\address{Simons Center for Geometry and Physics,
State University of New York, Stony Brook, NY 11794-3636 U.S.A.\\
Center for Geometry and Physics, Institute for Basic Sciences (IBS),
77 Cheongam-ro, Nam-gu, Pohang, Korea} \email{kfukaya@scgp.stonybrook.edu}
\address{Center for Geometry and Physics, Institute for Basic Sciences (IBS),
77 Cheongam-ro, Nam-gu, Pohang, Korea
\\
Department of Mathematics,
POSTECH, Pohang, Korea}
\email{yongoh1@postech.ac.kr}
\address{Graduate School of Mathematics,
Nagoya University, Nagoya, Japan} \email{ohta@math.nagoya-u.ac.jp}
\address{Research Institute for Mathematical Sciences, Kyoto University, Kyoto, Japan}
\email{ono@kurims.kyoto-u.ac.jp}
\newcommand{\leftineqineq}{\langle \!\langle}
\newcommand{\rightineqineq}{\rangle \!\rangle}
\def\E{\ifmmode{\mathbb E}\else{$\mathbb E$}\fi} 
\def\N{\ifmmode{\mathbb N}\else{$\mathbb N$}\fi} 
\def\R{\ifmmode{\mathbb R}\else{$\mathbb R$}\fi} 
\def\Q{\ifmmode{\mathbb Q}\else{$\mathbb Q$}\fi} 
\def\C{\ifmmode{\mathbb C}\else{$\mathbb C$}\fi} 
\def\H{\ifmmode{\mathbb H}\else{$\mathbb H$}\fi} 
\def\Z{\ifmmode{\mathbb Z}\else{$\mathbb Z$}\fi} 
\def\P{\ifmmode{\mathbb P}\else{$\mathbb P$}\fi} 
\def\T{\ifmmode{\mathbb T}\else{$\mathbb T$}\fi} 
\def\SS{\ifmmode{\mathbb S}\else{$\mathbb S$}\fi} 
\def\DD{\ifmmode{\mathbb D}\else{$\mathbb D$}\fi} 
\def\K{\ifmmode{\mathbb K}\else{$\mathbb K$}\fi}
\newcommand{\del}{\partial}
\newcommand{\be}{\begin{equation}}
\newcommand{\ee}{\end{equation}}
\newcommand{\bea}{\begin{eqnarray}}
\newcommand{\eea}{\end{eqnarray}}
\newcommand{\beastar}{\begin{eqnarray*}}
\newcommand{\eeastar}{\end{eqnarray*}}
\newcommand{\bc}{\begin{center}}
\newcommand{\ec}{\end{center}}
\newcommand{\llb}{\llbracket}
\newcommand{\lb}{[}
\newcommand{\rrb}{\rrbracket}
\def\L{\mathbb{L}}
\theoremstyle{theorem}
\newtheorem{thm}{Theorem}[section]
\newtheorem{cor}[thm]{Corollary}
\newtheorem{lem}[thm]{Lemma}
\newtheorem{sublem}[thm]{Sublemma}
\newtheorem{prop}[thm]{Proposition}
\theoremstyle{definition}
\newtheorem{defn}[thm]{Definition}
\newtheorem{rem}[thm]{Remark}
\newtheorem{exm}[thm]{Example}
\newtheorem{conds}[thm]{Condition}
\newtheorem{prob}[thm]{Problem}
\newtheorem{asmp}[thm]{Assumption}
\newtheorem{proper}[thm]{Properties}
\newtheorem*{thm*}{Theorem}
\numberwithin{equation}{section}
\def\dudtau{{\frac{\del u}{\del \tau}}}
\def\dudt{{\frac{\del u}{\del t}}}
\def\R{{\mathbb R}}
\def\Crit{{\hbox{Crit}}}
\def\E{{\mathbb E}}
\def\Z{{\mathbb Z}}
\def\C{{\mathbb C}}
\def\R{{\mathbb R}}
\def\N{{\mathbb N}}
\def\e{\varepsilon} 
\def\CA{{\mathcal A}}
\def\CG{{\mathcal G}}
\def\CJ{{\mathcal J}}
\def\CK{{\mathcal K}}
\def\CL{{\mathcal L}}
\def\CM{{\mathcal M}}
\def\CP{{\mathcal P}}
\def\CQ{{\mathcal Q}}
\def\CP{{\mathcal P}}
\def\opname#1{\mathop{\kern0pt{\rm #1}}\nolimits}
\def\dim{\opname{dim}}
\def\vol{\opname{vol}}
\def\Ham{\opname{Ham}}
\def\Cal{\opname{Cal}}
\def\Per{\opname{Per}}
\def\supp{\operatorname{supp}}
\def\Per{\operatorname{Per}}
\def\Crit{\operatorname{Crit}}
\def\Fix{\operatorname{Fix}}
\def\mq{\mathfrak{q}}
\begin{document}
\quad \vskip1.375truein

\def\mq{\mathfrak{q}}
\def\mp{\mathfrak{p}}
\def\mH{\mathfrak{H}}
\def\mh{\mathfrak{h}}
\def\ma{\mathfrak{a}}
\def\ms{\mathfrak{s}}
\def\mm{\mathfrak{m}}
\def\mn{\mathfrak{n}}
\def\mz{\mathfrak{z}}
\def\mw{\mathfrak{w}}
\def\Hoch{{\tt Hoch}}
\def\mt{\mathfrak{t}}
\def\ml{\mathfrak{l}}
\def\mT{\mathfrak{T}}
\def\mL{\mathfrak{L}}
\def\mg{\mathfrak{g}}
\def\md{\mathfrak{d}}
\def\mr{\mathfrak{r}}

\title[Spectral invariants with bulk]
{Spectral invariants with bulk, quasi-morphisms and Lagrangian Floer theory}

\author[K. Fukaya, Y.-G. Oh, H. Ohta, K.
Ono]{Kenji Fukaya, Yong-Geun Oh, Hiroshi Ohta, Kaoru Ono}
\thanks{Kenji Fukaya is supported partially by JSPS Grant-in-Aid for Scientific Research
No. 23224002, NSF Grant No. 1406423, and Simons Collaboration on Homological Mirror Symmetry.
Yong-Geun Oh is supported by the IBS project IBS-R003-D1.
Hiroshi Ohta is supported by JSPS Grant-in-Aid for Scientific Research Nos. 23340015, 15H02054.
Kaoru Ono is supported by JSPS Grant-in-Aid for
Scientific Research, Nos. 18340014, 21244002, 26247006, 23224001.}

\begin{abstract}
In this paper we first develop various enhancements of the theory of
spectral invariants of Hamiltonian Floer homology and of
Entov-Polterovich theory of spectral symplectic quasi-states and
quasi-morphisms by incorporating \emph{bulk deformations}, i.e.,
deformations by ambient cycles of symplectic manifolds, of the Floer
homology and quantum cohomology. Essentially the same kind of
construction is independently carried out by Usher \cite{usher:talk}
in a slightly less general context. Then we explore various applications of
these enhancements to the symplectic topology, especially new
construction of symplectic quasi-states, quasi-morphisms and new
Lagrangian intersection results on toric
and non-toric manifolds
\par
The most novel part of this paper is to use open-closed
Gromov-Witten-Floer theory (operator $\frak q$ in \cite{fooo:book1}
and its variant involving closed orbits of periodic Hamiltonian
system) to connect spectral invariants (with bulk deformation),
symplectic quasi-states, quasi-morphism to the Lagrangian Floer
theory (with bulk deformation).
\par
We use this open-closed Gromov-Witten-Floer theory to produce new
examples. Especially using the calculation of Lagrangian Floer
cohomology with bulk deformation in \cite{fooo:toric1,fooo:bulk, fooo:toricmir}, we
produce examples of compact symplectic manifolds $(M,\omega)$ which
admits uncountably many independent quasi-morphisms
$\widetilde{\text{\rm Ham}}(M,\omega) \to \R$.
We also obtain a new intersection result for the Lagrangian
submanifold in $S^2 \times S^2$ discovered in \cite{fooo:S2S2}.
\par
Many of these applications were announced in \cite{fooo:toric1,fooo:bulk,fooo:S2S2}.
\end{abstract}

\date{October 19, 2016}

\subjclass[2000]{Primary 53D40, 53D12, 53D45;\\Secondary 53D20, 14M25, 20F65}
\keywords{Floer homology, Lagrangian submanifolds, Hamiltonian dynamics,
bulk deformations, spectral invariants, partial symplectic quasi-states,
quasi-morphisms, quantum cohomology, toric manifold, open-closed Gromov-Witten theory}
%

\maketitle
\newpage
\tableofcontents
\newpage



\centerline{\bf\LARGE Preface}
\par\medskip
Floer theory is an important tool in the study of symplectic
topology and in the study of mirror symmetry phenomenon of string
theory in physics (and also in low dimensional topology, which is
not touched upon in this paper). Since the theory was invented by
Floer in the late 80's, it has gone through the period of technical
underpinning and theoretical enhancement, especially manifested in the
Lagrangian Floer theory. Now the period of theoretical and
technical enhancement of the Gromov-Witten-Floer theory has passed and
the time for applying this enhanced theory as a major tool comes for
studying symplectic topology and its related areas.

Before these theoretical enhancements took place, the so
called, spectral invariants, were introduced by the second named
author in the context of Lagrangian submanifolds of the cotangent
bundle. This construction adapts the classical
Rabinowitz-Hofer-Zehnder's mini-max theory of the action functional
and Viterbo's stable Morse theory of generating functions to the
mini-max theory for the chain level Floer homology in the construction
of symplectic invariants. This construction was subsequently applied
to the Hamiltonian Floer theory by Schwarz in the aspherical case
and then by the second named author in general. The construction in
the non-exact case involves the usage of a Novikov ring and
non-Archimedean analysis in a significant manner. These spectral
invariants have been used by Entov-Polterovich in their
remarkable construction of symplectic quasi-states and
quasi-morphisms, which starts to unveil the true symplectic nature of
the analytical construction of Floer homology and spectral
invariants.

In this memoir, we develop various enhancements of the theory of
spectral invariants of Hamiltonian Floer homology and of
Entov-Polterovich theory of spectral symplectic quasi-states and
quasi-morphisms by incorporating the above mentioned theoretical
amplification and also non-Archimedean analysis into the theory.
More specifically, we incorporate \emph{bulk deformations}
introduced in \cite{fooo:book1}, i.e., the deformations by ambient
cycles of symplectic manifolds, of Floer homology and of quantum
cohomology into the construction of spectral invariants. Essentially
the same kind of construction is independently carried out by Usher
\cite{usher:talk} in a slightly less general context. Then we
explore various applications of these enhancements to the symplectic
topology, especially a new construction of symplectic quasi-states,
quasi-morphisms and new Lagrangian intersection results for toric
and non-toric manifolds by exploiting various explicit calculations
involving the potential function that the
authors made in a series of
recent papers on the Lagrangian Floer theory of toric manifolds
\cite{fooo:toric1,fooo:bulk, fooo:toricmir}.
\par
The most novel part of this memoir is our usage of the open-closed
Gromov-Witten-Floer theory (more specifically, the usage of operator
$\frak q$, which is a morphism between the quantum cohomology of
$(M,\omega)$ and the Hochschild cohomology of Lagrangian Floer
cohomology in \cite{fooo:book1} and its variant involving closed
orbits of periodic Hamiltonian system), which connect spectral
invariants (with bulk deformation), symplectic quasi-states,
quasi-morphism to the Lagrangian Floer theory (with bulk deformation)
and open-closed Gromov-Witten theory.
\par
We use this open-closed Gromov-Witten-Floer theory to produce new
examples. Especially, using the calculation of Lagrangian Floer
cohomology and the critical point theory of the potential function with
a bulk deformation, we produce examples of compact symplectic manifolds
$(M,\omega)$ that admit a continuum of linearly independent quasi-morphisms
$\widetilde{\text{\rm Ham}}(M,\omega) \to \R$.
\par
We also prove a new intersection result for the Lagrangian submanifold in
$S^2 \times S^2$ discovered in \cite{fooo:S2S2}.
\par
Many of these applications were announced
in \cite{fooo:toric1,fooo:bulk,fooo:S2S2}.
\par
On the technical side, we exclusively use the de Rham version of
various constructions and calculations performed in the relevant
Gromov-Witten-Floer theory. We adopt the de Rham version in this
memoir partly to make the exposition consistent with the series of
our papers studying toric manifolds  \cite{fooo:toric1,fooo:bulk, fooo:toricmir}.
Another reason is that many proofs become simpler when we use the de
Rham version of the constructions. We have no doubt that all the
results can be proved by using singular homology. Then we can use
$\Q$ instead of $\R$ as the underlying
ground field.

\par
\bigskip
November 2011

\begin{flushright}
Fukaya, Oh, Ohta, Ono
\end{flushright}
\par\newpage

\section{Introduction}
\label{sec:introduction}

\subsection{Introduction}
\label{subsec:introduction}

Let $(M,\omega)$ be a compact symplectic manifold.
We consider one-periodic nondegenerate Hamiltonians $H: S^1 \times M
\to \R$, not necessarily normalized, and one-periodic family
$J = \{J_t\}_{t \in S^1}$ of almost complex structures compatible
with $\omega$. To each given such pair $(H,J)$, we can associate the
Floer homology $HF(H,J)$ by considering the perturbed
Cauchy-Riemann equation
\be\label{eq:CRHJ} \dudtau + J_t \left(\dudt
- X_{H_t}(u)\right) = 0,
\index{perturbed Cauchy-Riemann equation}
\ee
where $H_t(x) = H(t,x)$ and
$X_{H_t}$ is the Hamiltonian vector field associated to $H_t \in
C^\infty(M)$.
The associated chain complex
$(CF(M,H), \del_{(H,J)})$ is generated by
certain equivalence classes of
the pairs $[\gamma,w]$ where $\gamma$ is a loop satisfying
$\dot \gamma(t) = X_{H_t}(\gamma)$, $w: D^2 \to M$ is a disc with $ w\vert_{\del D^2} = \gamma$
and
$[\gamma,w]$ is the homotopy class relative to the boundary $\gamma$.
See Definitions \ref{Lambdahatonashi} and \ref{Lambda(G)}.
The boundary operator $\del_{(H,J)}$ is defined by an appropriate weighted
count of the solution set of the equation (\ref{eq:CRHJ}).
This chain complex carries a natural
downward filtration provided by the action functional
\begin{equation}\label{mathcal A_H}
\CA_H([\gamma,w]) = - \int w^*\omega - \int_0^1 H(t,\gamma(t))\, dt,
\index{action functional}\index{$\mathcal A_H$}
\end{equation}
since \eqref{eq:CRHJ} is the negative $L^2$-gradient flow of $\CA_H$ with respect to
the $L^2$-metric on $\CL(M)$.
\par

The homology group of $(CF(M,H), \del_{(H,J)})$
is the Floer homology $HF_*(M;H,J)$
associated to the one-periodic Hamiltonian $H$. $HF_*(M;H,J)$ is known to
be isomorphic to the ordinary homology $H_*(M)$ of $M$ with appropriate Novikov
field as its coefficient ring.
(\cite{floer:cmp}).
The isomorphism is given by the Piunikhin isomorphism \cite{Piu94}
\be\label{eq:flat}
{\mathcal P}_H: H_*(M) \to HF_*(M;H,J).
\ee
We use it to transfer a quantum cohomology class $a \in QH^*(M)\cong H^*(M)$
to a Floer homology class via the map $a \mapsto  a^\flat_H: = {\mathcal P}_H \circ \flat(a)$
where $\flat$ is the Poincar\'e duality: $QH^*(M) \to H_*(M)$.

The spectral invariants constructed by the second named author in \cite{oh:alan}
for the general non-exact case are defined as follows. (See \cite{viterbo,oh:cag1,schwarz}
for the earlier related works for the exact case.) For a given quantum cohomology class
$a$, we consider Floer cycles $\alpha$ representing the associated Floer homology class $a^\flat_H$.
Then we take the mini-max value
\bea
\lambda_H(\alpha) & = & \max\{\CA_H([\gamma_i,w_i]) \mid
\alpha = \sum a_i [\gamma_i,w_i], a_i \in \C \setminus \{0\}\},\label{eq:rhoHa}\\
\rho(H;a) & = & \inf \{\lambda_H(\alpha) \mid
\del_{(H,J)}(\alpha) = 0, [\alpha] = a^\flat_H \}.
\label{eq:rhoHa2}
\eea
\index{spectral invariant}
\par
We can show that the right hand side of (\ref{eq:rhoHa2}) is independent on the choice of $J$.
So $J$ is not included in the notation $\rho(H;a)$. Via the $C^0$-continuity of the function
$H \mapsto \rho(H;a)$, the function extends continuously to arbitrary
continuous function $H$.

It induces an invariant of an element of the universal cover of the
group of Hamiltonian diffeomorphisms as follows.
We denote by ${\rm Ham}(M,\omega)$ the group of Hamiltonian diffeomorphisms
of $M$ and by $\widetilde{\rm Ham}(M,\omega)$ its universal cover \index{$\widetilde{\rm Ham}(M,\omega)$} and
by $\phi_H: t \mapsto \phi_H^t$ the Hamiltonian path (based at the identity)
generated by the (time-dependent) Hamiltonian $H$ and its time one map by
$\psi_H = \phi_H^1 \in {\rm Ham}(M,\omega)$.
Each Hamiltonian $H$ generates the Hamiltonian path $\phi_H$ which in turn determines
an element $\widetilde \psi_H=[\phi_H] \in \widetilde{\rm Ham}(M,\omega)$. Conversely,
each smooth Hamiltonian path $[0,1] \to {\rm Ham}(M,\omega)$ based at
the identity is generated by a unique normalized Hamiltonian $H$, i.e.,
$H$ satisfying
\be\label{eq:normalized}
\int_M H_t \, \omega^n = 0.
\ee
\index{Hamiltonian!normalized Hamiltonian}
It is proved in \cite{oh:alan, oh:minimax} that $\rho(H;a)$
for normalized Hamiltonians $H$ depends only on
the homotopy class $\widetilde\psi = \widetilde\psi_H$ of the path $\phi_H$
and $a$, which we denote by
\begin{equation}\label{rhofordiff}
\rho(\widetilde\psi;a) := \rho(H;a).
\end{equation}
This homotopy invariance is proved for the rational $(M,\omega)$
in \cite{oh:alan} and for the irrational case in \cite{oh:minimax,usher:specnumber} respectively.

In a series of papers \cite{EP:morphism,EP:states,EP:rigid},
Entov and Polterovich discovered remarkable applications of these spectral invariants
to the theory of symplectic intersections and to the
study of $\mathrm{Ham}(M,\omega)$ by combining ideas from dynamical systems,
function theory and quantum cohomology. We briefly summarize their construction now.
(As we explain in Subsection \ref{subsec:conv} we adopt conventions slightly different  from
Entov and Polterovich's below. However there is no essential mathematical difference.)

Let $QH^*(M;\Lambda)$ be the quantum cohomology ring of $M$.
(Here $\Lambda$ is the (universal) Novikov field. See Notation and Convention (\ref{item16}).)
They considered idempotent elements $e$ of $QH^*(M;\Lambda)$, i.e.,
those satisfying $e^2 = e$
and defined the function
$\zeta_e: C^0(M) \to \R$ by
\begin{equation}\label{asymptoticrho0}
\zeta_e(H) :=  \lim_{n \to \infty} \frac{\rho(n H;e)}{n}
\end{equation}
\index{partial symplectic quasi-state}
for autonomous $C^\infty$ functions $H$ and then extended
its definition to $C^0(M)$ by
continuity.

It is proved in \cite{EP:morphism,EP:states, EP:rigid} that
$\zeta_e$ satisfies most of the properties of
\emph{quasi-states} introduced by Aarnes \cite{aarnes:adv} and
introduced the notion of a \emph{partial symplectic quasi-state}.
They also formulated the notions of
\emph{heavy} and \emph{super-heavy} subsets of symplectic manifolds.
Entov and Polterovich also considered
the map $\mu_e: \widetilde{\Ham}(M,\omega) \to \R$  by
\begin{equation}\label{eq:mue}
\mu_e(\widetilde \psi) = - \vol_\omega(M) \lim_{n \to \infty} \frac{\rho(\widetilde \psi^n;e)}{n}.
\end{equation}
Whenever $e$ is the unit of a direct factor of $QH^*(M;\Lambda)$, which is a field,
$\mu_e$ becomes a homogeneous quasi-morphism. Namely it satisfies
\begin{equation} \label{eq:quasi-morphism}
|\mu_e(\widetilde\psi_1) + \mu_e(\widetilde\psi_2) -
\mu_e(\widetilde\psi_1\widetilde\psi_2)| <  C,
\end{equation}
for some constant $C$ independent of $\widetilde\psi_1$, $\widetilde\psi_2$ and
\be\label{eq:homo-mue}
\mu_e(\widetilde\psi^n) = n\mu_e(\widetilde\psi), \qquad \text{for $n \in \Z$.}
\ee
These facts were proved by Entov-Polterovich \cite{EP:morphism} in case  $QH^*(M;\Lambda)$
is semi-simple and $M$ is monotone. The monotonicity assumption was improved
by Ostrover \cite{ostrober2}. It was observed by McDuff that instead of the semi-simplicity
assumption one has only to assume that $e$ is the unit of a field factor of $QH^*(M;\Lambda)$.
In fact, Entov and Polterovich prove several other
symplectic properties of $\mu_e$, and call them \emph{Calabi quasi-morphisms}.
\par
One can generalize the constructions mentioned above by involving
bulk deformations.
 Here bulk deformation means a deformation of
various Floer theories and others by using a cohomology class ${\frak b}$
of the ambient symplectic manifold.
We call ${\frak b}$ a bulk cycle and denote the corresponding deformed Floer homology by $HF^{\frak b}(M;H,J)$
which we call a Floer homology with bulk.
(We will define it in Part \ref{part:bulk-hamFloer}.)
Actually as in the case of $HF(M;H,J)$, the Floer homology with bulk $HF^{\frak b}(M;H,J)$ as a $\Lambda$ module is also isomorphic
to the ordinary homology group $H(M;\Lambda)$ of $M$.
The generalization is straightforward way except the following point:
\par
We note that for the construction of partial quasi-states and quasi-morphism the
following triangle inequality of spectral invariant plays an important role.
\begin{equation}\label{productprop}
\rho(\widetilde\psi_1\circ\widetilde\psi_2,a\cup_Q b) \le \rho(\widetilde\psi_1,a) + \rho(\widetilde\psi_2,b),
\end{equation}
here $\cup_Q$ is the product of the small quantum cohomology ring $QH^*(M;\Lambda)$
and $\rho$ is the spectral invariant (\ref{rhofordiff}) (without bulk deformation).
\par
We generalize the definitions of (\ref{eq:flat}),
(\ref{eq:rhoHa2}),
(\ref{rhofordiff}) to obtain corresponding ones ${\mathcal P}_H^{\frak b}$,
$\rho^{\frak b}(H;a)$,
$\rho^{\frak b}(\widetilde\psi;a)$, respectively.
Then (\ref{productprop})  becomes
\begin{equation}\label{productprop2}
\rho^{\frak b}(\widetilde\psi_1\circ\widetilde\psi_2,a\cup^{\frak b} b) \le
\rho^{\frak b}(\widetilde\psi_1,a) + \rho^{\frak b}(\widetilde\psi_2,b),
\end{equation}
where $\cup^{\frak b}$ is the deformed cup product by $\frak b$. (See
Definition \ref{defn:prod} for its definition.)
Thus in place of the small quantum cohomology ring $QH^*(M;\Lambda)$
the $\frak b$-deformed quantum cohomology ring
(which we denote by $QH_{\frak b}^*(M;\Lambda)$) plays an important role here.
\par
Whenever $e \in QH^*_{\frak b}(M;\Lambda)$ is an idempotent, we define
\begin{equation}\label{partialsympstate}
\zeta_e^{\frak b}(H) = - \lim_{n\to \infty}\frac{\rho^{\frak b}(nH;e)}{n}
\end{equation}
for the autonomous function $H = H(x) \in C^\infty(M)$
which in turn defines a partial symplectic quasi-state on $C^0(M)$.
See Definition \ref{defpscquasi-state} and Theorem \ref{thm:state}.
Similarly we can define $\mu_e^{\frak b}: \widetilde{\Ham}(M,\omega) \to \R$. \index{$\mu_e^{\frak b}$}
We will call any such partial quasi-state or quasi-morphism obtained from
the spectral invariants a \emph{spectral partial quasi-state} or
a \emph{spectral quasi-morphism} respectively.
\index{spectral partial quasi-state}\index{quasi-morphism!spectral quasi-morphism}

\begin{thm}\label{existquasihomo}
Let $\Lambda e \cong \Lambda$ be a direct factor of $QH^*_\frak b(M;\Lambda)$
and $e$ its unit. Then
$$
\mu_e^\frak b : \widetilde{\rm Ham}(M,\omega) \to \R
$$
is a homogeneous Calabi quasi-morphism.
\end{thm}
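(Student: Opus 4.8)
The plan is to show that $\mu_e^{\frak b}$ satisfies the two defining properties of a homogeneous Calabi quasimorphism: the quasimorphism inequality \eqref{eq:quasimorphism} and homogeneity \eqref{eq:homo-mue}, together with the extra symplectic properties (continuity, Calabi property on displaceable sets) that Entov--Polterovich impose. The backbone of the argument is the triangle inequality \eqref{productprop2} for the bulk-deformed spectral invariants $\rho^{\frak b}$, which we take as already established in Chapter 2 (via the $\frak b$-deformed pair-of-pants product), and the hypothesis that $\Lambda e$ is a field factor of $QH^*_{\frak b}(M;\Lambda)$ with unit $e$.

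First I would record the basic algebraic properties of $\rho^{\frak b}(\widetilde\psi; e)$ that follow from the earlier constructions: the triangle inequality, the normalization $\rho^{\frak b}(\mathrm{id}; e)$ is finite, and the fact that for the idempotent $e$ one has $e \cup^{\frak b} e = e$, so that \eqref{productprop2} applied with $a = b = e$ gives
\begin{equation*}
\rho^{\frak b}(\widetilde\psi_1 \circ \widetilde\psi_2; e) \le \rho^{\frak b}(\widetilde\psi_1; e) + \rho^{\frak b}(\widetilde\psi_2; e).
\end{equation*}
Subadditivity of $\widetilde\psi \mapsto \rho^{\frak b}(\widetilde\psi; e)$ is thus immediate. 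The crucial point — and where the field hypothesis on $\Lambda e$ enters — is the reverse bound: one needs a constant $C$ with $\rho^{\frak b}(\widetilde\psi_1;e) + \rho^{\frak b}(\widetilde\psi_2;e) \le \rho^{\frak b}(\widetilde\psi_1\circ\widetilde\psi_2;e) + C$. Following Entov--Polterovich (and the McDuff observation quoted in the excerpt), this comes from a Poincar\'e-duality-type relation between $\rho^{\frak b}(\widetilde\psi; e)$ and $\rho^{\frak b}(\widetilde\psi^{-1}; e)$: in a field factor, the pairing $\langle \cdot, \cdot\rangle$ on $HF^{\frak b}$ is nondegenerate on the $e$-summand, and the valuation of the unit $e$ under the triangle product forces $\rho^{\frak b}(\widetilde\psi; e) + \rho^{\frak b}(\widetilde\psi^{-1}; e)$ to be bounded below by a universal constant. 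Feeding this back into the subadditivity with $\widetilde\psi_1\circ\widetilde\psi_2$ in place of $\widetilde\psi_1$ and $\widetilde\psi_2^{-1}$ in place of $\widetilde\psi_2$ yields the upper bound, hence the quasimorphism inequality for $- \vol_\omega(M)\,\rho^{\frak b}(\cdot;e)$ and therefore for $\mu_e^{\frak b}$ after the limit in \eqref{eq:mue} is taken; a standard Fekete subadditivity argument shows the limit exists and that homogeneity \eqref{eq:homo-mue} holds.

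Next I would verify the remaining ``Calabi'' properties. Using the identity \eqref{sympstateandquasimor} together with Theorem \ref{thm:state} on $\zeta_e^{\frak b}$, one transfers the partial-quasi-state properties of $\zeta_e^{\frak b}$ (monotonicity, normalization, the vanishing on Hamiltonians supported in displaceable open sets) to the statement that $\mu_e^{\frak b}$ restricts to the Calabi homomorphism on subgroups generated by Hamiltonians supported in a displaceable open subset — this is exactly the displacement/energy estimate for $\rho^{\frak b}$, which is inherited from the corresponding estimate for $\rho$ because inserting the bulk cycle $\frak b$ does not affect the geometric energy bounds. Continuity in the Hofer/$C^0$ topology likewise descends from the Lipschitz estimate $|\rho^{\frak b}(H;e) - \rho^{\frak b}(K;e)| \le \int_0^1 \max(H_t - K_t)\,dt$ or its $C^0$-analogue, already available for the deformed invariants.

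The main obstacle will be establishing the reverse (lower) bound on $\rho^{\frak b}(\widetilde\psi;e) + \rho^{\frak b}(\widetilde\psi^{-1};e)$ in the bulk-deformed setting: one must check that the duality pairing and the multiplicativity of the fundamental class behave correctly after deformation by $\frak b$, i.e.\ that $QH^*_{\frak b}(M;\Lambda)$ really splits off the field factor $\Lambda e$ compatibly with the filtration and the triangle product, so that the valuation of $e$ is computed as zero. Everything else is a routine adaptation of the undeformed Entov--Polterovich arguments, since the bulk deformation only changes the operators/structure constants while preserving all the filtration and energy estimates that those arguments use.
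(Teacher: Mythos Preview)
Your overall strategy matches the paper's proof (Section~\ref{sec:const-morphism}, Theorem~\ref{thm:morphism}): subadditivity from the triangle inequality with $e\cup^{\frak b}e=e$, the reverse bound from a Poincar\'e-duality relation between $\rho^{\frak b}(\widetilde\psi;e)$ and $\rho^{\frak b}(\widetilde\psi^{-1};e)$ that exploits the field-factor hypothesis, and the Calabi property inherited from Theorem~\ref{thm:state}.

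There is, however, a sign confusion in your key step. You write that duality forces $\rho^{\frak b}(\widetilde\psi;e)+\rho^{\frak b}(\widetilde\psi^{-1};e)$ to be bounded \emph{below}; but a lower bound is trivial (apply the triangle inequality to $\widetilde\psi\cdot\widetilde\psi^{-1}=\mathrm{id}$) and does not yield the reverse quasimorphism inequality. What is needed, and what the paper proves as Lemma~\ref{dualitygyakmain}, is the \emph{upper} bound $\rho^{\frak b}(\widetilde\phi;e)+\rho^{\frak b}(\widetilde\phi^{-1};e)\le 3\frak v_q(e)$. This is where the duality Theorem~\ref{dualitymain} and the field hypothesis genuinely enter: one takes $b$ with $\Pi(e,b)\ne 0$, uses the Frobenius property to replace $b$ by its projection $b_1=xe$ to the field factor, shows $\frak v_q(b_1)\ge 0$ (Sublemma~\ref{sublem166}), and then bounds $\frak v_q(b_1^{-1})\le 2\frak v_q(e)$ using invertibility in the field. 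Your substitution ``$\widetilde\psi_1\circ\widetilde\psi_2$ in place of $\widetilde\psi_1$ and $\widetilde\psi_2^{-1}$ in place of $\widetilde\psi_2$'' is exactly the right manipulation once you have the upper bound, not the lower one. Relatedly, $\frak v_q(e)$ need not be zero; it appears as the constant in the defect bound (Remark~\ref{rem168}).
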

Theorem \ref{existquasihomo} is proved in Section \ref{sec:const-morphism}.
In particular, combined with the study of big quantum cohomology of
toric manifolds \cite{fooo:toricmir}, this implies the following:
(The proof is completed in Subsection \ref{toricexistqh}.)

\begin{cor}\label{existtoric}
For any compact toric manifold $(M,\omega)$, there exists a
nontrivial homogeneous Calabi quasi-morphism
$$
\mu_e^\frak b : \widetilde{\rm Ham}(M,\omega) \to \R.
$$
\end{cor}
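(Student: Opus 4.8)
The plan is to invoke Theorem \ref{existquasihomo} with an appropriately chosen bulk class $\frak b$, so the entire content reduces to producing, for any compact toric manifold $(M,\omega)$, a bulk deformation $\frak b$ (an even-degree ambient cycle) such that the deformed quantum cohomology ring $QH^*_{\frak b}(M;\Lambda)$ admits a direct factor isomorphic to $\Lambda$ as a ring, with $e$ its unit. Once such a factor exists, Theorem \ref{existquasihomo} gives the homogeneous Calabi quasimorphism $\mu_e^{\frak b}$ immediately, and nontriviality will follow from the standard argument that a Calabi quasimorphism cannot be a homomorphism (e.g. by testing against Hamiltonians supported in a small ball, where $\mu_e^{\frak b}$ differs from the Calabi homomorphism), which I would record as a short remark rather than reprove.

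First I would recall from the study of the big quantum cohomology of toric manifolds in \cite{fooo:toricmir} that, after deforming by a suitable bulk class $\frak b$ lying in $H^{\mathrm{even}}(M;\Lambda_0)$ (in practice one spanned by the toric divisors), the $\frak b$-deformed potential function $\mathfrak{PO}^{\frak b}$ has the property that its critical point scheme is reduced, or more precisely that at least one critical point is nondegenerate in the sense that the Hessian of $\mathfrak{PO}^{\frak b}$ there is nondegenerate. Second, I would use the ring isomorphism, established in the toric case via the Kodaira–Spencer-type map of \cite{fooo:toricmir}, between $QH^*_{\frak b}(M;\Lambda)$ and the Jacobian ring (Batyrev-type ring) $\mathrm{Jac}(\mathfrak{PO}^{\frak b})$. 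Third, under this isomorphism a nondegenerate critical point of $\mathfrak{PO}^{\frak b}$ corresponds exactly to a ring direct summand $\Lambda e$ of $\mathrm{Jac}(\mathfrak{PO}^{\frak b})$ isomorphic to $\Lambda$: the local ring at that point is a field, and since over the field $\Lambda$ a finite-dimensional commutative algebra splits as a product of its localizations, the nondegenerate point contributes a one-dimensional factor. Pulling $e$ back through the isomorphism gives the required idempotent and the required field factor of $QH^*_{\frak b}(M;\Lambda)$.

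The main obstacle is the second and third steps taken together: one needs that for \emph{every} compact toric $M$ there is \emph{some} choice of bulk $\frak b$ making at least one critical point of the deformed potential nondegenerate. For Fano toric manifolds the undeformed potential (i.e. $\frak b = 0$) already tends to have this property, but in general — for non-Fano toric manifolds, where the potential picks up higher-order correction terms and critical points can be degenerate for all $\frak b = 0$ — one genuinely needs the extra freedom provided by the bulk parameters, and the assertion that the bulk deformation suffices to resolve the degeneracy is exactly the deep input from \cite{fooo:toricmir}. I would therefore structure the proof so that this existence statement is quoted as a black box from \cite{fooo:toricmir} (with a precise pointer), and the remainder — the passage from "nondegenerate critical point of $\mathfrak{PO}^{\frak b}$" to "field direct factor of $QH^*_{\frak b}(M;\Lambda)$ with unit $e$" to "apply Theorem \ref{existquasihomo}" — is the routine part carried out in Subsection \ref{toricexistqh}. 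The only mild subtlety in that routine part is bookkeeping with the Novikov field $\Lambda$: one must check the relevant critical value lies in $\Lambda$ (or pass to a finite extension and then descend, as is done for $\mathrm{Ham}$-quasimorphisms elsewhere in the paper), but this is handled by the same valuation arguments already used for spectral invariants with bulk.
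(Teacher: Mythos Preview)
Your proposal is correct and follows essentially the same route as the paper: find a bulk $\frak b$ for which $\frak{PO}_{\frak b}$ has a nondegenerate critical point, then use the Kodaira--Spencer isomorphism $QH^*_{\frak b}(M;\Lambda)\cong\mathrm{Jac}(\frak{PO}_{\frak b};\Lambda)$ together with Proposition~\ref{Morsesplit} to extract a field factor $\Lambda e$, and apply Theorem~\ref{existquasihomo} (equivalently Theorem~\ref{thm:morphism}).

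The one substantive difference is that the paper does \emph{not} cite the existence of such $\frak b$ as a black box from \cite{fooo:toricmir}. Instead, Subsection~\ref{toricexistqh} proves it directly: using the leading-order formula for $\frak{PO}_{\frak b}$ (Theorem~\ref{toricPOcalcthm}) and Kushnirenko's result (Lemma~\ref{gennondeg}), a dense set of degree-2 bulk classes $\frak b=\sum\log c_i\,[D_i]$ makes the leading term $\sum c_i z_i$ Morse, and then a perturbation argument (\cite{fooo:toric1} Theorem~10.4) upgrades this to $\frak{PO}_{\frak b}$ itself being Morse. So the paper actually gets the whole ring semisimple, not just one field factor, and the ``deep input'' you defer is spelled out here. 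Your concern about Novikov-field extensions is a red herring: $\Lambda$ is algebraically closed (see the remark after Theorem~\ref{thm:morphism}), so no descent is needed. Nontriviality is immediate from the Calabi property (Definition~\ref{def:Qhomo}~(2)), since $\mathrm{Cal}_U$ is unbounded on $\widetilde{\mathrm{Ham}}_U$.
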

We say that a quasi-morphism is {\it nontrivial} if it is not bounded.
Corollary \ref{existtoric} is  also proved independently by Usher \cite{usher:talk}.

It is in general very hard to calculate spectral invariants and
partial quasi-states or quasi-morphisms obtained therefrom.
In Part 4 of this paper we provide a means of estimating them
in certain cases. We recall the following definition:

\begin{defn}\label{heavy1}(Entov-Polterovich \cite{EP:rigid})
Let $\zeta : C^0(M) \to \R$ be any partial quasi-state.
A closed subset $Y \subset X$ is called \emph{$\zeta$-heavy} \index{heavy subset!$\zeta$-heavy} if
\be
\zeta(H) \leq \sup \{H(p) \mid p \in Y\}
\ee
for any $H \in C^0(X)$.
$Y \subset X$ is called {\it $\zeta$-superheavy} if
\be
\zeta(H) \geq \inf \{H(p) \mid  p \in Y\}
\ee
for any  $H \in C^0(X)$. \index{superheavy subset!$\zeta$-superheavy}
\end{defn}
\begin{rem}
\begin{enumerate}
\item
Entov-Polterovich  proved  in \cite[Theorem 1.4 (i)]{EP:rigid}  that superheaviness implies
heaviness for $\zeta_e$. The same can be proved for
$\zeta_e^{\frak b}$ by the same way.
\item
Actually in Definition \ref{heavy1}
we use a different sign convention from that of Entov-Polterovich.
See Remark \ref{rem114444}.
\end{enumerate}
\end{rem}

We can also define similar notions $\mu$-heaviness and $\mu$-superheaviness, for
$\mu : \widetilde{\rm Ham}(M,\omega) \to \R$, instead of
$\zeta : C^0(M) \to \R$.
See Definition \ref{tdheavy}.
In Part 4 of this paper we provide a way to use Lagrangian Floer theory
to show certain Lagrangian submanifold is $\mu$-heavy or $\mu$-superheavy
for $\mu = \mu_e$ or $\mu_e^{\frak b}$.

Let $L$ be a relatively spin Lagrangian submanifold of $M$.
In \cite{fooo:book1} we associated to $L$ a set
$\mathcal M_{\rm weak,def}(L;\Lambda_+)$, which we call the {\it Maurer-Cartan moduli space}.
(See also Definition \ref{bulkMCelement}.)

\begin{rem}
The Maurer-Cartan moduli space that appears in \cite{fooo:book1} uses
the Novikov ring $\Lambda_+$. A technical enhancement to its $\Lambda_0$-version was performed in
\cite{fooo:bulk,fukaya:cyc} using the idea of Cho \cite{cho:Bfield},
which is used in this paper. In this introduction, however, we state
only the $\Lambda_+$-version for the simplicity of exposition.
\end{rem}

The Maurer-Cartan moduli space comes with a map
$$
\pi_{\rm bulk} : \mathcal M_{\rm weak,def}(L;\Lambda_+)
\to \bigoplus_{k}H^{2k}(M;\Lambda_+).
$$
For each  $\text{\bf b} \in \mathcal M_{\rm weak,def}(L;\Lambda_+)$,
the Floer cohomology
$
HF^*((L,\text{\bf b}),(L,\text{\bf b});\Lambda_0)
$
deformed by $\text{\bf b}$ is defined in \cite[Definition 3.8.61]{fooo:book1}.
(See Definition \ref{FLoercohbulk}.)
Moreover the open-closed map
\begin{equation}\label{openclosedhomo}
i_{{\rm qm},\text{\bf b}}^{\ast} :
H^*(M;\Lambda_0) \to HF^*((L,\text{\bf b}),(L,\text{\bf b});\Lambda_0)
\end{equation}
is constructed in \cite[Theorem 3.8.62]{fooo:book1}.
(See (\ref{iqmdefformula}).)
Utilizing this map $i_{{\rm qm},\text{\bf b}}^{\ast}$, we can locate
$\mu_e^\frak b$-superheavy Lagrangian submanifolds in several circumstances.
\index{superheavy subset!$\mu$-superheavy}
\index{heavy subset!$\mu$-heavy}
\begin{thm}\label{supportmain}
Consider a pair $(\text{\bf b},\frak b)$ with
$\text{\bf b} \in \mathcal M_{\rm weak,def}(L;\Lambda_+)$ and
$\pi_{\rm bulk}(\text{\bf b}) = \frak b$.
Let $e$ be an idempotent of $QH^*_{\frak b}(M;\Lambda)$ such that
$$
i_{{\rm qm},\text{\bf b}}^{\ast}(e) \ne 0 \in HF^*((L,\text{\bf b}),(L,\text{\bf b});\Lambda).
$$
Then $L$ is $\zeta_e^\frak b$-heavy and $\mu_e^\frak b$-heavy.
\par
If $e$ is a unit of a field factor of $QH^*_{\frak b}(M;\Lambda)$ in addition,
then $L$ is $\zeta_e^\frak b$-superheavy and
$\mu_e^\frak b$-superheavy.
\end{thm}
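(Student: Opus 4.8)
The plan is to reduce the heaviness/superheaviness statement about $L$ to a chain-level inequality relating the action functional $\CA_H$ governing the spectral invariant $\rho^{\frak b}$ to the Lagrangian Floer energy filtration on $HF^*((L,\text{\bf b}),(L,\text{\bf b}))$, with the bridge provided by the open-closed map $i_{{\rm qm},\text{\bf b}}^{\ast}$. First I would set up the parametrized/PSS-type moduli spaces: following the footnote, realize the Hamiltonian problem for $\widetilde\psi_H$ via the two Lagrangian submanifolds $L$ and its image (or, in the diagonal picture, $\Delta$ and the graph of $\psi_H$), so that a Floer trajectory for the bulk-deformed Hamiltonian Floer complex $CF^{\frak b}(H,J)$ with one marked point mapped through $\text{\bf b}$ produces an element of the Lagrangian Floer complex $CF((L,\text{\bf b}),(L,\text{\bf b}))$. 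The key algebraic input is that $i_{{\rm qm},\text{\bf b}}^{\ast}$ is a unital ring homomorphism (with respect to $\cup^{\frak b}$ on the source, as constructed in \cite{fooo:book} Theorem 3.8.62 and its bulk enhancement in this paper), so that $i_{{\rm qm},\text{\bf b}}^{\ast}(e) \ne 0$ forces $e$ to survive in Lagrangian Floer cohomology, and the nonvanishing is exactly the obstruction that makes the mini-max value $\rho^{\frak b}(H;e)$ controllable.

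Next I would establish the comparison inequality. One sets $c(H) := \rho^{\frak b}(\psi_H; e)$ and shows, by a neck-stretching/energy estimate on the open-closed moduli spaces interpolating between the closed PSS picture and the Lagrangian picture, that
\begin{equation}\label{eq:heavyest}
\rho^{\frak b}(H;e) \ \le\ \max_{t}\max_{p \in L} H(t,p) + (\text{curvature/valuation terms}) ,
\end{equation}
where the error terms are nonnegative elements of $\Lambda_+$ coming from the bulk insertions $\text{\bf b}$ and the disc contributions; these vanish in the appropriate asymptotic limit. Passing to the autonomous case $H = H(x)$, replacing $H$ by $nH$, applying \eqref{eq:heavyest} to $nH$, dividing by $n$ and letting $n \to \infty$ kills the $\Lambda_+$-error terms (they have positive valuation, so $T^{\lambda}/n$-type terms go to zero), and by the definition \eqref{partialsympstate} of $\zeta_e^{\frak b}$ this yields $\zeta_e^{\frak b}(H) \le \sup_{p \in L} H(p)$, i.e. $L$ is $\zeta_e^{\frak b}$-heavy. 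The corresponding statement for $\mu_e^{\frak b}$ follows from the identity \eqref{sympstateandquasimor}, $\mu_e^{\frak b}(\widetilde\psi) = \Cal(H) - \vol_\omega(M)\,\zeta_e^{\frak b}(H)$, together with the definition of $\mu$-heaviness (Definition \ref{tdheavy}), which absorbs the Calabi term.

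For the superheaviness half, I would invoke the field-factor hypothesis: when $e$ is the unit of a direct field factor $\Lambda e \cong \Lambda$ of $QH^*_{\frak b}(M;\Lambda)$, $\mu_e^{\frak b}$ is a homogeneous quasimorphism (Theorem \ref{existquasihomo}), hence $\zeta_e^{\frak b}$ is genuinely a partial symplectic quasi-state with the subadditivity/superlinearity needed to upgrade heaviness of $L$ to superheaviness. Concretely, one uses the triangle inequality \eqref{productprop2} applied to $\widetilde\psi_H$ and $\widetilde\psi_{-H} = \widetilde\psi_H^{-1}$, together with $e \cup^{\frak b} e = e$ and $i_{{\rm qm},\text{\bf b}}^{\ast}(e)\ne 0$, to get the reverse inequality $\zeta_e^{\frak b}(H) \ge \inf_{p \in L} H(p)$; alternatively one cites the Entov–Polterovich mechanism (their \cite{EP:rigid} Theorem 1.4 and the remark in the excerpt that it goes through verbatim for $\zeta_e^{\frak b}$) that heaviness plus the quasi-state property of $\zeta_e^{\frak b}$ implies superheaviness, since the whole manifold $M$ is $\zeta_e^{\frak b}$-superheavy. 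I expect the main obstacle to be the first step: constructing the bulk-deformed open-closed moduli spaces with the right Fredholm/transversality setup (Kuranishi structures, compatibility of the $\text{\bf b}$-deformation on the closed side with the $\text{\bf b}$-deformation on the Lagrangian side) and, above all, proving the energy inequality \eqref{eq:heavyest} with sharp constants so that the $\Lambda_+$-error terms are genuinely of positive valuation — this is where the filtration bookkeeping between $\CA_H$, the disc areas, and the Novikov degrees of $\text{\bf b}$ must be done carefully rather than routinely.
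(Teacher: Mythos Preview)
Your overall architecture is right --- construct a chain-level open-closed map $CF(M,H)\to CF(L)$, prove a filtration estimate, and homogenize --- but two of the three key steps are off in ways that would block the argument.

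First, the direction of the central inequality is backwards. The half-cylinder moduli spaces (with Hamiltonian asymptotic at $-\infty$ and Lagrangian boundary at $0$) give a chain map $\frak I: CF(M,H;\Lambda^\downarrow)\to CF_{\rm dR}(L;\Lambda^\downarrow)$ that \emph{raises} the filtration by $\sup_{[0,1]\times L} H$; the energy estimate yields
\[
\rho^{\frak b}(H;e)\ \ge\ -\sup_{[0,1]\times L}H\ +\ \rho_L^{\mathbf b}(e),
\]
a lower bound on $\rho^{\frak b}$, not the upper bound you wrote. This is the inequality that, after replacing $H$ by $nH$ and using $\zeta_e^{\frak b}=-\lim\rho^{\frak b}(nH;e)/n$, gives $\zeta_e^{\frak b}(H)\le\sup_L H$. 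Correspondingly, the ``error term'' is not a $\Lambda_+$-valued quantity that vanishes in the limit: it is the single real number $\rho_L^{\mathbf b}(e)$, the Lagrangian spectral level of $i_{{\rm qm},\text{\bf b}}^*(e)$. The hypothesis $i_{{\rm qm},\text{\bf b}}^*(e)\ne 0$ enters exactly here, guaranteeing $\rho_L^{\mathbf b}(e)>-\infty$ so that the bound survives division by $n$. (The ring-homomorphism property of $i_{{\rm qm},\text{\bf b}}^*$ is not used.)

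Second, your superheaviness step does not work as stated. It is not true that heaviness plus the quasi-state property yields superheaviness; the implication runs the other way. The paper's mechanism is the Poincar\'e-duality inequality
\[
\rho^{\frak b}(\widetilde\phi;e)\ +\ \rho^{\frak b}(\widetilde\phi^{-1};e)\ \le\ 3\,\frak v_q(e),
\]
whose proof uses the field-factor hypothesis in an essential way (to invert an element in $\Lambda e$ and control its valuation). One then applies the heaviness lower bound above to $\widetilde\phi^{-1}$ (generated by $\widetilde H(t,x)=-H(1-t,x)$) and combines. Your alternative via the triangle inequality for $\widetilde\psi_H$ and $\widetilde\psi_H^{-1}$ does not by itself produce the needed two-sided control; the duality step is where the extra algebraic hypothesis on $e$ is actually consumed.
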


See Definition \ref{tdheavy} for the definitions of
$\mu_e^\frak b$-heavy and $\mu_e^\frak b$-superheavy sets.
Theorem \ref{supportmain} (Theorem \ref{thm:heavy}) is proved in Section \ref{sec:heavy}.
\begin{rem}
\begin{enumerate}
\item
Theorem \ref{supportmain} gives rise to a proof of a conjecture made in \cite[Remark 1.7]{fooo:toric1}.
\item
Theorem \ref{supportmain} is closely related to  \cite[Theorem 1.20]{EP:rigid}.
\end{enumerate}
\end{rem}

Theorem \ref{supportmain} also proves linear independence of some spectral
Calabi quasi-morphisms in the following sense.

\begin{defn}
Let
$$
\mu_j : \widetilde{\rm Ham}(M,\omega) \to \R
$$
be homogeneous Calabi-quasi-morphisms for $j=1,\dots,N$.
We say that they are {\it linearly independent} if there exists
a subgroup $\cong \Z^N$ of $\widetilde{\rm Ham}(M,\omega)$ such that
the restriction of
$(\mu_1,\dots,\mu_N):  \widetilde{\rm Ham}(M,\omega) \to \R^N$ to this subgroup
is an isomorphism to a lattice in $\R^N$.
A (possibly infinite) set of quasi-morphisms of $\widetilde{\rm Ham}(M,\omega)$ is
said to be {\it linearly independent} if any of its finite subset is
linearly independent in the above sense.
The case of ${\rm Ham}(M,\omega)$ can be defined in the same way.
\end{defn}

\begin{cor}\label{lieind}
Let $L_j$ be mutually disjoint relatively spin Lagrangian submanifolds.
($j = 1,\dots,N$.)
Let $\frak b_j \in  H^{{\rm even}}(M;\Lambda_+)$ and
$\text{\bf b}_j \in \mathcal M_{\rm weak,def}(L_j;\Lambda_+)$
with $\pi_{\rm bulk}(\text{\bf b}_j) = \frak b_j$.
Let $e_j$ be the unit of a field factor of $QH_{\frak b_j}^*(M,\Lambda)$ such that
$$
i^*_{{\rm qm},\text{\bf b}_j}(e_j) \ne 0 \in HF^*((L_j,\text{\bf b}_j),(L_j,\text{\bf b}_j);\Lambda),
\quad j=1,\ldots,N.
$$
Then $\mu_{e_j}^{\frak b_j}$ ($j=1,\ldots,N$) are linearly independent.
\end{cor}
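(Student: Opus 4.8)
The plan is to run the standard Entov--Polterovich mechanism for extracting linearly independent quasimorphisms from disjoint superheavy sets, with $\zeta_{e_j}^{\frak b_j}$ in place of $\zeta^{EP}$. Concretely, I would produce $N$ pairwise commuting elements $\widetilde\psi_1,\dots,\widetilde\psi_N\in\widetilde{\rm Ham}(M,\omega)$ for which the matrix $\bigl(\mu_{e_i}^{\frak b_i}(\widetilde\psi_j)\bigr)_{i,j}$ is a nonzero scalar multiple of the identity, and then check that this alone forces the subgroup $G=\langle\widetilde\psi_1,\dots,\widetilde\psi_N\rangle$ to be free abelian of rank $N$ and $(\mu_{e_1}^{\frak b_1},\dots,\mu_{e_N}^{\frak b_N})$ to restrict to an isomorphism of $G$ onto a lattice in $\R^N$. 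The inputs are: Theorem~\ref{supportmain}, which under the hypotheses of the corollary makes each $L_j$ $\zeta_{e_j}^{\frak b_j}$-superheavy and hence also $\zeta_{e_j}^{\frak b_j}$-heavy (recall that superheaviness implies heaviness for $\zeta_e^{\frak b}$); the identity \eqref{sympstateandquasimor} relating $\mu_e^{\frak b}$ and $\zeta_e^{\frak b}$ on classes generated by autonomous Hamiltonians; Theorem~\ref{existquasihomo}, which guarantees that each $\mu_{e_j}^{\frak b_j}$ is a genuine homogeneous quasimorphism; and the elementary fact that a homogeneous quasimorphism restricts to a homomorphism on any abelian subgroup.

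\emph{Construction and key computation.} Since the $L_j$ are mutually disjoint compact submanifolds, I would first choose pairwise disjoint open sets $U_j\supset L_j$. Because $L_j$ has measure zero, one can then choose an autonomous \emph{normalized} Hamiltonian $H_j$ with $\operatorname{supp}H_j\subset U_j$ and $H_j\equiv 1$ on a neighbourhood of $L_j$ (a bump equal to $1$ near $L_j$ and supported in $U_j$, corrected by a small compensating negative bump supported in $U_j$ away from $L_j$ so that $\int_M H_j\,\omega^n=0$, hence $\Cal(H_j)=0$). Set $\widetilde\psi_j=\widetilde\psi_{H_j}$. The disjointness of the supports makes the flows, and in fact the Hamiltonian paths, commute, so the $\widetilde\psi_j$ commute in $\widetilde{\rm Ham}(M,\omega)$ and $\prod_j\widetilde\psi_j^{\,m_j}=\widetilde\psi_{\sum_j m_j H_j}$ for every $(m_j)\in\Z^N$ (using $\phi_H^n=\phi_{nH}$ for autonomous $H$). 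Now fix $i$: for $j\ne i$, $H_j$ vanishes identically on $L_i$, so superheaviness of $L_i$ gives $\zeta_{e_i}^{\frak b_i}(H_j)\ge\inf_{L_i}H_j=0$ and heaviness gives $\zeta_{e_i}^{\frak b_i}(H_j)\le\sup_{L_i}H_j=0$, whence $\zeta_{e_i}^{\frak b_i}(H_j)=0$; for $j=i$, $H_i\equiv1$ on $L_i$, so the same two inequalities give $\zeta_{e_i}^{\frak b_i}(H_i)=1$. Since $\Cal(H_j)=0$ for all $j$, \eqref{sympstateandquasimor} yields
\[
\mu_{e_i}^{\frak b_i}(\widetilde\psi_j)=\Cal(H_j)-\vol_\omega(M)\,\zeta_{e_i}^{\frak b_i}(H_j)=-\vol_\omega(M)\,\delta_{ij}.
\]

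\emph{Conclusion.} Because the $\widetilde\psi_j$ commute and each $\mu_{e_i}^{\frak b_i}$ is a homogeneous quasimorphism, its restriction to the abelian group $G$ is a homomorphism, so $\mu_{e_i}^{\frak b_i}\bigl(\prod_j\widetilde\psi_j^{\,m_j}\bigr)=\sum_j m_j\,\mu_{e_i}^{\frak b_i}(\widetilde\psi_j)=-\vol_\omega(M)\,m_i$. Hence the homomorphism $\Z^N\to\widetilde{\rm Ham}(M,\omega)$ sending $(m_j)$ to $\prod_j\widetilde\psi_j^{\,m_j}$ is injective, since its composite with $(\mu_{e_1}^{\frak b_1},\dots,\mu_{e_N}^{\frak b_N})$ is the injection $(m_j)\mapsto-\vol_\omega(M)(m_j)$. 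Thus $G\cong\Z^N$ and $(\mu_{e_1}^{\frak b_1},\dots,\mu_{e_N}^{\frak b_N})$ carries $G$ isomorphically onto the lattice $(-\vol_\omega(M)\,\Z)^N\subset\R^N$, which is precisely the asserted linear independence.

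\emph{Main difficulty.} Essentially all the work is already contained in Theorem~\ref{supportmain}; granted the superheaviness (hence heaviness) of each $L_j$, the rest is bookkeeping in the spirit of Entov--Polterovich. The two places that deserve a moment's care are the use of \emph{heaviness} of $L_i$, not merely superheaviness, to force the off-diagonal values $\zeta_{e_i}^{\frak b_i}(H_j)$ with $i\ne j$ to vanish \emph{exactly}, and the observation---relying on each $L_j$ having measure zero---that the test Hamiltonians can be chosen normalized, so that the matrix $\bigl(\mu_{e_i}^{\frak b_i}(\widetilde\psi_j)\bigr)$ comes out diagonal rather than merely nonsingular; diagonality is what immediately gives the isomorphism onto a lattice demanded by the definition of linear independence.
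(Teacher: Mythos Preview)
Your proof is correct and follows essentially the same strategy as the paper's (Section~\ref{sec:independence}): build commuting autonomous test Hamiltonians supported near the disjoint $L_j$'s and use the superheaviness/heaviness of $L_i$ from Theorem~\ref{supportmain} to pin down $\mu_{e_i}^{\frak b_i}$ on the resulting $\Z^N$. The only cosmetic differences are that the paper places the compensating negative bumps in auxiliary disjoint open sets $U_{-j}$ rather than inside $U_j$, and that the paper applies $\mu$-heaviness directly to the combined Hamiltonian $H=\sum k_iH_i$ (using Definition~\ref{tdheavy}) instead of computing the individual matrix entries via $\zeta$ and then invoking that a homogeneous quasimorphism is additive on abelian subgroups.
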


This corollary follows from Theorem \ref{supportmain} mentioned above and \cite[Theorem 8.2]{EP:rigid}.
(See also Section \ref{sec:independence} of this paper.)

The study of toric manifolds \cite{fooo:bulk} and deformations of some toric orbifolds
\cite{fooo:S2S2} provides examples for which the hypothesis of Corollary \ref{lieind}
is satisfied. This study gives rise to the following theorem

\begin{thm}\label{uncount}
Let $M$ be one of the following three kinds of symplectic manifolds:
\begin{enumerate}
\item $S^2\times S^2$ with monotone toric symplectic structure,
\item
Cubic surface,
\item
$k$ points blow up of $\C P^2$ with a certain toric symplectic structure,
where $k\ge 2$.
\end{enumerate}
Then $(M,\omega)$ carries a collection $\{\mu_a\}_{a \in \frak A}$
of linearly independent Calabi quasi-morphisms
$$
\mu_a : \widetilde{\rm Ham}(M,\omega) \to \R
$$
for an uncountable indexing set $\frak A$.
\end{thm}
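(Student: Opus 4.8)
The plan is to reduce Theorem \ref{uncount} to Corollary \ref{lieind}, and then to feed that corollary the explicit Floer-theoretic input coming from the study of toric (and degenerate-toric) manifolds. Since a set of homogeneous quasimorphisms is linearly independent precisely when every finite subset is, it suffices to produce a family of data $\{(L_a, \text{\bf b}_a, \frak b_a, e_a)\}_{a \in \frak A}$, indexed by a set $\frak A$ of the cardinality of the continuum, with the following properties: the $L_a$ are relatively spin Lagrangian submanifolds of $M$, any two of which are \emph{disjoint}; $\frak b_a \in H^{even}(M;\Lambda_+)$ and $\text{\bf b}_a \in \mathcal M_{\rm weak,def}(L_a;\Lambda_+)$ with $\pi_{\rm bulk}(\text{\bf b}_a) = \frak b_a$; and $e_a$ is a unit of a field direct factor of $QH^*_{\frak b_a}(M;\Lambda)$ satisfying $i^*_{{\rm qm},\text{\bf b}_a}(e_a) \ne 0$ in $HF^*((L_a,\text{\bf b}_a),(L_a,\text{\bf b}_a);\Lambda)$. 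Indeed, given such a family, set $\mu_a := \mu^{\frak b_a}_{e_a}$; each $\mu_a$ is a homogeneous Calabi quasimorphism by Theorem \ref{existquasihomo}, and for any finite subset $\{a_1,\dots,a_N\} \subset \frak A$ the Lagrangians $L_{a_1},\dots,L_{a_N}$ are mutually disjoint, so Corollary \ref{lieind} shows that $\mu_{a_1},\dots,\mu_{a_N}$ are linearly independent. Hence $\{\mu_a\}_{a\in\frak A}$ is an uncountable linearly independent set, as desired.

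It then remains to exhibit such a family in each of the three cases, and here one invokes the computations of \cite{fooo:toric1,fooo:bulk,fooo:S2S2,fooo:toricmir}. In every case the $L_a$ will be Lagrangian torus fibers of a toric structure — on $M$ itself in case (3), and on a toric orbifold obtained as a degeneration limit of $M$ in cases (1) and (2) — taken over the points of a line segment in the interior of the moment polytope; this guarantees at once that distinct parameters give disjoint Lagrangians. For each fiber $L_a$ one selects, following \cite{fooo:bulk,fooo:S2S2}, a bulk class $\frak b_a$ and a weak bounding cochain $\text{\bf b}_a$ over it for which $L_a$ is \emph{balanced}, i.e.\ $HF^*((L_a,\text{\bf b}_a),(L_a,\text{\bf b}_a);\Lambda) \ne 0$; the field-factor structure of the $\frak b_a$-deformed quantum cohomology $QH^*_{\frak b_a}(M;\Lambda)$, hence the idempotent $e_a$ and its unit property, is read off from the big-quantum-cohomology calculation for toric manifolds in \cite{fooo:toricmir}; and the non-vanishing $i^*_{{\rm qm},\text{\bf b}_a}(e_a) \ne 0$ is the output of the open-closed computation in \cite{fooo:bulk,fooo:S2S2}, where for the chosen data it is essentially the assertion that $L_a$ is balanced. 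Concretely, case (1) is the continuum of non-displaceable Lagrangian tori in monotone $S^2\times S^2$ constructed in \cite{fooo:S2S2} by toric-orbifold degeneration; case (3) is the interval of balanced fibers of the $k$-fold blow-up of $\C P^2$ ($k\ge 2$) with the prescribed toric symplectic form treated in \cite{fooo:bulk}; and case (2), the cubic surface, is handled by degenerating it to a toric limit and transporting the corresponding interval of balanced fibers, again along the lines of \cite{fooo:bulk}.

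The main obstacle is the simultaneous verification, \emph{over a whole one-parameter family of bulk parameters at once}, of the two algebraic facts needed by Corollary \ref{lieind}: that $QH^*_{\frak b_a}(M;\Lambda)$ splits off a \emph{field} factor (so that $e_a$ is the unit of a field factor, and $\mu^{\frak b_a}_{e_a}$ is genuinely a quasimorphism rather than merely a spectral partial quasi-state), and that the open-closed map carries $e_a$ to a nonzero class in the $\text{\bf b}_a$-deformed Lagrangian Floer cohomology. This is exactly where the explicit potential-function computations and the analysis of the critical points of the bulk-deformed potential must be carried out; in cases (1) and (2) there is the extra difficulty that $M$ itself is not toric, so one must also control the relevant open- and closed-string invariants under the degeneration to the toric limit. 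With those inputs secured, the reduction of the first paragraph completes the proof.
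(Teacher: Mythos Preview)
Your proposal is correct and follows essentially the same approach as the paper: reduce to Corollary~\ref{lieind} via a one-parameter family of mutually disjoint Lagrangian tori, each equipped with a bulk class $\frak b_a$ and bounding cochain making the potential function have a \emph{nondegenerate} critical point (hence a field factor in $QH^*_{\frak b_a}$), and verify $i^*_{{\rm qm},\text{\bf b}_a}(e_a)\ne 0$ via the open-closed computation. The only inaccuracy is bibliographic: the cubic surface case~(2) is not in \cite{fooo:bulk} but is carried out in Section~\ref{sec:cubic} of this paper, using the toric-degeneration framework of Nishinou--Nohara--Ueda \cite{nnu1,nnu2} together with the Chan--Lau disc count \cite{chanlau} to compute the bulk-deformed potential explicitly; the relevant interval of tori lies over the tripod $\frak Z\subset\operatorname{Int}P$ and the field-factor/semisimplicity check is Lemma~\ref{semisimplecubic}.
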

\begin{rem}
\begin{enumerate}
\item In the case of $(M,\omega) = S^2\times S^2$, we have
quasi-morphisms
$
\mu_a : {\rm Ham}(M,\omega) \to \R
$
in place of
$
\mu_a : \widetilde{\rm Ham}(M,\omega) \to \R
$. See Corollary \ref{S2S2qm}.
\item
We can explicitly specify the symplectic structure used in Theorem \ref{uncount}
(3). See Section \ref{sec:exotic}.
\item We can also construct an example of the
symplectic manifold that admits an uncountable set of
linearly independent quasi-morphisms in higher dimension by
the similar way. (For example we can take direct product with
$S^2$ of the symplectic manifolds in  Theorem \ref{uncount}.)
\item
Theorem \ref{uncount} for $S^2 \times S^2$ was announced in \cite[Remark 7.1]{fooo:S2S2},
and for the case of $k$-points ($k\ge 2$) blow up of $\C P^2$ in \cite[Remark 1.2 (3)]{fooo:bulk},
respectively.
\item
Biran-Entov-Polterovich constructed an uncountable family of linearly independent
Calabi quasi-morphisms for the case of the group ${\rm Ham}(B^{2n}(1);\omega)$
of compactly supported Hamiltonian diffeomorphisms of balls with $n \ge 2$ in
\cite{biranentovpol}. Theorem \ref{uncount} provides the first example of closed
$M$ with such property.
\item Existence of infinitely many linearly independent homogeneous Calabi quasi-morphisms
on ${\rm Ham}(M,\omega)$ is still an open problem for the case of $M = \C P^2$ as well as $\C P^n$
for general $n \geq 2$.
\item
Theorem \ref{uncount} implies that the second bounded cohomology of
$\widetilde{\rm Ham}(M;\omega)$ is of infinite rank for  $(M,\omega)$ appearing in
Theorem  \ref{uncount}.
More precisely, the \emph{defect} $\operatorname{Def}_a$ defined by
$$
\operatorname{Def}_a(\phi,\psi): = \mu_a(\phi) + \mu_a(\psi) -
\mu_a(\phi\psi)
$$
defines a bounded two-cocycle. \index{defect} It follows from the perfectness of the group
${\rm Ham}(M;\omega)$ \cite[Corollary 4.3.2 (1)]{banyagabook} that the set of cohomology classes of
$\{\operatorname{Def}_a\}$ is linearly independent in the 2nd bounded cohomology group of
$\widetilde{\rm Ham}(M,\omega)$. In fact, if otherwise, we can find a nonzero
linear combination of elements from $\{\operatorname{Def}_a\}$ which is a coboundary.
Namely we have
$$
\sum c_i \operatorname{Def}_{a_i}(\phi,\psi) = \rho(\phi) + \rho(\psi) -
\rho(\phi\psi)
$$
where $\rho : \widetilde{\rm Ham}(M;\omega) \to \R$ is bounded and $c_i \in \R$.
Therefore
\begin{equation}\label{form1.25}
\sum c_i \mu_{a_i} - \rho : \widetilde{\rm Ham}(M;\omega) \to \R
\end{equation}
is a homomorphism. The linear independence of $\mu_{a_i}$
implies that $\sum c_i \mu_{a_i}$ is unbounded. Therefore (\ref{form1.25})
defines a nontrivial homomorphism, which contradicts the
perfectness of $\widetilde{\rm Ham}(M,\omega)$.
\item
At the final stage of completing this paper,
a paper \cite{Borman}
appears in the arXiv which discusses a result related to Theorem \ref{uncount} (3)
using \cite{abreu}.
\end{enumerate}
\end{rem}

Another corollary of Theorem \ref{supportmain} combined with Theorem 1.4 (iii),
 \cite[Theorem 1.8]{EP:rigid} is the following intersection result of
the exotic Lagrangian tori discovered in \cite{fooo:S2S2}.

\begin{thm}\label{dips2s2}
Let $T(u) \subset S^2(1) \times S^2(1)$ for $0 <u \leq 1/2$ be
the tori from \cite{fooo:S2S2}. Then we have
$$
\psi(T(u)) \cap (S^1_{\text{\rm eq}} \times S^1_{\text{\rm eq}}) \neq \emptyset
$$
for any symplectic diffeomorphism $\psi$ of $S^2(1) \times S^2(1)$.
\end{thm}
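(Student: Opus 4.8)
The plan is to derive Theorem~\ref{dips2s2} from Theorem~\ref{supportmain} together with the intersection properties of heavy and superheavy subsets established by Entov--Polterovich in \cite{EP:rigid}. First I would feed in the Lagrangian Floer computation with bulk deformation of \cite{fooo:S2S2}: for each $u\in(0,1/2]$ there are a bulk cycle $\frak b\in H^{\rm even}(S^2(1)\times S^2(1);\Lambda_+)$, a weak bounding cochain $\text{\bf b}\in\mathcal M_{\rm weak,def}(T(u);\Lambda_+)$ with $\pi_{\rm bulk}(\text{\bf b})=\frak b$, and a unit $e$ of a field factor of $QH^*_{\frak b}(S^2(1)\times S^2(1);\Lambda)$ for which $i_{{\rm qm},\text{\bf b}}^{\ast}(e)\neq 0$ in $HF^*((T(u),\text{\bf b}),(T(u),\text{\bf b});\Lambda)$. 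Theorem~\ref{supportmain} then makes $T(u)$ a $\zeta_e^{\frak b}$-superheavy, hence $\zeta_e^{\frak b}$-heavy, subset.

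The second, and most delicate, point is that the equatorial torus $S^1_{\text{\rm eq}}\times S^1_{\text{\rm eq}}$ is superheavy for the \emph{same} partial quasi-state $\zeta_e^{\frak b}$. Since $S^2(1)\times S^2(1)$ is toric with the square moment polytope and $S^1_{\text{\rm eq}}\times S^1_{\text{\rm eq}}$ is its central fiber, I would compute, using the bulk-deformed potential function as in \cite{fooo:bulk}, that the central fiber carries a bounding cochain lying over the same bulk class $\frak b$ whose deformed Floer cohomology receives the \emph{same} idempotent $e$ nontrivially, so that Theorem~\ref{supportmain} again applies and yields superheaviness. Equivalently, one must show that the disjoint Lagrangians $T(u)$ and $S^1_{\text{\rm eq}}\times S^1_{\text{\rm eq}}$ both sit over the same field factor of the bulk-deformed quantum cohomology $QH^*_{\frak b}$ --- in contrast with the situation of Corollary~\ref{lieind}, where one wants \emph{different} field factors.

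Granting these two facts, the conclusion follows quickly. A $\zeta_e^{\frak b}$-heavy set must meet the $\zeta_e^{\frak b}$-superheavy set $S^1_{\text{\rm eq}}\times S^1_{\text{\rm eq}}$ by \cite{EP:rigid} (Theorem~1.4(iii)), and heaviness is invariant under Hamiltonian isotopy; this already handles Hamiltonian $\psi$. To pass to an arbitrary symplectic diffeomorphism, recall that $H^1(S^2\times S^2;\R)=0$, so ${\rm Symp}_0={\rm Ham}$, and that by Gromov's theorem $\pi_0{\rm Symp}(S^2(1)\times S^2(1))$ is generated by the swap involution $\tau$, which fixes $S^1_{\text{\rm eq}}\times S^1_{\text{\rm eq}}$; thus any $\psi$ may be written $\psi=g\circ\tau^k$ with $g\in{\rm Ham}$ and $k\in\{0,1\}$. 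Since superheaviness transforms naturally under symplectomorphisms and $\tau$ fixes the equatorial torus, $S^1_{\text{\rm eq}}\times S^1_{\text{\rm eq}}$ is $\tau^k_*\zeta_e^{\frak b}$-superheavy while $\tau^k(T(u))$ is $\tau^k_*\zeta_e^{\frak b}$-heavy; applying Hamiltonian invariance of heaviness to $g$ and then \cite{EP:rigid} (Theorem~1.4(iii)) gives $\psi(T(u))\cap(S^1_{\text{\rm eq}}\times S^1_{\text{\rm eq}})\neq\emptyset$. (Alternatively one may invoke \cite{EP:rigid} Theorem~1.8 for the equatorial torus directly.) I expect the main obstacle to lie precisely in the field-factor matching of the second paragraph: it is not a formal consequence of non-displaceability of the two tori, but requires the explicit bookkeeping of critical points of the bulk-deformed potential of $S^2(1)\times S^2(1)$ carried out in \cite{fooo:S2S2,fooo:bulk}.
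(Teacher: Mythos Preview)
Your overall strategy is correct and matches the paper's: use Theorem~\ref{supportmain} to get superheaviness of $T(u)$ for some $\zeta_e^{\frak b}$, show that $S^1_{\rm eq}\times S^1_{\rm eq}$ is also superheavy for the same partial quasi-state, and then invoke the Entov--Polterovich intersection result (Theorem~\ref{sheavyintersectheavy} in the paper).

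However, you have misidentified where the difficulty lies. You flag as the ``most delicate'' point the need to match field factors --- i.e.\ to show that the specific idempotent $e$ detecting $T(u)$ also detects $S^1_{\rm eq}\times S^1_{\rm eq}$. The paper sidesteps this entirely with a much cleaner observation: since $S^1_{\rm eq}\times S^1_{\rm eq}$ is the central fiber of the \emph{monotone} toric structure on $S^2\times S^2$, the bulk-deformed potential $\frak{PO}_{\frak b(\rho)}$ (computed in Subsection~\ref{subsec:eigenvalue} as $T^{1/2}(e^a y_1 + y_1^{-1} + e^{-a} y_2 + y_2^{-1})$) has \emph{all four} of its nondegenerate critical points with valuation at the center. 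Hence via Theorems~\ref{JacisHQ} and~\ref{toricheavymain}, $S^1_{\rm eq}\times S^1_{\rm eq}$ is $\mu_e^{\frak b(\rho)}$-superheavy for \emph{every} idempotent $e$ of a field factor of $QH_{\frak b(\rho)}$. No matching is required: whatever idempotent $T(u)$ picks out, the central fiber already sees it. This is the key simplification you are missing.

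Your treatment of the passage from Hamiltonian to arbitrary symplectic diffeomorphisms (via $H^1(S^2\times S^2)=0$ and Gromov's computation of $\pi_0\,{\rm Symp}$) is more detailed than the paper's, which relies on Theorem~\ref{sheavyintersectheavy} (stated for ${\rm Symp}_0$) and writes the final line only for $\varphi\in{\rm Ham}$; your argument fills in that gap explicitly.
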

\begin{rem}
Theorem \ref{dips2s2} was announced in the introduction of \cite{fooo:S2S2}.
The proof is given in
Subsection \ref{subsec:POb-T(u)}.
\end{rem}

A brief outline of the content of the paper is now in order. The present paper
consist of 7 parts.
Part 1 is  a review. In Part 2, we first enhance the Hamiltonian Floer theory by
involving its deformations by ambient cohomology classes, which we call
bulk deformations.
In this paper, we
use de Rham (co)cycles instead of singular cycles as in \cite{fooo:bulk,fooo:toricmir}.
After this enhancement, we generalize construction of spectral invariants
in \cite{oh:alan} involving bulk deformations and define \emph{spectral invariants with bulk}.
Part 3 then generalizes construction \cite{EP:states,EP:morphism} of symplectic partial quasi-states and
Calabi quasi-morphisms by replacing the spectral invariants defined in \cite{oh:alan}
by these spectral invariants with bulk.
\par
In the course of carrying out these
enhancements, we also unify, clarify and enhance many known constructions in
Hamiltonian Floer theory in the framework of virtual fundamental chain technique.
(We use its version based on Kuranishi structures and
accompanied abstract perturbation theory originally established in
\cite{fukaya-ono} and further enhanced in \cite[Appendix A.2]{fooo:book2}.)
Variants of virtual fundamental chain technique, some of which we use in this paper,
are given in
\cite{fooo:bulk,fooo:toricmir,fukaya:cyc}.
Explanation of further detail of virtual fundamental chain technique is provided in \cite{fooo:techI,fooo:tech2}.
Systematic application of  virtual technique is needed particularly
because many constructions related to the study of spectral invariants (with bulk)
have to be done in the chain level, not just in homology. Examples of such enhancement
include construction of pants product \cite{schwarz1} and Piunikhin isomorphism
whose construction was outlined in \cite{Piu94,rt, pss}. We give a complete construction of
both of these in general compact symplectic manifolds without assuming any conditions
on $(M,\omega)$ such as semi-positivity or rationality.
\par
In Part 4, we connect the study of spectral invariants to the Lagrangian Floer
theory developed in \cite{fooo:book1,fooo:book2}. The main construction in the study
is based on open-closed Gromov-Witten theory developed in \cite[Section 3.8]{fooo:book1},
which induces a map from the quantum cohomology of the ambient
symplectic manifolds to the Hochschild cohomology of $A_\infty$ algebra (or more
generally that of Fukaya category of $(M,\omega)$). This map was defined in
\cite{fooo:book1} and further studied in \cite{fooo:bulk}, \cite[Section 2.6]{fooo:toricmir}
and etc..
This part borrows
much from \cite{fooo:book1,fooo:book2,fooo:toricmir} in its exposition.
The main new ingredient is a construction of a map from Floer homology of periodic Hamiltonians
to Floer cohomology of Lagrangian submanifold, through which the map from quantum cohomology
to Floer cohomology of Lagrangian submanifold factors
(Subsection \ref{subsec:PIirelation}).
We also study its properties especially those related to the
filtration. A similar construction was used by Albers \cite{albers} and also by Biran-Cornea
\cite{biran-cor} in the monotone context, which has been exploited by
Entov-Polterovich \cite{EP:rigid} in their study of symplectic intersections.
\par
In Part 5, we combine the results obtained in the previous parts together with
the results on the Lagrangian Floer theory of toric manifolds obtained in
the series of our previous papers \cite{fooo:toric1,fooo:bulk,fooo:S2S2,fooo:toricmir},
give various new constructions of Calabi quasi-morphisms and new
Lagrangian intersections results on toric manifolds and other
K\"ahler surfaces. These results are obtained by detecting the heaviness
of Lagrangian submanifolds in the sense of Entov-Polterovich \cite{EP:rigid}
in terms of spectral invariants, critical point theory of potential functions
and also the closed-open map from quantum cohomology to Hochschild cohomology
of $A_\infty$-algebra of Lagrangian submanifolds.
\par
Finally in Part 6, we prove various technical results necessary to
complete the constructions carried out in the previous parts. For
example, we establish the isomorphism property
of the Piunikhin map with bulk. We give the construction of Seidel homomorphism
with bulk extending the results of \cite{seidel:auto} and generalize the
McDuff-Tolman's representation of quantum cohomology ring of toric manifolds
in terms of Seidel elements \cite{mc-tol} to that of big quantum cohomology ring.

Some portion of the present paper is devoted to proving various results in
Hamiltonian Floer theory, spectral invariants,
Entov-Polterovich theory of symplectic quasi-states and others
for arbitrary compact symplectic manifolds.
These proofs are often similar to those in the literature except we apply virtual
fundamental chain technique (via the Kuranishi structure) systematically.
In fact, most of the literature assume semi-positivity since they do not use  virtual
fundamental chain technique. Even when virtual fundamental chain technique
is used, not enough details on the way how to apply the technique are provided.
Because of these reasons, for readers's convenience and for the completeness' sake,
we provide a fair amount of these details on the proofs in the literature
in a unified and coherent fashion in the most general context using the
framework of Kuranishi structure and virtual fundamental chain technique, without imposing any restrictions on the
ambient symplectic manifold $(M,\omega)$.
We include a brief summary of the theory of Kuranishi structure
and its perturbation as Part \ref{part7} for reader's convenience.
\par
On the other hand, the most novel part of this paper
is to combine the story of spectral invariants and Entov-Polterovich theory
with that of bulk deformations, Lagrangian Floer theory
and the calculation of them in the toric case to obtain new examples,
especially those appearing in Theorem \ref{uncount}.

\par\bigskip

\subsection{Notations and Conventions}
\par\smallskip
We follow the conventions
of \cite{oh:alan,oh:minimax,oh:hameo2} for the definition of
Hamiltonian vector fields and action functional and others appearing
in the Hamiltonian Floer theory and in the construction of spectral
invariants and Entov-Polterovich's Calabi quasi-morphisms. There are
differences from e.g., those used in
\cite{EP:morphism,EP:states,EP:rigid} one way or the other. (See
Subsection \ref{subsec:conv} for the explanation of the differences.)

\begin{enumerate}
\item The Hamiltonian vector field $X_H$ is defined by
$dH = \omega(X_H,\cdot)$.
\item The flow of $X_H$ is denoted by $\phi_H: t \mapsto \phi_H^t$ and
its time-one map by $\psi_H = \phi_H^1 \in \Ham(M,\omega)$.
We say that $H$ or its associated map $\psi_H$ is {\it nondegenerate}\index{nondegenerate!Hamiltonian}
if at $p \in \operatorname{Fix}\psi_H$, the differential
$d_p\psi_H : T_p M \to T_pM$ does not have eigenvalue $1$.
We denote by $\Ham _{{\rm nd}}(M,\omega)$\index{$\text{\rm Ham}_{{\rm nd}}(M,\omega)$}
the subset of $\Ham(M,\omega)$ consisting of
nondegenerate $\psi_H$'s.
\item We denote by $[\phi_H]$ the path homotopy class of $\phi_H:[0,1] \to \Ham(M,\omega)$
relative to the ends which we generally denote $\widetilde \psi_H =
[\phi_H]$. We denote by $z^p_H(t) = \phi_H^t(p)$ the solution
associated to a fixed point $p$ of $\psi_H = \phi_H^1$.
\item $\widetilde H(t,x) = -H(1-t,x)$ is the
\emph{time-reversal} \index{time-reversal!Hamiltonian}
Hamiltonian generating
$\phi_H^{1-t}\phi_H^{-1}$.
\item We denote by $H_1 * H_2$ the Hamiltonian generating
the \emph{concatenation} \index{concatenation!Hamiltonian} of the two Hamiltonian paths $\phi_{H_1}$
followed by $\phi_{H_2}^t$. More explicitly, it is defined by
$$
(H_1 * H_2)(t,x) = \begin{cases}2H_1(2t,x) \quad & 0 \leq t\leq 1/2 \\
2H_2(2t-1,x) \quad & 1/2 \leq t \leq 1.
\end{cases}
$$
\item The action functional $\CA_H: \widetilde \CL_0(M) \to \R$ is defined by
$$
\CA_H([\gamma,w]) = -\int w^*\omega - \int_0^1 H(t,\gamma(t))\,d t.
$$
\item
We denote by ${\text{Per}(H)}$ the set of
periodic orbits of periodic Hamiltonian system associated to the
Hamiltonian $H : [0,1] \times M \to \R$.
$\text{\rm Crit}(\mathcal A_H)$ is the set of
critical points of $\CA_H$.  Its element is denoted by
$[\gamma,w]$ where $\gamma \in {\text{Per}(H)}$
and $w$ is a disk which bounds $\gamma$.
We identify $[\gamma,w]$ and $[\gamma,w']$ if $w$ is
homotopic to $w'$.
We define $[\gamma,w] \sim [\gamma,w']$ if
$\int w^*\omega = \int (w')^*\omega$ and
denote by $\widehat{\text{Per}}(H)$
the set of the equivalence classes.
An element of $\widehat{\text{Per}}(H)$ is denoted by
$\llb \gamma,w \rrb$.
There exists a map
$$
\text{\rm Crit}(\CA_H)
\overset{\pi}\longrightarrow
\widehat{\text{\rm Per}}(H)
\longrightarrow
{\text{\rm Per}}(H)
$$
defined by $[\gamma,w] \mapsto \llb \gamma,w \rrb$,
$\llb \gamma,w \rrb \mapsto  \gamma$.
\item $\CJ_\omega$ = the set of $\omega$-compatible almost complex structures.
$j_\omega = \CL(\CJ_\omega)$ = the set of $S^1$-family $J$ of compatible
almost complex structures;
$J =\{J_t\}_{t \in S^1}$.
\item $\CP(j_\omega)$ = $\operatorname{Map}([0,1] \times S^1, \CJ_\omega)$; \quad
$(s,t)  \in [0,1] \times S^1 \mapsto J^s_t \in \CJ_\omega$.
\item $\CK = \{\chi: \R \to [0,1]\}$ where $\chi$ is a smooth function with
$\chi'(\tau) \geq 0$, $\chi(\tau) \equiv 0$ for $\tau \leq 0$ and $\chi(\tau) \equiv 1$ for
$\tau \geq 1$. We define $\tilde \chi$ by $\tilde \chi = 1 -\chi$.
\item For given $H \in C^\infty(S^1 \times M,\R)$,
we define the $\R$-family $H_\chi$ by
\begin{equation}\label{notationFchi}
H_\chi(\tau,t,x) = \chi(\tau)H(t,x).
\end{equation}
\item
For $J\in \CP(j_\omega)$ we take $J_s =\{J_{s,t} ; t\in S^1\}$
such that
$$
J_{1,t} = J_t, \quad J_{0,t} = J_0,\quad J_{s,0} = J_0,
$$
and put
$$
J_{\chi}(\tau,t) = J_{\chi(\tau),t}.
$$
\item
If $H \in C^\infty([0,1] \times S^1 \times M,\R)$
and
$J\in \CP(j_\omega)$, we put
\begin{equation}\label{notationHchi}
H^\chi(\tau,t,x) = H(\chi(\tau),t,x), \quad \, J^\chi(\tau,t,x) = J(\chi(\tau),t,x).
\end{equation}
\item Let $\Omega_*(M) \widehat \otimes \Lambda^{\downarrow}$ be the completion of
the algebraic tensor product $\Omega_*(M) \otimes \Lambda^{\downarrow}$ with respect to
the metric induced by the valuation $\frak v_q$ on $\Lambda^\downarrow$ defined in
item (18) below.
The {\it Piunikhin chain map}
$$
\CP^{\frak b}_{(H_\chi,J_\chi)}: \Omega_*(M) \widehat \otimes \Lambda^{\downarrow} \to
CF_*(M,H;\Lambda^\downarrow)
$$
is associated to $(H_\chi,J_\chi)$ in (11),(12).
(See Section \ref{sec:deform-bdy}).
$\Lambda^\downarrow$ is defined in item (17).
The map
$$
\CQ^{\frak b}_{(H_{\tilde \chi},J_{\tilde \chi})}:
CF_*(M,H;\Lambda^\downarrow) \to \Omega_*(M) \widehat \otimes \Lambda^{\downarrow}
$$
is associated to $\tilde\chi(\tau) = \chi(1-\tau)$.
(See Section \ref{sec:appendix1}.)
\item \index{shuffle}
We denote the set of shuffles of $\ell$ elements by
\begin{equation}\label{shuff0}
\text{\rm Shuff}(\ell) = \{ (\mathbb L_1,\mathbb L_2) \mid
\mathbb L_1 \cup \mathbb L_2 = \{1,\ldots,\ell\}, \,\,\mathbb L_1 \cap
\mathbb L_2 = \emptyset \}.
\end{equation}
For $(\mathbb L_1,\mathbb L_2) \in \text{\rm Shuff}(\ell)$
let
$\#\mathbb L_i$ be the order of this subset.
Then $\#\mathbb L_1 + \#\mathbb L_2 = \ell$.
\par
The set of {\it triple shuffles} is the set of
$(\mathbb L_1,\mathbb L_2,\mathbb L_3)$ such that
$\mathbb L_1 \cup \mathbb L_2 \cup \mathbb L_3 = \{1,\ldots,\ell\}$
and that $\mathbb L_1,\mathbb L_2,\mathbb L_3$ are mutually disjoint.
\item\label{item16}
 \index{universal Novikov ring}\index{universal Novikov field}
The universal Novikov ring $\Lambda_0$ and its filed $\Lambda$  of fractions are defined by
$$
\aligned
\Lambda_0
& = \left.\left\{ \sum_{i=1}^{\infty} a_i T^{\lambda_i} ~\right\vert~ a_i \in \C, \lambda_i \in \R_{\ge 0},
\lim_{i\to\infty} \lambda_i = +\infty \right\}, \\
\Lambda
& = \left.\left\{ \sum_{i=1}^{\infty} a_i T^{\lambda_i} ~\right\vert~ a_i \in \C, \lambda_i \in \R,
\lim_{i\to\infty} \lambda_i = +\infty \right\}
\cong \Lambda_0[T^{-1}].
\endaligned
$$
The maximal ideal of $\Lambda_0$ is denoted by
$$
\Lambda_+
= \left.\left\{ \sum_{i=1}^{\infty} a_i T^{\lambda_i} ~\right\vert~ a_i \in \C, \lambda_i \in \R_{> 0},
\lim_{i\to\infty} \lambda_i = +\infty \right\}.
$$
We define the valuation $\frak v_T$\index{valuation}\index{valuation!$\frak v_T$} on $\Lambda$ by\index{$\frak v_T$}
$$ \frak v_T \left(\sum_{i=1}^{\infty} a_i T^{\lambda_i}\right)
  = \inf \{ \lambda_i \mid a_i \ne 0 \}, \quad
  \frak v_T (0) = +\infty.
$$
\item\label{item17}
We also use the following (downward) Novikov ring $\Lambda^\downarrow_0$ and field $\Lambda^\downarrow$:
$$
\aligned \Lambda^\downarrow_0
& = \left.\left\{ \sum_{i=1}^{\infty} a_i q^{\lambda_i} ~\right\vert~ a_i \in \C, \lambda_i \in \R_{\le 0},
\lim_{i\to\infty} \lambda_i = -\infty \right\}, \\
\Lambda^\downarrow
& = \left.\left\{ \sum_{i=1}^{\infty} a_i q^{\lambda_i} ~\right\vert~ a_i \in \C, \lambda_i \in \R,
\lim_{i\to\infty} \lambda_i = -\infty \right\}
\cong \Lambda^\downarrow_0[q].
\endaligned
$$
The maximal ideal of $\Lambda^\downarrow_0$ is denoted by
$$
\Lambda^\downarrow_-
= \left.\left\{ \sum_{i=1}^{\infty} a_i q^{\lambda_i} \in \Lambda^\downarrow ~\right\vert~ \lambda_i < 0 \right\}.
$$
\item
We define the valuation $\frak v_q$\index{valuation!$\frak v_q$}  on $\Lambda^\downarrow$ by\index{$\frak v_q$}
$$
 \frak v_q \left(\sum_{i=1}^{\infty} a_i q^{\lambda_i}\right)
  = \sup \{ \lambda_i \mid a_i \ne 0 \}, \quad
  \frak v_q (0) = -\infty.
$$
Of course, $\Lambda^\downarrow_0$ and $\Lambda^\downarrow$ are isomorphic to $\Lambda_0$ and
$\Lambda$ respectively by the isomorphism $q \mapsto T^{-1}$.
Under the isomorphism we have $\frak v_q = -\frak v_T$.
The downward universal Novikov rings seem to be more commonly used in the study
of spectral invariant (e.g., \cite{oh:alan}), while the upward versions $\Lambda$ and $\Lambda_0$ are used
in Lagrangian Floer theory (e.g., \cite{fooo:book1,fooo:book2}).
\item
Let $V$ be a $\Z$ graded vector space over $\C$.
We put $
B_kV = \underbrace{V\otimes \cdots \otimes V}_{\text{$k$ times}}
$ and $BV=\bigoplus_{k=0}^{\infty}B_k V$ where
$B_0 V=\C$.
Then $BV$\index{$BV$} has a structure of coassociative
coalgebra with coproduct. We note that
we have two kinds of coproduct structures on
$BV$.
One is the {\it deconcatenation coproduct}\index{coproduct!deconcatenation} defined by
\be\label{deconcoproduct}
\Delta_{\rm decon}(x_1 \otimes \cdots \otimes x_k)
= \sum_{i=0}^k (x_1 \otimes \cdots \otimes x_i)
\otimes (x_{i+1} \otimes \cdots\otimes x_k).
\ee
The other is the {\it shuffle coproduct}\index{coproduct!shuffle} defined by
\be\label{shuffcoproduct}
\aligned
& \Delta_{\rm shuff}(x_1 \otimes \cdots \otimes x_k) \\
& =
\sum_{(\mathbb L_1,\mathbb L_2) \in \text{\rm Shuff}(k)}
(-1)^{\ast} (x_{\ell_1(1)} \otimes \cdots \otimes x_{\ell_1(k_1)})
\otimes (x_{\ell_2(1)} \otimes \cdots\otimes x_{\ell_2(k_2)}),
\endaligned
\ee
where $\mathbb L_j=\{\ell_j(1), \dots , \ell_j(k_j)\}$ with $\ell_j(1) < \dots < \ell_j(k_j)$ for $j=1,2$ and
\be\label{shufflesignbeforshift}
\ast =
\sum _{\ell_1(i)>\ell_2(j)} \deg x_{\ell_1(i)}
\deg x_{\ell_2(j)}.
\ee
It is easy to see that
\index{$\Delta_{\rm decon}$}\index{$\Delta_{\rm shuff}$}
\be\label{eqs:exponentials}
\aligned
\Delta_{\rm decon}\left(\sum_{k=0}^{\infty} x^{\otimes k}\right)
& =\left(\sum_{k=0}^{\infty} x^{\otimes k}\right)\otimes
\left(\sum_{k=0}^{\infty} x^{\otimes k}\right) \\
\Delta_{\rm shuff}\left(\sum_{k=0}^{\infty} \frac{x^{\otimes k}}{k!}\right)
& =\left(\sum_{k=0}^{\infty} \frac{x^{\otimes k}}{k!}\right)\otimes
\left(\sum_{k=0}^{\infty} \frac{x^{\otimes k}}{k!}\right)
\endaligned
\ee
if $\deg x$ is even.
We write $e^x=\sum_{k=0}^{\infty} x^{\otimes k}$ or
$e^x=\sum_{k=0}^{\infty} \frac{x^{\otimes k}}{k!}$
according as we use $\Delta_{\rm decon}$
or $\Delta_{\rm shuff}$ as coproduct structures.
\par
Actually we only use $\Delta_{\rm decon}$ for $BV$ in this article.
However on $EV$, which we describe in item (20), we use
$\Delta_{\rm shuff}$, which is induced by $\Delta_{\rm shuff}$ on $BV$.
\par\noindent
\item
The symmetric group ${\rm Perm}(k)$ of order $k!$ acts on $B_kV$ by
$$
\sigma \cdot (x_1 \otimes \cdots \otimes x_k)
= (-1)^* x_{\sigma(1)} \otimes \cdots \otimes x_{\sigma(k)},
$$
where
$
* = \sum_{i<j; \sigma(i)>\sigma(j)} \deg x_i \deg x_j.
$
We denote by $E_kV$ the quotient of $B_kV$ by the submodule generated by
$\sigma \cdot {\bf x}-{\bf x}$ for $\sigma \in {\rm Perm}(k)$, ${\bf x} \in B_kV$.
\index{$\text{\rm Perm}(k)$}
We denote by $[{\bf x}]$ an element
of $E_kV$ and
put $EV=\bigoplus_{k=0}^{\infty}E_kV$. \index{$EV$}
The shuffle coproduct structure on $BV$ induces a coproduct structure on $EV$, which we
also denote by $\Delta_{\rm shuff}$. It is given by
\be\label{shuffcoproductEC}
\aligned
& \Delta_{\rm shuff}([x_1 \otimes \cdots \otimes x_k]) \\
& =
\sum_{(\mathbb L_1,\mathbb L_2) \in \text{\rm Shuff}(k)} (-1)^{\ast}([x_{\ell_1(1)} \otimes \cdots \otimes x_{\ell_1(k_1)}])
\otimes ([x_{\ell_2(1)} \otimes \cdots\otimes x_{\ell_2(k_2)}]).
\endaligned
\ee
Here $\ast$ is the same as \eqref{shufflesignbeforshift}.
Then $EV$ becomes a coassociative and graded
cocommutative coalgebra.
\par
In \cite{fooo:book1,fooo:toric1, fooo:bulk}, we denote by $E_kV$
the {\it ${\rm Perm}(k)$-invariant subset} of $B_kV$
and use the deconcatenation coproduct restricted to the subset.
In \cite{fooo:toricmir}, we use $E_kV$
as the {\it quotient space} and the shuffle coproduct on it as we do in this paper. This paper follows the conventions used in  \cite{fooo:toricmir}.
\par\noindent
\item
Let $L$ be a relatively spin closed Lagrangian submanifold of a symplectic manifold $(M,\omega)$.
\begin{enumerate}
\item For the case $V=\Omega (L)[1]$, we always use
the deconcatenation coproduct
$\Delta_{\rm decon}$ on $B(\Omega(L)[1])$.
\item
For the case $V=\Omega(M)[2]$, we always use
the shuffle coproduct $\Delta_{\rm shuff}$ on
$E(\Omega(M)[2])$.
\end{enumerate}
Here $\Omega (L)[1]$ (resp. $\Omega(M)[2]$) is the degree shift by
$+1$ of $\Omega(L)$, i.e., $(\Omega(L)[1])^d=\Omega ^{d+1}(L)$ (resp. $+2$ of $\Omega(M)$, i.e., $(\Omega(M)[2])^d=\Omega ^{d+2}(M)$.)
Therefore, no confusion can occur even if
we use the same notation $\Delta$ for
the coproducts
$\Delta_{\rm decon}$ and $\Delta_{\rm shuff}$.

\item
Sometimes we regard the de Rham complex $(\Omega(M),d)$ as a chain complex and
consider its homology. In that case we put
$$
\Omega_k(M) = \Omega^{\dim M -k}(M), \quad
\partial = (-1)^{\deg+1}d.
$$
See  \cite[Remark 3.5.8]{fooo:book1} for this sign convention.
When a cohomology class $a \in H^{\dim M-k}(M)$ is represented by a
differential form $\alpha$ and we regard $\alpha$ as an element
of the chain complex $(\Omega_*(M),\partial)$, we denote the homology class
by $a^{\flat} \in H_{k}(M)$.
\index{$\flat$} We identify the singular homology $H_{k}(M)$
with $H_k(\Omega_*(M),\delta)$ by identifying $a^\flat \in H_k(\Omega_*(M),\delta)$\index{$\Omega_*(M)$}
with the singular homology class represented by a cycle $C$ satisfying
$\int_C \beta = \int_M a^\flat \wedge \beta$ for all closed form $\beta$ on $M$.
\par
More generally
If $a \in H^*(M) \otimes \Lambda$ is expressed as
$$
a = \sum a_i T^{\lambda_i} \, \quad a_i \in H^*(M), \quad \lambda_i \to \infty
$$
then we define
$$
a^\flat = \sum a_i^\flat q^{- \lambda_i}, \quad -\lambda_i \to -\infty.
$$
\item We always denote \index{$QH^*(M)$} by $QH^*(M) = QH^*(M,\Lambda)$ the standard quantum cohomology ring with
Novikov ring equipped with \emph{upward} valuations. As a module, it is isomorphic to
$QH^*(M) = H^*(M) \otimes \Lambda \cong H^*(M) \otimes \Lambda^\downarrow = QH_*(M)$.
\item
When we say that the boundary orientation of some moduli space is compatible with
the orientation of strata corresponding to bubbling off disks (with boundary marked points),
the compatibility means in the sense of
\cite[Proposition 8.3.3]{fooo:book2}.

\item
In the present paper, we will exclusively use the quantum \emph{cohomology},
the Hamiltonian Floer \emph{homology} and Lagrangian Floer \emph{cohomology}.
Depending on the usage of homology or cohomology, we use different valuations $\frak v_T$
or ${\frak v}_q = -\frak v_T$ on the Novikov ring $\Lambda$. When we emphasize we use
parameter $q$ and valuation ${\frak v}_q$ we write
$\Lambda^\downarrow$.
\end{enumerate}

\subsection{Difference  between Entov-Polterovich's convention and ours}
\label{subsec:conv}

In this subsection we explain the difference between our convention and
the one of Entov-Polterovich.
The discussion of this subsection is not used anywhere else in this paper.
Entov and Polterovich (see  \cite[equation (9)]{EP:rigid}) use the action functional
\be\label{mathcalA_HEP}
\widetilde{\CA}_H([\gamma, w])=- \int w^* \omega + \int_0^1 H(t,\gamma(t))\, dt
\ee
instead of \eqref{mathcal A_H}. We note the relationship
\be\label{eq:actionEP}
\widetilde{\CA}_H = \CA_{-H}.
\ee
They use it to define $c(a^\flat;H)$ for a quantum homology class
$a^\flat \in H_*(M;\Lambda^{\downarrow})$.
(They do it in the same way as $\lambda_H$ given in (\ref{eq:rhoHa}) except the
difference of convention mentioned above.)
Put
$$
\rho^{EP}(H;a) = - c(a^\flat;H).
$$
and
define the function
$\zeta^{EP}_e: C^0(M) \to \R$:
\begin{equation}\label{asymptoticrho}
\zeta^{EP}_e(H) :=  \lim_{n \to \infty} \frac{\rho^{EP}(n H;e)}{n}
\end{equation}
for autonomous $C^\infty$ function $H$ and
\begin{equation}\label{eq:mueEP}
\mu^{EP}_e(\widetilde \psi) = - \vol_\omega(M) \lim_{n \to \infty} \frac{\rho^{EP}(\widetilde \psi^n;e)}{n},
\end{equation}
where $\rho^{EP}(\widetilde \psi^n;e)$ is defined from
$\rho^{EP}(H;e)$ in the same way as in (\ref{rhofordiff}).

These functions exactly coincide with those defined by
\cite{EP:morphism,EP:states,EP:rigid}.

\par
Now in the case $\frak b =0$
(\ref{mathcalA_HEP}) implies the equalities
\begin{equation}\label{signdifference}
\zeta_e(H)=-\zeta_e^{EP}(-H), \qquad \mu_e(\widetilde{\psi}_H)= -\mu_e^{EP}(\widetilde{\psi}_{-H}),
\end{equation}
which relate (\ref{asymptoticrho0}), (\ref{eq:mue})
and  (\ref{asymptoticrho}), (\ref{eq:mueEP}).

For example, when $e = 1$ and $H$ is a $C^2$ small autonomous
Morse function, we have
$$
\rho(H;1) = - \min H,  \qquad \rho^{EP}(H;1) = \max H,
$$
which also shows that the values of Entov-Polterovich's spectral invariant are
different in general from ours (the latter is also the same as that of \cite{oh:alan}).
\par
Moreover we can find an example for which the identity $\zeta_e(H)\ne -\zeta_e^{EP}(H)$ holds, as follows.
Consider an autonomous Hamiltonian $H$ on $T^2=S^1 \times S^1$ of the form $\pi_1^*h$, where
$\pi_1: S^1 \times S^1 \to S^1$ is the first projection.
Note that there are no non-constant contractible periodic orbits of $X_{kH}$
for any real number $k$.
We find that
$\rho(kH;1)=  - \min kH$, $\rho^{EP}(kH;1) = \max kH$.
Therefore we have $\zeta_1(H) = -\min H$, while
$\zeta_1^{EP}(H) = \max H$.
\par
The identities (\ref{signdifference}) imply that, for the case $\zeta_e$ is
actually homogeneous, not just semi-homogeneous,
(namely if $\zeta_e(-H) = -\zeta_e(H)$), then the two (partial) quasi-states
$\zeta_e$ and $\zeta^{EP}_e$ coincide.

\begin{rem}\label{rem114444}
Note the direction of inequality we used in Definition \ref{heavy1}
is opposite to the one in Entov-Polterovich \cite{EP:rigid}.
However, because
of the minus sign in
(\ref{signdifference}), Definition \ref{heavy1}
is indeed consistent with that in \cite{EP:rigid}. We feel that the direction of the inequality
above is easier to memorize and suits better for the general principle in
symplectic topology in that getting a lower bound for the spectral invariants
is much more nontrivial in general than getting an upper bound which is just
a consequence of the general `positivity' principle in symplectic topology.
\end{rem}
We also remark that Entov-Polterovich use quantum {\em homology}
and a Novikov ring similar to $\Lambda^{\downarrow}$.
We use quantum {\em cohomology} and the Novikov ring $\Lambda$.

\part{Review of spectral invariants}

\section{Hamiltonian Floer-Novikov complex}
\label{subsec:pert-action}

We first recall from Notations and Conventions (18) in Section \ref{sec:introduction} that we define
a valuation $\frak v_q$ \index{valuation} on the (downward) universal Novikov field $\Lambda^\downarrow$ by
\begin{equation}\label{vqdef}
 \frak v_q\left(\sum_{i=1}^{\infty} a_i q^{\lambda_i}\right)
 = \sup \{ \lambda_i \mid a_i \ne 0 \}.
\end{equation}
It satisfies the following properties:
\begin{enumerate}
\item $ \frak v_q(xy) =  \frak v_q(x) +  \frak v_q(y)$,
\item $ \frak v_q(x+y) \le \max\{ \frak v_q(x),  \frak v_q(y)\}$,
\item  $\frak v_q(x) = - \infty$ if and only if $x =0$,
\item  $\frak v_q(q) = 1$,
\item  $\frak v_q(ax) = \frak v_q(x)$ if $a \in \C \setminus \{0\}$.
\end{enumerate}
This field will be the coefficient field of the Floer chain module whose
description is in order.

Let $\widetilde{\CL}_0(M)$ be the set of all the pairs $[\gamma,w]$
where $\gamma$ is a loop $\gamma: S^1 \to M$ and $w: D^2 \to M$ a
disc with $ w\vert_{\del D^2} = \gamma$. We denote by $\pi_2(\gamma)$ the set
of homotopy classes of such $w$'s. \index{$\pi_2(\gamma)$}
We identify $[\gamma,w]$
and $[\gamma',w']$ if $\gamma = \gamma'$ and  $w$ is homotopic to
$w'$ relative to the boundary $\gamma$.
\index{$\lb \gamma,w]$} When a one-periodic
Hamiltonian $H:(\R/\Z) \times M \to \R$ is given, we consider the
perturbed functional $\CA_H: \widetilde \CL_0(M) \to \R$ defined by
\be\label{eq:AAH} \CA_H([\gamma,w]) = -\int w^*\omega - \int
H(t,\gamma(t))dt.
\ee
For a Hamiltonian $H:[0,1] \times M \to \R$,
we denote its flow, a Hamiltonian isotopy, by $\phi_H: t\mapsto
\phi_H^t \in \text{\rm Ham}(M,\omega)$. This gives a one-to-one
correspondence between equivalence classes of $H$ modulo the
addition of a function on $[0,1]$ and Hamiltonian isotopies. We
denote the time-one map by $\psi_H: = \phi_H^1$. We put
$$
\Fix \psi_H = \{ p \in M \mid \psi_H(p) = p\}.
$$
Each element $p\in \Fix \psi_H$ induces a map
$
z_p=z_p^H : S^1 \to M
$
by the correspondence
\be\label{eq:Fix-Per}
z_p(t) = \phi_H^t(p),
\ee
where $t \in \R/\Z \cong S^1$. The loop $z_p$ satisfies Hamilton's equation
$$
\dot x = X_H(t,x), \quad z_p(0) = p.
$$
Here $X_H$ is the (time-dependent) Hamiltonian vector field given by
$X_H(t,x) = X_{H_t}(x)$ where $X_{H_t}$ is the Hamiltonian vector field
generated by the function $H_t:C^\infty(M) \to \R$.
We denote by $\text{\rm Per}(H)$ the set of one-periodic solutions of $\dot x = X_H(t,x)$.
Then \eqref{eq:Fix-Per} provides a one-to-one correspondence between $\operatorname{Fix}\psi_H$ and
$\operatorname{Per}(H)$. The next lemma is well-known.
\index{$\text{\rm Per}(H)$}

\begin{lem}
The set of critical points of $\CA_H$ is given by
$$
\text{\rm Crit}(\CA_H) = \{ [\gamma,w] \mid \gamma \in \text{\rm Per}(H), \, w\vert_{\del D^2}  = \gamma \}.
$$
\index{$\text{\rm Crit}(\CA_H)$}
\end{lem}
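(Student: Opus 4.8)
The plan is to compute the first variation of the action functional $\CA_H$ directly and show that its vanishing is equivalent to the pair $[\gamma,w]$ consisting of a one-periodic Hamiltonian orbit together with an arbitrary bounding disc. The key observation is that the term $-\int w^*\omega$ depends only on the homotopy class $[\gamma,w]$ among discs bounding $\gamma$ (two such discs differ by a sphere, which is quotiented out or detected only by the Novikov grading, but here we are on $\widetilde{\CL}_0(M)$ where $w$ is taken up to homotopy rel boundary), so a variation of $[\gamma,w]$ is genuinely a variation of the loop $\gamma$ together with a compatible variation of the filling. Concretely, I would take a smooth family $s \mapsto [\gamma_s, w_s]$ with $\gamma_0 = \gamma$, let $\xi(t) = \frac{\partial}{\partial s}\big|_{s=0}\gamma_s(t) \in \Gamma(\gamma^*TM)$ be the infinitesimal variation of the loop, and differentiate.

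\textbf{Key steps.} First I would recall the standard formula for the derivative of the symplectic area term: for a variation of discs with boundary variation $\xi$, one has $\frac{d}{ds}\big|_{s=0}\left(-\int w_s^*\omega\right) = -\int_0^1 \omega(\dot\gamma(t), \xi(t))\, dt$. This is Stokes' theorem applied to the cylinder swept out by $\gamma_s$; the interior contribution vanishes because $\omega$ is closed, leaving only the boundary integral. Second, the Hamiltonian term differentiates trivially: $\frac{d}{ds}\big|_{s=0}\left(-\int_0^1 H(t,\gamma_s(t))\, dt\right) = -\int_0^1 dH_t(\xi(t))\, dt = -\int_0^1 \omega(X_{H_t}(\gamma(t)), \xi(t))\, dt$, using the defining relation $dH = \omega(X_H, \cdot)$ from the conventions. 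Combining, $d\CA_H([\gamma,w])(\xi) = -\int_0^1 \omega\big(\dot\gamma(t) - X_{H_t}(\gamma(t)), \xi(t)\big)\, dt$. Third, since $\omega$ is nondegenerate and $\xi$ ranges over \emph{all} smooth sections of $\gamma^*TM$ (any loop variation can be realized by a variation of the pair, extending $\xi$ over the disc arbitrarily), this vanishes for all $\xi$ if and only if $\dot\gamma(t) = X_{H_t}(\gamma(t))$ for all $t$, i.e. $\gamma \in \mbox{\rm Per}(H)$. When this holds, $w$ is unconstrained beyond bounding $\gamma$, which gives exactly the claimed description of $\mbox{\rm Crit}(\CA_H)$.

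\textbf{Main obstacle.} The only genuine subtlety — and it is mild — is the variational setup: one must be careful that variations in $\widetilde{\CL}_0(M)$ are parametrized correctly, namely that a tangent vector to $\widetilde{\CL}_0(M)$ at $[\gamma,w]$ is faithfully recorded by the loop variation $\xi \in \Gamma(\gamma^*TM)$ (the disc part carries no extra infinitesimal freedom modulo the homotopy identification, and the $\omega$-area term's variation depends only on $\xi$ via the Stokes computation). Once this is set up, everything is a short computation. Since the excerpt itself labels this "well-known," I would keep the proof brief, citing the Stokes argument for the area term and the convention $dH = \omega(X_H,\cdot)$ for the Hamiltonian term, and conclude by nondegeneracy of $\omega$.
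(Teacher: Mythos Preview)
Your approach is correct and is precisely the standard first-variation computation; the paper itself omits the proof entirely, labeling the lemma ``well-known,'' so you have supplied exactly what the paper leaves to the reader. One small caution on signs: with the paper's conventions (and checking on, say, expanding circles in $\R^2$), the area term actually gives $\frac{d}{ds}\big|_{s=0}\big(-\int w_s^*\omega\big) = +\int_0^1 \omega(\dot\gamma,\xi)\,dt$, so the combined variation is $+\int_0^1 \omega(\dot\gamma - X_{H_t},\xi)\,dt$ rather than the negative you wrote; your final critical-point equation $\dot\gamma = X_{H_t}$ is nonetheless correct.
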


Hereafter we assume that our Hamiltonian $H$ is normalized in the sense
of (\ref{eq:normalized}) unless otherwise stated explicitly.

We say that $H$ or its associated map $\psi_H$ is {\it nondegenerate}\index{Hamiltonian!nondegenerate}
if at any $p \in \operatorname{Fix}\psi_H$, the differential
$d_p\psi_H : T_p M \to T_p M$ does not have eigenvalue $1$.
The cardinality of $\text{\rm Per}(H)$ is finite if $\psi_H$ is nondegenerate.
For each nondegenerate Hamiltonian, we are given the canonical real and integer grading
\be\label{eq:gradings}
\CA_H: \text{\rm Crit}(\CA_H) \to \R, \quad \mu_H: \text{\rm Crit}(\CA_H) \to \Z
\ee
\index{Conley-Zehnder index}\index{$\mu_H$}
induced by the action functional and the Conley-Zehnder index \cite{conley-zehn} respectively.
\par
We define an equivalence relation $\sim$ on $\text{\rm Crit}(\CA_H)$
such that $[\gamma,w] \sim [\gamma',w']$ if and only if $\gamma = \gamma'$ and
$$
\int w^*\omega = \int (w')^*\omega.
$$
We denote by $\widehat{\text{\rm Per}}(H)$ the set of such equivalence
classes.  \index{$\widehat{\text{\rm Per}}(H)$}
The equivalence class of  $[\gamma,w] \in \widehat{\text{\rm Per}}(H)$ is denoted by
$\llb \gamma,w \rrb$.
\index{$\llb \gamma,w \rrb$}
The map $\mathcal A_H$ in (\ref{eq:gradings}) induces a map
$\widehat{\text{\rm Per}}(H) \to \R$, which we denote by the same symbol.
\par
We define maps
\begin{equation}\label{forgetper}
\text{\rm Crit}(\CA_H)
\overset{\pi}\longrightarrow
\widehat{\text{\rm Per}}(H)
\longrightarrow
{\text{\rm Per}}(H)
\end{equation}
by $[\gamma,w] \mapsto \llb \gamma,w \rrb$
and $ \llb \gamma,w \rrb \mapsto \gamma$.  We denote by $\pi : \text{\rm Crit}(\CA_H)
\to \widehat{\text{\rm Per}}(H)$ the first map in (\ref{forgetper}).

Note that the obvious concatenation defines a map
\begin{equation}\label{pi2action}
\# : \pi_2(M) \times \pi_2(\gamma)\to \pi_2(\gamma).
\end{equation}
We consider the map
\begin{equation}\label{energymapmap}
\pi_2(M) \to \R
\end{equation}
defined by $[v] \mapsto \int v^*\omega$ for $v : S^2 \to M$.
Let $K_2(M)$ be the kernel of this map.
\index{$K_2(M)$}
\begin{lem}\label{determinedifference}
By the action (\ref{pi2action}) we can identify
$$
\widehat{\text{\rm Per}}(H) = \text{\rm Crit}(\CA_H)/K_2(M).
$$
\end{lem}
The proof is easy and is omitted.
See Lemma \ref{lem4545} for the relation between $\widehat{\text{\rm Per}}(H)$
and ${\text{\rm Per}}(H)$.

We consider the
$\Lambda^\downarrow$ vector space
$\widehat{CF}(M,H;\Lambda^\downarrow)$ with a basis given by the
elements of the  set $\widehat{\text{\rm Per}}(H)$.
\begin{defn}\label{Lambdahatonashi}\index{$CF(M,H;\Lambda^\downarrow)$}
We define an equivalence relation $\sim$ on $\widehat{CF}(M,H;\Lambda^\downarrow)$ so that
$\llb\gamma,w\rrb \sim q^c\llb\gamma',w'\rrb$ if and only if
\be\label{eq:equiv-rel}
\gamma= \gamma', \qquad \int_{D^2} w^{\prime *}\omega = \int _{D^2} w^*\omega + c.
\ee
\par
The quotient of $\widehat{CF}(M,H;\Lambda^\downarrow)$ modded out by this equivalence relation $\sim$
is called the Floer complex of the periodic Hamiltonian $H$ and denoted by
${CF}(M,H;\Lambda^\downarrow)$.
\index{$CF(M,H;\Lambda^\downarrow)$}
\end{defn}
Here we ignore the $\Z$-grading, which is given by the Conley-Zehnder indices,
in defining the equivalence relation unlike the case of standard literature
of Hamiltonian Floer homology such as \cite{hofer-sal}, where additional requirement
$
c_1(\overline w \# w') = 0
$
is imposed in the definition of Floer complex, denoted by $CF(H)$. For the purpose of
the current paper, the equivalence relation \eqref{eq:equiv-rel}
is enough and more favorable in that it makes the associated Novikov ring
becomes a field.
To differentiate the current definition from $CF(H)$, we denote the complex
used in the present paper by ${CF}(M,H;\Lambda^\downarrow)$.

\begin{lem}
As a $\Lambda^\downarrow$ vector space, ${CF}(M,H;\Lambda^\downarrow)$ is isomorphic to
the direct sum $(\Lambda^\downarrow)^{\# \text{\rm Per}(H)}$.
\par
Moreover the following holds: We fix a lifting $\llb\gamma,w_\gamma\rrb
\in \widehat{\text{\rm Per}}(H)$ for each
$\gamma \in \text{\rm Per}(H)$. Then any element $x$ of $CF(M,H;\Lambda^\downarrow)$
is uniquely written as a sum
\begin{equation}\label{xexpand}
x = \sum_{\gamma \in \text{\rm Per}(H)} x_\gamma \llb\gamma,w_\gamma\rrb, \quad
\text{with $x_\gamma \in \Lambda^\downarrow$}.
\end{equation}
\end{lem}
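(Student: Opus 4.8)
The plan is to show two things: first, that $CF(M,H;\Lambda^\downarrow)$ is a free $\Lambda^\downarrow$-module with basis indexed by $\operatorname{Per}(H)$, so that it is isomorphic to $(\Lambda^\downarrow)^{\#\operatorname{Per}(H)}$; and second, that once a lift $[\gamma, w_\gamma] \in \operatorname{Crit}(\CA_H)$ is fixed for each $\gamma \in \operatorname{Per}(H)$, these lifts form such a basis, which gives the unique expansion \eqref{xexpand}.

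First I would recall the structure at hand. By the preceding lemma, $\operatorname{Crit}(\CA_H)$ consists of all pairs $[\gamma, w]$ with $\gamma \in \operatorname{Per}(H)$ and $w$ a bounding disc, and $\widehat{CF}(M;H;\Lambda^\downarrow)$ is by definition the $\Lambda^\downarrow$-vector space on this (typically uncountable) basis. Now fix $\gamma \in \operatorname{Per}(H)$ and a reference lift $[\gamma, w_\gamma]$. For any other bounding disc $w$, the capping discs $w$ and $w_\gamma$ differ by a sphere class, and setting $c = \int_{D^2} w_\gamma^*\omega - \int_{D^2} w^*\omega$ one checks directly from Definition \ref{Lambdahatonashi} that $[\gamma,w] \sim q^{c}[\gamma, w_\gamma]$ (matching the sign convention $\int w'^*\omega = \int w^*\omega - c$ with $w' = w_\gamma$). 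Hence in the quotient ${CF}(M,H;\Lambda^\downarrow)$ every generator $[\gamma,w]$ is identified with a $\Lambda^\downarrow$-multiple of the chosen lift $[\gamma, w_\gamma]$, so the images of $\{[\gamma, w_\gamma]\}_{\gamma \in \operatorname{Per}(H)}$ span ${CF}(M,H;\Lambda^\downarrow)$ over $\Lambda^\downarrow$.

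Next I would prove linear independence of these lifts, which is the only real content. Suppose $\sum_{\gamma} x_\gamma [\gamma, w_\gamma] = 0$ in ${CF}(M,H;\Lambda^\downarrow)$ with $x_\gamma \in \Lambda^\downarrow$. Lifting back to $\widehat{CF}(M;H;\Lambda^\downarrow)$, the relation means that $\sum_\gamma x_\gamma [\gamma, w_\gamma]$ lies in the $\Lambda^\downarrow$-submodule generated by elements of the form $[\gamma', w'] - q^{c}[\gamma', w'']$ with the energy constraint $\int w''^*\omega = \int w'^*\omega - c$. The key observation is that this submodule is graded by the loop: the equivalence relation $\sim$ never mixes different $\gamma$'s, since \eqref{eq:equiv-rel} requires $\gamma = \gamma'$. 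Therefore we may argue one $\gamma$ at a time, and it suffices to show: if $x_\gamma [\gamma, w_\gamma]$ maps to $0$ in the quotient, then $x_\gamma = 0$. Writing $x_\gamma = \sum_i a_i q^{\lambda_i}$ with distinct $\lambda_i$ and $a_i \neq 0$, the element $x_\gamma[\gamma, w_\gamma] = \sum_i a_i \, q^{\lambda_i}[\gamma, w_\gamma]$, and $q^{\lambda_i}[\gamma, w_\gamma]$ is by \eqref{eq:equiv-rel} identified with $[\gamma, w_i]$ for the unique (up to the relation) capping $w_i$ with $\int w_i^*\omega = \int w_\gamma^*\omega - \lambda_i$; distinct $\lambda_i$ give inequivalent $[\gamma, w_i]$. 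So $x_\gamma[\gamma,w_\gamma]$ is a $\C$-linear combination of distinct basis elements of the quotient, hence vanishes only if all $a_i = 0$, i.e. $x_\gamma = 0$.

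The main obstacle is bookkeeping the equivalence relation cleanly enough to justify the ``one $\gamma$ at a time'' reduction and to be sure that the quotient ${CF}(M,H;\Lambda^\downarrow)$ genuinely has a $\Lambda^\downarrow$-basis given by equivalence classes of cappings of a fixed $\gamma$ modulo the $q$-action — essentially, that the action of $q^{\mathbb R}$ on cappings of a fixed $\gamma$ is free and transitive onto the set of $\omega$-areas that occur, which relies only on $\int(\cdot)^*\omega$ being the sole invariant in \eqref{eq:equiv-rel}. Once that is set up, the two assertions of the lemma — abstract freeness of rank $\#\operatorname{Per}(H)$, and that a choice of lifts realizes the basis with the uniqueness in \eqref{xexpand} — follow formally from the existence-and-uniqueness of the expansion just established. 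I would also remark, as the text does, that it is precisely the omission of the Conley–Zehnder/$c_1$ constraint that makes the relevant Novikov object a field $\Lambda^\downarrow$ rather than a more complicated ring, so ``vector space'' and ``basis'' are the correct notions here.
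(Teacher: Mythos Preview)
Your proof is correct in substance; the paper itself omits the proof entirely, saying only ``The proof is easy and omitted.'' Your spanning argument and the reduction to a single $\gamma$ are exactly right.

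One small imprecision in your linear--independence step: when you write that $q^{\lambda_i}[\gamma,w_\gamma]$ is identified with $[\gamma,w_i]$ for a capping $w_i$ with $\int w_i^*\omega = \int w_\gamma^*\omega - \lambda_i$, such a $w_i$ need not exist, since the $\omega$-areas of cappings of $\gamma$ form only a coset of the period group $G(M,\omega)\subset\R$, not all of $\R$. Relatedly, calling the $q^{\lambda_i}[\gamma,w_\gamma]$ ``distinct basis elements of the quotient'' is slightly circular. A cleaner way to finish: define a $\Lambda^\downarrow$-linear map $\phi_\gamma$ from the $\gamma$-summand of $\widehat{CF}$ to $\Lambda^\downarrow$ by $[\gamma,w]\mapsto q^{\int w^*\omega-\int w_\gamma^*\omega}$; this kills every relation $[\gamma,w]-q^c[\gamma,w']$ and sends $[\gamma,w_\gamma]$ to $1$, so it descends to an isomorphism of the $\gamma$-summand of $CF(M,H;\Lambda^\downarrow)$ with $\Lambda^\downarrow$. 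Summing over $\gamma$ gives both statements at once.
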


The proof is easy and omitted.
\begin{defn}\label{valuationv} \index{filtration}
\begin{enumerate}
\item
Let $x$ be as in $(\ref{xexpand})$. We define the
{\it level function}\index{level function}\index{$\lambda_H$}
$$
\lambda_H(x) = \max \{ \frak v_q(x_\gamma) + \CA_H(\llb\gamma,w_\gamma\rrb) \mid x_\gamma \neq 0 \}.
$$
\item
We define a filtration $F^{\lambda}CF(M,H;\Lambda^\downarrow)$ on $CF(M,H;\Lambda^\downarrow)$ by
$$
F^{\lambda}CF(M,H;\Lambda^\downarrow)
=
\{ x \in CF(M,H;\Lambda^\downarrow) \mid \lambda_H(x) \le \lambda\}.
$$
We have
$$
F^{\lambda_1}CF(M,H;\Lambda^\downarrow)  \subset F^{\lambda_2}CF(M,H;\Lambda^\downarrow)
$$
if $\lambda_1 < \lambda_2$. We also have
$$
\bigcap_{\lambda} F^{\lambda}CF(M,H;\Lambda^\downarrow)
=\{0\},
\quad
\bigcup_{\lambda} F^{\lambda}CF(M,H;\Lambda^\downarrow)
=CF(M,H).
$$
\item
We define a metric $d_q$ on $CF(M,H;\Lambda^\downarrow)$ by
\be\label{eq:metricdq}
d_q(x,x') = e^{\lambda_H(x-x') }.
\ee
\end{enumerate}
\end{defn}

Definition \ref{valuationv}, together with (\ref{vqdef}) and (\ref{eq:equiv-rel}),
implies that
$$
\lambda_H(a\frak x) = \frak v_q(a) + \lambda_H(\frak x)
$$
for $a \in \Lambda^{\downarrow}$,
$\frak x \in CF(M,H;\Lambda^\downarrow)$.
We also have
$$
q^{\lambda_1} F^{\lambda_2}CF(M,H;\Lambda^\downarrow)
\subseteq
F^{\lambda_1+\lambda_2}CF(M,H;\Lambda^\downarrow).
$$

\begin{lem}\label{lem37}
\begin{enumerate}
\item The definition of $\lambda_H$ given in Definition $\ref{valuationv}$
is independent of the choice of the lifting $\gamma \mapsto [\gamma,w_\gamma]$ with
$\gamma \in \Per(H)$.
\item
$CF(M,H;\Lambda^{\downarrow})$ is complete with respect to the metric $d_q$.
\item
The infinite sum
$$
\sum_{\llb \gamma,w \rrb \in \widehat{\Per}(H)} x_{\llb\gamma,w\rrb} \llb\gamma,w\rrb
$$
converges in $CF(M,H;\Lambda^{\downarrow})$ with respect to the metric $d_q$, provided
$$
\{ \llb\gamma,w\rrb \in \Per(H) \mid \frak v_q(x_{\llb\gamma,w\rrb}) +   \mathcal A_H(\llb\gamma,w\rrb) > -C,
\,\, x_{\llb\gamma,w\rrb} \ne 0\}.
$$
is finite for any $C \in \R$.
\end{enumerate}
\end{lem}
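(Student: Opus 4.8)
The plan is to dispatch the three assertions in order, reducing each to the algebraic properties (1)--(5) of $\frak v_q$ recorded after \eqref{vqdef} together with the completeness of the Novikov field $\Lambda^\downarrow$. Throughout I fix, for part (1) the comparison of two liftings, and for parts (2)--(3) a single lifting $\gamma \mapsto [\gamma,w_\gamma]$, and I use that $\Per(H)$ is finite.

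For (1), let $\gamma \mapsto [\gamma,w_\gamma]$ and $\gamma \mapsto [\gamma,w'_\gamma]$ be two liftings. By the defining relation \eqref{eq:equiv-rel} the classes $[\gamma,w_\gamma]$ and $[\gamma,w'_\gamma]$ differ in $CF(M,H;\Lambda^\downarrow)$ by a factor $q^{c_\gamma}$, where $c_\gamma$ is governed by the difference of the $\omega$-areas of the two cappings; by the definition \eqref{eq:AAH} of $\CA_H$ the \emph{same} area difference equals $\CA_H([\gamma,w'_\gamma]) - \CA_H([\gamma,w_\gamma])$. Rewriting $x = \sum_\gamma x_\gamma[\gamma,w_\gamma]$ in the primed lifting replaces the $\gamma$-coefficient $x_\gamma$ by $q^{c_\gamma}x_\gamma$, which by properties (1) and (4) shifts $\frak v_q$ of that coefficient by $c_\gamma$. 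The point is that \eqref{eq:equiv-rel} is calibrated precisely so that this shift cancels the change in $\CA_H([\gamma,w_\gamma])$; hence each summand $\frak v_q(x_\gamma) + \CA_H([\gamma,w_\gamma])$ in Definition \ref{valuationv}(1) is unchanged, and so is its maximum over the finite set $\Per(H)$.

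For (2), since $\Per(H)$ is finite Definition \ref{valuationv}(1) gives $\frak v_q(x-x') = \max_{\gamma\in\Per(H)}\bigl(\frak v_q(x_\gamma - x'_\gamma) + \CA_H([\gamma,w_\gamma])\bigr)$, a maximum over finitely many terms whose shifts are fixed constants. Hence a sequence in $CF(M,H;\Lambda^\downarrow)$ is $d_q$-Cauchy, resp. $d_q$-convergent, iff each of its coordinate sequences is Cauchy, resp. convergent, in $(\Lambda^\downarrow,\frak v_q)$; so it suffices to show $\Lambda^\downarrow$ is complete for $\frak v_q$. Given a Cauchy sequence $(y^{(n)})$ in $\Lambda^\downarrow$: for every $C\in\R$ there is $N(C)$ with $\frak v_q(y^{(n)}-y^{(m)}) < C$ for $n,m\ge N(C)$, which by \eqref{vqdef} means $y^{(n)}$ and $y^{(m)}$ carry the same coefficient at every exponent $\ge C$. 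Thus the coefficient at each exponent eventually stabilizes; let $y$ be the formal sum with those limiting coefficients. Only finitely many exponents $\ge C$ occur in $y$ (they already occur in $y^{(N(C))}\in\Lambda^\downarrow$), so the exponents of $y$ tend to $-\infty$ and $y\in\Lambda^\downarrow$; and $\frak v_q(y-y^{(n)}) < C$ for $n\ge N(C)$, so $y^{(n)}\to y$.

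For (3), set $z_{[\gamma,w]} := x_{[\gamma,w]}[\gamma,w]$. Writing $[\gamma,w] = q^{c(w)}[\gamma,w_\gamma]$ via \eqref{eq:equiv-rel} and using \eqref{eq:AAH} and Definition \ref{valuationv}(1) exactly as in (1), one gets $\frak v_q(z_{[\gamma,w]}) = \frak v_q(x_{[\gamma,w]}) + \CA_H([\gamma,w])$. Thus the hypothesis says precisely that for each $C\in\R$ only finitely many terms satisfy $\frak v_q(z_{[\gamma,w]}) > -C$; in particular only countably many terms are nonzero, and in any enumeration $z_1,z_2,\dots$ of them one has $\frak v_q(z_j)\to -\infty$. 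For the partial sums $S_n = \sum_{j\le n} z_j$ the ultrametric inequality (property (2)) gives $\frak v_q(S_n - S_m) \le \sup_{j>m}\frak v_q(z_j) \to -\infty$ as $m\to\infty$, so $(S_n)$ is $d_q$-Cauchy and converges by (2); as always in an ultrametric complete space, a series with terms tending to $0$ converges unconditionally, so the limit is independent of the enumeration. The computations are elementary and there is no serious obstacle; the only content-bearing step is the completeness of $\Lambda^\downarrow$ used in (2), and even that is the routine ``truncations above any threshold stabilize'' argument — the only real need for care being the bookkeeping of how $\frak v_q$ on coefficients, the area-shift $c$, and the action functional $\CA_H$ combine, the key being that \eqref{eq:equiv-rel} is designed exactly so that the regroupings in (1) and (3) preserve the $\frak v_q$ of individual terms.
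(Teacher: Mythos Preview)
Your argument is correct and is exactly the elementary bookkeeping one expects; the paper itself gives no proof, stating only that it ``is easy and omitted,'' so there is nothing substantive to compare against.

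One small caveat in part~(1): if you trace signs literally against \eqref{eq:equiv-rel} as printed (with $-c$), then writing $[\gamma,w_\gamma]=q^{c_\gamma}[\gamma,w'_\gamma]$ gives $c_\gamma=\int w_\gamma^*\omega-\int(w'_\gamma)^*\omega$, and one computes that \emph{both} $\frak v_q$ of the coefficient and $\CA_H$ of the lifting increase by $c_\gamma$ when passing to the primed lifting, so they add rather than cancel. The cancellation you invoke --- equivalently, the identity $\frak v_q([\gamma,w])=\CA_H([\gamma,w])$ which the paper uses throughout --- holds with $+c$ in place of $-c$ in \eqref{eq:equiv-rel}. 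This is almost certainly a typo in the paper rather than a gap in your reasoning; parts~(2) and~(3) go through as written.
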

The proof is easy and omitted.

\section{Floer boundary map}
\label{subsec:boundary}
\index{Floer boundary map|(}

In this section we define the boundary operator $\partial_{(H,J)}$
on $CF(M,H;\Lambda^{\downarrow})$ so that it becomes a filtered complex.
Let a nondegenerate one-periodic Hamiltonian function $H$ and a
one-periodic family $J = \{J_t\}_{t\in S^1}$ of
compatible almost complex structures be given. The study of the following perturbed Cauchy-Riemann
equation \index{perturbed Cauchy-Riemann equation}
\be\label{eq:HJCR0}
\dudtau + J\Big(\dudt - X_{H_t}(u)\Big) = 0
\ee
is the heart of the Hamiltonian Floer theory.
Here and hereafter $J$ in (\ref{eq:HJCR0}) means $J_t$.
\begin{rem}\label{rem31}
\begin{enumerate}
\item
In this paper, we {\it never} use perturbation of (a family of) compatible almost complex
structures $J$ to achieve
transversality of the moduli space of
the Floer equations (\ref{eq:HJCR0}) but use abstract perturbations (multisections and CF-perturbations
    of the Kuranishi structure
in the sense of \cite{fukaya-ono,fooo:book2,fooo:tech2})
to achieve the necessary transversality.
See Part \ref{part7} for  a summary of Kuranishi structure and CF-perturbation.
\item
In Parts 1-3 (and somewhere in the appendix) we use a $t$-parameterized
family of compatible almost complex structures $\{J_t\}_{t\in S^1}$.
However we emphasize that we do {\it not} need to use an
$S^1$-parameterized
family of compatible almost complex structures but can
use a {\it fixed} compatible almost complex structure $J$,
to prove all of our main results of this paper.
(We need to use a $t$-dependent $J$ for the construction in
Sections  \ref{sec:appendix4} and   \ref{subsec:seibulkspectral}.)
The $t$-dependent $J$ is included only for the sake of consistency with the
reference on spectral invariants.
(Traditionally $t$-dependent $J$ had been used
to achieve transversality.
As we mentioned in (1), we do {\it not} need this
extra freedom in this paper since
we use abstract perturbations.)
\end{enumerate}
\end{rem}

The following definition is useful for the later discussions.

\begin{defn}\label{eq:pi2zz'}\index{$\pi_2(\gamma,\gamma')$} Let $\gamma, \, \gamma' \in \text{Per}(H)$. We
denote by $\pi_2(\gamma,\gamma')$ the set of homotopy classes of smooth maps
$
u: [0,1] \times S^1  \to M
$
relative to the boundary
$u(0,t) = \gamma(t)$,
$u(1,t) = \gamma'(t).
$
We denote by $[u] \in \pi_2(\gamma,\gamma')$ its homotopy class.
\par
As we defined at the beginning of Section \ref{subsec:pert-action},
$\pi_2(\gamma)$ denotes the set of relative homotopy classes
of the maps
$
w: D^2 \to M; w|_{\del D^2} = \gamma.
$
For $C \in \pi_2(\gamma,\gamma')$, there is a natural map of
$
(\cdot) \# C: \pi_2(\gamma) \to \pi_2(\gamma')
$
induced by the gluing map
$
w \mapsto w \# C.
$
There is also the natural gluing map
$$
\pi_2(\gamma_0,\gamma_1) \times \pi_2(\gamma_1,\gamma_2) \to \pi_2(\gamma_0,\gamma_2)
, \qquad
(u_1, u_2) \mapsto u_1\# u_2.
$$
\end{defn}

For each $[\gamma,w], [\gamma',w'] \in \text{Crit}(\mathcal A_H)$, we will define a moduli space
$\CM(H,J;[\gamma,w], [\gamma',w']).$
We begin with the definition of the energy.

\begin{defn}{(\rm Energy)} For a given smooth map $u: \R \times S^1 \to M$,
we define the energy of $u$ by
$$
E_{(H,J)}(u) = \frac{1}{2} \int \Big(\Big|\dudtau\Big|^2_{J} + \Big|
\dudt - X_{H_t}(u)\Big|_{J}^2 \Big)\, dt\, d\tau.
$$
Here and hereafter we denote
$$
\left\vert \frac{\partial u}{\partial \tau} \right\vert_J^2
=
\omega\left(\frac{\partial u}{\partial \tau},J_t\left(\frac{\partial u}{\partial \tau}\right)\right).
$$
\end{defn}

\begin{defn}\label{stripmk0int0}\index{$\mathcal M(H,J;[\gamma,w], [\gamma',w'] )$}
We denote by $\widehat{\overset{\circ}{{\CM}}}(H,J;[\gamma,w], [\gamma',w'] )$ the set of all smooth maps
$u: \R \times S^1 \to M$ which satisfy the following conditions:
\begin{enumerate}
\item The map $u$ satisfies the equation:
\be\label{eq:HJCR}
\dudtau + J\Big(\dudt - X_{H_t}(u)\Big) = 0.
\ee
\item The energy
$E_{(H,J)}(u) $ is finite.
\item The map $u$ satisfies the following asymptotic boundary condition.
$$
\lim_{\tau\to-\infty}u(\tau, t) = \gamma(t), \qquad \lim_{\tau\to +\infty}u(\tau, t) = \gamma'(t).
$$
\item
The homotopy class of the concatenation $w\# u$ of $w$ and $u$  gives
the same element as
$[\gamma',w'] \in \pi_2(\gamma')$. Namely
 $[\gamma',w\# u] = [\gamma',w']$.
\end{enumerate}
It carries an $\R$-action induced by the domain translations in the $\tau$-direction.
We denote the quotient space of this $\R$-action by $\overset{\circ}{\CM}(H,J;[\gamma,w], [\gamma',w'])$.
\par
When $[\gamma,w] =  [\gamma',w']$, we set the space
$\overset{\circ}{{\CM}}(H,J;[\gamma,w], [\gamma,w])$ to be the empty set by definition.
\end{defn}

\begin{rem} The conditions (1) and (2) above make the convergence in (3) one
of an exponential order, which in turn enables the statement (4) to make sense.
\end{rem}
For any $\alpha \in \pi_2(M)$, we have a canonical
homeomorphism
\begin{equation}\label{homeo333}
\overset{\circ}{{\CM}}(H,J;[\gamma,w], [\gamma,w])
\cong
\overset{\circ}{{\CM}}(H,J;[\gamma,\alpha\# w], [\gamma,\alpha\# w]).
\end{equation}
This is because the condition $[\gamma',w\# u] = [\gamma',w']$
in item (4) is equivalent to
$[\gamma',\alpha\# w\# u] = [\gamma',\alpha\# w']$.

\begin{prop}\label{connkura}
\begin{enumerate}
\item The moduli space
$\overset{\circ}{\CM}(H,J;[\gamma,w], [\gamma',w'] )$ has a compactification
${{\CM}}(H,J;[\gamma,w], [\gamma',w'] )$ that is Hausdorff.
\item
The space ${{\CM}}(H,J;[\gamma,w], [\gamma',w'])$ has an orientable Kuranishi structure with corners.
(See Definitions $\ref{kstructuredefn}$ and $\ref{defn345}$ for the definition of  Kuranishi structure with corners.)
\item
The normalized boundary\footnote{See Definition \ref{defnnormalizedboundary}
for the definition of normalized boundary.} of ${{\CM}}(H,J;[\gamma,w], [\gamma',w'])$ in the
sense of Kuranishi structure is described by
\begin{equation}
\aligned
&\partial {\CM}(H,J;[\gamma,w], [\gamma',w']) \\
&= \bigcup {\CM}(H,J;[\gamma,w], [\gamma'',w'']) \times {\CM}(H,J;[\gamma'',w''], [\gamma',w']),
\endaligned
\end{equation}
where the union is taken over all $[\gamma'',w''] \in \mathrm{Crit}(\mathcal A_{H})$.
\item
Then we have the following formula for the (virtual) dimension:
\begin{equation}\label{dimension0}
\dim {\CM}(H,J;[\gamma,w], [\gamma',w']) = \mu_H([\gamma',w']) -  \mu_H([\gamma,w]) - 1.
\end{equation}
\item
We can define orientations of ${\CM}(H,J;[\gamma,w], [\gamma',w'])$ so that
$(3)$ above is compatible with this orientation.
\item
The homeomorphism $(\ref{homeo333})$ extends to the compactifications
and their Kuranishi structures are identified by the homeomorphism.
\end{enumerate}
\end{prop}

This is proved in \cite[Chapter 4]{fukaya-ono}. More precisely, the dimension formula (4) is proved
in \cite{floer:cmp,salamon-zehnder}, and (1) is \cite[Theorem 19.12]{fukaya-ono},
(2), (3) are \cite[Theorem 19.14]{fukaya-ono} and
(5) is \cite[Lemma 21.4]{fukaya-ono}.
Thorough detail of the proof of this theorem is given also in \cite[Part 5]{fooo:techI}.
In fact Proposition \ref{connkura} is \cite[Theorem 29.4]{fooo:techI}  which is proved
in  \cite[Section 31]{fooo:techI}.
See  \cite{floer:cmp,hofer-sal,Ono95} etc. for the
earlier works for the semi-positive cases.
\par
We remark that $CF(M,H;\Lambda^{\downarrow})$ as a $\Lambda^{\downarrow}$-vector space carries a natural $\Z_2$-grading
given by the homological degree (${\rm mod}\, 2$) of {$\llb\gamma,w\rrb$.
This is because
$$
\mu_H([\gamma,w']) - \mu_H([\gamma,w])= 2 c_1(M) \cap [\overline w \# w']
$$
holds for any pair $[w],\, [w'] \in \pi_2(\gamma)$. Here
$\overline w\# w'$ is a 2-sphere obtained by gluing $\overline w$ and $w'$ along $\gamma$
where $\overline w$ is the $w$ with opposite orientation.
(See \cite[page 557]{floer:cmp}  and \cite[Appendix]{oh:lecture}  for its proof.)
In particular, it implies that the parity of $\mu_H([\gamma,w])$
depends only on $\gamma \in \Per(H)$
but not on its lifting $[\gamma,w] \in \text{\rm Crit}(\mathcal A_H)$.
This gives rise to a natural $\mathbb Z_2$-grading on $CF(M,H;\Lambda^\downarrow)$,
that is, the homological degree mod 2 of $[\gamma,w_\gamma]$,
which is $n - \mu_H([\gamma,w_\gamma]) \mod 2$. So we put:
\begin{equation}\label{35formula}\index{$CF_1(M,H;\Lambda^\downarrow)$}\index{$CF_0(M,H;\Lambda^\downarrow)$}
\aligned
CF_1(M,H;\Lambda^\downarrow)
&= \bigoplus_{\gamma; \text{$n-\mu_H([\gamma,w_\gamma])$ is odd.}} \Lambda^\downarrow \llb\gamma,w_\gamma\rrb, \\
CF_0(M,H;\Lambda^\downarrow)
&= \bigoplus_{\gamma ; \text{$n-\mu_H([\gamma,w_\gamma])$ is even.}} \Lambda^\downarrow \llb\gamma,w_\gamma\rrb.
\endaligned\end{equation}
\begin{rem}
We remark that the degree of Floer chain defined above is shifted by $n$ from
the Conley-Zehnder index $\mu_H$.
By this shift, the degree will coincide with the degree of (quantum) cohomology group
of $M$ by the isomorphism in Theorem \ref{Piuiso}.
See also Remark \ref{rem52} (2) and \cite[Section 7.2]{oh:lecture} .
\end{rem}
We use Proposition \ref{connkura} to define
the Floer boundary map
$$
\del_{(H,J)} : CF_{k+1}(M,H;\Lambda^\downarrow) \to CF_k(M,H;\Lambda^\downarrow)
$$
whose explanation is in order.

First for each given pair $[\gamma,w], \, [\gamma',w']$, it defines
a homotopy class $B \in \pi_2(\gamma,\gamma')$ such that $[w \# B] = w'$ in
$\pi_2(\gamma')$. By an abuse of notation, we denote this class $B = [\overline w \# w']$.
It follows from the Gromov-Floer compactness and the energy identity given \eqref{energyincrease}
that for each given $[\gamma,w]$ the set of all $[\gamma',w'] \in \Crit \CA_H$ satisfying
$$
{\CM}(H,J;[\gamma,w],[\gamma',w']) \ne \emptyset, \qquad [\overline w \# w']\cap[\omega] < A
$$
is finite for any fixed $A\in \R$. Using this finiteness, we construct a system of
multisections $\frak s$ on ${\CM}(H,J;[\gamma,w], [\gamma',w'])$ inductively over the symplectic
area
$
(\overline w \# w') \cap \omega \in \R_{\ge 0}
$
so that they are transversal to $0$ and compatible with the identification made in Proposition \ref{connkura} (3).
Such an inductive construction is proven to be possible for the relative version of
the construction of Kuranishi structures in \cite[Theorem 6.12]{fukaya-ono}  (that is, \cite[Lemma A1.20]{fooo:book2}).
Detail of the construction of such compatible system of Kuranishi structures
is given in \cite[Parts 4 and 5]{fooo:techI}. Construction of the multisection then is detailed in
\cite[Section 13]{fooo:tech2}.
Now we define
the operator $\partial_{(H,J)}$ by the formula:
\begin{equation}\label{boundaryop}
\partial_{(H,J)} (\llb \gamma,w \rrb)
= \sum_{[\gamma',w'] \in \text{\rm Crit}(\mathcal A_H)
\atop \mu_H([\gamma',w'])
= \mu_H([\gamma,w])  + 1}
\frak n_{H,J}([\gamma,w ], [\gamma',w']) \,\, \llb\gamma',w'\rrb.
\end{equation}
Here we put
$$\frak n_{H,J}([\gamma,w], [\gamma',w'])
=
\# {\CM}(H,J;[\gamma,w],[\gamma',w'])^{\frak s}.
$$
The rational number
$\# {\CM}(H,J;[\gamma,w],[\gamma',w'])^{\frak s}$ is the
virtual fundamental $0$-chain of ${\CM}(H,J;[\gamma,w],[\gamma',w'])$
with respect to the multisection ${\frak s}$.
Namely it is the order of the zero set of ${\frak s}$ counted with
sign and multiplicity.
(See \cite[Definition 4.6]{fukaya-ono}  or \cite[Definition A1.28]{fooo:book2}  for its precise
definition. For the case of virtual dimension zero, which we use here, the detailed definition of
virtual fundamental chain is given also in \cite[Definitions 14.4 and 14.6]{fooo:tech2}.)
Hereafter we omit $\frak s$ and simply write
${\CM}(H,J;[\gamma,w], [\gamma',w'])$ for the perturbed moduli space.
\par
By the above mentioned finiteness, Lemma \ref{lem37} (3) implies that the right hand side of
(\ref{boundaryop}) converges in $d_q$-metric, i.e., with respect to the downward valuation.
\begin{rem}\label{remark38new}
To define the right hand side of (\ref{boundaryop})
we assume $\llb \gamma,w \rrb \in \widehat{\rm Per}(H)$ is
represented by $[\gamma,w] \in {\rm Crit}(\mathcal A_H)$,
i.e., $\pi([\gamma,w]) = \llb \gamma,w \rrb$ and the sum
is taken over $[\gamma',w'] \in \text{\rm Crit}(\mathcal A_H)$.
\footnote{The map $\pi$ is defined as in (\ref{forgetper}).}
The right hand side of (\ref{boundaryop}) is independent of
the choice of such $[\gamma,w]$
as far as it represents the same element of $\widehat{\rm Per}(H)$. In fact we have
$$
\frak n_{H,J}([\gamma,w], [\gamma',w'])
=
\frak n_{H,J}([\gamma,\alpha\#w], [\gamma',\alpha\#w'])
$$
by Proposition \ref{connkura} (5). (We take a multisection $\frak s$
which is compatible with the isomorphism in  Proposition \ref{connkura} (5).)
The independence of the right hand side of (\ref{boundaryop})
then follows from Lemma \ref{determinedifference}.
\par
Lemma \ref{determinedifference} also implies that this definition is
consistent with the equivalence relation (\ref{eq:equiv-rel}).
(Hereafter in a similar situation we do not mention this kind of remarks.)
\end{rem}

We can prove
\begin{equation}\label{eq:3737}
\partial_{(H,J)} \circ \partial_{(H,J)} = 0
\end{equation}
by applying Proposition \ref{connkura} (3) in the case
when $\mu_H([\gamma',w']) -  \mu_H([\gamma,w]) = 2$.
(See \cite[Lemma 20.2]{fukaya-ono}.)
The boundary property (\ref{eq:3737}) follows also from \cite[Theorem 14.10]{fooo:tech2}.
\begin{rem}
Instead of multisections we can use CF-perturbations
(See Definition \ref{defCFGCSFKUA}.) to define the boundary operator $\partial_{(H,J)}$.
If we use CF-perturbations instead of multisections,
(\ref{eq:3737}) follows from Stokes' formula, Theorem \ref{them48},
and composition formula, Theorem \ref{compform}.
\par
We use only CF-perturbations from now on in this paper, except
in Part 4, where we use multisection sometimes.
(See Remark \ref{remark206666} and other remarks quoted therein.)
Actually we may regard a multisection itself as a CF-perturbation.
(See Remark \ref{multiseccont}.)
\end{rem}

\begin{lem}\label{filtered} For any $\lambda \in \R$,
$$
\partial_{(H,J)} (F^{\lambda}CF(M,H;\Lambda^\downarrow))
\subset F^{\lambda}CF(M,H;\Lambda^\downarrow).
$$
\end{lem}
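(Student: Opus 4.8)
The plan is to show that the Floer differential decreases (or preserves) the $\frak v_q$-valuation, which is exactly the assertion that $\partial_{(H,J)}$ respects the filtration. By linearity and the definition of $\frak v_q$ on a general chain, it suffices to check the claim on a single generator $[\gamma,w] \in \Crit(\CA_H)$; that is, I would show $\frak v_q(\partial_{(H,J)}[\gamma,w]) \le \CA_H([\gamma,w])$. Unwinding the definition \eqref{boundaryop}, $\partial_{(H,J)}[\gamma,w]$ is a sum over $[\gamma',w']$ with $\mu_H([\gamma,w]) - \mu_H([\gamma',w']) = 1$, with coefficients $\# \CM(H,J;[\gamma,w],[\gamma',w'])^{\frak s} \in \Q$. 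Since these coefficients are scalars in $\C$, property (5) of $\frak v_q$ in Section \ref{subsec:pert-action} gives that each term contributes $\frak v_q$ equal to $\CA_H([\gamma',w'])$ whenever the coefficient is nonzero, so by property (2) it remains to show that a nonzero coefficient forces $\CA_H([\gamma',w']) \le \CA_H([\gamma,w])$.

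The key geometric input is the standard energy identity for solutions of the perturbed Cauchy-Riemann equation \eqref{eq:HJCR}: if $u \in \widehat{\overset{\circ}{\CM}}(H,J;[\gamma,w],[\gamma',w'])$, then
$$
E_{(H,J)}(u) = \CA_H([\gamma,w]) - \CA_H([\gamma',w']).
$$
This follows because \eqref{eq:HJCR0} is the negative $L^2$-gradient flow of $\CA_H$, so $\CA_H$ is nonincreasing along $u$, and the total drop equals $\int |\partial u/\partial\tau|_J^2\,dt\,d\tau = E_{(H,J)}(u) \ge 0$; here one uses condition (4) in Definition \ref{stripmk0int0} ($w \# u \simeq w'$) to see that the asymptotic values of $\CA_H$ along $u$ are precisely $\CA_H([\gamma,w])$ and $\CA_H([\gamma',w'])$, not merely well-defined up to period. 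Consequently, if $\CM(H,J;[\gamma,w],[\gamma',w'])$ is nonempty (which is necessary for the coefficient to be nonzero, regardless of the multisection perturbation, since the perturbed zero locus is close to the unperturbed moduli space and the energy bound is preserved under small perturbation), then $E_{(H,J)}(u) \ge 0$ forces $\CA_H([\gamma',w']) \le \CA_H([\gamma,w])$, as desired. One should note the mild subtlety that after perturbing by the multisection $\frak s$, the zero set consists of nearby virtual solutions rather than honest ones; but the energy is a continuous function and the Gromov-Floer compactness argument already invoked in the text to establish convergence of \eqref{boundaryop} in the $d_q$-metric controls exactly this, so the inequality $\CA_H([\gamma',w']) \le \CA_H([\gamma,w]) + o(1)$ holds, and by taking the perturbation small one gets the non-strict inequality with no loss (or, more cleanly, one builds the multisections so that each $\CM(H,J;[\gamma,w],[\gamma',w'])^{\frak s}$ is supported within the region where the unperturbed energy is nonnegative).

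The main obstacle is purely bookkeeping rather than conceptual: one must make sure that the energy identity is stated with the correct signs matching the conventions of this paper (the action functional $\CA_H$ with the minus sign as in \eqref{eq:AAH}, and the downward Novikov field $\Lambda^\downarrow$ with $\frak v_q = \sup$), so that "energy nonnegative" translates into "$\frak v_q$ nonincreasing" rather than the reverse. Once the sign conventions are pinned down — concretely, that a Floer trajectory flows from higher action to lower action and that higher action corresponds to larger $\frak v_q$ by item (1) of Definition \ref{valuationv} — the argument is immediate: $\frak v_q(\partial_{(H,J)}[\gamma,w]) \le \max_{[\gamma',w']}\CA_H([\gamma',w']) \le \CA_H([\gamma,w])$, where the maximum is over those $[\gamma',w']$ with nonzero coefficient. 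Combining with linearity and the definition of $F^\lambda CF(M,H;\Lambda^\downarrow)$ completes the proof.
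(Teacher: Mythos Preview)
Your proposal is correct and follows essentially the same approach as the paper: both reduce to the energy identity $E_{(H,J)}(u) = \CA_H([\gamma,w]) - \CA_H([\gamma',w']) \ge 0$ for a Floer trajectory $u$, which the paper derives by the explicit computation \eqref{energyincrease} and which you attribute to the gradient-flow interpretation. Your additional discussion of the multisection perturbation is a reasonable point to raise, though in the Kuranishi framework nonemptiness of $\CM(H,J;[\gamma,w],[\gamma',w'])^{\frak s}$ already presupposes nonemptiness of the unperturbed compactified moduli space, so the action inequality holds without any $o(1)$ fudge.
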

\begin{proof}
We take $[\gamma,w] \in \text{Crit}(\mathcal A_H)$ such that
$\pi([\gamma,w]) = \llb \gamma,w \rrb$
and suppose $\llb \gamma,w \rrb \in \widehat{\Per}(H)$ satisfies
$\CA_H(\llb \gamma,w \rrb) \leq \lambda$.

If $u \in \widetilde{\mathcal M}(H,J;[\gamma,w],[\gamma',w'])$,
then
\bea\label{energyincrease}
\int u^*\omega
&= &\int_{\tau\in \R}\int_{t \in S^1} \omega
\left(
\frac{\partial u}{\partial \tau}, \frac{\partial u}{\partial t}
\right)
dt\, d\tau \nonumber \\
&= & \int_{\tau\in \R}\int_{t \in S^1} \omega
\left(
\frac{\partial u}{\partial \tau}, J\frac{\partial u}{\partial \tau} + X_{H_t}(u)
\right)
dt\, d\tau \nonumber\\
&= &\int_{\tau\in \R}\int_{t \in S^1} \left \vert\frac{\partial
u}{\partial \tau}\right\vert_J^2 \, dt\,d\tau -  \int_{\tau\in
\R}\int_{t \in S^1} (dH_t(u(\tau,t)))\left(\frac{\del u}{\del
\tau}\right) \, dt\,d\tau
\nonumber\\
&=& \int_{\tau\in \R}\int_{t \in S^1} \left \vert\frac{\partial
u}{\partial \tau}\right\vert_J^2\, dt\,d \tau -  \int_{\tau\in
\R}\left(\int_{t\in S^1} \frac{\partial}{\partial
\tau}(H_t(u(\tau,t))\, dt \right)d\tau
\nonumber \\
&= & E_{(H,J)}(u) -  \int_{t \in S^1} H_t(\gamma'(t)) dt+ \int_{t \in
S^1} H_t(\gamma(t)) dt. \eea
Therefore
$$
\int u^*\omega + \int_{t \in S^1} H_t(\gamma'(t))\, dt - \int_{t \in S^1}
H_t(\gamma(t))\, dt =E_{(H,J)}(u) \ge 0.
$$
Combined with $w\# u \sim w'$, this implies $\mathcal A_H([\gamma,w]) - \mathcal A_H([\gamma',w']) = E_{(H,J)}(u)$
and so
$$
\mathcal A_H([\gamma',w']) = \mathcal A_H([\gamma,w]) - E_{(H,J)}(u) \leq \mathcal A_H([\gamma,w])
$$
and hence Lemma \ref{filtered} holds.
\end{proof}
\index{Floer boundary map|)}
\begin{defn}\index{Floer homology}\index{$HF_*(H,J;\Lambda^\downarrow)$}
The \index{Floer homology! of periodic Hamiltonian system}{\em Floer homology} with $\Lambda^\downarrow$ coefficients is defined by
$$
HF_*(H,J;\Lambda^\downarrow): =
\frac{\operatorname{Ker}\del_{(H,J)}}{\operatorname{Im}
\del_{(H,J)}}.
$$
\index{$HF_*(H,J;\Lambda^\downarrow)$}
\end{defn}
\begin{thm}\label{Piuiso}
We may choose the orientation in Proposition $\ref{connkura}$ $(5)$ so that
$HF_*(H,J;\Lambda^\downarrow)$ is isomorphic to
the singular (co)homology $H(M;\Lambda^\downarrow)$ with $\Lambda^{\downarrow}$
coefficients.
\end{thm}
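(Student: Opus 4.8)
The plan is to construct the isomorphism via the Piunikhin--Salamon--Schwarz (PSS) chain maps, working entirely in the filtered chain level with Kuranishi structures and abstract multisections, and then to identify the homology of $CF(M,H;\Lambda^\downarrow)$ with the quantum cohomology $QH^*(M;\Lambda^\downarrow)$, which as a $\Lambda^\downarrow$-module is just $H(M;\Lambda^\downarrow)$. Concretely, I would use the maps $\CP_{(H_\chi,J_\chi)}\colon \Omega_*(M)\widehat\otimes\Lambda^\downarrow \to CF_*(M,H;\Lambda^\downarrow)$ and $\CQ_{(H_{\widetilde\chi},J_{\widetilde\chi})}\colon CF_*(M,H;\Lambda^\downarrow)\to \Omega_*(M)\widehat\otimes\Lambda^\downarrow$ already referenced in Notations and Conventions (15) (taking bulk parameter $\frak b=0$ here). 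The first step is to set up the moduli spaces of ``spiked discs'' — solutions of the Cauchy--Riemann equation \eqref{eq:HJCR0} on the cylinder that converge at one end to a periodic orbit $\gamma$ and at the other end degenerate (via the cutoff $\chi$) to a genuine $J_0$-holomorphic disc/plane carrying an incidence condition with a cycle in $M$ — establish their compactness, orientable Kuranishi structures with corners, and the description of their boundary; this is the analogue of Proposition \ref{connkura} for the PSS moduli spaces, and is again a theorem in the framework of \cite{fukaya-ono} and the appendices of \cite{fooo:book}.

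The second step is to verify that $\CP$ and $\CQ$ are chain maps of the expected degree. For $\CP$ this uses the boundary description of the spiked-disc moduli space: its codimension-one strata are of two types, a Floer trajectory broken off at the cylindrical end (contributing $\del_{(H,J)}\circ\CP$) and a bubbling/degeneration at the disc end (contributing $\CP\circ d$, after identifying the relevant moduli space with one defining the quantum cup-by-nothing, i.e.\ the ordinary de Rham differential twisted by Gromov--Witten corrections). For $\CQ$ one runs the same argument with $\widetilde\chi=1-\chi$. The third step is the homotopy argument: gluing $\CP$ and $\CQ$ along a neck of varying length produces a one-parameter family of moduli spaces whose $0$-end is $\CQ\circ\CP$ and whose $\infty$-end is a broken configuration that, after a standard ``no escape of energy / exponential decay'' estimate, is identified with the identity on $H(M;\Lambda^\downarrow)$ up to a chain homotopy; symmetrically $\CP\circ\CQ$ is chain homotopic to the identity on $CF$. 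One must also check, using the energy identity \eqref{energyincrease} and the nonnegativity of the disc areas entering the Novikov exponents, that $\CP$ and $\CQ$ respect the filtrations up to controlled shifts — this is what makes the maps well-defined over $\Lambda^\downarrow$ and convergent in the $d_q$-metric in the sense of Lemma \ref{lem37}(3). Finally, the orientations in Proposition \ref{connkura}(5) are pinned down so that these chain maps are orientation-compatible, which is exactly the freedom asserted in the statement.

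The main obstacle, and the bulk of the work, is \emph{transversality and the construction of compatible multisections across all the glued moduli spaces simultaneously}: the spiked-disc spaces, the Floer-trajectory spaces, the sphere-bubble spaces appearing in their boundaries, and the one-parameter gluing families must all carry multisections that agree on overlaps and on boundary strata in the sense of Proposition \ref{connkura}(3), so that $\del^2=0$, $\del\circ\CP=\CP\circ d$, and the homotopy relations hold on the nose at the chain level rather than merely in homology. Because we impose no semi-positivity or rationality on $(M,\omega)$, this forces genuine use of the Kuranishi structure and abstract perturbation machinery of \cite{fukaya-ono} and Appendix A.1 of \cite{fooo:book}, with the inductive construction of multisections organized by symplectic area (as in the construction of $\del_{(H,J)}$ above). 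A secondary subtlety is the Novikov-ring bookkeeping: one is working over $\Lambda^\downarrow$ with the weaker equivalence relation \eqref{eq:equiv-rel}, so one must check that all infinite sums defining $\CP$, $\CQ$ and the homotopies converge in the $d_q$-metric, which again follows from Gromov--Floer compactness together with the energy estimate. Everything downstream — the $\Z_2$-grading matching, the degree shift by $n$, and the identification with $H(M;\Lambda^\downarrow)$ as graded $\Lambda^\downarrow$-modules — is then formal.
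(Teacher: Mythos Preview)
Your overall strategy matches the paper's: build $\CP$ and $\CQ$ from elongated-Hamiltonian moduli spaces, show they are chain maps, and run a one-parameter gluing argument to produce the homotopies. The paper carries this out (in the more general bulk-deformed form) in Section~\ref{sec:appendix1}, Theorem~\ref{appendmain}.

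There is, however, a genuine technical point you have misidentified. You describe the codimension-one boundary of the spiked-disc moduli space as having two types, with ``bubbling at the disc end contributing $\CP\circ d$ \dots twisted by Gromov--Witten corrections''. In the paper's de Rham setup this is not how the chain-map identity arises. Proposition~\ref{isomPiukura}(3) gives only \emph{one} type of codimension-one boundary for $\CM(H_\chi,J_\chi;*,[\gamma,w])$: Floer breaking at $\tau\to+\infty$. The term $\CP\circ\partial$ comes purely from Stokes' theorem applied to the integrand $\mathrm{ev}_{-\infty}^*h$, not from a boundary stratum. The sphere-bubbling at $\tau\to-\infty$ is a priori a codimension-one phenomenon in the Kuranishi setting and must be suppressed; the paper does this by requiring $J_{s,t}$ to be $t$-\emph{independent} at $s=0$ (Remark~\ref{JS1inv}), which gives the bubble component an extra $S^1$-symmetry and pushes it to codimension two (see the proof of Proposition~\ref{piuBULKkura}).

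The same $S^1$-equivariance is the engine of the homotopy argument in Lemma~\ref{MBchainhomotopyprop}: it kills the contributions of the boundary pieces (\ref{bdryQBULKkurapara2}), (\ref{bdryQBULKkurapara3}) of the parametrized moduli space, and at $S=0$ forces only the constant maps ($\ell=0$, $C=0$) to survive, which is exactly what produces the identity map. Your ``no escape of energy / exponential decay'' does not do this job; without the $S^1$-trick the homotopy does not close up in the general (non-semipositive) setting.
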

This is proved in \cite[Theorem 22.1]{fukaya-ono}.
We will describe a construction of isomorphism (which is different from the one in  \cite{fukaya-ono})
below because we need to specify the isomorphism to encode each spectral invariant
by the corresponding quantum cohomology class.
\begin{defn}\label{defn:chi}
Consider a smooth function \index{$\chi$}
$
\chi : \R \to [0,1]
$
with the  properties
\bea
\chi(\tau) & = & \begin{cases} 0 \quad \mbox{ for }\, \tau \le 0\\
1 \quad \mbox{ for }\, \tau \ge 1
\end{cases}\\
\chi'(\tau) & \geq & 0.
\eea
We denote by $\CK$ the set of such elongation functions.
\end{defn}
We note that $\CK$ is convex and so contractible.
We will fix a choice of $\chi$ for our various constructions  throughout
the present paper but the Floer theoretic invariants coming out of such
constructions are independent of the choice thanks to contractibility of $\CK$.

For a given $S^1$-dependent family of almost complex structures $J$, we consider a two-parameter family
$\{J_s\}_{s \in [0,1]}$  such that
\begin{equation}\label{HsJs}
J_{0,t} = J_0, \quad J_{1,t} = J_t
\end{equation}
where $J_0$ is a
time-independent almost complex structure. We also
assume
$$
J_{s,t} \equiv J_0, \quad \mbox{for }\, (s,t) \in \del [0,1]^2 \setminus (\{1\} \times [0,1]).
$$
For each nondegenerate $H : S^1\times M \to \R$, we define $(\R \times S^1)$-family to
$(H_\chi,J_\chi)$ on $\R \times S^1$ by
\be\label{eq:paraHJ}
H_\chi(\tau,t) = \chi(\tau) H_t, \quad J_\chi(\tau,t) = J_{\chi(\tau),t}.
\ee
\begin{rem}\label{JS1inv}
It is very important that the family $J_s$ is $t$-{\it independent} for $s=0$.
See the proof of Proposition \ref{piuBULKkura}.
\end{rem}

\begin{defn}\label{stripmk0int1}
We denote by $\overset{\circ}{{\CM}}(H_\chi,J_\chi;*,[\gamma;w])$ the set of all maps
\index{$\mathcal M(H_\chi,J_\chi;*,[\gamma;w])$}
$u: \R \times S^1 \to M$ satisfying the following conditions:
 \begin{enumerate}
 \item The map $u$ satisfies the equation:
\be\label{eq:HJCR2}
\dudtau + J_\chi\Big(\dudt - \chi(\tau)X_{H_t}(u)\Big) = 0.
\ee
Here and hereafter $J_\chi$ in $(\ref{eq:HJCR2})$ means
$J_{\chi,t}$.
\item The energy
$$
E_{(H_\chi,J_\chi)}(u):= \frac{1}{2} \int \Big(\Big|\dudtau\Big|^2_{J_\chi} + \Big|
\dudt - \chi(\tau)X_{H_t}(u)\Big|_{J_\chi}^2 \Big)\, dt\, d\tau
$$
is finite.
\item The map $u$ satisfies the following asymptotic boundary condition:
$$
\lim_{\tau\to +\infty}u(\tau, t) = \gamma(t).
$$
In particular, $u$ defines a homotopy class $[u] \in \pi_2(\gamma)$.
\item We have $[u] = [w]$ in $\pi_2(\gamma)$.
\end{enumerate}
\end{defn}

\begin{figure}[h]
\centering
\includegraphics[scale=0.5]{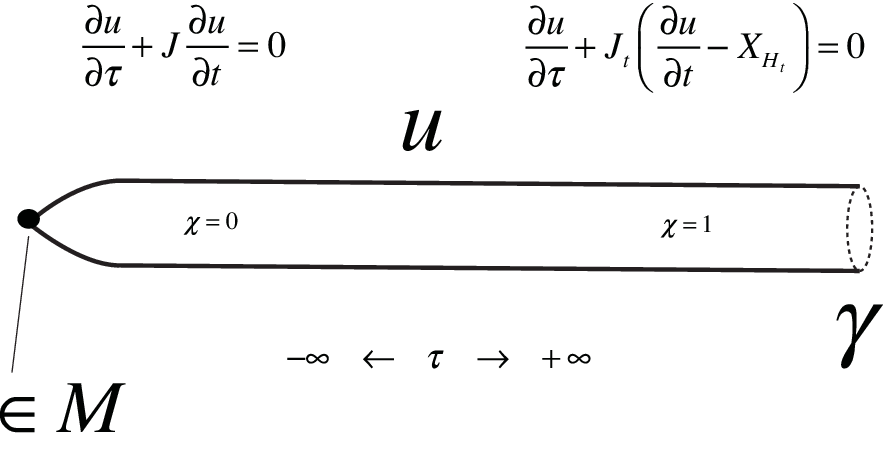}
\caption{An element of $\overset{\circ}{{\CM}}(H_\chi,J_\chi;*,[\gamma;w])$}
\label{Figure1}
\end{figure}
\par
We note that since $\chi(\tau) X_{H_t} \equiv 0$ and $J_{\chi(\tau)} \equiv J_0$
 for $\tau < -1$,
which turns \eqref{eq:HJCR2} into the genuine $J_0$-holomorphic curve equation,
the removable singularity theorem (due to Sacks-Uhlenbeck and Gromov,
see e.g., \cite[Theorem 4.5.1]{sikarav}) gives rise to
a well-defined limit
\begin{equation}\label{ev-inf}
\lim_{\tau\to -\infty}u(\tau, t)
\end{equation}
which does not depend on $t$. Therefore the homotopy class condition required
in (4) above makes sense.
\par
We denote this assignment of the limit by
\be\label{eq:ev-infty}
{\rm ev}_{-\infty} : \overset{\circ}{{\CM}}(H_\chi,J_\chi;*,[\gamma,w]) \to M.
\ee
\begin{rem}\label{rem:*}
We put $*$ on the left of $[\gamma;w]$ in the notation of the moduli
space to highlight the fact that $[\gamma,w]$ appears as \emph{positive} asymptotic
boundary condition of the cylinder $\R \times S^1$. We use similar notations using $*$
in various other Floer theoretic moduli spaces arising throughout the paper in a various
different ways.
Note we do {\it not} require the limit point (\ref{ev-inf}) to be a particular base point of $M$
--we only require the convergence to some point of $M$.
\end{rem}
\begin{prop}\label{isomPiukura}
\begin{enumerate}
\item The moduli space
$\overset{\circ}{{\CM}}(H_\chi,J_\chi;*,[\gamma,w])$ has a compactification
${{\CM}}(H_\chi,J_\chi;*,[\gamma,w])$ that is Hausdorff.
\item
The space ${{\CM}}(H_\chi,J_\chi;*,[\gamma,w])$ has an orientable Kuranishi structure with corners.
\item
The  (normalized) boundary of ${{\CM}}(H_\chi,J_\chi;*,[\gamma,w])$ is described by
\begin{equation}\label{bdryhomomap}
\aligned
&\partial {{\CM}}(H_\chi,J_\chi;*,[\gamma,w]) \\
&= \bigcup {{\CM}}(H_\chi,J_\chi;*,[\gamma',w']) \times {\CM}(H,J;[\gamma',w'],
[\gamma,w]),
\endaligned
\end{equation}
where the union is taken over all $[\gamma',w'] \in \text{\rm Crit}(\mathcal A_H)$.
\item
The (virtual) dimension satisfies
the following equality $(\ref{dimension})$.
\begin{equation}\label{dimension}
\dim  {{\CM}}(H_\chi,J_\chi;*,[\gamma,w]) = \mu_H([\gamma,w]) + n.
\end{equation}
\item
We can define a system of orientations of $ {{\CM}}(H_\chi,J_\chi;*,[\gamma,w]) $ so that
$(3)$ above is compatible with this orientation.
\item
The map ${\rm ev}_{-\infty}$ becomes a weakly submersive map in the sense of
\cite[Definition {\rm  A1.13}]{fooo:book2}.  (See Definition $\ref{mapkura}$.)
\end{enumerate}
\end{prop}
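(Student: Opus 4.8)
The plan is to deduce Proposition~\ref{isomPiukura} from the standard analytic package for the perturbed Cauchy--Riemann equation \eqref{eq:HJCR2}, together with the Kuranishi-structure machinery of \cite{fukaya-ono,fooo:book}, adapted to the one-ended configuration with a spike at $\tau=-\infty$. First I would set up the Fredholm theory: linearize \eqref{eq:HJCR2} at a solution $u$ in exponentially weighted Sobolev spaces at the $\tau\to+\infty$ end (converging to $\gamma$) and in ordinary Sobolev spaces near $-\infty$, where the removable-singularity theorem lets one regard $u$ as defined on a disc. The linearized operator is Fredholm, and an index computation by gluing/excision---comparing with the Floer-trajectory index of Proposition~\ref{connkura}\,(4) and with the index of a $J_0$-holomorphic cap---yields expected dimension $\mu_H([\gamma,w])+n$, which is (4). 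Next, the energy $E_{(H_\chi,J_\chi)}(u)$ is a priori bounded by the topological action difference plus the Hamiltonian term, by the same computation as in \eqref{energyincrease}; Gromov--Floer compactness then produces the Hausdorff compactification ${\CM}(H_\chi,J_\chi;*,[\gamma,w])$ of (1), whose added configurations consist of a principal component, interior sphere bubbles, and finitely many $(H,J)$-Floer trajectories broken off at the $+\infty$ end. No breaking occurs at $-\infty$: there $\chi\equiv0$ and $J_\chi\equiv J_0$, so a would-be broken piece is a genuine $J_0$-holomorphic sphere, hence already counted among the interior bubbles.

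For (2), (3) and (5) I would construct Kuranishi neighborhoods in the usual way---choosing finite-dimensional obstruction spaces supported away from nodes and ends, stabilizing unstable components by adding interior marked points, and gluing---following \cite{fooo:book} Chapter~7 (and \cite{fukaya-ono}) essentially verbatim for this moduli space. Since the only codimension-one degeneration is the $+\infty$ breaking, the gluing analysis yields the boundary formula \eqref{bdryhomomap}; interior bubbling is codimension $\ge 2$ and does not contribute to $\partial$. Orientations are read off from the determinant line of the linearized operator, and their compatibility with \eqref{bdryhomomap} is the orientation-gluing statement of \cite{fooo:book} Chapter~8, which gives (5).

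The substantive point is (6), weak submersivity of ${\rm ev}_{-\infty}$. After the removable singularity, ${\rm ev}_{-\infty}(u)$ is the value at the puncture of the $J_0$-holomorphic cap, and a priori $d\,{\rm ev}_{-\infty}$ restricted to $\ker D_u$ need not surject onto $T_{{\rm ev}_{-\infty}(u)}M$. As in the analogous constructions of \cite{fooo:book,fooo:toricmir}, I would build the Kuranishi structure so that ${\rm ev}_{-\infty}$ is already a submersion on each chart: enlarge the obstruction space $E_u$ by adjoining the $D_u$-images of a finite-dimensional family of compactly supported infinitesimal deformations concentrated near $\tau=-\infty$ whose effect on the puncture value spans $T_{{\rm ev}_{-\infty}(u)}M$; equivalently, one uses the $t$-independence of $J_0$ (Remark~\ref{JS1inv}) to translate the $J_0$-holomorphic cap and thereby move ${\rm ev}_{-\infty}(u)$ freely. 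After this enlargement the zero set of the Kuranishi map still cuts out the same compactified moduli space, the boundary description \eqref{bdryhomomap} and the orientations are unchanged, and $d\,{\rm ev}_{-\infty}$ followed by projection off the added obstruction directions is onto---which is exactly weak submersivity in the sense of \cite{fooo:book} Definition~A1.13.

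I expect part (6) to be the main obstacle: the obstruction-bundle choices must be arranged so that \emph{simultaneously} ${\rm ev}_{-\infty}$ is submersive on every chart, the boundary identification \eqref{bdryhomomap} holds, and everything stays consistent under the gluing used in the inductive construction of the Kuranishi structures; keeping these three requirements compatible throughout the induction is where the care lies. The hypothesis that $J_0$ is $t$-independent is precisely what makes the $-\infty$ end a clean $J_0$-holomorphic problem, and it is used both to rule out boundary breaking there and to supply the deformations that render ${\rm ev}_{-\infty}$ submersive.
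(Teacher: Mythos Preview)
Your proposal is correct and matches the paper's approach: the paper simply states that the proof is the same as that of Proposition~\ref{connkura} (i.e., the Kuranishi-structure package of \cite{fukaya-ono}) and omits all details. The one additional observation---that breaking at $\tau\to-\infty$ is codimension $\ge 2$ because $J_0$ is $t$-independent---is exactly what the paper spells out in its proof of the closely related Proposition~\ref{piuBULKkura}, so your outline is faithful to (and more detailed than) what the paper actually writes.
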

The proof is mostly the same as that of Proposition \ref{connkura} and is omitted.
We can prove the weak submersivity of  ${\rm ev}_{-\infty}$ in item (6) in the same way as
\cite[Section 7]{fooo:book2}. It is proved more explicitly in \cite[Lemma 3.1]{fukaya:cyc}.
(There the case of the evaluation map at the boundary marked point is
treated. The case of ${\rm ev}_{-\infty}$, which is item (6) above, can be proved in
the same way.)
An element of the right hand side of (\ref{bdryhomomap}) is drawn in Figure  \ref{Figure2} below.
\begin{figure}[h]
\centering
\includegraphics[scale=0.5]
{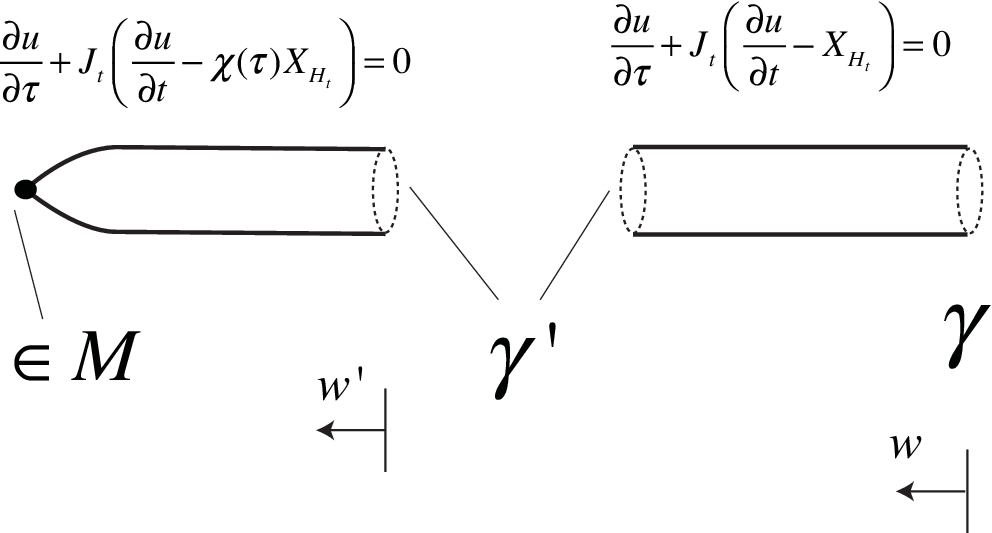}
\caption{An element of (\ref{bdryhomomap})}
\label{Figure2}
\end{figure}
We take a system of CF-perturbations $\{\widehat{\frak S^{\epsilon}}\}$ on $ {{\CM}}(H_\chi,J_\chi;*,[\gamma,w])$
for various $[\gamma,w]$ so that it is compatible at the boundary
described in (\ref{bdryhomomap}) and ${\rm ev}_{-\infty}$ is strongly submersive with respect to $\{\widehat{\frak S^{\epsilon}}\}$, in the
sense of Definition \ref{CFtransv}. Existence of such CF-perturbations follows from Theorem
\ref{existperturbcontkura}.
\par
Let $h$ be a differential $k$ form on $M$. We define
\begin{equation}\label{Piunikin}\index{$\mathcal P_{(H_\chi,J_\chi)}(h)$}
\CP_{(H_\chi,J_\chi)}(h)
= \sum_{[\gamma,w] \in \text{\rm Crit}(\mathcal A_H)} \left(\int_{({\CM}(H_\chi,J_\chi;*,
[\gamma,w]),\widehat{\frak S^{\epsilon}})}
{\rm ev}_{-\infty}^{*}(h)\right)
\llb \gamma,w \rrb.
\end{equation}
(The symbol $\mathcal P$ stands for Piunikhin \cite{Piu94}.)
Here the sum is taken over $[\gamma,w] \in \text{\rm Crit}(\mathcal A_H)$ with $\mu_H([\gamma,w]) = k-n = \deg h-n$
and $\widehat{\frak S}$ is a CF-perturbation.
The integration in (\ref{Piunikin}) is defined as
(\ref{form34333}).
\par
The definition (\ref{Piunikin}) induces a map
$$
\CP_{(H_\chi,J_\chi)} : \Omega(M) \widehat\otimes \Lambda^\downarrow \to CF(M,H;\Lambda^\downarrow).
$$
Here $\Omega(M) \widehat\otimes \Lambda^\downarrow$ is the completion with respect to the norm $\frak v_q$
of the algebraic tensor product $\Omega(M) \otimes \Lambda^\downarrow$
(over $\R$).
\index{$\widehat\otimes$}
\par
Let $(\Omega^*(M),d)$ be the de Rham complex of $M$. We regard it as a
{\it chain} complex $(\Omega_*(M),\partial)$, where
\be\label{eq:deRhamchain}
\Omega_k(M) = \Omega^{\dim M-k}(M), \quad  \partial = (-1)^{\deg+1} d.
\ee
\begin{lem}
$\CP_{(H_\chi,J_\chi)}$ defines a chain map
$$
\CP_{(H_\chi,J_\chi)} : (\Omega(M),\partial) \,\widehat{\otimes}\, \Lambda^\downarrow \to (CF(M,H;\Lambda^\downarrow),
\partial_{(H,J)})
$$
from the de Rham complex to the Floer complex.
\end{lem}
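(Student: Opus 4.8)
The plan is to show that $\CP_{(H_\chi,J_\chi)}$ intertwines the differentials, i.e. that
$$
\partial_{(H,J)} \circ \CP_{(H_\chi,J_\chi)} = \CP_{(H_\chi,J_\chi)} \circ \partial
$$
as maps $(\Omega_*(M),\partial)\,\widehat{\otimes}\,\Lambda^\downarrow \to CF(M,H;\Lambda^\downarrow)$, and that it respects the filtration by $\frak v_q$ (the latter being an energy estimate of the same flavor as Lemma \ref{filtered}, using that $E_{(H_\chi,J_\chi)}(u)\ge 0$). First I would fix a differential $k$-form $h$ on $M$ and compute $\partial_{(H,J)}\CP_{(H_\chi,J_\chi)}(h)$ by expanding both operators via their defining formulas \eqref{boundaryop} and \eqref{Piunikin}; the coefficient of a generator $[\gamma',w']$ with $\mu_H([\gamma',w'])=k-n-1$ becomes a sum, over intermediate $[\gamma,w]$ with $\mu_H([\gamma,w])=k-n$, of $\bigl(\int_{\CM(H_\chi,J_\chi;*,[\gamma,w])^{\frak s}}\mathrm{ev}_{-\infty}^*h\bigr)\cdot \#\CM(H,J;[\gamma,w],[\gamma',w'])^{\frak s}$. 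By the standard fiber-product description of the integral over a product of Kuranishi spaces, this is exactly the contribution of the codimension-one boundary stratum appearing in Proposition \ref{isomPiukura}(3) for the one-dimensional moduli space $\CM(H_\chi,J_\chi;*,[\gamma',w'])$.

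The key step is then to invoke Proposition \ref{isomPiukura}: the space $\CM(H_\chi,J_\chi;*,[\gamma',w'])$ with $\mu_H([\gamma',w'])=k-n-1$ has virtual dimension $k$, it carries a Kuranishi structure with corners whose codimension-one boundary (in the Kuranishi sense) is the union \eqref{bdryhomomap} over intermediate $[\gamma,w]$, and $\mathrm{ev}_{-\infty}$ is weakly submersive, so one may choose multisections $\frak s$ compatibly at the boundary and pull back $h$. Applying Stokes' theorem for integration over Kuranishi spaces with corners (as in \cite{fooo:book}, e.g. the argument behind the $A_\infty$ relations, or \cite{fooo:toric1} Appendix C) to the $(k)$-dimensional space $\CM(H_\chi,J_\chi;*,[\gamma',w'])$ and the form $\mathrm{ev}_{-\infty}^*h$ of degree $\dim M - k$ gives
$$
\int_{\CM(H_\chi,J_\chi;*,[\gamma',w'])^{\frak s}} \mathrm{ev}_{-\infty}^*(dh)
\;\pm\;
\int_{\partial \CM(H_\chi,J_\chi;*,[\gamma',w'])^{\frak s}} \mathrm{ev}_{-\infty}^*(h) \;=\; 0.
$$
The boundary integral reproduces $\bigl(\partial_{(H,J)}\CP_{(H_\chi,J_\chi)}(h)\bigr)$ in the coefficient of $[\gamma',w']$, while the interior term, after converting $d$ to $\partial$ via \eqref{eq:deRhamchain}, reproduces $\CP_{(H_\chi,J_\chi)}(\partial h)$ in the same coefficient. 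Matching signs — between the Kuranishi boundary orientation of Proposition \ref{isomPiukura}(5) and the $(-1)^{\deg+1}$ appearing in \eqref{eq:deRhamchain}, together with the sign conventions for fiber products of oriented Kuranishi spaces — yields the chain map equation.

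The main obstacle is the sign bookkeeping: one must verify that the orientation chosen in Proposition \ref{isomPiukura}(5) for the boundary of $\CM(H_\chi,J_\chi;*,[\gamma,w])$ is the one for which Stokes' formula and the fiber-product sign rule combine to match the Floer differential sign in \eqref{boundaryop} and the chain-complex sign $\partial=(-1)^{\deg+1}d$ on $\Omega_*(M)$; this is the point where our orientation conventions (Notations and Conventions (20)) and those of \cite{fooo:book} Chapter 8 must be carefully aligned. Once signs are fixed, finiteness of the relevant sums (so that everything converges in the $d_q$-metric) follows from Gromov--Floer compactness exactly as in the discussion following \eqref{boundaryop}, and the filtration-preserving property follows from the nonnegativity of $E_{(H_\chi,J_\chi)}$. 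I would remark that a shorter route is to regard this as a special case of the general principle that the count of boundary points of a compactified $1$-dimensional Kuranishi moduli space, weighted by a pulled-back closed form, vanishes; but since $h$ need not be closed we genuinely need the Stokes' theorem version above.
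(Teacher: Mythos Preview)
Your proposal is correct and follows exactly the paper's approach: the paper's proof is a one-liner invoking Stokes' theorem (\cite{fooo:toric1} Lemma C.9), the boundary description in Proposition \ref{isomPiukura}(3), and the definition. You have simply unpacked these ingredients in detail; the filtration remark is extra (not part of this lemma's statement) but harmless.
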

\begin{proof}
We can prove
$
\CP_{(H_\chi,J_\chi)} \circ \partial = \partial_{(H,J)} \circ \CP_{(H_\chi,J_\chi)}
$
by applying Stokes' theorem (Theorem \ref{them48}, \cite[Proposition 9.26]{fooo:tech2}),
composition formula (Theorem \ref{compform}, \cite[Theorem 10.20]{fooo:tech2}),
Proposition \ref{isomPiukura} (3) and the definition.
\end{proof}
\begin{lem}
$\CP_{(H_\chi,J_\chi)}$ induces a chain homotopy equivalence.
\end{lem}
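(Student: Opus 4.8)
The plan is to exhibit an explicit homotopy inverse of $\CP_{(H_\chi,J_\chi)}$ and to prove that the two composites are chain homotopic to chain isomorphisms; since every complex occurring here is a complex of free modules over the field $\Lambda^\downarrow$, this forces $\CP_{(H_\chi,J_\chi)}$ to be a chain homotopy equivalence. The homotopy inverse is the map $\CQ_{(H_{\widetilde\chi},J_{\widetilde\chi})}\colon CF(M,H;\Lambda^\downarrow)\to(\Omega(M),\partial)\widehat\otimes\Lambda^\downarrow$ of Notations and Conventions (15), obtained by counting, with weight ${\rm ev}_{+\infty}$, the finite-energy solutions of the elongated Floer equation for the reversed elongation function $\widetilde\chi(\tau)=\chi(1-\tau)$ that emanate from $[\gamma,w]$ at $\tau=-\infty$ and converge to a point in $M$ at $\tau=+\infty$; that this is a chain map is proved exactly as the preceding lemma, using Stokes' theorem and the analogue of Proposition \ref{isomPiukura} (3).

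To compare $\CQ_{(H_{\widetilde\chi},J_{\widetilde\chi})}\circ\CP_{(H_\chi,J_\chi)}$ with the identity on $(\Omega(M),\partial)\widehat\otimes\Lambda^\downarrow$, I would introduce an $R$-parametrized family ($R\in[0,\infty)$) of equations on $\R\times S^1$ in which the Hamiltonian term is $\rho_R(\tau)X_{H_t}$ with $\rho_R$ a cutoff equal to $1$ on $[-R,R]$ and supported in $[-R-1,R+1]$, together with the obvious interpolation of the almost complex structures, and asymptotics ``a point at $-\infty$, a point at $+\infty$''. Let $\CM^{\rm para}$ denote the union over $R$ of these moduli spaces, cut down by ${\rm ev}_{-\infty}^*h$; it carries a Kuranishi structure with corners and multisections compatible with those already fixed for $\CP_{(H_\chi,J_\chi)}$, $\CQ_{(H_{\widetilde\chi},J_{\widetilde\chi})}$ and $\partial_{(H,J)}$. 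Its codimension-one boundary has three types of strata: the end $R=+\infty$, which by the gluing theorem is the fibre product $\CM(H_\chi,J_\chi;*,[\gamma,w])\times\CM(H,J;[\gamma,w],[\gamma',w'])\times\CM(H_{\widetilde\chi},J_{\widetilde\chi};[\gamma',w'],*)$ and hence contributes $\CQ_{(H_{\widetilde\chi},J_{\widetilde\chi})}\circ\CP_{(H_\chi,J_\chi)}$; the end $R=0$, where the Hamiltonian term has disappeared and the equation becomes the $J_0$-holomorphic sphere equation on $S^2$ with marked points at $0,\infty$; and strata in which a Floer trajectory or a sphere bubbles off, which assemble into $\partial\circ\mathfrak H+\mathfrak H\circ\partial$ for the operator $\mathfrak H$ defined by counting the interior of $\CM^{\rm para}$. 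The constant-sphere part of the $R=0$ end contributes exactly $h$, while the nonconstant spheres contribute an operator $\mathcal N$ with $\frak v_q(\mathcal N x)\le\frak v_q(x)-\varepsilon$ for a uniform $\varepsilon>0$ coming from Gromov compactness. Hence $\CQ_{(H_{\widetilde\chi},J_{\widetilde\chi})}\circ\CP_{(H_\chi,J_\chi)}$ is chain homotopic to $\mathrm{id}+\mathcal N$, which is invertible by the geometric series over $\Lambda^\downarrow$, hence is a chain isomorphism.

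The mirror construction, interpolating a Hamiltonian that drops from $H$ to $0$ and back to $H$ on a window of width $R$ with periodic orbits as $\pm\infty$-asymptotics, shows in the same way that $\CP_{(H_\chi,J_\chi)}\circ\CQ_{(H_{\widetilde\chi},J_{\widetilde\chi})}$ is chain homotopic to $\mathrm{id}+\mathcal N'$ on $CF(M,H;\Lambda^\downarrow)$ with $\mathcal N'$ strictly decreasing the $\frak v_q$-filtration, hence again to a chain isomorphism. Gromov--Floer compactness together with energy estimates of the type used in \eqref{energyincrease} guarantees that all the sums defining $\CP_{(H_\chi,J_\chi)}$, $\CQ_{(H_{\widetilde\chi},J_{\widetilde\chi})}$, $\mathfrak H$, $\mathcal N$, $\mathcal N'$ converge in the $d_q$-metric, so that everything lives in the completed complexes. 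It then follows that $\CP_{(H_\chi,J_\chi)}$ induces an isomorphism on homology, and, being a morphism of complexes of free $\Lambda^\downarrow$-modules, it is a chain homotopy equivalence.

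The genuinely nontrivial part is the analytic input behind the second paragraph: the construction of the $R$-parametrized Kuranishi structure with corners on $\CM^{\rm para}$, the gluing identification of the $R=+\infty$ end, and --- most delicate --- the analysis of the $R=0$ degeneration, where one must rule out escape of energy into the Hamiltonian region and identify the limit configurations with stable $J_0$-holomorphic spheres carrying two marked points, so that after a compatible choice of multisections the constant component contributes the identity with coefficient $1$. Orientation bookkeeping over the whole family, and the compatibility of the new multisections with those previously chosen for $\CP_{(H_\chi,J_\chi)}$, $\CQ_{(H_{\widetilde\chi},J_{\widetilde\chi})}$ and $\partial_{(H,J)}$, is the remaining routine-but-careful ingredient; in the body of the paper this is carried out in the more general bulk-deformed setting of Chapter 6.
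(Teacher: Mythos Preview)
Your overall scheme---construct $\CQ_{(H_{\widetilde\chi},J_{\widetilde\chi})}$ and show both composites are chain homotopic to invertible operators via a one-parameter family---is exactly the paper's (carried out in Section~\ref{sec:appendix1} for the bulk-deformed generalisation). But your boundary analysis of $\CM^{\rm para}$ has a genuine gap.

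In the de Rham model used here, $\partial=\pm d$, and the relation $\partial\circ\mathfrak H\pm\mathfrak H\circ\partial=(\text{boundary correspondence})$ is Stokes' theorem for integration along the fibre of ${\rm ev}_{+\infty}$. The codimension-one boundary of $\CM^{\rm para}$ has \emph{four} types (Lemma~\ref{HBULKkurapara}): the end $R=\infty$ (giving $\CQ\circ\CP$), the end $R=0$, and---for intermediate $R$---strata where a $J_0$-sphere slides off to $\tau\to-\infty$ or to $\tau\to+\infty$. These last two are not part of $\partial\circ\mathfrak H+\mathfrak H\circ\partial$; they sit as extra terms on the right-hand side, of the form $(\text{sphere operator})\circ\mathfrak H$ and $\mathfrak H\circ(\text{sphere operator})$, and you have not accounted for them. (Incidentally, the $R=\infty$ stratum is $\CM(H_\chi;*,[\gamma,w])\times\CM(H_{\widetilde\chi};[\gamma,w],*)$ with no intermediate Floer factor; inserting one is already codimension two.)

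The paper kills these unwanted contributions, together with the nonconstant spheres at $R=0$, by a single $S^1$-equivariance argument: the sphere factor $\CM_\ell(H{=}0,J_0;*,*;C)$ carries a free $S^1$-action by rotation of the domain cylinder, and one chooses multisections pulled back from the $S^1$-quotient via the forgetful maps (\ref{HJCforget1})--(\ref{HJCforget2}). Every such boundary term then vanishes, and one obtains a chain homotopy from $\CQ\circ\CP$ to \emph{exactly} the identity (Lemma~\ref{MBchainhomotopyprop}), not merely to $\mathrm{id}+\mathcal N$. Your filtration trick for $\mathrm{id}+\mathcal N$ is a perfectly reasonable alternative at the $R=0$ end, but to close the proof you must still dispatch the intermediate sphere bubbles---either by the same $S^1$-equivariance, or by a further filtration argument showing their contribution is also strictly decreasing.
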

The proof is similar to the argument established in various similar situations.
(One of the closest descriptions we can find in the literature is \cite[Section 8 Proposition 8.24]{fooo:bulk},
where a similar lemma is proved in the case of Lagrangian Floer theory.)
We give a proof in the appendix for completeness' sake.
\begin{rem}
\begin{enumerate}
\item
Note we need to take care of the problem of running-out, which is explained in detail in \cite[Section 7.2.3]{fooo:book2}.
We explain it briefly below.
In our situation we need to perturb infinitely many
moduli spaces such as ${\CM}(H,J;[\gamma,w], [\gamma',w'])$
simultaneously.
It is in general difficult to do so.
We go around this problem as follows.
For each $E>0$ we have only finitely many moduli spaces
for which symplectic area of its element is less than $E$.
We can perturb them appropriately
for any fixed $E$.
(The perturbation depends on $E$, however.)
By doing so, we obtain a chain complex $C_E$ over $\Lambda_0^\downarrow/q^{-E}\Lambda_0^\downarrow$
for any $E$.
We can also prove that, if $E' > E$ then $C_{E'} \mod q^{-E}\Lambda_0^\downarrow$
is chain homotopy equivalent to $C_E$.
We can use this fact and an elementary homological
algebra to lift $C_E$ to a chain complex over $\Lambda^\downarrow_0/q^{-E'}\Lambda_0^\downarrow$
so that it becomes chain homotopy equivalent to $C_{E'}$.
We repeat this process and by taking a projective limit we obtain a
chain complex over $\Lambda^\downarrow_0$ whose reduction is
chain homotopy equivalent to $C_{E}$ for any $E$.
\par
The technical difficulty to perform this construction in our situation is much
simpler than that of \cite[Section 7.2]{fooo:book2}, since
we here need to take a projective limit of chain complex (or DGA)
which is much simpler than that of $A_{\infty}$ algebra in general (which is discussed in
\cite[Section 7.2]{fooo:book2}).
So we omit the detail.
\item
Here we work over  the $\Lambda^\downarrow$ coefficients and so only with the $\Z_2$ grading
given in (\ref{35formula}).
However if the minimal Chern number is $2N$, we can define a
$\Z_{2N}$-grading.
\item
Here we use $\C$ as the ground field.  Up until now, we can
work with $\Q$ in the same way. We prefer to use $\C$ in this memoir since we will use de Rham theory
later on to involve bulk deformations in our constructions.
In addition, the de Rham theory is used
for the Lagrangian Floer theory of toric manifolds in various
calculations and applications developed in \cite{fooo:toric1} etc.
\item
We use de Rham cohomology of $M$ to define $\mathcal P_{(H_\chi,J_\chi)}$.
There are several other ways of constructing this isomorphism.
One uses singular (co)homology as in \cite[Section 7.2]{fooo:book2}
(especially \cite[Proposition 7.2.21]{fooo:book2}) and references therein. This approach allows
one to work with $\Q$ coefficients, which may have some additional applications.
Other uses Morse homology as proposed in \cite{rt, pss}.
The necessary analytic details of the latter approach
has been established recently in \cite{oh-zhu}.
\end{enumerate}
\end{rem}

\section{Spectral invariants}
\label{sec:spectral}

The very motivating example of Floer-Novikov complex and its chain level
theory was applied by the second named author in the Hamiltonian Floer
theory \cite{oh:alan,oh:minimax}.
Namely, a spectral number which we denote by $\rho(H;a)$ is associated
to $a \in QH(M;\Lambda) = H^*(M) \otimes \Lambda$
and a Hamiltonian $H$, and is proved to be independent
of various choices, especially of $J$ in \cite{oh:alan}.
Via the canonical identification $H^*(M) \otimes \Lambda \cong H^*(M) \otimes \Lambda^\downarrow$
induced by the change of the formal variable $T \mapsto T^{-1} =: q$ which reverses the direction of valuation,
we often abuse our notation for $a$ also to denote this formal change.
\par
In this section we give a brief summary of this construction.
Let $H : S^1 \times M \to \R$ be a normalized time-dependent nondegenerate Hamiltonian.
\par
\begin{defn}\label{criticalvalue}
We denote the set of (spherical) symplectic periods of $(M,\omega)$ by
$$
G(M,\omega) = \{ \alpha \cap [\omega] \mid \alpha \in \pi_2(M)\}.
$$
The set $G(M,\omega)$ is a countable subset of $\R$ which is a
subgroup of the additive group of $\R$ as a group. It may or may not be discrete.

For each given $H$, we define the {\it action spectrum} of $H$ by
$$
\mbox{\rm Spec}(H) := \{\CA_H(\gamma,w)\in \R ~|~ [\gamma,w] \in
\mbox{\rm Crit}(\CA_H) \},
$$
i.e., the set of critical values of $\CA_H: \widetilde{\CL_0}(M)
\to \R$.\index{action spectrum}
\end{defn}

Then for any given $H$ the subgroup $G(M,\omega)$
of $(\R,+)$ acts on $\text{\rm Spec}(H)$ by an addition.

\begin{defn}\label{submonoid}
Let $G \subset \R$  be a submonoid. We say that a subset $G' \subset \R$ is a \emph{$G$-set}
\index{$G$-set} if
$g\in G, g'\in G'$ implies $g+g' \in G'$.
\end{defn}\index{$G$-set}
With this definition, $\mbox{Spec}(H)$ is $G(M,\omega)$-set

\begin{lem}\label{Gomegaaction}
If $\lambda \in \mbox{\rm Spec}(H)$ and
$g \in G(M,\omega)$ then
$\lambda \pm g\in  \mbox{\rm Spec}(H)$.
\par
If $H$ is nondegenerate, then
the quotient space
$\mbox{\rm Spec}(H)/G(M,\omega)$ with the above action is a finite set and
$$
\# \left(\mbox{\rm Spec}(H)/G(M,\omega)\right) \le \# \text{\rm Per}(H).
$$
\end{lem}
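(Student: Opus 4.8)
The plan is to prove the two assertions of Lemma~\ref{Gomegaaction} in turn. For the first assertion, suppose $\lambda \in \mbox{\rm Spec}(H)$, so $\lambda = \CA_H([\gamma,w])$ for some $[\gamma,w] \in \mbox{\rm Crit}(\CA_H)$, and let $g \in G(M,\omega)$, say $g = \alpha \cap [\omega]$ with $\alpha \in \pi_2(M)$. First I would represent $\alpha$ by a smooth map $v: S^2 \to M$ and form the connected sum $w \# v$ (resp.\ $w \# \overline v$) of the disc $w$ with the sphere $v$, glued at an interior point of $w$; this is again a disc with boundary $\gamma$, so $[\gamma, w\#v] \in \mbox{\rm Crit}(\CA_H)$ (the criticality condition depends only on $\gamma$, by the Lemma in Section~\ref{subsec:pert-action}). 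Since the Hamiltonian term $\int_0^1 H(t,\gamma(t))\,dt$ is unchanged and $\int (w\#v)^*\omega = \int w^*\omega + \int v^*\omega = \int w^*\omega + g$, we get $\CA_H([\gamma,w\#v]) = \lambda - g$, and likewise $\CA_H([\gamma,w\#\overline v]) = \lambda + g$. Hence $\lambda \pm g \in \mbox{\rm Spec}(H)$.

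For the second assertion, assume $H$ is nondegenerate, so $\mbox{\rm Per}(H)$ is finite. The key observation is that the map $\mbox{\rm Crit}(\CA_H) \to \mbox{\rm Per}(H)$, $[\gamma,w] \mapsto \gamma$, together with the $G(M,\omega)$-action, organizes the spectrum: I would define a map $\mbox{\rm Spec}(H)/G(M,\omega) \to \mbox{\rm Per}(H)$ by sending the class of $\CA_H([\gamma,w])$ to $\gamma$, and check it is well defined and surjective. Well-definedness is the point requiring an argument: if $\CA_H([\gamma,w])$ and $\CA_H([\gamma',w'])$ differ by an element of $G(M,\omega)$, one must still know they correspond to the same periodic orbit, but in fact one only needs the map to be well defined, i.e.\ that the class of $\CA_H([\gamma,w])$ in the quotient is independent of the choice of lift $w$ — and this is exactly the first assertion: two lifts $[\gamma,w]$, $[\gamma,w']$ of the same $\gamma$ have actions differing by $\int(\overline w \# w')^*\omega \in G(M,\omega)$. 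Surjectivity onto $\mbox{\rm Per}(H)$ is immediate since every $\gamma \in \mbox{\rm Per}(H)$ admits at least one bounding disc ($\gamma$ is contractible, being the image under $\phi_H^t$ of a fixed point). Therefore $\#\bigl(\mbox{\rm Spec}(H)/G(M,\omega)\bigr) \le \#\mbox{\rm Per}(H)$, and since the right side is finite the left side is finite as well.

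I expect the only mild subtlety to be bookkeeping about which orientation of the glued sphere adds $+g$ versus $-g$, and the routine verification that $\int_{S^2} v^*\omega$ indeed realizes $\alpha \cap [\omega]$ — these are standard and I would not belabor them. The substantive content is simply that the $G(M,\omega)$-orbits in $\mbox{\rm Spec}(H)$ are indexed (possibly non-injectively) by the finite set $\mbox{\rm Per}(H)$ via the forgetful map, which is precisely what the first assertion makes precise. No compactness or transversality input is needed here; this is purely a statement about the action functional and $\pi_2$.
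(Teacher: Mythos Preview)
Your argument is correct and matches the paper's (very brief) proof, which simply notes $\CA_H([\gamma,w]) - \CA_H([\gamma,w']) = \int_{\overline w \# w'} \omega$ and declares the rest easy from finiteness of $\text{Per}(H)$.

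One expository slip worth fixing: in the second part you attempt to define a map $\text{Spec}(H)/G(M,\omega) \to \text{Per}(H)$, but such a map is \emph{not} well defined --- distinct orbits $\gamma \ne \gamma'$ can perfectly well have action values lying in the same $G(M,\omega)$-coset, and nothing in your argument rules this out. What your well-definedness discussion actually establishes, and what suffices for the bound, is the surjection in the \emph{opposite} direction
\[
\text{Per}(H) \longrightarrow \text{Spec}(H)/G(M,\omega), \qquad \gamma \longmapsto [\CA_H([\gamma,w_\gamma])],
\]
whose well-definedness is exactly your observation that two lifts of the same $\gamma$ have actions differing by an element of $G(M,\omega)$, and whose surjectivity is immediate from the definition of $\text{Spec}(H)$. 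Reverse the arrow and your argument is clean; your sentence ``Surjectivity onto $\text{Per}(H)$'' should read ``surjectivity onto $\text{Spec}(H)/G(M,\omega)$''.
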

\begin{proof}
Let $[\gamma,w], [\gamma,w'] \in \mbox{\rm Crit}(\CA_H)$.
We glue $w$ and $w'$ along $\gamma$ and obtain
$\overline w \# w'$. Its homology class in $H_2(M;\Z)$ is well-defined.
We have
$$
\CA_H([\gamma,w]) - \CA_H([\gamma,w'])
= \int_{\overline w \# w'} \omega.
$$
The lemma follows easily from this fact  and the fact that $\mbox{\rm Per}(H)$ is a finite set.
\end{proof}

\begin{rem} One can actually make the equality
$\# \left(\mbox{\rm Spec}(H)/G(M,\omega)\right) = \# \text{\rm Per}(H)$ hold
by further perturbing $H$, but we will not do that because it is not needed.
\end{rem}
\begin{lem}\label{lem4545}
$G(M,\omega)$ acts on $\widehat{\Per}(H)$
such that
$\widehat{\Per}(H)/G(M,\omega) = {\Per}(H)$
and
$
\mathcal A_H(g\llb \gamma,w \rrb) = g + \mathcal A_H(\llb \gamma,w \rrb)$.
\end{lem}
\begin{proof}
It is obvious from definition that $\pi_2(M)$ acts on $\text{\rm Crit}(\mathcal A_H)$
such that $\text{\rm Crit}(\mathcal A_H)/\pi_2(M) \cong \text{\rm Per}(H)$.
The lemma then follows from the fact that
$G(M,\omega)$ is the image of (\ref{energymapmap}) and Lemma \ref{determinedifference}
in the same way as the proof of Lemma \ref{Gomegaaction}.
\end{proof}

The following definition is standard.
\begin{defn}\index{Hamiltonian!homotopic}We say that two one-periodic Hamiltonians $H$ and $H'$
are {\it homotopic} if
\index{homotopic ! Hamiltonian}  $\phi_H^1 = \phi_{H'}^1$ and if
there exists $\{H^s\}_{s\in [0,1]}$ a one-parameter family of
one periodic Hamiltonians such that $H^{0} = H$, $H^1 = H'$ and
$\phi_H^1 = \phi_{H^s}^1$ for all $s \in [0,1]$.
In this case we denote $H \sim H'$ and denote the set of equivalence classes
by $\widetilde{\rm Ham}(M,\omega)$.
\end{defn}

The following lemma was proven in \cite{schwarz, pol:book} in the aspherical case
and in \cite{oh:jkms} for the general case.
We provide its proof in Section \ref{sec:proofhomotopy} for reader's convenience.

\begin{prop} Suppose that $H, \, H'$ are normalized one periodic Hamiltonians.
If $H \sim H'$, we have
$
\mbox{\rm Spec}(H) = \mbox{\rm Spec}(H')
$
as a subset of $\R$.
\end{prop}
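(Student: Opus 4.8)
The plan is to show that the two action spectra agree by producing, for each critical point $[\gamma,w]$ of $\CA_H$, a critical point of $\CA_{H'}$ with the same action value, using the homotopy $\{H^s\}$ that connects $H$ to $H'$ with a fixed time-one map. The key observation is that since $\phi_H^1 = \phi_{H^s}^1 = \phi_{H'}^1$ for all $s$, the fixed point set $\Fix\psi_{H^s}$ does not move with $s$, hence there is a natural bijection between $\Per(H)$, $\Per(H^s)$ and $\Per(H')$: to $p \in \Fix\psi_H = \Fix\psi_{H'}$ one associates the one-periodic orbits $z^p_{H^s}(t) = \phi_{H^s}^t(p)$. What has to be checked is that, after making a coherent choice of cappings, the action values $\CA_{H^s}([z^p_{H^s}, w^s])$ are \emph{constant} in $s$, which forces $\mbox{Spec}(H) \subseteq \mbox{Spec}(H')$; reversing the roles of $H$ and $H'$ gives the other inclusion.

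First I would fix $p \in \Fix\psi_H$ and a capping $w$ of $\gamma = z^p_H$. Using the homotopy, I build a one-parameter family of loops $\gamma^s(t) = \phi_{H^s}^t(p)$ and glue the ``cylinder of loops'' $(s,t)\mapsto \gamma^s(t)$ to $w$ to obtain a capping $w^s$ of $\gamma^s$; this is the coherent choice of cappings. Then I would compute $\frac{d}{ds}\CA_{H^s}([\gamma^s,w^s])$. This derivative has two contributions: one from the variation of the symplectic area $-\int (w^s)^*\omega$, which equals $-\int_0^1 \omega(\partial_s\gamma^s, \dot\gamma^s)\,dt$, and one from the variation of the Hamiltonian term $-\int_0^1 H^s_t(\gamma^s(t))\,dt$, which splits into $-\int_0^1 (\partial_s H^s_t)(\gamma^s(t))\,dt - \int_0^1 dH^s_t(\partial_s\gamma^s)\,dt$. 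Using $\dot\gamma^s = X_{H^s_t}(\gamma^s)$ and the defining relation $dH^s_t = \omega(X_{H^s_t},\cdot)$, the term $\int_0^1 \omega(\partial_s\gamma^s,\dot\gamma^s)\,dt$ cancels against $-\int_0^1 dH^s_t(\partial_s\gamma^s)\,dt$, leaving
$$
\frac{d}{ds}\CA_{H^s}([\gamma^s,w^s]) = -\int_0^1 (\partial_s H^s_t)(\gamma^s(t))\,dt.
$$
For the right-hand side to vanish I need the normalization hypothesis together with the fact that the loop $\gamma^s$ is (up to reparametrization) contained in the fixed-point structure; more precisely, differentiating the identity $\phi_{H^s}^1 = \phi_H^1$ in $s$ shows that the generating function $\partial_s H^s$ is, at time $1$, constant along trajectories, and combined with normalization $\int_M \partial_s H^s_t\,\omega^n = 0$ one extracts that $\int_0^1(\partial_s H^s_t)(\gamma^s(t))\,dt$ is independent of the orbit and in fact forced to be the common value, hence $0$. (This is exactly the standard Floer-theoretic computation showing $\CA_H$ and the capped action spectrum only depend on the path homotopy class of $\phi_H$; it is proved in \cite{oh:alan, oh:minimax, schwarz, pol:book, oh:jkms}.)

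The main obstacle I expect is the bookkeeping around the cappings and the normalization constant: the pointwise identity $\partial_s(\phi_{H^s}^1) = 0$ gives, via Hamilton's equations, that $\partial_s H^s_1 \circ \phi_{H^s}^1$ differs from $\partial_s H^s_0$ by the pullback of an exact term under the flow, so the vanishing of $\frac{d}{ds}\CA_{H^s}$ along a \emph{specific} orbit is not immediate — one must integrate by parts in $t$ and use periodicity, then invoke normalization to kill the leftover constant. Once that derivative computation is in place, the proof concludes quickly: every element of $\mbox{Spec}(H)$, being $\CA_H([\gamma,w])$ for some $[\gamma,w]\in\Crit(\CA_H)$, equals $\CA_{H'}([\gamma^1,w^1])$ for the corresponding capped orbit of $H'$, so $\mbox{Spec}(H)\subseteq\mbox{Spec}(H')$, and by symmetry the two sets are equal. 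I would also remark that this already gives more than the statement: the \emph{spectrum with its $G(M,\omega)$-action} is preserved, consistent with Lemma \ref{Gomegaaction}.
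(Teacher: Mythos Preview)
Your approach is the same as the paper's (carried out as Proposition~\ref{spectrumconst}): build the coherent family of capped orbits $[\gamma^s,w^s]$ via the homotopy, reduce $\frac{d}{ds}\CA_{H^s}([\gamma^s,w^s])$ to $-\int_0^1(\partial_s H^s_t)(\gamma^s(t))\,dt$, and argue this integral vanishes from the fixed time-one-map condition plus normalization.

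The gap is in your explanation of why that integral vanishes. The condition $\partial_s(\phi_{H^s}^1)=0$ does \emph{not} directly constrain $\partial_s H^s_1$; what it constrains is the auxiliary Hamiltonian $K=K(s,t,x)$ generating the $s$-direction field $\frac{\partial\phi_{H^s}^t}{\partial s}\circ(\phi_{H^s}^t)^{-1}$, forcing $dK_{s,1}=0$, hence $K_{s,1}$ constant and (by normalization of $K$) identically zero; likewise $K_{s,0}\equiv 0$. The bridge between $K$ and $\partial_s H$ is the curvature identity $\partial_t K-\partial_s H=\{H,K\}$, which along the orbit $\gamma^s(t)=\phi_{H^s}^t(p)$ becomes
\[
(\partial_s H^s_t)(\gamma^s(t))=\frac{\partial}{\partial t}\bigl(K(s,t,\gamma^s(t))\bigr),
\]
so the integral collapses to $K(s,1,\psi(p))-K(s,0,p)=0$. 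Your ``integrate by parts in $t$ and use periodicity'' is pointing in this direction, but without introducing $K$ and this identity explicitly the step does not go through; in particular your claim that $\int_0^1(\partial_s H^s_t)(\gamma^s(t))\,dt$ is ``independent of the orbit and forced to be the common value, hence $0$'' is not justified as stated.
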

This enables one to make the following definition

\begin{defn}
We define the spectrum
of $\widetilde \psi \in \widetilde{\rm Ham}(M,\omega)$ to be
$
\mbox{Spec}(\widetilde \psi) := \mbox{Spec}(\underline H)
$
for a (and so any) Hamiltonian $H$ satisfying $\widetilde \psi=[\phi_H]$.
\end{defn}

Here we denote the {\em normalization} \index{normalization ! Hamiltonian} of $H$ by
$$
\underline H(t,x) = H(t,x) - \frac{1}{\vol_\omega(M)}\int_M H_t\, \omega^n.
$$
\index{$\underline H(t,x)$}

\begin{defn}\label{Lambda(G)}
Let $G$ be a subset of $\R$, which is a monoid.
We denote by $\Lambda^\downarrow(G)$ the set of all elements
$$
\sum a_iq^{\lambda_i} \in \Lambda^\downarrow
$$
such that if $a_i\ne 0$ then $- \lambda_i \in G$.
We note that
$\Lambda^\downarrow(G)$ forms a subring of $ \Lambda^\downarrow$ and
$\Lambda^\downarrow(G)$ is a field if $G$ is a subgroup of
the (additive) group $\R$.
We write
$$
\Lambda^\downarrow(M) =
\Lambda^\downarrow(G(M,\omega)).
$$
\index{$\Lambda^\downarrow(M)$}
\par
Suppose that $H$ is nondegenerate. We denote by $CF(M,H)$
\index{$CF(M,H)$}
the subset of $CF(M,H;\Lambda^\downarrow)$ consisting of the infinite sums
\begin{equation}\label{CFinfinitesum}
\sum_{\llb \gamma,w \rrb \in \widehat{\text{\rm Per}}(H)} a_{\llb \gamma,w \rrb} \llb \gamma,w \rrb
\end{equation}
with $a_{\llb \gamma,w \rrb}\in \C$. We denote by  $F^{\lambda}CF(M,H)$ the subset of $CF(M,H)$
consisting of elements (\ref{CFinfinitesum})
such that
$\mathcal A_H(\llb \gamma,w \rrb) \le \lambda$.
\end{defn}
\begin{lem}
\begin{enumerate}
\item
$CF(M,H)$ is a vector space over $\Lambda^\downarrow(M)$.
\item
$\{\llb \gamma,w_\gamma \rrb \mid \gamma \in \text{\rm Per}(H) \}$ is a basis of $CF(M,H)$
over $\Lambda^\downarrow(M)$.
\item
We have
$$
CF(M,H;\Lambda^\downarrow)
\cong CF(M,H) \otimes_{\Lambda^\downarrow(M)} \Lambda^\downarrow.
$$
\item
The Floer boundary operator $\partial_{(H,J)}$ preserves the submodule
$CF(M,H) \subset CF(M,H;\Lambda^\downarrow)$.
\end{enumerate}
\end{lem}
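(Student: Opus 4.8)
The plan is to verify the four assertions in turn, each of which is essentially bookkeeping once the right Novikov ring $\Lambda^\downarrow(M) = \Lambda^\downarrow(G(M,\omega))$ is in place. For (1), I would first note that $\Lambda^\downarrow(M)$ is a subring of $\Lambda^\downarrow$ (closure under addition and multiplication is immediate from the definition of a $G$-set together with the fact that $G(M,\omega)$ is a subgroup of $\R$, hence closed under sums and negatives), and in fact a field since $G(M,\omega)$ is a group: the inverse of $\sum a_i q^{\lambda_i}$ with leading term $a_0 q^{\lambda_0}$ lies again in $\Lambda^\downarrow(M)$ because all exponents appearing differ from $\lambda_0$ by elements of $G(M,\omega)$. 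Then one checks that scalar multiplication by $\Lambda^\downarrow(M)$ preserves $CF(M,H)$: if $\sum a_{[\gamma,w]}[\gamma,w]$ lies in $CF(M,H)$ and $c = \sum b_j q^{\mu_j} \in \Lambda^\downarrow(M)$, the action of $q^{\mu_j}$ on $[\gamma,w]$ shifts $\CA_H$ by $-\mu_j \in G(M,\omega)$ via the equivalence relation \eqref{eq:equiv-rel}, so $q^{\mu_j}[\gamma,w] = [\gamma, w_j]$ for some disc $w_j$ with $\CA_H([\gamma,w_j]) = \CA_H([\gamma,w]) + (-\mu_j)$; the finiteness condition in Definition \ref{Lambda(G)} is preserved because there are only finitely many $\mu_j$ with $\mu_j$ above any given threshold. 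Associativity and distributivity are inherited from $\Lambda^\downarrow$.

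For (2), fix a lift $[\gamma,w_\gamma]$ for each $\gamma \in \text{Per}(H)$. Spanning: given any element \eqref{CFinfinitesum}, group the terms according to the underlying periodic orbit $\gamma$; for each term $a_{[\gamma,w]}[\gamma,w]$ with the same $\gamma$ we have $[\gamma,w] = q^{c}[\gamma,w_\gamma]$ where $c = \int_{D^2} w_\gamma^*\omega - \int_{D^2} w^*\omega$, and by Lemma \ref{Gomegaaction} this $c$ satisfies $-c \in G(M,\omega)$ (it equals $-\int_{\overline{w_\gamma}\# w}\omega$), so the coefficient sum $\sum_w a_{[\gamma,w]} q^{c(w)}$ lies in $\Lambda^\downarrow(M)$ — here the finiteness hypothesis on the element of $CF(M,H)$ translates exactly into convergence of this sum in $\Lambda^\downarrow(M)$. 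Linear independence over $\Lambda^\downarrow(M)$: a relation $\sum_\gamma c_\gamma [\gamma,w_\gamma] = 0$ with $c_\gamma \in \Lambda^\downarrow(M)$ forces each $c_\gamma = 0$ because distinct $\gamma$ contribute basis elements indexed by distinct periodic orbits and no cancellation between different orbits is possible.

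Assertion (3) is then a formal consequence: extension of scalars from $\Lambda^\downarrow(M)$ to $\Lambda^\downarrow$ applied to the basis $\{[\gamma,w_\gamma]\}$ of $CF(M,H)$ produces a $\Lambda^\downarrow$-basis of the same set of formal sums, and one checks the completions match — the norm $\frak v_q$ on $CF(M,H;\Lambda^\downarrow)$ restricts to the natural one on the subspace, and both the growth condition defining $CF(M,H)$ and Lemma \ref{lem37}(3) defining $CF(M,H;\Lambda^\downarrow)$ describe convergent infinite sums, so $CF(M,H;\Lambda^\downarrow) \cong CF(M,H)\otimes_{\Lambda^\downarrow(M)}\Lambda^\downarrow$ after completion. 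For (4), I would use the energy identity \eqref{energyincrease} from the proof of Lemma \ref{filtered}: if $[\gamma',w']$ contributes to $\partial_{(H,J)}[\gamma,w_\gamma]$ via a moduli space element $u$, then $\CA_H([\gamma,w_\gamma]) - \CA_H([\gamma',w']) = E_{(H,J)}(u) = \int_{\overline{w_\gamma}\#w'}\omega + (\text{terms from }H)$; more to the point, comparing the lift $w'$ produced by gluing $w_\gamma \# u$ against the chosen lift $w_{\gamma'}$, the discrepancy $\int_{D^2}w_{\gamma'}^*\omega - \int_{D^2}w'^*\omega$ is a spherical area, hence lies in $G(M,\omega)$, so the resulting coefficient of $[\gamma',w_{\gamma'}]$ is in $\Lambda^\downarrow(M)$, not merely $\Lambda^\downarrow$. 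The finiteness needed for the sum to define an element of $CF(M,H)$ is exactly Gromov–Floer compactness, as already invoked after \eqref{boundaryop}.

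The only genuinely delicate point — and what I expect to be the main obstacle to write cleanly — is (4): one must track carefully that the coefficients appearing in $\partial_{(H,J)}$ applied to a \emph{chosen lift} have exponents in $G(M,\omega)$, which hinges on the fact that $[\overline{w}\# w']$ in \eqref{energyincrease} is a genuine \emph{spherical} class, so its $\omega$-area lies in $G(M,\omega)$ by definition. Everything else is routine linear algebra over the Novikov field together with the already-established compactness and index identities, so I would keep those parts brief and concentrate the exposition on the $G(M,\omega)$-rationality of the Floer differential.
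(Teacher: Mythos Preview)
Your proposal is correct and matches what the paper intends: it simply states that the lemma ``is an easy consequence of Lemma~\ref{Gomegaaction}'' and gives no further detail, and your argument is precisely the unpacking of that remark.

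One small point of emphasis: you identify (4) as the delicate part, but in fact it is the most immediate. The boundary operator in \eqref{boundaryop} is already written as a sum $\sum_{[\gamma',w']} \#\mathcal M(\cdots)\,[\gamma',w']$ with \emph{complex} coefficients indexed by critical points, so the output is manifestly of the form \eqref{CFinfinitesum}; the only thing to check is the finiteness condition, which is exactly the Gromov--Floer compactness already invoked after \eqref{boundaryop}. There is no need to re-express in terms of chosen lifts or to track that exponents lie in $G(M,\omega)$ for this part. The genuine use of Lemma~\ref{Gomegaaction} (that action differences for the same orbit are spherical areas, hence in $G(M,\omega)$) is in your arguments for (1) and (2), which you handle correctly.
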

This is an easy consequence of Lemma \ref{lem4545}.

\begin{lem} The chain map $\CP_{(H_\chi,J_\chi)}$
in $(\ref{Piunikin})$ induces a $\Lambda^\downarrow(M)$-linear map, which we still denote
$$
\CP_{(H_\chi,J_\chi)} : C(M;\Lambda^\downarrow(M)) \to CF(M,H;\Lambda^\downarrow(M))
$$
which are chain-homotopic to one another for different choices of $\chi$.
\end{lem}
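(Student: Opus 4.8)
The plan is to establish two things: that the map $(\ref{Piunikin})$, which a priori is only $\Lambda^\downarrow$-linear, already carries the de Rham complex over the subfield $\Lambda^\downarrow(M)$ into the subcomplex $CF(M,H;\Lambda^\downarrow(M))$; and that for two elongation functions the resulting maps are chain homotopic through a $\Lambda^\downarrow(M)$-linear homotopy.

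For the first claim, fix $\gamma \in \mathrm{Per}(H)$ and compute the $[\gamma,w_\gamma]$-component of $\CP_{(H_\chi,J_\chi)}(h)$ for a real differential form $h$. Regrouping the sum in $(\ref{Piunikin})$ according to the class $[w] \in \pi_2(\gamma)$ and writing $[\gamma,w] = q^{c(w)}[\gamma,w_\gamma]$ with $c(w) = \int_{\overline{w_\gamma}\# w}\omega$, this component equals $\sum_{[w]}\bigl(\int_{\CM(H_\chi,J_\chi;*,[\gamma,w])^{\frak s}} {\rm ev}_{-\infty}^*h\bigr)\, q^{c(w)}$. As in the proof of Lemma $\ref{Gomegaaction}$, each exponent $c(w)$ lies in $G(M,\omega)$ because two cappings of the same orbit differ by a spherical class; each coefficient is a real number, being the integral of a real form over the zero locus of a multisection of a Kuranishi space (\cite{fooo:toric1} Appendix C); and the series is a well-defined element of $\Lambda^\downarrow$ by the same Gromov--Floer compactness argument that was used to see that $(\ref{Piunikin})$ defines an element of $CF(M,H;\Lambda^\downarrow)$. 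Hence every component lies in $\Lambda^\downarrow(M)$, so $\CP_{(H_\chi,J_\chi)}$ maps $\Omega(M)$ into $CF(M,H;\Lambda^\downarrow(M))$; extending scalars gives the asserted $\Lambda^\downarrow(M)$-linear map $\CP_{(H_\chi,J_\chi),\#} : C(M;\Lambda^\downarrow(M)) \to CF(M,H;\Lambda^\downarrow(M))$, and it is a chain map because $\CP_{(H_\chi,J_\chi)}$ is one and $\partial_{(H,J)}$ preserves $CF(M,H;\Lambda^\downarrow(M))$ by the preceding Lemma.

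For the chain homotopy, let $\chi_0, \chi_1 \in \CK$. Using the convexity of $\CK$, pick a smooth path $s \mapsto \chi_s$, $s \in [0,1]$, from $\chi_0$ to $\chi_1$, and form the parametrized moduli space $\CM(\{(H_{\chi_s},J_{\chi_s})\}_{s};*,[\gamma,w])$ of pairs $(s,u)$ with $u$ a solution of the $\chi_s$-version of $(\ref{eq:HJCR2})$ in the class $[w]$, asymptotic to $\gamma$ at $+\infty$. Exactly as in Proposition $\ref{isomPiukura}$, this space carries an oriented Kuranishi structure with corners of virtual dimension $\mu_H([\gamma,w]) + n + 1$, on which ${\rm ev}_{-\infty}$ is weakly submersive, and whose boundary consists of the fibers over $s = 0$ and $s = 1$ (recovering the two Piunikhin moduli spaces) together with the split strata $\CM(\{(H_{\chi_s},J_{\chi_s})\}_{s};*,[\gamma',w']) \times \CM(H,J;[\gamma',w'],[\gamma,w])$ coming from a Floer trajectory breaking off at $+\infty$. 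Choosing multisections $\frak s$ compatibly with this boundary, one defines the degree-shifted map
$$
\mathcal H(h) = \sum_{[\gamma,w]} \left(\int_{\CM(\{(H_{\chi_s},J_{\chi_s})\}_{s};*,[\gamma,w])^{\frak s}} {\rm ev}_{-\infty}^*h\right)[\gamma,w],
$$
and Stokes' theorem over this parametrized moduli space (\cite{fooo:toric1} Lemma C.9), together with the boundary description, yields
$$
\CP_{(H_{\chi_1},J_{\chi_1}),\#} - \CP_{(H_{\chi_0},J_{\chi_0}),\#} = \partial_{(H,J)}\circ \mathcal H + \mathcal H \circ \partial.
$$
The same discussion as in the first claim shows $\mathcal H$ is $\Lambda^\downarrow(M)$-linear, so the two maps are chain homotopic over $\Lambda^\downarrow(M)$.

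The only substantive work is the construction of the parametrized moduli space together with its Kuranishi structure, orientations, and boundary-compatible multisections; this is a routine variant of Propositions $\ref{connkura}$ and $\ref{isomPiukura}$, and the ``running-out'' phenomenon noted in the Remark following the chain-homotopy-equivalence lemma is dealt with in the same way (work over $\Lambda^\downarrow_0$ up to a fixed energy level and pass to the inductive limit). The only point peculiar to this lemma is the elementary bookkeeping that identifies $G(M,\omega)$ as the monoid of exponents that can occur, carried out above; I do not expect this to be an obstacle.
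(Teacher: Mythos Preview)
Your argument is correct and is precisely the unpacking of what the paper means by ``This is immediate from definition of $\CP_{(H_\chi,J_\chi),\#}$'': the paper gives no further detail, but the content is exactly your observation that the $q$-exponents appearing in $(\ref{Piunikin})$ are symplectic areas of spherical classes and hence lie in $G(M,\omega)$, together with the standard parametrized-moduli-space cobordism for the chain homotopy (which the paper later spells out in the bulk-deformed setting, e.g.\ Lemma~\ref{parakuralem}).
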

This is immediate from the definition of $\CP_{(H_\chi,J_\chi)}$.
Therefore this together with Theorem \ref{Piuiso} gives rise to an isomorphism
\begin{equation}
\CP_{(H_\chi,J_\chi),\ast} : H_*(M;\Lambda^\downarrow(M))
\cong HF_*(M,H)
\end{equation}
where the right hand side is the homology of $(CF(M,H),\partial)$.
This isomorphism does not depend on the choice of $\chi$'s. We also use the
isomorphism, which is the $\Lambda-\Lambda^{\downarrow}$ linear extension of
the Poincar\'e duality isomorphism, denoted by
$$
\flat: QH^*(M;\Lambda) = H^*(M) \otimes \Lambda \to H_*(M) \otimes \Lambda^\downarrow \cong H_*(M,\Lambda^\downarrow)
$$
which is nothing but the map induced by first taking the Poincar\'e duality and then
changing the formal variable $T \to q = T^{-1}$. For the simplicity
of notation, we will just denote $QH^*(M) = QH^*(M;\Lambda)$.
\par
The filtration $F^{\lambda}CF(M,H;\Lambda^\downarrow)$
induces a filtration $F^{\lambda}CF(M,H)$ on $CF(M,H)$ in an obvious way.
\begin{defn}\index{$\rho(\frak x)$}
\begin{enumerate}
\item
Let $\frak x \in HF(H,J)$ be any nonzero Floer homology class.
We define its {\it spectral invariant} \index{spectral invariant!definition}$\rho(\frak x)$ by
$$
\rho(\frak x)
= \inf \{\lambda \mid x \in F^{\lambda}CF(M,H;\Lambda^\downarrow),
\,\, \partial_{(H,J)} x = 0, \,\, [x] = \frak x\}.
$$
\item
If $a \in H^*(M;\Lambda(M))$
and $H$ is a nondegenerate time-dependent Hamiltonian, we define
the {\it spectral invariant} $\rho(H;a)$ by
$$
\rho(H,J;a) = \rho(a^\flat_H), \quad a^\flat_H: = \CP_{(H_\chi,J_\chi),\ast}(a^\flat)
$$
where the right hand side is defined in $(1)$ and $a^\flat$ is the Poincar\'e dual of the cohomology class $a$. See Notations and Conventions (22).
(Here $\Lambda(M) \subset \Lambda$ is a subring
which is identified with $\Lambda^{\downarrow}(M) \subset \Lambda^{\downarrow}$
by the canonical isomorphism $\Lambda \cong \Lambda^{\downarrow}$.)
\index{$\Lambda(M)$}
\end{enumerate}
\end{defn}
It is proved in \cite{oh:alan,oh:minimax} that $\rho(H,J;a)$ is
independent of $J$. The same can be proved in general under other choices involved
in the definition such as the abstract perturbations
in the framework of Kuranishi structure \emph{as long as the Hamiltonian
$H$ is fixed}. So we omit $J$ from notation and just denote it by
$\rho(H;a)$.

We introduce the following standard invariants associated to the
Hamiltonian $H:[0,1] \times M \to \R$ called the {\it positive and  negative parts of Hofer norm}
$E^\pm(H)$ \index{Hofer norm} \index{Hofer norm!negative part}\index{Hofer norm!positive part}
\bea\label{eq:=-Hofernorm}
\index{$E^+(H)$} \index{$E^-(H)$}
E^+(H)
& : = & \int_{t\in S^1} \max_x H_t \, dt \\
E^-(H)
& : = & \int_{t\in S^1} - \min_x H_t \, dt
\eea
for any Hamiltonian $H$. We have the {\em Hofer norm} \index{Hofer norm! Hamiltonian} \index{$\Vert H\Vert$}
$\|H\| = E^+(H) + E^-(H)$. We like to emphasize that $H$ is not necessarily
one-periodic time-dependent family.
\begin{lem}\label{lem:E+rhoE-}
We have
$$
-E^+(H'-H) \leq \rho(H';a) - \rho(H;a) \leq
E^-(H'-H).
$$
\end{lem}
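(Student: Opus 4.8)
The plan is to reduce the two-sided estimate to a single one-sided inequality by symmetry, and then to prove the one-sided inequality by a direct comparison of action values along Floer trajectories. First I would record the elementary reduction: the inequality $\rho(H';a) - \rho(H;a) \le E^-(H'-H)$ applied with the roles of $H$ and $H'$ exchanged gives $\rho(H;a)-\rho(H';a) \le E^-(H-H')$, and since $E^-(H-H') = E^+(H'-H)$ (as $-\min_x(-(F_t)) = \max_x F_t$ for $F = H'-H$), this is exactly $-E^+(H'-H) \le \rho(H';a)-\rho(H;a)$. So it suffices to prove the single inequality $\rho(H';a) \le \rho(H;a) + E^-(H'-H)$.

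For the one-sided estimate I would use the standard homotopy/continuation argument adapted to the present chain-level set-up. Fix a generic path $\{H^s\}_{s\in[0,1]}$ of nondegenerate Hamiltonians from $H$ to $H'$ (e.g. the linear path, perturbed to nondegeneracy), and the associated continuation map $\Phi: CF(M,H;\Lambda^\downarrow) \to CF(M,H';\Lambda^\downarrow)$ defined by counting solutions of the $s$-elongated Cauchy-Riemann equation, in the same Kuranishi/multisection framework used for $\partial_{(H,J)}$ in Section \ref{subsec:boundary}. This $\Phi$ is a chain map inducing the canonical isomorphism on homology compatibly with the Piunikhin maps $\CP_{(H_\chi,J_\chi)}$, hence it sends the class $\CP_{(H_\chi,J_\chi),\ast}(a^\flat)$ to $\CP_{(H'_\chi,J'_\chi),\ast}(a^\flat)$. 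The key analytic input is the energy identity for continuation trajectories: if $u$ contributes to $\Phi$ from $[\gamma,w]$ to $[\gamma',w']$, then a computation identical in spirit to \eqref{energyincrease} gives
\[
\CA_{H'}([\gamma',w']) \le \CA_H([\gamma,w]) + \int_0^1 \max_x\bigl(H_t - H'_t\bigr)\, dt,
\]
the extra term coming from the $\partial_s H^s$ contribution, bounded above by $\int_0^1 \max_x (H_t-H'_t)\,dt = E^-(H'-H)$. Consequently $\Phi\bigl(F^\lambda CF(M,H;\Lambda^\downarrow)\bigr) \subset F^{\lambda + E^-(H'-H)} CF(M,H';\Lambda^\downarrow)$. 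Applying this to a cycle $x$ representing $\CP_{(H_\chi,J_\chi),\ast}(a^\flat)$ with $\frak v_q(x)$ close to $\rho(H;a)$ yields a cycle $\Phi(x)$ representing the corresponding class for $H'$ with $\frak v_q(\Phi(x)) \le \rho(H;a) + E^-(H'-H) + \epsilon$, giving $\rho(H';a) \le \rho(H;a)+E^-(H'-H)$.

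The main obstacle I anticipate is not conceptual but bookkeeping: one must set up the continuation map carefully in the virtual framework so that the filtration-shift estimate survives the choice of multisections (the perturbed moduli spaces must still be cut out with a uniform energy bound compatible with the inductive construction over symplectic area, as in Proposition \ref{connkura} and the discussion following \eqref{boundaryop}), and one must check that the homotopy class bookkeeping $w\#u \sim w'$ correctly tracks the action values so that the energy inequality is literally an inequality between $\CA_{H'}([\gamma',w'])$ and $\CA_H([\gamma,w]) + E^-(H'-H)$ rather than an inequality modulo periods. There is also the minor point that $H,H'$ in the statement need not be one-periodic, so strictly one should first verify that $\rho(H;a)$ extends to such $H$ by $C^0$-continuity (as recalled in the introduction) and that the estimate for smooth nondegenerate one-periodic Hamiltonians passes to the limit; since both sides are continuous in $H$ for the $C^0$-topology this is routine. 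None of these steps require new ideas beyond those already deployed in \cite{oh:alan,oh:minimax}; the work is in phrasing them within the Kuranishi-structure conventions fixed earlier in the paper.
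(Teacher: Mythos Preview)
Your approach is essentially the same as the paper's: the paper defers this lemma to Theorem~\ref{contspect} (the bulk-deformed version, which specializes to the present statement at $\frak b=0$), where it takes the linear interpolation $F^s = H + s(H'-H)$, builds the continuation chain map $\mathcal P^{\frak b}_{(F^\chi,J^\chi)}$, proves the action-shift estimate $\CA_{H'}([\gamma',w']) - \CA_H([\gamma,w]) \le E^-(H'-H)$ via the same energy computation you outline (Lemma~\ref{connectinghomofilt}), checks compatibility with the Piunikhin maps (Proposition~\ref{compaticompositte}), and then obtains the other inequality by exchanging $H$ and $H'$. One small remark: your parenthetical ``perturbed to nondegeneracy'' for the intermediate $H^s$ is unnecessary---only the endpoints $H,H'$ need be nondegenerate for the continuation map to be defined, and the paper uses the unperturbed linear path, which is what makes the energy estimate come out exactly as $E^-(H'-H)$ rather than $E^-(H'-H)+\epsilon$.
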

This lemma enables one to
extend, by continuity, the definition of $\rho(H;a)$ to any Hamiltonian $H:S^1 \times M \to \R$
which is not necessarily nondegenerate.
Lemma \ref{lem:E+rhoE-} is proved in a generalized form as Theorem \ref{contspect}.

The following homotopy invariance is also proved
in \cite{oh:alan,oh:minimax,usher:specnumber}.

\begin{thm}[{\rm Homotopy invariance}]
\label{homotopyinvtheorem} Suppose $H, \, H'$ are normalized.
If $H \sim H'$ then
$
\rho(H;a) = \rho(H';a).
$
\end{thm}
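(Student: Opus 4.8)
The plan is to reduce everything to the already-established spectral-space identity $\mbox{\rm Spec}(H) = \mbox{\rm Spec}(H')$ for homotopic normalized Hamiltonians, together with the continuity estimate of Lemma \ref{lem:E+rhoE-} applied along a homotopy. First I would fix a smooth path $\{H^s\}_{s \in [0,1]}$ of one-periodic Hamiltonians with $H^0 = H$, $H^1 = H'$, and $\phi_{H^s}^1 = \phi_H^1$ for all $s$, so that in particular each $H^s$ is normalized (after subtracting the mean, which does not change the time-one map and only translates the action functional by a constant that we may absorb; but since we assume $H, H'$ normalized and normalization is canonical, we may take each $H^s$ normalized). The function $s \mapsto \rho(H^s;a)$ is continuous by Lemma \ref{lem:E+rhoE-}: indeed $|\rho(H^{s'};a) - \rho(H^s;a)| \le \max(E^+(H^{s'}-H^s), E^-(H^{s'}-H^s)) \le \|H^{s'}-H^s\|$, which tends to $0$ as $s' \to s$ by smoothness of the path.

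The key structural point is that for each $s$, the value $\rho(H^s;a)$ lies in $\mbox{\rm Spec}(H^s)$. This is a standard fact about spectral invariants: the mini-max value $\rho(\frak x)$ over cycles representing a fixed nonzero homology class is attained, hence equals the action of some critical point $[\gamma,w] \in \mathrm{Crit}(\CA_{H^s})$; I would cite the non-degenerate case directly and handle the general (possibly degenerate) $H^s$ along the path by a further limiting argument, or — more cleanly — first prove the theorem assuming $H, H'$ and the whole path are non-degenerate, then remove the non-degeneracy hypothesis by $C^0$-approximation using Lemma \ref{lem:E+rhoE-} once more. Now by the Proposition preceding Theorem \ref{homotopyinvtheorem}, $\mbox{\rm Spec}(H^s) = \mbox{\rm Spec}(H^0) = \mbox{\rm Spec}(H)$ is independent of $s$. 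Thus $s \mapsto \rho(H^s;a)$ is a continuous function on the connected interval $[0,1]$ taking values in the \emph{fixed} set $\mbox{\rm Spec}(H)$.

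It then remains to argue that $\mbox{\rm Spec}(H)$ is totally disconnected — or at least has empty interior and meager complement in no interval — so that a continuous $[0,1]$-valued function into it is constant. For rational $(M,\omega)$ the set $G(M,\omega)$ is discrete, hence $\mbox{\rm Spec}(H)$ is a discrete (closed, nowhere dense) subset of $\R$ by Lemma \ref{Gomegaaction}, and a continuous function into a discrete set on a connected domain is constant, giving $\rho(H;a) = \rho(H^0;a) = \rho(H^1;a) = \rho(H';a)$. For irrational $(M,\omega)$, where $G(M,\omega)$ may be dense, the measure-zero (indeed Lebesgue-null, being a countable union of translates of a countable set) property of $\mbox{\rm Spec}(H)$ is what one uses: a continuous function on a connected set whose image lies in a set of measure zero must be constant, because the image of a connected set under a continuous real function is an interval, and the only intervals of measure zero are points. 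The hard part — and the step I expect to be the main obstacle — is the claim that $\rho(H^s;a) \in \mbox{\rm Spec}(H^s)$ genuinely holds, including the reduction from possibly degenerate intermediate $H^s$ to the non-degenerate setting where the mini-max is manifestly attained; this requires care with the chain-level construction and a two-step approximation (approximate the path by a non-degenerate one, apply the non-degenerate result, then pass to the limit using the Hofer-norm continuity of $\rho$). I would refer to the treatment in \cite{oh:alan,oh:minimax,usher:specnumber} for the detailed verification of this attainment property and its stability.
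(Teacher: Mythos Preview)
Your proposal is correct and follows essentially the same approach as the paper: continuity of $s \mapsto \rho(H^s;a)$ via the Hofer-norm estimate, constancy of $\mathrm{Spec}(H^s)$ along the homotopy, measure-zero of the spectrum, and the interval argument. One simplification you are missing: your worry about degeneracy of the intermediate $H^s$ is unfounded, because by definition of the homotopy $\phi_{H^s}^1 \equiv \psi_H$ for all $s$, so nondegeneracy of $\psi_H$ automatically gives nondegeneracy of every $H^s$, and the spectrality axiom applies directly along the whole path without any approximation; the degenerate endpoint case is then handled by a single $C^0$-limit at the very end.
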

We will prove it in Section \ref{sec:proofhomotopy} for completeness. This homotopy
invariance enables one to extend the definition of $\rho(H;a)$ to non-periodic
$H:[0,1] \times M \to \R$.
\par
Consider the set of smooth functions $\zeta:[0,1] \to [0,1]$
satisfying $\zeta(0) = 0$, $\zeta(1) = 1$
and $\zeta$ is constant in a neighborhood of $0$ and $1$. Note that this set is convex and so contractible.
Let $H: [0,1] \times M \to \R$ and $\zeta:[0,1] \to [0,1]$ be such a function. Denote
$$
H^\zeta(t,x) = \zeta'(t)H(\zeta(t),x).
$$
We note that $H^\zeta$ may be regarded as a map for $S^1 \times M$
since $H = 0$ in a neighborhood of $\{0,1\} \times M$.
Moreover the above mentioned convexity implies that
$H^{\zeta_1} \sim H^{\zeta_2}$.
Therefore $\rho(H^{\zeta_1};a) = \rho(H^{\zeta_2};a)$ for any such $\zeta_i$.
We define the common number to be $\rho(H;a)$.
This gives rise to the map
\begin{equation}\label{spH}
\rho: C^\infty([0,1] \times M,\R) \times (QH^*(M) \setminus\{0\}) \to \R.
\end{equation}
Its basic properties are summarized in the next theorem.
For given $a \in QH^*(M)$, we represent it by
$$
a = \sum_{g\in G(M,\omega)} a_g T^{-g}  = \sum_{g\in G(M,\omega)} a_g q^g, \quad a_g \in H^*(M;\C).
$$
We define \index{$\lambda_q$}
\begin{equation}\label{defvq}
\lambda_q(a): =  \max \{g \in \R ~|~  a_g \neq 0 \}.
\end{equation}
\begin{thm}\label{thm:axiomsh-H} \index{spectral invariant!axiom}
{\rm (Oh)} Let $(M,\omega)$ be any closed symplectic manifold.
Then the map $\rho$ in $(\ref{spH})$ satisfies the following properties:
Let $H, \, H' \in C^\infty([0,1] \times M,\R)$ and $0 \neq a \in QH^*(M)$.
Then
\begin{enumerate}
\item {\rm (Nondegenerate spectrality)}
$\rho(H;a) \in \mbox{\rm Spec}(H)$,
if $\widetilde \psi_H$ is nondegenerate.
\item {\rm (Projective invariance)}
$\rho(H;\lambda a) = \rho(H;a)$ for any
$0 \neq \lambda \in \C$.
\item {\rm (Normalization shift)} For any function $c:[0,1] \to \R$,
$\rho(H + c(t);a) = \rho(H;a) - \int_0^1 c(t)dt$.
\item {\rm (Normalization)}
$\rho(\underline 0;a) = \frak v_q(a) = - \frak v_T(a)$,
where $\underline 0$ is the identity in
$\widetilde{\rm Ham}(M,\omega)$.
\index{$\underline 0$}
\item {\rm (Symplectic
invariance)} $\rho(H\circ \eta;\eta^*a) =
\rho(\widetilde\phi;a)$ for any symplectic diffeomorphism $\eta$.
In particular, if $\eta \in {\rm Symp}_0(M,\omega)$,
then we have $\rho(H\circ \eta;a) =
\rho(H;a)$.
\item {\rm (Triangle inequality)}
$\rho(H\# H'; a\cup_Q b) \leq \rho(H;a) + \rho(H';b) $,
where $a\cup_Q b$ is the quantum cup product.
\item {\rm ($C^0$-hamiltonian continuity)} We have
$$
-E^+(H'-H) \leq \rho(H';a) - \rho(H;a) \leq
E^-(H'-H).
$$
\item {\rm (Additive triangle inequality)}
$\rho(H;a+b) \le \max\{\rho(H;a),\rho(H;b)\}$.
\end{enumerate}
\end{thm}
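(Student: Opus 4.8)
The plan is to establish the eight properties roughly in order of increasing depth, deducing the structural ones directly from the definition $\rho(H;a)=\rho(\CP_{(H_\chi,J_\chi),\ast}(a^\flat))$ and the Piunikhin isomorphism (Theorem~\ref{Piuiso}), and reserving the real work for the triangle inequality~(6), following the scheme of \cite{oh:alan}. Properties (2) and (8) are read off at the chain level: since $\CP_{(H_\chi,J_\chi),\ast}$ is $\Lambda^\downarrow(M)$-linear and $\frak v_q(\lambda x)=\frak v_q(x)$ for $\lambda\in\C\setminus\{0\}$, a Floer cycle representing $\CP_{(H_\chi,J_\chi),\ast}(a^\flat)$ at level $\le\lambda$ also represents $(\lambda a)^\flat$ at the same level, giving (2), while for cycles $x,y$ representing $\CP_{(H_\chi,J_\chi),\ast}(a^\flat)$ and $\CP_{(H_\chi,J_\chi),\ast}(b^\flat)$ with $\frak v_q(x),\frak v_q(y)$ near $\rho(H;a),\rho(H;b)$, the cycle $x+y$ represents $(a+b)^\flat$ and obeys $\frak v_q(x+y)\le\max\{\frak v_q(x),\frak v_q(y)\}$, giving (8). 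For (3) one notes that $X_{H+c(t)}=X_{H_t}$ since $c(t)$ is spatially constant, so $\mathrm{Per}(H+c)=\mathrm{Per}(H)$ and all moduli spaces for the Floer differential and for the Piunikhin map coincide with those of $H$, whereas $\CA_{H+c(t)}$ differs from $\CA_H$ by the constant $-\int_0^1 c(t)\,dt$ on critical points; hence the whole filtered complex and its Piunikhin chain map are unchanged apart from this overall action shift, which passes to $\rho$ (and to the non-periodic case after the $\zeta$-reparametrization reduction). For (5) a symplectic diffeomorphism $\eta$ carries a solution of the Floer (or Piunikhin) equation for $(H,J)$ to one for $(H\circ\eta,\eta^*J)$ by $u\mapsto\eta^{-1}\circ u$, and since $\eta^*\omega=\omega$ the action values, hence the filtration levels, are preserved; this gives an isometry of filtered complexes intertwining the Piunikhin maps with $\eta^*$ on $H^*(M)$, whence $\rho(H\circ\eta;\eta^*a)=\rho(H;a)$, and $\eta^*a=a$ when $\eta\in{\rm Symp}_0(M,\omega)$.

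For (1), any $x=\sum_\gamma x_\gamma[\gamma,w_\gamma]\in CF(M,H;\Lambda^\downarrow)$ has $\frak v_q(x)=\max_\gamma\bigl(\frak v_q(x_\gamma)+\CA_H([\gamma,w_\gamma])\bigr)$, and as $\frak v_q(x_\gamma)\in-G(M,\omega)$ each summand lies in $\mathrm{Spec}(H)$ by Lemma~\ref{Gomegaaction}; hence every candidate value in the infimum defining $\rho(\CP_{(H_\chi,J_\chi),\ast}(a^\flat))$ belongs to $\mathrm{Spec}(H)$. Since $\mathrm{Spec}(H)$ is closed in $\R$ for nondegenerate $H$ and the infimum is finite for $a\ne0$ --- finiteness of $\mathrm{Per}(H)$ bounds it above and the Novikov finiteness condition bounds it below --- it is attained in $\mathrm{Spec}(H)$, which is (1). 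Property (7) is Lemma~\ref{lem:E+rhoE-}, proved in the generalized form of Theorem~\ref{contspect} by the standard monotone-continuation argument: the continuation map from $(H,J)$ to $(H',J')$ shifts the action of a generator by at most $E^-(H'-H)$ from one side and at least $-E^+(H'-H)$ from the other (energy identity for the parametrized Floer equation with elongation), and commutes up to chain homotopy with the two Piunikhin maps, so it matches homology classes and transports the estimate to $\rho$. Finally (4): $\underline 0$ is degenerate, so one takes a $C^2$-small autonomous Morse perturbation $\varepsilon f$, whose only periodic orbits (for $\varepsilon$ small) are the critical points of $f$ as constant loops, for which the Floer complex is the quantum-completed Morse complex and the Piunikhin map is chain homotopic to the canonical inclusion; a direct Morse computation then shows the $q^g$-monomials of $a$ contribute generators at action level tending to $g$ as $\varepsilon\to0$, so $\rho(\varepsilon f;a)\to\max\{g\mid a_g\ne0\}=\frak v_q(a)$, and (7) yields $\rho(\underline 0;a)=\frak v_q(a)$.

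The remaining property (6) is the genuinely hard step. From the moduli spaces of finite-energy solutions of the Floer equation on a pair of pants whose three cylindrical ends are asymptotic to periodic orbits of $H$, $H'$ and $H\#H'$ --- carrying Kuranishi structures and compatible systems of multisections exactly as in Proposition~\ref{connkura} --- I would construct a chain map
$$
\ast\colon\ CF(M,H;\Lambda^\downarrow)\otimes_{\Lambda^\downarrow} CF(M,H';\Lambda^\downarrow)\longrightarrow CF(M,H\#H';\Lambda^\downarrow)
$$
(the chain-level pants product of \cite{schwarz1}) and prove two facts: \textbf{(a)} $\ast$ is \emph{additively filtered}, $\frak v_q(x\ast y)\le\frak v_q(x)+\frak v_q(y)$, which follows from the area/energy identity for pair-of-pants solutions forcing $\CA_{H\#H'}$ of the output orbit to be at most the sum of $\CA_H,\CA_{H'}$ of the two inputs up to a nonnegative energy term; and \textbf{(b)} on homology, transported through the Piunikhin isomorphisms and Theorem~\ref{Piuiso}, $\ast$ coincides with the quantum cup product $\cup_Q$ on $H^*(M;\Lambda^\downarrow(M))$. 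Granting (a) and (b): given $\epsilon>0$, choose cycles $x,y$ with $[x]=\CP_{(H_\chi,J_\chi),\ast}(a^\flat)$, $[y]=\CP_{(H'_\chi,J'_\chi),\ast}(b^\flat)$ and $\frak v_q(x)\le\rho(H;a)+\epsilon$, $\frak v_q(y)\le\rho(H';b)+\epsilon$; then $x\ast y$ is a cycle with $[x\ast y]=\CP_{(\cdots),\ast}((a\cup_Q b)^\flat)$ by (b) and $\frak v_q(x\ast y)\le\rho(H;a)+\rho(H';b)+2\epsilon$ by (a), so $\rho(H\#H';a\cup_Q b)\le\rho(H;a)+\rho(H';b)+2\epsilon$, and $\epsilon\to0$ gives (6). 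I expect (b) to be the main obstacle in this generality --- no semipositivity, no rationality --- since sphere and disk bubbling in the pair-of-pants moduli space must be absorbed by the virtual-cycle machinery so that the perturbed count genuinely computes $\cup_Q$; this is precisely the chain-level enhancement of Schwarz's construction that the foundational sections are designed to supply. All of the above is arranged to survive the insertion of an ambient cycle $\frak b$, and is reused verbatim for the spectral invariants with bulk.
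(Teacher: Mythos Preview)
Your outline for properties (2), (3), (5), (7), (8) and the triangle inequality (6) matches the paper's approach closely: the pants-product chain map, its filtration additivity (Lemma~\ref{monolemma} and Proposition~\ref{filtpresm2}), and its compatibility with $\cup_Q$ through the Piunikhin maps (Theorem~\ref{themprodcompati}) are exactly how the paper proceeds, and your sketch of the cobordism argument for (b) is the correct one. Your treatment of (4) via a $C^2$-small Morse perturbation is a legitimate alternative to the paper's two-sided estimate using $\CP$ and $\CQ$ (Proposition~\ref{oppositeineq}); either route works.

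There is, however, a genuine gap in your argument for (1). You assert that $\mathrm{Spec}(H)$ is closed in $\R$ for nondegenerate $H$, and conclude that an infimum of values in $\mathrm{Spec}(H)$ must lie in $\mathrm{Spec}(H)$. This is false in general: $\mathrm{Spec}(H)$ is a finite union of cosets of the period group $G(M,\omega)=\{\alpha\cap[\omega]:\alpha\in\pi_2(M)\}\subset\R$, and a subgroup of $\R$ is either discrete or dense. For an irrational symplectic manifold $G(M,\omega)$ is dense, so $\mathrm{Spec}(H)$ is a countable dense subset of $\R$, certainly not closed. In that case your argument yields only that $\rho(H;a)$ lies in the closure $\overline{\mathrm{Spec}(H)}=\R$, which is vacuous. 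This is precisely the difficulty that forced the passage from \cite{oh:alan} (rational case) to \cite{oh:minimax,usher:specnumber} (general case).

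The paper resolves this by Usher's purely algebraic lemma (Proposition~\ref{spectrarityalg}): one constructs a \emph{standard basis} of $\mathrm{Ker}\,\partial$ adapted to the filtration (Lemma~\ref{lemstandardbasis0}), and shows that for any homology class the infimum of $\frak v_q$ over representing cycles is actually \emph{attained} by the expansion in this basis, hence lies in the set $G'$ of possible levels. No topological closedness of $\mathrm{Spec}(H)$ is needed; the point is that the infimum over an infinite family of chains is realized by a single chain. Your ``Novikov finiteness condition bounds it below'' is also not quite an argument: finiteness of $\rho$ for $a\ne0$ is not automatic from the Novikov condition on a single chain, but is another consequence of the standard-basis machinery (or equivalently of the fact that $\CP$ is an isomorphism, so a nonzero class cannot be pushed to arbitrarily low filtration). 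Replace your paragraph on (1) with an appeal to this algebraic device and the proof goes through.
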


Theorem \ref{thm:axiomsh-H} is stated by the second named author \cite{oh:alan,oh:minimax}
in the general context but without detailed account on the construction of
virtual fundamental classes in the various moduli spaces entering in the proofs.
In the present paper, we provide these details in the framework of Kuranishi
structures \cite{fukaya-ono}. A purely algebraic treatment of the
statement (1) is given by Usher \cite{usher:specnumber}.

By considering the normalization $\underline H(t,x)$ of
$H(t,x)$ and the one-one correspondence between $\underline{H}$ and its
Hamiltonian path $\phi_H$, we can interpret
$\rho(H;a)$ as the invariant of the associated Hamiltonian path $\phi_H$ by setting
$$
\rho(\phi_H;a) := \rho(\underline H;a).
$$
The invariance of $H \mapsto \rho(H;a)$ under the equivalence relation $H\sim H'$
enables one to push this down to $\widetilde{\Ham}(M,\omega)$ which we denote $\rho (\phi_H;a)$.
We denote  the resulting map by
\begin{equation}\label{eq:sptildepsi}
\rho: \widetilde{\rm Ham}(M, \omega) \times (QH^*(M) \setminus\{0\}) \to \R.
\end{equation}
Its basic properties are summarized in the next theorem, which are
immediate translation of those stated in Theorem
\ref{thm:axiomsh-H}.

\begin{thm}\label{axiomsh}
Let $(M,\omega)$ be any closed symplectic manifold. Then the map
$\rho$ in $(\ref{eq:sptildepsi})$ has the following properties: Let
$\widetilde \psi, \widetilde \phi \in \widetilde{\rm Ham}(M,\omega)$
and $0 \neq a \in H^{*}(M; \Lambda(M))$.
\begin{enumerate}
\item {\rm (Nondegenerate spectrality)}
$\rho(\widetilde \psi;a) \in \mbox{\rm Spec}(\widetilde \psi)$,
if $\widetilde \psi$ is nondegenerate.
\item {\rm (Projective invariance)}
$\rho(\widetilde\phi;\lambda a) = \rho(\widetilde\phi;a)$ for any
$0 \neq \lambda \in \C$.
\item {\rm (Normalization)}
We have $\rho(\underline 0;a) =
\frak v_q(a)$.
\item {\rm (Symplectic
invariance)} $\rho(\eta \circ\widetilde\phi\circ \eta^{-1};\eta^*a) =
\rho(\widetilde\phi;a)$ for any symplectic diffeomorphism $\eta$. In particular, if $\eta \in {\rm Symp}_0(M,\omega)$,
then we have $\rho(\eta \circ\widetilde\phi\circ \eta^{-1};a) =
\rho(\widetilde\phi;a)$.
\item {\rm (Triangle inequality)} $\rho(\widetilde\phi \circ
\widetilde\psi; a\cup_Q b) \leq \rho(\widetilde\phi;a) +
\rho(\widetilde\psi;b) $,
where $a\cup_Q b$ is the quantum cup product.
\item {\rm ($C^0$-hamiltonian continuity)} We have
$$
 |\rho(\widetilde\phi;a) - \rho(\widetilde\psi;a)| \leq
 \max\{
 \|\widetilde\phi \circ \widetilde\psi^{-1} \|_+,
\|\widetilde\phi \circ \widetilde\psi^{-1} \|_-
\}
$$
where $\| \cdot\|_\pm$ is the positive and negative parts of
Hofer norm on $\widetilde{\rm Ham}(M,\omega)$.
\item {\rm (Additive triangle inequality)}
$\rho(\widetilde\phi;a+b) \le \max\{\rho(\widetilde\phi;a),\rho(\widetilde\phi;b)\}$.
\end{enumerate}
\end{thm}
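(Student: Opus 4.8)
The map \eqref{eq:sptildepsi} is well-defined by setting $\rho(\widetilde\psi;a):=\rho(\underline H;a)$ for any $H$ with $\widetilde\psi=[\phi_H]$: by Theorem \ref{homotopyinvtheorem} this does not depend on the choice of such $H$, and by construction $\rho(\widetilde\psi_H;a)=\rho(H;a)$ whenever $H$ is normalized. The plan is to deduce each of (1)--(7) from the correspondingly named item of Theorem \ref{thm:axiomsh-H}, by choosing a normalized representative of the element(s) of $\widetilde{\rm Ham}(M,\omega)$ involved and then checking that the relevant group operations are reflected, at the level of normalized Hamiltonians, by operations that preserve the normalization condition \eqref{eq:normalized}.

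Items (1), (2), (3) and (7) are immediate. Indeed (1) is Theorem \ref{thm:axiomsh-H}(1) together with the definition $\mbox{\rm Spec}(\widetilde\psi)=\mbox{\rm Spec}(\underline H)$; (2) and (7) are Theorem \ref{thm:axiomsh-H}(2) and (8) read off from a normalized representative; and (3) is Theorem \ref{thm:axiomsh-H}(4), since the identity $\underline 0\in\widetilde{\rm Ham}(M,\omega)$ is represented by the zero Hamiltonian, which already satisfies \eqref{eq:normalized}.

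For (4) and (5) the only thing to verify is that conjugation and multiplication in $\widetilde{\rm Ham}(M,\omega)$ are realized by normalization-preserving operations on Hamiltonians. For (4): if $H$ is normalized and $\eta$ is a symplectic diffeomorphism, then $H\circ\eta$ is again normalized because $\eta^*\omega^n=\omega^n$, and $\phi^t_{H\circ\eta}=\eta^{-1}\phi_H^t\eta$, so $[\phi_{H\circ\eta}]$ is the $\eta$-conjugate of $\widetilde\psi_H$; applying Theorem \ref{thm:axiomsh-H}(5) to $H\circ\eta$ (with the induced action of $\eta$ on $H^*(M;\Lambda^\downarrow(M))$) then gives (4). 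For (5): if $H$ and $H'$ are normalized, then $H\# H'$ is normalized, and $[\phi_{H\# H'}]$ is the product of $[\phi_H]$ and $[\phi_{H'}]$ in $\widetilde{\rm Ham}(M,\omega)$; Theorem \ref{thm:axiomsh-H}(6) applied to $(H,H')$ therefore yields (5).

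The one item that will require a genuine, if short, argument is (6), the $C^0$-Hamiltonian continuity, because the Hofer pseudo-norms $\|\cdot\|_\pm$ on $\widetilde{\rm Ham}(M,\omega)$ are defined as infima over (normalized) representatives, whereas Theorem \ref{thm:axiomsh-H}(7) controls $\rho$ through the $L^\infty$-size of the \emph{difference} $H'-H$ of two Hamiltonians, and the Hamiltonian generating $\widetilde\phi\circ\widetilde\psi^{-1}$ is not $H-H'$. Here I would proceed as follows. Put $\widetilde\eta:=\widetilde\phi\circ\widetilde\psi^{-1}$, choose a normalized $G$ with $[\phi_G]=\widetilde\eta$ and a normalized $H'$ with $[\phi_{H'}]=\widetilde\psi$, arranged so that $G\# H'$ represents $\widetilde\phi$. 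Both $G\# H'$ and $\underline 0\# H'$ are normalized, and $\underline 0\# H'$ represents $\widetilde\psi$ (it is a reparametrization of $\phi_{H'}$). The difference $(G\# H')-(\underline 0\# H')$ is supported in $t\in[0,1/2]$, where it equals $2G(2t,\cdot)$, so a change of variables gives $E^{\pm}\bigl((G\# H')-(\underline 0\# H')\bigr)=E^{\pm}(G)$, and Theorem \ref{thm:axiomsh-H}(7) then gives
$$
-E^+(G) \le \rho(\widetilde\phi;a)-\rho(\widetilde\psi;a) \le E^-(G).
$$
Taking the infimum over all normalized $G$ representing $\widetilde\eta$ --- which computes $\|\widetilde\eta\|_+$ and $\|\widetilde\eta\|_-$ --- and using $\sup_G(-E^+(G))=-\inf_G E^+(G)$, one obtains $|\rho(\widetilde\phi;a)-\rho(\widetilde\psi;a)|\le\max\{\|\widetilde\eta\|_+,\|\widetilde\eta\|_-\}$, which is (6); Theorem \ref{thm:axiomsh-H}(3) is what is used to check that these pseudo-norms are indeed computed by normalized representatives. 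As this discussion shows, all of the real content lies in Theorem \ref{thm:axiomsh-H}; the only obstacle is the bookkeeping of normalization and of the dictionary between multiplication, inversion and conjugation on $\widetilde{\rm Ham}(M,\omega)$ and the operations $\#$ and $H\mapsto\widetilde H$ on Hamiltonians, together with the standard reparametrization homotopies relating $H\# H'$ and $\underline 0\# H'$ to $H$ and $H'$.
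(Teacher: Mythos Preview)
Your proposal is correct and is exactly the ``immediate translation'' the paper alludes to: the paper does not spell out a separate proof of Theorem \ref{axiomsh} but states that its properties follow directly from those in Theorem \ref{thm:axiomsh-H} (and defers details to the bulk-deformed version, Theorem \ref{axiomshbulk}). Your careful handling of item (6) via the concatenation trick $G\# H'$ versus $\underline 0\# H'$ is a clean way to make the translation precise; the only minor quibble is that the appeal to Theorem \ref{thm:axiomsh-H}(3) at the end is not quite the relevant point---the restriction to normalized $G$ in computing $\|\widetilde\eta\|_\pm$ is a matter of the definition \eqref{eq:tildepsi=-norm} rather than the normalization-shift property of $\rho$.
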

Here we explain the meaning of the negative and positive parts of {\em Hofer norm}
\index{Hofer norm! Hamiltonian diffeomorphism}
$\|\widetilde \psi\|_\pm$. For $\widetilde \psi \in \widetilde{\rm Ham}(M,\omega)$, we define
\be\label{eq:tildepsi=-norm}
\|\widetilde \psi\|_\pm = \inf_H \{E^\pm(H) \mid   [\phi_H] = \widetilde \psi\}
\ee
respectively, and the (strong) \emph{Hofer norm} $\|\widetilde\psi\|$ is
defined by
\be\label{eq:strongHofernorm}
\|\widetilde \psi\| = \inf_H \{\|H\| \mid   [\phi_H^1] = \widetilde \psi \}.
\ee
There is another norm, sometimes called the \emph{medium Hofer norm},\index{Hofer norm!medium} which is defined by
\be\label{eq:mediumHofernorm}
\|\widetilde \psi\|_{\text{med}} = \|\widetilde\psi\|_+ + \|\widetilde\psi\|_-.
\ee
Obviously we have
$$
|\rho(\widetilde \psi;a) - \rho(\widetilde{id};a)| \leq \|\widetilde \psi\|_{\text{med}} \leq \|\widetilde \psi \|
$$
for all $a \in QH^*(M;\Lambda(M))$.
Here $\widetilde{id}$ stands for the constant Hamiltonian isotopy at the identity.
If $a \in H^*(M;\C) \subset QH^*(M;\Lambda(M))$, we find that
$$
|\rho(\widetilde \psi;a)| \leq \|\widetilde \psi\|_{\text{med}} \leq \|\widetilde \psi\|.
$$
See the introduction of \cite{oh:dmj} for the related discussion.

\begin{rem}
There is another important property, that is
compatibility with Poincar\'e duality observed by Entov-Polterovich \cite{EP:morphism}
in the case $M$ is semi-positive and $M$ is rational.
Those assumptions are removed by Usher \cite{usher:duality}.
We will discuss some enhancement of this point later in Section \ref{sec:duality}.
\end{rem}
We refer readers to the above references for
the proof of Theorems \ref{axiomsh}. Later we will prove
its enhancement including bulk deformations.
Here are some remarks.
\begin{rem}\label{rem:unnormalized} We like to note that the constructions of $\rho(H;a)$
given in \cite{oh:alan} can be carried over whether or not $H$ is normalized. We need the normalization
only to descend the spectral function $H \mapsto \rho(H;a)$ to
the universal covering space $\widetilde{\rm Ham}(M,\omega)$ as in
Theorem \ref{axiomsh}.
\end{rem}

\begin{rem}\label{rem:EPversusFOOO}
In \cite{EP:morphism,EP:states,EP:rigid},
Entov-Polterovich used different sign conventions from the ones \cite{oh:alan}
and the present paper. If we compare our convention with
the one from \cite{EP:rigid}, the only difference lies in the
definition of Hamiltonian vector field: our definition, which is the
same as that of \cite{oh:alan}, is given by
$
dH = \omega(X_H, \cdot)
$
while \cite{EP:rigid} takes
$
dH = \omega(\cdot, X_H).
$
Therefore by replacing $H$ by $-H$, one has the same set of
closed loops as the periodic solutions of the corresponding
Hamiltonian vector fields.

This also results in the difference in the definition of action functional:
our definition, the same as the one in \cite{oh:alan}, is given by
\be\label{eq:EPAH1}\index{action functional}
\CA_H([\gamma,w]) = - \int w^* \omega - \int_0^1 H(t,\gamma(t))\, dt
\ee
while
\cite{EP:morphism} and \cite{EP:rigid} takes
\be\label{eq:EPAH2}
- \int w^* \omega + \int_0^1 H(t,\gamma(t))\, dt
\ee
as its definition.
We denote the definition \eqref{eq:EPAH2} by $\widetilde{\CA}_H([\gamma,w])$
for the purpose of comparison of the two below.

Therefore \emph{under the change of $H$ by $-H$}, one has the same set of
$\Crit(\CA_H)$ and $\Crit(\widetilde{\CA}_H)$ with the same action integrals.
Since both conventions use the same associated almost K\"ahler
metric $\omega(\cdot, J \cdot)$, the associated perturbed Cauchy-Riemann
equations are exactly the same.

In addition, Entov and Polterovich \cite{EP:morphism,EP:states}
use the notation $c(\frak a,H)$ for the spectral numbers where
$\frak a$ is the quantum \emph{homology} class. Our $\rho(H;a)$ is nothing but
\be\label{eq:cversusrho}
\rho(H;a) = c(a^\flat;\widetilde H) = c(a^\flat;\overline H)
\ee
where $a^\flat$ is the homology class Poincar\'e dual to
the cohomology class $a$  and $\widetilde H$ is the function defined by
$\widetilde H(t,x) = -H(1-t,x)$ and $\overline H$ is the inverse Hamiltonian of $H$ given by
\be\label{eq:inverseH}
\overline H(t,x) = - H(t,\phi_H^t(x)).
\ee
The second identity of \eqref{eq:cversusrho}
follows from the fact that $\widetilde H \sim \overline H$. More precisely,
$\widetilde H$ generates the flow
$
\phi_{\widetilde H}:\phi_H^{1-t}\circ (\phi_H^1)^{-1}
$
which can be deformed to $\phi_{\overline H}: t \mapsto (\phi_H^t)^{-1}$.
In fact the following explicit formula
provides such a deformation
\be\label{eq:phist}
\phi_s^t = \begin{cases} \phi_H^{s-t}\circ (\phi_H^s)^{-1} \quad & \mbox{for } \, 0 \leq t \leq s \\
(\phi_H^t)^{-1} \quad & \mbox{for }\, s \leq t \leq 1
\end{cases}
\ee
for $0 \leq s \leq 1$.
(See the proof of \cite[Lemma 5.2]{oh:dmj}  for this formula.)
\par
With these understood, one can translate all the statements
in \cite{EP:morphism,EP:states} into the ones in terms of our notations.
\end{rem}

\part{Bulk deformations of Hamiltonian Floer homology and spectral invariants}
\label{part:bulk-hamFloer}

In this part, we deform Hamiltonian Floer homology by
the element $\frak b \in H^{{\rm even}}(M,\Lambda_0)$
in a way similar to the case of Lagrangian Floer theory
in \cite[Section 3.8]{fooo:book1}.
We will denote the resulting $\frak b$-deformation by
$
HF_*^{\frak b}(H,J_0;\Lambda).
$
As a $\Lambda $-module, it is isomorphic to the singular
homology $H_*(M;\Lambda)$ for any $\frak b$. Recall that we regard
the de Rham complex as a chain complex \eqref{eq:deRhamchain}.
\par
Using the filtration we obtain a version of spectral invariants,
the spectral invariants with bulk deformation,
which contains various new information as we demonstrate later in Part 5.

\section{Big quantum cohomology ring: review}
\label{sec:bigquantum} \index{big quantum cohomology}

In this section, we exclusively denote by $J_0$ the \emph{time-independent}
almost complex structures.

The theory of spectral invariants explained in  Part 1 is closely related to the
{\it (small)} quantum cohomology.
The spectral invariant with bulk we are going to construct is closely related to the
{\it big} quantum cohomology,
which we review in this section.
\par
Let $(M,\omega)$ be a closed symplectic manifold and $J_0$ a compatible
(time independent) almost complex structure. For $\alpha \in H_2(M;\Z)$ let
$\mathcal M_{\ell}^{\rm cl}(\alpha;J_0)$ \index{$\mathcal M_{\ell}^{\rm cl}(\alpha;J_0)$}
 be the moduli space of
$J_0$-stable maps of genus zero, with $\ell$-marked points and of homology
class $\alpha$.
(See \cite[Definition 7.7]{fukaya-ono}  for example for the
definition of this moduli space.)
There exists an evaluation map
$$
\text{\rm ev}: \mathcal M_{\ell}^{\rm cl}(\alpha;J_0) \to M^{\ell}.
$$
The moduli space $\mathcal M_{\ell}^{\rm cl}(\alpha;J_0)$
\index{$\mathcal M_{\ell}^{\rm cl}(\alpha;J_0)$} has a virtual fundamental cycle
and hence defines a class
$$
\text{\rm ev}_*[\mathcal M_{\ell}^{\rm cl}(\alpha;J_0)] \in H_{*}(M^{\ell};\Q).
$$
(See \cite[Theorem 1.3]{fukaya-ono}.) Here the sub-index $*$ in $H_{*}(M^{\ell};\Q)$
is given by
$
* = 2n + 2c_1(M)(\alpha) + 2\ell - 6
$.
Let $h_1,\dots,h_{\ell}$ be closed differential forms on $M$ such that
\begin{equation}\label{degreecondGW}
\sum \text{\rm deg}\, h_i = 2n + 2c_1(M)(\alpha) + 2\ell - 6.
\end{equation}
We define Gromov-Witten invariant by
\begin{equation}\label{GWinteg}\index{$GW_{\ell}$}
GW_{\ell}(\alpha:h_1,\dots,h_{\ell}) =
\int_{\mathcal M_{\ell}^{\rm cl}(\alpha;J_0)}
\text{\rm ev}^* (h_1 \times \dots \times h_{\ell})
\in \R.
\end{equation} \index{Gromov-Witten invariant!definition}
More precisely, we take a CF perturbation of the Kuranishi structure of
$\mathcal M_{\ell}^{\rm cl}(\alpha;J_0)$ and define the integration in (\ref{GWinteg})
using it.
(See (\ref{form34333}) and \cite[Definition 10.22]{fooo:tech2}.) We can prove that (\ref{GWinteg}) is independent of
the almost complex structure $J_0$. \index{$GW_{\ell}(\alpha:h_1,\dots,h_{\ell})$}
We put $GW_{\ell}(\alpha:h_1,\dots,h_{\ell}) = 0$ unless (\ref{degreecondGW})
is satisfied. We now define
\begin{equation}\label{sumGW}
GW_{\ell}(h_1,\dots,h_{\ell})
= \sum_{\alpha} T^{\alpha \cap \omega} GW(\alpha:h_1,\dots,h_{\ell})
\in \Lambda.
\end{equation}
By Stokes' theorem ((\ref{stokessss}), \cite[Theorem 8.11]{fooo:tech2})
we can prove that $GW_{\ell}(h_1,\dots,h_{\ell})$
depends only on the de Rham cohomology class of $h_i$ and is independent
of the closed forms $h_i$ representing de Rham cohomology class.
\par
By extending the definition (\ref{sumGW}) linearly over to a $\Lambda $-module homomorphism,
we obtain:
$$
GW_{\ell}: H(M;\Lambda)^{\ell\otimes} \to \Lambda.
$$
\begin{defn}\label{defn:prod}\index{bulk deformation!quantum product}
Let $\frak b \in H^{{\rm even}}(M;\Lambda _0)$.
For each given pair $\frak c,\frak d \in H^*(M; \Lambda)$, we define a product $\frak c
\cup^{\frak b} \frak d \in H(M;\Lambda) $ by the following formula
\index{$\cup^{\frak b}$}
\begin{equation}\label{defcup}
\langle \frak c \cup^{\frak b} \frak d, \frak e\rangle_{\text{\rm PD}_M}
= \sum_{\ell=0}^{\infty} \frac{1}{\ell!} GW_{\ell+3}(\frak c,\frak d,\frak e,\frak b,\dots,\frak b).
\end{equation}
Here $\langle \cdot,\cdot\rangle_{\text{\rm PD}_M}$ denotes the
Poincar\'e duality. We call $\cup^{\frak b}$ the {\it deformed
quantum cup product}. \index{deformed quantum product}
\end{defn}
\par
\begin{rem}\label{rem:frakbL0}
We note that the right  hand side of (\ref{defcup}) is an infinite sum in general.
If $\frak b \in H^{{\rm even}}(M;\Lambda_+ )$, it converges in
$q$-adic topology so (\ref{defcup}) makes sense.
For a general element
$\frak b \in H^{{\rm even}}(M;\Lambda _0)$, we split
\begin{equation}\label{decompb}
\frak b = \frak b_0 + \frak b_2 +\frak b_+
\end{equation}
with $\frak b_0 \in H^{0}(M;\Lambda _0)$,
$\frak b_2 \in H^{2}(M;\C)$,  and $\frak b_+ \in H^2(M;\Lambda _+)
\oplus
\bigoplus_{k\ge 2}H^{2k}(M;\Lambda _0)$ and define
\begin{equation}\label{GWseqconvform}
\langle \frak c \cup^{\frak b} \frak d, \frak e\rangle_{\text{\rm PD}_M}
= \sum_{\ell=0}^{\infty}
\sum_{\alpha}\frac{\exp({\frak b_2\cap \alpha})}{\ell!}
T^{\alpha\cap \omega}GW_{\ell+3}(\alpha:\frak c,\frak d,\frak e,\frak b_+,\dots,\frak b_+).
\end{equation}
We can prove that (\ref{GWseqconvform}) converges in $q$-adic topology.
(This can be proved in the same way as in \cite[Section 9]{fooo:bulk}.
See \cite[Lemma 2.29]{fooo:toricmir}.)
We put the factor $\ell!$ since our set of interior marked points is ordered.
\begin{rem}\label{rem53}
Here  the factor $T^{\alpha\cap \omega}$  is the usual weight we put in Gromov-Witten
invariant and in Lagrangian Floer theory. The appearance of the exponential factor
$\exp({\frak b_2\cap \alpha})$ is related to the following formula
$$
GW_{\ell +3}(\alpha;\frak c,\frak d,\frak e,\frak b_2, \dots, \frak b_2)
= \frac{(\frak b_2 \cap \alpha)^{\ell}}{\ell !}GW_3(\alpha : \frak c,\frak d, \frak e).
$$
This formula is called  the divisor axiom (See for example \cite[Theorem 23.1.4]{fukaya-ono}).
In fact if we formally `expand' the formal sum
$$
\sum_{\ell=0}^{\infty}
\sum_{\alpha}\frac{1}{\ell!} T^{\alpha \cap \omega}GW_{\ell+3}(\alpha:\frak c,\frak d,\frak e,\frak b,\dots,\frak b)
$$
by substituting $\frak b = \frak b_2 + \frak b_+$, a formal calculation based on the divisor axiom
gives rise to \eqref{GWseqconvform}. However this sum as it is does not make sense since it
does \emph{not} converge in the $q$-adic topology.
\end{rem}
Geometrically considering the element $\frak b \in H^{2}(M;\Lambda _0)$ corresponds to
twisting the Hamiltonian Floer theory by a $B$-field and is the analog to
Cho's trick of considering nonunitary line bundles \cite{fukaya:family,cho:Bfield}.
(We remark that this $q$-adic convergence of Gromov-Witten invariant had been
known for a long time.)
\end{rem}
It is now well-established that the product $\cup^{\frak b}$
is associative and graded commutative and is independent of $J_0$.
We thus obtain a $\Z_2$-graded commutative ring \index{$QH^*_{\frak b}(M)$}
$$
QH^*_{\frak b}(M)
= (H^*(M;\Lambda),\cup^{\frak b}) \cong (H^*(M) \otimes \Lambda,\cup^{\frak b}).
$$
As we will see later,
for the purpose of construction of spectral invariants and of
partial symplectic quasi-states and quasi-morphisms,
it is important to use a smaller Novikov ring than $\Lambda$.
We discuss this point now.

\begin{defn}\label{def:deformcup} \index{$G$-gapped}
Let $G$ be a discrete submonoid of $\R$. We say an element
$\frak b \in H(M;\Lambda _0)$ to be $G$-{\it gapped} if $\frak b$ can be written as
$$
\frak b = \sum_{g \in G} T^{g} b_g, \qquad b_g \in H(M;\C).
$$
\end{defn}
For each $\frak b \in H(M;\Lambda _0)$ there exists a smallest discrete submonoid $G$ such that
$\frak b$ is $G$-gapped. We write this monoid as $G_0(\frak b)$.
\index{$G_0(\frak b)$}
Let
$
G_0(M,\omega)
$
be the monoid generated by the set
$$
\{ \alpha \cap \omega \mid \mathcal M_{\ell}^{\rm cl}(\alpha;J_0) \ne \emptyset \}.
$$
Then
$
G_0(M,\omega)
$ is discrete by the Gromov compactness.
Let $G_0(M,\omega,\frak b)$ be the discrete monoid generated by
$G_0(M,\omega) \cup G_0(\frak b)$.
\index{$G_0(M,\omega,\frak b)$}
We define
\begin{equation}
\Lambda _0(M,\omega,\frak b)
=
\left.\left\{ \sum a_i T^{\lambda_i}
\in \Lambda _0 ~\right\vert~ \lambda_i \in G_0(M,\omega,\frak b)\right\}.
\end{equation}
The following is easy to check.

\begin{lem} The bilinear map $\cup^{\frak b}$ induces a ring structure on
$H^*(M;\Lambda _0(M,\omega,\frak b))$.
\end{lem}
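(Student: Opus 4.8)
The plan is to reduce the statement to a single closure assertion. The preceding discussion already records that $\cup^{\frak b}$ is $\Lambda^{\downarrow}$-bilinear, associative, graded commutative, and unital on the full ring $QH^*_{\frak b}(M;\Lambda^{\downarrow}) = (H(M;\Lambda^{\downarrow}),\cup^{\frak b})$, with unit the Poincar\'e dual of the fundamental class, which lies in $H(M;\C)$. Consequently, once one knows that $H(M;\Lambda^{\downarrow}_0(M,\omega,\frak b))$ is a $\Lambda^{\downarrow}_0(M,\omega,\frak b)$-submodule closed under $\cup^{\frak b}$ and containing that unit, every ring axiom restricts to it automatically. The unit clearly lies in the submodule, so the only substantive task is closure: if $\frak c,\frak d \in H(M;\Lambda^{\downarrow}_0(M,\omega,\frak b))$ then $\frak c\cup^{\frak b}\frak d \in H(M;\Lambda^{\downarrow}_0(M,\omega,\frak b))$.

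To prove closure I would first invoke $\Lambda^{\downarrow}_0(M,\omega,\frak b)$-bilinearity of $\cup^{\frak b}$ to reduce to $\frak c = q^{-g_1}\gamma$, $\frak d = q^{-g_2}\delta$ with $g_1,g_2 \in G_0(M,\omega,\frak b)$ and $\gamma,\delta \in H(M;\C)$, and then pull the scalar $q^{-(g_1+g_2)}$ (with $g_1+g_2 \in G_0(M,\omega,\frak b)$) out front, so that it suffices to check $\gamma\cup^{\frak b}\delta \in H(M;\Lambda^{\downarrow}_0(M,\omega,\frak b))$. Since the Poincar\'e pairing on $H(M;\C)$ is perfect and we extend it $\Lambda^{\downarrow}$-linearly, $\gamma\cup^{\frak b}\delta$ is determined by the scalars $\langle \gamma\cup^{\frak b}\delta,\frak e\rangle_{\mathrm{PD}_M}$ for $\frak e \in H(M;\C)$, and I would read off the $q$-exponents appearing there from \eqref{GWseqconvform}. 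In that formula a summand indexed by $(\ell,\alpha)$, together with a choice of which components of $\frak b_+$ fill the $\ell$ slots, vanishes unless $\mathcal M_{\ell+3}^{\mathrm{cl}}(\alpha;J_0) \ne \emptyset$, which forces $\alpha\cap\omega \in G_0(M,\omega)$; the factor $\exp(\frak b_2\cap\alpha)$ contributes only a complex scalar since $\frak b_2 \in H^2(M;\C)$; and each insertion of $\frak b_+$ contributes a monomial $q^{-g}$ with $g$ in the monoid generated by $G_0(\frak b)$. Hence every nonzero summand is a complex number times $q^{-\lambda}$ with $\lambda = \alpha\cap\omega + g^{(1)} + \cdots + g^{(\ell)} \in G_0(M,\omega,\frak b)$, and $\lambda \ge 0$ because $G_0(M,\omega,\frak b) \subset \R_{\ge 0}$; thus $q^{-\lambda} \in \Lambda^{\downarrow}_0$ with exponent in $G_0(M,\omega,\frak b)$, as required.

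What remains is to see that \eqref{GWseqconvform} converges $q$-adically, i.e. that for each fixed $\lambda_0 \in G_0(M,\omega,\frak b)$ only finitely many summands carry the monomial $q^{-\lambda_0}$. Here I would argue: discreteness of $G_0(M,\omega)$ bounds the set of $\alpha$ with $\alpha\cap\omega \le \lambda_0$; for each such $\alpha$ the degree constraint \eqref{degreecondGW} (using that the $\bigoplus_{k\ge2}H^{2k}(M;\Lambda^{\downarrow}_0)$ part of $\frak b_+$ has degree $\ge 4$) bounds the number of slots filled by that part, while each slot filled by the $H^2(M;\Lambda^{\downarrow}_-)$ part of $\frak b_+$ contributes an exponent bounded below by the smallest positive element of the discrete monoid $G_0(\frak b)$, bounding the number of those slots as well; hence $\ell$ is bounded and only finitely many configurations occur. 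This is precisely the $q$-adic convergence asserted right after \eqref{GWseqconvform}, established as in \cite{fooo:bulk} Section 9 and \cite{fooo:toricmir} Lemma 2.29, so I would simply cite it. Granting it, $\gamma\cup^{\frak b}\delta$ lies in $H(M;\Lambda^{\downarrow}_0(M,\omega,\frak b))$, closure holds, and the lemma follows. The one genuinely non-formal point is this finiteness/convergence bookkeeping; everything else is a direct restriction of the ring structure already available on $QH^*_{\frak b}(M;\Lambda^{\downarrow})$.
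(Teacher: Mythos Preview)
Your proof is correct. The paper itself gives no proof at all, merely prefacing the lemma with ``The following is easy to check,'' so your argument supplies precisely the details the authors leave to the reader: closure of the submodule under $\cup^{\frak b}$ follows from reading off the $q$-exponents in \eqref{GWseqconvform}, each of which lies in the discrete monoid $G_0(M,\omega,\frak b)$, while the remaining ring axioms descend from $QH^*_{\frak b}(M;\Lambda^{\downarrow})$.
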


We have thus obtained the associated quantum cohomology ring
\begin{equation}\label{qcsmallcoef}
QH^*_{\frak b}(M;\Lambda_0 (M,\omega,\frak b))
= (H^*(M;\Lambda_0 (M,\omega,\frak b)),\cup^{\frak b}).
\end{equation}

\begin{rem}\label{rem52}
\begin{enumerate}
\item Via the identification $q  = T^{-1}$, we will use
$$
\Lambda^\downarrow_0(M,\omega,\frak b)
=
\left\{ \sum_i a_i q^{- \lambda_i} \in \Lambda
\mid \lambda_i \in G_0(M,\omega;\frak b), \, \lambda_i \to \infty \right\}
$$
in place of $\Lambda_0(M,\omega,\frak b)$ in (\ref{qcsmallcoef})
in the construction of Floer \emph{homology} as before.
\item
Entov-Polterovich \cite{EP:morphism,EP:states,EP:rigid}
use quantum {\it homology}, where the degree is shifted by $2n$
from the usual degree.
The isomorphism in Theorem \ref{Piuiso} then
preserves the degree when we use the Conley-Zehnder
index as the degree of Floer homology.
\par
Here we use the usual degree convention of quantum {\it cohomology}
but shift the degree of Floer homology by $n$ from
the Conley-Zehnder index.
\par
In this convention, the (quantum) cup product
is ($\Z_2$)-degree preserving.
In `quantum homology', the product of
degree $d_1$ and $d_2$ classes has
degree $d_1+d_2 -2n$.
We prefer to choose our convention so that
the quantum product becomes degree-preserving as in the usual grading
convention in the quantum cohomology ring.
\end{enumerate}
\end{rem}
\index{big quantum cohomology|)}

\section{Hamiltonian Floer homology with bulk deformations}
\label{sec:deform-bdy} \index{bulk deformation!Hamiltonian Floer
homology|(}

In this section we modify the construction of Section \ref{subsec:boundary}
and incorporate bulk deformations.
\par
Let $[\gamma,w],\, [\gamma',w'] \in \Crit(\CA_H)$.
Below we will need to consider the moduli space of marked
Floer trajectories
$
\CM_{\ell}(H,J;[\gamma,w],[\gamma',w'])
$
for each $\ell = 0,1,\dots$.
The moduli space $\CM_{0}(H,J;[\gamma,w],[\gamma',w'])$
coincides with $\CM(H,J;[\gamma,w],[\gamma',w'])$
which is defined in Definition \ref{stripmk0int0} and Proposition \ref{connkura}.
\begin{defn}\index{$\mathcal M_{\ell}(H,J;[\gamma,w], [\gamma',w'])$}
We denote by $\widehat{\overset{\circ}{{\CM}}}_{\ell}(H,J;[\gamma,w], [\gamma',w'])$ the set of all
\linebreak
$(u;z^+_1,\dots,z^+_{\ell})$, where $u$ is a map
 $u: \R \times S^1 \to M$ which satisfies Conditions
(1) - (4) of Definition \ref{stripmk0int0}
and $z^+_i$ $(i=1,\dots,\ell)$ are mutually distinct points of
$\R \times S^1$.
It carries an $\R$-action by translations in $\tau$-direction.
We denote its quotient space by $\overset{\circ}{{\CM}}_{\ell}(H,J;[\gamma,w], [\gamma',w'])$.
We define the evaluation map
$$
{\rm ev} = ({\rm ev}_1\dots,{\rm ev}_{\ell}): \overset{\circ}\CM_\ell(H,J;[\gamma,w],[\gamma',w']) \to M^{\ell}
$$
by
$$
{\rm ev}(u;z^+_1,\dots,z^+_{\ell})
= (u(z^+_1),\dots,u(z^+_{\ell})).
$$
\end{defn}\index{shuffle}
We use the following notation in the next proposition.
Denote the set of shuffles of $\ell$ elements by \index{$\text{\rm Shuff}(\ell)$}
\begin{equation}\label{shuff}
\text{\rm Shuff}(\ell) = \{ (\mathbb L_1,\mathbb L_2) \mid \mathbb
L_1 \cup \mathbb L_2 = \{1,\ldots,\ell\}, \,\,\mathbb L_1 \cap
\mathbb L_2 = \emptyset \}.
\end{equation}
For $(\mathbb L_1,\mathbb L_2) \in \text{\rm Shuff}(\ell)$
let
$\#\mathbb L_i$ be the order of this subset.
Then $\#\mathbb L_1 + \#\mathbb L_2 = \ell$.
\par
For any $\alpha \in \pi_2(M)$ there exists a canonical homeomorphism
\begin{equation}\label{homeo62222}
\overset{\circ}{{\CM}}_{\ell}(H,J;[\gamma,w], [\gamma',w'])
\cong
\overset{\circ}{{\CM}}_{\ell}(H,J;[\gamma,\alpha\#w], [\gamma',\alpha\#w']).
\end{equation}
\begin{prop}\label{connBULKkura}
\begin{enumerate}
\item The moduli space
$\overset{\circ}{\CM}_{\ell}(H,J;[\gamma,w], [\gamma',w'])$ has a compactification
${{\CM}}_{\ell}(H,J;[\gamma,w], [\gamma',w'])$ that is Hausdorff.
\item
The space ${{\CM}}_{\ell}(H,J;[\gamma,w], [\gamma',w'])$ has an orientable Kuranishi structure with corners.
\item
The normalized boundary of ${{\CM}}_{\ell}(H,J;[\gamma,w],[\gamma',w'])$ is described by
\begin{equation}\label{62formu}
\aligned
&\partial
 {\CM}_{\ell}(H,J;[\gamma,w], [\gamma',w']) =\\
& \bigcup {\CM}_{\#\mathbb L_1}(H,J;[\gamma,w],[\gamma'',w''])
\times {\CM}_{\#\mathbb L_2}(H,J;[\gamma'',w''],[\gamma',w']),
\endaligned\end{equation}
where the union is taken over all $[\gamma'',w''] \in \text{\rm Crit}(\mathcal A_H)$,
and $(\mathbb L_1,\mathbb L_2) \in \text{\rm Shuff}(\ell)$.
\item
Let $\mu_H : \mbox{\rm Crit}(\CA_H) \to  \Z$
be the Conley-Zehnder index. Then the (virtual) dimension satisfies
the following equality $(\ref{dimensionboundaryb})$.
\begin{equation}\label{dimensionboundaryb}
\dim {\CM}_{\ell}(H,J;[\gamma,w], [\gamma',w']) = \mu_H([\gamma',w']) -  \mu_H([\gamma,w]) - 1+2\ell.
\end{equation}
\item
We can define orientations of ${\CM}_{\ell}(H,J;[\gamma,w], [\gamma',w'])$ so that
$(3)$ above is compatible with this orientation.
\item The evaluation map ${\rm ev}$ extends to a map
$\CM_\ell(H,J;[\gamma,w],[\gamma',w']) \to M^{\ell}
$, which we denote also by ${\rm ev}$. It is a strongly continuous \index{strongly continuous} and weakly submersive\index{weakly submersive} map
in the sense of \cite[Definition 4.6]{fooo:tech2}, \cite[Definition A.1.13]{fooo:book2}
(see also Definition $\ref{mapkura}$) which is compatible with $(3)$. Namely
if we denote
$$
\L_1 = \{i_1,\dots,i_{\#\L_1}\},\,
\L_2 = \{j_1,\dots,j_{\#\L_2}\}
$$
with
$i_1 <\dots < i_{\#\L_1}$, $j_1 < \dots < j_{\#\L_2}$,
then ${\rm ev}_k$ of the first factor (resp. the second factor) of the right hand side of
$(\ref{62formu})$ coincides with ${\rm ev}_{i_k}$ (resp. ${\rm ev}_{j_k}$)
of the left hand side of $(\ref{62formu})$.
\item
The homeomorphism $(\ref{homeo62222})$ extends to the compactifications
and their Kuranishi structures are identified by the homeomorphism.
\end{enumerate}
\end{prop}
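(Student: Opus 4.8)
The plan is to follow verbatim the construction behind Proposition \ref{connkura} (due to \cite{fukaya-ono}), carrying the $\ell$ interior marked points $z^+_1,\dots,z^+_\ell$ along as additional, unobstructed parameters of the domain. First I would set up the Gromov--Floer compactification of $\overset{\circ}{\CM}_\ell(H,J;[\gamma,w],[\gamma',w'])$: a sequence in the open stratum either converges, after extracting translations, to a stable broken Floer trajectory with the marked points converging to points of the limit configuration --- possibly onto sphere bubbles attached along the cylinder or at one of its ends --- or several marked points collide and a sphere bubble develops. This is exactly the compactness of \cite{fukaya-ono} Theorem 19.12 with the evident bookkeeping of marked points, and it yields the Hausdorff space $\CM_\ell(H,J;[\gamma,w],[\gamma',w'])$ of (1). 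The Kuranishi structure and its orientation asserted in (2) and (5) are obtained as in \cite{fukaya-ono} Chapter 4 (\cite{fukaya-ono} Theorem 19.14 and Lemma 21.4): the obstruction spaces are those of the underlying Floer problem and are unchanged by the marked points, while the Kuranishi neighborhoods acquire extra factors recording the positions of the $z^+_i$, each such factor being a copy of the (locally two-real-dimensional) domain with its complex orientation. Since these factors are even-dimensional, the orientation conventions of the $\ell=0$ case extend with no new signs.

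For the dimension formula (4), adding $\ell$ free marked points on a two-real-dimensional domain raises the virtual dimension of $\widehat{\CM}_\ell$ by $2\ell$ relative to $\widehat{\CM}_0$; dividing by the $\R$-translation then gives $\mu_H([\gamma',w']) - \mu_H([\gamma,w]) - 1 + 2\ell$. This is consistent with the codimension-one boundary description (3): the dimension of a stratum on the right side of \eqref{62formu} is
\[
\bigl(\mu_H([\gamma'',w'']) - \mu_H([\gamma,w]) - 1 + 2\#\mathbb L_1\bigr)
+ \bigl(\mu_H([\gamma',w']) - \mu_H([\gamma'',w'']) - 1 + 2\#\mathbb L_2\bigr)
= \mu_H([\gamma',w']) - \mu_H([\gamma,w]) - 2 + 2\ell ,
\]
using $\#\mathbb L_1 + \#\mathbb L_2 = \ell$, which is $\dim \CM_\ell - 1$. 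To prove (3) itself I would identify the codimension-one strata of the compactification: bubbling of a sphere and collision of marked points are each codimension $\ge 2$, so the only codimension-one phenomenon is a single splitting of the cylinder into two Floer pieces $u_1 \# u_2$ through some $[\gamma'',w'']$, and in the limit each marked point $z^+_i$ lies on exactly one of the two pieces. Recording which piece each $z^+_i$ goes to is precisely the datum of a shuffle $(\mathbb L_1,\mathbb L_2)\in\mathrm{Shuff}(\ell)$, which produces the fiber-product decomposition \eqref{62formu}; the matching of Kuranishi structures and orientations across this boundary is inherited from Proposition \ref{connkura} together with the fact that the marked-point factors are glued in the obvious orientation-preserving way. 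Finally, (6) is immediate: $\mathrm{ev}_i(u;z^+_1,\dots,z^+_\ell) := u(z^+_i)$ extends continuously over the compactification, the value being read off the component carrying $z^+_i$; and under a boundary splitting with shuffle $(\mathbb L_1,\mathbb L_2)$, writing $\mathbb L_1 = \{i_1<\dots<i_{\#\mathbb L_1}\}$, the evaluation at $z^+_{i_k}$ factors through the first factor and coincides with its $k$-th evaluation map, and symmetrically for $\mathbb L_2$.

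The only real work --- the hard part --- is item (3): checking that the shuffle strata exhaust the codimension-one boundary and that the Kuranishi structures on the two sides of \eqref{62formu} genuinely agree, i.e.\ that the gluing construction of \cite{fukaya-ono} is compatible with the distribution of the marked points. This is a routine but somewhat lengthy verification that, on a neighborhood of a split configuration, the gluing parameter and the obstruction bundle are the fiber products of those on the two pieces, with the marked points riding along passively; since in a codimension-one stratum no marked point sits on the neck, no new interactions arise, and everything else reduces to the $\ell=0$ statements already recorded in Proposition \ref{connkura}.
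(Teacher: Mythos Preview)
Your proposal is correct and follows exactly the route the paper takes: the paper's entire proof is the single sentence ``The proof of Proposition \ref{connBULKkura} is the same as that of Proposition \ref{connkura} and so is omitted,'' deferring to the \cite{fukaya-ono} machinery just as you do. Your write-up is a faithful and more detailed expansion of that deferral, correctly identifying that the marked points ride along as passive even-dimensional parameters and that the shuffle decomposition of the boundary is the only new bookkeeping.
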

The proof of Proposition \ref{connBULKkura} is the same as that of Propositions \ref{connkura}
and \ref{isomPiukura}.
The detail of the proof of \cite[Theorem 29.4]{fooo:techI}
given in \cite[Section 31]{fooo:techI}   proves Proposition \ref{connBULKkura} also.
An element of (\ref{62formu})
 is drawn in Figure \ref{Figure3} below.
\begin{figure}[h]
\centering
\includegraphics[scale=0.5]
{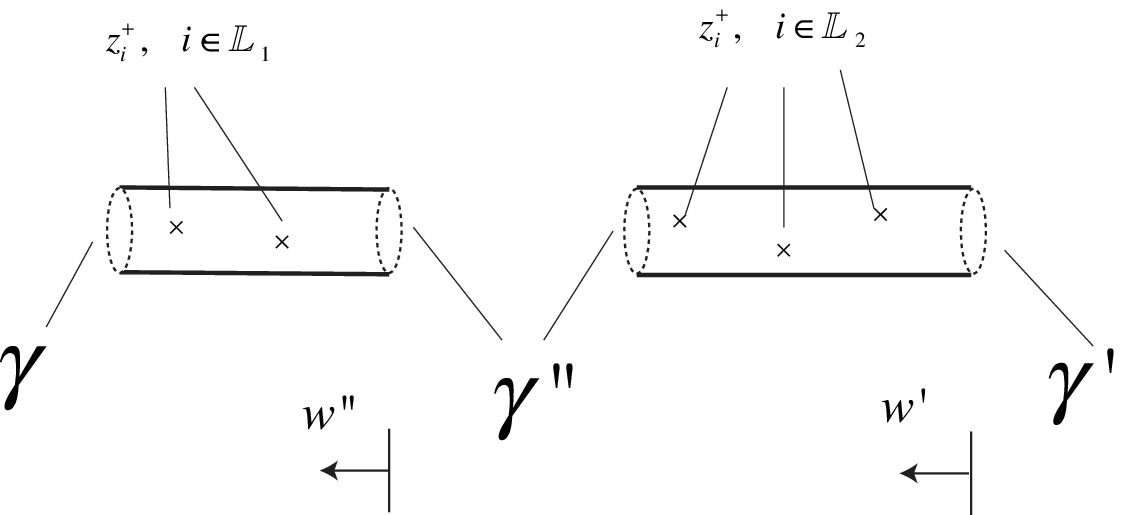}
\caption{An element of (\ref{62formu})}
\label{Figure3}
\end{figure}
We are ready to define the deformed boundary map $\del_{(H,J)}^\frak b$.
We start with defining the following operator:
\begin{defn}
Let $[ \gamma,w],\, [\gamma',w'] \in \text{\rm Crit}(\mathcal A_H)$ and $h_i$ ($i = 1,\ldots,\ell$) be
differential forms on $M$. We define
\index{$\frak n_{(H,J);\ell}([\gamma,w],[\gamma',w']) (h_1,\ldots,h_{\ell})$}
$\frak n_{(H,J);\ell}([\gamma,w],[\gamma',w']) (h_1,\ldots,h_{\ell}) \in \C$ by
\be\label{eq:nww'C}
\frak n_{(H,J);\ell}([\gamma,w],[\gamma',w']) (h_1,\ldots,h_{\ell}) =
\int_{\CM_{\ell}(H,J;[\gamma,w],[\gamma',w'])}
{\rm ev}_1^*h_1 \wedge\dots \wedge {\rm ev}_\ell^*h_\ell.
\ee
By definition (\ref{eq:nww'C}) is zero unless
$$
\sum_{i=1}^{\ell} \text{\rm deg} \, h_i
\ne \dim \CM_{\ell}(H,J;[\gamma,w],[\gamma',w']),
$$
where the right hand side is as in (\ref{dimensionboundaryb}).
\begin{rem}\label{compatimulti}
In order to define the integration in (\ref{eq:nww'C})
using Definition \ref{def320222}, we need to take a CF-perturbation
of $\CM_\ell(H,J;[\gamma,w], [\gamma',w'])$ that is transversal to $0$.
In our situation the integration (\ref{eq:nww'C}) depends on the choice of a CF-perturbation
as well as the small parameter $\epsilon>0$ since $\CM_\ell(H,J;[\gamma,w], [\gamma',w'])$ has
codimension one boundary.
We take a system of CF-perturbations of $\CM_\ell(H,J;[\gamma,w], [\gamma',w'])$ so that
the system is compatible
with the decomposition (\ref{62formu}) of the normalized boundary
$\partial\CM_\ell(H,J;[\gamma,w], [\gamma',w'])$.
We take our CF-perturbations so that
it is consistent with the isomorphism in
Proposition \ref{connBULKkura} (7).
\footnote{From now on we do not repeat this kind of remarks in the similar
situations.}
We also fix a sufficiently small
constant $\epsilon > 0$.
\end{rem}
\par
We linearly extend the definition of $\frak n_{(H,J);\ell}([\gamma,w],
[\gamma',w'])$ to
a $\Lambda $-multilinear map and compose it with a
canonical isomorphism $\Lambda \cong \Lambda^{\downarrow}$.
We denote the composition $(\Omega(M) \widehat\otimes \Lambda)^{\ell} \to \Lambda^{\downarrow}$
by the same symbol $\frak n_{(H,J);\ell}([\gamma,w],[\gamma',w'])$.
\par
Let $\frak b \in H^{{\rm even}}(M;\Lambda _0)$ and split
$\frak b = \frak b_0 + \frak b_2 + \frak b_+$ as in
(\ref{decompb}). We take closed forms that represent the cohomology classes
$\frak b_0$, $\frak b_2$, $\frak b_+$ respectively
and denote them by the same symbols.
We then define $\frak n_{(H,J);\ell}^\frak b([\gamma,w],[\gamma',w']) \in \Lambda^{\downarrow}$
for $[\gamma,w], [\gamma',w'] \in \text{\rm Crit}(\mathcal A_H)$ by
\index{$\frak n_{(H,J)}^\frak b([\gamma,w],[\gamma',w'])$}
\be\label{bdrycoefb}
\aligned
&\frak n_{(H,J)}^\frak b([\gamma,w],[\gamma',w'])
\\
&=
\sum_{\ell=0}^{\infty}\frac{\exp(\int (w')^*\frak b_2 - \int w^*\frak b_2)}{\ell!}
\frak n_{(H,J);\ell}([\gamma,w],[\gamma',w'])(\underbrace{\frak b_+, \dots,\frak b_+}_{\ell}).
\endaligned
\ee
Let $\llb \gamma,w \rrb, \llb \gamma',w' \rrb \in \widehat{\Per}(H)$.
Using (\ref{bdrycoefb}) we define $\frak n_{(H,J);\ell}^\frak b(\llb \gamma,w \rrb,\llb \gamma',w' \rrb) \in \Lambda^{\downarrow}$ as follows.
Suppose $\llb \gamma,w \rrb$ is represented by $[\gamma,w] \in \text{\rm Crit}(\mathcal A_H)$,
i.e., $\pi([\gamma,w]) = \llb \gamma,w \rrb$. We then put:
\index{$\frak n_{(H,J)}^\frak b(\llb \gamma,w \rrb,\llb \gamma',w' \rrb)$}
\begin{equation}\label{nf66666}
\frak n_{(H,J)}^\frak b(\llb \gamma,w \rrb,\llb \gamma',w' \rrb)
=
\sum_{[\gamma',w'] \in \text{\rm Crit}(\mathcal A_H)
\atop \pi([\gamma',w']) = \llb \gamma',w' \rrb}
\frak n_{(H,J)}^\frak b([\gamma,w],[\gamma',w']).
\end{equation}
Using Propositoin \ref{connBULKkura} (7) we can show that the right hand side of (\ref{nf66666})
is independent of $[\gamma,w]$ but depends only on $\llb \gamma,w \rrb$
in the same way as Remark \ref{remark38new}.
\end{defn}
\begin{lem}\label{adiccomv1}
When $\deg {\mathfrak b}_+ >2$, the right hand sides of
\eqref{bdrycoefb} and $(\ref{nf66666})$ are a finite sum. On the other hand,
when $\deg {\mathfrak b}_+ =2$, the right hand side of
\eqref{bdrycoefb} and $(\ref{nf66666})$ converge in the $q$-adic topology.
\end{lem}
\begin{proof}
Consider the case that $\deg {\mathfrak b}_+ >2$.
Suppose ${\CM}_{\ell}(H,J;[\gamma,w],[\gamma',w'])\neq \emptyset$
and so ${\CM}(H,J;[\gamma,w],[\gamma',w']) \neq \emptyset$.
The summand corresponding to $\ell$ in the right hand side of
$(\ref{bdrycoefb})$
vanishes unless
$\ell \deg {\mathfrak b}_+ =  \dim {\CM}_{\ell}(H,J;[\gamma,w], [\gamma',w']
) = \mu_H([\gamma^{\prime},w^{\prime}]) - \mu_H([\gamma,w]) + 2\ell -1$, i.e.,
$$
\ell(\deg {\mathfrak b}_+ -2) = \mu_H([\gamma^{\prime},w^{\prime}]) -
\mu_H([\gamma,w]) -1.
$$
Since the right hand side depends only on
$[\gamma, w], [\gamma^{\prime}, w^{\prime}]$, it implies
boundedness of the number of possible choices of $\ell$. This finishes the
proof of the first statement.

For the second statement, we note that ${\mathfrak v}_q ({\mathfrak b}_+)
= - {\mathfrak v}_T({\mathfrak b}_+)< 0$.
Hence the value of ${\mathfrak v}_q$ of the $\ell$-th term in the right
hand of
\eqref{bdrycoefb} diverges to $-\infty$ as $\ell$ tends to $\infty$.
Hence the proof.
\end{proof}

\begin{defn}
We define a deformed Floer boundary map
$$
\del^{\frak b}_{(H,J)}: CF_*(M,H;\Lambda^{\downarrow}) \to CF_*(M,H;\Lambda^{\downarrow})
$$
by
\index{$\del^{\frak b}_{(H,J)}$}
\begin{equation}\label{defboundary}
\del^{\frak b}_{(H,J)}(\llb \gamma,w \rrb) =
\sum_{\llb \gamma',w' \rrb}  \frak n_{(H,J)}^{\frak b}(\llb \gamma,w \rrb,\llb \gamma',w' \rrb)\llb \gamma',w' \rrb.
\end{equation}
\end{defn}
We point out that the sum in (\ref{defboundary}) may not be a finite sum.

\begin{lem}\label{adiccomv2}
The right hand side of $(\ref{defboundary})$ converges in $CF(M,H;\Lambda^{\downarrow})$
and $\del^{\frak b}_{(J,H)}$ is continuous in $q$-adic topology.
\end{lem}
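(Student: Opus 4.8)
The plan is to reduce the statement to the two finiteness facts already established, namely Lemma \ref{adiccomv1} and the Gromov-Floer compactness argument used in Lemma \ref{lem37}(3). First I would recall that, by definition of the metric $d_q$ on $CF(M,H;\Lambda^\downarrow)$ (Definition \ref{valuationv}) and the characterization of convergence in Lemma \ref{lem37}(3), the sum
$$
\sum_{[\gamma',w']} \frak n_{(H,J)}^{\frak b}([\gamma,w],[\gamma',w'])\,[\gamma',w']
$$
converges in $CF(M,H;\Lambda^\downarrow)$ provided that, for every $C \in \R$, the set of $[\gamma',w']$ with
$$
\frak v_q\bigl(\frak n_{(H,J)}^{\frak b}([\gamma,w],[\gamma',w'])\bigr) + \CA_H([\gamma',w']) > -C
$$
and $\frak n_{(H,J)}^{\frak b}([\gamma,w],[\gamma',w']) \ne 0$ is finite. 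So the real content is this energy/valuation estimate together with a finiteness count.

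The key steps, in order, are as follows. First, estimate $\frak v_q$ of each coefficient. For a summand of $(\ref{bdrycoefb})$ corresponding to $[\gamma',w']$ and $\ell$ to be nonzero, the moduli space ${\CM}_{\ell}(H,J;[\gamma,w],[\gamma',w'])$ must be nonempty, and then the energy identity $(\ref{energyincrease})$ gives $E_{(H,J)}(u) = \CA_H([\gamma,w]) - \CA_H([\gamma',w']) \ge 0$, hence $\CA_H([\gamma',w']) \le \CA_H([\gamma,w])$. I would track how the $\frak b_2$-exponential factor $\exp(w'\cap \frak b_2 - w\cap \frak b_2)$ and the coefficients coming from $\frak b_+$ (which lies in $\Lambda^\downarrow_0$, hence has $\frak v_q \le 0$ on its components, with the $\frak b_+$-contribution concentrated in strictly negative $q$-powers except for the $H^{2k}(M;\Lambda^\downarrow_0)$, $k\ge 2$, part) affect the valuation; the upshot is a bound of the form $\frak v_q(\frak n_{(H,J)}^{\frak b}([\gamma,w],[\gamma',w'])) \le \CA_H([\gamma,w]) - \CA_H([\gamma',w']) + (\text{correction from }\frak b_2)$, so that $\frak v_q(\frak n^{\frak b}) + \CA_H([\gamma',w']) \le \CA_H([\gamma,w]) + (\text{bounded term})$. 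Second, fix $C$ and bound the number of contributing $[\gamma',w']$: from the previous step $\CA_H([\gamma',w'])$ is bounded above by $\CA_H([\gamma,w])$ and, via the $d_q$-convergence criterion, bounded below in terms of $C$; by Gromov-Floer compactness (as in Lemma \ref{adiccomv1}) only finitely many $[\gamma',w']$ fall in such an action window with nonempty $\CM_\ell$, and by the dimension inequality $\ell(2n-2) \le \mu_H([\gamma,w]) - \mu_H([\gamma',w'])$ only finitely many $\ell$ contribute for each. This gives convergence. Third, for $q$-adic continuity of $\del^{\frak b}_{(H,J)}$: observe that $\del^{\frak b}_{(H,J)}$ is $\C$-linear and, by the estimate in step one, maps $F^\lambda CF(M,H;\Lambda^\downarrow)$ into $F^{\lambda + c}CF(M,H;\Lambda^\downarrow)$ for a constant $c$ depending only on $\frak b$ (the $\frak b_2$-correction term is uniformly bounded since $M$ is compact and $\frak b_2 \in H^2(M;\C)$); a filtration-nonincreasing-up-to-a-shift linear map is automatically continuous for the metric $d_q$, and then it extends continuously to the completion, which is all of $CF(M,H;\Lambda^\downarrow)$ by Lemma \ref{lem37}(2).

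The main obstacle I expect is bookkeeping the $\frak b_2$-contribution cleanly: unlike $\frak b_+$, the class $\frak b_2 \in H^2(M;\C)$ enters through the exponential $\exp(w'\cap \frak b_2 - w\cap \frak b_2)$, which is a nonzero complex scalar and therefore does not decrease $\frak v_q$, but one must check that, across all the lifts $[\gamma',w']$ of a fixed $\gamma'$ that can actually be reached by a Floer trajectory from $[\gamma,w]$, the quantity $w'\cap \frak b_2$ stays in a bounded region — this follows because such $w'$ differ from $w\#u$ by spheres of bounded symplectic area (the action window constraint), and $\frak b_2$ pairs boundedly with a bounded family of homology classes. Making this quantitative, and confirming that it is exactly the shift $c$ appearing in the filtration estimate, is the one place where a genuine (if short) argument is needed rather than a direct appeal to earlier lemmas; everything else is a reprise of the unperturbed case together with Lemma \ref{adiccomv1}.
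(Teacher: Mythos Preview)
Your overall strategy matches the paper's --- Gromov--Floer compactness plus the convergence criterion of Lemma~\ref{lem37}(3) --- but you have manufactured a difficulty around $\frak b_2$ that is not there, and in the process stated a valuation bound that points the wrong way.

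The factor $\exp(w'\cap \frak b_2 - w\cap \frak b_2)$ is a nonzero \emph{complex} number for every $[\gamma',w']$, since $\frak b_2 \in H^2(M;\C)$; its $\frak v_q$-valuation is therefore exactly $0$, independent of how large $|w'\cap\frak b_2|$ is. No bounded-region argument is needed. Together with $\frak b_+ \in \Lambda^\downarrow_0$-coefficients (so each $\frak n_{(H,J);\ell}(\cdots)(\frak b_+,\dots,\frak b_+)$ lies in $\Lambda^\downarrow_0$) and the finiteness of the $\ell$-sum from Lemma~\ref{adiccomv1}, this gives the clean bound
\[
\frak v_q\bigl(\frak n^{\frak b}_{(H,J)}([\gamma,w],[\gamma',w'])\bigr) \le 0.
\]
Your shift $c$ is simply $0$, and $\partial^{\frak b}_{(H,J)}$ is genuinely filtration-nonincreasing.

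This is also what makes your step two work. Your stated bound $\frak v_q(\text{coeff}) \le \CA_H([\gamma,w]) - \CA_H([\gamma',w']) + c$ is an \emph{upper} bound on $\frak v_q(\text{coeff}) + \CA_H([\gamma',w'])$ and cannot by itself yield the \emph{lower} bound on $\CA_H([\gamma',w'])$ you need for finiteness. What does the job is the constant bound $\frak v_q(\text{coeff}) \le 0$: then $\frak v_q(\text{coeff}) + \CA_H([\gamma',w']) > -C$ forces $\CA_H([\gamma',w']) > -C$, hence $\CA_H([\gamma,w]) - \CA_H([\gamma',w']) < \CA_H([\gamma,w]) + C$, and Gromov--Floer compactness gives finitely many such $[\gamma',w']$ with nonempty $\CM_\ell$. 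This is exactly the paper's (two-sentence) argument.
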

\begin{proof}
Let $E$ be any real number and $[\gamma',w'] \in  \Crit \CA_H$.
By Gromov-Floer compactness, the number of $[\gamma',w']$ such that
${\CM}_{\ell}(H,J;[\gamma,w],[\gamma',w'])$ is nonempty and
$$
\mathcal A_H([\gamma,w]) - \mathcal A_H([\gamma',w']) < E
$$
is finite. The lemma now follows from the definition of
convergence in $CF(M,H;\Lambda^{\downarrow})$.
\end{proof}
Combining Proposition \ref{connBULKkura} (3) with Stokes' theorem
\cite[Lemma C.9]{fooo:toric1}  and
composition formula \cite[Lemma C.10]{fooo:toric1}, we can
check
$$
\del_{(H,J)}^{\frak b} \circ \del_{(H,J)}^{\frak b} = 0.
$$
\begin{defn}
We define {\em Floer homology of periodic Hamiltonian system with bulk deformation} by
$$
HF_*^\frak b(M,H,J;\Lambda^{\downarrow})
= \frac{\text{\rm Ker}\,\,\del^{\frak b}_{(H,J)}}
{\text{\rm Im}\,\,\del^{\frak b}_{(H,J)}}.
$$
\index{$HF_*^\frak b(M,H,J;\Lambda^{\downarrow})$}
\end{defn}

Now we take two parameter family $\{(H_\chi,J_\chi)\}_{\tau \in \R}$
as in (\ref{eq:paraHJ}) in the proof of Theorem \ref{Piuiso}.

\begin{thm}\label{Pbulkiso}
There exists a $\Lambda^{\downarrow}$-module isomorphism
$$
\mathcal P_{(H_\chi,J_\chi), \ast}^{\frak b} : H_*(M;\Lambda^{\downarrow})
\cong HF_*^\frak b(M,H,J;\Lambda^{\downarrow})
$$
for all $\frak b$. We call it the {\it Piunikhin map with
bulk}.\index{Piunikhin isomorphism!with bulk}
\index{$\mathcal P_{(H_\chi,J_\chi), \ast}^{\frak b} $}

\end{thm}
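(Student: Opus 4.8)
The plan is to repeat verbatim the construction of $\CP_{(H_\chi,J_\chi)}$ from Section~\ref{subsec:boundary}, now carrying interior marked points at which to insert $\frak b$. For $[\gamma,w]\in\Crit\CA_H$ and $\ell\ge 0$ I would first introduce the moduli space $\overset{\circ}{\CM}_\ell(H_\chi,J_\chi;*,[\gamma,w])$ of tuples $(u;z^+_1,\dots,z^+_\ell)$, where $u$ solves the $\chi$-elongated equation \eqref{eq:HJCR2} with finite energy, $\lim_{\tau\to+\infty}u(\tau,t)=\gamma(t)$, $[u]=[w]$ in $\pi_2(\gamma)$, and the $z^+_i$ are distinct points of $\R\times S^1$, together with the evaluation map ${\rm ev}_{-\infty}$ to $M$ (well defined by the removable singularity theorem, as in \eqref{eq:ev-infty}) and ${\rm ev}=({\rm ev}_1,\dots,{\rm ev}_\ell)$ to $M^\ell$. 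Combining the proofs of Propositions~\ref{isomPiukura} and \ref{connBULKkura}, this space has a Hausdorff compactification $\CM_\ell(H_\chi,J_\chi;*,[\gamma,w])$ carrying an oriented Kuranishi structure with corners of virtual dimension $\mu_H([\gamma,w])+n+2\ell$, with ${\rm ev}_{-\infty}$ weakly submersive, whose codimension-one boundary is the union over $[\gamma',w']\in\Crit\CA_H$ and $(\mathbb L_1,\mathbb L_2)\in\text{\rm Shuff}(\ell)$ of $\CM_{\#\mathbb L_1}(H_\chi,J_\chi;*,[\gamma',w'])\times\CM_{\#\mathbb L_2}(H,J;[\gamma',w'],[\gamma,w])$, with evaluation maps matching as in Proposition~\ref{connBULKkura}(6). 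Fixing a coherent system of multisections compatible with this boundary and with those already chosen on the $\CM_\ell(H,J;-,-)$, and representing $\frak b_0,\frak b_2,\frak b_+$ by closed forms, I then set, for a closed form $h$ on $M$,
$$
\CP^{\frak b}_{(H_\chi,J_\chi)}(h)=\sum_{[\gamma,w]}\sum_{\ell=0}^{\infty}\frac{\exp(w\cap\frak b_2)}{\ell!}\left(\int_{\CM_\ell(H_\chi,J_\chi;*,[\gamma,w])}{\rm ev}_{-\infty}^*h\wedge{\rm ev}_1^*\frak b_+\wedge\cdots\wedge{\rm ev}_\ell^*\frak b_+\right)[\gamma,w],
$$
extended $\Lambda^\downarrow$-linearly. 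The $q$-adic convergence of the inner sum and of the sum over $[\gamma,w]$ follows from the energy identity and Gromov--Floer compactness exactly as in Lemmas~\ref{adiccomv1} and \ref{adiccomv2}.

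Next I would check that $\CP^{\frak b}_{(H_\chi,J_\chi)}$ is a chain map $(\Omega_*(M),\partial)\,\widehat\otimes\,\Lambda^\downarrow\to(CF(M,H;\Lambda^\downarrow),\del^{\frak b}_{(H,J)})$. Applying Stokes' theorem (\cite{fooo:toric1} Lemma~C.9) to $\int{\rm ev}_{-\infty}^*h\wedge{\rm ev}_1^*\frak b_+\wedge\cdots\wedge{\rm ev}_\ell^*\frak b_+$ over $\CM_\ell(H_\chi,J_\chi;*,[\gamma,w])$ and substituting the boundary description above: the interior $d$ produces the $\CP^{\frak b}\circ\partial$ term (the terms where $d$ hits $\frak b_+$ or the $\frak b_2$-weight vanish since $\frak b$ is closed), the first boundary factor again produces part of $\CP^{\frak b}\circ\partial$, and the second boundary factor, after summing over shuffles and reorganizing the $1/\ell!$, produces $\del^{\frak b}_{(H,J)}\circ\CP^{\frak b}$ with precisely the weights $\exp(w'\cap\frak b_2-w\cap\frak b_2)$ of \eqref{bdrycoefb}. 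The sign bookkeeping and the handling of the $\exp(w\cap\frak b_2)$-factors are identical to those in the definition of $\del^{\frak b}_{(H,J)}$ and in \cite{fooo:book} Section~3.8. Running the same argument over the $[0,1]$-parametrized family of elongation functions shows $\CP^{\frak b}_{(H_\chi,J_\chi)}$ is independent of $\chi$ up to chain homotopy, so it descends to a well-defined map $\CP^{\frak b}_{(H_\chi,J_\chi),\ast}$ on homology.

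It remains to show $\CP^{\frak b}_{(H_\chi,J_\chi)}$ is a chain homotopy equivalence; combined with Theorem~\ref{Piuiso} and Poincar\'e duality on $M$ this gives the stated isomorphism. I would argue via the energy (valuation $\frak v_q$) filtration. Working first over $\Lambda^\downarrow_0$ and truncating at a finite energy level (handling running-out by the inductive-limit procedure already used for $\frak b=0$, cf.\ the Remark after Theorem~\ref{Piuiso}), $\CP^{\frak b}$ is filtered; the $\frak b_+$-insertions and the $\Lambda^\downarrow_-$-part of $\frak b_2$ only contribute strictly positive-energy corrections, while the $q^0$-part of $\frak b_2$ is absorbed by a reparametrization (Cho's trick, cf.\ Remark~\ref{rem:frakbL0}). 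Hence the map induced on the associated graded coincides with that of the undeformed Piunikhin map $\CP_{(H_\chi,J_\chi)}$, which is a chain homotopy equivalence by the Lemma of Section~\ref{subsec:boundary}. A standard completeness argument — the algebraic mapping cone of $\CP^{\frak b}$ is acyclic on each graded piece, hence acyclic, and both complexes are complete for the filtration — then shows $\CP^{\frak b}$ itself is a chain homotopy equivalence; applying $-\otimes_{\Lambda^\downarrow_0}\Lambda^\downarrow$ gives the theorem. Equivalently, one constructs the reverse map $\CQ^{\frak b}_{(H_{\widetilde\chi},J_{\widetilde\chi})}$ (Notation (15)) and proves $\CQ^{\frak b}\circ\CP^{\frak b}\simeq\mathrm{id}$ and $\CP^{\frak b}\circ\CQ^{\frak b}\simeq\mathrm{id}$ by the usual moduli space of long stretched configurations and a gluing cobordism, as in \cite{fooo:bulk} Section~8; I would defer this gluing analysis to the appendix, exactly as is done for $\frak b=0$.

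I expect the main obstacle to be not the homological algebra but the construction of a \emph{single} coherent system of Kuranishi multisections that is simultaneously transversal and compatible across all the spaces $\CM_\ell(H_\chi,J_\chi;*,[\gamma,w])$ and $\CM_\ell(H,J;[\gamma',w'],[\gamma,w])$ — with their shuffle-type boundary identifications and forgetful maps of marked points — so that Stokes' theorem applies on the nose; this is intertwined with the running-out issue that forces the energy-truncated, inductive-limit formulation (and, in the $\CQ^{\frak b}$-approach, with the compatibility of the gluing parameter with the perturbation data). The one extra subtlety relative to the $\frak b=0$ Piunikhin construction is that $\frak b_2$ does not raise energy, so the filtration reduction to the undeformed case genuinely needs the input that the $\frak b_2$-twist is a gauge equivalence rather than a deformation.
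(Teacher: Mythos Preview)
Your construction of $\CP^{\frak b}_{(H_\chi,J_\chi)}$ and the chain-map verification follow the paper's Section~\ref{sec:deform-bdy} essentially verbatim (Definition~\ref{buldPmoduli}, Proposition~\ref{piuBULKkura}, formula~\eqref{Piunikinb}, equation~\eqref{boundaryprop}); one point you pass over is that the $\tau\to-\infty$ sphere bubble does not appear in the boundary decomposition precisely because $J_{\chi}(\tau,\cdot)$ is $t$-independent for $\tau\le 0$, giving an extra $S^1$-symmetry that pushes that stratum to codimension~$2$ (the paper flags this explicitly, cf.\ Remark~\ref{JS1inv} and the proof of Proposition~\ref{piuBULKkura}).

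For the isomorphism the paper does \emph{not} use your filtration/associated-graded reduction to $\frak b=0$. It takes your ``equivalently'' route directly: in Section~\ref{sec:appendix1} it constructs $\CQ^{\frak b}_{(H_{\widetilde\chi},J_{\widetilde\chi})}$ from the opposite-end moduli spaces $\CM_\ell(H_{\widetilde\chi},J_{\widetilde\chi};[\gamma,w],*)$, builds the parametrised spaces $\CM_\ell(para;H_\chi,J_\chi;*,*;C)$, and proves $\CQ^{\frak b}\circ\CP^{\frak b}\simeq\mathrm{id}$ (Proposition~\ref{QP-1}, Lemma~\ref{MBchainhomotopyprop}) by showing that the boundary components of types~\eqref{bdryQBULKkurapara2}--\eqref{bdryQBULKkurapara4} either vanish by the pulled-back $S^1$-equivariant multisection or reduce to the identity correspondence when $C=0$, $\ell=0$. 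The other composition is handled symmetrically.

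Your filtration argument, as stated, has a gap. You claim the $\frak b_+$-insertions are ``strictly positive-energy corrections'', but in the decomposition~\eqref{decompb} $\frak b_+$ is allowed to have components in $H^{2k}(M;\Lambda_0^\downarrow)$ for $k\ge 2$, in particular in $H^{2k}(M;\C)$ with no $q$-weight at all. Such insertions change the dimension constraint but not the energy level, so $\CP^{\frak b}$ and the undeformed $\CP$ do \emph{not} agree on any associated graded for the $\frak v_q$-filtration, and your mapping-cone/completeness step does not go through as written. (Cho's trick/Remark~\ref{rem:frakbL0} only addresses the degree-$2$ piece, not higher-degree bulk classes.) One can likely repair this with a combined energy-and-degree bookkeeping, but that is genuinely extra work; the paper sidesteps the issue entirely by building the homotopy inverse geometrically.
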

\begin{proof}
The proof, which we discuss below, is similar to the proof of Theorem \ref{Piuiso}.
We recall that we identify the de Rham complex with a chain complex
{
$$
(\Omega_*(M) \widehat \otimes \Lambda^\downarrow, \partial), \quad \Omega_*(M): = \Omega^{2n-*}(M),
\quad \partial = (-1)^{\text{deg} +1} d.
$$
We denote by the map $\flat: (\Omega^*(M), \Lambda) \to (\Omega_*(M), \Lambda^\downarrow)$
\index{$\flat$}
this isomorphism between the two.}
In this section we only give the definition of the map
$\mathcal P_{(H_\chi,J_\chi), \ast}^{\frak b}$.
In Section \ref{sec:appendix1} we prove that it is indeed an isomorphism.
\begin{defn}\label{buldPmoduli}
We denote by $\overset{\circ}{\CM}_{\ell}(H_\chi,J_\chi;*,[\gamma,w])$ the set of all
\index{$\mathcal M_{\ell}(H_\chi,J_\chi;*,[\gamma,w])$}
$(u;z^+_1,\dots,z^+_{\ell})$ of maps
 $u: \R \times S^1 \to M$ and
$z^+_i$, $i=1,\dots,\ell$ such that
$u$ satisfies (1)-(4) of Definition \ref{stripmk0int1}
and $z^+_i \in \R \times S^1$ are mutually distinct.
 \end{defn}
The assignment $(u;z^+_1,\dots,z^+_{\ell}) \mapsto (u(z^+_1),\dots,u(z^+_{\ell}))$
 defines an evaluation map
$$
{\rm ev} = ({\rm ev}_1\dots,{\rm ev}_{\ell}) = \overset{\circ}{\CM}_{\ell}(H_\chi,J_\chi;*,[\gamma,w]) \to M^{\ell}.
$$
\begin{prop}\label{piuBULKkura}
\begin{enumerate}
\item
$\overset{\circ}{\CM}_{\ell}(H_\chi,J_\chi;*,[\gamma,w])$ has a compactification,
denoted by
${{\CM}}_{\ell}(H_\chi,J_\chi;*,[\gamma,w])$, that is Hausdorff.
\item
The space ${{\CM}}_{\ell}(H_\chi,J_\chi;*,[\gamma,w])$ has an orientable Kuranishi structure with corners.
\item
The normalized boundary of ${{\CM}}_{\ell}(H_\chi,J_\chi;*,[\gamma,w])$ is described by
\begin{equation}\label{bdryPIunikinmodu}
\aligned
&\partial {\CM}_{\ell}(H_\chi,J_\chi;*,[\gamma,w]) \\
&= \bigcup
{{\CM}}_{\#\mathbb L_1}(H_\chi,J_\chi;*,[\gamma',w'])
\times {\CM}_{\#\mathbb L_2}(H,J;[\gamma',w'],[\gamma,w])
\endaligned\end{equation}
where the union is taken over all $[\gamma',w'] \in \text{\rm Crit}(\mathcal A_H)$,
 and $(\mathbb L_1,\mathbb L_2) \in \text{\rm Shuff}(\ell)$.
\item
Let $\mu_H : \mbox{\rm Crit}(\CA_H) \to  \Z$
be the Conley-Zehnder index. Then the (virtual) dimension satisfies
the following equality:
\begin{equation}\label{dimensionboundaryb2}
\dim {\CM}_{\ell}(H_\chi,J_\chi;*,[\gamma,w]) = \mu_H([\gamma,w]) +n + 2\ell.
\end{equation}
\item
We can define orientations of ${\CM}_{\ell}(H_\chi,J_\chi;*,[\gamma,w])$ so that
$(3)$ above is compatible with this orientation.
\item The map
${\rm ev}$ extends to a strongly smooth map
$\CM_\ell(H_\chi,J_\chi;*,[\gamma,w]) \to M^{\ell}
$, which we denote also by ${\rm ev}$.
It is compatible with $(3)$ in the same sense as
Proposition $\ref{connBULKkura}$ $(6)$.
\item
The map $\text{\rm ev}_{-\infty}$ which sends $(u;z^+_1,\dots,z^+_{\ell})$ to
$
\lim_{\tau\to -\infty}u(\tau, t)
$
extends to a strongly continuous smooth map $\CM_\ell(H_\chi,J_\chi;*,[\gamma,w]) \to M$,
which we denote also by ${\rm ev}_{-\infty}$. It is compatible with $(3)$.
\end{enumerate}
\end{prop}
\begin{proof}
The proof of Proposition \ref{piuBULKkura} is mostly the same as that of Proposition \ref{connkura}.
(See also \cite[Parts 4 and 5]{fooo:techI}.)
We only need to see that in (\ref{bdryPIunikinmodu})
the boundary component such as
\begin{equation}\label{67doesnotoccur}
\mathcal M_{\#\L_1}(0,J_0;*,*;C)
\times
\mathcal M_{\#\L_2}(H_{\chi},J_{\chi};*,[\gamma,(-C)\#w])
\end{equation}
does not appear.
\index{$\mathcal M_{\#\L_1}(0,J_0;*,*;C)$}
(Here the first factor of (\ref{67doesnotoccur}) is a
compactified moduli space of the $J_0$-holomorphic maps
$\R \times S^1\to M$ with finite energy and of homotopy class $C \in \pi_2(M)$.
The way of our usage of $*$ for the arguments used in the notation of the moduli spaces
is the same as in Remark \ref{rem:*}. In particular the second $*$ of the first factor of
\eqref{67doesnotoccur} highlights the fact that the limit at $+\infty$ of an element in the moduli
space converges to an unspecified point of $M$.)
\par
In fact, the moduli space  $\mathcal M_{\#\L_1}(0,J_0;*,*;C)$
has an extra $S^1$ symmetry by the $S^1$ action of the domain
$\R \times S^1$. (See Remark \ref{JS1inv}.)
So after taking a quotient by this $S^1$ action, this component is of codimension 2.
(See the proof of Lemma \ref{MBchainhomotopyprop}.)
\end{proof}
Let $[\gamma,w] \in \text{\rm Crit}(\mathcal A_H)$ and $h_i$ ($i = 1,\ldots,\ell$),
$h$ be
differential forms on $M$.
We take a system of CF-perturbations on $\CM_\ell(H_\chi,J_\chi;*,[\gamma,w])$ such that
it is compatible with (3). We use the system to define (\ref{ndRdef69}) below.
See Remark \ref{compatimulti}.
We define
$\frak n_{(H_\chi,J_\chi)}(h;[\gamma,w]) (h_1,\ldots,h_{\ell}) \in \C$ by
\begin{equation}\label{ndRdef69}
\aligned
&\frak n_{(H_\chi,J_\chi)} (h;[\gamma,w])(h_1,\ldots,h_{\ell})\\
&= \int_{{\CM}_{\ell}(H_\chi,J_\chi;*,[\gamma,w])}
{\rm ev}_{-\infty}^*h \wedge {\rm ev}_1^*h_1 \wedge\dots \wedge {\rm ev}_\ell^*h_\ell.
\endaligned
\end{equation}
We note that (\ref{ndRdef69}) is zero by definition unless the dimensional restriction
$$
\text{\rm deg} \, h + \sum_{i=1}^{\ell} \text{\rm deg} \, h_i
= \dim {\CM}_{\ell}(H_\chi,J_\chi;*,[\gamma,w]),
$$
holds.
We extend $\frak n_{(H_\chi,J_\chi)}(h;[\gamma,w])$ to a $\Lambda$-multilinear map
$(\Omega(M) \widehat\otimes \Lambda) \to \Lambda$
and identify $\Lambda = \Lambda^{\downarrow}$ by $q = T^{-1}$.
We denote it by the same symbol.
\par
Let $\frak b \in H^{{\rm even}}(M;\Lambda _0)$. We decompose
$\frak b = \frak b_0 + \frak b_2 + \frak b_+$ as in
(\ref{decompb})
and  regard  $ \frak b_0$, $\frak b_2$, $\frak b_+$ as de Rham (co)homology classes
by representing them by the closed differential forms.
\emph{
We define an element $\frak n^\frak b_{(H_\chi,J_\chi)}(h;[\gamma,w]) \in \Lambda^{\downarrow}$}
by :

\begin{equation}\label{bdrycoefb2}
\frak n^\frak b_{(H_\chi,J_\chi)}(h;[\gamma,w]) :=
\sum_{\ell=0}^{\infty}\frac{\exp({\int w^*\frak b_2})}{\ell!}
\frak n_{(H_\chi,J_\chi),\ell}([\gamma,w])(h;\underbrace{\frak b_+,
\dots,\frak b_+}_{\ell})
\end{equation}
for each given $[\gamma,w] \in \text{\rm Crit}(\mathcal A_H)$ and a differential form $h$ on $M$.
We can prove that the sum in (\ref{bdrycoefb2}) converges in $q$-adic topology,
in the same way as in Lemma \ref{adiccomv1}.
We now define
\begin{equation}\label{Piunikinb}
\mathcal P^{\frak b}_{(H_\chi,J_\chi)}(h)
:= \sum_{[\gamma,w] \in \text{\rm Crit}(\mathcal A_H)} \frak n^\frak b_{(H_\chi,J_\chi)}(h;[\gamma,w])\,\,
\llb \gamma,w \rrb.
\end{equation}
We can prove that the right hand side is an element of
$CF(H,J;\Lambda^{\downarrow})$ in the same way as in Lemma \ref{adiccomv2}.
Thus we have defined
$$
\mathcal P^{\frak b}_{(H_\chi,J_\chi)}
: \Omega_*(M) \widehat{\otimes} \Lambda^{\downarrow}  \to CF_*(M,H;\Lambda^{\downarrow}).
$$
Then the identity
\begin{equation}\label{boundaryprop}
\mathcal P^{\frak b}_{(H_\chi,J_\chi)}  \circ \partial
= \partial_{(H,J)}^{\frak b} \circ \mathcal P^{\frak b}_{(H_\chi,J_\chi)}
\end{equation}
is a consequence of (\ref{bdryPIunikinmodu}),
Stokes' theorem (Theorem \ref{them48}) and
composition formula (Theorem \ref{compform}).
We can easily prove that $\mathcal P^{\frak b}_{(H_\chi,J_\chi)}$
are chain homotopic to one another when $\chi$ is varied in $\CK$.
We denote by
\begin{equation}\label{Piuniquinhomoeq}\CP_{(H_\chi,J_\chi),\ast}^{\frak b} : H_*(M;\Lambda^{\downarrow}) \to HF_*^\frak b(M,H,J;\Lambda^{\downarrow})
\end{equation}
\index{$\mathcal P_{(H_\chi,J_\chi),\ast}^{\frak b}$}
the map induced on homology. We will prove in Section \ref{sec:appendix1} that it is an isomorphism.
\end{proof}
\index{bulk deformation!Hamiltonian Floer homology|)}

\section{Spectral invariants with bulk deformation}
\label{sec:specbulk}

We next modify the argument given in Section \ref{sec:spectral} and define
spectral invariants with bulk. Let $\frak b \in
H^{{\rm even}}(M;\Lambda_0)$. We consider \emph{discrete} submonoids
$G_0(M,\omega)$ and $G_0(M,\omega,\frak b)$ of $\R$ in Definition
\ref{def:deformcup}.

\begin{defn} \index{$G(M,\omega)$}\index{$G(M,\omega,\frak
b)$}\index{$G_0(M,\omega)$}\index{$G_0(M,\omega,\frak b)$} We denote
by $G(M,\omega)$ and $G(M,\omega,\frak b)$ the sub{\it group} of
$(\R,+)$ generated by the monoids $G_0(M,\omega)$ and
$G_0(M,\omega,\frak b)$, respectively.
\end{defn}
We note that $G(M,\omega)$ and $G(M,\omega,\frak b)$ are not
necessarily discrete. We also remark that $G(M,\omega,\frak b)$ may
not even be finitely generated.
\par
Let $H$ be a time-dependent Hamiltonian on $M$.
We defined ${\rm Spec}(H)$ in Definition \ref{criticalvalue}.

\begin{defn}\index{$\text{\rm Spec}(H;\frak b)$}
We define
$$
\aligned
{\rm Spec}(H;\frak b)
&= {\rm Spec}(H) + G(M,\omega,\frak b)\\
&= \{ \lambda + g \mid
\lambda \in {\rm Spec}(H),
g \in G(M,\omega,\frak b) \}.
\endaligned$$
\end{defn}
For a monoid $G \subset \R$, the ring $\Lambda^{\downarrow}(G)$ was defined in Definition \ref{Lambda(G)}.
\begin{defn}
Suppose $H$ is nondegenerate. We put
\index{$\Lambda^{\downarrow}_{\frak b} (M)$}
$$
\Lambda^{\downarrow}_{\frak b} (M)
= \Lambda^{\downarrow} (G(M,\omega,\frak b))
$$
and \index{$CF(M,H;\Lambda^{\downarrow}_{\frak b} (M))$}
$$
CF(M,H;\Lambda^{\downarrow}_{\frak b} (M))
= CF(M,H) \otimes_{\Lambda^{\downarrow} (M)} \Lambda^{\downarrow}_{\frak b} (M).
$$
Here $CF(M,H)$ is defined in Definition \ref{Lambda(G)}.
\end{defn}
By definition, an element $x \in CF(M,H;\Lambda^{\downarrow}_{\frak b} (M))$ can be written as
$$
x = \sum_{\gamma \in \text{\rm Per}(H)} x_\gamma \llb \gamma, w_\gamma \rrb, \quad x_\gamma \in \Lambda^{\downarrow}_{\frak b} (M)
$$
similarly as in \eqref{xexpand}. We define the {\it level function}\index{level function}
$\lambda_H: CF(M,H;\Lambda^{\downarrow}_{\frak b} (M)) \to \R$\index{$\lambda_H$}
by
$$
\lambda_H(x) =
\max\{\frak v_q(x_\lambda) + \CA_H(\llb \gamma,w_\gamma \rrb) \mid x_\gamma \neq 0 \, \text{in the above sum}\}.
$$
\begin{lem}\label{frabCFlem}
Suppose $H$ is nondegenerate.
\begin{enumerate}
\item
The set $\{\llb \gamma,w_\gamma \rrb \mid \gamma \in \text{\rm Per}(H)\}$ forms a basis of
the vector space  $CF(M,H;\Lambda^{\downarrow}_{\frak b} (M))$ over the field $\Lambda^{\downarrow}_{\frak b} (M)$.
\item
If $\frak x \in CF(M,H;\frak b) \setminus \{0\}$ then
$
\lambda_H(\frak x) \in \text{\rm Spec}(H;\frak b).
$
\end{enumerate}
\end{lem}
\begin{proof} Statement (1) follows from the fact that
$\mathcal A_H([\gamma,w]) - \mathcal A_H([\gamma,w'])
\in G(M,\omega,\frak b)$ for $\gamma \in \text{\rm Per}(H)$,
$[\gamma,w], [\gamma,w'] \in \text{\rm Crit}(\mathcal A_H)$.
Then statement (2) follows from statement (1).
\end{proof}
We denote by ${\rm Ham}_{\text{\rm nd}}(M,\omega)$ the set of nondegenerate
Hamiltonian diffeomorphisms. \index{$\text{\rm Ham}_{{\rm nd}}(M,\omega)$}
By now, it is well-established that for any $H \in {\rm Ham}_{\text{nd}}(M,\omega)$ the map $\partial^{\frak b}_{(H,J)}:
CF(M,H;\Lambda^{\downarrow}) \to CF(M,H;\Lambda^{\downarrow})$ is defined and preserves
the subspace
$CF(M,H;\Lambda^{\downarrow}_{\frak b} (M))$ of $CF(M,H;\Lambda^{\downarrow})$. Moreover the filtration of
$CF(M,H;\Lambda^{\downarrow}(M))$ induces one on
$CF(M,H;\Lambda^{\downarrow}_{\frak b} (M))$ given by
$$
F^{\lambda}CF(M,H;\Lambda^{\downarrow}_{\frak b} (M))= F^{\lambda}CF(M,H;\Lambda^{\downarrow}(M))
\cap CF(M,H;\Lambda^{\downarrow}_{\frak b} (M)).
$$
We denote the homology of  $(CF(M,H;\Lambda^{\downarrow}_{\frak b} (M)),\partial^{\frak b}_{(H,J)})$
by $HF^{\frak b}(M,H,J;\Lambda^{\downarrow}_{\frak b} (M))$. Then
Lemma \ref{frabCFlem} implies
\begin{equation}\label{stgptenso}
HF^{\frak b}_*(M,H,J;\Lambda^{\downarrow})
\cong
HF^{\frak b}_*(M,H,J;\Lambda^{\downarrow}_{\frak b} (M))
\otimes_{\Lambda^{\downarrow}_{\frak b} (M)}\Lambda^{\downarrow}.
\end{equation}
Therefore Theorem \ref{Pbulkiso}  implies:
\begin{lem} The map $\CP_{(H_{\chi},J_{\chi}),\ast}^{\frak b}$ in \eqref{Piuniquinhomoeq}
induces an isomorphism
$$
H(M;\Lambda^{\downarrow}_{\frak b} (M)) \cong HF^{\frak b}(M,H,J;\Lambda^{\downarrow}_{\frak b} (M)).
$$
\end{lem}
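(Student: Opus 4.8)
The plan is to show that the chain-level Piunikhin map $\CP^{\frak b}_{(H_\chi,J_\chi)}$ of \eqref{Piunikinb} already respects the smaller Novikov coefficient ring, and then to deduce the statement from Theorem \ref{Pbulkiso} by base change along the field extension $\Lambda_{\frak b}^\downarrow(M)\hookrightarrow\Lambda^\downarrow$.

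First I would check that $\CP^{\frak b}_{(H_\chi,J_\chi)}$ restricts to a chain map $\CP^{\frak b}_{(H_\chi,J_\chi)}\colon C(M;\Lambda_{\frak b}^\downarrow(M))\to CF(M,H;\frak b)$. Since the restriction is $\Lambda_{\frak b}^\downarrow(M)$-linear and $q$-adically continuous, it suffices to verify $\CP^{\frak b}_{(H_\chi,J_\chi)}(h)\in CF(M,H;\frak b)$ for a closed differential form $h$ on $M$. Here I track the $q$-exponents appearing in the structure constant $\frak n^{\frak b}_{(H_\chi,J_\chi)}(h;[\gamma,w])$ of \eqref{bdrycoefb2}: the factor $\exp(\int w^*\frak b_2)$ is a scalar in $\C$ and contributes nothing; the moduli space ${\CM}_\ell(H_\chi,J_\chi;*,[\gamma,w])$ is taken with the homotopy class of $u$ fixed equal to $[w]$, so integration over it produces no further powers of $q$; and each insertion of $\frak b_+$ (see \eqref{decompb} and Definition \ref{def:deformcup}) contributes only $q$-exponents in $-G_0(\frak b)$, hence so does any $\ell$-fold insertion. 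Finally, rewriting $[\gamma,w]$ in the basis $\{[\gamma,w_\gamma]\}_\gamma$ of $CF(M,H)$ via Definition \ref{Lambdahatonashi} introduces a factor $q^{c}$ with $c=\int_{\overline{w_\gamma}\#w}\omega\in G(M,\omega)$. Therefore every coefficient of a basis element $[\gamma,w_\gamma]$ in $\CP^{\frak b}_{(H_\chi,J_\chi)}(h)$ has all its $q$-exponents $\lambda$ obeying $-\lambda\in G(M,\omega)+G_0(\frak b)\subseteq G(M,\omega,\frak b)$, i.e. it lies in $\Lambda_{\frak b}^\downarrow(M)=\Lambda^\downarrow(G(M,\omega,\frak b))$, using that $G(M,\omega)$ is a subgroup. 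This is the same bookkeeping that already shows $\partial^{\frak b}_{(H,J)}$ preserves $CF(M,H;\frak b)$, so $\CP^{\frak b}_{(H_\chi,J_\chi)}$ is a chain map between the asserted complexes and induces a homomorphism $\CP^{\frak b}_{(H_\chi,J_\chi),\ast}\colon H(M;\Lambda_{\frak b}^\downarrow(M))\to HF^{\frak b}(M,H,J;\Lambda_{\frak b}^\downarrow(M))$.

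Next I would invoke base change. By construction $CF(M,H;\Lambda^\downarrow)=CF(M,H;\frak b)\otimes_{\Lambda_{\frak b}^\downarrow(M)}\Lambda^\downarrow$ and $C(M;\Lambda^\downarrow)=C(M;\Lambda_{\frak b}^\downarrow(M))\otimes_{\Lambda_{\frak b}^\downarrow(M)}\Lambda^\downarrow$, compatibly with the differentials $\partial^{\frak b}_{(H,J)}$, $\partial$ and with the Piunikhin chain maps; hence the chain map over $\Lambda^\downarrow$ is the base change of the one over $\Lambda_{\frak b}^\downarrow(M)$ along $\Lambda_{\frak b}^\downarrow(M)\hookrightarrow\Lambda^\downarrow$. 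Since $\Lambda^\downarrow$ is a field extension of $\Lambda_{\frak b}^\downarrow(M)$, in particular faithfully flat over it, homology commutes with this base change; combined with \eqref{stgptenso} and the evident identity $H(M;\Lambda^\downarrow)=H(M;\Lambda_{\frak b}^\downarrow(M))\otimes_{\Lambda_{\frak b}^\downarrow(M)}\Lambda^\downarrow$, this shows that $\CP^{\frak b}_{(H_\chi,J_\chi),\ast}$ over $\Lambda^\downarrow$ equals $\CP^{\frak b}_{(H_\chi,J_\chi),\ast}$ over $\Lambda_{\frak b}^\downarrow(M)$ tensored up with $\Lambda^\downarrow$.

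Finally, by Theorem \ref{Pbulkiso} the map over $\Lambda^\downarrow$ is an isomorphism, and a $\Lambda_{\frak b}^\downarrow(M)$-linear map whose base change along the faithfully flat extension $\Lambda_{\frak b}^\downarrow(M)\hookrightarrow\Lambda^\downarrow$ is an isomorphism is itself an isomorphism, since such an extension detects vanishing of both kernel and cokernel. This yields the desired isomorphism $H(M;\Lambda_{\frak b}^\downarrow(M))\cong HF^{\frak b}(M,H,J;\Lambda_{\frak b}^\downarrow(M))$. The only non-formal ingredient is the exponent bookkeeping of the first step, which I expect to be the (mild) main obstacle; everything else is a formal consequence of results already established in the excerpt.
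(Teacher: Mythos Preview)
Your proof is correct and follows essentially the same route the paper takes: the paper simply writes ``Therefore Theorem \ref{Pbulkiso} implies'' the lemma, invoking \eqref{stgptenso} and Lemma \ref{frabCFlem}, which is exactly the base-change argument you spell out in detail. Your exponent bookkeeping for why $\CP^{\frak b}_{(H_\chi,J_\chi)}$ lands in $CF(M,H;\frak b)$ is the same argument the paper leaves implicit when it asserts just before the lemma that $\partial^{\frak b}_{(H,J)}$ preserves $CF(M,H;\frak b)$; both rest on the observation that the only $q$-exponents arising come from insertions of $\frak b_+$ (contributing $-G_0(\frak b)$) and from rewriting $[\gamma,w]$ as $q^c[\gamma,w_\gamma]$ with $c\in G(M,\omega)$.
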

\begin{defn}
\begin{enumerate}
\item
Let $\frak x \in HF^{\frak b}(M,H,J;\Lambda^{\downarrow})$. We define its {\it spectral invariant} $\rho^{\frak b}(\frak x)$\index{spectral invariant}
by
$$
\rho^{\frak b}(\frak x)
= \inf \{\lambda \mid x \in F^{\lambda}CF(M,H,J;\Lambda^{\downarrow}),
\,\, \partial_{H,J}^{\frak b}(x) = 0, \,\, [x] = \frak x  \in HF^{\frak b}(M,H,J;\Lambda^{\downarrow})\}.
$$
\item
If $a \in H^*(M;\Lambda_{\frak b}  (M))$
\footnote{Here $\Lambda_{\frak b}  (M)$ is a subalgebra of $\Lambda$ which is identified to
\index{$\Lambda_{\frak b}  (M)$}
the subalgebra $\Lambda_{\frak b}^{\downarrow}(M)$ of $\Lambda^{\downarrow}$ by the canonical
isomorphism $\Lambda \cong \Lambda^{\downarrow}$.}
and $H$ is a nondegenerate time-dependent Hamiltonian, we define
the {\it spectral invariant with bulk} $\rho^{\frak b}(H;a)$\index{spectral invariant!with bulk}
\index{$\rho^{\frak b}(H;a)$} by
$$
\rho^{\frak b}(H;a) = \rho^{\frak b}(a^{\frak b;\flat}_H), \quad
a^{\frak b;\flat}_H: = \CP_{(H_{\chi},J_{\chi}),\ast}^{\frak b}(a^\flat) \in HF^{\frak b}_*(M,H,J;\Lambda^\downarrow)
$$
\index{$a^{\frak b;\flat}_H$}
where the right hand side is as in $(1)$, and we regard
$$
\mathcal P^{\frak b}_{(H_{\chi},J_{\chi}),\ast}(a^\flat)
\in HF^{\frak b}(M,H,J;\Lambda^{\downarrow}_{\frak b}(M))
\subset
HF(M,H,J;\Lambda^{\downarrow}).
$$
\end{enumerate}
\end{defn}

By the same procedure exercised for the spectral invariant $\rho(H;a)$,
we can prove that $\rho^{\frak b}(\CP_{(H_{\chi},J_{\chi}),\ast}^{\frak b}(a))$ do not depend on
the choices of $J$ and $\chi$ or of other choices involved in the construction of
virtual fundamental cycles, and hence $\rho^{\frak b}(H;a)$ is
well-defined.

\begin{thm}[{\rm Homotopy invariance}]
\label{homotopyinvbulk}\index{spectral invariant!homotopy
invariance}
\begin{enumerate}
\item
The spectral invariant  $\rho^{\frak b}(H;a)$ is independent of the
almost complex structure and other choices involved in the definition.
\item
The spectral invariant $\rho^{\frak b}(H;a)$ depends only on the homology class of $\frak b$ and is independent of the
choices of differential forms which represent it.
\item
Suppose $\phi_H^1 = \phi_{H'}^1$ and the paths
$\phi_H$ and $\phi_{H'}$ are homotopic relative to the ends. Then
$$
\rho^{\frak b}(H;a) = \rho^{\frak b}(H';a).
$$
\end{enumerate}
\end{thm}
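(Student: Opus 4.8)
The plan is to reduce all three statements to suitable cobordism arguments in the spirit of the proofs of Theorems \ref{Piuiso} and \ref{Pbulkiso}, carefully tracking filtration levels throughout. For part (1), the independence of $\rho^{\frak b}(H;a)$ from the auxiliary almost complex structure $J$, the elongation function $\chi$, and the multisections: I would first observe that any two choices $(J^0,\chi^0,\frak s^0)$ and $(J^1,\chi^1,\frak s^1)$ can be joined by a path, and one constructs a parametrized moduli space over $[0,1]$ (with a generic multisection) giving a chain homotopy $\Theta$ between the two Piunikhin chain maps $\CP^{\frak b,0}_{(H_\chi,J_\chi)}$ and $\CP^{\frak b,1}_{(H_\chi,J_\chi)}$. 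The crucial point is the \emph{filtration estimate}: by the same energy identity used in Lemma \ref{filtered} and in the proof of Lemma \ref{adiccomv1}, any trajectory contributing to $\Theta$ has nonnegative energy, so $\CP^{\frak b,1} - \CP^{\frak b,0} = \partial^{\frak b}_{(H,J)}\Theta \pm \Theta\partial$ with $\Theta$ preserving the filtration $F^\lambda$ up to an arbitrarily small error that can be absorbed. Hence the two chain maps induce the same map on homology \emph{and} the mini-max value defining $\rho^{\frak b}$ is unchanged. This is where the bulk term $\frak b_2$ requires a small extra remark: the exponential weights $\exp(\int w^*\frak b_2)$ shift action values only by amounts in $G(M,\omega,\frak b)$, consistently across the cobordism, so they do not affect the infimum.

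For part (2), independence of the representative closed form for the cohomology class $\frak b = \frak b_0 + \frak b_2 + \frak b_+$: writing $\frak b' - \frak b = d\eta$ for a form $\eta$ of odd degree, I would interpolate $\frak b_s = \frak b + s\,d\eta$ and again build a one-parameter family of chain maps (now varying the insertion forms in the operators $\frak n^{\frak b_s}_{(H_\chi,J_\chi)}$). Stokes' theorem applied on $\CM_\ell(H_\chi,J_\chi;*,[\gamma,w])$, together with the boundary description in Proposition \ref{piuBULKkura}(3), produces a chain homotopy; the differentiated insertion $d\eta$ becomes a boundary term that reassembles into $\partial^{\frak b}\circ(\cdot) \pm (\cdot)\circ\partial$. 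Once more the energy identity guarantees this homotopy is filtration-nonincreasing, so $\rho^{\frak b}(H;a)$ is unchanged. The monoid $G(M,\omega,\frak b)$ depends only on the cohomology class of $\frak b$ (it is generated by $G_0(M,\omega)$ and the exponents appearing in $\frak b$, which are cohomological data), so the target Novikov field $\Lambda^\downarrow_{\frak b}(M)$ is well defined as well.

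For part (3), homotopy invariance under $\phi_H \simeq \phi_{H'}$ rel endpoints with $\phi_H^1 = \phi_{H'}^1$: I would follow the classical argument of \cite{oh:alan,oh:minimax}, reproved in Section \ref{sec:proofhomotopy} of the excerpt for the undeformed case, and upgrade it. First, Proposition stating $\mathrm{Spec}(H) = \mathrm{Spec}(H')$ for homotopic normalized Hamiltonians extends verbatim to $\mathrm{Spec}(H;\frak b) = \mathrm{Spec}(H';\frak b)$ since the extra summand $G(M,\omega,\frak b)$ is the same. Then, given a homotopy $\{H^s\}$, one compares $HF^{\frak b}(H,J)$ and $HF^{\frak b}(H',J')$ via the continuation maps built from $s$-dependent equations; the key inequality is the deformed analogue of Lemma \ref{lem:E+rhoE-}, namely $-E^+(H'-H) \le \rho^{\frak b}(H';a) - \rho^{\frak b}(H;a) \le E^-(H'-H)$, which follows from the same energy estimate applied to the bulk-deformed continuation moduli spaces. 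Applying this along the homotopy with reparametrization (using the convexity of the space of $\chi$'s and $\zeta$'s as in the undeformed proof), and using that $E^{\pm}$ of the difference tends to zero along a sufficiently fine subdivision while the spectrum is a fixed measure-zero set, forces $\rho^{\frak b}(H;a) = \rho^{\frak b}(H';a)$.

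The main obstacle is not conceptual but bookkeeping: one must verify that every cobordism and Stokes argument is carried out in the chain level with \emph{filtration control}, i.e., that all the new moduli spaces with marked points and bulk insertions satisfy the nonnegative-energy property and that the $q$-adic convergence of the deformed operators (Lemmas \ref{adiccomv1}, \ref{adiccomv2}) persists for the parametrized families. The presence of the degree-$2$ part $\frak b_2$, which enters through non-convergent-looking exponential weights handled only $q$-adically, is the subtle piece: one must ensure the interpolating families still have their sums converging and that the action shifts stay inside $G(M,\omega,\frak b)$ uniformly in the homotopy parameter. Once these technical points are in place — and they are the deformed mirror of estimates already established in Sections \ref{sec:deform-bdy} and the appendices — the three statements follow by the standard continuation/cobordism machinery.
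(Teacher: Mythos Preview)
Your argument for part (3) is correct and matches the paper's proof (Section~\ref{sec:proofhomotopy}): constancy of $\mathrm{Spec}(H^s;\frak b)$ along the homotopy, continuity of $s\mapsto\rho^{\frak b}(H^s;a)$ from the $C^0$-estimate, and the measure-zero spectrum together force constancy.

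Parts (1) and (2), however, have genuine gaps. In (1), varying $J$ changes the Floer differential $\partial^{\frak b}_{(H,J)}$ itself, so $\CP^{\frak b,0}$ and $\CP^{\frak b,1}$ land in \emph{different} chain complexes; a ``chain homotopy $\Theta$'' between them is not well-posed, and equality of induced maps on homology does not by itself compare mini-max levels across two distinct filtered complexes. What is required is a filtration-preserving \emph{continuation chain map} $(CF,\partial^{\frak b}_{(H,J)})\to(CF,\partial^{\frak b}_{(H,J')})$ together with its compatibility with the two Piunikhin maps; this is exactly what the paper builds in Section~\ref{sec:proofcontinuity} (Lemma~\ref{connectinghomofilt} and Proposition~\ref{compaticompositte}), deducing (1) as the special case $H=H'$ of Theorem~\ref{contspect}. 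In (2), the interpolation $\frak b_s=\frak b+s\,d\eta$ fails for the degree-$2$ component: if $\frak b_2'-\frak b_2=d\frak c_1$ with $\frak c_1\in\Omega^1(M;\C)$, the weight $\exp(\int w^*\frak b_2)$ changes by the nonzero complex number $\exp(\int_\gamma\frak c_1)$, which has no $q$-adic smallness whatsoever, so no Stokes argument on the bulk insertions produces a filtration-controlled homotopy. The paper (Section~\ref{sec:appendix2}) accordingly splits the proof: first the case $\frak b_2(0)=\frak b_2(1)$ is handled via a pseudo-isotopy complex $\mathrm{Poly}(\R;CF)$ with evaluation maps shown to be filtered chain homotopy equivalences, and then the residual change in $\frak b_2$ is handled by the explicit isomorphism $I([\gamma,w])=\exp(\int_{S^1}\gamma^*\frak c_1)\,[\gamma,w]$, which directly intertwines the two differentials and the two Piunikhin maps. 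Your sketch flags $\frak b_2$ as subtle but does not supply this mechanism.
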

Theorem \ref{homotopyinvbulk} (1) is proved in Section \ref{sec:proofcontinuity}.
Theorem \ref{homotopyinvbulk} (3) is proved in Section
\ref{sec:proofhomotopy}.
Theorem  \ref{homotopyinvbulk} (2) is proved in Section \ref{sec:appendix2}.
\par
Theorem \ref{homotopyinvbulk} implies that the function $H \mapsto \rho^{\frak b}(\underline{H};a)$ descends to $\widetilde{\rm Ham}_{\rm nd}(M,\omega)$,
which is the inverse image of ${\rm Ham}_{\rm nd}(M,\omega)$ in
the universal covering
$\widetilde{\rm Ham}(M,\omega) \to {\rm Ham}(M,\omega)$. We denote
by $\rho^{\frak b}(\widetilde \psi_H;a) = \rho^{\frak b}(\underline H;a)$ if
$\widetilde \psi_H = [\phi_H]
\in \widetilde{\rm Ham}_{\rm nd}(M,\omega)$ associated to $H$ as before.
\par
We have thus defined a map
\begin{equation}\label{spnondegb}
\rho^{\frak b}: \widetilde{\rm Ham}_{\rm nd}(M, \omega) \times (QH_{\frak b}^*(M)\setminus\{0\}) \to \R.
\end{equation}
It still satisfies the conclusions of Theorem \ref{axiomsh}.
Namely we have:
\begin{thm}\label{axiomshbulk} Let $(M,\omega)$ be any closed symplectic
manifold.
Then the map $\rho^{\frak b}$ in $(\ref{spnondegb})$ extends to
\begin{equation}\label{spgen}
\rho^{\frak b}: \widetilde{\rm Ham}(M, \omega) \times (QH_{\frak b}^*(M)\setminus\{0\}) \to \R.
\end{equation}
It has the following properties.
Let $\widetilde \psi, \widetilde \phi \in
\widetilde{\rm Ham}(M,\omega)$ and $0 \neq a \in H^*(M;\Lambda_{\frak b}(M))$.
\begin{enumerate}
\item {\rm (Nondegenerate spectrality)} If $\widetilde \psi$ is non-degenerate, then
$\rho(\widetilde \psi;a) \in \mbox{\rm Spec}(H;\frak b)$.
\index{$\rho(\widetilde \psi;a)$}
\item {\rm (Projective invariance)}
$\rho^{\frak b}(\widetilde\phi;\lambda a) = \rho^{\frak b}(\widetilde\phi;a)$ for any
$0 \neq \lambda \in \C$.
\item {\rm (Normalization)}
We have $\rho^{\frak b}(\underline 0;a) = \lambda_q(a)$ where $\underline 0$ is the identity in
$\widetilde{\rm Ham}(M,\omega)$
and $\frak v_q(a)$ is as in $(\ref{defvq})$.
\item {\rm (Symplectic invariance)} $\rho^{\eta^*\frak b}(\eta^{-1} \circ\widetilde\phi\circ \eta;\eta^*a) =
\rho^{\frak b}(\widetilde\phi;a)$ for any symplectic diffeomorphism
$\eta$. In particular, if $\eta \in {\rm Symp}_0(M,\omega)$,
then we have $\rho^{\frak b}(\eta^{-1} \circ\widetilde\phi\circ \eta;a) =
\rho^{\frak b}(\widetilde\phi;a)$.
\item {\rm (Triangle inequality)} $\rho^{\frak b}(\widetilde\phi \circ
\widetilde\psi; a\cup^{\frak b} b) \leq \rho^{\frak b}(\widetilde\phi;a) +
\rho^{\frak b}(\widetilde\psi;b)$,
where $a\cup^{\frak b} b$ is the $\frak{b}$-deformed quantum cup product.
\item {\rm ($C^0$-Hamiltonian continuity)} We have
$$
\vert \rho^{\frak b}(\widetilde\phi;a) - \rho^{\frak b}(\widetilde\psi;a)
\vert
\leq
\max\{
\|\widetilde\phi \circ \widetilde\psi^{-1} \|_+ ,
\|\widetilde\phi \circ \widetilde\psi^{-1} \|_-
\}
$$ where $\| \cdot\|_\pm$ is the positive and negative parts of
Hofer norm on $\widetilde{\rm Ham}(M,\omega)$. In
particular, the function $\rho_a: \widetilde \psi \mapsto
\rho^{\frak b}(\widetilde \psi;a)$ is continuous
with respect to the quotient topology under the equivalence relation $\sim$
on the space of Hamiltonian paths $\{\widetilde\psi_H \mid H \in C^\infty(S^1 \times M,\R) \}$.
\item {\rm (Additive triangle inequality)}
$\rho^{\frak b}(\widetilde\psi;a+b) \le \max\{\rho^{\frak b}(\widetilde\psi;a),\rho^{\frak b}(\widetilde\psi;b)\}$.
\end{enumerate}
\end{thm}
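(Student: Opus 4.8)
The plan is to obtain Theorem~\ref{axiomshbulk} by running, essentially verbatim, the proof of the undeformed Theorem~\ref{axiomsh}, the point being that the bulk parameter $\frak b$ enters every moduli problem used below only through the insertion of extra marked points carrying the class $\frak b_+$ together with the $\exp(\,\cdot\cap\frak b_2\,)$ weighting, exactly as in the definitions of $\del^{\frak b}_{(H,J)}$ and $\CP^{\frak b}_{(H_\chi,J_\chi)}$. In particular these insertions leave untouched all the energy and action identities (the one in Lemma~\ref{filtered}, and its continuation-map and pair-of-pants analogues), so the filtration estimates on which the axioms rest carry over word for word once the corresponding $\frak b$-deformed operators have been constructed with Kuranishi structures and compatible multisections, which is supplied by Propositions~\ref{connBULKkura} and \ref{piuBULKkura} and their cousins. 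The homotopy invariance needed to descend to $\widetilde{\rm Ham}(M,\omega)$ is Theorem~\ref{homotopyinvbulk}, which we assume.

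First I would treat the formal properties and the extension \eqref{spgen}. For \emph{nondegenerate} $H,H'$ one proves $-E^+(H'-H)\le\rho^{\frak b}(H';a)-\rho^{\frak b}(H;a)\le E^-(H'-H)$ by comparing $CF(M,H;\frak b)$ and $CF(M,H';\frak b)$ through a bulk-deformed continuation chain map: the energy of a continuation trajectory acquires only the term $\int\chi'(\tau)(H'_t-H_t)\,dt\,d\tau$, which is insensitive to the $\frak b$-insertions and is bounded by $E^{\mp}(H'-H)$ in the two directions; this is the $\frak b$-analogue of Lemma~\ref{lem:E+rhoE-} (proved in general as Theorem~\ref{contspect}). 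Since nondegenerate Hamiltonians are $C^0$-dense, $\rho^{\frak b}(\cdot;a)$ extends by uniform continuity; combined with Theorem~\ref{homotopyinvbulk} it descends to $\widetilde{\rm Ham}(M,\omega)$, and taking the infimum over $H$ with $[\phi_H]=\widetilde\psi$ gives property~(6). Properties~(2) and (7) follow from $\frak v_q(\lambda x)=\frak v_q(x)$ for $\lambda\in\C\setminus\{0\}$, $\frak v_q(x+y)\le\max\{\frak v_q(x),\frak v_q(y)\}$, and the $\Lambda^\downarrow$-linearity of $\CP^{\frak b}_{(H_\chi,J_\chi),\ast}$: e.g.\ for (7) pick cycles $x,y$ representing $\CP^{\frak b}_\ast(a^\flat)$ and $\CP^{\frak b}_\ast(b^\flat)$ with valuation within $\epsilon$ of the respective $\rho^{\frak b}$ and note $x+y$ represents $\CP^{\frak b}_\ast((a+b)^\flat)$. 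Property~(1) follows from Lemma~\ref{frabCFlem}(2) once the defining infimum is known to be attained; since $\mathrm{Spec}(H;\frak b)$ is a finite union of cosets of $G(M,\omega,\frak b)$, this is automatic when $G(M,\omega,\frak b)$ is discrete and in general follows from the algebraic boundary-depth argument of \cite{usher:specnumber}. Property~(4) is naturality: a symplectic diffeomorphism $\eta$ carries the data $(H,J,\frak b)$ to $(H\circ\eta,\eta^\ast J,\eta^\ast\frak b)$ and induces an action-preserving isomorphism of the filtered complexes, whence $\rho^{\eta^\ast\frak b}(\eta\circ\widetilde\phi\circ\eta^{-1};\eta^\ast a)=\rho^{\frak b}(\widetilde\phi;a)$; an $\eta\in\mathrm{Symp}_0(M,\omega)$ acts trivially on $H^\ast(M)$, so $\eta^\ast\frak b=\frak b$ and $\eta^\ast a=a$ as cohomology classes. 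Property~(3) is the computation of $\CP^{\frak b}_{(H_\chi,J_\chi),\ast}$ for a $C^2$-small autonomous Morse function approximating $\underline0$: the complex reduces to the Morse complex with bulk, the $\ell\ge1$ insertions do not lower the leading $q$-order, and letting $H\to0$ gives $\rho^{\frak b}(\underline0;a)=\frak v_q(a)$, as in Theorem~\ref{thm:axiomsh-H}(4).

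The substantive new input is the triangle inequality~(5). I would construct a bulk-deformed pair-of-pants product at the chain level,
\[
m_2^{\frak b}\colon CF(M,H_1;\Lambda^\downarrow)\otimes CF(M,H_2;\Lambda^\downarrow)\longrightarrow CF(M,H_1\#H_2;\Lambda^\downarrow),
\]
by counting, with $\frak b_+$-weighted marked points and the $\exp(\cap\frak b_2)$ correction, solutions on the thrice-punctured sphere with two inputs and one output (the $\frak b=0$ version being the pants product of \cite{schwarz1}). The energy computation of Lemma~\ref{filtered} shows the output action is at most the sum of the input actions, so $m_2^{\frak b}$ is filtered: $\frak v_q(m_2^{\frak b}(x_1\otimes x_2))\le\frak v_q(x_1)+\frak v_q(x_2)$. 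One then checks, by the usual gluing/cobordism arguments, that $m_2^{\frak b}$ is a chain map and that under the Piunikhin isomorphisms of Theorem~\ref{Pbulkiso} it intertwines with the $\frak b$-deformed quantum cup product of Definition~\ref{defn:prod}; concretely one builds an interpolating moduli space degenerating at one end to the pants picture and at the other to the Gromov--Witten configuration with three marked points carrying $a,b,\frak b,\dots,\frak b$. Granting this, (5) is formal: choose cycles $x_i$ representing $\CP^{\frak b}_\ast(a^\flat)$ and $\CP^{\frak b}_\ast(b^\flat)$ with valuations within $\epsilon$ of $\rho^{\frak b}(\widetilde\phi;a)$ and $\rho^{\frak b}(\widetilde\psi;b)$; then $m_2^{\frak b}(x_1\otimes x_2)$ is a cycle representing $\CP^{\frak b}_\ast((a\cup^{\frak b}b)^\flat)$ of valuation at most $\rho^{\frak b}(\widetilde\phi;a)+\rho^{\frak b}(\widetilde\psi;b)+2\epsilon$, $H_1\#H_2$ generates a representative of $\widetilde\phi\circ\widetilde\psi$ (the group-theoretic bookkeeping being exactly that of Theorem~\ref{axiomsh}(5)), and one lets $\epsilon\to0$. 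The main obstacle is precisely the construction and analysis of $m_2^{\frak b}$ and the identification of its induced map with $\cup^{\frak b}$: one must equip the pants and interpolation moduli spaces with Kuranishi structures having all the expected boundary strata (the sphere-bubbling strata dropping out by the extra $S^1$-symmetry near a $t$-independent end, as in Proposition~\ref{piuBULKkura}), choose a coherent system of multisections compatible with the \eqref{62formu}-type boundary decompositions so that the $\frak b$-weighted counts genuinely assemble into a chain map, and keep the orientations consistent throughout.
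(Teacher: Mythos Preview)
Your proposal is correct and follows essentially the same route as the paper: continuity via continuation maps (Theorem~\ref{contspect}), spectrality via Usher's algebraic lemma (Proposition~\ref{spectrarityalg}), the triangle inequality via a bulk-deformed pants product shown to be filtered and to intertwine with $\cup^{\frak b}$ under Piunikhin (Sections~\ref{subsec:pants}--\ref{subsec:proofend}), and the remaining axioms as formal consequences.

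One small divergence worth noting concerns Normalization~(3). You propose a $C^2$-small Morse function argument. The paper instead proves the two inequalities separately: $\rho^{\frak b}(\underline 0;a)\le\frak v_q(a)$ comes from the action bound $\mathcal A_H([\gamma,w])\le E^-(H)$ for nonempty $\mathcal M_\ell(H_\chi,J_\chi;*,[\gamma,w])$, while the reverse inequality $\rho^{\frak b}(\underline 0;a)\ge\frak v_q(a)$ is established by constructing the inverse chain map $\mathcal Q^{\frak b}_{(H_{\widetilde\chi},J_{\widetilde\chi})}$ and showing it too respects filtration (Proposition~\ref{oppositeineq}). Your approach is standard and works, but the paper's has the advantage that $\mathcal Q^{\frak b}$ is needed anyway to prove $\mathcal P^{\frak b}$ is an isomorphism (Theorem~\ref{appendmain}), so no extra Morse-theoretic analysis of the bulk insertions is required.
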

The proofs of Theorems \ref{homotopyinvbulk} and \ref{axiomshbulk} occupy the rest of this part.
Most of the proofs are minor changes of the proofs of
Theorem \ref{axiomsh} in \cite{oh:alan,oh:minimax} and of \cite{usher:specnumber}.

\section{Proof of the spectrality axiom}
\label{sec:sectrality}

In this section we prove Theorem \ref{axiomshbulk} (1).
To include the case when $(M,\omega)$ is not rational
we use some algebraic results exploited by Usher \cite{usher:specnumber}.
We reprove a similar result in Subsection \ref{subsec:Usher}
using the universal Novikov ring.

\subsection{Usher's spectrality lemma}
\label{subsec:Usher} \index{Usher's spectrality lemma|(}

Let $G$ be a subgroup of $\R$.
(We do {\it not} assume that $G$ is discrete.)
We define
$$
\aligned
\Lambda^{\downarrow}(G)
&=
 \left.\left\{ \sum_{i=1}^{\infty} a_i q^{\lambda_i} ~\right\vert~ a_i \in \C, \lambda_i \in \R,
\, \lambda_i \in G,
\lim_{i\to\infty} \lambda_i = -\infty \right\},
\\
\Lambda^{\downarrow}_0 (G)
&=
 \left.\left\{ \sum_{i=1}^{\infty} a_i q^{\lambda_i} ~\right\vert~ a_i \in \C, \lambda_i \in \R_{\le 0},
\, \lambda_i \in G,
\lim_{i\to\infty} \lambda_i = -\infty \right\},
\\
\Lambda^{\downarrow}_+ (G)
&= \left.\left\{ \sum_{i=1}^{\infty} a_i q^{\lambda_i} ~\right\vert~ a_i \in \C, \lambda_i \in \R_{< 0},
\, \lambda_i \in G,
\lim_{i\to\infty} \lambda_i = -\infty \right\}.
\endaligned
$$
(Note the above definition of $\Lambda^{\downarrow}(G)$ coincides with
Definition \ref{Lambda(G)}.)
\index{$\Lambda^{\downarrow}_0 (G)$}\index{$\Lambda^{\downarrow}_+ (G)$}
\par
It is easy to see that $\Lambda^{\downarrow}(G)$ is a field of fraction of $\Lambda_0^{\downarrow}(G)$.
\par
Let $\overline C$ be a finite dimensional $\C$ vector space. We put
$$
C = \overline C \otimes \Lambda^{\downarrow},
\quad
C(G) = \overline C \otimes \Lambda^{\downarrow}(G) \subset C.
$$
\par
Let $e_i$ ($i=1,\dots,N$) be a $\C$-basis of $\overline C$  and
$\lambda^0_i$ for $i=1,\dots,N$ be real numbers.
We define\index{$\lambda_q$}
$
\lambda_q : C \to \R
$
by
$$
\lambda_q\left(\sum_{i=1}^N x_i e_i\right) = \sup \{\frak v_q(x_i)+\lambda_q(e_i) \mid i=1,\dots,N\},
$$
i.e., $\lambda_q(e_i) = \lambda_i^0$ for $i = 1, \ldots, N$.
It defines a norm with respect to which $C$ and $C(G)$ are complete. Then we define a $G$-set
\be\label{eq:G'}\index{$G'$}
G' = \bigcup_{i=1}^N\{ \lambda_i^0 + g \mid g \in G\}.
\ee
It follows from the definition of $\lambda_q(x)$ that
if $x \in C(G)$ then $\lambda_q(x) \in G'$.
We put
$$
F^{\lambda}C = \{x \in C \mid \lambda_q(x) \le \lambda\},
\quad
F^{\lambda}C(G) = F^{\lambda}C \cap C(G).
$$

Suppose that $\overline C$ is $\Z_2$-graded, i.e., $\overline C = \overline C^0 \oplus \overline C^1$ and
each of the element of our basis $e_i$ lies in either $\overline C^0$ or $\overline C^1$.
Let a $\C$-linear map
$$
\partial_g : \overline C^i \to \overline C^{i-1}
$$
be given for each $g \in G$. Assuming that
$\{g \mid \partial_g \ne 0\} \cap \R_{>E}$ is a finite set for
any $E \in \R$,
we put
$$
\partial = \sum_{g\in G} q^g \partial_g : C \to C.
$$
It induces a linear map
$C(G) \to C(G)$, which we also denote by $\partial$.
If $\del$ satisfies $\partial \partial = 0$,
$(C,\partial)$ and $(C(G),\partial)$ define chain complexes.
Denote by $H(C)$, $H(C(G))$ their homologies respectively, and denote by
$H(C(G)) \to H(C)$ the natural homomorphism induced by $\Lambda^{\downarrow}(G) \hookrightarrow
\Lambda^{\downarrow}$.
\begin{defn}
For $\frak x \in H(C)$, we define the level
$$
\rho(\frak x) = \inf \{\lambda_q(x) \mid x \in C(G), \partial x = 0,
[x] = \frak x\}.
$$
\end{defn}
Now the following theorem is proved by Usher \cite{usher:specnumber}.
Here we give its proof for completeness' sake exploiting the algebraic
material developed in \cite[Subsection 6.3]{fooo:book1}.

\begin{prop}\label{spectrarityalg}{\rm (Usher)}
$\rho(\frak x) \in G'$ for any  $\frak x \in {\rm Im}(H(C(G)) \to H(C))$.
\end{prop}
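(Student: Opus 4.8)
The plan is to reduce the statement to a finiteness/closedness property of the set of achievable valuations, using the fact that over a complete valued field with a discrete-looking ``spectrum'' one can always find a minimizing cycle. More precisely, fix $\frak x \in \mathrm{Im}(H(C(G)) \to H(C))$ and choose some cycle $x_0 \in C(G)$ with $\partial x_0 = 0$ and $[x_0] = \frak x$ in $H(C)$ (such an $x_0$ exists by hypothesis, though a priori $[x_0] = \frak x$ only after tensoring up to $\Lambda^\downarrow$; since $\partial$ preserves $C(G)$ one checks $H(C(G)) \to H(C)$ is induced by the flat inclusion $\Lambda^\downarrow(G) \hookrightarrow \Lambda^\downarrow$, so we may genuinely take $x_0 \in C(G)$ representing a class mapping to $\frak x$). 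Then $\rho(\frak x) = \inf\{\frak v_q(x_0 - \partial y) \mid y \in C(G)\}$. The point is to show this infimum is attained and lies in $G'$.

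First I would record the structural facts: $G'$ as defined in \eqref{eq:G'} is a $G$-set, it is closed under the operation $\max$ of the valuation (property (2) of $\frak v_q$), and crucially $\frak v_q(x) \in G' \cup \{-\infty\}$ for every $x \in C(G)$ — this is immediate from the definition of $\frak v_q$ on $C$ via the basis $e_i$ and the fact that $\frak v_q$ on $\Lambda^\downarrow(G)$ takes values in $G \cup \{-\infty\}$. Next, the image subspace $\partial(C(G)) \subseteq C(G)$ is itself complete: $\partial$ is continuous for the $\frak v_q$-metric (because of the finiteness assumption on $\{g \mid \partial_g \neq 0\}$ above any level), so $\ker \partial$ is closed, and one shows $\partial(C(G))$ is closed in $C(G)$ by the standard argument that the induced map $C(G)/\ker\partial \to C(G)$ is a bijective bounded map between complete spaces, hence a homeomorphism onto a closed subspace (or, more elementarily, a Cauchy sequence of boundaries $\partial y_n$ can be promoted, by passing to a subsequence with rapidly decreasing valuation differences, to a convergent sequence $y_n' \to y$ with $\partial y = \lim \partial y_n$). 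This closedness is exactly the kind of statement isolated in Subsection 6.3 of \cite{fooo:book}, so I would invoke that.

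With $\partial(C(G))$ closed, the set $\{x_0 - \partial y \mid y \in C(G)\} = x_0 + \partial(C(G))$ is a closed coset; the function $\frak v_q$ restricted to it is therefore continuous and, being bounded below by $\rho(\frak x) > -\infty$ (finite since $[x_0] = \frak x \neq 0$ forces $x_0 \notin \partial(C(G))$, and closedness separates $x_0$ from that subspace by a positive $q$-distance), attains its infimum — explicitly, take a minimizing sequence $z_n = x_0 - \partial y_n$ with $\frak v_q(z_n) \downarrow \rho(\frak x)$; for $n, m$ large $\frak v_q(z_n - z_m) = \frak v_q(\partial y_m - \partial y_n)$ must be small, else the ultrametric inequality would force one of $\frak v_q(z_n), \frak v_q(z_m)$ strictly below $\rho(\frak x)$; so $\{z_n\}$ is Cauchy, converges to some cycle $z_\infty$ in the coset, and $\frak v_q(z_\infty) = \rho(\frak x)$. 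Then $\rho(\frak x) = \frak v_q(z_\infty) \in G' \cup \{-\infty\}$, and since $z_\infty \neq 0$ we get $\rho(\frak x) \in G'$, as claimed.

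\textbf{Main obstacle.} The delicate point is establishing that $\partial(C(G))$ is closed in $C(G)$ for the $q$-adic (equivalently $\frak v_q$-) topology; this is a genuine completeness/open-mapping-type assertion that fails for careless infinite-dimensional complexes and is precisely why one needs the ``gapped''/finiteness hypothesis on $\partial$ and the algebra developed in Subsection 6.3 of \cite{fooo:book}. Once that is in hand the rest is a routine ultrametric minimization argument. I would therefore spend the bulk of the write-up citing and, if necessary, adapting the relevant closedness lemma from \cite{fooo:book}, and keep the minimization step brief.
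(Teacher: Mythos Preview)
Your high-level outline --- show the infimum is attained by some cycle in $C(G)$, then observe its valuation lies in $G'$ --- matches the paper's. But the argument you give for attainment is wrong, and you have misidentified where the real difficulty lies.

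The Cauchy minimization step is flawed. You claim that a minimizing sequence $z_n = x_0 - \partial y_n$ with $\frak v_q(z_n) \downarrow \rho(\frak x)$ must be Cauchy, arguing that otherwise the ultrametric inequality would force some $\frak v_q(z_n)$ below $\rho(\frak x)$. This does not follow: the ultrametric inequality $\frak v_q(z_n - z_m) \le \max\{\frak v_q(z_n), \frak v_q(z_m)\}$ only bounds $\frak v_q(z_n - z_m)$ \emph{above}; it says nothing forcing it to be very negative. Concretely, if the infimum is achieved by two distinct cycles $z$ and $z'$ with $\frak v_q(z) = \frak v_q(z') = \rho(\frak x)$, the alternating sequence $z, z', z, z', \ldots$ is minimizing but manifestly not Cauchy, since $\frak v_q(z - z')$ can equal $\rho(\frak x)$. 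So closedness of the coset plus a minimizing sequence does not produce a minimizer by this route.

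You also overestimate the closedness issue. Since $\overline C$ is finite-dimensional over $\C$, the space $C(G)$ is finite-dimensional over the complete valued field $\Lambda^\downarrow(G)$, and every linear subspace --- in particular $\partial(C(G))$ --- is automatically closed. No gappedness or open-mapping argument is needed for that. The genuine content, and what the paper actually proves, is the existence of an \emph{orthogonal} (``standard'') basis: a basis $\{e'_i\} \cup \{e''_i\} \cup \{e'''_i\}$ of $C(G)$ adapted to $\operatorname{Im}\partial \subset \operatorname{Ker}\partial \subset C(G)$ whose leading terms (``symbols'') are $\C$-linearly independent, so that $\frak v_q\left(\sum a_i e''_i\right) = \max_i\{\frak v_q(a_i) + \frak v_q(e''_i)\}$. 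Then every homology class has a unique representative in the span of the $e''_i$, and its valuation --- which lies in $G'$ by construction --- realizes the infimum exactly. This standard-basis lemma is the substitute for your failed minimization argument; it is the non-Archimedean analogue of Gram--Schmidt, and it is where the work in Subsection~6.3 of \cite{fooo:book} actually goes.
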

\begin{proof}
We first need to slightly modify the discussion in \cite[Subsection 6.3]{fooo:book1}
since the energy level of the basis $e_i$ is not zero but is $\lambda_i^0$ here.

We say
$$
e_i \sim e_j \quad\mbox{if and only if } \, \lambda_i^0 - \lambda_j^0 \in G.
$$
By re-choosing the basis $\{e_i\}_{1 \leq i \leq N}$ into
the form $\{q^{\mu_i} e_i\}_{1 \leq i \leq N}$ with $\mu_i \in G$ if necessary, we may assume,
without loss of generality, that
$\lambda_i^0 = \lambda_j^0$ if $e_i \sim e_j$.
We assume this in the rest of this subsection.
\par
For each $\lambda \in G'$, define
$$
I(\lambda) = \{ i \mid \lambda - \lambda_i^0 \in G, \, 1 \leq i \leq N\}.
$$
We denote by $\mu(\lambda)$ the difference $\lambda - \lambda_i^0$
for $i\in I(\lambda)$. By the definition of $\sim$ and the hypothesis
we put above, the value $\mu(\lambda)$ is independent of $i$.
We take the direct sum
$$
\overline C(\lambda) = \bigoplus_{i \in I(\lambda)}\C e_i.
$$

Let $x \in C(G)$ be a nonzero element and denote $\lambda  = \lambda_q(x)$.
Then
there exists a unique $\sigma(x) \in \overline C(\lambda)$ such that
$$
\lambda_q(x - q^{\mu(\lambda)}\sigma(x)) < \lambda_q(x).
$$
We call $\sigma(x)$ the {\it symbol} of $x$.
\index{$\Lambda (G)$ vector subspace}
\begin{defn}(Compare \cite[Section 6.3.1]{fooo:book1})\index{symbol}
Let $V \subset C(G)$ be a $\Lambda^{\downarrow}(G)$ vector subspace.
A basis $\{e'_i \mid i=1,\dots,N'\}$ of $V$ is said to be a {\it
standard basis} \index{standard basis} if the symbols  $\{\sigma(e'_i) \mid i=1,\dots,N'\}$
are linearly independent over $\C$.
\end{defn}\index{$\Lambda^{\downarrow}(G)$}
If $\{e'_i \mid i=1,\dots,N'\}$  is a standard basis, then we have
\begin{equation}
\lambda_q\left(\sum_i a_i e'_i\right) = \max \{\frak v_q(a_i) + \lambda_q(e'_i) \mid i=1,\dots,N' \}.
\end{equation}
\begin{lem}\label{lemstandardbasis0}
Any $V \subset C(G)$ has standard basis. Moreover if $V_1 \subset
V_2 \subset C$ are $\Lambda^{\downarrow}(G)$ vector subspaces,
 then any standard
basis of $V_1$ can be extended to one of $V_2$.
\end{lem}
\begin{proof}
The proof is similar to the proof of \cite[Lemma 6.3.2   and Lemma 6.3.2bis]{fooo:book1}.
We give the detail below since we considered $\Lambda$ in place of
$\Lambda^{\downarrow}(G)$ in \cite{fooo:book1}.
\par
Let $x_1,\dots,x_k$ be a standard basis of $V_1$.
We prove the following by induction on $\ell$.
\begin{sublem}\label{mslemmasandard}
For any given $\ell \le \dim V_2 - \dim V_1$,
there exists $y_1,\dots,y_{\ell}$ such that the set
$\{\sigma(x_1),\dots,\sigma(x_k),\sigma(y_1),\dots,\sigma(y_{\ell})\}$ is
linearly independent over $\C$.
\end{sublem}
\begin{proof}
The proof is by induction on $\ell$.
Suppose we have $y_1,\dots,y_{\ell}$ as in the sublemma and $\dim V_2 - \dim V_1 \geq \ell +1$.
We will find $y_{\ell+1}$.
\par
Pick $z_1, \dots, z_m \in \overline{C}$ such that
$\{\sigma(x_1), \dots, \sigma(x_k), \sigma(y_1),\dots \sigma(y_\ell), \sigma(z_1), \dots, \linebreak \sigma(z_m)\}$
is a basis of $\overline{C}$ as a $\C$-vector space.
In particular, $\{x_1,\dots,x_k,y_1, \dots, y_\ell, \linebreak z_1, \dots, z_m\}$ is a basis of $C$ as a
$\Lambda (G)$-vector space.
We define $A : C(G) \to C(G)$ a $\Lambda (G)$-linear isomorphism by
$$A(x_i) = q^{\mu(\lambda_q(x_i))} \sigma(x_i), \quad
A(y_j) = q^{\mu(\lambda_q(y_j))} \sigma(y_j), \quad
A(z_h)=z_h
$$
for $i=1,\dots,k$, $j=1,\dots,\ell$, $h=1, \dots, m$.
Note that $A$ preserves filtration and $\sigma \circ A = \sigma$.
We take $y' \in V_2$ that is linearly independent of
$\{ x_1,\dots,x_k,y_1,\dots,y_{\ell}\}$ over $\Lambda (G)$.
We write
$$
A(y') = \sum_{n=1}^{\infty} q^{\mu(\lambda_n)}\overline y'_n
$$
where $\overline y'_n \in \overline C(\lambda_n)$.
Note $\lambda_q(q^{\mu(\lambda_n)}\overline y'_n) = \lambda_n$.
Moreover, we may assume that $\lambda_n > \lambda_{n+1}$ and
$\lim_{n\to \infty} \lambda_n = -\infty$.
\par
By the assumption that $\dim V_2 \geq \dim V_1 + \ell + 1 = k + \ell +1$ and that $\overline y'$ is linearly
independent of $\{x_1,\dots,x_k,y_1,\dots,y_{\ell}\}$,
there exists $n$ such that
\begin{equation}\label{condfornnn}
\overline y'_n \notin \bigoplus_{i=1}^k \C \sigma(x_i)
\oplus
\bigoplus_{j=1}^{\ell} \C \sigma(y_j).
\end{equation}
Let $n_0$ be the smallest number satisfying (\ref{condfornnn}).
Put
$$
y'' = \sum_{n=n_0}^{\infty} q^{\mu(\lambda_n)}\overline y'_n.
$$
Clearly, $\sigma(y'')$ is linearly independent to
$\sigma(x_1), \dots, \sigma(x_k), \sigma(y_1),\dots, \sigma(y_\ell)$.
Hence
$y_{\ell +1}=A^{-1}(y'')$ has the required property.
\end{proof}
Lemma \ref{lemstandardbasis0} easily follows from Sublemma \ref{mslemmasandard}.
\end{proof}
\par
We now consider $\partial : C(G) \to C(G)$ and its matrix element
with respect to
a basis of $C(G)$.
Choose a basis $\{e'_i \mid i=1,\dots, b\} \cup
\{e''_i \mid i=1,\dots, h\}
\cup
\{e'''_i \mid i=1,\dots, b\} $
such that
$\{e'_i \mid i=1,\dots, b\}$ is a standard basis of $\text{\rm Im}\,\partial$,
 $\{e'_i \mid i=1,\dots, b\} \cup
\{e''_i \mid i=1,\dots, h\} $ is a standard basis of $\text{\rm Ker}\,\partial$
and
 $\{e'_i \mid i=1,\dots, b\} \cup
\{e''_i \mid i=1,\dots, h\}
\cup
\{e'''_i \mid i=1,\dots, b\} $
is a standard basis of $C$.
(We may also assume that $e'_i, e''_i, e'''_i$ are \emph{homogeneous so that they lie}
either in  $C^0$ or in  $C^1$.)
Such a basis exists by Lemma \ref{lemstandardbasis0}.
\par
\begin{lem}
If $a \in H(C(G),\partial)$, there exists a unique $a_i \in \Lambda^{\downarrow}(G)$ such that
$\sum_{i=1}^h a_i e''_i$ represents $a$.
Moreover
\begin{equation}\label{infattainH00}
\inf\{ \lambda_q(x) \mid x \in \text{\rm Ker}\,\partial, \,\, a =[x]\}
=
\lambda_q\left(\sum_{i=1}^h a_i e''_i\right).
\end{equation}
\end{lem}
The proof is easy and so omitted.
\par
We note that by the definition \eqref{eq:G'} of $G'$,
$
\lambda_q\left(\sum_{i=1}^h a_i e''_i\right) \in G'.
$
Proposition \ref{spectrarityalg} is proved.
\end{proof}

\begin{rem}
From the above discussion we have proved
$$
\inf \{ \lambda_q(x) \mid x \in C(G), \partial x = 0,
[x] = \frak x\}
=
\inf \{ \lambda_q(x) \mid x \in C, \partial x = 0,
[x] = \frak x\}
$$
for $\frak x \in {\rm Im}(H(C(G)) \to H(C))$ at the same time.
\end{rem}
\index{Usher's spectrality lemma|)}

\subsection{Proof of nondegenerate spectrality}
\label{subsec:spectrarity}

In this subsection we apply Proposition \ref{spectrarityalg} to prove the following
theorem.
\begin{thm}\label{bulkspectrality}
If $H$ is nondegenerate, then
$
\rho^{\frak b}(H;a) \in {\rm Spec}(H;\frak b).
$
\end{thm}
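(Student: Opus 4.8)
The plan is to deduce Theorem~\ref{bulkspectrality} from the algebraic Proposition~\ref{spectrarityalg} by applying it to the bulk-deformed Floer complex, with the coefficient field $\Lambda^\downarrow(G)$ chosen to be $\Lambda^\downarrow_{\frak b}(M)$. Concretely, set $G = G(M,\omega,\frak b)$, a (possibly non-discrete) subgroup of $(\R,+)$, and let $\overline C$ be the finite dimensional $\C$-vector space with basis $\{[\gamma,w_\gamma] \mid \gamma \in \Per(H)\}$ obtained from a fixed choice of lifting $w_\gamma$ for each $\gamma$; here nondegeneracy of $H$ is used to guarantee that $\Per(H)$ is finite. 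We assign to the basis vector $[\gamma,w_\gamma]$ the weight $\lambda^0 = \CA_H([\gamma,w_\gamma])$ and equip $\overline C$ with the $\Z_2$-grading \eqref{35formula}. With these choices, in the notation of Subsection~\ref{subsec:Usher} one has $C = \overline C\otimes\Lambda^\downarrow = CF(M,H;\Lambda^\downarrow)$ and $C(G) = \overline C\otimes\Lambda^\downarrow(G) = CF(M,H;\frak b)$ by Lemma~\ref{frabCFlem}, and the valuation $\frak v_q$ and filtration $F^\lambda$ of Subsection~\ref{subsec:Usher} agree with those of Definition~\ref{valuationv}.

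Next I would match the remaining ingredients of Proposition~\ref{spectrarityalg} with their Floer-theoretic counterparts. The $G$-set $G'$ of \eqref{eq:G'} is $\bigcup_{\gamma\in\Per(H)}\bigl(\CA_H([\gamma,w_\gamma]) + G(M,\omega,\frak b)\bigr)$; by Lemma~\ref{Gomegaaction} we have $\mathrm{Spec}(H) = \bigcup_{\gamma}\bigl(\CA_H([\gamma,w_\gamma]) + G(M,\omega)\bigr)$, and since $G(M,\omega)\subset G(M,\omega,\frak b)$ this gives $G' = \mathrm{Spec}(H) + G(M,\omega,\frak b) = \mathrm{Spec}(H;\frak b)$. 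For the differential we take $\partial = \partial^{\frak b}_{(H,J)}$: it preserves $C(G) = CF(M,H;\frak b)$, squares to zero, and has degree $-1$ for the $\Z_2$-grading (both the action-decreasing property from \eqref{energyincrease} and the dimension formula \eqref{dimensionboundaryb} feed into this). Expanding the matrix entry of $\partial^{\frak b}_{(H,J)}$ from $[\gamma,w_\gamma]$ to $[\gamma',w_{\gamma'}]$ as a series $\sum_{g} a^{(g)}_{\gamma\gamma'}q^{g}$ with exponents $g\in G(M,\omega,\frak b)$, and setting $(\partial_g)_{\gamma\gamma'} = a^{(g)}_{\gamma\gamma'}$, we obtain a decomposition $\partial^{\frak b}_{(H,J)} = \sum_{g\in G}q^{g}\partial_g$ with each $\partial_g$ a $\C$-linear endomorphism of $\overline C$; the hypothesis that $\{g \mid \partial_g\neq 0\}\cap\R_{>E}$ be finite for every $E$ follows from Gromov--Floer compactness exactly as in Lemmas~\ref{adiccomv1} and \ref{adiccomv2}, since there are only finitely many critical points of action bounded below by a fixed value and each matrix entry is a series whose exponents tend to $-\infty$.

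Granting this dictionary, Proposition~\ref{spectrarityalg} yields $\rho(\frak x)\in G' = \mathrm{Spec}(H;\frak b)$ for every $\frak x\in\mathrm{Im}\bigl(H(C(G))\to H(C)\bigr)$, where $\rho(\frak x)$ is the infimum of $\frak v_q(x)$ over cycles $x\in C(G)$ representing $\frak x$. It remains to observe that $\rho^{\frak b}(H;a)$ is of this form: by definition $\rho^{\frak b}(H;a) = \rho^{\frak b}(\frak x)$ with $\frak x$ the image in $HF^{\frak b}(M,H,J;\Lambda^\downarrow) = H(C)$ of $\mathcal P^{\frak b}_{(H_\chi,J_\chi),\ast}(a^\flat) \in HF^{\frak b}(M,H,J;\Lambda^\downarrow_{\frak b}(M)) = H(C(G))$, so $\frak x\in\mathrm{Im}\bigl(H(C(G))\to H(C)\bigr)$; moreover $\rho^{\frak b}(\frak x)$ is by construction the infimum of $\frak v_q(x)$ over cycles $x\in C = CF(M,H;\Lambda^\downarrow)$ representing $\frak x$, which by the Remark following Proposition~\ref{spectrarityalg} coincides with the infimum taken over cycles in $C(G)$, i.e.\ with $\rho(\frak x)$. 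Hence $\rho^{\frak b}(H;a) = \rho(\frak x)\in\mathrm{Spec}(H;\frak b)$. The only genuine work is the bookkeeping of the second paragraph --- identifying $G'$ with $\mathrm{Spec}(H;\frak b)$ and checking the finiteness hypothesis on the operators $\partial_g$ --- everything else being subsumed into Proposition~\ref{spectrarityalg} and the Gromov--Floer compactness results already established.
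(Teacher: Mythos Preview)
Your proposal is correct and follows essentially the same approach as the paper's proof: both set $G = G(M,\omega,\frak b)$, identify the bulk-deformed Floer complex with $C(G)$ via a fixed choice of liftings $w_\gamma$ with weights $\lambda^0_\gamma = \CA_H([\gamma,w_\gamma])$, check that $G' = \mathrm{Spec}(H;\frak b)$, and apply Proposition~\ref{spectrarityalg}. Your write-up is more detailed---explicitly verifying the finiteness hypothesis on the $\partial_g$ and invoking the Remark after Proposition~\ref{spectrarityalg} to reconcile the infimum over $C$ with that over $C(G)$---where the paper simply asserts these identifications and concludes directly.
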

\begin{proof}
We put $G = G(M,\omega,\frak b)$.
Let $\overline C$ be the $\C$ vector space
whose basis is given by $\{[\gamma] \mid \gamma \in \text{\rm Per}(H)\}$.
Then we have
$$
C(G) \cong CF(M,H;\Lambda^{\downarrow}_{\frak b} (M)), \quad C \cong CF(M,H;\Lambda^{\downarrow}).
$$
In fact, an isomorphism
$
I : C(G) \cong CF(M,H;\Lambda^{\downarrow}_{\frak b} (M))
$
can be defined by
\begin{equation}\label{defIII}
I([\gamma])  =   \llb \gamma,w_\gamma \rrb,
\end{equation}
where we take and fix a bounding disc $w_\gamma$ for each $\gamma$.
\par
For each member $e_i = [\gamma_i]$ of the basis of $\overline C$,
we put
$
\lambda_i^0 = \mathcal A_{H}([\gamma_i,w_{\gamma_i}]).
$
Then
$
G' = {\rm Spec}(H;\frak b)
$
and the map $I$ preserves filtration.
Theorem \ref{bulkspectrality} now follows from
Proposition \ref{spectrarityalg}.
\end{proof}

\section{Proof of $C^0$-Hamiltonian continuity}
\label{sec:proofcontinuity}
\index{spectral invariant!$C^0$-Hamiltonian continuity}

In this section we prove the following:

\begin{thm}\label{contspect}
Let $H, \, H' : S^1\times M \to \R$ be smooth functions such that
$\psi_H$ and $\psi_{H'}$ are nondegenerate.
Let $a \in QH^*_{\frak b}(M)$ and $\frak b \in H^{{\rm even}}(M;\Lambda_0)$.
Then we have
\begin{equation}
-E^+(H'-H) \leq
\rho^{\frak b}(H';a) - \rho^{\frak b}(H;a)
\leq E^-(H'-H).
\end{equation}
\end{thm}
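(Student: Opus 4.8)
\textbf{Proof proposal for Theorem \ref{contspect}.}

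The plan is to imitate the standard energy estimate for continuation maps in Hamiltonian Floer theory, now in the presence of the bulk deformation $\frak b$. The key point is that the bulk insertions of the forms representing $\frak b_0,\frak b_2,\frak b_+$ are placed at interior marked points and carry \emph{no} new energy, so the energy identity \eqref{energyincrease} survives essentially unchanged and the filtration is shifted by exactly the same amount as in the undeformed case. First I would recall that, by Theorem \ref{homotopyinvbulk}(1), the spectral invariant $\rho^{\frak b}(H;a)$ is independent of all auxiliary choices, so it suffices to compute it using any convenient family of almost complex structures; and by Theorem \ref{homotopyinvbulk}(2), $\rho^{\frak b}$ depends only on the cohomology class $\frak b$, so we may fix closed representatives of $\frak b_0,\frak b_2,\frak b_+$ once and for all.

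Next I would set up the bulk-deformed continuation map. Given nondegenerate $H,H'$ with $H\le H'$ (the general case reduces to this after the two one-sided estimates are proved), choose an elongation function $\chi\in\CK$ and an interpolating homotopy $\mathcal H(\tau,t,x)$ with $\mathcal H=H$ for $\tau\le 0$, $\mathcal H=H'$ for $\tau\ge 1$, and $\partial\mathcal H/\partial\tau \ge 0$ everywhere when $H\le H'$. One builds moduli spaces $\CM_\ell(\mathcal H,J;[\gamma,w],[\gamma',w'])$ of solutions of the $\tau$-dependent Floer equation with $\ell$ interior marked points, exactly as in Proposition \ref{connBULKkura} but with the $\tau$-dependent data; the same Kuranishi package and the same boundary description hold, the shuffle combinatorics of the marked points being identical. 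Integrating ${\rm ev}_1^*\frak b_+\wedge\cdots\wedge{\rm ev}_\ell^*\frak b_+$ (with the $\exp$-weight coming from $\frak b_2$, as in \eqref{bdrycoefb}) over these spaces defines a bulk-deformed chain map $\mathcal C^{\frak b}_{\mathcal H}: CF(M,H;\Lambda^\downarrow)\to CF(M,H';\Lambda^\downarrow)$ which commutes with $\CP^{\frak b}$ up to chain homotopy, hence sends $\CP^{\frak b}_{(H_\chi,J_\chi),\ast}(a^\flat)$ to $\CP^{\frak b}_{(H'_\chi,J'_\chi),\ast}(a^\flat)$ in homology. The energy identity for such a solution reads
$$
E_{(\mathcal H,J)}(u) = \CA_H([\gamma,w]) - \CA_{H'}([\gamma',w']) - \int_{\R}\!\!\int_{S^1} \Big(\tfrac{\partial \mathcal H}{\partial\tau}\Big)(\tau,t,u(\tau,t))\, dt\, d\tau,
$$
and bounding $\partial\mathcal H/\partial\tau$ from below/above by $\min_x(H'_t-H_t)$ and $\max_x(H'_t-H_t)$ after reparametrizing by $\chi$ gives the filtration shift by $-E^-(H'-H)$ (resp.\ $-E^+(H'-H)$ for the reverse homotopy). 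Since the marked-point insertions contribute nothing to the energy, this is verbatim the undeformed estimate.

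From here the argument is purely formal. Because $\mathcal C^{\frak b}_{\mathcal H}$ shifts the action filtration by at most $E^-(H'-H)$ and carries the class computing $\rho^{\frak b}(H;a)$ to the one computing $\rho^{\frak b}(H';a)$, we get $\rho^{\frak b}(H';a)\le \rho^{\frak b}(H;a)+E^-(H'-H)$ for $H\le H'$; running the homotopy in the opposite direction (from $H'$ to $H$) yields $\rho^{\frak b}(H;a)\le \rho^{\frak b}(H';a)+E^+(H-H')=\rho^{\frak b}(H';a)+E^+(H'-H)$ by the sign conventions for $E^\pm$. For general $H,H'$ one interpolates through $\max(H,H')$ (or $\min(H,H')$) and adds the two estimates, using that $E^+$ and $E^-$ are subadditive; alternatively, one works directly with a non-monotone homotopy and estimates $\partial\mathcal H/\partial\tau$ pointwise, which gives both inequalities simultaneously with constants $E^+(H'-H)$ and $E^-(H'-H)$. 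I expect the main obstacle to be bookkeeping rather than anything conceptual: one must check that the bulk-deformed continuation map is well-defined over the small Novikov ring $\Lambda^\downarrow_{\frak b}(M)$ (the finiteness statements are exactly those of Lemmas \ref{adiccomv1} and \ref{adiccomv2}, since the marked points do not change the energy), that it is a chain map (Stokes' theorem plus the boundary formula analogous to \eqref{bdryPIunikinmodu}), and that it intertwines the two Piunikhin maps up to homotopy (a standard gluing/homotopy-of-homotopies argument). None of these requires any estimate beyond the energy identity above.
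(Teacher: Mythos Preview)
Your proposal is correct and essentially matches the paper's proof: the paper takes your ``alternative'' route directly, using the linear interpolation $F^s = H + s(H'-H)$ without any monotonicity assumption, proves the energy estimate (Lemma~\ref{connectinghomofilt}) exactly as you describe, and then swaps the roles of $H$ and $H'$ to get both inequalities---your detour through $\max(H,H')$ is unnecessary and would require extra care to recover the sharp constants. One small circularity to fix: you invoke Theorem~\ref{homotopyinvbulk}(1) (i.e.\ $J$-independence) at the outset, but in the paper that statement is itself \emph{derived} from the same continuation-map argument applied with $H=H'$ and $J\ne J'$, so you should prove both simultaneously rather than assume one to prove the other.
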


Theorem \ref{contspect} together with Theorem \ref{homotopyinvbulk}
implies Theorem \ref{axiomshbulk} (6). (See the end of Section \ref{sec:proofhomotopy}.)
We will also prove the following theorem at the same time in this section.

\begin{thm}\label{Jindepedence}
The value $\rho^{\frak b}(H,J;a)$ is independent of
the choices of $J$ and the abstract perturbations of the moduli space
we use during the construction of the number $\rho^{\frak b}(H,J;a)$.
\end{thm}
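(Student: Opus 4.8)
The plan is to prove Theorems \ref{contspect} and \ref{Jindepedence} together, since the $J$-independence is really a special case of the energy estimate when $H = H'$ and only the auxiliary data $(J,\chi,\text{perturbations})$ varies. First I would set up the standard \emph{chain homotopy via monotone homotopies} argument in the bulk-deformed setting. Given two pairs $(H,J)$ and $(H',J')$ with $\psi_H$, $\psi_{H'}$ nondegenerate, choose an elongation function $\chi \in \CK$ and a homotopy of data interpolating from $(H,J)$ at $\tau \to -\infty$ to $(H',J')$ at $\tau \to +\infty$; concretely, set $\mathcal H(\tau,t,x) = (1-\chi(\tau))H(t,x) + \chi(\tau)H'(t,x)$ and interpolate $J$ likewise. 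Counting solutions of the $\tau$-dependent Floer equation with $\ell$ extra marked points and pairing with $\frak b_+^{\otimes \ell}/\ell!$ (weighted by $\exp$ of the $\frak b_2$-periods, exactly as in \eqref{bdrycoefb}) defines a bulk chain map
$$
\Phi^{\frak b}_{\mathcal H} : CF(M,H;\Lambda^\downarrow) \to CF(M,H';\Lambda^\downarrow),
$$
and the reverse homotopy gives $\Psi^{\frak b}$ with $\Psi^{\frak b}\circ\Phi^{\frak b} \simeq \mathrm{id}$, $\Phi^{\frak b}\circ\Psi^{\frak b} \simeq \mathrm{id}$ (gluing/compactness analogues of Proposition \ref{connBULKkura} in the moving-Hamiltonian setting). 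The key compatibility I would record is that $\Phi^{\frak b}_{\mathcal H}$ intertwines the Piunikhin maps, i.e. $\Phi^{\frak b}_{\mathcal H} \circ \mathcal P^{\frak b}_{(H_\chi,J_\chi),\ast} = \mathcal P^{\frak b}_{(H'_{\chi'},J'_{\chi'}),\ast}$ on homology, which follows from a gluing of the two moduli problems; this is what transports the class $a^\flat$ correctly and makes the comparison independent of the (contractible) choices.

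Second, I would establish the filtration estimate on $\Phi^{\frak b}_{\mathcal H}$. The same energy computation as in \eqref{energyincrease} applies, but now the $\tau$-dependence of $\mathcal H$ produces an extra term: for $u \in \widetilde{\CM}(\mathcal H, \mathcal J; [\gamma,w],[\gamma',w'])$ one gets
$$
\CA_{H'}([\gamma',w']) = \CA_H([\gamma,w]) - E(u) - \int_{\R\times S^1} \chi'(\tau)\,(H'-H)(t,u(\tau,t))\,dt\,d\tau,
$$
and since $0 \le \chi'$ with $\int_\R \chi' = 1$, the last integral is bounded below by $-E^+(H'-H) = -\int_{S^1}\max_x(H'-H)_t\,dt$. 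This gives
$$
\Phi^{\frak b}_{\mathcal H}\big(F^\lambda CF(M,H;\Lambda^\downarrow)\big) \subset F^{\lambda + E^+(H'-H)}CF(M,H';\Lambda^\downarrow),
$$
and the insertion of the $\frak b_+$ marked points does not affect this bound because those insertions carry no energy (they are point constraints on the same curve $u$). Applying this to a cycle representing $\mathcal P^{\frak b}_{(H_\chi,J_\chi),\ast}(a^\flat)$ whose level is within $\varepsilon$ of $\rho^{\frak b}(H;a)$, and using the Piunikhin-intertwining above together with $\varepsilon \downarrow 0$, yields $\rho^{\frak b}(H';a) \le \rho^{\frak b}(H;a) + E^+(H'-H)$. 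Swapping the roles of $H$ and $H'$ and using $E^+(H-H') = E^-(H'-H)$ gives the lower bound $\rho^{\frak b}(H';a) - \rho^{\frak b}(H;a) \ge -E^-(H'-H)$, which is the asserted two-sided inequality.

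Third, Theorem \ref{Jindepedence}: take $H' = H$ but allow $J' \ne J$, a different elongation function, and different systems of multisections. The same construction produces $\Phi^{\frak b}$ and $\Psi^{\frak b}$ between the two chain complexes (which have the \emph{same} underlying generators and filtration, only differing in the defining perturbation data for $\partial^{\frak b}$), and the energy term $\int \chi'(\tau)(H'-H) = 0$ now, so $\Phi^{\frak b}$ is \emph{filtration non-increasing} in both directions; hence it is a filtered chain homotopy equivalence preserving levels exactly, and $\rho^{\frak b}(H,J;a) = \rho^{\frak b}(H,J';a)$. The independence of the abstract perturbations is the same statement with only the multisections varying, handled by the standard "two perturbations are connected by a one-parameter family, build a cobordism of Kuranishi structures" argument of \cite{fukaya-ono}.

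The main obstacle I expect is not the energy estimate (which is the classical Oh--Schwarz computation, essentially unchanged) but the \emph{bookkeeping of the bulk insertions through the moving-boundary moduli spaces}: one must set up $\CM_\ell(\mathcal H,\mathcal J;[\gamma,w],[\gamma',w'])$ with its Kuranishi structure, establish the boundary description combining shuffle-type splittings of the $\ell$ marked points with the degeneration into a $(\mathcal H,\mathcal J)$-piece and a $(H',J')$-piece (and, at the ends, into a $J_0$-holomorphic sphere bubble that must be shown to drop codimension $\ge 2$ after quotienting by the domain $S^1$-symmetry, exactly as in the proof of Proposition \ref{piuBULKkura}), and then verify via Stokes that $\Phi^{\frak b}_{\mathcal H}$ is a genuine chain map for $\partial^{\frak b}$ and that the composition homotopies hold at the $\frak b$-deformed level. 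All of this is parallel to material already in the paper and in \cite{fooo:book}, so the work is in assembling it coherently rather than in any genuinely new analytic input; I would present it by citing Proposition \ref{connBULKkura} and Proposition \ref{piuBULKkura} as templates and only spelling out the extra $\chi'$-term in the action identity.
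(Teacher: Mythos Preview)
Your approach is essentially identical to the paper's: both prove Theorems~\ref{contspect} and~\ref{Jindepedence} simultaneously by constructing a bulk-deformed continuation map $\mathcal P^{\frak b}_{(F^\chi,J^\chi)}$ from a linear homotopy $F^s = H + s(H'-H)$, verifying it intertwines the Piunikhin maps (the paper's Proposition~\ref{compaticompositte}), establishing the filtration shift via the $\chi'$-term energy identity (the paper's Lemma~\ref{connectinghomofilt}), and then specializing to $H=H'$ for $J$- and perturbation-independence.

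One correction: you have the $E^\pm$ bounds swapped. The integral $\int_{\R\times S^1}\chi'(\tau)(H'-H)(t,u)\,dt\,d\tau$ is bounded \emph{below} by $\int_{S^1}\min_x(H'-H)_t\,dt = -E^-(H'-H)$, not by $-E^+(H'-H)$; hence the filtration shift is $E^-(H'-H)$ and one obtains $\rho^{\frak b}(H';a) \le \rho^{\frak b}(H;a) + E^-(H'-H)$, which after swapping gives $\rho^{\frak b}(H';a) - \rho^{\frak b}(H;a) \ge -E^+(H'-H)$. This matches the statement of Theorem~\ref{contspect} exactly. The slip is harmless for Theorem~\ref{Jindepedence} itself since both quantities vanish when $H=H'$.
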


Theorem \ref{Jindepedence} is Theorem \ref{homotopyinvbulk} (1).

\begin{proof}
The proofs of Theorems \ref{contspect}, \ref{Jindepedence} are mostly the same as
one presented in \cite{oh:alan,oh:dmj,oh:minimax}.
Let $H,\, {H'}$ be in Theorems \ref{contspect} and $J, \, J' \in j_\omega$.
We interpolate them by the family in $\CP(j_\omega) = \operatorname{Map}([0,1],j_\omega)$
$$
(F^s,J^s), \quad 0 \leq s \leq 1
$$
where $\{J^s\}_{0 \leq s \leq 1}$ with $J^0 = J, \, J^1 = J'$ and
\begin{equation}\label{Fsformula}
F^s : = H + s({H'}-H) : S^1 \times M \to \R.
\end{equation}
(Note $J^s \ne J_s$ where $J_s$ is as in (\ref{HsJs}).)
Let $\chi: \R \to [0,1]$ be as in Definition \ref{defn:chi} and elongate
the family to the $(\R \times S^1)$-family $(F^\chi,J^\chi)$ by
$$
F^{\chi}(\tau,t,x) = F^{\chi(\tau)}(t,x),
\quad
J^{\chi}_t = J^{\chi(\tau)}_t.
$$
We put $F^{\chi(\tau)}_t(x) = F^{\chi(\tau)}(t,x)$.
Using this family, we construct a chain map
\begin{equation}\label{HtoGchainmap}
\mathcal P^{\frak b}_{(F^\chi,J^\chi), H, H'} :
(CF(M;H;\Lambda^\downarrow),\partial_{(H,J)}^{\frak b})
\to (CF(M;{H'};\Lambda^\downarrow),\partial_{({H'},J')}^{\frak b})
\end{equation}
by studying the equation
\begin{equation}\label{eq:HJCR242}
\dudtau + J^\chi\Big(\dudt - X_{F^{\chi(\tau)}_t}(u)\Big) = 0
\end{equation}
with finite energy
$$
E_{(F^\chi,J^\chi)}(u) = \frac{1}{2} \int \Big(\Big|\dudtau\Big|^2_{J^\chi} + \Big|
\dudt - X_{F^{\chi(\tau)}_t}(u)\Big|_{J^\chi}^2 \Big)\, dt\, d\tau
$$
To simplify the notation, we denote $\mathcal P^{\frak b}_{(F^\chi,J^\chi), H, H'}$
by $\mathcal P^{\frak b}_{(F^\chi,J^\chi)}$ when no confusion can occur.

Let $[\gamma,w] \in \mathrm{Crit}(\mathcal A_H)$, $[\gamma',w'] \in \mathrm{Crit}(\mathcal A_{H'})$.
First we prove the following bound for the action change.
\begin{lem}\label{connectinghomofilt} If the pair $[\gamma, w], \, [\gamma',w']$
carries a finite energy solution $u$ of \eqref{eq:HJCR242} satisfying
$$
u(-\infty) = \gamma, \, u(\infty) = \gamma', \quad w \# u \sim w'
$$
then
$$
\mathcal A_{H'}([\gamma',w']) - \mathcal A_H([\gamma,w]) \le E^-(H'-H).
$$
\end{lem}
\begin{proof}
Let $u$ be as above. Then by the same computation as in the proof of Lemma \ref{filtered}, we obtain
\beastar
\mathcal A_{H'}([\gamma',w']) - \mathcal A_{H}([\gamma,w])
& = &
- E_{(H,J)}(u) -
\int_{\R} \int_{S^1} \chi'(\tau) (H'-H) \circ u(\tau,t) \,\,dtd\tau \\
&\le & \int_0^1 - \min_x (H'_t(x) - H_t(x))\, dt  = E^-(H'-H)
\eeastar where the inequality follows since $\chi'\ge 0$ and
$\int\chi' d\tau = 1$.
\end{proof}

\begin{defn}\label{stripmk0int24}
\index{$\mathcal M_{\ell}(F^\chi,J^\chi;[\gamma,w],[\gamma',w'])$}
We denote by $\overset{\circ}{\CM}_{\ell}(F^\chi,J^\chi;[\gamma,w],[\gamma',w'])$ the set of all
smooth maps
$u: \R \times S^1 \to M$ which satisfy the following conditions:
 \begin{enumerate}
\item The map $u$ satisfies the equation \eqref{eq:HJCR242}.
\item The energy $E_{(F^\chi,J^\chi)}(u)$ is finite.
\item The map $u$ satisfies the following asymptotic boundary condition:
\footnote{The moduli space ${\CM}_{\ell}(F^\chi,J^\chi;[\gamma,w],[\gamma',w'])$
is {\it different} from  ${\CM}_{\ell}(F_\chi,J_\chi;[\gamma,w],[\gamma',w'])$, which is
defined in Definition \ref{stripmk0int1}.
Compare the definitions (\ref{Fsformula}) and  (\ref{HsJs})
of $F^{\chi}$ and $F_{\chi}$ respectively.}
$$
\lim_{\tau\to -\infty}u(\tau, t) = \gamma(t), \quad \lim_{\tau\to
+\infty}u(\tau, t) = \gamma'(t).
$$
\item The homotopy class of $w\# u$ is $[w']$,
where $\#$ is the obvious concatenation.
\item
$z_i^+$ are mutually distinct points in $\R \times S^1$.
\end{enumerate}
\end{defn}
The assignment $(u;z^+_1,\dots,z^+_{\ell}) \mapsto (u(z^+_1),\dots,u(z^+_{\ell}))$
 defines an evaluation map
$$
{\rm ev} = ({\rm ev}_1\dots,{\rm ev}_{\ell}) = \overset{\circ}{\CM}_{\ell}(F^\chi,J^\chi;[\gamma,w],[\gamma',w']) \to M^{\ell}.
$$
We remark that for any $\alpha \in \pi_2(M)$ there exists a canonical homeomorphism
\begin{equation}\label{homeo955}
\overset{\circ}{\CM}_{\ell}(F^\chi,J^\chi;[\gamma,w],[\gamma',w'])
\cong
\overset{\circ}{\CM}_{\ell}(F^\chi,J^\chi;[\gamma,\alpha\# w],[\gamma',\alpha\# w']).
\end{equation}
\begin{prop}\label{piuBULKkura2}
\begin{enumerate}
\item
The moduli space
$\overset{\circ}{\CM}_{\ell}(F^\chi,J^\chi;[\gamma,w],[\gamma',w'])$ has a compactification
${\CM}_{\ell}(F^\chi,J^\chi;[\gamma,w],[\gamma',w'])$ that is Hausdorff.
\item
The space ${\CM}_{\ell}(F^\chi,J^\chi;[\gamma,w],[\gamma',w'])$ has an orientable Kuranishi structure with corners.
\item
The normalized boundary of ${\CM}_{\ell}(F^\chi,J^\chi;[\gamma,w],[\gamma',w'])$ is described by
\begin{equation}\label{bdryPIunikinmodu2}
\aligned
&\partial {\CM}_{\ell}(F^\chi,J^\chi;[\gamma,w],[\gamma',w']) \\
&= \bigcup
{{\CM}}_{\#\mathbb L_1}(H,J;[\gamma,w],[\gamma'';w''])
\times {\CM}_{\#\mathbb L_2}(F^\chi,J^\chi;[\gamma'',w''],[\gamma',w'])
\\
& \quad \cup
\bigcup
{\CM}_{\#\mathbb L_1}(F^\chi,J^\chi;[\gamma,w],[\gamma''';w'''])
\times {\CM}_{\#\mathbb L_2}({H'};J';[\gamma''';w'''],[\gamma',w'])
\endaligned\end{equation}
where the first union is taken over all $(\gamma'',w'') \in \text{\rm Crit}(\mathcal A_H)$,
 and $(\mathbb L_1,\mathbb L_2) \in \text{\rm Shuff}(\ell)$
 and the second union is taken over all $(\gamma''',w''') \in \text{\rm Crit}(\mathcal A_{H'})$,
 and $(\mathbb L_1,\mathbb L_2) \in \text{\rm Shuff}(\ell)$.

\item
Let $\mu_H : \mbox{\rm Crit}(\CA_H) \to  \Z$, $\mu_{H'} : \mbox{\rm Crit}(\CA_{H'}) \to  \Z$,
be the Conley-Zehnder indices. Then the (virtual) dimension satisfies
the following equality:
\begin{equation}\label{dimensionboundaryb24}
{\CM}_{\ell}(F^\chi,J^\chi;[\gamma,w],[\gamma',w']) =
\mu_{H'}([\gamma',w']) - \mu_H([\gamma,w]) +2\ell.
\end{equation}
\item
We can define orientations of ${\CM}_{\ell}(F^\chi,J^\chi;[\gamma,w],[\gamma',w'])$ so that
$(3)$ above is compatible with this orientation.
\item
${\rm ev}$ extends to a weakly submersive map
$$
{\CM}_{\ell}(F^\chi,J^\chi;[\gamma,w],[\gamma',w']) \to M^{\ell},
$$
which we denote also by ${\rm ev}$. It is compatible with $(3)$.
\item
The homeomorphism $(\ref{homeo955})$ extends to the compactifications
and their Kuranishi structures are identified by the homeomorphism.
\end{enumerate}
\end{prop}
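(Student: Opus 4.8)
The plan is to prove Proposition \ref{piuBULKkura2} by transcribing verbatim the scheme used for Propositions \ref{connkura}, \ref{connBULKkura} and \ref{piuBULKkura}, indicating only the modifications forced by (a) the genuinely $\tau$-dependent interpolating data $(F^\chi,J^\chi)$ of \eqref{Fsformula} and (b) the $\ell$ interior marked points. The one structural remark to make at the outset is that $\chi(\tau)=0$ for $\tau\le 0$ and $\chi(\tau)=1$ for $\tau\ge 1$, so that along \eqref{eq:HJCR24} the equation agrees with the $(H,J)$-Floer equation near $\tau=-\infty$ and with the $(H',J')$-Floer equation near $\tau=+\infty$. In particular both cylindrical ends carry \emph{nondegenerate} periodic asymptotics, the exponential decay and removable-singularity analysis are available, and -- in contrast with Proposition \ref{piuBULKkura} -- no point-asymptote, hence no spurious $S^1$-symmetry, occurs at either end.

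First I would establish a uniform energy bound: the analogue of the energy identity \eqref{energyincrease} for \eqref{eq:HJCR24} carries an extra term $\int_{\R\times S^1}\chi'(\tau)\,(H'-H)(t,u(\tau,t))\,dt\,d\tau$ coming from $\partial F^\chi/\partial\tau$, bounded in absolute value by $\max_M|H'-H|$ since $\int_\R\chi'=1$; hence $E_{(F^\chi,J^\chi)}(u)$ is bounded in terms of $\CA_H([\gamma,w])-\CA_{H'}([\gamma',w'])$ and $\|H'-H\|_{C^0}$. With this bound in hand, Gromov--Floer compactness (\cite{fukaya-ono} \S19, \cite{fooo:book} \S7.1) produces the Hausdorff compactification ${\CM}_\ell(F^\chi,J^\chi;[\gamma,w],[\gamma',w'])$ by stable broken continuation trajectories with sphere bubbles and with the $\ell$ marked points, which is (1); unlike $\CM_\ell(H,J;\cdot,\cdot)$, there is no $\R$-action to quotient, because \eqref{eq:HJCR24} is not translation invariant.

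Next I would construct the oriented Kuranishi structure with corners by the standard obstruction-bundle and gluing construction of \cite{fukaya-ono} Chapters 19 and 21, exactly as for Proposition \ref{connkura}(2),(5), which gives (2) and (5). The $\ell$ free interior marked points contribute $2\ell$ to the virtual dimension, and the index of the linearized $\tau$-dependent operator between the two nondegenerate ends equals $\mu_{H'}([\gamma',w'])-\mu_H([\gamma,w])$, so \eqref{dimensionboundaryb24} holds, with no $-1$ precisely because no $\R$-symmetry is divided out; this is (4). For the codimension-one boundary (3): since $F^\chi\equiv H$ for $\tau\le 0$ and $F^\chi\equiv H'$ for $\tau\ge 1$, a breaking at the negative end splits off a genuine $(H,J)$-Floer trajectory, leaving a continuation trajectory, and symmetrically a breaking at the positive end splits off an $(H',J')$-Floer trajectory; the $\ell$ marked points are distributed over the two factors, producing precisely the shuffle sum \eqref{bdryPIunikinmodu2}. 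Sphere bubbling and collisions of marked points are of codimension $\ge 2$ (as in the proof of Proposition \ref{piuBULKkura}, via the stable-map description), so they do not enter $\partial$.

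Finally, for (6) I would observe that ${\rm ev}$ extends to a strongly continuous smooth map on the compactification and, using the freedom in the choice of the Kuranishi structure at the free interior marked points, can be arranged to be weakly submersive in the sense of \cite{fooo:book} Definition A1.13, as in Proposition \ref{isomPiukura}(6); the gluing maps identify the evaluation maps on the boundary strata with those of the two factors according to the bijection between $\{1,\dots,\ell\}$ and $\mathbb L_1\sqcup\mathbb L_2$ prescribed by the shuffle, which is the compatibility with (3) asserted in (6). There is no new analytic input: the whole proposition is bookkeeping once Propositions \ref{connkura}, \ref{connBULKkura} and \ref{piuBULKkura} are in hand. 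The only two points I would actually verify in detail rather than defer are the \emph{coherence of the orientations} across the two types of boundary strata in \eqref{bdryPIunikinmodu2} (the $(H,J)$- and the $(H',J')$-breakings must come with the correct relative signs against the continuation piece, following \cite{fukaya-ono} \S21 and the sign conventions of \cite{fooo:book} Chapter 8) and the confirmation that no exceptional stratum is codimension one; for everything else I would refer to \cite{fukaya-ono} and \cite{fooo:book}.
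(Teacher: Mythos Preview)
Your proposal is correct and follows the same approach as the paper: the paper simply states that the proof is the same as that of Proposition \ref{connkura} and omits it entirely. Your write-up is in fact a more detailed and careful expansion of what that omission covers, correctly identifying the key differences from the earlier propositions (the $\tau$-dependence destroying translation invariance, hence no $-1$ in the dimension formula and no $\R$-quotient; the two genuine nondegenerate asymptotics; the two types of codimension-one breaking).
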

The proof of Proposition \ref{piuBULKkura2} is the same as that of Proposition \ref{connkura}
(See \cite[Section 31 and Part 4]{fooo:techI}.) and
so is omitted.
\begin{defn}
Let $[\gamma,w] \in \text{\rm Crit}(\mathcal A_H)$,
$[\gamma',w'] \in \text{\rm Crit}(\mathcal A_{H'})$   and let $h_i$ ($i = 1,\ldots,\ell$) be
differential forms on $M$. We define
$\frak n_{(F^\chi,J^\chi);[\gamma,w],[\gamma',w']} (h_1,\ldots,h_{\ell}) \in \C$ by
\begin{equation}\label{ndRdef634}
\frak n_{(F^\chi,J^\chi);[\gamma,w],[\gamma',w']} (h_1,\ldots,h_{\ell})
= \int_{{\CM}_{\ell}(F^\chi,J^\chi;[\gamma,w],[\gamma',w'])}
{\rm ev}_1^*h_1 \wedge\dots \wedge {\rm ev}_\ell^*h_\ell.
\end{equation}
(We take and use CF-perturbation to define the right hand side.)
\par
By definition (\ref{ndRdef634}) is zero if
$$
\sum_{i=1}^{\ell} \text{\rm deg} \, h_i
\ne \dim {\CM}_{\ell}(F^\chi,J^\chi;[\gamma,w],[\gamma',w']),
$$
where the right hand side is as in (\ref{dimensionboundaryb24}).
We extend (\ref{ndRdef634}) to
$$
\frak n_{(F^\chi,J^\chi);[\gamma,w],[\gamma',w']} :
B_{\ell}(\Omega(M) \widehat{\otimes} \Lambda)
\to
\Lambda^{\downarrow}
$$
by ($\Lambda,\Lambda^{\downarrow})$ linearity.
\par
Note that we need to make an appropriate choice of a
compatible system of CF-perturbations in order to define the integral
given in (\ref{ndRdef634}). See Remark
\ref{compatimulti}. We sometimes omit this remark from now on.
\par
Let $\frak b \in H^{{\rm even}}(M;\Lambda _0)$.
We split
$\frak b = \frak b_0 + \frak b_2 + \frak b_+$ as in
(\ref{decompb}).
We take closed differential forms representing $\frak b_0$, $\frak b_2$, $\frak b_+$
which we still denote by the same letters respectively with an abuse of notations.
Define $\frak n_{(F^\chi,J^\chi)}^{\frak b}([\gamma,w],[\gamma',w']) \in \Lambda^{\downarrow}$ by
the sum
\begin{equation}\label{bdrycoefb4}
\aligned
\frak n_{(F^\chi,J^\chi)}^{\frak b}([\gamma,w],[\gamma',w'])
=
\sum_{\ell=0}^{\infty}&\frac{\exp({\int (w')^*\frak b_2 -
\int w^* \frak b_2)}}{\ell!}  \\
&\frak n_{(F^\chi,J^\chi);[\gamma,w],[\gamma',w']}(
\underbrace{\frak b_+,
\dots,\frak b_+}_{\ell}).
\endaligned
\end{equation}
\end{defn}
We can prove that the sum in (\ref{bdrycoefb4}) converges in $q$-adic topology,
in the same way as in Lemma \ref{adiccomv1}.
We now set \index{$\mathcal P^{\frak b}_{(F^\chi,J^\chi)}(\llb \gamma,w \rrb)$}
\begin{equation}\label{Piunikinb4}
\mathcal P^{\frak b}_{(F^\chi,J^\chi)}(\llb \gamma,w \rrb)
= \sum_{[\gamma',w'] \in \text{\rm Crit}(\mathcal A_{H'})} \frak n^\frak b_{(F^\chi,J^\chi)}
([\gamma,w],[\gamma',w'])\,\,
\llb \gamma',w' \rrb.
\end{equation}
Here we suppose $\llb \gamma,w \rrb$ is
represented by $[\gamma,w]$, i.e., $\pi([\gamma,w]) = \llb \gamma,w \rrb$.
We then can show by using Proposition \ref{piuBULKkura2} (7)
that the right hand side is independent of such a representative $[\gamma,w]$.
(We take our CF-perturbation so that it is
compatible with this isomorphism.)
\par
We can prove that the right hand side defines an element of
$CF({H'},J';\Lambda^{\downarrow})$ in the same way as in Lemma \ref{adiccomv2}.
Thus we have defined (\ref{HtoGchainmap}). Then
\begin{equation}\label{boundaryprop4}
\mathcal P^{\frak b}_{(F^\chi,J^\chi)}  \circ \partial_{(H,J)}^{\frak b}
= \partial_{({H'},J')}^{\frak b} \circ \mathcal P^{\frak b}_{(F^\chi,J^\chi)}
\end{equation}
is a consequence of (\ref{bdryPIunikinmodu2}), Stokes' theorem (Theorem \ref{them48})
and the composition formula (Theorem \ref{compform}).

Now we would like to study the relationship between
the Piunikhin maps $\CP^{\frak b}_{(H_\chi,J_\chi)}$ as we vary
the pair $(H,J)$ and the elongation function $\chi \in \CK$ introduced in
Definition \ref{defn:chi}. Let $\chi \in \CK$ and consider the three maps
$
\mathcal P^{\frak b}_{(H_{\chi},J_{\chi})}$, $\mathcal P^{\frak b}_{(H'_{\chi},J'_{\chi})}$ {and }
$\mathcal P^{\frak b}_{(F^{\chi},J^{\chi})}
$.

\begin{prop}\label{compaticompositte}
$\mathcal P^{\frak b}_{(F^{\chi},J^{\chi})} \circ \mathcal P^{\frak b}_{(H_{\chi},J_{\chi})}$
is chain homotopic to $\mathcal P^{\frak b}_{(H'_{\chi},J'_{\chi})}$.
\end{prop}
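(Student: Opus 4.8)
The plan is to run the standard "homotopy of homotopies'' argument, adapted to the bulk-deformed setting. First I would introduce, for a gluing parameter $R\in[0,\infty]$, an $(\R\times S^1)$-family $(\mathcal H^R,\mathcal J^R)$ of Hamiltonians and compatible almost complex structures on $\R\times S^1$ with the following features: as $\tau\to-\infty$ it is the time-independent pair $(0,J_0)$; as $\tau\to+\infty$ it is $(H',J'_t)$; at $R=\infty$ it is the \emph{concatenation} of the Piunikhin profile $(H_\chi,J_\chi)$ defining $\mathcal P^{\frak b}_{(H_\chi,J_\chi)}$ with the continuation profile $(F^\chi,J^\chi)$ of Section \ref{sec:proofcontinuity}, inserted after a neck of length $R$; and at $R=0$ it is a profile interpolating \emph{directly} from $(0,J_0)$ to $(H',J')$, which we take to be exactly the data defining $\mathcal P^{\frak b}_{(H'_\chi,J'_\chi)}$. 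The space of admissible interpolating profiles is convex, hence contractible, so such a family exists. For each $\ell\ge 0$ and each $[\gamma',w']\in\Crit\CA_{H'}$ one then obtains, following the recipe of Propositions \ref{piuBULKkura} and \ref{piuBULKkura2}, a parametrized moduli space $\CM_\ell(\{(\mathcal H^R,\mathcal J^R)\}_R;*,[\gamma',w'])$ with an orientable Kuranishi structure with corners, evaluation maps $({\rm ev}_{-\infty},{\rm ev}_1,\dots,{\rm ev}_\ell)$, and virtual dimension $\mu_{H'}([\gamma',w'])+n+2\ell+1$, one more than in Proposition \ref{piuBULKkura} because of the extra parameter $R$.

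The second step is the boundary analysis. The codimension-one boundary of $\CM_\ell(\{(\mathcal H^R,\mathcal J^R)\}_R;*,[\gamma',w'])$ is the union of: the slice $R=\infty$, which by the gluing theorem is the fiber product $\bigcup_{[\gamma'',w'']}\CM_{\#\mathbb L_1}(H_\chi,J_\chi;*,[\gamma'',w''])\times\CM_{\#\mathbb L_2}(F^\chi,J^\chi;[\gamma'',w''],[\gamma',w'])$ over $[\gamma'',w'']\in\Crit\CA_H$; the slice $R=0$, which is $\CM_\ell(H'_\chi,J'_\chi;*,[\gamma',w'])$; the strata where a Floer trajectory for $(H',J')$ breaks off at the $+\infty$ end, i.e.\ $\bigcup_{[\gamma'',w'']}\CM_{\#\mathbb L_1}(\{(\mathcal H^R,\mathcal J^R)\}_R;*,[\gamma'',w''])\times\CM_{\#\mathbb L_2}(H',J';[\gamma'',w''],[\gamma',w'])$ over $[\gamma'',w'']\in\Crit\CA_{H'}$; and the shuffle-type splittings of the marked points as in Proposition \ref{connBULKkura} (3). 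A bubbling of a $J_0$-holomorphic sphere at the $\tau=-\infty$ end is excluded exactly as in the proof of Proposition \ref{piuBULKkura}: such a configuration carries an extra $S^1$-symmetry and so is of codimension at least two. Defining $\mathcal H^{\frak b}:\Omega_*(M)\widehat\otimes\Lambda^\downarrow\to CF_*(M,H';\Lambda^\downarrow)$ by
$$
\mathcal H^{\frak b}(h)=\sum_{[\gamma',w']}\left(\sum_{\ell=0}^\infty\frac{\exp(\int (w')^*\frak b_2)}{\ell!}\int_{\CM_\ell(\{(\mathcal H^R,\mathcal J^R)\}_R;*,[\gamma',w'])}{\rm ev}_{-\infty}^*h\wedge{\rm ev}_1^*\frak b_+\wedge\dots\wedge{\rm ev}_\ell^*\frak b_+\right)[\gamma',w'],
$$
with convergence checked as in Lemma \ref{adiccomv1} and Lemma \ref{adiccomv2}, Stokes' theorem applied to this boundary decomposition (the de Rham differential ${\rm ev}_{-\infty}^*dh$ producing the term $\mathcal H^{\frak b}\circ\partial$) yields
$$
\mathcal P^{\frak b}_{(F^\chi,J^\chi)}\circ\mathcal P^{\frak b}_{(H_\chi,J_\chi)}-\mathcal P^{\frak b}_{(H'_\chi,J'_\chi)}=\partial^{\frak b}_{(H',J')}\circ\mathcal H^{\frak b}+\mathcal H^{\frak b}\circ\partial,
$$
which is the assertion; here one uses \eqref{bdrycoefb4} to recognize the $R=\infty$ contribution as the composite $\mathcal P^{\frak b}_{(F^\chi,J^\chi)}\circ\mathcal P^{\frak b}_{(H_\chi,J_\chi)}$, and all signs are absorbed into the definition of $\mathcal H^{\frak b}$ and a compatible choice of orientations.

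Two routine matters remain. As in the remark following the proof of Theorem \ref{Piuiso}, there is a running-out phenomenon, handled by first working over $\Lambda_0^\downarrow$ truncated at a fixed energy level and passing to the inductive limit, which is harmless here since only chain complexes, not $A_\infty$-structures, are involved. And the orientations of the parametrized moduli spaces must be fixed compatibly with the four boundary strata above in the conventions of \cite{fooo:book}, which is bookkeeping of the same kind as in Proposition \ref{connBULKkura} (5). The main obstacle is precisely the construction of the parametrized moduli spaces $\CM_\ell(\{(\mathcal H^R,\mathcal J^R)\}_R;*,[\gamma',w'])$ together with the gluing analysis identifying the $R=\infty$ stratum with the stated fiber product: this is the technically substantial part, where the Kuranishi structures have to be built in the relative sense of \cite{fukaya-ono} Theorem 6.12 so that all boundary strata appear with the correct multiplicities and signs.
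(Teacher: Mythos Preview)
Your argument is essentially the paper's own: the paper builds the same one-parameter family (written $(G_S,J_S)$, $S\in[0,\infty)$, in Lemma~\ref{parakuralem}), identifies the same three boundary strata (\ref{bdryparamoduli1})--(\ref{bdryparamoduli3}), and defines the chain homotopy $\mathcal H^{\frak b}_{(\mathcal G,\mathcal J)}$ exactly as you do to obtain (\ref{Ghomotopy}). One small correction: your fourth boundary type, ``shuffle-type splittings of the marked points as in Proposition~\ref{connBULKkura} (3)'', does not occur as a separate stratum here---the shuffles are already absorbed into the unions over $(\mathbb L_1,\mathbb L_2)$ in the three strata you listed, and there is no interior breaking of the cap into two caps---so just drop that item.
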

\begin{proof}
Let $J_s, J'_s$ be as in
(\ref{HsJs}) and $(F^s,J^{s})$ as in
(\ref{Fsformula}).
For $S \in [1,\infty)$, $\tau \in \R$, we define
$
G_S(\tau,t,x)
$
by
$$
G_S(\tau,t,x)
=
\begin{cases}
\chi(\tau + 2S)H_t(x)
& \tau \le 0, \quad S\ge 1 \\
F^{\chi(\tau - 2S)}_t(x)
& \tau \ge 0, \quad S\ge 1.
\end{cases}
$$
We also define $J_S(\tau,t,x)$ by
$$
J_S(\tau,t,x)
=
\begin{cases}
J_{\chi(\tau + 2S),t}
& \tau \le 0, \quad S\ge 1 \\
J^{\chi(\tau - 2S)}_t
& \tau \ge 0, \quad S\ge 1.
\end{cases}
$$
We extend the definition of $G_S$ to those $S \in [0,1]$ by the formula,
$$
G_S(\tau,t,x)
=
(1-S) \chi(\tau)H'(t,x)
+ SG_1(\tau,t,x).
$$
Note that $G_S$ may not be smooth at $S=1$ on $S$ and on $\tau \in [-10,10]$.
Here the specific number 10 does not assume any significance which
can be replaced by any sufficiently large constant but fixed.
It is chosen so that $G_S$ is smooth on $\tau$ at least outside $[-10,10]$.
We modify $G_S$ on a small
neighborhood of this set so that $G_S$ becomes
a smooth family. We denote the resulting modification still by the same symbol $G_S$
by an abuse of notation.
\par
We also extend the definition of $J_S$ to those $S\in [0,1]$ so that the following holds:
\begin{enumerate}
\item
At $S=0$, $J_S(\tau,t)$ coincides with $J'_{\chi(\tau),t}$.
\item
$J_S$ is $t$ independent for $\tau < -10$.
(It may be $S$-dependent there.)
\end{enumerate}
\par
We denote the family obtained above by
$$
(\CG,\CJ) = \{(G_S,J_S)\}_{S \in \R_{\ge 0}}.
$$
We put $G_{S;\tau,t}(x) = G_S(\tau,t,x)$.
Now for each $S \in \R_{\ge 0}$, we consider
\begin{equation}\label{CRGJS}
\dudtau + J_S\Big(\dudt - X_{G_{S;\tau,t}}(u)\Big) = 0
\end{equation}
and define its moduli space $\overset{\circ}{\CM}_{\ell}(G_S,J_S;*,[\gamma,w])$
defined in Definition \ref{buldPmoduli}. We put
\index{$\mathcal M_{\ell}(para;*,[\gamma',w'])$}
\begin{equation}\label{FSmodulipara}
\overset{\circ}{\CM}_{\ell}(para;*,[\gamma',w'])
= \bigcup_{S \in \R_{\ge 0}} \{S\} \times\overset{\circ}{\CM}_{\ell}(G_S,J_S;*,[\gamma',w']).
\end{equation}
Here `{\it para}' stands for the parameterized moduli space.
We have the natural evaluation maps
\begin{equation}\label{eq:evpara*}
\text{\rm ev}:\overset{\circ}{\CM}_{\ell}(para;*,[\gamma',w']) \to M^\ell,\quad
\text{\rm ev}_{-\infty}:\overset{\circ}{\CM}_{\ell}(para;*,[\gamma',w']) \to L.
\end{equation}
\begin{lem}\label{parakuralem}
\begin{enumerate}
\item
The moduli space
$\overset{\circ}{\CM}_{\ell}(para;*,[\gamma',w'])$ has a compactification,
denoted by ${\CM}_{\ell}(para;*,[\gamma',w'])$, that is Hausdorff.
\item
The space ${\CM}_{\ell}(para;*,[\gamma',w'])$ has an orientable Kuranishi structure with corners.
\item
The normalized boundary of ${\CM}_{\ell}(para;*,[\gamma',w'])$ is described by
the following three types of components:
\begin{equation}\label{bdryparamoduli1}
\bigcup{\CM}_{\#\L_1}(para;*,[\gamma'',w''])
\times
{\CM}_{\#\L_2}(H',J';[\gamma'',w''],[\gamma',w'])
\end{equation}
where the union is taken over all
$(\L_1,\L_2) \in \text{\rm Shuff}(\ell)$, $[\gamma'',w'']
\in \text{\rm Crit}(\mathcal A_{H'})$.
\begin{equation}\label{bdryparamoduli2}
\bigcup{\CM}_{\#\mathbb L_1}(H_{\chi},J_{\chi};*,[\gamma,w])
\times {\CM}_{\#\mathbb L_2}(F^{\chi},J^{\chi};[\gamma,w],[\gamma',w'])
\end{equation}
where the union is taken over all
$(\L_1,\L_2) \in \text{\rm Shuff}(\ell)$, $[\gamma,w]
\in \text{\rm Crit}(\mathcal A_{H})$.
\begin{equation}\label{bdryparamoduli3}
{\CM}_{\ell}(H'_{\chi},J'_{\chi};*,[\gamma',w']).
\end{equation}
\item
Then the (virtual) dimension is given by
\begin{equation}\label{dimensionboundaryb249}
\dim {\CM}_{\ell}(para;*,[\gamma',w']) =
\mu_{H'}([\gamma',w']) +n+1 +2\ell.
\end{equation}
\item
We can define orientations of ${\CM}_{\ell}(H_{\chi},J_{\chi};*,[\gamma',w'])$ so that
$(3)$ above is compatible with this orientation.
\item The evaluation map $\text{\rm ev}:\overset{\circ}{\CM}_{\ell}(para;*,[\gamma',w']) \to M^\ell$
given in \eqref{eq:evpara*} extends to a weakly submersive map
${\CM}_{\ell}(H_{\chi},J_{\chi};*,[\gamma',w']) \to M^{\ell}$, which we
denote also by ${\rm ev}$. The family of evaluation maps are compatible with
the boundary description $(3)$.
\par Similarly the map
${\rm ev}_{-\infty} : \overset{\circ}{\CM}_{\ell}(para;*,[\gamma',w']) \to L$
can be also extended to ${\CM}_{\ell}(H_{\chi},J_{\chi};*,[\gamma',w'])$.
\end{enumerate}
\end{lem}
\begin{proof}
The proof is mostly the same as that of Proposition \ref{connkura}.
We only mention how the boundary components are given as in (3).
\par
The type (\ref{bdryparamoduli1}) appears when there is a bubble to  $\tau \to \infty$.
The bubble to $\tau \to -\infty$ is of codimension 2 by the $S^1$-equivariance.
(See the proof of Lemma \ref{MBchainhomotopyprop}.)
\par
The types (\ref{bdryparamoduli2}) and (\ref{bdryparamoduli3}) correspond to the cases of
$S \to \infty$ and $S=0$ respectively.
\end{proof}
We use this parameterized moduli space in the same way as we did in the definition of
$\mathcal P^{\frak b}_{(H_\chi,J_\chi)}$ and define a degree one map
\index{$\mathcal H^{\frak b}_{(\CG,\CJ)}$}
$$
\mathcal H^{\frak b}_{(\CG,\CJ)}
: \Omega(M) \widehat\otimes \Lambda^\downarrow
\to CF(M,{H'};\Lambda^\downarrow).
$$
\par
We apply Lemma \ref{parakuralem}
together with Stokes' theorem (Theorem \ref{them48}) and the
composition formula (Theorem \ref{compform}) to derive the equality
\begin{equation}\label{Ghomotopy}
\partial^{\frak b}_{(H',J')} \circ \mathcal H^{\frak b}_{(\CG,\CJ)}
+ \mathcal H^{\frak b}_{(\CG,\CJ)}  \circ \partial
= \mathcal P^{\frak b}_{(F^{\chi},J^{\chi})}\circ \mathcal P^{\frak b}_{(H_{\chi},J_{\chi})} -
\mathcal  \mathcal P^{\frak b}_{(H'_{\chi},J'_{\chi})}.
\end{equation}
Then Proposition \ref{compaticompositte} follows from (\ref{Ghomotopy}).
\end{proof}
Now we are ready to complete the proof of Theorem \ref{contspect}.
By Lemma \ref{connectinghomofilt}, we have
\begin{equation}\label{connectinghomofiltformu}
\mathcal P^{\frak b}_{(F^{\chi};J^{\chi})} \left( F^{\lambda}
CF(M,H;\Lambda^\downarrow) \right) \subset F^{\lambda +E^-(H'-H)}
CF(M,{H'};\Lambda^\downarrow).
\end{equation}
Let $\rho = \rho^{\frak b}(H;a)$ and $\epsilon >0$. We take an element $x \in
F^{\rho + \epsilon}  CF(M,H;\Lambda^\downarrow)$ representing the class
$\mathcal P^{\frak b}_{(H_\chi,J_\chi)}(a^\flat)$.
Then the element
$\mathcal P^{\frak b}_{(F^{\chi},J^{\chi})}(x) \in F^{\rho +
\epsilon - E^-(H'-H)}  CF(M,G;\Lambda^\downarrow)$ represents the
Floer homology class $\mathcal P^{\frak
b}_{(F^{\chi},J^{\chi})}\mathcal P^{\frak
b}_{(H_{\chi},J_{\chi})}(a^\flat) = \mathcal P^{\frak
b}_{(H'_{\chi},J'_{\chi})}(a^\flat)$. (Proposition
\ref{compaticompositte}). Therefore $\rho^{\frak b}({H'};a) \le \rho +
\epsilon + E^-(H'-H) $. Since $\epsilon$ is an arbitrary positive number, we
have
$$
\rho^{\frak b}({H'};a) \le \rho^{\frak b}(H;a) +  E^-(H'- H).
$$
By exchanging the role of $H'$ and $H$ we have
$$
\rho^{\frak b}(H;a) \le \rho^{\frak b}({H'};a) +  E^+({H'}- H).
$$
The proof of Theorem \ref{contspect} is complete.
\end{proof}
We also note that Theorem \ref{Jindepedence} follows from the above
argument applied to the case $H=H'$ but $J\ne J'$. \qed
\index{spectral invariant!$C^0$-Hamiltonian continuity|)}
\par\medskip
\section{Proof of homotopy invariance}
\label{sec:proofhomotopy}\index{spectral invariant!homotopy
invariance|(}

In this section we prove Theorem \ref{homotopyinvbulk} (3) and
Theorem \ref{homotopyinvtheorem}.
Let $H^s$, $s \in [0,1]$ be a {one-parameter
family of normalized periodic Hamiltonians $H^s : S^1 \times M \to \R$
such that:
\be\label{eq:homotopy}
\phi_{H^s}^1 \equiv \psi, \, \phi_{H^s}^0 \equiv id \quad \mbox{for all }\, s \in [0,1].
\ee
We assume without loss of generality that
$H^s(t,x) \equiv 0$ on a neighborhood of $\{[0]\} \times M \subset S^1 \times M$.
\par
We first define an isomorphism
\begin{equation}
I_s : {\rm Crit}(\mathcal A_{H^0}) \to {\rm  Crit}(\mathcal A_{H^s}).
\end{equation}
Let $\gamma \in {\rm Per}(\mathcal A_{H^0})$.
Put $p=\gamma(0)$ and
$\gamma_s = z_p^{H^s}$ defined by
$$
z_p^{H^s}(t) = \phi_{H^s}^t(p).
$$
By \eqref{eq:homotopy},  $z_p^{H^s}(1) = \gamma(1)= \psi(p)$ for all $s \in [0,1]$.
Moreover, we have
$
z_p^{H^s} \in {\rm Per}(\mathcal A_{H^s}).
$
We note that $z_p^{H^0} = \gamma$.
\par
Next let $[\gamma,w] \in {\rm Crit}(\mathcal A_{H^0})$ be a lifting of $\gamma$.
By concatenating $w$ with
$\bigcup_{\sigma \le s} \gamma_{\sigma}$ to obtain
$w_{s} : D^2 \to M$ such that
$w_{s} \vert_{\partial D^2} = \gamma_s$.
We now define
\begin{equation}\label{Isigma}
I_s([\gamma,w]) = [\gamma_s,w_s].
\end{equation}

The following is proved in \cite[Proposition 3.1]{schwarz} for the symplectically
aspherical case and in \cite{oh:jkms} in general.
The following proof is borrowed from \cite{oh:jkms}.
\begin{prop}\label{spectrumconst} Suppose that each $H^s$ is normalized and
satisfies \eqref{eq:homotopy}. Then we have
$$
\mathcal A_{H^s}(I_s([\gamma,w])) = \mathcal A_{H^{0}}([\gamma,w])
$$
for all $s \in [0,1]$.
\end{prop}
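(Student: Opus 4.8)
The plan is to compute the derivative $\frac{d}{ds}\CA_{H^s}(I_s([\gamma,w]))$ and show it vanishes identically. Write $I_s([\gamma,w]) = [\gamma_s, w_s]$ as in \eqref{Isigma}, where $\gamma_s(t) = \phi_{H^s}^t(p)$ with $p = \gamma(0)$, and $w_s$ is obtained by concatenating $w$ with the ``track'' $\bigcup_{\sigma\le s}\gamma_\sigma$. There are two contributions to the $s$-derivative of $\CA_{H^s}([\gamma_s,w_s]) = -\int w_s^*\omega - \int_0^1 H^s(t,\gamma_s(t))\,dt$: one from the moving capping disc $w_s$ (whose boundary sweeps out area at the rate governed by the vector field $X_s := \frac{\partial}{\partial s}\phi_{H^s}^t\circ(\phi_{H^s}^t)^{-1}$ along $\gamma_s$), and one from differentiating the integrand $H^s(t,\gamma_s(t))$ in both the explicit $s$ and through $\gamma_s$.

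First I would set up notation: let $K^s(t,x)$ be the (normalized) Hamiltonian generating the $s$-flow, i.e. $\frac{\partial}{\partial s}\phi_{H^s}^t = X_{K^s_t}\circ\phi_{H^s}^t$ where the flow is in the $t$-variable; the standard identity relating the two flows gives $\frac{\partial K^s}{\partial t} = \frac{\partial H^s}{\partial s} + \{K^s, H^s\}$ (Poisson bracket), which comes from $\phi_{H^s}^1$ being independent of $s$ — this is exactly where hypothesis \eqref{eq:homotopy} enters, ensuring $K^s(1,\cdot) = K^s(0,\cdot)$ up to a constant, and after normalization that constant is controlled. Then the area-sweep term contributes $-\int_0^1 \omega(X_{K^s}, \dot\gamma_s)\,dt = -\int_0^1 dK^s_t(\dot\gamma_s(t))\,dt$ using $\dot\gamma_s = X_{H^s_t}(\gamma_s)$ and the definition of Hamiltonian vector fields, while the potential term contributes $-\int_0^1\left(\frac{\partial H^s}{\partial s}(t,\gamma_s) + dH^s_t(X_{K^s_t})\right)dt$. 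I would then combine these, use $dK^s_t(X_{H^s_t}) + dH^s_t(X_{K^s_t}) = 2\{K^s,H^s\}$ or rather recognize the antisymmetry so these combine with the Poisson-bracket term, and invoke the structural identity above together with the fundamental theorem of calculus in $t$ to see everything telescopes to a boundary term $K^s(1,\gamma_s(1)) - K^s(0,\gamma_s(0))$, which vanishes by \eqref{eq:homotopy} and the normalization/vanishing-near-$t=0$ assumption on $H^s$.

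The main obstacle I anticipate is bookkeeping the signs and the precise form of the relation between $K^s$ and $H^s$ (the ``$\frac{\partial K}{\partial t} = \frac{\partial H}{\partial s} + \{K,H\}$'' identity), together with confirming that the normalization condition $\int_M H^s_t\,\omega^n = 0$ forces the relevant integration constant in $K^s$ to be zero so that the boundary term genuinely vanishes rather than contributing a nonzero constant. One must be careful that the Poisson bracket of two functions integrates to zero over $M$, which handles normalization of $K^s$ up to a $t$-dependent constant; the vanishing of $H^s$ near $t=0$ (assumed WLOG) then pins down $K^s(0,\cdot)\equiv 0$. Since $\CA_{H^s}(I_s([\gamma,w]))$ is continuous in $s$ and has zero derivative, it is constant, hence equals its value at $s=0$, namely $\CA_{H^0}([\gamma,w])$ since $I_0 = \mathrm{id}$. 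This is precisely the borrowed argument from \cite{oh:jkms}, so I would present it in that streamlined differential-of-the-action form rather than via a homotopy-of-cylinders argument.
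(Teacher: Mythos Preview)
Your proposal is correct and follows essentially the same route as the paper: differentiate $\CA_{H^s}(I_s([\gamma,w]))$ in $s$, reduce to $-\int_0^1 \frac{\partial H^s}{\partial s}(t,\gamma_s(t))\,dt$, then use the Banyaga identity to rewrite this integrand as a total $t$-derivative of $K^s$ along $\gamma_s$, yielding the boundary term $K^s(1,\gamma_s(1)) - K^s(0,\gamma_s(0))$ which vanishes by normalization and \eqref{eq:homotopy}. The one difference is that the paper short-circuits your explicit cancellation of the area-sweep term against the $dH^s_t(X_{K^s_t})$ term by simply observing that $I_s([\gamma,w]) \in \Crit\CA_{H^s}$, so the full first variation $(d\CA_{H^s})(\frac{D}{ds}I_s)$ vanishes in one stroke; your computation of this cancellation by hand via the antisymmetry of the Poisson bracket (your tentative ``$2\{K^s,H^s\}$'' is wrong---the two terms have opposite sign and sum to zero, which is exactly the critical-point condition) is equivalent but longer.
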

\begin{proof}
To prove the equality, it is enough to prove
\be\label{eq:derivative}
\frac{d}{d s} \mathcal A_{H^s}(I_s([\gamma,w])) = 0
\ee
for all $s \in [0,1]$.
\par
Note that $\mathcal A_{H^0}(I_0([\gamma,w])) = \mathcal A_{H^{0}}([\gamma,w])$.
Denote $H = H(s,t,x):= H^s(t,x)$ and denote by
$K=K(s,t,x)$ the normalized Hamiltonian generating the vector field
$$
\frac{\del \phi_{H^s}^t}{\del s} \circ (\phi_{H^s}^t)^{-1} =: X_K
$$
in $s$-direction.  We compute
$$
\frac{d}{d s} \mathcal A_{H^s}(I_s([\gamma,w])) = (d\CA_{H^s}(I_s([\gamma,w])))
\left(\frac{d}{ds}I_s([\gamma,w])\right)
- \int_0^1 \frac{\del H}{\del s}(s,t,\gamma_s(t))\, dt.
$$
Using that $I_s([\gamma,w]) \in \Crit \CA_{H^s}$, this reduces to
\be\label{eq:reduction}
\frac{d}{d s} \mathcal A_{H^s}(I_s([\gamma,w])) = - \int_0^1 \frac{\del H}{\del s}(s,t,\gamma_s(t))\, dt.
\ee
By \eqref{eq:homotopy}, we have
$$
X_K(s,1,x) = 0 = X_K(s,0,x)
$$
which implies $dK_{s,1} \equiv 0$. Therefore
$K_{s,1} \equiv c(s)$ where $c:[0,1] \to \R$ is a function of $s$ alone.
Then by the normalization condition, we obtain
\be\label{eq:Ks10}
K_{s,1} \equiv 0 \equiv K_{s,0}.
\ee
\begin{lem}
\be\label{eq:delsdelt}
\frac{\del H}{\del s}(s,t,\phi_{H^s}^t(p)) = \frac{\del}{\del t}\left(K(s,t,\phi_{H^s}^t(x)(p))\right).
\ee
\end{lem}
\begin{proof} The following is proved
$$
\frac{\del K}{\del t} - \frac{\del H}{\del s} - \{H,K\} = 0
$$
in \cite[Proposition I.1.1]{banyagapaper}  for \emph{normalized}
family $H^s$. By rewriting this into
$$
\frac{\del K}{\del t} + \{K,H\} = \frac{\del H}{\del s}
$$
and recalling the definition
$$
\{K,H\} = \omega(X_K,X_H) = dK(X_H)
$$
of the Poisson bracket (in our convention), \index{Poisson
bracket}it is easy to check that this condition is equivalent to
\eqref{eq:delsdelt}. Here the exterior differential and the Poisson
bracket are taken over $M$ for each fixed $(s,t)$.
\end{proof}
Therefore we obtain
\beastar
\int_0^1 \frac{\del H}{\del s}(s,t,\gamma_s(t))\, dt
& = & \int_0^1 \frac{\del H}{\del s}(s,t,\phi_{H^s}^t(p))\, dt\\
& = & \int_0^1 \frac{\del}{\del t}\left(K(s,t,\phi_{H^s}^t(p))\right)\, dt\\
& = & K(s,1,\phi_{H^s}^1(p)) - K(s,0,\phi_{H^s}^0(p)) \\
& = & K(s,1,\psi(p)) - K(s,0,p) = 0
\eeastar
where the last equality comes from \eqref{eq:Ks10}. Substituting this into
\eqref{eq:reduction}, we have finished the proof.
\end{proof}
The following corollary is immediate.
\begin{cor}\label{cor105}
$\mathrm{Spec}(H^0) = \mathrm{Spec}(H^s)$.
Moreover
$\mathrm{Spec}(H^0;\frak b) = \mathrm{Spec}(H^s;\frak b)$.
\end{cor}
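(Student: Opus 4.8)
The plan is to deduce Corollary~\ref{cor105} immediately from Proposition~\ref{spectrumconst} together with the fact that the map $I_s$ of \eqref{Isigma} is a bijection. First I would record that, by Definition~\ref{criticalvalue}, $\mathrm{Spec}(H)$ is by construction the image $\CA_H(\Crit(\CA_H))$ of the action functional on its critical set, so that the desired equality $\mathrm{Spec}(H^0)=\mathrm{Spec}(H^s)$ is exactly the assertion that $\CA_{H^0}$ and $\CA_{H^s}$ have the same range.

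Next I would check that $I_s\colon \Crit(\CA_{H^0})\to\Crit(\CA_{H^s})$ is a bijection. On the level of periodic orbits this is clear: by \eqref{eq:homotopy} we have $\phi_{H^s}^1\equiv\psi$ for every $s$, so $p\mapsto z_p^{H^s}$ is a bijection $\Fix\psi\to\Per(H^s)$, and $I_s$ covers the composite of this bijection with the inverse of the corresponding bijection for $s=0$. On the level of cappings, $I_s$ replaces a bounding disc $w$ of $\gamma$ by $w_s$, the concatenation of $w$ with the fixed cylinder $\bigcup_{\sigma\le s}\gamma_\sigma$; concatenating instead with the orientation-reversed cylinder furnishes a two-sided inverse on homotopy classes of cappings relative to the boundary. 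Hence $I_s$ is a bijection. Combining this with Proposition~\ref{spectrumconst}, which gives $\CA_{H^s}\circ I_s=\CA_{H^0}$ on $\Crit(\CA_{H^0})$, yields
$$
\mathrm{Spec}(H^s)=\CA_{H^s}\bigl(I_s(\Crit(\CA_{H^0}))\bigr)=\CA_{H^0}(\Crit(\CA_{H^0}))=\mathrm{Spec}(H^0),
$$
which is the first assertion.

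For the bulk statement I would simply invoke the definition $\mathrm{Spec}(H;\frak b)=\mathrm{Spec}(H)+G(M,\omega,\frak b)$ and the fact that the subgroup $G(M,\omega,\frak b)$ depends only on $(M,\omega)$ and $\frak b$, not on the Hamiltonian; adding $G(M,\omega,\frak b)$ to both sides of $\mathrm{Spec}(H^0)=\mathrm{Spec}(H^s)$ then gives $\mathrm{Spec}(H^0;\frak b)=\mathrm{Spec}(H^s;\frak b)$. There is no real obstacle here: the content is entirely carried by Proposition~\ref{spectrumconst}, and the only point needing (routine) verification is the bijectivity of $I_s$, for which no analytic input is required.
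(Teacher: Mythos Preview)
Your proposal is correct and matches the paper's approach: the paper states that the corollary is immediate from Proposition~\ref{spectrumconst}, and you have simply made explicit the routine verification (bijectivity of $I_s$ and the definition of $\mathrm{Spec}(H;\frak b)$) that this entails.
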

The following lemma is proved
for arbitrary $(M,\omega)$ by the second named author in \cite{oh:ajm1}. (The corresponding
theorem in the aspherical case was proved in \cite{schwarz}
 generalizing
a similar theorem in \cite{hofer-Zeh}.)
\begin{lem}\label{measure0}
The set $\mathrm{Spec}(H)$ has measure zero for any periodic
Hamiltonian $H$.
\end{lem}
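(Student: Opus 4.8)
The plan is to dispose of the nondegenerate case immediately and then reduce the general case to Sard's theorem via a finite dimensional model for $\CA_H$ near periodic orbits. If $H$ is nondegenerate, then by Lemma \ref{Gomegaaction} the quotient $\text{\rm Spec}(H)/G(M,\omega)$ is finite and $\text{\rm Spec}(H)$ is invariant under translation by $G(M,\omega)$; since $G(M,\omega)$ is countable, $\text{\rm Spec}(H)$ is a finite union of countable sets, hence has measure zero. In general $\text{\rm Per}(H)$ may be infinite, so a different argument is needed; when $\omega$ is exact this is the classical argument of \cite{hofer-Zeh, schwarz} using a primitive of $\omega$, and the general case is due to \cite{oh:ajm1}, which we now sketch.

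First note that $\text{\rm Spec}(H) = \Sigma + G(M,\omega)$, where $\Sigma$ is formed by choosing, for each contractible $\gamma \in \text{\rm Per}(H)$, one capping $w_\gamma$ and collecting the numbers $\CA_H([\gamma,w_\gamma])$: indeed two cappings of the same $\gamma$ give action values differing by $\int_{\overline w\# w'}\omega \in G(M,\omega)$. Since $G(M,\omega)$ is countable, it suffices to exhibit \emph{some} choice of reference cappings for which $\Sigma$ has measure zero. Now $\text{\rm Per}(H)$ is identified with the compact set $\Fix(\psi_H)\subset M$ by $\gamma\mapsto\gamma(0)$, with $z_p(t)=\phi_H^t(p)$ depending smoothly on $p$; the set of $p$ with $z_p$ contractible is open and closed in $\Fix(\psi_H)$ (nearby orbits are $C^0$-close, hence freely homotopic), hence compact, so it is covered by finitely many small open balls $V_1,\dots,V_N\subset M$. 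On each $V_i$ fix $p_i$ with $z_{p_i}$ contractible together with a capping $w_{p_i}$, and for $p\in V_i$ join $z_p$ to $z_{p_i}$ by the short homotopy $h_p(s,t)=\exp_{z_{p_i}(t)}\!\big(s\,\exp_{z_{p_i}(t)}^{-1}(z_p(t))\big)$, setting
\[
\mathcal A_i(p) := -\int_{D^2} w_{p_i}^*\omega \;-\; \int_{[0,1]^2} h_p^*\omega \;-\; \int_0^1 H(t,z_p(t))\,dt .
\]
Then $\mathcal A_i\colon V_i\to\R$ is smooth, and when $p\in\Fix(\psi_H)$ one has $\exp_{z_{p_i}(0)}^{-1}(z_p(0))=\exp_{z_{p_i}(1)}^{-1}(z_p(1))$, so $h_p$ closes up to an honest cylinder and $\mathcal A_i(p)=\CA_H([z_p,w])$ for a genuine capping $w$ of $z_p$; in particular $\mathcal A_i(p)\in\text{\rm Spec}(H)$.

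The crucial step, and the one that needs the most care, is to check that \emph{every} $p\in\Fix(\psi_H)\cap V_i$ is a critical point of $\mathcal A_i$. This is the usual first variation computation for the action: differentiating $\mathcal A_i$ along a variation $p(\epsilon)$ with $\zeta_t=\partial_\epsilon z_{p(\epsilon)}(t)$, a Stokes/Cartan computation for the swept-area term $-\int h_p^*\omega$ produces a term $\mp\int_0^1\omega(\zeta_t,X_{H_t}(z_p(t)))\,dt$ coming from the $s=1$ edge, together with boundary contributions from the two side edges $t=0$ and $t=1$ — and these side edges coincide exactly when $p$ is a fixed point, so they cancel — while $-\int_0^1 H(t,z_p(t))\,dt$ contributes the opposite term $\pm\int_0^1\omega(\zeta_t,X_{H_t}(z_p(t)))\,dt$, so $d\mathcal A_i(p)=0$. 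Granting this, $\Fix(\psi_H)\cap V_i\subset\Crit(\mathcal A_i)$, so Sard's theorem (applicable since $\mathcal A_i$ is $C^\infty$ on an open subset of $M$, target $\R$) shows that $\mathcal A_i(\Fix(\psi_H)\cap V_i)$ has measure zero. Taking $\Sigma=\bigcup_{i=1}^N \mathcal A_i(\Fix(\psi_H)\cap V_i)$ completes the proof, since then $\text{\rm Spec}(H)\subset \Sigma + G(M,\omega)$ is a countable union of measure zero sets. (The same argument, carried out with $\frak b$-translated cappings, also reproves that $\text{\rm Spec}(H;\frak b)$ has measure zero whenever $G(M,\omega,\frak b)$ is countable.)
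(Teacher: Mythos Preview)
Your reduction to Sard's theorem is the right strategy, and the construction of $\mathcal A_i$ is set up correctly so that $\mathcal A_i(p)=\CA_H([z_p,w])$ for fixed points $p$. The gap is in the crucial step: the claim that the side--edge contributions at $t=0$ and $t=1$ cancel, and hence that every $p\in\Fix(\psi_H)\cap V_i$ is a critical point of $\mathcal A_i$, is not correct.

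When $p$ is a fixed point, the two side edges $s\mapsto h_p(s,0)$ and $s\mapsto h_p(s,1)$ do trace the \emph{same curve} in $M$ (the geodesic from $p_i$ to $p$). But the first variation formula contributes, along each edge, the integral of $\omega(V,\partial_s h_p)$ where $V=\partial_\epsilon h_{p(\epsilon)}\big|_{\epsilon=0}$ is the \emph{variation vector field}. At $t=0$ the endpoint moves as $p(\epsilon)$, so $V$ is the Jacobi field with $V(1)=\xi$; at $t=1$ the endpoint moves as $\psi_H(p(\epsilon))$, so $V$ is the Jacobi field with $V(1)=d\psi_H(p)\xi$. These differ whenever $\xi\notin\ker(d\psi_H(p)-I)$. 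A direct computation in flat Darboux coordinates (so geodesics are straight lines and Jacobi fields are linear) gives
\[
d\mathcal A_i(p)[\xi]\;=\;\tfrac{1}{2}\,\omega\big((d\psi_H(p)-I)\xi,\;p-p_i\big),
\]
which is typically nonzero. So $\Fix(\psi_H)\cap V_i\not\subset\Crit(\mathcal A_i)$, and Sard's theorem cannot be applied to $\mathcal A_i$ as stated. (What your computation \emph{does} show is that $d\mathcal A_i$ vanishes on $\ker(d\psi_H(p)-I)$, i.e.\ along directions tangent to the fixed set where that set happens to be a manifold; but $\Fix(\psi_H)$ need not be a manifold, so this is not enough.)

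The argument in \cite{oh:ajm1} cited by the paper avoids this by using a different finite--dimensional model: near each fixed point one passes to the graph $\Gamma_{\psi_H}\subset (M\times M,-\omega\oplus\omega)$ and a Weinstein neighborhood of the diagonal, or equivalently to a local \emph{generating function} $S$ for $\psi_H$ in Darboux coordinates. With that choice the fixed points of $\psi_H$ are \emph{exactly} the critical points of $S$, and one checks that $S$ agrees with $\CA_H$ (for a consistent choice of cappings) up to an additive constant. Sard then applies to $S$. The difference from your $\mathcal A_i$ is that $S$ is built so that its differential is controlled by $\psi_H(q)-q$ itself, not by the vector $p-p_i$ pairing against $(d\psi_H-I)\xi$.
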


This, together with Lemma \ref{measure0} and the fact that the set
$G(M,\omega,\frak b)$ is countable, implies

\begin{cor}\label{measure0b}
$\mathrm{Spec}(H;\frak b)$ has measure zero for any periodic
Hamiltonian $H$ and $\frak b$.
\end{cor}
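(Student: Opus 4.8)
The plan is to obtain Corollary \ref{measure0b} as an immediate consequence of Lemma \ref{measure0} together with the countability of $G(M,\omega,\frak b)$. By the definition of $\mathrm{Spec}(H;\frak b)$ we have the decomposition
\[
\mathrm{Spec}(H;\frak b) = \mathrm{Spec}(H) + G(M,\omega,\frak b) = \bigcup_{g \in G(M,\omega,\frak b)} \bigl(g + \mathrm{Spec}(H)\bigr),
\]
so it suffices to exhibit the right-hand side as a countable union of Lebesgue-null subsets of $\R$.

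First I would verify that $G(M,\omega,\frak b)$ is countable. By construction it is the subgroup of $(\R,+)$ generated by the discrete monoid $G_0(M,\omega,\frak b)$, which is in turn generated by $G_0(M,\omega) \cup G_0(\frak b)$. Here $G_0(M,\omega)$ is discrete by Gromov compactness, hence countable, and $G_0(\frak b)$ is discrete by definition, hence countable; therefore $G_0(M,\omega,\frak b)$ is countable, and the subgroup of $\R$ it generates --- whose elements are exactly the finite $\Z$-linear combinations of elements of $G_0(M,\omega,\frak b)$ --- is countable as well. Note that countability is all that is needed here: as remarked just before Definition \ref{def:deformcup}, $G(M,\omega,\frak b)$ need not be discrete, nor even finitely generated.

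Then I would finish as follows. By Lemma \ref{measure0} the set $\mathrm{Spec}(H)$ has Lebesgue measure zero; since Lebesgue measure on $\R$ is translation invariant, each translate $g + \mathrm{Spec}(H)$ with $g \in G(M,\omega,\frak b)$ also has measure zero; and a countable union of Lebesgue-null subsets of $\R$ is Lebesgue-null. Hence $\mathrm{Spec}(H;\frak b)$ has measure zero. There is no genuine obstacle in this argument; the only point that deserves to be spelled out is the countability of $G(M,\omega,\frak b)$, which is precisely the assertion invoked implicitly by the phrase ``the fact that the set $G(M,\omega,\frak b)$ is countable'' in the text preceding the corollary.
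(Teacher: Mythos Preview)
Your proof is correct and follows exactly the approach indicated in the paper: the corollary is stated immediately after the observation that Lemma~\ref{measure0} together with the countability of $G(M,\omega,\frak b)$ yields the result, and your argument simply spells this out by writing $\mathrm{Spec}(H;\frak b)$ as a countable union of translates of the null set $\mathrm{Spec}(H)$. The added verification that $G(M,\omega,\frak b)$ is countable is a welcome elaboration of what the paper leaves implicit.
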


\begin{proof}[Proof of Theorem \ref{homotopyinvbulk} (3)]
By Theorem \ref{homotopyinvbulk} (1) which is proved in
Section \ref{sec:proofcontinuity}, the number $\rho^{\frak b}(H^s;a)$ is
well-defined independent of the choices of $J$ and
other choices such as abstract perturbations.
By Theorem \ref{contspect} the function
$s \mapsto \rho^{\frak b}(H^s;a)$ is continuous.
Moreover $\rho^{\frak b}(H^{\rho};a)$
is contained in the set $\mathrm{Spec}(H^s;\frak b) \subset \R$
that is independent of $s$ and has Lebesgue measure $0$.
(This independence follows from Corollary \ref{cor105}.)
Therefore $s \mapsto \rho^{\frak b}(H^s;a)$
must be a constant function, as required.
\end{proof}
Theorem \ref{homotopyinvtheorem} is a special case of Theorem
\ref{homotopyinvbulk} for ${\frak b} = 0$.
\index{spectral invariant!homotopy invariance|)}
\section{Proof of the triangle inequality}
\label{sec:prooftriangle}

In this section we prove Theorem \ref{axiomshbulk} (5).
The proof is divided into several steps.

\subsection{Pants products}
\label{subsec:pants}
\index{pants product}

In this subsection, we define a product structure
of Floer homology of periodic Hamiltonian system.
It is called the {\it pants product}.
Let $J_1 = \{J_{1,t}\}$, $J_2 = \{J_{2,t}\}$ be two $S^1$-parameterized
families of compatible almost complex structures on $M$.
We assume that
\begin{equation}
J_{1,t} = J_{2,t} = J_0, \quad \text{if $t$ is in a neighborhood of $[1] \in S^1$.}
\end{equation}
Here $J_0$ is a certain compatible almost complex structure on $M$.
We remark that we have already proved $J$-independence of
the spectral invariant. So we may assume the above condition
without loss of generality.
(Actually we may also choose $J_{1,t} = J_{2,t}  = J_0$ without loss of generality.
See Remark \ref{rem31} (2).)
\par
We next take time-dependent Hamiltonians $H_{1}, H_2$.
After making the associated Hamiltonian
isotopy constant neat $t = 0, 1$, we may
assume
\begin{equation}\label{Hcond11}
H_{1,t} = H_{2,t} = 0, \quad \text{if $t$ is in a neighborhood of $[1] \in S^1$.}
\end{equation}
The {\em pants product} \index{pants product} is defined by a chain map
\begin{equation}\label{mproduct}
\aligned
\frak m^{\rm{cl}}_2 : CF(M,H_1,J_1;\Lambda^{\downarrow})
&\otimes CF(M,H_2,J_2;\Lambda^{\downarrow})
\\
&\to CF(M,H_1 * H_2,J_1 * J_2
;\Lambda^{\downarrow})
\endaligned
\end{equation}
\index{$\frak m^{\rm{cl}}_2$}
where
\begin{equation}\label{eq:defconcat}
(H_1 * H_2) (t,x) =
\begin{cases}
2H_1(2t,x)  &t\le 1/2, \\
2H_2(2t-1,x)  &t\ge 1/2
\end{cases}
\end{equation}
and
\be
(J_1 * J_2) (t,x) = \begin{cases}
J_1(2t,x)  & t\le 1/2, \\
J_2(2t-1,x) & t\ge 1/2.
\end{cases}
\ee
\begin{rem}
Our definition of the product Hamiltonian $H_1 * H_2$ is different from those used in
\cite{schwarz,oh:alan}.
But the same definition is used in \cite{abbsch2}.
\end{rem}
\begin{rem}
Note we include $J_1$,$J_2$ in the notation of
$CF(M,H_i,J_i;\Lambda^{\downarrow})$,
since the boundary operator depends on the almost
complex structure $J_i$.
(Its homology group $HF(M,H_i,J_i;\Lambda^{\downarrow})$
is independent of $J_i$.)
\end{rem}
It is easy to see that
$
\psi_{H_1 * H_2}
=
\psi_{H_2} \circ \psi_{H_1}.
$

In the symplectically aspherical case, the detail of the construction (\ref{mproduct})
is written in \cite{schwarz1}.
Its generalization to arbitrary symplectic manifold is rather immediate
with the virtual fundamental chain technique in the framework of Kuranishi
structure \cite{fukaya-ono}. We treat this construction for the general case here together with its generalization
including bulk deformations.

Let $\Sigma = S^2 \setminus \{\text{3 points}\}$.
We choose a function
$
h : \Sigma \to \R
$
with the following properties:
\begin{conds}\label{cond:h}
\begin{enumerate}
\item It is proper.
\item It is a Morse function with a unique critical point $z_0$ such that
$h(z_0) = 0$.
\item For $\tau < 0$,
the preimage $h^{-1}(\tau)$ is a disjoint union of two $S^1$'s,
and for $\tau > 0$, $h^{-1}(\tau)$ is one $S^1$.
\end{enumerate}
\end{conds}
See Figure 4.
We fix a Riemannian metric on $\Sigma$ such that $\Sigma$ is isometric
to the three copies of $S^1 \times [0,\infty)$ outside a compact set.
Let $\psi_{\nabla h}^t$ be the one parameter subgroup
associated to the gradient vector field of $h$. We put
$$
\frak S = \{z \in \Sigma
\mid  \lim_{t\to\infty}
\psi_{\nabla h}^t(z) = z_0, \,\,\text{or}
\,\,
\lim_{t\to -\infty}
\psi_{\nabla h}^t(z) = z_0 \}
$$
i.e., the union of stable and unstable manifolds of $z_0$. Take a diffeomorphism
$$
\varphi: \R \times ((0,1/2) \sqcup (1/2,1)) \to \Sigma \setminus \frak S
$$
such that
$
h(\varphi(\tau,t)) = \tau
$
and put a complex structure $j_{\Sigma}$ on $\Sigma$ with respect to
which $\varphi$ is conformal. Such a complex structure can be chosen by
first pushing forward the standard one on $\R \times ((0,1/2) \sqcup (1/2,1)) \subset \C$
and extending it to whole $\Sigma$. This choice of $\varphi$ and $j_\Sigma$
also provides the cylindrical ends near each puncture of $\Sigma$.
See Figure 4.
\begin{figure}[h]
\centering
\includegraphics[scale=0.3]
{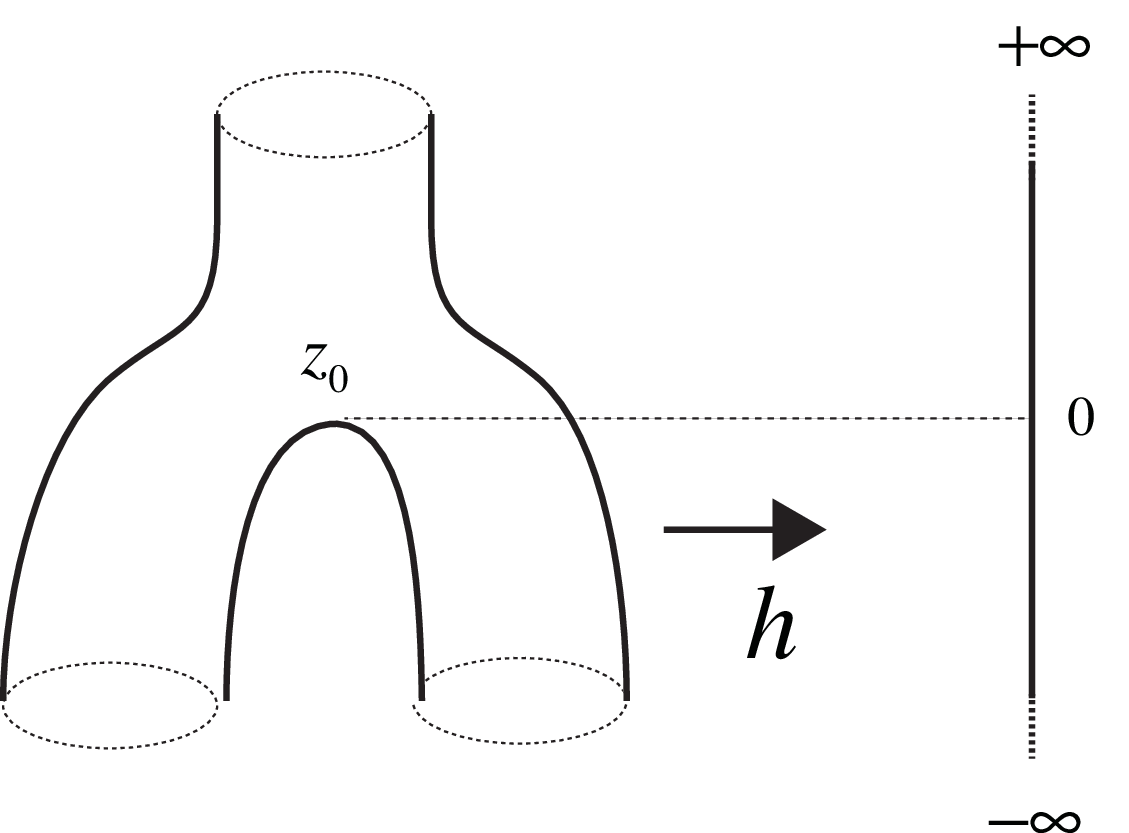}
\caption{function $h$}
\label{Figure4}
\end{figure}
\par
We define a smooth function
$
H^\varphi: \Sigma\times M \to \R
$
by:
\begin{equation}\label{defsharp}
H^\varphi(\varphi(\tau,t),x) = (H_1 * H_2)(t,x)
\end{equation}
on $\Sigma \setminus \frak S$ and extending to $\frak S$
by 0. This is consistent with the assumption (\ref{Hcond11}).
\par
We define a $\Sigma$-parameterized family $J^{\varphi}$ of almost
complex structures by
$$
J^{\varphi}_{\varphi(\tau,t)} = (J_1 * J_2)_t
$$
first on $\Sigma \setminus \frak S$.
Note that the right hand side is $J_0$ in a neighborhood of $\frak S$.
Therefore we can trivially extend its definition
across $\frak S$ to whole $\Sigma$.
\par
For $\tau< 0$, we take the identification
$$
h^{-1}(\tau) \cong ([0,1/2]/\sim)  \sqcup ([1/2,1]/\sim),
$$
where $0\sim 1/2$ and $1/2\sim 1$. Consider the natural diffeomorphisms
\beastar
\varphi_1 &:&  ([0, 1/2]/\sim) \to ([0,1]/\sim); t \mapsto 2t\\
\varphi_2 &:&  ([1/2,1]/\sim) \to  ([0,1]/\sim); t \mapsto 2t - 1.
\eeastar
Then we have the identity
\be\label{eq:H1dtglueH2dt} (H_1 *
H_2) \,dt = \varphi_i^*(H_i\, dt), \quad i =1, \,2.
\ee
This can be easily seen from the definition of $H_1 * H_2$.
\par
Hereafter in this section, we assume that $H_1,\, H_2, \, H_1 * H_2$ are all nondegenerate.
Let $[\gamma_1,w_1] \in {\rm Crit}(\mathcal A_{H_1})$,
$[\gamma_2,w_2] \in {\rm Crit}(\mathcal A_{H_2})$ and
$[\gamma_3,w_3] \in {\rm Crit}(\mathcal A_{H_1\# H_2})$.

\begin{defn}\label{pantsmoduli}
We denote by $\overset{\circ}{{\CM}}_{\ell}(H^\varphi,J^\varphi;
[\gamma_1;w_1],[\gamma_2;w_2],[\gamma_3,w_3])$
\index{$\mathcal M_{\ell}(H^\varphi,J^\varphi;
[\gamma_1;w_1],[\gamma_2;w_2],[\gamma_3,w_3])$} the set of all
pairs $(u;z_1^+,\dots,z_{\ell}^+)$ of maps $u: \Sigma \to M$
and marked points $z_i^+ \in \Sigma$ that satisfy the following conditions:
 \begin{enumerate}
 \item The map $\overline u = u\circ \varphi$ satisfies the equation:
\be\label{eq:HJCR11}
\frac{\partial\overline u}{\partial \tau} + J^\varphi\Big(\frac{\partial\overline u}
{\partial t} -X_{H^\varphi}(\overline u)\Big) = 0.
\ee
\item The energy
$$
E_{(H^\varphi,J^\varphi)}(u)= \frac{1}{2} \int \Big(\Big|\frac{\partial\overline u}{\partial \tau}\Big|^2_{J^\varphi} + \Big|
\frac{\partial\overline u}
{\partial t} -X_{H^{\varphi}}(\overline u)\Big|_{J^\varphi}^2 \Big)\, dt\, d\tau
$$
is finite.
\item It satisfies the following three asymptotic boundary conditions:
$$
\lim_{\tau\to +\infty}u(\varphi(\tau, t)) = \gamma(t).
$$
$$
\lim_{\tau\to - \infty}u(\varphi(\tau, t)) =
\begin{cases}
\gamma_1(2t)   &t \le 1/2, \\
\gamma_2(2t-1)   &t \ge 1/2.
\end{cases}
$$
\item The homotopy class of $(w_1 \sqcup w_2) \# u$ is $[w_3]$ in $\pi_2(\gamma_3)$.
Here $(w_1 \sqcup w_2) \# u$ is the obvious
concatenation of $w_1$, $w_2$ and $u$.
\item
$z_1^+,\dots,z_{\ell}^+$ are mutually distinct.
\end{enumerate}
\end{defn}

We denote by
\begin{equation}\label{ev=()}
{\rm ev} = ({\rm ev} _1,\dots,{\rm ev} _{\ell}) : \overset{\circ}{{\CM}}_{\ell}(H^\varphi,J^\varphi;
[\gamma_1,w_1],[\gamma_2,w_2],[\gamma,w]) \to M^{\ell}
\end{equation}
the evaluation map which associates the point
$(u(z_1^+),\dots,u(z_{\ell}^+))$ to $(u;z_1^+,\dots,z_{\ell}^+)$.

\begin{rem} One can write the equation \eqref{eq:HJCR11} in a more invariant fashion into the
coordinate independent form
$$
(du + P_{H^\varphi}(u))^{(0,1)}= 0
$$
where $P_{H^\varphi}$ is a $u^*(TM)$-valued one form on $\Sigma$
and the $(0,1)$-part is taken with respect to $j_\Sigma(y)$ on $T_y\Sigma$ and
$J^\varphi(u(y))$ on $T_{u(y)}M$ at each $y \in \Sigma$. In terms of
$\varphi$, the pull-back $\varphi^*(P_{H^\varphi})$ can be written as
$
\varphi^*(P_{H^\varphi}) = X_{H_i}\, dt, \quad i = 1, \, 2, \, 3
$
on the ends of $\Sigma$ near the punctures.
\end{rem}

Now we have the following proposition that provides basic properties of the
moduli space $\overset{\circ}{{\CM}}_{\ell}(H^\varphi,J^\varphi;
[\gamma_1,w_1],[\gamma_2,w_2],[\gamma_3,w_3])$.
\par
For any $\alpha_1,\alpha_2 \in \pi_2(M)$,
there exists a canonical homeomorphism
\begin{equation}\label{homeo1110}
\aligned
& \overset{\circ}{{\CM}}_{\ell}(H^\varphi,J^\varphi;
[\gamma_1,w_1],[\gamma_2,w_2],[\gamma,w]) \\
&\cong
 \overset{\circ}{{\CM}}_{\ell}(H^\varphi,J^\varphi;
[\gamma_1,\alpha_1 \# w_1],[\gamma_2,\alpha_2 \# w_2],[\gamma, \alpha \# w]).
\endaligned
\end{equation}
Here we put $\alpha = \alpha_1 + \alpha_2$.
\begin{prop}\label{pantsBULKkura}
\begin{enumerate}
\item
$\overset{\circ}{{\CM}}_{\ell}(H^\varphi,J^\varphi;
[\gamma_1,w_1],[\gamma_2,w_2],[\gamma_3,w_3])$ has a compactification
${{\CM}}_{\ell}(H^\varphi,J^\varphi;
[\gamma_1,w_1],[\gamma_2,w_2],[\gamma_3,w_3])$ that is Hausdorff.
\item
The space ${{\CM}}_{\ell}(H^\varphi,J^\varphi;
[\gamma_1,w_1],[\gamma_2,w_2],[\gamma_3,w_3])$ has an orientable Kuranishi structure with corners.
\item
The normalized boundary of ${{\CM}}_{\ell}(H^\varphi,J^\varphi;
[\gamma_1,w_1],[\gamma_2,w_2],[\gamma_3,w_3])$ is described
by union of the following three types of direct products:
\begin{equation}\label{bdrypantsBULKkura2}
{{\CM}}_{\#\mathbb L_1}(H_1,J_1;[\gamma_1,w_1],[\gamma'_1,w'_1])
\times {{\CM}}_{\#\mathbb L_2}(H^\varphi,J^\varphi;
[\gamma'_1,w'_1],[\gamma_2,w_2],[\gamma_3,w_3])
\end{equation}
where the union is taken over all $[\gamma'_1,w'_1] \in \text{\rm Crit}(H_1)$,
 and $(\mathbb L_1,\mathbb L_2) \in \text{\rm Shuff}(\ell)$.
\begin{equation}\label{bdrypantsBULKkura3}
{{\CM}}_{\#\mathbb L_1}(H_2,J_2
;[\gamma_2,w_2],[\gamma'_2,w'_2])
\times {{\CM}}_{\#\mathbb L_2}(H^\varphi,J^\varphi;
[\gamma_1,w_1],[\gamma'_2,w'_2],[\gamma_3,w_3])
\end{equation}
where the union is taken over all $[\gamma'_2,w'_2] \in \text{\rm Crit}(H_2)$,
 and $(\mathbb L_1,\mathbb L_2) \in \text{\rm Shuff}(\ell)$.
\begin{equation}\label{bdrypantsBULKkura4}
\aligned
&{{\CM}}_{\#\mathbb L_1}(H^\varphi,J^\varphi;
[\gamma_1,w_1],[\gamma_2;w_1],[\gamma'_3,w'_3])\\
&\times  {{\CM}}_{\#\mathbb L_2}(H_1 * H_2,J_1 * J_2;[\gamma_3',w_3'],[\gamma_3,w_3])
\endaligned
\end{equation}
where the union is taken over all $[\gamma_3',w_3'] \in \text{\rm Crit}(H_1 * H_2)$,
 and $(\mathbb L_1,\mathbb L_2) \in \text{\rm Shuff}(\ell)$.
\item
The (virtual) dimension is given by
\begin{equation}\label{dimensionboundaryb11}
\aligned
&\dim{{\CM}}_{\ell}(H^\varphi,J^\varphi;
[\gamma_1,w_1],[\gamma_2,w_2],[\gamma_3,w_3]) \\
&= \mu_{H_1 * H_2}([\gamma_3,w_3]) - \mu_{H_1}([\gamma_1,w_1])
- \mu_{H_2}([\gamma_2,w_2])  +2\ell - n
\endaligned
\end{equation}
where $\mu_H : \mbox{\rm Crit}(\CA_H) \to  \Z$
is the Conley-Zehnder index.
\item There exists a system of orientations on the
moduli spaces
$$
{{\CM}}_{\ell}(H^\varphi,J^\varphi;
[\gamma_1,w_1],[\gamma_2,w_2],[\gamma_3,w_3])
$$
such that the isomorphism
$(3)$ above is compatible with the orientations.
\item The map
${\rm ev}$ \eqref{ev=()} extends to a strongly continuous smooth map
$$
{{\CM}}_{\ell}(H^\varphi,J^\varphi;
[\gamma_1,w_1],[\gamma_2,w_2],[\gamma_3,w_3]) \to M^{\ell},
$$
which we denote also by ${\rm ev}$.
The system of the evaluation maps is compatible with the boundary
description $(3)$ above.
\item
The homeomorphism $(\ref{homeo1110})$ extends to the compactifications
and their Kuranishi structures are identified by the homeomorphism.
\end{enumerate}
\end{prop}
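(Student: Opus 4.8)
The plan is to establish Proposition \ref{pantsBULKkura} by the same technology used for Proposition \ref{connkura}, Proposition \ref{connBULKkura} and Proposition \ref{piuBULKkura}, since the moduli space at hand is just a variant in which the cylinder $\R\times S^1$ is replaced by the pair-of-pants $\Sigma$ equipped with the function $h$, the fixed complex structure $j_\Sigma$, and the Hamiltonian perturbation one-form $P_{H^\varphi}=\varphi^*(P_{H^\varphi})$ that equals $X_{H_i}\,dt$ on the three cylindrical ends. First I would set up Gromov--Floer compactness for the equation \eqref{eq:HJCR11}: the energy $E_{(H^\varphi,J^\varphi)}(u)$ is a priori bounded in terms of $\CA_{H_1}([\gamma_1,w_1])+\CA_{H_2}([\gamma_2,w_2])-\CA_{H_1\#H_2}([\gamma_3,w_3])$ plus a curvature term coming from $d P_{H^\varphi}$ which is compactly supported on $\Sigma$ and hence uniformly bounded; this controls bubbling and gives a sequential compactification by stable maps with cylindrical breakings at the three ends and sphere bubbles in the interior. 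This yields (1). For (2) and (5) one runs the standard Kuranishi package of \cite{fukaya-ono}: linearize \eqref{eq:HJCR11}, obtain Fredholm theory on weighted Sobolev spaces with exponential weights at the punctures, build obstruction bundles, and glue; orientations come from the determinant line bundle and the conventions of \cite{fooo:book} Chapter 8, compatibly with the conventions already fixed for $\CM(H,J;\cdot,\cdot)$ and $\CM_\ell(H_\chi,J_\chi;\cdot)$.

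For (4), the dimension formula, I would compute the index of the linearized operator by the standard gluing-of-indices argument: capping off the three ends with the discs $w_1$, $w_2$, $w_3$ produces a closed-surface index computation whose answer is $2c_1$ evaluated on the glued sphere plus a topological term; tracking the Conley--Zehnder indices at the three punctures (two incoming, one outgoing) and the $-n$ coming from the Euler characteristic of the thrice-punctured sphere gives \eqref{dimensionboundaryb11}, and each of the $\ell$ interior marked points adds $2$. This is a routine bookkeeping exercise once the sign of each puncture contribution is fixed to be consistent with \eqref{eq:H1dtglueH2dt}, i.e. with the identification $(H_1\#H_2)\,dt=\varphi_i^*(H_i\,dt)$.

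For (3), the boundary description, I would enumerate the codimension-one degenerations: breaking off a Floer trajectory of $(H_1,J_1)$ at the first incoming end gives \eqref{bdrypantsBULKkura2}, breaking off a trajectory of $(H_2,J_2)$ at the second incoming end gives \eqref{bdrypantsBULKkura3}, and breaking off a trajectory of $(H_1\#H_2,J_1\#J_2)$ at the outgoing end gives \eqref{bdrypantsBULKkura4}; in each case the $\ell$ interior marked points get distributed over the two components according to a shuffle $(\mathbb L_1,\mathbb L_2)\in\text{\rm Shuff}(\ell)$, exactly as in Proposition \ref{connBULKkura} (3). The only degenerations one must argue away are: (i) a sphere bubble carrying interior marked points, which is codimension $\ge 2$ because the extra $S^1$ reparametrization of a bubble attached along the cylindrical neck (or, for an interior bubble, the $\mathrm{PSL}(2,\C)/\text{stabilizer}$ dimension count as in \cite{fukaya-ono}) is too large — this is the same phenomenon invoked for \eqref{67doesnotoccur} via Remark \ref{JS1inv}; and (ii) degeneration of the conformal structure of $\Sigma$ itself, which does not occur because $j_\Sigma$ is held fixed (there is no domain modulus here, unlike in the Deligne--Mumford situation). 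Finally (6) is immediate: the evaluation maps ${\rm ev}_i$ at the interior marked points extend continuously to the compactification and are strongly continuous smooth in the sense of \cite{fooo:book} Definition A1.13, and their compatibility with the boundary decomposition is read off exactly as in Proposition \ref{connBULKkura} (6), distributing ${\rm ev}_{i_k}$ and ${\rm ev}_{j_k}$ according to the shuffle.

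The main obstacle, as usual in this circle of ideas, is not any single new analytic input but the bookkeeping of (a) the compatible system of multisections across the three distinct boundary types of \eqref{bdrypantsBULKkura2}--\eqref{bdrypantsBULKkura4} simultaneously — so that later, when one passes to the chain-level pants product $\frak m_2^{\rm cl}$ and deforms by $\frak b$, the identity $\frak m_2^{\rm cl}\circ(\partial\otimes 1\pm 1\otimes\partial)=\partial\circ\frak m_2^{\rm cl}$ holds on the nose — and (b) the orientations, i.e. checking that the product orientations induced on the three boundary strata agree with the boundary orientation of ${\CM}_\ell(H^\varphi,J^\varphi;\cdot)$ up to the expected Koszul signs. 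Since all of this is structurally identical to the constructions already carried out for $\CM(H,J;\cdot,\cdot)$, $\CM_\ell(H,J;\cdot,\cdot)$ and $\CM_\ell(H_\chi,J_\chi;\cdot)$, and the sign conventions are pinned down in \cite{fooo:book} Chapter 8 and Proposition 8.3.3, the proof is a matter of invoking those references, and I expect the paper to state that "the proof is the same as that of Proposition \ref{connkura} and is omitted," recording only the points where the pair-of-pants geometry forces something new, namely the fixed domain complex structure and the $-n$ shift in \eqref{dimensionboundaryb11}.
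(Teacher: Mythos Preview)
Your proposal is correct and takes exactly the approach the paper intends: the paper's proof reads in its entirety ``The proof of Proposition \ref{pantsBULKkura} is the same as that of Proposition \ref{connkura} and so is omitted,'' which you correctly anticipated. Your outline simply fills in the standard details (energy bounds, Fredholm index, codimension-one breakings at the three ends, ruling out domain degeneration since $j_\Sigma$ is fixed) that the paper leaves implicit by reference to Proposition \ref{connkura} and \cite{fukaya-ono}.
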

The Figures \ref{Figure5},\ref{Figure6},\ref{Figure7} below draw the elements
corresponding to (\ref{bdrypantsBULKkura2}),
(\ref{bdrypantsBULKkura3}), (\ref{bdrypantsBULKkura4}), respectively.
\begin{figure}[h]
\centering
\includegraphics[scale=0.3]
{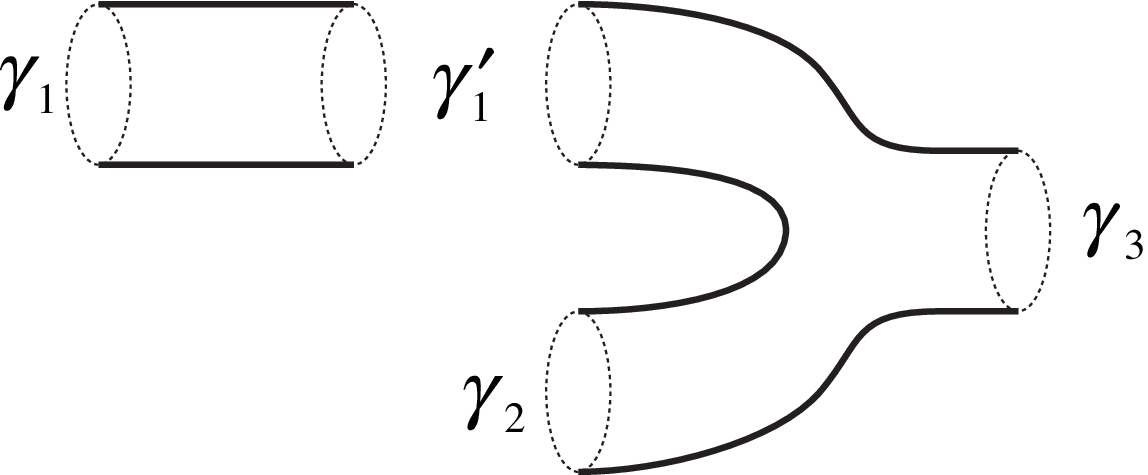}
\caption{An element of (\ref{bdrypantsBULKkura2})}
\label{Figure5}
\end{figure}
\begin{figure}[h]
\centering
\includegraphics[scale=0.3]
{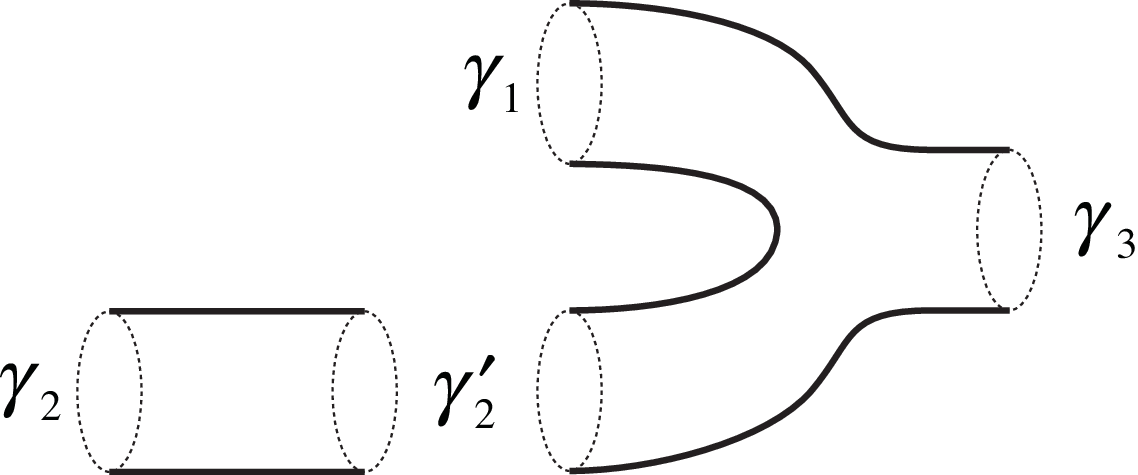}
\caption{An element of (\ref{bdrypantsBULKkura3})}
\label{Figure6}
\end{figure}
\begin{figure}[h]
\centering
\includegraphics[scale=0.3]
{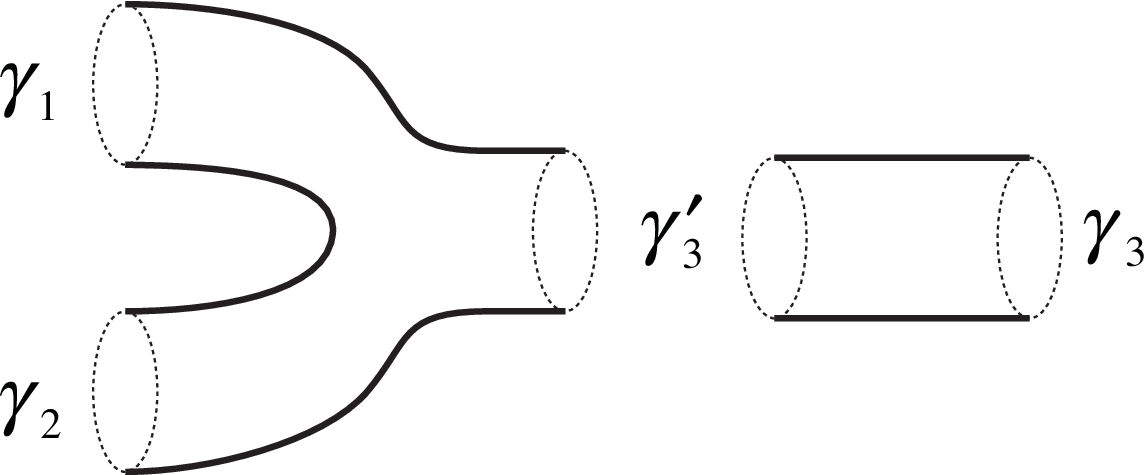}
\caption{An element of (\ref{bdrypantsBULKkura4})}
\label{Figure7}
\end{figure}
The proof of Proposition \ref{pantsBULKkura} is the same as that of Proposition \ref{connkura}
and so omitted.
(Note that the domain of an element of the moduli space
$\overset{\circ}{{\CM}}_{\ell}(H^\varphi,J^\varphi;
[\gamma_1,w_1],[\gamma_2,w_2],[\gamma_3,w_3])$ is
a sphere with three punctures, which is stable.  Therefore we do not need
to add marked points thereto, in order to transport the element of the
obstruction space to a nearby element. (See \cite[Definition 17.15]{fooo:techI}.)
The procedure of defining obstruction bundles
of the factors such as ${{\CM}}_{\#\mathbb L_1}(H_1,J_1;[\gamma_1,w_1],[\gamma'_1,w'_1])$
in (\ref{bdrypantsBULKkura2}) is determined in the course of
the proof of Proposition \ref{pantsBULKkura},
which is detailed in \cite[Section 31]{fooo:techI}. Therefore the rest of the proof is
the same as that of \cite[Part 4]{fooo:techI}.)
\par
Let
$\frak b \in H^{{\rm even}}(M;\Lambda_0 )$ which we decompose into
$\frak b = \frak b_0 + \frak b_2 + \frak b_+$ as in
(\ref{decompb}). We choose their representative closed forms respectively as before.
We then define the value
$\frak n^{\frak b}_{H^\varphi,J^\varphi;}([\gamma_1,w_1],[ \gamma_2,w_2],[\gamma_3,w_3])
\in \Lambda_0 $
by
\begin{equation}
\aligned
&\frak n^{\frak b}_{H^\varphi,J^\varphi;}([\gamma_1,w_1],[\gamma_2,w_2],[\gamma_3,w_3]) \\
&=
\sum_{\ell=0}^{\infty}
\frac{\exp(\int w_3^*\frak b_2 - \int w_2^*\frak b_2 - \int w_1^*\frak b_2)}{\ell !}\\
&
\quad\quad\quad
\int_{{{\CM}}_{\ell}(H^\varphi,J^\varphi;
[\gamma_1,w_1],[\gamma_2,w_2],[\gamma_3,w_3]) }
{\rm ev}_1^*\frak b_+ \wedge\dots \wedge {\rm ev}_\ell^*\frak b_+.
\endaligned
\end{equation}
We define a system of CF-perturbations on various
${{\CM}}_{\ell}(H^\varphi,J^\varphi;
[\gamma_1,w_1],[\gamma_2,w_2],[\gamma_3,w_3])$ that is
compatible with the description of their boundaries given in
Proposition \ref{pantsBULKkura} (3)
and use the system to define the integration in the right hand side.
\par
\begin{defn}\label{partialisderivative}
We put
\begin{equation}\label{multicldef111}\index{$\frak m_2^{\rm{cl}}$}
\aligned
&\frak m_2^{\rm{cl}}(\llb \gamma_1,w_1 \rrb \otimes \llb \gamma_2,w_2 \rrb) \\
&= \sum_{[\gamma_3,w_3]\in \mathrm{Crit}(\mathcal A_{H_1 * H_2})}
\frak n^{\frak b}_{H^\varphi,J^\varphi;}([\gamma_1,w_1],[\gamma_2,w_2],[\gamma_3,w_3])\llb \gamma_3,w_3 \rrb.
\endaligned
\end{equation}
\end{defn}

We can prove that the right hand side of (\ref{multicldef111}) converges
in $CF((H_1 * H_2,J_2);\Lambda^{\downarrow})$ in the same way as the proof of
Lemma \ref{adiccomv2}.
To define the right hand side of (\ref{multicldef111}) we suppose
that  $\llb \gamma_1,w_1 \rrb$ and $\llb \gamma_2,w_2 \rrb$
are represented by $[\gamma_1,w_1] \in \mathrm{Crit}(\mathcal A_{H_1})$ and $[\gamma_2,w_2]
\in \mathrm{Crit}(\mathcal A_{H_2})$, respectively, i.e., $\pi([\gamma_i,w_i]) = \llb \gamma_i,w_i \rrb$. The independence of the right hand side of (\ref{multicldef111})
of the the choice of the representatives follows from Proposition \ref{pantsBULKkura} (7).
We have thus defined (\ref{mproduct}).
\par
\begin{lem} The map $\frak m_2^{\rm{cl}}$ satisfies
$$
\partial_{(H_1 * H_2,J_1 * J_2)}^{\frak b} \circ \frak m_2^{\rm{cl}}
= \frak m_2^{\rm{cl}} \circ
\left(\partial_{(H_1,J_1)}^{\frak b} \widehat\otimes 1 + 1 \widehat\otimes  \partial_{(H_2,J_2)}^{\frak b}
\right).
$$
Here $\widehat{\otimes}$
\index{$\widehat\otimes$} is the graded tensor product of the linear maps, which is given by
$$
(F \widehat{\otimes} G )(x \otimes y) = (-1)^{\deg G\det' x}F(x) \otimes G(y).
$$
\end{lem}
\begin{proof}
This is a consequence of Proposition \ref{pantsBULKkura} (3),
Stokes' theorem (Theorem \ref{them48}) and
composition formula (Theorem \ref{compform}).  In fact, the three boundary types,
(\ref{bdrypantsBULKkura2}), (\ref{bdrypantsBULKkura3}), (\ref{bdrypantsBULKkura4}) correspond to
the three operations, $\frak m_2^{\rm{cl}} \circ \partial_{(H_1,J_1)}^{\frak b} \widehat\otimes 1$,
$\frak m_2^{\rm{cl}} \circ  \partial_{(H_2,J_2)}^{\frak b}$
and $\partial_{(H_1 * H_2,J_1 * J_2)}^{\frak b} \circ \frak m_2^{\rm{cl}}$,
respectively.
\end{proof}
Thus we have defined the pants product
\begin{equation}\label{mproducth}
\aligned
\frak m^{\rm{cl}}_2 : HF((H_1,J_1);\Lambda^{\downarrow}) &\otimes HF(M,H_2,J_2;\Lambda^{\downarrow}) \\
&\to HF(M,H_1 * H_2,J_1 * J_2;\Lambda^{\downarrow}).
\endaligned
\end{equation}
The next proposition shows that it respects the filtration.
\begin{prop}\label{filtpresm2} For all $\lambda_1, \, \lambda_2 \in \R$,
\beastar
&{}& \frak m_2^{\rm{cl}}
\left(
F^{\lambda_1} CF(M,H_1,J_1;\Lambda^{\downarrow}) \otimes F^{\lambda_2} CF(M,H_2,J_2;\Lambda^{\downarrow})
\right) \\
& \subseteq &
F^{\lambda_1+\lambda_2} CF(M,H_1 * H_2,J_1 * J_2;\Lambda^{\downarrow}).
\eeastar
\end{prop}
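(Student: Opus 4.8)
The plan is to estimate the action of the output $[\gamma_3,w_3]$ in terms of the actions of the inputs $[\gamma_1,w_1]$, $[\gamma_2,w_2]$ whenever the coefficient
$\frak n^{\frak b}_{H^\varphi,J^\varphi;}([\gamma_1,w_1],[\gamma_2,w_2],[\gamma_3,w_3])$ is nonzero, i.e. whenever the moduli space ${\CM}_{\ell}(H^\varphi,J^\varphi;[\gamma_1,w_1],[\gamma_2,w_2],[\gamma_3,w_3])$ is nonempty for some $\ell$. The model is the energy computation in the proof of Lemma \ref{filtered}; here the role of the $\tau$-interval is played by the thrice-punctured sphere $\Sigma$ with the function $h$. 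First I would pick an element $u \in \overset{\circ}{{\CM}}_{\ell}(H^\varphi,J^\varphi;[\gamma_1,w_1],[\gamma_2,w_2],[\gamma_3,w_3])$ and compute $\int_\Sigma u^*\omega$ by pairing $\frac{\partial \overline u}{\partial\tau}$ with the equation \eqref{eq:HJCR11}, exactly as in the four-line calculation \eqref{energyincrease}. The pointwise $\omega(\partial_\tau \overline u, J^\varphi \partial_\tau\overline u) = |\partial_\tau\overline u|^2_{J^\varphi} \ge 0$ term contributes the nonnegative energy, and the Hamiltonian term integrates, via $\frac{\partial}{\partial\tau}(H^\varphi_t(\overline u))$, to boundary contributions at the three cylindrical ends. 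Because the metric is cylindrical and $H^\varphi$ is built from $H_1\# H_2$ through the conformal identification $\varphi$, on the incoming ends (where $\tau\to-\infty$) the two boundary terms are $\int_{S^1} (H_1\#H_2)(t,\gamma_{12}(t))\,dt$, which, using \eqref{eq:H1dtglueH2dt} (the identity $(H_1\#H_2)\,dt = \varphi_i^*(H_i\,dt)$), splits precisely into $\int_0^1 H_{1,t}(\gamma_1(t))\,dt + \int_0^1 H_{2,t}(\gamma_2(t))\,dt$; on the outgoing end it is $\int_0^1 (H_1\#H_2)_t(\gamma_3(t))\,dt$.

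Assembling these pieces, and using the homotopy condition (4) of Definition \ref{pantsmoduli} that $(w_1\sqcup w_2)\# u$ is homotopic to $w_3$ to rewrite $\int_\Sigma u^*\omega = -\int w_3^*\omega + \int w_1^*\omega + \int w_2^*\omega$, I expect to obtain an identity of the shape
\begin{equation}
\CA_{H_1\#H_2}([\gamma_3,w_3]) = \CA_{H_1}([\gamma_1,w_1]) + \CA_{H_2}([\gamma_2,w_2]) - E_{(H^\varphi,J^\varphi)}(u),
\end{equation}
possibly up to a fixed bookkeeping constant coming from the normalization of $h$ and the precise choice of cylindrical coordinates (which one checks is zero for the chosen $h$, $\varphi$ with $h(z_0) = 1/2$ and the ends normalized as in the construction). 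Since $E_{(H^\varphi,J^\varphi)}(u) \ge 0$, this gives
\begin{equation}
\CA_{H_1\#H_2}([\gamma_3,w_3]) \le \CA_{H_1}([\gamma_1,w_1]) + \CA_{H_2}([\gamma_2,w_2]).
\end{equation}
Note that the marked points $z_i^+$ and the bulk insertions $\frak b_+$ play no role in this estimate: the extra factor $\exp(\frak b_2\cap w_3 - \frak b_2\cap w_2 - \frak b_2\cap w_1)$ is a scalar in $\C$ of $q$-valuation $0$, and the integrals $\int {\rm ev}_i^*\frak b_+$ only affect whether the coefficient is nonzero, not the action filtration level.

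From the chain-level inequality the statement of Proposition \ref{filtpresm2} is immediate: if $x_1 = \sum a_i [\gamma_1^{(i)}, w_1^{(i)}] \in F^{\lambda_1}CF(M,H_1,J_1;\Lambda^\downarrow)$ and $x_2 = \sum b_j [\gamma_2^{(j)}, w_2^{(j)}] \in F^{\lambda_2}CF(M,H_2,J_2;\Lambda^\downarrow)$, then each basis term $[\gamma_3,w_3]$ occurring in $\frak m_2^{\rm cl}(x_1\otimes x_2)$ with nonzero coefficient has
$\CA_{H_1\#H_2}([\gamma_3,w_3]) + \frak v_q(\text{coefficient})$ bounded, using property (1) of $\frak v_q$ and the valuation bounds $\frak v_q(a_i) + \CA_{H_1}([\gamma_1^{(i)},w_1^{(i)}]) \le \lambda_1$, $\frak v_q(b_j) + \CA_{H_2}([\gamma_2^{(j)},w_2^{(j)}]) \le \lambda_2$, together with the fact that the $\exp$-factor and the $\Lambda^\downarrow$-contribution from the area weights of the curves have $q$-valuation at most the area of the connecting curve (which by the energy identity above is $\le \CA_{H_1}+\CA_{H_2} - \CA_{H_1\#H_2}$), so everything collapses into the single bound $\CA_{H_1\#H_2}([\gamma_3,w_3]) + \frak v_q(\text{coeff}) \le \lambda_1 + \lambda_2$. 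Hence $\frak m_2^{\rm cl}(x_1\otimes x_2) \in F^{\lambda_1+\lambda_2}CF(M,H_1\#H_2,J_1\#J_2;\Lambda^\downarrow)$.

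The main obstacle I anticipate is purely bookkeeping: getting the boundary contributions at the three cylindrical ends to line up with exactly $\int H_1 + \int H_2$ and $\int(H_1\#H_2)$ with no spurious additive constant. This requires being careful that the function $h$ and the conformal parametrization $\varphi$ have been set up so that the incoming ends are genuine half-cylinders $(-\infty,0]\times S^1$ carrying the Hamiltonian term $X_{H_i}\,dt$ with no rescaling, and that the critical value $h(z_0) = 1/2$ does not leak into the action bound — essentially the same care needed in the aspherical treatment of \cite{schwarz1}, here transcribed into the $\Lambda^\downarrow$-filtration language. Everything else (the energy nonnegativity, the homotopy/area identity, the passage from chain level to the filtration statement) is routine and parallels Lemma \ref{filtered}.
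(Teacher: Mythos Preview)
Your approach is correct and arrives at the same key inequality (the paper's Lemma \ref{monolemma}), but the paper packages the argument differently. Rather than doing a single global energy computation on $\Sigma$, the paper slices $\Sigma$ by the level sets of $h$: for $\tau_0 < 0$ it applies the cylinder estimate of Lemma \ref{filtered} separately on each of the two incoming half-cylinders to get $\CA_{H_i}([\gamma_i,w_i]) \ge \CA_{H_i}([\gamma_i^{\tau_0},w_i^{\tau_0}])$, and for $\tau_0 > 0$ it applies it on the outgoing half-cylinder to get $\CA_{H_1\#H_2}([\gamma^{\tau_0},w^{\tau_0}]) \ge \CA_{H_1\#H_2}([\gamma_3,w_3])$. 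Then it takes the limit $\tau_0 \to 0$ from both sides and observes that the two expressions match, $\lim_{\tau_0\to 0}\bigl(\CA_{H_1}([\gamma_1^{\tau_0},w_1^{\tau_0}]) + \CA_{H_2}([\gamma_2^{\tau_0},w_2^{\tau_0}])\bigr) = \lim_{\tau_0\to 0}\CA_{H_1\#H_2}([\gamma^{\tau_0},w^{\tau_0}])$, which is exactly the statement that no spurious constant appears at the critical point of $h$. This slicing argument entirely avoids the bookkeeping worry you flagged about contributions near $z_0$ where the $(\tau,t)$-coordinates are singular: each piece is a genuine half-cylinder and Lemma \ref{filtered} applies verbatim. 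Your global computation is more direct and gives the sharper identity $\CA_{H_1\#H_2}([\gamma_3,w_3]) = \CA_{H_1}([\gamma_1,w_1]) + \CA_{H_2}([\gamma_2,w_2]) - E$, but requires exactly the care you anticipated; the paper's version trades that for the (easy) limiting identity (\ref{mono3}). One small slip: the concatenation condition $(w_1\sqcup w_2)\# u \sim w_3$ gives $\int_\Sigma u^*\omega = \int w_3^*\omega - \int w_1^*\omega - \int w_2^*\omega$, the opposite sign to what you wrote, though the final inequality is unaffected.
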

\begin{proof} We start with the following lemma
\begin{lem}\label{monolemma}
If
${{\CM}}_{\ell}(H^\varphi,J^\varphi;
[\gamma_1,w_1],[\gamma_2,w_2],[\gamma_3,w_3])$
is nonempty, then
$$
\mathcal A_{H_1}([\gamma_1,w_1])
+
\mathcal A_{H_2}([\gamma_2,w_2])
\ge
\mathcal A_{H_1 * H_2}([\gamma_3,w_3]).
$$
\end{lem}
\begin{proof}
Let $(u;z_1^+,\dots,z_k^+) \in \overset{\circ}{{\CM}}_{\ell}(H^\varphi,J^\varphi;
[\gamma_1,w_1],[\gamma_2,w_2],[\gamma_3,w_3])$ and $\tau_0 < 0$.
We identify
$$
h^{-1}(\tau_0) = S^1_1 \sqcup S^1_2.
$$
We denote the restriction of $u$ to $S^1_1$ by $\gamma_1^{\tau_0}$ and
the restriction of $u$ to $S^1_2$ by $\gamma_2^{\tau_0}$.
\par
We concatenate $w_1$ with $\cup_{\tau \le \tau_0} \gamma_1^{\tau_0}$
to obtain $w_1^{\tau_0}$ which bounds $\gamma_1^{\tau_0}$.
We define $w_2^{\tau_0}$ in the same way.
\par
By the same way as in Lemma \ref{filtered}, we derive
\begin{equation}\label{mono1}
\mathcal A_{H_1}([\gamma_1,w_1]) \ge \mathcal A_{H_1}([\gamma^{\tau_0}_1,w^{\tau_0}_1]),
\quad
\mathcal A_{H_2}([\gamma_2,w_2]) \ge \mathcal A_{H_2}([\gamma^{\tau_0}_2,w^{\tau_0}_2]).
\end{equation}
Next let $\tau_0 > 0$.
We denote the restriction of $u$ to $h^{-1}(\tau_0)$  by $\gamma^{\tau_0}$.
We concatenate $w_1 \sqcup w_2$ and the restriction of $u$ to
$\{z \in \Sigma \mid h(z) \le \tau_0\}$ to obtain $w^{\tau_0}$.
By the same way as in Lemma \ref{filtered}, we also derive
\begin{equation}\label{mono2}
\mathcal A_{H_1 * H_2}([\gamma^{\tau_0},w^{\tau_0}]) \ge \mathcal A_{H_1 * H_2}([\gamma_3,w_3]).
\end{equation}
It follows easily from definition that
\begin{equation}\label{mono3}
\lim_{\tau_0 \to 0}
\left(
\mathcal A_{H_1}([\gamma^{\tau_0}_1,w^{\tau_0}_1])
+ \mathcal A_{H_2}([\gamma^{\tau_0}_2,w^{\tau_0}_2])
\right)
=\lim_{\tau_0 \to 0}
\mathcal A_{H_1 * H_2}([\gamma^{\tau_0},w^{\tau_0}]).
\end{equation}
Lemma \ref{monolemma} follows easily from
(\ref{mono1}), (\ref{mono2}), (\ref{mono3}).
\end{proof}
Proposition \ref{filtpresm2} follows immediately from Lemma \ref{monolemma}.
\end{proof}

\subsection{Multiplicative property of Piunikhin isomorphism}
\label{subsec:multiplicative}
\index{Piunikhin isomorphism!multiplicative property|(}

In this subsection, we prove that the Piunikhin isomorphism interpolates
the quantum product $\cup_\frak b$ of $QH$ and the $\frak b$-deformed pants product
of $HF$.

Let $\chi : \R \to [0,1]$ be as in Definition \ref{defn:chi}.
For each $S  \in \R$, we define
$$
H_S^\varphi(z,x) = \chi(h(z)+S)H^\varphi(z,x)
$$
where $H^\varphi$ is as in (\ref{defsharp}).
\begin{figure}[h]
\centering
\includegraphics[scale=0.3]
{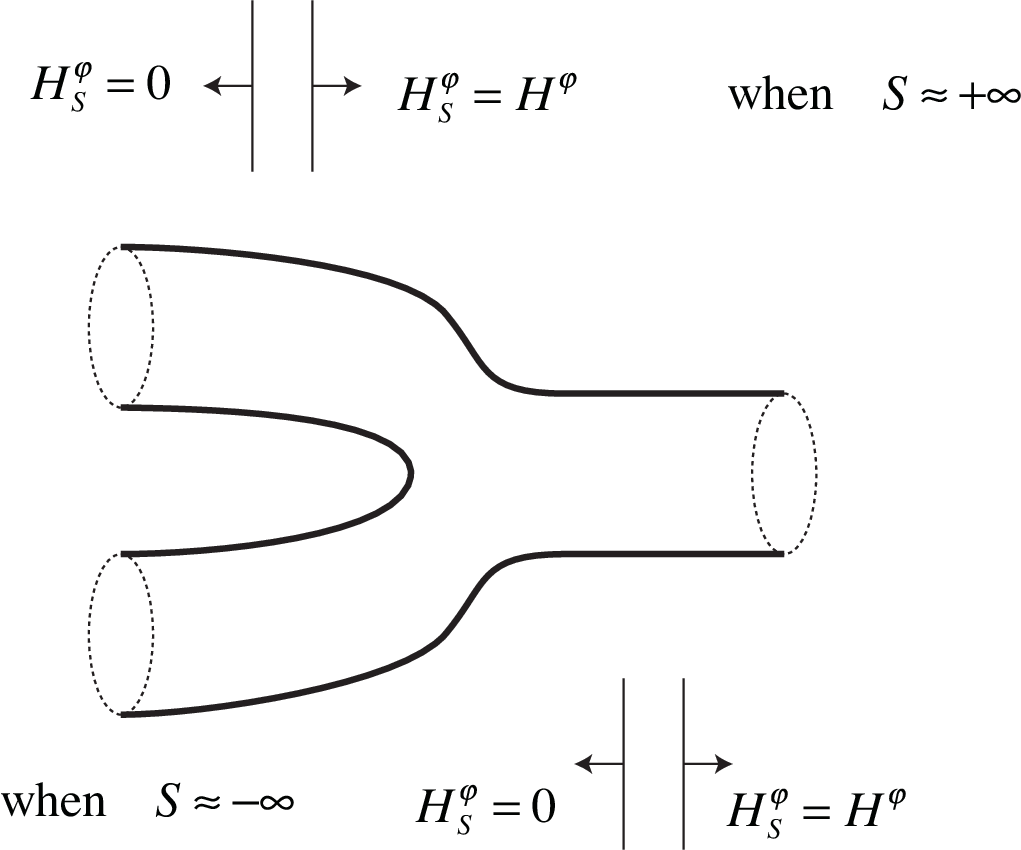}
\caption{Function $H_S^\varphi$}
\label{Figure8}
\end{figure}
Similarly we define a family $J_S^\varphi(z)$ so that
$$
J^{\varphi}_S(\varphi(\tau,t)) =
J^{\varphi(\tau,t)}(\varphi(\tau+S,t)).
$$
Due to the condition $J_t \equiv J_0$ near $t = 0$, this definition
smoothly extends to whole $\Sigma$.

With this preparation, we prove the following:

\begin{thm}\label{themprodcompati}
For $a_1,\, a_2 \in H^*(M;\Lambda) \setminus \{0\}$, we have
$$
\frak m_2^{\rm cl} (\mathcal P_{((H_1)_{\chi},(J_1)_{\chi}),\ast}(a_1^\flat),
\mathcal P_{((H_2)_{\chi},(J_2)_{\chi}),\ast}(a_2^\flat))
= \mathcal P_{((H_1 * H_2)_{\chi},(J_1 * J_2)_{\chi}),\ast} ((a_1 \cup^{\frak b} a_2)^\flat).
$$
\end{thm}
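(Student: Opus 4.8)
The plan is to prove Theorem \ref{themprodcompati} by the standard ``composition/gluing'' cobordism argument, constructing a one-parameter family of moduli spaces that interpolates between the two sides of the asserted identity and whose codimension-one boundary realizes both sides. Concretely, I would first introduce the family $(H_S^\varphi, J_S^\varphi)$ defined just above the statement and form, for each $[\gamma_3,w_3]\in\Crit(\CA_{H_1\#H_2})$, the parametrized moduli space
$$
\CM_\ell(para;*,*,[\gamma_3,w_3]) = \bigcup_{S\in\R}\{S\}\times \overset{\circ}{\CM}_\ell(H_S^\varphi,J_S^\varphi;*,*,[\gamma_3,w_3]),
$$
where now there are two ``incoming'' ends carrying the input $*$ (a point of $M$, the limit of the $J_0$-holomorphic cap at $\tau\to-\infty$ along each of the two incoming cylinders) and one ``outgoing'' end asymptotic to $[\gamma_3,w_3]$. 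Here $\overset{\circ}{\CM}_\ell(H_S^\varphi,J_S^\varphi;*,*,[\gamma_3,w_3])$ is the space of solutions $u:\Sigma\to M$ of the $(H_S^\varphi,J_S^\varphi)$-perturbed Cauchy--Riemann equation together with $\ell$ marked points, with finite energy; it comes with evaluation maps $\operatorname{ev}_i$ at the marked points and two maps $\operatorname{ev}_{-\infty}^{(1)}, \operatorname{ev}_{-\infty}^{(2)}$ at the two incoming ends. The first step is to establish, exactly as in Proposition \ref{connkura}, Proposition \ref{piuBULKkura}, Proposition \ref{pantsBULKkura} and Lemma \ref{parakuralem}, that this parametrized space has a Hausdorff compactification with an oriented Kuranishi structure with corners, that $\operatorname{ev}$ and the $\operatorname{ev}_{-\infty}^{(j)}$ extend, and that $\operatorname{ev}_{-\infty}^{(j)}$ are weakly submersive. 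I would state this as a proposition and remark that the proof is the same as those cited, so is omitted.

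The second step is to read off the boundary of $\CM_\ell(para;*,*,[\gamma_3,w_3])$. There are three sources of boundary: (i) $S\to -\infty$, where $\chi(h(z)+S)\equiv 0$ on the whole of $\Sigma$, so the surface splits into two $J_0$-holomorphic caps glued to a $\delbar_{J_0}$-holomorphic pair of pants with three marked points followed by a $(H^\varphi_\chi, J^\varphi_\chi)$-trajectory, which after using the associativity of the closed genus-zero GW moduli spaces reproduces $\CP^{\frak b}_{((H_1\#H_2)_\chi,(J_1\#J_2)_\chi)}(a_1\cup^{\frak b}a_2)$; (ii) $S\to +\infty$, where the neck along each of the two incoming ends stretches, so the configuration degenerates into two Piunikhin moduli spaces $\CM_{\#\L_j}((H_j)_\chi,(J_j)_\chi;*,[\gamma_j,w_j])$ feeding into the pants moduli space $\CM_{\#\L_0}(H^\varphi,J^\varphi;[\gamma_1,w_1],[\gamma_2,w_2],[\gamma_3,w_3])$, which is exactly $\frak m_2^{\rm cl}(\CP^{\frak b}_{((H_1)_\chi,(J_1)_\chi)}(a_1),\CP^{\frak b}_{((H_2)_\chi,(J_2)_\chi)}(a_2))$; and (iii) the strip/sphere breaking at finite $S$ (both at $\tau\to+\infty$ along the outgoing end, giving a composition with $\partial^{\frak b}_{(H_1\#H_2,J_1\#J_2)}$, and at $\tau\to-\infty$ along an incoming end, which as in Proposition \ref{piuBULKkura} is codimension $2$ because of the residual $S^1$-symmetry of the $J_0$-holomorphic cap and hence does not contribute). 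Exactly as in the previous sections one must choose a compatible system of multisections on all these moduli spaces extending the ones already fixed for the Piunikhin maps and the pants product; this is possible by the relative construction of Kuranishi structures as in \cite{fukaya-ono} Theorem 6.12.

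The third step is the bookkeeping: integrate $\operatorname{ev}^*(\frak b_+^{\wedge\ell})$ weighted by $\exp$ of the appropriate $\frak b_2$-pairings over $\CM_\ell(para;*,*,[\gamma_3,w_3])$ after pulling back a closed form from $M$ along the two incoming evaluation maps, sum over $\ell$ and $[\gamma_3,w_3]$ with the factorials, and apply Stokes' theorem. The boundary contributions identified in step two then give the identity
$$
\frak m_2^{\rm cl}(\CP^{\frak b}_{((H_1)_\chi,(J_1)_\chi),\ast}(a_1), \CP^{\frak b}_{((H_2)_\chi,(J_2)_\chi),\ast}(a_2)) - \CP^{\frak b}_{((H_1\#H_2)_\chi,(J_1\#J_2)_\chi),\ast}(a_1\cup^{\frak b}a_2) = \partial^{\frak b}_{(H_1\#H_2,J_1\#J_2)}\circ \mathcal K + \mathcal K\circ \partial,
$$
where $\mathcal K = \mathcal K^{\frak b}_{(para)}$ is the degree-one operator defined by the parametrized moduli space, so the two sides agree in $HF^{\frak b}_*(M,H_1\#H_2,J_1\#J_2;\Lambda^\downarrow)$. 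One should also record, as in Lemma \ref{monolemma}, the elementary action estimate showing $\mathcal K$ shifts filtration by the right amount; this is not strictly needed for the homology-level statement but will be used later, and follows by the same computation as Lemma \ref{filtered}. The main obstacle, as usual in this circle of ideas, is not any single analytic estimate but the careful set-up of the compatible system of multisections over the entire parametrized family so that all the boundary identifications in step two hold on the nose --- in particular checking that the degenerations along the incoming $\tau\to-\infty$ ends really are codimension $2$ (this uses crucially the $t$-independence of $J$ at $s=0$, cf.\ Remark \ref{JS1inv}) and that the $S\to-\infty$ limit genuinely reconstructs the big quantum product via the splitting axiom for genus-zero GW moduli spaces.
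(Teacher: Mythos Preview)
Your plan is essentially the paper's proof: the same parametrized moduli space over $S\in\R$, the same identification of the $S\to+\infty$ end with $\frak m_2^{\rm cl}\circ(\CP^{\frak b}\otimes\CP^{\frak b})$, the same codimension-two disposal of incoming bubbles via the $S^1$-symmetry, and the same Stokes argument producing the chain homotopy (the paper's Lemma \ref{compprodmainlemma}).

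One point to sharpen: your description of the $S\to-\infty$ boundary is slightly off. Nothing ``splits into two caps plus a pants''; rather the entire pair of pants becomes $J_0$-holomorphic (the two incoming punctures are removable), yielding a single closed sphere with two input marked points and one output marked point --- i.e.\ an element of $\CM^{\rm cl}_{3+\#\L_1}(\alpha;J_0)$ --- fiber-producted at $\operatorname{ev}_3$ with a \emph{single} Piunikhin cylinder $\CM_{\#\L_2}((H_1\#H_2)_\chi,(J_1\#J_2)_\chi;*,[\gamma,w])$. No associativity of GW invariants is needed; this directly gives $\CP^{\frak b}_{((H_1\#H_2)_\chi,(J_1\#J_2)_\chi)}\circ\frak{gw}_2$ on the chain level, where $\frak{gw}_2$ is a chain-level realization of $\cup^{\frak b}$. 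Because this boundary component is a genuine fiber product over $M$ rather than a direct product, the paper emphasizes that one must use a continuous \emph{family} of multisections on $\CM^{\rm cl}_{3+\ell}(\alpha;J_0)$ making $\operatorname{ev}_3$ a submersion on the perturbed space, so that the induced multisection on the fiber product is transversal; this is the one technical point beyond what you flagged.
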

\begin{proof}
Let
$[\gamma,w] \in \mathrm{Crit}(\mathcal A_{H_1 * H_2})$.
\begin{defn}\label{pantsmoduliS}\index{$\mathcal M_{\ell}(H^\varphi_S,J^\varphi_S;**,
[\gamma,w])$}
We denote by $\overset{\circ}{{\CM}}_{\ell}(H^\varphi_S,J^\varphi_S;**,
[\gamma,w])$ the set of all pairs $(u;z_1^+,\dots,z_{\ell}^+)$ of maps
$u: \Sigma \to M$ and marked points $z_i^+ \in \Sigma$,
which satisfy the following conditions:
\begin{enumerate}
\item The map $\overline u: = u \circ \varphi$ satisfies
\be\label{eq:CRHJ12}
\frac{\del \overline u}{\del \tau} + J_S^\varphi\left(\frac{\del \overline u}{\del t}
- X_{H_S^\varphi}(\overline u)\right) = 0.
\ee
\item The energy
$$
E_{(H_S^\varphi,J_S^\varphi)} = \frac{1}{2}
\int \left(\Big|\frac{d\overline u}{d \tau}\Big|^2_{J_S^\varphi} + \Big|
\frac{\partial\overline u}
{\partial t} -X_{H_S^\varphi}(\overline u)\Big|_{J_S^\varphi}^2
\right) dt\, d\tau
$$
is finite.
\item It satisfies the following asymptotic boundary condition:
$$
\lim_{\tau\to +\infty}u(\varphi(\tau, t)) = \gamma(t).
$$
\item
Let $[u]$ be the homotopy class of $u$ in $\pi_2(\gamma)$. Then $[u] = [w]$.
\item
$z_1^+,\dots,z_{\ell}^+$ are mutually distinct.
\end{enumerate}
\end{defn}
Again here as in Remark \ref{rem:*} we put the $**$ in the notation of the moduli space  to highlight the
fact that the domain curve carries two negative punctures and one positive
puncture at which the asymptotic boundary condition $[\gamma,w]$ is assigned.
\par
We note that (\ref{eq:CRHJ12}) and the finiteness of energy imply that
there exist $p_1, p_2 \in M$ such that
\begin{equation}\label{p1p2def}
\lim_{\tau\to - \infty}u(\varphi(\tau, t)) =
\begin{cases}
p_1   &t < 1/2, \\
p_2   &t > 1/2.
\end{cases}
\end{equation}
Therefore the homotopy class of $u$ in $\pi_2(\gamma)$ is defined.
\par
We define the evaluation map
$$
{\rm ev}_{-\infty}
= ({\rm ev}_{-\infty,1},{\rm ev}_{-\infty,2})
: \overset{\circ}{{\CM}}_{\ell}(H_S^\varphi,J_S^\varphi;**,
[\gamma,w])
\to M^2
$$
by
$
{\rm ev}_{-\infty}(u) = (p_1,p_2)
$
where $p_1,p_2$ are as in (\ref{p1p2def}),
and
$$
{\rm ev} = ({\rm ev} _1,\dots,{\rm ev} _{\ell}) : \overset{\circ}{{\CM}}_{\ell}(H_S^\varphi,J_S^\varphi;**,
[\gamma,w]) \to M^{\ell}
$$
by
$
{\rm ev}(u;z_1^+,\dots,z_{\ell}^+)=(u(z_1^+),\dots,u(z_{\ell}^+)).
$

We put
\begin{equation}\label{eq:MMpara**}
\overset{\circ}{{\CM}}_{\ell}(para;H^\varphi,J^\varphi;**,
[\gamma,w]) =
\bigcup_{S \in \R}
\{S\} \times
\overset{\circ}{{\CM}}_{\ell}(H_S^\varphi,J_S^\varphi;**,
[\gamma,w]).
\end{equation}
\index{$\mathcal M_{\ell}(para;H^\varphi,J^\varphi;**,
[\gamma,w]) $}
The evaluation maps ${\rm ev}_{-\infty}$ and ${\rm ev}$ are defined on them in an obvious way.

\begin{prop}\label{pantsBULKkurapara}
\begin{enumerate}
\item
The moduli space $\overset{\circ}{{\CM}}_{\ell}(para;H^\varphi,J^\varphi;**,
[\gamma,w])$ has a compactification
${{\CM}}_{\ell}(para;H^\varphi,J^\varphi;**,
[\gamma,w])$ that is Hausdorff.
\item
The space ${{\CM}}_{\ell}(para;H^\varphi,J^\varphi;**,
[\gamma,w])$ has an orientable Kuranishi structure with boundary and corners.
\item
The normalized boundary of ${{\CM}}_{\ell}(para;H^\varphi,J^\varphi;**,
[\gamma,w])$ is described by
the union of following three types of direct or fiber products:
The first one is
\begin{equation}\label{bdrypantsBULKkurapara1}
{{\CM}}_{\#\mathbb L_1}(para;H^\varphi,J^\varphi;**,
[\gamma',w'])\times  {{\CM}}_{\#\mathbb L_2}(H_1 * H_2,J_1 *  J_2;[\gamma',w'],[\gamma,w])
\end{equation}
where the union is taken over all $[\gamma',w'] \in \text{\rm Crit}(H_1 * H_2)$
and $(\mathbb L_1,\mathbb L_2) \in \text{\rm Shuff}(\ell)$, which corresponds to
splitting-off a Floer trajectory at some finite parameters $S$.
(Figure $\ref{Figure9}$)
\par
The second one corresponds to $S = -\infty$ which is
\begin{equation}\label{bdrypantsBULKkurapara2}
{{\CM}}_{3+\#\mathbb L_1}^{\rm cl}(\alpha;J_0) \,\,
{}_{{\rm ev}_3}\times_{{\rm ev}_{-\infty}}  {{\CM}}_{\#\mathbb L_2}((H_1 * H_2)_{\chi},
(J_1 * J_2)_{\chi};*,
[\gamma';w']).
\end{equation}
Here ${{\CM}}_{3+\#\mathbb L_1}^{\rm cl}(\alpha;J_0)$ is
as defined in Section $\ref{sec:bigquantum}$.
The union is taken over all $(\mathbb L_1,\mathbb L_2) \in \text{\rm Shuff}(\ell)$
and $\alpha, w'$ such that the obvious concatenation $\alpha \# w'$ is homotopic
to $w$ the fiber product is taken over $M$.
(Figure $\ref{Figure10}$)
(See Definition $\ref{defnn325}$ for the definition of fiber product of Kuranishi structure.)
\par
The third type corresponds to $S = \infty$ which is
\begin{equation}\label{bdrypantsBULKkurapara3}
\aligned
\big( {{\CM}}_{\#\mathbb L_1}((H_1)_{\chi},(J_1)_{\chi};*,&[\gamma_1,w_1])
\times {{\CM}}_{\#\mathbb L_2}((H_2)_{\chi},(J_2)_{\chi};*,[\gamma_2,w_2])\big)\\
&\times {{\CM}}_{\#\mathbb L_3}(H^\varphi,J^\varphi;
[\gamma_1,w_1],[\gamma_2,w_2],[\gamma,w]).
\endaligned
\end{equation}
where the union is taken over all
 $(\mathbb L_1,\mathbb L_2,\mathbb L_3)$ the triple shuffle of
$\{1,\dots,\ell\}$, and
$[\gamma_1,w_1] \in\mbox{\rm Crit}(\CA_{H_1})$,
$[\gamma_2,w_2] \in\mbox{\rm Crit}(\CA_{H_2})$.
(Figure $\ref{Figure11}$)
\item
The (virtual) dimension is given by
\begin{equation}\label{dimensionboundaryb223}
\dim {{\CM}}_{\ell}(para;H^\varphi,J^\varphi;**,
[\gamma,w]) = \mu_{H_1 * H_2}([\gamma,w])  +2\ell + 1 + n.
\end{equation}
\item
We can define a system of orientations on the collection of moduli spaces
${{\CM}}_{\ell}(para;H^\varphi,J^\varphi;**,
[\gamma,w])$ that is compatible with the isomorphism $(3)$ above.
\item The map ${\rm ev}$ on extends to a strongly continuous smooth map
$$
{{\CM}}_{\ell}(para;H^\varphi,J^\varphi;**,
[\gamma,w]) \to M^{\ell},
$$
which we denote also by ${\rm ev}$.
The map is compatible with the boundary description $(3)$ thereof.
\item The map
${\rm ev}_{-\infty}$ defined above on $\overset{\circ}{{\CM}}_{\ell}(para;H^\varphi,J^\varphi;**,
[\gamma,w])$ extends also to a strongly continuous smooth map
$$
{{\CM}}_{\ell}(para;H^\varphi,J^\varphi;**,
[\gamma,w]) \to M^2,
$$
which we denote by ${\rm ev}_{-\infty}$.
The map is compatible with the boundary description $(3)$ thereof.
\end{enumerate}
\end{prop}
\begin{figure}[h]
\centering
\includegraphics[scale=0.3]
{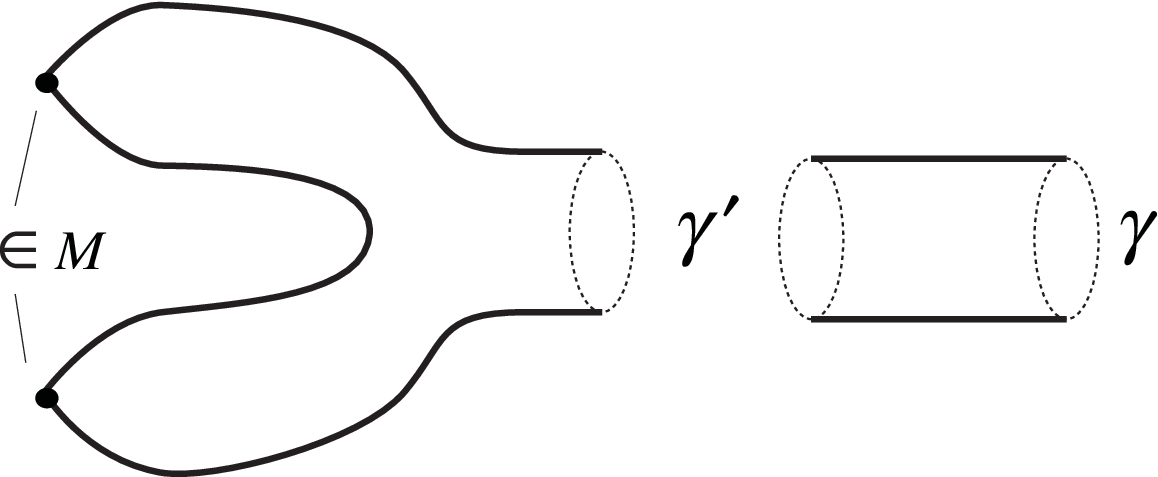}
\caption{An element of (\ref{bdrypantsBULKkurapara1})}
\label{Figure9}
\end{figure}
\begin{figure}[h]
\centering
\includegraphics[scale=0.3]
{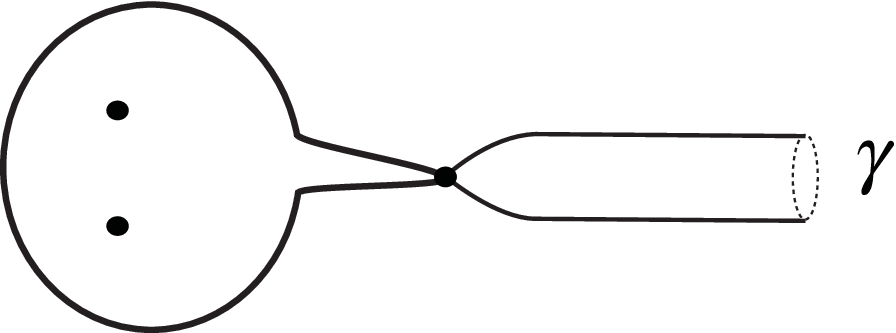}
\caption{An element of (\ref{bdrypantsBULKkurapara2})}
\label{Figure10}
\end{figure}
\begin{figure}[h]
\centering
\includegraphics[scale=0.3]
{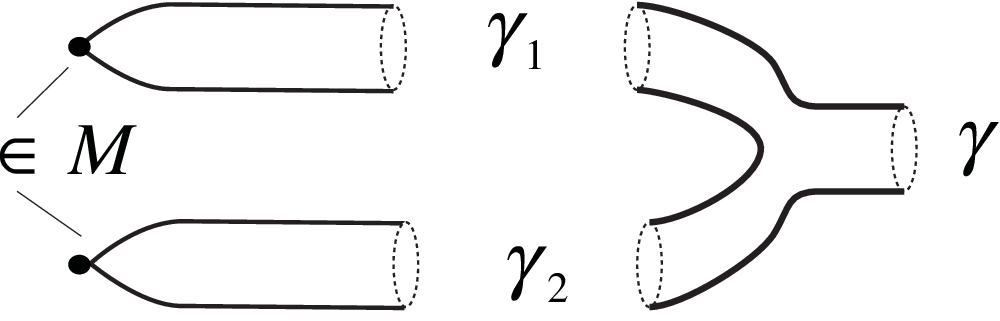}
\caption{An element of (\ref{bdrypantsBULKkurapara3})}
\label{Figure11}
\end{figure}
\begin{proof}
The proof is the same as other similar statements appearing in this and
several other previous papers, such as
\cite{fukaya-ono, fooo:book1, fooo:book2, fooo:techI}.
So it suffices to see how the boundary of our moduli space appears as in
(3).
\par
For each fixed $S$ the boundary of
${{\CM}}_{\ell}(H_S^\varphi,J_S^\varphi;**,
[\gamma,w])$ is described by
(\ref{bdrypantsBULKkurapara1}), with {\it para} being replaced by $S$.
We note that there is a `splitting end' where `bubble' occurs at $\tau \to \infty$. See Figure \ref{Figure9}.
\par
The case $S \to -\infty$ is described by (\ref{bdrypantsBULKkurapara2}).
We can prove the claim as follows.
We recall that $\lim_{S\to -\infty}(H_{S}^\varphi,J_{S}^\varphi) = (0,J_0)$
where $J_0$ is \emph{time-independent}.
We also remark that the moduli space
${{\CM}}_{3+\ell}^{\rm cl}(\alpha;J_0)$ is identified with the
moduli space of $(u;z^+_1,\dots,z^+_{\ell})$ such that the composition
$\overline u = u\circ \varphi$ satisfies the equation:
\be\label{eq:HJCR120}
\frac{\partial\overline u}{\partial \tau} + J_0\Big(\frac{\partial\overline u}
{\partial t} \Big) = 0
\ee
and
$\int u^*\omega< \infty$,
$[u] =\alpha$ in $H_2(M)$.
Therefore the `bubble' which slides to  $\tau \to -\infty$
is described by ${{\CM}}_{3+\ell}^{\rm cl}(\alpha;J_0)$.
(See Figure \ref{Figure10}.)
\par
The other potential `splitting end' where `bubble' occurs at $\tau \to -\infty$
has codimension two and do not appear here.
(This is because of the $S^1$ symmetry present on such a bubble.)
\par
Finally the case $S \to +\infty$ is described by (\ref{bdrypantsBULKkurapara3}).
(See Figure \ref{Figure11}.)
\end{proof}
To use ${{\CM}}_{\ell}(para; H^\varphi,J^\varphi;**,
[\gamma,w])$ to define an appropriate chain homotopy map we need to
find a perturbation on it that is compatible with the
description of its boundary given in Proposition \ref{pantsBULKkurapara} (3).
Since (\ref{bdrypantsBULKkurapara2}) involves fiber product we need
to find a perturbation so that $\mathrm{ev}_3$ is a submersion on the
perturbed moduli space. We need to use a continuous family of perturbations
(CF perturbation)
for this purpose.
The detail of the process of constructing such perturbations
is given in \cite[Section 12]{fooo:bulk}, \cite{fooo:tech2}, etc.
In \cite[Section 12]{fooo:bulk}  this notion is called
continuous family of multisections.  In \cite[Definition 7.47]{fooo:tech2}
where thorough detail of this construction is given, we call it a
\index{CF-perturbation}CF-perturbation.
In this paper we use the name of \cite{fooo:tech2}.
See Part \ref{part7} Definition \ref{defCFGCSFKUA}
 for the definition of CF-perturbation.
  \par
We regard ${{\CM}}_{3+\ell}^{\rm cl}(\alpha;J_0)$ as the
compactified moduli space of the pair
$(u;z^+_1,\dots,z^+_{\ell})$
satisfying (\ref{eq:HJCR120}) etc. Then we have a CF-perturbation
$\widetriangle{{\frak S}^{\epsilon}}$
on certain good coordinate system compatible with the
Kuranishi structure in Proposition \ref{pantsBULKkurapara}.
(See  Definition \ref{gcsystem} for the definition of
good coordinate system and  Definition \ref{defCFGCSFKUA}
 for the definition
of CF-perturbation.
Existence of a good coordinate system
is proved as Theorem \ref{thmexist}, \cite[Theorem 3.30]{fooo:tech2}
and existence of a CF-perturbation is proved as Theorem \ref{existperturbcont}, \cite[Theorem 7.49]{fooo:tech2}.)

Here we use the evaluation maps at the $1,2,4,\dots,(\ell + 3)$-th marked points
to receive an `input' and the evaluation map at the 3-rd marked point to give out
an `output'. This gives rise to a map
$
\mathrm{Corr}({{\CM}}_{3+\ell}^{\rm cl}(\alpha;J_0))
: \Omega(M)^{\otimes(2 +\ell)}
\to \Omega(M)
$
defined by
\index{$\text{\rm Corr}(\mathcal M_{3+\ell}^{\rm cl}(\alpha;J_0))$}
$$
\aligned
&\mathrm{Corr}({{\CM}}_{3+\ell}^{\rm cl}(\alpha;J_0))
(h_1,h_2,h_3,\dots,h_{\ell+2})\\
&= \mathrm{ev}_{3}!
\left( (\mathrm{ev}_{1}\times \mathrm{ev}_{2}\times
\mathrm{ev}_{4}\times  \dots
\times  \mathrm{ev}_{\ell+3})^*(h_1,h_2,h_3,\dots,h_{\ell+2});
\widetriangle{{\frak S}^{\epsilon}}\right),
\endaligned
$$
where $(\mathrm{ev}_{3})!$ is the integration along the fibers
 of ${{\CM}}_{3+\ell}^{\rm cl}(\alpha;J_0)$
under the evaluation map $\mathrm{ev}_{3} :
{{\CM}}_{3+\ell}^{\rm cl}(\alpha;J_0) \to M$,
which is defined via a choice of CF-perturbation $\widetriangle{{\frak S}^{\epsilon}}$.
See \cite[Definition 7.78]{fooo:tech2}  and Definition \ref{defintfiberGCS}.
The map depends on the choice of a CF-perturbation and $\epsilon$ on the chain level but
independent in the homology level. (See \cite[Proposition 8.15]{fooo:tech2}.)
We extend the above assignment
$\mathrm{Corr}({{\CM}}_{3+\ell}^{\rm cl}(\alpha;J_0))$ to a $\Lambda $-multi-linear map
$
\mathrm{Corr}({{\CM}}_{3+\ell}^{\rm cl}(\alpha;J_0)) : (\Omega(M) \widehat{\otimes} \Lambda)^{\otimes (2+\ell)}
\to \Omega(M) \widehat{\otimes} \Lambda
$
in the obvious way.
\par
Let $\frak b \in H^{{\rm even}}(M;\Lambda _0)$ and decompose
$\frak b = \frak b_0 + \frak b_2 + \frak b_+$ as in
(\ref{decompb}). We also regard them as differential forms
by the same kind of abuse of notations.
Let $a_1,a_2 \in \Omega(M)$. We put
\index{$\frak {gw}_{2;\alpha}^{\mathrm{cl}}$}
$$
\aligned
\frak {gw}_{2;\alpha}^{\mathrm{cl}}(a_1,a_2)  =
\sum_{\ell} \frac{\exp(\frak b_2\cap \alpha)}{\ell!}
\mathrm{Corr}({{\CM}}_{3+\ell}^{\mathrm cl}(\alpha;J_0))
(a_1,a_2,\frak b_{+},\dots,\frak b_{+}).
\endaligned
$$
We then define
\begin{equation}\label{chainlevelGW}
\frak {gw}_{2}(a_1,a_2) =
\sum_{\alpha} T^{\alpha \cap \omega} \frak {gw}_{2;\alpha}(a_1,a_2).
\end{equation}
We can easily prove that the right hand side of
(\ref{chainlevelGW}) converges in
$\Omega(M) \widehat\otimes \Lambda $.
Using the fact that Gromov-Witten invariant
is well-defined in the homology level
(this follows from the fact that
${{\CM}}_{3+\ell}^{\rm cl}(\alpha;J_0)$
has a Kuranishi structure {\it without boundary}),
we can easily show that $\frak {gw}_{2}$ induces a product map
$\cup^{\frak b}$ in the cohomology level.
\par
We now go back to the study of the moduli space
${{\CM}}_{\ell}(para;H^\varphi,J^\varphi;**, [\gamma,w])$.
We will need to construct a system of CF-perturbations thereon inductively over
the energy the explanation of which is in order.
We note that we have already defined CF-perturbations
of the moduli spaces which appear in the right hand
of Proposition \ref{pantsBULKkurapara} (3).
The fiber product in (\ref{bdrypantsBULKkurapara2}) induces a fiber product
of our CF-perturbations
in the sense of \cite[Definition 10.3]{fooo:tech2}, since $\mathrm{ev}_3$ is strongly
submersive
with respect to our CF-perturbation in the sense of Definition \ref{CFtransv}.

Other products appearing in (\ref{bdrypantsBULKkurapara1})
and (\ref{bdrypantsBULKkurapara3}) are direct products so
the perturbation of each of the factors immediately
induce one on the product.
Thus we have defined a CF-perturbation on the boundary.
It is compatible at the corners by the inductive construction of
CF-perturbations.
Therefore we can extend the given $\text{\rm CF}$-perturbation on the boundary
to the whole ${{\CM}}_{\ell}(para;H^\varphi,J^\varphi; **,[\gamma,w])$
by the general theory of Kuranishi structure.
We use this CF-perturbation to define the
integration on these moduli spaces given below.
\par
We now use our CF-perturbations to define an integration along
the fibers and put
$$
\aligned
&\frak n_{a_1,a_2,\frak b,para;H^\varphi,J^\varphi;
[\gamma,w ]} \\
&= \sum_{\ell}  \frac{\exp(\frak b_2\cap \alpha)}{\ell!}
\int_{{{\CM}}_{\ell}(para;H^\varphi,J^\varphi;
[\gamma,w])} \mathrm{ev}_{-\infty}^*(a_1,a_2)\wedge
\mathrm{ev}^*(\frak b_{+}\wedge \dots \wedge \frak b_{+}).
\endaligned$$
\index{$\frak H^{\frak b}_{H^\varphi,J^\varphi}(a_1,a_2)$}
\begin{defn}
$$
\frak H^{\frak b}_{H^\varphi,J^\varphi}(a_1,a_2)
=
\sum_{[\gamma,w] \in \mathrm{Crit}(\mathcal A_{H_1 * H_2})}\frak n_{a_1,a_2,\frak b,para;H^\varphi,J^\varphi;
[\gamma,w]}
\llb \gamma,w \rrb.
$$
\end{defn}
We can prove that the right hand side converges in $CF(M,H_1 * H_2,J_1 * J_2;\Lambda)$
in the same way as in the proof of convergence of the right hand side of
(\ref{boundaryop}).
\begin{lem}\label{compprodmainlemma}
We have
$$
\aligned
&\partial_{((H_1 * H_2)_{\chi},(J_1 * J_2)_{\chi})}^{\frak b}\circ \frak H^{\frak b}_{H^\varphi,J^\varphi}
+ \frak H^{\frak b}_{H^\varphi,J^\varphi} (\partial \widehat\otimes 1 + 1\widehat\otimes \partial )\\
&=
\mathcal P_{((H_1 * H_2)_{\chi};(J_1 * J_2)_{\chi})}^{\frak b} \circ \frak {gw}_2
- \frak m^{\rm cl}_2
\circ
\left(
\mathcal P_{((H_1)_{\chi};(J_1)_{\chi})}^{\frak b} \otimes
\mathcal P_{((H_2)_{\chi};(J_2)_{\chi})}^{\frak b}
\right).
\endaligned
$$
\end{lem}
\begin{proof}
Using the description of the moduli spaces given in Proposition \ref{pantsBULKkurapara},
we derive the lemma from Stokes' formula (Theorem \ref{them48},
\cite[Theorem 8.10]{fooo:tech2})
and composition formula (Theorem \ref{compform},
\cite[Theorem 10.20]{fooo:tech2}) similarly as before.
\end{proof}
Theorem \ref{themprodcompati} follows immediately from Lemma \ref{compprodmainlemma}.
\end{proof}
\index{Piunikhin isomorphism!multiplicative property|)}

\subsection{Wrap-up of the proof of triangle inequality}
\label{subsec:proofend}

Now we prove:

\begin{thm}\label{multiplicativemain}
Assume that $H_1, H_2, H_1 * H_2$ are all nondegenerate.
Then for any $a_1,a_2 \in QH^*_{\frak b}(M;\Lambda)$ we have:
$$
\rho^{\frak b}(H_1;a_1)
+
\rho^{\frak b}(H_2;a_2)
\ge \rho^{\frak b}(H_1 * H_2;a_1\cup^{\frak b}a_2).
$$
\end{thm}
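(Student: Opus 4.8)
The plan is to deduce the triangle inequality $\rho^{\frak b}(H_1;a_1) + \rho^{\frak b}(H_2;a_2) \ge \rho^{\frak b}(H_1\#H_2;a_1\cup^{\frak b}a_2)$ from the chain-level results already assembled in this section: the filtration-preserving property of the pants product (Proposition \ref{filtpresm2}) and the multiplicativity of the Piunikhin map (Theorem \ref{themprodcompati}). First I would fix $\epsilon>0$ and, for $i=1,2$, choose cycles $x_i \in F^{\rho^{\frak b}(H_i;a_i)+\epsilon}CF(M,H_i,J_i;\Lambda^\downarrow)$ with $\partial^{\frak b}_{(H_i,J_i)}x_i = 0$ representing the Floer homology class $\mathcal P^{\frak b}_{((H_i)_\chi,(J_i)_\chi),\ast}(a_i^\flat)$; such $x_i$ exist by the definition of $\rho^{\frak b}$ as an infimum.

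Next I would apply the pants product chain map $\frak m_2^{\rm cl}$ to $x_1 \otimes x_2$. By Proposition \ref{filtpresm2},
$$
\frak m_2^{\rm cl}(x_1 \otimes x_2) \in F^{\rho^{\frak b}(H_1;a_1) + \rho^{\frak b}(H_2;a_2) + 2\epsilon} CF(M,H_1\#H_2,J_1\#J_2;\Lambda^\downarrow),
$$
and since $\frak m_2^{\rm cl}$ is a chain map and $x_1,x_2$ are cycles, $\frak m_2^{\rm cl}(x_1\otimes x_2)$ is a cycle. Its homology class is $\frak m_2^{\rm cl}(\mathcal P^{\frak b}_{((H_1)_\chi,(J_1)_\chi),\ast}(a_1^\flat) \otimes \mathcal P^{\frak b}_{((H_2)_\chi,(J_2)_\chi),\ast}(a_2^\flat))$, which by Theorem \ref{themprodcompati} equals $\mathcal P^{\frak b}_{((H_1\#H_2)_\chi,(J_1\#J_2)_\chi),\ast}((a_1\cup^{\frak b}a_2)^\flat)$ — here one uses that the Poincaré dual intertwines the quantum product $\cup^{\frak b}$ with whatever operation Theorem \ref{themprodcompati} phrases homologically, consistent with the conventions fixed in Notations and Conventions (17) and Remark \ref{rem:EPversusFOOO}. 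Therefore $\frak m_2^{\rm cl}(x_1\otimes x_2)$ is a cycle of level $\le \rho^{\frak b}(H_1;a_1)+\rho^{\frak b}(H_2;a_2)+2\epsilon$ representing the class whose spectral invariant is $\rho^{\frak b}(H_1\#H_2;a_1\cup^{\frak b}a_2)$, so by definition of the latter as an infimum over representing cycles,
$$
\rho^{\frak b}(H_1\#H_2;a_1\cup^{\frak b}a_2) \le \rho^{\frak b}(H_1;a_1) + \rho^{\frak b}(H_2;a_2) + 2\epsilon.
$$
Letting $\epsilon \to 0$ finishes the argument.

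The one genuinely delicate point — and the step I expect to be the main obstacle — is the careful bookkeeping needed to pass from $\frak m_2^{\rm cl}$ between complexes with \emph{specific} auxiliary data $(J_1,J_2,J_1\#J_2)$ to the invariantly-defined spectral numbers $\rho^{\frak b}(H_i;a_i)$. This requires invoking $J$-independence of $\rho^{\frak b}$ (Theorem \ref{Jindepedence}) to align the choices, and checking that the chain homotopies in Theorem \ref{themprodcompati} are themselves filtration-non-increasing up to the relevant shift, so that the homology-level identity faithfully reflects a level estimate at the chain level. One must also confirm the boundedness/convergence statements (that $\frak m_2^{\rm cl}(x_1\otimes x_2)$ genuinely lies in $CF(M,H_1\#H_2;\Lambda^\downarrow)$ with finite $\frak v_q$), which follows from Lemma \ref{adiccomv2}-type arguments already cited after Definition \ref{partialisderivative}. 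Once these are in place the inequality is immediate, and the extension to possibly degenerate $H_1, H_2$ follows from the $C^0$-continuity in Theorem \ref{contspect} by approximation, as in the non-bulk case.
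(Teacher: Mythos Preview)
Your proof is correct and essentially identical to the paper's own argument: choose cycles $x_i$ at level $\rho^{\frak b}(H_i;a_i)+\epsilon$, apply Proposition \ref{filtpresm2} for the filtration bound on $\frak m_2^{\rm cl}(x_1,x_2)$, invoke Theorem \ref{themprodcompati} to identify its homology class, and let $\epsilon\to 0$. Your worry about whether the chain homotopies in Theorem \ref{themprodcompati} preserve filtration is unnecessary --- that theorem is only used as a \emph{homology-level} identity, while the level estimate comes entirely from Proposition \ref{filtpresm2} applied to the explicit cycle $\frak m_2^{\rm cl}(x_1,x_2)$; no filtration control on the homotopy is required.
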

\begin{proof}
Let $\epsilon >0$ and $\rho_i = \rho(H_i;a_i;\frak b)$.
Let $x_i \in F^{\rho_i+\epsilon}CF(M,H_i,J_i)$ such that
$\partial_{(H_i,J_i)}^{\frak b}(x_i) = 0$ and
$[x_i] = \mathcal P_{((H_i)_{\chi},(J_i)_{\chi}),\ast}^{\frak b}(a_i^\flat)
\in HF(M,H_i,J_i)$ ($i=1,2$).
\par
By Proposition \ref{filtpresm2} we have
$$
\frak m_2^{\rm cl}(x_1,x_2)
\in
F^{\rho_1+\rho_2+2\epsilon}CF(M,H_1 * H_2,J_1 * J_2).
$$
By Theorem \ref{themprodcompati} we have
$$
[\frak m_2^{\rm cl}(x_1,x_2)]
= \mathcal P_{((H_1 * H_2)_{\chi},(J_1 * J_2)_{\chi}),\ast}^{\frak b}((a_1 \cup^{\frak b} a_2)^\flat).
$$
Therefore by definition
$$
\rho(H_1 * H_2;a_1\cup^{\frak b}a_2;\frak b)
\le \rho_1+\rho_2+2\epsilon.
$$
Since $\epsilon >0$ is arbitrary,
Theorem \ref{multiplicativemain} follows.
\end{proof}

\section{Proofs of other axioms}
\label{sec:proofother}

We are now ready to complete the proof of Theorem \ref{axiomshbulk}.
\par
Note that the proof of Theorem \ref{homotopyinvbulk} (1),(3)
has been completed in Section \ref{sec:proofhomotopy} and hence
the invariant $\rho^{\frak b}(\widetilde{\phi};a)$
is well-defined for $\widetilde{\phi} \in \widetilde{\rm{Ham}}_{\rm nd}(M;\omega)$.
\par
For general $\widetilde{\psi}_H \in \widetilde{\rm{Ham}}(M;\omega)$,
not necessarily nondegenerate, we take nondegenerate $H_i$ which converges to $H$ in $C^0$-sense
and take the limit $\lim_{i\to \infty}\rho^{\frak b}(\widetilde{\psi}_{H_i};a)$. This
limit exists and is independent of $H_i$ by  Theorem \ref{contspect}.
We define this limit to be $\rho^\frak b(\widetilde{\phi};a)$ and
have thus defined $\rho^{\frak b}(\widetilde{\phi};a)$ in general.
We prove that it satisfies (1) - (7) of Theorem  \ref{axiomshbulk}.
\par
Statement (1) is Theorem \ref{bulkspectrality}.
\par
Statement (2) is immediate from definition.
\par
Now let us prove (3). In a way similar to the proof of Lemma \ref{connectinghomofilt},
we prove the following:
\begin{lem}
If $\mathcal M_{\ell}(H_\chi,J_\chi;*,[\gamma,w])$ is nonempty, then
$
\mathcal A_H([\gamma,w]) \le E^-(H).
$
\end{lem}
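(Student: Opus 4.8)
The statement to be proved is the following energy/action bound: if $\mathcal M_{\ell}(H_\chi,J_\chi;*,[\gamma,w])$ is nonempty, then $\mathcal A_H([\gamma,w]) \le E^-(H)$. The plan is to imitate the computation in Lemma \ref{filtered} (and its elongated-homotopy refinement in Lemma \ref{connectinghomofilt}), applied to the Piunikhin moduli space with the elongation function $\chi$. Recall that an element $u$ of $\mathcal M_{\ell}(H_\chi,J_\chi;*,[\gamma,w])$ satisfies the $\chi$-elongated equation $\dudtau + J_\chi\big(\dudt - \chi(\tau)X_{H_t}(u)\big) = 0$, has finite energy $E_{(H_\chi,J_\chi)}(u)\ge 0$, limits to a constant point $p_- = \mathrm{ev}_{-\infty}(u)\in M$ as $\tau\to-\infty$, to $\gamma$ as $\tau\to+\infty$, and the homotopy class of the glued disc equals $[w]$.

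\textbf{Key steps.} First I would run the same integration-by-parts identity as in \eqref{energyincrease}, but keeping track of the $\chi(\tau)$ factor. Computing $\int u^*\omega$ over $\R\times S^1$ and using the equation, one gets
\[
\int u^*\omega = E_{(H_\chi,J_\chi)}(u) - \int_{\R}\int_{S^1}\chi(\tau)\,\frac{\partial}{\partial\tau}\big(H_t(u(\tau,t))\big)\,dt\,d\tau + (\text{correction from } \chi'),
\]
and integrating the $\chi$-weighted total $\tau$-derivative by parts produces a term $\int\int \chi'(\tau) H_t(u(\tau,t))\,dt\,d\tau$ together with boundary contributions: at $\tau=+\infty$ one gets $-\int_{S^1}H_t(\gamma(t))\,dt$ (since $\chi\equiv 1$ there), and at $\tau=-\infty$ the term vanishes because $\chi\equiv 0$ for $\tau\le 0$. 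Since $w\#u\sim w$... more precisely since $[u]=[w]$ as an element of $\pi_2(\gamma)$ and the limit at $-\infty$ is the constant disc at $p_-$, the relation $\CA_H([\gamma,w]) = -\int u^*\omega - \int_{S^1} H_t(\gamma(t))\,dt$ gives, after rearranging,
\[
\CA_H([\gamma,w]) = -E_{(H_\chi,J_\chi)}(u) - \int_{\R}\int_{S^1}\chi'(\tau)\,H_t(u(\tau,t))\,dt\,d\tau.
\]
Then, because $\chi'\ge 0$ and $\int_{\R}\chi'(\tau)\,d\tau = 1$, I would bound
\[
-\int_{\R}\int_{S^1}\chi'(\tau)\,H_t(u(\tau,t))\,dt\,d\tau \le \int_{S^1}\Big(-\min_x H_t(x)\Big)\,dt = E^-(H),
\]
and since $E_{(H_\chi,J_\chi)}(u)\ge 0$, we conclude $\CA_H([\gamma,w]) \le E^-(H)$, as desired. (One should double check the sign bookkeeping against the conventions in Lemma \ref{connectinghomofilt}, where essentially the same estimate appears for the connecting homomorphism with $F^s = H + s(H'-H)$; the present case is the special case $H'-H = H$, i.e. comparing with the zero Hamiltonian, modulo the fact that here the Hamiltonian term itself is being elongated rather than the difference.)

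\textbf{Main obstacle.} The only real subtlety is the careful treatment of the $\tau\to\pm\infty$ boundary terms in the integration by parts — in particular making sure the contribution at $\tau=-\infty$ genuinely vanishes (which it does, since $\chi$ and hence $\chi'$ are identically zero for $\tau\le 0$, so there is no boundary term there and the integrand is supported in $\tau\in[0,\infty)$) — and verifying that the finiteness of energy indeed justifies the exponential decay needed to perform these manipulations rigorously. All of this is standard and parallels the proofs of Lemma \ref{filtered} and Lemma \ref{connectinghomofilt}; no genuinely new difficulty arises. The upshot, combined with the chain map property \eqref{boundaryprop}, is the filtration bound $\mathcal P^{\frak b}_{(H_\chi,J_\chi)}\big(\Omega_*(M)\widehat\otimes\Lambda^\downarrow\big) \subset F^{E^-(H)}CF(M,H;\Lambda^\downarrow)$ on the image of the Piunikhin map, which feeds into the one-sided estimate $\rho^{\frak b}(H;a) \le E^-(H)$ needed for statement (3) of Theorem \ref{axiomshbulk}.
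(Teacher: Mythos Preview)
Your proposal is correct and follows exactly the approach the paper intends: the paper merely states ``In a way similar to the proof of Lemma~\ref{connectinghomofilt}'' without writing out the details, and the computation you spell out---the integration-by-parts yielding $\CA_H([\gamma,w]) = -E_{(H_\chi,J_\chi)}(u) - \int_{\R}\int_{S^1}\chi'(\tau)H_t(u(\tau,t))\,dt\,d\tau$ followed by the bound using $\chi'\ge 0$ and $\int\chi'\,d\tau=1$---is precisely that argument specialized to the Piunikhin moduli space (i.e., the case where the incoming end is a constant point and the Hamiltonian is elongated from $0$ to $H$). Your identification of the filtration consequence for $\mathcal P^{\frak b}_{(H_\chi,J_\chi)}$ and its role in the proof of Theorem~\ref{axiomshbulk}~(3) is also exactly how the paper uses this lemma.
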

Therefore if $\lambda_H(a) < \lambda$ then
$$
\mathcal P_{(H_{\chi},J_{\chi})}^{\frak b}(a^\flat) \in
F^{\lambda +\Vert H_i\Vert_-}CF(M,H,J).
$$
It follows that
$$
\rho^{\frak b}(H;a) \le \lambda + E^-(H).
$$
\par
We apply this inequality to a sequence $H_i$ of Hamiltonians converging to $0$
such that $\widetilde{\psi}_{H_i}$ are nondegenerate. By taking the limit, we have
$$
\rho^{\frak b}(\underline 0;a) \le \lambda.
$$
Since this holds for any $\lambda >\frak v_q(a)$, $\rho^{\frak b}(\underline 0;a) \leq \frak v_q(a)$.
We refer to Proposition \ref{oppositeineq} in Section \ref{sec:appendix1}
for the proof of opposite inequality
\be\label{eq:rhogeH+}
\rho^{\frak b}(\underline 0;a) \ge \frak v_q(a).
\ee
\par
Statement (4) is immediate from construction.
\par
Statement (5) is Theorem \ref{multiplicativemain} in the nondegenerate case.
The general case then follows by an obvious limit argument.
\par
Statement (6) immediately follows from Theorem \ref{contspect}.
\par
Statement (7) is obvious from construction.
We have thus completed the proof of Theorem \ref{axiomshbulk}
except the opposite inequality \eqref{eq:rhogeH+} which
is deferred to Section  \ref{sec:appendix1}.
\qed

\part{Quasi-states and quasi-morphisms
via spectral invariants with bulk}
\label{part:quasihomo}

In this part, we show that Entov-Polterovich's theory can be
enhanced by involving spectral invariants with bulk, which we
have developed in Part 2. The generalization is rather straightforward requiring only a
small amount of new ideas. So a large portion of this part is actually a
review of the works by Entov-Polterovich and Usher
\cite{EP:morphism,EP:states,EP:rigid, ostrober2,usher:specnumber,usher:duality}.
(It seems, however, that the proof of Theorem \ref{dualitymain} below is not written
in detail to the level of generality that we provide here.)

\section{Partial symplectic quasi-states}
\label{sec:spec-displace}

We start this section by recalling the definition of Calabi homomorphism.
Let $H : [0,1] \times M \to \R$ be a time dependent
Hamiltonian and $\phi_H^t$ be the $t$ parameter family of
Hamiltonian diffeomorphisms induced by it.
We note that we do {\it not} assume that $H$ is normalized.
For an open proper subset $U \subset M$ we define
\begin{equation}
\Ham_U(M,\omega)
= \left\{\psi_H \in \Ham(M,\omega) \mid \, \text{$\supp H_t \subset U$ for any $t$} \right\}.
\end{equation}
We denote the universal covering space of
$\mathrm{Ham}_U(M,\omega)$ by $\widetilde{\mathrm{Ham}}_U(M,\omega)$.
Each time dependent Hamiltonian $H$ supported in $U$ determines an element
$\psi_H = \phi_H^1 \in \mathrm{Ham}_U(M,\omega)$,
together with its lifting $\widetilde \psi_H = [\phi_H]_U \in \widetilde{\mathrm{Ham}}_U(M,\omega)$.
Here $[\cdot]_U$ is the path homotopy class of $\phi_H$ in $\Ham_U(M,\omega)$.
We recall the following lemma due to \cite{calabi}, whose
proof we omit.
(See for example \cite[Theorem 4.2.7]{banyagabook}, \cite[p.328--p.329]{mcduffsalamonintro}.)
\begin{lem}\label{wdcalabi}
If $\supp H_t \subset U$ for all $t$, then the integral
$$
 \int_0^1 dt \int_M H_t \,\,\omega^n
$$
depends only on $\widetilde{\psi}_H \in \widetilde{\mathrm{Ham}}_U(M,\omega)$.
\end{lem}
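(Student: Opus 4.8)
\textbf{Proof proposal for Lemma \ref{wdcalabi}.}

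The plan is to show that the value of $\int_0^1 dt \int_M H_t\,\omega^n$ is unchanged under the two operations that generate the equivalence relation defining $\widetilde{\mathrm{Ham}}_U(M,\omega)$: namely (a) reparametrization of the Hamiltonian path $\phi_H$ together with homotopies of the path rel endpoints inside $\mathrm{Ham}_U(M,\omega)$, and (b) the ambiguity in the Hamiltonian generating a given path (which, for paths supported in the \emph{proper} open subset $U$, is genuinely no ambiguity at all, since a Hamiltonian supported in $U$ is automatically normalized in the sense that it is determined by its flow — the constant can be absorbed only by adding a function of $t$, but then the support condition forces that function to vanish). So the only real content is invariance under (a).

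First I would reduce to the following statement: if $H$ and $H'$ are two time-dependent Hamiltonians, each supported in $U$ for all $t$, with $\phi_H^1=\phi_{H'}^1$ and with $\{\phi_{H^s}\}_{s\in[0,1]}$ a homotopy rel endpoints through paths in $\mathrm{Ham}_U(M,\omega)$ (so $H^s$ supported in $U$, $\phi_{H^s}^0=\mathrm{id}$, $\phi_{H^s}^1=\psi$ for all $s$), then $\mathrm{Cal}(H)=\mathrm{Cal}(H')$ where $\mathrm{Cal}(H):=\int_0^1 dt\int_M H_t\,\omega^n$. To prove this I would differentiate $s\mapsto \mathrm{Cal}(H^s)$ in $s$ and show the derivative vanishes. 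Write $K=K(s,t,x)$ for the Hamiltonian generating $\frac{\partial\phi_{H^s}^t}{\partial s}\circ(\phi_{H^s}^t)^{-1}$ in the $s$-direction; since the homotopy is through maps supported in $U$ (more precisely, since the endpoints are fixed), $K_{s,0}$ and $K_{s,1}$ are constant in $x$, and one can arrange $K$ to be supported in $U$ so these constants are $0$. Exactly as in the computation of Proposition \ref{spectrumconst} (the Banyaga identity $\frac{\partial K}{\partial t}+\{K,H\}=\frac{\partial H}{\partial s}$ from \cite{banyagapaper} Proposition I.1.1), one gets $\frac{\partial H^s}{\partial s}(t,\phi_{H^s}^t(x))=\frac{\partial}{\partial t}\bigl(K(s,t,\phi_{H^s}^t(x))\bigr)$.

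Then I would compute, using that $\phi_{H^s}^t$ is $\omega$-preserving so $(\phi_{H^s}^t)^*\omega^n=\omega^n$ (change of variables $y=\phi_{H^s}^t(x)$):
\begin{equation*}
\frac{d}{ds}\mathrm{Cal}(H^s)=\int_0^1 dt\int_M \frac{\partial H^s}{\partial s}(t,y)\,\omega^n
=\int_0^1 dt\int_M \frac{\partial}{\partial t}\bigl(K(s,t,\phi_{H^s}^t(x))\bigr)\,\omega^n,
\end{equation*}
and by Fubini and the fundamental theorem of calculus in $t$ this equals $\int_M\bigl(K(s,1,\phi_{H^s}^1(x))-K(s,0,x)\bigr)\,\omega^n=\int_M\bigl(K(s,1,\psi(x))-K(s,0,x)\bigr)\,\omega^n=0$ since $K_{s,0}\equiv 0\equiv K_{s,1}$. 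Hence $\mathrm{Cal}(H^s)$ is constant in $s$, giving $\mathrm{Cal}(H)=\mathrm{Cal}(H')$. Finally, invariance under reparametrization of the path (replacing $H$ by $H^\zeta(t,x)=\zeta'(t)H(\zeta(t),x)$ for a reparametrization $\zeta$ of $[0,1]$) follows by a direct change of variables $t\mapsto\zeta(t)$ in the $t$-integral, which is elementary; and reparametrizations connect to the identity through homotopies rel endpoints, so this case is also absorbed into the argument above.

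The main obstacle I anticipate is the bookkeeping around the generating Hamiltonian $K$ of the $s$-variation: one must be careful that $K$ can indeed be chosen supported in (a neighborhood of) $U$ — equivalently that the boundary terms $K_{s,0}$ and $K_{s,1}$ vanish rather than being merely constant — and this is where the hypothesis that $U$ is a \emph{proper} open subset (so that "supported in $U$" pins down the Hamiltonian with no leftover constant) is used. The analytic content, however, is identical to that already carried out in Proposition \ref{spectrumconst}, so once that reduction is in place the proof is routine; I would simply cite \cite{calabi} (and the references \cite{banyagabook}, \cite{mcduffsalamonintro}) for the details.
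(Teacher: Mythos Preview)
The paper does not prove this lemma: it explicitly omits the proof and refers to \cite{calabi}, \cite{banyagabook} Theorem 4.2.7, and \cite{mcduffsalamonintro} pp.~328--329. Your proposal supplies a correct proof along the standard lines found in those references; indeed the computation you carry out is essentially the same differentiation-in-$s$ argument using Banyaga's identity that the paper itself later reproduces in the proof of Proposition \ref{spectrumconst}, so your approach is entirely in keeping with the paper's methods.

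One small point of presentation: in your displayed computation the variables of integration on the two sides ($y$ on the left, $x$ on the right after the change of variables) should be made explicit, and the justification that $K$ can be taken supported in $U$ deserves one more sentence --- since the vector field $\partial_s\phi_{H^s}^t\circ(\phi_{H^s}^t)^{-1}$ vanishes outside $U$, the function $K$ is locally constant there, and as $U\subsetneq M$ with $M$ connected you may normalize $K$ to vanish on $M\setminus U$; then $K_{s,0}$ and $K_{s,1}$, being constant in $x$ and zero on $M\setminus U$, vanish identically. With that clarification your argument is complete.
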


\begin{defn}\label{calabihomo}
We define the homomorphism
$\operatorname{Cal}_U: \widetilde{\Ham}_U(M,\omega) \to \R$ by
$$
\operatorname{Cal}_U(\widetilde{\psi}_H)
= \int_0^1dt \int_M H_t\,\omega^n,
$$
which is called a {\it Calabi homomorphism}.\index{Calabi homomorphism}
We also put $\operatorname{Cal} = \operatorname{Cal}_M : \widetilde{\rm Ham}(M,\omega) \to \R$.
\index{$\text{\rm Cal}_U$}\index{$\text{\rm Cal}$}
\end{defn}
This is well-defined by Lemma \ref{wdcalabi}.

We next recall the notion of partial symplectic quasi-states
introduced by Entov-Polterovich \cite{EP:states}.
We say that a subset $U$ of $M$ is {\it displaceable}\index{displaceable subset}
(or Hamiltonian displaceable) if
there exists $\phi \in \text{\rm Ham}(M,\omega)$ such that
$\phi(U) \cap \overline U =\emptyset$.

\begin{defn}
[\cite{EP:states, EP:rigid}] \label{defn:zeta}\index{partial symplectic quasi-state} A \emph{partial
symplectic quasi-state} is defined to be a function $\zeta: C^0(M)
\to \R$ that satisfies the following properties:
\begin{enumerate}
\item {(Lipschitz continuity)} $|\zeta(F_1) - \zeta(F_2)| \leq \|F_1-F_2\|_{C^0}$.
\item  {(Semi-homogeneity)} $\zeta(\lambda F) = \lambda \zeta(F)$ for any
$F \in C^0(M)$ and $\lambda \in \R_{> 0}$.
\item {(Monotonicity)} $\zeta(F_1) \leq \zeta(F_2)$ for $F_1 \leq F_2$.
\item {(Additivity over constants and Normalization)}
$\zeta (F + \alpha) = \zeta(F) + \alpha$ for any continuous
function $F$ and any real number $\alpha$. In particular $\zeta(1) = 1$.
\item {(Partial additivity)} If two $F_1, \, F_2 \in C^\infty(M)$ satisfy
$\{F_1,F_2\} = 0$ and $\supp F_2$ is
displaceable, then $\zeta(F_1 + F_2) = \zeta(F_1)$.
\item  {(Symplectic invariance) }
$\zeta(F) = \zeta(F \circ \psi)$ for any
$\psi \in {\rm Symp}_0 (M,\omega)$.
\item {(Vanishing)} $\zeta(F) = 0$, provided $\supp F$ is displaceable.
\item{(Triangle inequality)} If $\{F_1, F_2 \}= 0$, $\zeta(F_1 + F_2) \geq \zeta(F_1) + \zeta(F_2)$.
\end{enumerate}
\end{defn}

The triangle inequality property is required in the definition in \cite{EP:rigid}, though
it is not in \cite{EP:states}.
The triangle inequality (8) is different from the one in \cite{EP:rigid} and
are adapted to our convention.  Namely, for a partial symplectic quasi-state $\zeta^{EP}$ in the
sense of Entov-Polterovich, $\zeta(H)=-\zeta^{EP}(-H)$ gives a partial symplectic quasi-state
in the sense of Definition  \ref{defn:zeta}.
We would like to point out that the above vanishing property (7) is actually
an immediate consequence of the axiom, partial additivity (5).
\par
The upshot of Entov-Polterovich's discovery is that the spectral invariant
function $H\mapsto \rho(H;1)$
naturally gives rise to an example of partial symplectic quasi-states,
which we denote by $\zeta_1$. In fact,
this \emph{spectral partial quasi-states}\index{partial symplectic quasi-state!spectral partial quasi-state}
is the only known example of
such partial symplectic quasi-states so far. We call any such partial
symplectic quasi-states constructed out of spectral invariants and
its bulk-deformed ones as a whole \emph{spectral partial quasi-states}.
The main result of the next section is to generalize Entov-Polterovich's construction
of spectral partial (symplectic) quasi-states by involving the spectral invariants with bulk.
\par
Recall that the Lie algebra of $\widetilde{\rm Ham}(M,\omega)$ or ${\rm Ham}(M,\omega)$
can be identified with $C^\infty(M)/\R \cong C^\infty(M)_0$, the set of
normalized autonomous Hamiltonian functions.
The functional $\zeta_1^\infty = \zeta_1|_{C^\infty(M)}$ is defined on
the central extension $C^\infty(M)$ of this Lie algebra.

In fact, $\zeta_1$ can be regarded as a `linearization' of another nonlinear
functional $\mu: \widetilde{\rm Ham}(M,\omega) \to \R$
which has the properties established in \cite{EP:states}
section 7. This becomes a genuine quasi-morphism
under a suitable algebraic condition such as semisimplicity of the quantum
cohomology ring of the underlying symplectic manifold $(M,\omega)$.
\par
We recall that the Hofer norm
$\Vert \widetilde\phi\Vert$ for $\widetilde\phi \in  \widetilde{\rm Ham}(M,\omega)$
is defined by
\begin{equation}\label{propdefmu}
\Vert \widetilde\phi\Vert
= \inf \left\{ \|H\| \mid [\phi_H] = \widetilde\phi
\right\}.
\end{equation}
Following \cite{EP:states},
we define another norm called the \emph{fragmentation norm}.
\index{fragmentation norm}\index{$\Vert \widetilde\phi\Vert_U$}
\begin{defn} Let $U$ be a given open subset of $M$. For any $\widetilde \phi \in \widetilde{\rm Ham}(M,\omega)$,
we say $\Vert \widetilde\phi\Vert_U \le m$ if and only if
there exists $\widetilde \psi_i \in \widetilde{\rm Ham}(M,\omega)$,
$\widetilde\phi_i \in \widetilde{\rm Ham}_U(M,\omega)$
for $i=1,\dots,m$
such that
$$
\widetilde\phi =
\prod_{i=1}^m (\widetilde \psi_i\widetilde\phi_i\widetilde \psi_i^{-1}).
$$
\end{defn}
The following fragmentation lemma of Banyaga \cite{banyagapaper} shows that
the norm $\Vert \widetilde\phi\Vert_U$ is always finite.
\begin{lem}[Banyaga]\label{fragmentation}
Let $U_i \subset M$ be open sets for $i=1,\dots,N$,
$U = \bigcup_{i=1}^N U_i$, and $\widetilde\phi \in \widetilde{\text{\rm Ham}}_U(M,\omega)$.
Then there exists $\widetilde\phi_j$ such that
$\widetilde\phi_j \in \text{\rm Ham}_{U_{i(j)}}(M,\omega)$ for some $i(j) \in \{1,\dots,N\}$
and
$$
\widetilde\phi = \widetilde\phi_1\dots \widetilde\phi_N.
$$
\end{lem}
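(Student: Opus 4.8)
\textbf{Proof proposal for Lemma \ref{fragmentation} (Banyaga's fragmentation lemma).}

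The plan is to reduce the statement to the local case by a partition-of-unity argument applied to the generating Hamiltonian path, and then to invoke the standard local fragmentation fact for Hamiltonian diffeomorphisms supported in a single chart. First I would lift $\widetilde\phi$ to a Hamiltonian path: choose $H : [0,1]\times M \to \R$ with $\supp H_t \subset U = \bigcup_i U_i$ for all $t$ and $\widetilde\phi = [\phi_H]_U$. Since $\{U_i\}_{i=1}^N$ covers the (compact, after shrinking to a neighborhood of $\supp H$) support, I would pick a smooth partition of unity $\{\rho_i\}$ subordinate to $\{U_i\}$ with $\sum_i \rho_i \equiv 1$ on $\bigcup_t \supp H_t$, and set $H^{(i)}_t = \rho_i\, H_t$, so that $H_t = \sum_{i=1}^N H^{(i)}_t$ with $\supp H^{(i)}_t \subset U_i$ for all $t$.

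The key step is then to convert the additive decomposition of generating Hamiltonians into a multiplicative decomposition of the time-one maps at the level of $\widetilde{\mathrm{Ham}}$. Using the composition formula for Hamiltonian flows (in the convention of Notations and Conventions (5)--(6)), one has
$$
\phi_H^t = \phi_{K^{(1)}}^t \circ \phi_{K^{(2)}}^t \circ \cdots \circ \phi_{K^{(N)}}^t
$$
for an appropriate choice of Hamiltonians $K^{(j)}$, where $K^{(1)} = H^{(1)}$ and, inductively, $K^{(j)}_t(x) = H^{(j)}_t\big((\phi_{K^{(1)}}^t\circ\cdots\circ\phi_{K^{(j-1)}}^t)(x)\big)$. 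A routine computation with Hamiltonian vector fields shows each $K^{(j)}$ generates the claimed factor; crucially $\supp K^{(j)}_t$ is the image of $\supp H^{(j)}_t \subset U_{i(j)}$ under a Hamiltonian diffeomorphism. This is the only genuinely substantive point: the support of $K^{(j)}_t$ need not sit inside $U_{i(j)}$ anymore. To repair this I would conjugate — equivalently, observe that $\phi_{K^{(j)}}^t = \theta_j \circ \phi_{H^{(j)}}^t \circ \theta_j^{-1}$ where $\theta_j = \phi_{K^{(1)}}^t\circ\cdots\circ\phi_{K^{(j-1)}}^t$ evaluated suitably — but since the lemma as stated only asks for each $\widetilde\phi_j \in \widetilde{\mathrm{Ham}}_{U_{i(j)}}(M,\omega)$ and \emph{not} for conjugating elements (that refinement is the content of the preceding Definition and would need the stronger form $\widetilde\psi_j\widetilde\phi_j\widetilde\psi_j^{-1}$), I would instead decompose $H$ more carefully using a \emph{time-ordered} splitting, or simply cite that the factors $\phi_{K^{(j)}}$, being generated by Hamiltonians whose supports are displaced copies of subsets of $U_{i(j)}$, are themselves conjugates of elements of $\widetilde{\mathrm{Ham}}_{U_{i(j)}}(M,\omega)$, which after renaming absorbs into the statement. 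Passing to path-homotopy classes in $\widetilde{\mathrm{Ham}}_U(M,\omega)$ then yields $\widetilde\phi = \widetilde\phi_1\cdots\widetilde\phi_N$ with $\widetilde\phi_j \in \widetilde{\mathrm{Ham}}_{U_{i(j)}}(M,\omega)$.

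The main obstacle is precisely this support-tracking issue: the naive partition-of-unity decomposition of $H$ gives a product of flows, but the conjugating diffeomorphisms $\theta_j$ move supports around, so one does not get factors literally supported in the original $U_i$ without either (a) iterating the construction with an energy/time subdivision so that each $\theta_j$ is $C^0$-small and hence moves $U_{i(j)}$ only slightly, staying inside a slightly enlarged chart (then shrinking the cover at the outset to compensate), or (b) appealing to the local fragmentation theorem for Hamiltonian isotopies supported near a point, which is where the genuine hands-on work of \cite{banyagapaper} lies. Since this lemma is classical and the reference \cite{banyagapaper} is cited, for the purposes of the present paper I would record the reduction above and refer to \cite{banyagapaper} (see also \cite{banyagabook}) for the local step, noting only that the lift to $\widetilde{\mathrm{Ham}}_U$ rather than $\mathrm{Ham}_U$ is automatic because the partition-of-unity homotopy $H_t \rightsquigarrow \sum_{j\le k} H^{(j)}_t$ is through Hamiltonians supported in $U$.
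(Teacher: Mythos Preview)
Your partition-of-unity approach is a natural first instinct but is genuinely different from the paper's proof, and as you yourself flag, it does not close. The sentence ``which after renaming absorbs into the statement'' is where the argument breaks: having each factor be a \emph{conjugate} of something in $\widetilde{\mathrm{Ham}}_{U_{i(j)}}(M,\omega)$ does not place that factor in $\widetilde{\mathrm{Ham}}_{U_{i(j)}}(M,\omega)$, and the lemma really does require the latter. Your option (a) (subdivide in time so the conjugators $\theta_j$ are $C^0$-small and supports stay inside slightly enlarged charts) is the correct rescue, but it is not carried out, and once you do carry it out you are essentially re-deriving the paper's argument by a longer route.

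The paper's proof avoids the composition-formula support-tracking entirely. It first reduces by induction to the case $N=2$, then writes $\widetilde\phi$ as a product of pieces $\widetilde\psi_H$ with $\Vert H\Vert_{C^1}<\epsilon$ small. For each such piece it chooses nested opens $U_1''\subset U_1'\subset U_1$ with $U_1''\cup U_2 \supset \supp H$, takes a single cut-off $\eta$ equal to $1$ on $U_1'$ and supported in $U_1$, and sets $\phi_1 = \psi_{\eta H}$. The $C^1$-smallness guarantees $\phi_1=\phi$ on $U_1''$ and $\phi_1=\mathrm{id}$ outside $U_1''\cup U_2$, so $\phi_2=\phi_1^{-1}\phi$ is automatically supported in $U_2$. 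No conjugation formula is invoked and no supports ever leave their designated $U_i$; the smallness assumption does all the work up front rather than being introduced as a repair. This is both shorter and cleaner than the partition-of-unity route.
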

\begin{proof}
We give a self contained proof below for the sake of completeness and for readers' convenience.
By an obvious induction argument it suffices to consider the case $N=2$, namely
$U = U_1 \cup U_2$. Let $\phi= \phi_H^1 \in \Ham_U(M,\omega)$.
We may assume without loss of generality that
$\widetilde\phi = \widetilde\psi_H$ and $\Vert H\Vert_{C^1} < \epsilon$,
where $\epsilon$ is a positive number depending only on $U_1,\,U_2$ and $U$ to be
determined later. (This is because any element of
$\widetilde{\text{\rm Ham}}_{U}(M,\omega)$
is a product of finitely many such $\widetilde\phi$'s.)
\par
We take a pair of  open subsets $U_1'' \subset U_1'$ so that
$\overline U_1'' \subset U_1' \subset \overline  U_1' \subset U_1$ and
$U_1'' \cup U_2 \supset \text{\rm supp}\,H$.
\par
Let $\eta : M \to [0,1]$ be a smooth cut-off function
such that $\supp \eta \subset U_1$ and that $\eta = 1$ on $U_1'$.
and  put $\phi_1 = \psi_{\eta\,H}$.
It is easy to see that if $\epsilon$ is sufficiently small then
$\phi_1 = \phi$ on $U''_1$, where
$\phi \in \text{\rm Ham}_U(M,\omega)$ is the projection
of $\widetilde\phi$.
Moreover we may assume that $\phi_1(x) = x$
for $x \notin U_1'' \cup U_2$.
\par
Therefore the support of $\phi_2 = \phi^{-1}_1\phi$ is on
$U_2$ and the support of $\phi_1$ is on $U_1$.
Using the fact that they are $C^1$-close to the identity, it follows
that $\widetilde \phi_1 \widetilde \phi_2 = \widetilde\phi$.
\end{proof}

\begin{defn}\label{defpscquasi-state}
A {\it partial quasi-morphism}\index{partial quasi-morphism}
on $\widetilde{\rm Ham}(M,\omega)$ is a function $\mu: \widetilde{\rm Ham}(M,\omega) \to \R$
that satisfies following conditions: Let $\widetilde\psi, \widetilde\phi \in \widetilde{\rm Ham}(M,\omega)$.
\begin{enumerate}
\item {(Lipschitz continuity)} $|\mu(\widetilde\psi) - \mu(\widetilde\phi)|
\leq C\Vert\widetilde\psi \widetilde\phi^{-1}\Vert$,
where $\Vert\,\,\Vert$ is the Hofer norm and $C$ is a constant independent of
$\widetilde\psi, \widetilde\phi$.
\item  {(Semi-homogeneity)} $\mu(\widetilde\phi^n) = n \mu(\widetilde\phi)$ for any
$n\in \Z_{\ge 0}$.
\item {(Controlled quasi-additivity)}
If $U \subset M$ is displaceable, then
there exists a constant $K$ depending only on $U$ such that
$$
\vert
\mu(\widetilde\psi \widetilde\phi)
- \mu(\widetilde\psi ) - \mu(\widetilde\phi)
\vert
< K
\min (\Vert \widetilde\psi\Vert_U, \Vert \widetilde\phi\Vert_U).
$$
\item  {(Symplectic invariance) }
$\mu(\widetilde\phi ) = \mu(\psi \circ \widetilde\phi \circ \psi^{-1})$ for all
$\widetilde\phi \in \widetilde{\rm Ham}(M,\omega)$ and
$\psi \in {\rm Symp}_0 (M,\omega)$.
\item {(Calabi Property)} If $U \subset M$ is displaceable, then
the restriction of $\mu$ to $\widetilde{\rm Ham}_U(M,\omega)$ coincides with
Calabi homomorphism $\operatorname{Cal}_U$.
\end{enumerate}
\end{defn}
\begin{rem}
\begin{enumerate}
\item The notion given in Definition 13.6 was introduced by Entov and Polterovich \cite{EP:states}
in the course of their construction of spectral partial quasi-states.  They did not name it.
In the earlier versions of the present paper, which was originally posted in the arXiv in 2011
(arXiv:1105.5123), we proposed to use the name `Entov-Polterovich pre-quasi-morphism'
for this function $\mu$. This name has been used by other researchers.
(See \cite{kawasaki}, \cite{wu-xu}, for example.)
Because of the referee's request, who said that the `same' notion was used in \cite{MVZ},
we change the name to the `partial quasi-morphism'.
\item
The name `partial quasi-morphism' is used in the paper \cite{MVZ}
by A. Monzner,  N. Vichery, F. Zapolsky, which was posted in
the arXiv in 2011 (arXiv:1111.0287). The same name was also used in Monzner's thesis \cite{Monzner}.
The  `partial quasi-morphism' used in \cite{MVZ, Monzner} is
a specialization of the notion given in Definition \ref{defpscquasi-state} to the cotangent bundle.
In the definition of `partial quasi-morphism' given
in \cite[Definition1.2]{Monzner} only the two conditions corresponding
to (3),(2) of Definition 13.6 are required but not others put above in Definition 13.6.
On the other hand, the `partial quasi-morphism' constructed
in \cite[Theorem 1.3]{MVZ} on the cotangent bundle, which is also duplicated as Theorem 3.5 in \cite{Monzner},
satisfies the conditions corresponding to those put in Definition 13.6.
More specifically, the conditions (1),(2),(3),(4),(5) of Definition 13.6
correspond to the ones (iii),(i),(v),(ii),(iv) in \cite[Theorem 1.3]{MVZ}, respectively.
On the cotangent bundle, \cite{MVZ,Monzner} shows that the partial quasi-morphism
satisfies the additional properties like (vi),(vii),(ix) in \cite[Theorem 1.3]{MVZ}.
It seems to be an interesting question to study how the properties (vi),(vii),(ix) in \cite[Theorem 1.3]{MVZ},
which are stated in terms of the presence of the zero section in cotangent bundle,
can be generalized to the general symplectic manifold beyond the cotangent bundle.
\end{enumerate}
\end{rem}

\section{Construction by spectral invariant with bulk}
\label{sec:constructspece}\index{spectral invariant!with bulk}

In this section we describe construction of an example of partial quasi-morphism
out of spectral invariants with bulk.
Let $\frak b \in H^*(M) \otimes \Lambda_0$
and  $0 \neq e \in QH^*_{\frak b}(M)$
satisfying
\begin{equation}\label{idenmp}
e \cup^{\frak b} e = e.
\end{equation}
An obvious example of such $e$ is $e = 1 \in  QH^*(M)$.
For given $\widetilde\psi_H \in \widetilde{\rm Ham}(M,\omega)$,
we consider the limit
\begin{equation}\label{homog}
\mu_e^{\frak b}(\widetilde\psi_H) =  \vol_\omega(M)\lim_{n\to +\infty}
\frac{\rho^{\frak b}((\widetilde\psi_H)^n;e)}{n}.
\end{equation}
We will later show that this limit always exists. In the mean time
we recall the relationship
$$
\rho^{\frak b}(\widetilde\psi_H;e) := \rho^{\frak b}(\underline H;e)
= \rho^{\frak b}(H;e) + \frac{1}{\vol_\omega(M)}\Cal(H)
$$
for any Hamiltonian $H$. In particular, the right hand side does not depend on
$H$ as long as $[\tilde \psi_H]$ remains the same element of
$\widetilde{\rm Ham}(M,\omega)$.

In particular, if $H$ is a {\it autonomous} Hamiltonian
and so $\psi_{nH} = (\psi_H)^n$, $\widetilde\psi_H^n = [\psi_{nH}]$, then
$\mu_e^{\frak b}(\widetilde\psi_H)$ can be also written as
\begin{equation}
\mu_e^{\frak b}(\widetilde\psi_H) = \vol_\omega(M)\lim_{n\to +\infty}
\frac{\rho^{\frak b}(nH;e)}{n} + \Cal(H).
\end{equation}
We define a (nonlinear) functional $\zeta_e^{\frak b}: C^0(M) \to \R$ by
\begin{equation}\label{homog2}
\zeta_e^{\frak b}(H) = - \lim_{n\to\infty} \frac{\rho^{\frak b}(nH;e)}{n}
\end{equation}
for $H \in C^\infty(M)$ and then extending to $C^0(M)$ by continuity.
Then for any $\widetilde \phi$ generated by \emph{autonomous} (smooth) Hamiltonian $H$,
whether it is normalized or not, we obtain the relationship
\be\label{eq:muzetae}
\frac{1}{\vol_\omega(M)} \mu_e^{\frak b}(\widetilde\psi_H) = - \zeta_e^{\frak b}(H) + \frac{1}{\vol_\omega(M)}\Cal(H).
\ee
If $H$ is normalized,
$\Cal(H)=0$ hence $\mu_e^{\frak b}(\widetilde\psi_H) =  - \vol_\omega(M) \zeta_e^{\frak b}(H)$.
\par
\begin{thm}\label{thm:state}
\begin{enumerate}
\item The limit $(\ref{homog})$ exists and so does the limit $(\ref{homog2})$.
\item $\mu_e^{\frak b}$ becomes a partial quasi-morphism
on $\widetilde{\rm Ham}(M,\omega)$.
\item $\zeta_e^{\frak b}$ becomes a partial symplectic quasi-state on $M$.
\end{enumerate}
\end{thm}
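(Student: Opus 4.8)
\textbf{Proof proposal for Theorem \ref{thm:state}.}

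The plan is to follow the Entov--Polterovich strategy, checking each clause of the two definitions while using only the axioms of $\rho^{\frak b}$ collected in Theorem \ref{axiomshbulk} together with the idempotency relation \eqref{idenmp}. First I would establish existence of the limits in (1). For $\zeta_e^{\frak b}$, set $a_n = \rho^{\frak b}(nH;e)$ for $H \in C^\infty(M)$; the triangle inequality applied to $H\#H' $ with $a=b=e$ and the relation $e\cup^{\frak b}e=e$ gives $\rho^{\frak b}((m+n)H;e)\le \rho^{\frak b}(mH;e)+\rho^{\frak b}(nH;e)$ after using homotopy invariance to identify $\widetilde\psi_{(m+n)H}$ with the concatenation (for autonomous $H$ one has $\phi_{mH}\#\phi_{nH}\sim\phi_{(m+n)H}$, which is exactly the computation preceding \eqref{eq:muzetaCal}). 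Hence $(a_n)$ is subadditive and $\lim a_n/n = \inf a_n/n$ exists in $[-\infty,\infty)$; the $C^0$-continuity axiom (7) bounds $|a_n/n|$ by $\max(E^+(H),E^-(H))$, so the limit is finite. The same argument handles \eqref{homog} since $\rho^{\frak b}(\widetilde\psi^n;e)$ is subadditive in $n$ by the triangle inequality; Lipschitz continuity of $\mu_e^{\frak b}$ in the Hofer norm then follows from axiom (6) after dividing by $n$ and passing to the limit. The relation \eqref{eq:muzetae} connecting $\mu_e^{\frak b}$ and $\zeta_e^{\frak b}$ is then immediate from \eqref{eq:rhotildepsi}.

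Next I would verify clause (3), that $\zeta_e^{\frak b}$ is a partial symplectic quasi-state. Lipschitz continuity and semi-homogeneity for $\lambda>0$ are direct from the $C^0$-continuity and projective-invariance axioms applied to $nH$ and then rescaling; monotonicity follows because $\rho^{\frak b}(H;a)$ is monotone in $H$ (an immediate consequence of axiom (7): if $H\le H'$ then $E^+(H-H')=0$). The normalization $\zeta_e^{\frak b}(1)=1$ comes from the normalization axiom (3): $\rho^{\frak b}(nH+c;a)=\rho^{\frak b}(nH;a)-nc$ with $H=0$, $c=1$ gives $\rho^{\frak b}(n\cdot 1;e)=\rho^{\frak b}(\underline 0;e)-n=\frak v_q(e)-n$, so $\zeta_e^{\frak b}(1)=1$. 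Symplectic invariance is axiom (4). The heart of clause (3) is partial additivity (5): if $\{F_1,F_2\}=0$ and $\supp F_2$ is displaceable, then $\zeta_e^{\frak b}(F_1+F_2)=\zeta_e^{\frak b}(F_1)$. This is the standard displacement argument: since $F_1,F_2$ Poisson-commute their flows commute, $\phi^t_{F_1+F_2}=\phi^t_{F_1}\phi^t_{F_2}$, and one estimates $\rho^{\frak b}(n(F_1+F_2);e)$ against $\rho^{\frak b}(nF_1;e)$ using the triangle inequality together with a displacement bound $|\rho^{\frak b}(\widetilde\psi\widetilde\phi;e)-\rho^{\frak b}(\widetilde\psi;e)|\le C(U)$ for $\widetilde\phi\in\widetilde{\rm Ham}_U(M,\omega)$ with $U$ displaceable; this displacement estimate in turn follows from axioms (5)--(7) of Theorem \ref{axiomshbulk} applied to a Hamiltonian displacing $\overline U$, exactly as in \cite{EP:states} (and the argument surrounding \eqref{eq:quasimorphism}). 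Dividing by $n$ and letting $n\to\infty$ kills the bounded error. Vanishing (7) is then the special case $F_1=0$. The triangle inequality (8) for $\zeta_e^{\frak b}$ when $\{F_1,F_2\}=0$ follows from the additive triangle inequality combined with the multiplicative structure, or more directly by a sub/superadditivity argument on $\rho^{\frak b}(n(F_1+F_2);e)$ as in \cite{EP:rigid}.

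Finally, for clause (2) I would check the five axioms of an Entov--Polterovich pre-quasimorphism for $\mu_e^{\frak b}$. Lipschitz continuity and semi-homogeneity $\mu_e^{\frak b}(\widetilde\phi^n)=n\mu_e^{\frak b}(\widetilde\phi)$ are built into the definition via the limit. Symplectic invariance is axiom (4) of Theorem \ref{axiomshbulk}. The Calabi property---that $\mu_e^{\frak b}$ restricted to $\widetilde{\rm Ham}_U(M,\omega)$ for displaceable $U$ equals $\operatorname{Cal}_U$---follows from the displacement estimate above: for $\widetilde\psi_H$ with $\supp H\subset U$ displaceable, $\rho^{\frak b}(\underline H^{\,n};e)$ stays within a bounded distance of $\rho^{\frak b}(\underline 0;e)=\frak v_q(e)$, so $\vol_\omega(M)\lim_n \rho^{\frak b}((\widetilde\psi_H)^n;e)/n = 0$ when $H$ is normalized, and by \eqref{eq:muzetaCal} the unnormalized version picks up exactly $\operatorname{Cal}_U(\widetilde\psi_H)$. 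Controlled quasi-additivity is the main obstacle: one must show $|\mu_e^{\frak b}(\widetilde\psi\widetilde\phi)-\mu_e^{\frak b}(\widetilde\psi)-\mu_e^{\frak b}(\widetilde\phi)|\le K\min(\|\widetilde\psi\|_U,\|\widetilde\phi\|_U)$ for displaceable $U$. Using Banyaga's fragmentation lemma (Lemma \ref{fragmentation}) one reduces to the case where, say, $\widetilde\phi$ is a single conjugate $\widetilde\eta\,\widetilde\phi_0\,\widetilde\eta^{-1}$ with $\widetilde\phi_0\in\widetilde{\rm Ham}_U(M,\omega)$; then symplectic invariance together with the triangle inequality and the displacement estimate (applied $n$ times to $(\widetilde\psi\widetilde\phi)^n$ versus $\widetilde\psi^n$, with the error controlled by the displacement energy of $U$) yields the bound, and dividing by $n$ and passing to the limit gives controlled quasi-additivity with $K$ proportional to that displacement energy---this is the argument of \cite{EP:states} Section 7, which goes through verbatim once the displacement estimate for $\rho^{\frak b}$ is in hand. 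Thus the only genuinely new input beyond the bulk-deformed axioms is the bulk-deformed displacement estimate, and once that is established the rest is a faithful transcription of Entov--Polterovich, which is why the statement is described in the text as ``rather straightforward.''
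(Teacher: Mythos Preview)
Your proposal is correct and follows the same Entov--Polterovich template as the paper, but there are two points worth noting.

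First, your argument for existence of the limits is actually cleaner than the paper's. You observe directly that $n\mapsto \rho^{\frak b}(\widetilde\psi^n;e)$ is subadditive (from the triangle inequality and $e\cup^{\frak b}e=e$) and bounded below by a linear function (from $C^0$-Hamiltonian continuity and $\|\widetilde\psi^n\|\le n\|\widetilde\psi\|$), so Fekete's lemma applies immediately. The paper instead first builds the full displacement machinery (Proposition \ref{calabiprop}, Lemmas \ref{1310}--\ref{1311}, Corollary \ref{cor14}) and only then extracts a subadditive nonnegative sequence; the advantage of the paper's order is that the displacement estimates are needed anyway for controlled quasi-additivity and the Calabi property, so no work is wasted.

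Second, your description of the displacement estimate is slightly imprecise: it does \emph{not} follow from axioms (5)--(7) of Theorem \ref{axiomshbulk} alone. The key input is Ostrover's trick (Proposition \ref{calabiprop} in the paper), which shows that if $\phi$ displaces $U$ and $\widetilde\psi\in\widetilde{\rm Ham}_U(M,\omega)$ then $\rho^{\frak b}(\widetilde\phi\widetilde\psi;e)=\rho^{\frak b}(\widetilde\phi;e)+\vol_\omega(M)^{-1}\operatorname{Cal}_U(\widetilde\psi)$ \emph{exactly}. This uses the spectrality axiom (1) and homotopy invariance via the measure-zero spectrum argument (Corollary \ref{measure0b}), not just the triangle and continuity inequalities. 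Once you have Proposition \ref{calabiprop}, the rest of your outline for partial additivity, the Calabi property, and controlled quasi-additivity via fragmentation matches the paper's Lemmas \ref{1310}--\ref{finallemmasquasistate} essentially verbatim.
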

\begin{rem}
\begin{enumerate}
\item In case $\frak b = 0$, Theorem \ref{thm:state} is proved by
Entov-Polterovich \cite{EP:states}.
\item Actually in \cite{EP:states} several additional assumptions are imposed on
$(M,\omega)$. Those assumptions are now removed by Usher \cite{usher:specnumber,usher:duality}.
\item See also \cite{usher:talk} for some works related to the themes of the present paper.
\end{enumerate}
\end{rem}

We mostly follow the arguments presented in  \cite[pp.86-88]{EP:states} for the proof.
We give the proof of Theorem \ref{thm:state} in the following three subsections.

\subsection{Existence of the limit}

We begin with the following proposition.

\begin{prop}\label{calabiprop}
Let $U\subset M$ be an open set and $\phi :  M \to M$ a Hamiltonian diffeomorphism
such that $\phi(U) \cap \overline U = \emptyset$,
and $\widetilde{\phi} \in \widetilde{\text{\rm Ham}}(M)$ its lift.
Let $\widetilde \psi \in \widetilde{\text{\rm Ham}}_U(M,\omega)$, $\frak b \in
H^{{\rm even}}(M;\Lambda_0^{\downarrow})$ and $0 \neq a \in QH^*_{\frak b}(M)$.
Then
\begin{equation}\label{calabipropformula}
\rho^{\frak b}(\widetilde{\phi} \widetilde{\psi};a)
=
\rho^{\frak b}(\widetilde{\phi};a)
+ \frac{\mathrm{Cal}_U(\widetilde \psi)}{\vol_\omega(M)}.
\end{equation}
\end{prop}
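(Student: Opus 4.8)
\textbf{Proof proposal for Proposition \ref{calabiprop}.}

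The plan is to exploit two of the axioms for $\rho^{\frak b}$ from Theorem \ref{axiomshbulk}, namely the triangle inequality (5) with $b = e$ (the unit, so that $e \cup^{\frak b} e = e$; in our situation we use $a$ and the unit) together with the $C^0$-Hamiltonian continuity (6), in combination with the fragmentation and displacement structure of $\widetilde\psi \in \widetilde{\rm Ham}_U(M,\omega)$. The key point is that, since $\phi$ displaces $U$, the conjugate $\widetilde\phi\,\widetilde\psi\,\widetilde\phi^{-1}$ and $\widetilde\psi$ commute with each other in a strong sense (their supports are disjoint), which lets one iterate. First I would reduce, via the relation $\rho^{\frak b}(\widetilde\psi;a) = \rho^{\frak b}(\underline H;a) = \rho^{\frak b}(H;a) + \Cal(H)/\vol_\omega(M)$ and the homomorphism property of $\Cal_U$ (Definition \ref{calabihomo}), to the case where $\widetilde\psi = \widetilde\psi_K$ for a \emph{normalized} Hamiltonian $K$ supported in $U$ (after conjugating so the support sits inside $U$); then $\Cal_U(\widetilde\psi) = \Cal(K)$ up to the volume normalization, and the claim becomes $\rho^{\frak b}(\widetilde\phi\,\widetilde\psi;a) = \rho^{\frak b}(\widetilde\phi;a)$ when $K$ is normalized and $\supp K \subset U$ with $\phi(U)\cap\overline U = \emptyset$.

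The heart of the argument is the following iteration trick, standard since Entov--Polterovich. For each $n$, write
$$
(\widetilde\phi\,\widetilde\psi)^n = \widetilde\phi^n \cdot \prod_{j=1}^{n}\big(\widetilde\phi^{-(n-j)}\,\widetilde\psi\,\widetilde\phi^{n-j}\big),
$$
and observe that each factor $\widetilde\phi^{-k}\,\widetilde\psi\,\widetilde\phi^{k}$ is supported in $\phi^{-k}(U)$. Since $\phi$ displaces $U$, the sets $U, \phi^{-1}(U),\dots$ are not all disjoint, so one cannot directly conclude the product is small in Hofer norm; instead one groups the terms into two families according to parity (or uses that $\widetilde\psi$ and $\widetilde\phi\,\widetilde\psi\,\widetilde\phi^{-1}$ have disjoint support and hence generate a product whose Hofer norm is controlled by $2\Vert\widetilde\psi\Vert$). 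The cleanest route: set $\widetilde\psi' = \widetilde\phi\,\widetilde\psi\,\widetilde\phi^{-1}$, note $\widetilde\psi$ and $\widetilde\psi'$ commute and $\widetilde\phi\,\widetilde\psi = \widetilde\psi'\,\widetilde\phi$, so that $(\widetilde\phi\,\widetilde\psi)^n = (\widetilde\psi')^n\,\widetilde\phi^n$ — wait, this needs $\widetilde\phi$ to commute with $\widetilde\psi'$, which it does not in general. So instead I would use the genuine Entov--Polterovich estimate: by the triangle inequality (5) applied to $a \cup^{\frak b} e = a$,
$$
\rho^{\frak b}\big((\widetilde\phi\,\widetilde\psi)^n;a\big) \le \rho^{\frak b}\big(\widetilde\phi^n;a\big) + \rho^{\frak b}\Big(\prod_{j=1}^n \widetilde\phi^{-(n-j)}\widetilde\psi\,\widetilde\phi^{n-j}; e\Big),
$$
and then bound the second term using $C^0$-continuity by a quantity of the form $C\cdot\Vert\widetilde\psi\Vert_U$ (using that the product lies in $\widetilde{\rm Ham}_{\phi(U)\cup U}(M,\omega)$ with controlled fragmentation norm, and that $e = 1$-type idempotents have $\rho^{\frak b}(\widetilde\theta;e)$ bounded on $\widetilde{\rm Ham}_W$ for displaceable $W$ — this is the displacement energy bound). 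Dividing by $n$ and letting $n\to\infty$ gives one inequality in the \emph{homogenized} version; the reverse comes by symmetry (exchanging $\widetilde\phi\,\widetilde\psi$ and $\widetilde\phi$).

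To get the sharp (non-asymptotic) equality \eqref{calabipropformula} rather than its homogenization, I would then invoke the already-proved homogeneity of $\rho^{\frak b}$-based quantities together with the Calabi property: on $\widetilde{\rm Ham}_U(M,\omega)$ with $U$ displaceable, the partial quasimorphism reduces to $\Cal_U$ exactly (this is essentially what Proposition \ref{calabiprop} is establishing, so there is a bootstrapping flavor). The honest proof is: apply the displacement-energy estimate directly, without homogenizing, to $\rho^{\frak b}(\widetilde\phi\,\widetilde\psi;a)$ versus $\rho^{\frak b}(\widetilde\phi;a)$, using that $\widetilde\psi$ can be written as a product of two pieces each supported in a displaceable set, and use the identity $\rho^{\frak b}(\widetilde\phi\,\widetilde\psi;a) - \rho^{\frak b}(\widetilde\phi;a) - \Cal_U(\widetilde\psi)/\vol_\omega(M)$ is a quasimorphism-type defect in $\widetilde\psi$ that is both bounded and homogeneous, hence zero. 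The main obstacle I anticipate is precisely this last step — passing from the asymptotic/bounded statement to the exact equality — which requires carefully combining the homogeneity of $\mu_e^{\frak b}$ (not yet proven at this point in the text, so one must argue directly) with the displacement estimate; concretely, one shows the defect $D(\widetilde\psi) := \vol_\omega(M)(\rho^{\frak b}(\widetilde\phi\,\widetilde\psi;a) - \rho^{\frak b}(\widetilde\phi;a)) - \Cal_U(\widetilde\psi)$ satisfies $D(\widetilde\psi^k) = kD(\widetilde\psi) + O(1)$ via the iteration above and $|D| \le C\Vert\widetilde\psi\Vert_U$ via $C^0$-continuity, forcing $D \equiv 0$.
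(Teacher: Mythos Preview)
Your approach has a genuine gap: it cannot deliver the \emph{exact} equality \eqref{calabipropformula}, and the attempted bootstrap is circular. The triangle inequality (5) is one-sided, so the best your iteration yields is an inequality of the form
\[
\rho^{\frak b}(\widetilde\phi\,\widetilde\psi;a) \le \rho^{\frak b}(\widetilde\phi;a) + \rho^{\frak b}(\widetilde\psi;e),
\]
and you have no axiom giving a matching lower bound without already knowing something like \eqref{calabipropformula}. Your proposed fix --- showing the defect $D(\widetilde\psi)$ is ``bounded and homogeneous, hence zero'' --- does not go through: to prove $D(\widetilde\psi^k) = kD(\widetilde\psi) + O(1)$ you would need to relate $\rho^{\frak b}(\widetilde\phi\,\widetilde\psi^k;a)$ to $\rho^{\frak b}(\widetilde\phi\,\widetilde\psi;a)$ in a two-sided way, and that is exactly what you do not have. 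Moreover, the displacement-energy bounds and the Calabi property of $\mu_e^{\frak b}$ you want to invoke are \emph{derived from} Proposition \ref{calabiprop} later in the text (see Lemma \ref{1310}, Corollary \ref{cor14}, and the proof of Definition \ref{defpscquasistate} (5)), so using them here is circular.

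The missing idea, due to Ostrover, is of a completely different nature and uses the displacement hypothesis directly on the \emph{action spectrum} rather than via the axioms. Since $\phi(U)\cap\overline U = \emptyset$, every fixed point $p$ of $\phi$ lies outside $U$; hence $\phi_H^t(p) = p$ for all $t$ (as $\supp H_t \subset U$), and therefore $\mathrm{Fix}(\phi\circ\phi_H^t)$ is \emph{independent of $t$}. Running the one-parameter family $H^s(t,x) = sH(st,x)$, one checks that the action $\mathcal A_{H^s\#F}(I_s([\gamma,w]))$ is constant in $s$ (Lemma \ref{sigmainvAvalue}), so after normalizing one obtains
\[
\mathrm{Spec}(\widetilde\phi\circ[\phi_{H^s}];\frak b) = \mathrm{Spec}(\widetilde\phi;\frak b) + \frac{s\,\mathrm{Cal}_U(H)}{\vol_\omega(M)}.
\]
Now the function $s\mapsto \rho^{\frak b}(\widetilde\phi\circ[\phi_{H^s}];a) - s\,\mathrm{Cal}_U(\widetilde\psi_H)/\vol_\omega(M)$ is continuous (by $C^0$-continuity) and takes values in the fixed measure-zero set $\mathrm{Spec}(\widetilde\phi;\frak b)$, hence is constant. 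Evaluating at $s=0$ and $s=1$ gives the exact equality. This is the argument the paper gives, attributed to Ostrover and Entov--Polterovich.
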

\begin{proof}
The main idea of the proof of the proposition is due to
Ostrover \cite{ostrober}. It was used by Entov-Polterovich for the proof of
\cite[Lemma 7.2]{EP:states}, which we follow here.
\par
Let $F : [0,1]\times M \to \R$ be a normalized Hamiltonian
such that $[\phi_F] = \widetilde{\phi}$ and
let $H : [0,1]\times M \to \R$ be a Hamiltonian such that $\supp H_t$ is compact and
contained in $U$ for any $t$ and
that $\widetilde \psi =[\phi_H]$.
\par
By the assumption on $\phi$ and $\widetilde \psi$, we find that the fixed point set
$\mathrm{Fix}( \phi \circ \phi^t_H  )$ is independent of $t$.
We note
$\phi_{H^s * F}^1 =  {\phi} \circ {\psi}_{H^s}$,
where $H^s$ is the Hamiltonian generating the rescaled flow $t \mapsto \phi_H^{st}$ defined by
$$
H^s(t,x) = sH(st,x)
$$
and $*$ is the concatenation defined as in \eqref{eq:defconcat}.
Using the condition $\phi(U) \cap \overline U = \empty$, we find
$$\text{\rm Crit}({\mathcal A}_{H^0 * F})=\text{\rm
Crit}({\mathcal A}_{H^s * F}),
$$
which is the set consisting of $[\gamma, w]$ such that $\gamma(t)$
is the constant path at a fixed point of $\phi$
for $0 \leq t \leq 1/2$
and
$\gamma(2t-1)=\phi_H^{2t-1}(\gamma(1/2))$.
Note also that $\gamma(t)$ stays outside of ${\rm
supp}(H_t)$ for $0 \leq t \leq 1/2$.
This implies the following:

\begin{lem}\label{sigmainvAvalue}
For any $[\gamma,w] \in \text{\rm Crit}(\mathcal A_{H^{0} * F})$,
the number $\mathcal A_{H^s * F}([\gamma,w])$
is independent of $s$.
\end{lem}

We now consider the normalization of $H^s$
$$
\underline H^s(t,x)
= H^s(t,x) - \frac{s}{\vol_\omega(M)} \int_M H_{st}\omega^n.
$$
Then Lemma \ref{sigmainvAvalue} implies
$$
\aligned
\mathcal A_{\underline H^s * F}([\gamma,w])
&= \mathcal A_{H^s * F}([\gamma,w] ) +
\frac{s}{\vol_\omega(M)} \int_0^1 \int_M H_{st}\omega^n\,dt\\
&= \mathcal A_{F}([\gamma,w] ) + \frac{s}{\vol_\omega(M)} \int_0^1\int_M H_{st}\omega^n\,dt\\
&= \mathcal A_{F}([\gamma,w] ) + \frac{1}{\vol_\omega(M)} \int_0^s \int_M H_{r}\omega^n\,dr.
\endaligned
$$
Therefore we have
$$
\mathrm{Spec}(\widetilde \phi \circ [\phi_{H^s}];\mathfrak b)
= \mathrm{Spec}(\widetilde \phi;\mathfrak b)
+ \mathcal A_{F}([\gamma,w] ) + \frac{1}{\vol_\omega(M)} \int_0^s \int_M H_{r}\omega^n\,dr
$$
and so the function
$$
s \mapsto \rho^{\frak b}(\widetilde \phi \circ [\phi_{H^s}];a)
- \mathcal A_{F}([\gamma,w] ) + \frac{1}{\vol_\omega(M)} \int_0^s \int_M H_{r}\omega^n\,dr
$$
is continuous and takes values in the set
$\mathrm{Spec}(\widetilde \phi;\mathfrak b)$, which is a set of measure $0$
(see Corollary \ref{measure0b}.)
Therefore it must be constant. This finishes the proof of Proposition \ref{calabiprop}.
\end{proof}
Let $e$ and $\frak b$ be as in (\ref{idenmp}).
\begin{defn} \index{spectral displacement energy} Let $A$ be any displaceable closed subset of $M$. We define
the \emph{$\rho^{\frak b}_e$-spectral displacement energy} $\frak e(A;e;\frak b)$ by
\begin{equation}\label{eq:erhoA}
\frak e(A;e;\frak b) = \inf\{\rho^{\frak b}(\widetilde \phi;e)
+ \rho^{\frak b}(\widetilde \phi^{-1};e) \mid
\widetilde \phi
\in \widetilde{\mathrm{Ham}}(M,\omega),\,\,\widetilde \phi(A) \cap \overline A = \emptyset\}.
\end{equation}
\end{defn}
\begin{lem}\label{1310}
Let $U\subset M$  be an open set
which is Hamiltonian displaceable and
$\widetilde\psi \in \widetilde{\mathrm{Ham}}_U(M,\omega)$.
Then
\begin{equation}
\frak v_q(e) \le \rho^{\frak b}(\widetilde \psi;e) + \rho^{\frak b}(\widetilde \psi^{-1};e)
\le 2 \frak e(\overline U;e;\frak b). \label{14.6}
\end{equation}
\end{lem}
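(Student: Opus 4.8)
The plan is to deduce the lemma essentially formally from the axioms collected in Theorem~\ref{axiomshbulk} and from Proposition~\ref{calabiprop}, which already contains all the geometric input. For the left inequality of \eqref{14.6} I would first apply the triangle inequality (Theorem~\ref{axiomshbulk}(5)) with $a=b=e$ to the product $\underline 0=\widetilde\psi\circ\widetilde\psi^{-1}$, then use the idempotency $e\cup^{\frak b}e=e$ from \eqref{idenmp} and the normalization axiom (Theorem~\ref{axiomshbulk}(3)) to identify the left-hand side, obtaining
$$\frak v_q(e)=\rho^{\frak b}(\underline 0;e)=\rho^{\frak b}(\widetilde\psi\circ\widetilde\psi^{-1};e\cup^{\frak b}e)\le\rho^{\frak b}(\widetilde\psi;e)+\rho^{\frak b}(\widetilde\psi^{-1};e).$$
Note this half uses nothing about $U$ and holds for any $\widetilde\psi$.

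For the right inequality, let $\widetilde\phi\in\widetilde{\rm Ham}(M,\omega)$ be any element with $\widetilde\phi(\overline U)\cap\overline U=\emptyset$; since $U$, hence $\overline U$, is Hamiltonian displaceable, such $\widetilde\phi$ exist (and if none did the right-hand side would be $+\infty$, leaving nothing to prove). Writing $\widetilde\psi=\widetilde\phi^{-1}\circ(\widetilde\phi\,\widetilde\psi)$ and applying the triangle inequality with $a=b=e$ gives $\rho^{\frak b}(\widetilde\psi;e)\le\rho^{\frak b}(\widetilde\phi^{-1};e)+\rho^{\frak b}(\widetilde\phi\,\widetilde\psi;e)$. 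Because $\widetilde\psi\in\widetilde{\rm Ham}_U(M,\omega)$ and $\widetilde\phi$ displaces $\overline U$, Proposition~\ref{calabiprop} yields $\rho^{\frak b}(\widetilde\phi\,\widetilde\psi;e)=\rho^{\frak b}(\widetilde\phi;e)+\operatorname{Cal}_U(\widetilde\psi)/\vol_\omega(M)$, so
$$\rho^{\frak b}(\widetilde\psi;e)\le\rho^{\frak b}(\widetilde\phi;e)+\rho^{\frak b}(\widetilde\phi^{-1};e)+\frac{\operatorname{Cal}_U(\widetilde\psi)}{\vol_\omega(M)}.$$
Applying the same argument to $\widetilde\psi^{-1}$, which again lies in $\widetilde{\rm Ham}_U(M,\omega)$ since ${\rm Ham}_U(M,\omega)$ is a group, and using that $\operatorname{Cal}_U$ is a homomorphism (Definition~\ref{calabihomo}), so that $\operatorname{Cal}_U(\widetilde\psi^{-1})=-\operatorname{Cal}_U(\widetilde\psi)$, the Calabi terms cancel upon adding the two estimates:
$$\rho^{\frak b}(\widetilde\psi;e)+\rho^{\frak b}(\widetilde\psi^{-1};e)\le 2\bigl(\rho^{\frak b}(\widetilde\phi;e)+\rho^{\frak b}(\widetilde\phi^{-1};e)\bigr).$$
Taking the infimum over all $\widetilde\phi$ displacing $\overline U$ and invoking the definition \eqref{eq:erhoA} of $\frak e(\overline U;e;\frak b)$ gives the stated bound by $2\,\frak e(\overline U;e;\frak b)$.

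I do not expect a serious obstacle here: the real content—deforming within $\widetilde{\rm Ham}_U$ while tracking the Calabi invariant (the Ostrover trick) together with the measure-zero property of the action spectrum from Corollary~\ref{measure0b}—is already encapsulated in Proposition~\ref{calabiprop}. The points that need genuine care are bookkeeping: writing $\widetilde\psi=\widetilde\phi^{-1}\circ(\widetilde\phi\,\widetilde\psi)$ in the correct order so the triangle inequality is applied in the right direction; checking that the hypothesis of Proposition~\ref{calabiprop} is met for the \emph{closed} set $\overline U$ rather than merely for $U$; and verifying that $\widetilde\psi^{-1}$ is still supported in $U$ so the second application is legitimate. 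The one conceptually non-obvious step is the cancellation of the Calabi terms, which is precisely why one symmetrizes $\rho^{\frak b}$ over $\widetilde\psi$ and $\widetilde\psi^{-1}$ in the first place; everything else is routine.
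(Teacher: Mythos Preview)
Your proof is correct and follows essentially the same approach as the paper: the left inequality comes from the triangle inequality applied to $\underline 0=\widetilde\psi\circ\widetilde\psi^{-1}$ together with the normalization axiom, and the right inequality comes from bounding $\rho^{\frak b}(\widetilde\psi;e)$ and $\rho^{\frak b}(\widetilde\psi^{-1};e)$ separately via the triangle inequality and Proposition~\ref{calabiprop}, with the Calabi terms cancelling upon addition. The paper's presentation is nearly identical, differing only in that it records the two applications of Proposition~\ref{calabiprop} as a pair of displayed formulas before summing, whereas you state the argument once and then say ``same for $\widetilde\psi^{-1}$''.
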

\begin{proof}
The following proof is the same as that of \cite[Lemma 7.2]{EP:states}.
First, Theorem \ref{axiomshbulk} (3), (5) and (\ref{idenmp}) imply
$$
\frak v_q(e)  = \rho( \underline{0};e;\frak b) \le
\rho^{\frak b}(\widetilde \psi^{-1};e) + \rho^{\frak b}(\widetilde \psi;e)
$$
which proves the first inequality of \eqref{14.6}.

Next, (\ref{calabipropformula}) implies
\begin{equation}\label{calabipropformula2}
\rho(\widetilde{\phi} \widetilde{\psi}^{-1};a;\frak b)
=
\rho^{\frak b}(\widetilde{\phi};a)
- \frac{\mathrm{Cal}_U(\widetilde \psi)}{\vol_\omega(M)}
\end{equation}
for any $\phi$ displacing $U$.

By the triangle inequality, we also have
\begin{equation}\label{formulatouseinlemma}
\aligned
\rho^{\frak b}(\widetilde \psi;e) &\le
 \rho^{\frak b}(\widetilde{\phi} \widetilde{\psi};e) +
\rho^{\frak b}(\widetilde{\phi}^{-1};e) \\
\rho^{\frak b}(\widetilde{\psi}^{-1};e) &\le
 \rho^{\frak b}(\widetilde{\phi} \widetilde{\psi}^{-1};e) +
\rho^{\frak b}(\widetilde{\phi}^{-1};e).
\endaligned
\end{equation}
Combining (\ref{calabipropformula}), (\ref{calabipropformula2})
and (\ref{formulatouseinlemma}) we derive
$$
\aligned
\frak v_q(e) &\le
\rho^{\frak b}(\widetilde \psi^{-1};e) + \rho^{\frak b}(\widetilde \psi;e)\\
&\le
 \rho^{\frak b}(\widetilde{\phi} \widetilde{\psi}^{-1};e) +
\rho^{\frak b}(\widetilde{\phi} \widetilde{\psi};e) +
2\rho^{\frak b}(\widetilde{\phi}^{-1};e)\\
&\le  2\rho^{\frak b}(\widetilde \phi;e)  +
2\rho^{\frak b}(\widetilde{\phi}^{-1};e).
\endaligned
$$
Since this holds for all $\widetilde \phi$ displacing $U$,
the second inequality of \eqref{14.6} follows.
\end{proof}

\begin{lem}\label{1311}
Suppose $U$ is displaceable and $\widetilde\psi
\in \widetilde{\Ham}_U(M,\omega)$.
Then for any $\widetilde\phi
\in \widetilde{\Ham}(M,\omega)$,
$$
\rho^{\frak b}(\widetilde \phi;e)
+ \rho^{\frak b}(\widetilde \psi;e)  - 2 \frak e(\overline U;e;\frak b)
\le
\rho^{\frak b}(\widetilde\phi \widetilde\psi ;e)\le
\rho^{\frak b}(\widetilde \phi;e)
+ \rho^{\frak b}(\widetilde \psi;e) .
$$
\end{lem}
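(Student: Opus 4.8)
The plan is to derive both inequalities purely formally from the triangle inequality for spectral invariants with bulk (Theorem \ref{axiomshbulk}~(5)), the idempotency relation \eqref{idenmp}, and the spectral displacement energy estimate of Lemma \ref{1310}; no new geometric input is required.

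For the right-hand inequality, I would apply Theorem \ref{axiomshbulk}~(5) to the pair $\widetilde\phi,\widetilde\psi$ with $a=b=e$. Since $e\cup^{\frak b}e=e$ by \eqref{idenmp}, this gives at once
\[
\rho^{\frak b}(\widetilde\phi\widetilde\psi;e)\le\rho^{\frak b}(\widetilde\phi;e)+\rho^{\frak b}(\widetilde\psi;e).
\]

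For the left-hand inequality, I would start from the decomposition $\widetilde\phi=(\widetilde\phi\widetilde\psi)\,\widetilde\psi^{-1}$ and apply Theorem \ref{axiomshbulk}~(5) again (with $a=b=e$, using \eqref{idenmp}) to obtain $\rho^{\frak b}(\widetilde\phi;e)\le\rho^{\frak b}(\widetilde\phi\widetilde\psi;e)+\rho^{\frak b}(\widetilde\psi^{-1};e)$, that is,
\[
\rho^{\frak b}(\widetilde\phi\widetilde\psi;e)\ge\rho^{\frak b}(\widetilde\phi;e)-\rho^{\frak b}(\widetilde\psi^{-1};e).
\]
Now, because $U$ is displaceable and $\widetilde\psi\in\widetilde{\Ham}_U(M,\omega)$, the hypotheses of Lemma \ref{1310} are met, and the second inequality in \eqref{14.6} gives $\rho^{\frak b}(\widetilde\psi;e)+\rho^{\frak b}(\widetilde\psi^{-1};e)\le 2\,\frak e(\overline U;e;\frak b)$, hence $-\rho^{\frak b}(\widetilde\psi^{-1};e)\ge\rho^{\frak b}(\widetilde\psi;e)-2\,\frak e(\overline U;e;\frak b)$. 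Substituting this into the displayed lower bound yields the claimed inequality.

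The argument is short and essentially mechanical; the only thing to verify carefully is the applicability of Lemma \ref{1310}, but its hypotheses match ours verbatim. Thus the genuine work behind the lower bound — estimating $\rho^{\frak b}(\widetilde\psi;e)+\rho^{\frak b}(\widetilde\psi^{-1};e)$ by twice the spectral displacement energy — has already been carried out in Lemma \ref{1310}, which in turn rests on Proposition \ref{calabiprop} (the Ostrover-type continuity argument together with the measure-zero property of the action spectrum from Corollary \ref{measure0b}). So there is no real obstacle remaining at this stage.
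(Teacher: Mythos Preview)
Your proof is correct and follows essentially the same approach as the paper: the upper bound is the triangle inequality (Theorem~\ref{axiomshbulk}~(5)) combined with $e\cup^{\frak b}e=e$, and the lower bound is obtained by applying the triangle inequality to $\widetilde\phi=(\widetilde\phi\widetilde\psi)\widetilde\psi^{-1}$ and then invoking Lemma~\ref{1310} to control $\rho^{\frak b}(\widetilde\psi^{-1};e)$ in terms of $\rho^{\frak b}(\widetilde\psi;e)$ and $2\,\frak e(\overline U;e;\frak b)$.
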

\begin{proof}
The second inequality follows from Theorem \ref{axiomshbulk} (5) and (\ref{idenmp}).
The first inequality follows from
$$\aligned
\rho^{\frak b}(\widetilde\phi \widetilde\psi;e)
&\ge \rho^{\frak b}(\widetilde\phi;e) - \rho^{\frak b}(\widetilde \psi^{-1};e) \\
&\ge \rho^{\frak b}(\widetilde\phi;e) + \rho^{\frak b}(\widetilde \psi;e) - 2 \frak e(A;e;\frak b),
\endaligned$$
where the first inequality follows from Theorem \ref{axiomshbulk} (5) and the
second follows from Lemma \ref{1310}.
\end{proof}

\begin{cor}\label{cor14}
Let $\widetilde \psi_1, \dots, \widetilde \psi_m \in \widetilde{\text{\rm Ham}}(M,\omega)$
such that $\Vert \widetilde \psi_i \Vert_U = 1$  for $i=1,\dots,m$. Then
for any $\widetilde \phi \in \widetilde{\text{\rm Ham}}(M,\omega)$,
\begin{equation}\label{1314formula}
\vert
\rho^{\frak b}(\widetilde \psi_1\cdots \widetilde \psi_m\widetilde \phi;e)
- \sum_{i=1}^m \rho^{\frak b}(\widetilde \psi_i;e)
- \rho^{\frak b}(\widetilde \phi;e)
\vert
< 2m \frak e(\overline U;e;\frak b).
\end{equation}
\end{cor}
\begin{proof} By the hypothesis $\Vert \widetilde \psi_i \Vert_U = 1$, we can write
$\widetilde \psi_i = \widetilde \phi_i^{-1}\widetilde \psi'_i\widetilde \phi_i$
with $\widetilde \psi'_i \in \widetilde{\text{\rm Ham}}_U(M,\omega)$,
$\widetilde \phi_i \in \widetilde{\text{\rm Ham}}(M,\omega)$.
\par
The case $m=1$ follows from Lemma \ref{1311} which we apply
to $\psi_1(U)$ in place of $U$. (We note that
$\frak e(\overline U;e;\frak b) = \frak e(\psi_1(\overline U);e;\frak b)$.)
\par
Suppose the corollary is proved for $m-1$. Applying the
induction hypothesis to the case $m=2$, we have
$$
\vert
\rho^{\frak b}(\widetilde \psi_1\cdots \widetilde \psi_m\widetilde \phi;e)
- \rho^{\frak b}(\widetilde \psi_1;e)
- \rho^{\frak b}(\widetilde \psi_2\cdots \widetilde \psi_m\widetilde \phi;e)
\vert
< 2 \frak e(\overline U;e;\frak b)
$$
by Lemma \ref{1311}.
On the other hand, by the induction hypothesis we have
$$
\vert
\rho^{\frak b}(\widetilde \psi_2\cdots \widetilde \psi_m\widetilde \phi;e)
- \sum_{i=2}^m \rho^{\frak b}(\widetilde \psi_i;e)
- \rho^{\frak b}(\widetilde \phi;e)
\vert
< 2(m-1) \frak e(\overline U;e;\frak b).
$$
The inequality (\ref{1314formula}) follows.
\end{proof}

We now prove the convergence of the limit appearing in (\ref{homog}).

Let $\widetilde \psi \in \widetilde{\text{\rm Ham}}(M,\omega)$.
We fix a displaceable open subset $U \subset M$ and cover
$M = \cup_{i = 1}^m \phi_i(U)$ by $\phi_i \in Ham(M,\omega)$ for a sufficiently large $m$.
Then by applying Lemma \ref{fragmentation} to this decomposition of $M$,
we can factorize $\widetilde \psi = \widetilde \psi_1 \cdots \widetilde \psi_m$
so that $\supp \psi_i \subset \phi_i(U)$ for each $i =1, \cdots, m$.
In particular,
all $\widetilde \psi_i$ satisfy $\Vert \widetilde \psi_i\Vert_U = 1$.

We apply Corollary \ref{cor14} to these $\psi_i$'s by putting $\widetilde \phi = \overline 0$
and obtain
\begin{equation}\label{Eequality}
\vert
\rho^{\frak b}(\widetilde \psi^n;e)
- n\sum_{i=1}^m \rho^{\frak b}(\widetilde \psi_i;e)
\vert
\le 2mn \frak e(\overline U;e;\frak b).
\end{equation}
We put
$$
a_n = \rho^{\frak b}(\widetilde \psi^n;e)  + 2mn \frak e(\overline U;e;\frak b)
+ nm \vert\sup\{\rho^{\frak b}(\widetilde \psi_i;e) \mid i=1,\dots,m\}\vert.
$$
(\ref{Eequality}) implies that $a_n \ge 0$.
Theorem \ref{axiomshbulk} (5) implies $a_n + a_{n'} \ge a_{n+n'}$.
We recall the following:
\begin{lem}[M. Fekete \cite{fekete}]
If $a_n \ge 0$ and $a_n + a_{n'} \ge a_{n+n'}$,
then $\lim_{n\to \infty}a_n/n$ converges.
\end{lem}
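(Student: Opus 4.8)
This is the classical Fekete-type lemma for subadditive sequences, and the plan is to show that the limit in question equals the infimum $L := \inf_{n \ge 1} a_n/n$. First I would observe that since $a_n \ge 0$ for all $n$, we have $L \ge 0$; in particular $L$ is a finite real number, so it is a legitimate candidate for the limit. The inequality $a_n/n \ge L$ for every $n$ is immediate from the definition of infimum, and this already gives $\liminf_{n\to\infty} a_n/n \ge L$. It therefore remains only to prove $\limsup_{n\to\infty} a_n/n \le L$.

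For the upper bound, fix $\varepsilon > 0$ and choose $m \ge 1$ with $a_m/m < L + \varepsilon$. For an arbitrary $n$ I would perform division with remainder, $n = qm + r$ with $0 \le r < m$, and then iterate the superadditivity hypothesis $a_k + a_{k'} \ge a_{k+k'}$ (equivalently, subadditivity $a_{k+k'} \le a_k + a_{k'}$) to obtain
$$
a_n \le q\, a_m + a_r,
$$
where the term $a_r$ is simply dropped when $r = 0$. Dividing by $n$ gives
$$
\frac{a_n}{n} \le \frac{qm}{n}\cdot\frac{a_m}{m} + \frac{a_r}{n}.
$$
As $n \to \infty$ with $m$ fixed, one has $qm/n \to 1$, while $a_r$ remains confined to the finite set $\{a_1, \dots, a_{m-1}\}$, so $a_r/n \to 0$. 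Hence $\limsup_{n\to\infty} a_n/n \le a_m/m < L + \varepsilon$. Since $\varepsilon > 0$ was arbitrary, $\limsup_{n\to\infty} a_n/n \le L$, which combined with the liminf bound yields $\lim_{n\to\infty} a_n/n = L$.

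There is essentially no hard step here: the only mild subtlety worth flagging is the bookkeeping when the remainder $r$ vanishes (so that $a_0$, which is not assumed defined, does not enter), and the fact that one needs $a_n \ge 0$ purely to guarantee that $L > -\infty$ so that the statement "the limit converges" is meaningful. Everything else is the standard telescoping of the subadditivity estimate together with the elementary limit $qm/n \to 1$.
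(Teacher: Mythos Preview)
Your proof is correct and follows essentially the same route as the paper's: both use division with remainder $n = qm + r$, iterate the subadditivity to get $a_n \le q\,a_m + a_r$, divide by $n$, and let $n \to \infty$. The only cosmetic difference is that the paper organizes the argument around $\alpha = \liminf a_n/n$ (showing finiteness via the monotone subsequence $a_{2^n}/2^n$) while you use $L = \inf a_n/n$ directly; the division-with-remainder estimate is identical.
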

\begin{proof}
The following proof is taken from
Problem 98 of \cite[ p 17]{polya}.
Since $a_{2^n}/{2^n}$ is non-increasing,
$\alpha = \liminf_{n\to \infty} a_n/n$ is a finite number.
Let $\epsilon >0$. We take any $n_0$ such that
$\vert a_{n_0}/n_0\vert \le \alpha + \epsilon$.
If $n' = n_0k + r$ with $r = 1,\dots,n_0-1$, then
$a_{n'} = a_{n_0k+r} \le ka_{n_0} + a_r$.
Therefore
$$
\frac{a_{n'}}{n'} \le \frac{a_{n_0}}{n_0} \frac{kn_0}{kn_0+r} + \frac{a_r}{n'}.
$$
Hence if $n'$ is sufficiently large, we have
$\alpha -\epsilon \le a_{n'}/{n'} < \alpha + 2\epsilon$
as required.
\end{proof}
We have thus proved that the limit
$$
\vol_\omega(M)\lim_{n\to +\infty} \frac{\rho^{\frak b}(\widetilde \phi^n;e)}{n}
$$
exists.

\subsection{Partial quasi-morphism property of $\mu_e^{\frak b}$}

In this subsection, we prove Theorem \ref{thm:state} (2).

The limit $\mu_e^{\frak b}(\widetilde \phi)$
satisfies Definition \ref{defpscquasi-state} (2) by construction.
Definition \ref{defpscquasi-state} (1) then follows from Theorem \ref{axiomshbulk} (6).
Definition \ref{defpscquasi-state} (4) follows from Theorem \ref{axiomshbulk} (4).
\par
We next prove the properties required in Definition \ref{defpscquasi-state} (3).
\begin{lem}\label{finallemmasquasi-state}
We have
\begin{equation}
\vert \mu_e^{\frak b}(\widetilde \psi\widetilde \phi)
- \mu_e^{\frak b}(\widetilde \psi)
- \mu_e^{\frak b}(\widetilde \phi)
\vert
\le 2\frak e(\overline U;e;\frak b)\vol_\omega(M)
\min (2\Vert \widetilde \psi\Vert_U - 1, 2\Vert \widetilde \phi\Vert_U - 1).
\end{equation}
\end{lem}
\begin{proof}
Without loss of any generality, we may assume that $\Vert \widetilde \psi\Vert_U
\le \Vert \widetilde \phi\Vert_U$.
The proof will be given by the induction over the fragmentation norm $\Vert \widetilde \psi\Vert_U$.
\par
We first consider the case $\Vert \widetilde \psi\Vert_U=1$.
Since $\Vert \widetilde \phi^j\widetilde \psi\widetilde \phi^{-j}\Vert_U = 1$, Corollary
\ref{cor14}, the identity
$$
(\widetilde \psi\widetilde \phi)^k = \left(\prod_{j=0}^{k-1} \widetilde \phi^j\widetilde \psi\widetilde \phi^{-j}\right)
\widetilde \phi^k
$$
and $\rho^{\frak b}(\widetilde \phi^j\widetilde \psi\widetilde \phi^{-j};e)
= \rho^{\frak b}(\widetilde \psi;e) $ (Theorem \ref{axiomshbulk} (4))
imply
$$
\vert \rho^{\frak b}((\widetilde \psi\widetilde \phi)^k;e)
- k\rho^{\frak b}(\widetilde \psi;e)  - \rho^{\frak b}(\widetilde \phi^k;e)
\vert
\le 2k \frak e(\overline U;e;\frak b).
$$
We use Corollary \ref{cor14} again to derive
$$
\vert\rho^{\frak b}(\widetilde \psi^k;e)
- k\rho^{\frak b}(\widetilde \psi;e)\vert
\le 2 k\frak e(\overline U;e;\frak b).
$$
Therefore
$$
\vert \rho^{\frak b}((\widetilde \psi\widetilde \phi)^k;e)
-\rho^{\frak b}(\widetilde \psi^k;e) - \rho^{\frak b}(\widetilde \phi^k;e)
\vert
\le 4k \frak e(\overline U;e;\frak b).
$$
The case $m=1$ of the lemma follows.
\par
Now suppose that the lemma is proved for all $\widetilde \psi \in \widetilde{\text{\rm Ham}}(M,\omega)$
that satisfy $\Vert \widetilde \psi\Vert_U = m-1$. Let
$\widetilde \psi$ be any element of $\widetilde{\text{\rm Ham}}(M,\omega)$ with $\Vert \widetilde \psi\Vert_U = m$.
We write $\widetilde \psi = \widetilde \psi_1\widetilde \psi_2$
with $\Vert \widetilde \psi_1\Vert_U = m-1$ and $\Vert \widetilde \psi_2\Vert_U =1$.
Then by the induction hypothesis
$$
\vert \mu_e^{\frak b}(\widetilde \psi\widetilde \phi)
- \mu_e^{\frak b}(\widetilde \psi_1)
- \mu_e^{\frak b}(\widetilde \psi_2\widetilde \phi)
\vert
\le 2\frak e(\overline U;e;\frak b)\vol_\omega(M)(2(m-1) - 1).
$$
The case $m=1$ gives
$$
\vert \mu_e^{\frak b}(\widetilde \psi_2\widetilde \phi)
- \mu_e^{\frak b}(\widetilde \psi_2)
- \mu_e^{\frak b}(\widetilde \phi)
\vert
\le  2\frak e(\overline U;e;\frak b)\vol_\omega(M)
$$
and
$$
\vert \mu_e^{\frak b}(\widetilde \psi)
- \mu_e^{\frak b}(\widetilde \psi_1)
- \mu_e^{\frak b}(\widetilde \psi_2)
\vert
\le  2\frak e(\overline U;e;\frak b)\vol_\omega(M).
$$
By combining these three inequalities, we have finished the proof of
Lemma \ref{finallemmasquasi-state}.
\end{proof}

Clearly Lemma \ref{finallemmasquasi-state} implies
\begin{equation}
\vert \mu_e^{\frak b}(\widetilde \psi\widetilde \phi)
- \mu_e^{\frak b}(\widetilde \psi)
- \mu_e^{\frak b}(\widetilde \phi)
\vert
\le 4\frak e(\overline U;e;\frak b)\vol_\omega(M)
\min (\Vert \widetilde \psi\Vert_U , \Vert \widetilde \phi\Vert_U ).
\end{equation}
Thus we have proved the property of Definition \ref{defpscquasi-state} (3).
\begin{rem}
We may take $K=4\frak e(\overline U;e;\frak b)\vol_\omega(M)$ for the constant in
Definition \ref{defpscquasi-state} (3).
\end{rem}
We next prove
Definition \ref{defpscquasi-state} (5).
Let $U \subset M$ be a displaceable open subset and
$\widetilde \psi \in \widetilde{\text{\rm Ham}}_U(M,\omega)$.
Let $\widetilde \phi  \in \widetilde{\text{\rm Ham}}(M,\omega)$
such that $\phi(U) \cap  \overline U = \emptyset$.
By Proposition \ref{calabiprop} applied to $\widetilde \psi^n$ we have
$$
\rho^{\frak b}(\widetilde \phi\widetilde \psi^n;e)
=
\rho^{\frak b}(\widetilde \phi;e) + \frac{n\text{\rm Cal}_U(\widetilde \psi)}{\vol_\omega(M)}.
$$
Using this equality and Lemma \ref{1311}, we obtain
\beastar
\left\vert
\rho^{\frak b}(\widetilde \psi^n;e) - \frac{n\text{\rm Cal}_U(\widetilde \psi)}{\vol_\omega(M)}
\right\vert
& = &\vert \rho^{\frak b}(\widetilde \psi^n;e) + \rho^{\frak b}(\widetilde \phi;e)
-\rho^{\frak b}(\widetilde \phi\widetilde \psi^n;e)\vert \\
& \le & 2 \frak e(\overline U;e;\frak b) < \infty.
\eeastar
Then dividing this inequality by $\frac{n\text{\rm Cal}_U(\widetilde \psi)}{\vol_\omega(M)} $
and letting $n \to \infty$,
we obtain $\mu^{\frak b}(\widetilde \psi) = \Cal_U(\widetilde \psi)$.
The proof of Theorem \ref{thm:state} (2) is complete.

\subsection{Partial symplectic quasi-state property of $\zeta^{\frak b}_e$}

In this subsection, we give the proof of  Theorem \ref{thm:state} (3), i.e.,
the functional $\zeta_e^{\frak b}: C^0(M) \to \R$
is a partial symplectic quasi-state.
For this purpose, we have only to consider autonomous
smooth Hamiltonian $F$'s in the rest of the proof.
Let $F$ be an autonomous Hamiltonian and take its normalization
\begin{equation}\label{defofFprime}
\underline F = F - \frac{1}{\vol_\omega(M)}\int_M F \, \omega^n.
\end{equation}
Then
\begin{equation}\label{nonhomoezure}
\rho^{\frak b}(nF;e) + \int_M nF \, \omega^n = \rho^{\frak b}(n\underline F;e) = \rho^{\frak b}(\widetilde \psi^n;e)
\end{equation}
for $\widetilde \psi = [\phi_F]$. Dividing this equation by $n$, we obtain
$$
\frac{\rho^{\frak b}(nF;e)}{n} + \frac{1}{\vol_\omega(M)}\int_M F \, \omega^n
= \frac{\rho^{\frak b}(\widetilde \psi^n;e)}{n}.
$$
Therefore convergence of (\ref{homog2}) follows from the
convergence of (\ref{homog}).
Thus $\zeta_e^{\frak b}(F)$ is defined for $F \in C^{\infty}(M)$.
\par
Definition \ref{defn:zeta} (1) is a consequence of
Theorem \ref{axiomshbulk} (6).
We can extend $\zeta_e^{\frak b}$ to $C^{0}(M)$
by the $F\in C^{\infty}(M)$ case of Definition \ref{defn:zeta} (1). The property of
Definition \ref{defn:zeta} (1) in the case $F\in C^{0}(M)$  then follows for this extended $\zeta_e^{\frak b}$.
\par
Since $\widetilde \psi_{H/m}^m = \widetilde \psi_{H}$ holds for autonomous Hamiltonian $H$,
we can prove the property of Definition \ref{defn:zeta} (2) in the case
$\lambda \in \Q_{\ge 0}$ by using Definition \ref{defpscquasi-state} (2)
and  (\ref{nonhomoezure}).
Then the case $\lambda \in \R_{\ge 0}$ follows from Definition \ref{defn:zeta} (1).
\par
Definition \ref{defn:zeta} (4) is immediate from (\ref{nonhomoezure}).
\par
The property of Definition \ref{defn:zeta} (6) is a consequence of Theorem \ref{axiomshbulk}  (4).
\par
Let us prove the property of Definition \ref{defn:zeta} (7).
Suppose $U$ is displaceable and the support of time independent Hamiltonian $F$ is in $U$.
We define $U'$ as in (\ref{defofFprime}).
We take $\widetilde \phi \in \widetilde{\text{\rm Ham}}(M,\omega)$
such that $\phi(U) \cap \overline U = \emptyset$.
By Proposition \ref{calabiprop}, we have:
\be\label{eq:nCalU}
\rho^{\frak b}(\widetilde \phi \widetilde \psi_F^n;e)
=
\rho^{\frak b}(\widetilde \phi;e)
+ \frac{n\text{\rm Cal}_U(\widetilde \psi_F)}{\vol_\omega(M)}.
\ee
(Here we use the fact that $\text{\rm Cal}_U$ is a homomorphism.)
\par
By (\ref{1314formula}) we also have
$$
\vert
\rho^{\frak b}(\widetilde \phi \widetilde \psi_F^n;e)
-
\rho^{\frak b}(\widetilde \phi;e)
-
\rho^{\frak b}(\widetilde \psi_F^n;e)
\vert
<
2\frak e(\overline U;e;\frak b).
$$
Substituting \eqref{eq:nCalU} into this inequality, and then dividing by
$n$ and taking the limit, we obtain
$$
\lim_{n\to\infty}
\frac{\rho^{\frak b}(\widetilde \psi_F^n;e)}{n}
=  \frac{\text{\rm Cal}_U(\widetilde \psi_F)}{\vol_\omega(M)}
= \frac{1}{\vol_\omega(M)}
\int_M F\omega^n.
$$
On the other hand, we have
$$
\lim_{n\to\infty}
\frac{\rho^{\frak b}(\widetilde \psi_F^n;e)}{n} =
\lim_{n\to\infty}
\frac{\rho^{\frak b}(n\underline F;e)}{n} =
- \zeta^{\frak b}_e(\underline F)
$$
and hence
$$
\zeta_e^{\frak b}(F) = \zeta^{\frak b}_e(\underline F) +
\frac{1}{\vol_\omega(M)} \int_M F\omega^n= 0.
$$
\par
We next prove the property of Definition \ref{defn:zeta} (3).
Let $F_1 \le F_2$. We put $H= F_1$ and $H' =F_2$ and apply the
argument of the proof of Theorem \ref{Jindepedence} and obtain
a chain map
$$
\mathcal P_{(F^\chi;J^\chi),\#}^{\frak b}
:
(CF(M;F_1;\Lambda^{\downarrow}),\partial_{(F_1,J_1)}^{\frak b})
\to
(CF(M;F_2;\Lambda^{\downarrow}),\partial_{(F_2,J_2)}^{\frak b}).
$$
Using $F_1 < F_2$ and Lemma \ref{connectinghomofilt}
we have
$$
\mathcal P_{(F^\chi;J^\chi),\#}^{\frak b}(F^{\lambda}CF(M;F_1;\Lambda^{\downarrow}))
\subset
F^{\lambda}CF(M;F_2;\Lambda^{\downarrow})).
$$
Let $x \in F^{\lambda}CF(M;F_1;\Lambda^{\downarrow})$
such that
$[x] = [\mathcal P_{((F_1)_\chi,(J_1)_\chi),\#}^{\frak b}(e^\flat)]$ and $\vert\lambda - \rho(F_1;e) \vert < \epsilon$.
Then by Proposition \ref{compaticompositte} we have
$[\mathcal P_{(F^\chi,J^\chi),\#}^{\frak b}(x)]
=  [\mathcal P_{((F_2)_\chi,(J_2)_\chi),\#}^{\frak b}(e^\flat)]$ and
$
\mathcal P_{(F^\chi,J^\chi),\#}^{\frak b}(x)
\in F^{\lambda}CF(M;F_2;\Lambda^{\downarrow})).
$
Therefore
$
\rho^{\frak b}(F_2;e) \le \rho^{\frak b}(F_1;e) + \epsilon.
$
It implies
$
\zeta_e^{\frak b}(F_1) \le \zeta_e^{\frak b}(F_2),
$
as required.
\par
Next we prove the property of Definition \ref{defn:zeta} (5).
By the assumption $\{F_1,F_2\} = 0$ we have
$$
\widetilde \psi_{F_1}\widetilde \phi_{F_2}
=  \widetilde \psi_{F_2}\widetilde \phi_{F_1}
= \widetilde \psi_{F_1+F_2}.
$$
Therefore by Definition \ref{defpscquasi-state} (3) we have
$$
\aligned
&\vert
\rho^{\frak b}((\widetilde \psi_{F_1}\widetilde \psi_{F_2})^n;e)
-
\rho^{\frak b}((\widetilde \psi_{F_1})^n;e)
-
\rho^{\frak b}((\widetilde \psi_{F_2})^n;e)
\vert\\
&=
\vert
\rho^{\frak b}((\widetilde \psi_{F_1})^n(\widetilde \psi_{F_2})^n;e)
-
\rho^{\frak b}((\widetilde \psi_{F_1})^n;e)
-
\rho^{\frak b}((\widetilde \psi_{F_2})^n;e)
\vert
\le K\Vert (\widetilde \psi_{F_2})^n\Vert_U = K.
\endaligned
$$
Here $U$ is a displaceable open set containing the support of $F_2$.
Therefore we have
$$
\mu_e^{\frak b}(\widetilde \psi_{F_1}\widetilde \psi_{F_2})
=\mu_e^{\frak b}(\widetilde \psi_{F_1})
+ \mu_e^{\frak b}(\widetilde \psi_{F_2})
=
\mu_e^{\frak b}(\widetilde \psi_{F_1})
+ \text{\rm Cal}_U(F_2).
$$
(We use Definition \ref{defpscquasi-state} (5) in the second equality.)
$$
\zeta_e^{\frak b}(F_1+F_2)
=
\zeta_e^{\frak b}(F_1)
$$
is now a consequence of (\ref{homog2}).
The triangle inequality for $\zeta_e^{\frak b}$ follows the triangle inequality for
the spectral invariant $\rho^{\frak b}$, since $\zeta_e^{\frak b}(F)= - \rho^{\frak b}(F;e)$.
The proof of Theorem \ref{thm:state} is now complete.
\qed

\section{Poincar\'e duality and spectral invariant}
\label{sec:duality}

\subsection{Statement of the result}
\label{subsec:dualitystatement}

Let $\pi : \Lambda^{\downarrow} \to \C$ be the
projection to $\C \subset \Lambda^\downarrow$. Denote by
$
\langle \cdot , \cdot\rangle : \Omega(M) \otimes \Omega(M) \to \C
$
the Poincar\'e duality pairing
$
\langle h_1,h_2\rangle = \int_M h_1\wedge h_2.
$
We extend the pairing to
$$
\langle \cdot , \cdot\rangle :( \Omega(M) \widehat\otimes\Lambda^{\downarrow}) \otimes (\Omega(M)
\widehat\otimes\Lambda^{\downarrow}) \to \Lambda^{\downarrow}
$$
so that it becomes $\Lambda^{\downarrow}$-bilinear.
We put
$
\Pi(a,b) = \pi(\langle a, b\rangle)
$
which induces a $\C$-bilinear pairing
$$
\Pi : H(M;\Lambda^{\downarrow}) \otimes H(M;\Lambda^{\downarrow}) \to \C.
$$
\par
The main result of this section is:
\begin{thm}\label{dualitymain}
Let $\frak b \in H(M;\Lambda^{\downarrow}_0)$, $a \in QH_{\frak b}^*(M)$,
and $\widetilde\phi \in \widetilde{\mathrm{Ham}}(M,\omega)$. Then we have
\begin{equation}
\rho^{\frak b}(\widetilde\phi; a)
= - \inf_b\{\rho^{\frak b}(\widetilde\phi^{-1};b)
\mid \Pi(a,b) \ne 0\}.
\end{equation}
\end{thm}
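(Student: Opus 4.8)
The plan is to establish the duality $\rho^{\frak b}(\widetilde\phi;a) = -\inf_b\{\rho^{\frak b}(\widetilde\phi^{-1};b) \mid \Pi(a,b)\ne 0\}$ by constructing a nondegenerate duality pairing on the Floer chain level that is compatible with the action filtration, following the strategy of \cite{EP:morphism} and \cite{usher:duality} but carried out in the bulk-deformed setting with Kuranishi structures. First I would set up the time-reversal construction: for a nondegenerate Hamiltonian $H$ with $\widetilde\psi_H = \widetilde\phi$, the Hamiltonian $\widetilde H(t,x) = -H(1-t,x)$ generates (up to homotopy) $\widetilde\phi^{-1}$, and there is a canonical bijection $[\gamma,w] \mapsto [\overline\gamma,\overline w]$ between $\Crit\CA_H$ and $\Crit\CA_{\widetilde H}$ where $\overline\gamma(t) = \gamma(1-t)$ and $\overline w$ is the reflected capping, under which $\CA_{\widetilde H}([\overline\gamma,\overline w]) = -\CA_H([\gamma,w])$. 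This induces a $\C$-bilinear pairing
$$
\langle\!\langle \cdot,\cdot\rangle\!\rangle : CF(M,H;\Lambda^\downarrow) \otimes CF(M,\widetilde H;\Lambda^\downarrow) \to \Lambda^\downarrow
$$
characterized on basis elements by $\langle\!\langle [\gamma,w],[\overline{\gamma'},\overline{w'}]\rangle\!\rangle = \delta_{\gamma\gamma'} q^{-(\CA_H([\gamma,w]) + \CA_{\widetilde H}([\overline{\gamma'},\overline{w'}]))}$ (with suitable sign), which by construction satisfies $\frak v_q$-compatibility: $\frak v_q(\langle\!\langle x,y\rangle\!\rangle) \le \frak v_q(x) + \frak v_q(y)$, with equality achievable. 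The key homological input is that this pairing descends to a nondegenerate pairing $HF_*^{\frak b}(H,J;\Lambda^\downarrow) \otimes HF_*^{\frak b}(\widetilde H,\widetilde J;\Lambda^\downarrow) \to \Lambda^\downarrow$ — this requires checking that $\del^{\frak b}_{(H,J)}$ and $\del^{\frak b}_{(\widetilde H,\widetilde J)}$ are adjoint with respect to $\langle\!\langle\cdot,\cdot\rangle\!\rangle$, which follows from the identification of moduli spaces $\CM_\ell(H,J;[\gamma,w],[\gamma',w'])$ with $\CM_\ell(\widetilde H,\widetilde J;[\overline{\gamma'},\overline{w'}],[\overline\gamma,\overline w])$ via the reflection $(\tau,t)\mapsto(-\tau,1-t)$, together with matching choices of multisections.

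Next I would verify that this Floer-level pairing corresponds, under the Piunikhin isomorphisms with bulk $\CP^{\frak b}_{(H_\chi,J_\chi),\ast}$ and $\CP^{\frak b}_{(\widetilde H_\chi,\widetilde J_\chi),\ast}$, to the Poincaré pairing $\Pi$ on $H^*(M;\Lambda^\downarrow)$. Concretely, one builds a moduli space of "Piunikhin configurations with two negative ends capped by holomorphic planes" (a sphere with two punctures, each carrying the relevant cylindrical-end data for $H_\chi$ and $\widetilde H_\chi$ respectively, plus $\ell$ interior marked points for the bulk insertions $\frak b_+$), whose count interpolates $\langle\!\langle \CP^{\frak b}(a^\flat), \CP^{\frak b}(b^\flat)\rangle\!\rangle$ and $\Pi(a,b)$ up to terms of strictly positive $q$-valuation; the leading-order term is precisely the classical intersection pairing because the constant-area component is the Gromov-Witten moduli space whose genus-zero three-point function recovers $\int_M a\wedge b$ (plus bulk corrections organized by $\cup^{\frak b}$, which do not affect the $\pi$-projection since $\frak b\in H(M;\Lambda_0^\downarrow)$ has non-negative $T$-degree). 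This step is essentially the bulk-deformed version of the PSS-compatibility of Poincaré duality established by Usher; the Kuranishi/virtual-cycle bookkeeping is routine but lengthy, and I would cite the Stokes' theorem and composition formula from \cite{fooo:bulk} Section 12 as in the proof of Theorem \ref{themprodcompati}.

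Finally, the duality identity itself is a formal consequence, following the argument of \cite{usher:duality}. Write $\rho = \rho^{\frak b}(\widetilde\phi;a)$ and $\rho^* = \inf_b\{\rho^{\frak b}(\widetilde\phi^{-1};b) \mid \Pi(a,b)\ne 0\}$. For the inequality $\rho + \rho^* \ge 0$: given $\epsilon>0$, pick a cycle $x$ representing $\CP^{\frak b}(a^\flat)$ with $\frak v_q(x) < \rho+\epsilon$, and any cycle $y$ representing $\CP^{\frak b}(b^\flat)$ with $\Pi(a,b)\ne 0$; then $\langle\!\langle [x],[y]\rangle\!\rangle = \Pi(a,b) \ne 0$ (by the previous step) so $0 = \frak v_q(\Pi(a,b)) \le \frak v_q(x) + \frak v_q(y) < \rho+\epsilon + \frak v_q(y)$, and optimizing $y$ gives $\rho + \rho^* \ge -\epsilon$. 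For the reverse $\rho + \rho^* \le 0$: here one uses that the pairing on homology is nondegenerate, so the "symbol" of an optimal representative $x$ of $\CP^{\frak b}(a^\flat)$ pairs nontrivially with the symbol of some homology class $[y]$, realized by a standard-basis construction exactly as in the proof of Proposition \ref{spectrarityalg} (Usher's spectrality lemma); choosing such $y$ with $\frak v_q(y)$ minimal and noting $\Pi(a,b)\ne 0$ forces $\rho^{\frak b}(\widetilde\phi^{-1};b) \le -\rho + \epsilon$, giving $\rho^* \le -\rho$. I expect the main obstacle to be the second step: establishing that the chain-level duality pairing intertwines with $\Pi$ under the bulk Piunikhin maps requires carefully constructing the interpolating moduli space (a "double Piunikhin" configuration) and controlling its boundary strata, including ruling out the codimension-one sphere-bubbling at the two $\tau\to-\infty$ ends by the $S^1$-symmetry argument used in Proposition \ref{piuBULKkura}, and checking orientations and sign conventions are consistent with those fixed in \cite{fooo:book}.
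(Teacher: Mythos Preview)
Your proposal is correct and follows essentially the same three-step architecture as the paper: construct the chain-level duality pairing between $CF(M,H)$ and $CF(M,\widetilde H)$ via time reversal and show the bulk-deformed boundary operators are adjoint (the paper's Subsection~15.3), prove the pairing intertwines with the Poincar\'e pairing under the bulk Piunikhin maps by a parametrized ``double Piunikhin'' moduli space and chain homotopy (Subsection~15.4, Theorem~\ref{dualityandPthm}), and then deduce the spectral duality by the formal filtered-algebra argument using standard bases (Subsections~15.2 and~15.5). One small slip: in your Step~3 you write $\langle\!\langle [x],[y]\rangle\!\rangle = \Pi(a,b)$, but what actually holds is $\langle\!\langle [x],[y]\rangle\!\rangle = \langle a,b\rangle \in \Lambda^\downarrow$, and then $\Pi(a,b)\ne 0$ gives $\frak v_q(\langle a,b\rangle)\ge 0$; this does not affect the argument.
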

\begin{rem}
For the case $\frak b = 0$, this theorem is due to Entov-Polterovich
under the monotonicity assumption. (See \cite[Lemma 2.2]{EP:morphism}.)
The assumptions on $M$ which \cite{EP:morphism} imposed are removed and
Theorem \ref{dualitymain} itself is proved by Usher in \cite{usher:talk}.
\end{rem}

\subsection{Algebraic preliminary}
\label{subsec:dualityalg}

In this section we prove some algebraic lemmas used in the proof of
Theorem  \ref{dualitymain}.
A similar discussion was given by Usher in \cite{usher:duality}.
\par
We work in the situation of Subsections \ref{subsec:Usher}.
We put $G=\R$ in this subsection.
Namely $C(G) = C(G') = C$.
Note in this case we may take the basis $e_i$ such that
$\lambda_q(e_i) =0$.
Let $\partial : C \to C$ be a boundary operator.
We choose the standard basis $e'_i,e''_i, e'''_i$ as in Subsection \ref{subsec:Usher}.
Let $D$ be another finite dimensional $\Lambda^{\downarrow}$ vector space.
We assume that there exists a $\Lambda^{\downarrow}$ bilinear
pairing
$$
\langle \cdot,\cdot\rangle  :
C \times D \to \Lambda^{\downarrow}
$$
that is perfect. (Namely it induces an isomorphism
$C \to D^*$ to the dual space $D^*$ of $D$.)
Let $\{e_{i}^* \mid i=1,\dots, N\}$ be the dual basis of $\{e_{i} \mid i=1,\dots, N\}$.
We use it to define the filtration $F^{\lambda}D$ in the same way as
$F^{\lambda}C$.
(We assume $\lambda_q(e_{i}^*) = 0$.)
\par
It is easy to see that if $x \in F^{\lambda_1}C$, $y \in F^{\lambda_2}D$ then
\begin{equation}
\langle x,y\rangle \in F^{\lambda_1+\lambda_2}\Lambda^{\downarrow}.
\end{equation}
We define the adjoint
$
\partial^* : D \to D
$
by
$$
\langle x,\partial^* y\rangle = \langle \partial x,y\rangle.
$$
It is easy to see that $\partial^*\circ \partial^* =0$ and
$\partial^*(F^{\lambda}D) \subset F^{\lambda}D$.
\begin{defn}
We call $(D,\partial^*)$ the {\it filtered dual complex}\index{filtered dual complex} of $(C,\partial)$.
\end{defn}
We take a dual basis of $\{e'_i \mid i=1,\dots, b\} \cup
\{e''_i \mid i=1,\dots, h\}
\cup
\{e'''_i \mid i=1,\dots, b\} $.
Namely we take the basis so that
$$
\langle e'_i ,e'''_{*i}\rangle = 1,  \quad
\langle e''_i ,e''_{*i}\rangle = 1, \quad
\langle e'''_i ,e'_{*i}\rangle = 1
$$
and all the other pairings among the basis are zero.
It is easy to see that
$\{e'_{*i} \mid i=1,\dots, b\}$ is a basis of
$\text{\rm Im}\,\partial^*$
and $\{e'_{*i} \mid i=1,\dots, b\} \cup
\{e''_{*i} \mid i=1,\dots, h\} $ is a basis of
$\text{\rm Ker}\,\partial^*$.
\par
In the same way as in (\ref{infattainH00}) we have
\begin{equation}\label{infattainH2}
\inf\{ \lambda_q(x) \mid x \in \text{\rm Ker}\,\partial^*\!,\, b =[x]\}
=
\lambda_q\left(\sum_{i=1}^h b_i e''_{*i}\right)
\end{equation}
for $b \in H(D;\partial^*)$.
We define $\lambda_q(b)$ for $b \in H(D;\partial^*)$ by the left hand side.
\par
The pairing $\langle \cdot,\cdot\rangle$ induces a perfect $\Lambda^{\downarrow}$ pairing between
$H(C;\partial)$ and $H(D;\partial^*)$, which we also denote by $\langle \cdot,\cdot\rangle$.
By applying (\ref{infattainH00}) and  (\ref{infattainH2}) to $(C,\del)$ instead of $(D,\del^*)$, we
obtain
\begin{lem}\label{dualityalgmainlemma}
\begin{equation}
\lambda_q(a) = \sup \{ \frak v_q(\langle a,b\rangle)
\mid b \in H(F^0D;\partial^*)\}
\end{equation}
for $a \in H(C;\partial)$.
\end{lem}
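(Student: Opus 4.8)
\textbf{Proof plan for Lemma \ref{dualityalgmainlemma}.}
The plan is to reduce the statement to the two extremal-attainment formulas \eqref{infattainH00} and \eqref{infattainH2} for the filtration levels of homology classes, which were already established in Subsection \ref{subsec:Usher} and in the discussion just above. First I would fix a class $a \in H(C;\partial)$ and, using the standard bases $\{e'_i\} \cup \{e''_i\} \cup \{e'''_i\}$ of $C$ and the dual bases $\{e'_{*i}\} \cup \{e''_{*i}\} \cup \{e'''_{*i}\}$ of $D$, write the unique cycle representative $\sum_{i=1}^h a_i e''_i$ of $a$ in $\text{\rm Ker}\,\partial$ (unique modulo $\text{\rm Im}\,\partial$ it is not; unique as a combination of the $e''_i$ it is, by the lemma preceding \eqref{infattainH00}). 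By \eqref{infattainH00} we then have $\frak v_q(a) = \frak v_q\left(\sum_{i=1}^h a_i e''_i\right) = \max_i\{\frak v_q(a_i)\}$, since $\frak v_q(e''_i)=0$.

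Next I would compute the pairing against homology classes $b \in H(F^0 D;\partial^*)$. Writing such a $b$ by its unique representative $\sum_{j=1}^h b_j e''_{*j}$ in $\text{\rm Ker}\,\partial^*$ (the analog of the uniqueness statement, proved the same way using the dual standard basis), and using the orthogonality relations $\langle e''_i,e''_{*j}\rangle = \delta_{ij}$ together with the vanishing of all cross pairings among the chosen bases, I get $\langle a,b\rangle = \sum_{i=1}^h a_i b_i$ at the chain level; this descends to the induced homology pairing. Since $b \in H(F^0 D;\partial^*)$ forces $\frak v_q(b_j) \le 0$ for all $j$, each term satisfies $\frak v_q(a_i b_i) = \frak v_q(a_i) + \frak v_q(b_i) \le \frak v_q(a_i) \le \frak v_q(a)$, whence $\frak v_q(\langle a,b\rangle) \le \max_i \frak v_q(a_i b_i) \le \frak v_q(a)$. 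This gives the inequality ``$\ge$'' in the statement (the sup is $\le \frak v_q(a)$).

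For the reverse inequality I would produce an explicit $b$ attaining the bound: choose an index $i_0$ with $\frak v_q(a_{i_0}) = \frak v_q(a)$, and take $b$ to be the homology class of $e''_{*i_0} \in F^0 D$ (it is a $\partial^*$-cycle since $\{e'_{*i}\}\cup\{e''_{*i}\}$ is a basis of $\text{\rm Ker}\,\partial^*$, and $\frak v_q(e''_{*i_0})=0$ so it lies in $F^0D$). Then $\langle a,b\rangle = a_{i_0}$, so $\frak v_q(\langle a,b\rangle) = \frak v_q(a)$, showing the sup is attained and equals $\frak v_q(a)$. The only genuine point requiring care is the well-definedness of the homology-level pairing and of the uniqueness-of-representative statements in the filtered dual complex — that is, checking that $\langle \partial x, y\rangle$ and $\langle x, \partial^* y\rangle$ behave compatibly with the standard-basis decomposition so that the chain-level computation $\langle a,b\rangle = \sum a_i b_i$ really does compute the induced pairing on homology independently of representatives; I expect this to be the main (though still routine) obstacle, and it is handled exactly as in Usher \cite{usher:duality} and in Subsection 6.3 of \cite{fooo:book}. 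Everything else is bookkeeping with the orthonormal-type relations among the dual standard bases.
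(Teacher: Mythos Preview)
Your approach is correct and matches the paper's (one-line) proof, which simply cites \eqref{infattainH00} and \eqref{infattainH2} and leaves the explicit pairing computation $\langle a,b\rangle = \sum a_i b_i$ and the two inequalities you spelled out implicit. One small point to tighten: the normalization $\frak v_q(e''_i)=0=\frak v_q(e''_{*i})$ is not automatic from the standard-basis construction, but since $G=\R$ here you may rescale each $e''_i$ by a power of $q$ to arrange $\frak v_q(e''_i)=0$, after which $\frak v_q(e''_{*i})=0$ follows from $\langle e''_i,e''_{*i}\rangle=1$ together with the fact that the change-of-basis matrix between $\{e'_i,e''_i,e'''_i\}$ and $\{e_j\}$ (and hence its inverse transpose) lies in $GL_N(\Lambda^{\downarrow}_0)$.
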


\subsection{Duality between Floer homologies}
\label{subsec:dualityHF}

Let $H$ be a one-periodic time dependent Hamiltonian on $M$ such  that $\psi_H$ is non-degenerate.
We consider the chain complex
$(CF(M,H;\Lambda^{\downarrow}),\partial^{\frak b}_{(H,J)})$
which is defined in Section \ref{sec:deform-bdy}.
\par
Let $\{\gamma_i \mid i=1,\dots,N\} = \text{\rm Per}(H)$ and fix a choice of their lifts
$\llb \gamma_i,w_i \rrb \in \widehat{\rm Per}(H)$.
Then we put
$$
e_i = q^{-\mathcal A_H(\llb \gamma_i,w_i \rrb)} \llb \gamma_i,w_i \rrb
\in CF(M,H;\Lambda^{\downarrow}).
$$
We note that $\lambda_H(e_i) = 0$ and $e_i$ is independent of $w_i$.
$\{e_i \mid 1,\dots, N\}$ is a $\Lambda^{\downarrow}$ basis of
$CF(M,H;\Lambda^{\downarrow})$. It is easy to see that the filtration
of $CF(M,H;\Lambda^{\downarrow})$ defined as in Subsection
\ref{subsec:Usher} coincides with the filtration defined in Definition \ref{valuationv}.
\par
We define $\widetilde H$ by
\begin{equation}\label{tildeHHH}
\widetilde H(t,x) = - H(1-t,x).
\end{equation}
We have
$\phi_{\widetilde H}^t = \phi_{H}^{1-t} \circ (\phi_{H}^1)^{-1}$.
In particular,
$\psi_{\widetilde H} = (\psi_{H})^{-1}$.
Hence $\psi_{\widetilde H}$ is also non-degenerate.
\par
The main result of this subsection is as follows:
\begin{prop}\index{duality}
We can choose the perturbation etc. that are used in the definition of
$(CF(M,\widetilde H;\Lambda^{\downarrow}),\partial_{(\widetilde H,J)}^{\frak b})$ such that
there exists a perfect pairing
$$
\langle \cdot,\cdot \rangle :
CF(M, H;\Lambda^{\downarrow})
\times CF(M,\widetilde H;\Lambda^{\downarrow}) \to \Lambda^{\downarrow}
$$
by which the filtered complex
 $(CF(M,\widetilde H;\Lambda^{\downarrow}),\partial_{(\widetilde H,\widetilde J)}^{\frak b})$
is identified with the dual filtered complex of $(CF(M,H;\Lambda^{\downarrow}),\partial_{(H,J)}^{\frak b})$.
\end{prop}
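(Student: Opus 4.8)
The plan is to construct the pairing on the chain level directly from a natural bijection between the generators, and then to show that the $\frak b$-deformed Floer differential on $CF(M,\widetilde H;\Lambda^\downarrow)$ is the (filtered) adjoint of the one on $CF(M,H;\Lambda^\downarrow)$. First I would set up the bijection $\Crit\CA_H \to \Crit\CA_{\widetilde H}$. Given $[\gamma,w]\in\Crit\CA_H$, the loop $\gamma$ solves $\dot x = X_{H_t}(x)$ and its time-reversal $\overline\gamma(t) := \gamma(1-t)$ solves $\dot x = X_{\widetilde H_t}(x)$ since $\widetilde H(t,x) = -H(1-t,x)$; the cap $w$ reverses to a cap $\overline w$ for $\overline\gamma$, and one checks from the definition $\CA_H([\gamma,w]) = -\int w^*\omega - \int_0^1 H(t,\gamma(t))\,dt$ that $\CA_{\widetilde H}([\overline\gamma,\overline w]) = -\CA_H([\gamma,w])$. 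This sign is exactly what is needed for the pairing $\langle e_i, \overline e_j\rangle$ (with $e_i = q^{\CA_H([\gamma_i,w_i])}[\gamma_i,w_i]$ as in the proposition and $\overline e_j$ the analogous basis for $\widetilde H$) to be defined over $\C$, i.e. to have valuation zero: I would declare $\langle e_i,\overline e_j\rangle = \delta_{ij}$ (perhaps up to an orientation sign, to be pinned down), extend $\Lambda^\downarrow$-bilinearly, and observe this is a perfect pairing respecting filtrations in the sense of Subsection \ref{subsec:dualityalg}, since $\frak v_q(e_i) = 0 = \frak v_q(\overline e_j)$.

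Next I would identify the moduli spaces. A map $u(\tau,t)$ solving the Floer equation \eqref{eq:HJCR} for $(H,J)$ from $[\gamma,w]$ to $[\gamma',w']$ corresponds, via $u^\dagger(\tau,t) := u(-\tau, 1-t)$, to a solution of the Floer equation for $(\widetilde H, \widetilde J)$ from $[\overline{\gamma'},\overline{w'}]$ to $[\overline\gamma,\overline w]$, where $\widetilde J_t = J_{1-t}$; this is a standard computation with the chain rule, using $X_{\widetilde H_t} = -X_{H_{1-t}}$ (our sign convention $dH = \omega(X_H,\cdot)$ gives $X_{-H} = -X_H$). The energy is preserved and the asymptotics and homotopy-class conditions match up under the reversal. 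Including the $\ell$ interior marked points $z_i^+$ (which also get reflected, $z_i^+ \mapsto (z_i^+)^\dagger$) gives a diffeomorphism $\CM_\ell(H,J;[\gamma,w],[\gamma',w']) \cong \CM_\ell(\widetilde H,\widetilde J;[\overline{\gamma'},\overline{w'}],[\overline\gamma,\overline w])$ compatible with evaluation maps, Kuranishi structures and (after a careful orientation check) orientations. Consequently I can \emph{choose} the system of multisections for the $\widetilde H$ moduli spaces to be the push-forward of the one chosen for $H$; this is the content of the phrase ``we can choose the perturbation etc.'' in the statement. With the forms representing $\frak b = \frak b_0 + \frak b_2 + \frak b_+$ fixed, and noting that the exponential weight $\exp(w'\cap\frak b_2 - w\cap\frak b_2)$ becomes $\exp(\overline w\cap\frak b_2 - \overline{w'}\cap\frak b_2)$ under reversal (again a sign flip, matching the flip in the order of the two ends), the structure coefficients satisfy $\frak n^{\frak b}_{(H,J);\ell}([\gamma,w],[\gamma',w']) = \frak n^{\frak b}_{(\widetilde H,\widetilde J);\ell}([\overline{\gamma'},\overline{w'}],[\overline\gamma,\overline w])$. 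Reading off \eqref{defboundary}, this says precisely that $\partial^{\frak b}_{(\widetilde H,\widetilde J)}$ is the transpose of $\partial^{\frak b}_{(H,J)}$ with respect to $\langle\cdot,\cdot\rangle$, i.e. $\langle \partial^{\frak b}_{(H,J)}x, y\rangle = \langle x, \partial^{\frak b}_{(\widetilde H,\widetilde J)}y\rangle$; filtration-compatibility of $\partial^*$ is then automatic from Lemma \ref{filtered} applied on both sides. (Since $J$-independence of everything relevant has already been established, using $\widetilde J$ rather than $J$ costs nothing; alternatively one takes $J$ time-independent.)

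The step I expect to be the main obstacle is the orientation bookkeeping: verifying that the reflection $u \mapsto u^\dagger$ together with the marked-point reflection is compatible with the chosen orientations of the Kuranishi structures in Proposition \ref{connBULKkura} (5), so that the matching of $\frak n^{\frak b}$-coefficients holds \emph{on the nose} and not merely up to a global or index-dependent sign. This requires tracing through the orientation conventions for the Floer moduli spaces (linearized operators, gluing, the $\R$-action quotient) under an orientation-reversing reparametrization of the domain $\R\times S^1$, in the spirit of \cite{fukaya-ono} \S21 and \cite{fooo:book} Chapter 8; it may force a compensating sign in the definition of $\langle\cdot,\cdot\rangle$ (e.g. $\langle e_i,\overline e_j\rangle = (-1)^{?}\delta_{ij}$ depending on the Conley--Zehnder index parity $n - \mu_H([\gamma_i,w_i])$), which is harmless for the statement. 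Everything else — the bijection on critical points, the action-reversal identity, the moduli-space diffeomorphism, the energy and homotopy matching, the transfer of multisections, and the resulting adjointness of differentials — is routine once the conventions are fixed, and the perfectness and filtration properties of the pairing are then immediate, placing us exactly in the algebraic framework of Subsection \ref{subsec:dualityalg} needed to deduce Theorem \ref{dualitymain}.
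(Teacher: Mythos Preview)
Your proposal is correct and follows essentially the same route as the paper: the paper constructs the involution $\iota:[\gamma,w]\mapsto[\widetilde\gamma,\widetilde w]$ with $\widetilde\gamma(t)=\gamma(1-t)$, $\widetilde w(z)=w(\overline z)$, verifies $\CA_H+\CA_{\widetilde H}=0$ and $\int w^*\omega+\int\widetilde w^*\omega=0$, uses the domain reflection $(\tau,t)\mapsto(-\tau,1-t)$ to identify $\CM_\ell(H,J;[\gamma,w],[\gamma',w'])\cong\CM_\ell(\widetilde H,\widetilde J;[\widetilde\gamma',\widetilde w'],[\widetilde\gamma,\widetilde w])$ as spaces with Kuranishi structure, transports multisections across, defines the pairing by $\langle[\gamma,w],[\widetilde\gamma',\widetilde w']\rangle=\delta_{\gamma\gamma'}\,q^{-(w\cap\omega+\widetilde w'\cap\omega)}$ (equivalent to your $\langle e_i,\overline e_j\rangle=\delta_{ij}$), and checks adjointness by a short coefficient computation. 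The paper does not dwell on the orientation comparison you flag as the main obstacle---it simply asserts the isomorphism of Kuranishi structures and invokes the orientation conventions of Proposition~\ref{connBULKkura}(5)---so your instinct that this is the only point requiring care is well placed, but no additional sign appears in the paper's formulas.
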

\begin{proof}
Let $\gamma \in \text{\rm Per}(H)$.
It is then easy to see that
$$
\widetilde \gamma(t) = \gamma(1-t) \in \text{\rm Per}(\widetilde H).
$$
If $w : D^2 \to M$ satisfies $w\vert_{\partial D} = \gamma$, then
$\widetilde w(z) = w(\overline z)$ satisfies $\widetilde w\vert_{\partial D} = \widetilde \gamma$.
We have thus defined
\begin{equation}
\iota : \text{\rm Crit}(\mathcal A_H) \to \text{\rm Crit}(\mathcal A_{\widetilde H})
\end{equation}
by $[\gamma,w] \mapsto [\widetilde \gamma,\widetilde w]$.
It is easy to see
\begin{equation}\label{sumAdual0}
\mathcal A_H([\gamma,w])
+ \mathcal A_{\widetilde H}([\widetilde \gamma,\widetilde w]) = 0
\end{equation}
and
\be\label{eq:sumw*omega}
\int w^*\omega + \int \widetilde w^*\omega = 0.
\ee
Let $(u;z_1^+,\dots,z_{\ell}^+) \in \overset{\circ}{\CM}_{\ell}(H,J;[\gamma,w], [\gamma',w'])$.
We define
$
\iota : \R \times S^1 \to \R \times S^1
$
by $\iota(\tau,t) = (-\tau,1-t)$
and put
\begin{equation}\label{startransf}
\widetilde u = u \circ \iota.
\end{equation}
It is easy to find that
$$
(\widetilde u;\widetilde z_1^+,\dots,\widetilde z_\ell^+)\in
\overset{\circ}{\CM}_{\ell}(\widetilde H,\widetilde J; [\widetilde \gamma',\widetilde w'],[\widetilde \gamma,\widetilde w]).
$$
We have thus defined a homeomorphism
$$
\mathfrak I : \overset{\circ}{\CM}_{\ell}(H,J;[\gamma,w], [\gamma',w'])
\to \overset{\circ}{\CM}_{\ell}(\widetilde H,\widetilde J; [\widetilde \gamma',\widetilde w'],[\widetilde \gamma,\widetilde w])
$$
by
$$
\mathfrak I (u;z_1^+,\dots,z_{\ell}^+) = (\widetilde u;\widetilde z_1^+,\dots,\widetilde z_\ell^+).
$$
We can extend it to their compactifications and then
it becomes an isomorphism between spaces with Kuranishi structure:
$$
\mathfrak I :  {\CM}_{\ell}(H,J;[\gamma,w], [\gamma',w'])
\to  {\CM}_{\ell}(\widetilde H,\widetilde J; [\widetilde \gamma',\widetilde w'],[\widetilde \gamma,\widetilde w]).
$$
We take a CF-perturbation of
$ {\CM}_{\ell}(\widetilde H,\widetilde J; [\widetilde \gamma',\widetilde w'],[\widetilde \gamma,\widetilde w])$
so that it coincides with one for
${\CM}_{\ell}(H,J;[\gamma,w], [\gamma',w'])$
by the above isomorphism.
Then we have
$$
\frak n_{(H,J);\ell}([\gamma,w],[\gamma',w']) (h_1,\ldots,h_{\ell})
=
\frak n_{(\widetilde H,\widetilde J);\ell}([\widetilde \gamma',\widetilde w'],[\widetilde \gamma,\widetilde w ]) (h_1,\ldots,h_{\ell}),
$$
where the left hand side is defined in (\ref{eq:nww'C}).
Therefore
\begin{equation}\label{dualnvalue}
\frak n_{(H,J)}^{\frak b}(\llb \gamma,w \rrb,\llb \gamma',w' \rrb)
= \frak n_{(\widetilde H,\widetilde J)}^{\frak b}(\llb \widetilde \gamma',\widetilde w' \rrb,
\llb \widetilde \gamma,\widetilde w \rrb).
\end{equation}
Here  the left hand side is defined in (\ref{nf66666}).
\begin{defn}
Let $ \llb \gamma,w \rrb \in \widehat{\Per}(H)$,
$\llb \widetilde \gamma',\widetilde w' \rrb \in \widehat{\Per}(\widetilde H)$.
We define
\begin{equation}\label{pairHFdef}
\langle
\llb \gamma,w\rrb, \llb\widetilde \gamma',\widetilde w'\rrb
\rangle
=\begin{cases}
0 &\text{if $\gamma \ne  \gamma'$,}\\
q^{-(w\cap \omega + \widetilde w'\cap \omega)}
&\text{if $\gamma =  \gamma'$.}
\end{cases}
\end{equation}
We can extend (\ref{pairHFdef}) to a $\Lambda^{\downarrow}$
bilinear pairing
$$
\langle \cdot, \cdot\rangle : CF(M,H;\Lambda^{\downarrow})
\times CF(M,\widetilde H;\Lambda^{\downarrow})
\to \Lambda^{\downarrow},
$$
which becomes a perfect pairing.
\end{defn}
By (\ref{eq:sumw*omega}) we have
\begin{equation}\label{pairHFdef2}
\langle
\llb \gamma,w \rrb,\llb \widetilde \gamma,\widetilde w \rrb
\rangle = 1.
\end{equation}
\begin{lem}
\begin{equation}\label{dualityequality}
\langle
\partial_{(H,J)}^{\frak b}(\llb \gamma_1,w_1 \rrb),\llb \widetilde \gamma_2,\widetilde w_2 \rrb
\rangle
=
\langle
\llb \gamma_1,w_1 \rrb,\partial_{(\widetilde H,\widetilde J)}^{\frak b}(\llb \widetilde \gamma_2,\widetilde w_2 \rrb)
\rangle.
\end{equation}
\end{lem}
\begin{proof}
By definition the left hand side is
$$\aligned
&\sum_{w'_2 \in \pi_2(\gamma_2)} \frak n^{\frak b}_{(H,J)}([\gamma_1,w_1]),[\gamma_2, w'_2])
q^{-(w'_2\cap\omega + \widetilde w_2\cap \omega)} \\
&=\sum_{\alpha \in \pi_2(M)}
\frak n^{\frak b}_{(H,J)}[\gamma_1,w_1],[\gamma_2,\alpha\# w_2])
q^{-\alpha \cap\omega}.
\endaligned$$
On the other hand, the right hand side is
$$
\aligned
&\sum_{\widetilde w'_1 \in \pi_2(\gamma'_1)}
\frak n^{\frak b}_{(\widetilde H,\widetilde J)}([\widetilde \gamma_2,\widetilde w_2]),
[\widetilde \gamma_1,\widetilde w'_1])
q^{-(w_1\cap \omega + \widetilde w'_1\cap \omega)}\\
&=\sum_{\alpha \in \pi_2(M)}
\frak n^{\frak b}_{(\widetilde H,\widetilde J)}([\widetilde \gamma_2,\widetilde w_2]),[\widetilde \gamma_1,
\widetilde{(-\alpha)\# w_1}])
q^{-\alpha \cap \omega}.
\endaligned
$$
By (\ref{dualnvalue})  this is equal to
$$
\frak n^{\frak b}_{(H,J)}([ \gamma_1,{(-\alpha)\# w_1}],[ \gamma_2,w_2])
q^{-\alpha \cap\omega}.
$$
Since
$$
\frak n^{\frak b}_{(H,J)}([ \gamma_1,{(-\alpha)\# w_1}],[ \gamma_2,w_2])
=
\frak n^{\frak b}_{(H,J)}([ \gamma_1,{w_1}],[ \gamma_2,\alpha\# w_2]),
$$
by Proposition \ref{connBULKkura} (7),
the lemma follows.
\end{proof}
Then (\ref{pairHFdef2}) and (\ref{dualityequality}) imply the
proposition.
\end{proof}

\subsection{Duality and Piunikhin isomorphism}
\label{subsec:dualityandP}\index{Piunikhin isomorphism}

In this subsection we prove:

\begin{thm}\label{dualityandPthm}\index{duality}
For $a,a' \in H^*(M;\Lambda)$
 we
denote by $a^\flat, (a')^\flat$ the homology classes Poincar\'e dual to $a,a'$ respectively. (See Notations and Conventions $(22)$.)
Then we have
\begin{equation}
\langle
\CP_{(H_\chi,J_\chi),\ast}^{\frak b}(a^\flat),
\mathcal P_{(\widetilde H_\chi,\widetilde J_\chi),\ast}^{\frak b}((a')^\flat)
\rangle
=
\langle
a,
a'
\rangle.
\end{equation}
\end{thm}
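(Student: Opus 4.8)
The plan is to compare the two sides by building, from the moduli spaces that define the left‑hand side, a single parametrized moduli space whose boundary produces exactly the pairing of the right‑hand side. The key geometric input is the identification $\mathfrak I$ established in Subsection \ref{subsec:dualityHF}: reflecting the domain $\R\times S^1$ via $(\tau,t)\mapsto(-\tau,1-t)$ takes the Piunikhin moduli space $\CM_\ell(H_\chi,J_\chi;*,[\gamma,w])$ to the ``co‑Piunikhin'' moduli space $\CM_\ell(\widetilde H_{\widetilde\chi},\widetilde J_{\widetilde\chi};[\widetilde\gamma,\widetilde w],*)$ (a moduli space of $\widetilde H$‑trajectories from a periodic orbit to a point at $\tau\to+\infty$, with the elongation reversed). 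Glueing a $\CP$‑type cap at $\tau=-\infty$ to a $\CQ$‑type cap at $\tau=+\infty$ along a common periodic orbit $[\gamma,w]$ produces the pair‑of‑pants‑free configuration: a cylinder with two interior evaluation maps $\mathrm{ev}_{-\infty}$ at the two ends, which is precisely the moduli space computing the classical intersection pairing $\langle a,a'\rangle$ on $M$ (deformed by $\frak b$, but the bulk insertions on a sphere do not change the cohomological pairing). So the strategy is: (i) show $\CP^{\frak b}_{(H_\chi,J_\chi),\ast}$ and its dual‑side counterpart $\CQ^{\frak b}$ are adjoint up to the pairing $\langle\cdot,\cdot\rangle$ of Subsection \ref{subsec:dualityHF}; (ii) show $\CQ^{\frak b}\circ\CP^{\frak b}$ is chain homotopic to the identity (equivalently, to the map inducing $\langle\cdot,\cdot\rangle_{\mathrm{PD}_M}$ on $H_*(M)$), by the same kind of parametrized‑moduli‑space cobordism argument already used in the proof of Proposition \ref{compaticompositte}.

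Concretely, first I would set up the moduli space $\CM_\ell(\widetilde H_{\widetilde\chi},\widetilde J_{\widetilde\chi};[\gamma,w],*)$ and the associated operator $\CQ^{\frak b}_{(\widetilde H_{\widetilde\chi},\widetilde J_{\widetilde\chi})}: CF_*(M,\widetilde H;\Lambda^\downarrow)\to\Omega_*(M)\widehat\otimes\Lambda^\downarrow$, exactly as in Notations and Conventions (15), with bulk insertions inserted as in Section \ref{sec:deform-bdy}. Using $\mathfrak I$ together with the action identity (\ref{sumAdual0}) and the area identity (\ref{eq:sumw*omega}), I would check the adjunction
$$
\langle \CP^{\frak b}_{(H_\chi,J_\chi)}(h),\ [\widetilde\gamma,\widetilde w]\rangle
= \int_M h\wedge \CQ^{\frak b}_{(\widetilde H_{\widetilde\chi},\widetilde J_{\widetilde\chi})}([\widetilde\gamma,\widetilde w])
$$
at chain level; this is a bookkeeping computation matching the evaluation map $\mathrm{ev}_{-\infty}$ on one side with $\mathrm{ev}_{-\infty}$ on the other and using that the multisections on the two moduli spaces correspond under $\mathfrak I$ (as arranged in Subsection \ref{subsec:dualityHF}). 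Then, because $\CP^{\frak b}$ is an isomorphism on homology (Theorem \ref{Pbulkiso}) and the pairing $\langle\cdot,\cdot\rangle$ on $HF$ is perfect (Subsection \ref{subsec:dualityHF}), it suffices to identify $\CQ^{\frak b}_\ast\circ\CP^{\frak b}_\ast$ with the Poincar\'e pairing isomorphism $H^*(M)\to H_*(M)$; this is where the glued parametrized moduli space enters, whose three boundary strata are (a) breaking off a Floer trajectory for $(H,J)$ (giving $\partial^{\frak b}$ terms), (b) the $S\to\infty$ degeneration giving $\CQ^{\frak b}\circ\CP^{\frak b}$, and (c) the $S\to 0$ (or $S\to-\infty$) degeneration giving the genus‑zero Gromov–Witten correspondence on $M$, i.e. the $\cup^{\frak b}$‑deformed intersection pairing which in cohomological degree reasons reduces to the ordinary $\int_M a\wedge a'$. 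Sphere bubbles at $\tau\to\pm\infty$ are codimension two by the $S^1$‑symmetry argument (as in Remark \ref{JS1inv} and the proof of Lemma \ref{MBchainhomotopyprop}) and do not contribute. Stokes' theorem applied over this cobordism yields the chain homotopy, hence the claimed equality on homology.

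The main obstacle I anticipate is neither of the two geometric ingredients individually — both are ``the same kind of moduli‑space analysis'' as elsewhere in the paper — but rather the \emph{consistency of all the perturbation data}: the multisection on $\CM_\ell(\widetilde H_{\widetilde\chi},\widetilde J_{\widetilde\chi};[\gamma,w],*)$ must be the $\mathfrak I$‑image of the one on $\CM_\ell(H_\chi,J_\chi;*,[\gamma,w])$ (so that (i) holds with no error term), \emph{and simultaneously} the multisection on the glued parametrized space must restrict compatibly at all three boundary strata (so that (ii)'s Stokes argument closes up). Reconciling these two requirements — the adjunction perturbation and the cobordism perturbation — and checking that the bulk‑deformed Gromov–Witten stratum really contributes $\Pi(a,a')$ rather than some $q$‑corrected quantity (which requires the degree count forcing $\alpha=0$ on the relevant stratum, exactly as in the degree bookkeeping of Lemma \ref{adiccomv1}) is the delicate part; everything else is routine dimension counting and Stokes' theorem. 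Sign verification, following the orientation conventions of Notations and Conventions (18) and \cite{fooo:book} Proposition 8.3.3, is the remaining tedious but straightforward point.
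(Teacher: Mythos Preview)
Your proposal is essentially the same approach as the paper's: both use the parametrized moduli space ${\CM}_{\ell}(para;H_\chi,J_\chi;*,*;C)$ of Section~\ref{sec:appendix1} together with the reflection identification $\frak J$ between $\CM_\ell(\widetilde H_\chi,\widetilde J_\chi;*,[\widetilde\gamma,\widetilde w])$ and $\CM_\ell(H_{\widetilde\chi},J_{\widetilde\chi};[\gamma,w],*)$. The paper, however, streamlines your two-step decomposition (adjunction plus $\CQ\circ\CP\simeq\mathrm{id}$) into a single step: it defines a $\Lambda^\downarrow$-valued bilinear form $\overline{\mathcal H}^{\frak b}_{(H_\chi,J_\chi)}(h,h')$ by integrating $\mathrm{ev}_{+\infty}^*h\wedge\mathrm{ev}_{-\infty}^*h'\wedge\mathrm{ev}^*(\frak b_+^{\otimes\ell})$ over the parametrized space, and shows directly via Lemma~\ref{HBULKkurapara}(3) that this is a chain homotopy between $\int_M h\wedge h'$ and $\langle\CP_H(h),\CP_{\widetilde H}(h')\rangle$. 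This neatly resolves your stated concern about reconciling perturbation data: since the paper never invokes a separate operator $\CQ$ in this proof, it simply chooses the multisection on the $\CQ$-type boundary factor to be the $\frak J$-transfer of the one defining $\CP_{\widetilde H}$ (explicitly noted in the Remark to be \emph{different} from the multisection used for $\CQ$ in Section~\ref{sec:appendix1}), and then extends inward. One minor correction: the $S=0$ boundary contributes exactly $\int_M h\wedge h'$ not by a degree count forcing $\alpha=0$, but because the multisection there is pulled back from the $S^1$-quotient $\overline{\CM}_\ell(H=0,J_0;*,*;C)$, so all terms with $C\ne 0$ or $\ell>0$ vanish by $S^1$-equivariance (as in the proof of Lemma~\ref{MBchainhomotopyprop}).
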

\begin{proof}
We consider  two chain maps
$
: (\Omega(M) \widehat\otimes \Lambda)
\otimes
(\Omega(M) \widehat\otimes \Lambda)
\to \Lambda \cong
\Lambda^{\downarrow}
$
\begin{equation}\label{firstpairing}
h \otimes h' \mapsto \int_M h \wedge h'
\end{equation}
and
\begin{equation}\label{secondparing}
h \otimes h'  \mapsto \langle \CP_{(H_\chi,J_\chi),\#}^{\frak b}(h^\flat),
\mathcal P_{(\widetilde H_\chi,\widetilde J_\chi),\#}^{\frak b}((h')^\flat)\rangle.
\end{equation}

Here we regard $\Lambda^{\downarrow}$ as a chain complex with trivial boundary operator.
To prove Theorem \ref{dualityandPthm} it suffices to show that (\ref{firstpairing}) is
chain homotopic to (\ref{secondparing}). For this purpose, we will use the following
parameterized moduli space
$$
{{\CM}}_{\ell}(para;H_\chi,J_\chi;*,*;C) = \bigcup_{S\geq 0}
\{ S \} \times {{\CM}}_{\ell}(H_{\chi}^S,J_{\chi}^S;*,*;C)
$$
equipped with Kuranishi structure and CF-perturbation that is compatible along the boundary.
We refer readers to Definition \ref{moduliforH1} in Section \ref{sec:appendix1}
for the precise description of the moduli space ${\CM}_{\ell}(para; H_\chi,J_\chi;*,*;C)$
defined in (\ref{266below}).
Here $C \in \pi_2(M)$.
\par
We denote $\tilde \chi = \tilde \chi(\tau) = \chi(1-\tau)$.
Some boundary component of ${{\CM}}_{\ell}(para;H_\chi,J_\chi;*,*;C)$
in (\ref{bdryQBULKkurapara1}) will contain a direct factor of the type
${{\CM}}_{\#\mathbb L_2}(H_{\tilde \chi},J_{\tilde \chi};[\gamma,w],*)$
whose definition is given in Definition \ref{moduliforQ}.
We consider the map
\begin{equation}\label{identfypandqmoduli}
\frak J :
{{\CM}}_{\ell}(\widetilde H_\chi,\widetilde J_\chi;*,[\widetilde \gamma,\widetilde w])
\to {{\CM}}_{\ell}(H_{\tilde \chi},J_{\tilde \chi};[\gamma,w],*)
\end{equation}
defined by
$$
\frak J((u;z_1^+,\dots,z_{\ell}^+))
=(\tilde u;\tilde z_{1}^+,\dots,\tilde z_{\ell}^+),
$$
where the right hand side is defined as in (\ref{startransf}).
The map (\ref{identfypandqmoduli}) is extended to an
isomorphism of spaces with Kuranishi structures.
\par
Recall that when we considered $\mathcal P_{(\widetilde H_{{\chi}},\widetilde J_{{\chi}})}^{\frak b}$,
we made a choice of a CF-perturbation on
${{\CM}}_{\ell}(\widetilde H_\chi,\widetilde J_\chi;*,[\widetilde \gamma,\widetilde w])$.
This CF-perturbation induces a CF-perturbation on
$
{{\CM}}_{\ell}(H_{\tilde \chi},J_{\tilde \chi};[\gamma,w],*)
$
via the isomorphism \eqref{identfypandqmoduli}.
\par
We equip ${{\CM}}_{\ell}(para;H_\chi,J_\chi;*,*;C)$ with a system of CF-perturbations
that is compatible along the boundary with respect to
this choice of CF-perturbation on the direct factor
${{\CM}}_{\ell}(H_{\tilde \chi},J_{\tilde \chi};[\gamma,w],*)$ appearing
in (\ref{bdryQBULKkurapara1}).

\begin{rem}
On the other hand,
when we will define $\mathcal Q_{(H_{\tilde \chi},J_{\tilde \chi})}^{\frak b}$
in Section \ref{sec:appendix1}, we take another family of CF-perturbations
on ${{\CM}}_{\ell}(H_{\tilde \chi},J_{\tilde \chi};[\gamma,w],*)$.
This is {\it different} from the CF-perturbation defined above.
\end{rem}

Now let $h,h'$ be differential forms on $M$.
We define
\begin{equation}\label{overlineHdef}
\aligned
&\overline{\mathcal H}_{(H_\chi,J_\chi)}^{\frak b}(h,h')
\\& =
\sum_C \sum_{\ell=0}^{\infty}
\frac{\exp(\int_C \frak b_2)}{\ell !}
q^{-C \cap \omega} \\
&\quad\int_{\!{{\CM}}_{\ell}(para; H_\chi,J_\chi;*,*;C)}
ev_{+\infty}^* h
\wedge ev_{-\infty}^* h' \wedge
\text{\rm ev}^*(\underbrace{\frak b_{+},
\dots,\frak b_{+}}_{\ell}),
\endaligned
\end{equation}
where $\frak b_2$ is the summand (more precisely its representative
closed two-form) in the decomposition
$\frak b = \frak b_0 + \frak b_2 + \frak b_{+}$ as before and we use the above
chosen CF-perturbation on
${{\CM}}_{\ell}(para; H_\chi,J_\chi;*,*;C)$ to define an integration on it.
The formula (\ref{overlineHdef}) defines a map
$$
\overline{\mathcal H}_{(H_\chi,J_\chi)}^{\frak b} :
(\Omega(M) \widehat\otimes \Lambda)
\otimes
(\Omega(M) \widehat\otimes \Lambda)
\to
\Lambda^{\downarrow}.
$$
It follows from Lemma \ref{HBULKkurapara} (3)
that $\overline{\mathcal H}_{(H_\chi,J_\chi)}^{\frak b}$ is a
chain homotopy between
(\ref{firstpairing}) and (\ref{secondparing}).
The proof of Theorem \ref{dualityandPthm} is complete.
\end{proof}

\subsection{Proof of Theorem \ref{dualitymain}}
\label{subsec:dualitycomplproof}

Now we prove Theorem  \ref{dualitymain}. Let $\frak b$ be given.
Once Theorem \ref{dualityandPthm} is established,
the proof is the same as \cite{EP:states}.
It suffices to prove it in the case when $\widetilde\phi$ is nondegenerate.
We take $H$ such that $\widetilde\phi = \widetilde\psi_H$.
Let $a \in QH^*_{\frak b}(M)$ and $\epsilon >0$.
By Lemma \ref{dualityalgmainlemma},
we have $b' \in QH^*_{\frak b}(M)$ such that
\begin{equation}\label{estimatefrombelowPD}
\frak v_q\left(
\left\langle
\CP_{(H_\chi,J_\chi),\ast}^{\frak b}(a^\flat),
\mathcal P_{(\widetilde H_{\chi},\widetilde J_{\chi}),\ast}^{\frak b}((b')^\flat)
\right\rangle
\right)
\ge \rho^{\frak b}(H;a) - \epsilon
\end{equation}
and
$$
\lambda_{\widetilde H}(\mathcal P_{(\widetilde H_{\chi},\widetilde J_{\chi}),\ast}^{\frak b}((b')^\flat)
) \le 0.
$$
Let $\lambda$ be the left hand side of (\ref{estimatefrombelowPD}).
Then
$$
0 =
\frak v_q\left(\left\langle
\CP_{(H_\chi,J_\chi),\ast}^{\frak b}(a^\flat),
\mathcal P_{(\widetilde H_{\chi},\widetilde J_{\chi}),\ast}^{\frak b}(q^{-\lambda}(b')^\flat)
\right\rangle\right)
=
\frak v_q\left(\langle a,q^{-\lambda}b'
\rangle\right).
$$
(We use Theorem \ref{dualityandPthm} here.)
We put $b  = q^{-\lambda}b'$.
Then by definition
$$
\Pi(a,b) \ne 0.
$$
Thus, since $\lambda_{\widetilde H}(\mathcal P_{(\widetilde H_{\chi},\widetilde J_{\chi}),\ast}^{\frak b}(b^\flat))
= -\lambda + \lambda_{\widetilde H}(\mathcal P_{(\widetilde H_{\chi},\widetilde J_{\chi}),\ast}^{\frak b}((b')^\flat))
\le -\lambda$, we have
$$
\rho^{\frak b}(H;a) - \epsilon
\le \lambda
\le
-
\inf \{
\rho^{\frak b}(\widetilde\psi_H^{-1};b)
\mid \Pi(a,b) \ne 0
\}.
$$
Hence
\begin{equation}\label{estimatefrombelowPD2}
\rho^{\frak b}(\widetilde\psi_H;a)
\le
-
\inf \{
\rho^{\frak b}(\widetilde\psi_H^{-1};b)
\mid \Pi(a,b) \ne 0
\}.
\end{equation}
On the other hand, if $\Pi(a,b) \ne 0$ then
$$
\frak v_q
(\langle a, b\rangle) \ge 0.
$$
It implies
$$
\frak v_q\left(
\left\langle
\CP_{(H_\chi,J_\chi),\ast}^{\frak b}(a^\flat),
\mathcal P_{(\widetilde H_{\chi},\widetilde J_{\chi}),\ast}^{\frak b}(b^\flat)
\right\rangle
\right)
\ge 0.
$$
Hence
$$
\lambda_H(\CP_{(H_\chi,J_\chi),\ast}^{\frak b}(a^\flat))
+
\lambda_{\widetilde H}(\mathcal P_{(\widetilde H_{\chi},\widetilde J_{\chi}),\ast}^{\frak b}(b^\flat))
\ge 0.
$$
Therefore
\begin{equation}\label{estimatefromabove}
\rho^{\frak b}(\widetilde\psi_H;a)
\ge
-
\inf \{
\rho^{\frak b}(\widetilde\psi_H^{-1};b)
\mid \Pi(a,b) \ne 0
\}.
\end{equation}
(\ref{estimatefrombelowPD2}) and (\ref{estimatefromabove})
imply Theorem  \ref{dualitymain}.
\qed

\section{Construction of quasi-morphisms via spectral invariant with bulk}
\label{sec:const-morphism}\index{quasi-morphism! construction}

The next definition is due to Entov-Polterovich (see \cite[Section 1.1]{EP:morphism}).
\begin{defn}\label{def:Qhomo}
A function $\mu : \widetilde{\text{\rm Ham}}(M,\omega) \to \R$
is called a {\it homogeneous Calabi quasi-morphism}\index{quasi-morphism!Calabi quasi-morphism} if the following three conditions are
satisfied.
\begin{enumerate}
\item It is a quasi-morphism. Namely there exists a constant $C$ such that
for any $\widetilde\phi, \widetilde\psi \in \widetilde{\text{\rm Ham}}(M,\omega)$
we have
$$
\vert \mu(\widetilde\phi  \widetilde\psi) - \mu(\widetilde\phi)
- \mu(\widetilde\psi) \vert < C,
$$
where $C$ is independent of $\widetilde\phi, \widetilde\psi$.
\item
It is homogeneous. Namely
$
\mu(\widetilde\phi^n) = n\mu(\widetilde\phi)
$
for $n\in \Z$.
\item
If $\widetilde\phi \in \widetilde{\text{\rm Ham}}_U(M,\omega)$ and $U$ is
a displaceable open subset  of $M$, then
we have
$
\mu(\widetilde\phi ) = \text{\rm Cal}_U(\widetilde\phi).
$
\end{enumerate}
\end{defn}
\begin{rem}
We note that we have the canonical homomorphism
\linebreak
$\widetilde{\text{\rm Ham}}_U(M,\omega)
\to \widetilde{\text{\rm Ham}}(M,\omega) $. We use this homomorphism
to make sense out of the left hand side of the identity (3) above.
\end{rem}

The following is the analog to  \cite[Theorem 3.1]{EP:morphism} whose
proof is essentially the same once Theorem  \ref{dualitymain} is at our disposal.

\begin{thm}\label{thm:morphism}
Let $\frak b \in H^{{\rm even}}(M;\Lambda_0)$.
Suppose that there is a ring isomorphism
$
QH^*_{\frak b}(M) \cong \Lambda
\times Q
$
and let $e \in QH^*_{\frak b}(M)$ be the
idempotent corresponding to the unit of the first factor of the right hand side.
Then the function \index{$\mu_e^{\frak b}$}
$$
\mu_e^{\frak b}: \widetilde{{\rm Ham}}(M,\omega) \to \R
$$
is a homogeneous Calabi quasi-morphism.
\end{thm}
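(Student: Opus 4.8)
The plan is to reduce Theorem \ref{thm:morphism} to the properties of the Entov--Polterovich pre-quasimorphism $\mu_e^{\frak b}$ established in Theorem \ref{thm:state}, together with the Poincar\'e duality identity of Theorem \ref{dualitymain}. By Theorem \ref{thm:state} (2), $\mu_e^{\frak b}$ is already a pre-quasimorphism on $\widetilde{\rm Ham}(M,\omega)$: in particular it is homogeneous (this is Definition \ref{defpscquasistate} (2), giving condition (3) of Definition \ref{def:Qhomo}) and it restricts to $\operatorname{Cal}_U$ on $\widetilde{\rm Ham}_U(M,\omega)$ for displaceable $U$ (Definition \ref{defpscquasistate} (5), giving condition (2)). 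So the only thing left to prove is condition (1), the genuine quasimorphism inequality $|\mu_e^{\frak b}(\widetilde\phi\widetilde\psi) - \mu_e^{\frak b}(\widetilde\phi) - \mu_e^{\frak b}(\widetilde\psi)| < C$ with $C$ independent of $\widetilde\phi,\widetilde\psi$. This is where the hypothesis that $e$ is the unit of the \emph{field} factor $\Lambda^{\downarrow}$ of $QH^*_{\frak b}(M;\Lambda^{\downarrow})$ enters essentially; the controlled quasi-additivity of Definition \ref{defpscquasistate} (3) alone is not enough, since $\min(\|\widetilde\phi\|_U,\|\widetilde\psi\|_U)$ is unbounded.

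First I would record the ``sandwich'' estimates that follow directly from the triangle inequality Theorem \ref{axiomshbulk} (5) and the idempotency $e\cup^{\frak b}e = e$: for all $\widetilde\phi,\widetilde\psi$,
$$
\rho^{\frak b}(\widetilde\phi\widetilde\psi;e) \le \rho^{\frak b}(\widetilde\phi;e) + \rho^{\frak b}(\widetilde\psi;e),
$$
and, using $\rho^{\frak b}(\widetilde\phi;e) \le \rho^{\frak b}(\widetilde\phi\widetilde\psi;e) + \rho^{\frak b}(\widetilde\psi^{-1};e)$,
$$
\rho^{\frak b}(\widetilde\phi\widetilde\psi;e) \ge \rho^{\frak b}(\widetilde\phi;e) - \rho^{\frak b}(\widetilde\psi^{-1};e).
$$
Then I would invoke Theorem \ref{dualitymain} with $a = b = e$. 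Because $e$ is the unit of the \emph{field} direct factor $\Lambda^{\downarrow}e$, the pairing $\Pi(e,e)$ is computed inside that factor and is nonzero (here one uses that the splitting $QH^*_{\frak b}=\Lambda^{\downarrow}\times Q$ is an orthogonal decomposition for $\Pi$, so that $\Pi$ restricted to $\Lambda^{\downarrow}e$ is nondegenerate and $\Pi(e,e)\neq 0$). Theorem \ref{dualitymain} then gives
$$
\rho^{\frak b}(\widetilde\phi;e) = -\inf_b\{\rho^{\frak b}(\widetilde\phi^{-1};b)\mid \Pi(e,b)\neq 0\} \le -\rho^{\frak b}(\widetilde\phi^{-1};e) + \text{(controlled error)},
$$
and more precisely, combined with the normalization $\rho^{\frak b}(\underline 0;e)=\frak v_q(e)$, one obtains a two-sided bound
$$
\frak v_q(e) \le \rho^{\frak b}(\widetilde\phi;e) + \rho^{\frak b}(\widetilde\phi^{-1};e) \le \frak v_q(e) + (\text{const}),
$$
i.e.\ the quantity $\rho^{\frak b}(\widetilde\phi;e) + \rho^{\frak b}(\widetilde\phi^{-1};e)$ is bounded above by a \emph{universal} constant $K_0$ depending only on $(M,\omega,\frak b,e)$ and not on $\widetilde\phi$. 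This uniform bound is the crucial consequence of the field-factor hypothesis and replaces the $U$-dependent bound $2\frak e(\overline U;e;\frak b)$ of Lemma \ref{1310}.

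Given the uniform bound $\rho^{\frak b}(\widetilde\phi;e)+\rho^{\frak b}(\widetilde\phi^{-1};e)\le K_0$, the standard Entov--Polterovich argument goes through: from the two sandwich inequalities above we get
$$
\big|\rho^{\frak b}(\widetilde\phi\widetilde\psi;e) - \rho^{\frak b}(\widetilde\phi;e) - \rho^{\frak b}(\widetilde\psi;e)\big| \le K_0,
$$
so $\widetilde\phi\mapsto \rho^{\frak b}(\widetilde\phi;e)$ is itself a quasimorphism with defect $\le K_0$. Passing to the homogenization, $\mu_e^{\frak b}(\widetilde\phi) = \vol_\omega(M)\lim_{n\to\infty}\rho^{\frak b}(\widetilde\phi^n;e)/n$, the defect of the homogenized functional is bounded by $\vol_\omega(M)$ times the defect of $\rho^{\frak b}(\cdot;e)$ (a general fact about homogenization of quasimorphisms), hence $\mu_e^{\frak b}$ is a homogeneous quasimorphism with $C = \vol_\omega(M)\,K_0$. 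Finally I would remark that one should double-check that $\mu_e^{\frak b}$ descends correctly from Hamiltonians to $\widetilde{\rm Ham}(M,\omega)$ via the Calabi-twisted normalization \eqref{sympstateandquasimor}, which is already incorporated in the definition \eqref{homog}. The main obstacle is establishing the uniform bound on $\rho^{\frak b}(\widetilde\phi;e)+\rho^{\frak b}(\widetilde\phi^{-1};e)$ cleanly from Theorem \ref{dualitymain}; everything else is the formal quasimorphism-homogenization machinery plus the already-proven pre-quasimorphism axioms from Theorem \ref{thm:state}.
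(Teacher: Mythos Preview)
Your overall strategy is correct and matches the paper's: reduce to proving property (1) of Definition \ref{def:Qhomo}, and do this by establishing a uniform bound on $\rho^{\frak b}(\widetilde\phi;e)+\rho^{\frak b}(\widetilde\phi^{-1};e)$, from which the quasimorphism inequality follows exactly as you outline. The gap is in how you obtain that uniform \emph{upper} bound from Theorem \ref{dualitymain}.

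Plugging $b=e$ into
\[
\rho^{\frak b}(\widetilde\phi;e) \;=\; -\inf\{\rho^{\frak b}(\widetilde\phi^{-1};b)\mid \Pi(e,b)\ne 0\}
\]
gives the inequality in the \emph{opposite} direction: since $\inf_b\{\cdots\}\le \rho^{\frak b}(\widetilde\phi^{-1};e)$, you get $\rho^{\frak b}(\widetilde\phi;e)\ge -\rho^{\frak b}(\widetilde\phi^{-1};e)$, i.e.\ only the easy lower bound (which already follows from the triangle inequality and $\rho^{\frak b}(\underline 0;e)=\frak v_q(e)$). To extract an \emph{upper} bound from the duality formula you must bound $\rho^{\frak b}(\widetilde\phi^{-1};b)$ from \emph{below}, uniformly over \emph{all} $b$ with $\Pi(e,b)\ne 0$, by $\rho^{\frak b}(\widetilde\phi^{-1};e)-\text{const}$. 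This is exactly the content of the paper's Lemma \ref{dualitygyakmain}, and it genuinely uses the field-factor hypothesis: one projects $b$ to $b_1=xe$ in the $\Lambda^{\downarrow}$-factor (Frobenius gives $\Pi(e,b)=\Pi(e,b_1)\ne 0$), shows $\frak v_q(b_1)\ge 0$ (Sublemma \ref{sublem166}), then \emph{inverts} $b_1$ in the field $\Lambda^{\downarrow}e$ to bound $\frak v_q(b_1^{-1})\le 2\frak v_q(e)$, and finally applies the triangle inequality with $e=b_1\cup^{\frak b}b_1^{-1}$. Your sketch does not supply this step, and the assertion ``$\Pi$ restricted to $\Lambda^{\downarrow}e$ is nondegenerate so $\Pi(e,e)\ne 0$'' neither follows (nondegeneracy only gives $\langle e,e\rangle\ne 0$, not that its $q^0$-coefficient is nonzero) nor, even if true, yields the needed direction.

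A secondary point: Definition \ref{defpscquasistate} (2) gives only semi-homogeneity ($n\in\Z_{\ge 0}$), whereas Definition \ref{def:Qhomo} (3) requires $n\in\Z$. The paper closes this using the same uniform bound $\rho^{\frak b}(\widetilde\phi^n;e)+\rho^{\frak b}(\widetilde\phi^{-n};e)\le 3\frak v_q(e)$, so once Lemma \ref{dualitygyakmain} is in place this is immediate; but it should be noted.
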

\begin{rem}
An observation by McDuff is that a sufficient condition for the existence of
Calabi quasi-morphism is an existence of a direct product factor of
a quantum cohomology that is a field.
\cite{EP:morphism} used quantum homology over $\Lambda^{\downarrow}(\Q)$,
that is the set of $\sum a_iq^{\lambda_i}$ with $a_i \in \Q$.
Here we use the (downward) universal Novikov ring $\Lambda^{\downarrow}$,
where $a_i \in \C$.
Since $\Lambda^{\downarrow}$ is an algebraically closed field (see \cite[Appendix A]{fooo:toric1}) and
the quantum cohomology ring
is finite dimensional thereover, the direct product factor of
a quantum cohomology is isomorphic to $\Lambda$,
if it is a field. So our assumption of Theorem \ref{thm:morphism}
is equivalent to McDuff's in case $\frak b =0$.
\end{rem}
\begin{proof}
Let $\frak b$ and $e$ be as in Theorem \ref{thm:morphism}.
We first prove the property (1) of Definition \ref{def:Qhomo}.
We begin with the following lemma.
\begin{lem}\label{dualitygyakmain}
$$
\rho^{\frak b}(\widetilde\phi;e) \le 3\frak v_q(e) - \rho^{\frak b}(\widetilde\phi^{-1};e).
$$
\end{lem}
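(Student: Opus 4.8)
\textbf{Proof proposal for Lemma \ref{dualitygyakmain}.}
The plan is to combine the duality theorem (Theorem \ref{dualitymain}) with the triangle inequality (Theorem \ref{axiomshbulk} (5)) and the idempotency relation $e \cup^{\frak b} e = e$. Applying Theorem \ref{dualitymain} to the class $e$ and the element $\widetilde\phi$, we have
\begin{equation}\label{eq:dualityapplied}
\rho^{\frak b}(\widetilde\phi;e) = -\inf_b\{\rho^{\frak b}(\widetilde\phi^{-1};b) \mid \Pi(e,b) \ne 0\}.
\end{equation}
So it suffices to produce \emph{one} class $b$ with $\Pi(e,b) \ne 0$ and $\rho^{\frak b}(\widetilde\phi^{-1};b) \le 3\frak v_q(e) - \rho^{\frak b}(\widetilde\phi^{-1};e) - \rho^{\frak b}(\widetilde\phi;e)$; wait — more precisely, since the right-hand side of \eqref{eq:dualityapplied} is a supremum of $-\rho^{\frak b}(\widetilde\phi^{-1};b)$, it is enough to find $b$ with $\Pi(e,b)\ne 0$ realizing the claimed bound after rearranging. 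First I would take $b$ to be (a scalar multiple, adjusted by a power of $q$, of) the Poincar\'e-dual partner $e^\vee$ of $e$ under the pairing $\Pi$, normalized so that $\Pi(e,b) \ne 0$ and $\frak v_q(b)$ is controlled by $\frak v_q(e)$.

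The key computation is then to bound $\rho^{\frak b}(\widetilde\phi^{-1};b)$ from above. Here I would use the triangle inequality for $\cup^{\frak b}$: since $e\cup^{\frak b} e = e$, and $b$ is chosen so that $e \cup^{\frak b} b$ (or an appropriate product) equals a nonzero multiple of $e$ up to a bounded power of $q$, one gets
$$
\rho^{\frak b}(\widetilde\phi^{-1};e) = \rho^{\frak b}(\widetilde\phi^{-1}\cdot \widetilde{id}; e\cup^{\frak b}b') \le \rho^{\frak b}(\widetilde\phi^{-1};b') + \rho^{\frak b}(\widetilde{id};\,\cdot\,)
$$
for a suitable auxiliary class, where $\rho^{\frak b}(\widetilde{id};a) = \frak v_q(a)$ by the normalization axiom (Theorem \ref{axiomshbulk} (3)). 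Rearranging and tracking the powers of $q$ (each contributing a term bounded by $\frak v_q(e)$, and there are at most three such contributions coming from: the normalization of $b$, the normalization axiom value $\frak v_q(e)$, and the valuation estimate on the product) yields the factor $3\frak v_q(e)$. The role of the number $3$ is exactly that three separate valuation shifts enter: one to normalize $b$ relative to $e^\vee$, one from $\rho^{\frak b}(\widetilde{id};e) = \frak v_q(e)$, and one from the triangle-inequality bookkeeping on $\frak v_q(e\cup^{\frak b}b)$.

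The main obstacle I anticipate is the careful normalization of the dual class $b$ and the precise tracking of the $q$-valuations through the chain of inequalities, so that the final constant comes out to be $3\frak v_q(e)$ rather than something larger. In particular one must verify that the element $b$ furnished by the perfect pairing $\Pi$ (which a priori lives in $H(M;\Lambda^\downarrow)$ with no control on $\frak v_q(b)$) can be rescaled by a power of $q$ to have $\frak v_q(b) \le \frak v_q(e)$ while keeping $\Pi(e,b)\ne 0$; this uses that $\Pi$ takes values in $\C$, so $\Pi(e, q^{-\frak v_q(e)}b') = \pi(\langle e, q^{-\frak v_q(e)} b'\rangle)$ is a nonzero complex number precisely when the $q^0$-coefficient of $\langle e,b'\rangle$ sits at the right level. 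Once the normalization is pinned down, the remaining steps are routine applications of Theorems \ref{dualitymain} and \ref{axiomshbulk} (3), (5) together with $e\cup^{\frak b}e = e$.
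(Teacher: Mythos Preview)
Your proposal has the direction of the argument backwards. From the duality formula
\[
\rho^{\frak b}(\widetilde\phi;e) \;=\; -\inf_b\{\rho^{\frak b}(\widetilde\phi^{-1};b)\mid \Pi(e,b)\ne 0\}
\;=\; \sup_b\{-\rho^{\frak b}(\widetilde\phi^{-1};b)\mid \Pi(e,b)\ne 0\},
\]
an \emph{upper} bound on $\rho^{\frak b}(\widetilde\phi;e)$ requires a \emph{lower} bound on $\rho^{\frak b}(\widetilde\phi^{-1};b)$ valid for \emph{every} $b$ with $\Pi(e,b)\ne 0$. Exhibiting a single $b$ with $\rho^{\frak b}(\widetilde\phi^{-1};b)$ small only bounds the supremum from below, which is useless here. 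Your ``key computation'' of bounding $\rho^{\frak b}(\widetilde\phi^{-1};b)$ from above is therefore aimed at the wrong inequality.

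The paper's argument proceeds in exactly this universal way: given an arbitrary $b$ with $\Pi(e,b)\ne 0$, it projects $b$ to its component $b_1 = xe$ in the field factor $\Lambda^\downarrow e$, shows via the Frobenius property that $\Pi(e,b_1)\ne 0$ as well, and proves $\frak v_q(b_1)\ge 0$ (Sublemma \ref{sublem166}). The essential structural input you are missing is that $b_1$ is \emph{invertible} in the field factor, with $b_1^{-1}=x^{-1}e$ and $\frak v_q(b_1^{-1})\le 2\frak v_q(e)$. The triangle inequality is then applied twice in the reverse direction,
\[
\rho^{\frak b}(\widetilde\phi^{-1};b)\ \ge\ \rho^{\frak b}(\widetilde\phi^{-1};b_1)-\frak v_q(e)
\ \ge\ \rho^{\frak b}(\widetilde\phi^{-1};e)-\frak v_q(b_1^{-1})-\frak v_q(e)
\ \ge\ \rho^{\frak b}(\widetilde\phi^{-1};e)-3\frak v_q(e),
\]
using $e=e\cup^{\frak b}e$ and $e=b_1\cup^{\frak b}b_1^{-1}$. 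Your sketch invokes $e\cup^{\frak b}e=e$ but never the field hypothesis, which is precisely what makes the constant $3\frak v_q(e)$ come out and what allows the argument to work for an arbitrary $b$.
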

\begin{proof}
Let $b \in H(M;\Lambda)$ such that
$\Pi(e,b) \ne 0$. Such a $b$ exists by the nondegeneracy of the Poincar\'e pairing.
We write $b = (b_1,b_2)$ with respect to the decomposition
$QH_{\frak b}^{\ast}(M) \cong \Lambda^\downarrow \times Q$.
Using the Frobenius property of quantum cohomology
(see, for example, \cite{Manin}) we obtain
$$
\langle e,b\rangle
= \langle e \cup^{\frak b} e,b\rangle
= \langle e, e \cup^{\frak b} b\rangle
= \langle e, b_1\rangle.
$$
Therefore $\Pi(e,b_1)  = \Pi(e,b)  \ne 0$.
\begin{sublem}\label{sublem166}
$\frak v_q(b_1) \ge 0.$
\end{sublem}
\begin{proof}
We have
$b_1 = x e$ for some $x \in \Lambda \setminus \{0\}$. We decompose $e = \sum_{d =0}^{2n} e_d$
with $e_d \in H^d(M;\C) \otimes \Lambda$.
We denote by ${\mathbf 1} \in H^0(M; \C)$ the unit of the cohomology ring.
Then
\beastar
\Pi(e,b_1) & = &\pi(\langle e, b_1\rangle) = \pi(\langle e, xe \rangle)
= \pi(\langle e\cup^{\frak b} e, x {\mathbf 1}\rangle) \\
& = & \pi(\langle e, x {\mathbf 1} \rangle) = \pi(\langle x e, {\mathbf 1} \rangle) = \pi (\langle b_1, {\mathbf 1} \rangle).
\eeastar
Therefore $\Pi(e,b_1) \ne 0$ implies $\frak v_q (\langle b_1, {\mathbf 1} \rangle) >0$.
Since $\frak v_q (b_1) \geq \frak v_q (\langle b_1, {\mathbf 1} \rangle )$,
we obtain  $\frak v_q(b_1) \ge 0$ as required.
\end{proof}
Let $xe = b_1$ and $x \in \Lambda \setminus \{0\}$ as above.
Then
$$
\frak v_q(x) + \frak v_q(e) = \frak v_q(b_1) \ge 0.
$$
Since
$
b_1^{-1} = x^{-1}e,
$
we get
$$
\frak v_q(b_1^{-1}) =  -\frak v_q(x) + \frak v_q(e) = (\frak v_q(e) - \frak v_q(b_1))
+ \frak v_q(e) \le 2 \frak v_q(e).
$$
Therefore
$$
\aligned
\rho^{\frak b}(\widetilde\phi^{-1};b)
&\ge
\rho^{\frak b}(\widetilde\phi^{-1};b_1)
-
\rho^{\frak b}(\underline 0;e) \\
&\ge
\rho^{\frak b}(\widetilde\phi^{-1};e)
-
\rho^{\frak b}(\underline 0;b^{-1}_1)
-
\rho^{\frak b}(\underline 0;e)
=
\rho^{\frak b}(\widetilde\phi^{-1};e)
- \frak v_q(b^{-1}_1)
- \frak v_q(e)
\\
&\ge \rho^{\frak b}(\widetilde\phi^{-1};e) - 3\frak v_q(e).
\endaligned
$$
Here we use the identity $b_1 \cup^{\frak b} b_1^{-1} = e$ and the triangle inequality for the second inequality.
Lemma \ref{dualitygyakmain} now follows from Theorem \ref{dualitymain}.
Here $\underline 0$ is the identity element.
\end{proof}
\begin{cor}\label{rhoqh}
$$
\rho^{\frak b}(\widetilde\phi;e)
+ \rho^{\frak b}(\widetilde \psi;e)
\ge
\rho^{\frak b}(\widetilde\psi\widetilde\phi;e)
\ge
\rho^{\frak b}(\widetilde\phi;e)
+ \rho^{\frak b}(\widetilde \psi;e)  - 3\frak v_q(e).
$$
\end{cor}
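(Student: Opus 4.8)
The plan is to derive Corollary \ref{rhoqh} by combining the triangle inequality for spectral invariants with bulk (Theorem \ref{axiomshbulk} (5)) with the duality estimate just established in Lemma \ref{dualitygyakmain}. First I would record that since $e$ is idempotent, $e \cup^{\frak b} e = e$, so applying Theorem \ref{axiomshbulk} (5) to $\widetilde\psi \circ \widetilde\phi$ with $a = b = e$ gives immediately
$$
\rho^{\frak b}(\widetilde\psi\widetilde\phi;e) = \rho^{\frak b}(\widetilde\psi\widetilde\phi;e \cup^{\frak b} e) \le \rho^{\frak b}(\widetilde\psi;e) + \rho^{\frak b}(\widetilde\phi;e),
$$
which is the upper bound in the Corollary. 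This part is routine and uses nothing beyond the idempotency of $e$ and the product axiom.

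For the lower bound, the key observation is that one may again apply Theorem \ref{axiomshbulk} (5) but now to the product $\widetilde\phi = (\widetilde\psi\widetilde\phi)\circ\widetilde\psi^{-1}$, or more precisely to the element $(\widetilde\psi\widetilde\phi)^{-1}$ paired against $\widetilde\psi\widetilde\phi$. Concretely, from $e = e\cup^{\frak b} e$ and the triangle inequality applied to $\widetilde\psi^{-1} \circ (\widetilde\psi\widetilde\phi) = \widetilde\phi$ we get
$$
\rho^{\frak b}(\widetilde\phi;e) \le \rho^{\frak b}(\widetilde\psi^{-1};e) + \rho^{\frak b}(\widetilde\psi\widetilde\phi;e),
$$
so that $\rho^{\frak b}(\widetilde\psi\widetilde\phi;e) \ge \rho^{\frak b}(\widetilde\phi;e) - \rho^{\frak b}(\widetilde\psi^{-1};e)$. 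To turn $-\rho^{\frak b}(\widetilde\psi^{-1};e)$ into something of the form $\rho^{\frak b}(\widetilde\psi;e)$ plus a bounded error, I would invoke Lemma \ref{dualitygyakmain} in the form $-\rho^{\frak b}(\widetilde\psi^{-1};e) \ge \rho^{\frak b}(\widetilde\psi;e) - 3\frak v_q(e)$. Substituting yields
$$
\rho^{\frak b}(\widetilde\psi\widetilde\phi;e) \ge \rho^{\frak b}(\widetilde\phi;e) + \rho^{\frak b}(\widetilde\psi;e) - 3\frak v_q(e),
$$
which is exactly the lower bound claimed.

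The only genuine content here is Lemma \ref{dualitygyakmain}, whose proof in turn rests on Theorem \ref{dualitymain} (the Poincaré duality for spectral invariants with bulk) together with the Frobenius property of $QH^*_{\frak b}$ and the field-factor structure $QH^*_{\frak b}(M;\Lambda^{\downarrow}) \cong \Lambda^{\downarrow} \times Q$; I would treat that lemma as already in hand. Thus the \emph{main obstacle} is not in Corollary \ref{rhoqh} itself—which is a two-line consequence of axioms—but was already surmounted in establishing Lemma \ref{dualitygyakmain}, i.e., in proving the duality Theorem \ref{dualitymain} at the level of generality needed (arbitrary compact $(M,\omega)$, with bulk). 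Once that input is granted, the remaining step is purely formal bookkeeping of the triangle inequality, and I would simply verify that the signs and the roles of $\widetilde\psi$, $\widetilde\phi$, $\widetilde\psi^{-1}$ are consistent throughout.
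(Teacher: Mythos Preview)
Your proof is correct and follows essentially the same approach as the paper: the upper bound comes directly from the triangle inequality (Theorem \ref{axiomshbulk} (5)) with $e\cup^{\frak b}e=e$, and the lower bound is obtained by applying the triangle inequality to a suitable factorization and then invoking Lemma \ref{dualitygyakmain}. The only cosmetic difference is that the paper factors $\widetilde\psi = (\widetilde\psi\widetilde\phi)\widetilde\phi^{-1}$ and applies Lemma \ref{dualitygyakmain} to $\widetilde\phi$, whereas you factor $\widetilde\phi = \widetilde\psi^{-1}(\widetilde\psi\widetilde\phi)$ and apply it to $\widetilde\psi$; this is immaterial.
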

\begin{proof}
The first inequality is a consequence of Theorem \ref{axiomshbulk} (5).
We have
$$
\rho^{\frak b}(\widetilde\psi\widetilde\phi;e)
\ge
\rho^{\frak b}(\widetilde \psi;e)
- \rho^{\frak b}(\widetilde\phi^{-1};e)
\ge
\rho^{\frak b}(\widetilde \psi;e) + \rho^{\frak b}(\widetilde\phi;e) - 3\frak v_q(e).
$$
Here the first inequality follows from Theorem \ref{axiomshbulk} (5)
and the second inequality follows from Lemma \ref{dualitygyakmain}.
\end{proof}
We use Corollary \ref{rhoqh} inductively to show
\begin{equation}\label{rhomultieq}
\left\vert
\rho^{\frak b}(\widetilde\phi_1\cdots \widetilde\phi_k;e)
-
\sum_{i=1}^k \rho^{\frak b}(\widetilde\phi_i;e)
\right\vert
\le
3k\frak v_q(e).
\end{equation}
Therefore
\begin{equation}
\left\vert
\rho^{\frak b}((\widetilde\phi \widetilde\psi)^n;e)
-
n \rho^{\frak b}(\widetilde\phi;e)
-
n \rho^{\frak b}(\widetilde \psi;e)
\right\vert
\le
6n\frak v_q(e).
\nonumber
\end{equation}
\begin{equation}
\left\vert
\rho^{\frak b}(\widetilde\phi^n;e)
-
n \rho^{\frak b}(\widetilde\phi;e)
\right\vert
\le
3n\frak v_q(e).
\nonumber
\end{equation}
\begin{equation}
\left\vert
\rho^{\frak b}(\widetilde\psi^n;e)
-
n \rho^{\frak b}(\widetilde \psi;e)
\right\vert
\le
3n\frak v_q(e).
\nonumber
\end{equation}
Hence
$$
\left\vert
\rho^{\frak b}((\widetilde\phi \widetilde\psi)^n;e)
-
\rho^{\frak b}(\widetilde\phi^n;e)
-
\rho^{\frak b}(\widetilde\psi^n;e)
\right\vert
\le
12n\frak v_q(e).
$$
By dividing the last inequality by $n$ and taking its limit as $n \to \infty$, we obtain
$$
\vert
\mu_e^{\frak b}(\widetilde\phi \widetilde\psi)
-
\mu_e^{\frak b}(\widetilde\phi )
-
\mu_e^{\frak b}(\psi)
\vert
\le
12\frak v_q(e).
$$
Thus, $\mu_e^{\frak b}$ is a quasi-morphism.
\begin{rem}\label{rem168}
\begin{enumerate}
\item
The constant $C$ in Definition \ref{def:Qhomo} can be taken to be $12\frak v_q(e)$
for the quasi-morphism in Theorem \ref{thm:morphism}.
\item
Our proof of Lemma \ref{dualitygyakmain} is slightly simpler
than \cite[Lemma 3.2]{EP:morphism}, since we may assume that the field which is a
direct factor of quantum cohomology is $\Lambda$
and so we do not need a result from general
non-Archimedean geometry which is quoted in  \cite{EP:morphism}.
By the same reason we obtain an explicit bound.
\end{enumerate}
\end{rem}
Definition \ref{def:Qhomo} (2)
follows from Theorem \ref{defpscquasi-state} (5).
\par
The homogeneity of
$\mu_e^{\frak b}$ follows from
$$
\rho^{\frak b}(\underline 0;e)
\le
\rho^{\frak b}(\widetilde{\phi}^n;e) + \rho^{\frak b}(\widetilde{\phi}^{-n};e)
\le
3\frak v_q(e)
$$
and Definition \ref{defpscquasi-state} (2).
 The proof of Theorem \ref{thm:morphism} is complete.
\end{proof}
Theorem \ref{existquasihomo} is immediate from Theorem \ref{thm:morphism}.

\part{Spectral invariants and Lagrangian Floer theory}
\label{part:OCGWandLag}

The purpose of this part is to prove Theorem \ref{supportmain}.
The proof is based on open-closed Gromov-Witten theory\index{open-closed Gromov-Witten theory}
developed in \cite[Section 3.8]{fooo:book1}, which induces a
map from the quantum cohomology of the ambient
symplectic manifold to the Hochschild cohomology of $A_\infty$ algebra (or more
generally that of Fukaya category of $(M,\omega)$).
This map is defined in \cite{fooo:book1}
for arbitrary compact symplectic manifold and
its (weakly) unobstructed Lagrangian submanifold.
See also \cite[Section 4.7]{fooo:toricmir}  for several properties
of this map and various related works.
For our purpose, we need only a small portion thereof,
that is, the part constructed in \cite[Theorem 3.8.62]{fooo:book1}
to which we restrict ourselves in this paper,
except in Section \ref{subsec:Hochschild}.
\par
The main new part of the proof is the
construction of a map from Floer homology of periodic Hamiltonians
to the Floer cohomology of Lagrangian submanifold,
through which the map from quantum cohomology
to Floer cohomology of Lagrangian submanifold factors (Definition \ref{def:I} and Proposition \ref{184mainprop}).
We also study its properties, especially those related to the
filtration.
\par
In Parts 4 and 5, we fix a compatible almost complex structure $J$ that is
$t$-{\it independent}.

\section{Operator $\frak q$; review}
\label{sec:q-map} \index{operator $\frak q$}

In this section, we review a part of the results from \cite[Section 3.8]{fooo:book1}.
\par
Let $(M,\omega)$ be a compact symplectic manifold and $L$  its
relatively spin Lagrangian submanifold. We consider smooth differential forms on $M$.
Note that in \cite{fooo:book1, fooo:book2, fooo:bulk} we used smooth singular chains
instead of differential forms to represent cohomology
classes on $M$. In this paper we use differential forms because
we use them in the discussion of Floer homology in Part 2.
The construction of the operator $\frak q$ in this section
is a minor modification of the one given in  \cite[Section 3.8]{fooo:book1}
where smooth singular chains on $M$ are used.
\par
We will introduce a family of operators denoted by
\index{$\frak q_{\ell,k;\beta}$}
\begin{equation}\label{eq:mapq}
\frak q_{\ell,k;\beta}:
E_{\ell} (\Omega(M)[2]) \otimes B_k(\Omega(L)[1]) \to  \Omega(L)[1].
\end{equation}
Explanation of the various notations appearing in \eqref{eq:mapq}
is in order.
$\beta$ is an element of the image of $\pi_2(M,L) \to
H_2(M,L;\Z)$ and $C[i]$ is the degree shift of a $\Z$ graded
$\C$-vector space $C$ by $i$ defined by
$(C[i])^d = C^{d+i}$.
We recall from Notations and Conventions
(19)--(20) that
$E_{\ell}C$ is the quotient of
$B_{\ell}C=\underbrace{C\otimes \cdots \otimes C}_{\text{$\ell$ times}}$
by the symmetric group action.
The map \eqref{eq:mapq} is a $\C$-linear map of degree $1
-\mu_{L}(\beta)$ here $\mu_{L}$ is the Maslov index.
\index{Maslov index}\index{$\mu_{L}(\beta)$}
(See \cite[Definition 2.1.15]{fooo:book1}.)
\par
We next describe the main properties of $\frak q_{\ell,k;\beta}$.
Recall from Notations and Conventions
(19)--(20) again that both
$
BC = \bigoplus_{k=0}^{\infty} B_kC
$
and
$
EC = \bigoplus_{\ell=0}^{\infty} E_{\ell}C
$
have the structure of coassociative
coalgebra with coproduct $\Delta$ respectively.
We also consider a map $\Delta^{n-1}: BC \to (BC)^{\otimes n}$ or $EC \to
(EC)^{\otimes n}$
\index{$\Delta^{n-1}$}
defined by
$$
\Delta^{n-1} = (\Delta \otimes  \underbrace{id \otimes \cdots
\otimes id}_{n-2}) \circ (\Delta \otimes  \underbrace{id \otimes
\cdots \otimes id}_{n-3}) \circ \cdots \circ \Delta.
$$
Following Sweedler's notation \cite{sweedler},
\index{Sweedler's notation} we express
an element $\text{\bf x} \in BC$ as
\index{$\Delta^{n-1}$}
\begin{equation}\label{deltawritebyc}
\Delta^{n-1}(\text{\bf x}) = \sum_c \text{\bf x}^{n;1}_c \otimes
\cdots \otimes \text{\bf x}^{n;n}_c
\end{equation}
\index{$\text{\bf x}^{n;i}_c$}
where $c$ runs over some index set depending on $\text{\bf x}$.
Here
we note that by Notations and Conventions (20) we
always use the coproducts $\Delta_{\rm decon}$\index{coproduct!deconcatenation}
on $B(\Omega(L)[1])$ and $\Delta_{\rm shuff}$\index{coproduct!shuffle}
on $E(\Omega(M)[2])$, respectively.
Thus for ${\bf x} \in B(\Omega(L)[1])$
the equation \eqref{deltawritebyc} expresses
the decomposition of
$\Delta_{\rm decon}^{n-1}({\bf x})$, while
for ${\bf y} \in E(\Omega(M)[2])$
the equation \eqref{deltawritebyc} expresses
the decomposition of
$\Delta_{\rm shuff}^{n-1}({\bf y})$.
For an element
$
\text{\bf x} = x_1 \otimes \cdots \otimes x_k \in B_k(\Omega(L)[1])
$
we put the shifted degree $\deg'x_i = \deg x_i - 1$ and
$
\deg' \text{\bf x} = \sum \deg'x_i = \deg \text{\bf x} - k.
$
(Recall $\deg x_i$ is the cohomological degree of $x_i$ before
shifted.)
The next result is the de Rham version of  \cite[Theorem 3.8.32]{fooo:book1}.
\begin{thm}\label{qproperties} The operators $\frak
q_{\beta;\ell,k}$ satisfy the following properties:
\begin{enumerate}
\item
For each $\beta$ and $\text{\bf x} \in B_k(\Omega(L)[1])$,
$\text{\bf y} \in E_k(\Omega(M)[2])$,
\begin{equation}\label{qmaineq}
0 =
\sum_{\beta_1+\beta_2=\beta}\sum_{c_1,c_2}
(-1)^*
\frak q_{\beta_1}(\text{\bf y}^{2;1}_{c_1};
\text{\bf x}^{3;1}_{c_2} \otimes
\frak q_{\beta_2}(\text{\bf y}^{2;2}_{c_1};\text{\bf x}^{3;2}_{c_2})
\otimes \text{\bf x}^{3;3}_{c_2})
\end{equation}
where
$
* = \deg'\text{\bf x}^{3;1}_{c_2} +
\deg'\text{\bf x}^{3;1}_{c_2} \deg \text{\bf y}^{2;2}_{c_1}
+\deg \text{\bf y}^{2;1}_{c_1}.
$
In $(\ref{qmaineq})$ and hereafter, we write $\frak q_{\beta}(\text{\bf y};\text{\bf x})$ in place of
$\frak q_{\ell,k;\beta}(\text{\bf y};\text{\bf x})$ if
$\text{\bf y} \in E_{\ell}(\Omega(M)[2])$, $\text{\bf x} \in B_{k}(\Omega(L)[1])$.
We use notation $(\ref{deltawritebyc})$ in $(\ref{qmaineq})$.
\item If $1 \in E_0(\Omega(M)[2])$ and $\text{\bf x} \in B_k(\Omega(L)[1])$ then
\begin{equation}\label{qism}
\frak q_{0,k;\beta}(1;\text{\bf x}) = \frak m_{k;\beta}(\text{\bf x}).
\end{equation}
Here $\frak m_{k;\beta}$ is the filtered $A_{\infty}$ structure on
$\Omega(L)$.
\item Let $\text{\bf e}$ be the $0$ form (function) on $L$
which is $1$ everywhere. Let $\text{\bf x}_i \in B(\Omega(L)[1])$ and we put
$\text{\bf x} = \text{\bf x}_1 \otimes \text{\bf e} \otimes \text{\bf x}_2
\in B(\Omega(L)[1])$. Then
\begin{equation}\label{unital}
\frak q_{\beta}(\text{\bf y};\text{\bf x}) = 0
\end{equation}
except the following case :
\begin{equation}\label{unital2}
\frak q_{\beta_0}(1;\text{\bf e} \otimes x) =
(-1)^{\deg x}\frak q_{\beta_0}(1;x \otimes \text{\bf e}) = x,
\end{equation}
where $\beta_0 = 0 \in H_2(M,L;\Z)$ and $x \in \Omega(L)[1]
= B_1(\Omega(L)[1])$.
Note $1$ in $(\ref{unital2})$ is $1 \in E_0(\Omega(M)[2])$.
\end{enumerate}
\end{thm}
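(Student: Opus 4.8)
The plan is to construct the operators $\frak q_{\ell,k;\beta}$ as integrals over suitable moduli spaces of bordered stable maps with interior and boundary marked points, and then deduce the three listed properties from the structure of the boundaries of those moduli spaces. First I would recall from \cite{fooo:book} Section 3.8 the moduli space $\mathcal M_{k+1,\ell}(\beta)$ of stable maps $u: (\Sigma,\partial\Sigma) \to (M,L)$ from a genus-zero bordered Riemann surface with $k+1$ cyclically ordered boundary marked points $z_0,z_1,\dots,z_k$ and $\ell$ interior marked points $z_1^+,\dots,z_\ell^+$, in the class $\beta \in \pi_2(M,L)$. This space carries an oriented Kuranishi structure with corners (relative spin-ness of $L$ gives the orientation), evaluation maps $\mathrm{ev}_i: \mathcal M_{k+1,\ell}(\beta) \to L$ at the boundary points and $\mathrm{ev}_i^{\mathrm{int}}: \mathcal M_{k+1,\ell}(\beta)\to M$ at the interior points, and $\mathrm{ev}_0$ is weakly submersive. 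Choosing a system of multisections compatible with the boundary decomposition, I would define
\[
\frak q_{\ell,k;\beta}(h_1^+ \otimes\cdots\otimes h_\ell^+; h_1\otimes\cdots\otimes h_k)
= (\mathrm{ev}_0)_! \bigl( \textstyle\bigwedge_i (\mathrm{ev}_i^{\mathrm{int}})^* h_i^+ \wedge \bigwedge_j \mathrm{ev}_j^* h_j \bigr),
\]
the pushforward being integration along the fiber of $\mathrm{ev}_0$ in the sense of \cite{fooo:bulk} Section 12. The $\frak S_\ell$-invariance of the interior marked points and the shuffle sign \eqref{shufflesignbeforshift} make this well-defined on $E_\ell(\Omega(M)[2])$; the degree shift $1-\mu(\beta)$ is a dimension count using $\dim \mathcal M_{k+1,\ell}(\beta) = n + \mu(\beta) + k - 2 + 2\ell$.

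Next I would establish the three properties in turn. For (1), the $A_\infty$-type relation \eqref{qmaineq}: apply Stokes' theorem (\cite{fooo:bulk} Lemma 12.13) to $\frak q_{\ell,k;\beta}$, so that $d$ of the pushforward equals the pushforward over $\partial \mathcal M_{k+1,\ell}(\beta)$ plus $\pm$ pushforwards of $d h_i$. The codimension-one boundary of $\mathcal M_{k+1,\ell}(\beta)$ consists of configurations where the domain splits into two bordered components glued at a boundary node; the interior marked points are distributed by the deconcatenation coproduct $\Delta_{\mathrm{decon}}$ on the $\Omega(L)[1]$-inputs and the shuffle coproduct $\Delta_{\mathrm{shuff}}$ on the $\Omega(M)[2]$-inputs (matching Notations and Conventions (21)), and $\beta$ splits as $\beta_1+\beta_2$. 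The composition/fiber-product formula (\cite{fooo:bulk} Lemma 12.15) identifies the boundary contribution with the right-hand side of \eqref{qmaineq}; the sign $*$ comes from the Koszul sign bookkeeping for moving forms past each other and past the shift, exactly as in Theorem 3.8.32 of \cite{fooo:book}. For (2): when $\ell = 0$ there are no interior marked points, so $\mathcal M_{k+1,0}(\beta)$ is precisely the moduli space defining the filtered $A_\infty$ structure $\frak m_{k;\beta}$ on $\Omega(L)$, giving \eqref{qism} by definition. For (3), the unitality statements \eqref{unital}–\eqref{unital2}: inserting the constant function $\mathbf e$ at a boundary marked point, one uses the forgetful map that forgets that marked point; a standard argument (as in \cite{fooo:book} Section 3.3, or Lemma 3.5.3 there) shows the fiber-integral vanishes whenever the domain is stable after forgetting, because the pulled-back form $\mathrm{ev}^*\mathbf e = \mathbf 1$ contributes nothing to an integral over a positive-dimensional fiber, leaving only the exceptional unstable cases $\beta_0 = 0$, $k=1$ that give \eqref{unital2} with the stated signs.

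The main obstacle I expect is not any single property but the \emph{simultaneous, inductive} construction of the compatible system of multisections on all the $\mathcal M_{k+1,\ell}(\beta)$ — one must perturb so that the boundary identifications in the Kuranishi structure are respected, that $\mathrm{ev}_0$ remains a submersion on the perturbed spaces so that integration along the fiber is defined and Stokes applies, and that everything is invariant under the $\frak S_\ell$-action and the forgetful maps used for unitality. This is carried out by induction on $(\beta \cap \omega, k, \ell)$ ordered appropriately; Gromov compactness guarantees the induction is well-founded, and the energy-filtration arguments (paralleling Lemmas \ref{adiccomv1}, \ref{adiccomv2}) guarantee convergence of the resulting sums over $\beta$ in the $q$-adic (equivalently $T$-adic) topology. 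Once this bookkeeping is in place — which is essentially the de Rham translation of the singular-chain construction in \cite{fooo:book} Section 3.8, with the only genuine change being that cochains on $M$ are now differential forms and all operations are multilinear over $\Lambda$ rather than defined on chains — properties (1)–(3) follow formally as indicated.
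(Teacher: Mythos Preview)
Your proposal is correct and follows essentially the same approach as the paper: define $\frak q_{\ell,k;\beta}$ via correspondence by the moduli spaces $\mathcal M_{k+1;\ell}(L;\beta)$ equipped with compatible continuous families of multisections (the paper's Proposition~\ref{disckura} and Lemma~\ref{existmkulti1}), then derive (1) from the boundary decomposition and Stokes, (2) by definition, and (3) from compatibility with the forgetful map of boundary marked points. The paper likewise emphasizes that the inductive construction of the compatible multisection system is the main technical input, citing \cite{fukaya:cyc} for the details; your identification of this as the central obstacle is accurate.
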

The singular homology version of
Theorem \ref{qproperties} is proved in  \cite[Sections 3.8 and 7.4]{fooo:book1, fooo:book2}. The version where we use de Rham cohomology
for $L$ and cycles (of smooth submanifolds)
on $M$ is given in  \cite[Section 6]{fooo:toric1}
for the case when $M$ is a toric manifold and $L$ is a
Lagrangian torus fiber of $M$.
\par
Since the details of this construction will be needed for the proof of Theorem \ref{thm:heavy}
later in Section \ref{sec:heavy}, we explain the construction of the relevant operators
and the main ideas used in the proof of Theorem \ref{qproperties} here,
although it is a straightforward modification of the
constructions given in \cite{fooo:book1, fooo:book2, fooo:toric1}.
\index{$\mathcal M_{k+1;\ell}(L;\beta)$}

\begin{defn}\label{diskmoduli1}
We denote by $\overset{\circ}{\mathcal M}_{k+1;\ell}(L;\beta)$
the set of all $\sim$ equivalence classes of triples
$(u;z_1^{+},\dots,z_{\ell}^+;z_0,\dots,z_k)$ satisfying the following:
\begin{enumerate}
\item
$u : (D^2,\partial D^2) \to (M,L)$ is a pseudo-holomorphic map
such that $u(\partial D^2) \subset L$.
\item $z_1^{+},\dots,z_{\ell}^+$ are points in the interior of $D^2$
which are mutually distinct.
\item
$z_0,\dots,z_k$ are points on the boundary
$\partial D^2$ of $D^2$.
They are mutually distinct.
$z_0,\dots,z_k$ respects the counterclockwise cyclic order on $\partial D^2$.
\item The homology class $u_*([D^2,\partial D^2])$
is $\beta \in H_2(M,L;\Z)$.
\end{enumerate}
\par\medskip
We say that
$(u;z_1^{+},\dots,z_{\ell}^+;z_0,\dots,z_k)
\sim
(u';z_1^{\prime +},\dots,z^{\prime +}_{\ell};z'_0,\dots,z'_k)$
if there exists a biholomorphic map $v : D^2 \to D^2$
such that
$$
u' \circ v = u,
\quad
v(z_i^+) = z_i^{\prime +},
\quad
v(z_i) = z_i^{\prime}.
$$
\end{defn}
We define an evaluation map
$$
(\text{\rm ev},\text{\rm ev}^{\partial})
= (\text{\rm ev}_1,\dots,\text{\rm ev}_{\ell};
\text{\rm ev}^{\partial}_0,\dots,\text{\rm ev}^{\partial}_k)
: \overset{\circ}{\mathcal M}_{k+1;\ell}(L;\beta)
\to M^{\ell} \times L^{k+1}
$$
by
$$
\text{\rm ev}_i([u;z_1^{+},\dots,z_{\ell}^+;z_0,\dots,z_k])
= u(z_i^+),
\quad
\text{\rm ev}^{\partial}_i([u;z_1^{+},\dots,z_{\ell}^+;z_0,\dots,z_k])
= u(z_i).
$$
\begin{prop}\label{disckura}
\begin{enumerate}
\item The moduli space
$\overset{\circ}{\mathcal M}_{k+1;\ell}(L;\beta)$ has a compactification
${\mathcal M}_{k+1;\ell}(L;\beta)$ that is Hausdorff.
\item
The space ${\mathcal M}_{k+1;\ell}(L;\beta)$ has an orientable Kuranishi structure with corners.
\item
The (normalized) boundary of ${\mathcal M}_{k+1;\ell}(L;\beta)$ in the
sense of Kuranishi structure is described by the following fiber product
over $L$.
\begin{equation}\label{eq177}
\partial{\mathcal M}_{k+1;\ell}(L;\beta)
= \bigcup {\mathcal M}_{k_1+1;\# \L_1}(L;\beta_1)
{}_{\text{\rm ev}^{\partial}_{0}}\times_{\text{\rm ev}^{\partial}_i} {\mathcal M}_{k_2+1;\# \L_2}(L;\beta_2),
\end{equation}
where the union is taken over all $(\L_1,\L_2) \in \text{\rm Shuff}(\ell)$,
$k_1,k_2\in \Z_{\ge 0}$ with $k_1 + k_2 = k$ and $\beta_1,\beta_2
\in H_2(M,L;\Z)$ with $\beta_1 +\beta_2 = \beta$.
\item
There exists a map $\mu_L : H_2(M,L;\Z) \to  2\Z$,
Maslov index, such that the (virtual) dimension satisfies
the following equality $(\ref{dimensiondisc})$.
\begin{equation}\label{dimensiondisc}
\dim {\mathcal M}_{k+1;\ell}(L;\beta) = n+\mu_L(\beta)
+k -2 + 2\ell.
\end{equation}
\item
We can define orientations of ${\mathcal M}_{k+1;\ell}(L;\beta)$ so that
$(3)$ above is compatible with this orientation
    in the sense of  \cite[Proposition {\rm 8.3.3}]{fooo:book2}.
\item The evaluation map is extended to the
compactification so that it is compatible with $(\ref{eq177})$.
\item $ev_0^{\partial}$ is weakly submersive.
\item
The Kuranishi structure is compatible with the forgetful map  of the
boundary marked points.
\item
The Kuranishi structure is invariant under the permutation of interior marked
points.
\item
The Kuranishi structure is invariant under the cyclic permutation of the
boundary  marked
points.
\end{enumerate}
\end{prop}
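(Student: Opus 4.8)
The plan is to establish Proposition \ref{disckura} by reducing each of its ten assertions to the corresponding structural result already proved in \cite{fukaya-ono} and \cite{fooo:book} for the moduli spaces of bordered stable maps, making only the changes forced by the fact that we now carry $\ell$ interior marked points together with the $(k+1)$ boundary marked points. First I would set up the stable map compactification: a point of $\mathcal M_{k+1;\ell}(L;\beta)$ is an equivalence class of a stable bordered nodal $J$-holomorphic map of genus $0$ with one boundary component, carrying $\ell$ interior and $k+1$ boundary marked points, boundary mapped to $L$, and total class $\beta$; Gromov compactness for bordered stable maps (\cite{fooo:book} Theorem 7.1.43 and the surrounding discussion) gives that this space is compact and metrizable, hence Hausdorff, which is (1).

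For (2), (5), (7)--(10) I would invoke verbatim the constructions of \cite{fooo:book} Sections 7.1 and 8.10: the Kuranishi structure is built by the standard obstruction-bundle gluing, its orientability and the orientation conventions are exactly those of \cite{fooo:book} Chapter 8 (so that the boundary identification is compatible in the sense of Proposition 8.3.3), $\mathrm{ev}_0^\partial$ is weakly submersive because one can always stabilize by adding the extra boundary marked point $z_0$ and the evaluation there is a submersion on the source of the Kuranishi charts, and invariance under the forgetful map of boundary marked points, under permutation of interior marked points, and under cyclic permutation of boundary marked points all follow from the way the Kuranishi charts are constructed symmetrically in the marked points. The dimension formula (4) is the usual index computation: the real linearized operator on the disk contributes $n+\mu_L(\beta)$, the $k+1$ boundary marked points contribute $+(k+1)$, the $\ell$ interior marked points contribute $+2\ell$, and dividing by the $3$-dimensional automorphism group $\mathrm{Aut}(D^2)$ subtracts $3$, giving $\dim \mathcal M_{k+1;\ell}(L;\beta) = n+\mu_L(\beta)+k-2+2\ell$, which is \eqref{dimensiondisc}; this is identical to \cite{fooo:book} Theorem 7.1.43 except for the $+2\ell$ coming from the interior marked points.

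The substantive point is (3), the description of the codimension-one boundary, and this is where the interior marked points interact with the shuffle combinatorics. The codimension-one strata of $\mathcal M_{k+1;\ell}(L;\beta)$ in the sense of Kuranishi structures arise from breaking the disk along a boundary node into two disk components carrying classes $\beta_1,\beta_2$ with $\beta_1+\beta_2=\beta$; the $k$ non-root boundary marked points distribute into the two components as a consecutive block on each side (hence $k_1+k_2=k$, with $z_0$ on one component and the node playing the role of the root marked point $\mathrm{ev}^\partial_i$ on the other, which is the fiber product over $L$ written in \eqref{eq177}); and each of the $\ell$ interior marked points goes to one component or the other, which is precisely a choice of $(\mathbb L_1,\mathbb L_2)\in\mathrm{Shuff}(\ell)$. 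Sphere bubbling and interior-node degenerations are codimension at least two and do not contribute to $\partial$. The main obstacle I expect is bookkeeping: matching the orientation on the boundary stratum induced from $\mathcal M_{k+1;\ell}(L;\beta)$ with the product orientation on $\mathcal M_{k_1+1;\#\mathbb L_1}(L;\beta_1)\,{}_{\mathrm{ev}^\partial_0}\times_{\mathrm{ev}^\partial_i}\mathcal M_{k_2+1;\#\mathbb L_2}(L;\beta_2)$, including the sign $(-1)^\ast$ that will eventually appear in \eqref{qmaineq}. I would handle this exactly as in \cite{fooo:book} Chapter 8 (Proposition 8.3.3 and the sign computations of Section 8.10), checking that the insertion of interior marked points, being even-codimensional data evaluated into $M$, does not disturb the boundary-orientation comparison beyond the reordering sign already recorded by the shuffle. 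Once (3) is in place with correct orientations, (6) is immediate since the evaluation maps are defined on the compactification by continuity and restrict on each boundary stratum to the product of the evaluation maps on the factors.
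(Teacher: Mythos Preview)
Your overall strategy---reducing each item to the existing structural results for moduli of bordered stable maps---is exactly what the paper does, and your treatment of (1)--(7) is fine and in fact more explicit than the paper's one-line citation of \cite{fooo:book} Propositions 7.1.1, 7.1.2. The dimension count and the boundary description are handled correctly.

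The gap is in your treatment of (8)--(10). You write that compatibility with the forgetful map, invariance under permutation of interior marked points, and invariance under cyclic permutation of boundary marked points ``all follow from the way the Kuranishi charts are constructed symmetrically in the marked points,'' citing only \cite{fooo:book} Sections 7.1 and 8.10. This is not correct. The Kuranishi structures built in \cite{fooo:book} single out $z_0$ as the output marked point, and the inductive construction of obstruction bundles over the boundary strata is organized around that asymmetry; there is no a priori reason the resulting structure is invariant under the cyclic permutation sending $z_0$ to $z_1$, nor that it is compatible with forgetting boundary marked points in the precise sense needed for unitality. Achieving (8), (9), (10) simultaneously requires a genuinely more refined construction, which is the content of \cite{fukaya:cyc} (see Corollary 3.1 there, and Definition 3.1 for the precise meaning of forgetful-map compatibility). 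The paper explicitly separates its citations accordingly: (1)--(7) from \cite{fooo:book}, and (8)--(10) from \cite{fukaya:cyc}. You should do the same rather than asserting that the symmetries come for free.
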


\begin{rem} We remark that we require weak submersivity of $ev_0^\partial$, not
for other evaluation maps. This is because we pull back differential forms by $\text{\rm ev}_i^\partial$
for $i \geq 1$ while we push forward them by $\text{\rm ev}_0^\partial$. As well-known,
it is straightforward to pull back differential forms but one needs
some submersivity of the map to push forward them. We would also remark that
one cannot require the evaluation maps at all marked points simultaneously
weakly submersive. (See  \cite[Remark 3.2]{fukaya:cyc} for more explanation for this point.)
\end{rem}

An element of the right hand side of (\ref{eq177}) is drawn in  Figure \ref{Figure12} below.
\begin{figure}[h]
\centering
\includegraphics[scale=0.3]
{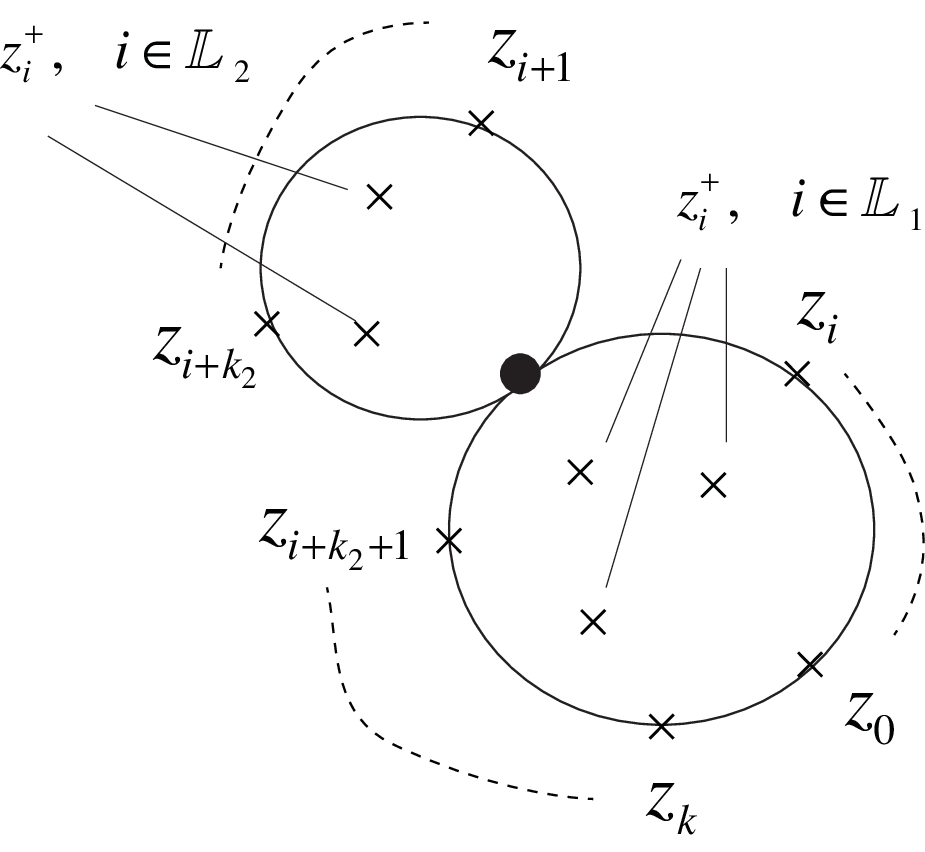}
\caption{An element of the right hand side of (\ref{eq177})}
\label{Figure12}
\end{figure}
Proposition \ref{disckura} (1) - (7) is proved
in \cite[Propositions 7.1.1,7.1.2]{fooo:book2}  for
the case of $\ell =0$. The same proof applies to case $\ell \ne 0$.
The Kuranishi structure satisfying the additional properties (8), (9), (10)
is constructed in \cite[ Corollary 3.1]{fukaya:cyc}.
We refer \cite[Definition 3.1]{fukaya:cyc}  for the precise meaning of the statement
(8).
\begin{lem}\label{existmkulti1}
There exists
a system of CF-perturbations
on the moduli spaces ${\mathcal M}_{k+1;\ell}(L;\beta)$
such that the following holds:
\begin{enumerate}
\item
It is transversal to zero in the sense of Definition $\ref{CFtransv}$.
\item
It is compatible with the
description of the boundary in Proposition $\ref{disckura}$ $(3)$ above.
\item
It is compatible
with the forgetful map of the boundary marked points.
\item
It is compatible
with the permutation of interior marked points.
\item
It is compatible with cyclic permutation of the boundary marked points.
\item
$\text{\rm ev}_0^{\partial}$ restricted to the
zero set of this system of CF-perturbations is a submersion.
In other words, $\text{\rm ev}_0^{\partial}$ is \index{strongly submersive}
strongly submersive
with respect to our  CF-perturbation in the sense of Definition $\ref{CFtransv}$, \cite[Definition 7.48]{fooo:tech2}.
\end{enumerate}
\end{lem}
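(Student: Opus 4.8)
The plan is to construct the desired system of continuous families of multisections by induction on the energy $\beta \cap \omega$ (equivalently, on the symplectic area, using Gromov compactness to ensure that for any $E$ there are only finitely many $\beta$ with $\beta\cap\omega \le E$ and $\mathcal M_{k+1;\ell}(L;\beta)\ne\emptyset$), and simultaneously on the number $k$ of boundary marked points. The reason one must work with \emph{continuous families} of multisections rather than ordinary multisections is precisely to make item (6) — the submersivity of $\mathrm{ev}_0^\partial$ on the zero set — compatible with all the other requirements; this is the technique of \cite{fooo:book} Section 7.2 (and the refinements in \cite{fooo:toric1} Section 12, \cite{fukaya:cyc}), and our situation is a de Rham analogue of that. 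First I would fix, for each $(k,\ell,\beta)$, an auxiliary parameter space $W_{k+1;\ell}(\beta)$ equipped with a smooth compactly-supported probability measure, and build the family of multisections $\{s^{\mathfrak w}\}_{\mathfrak w \in W_{k+1;\ell}(\beta)}$ on $\mathcal M_{k+1;\ell}(L;\beta)$.

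The inductive step proceeds as follows. Assuming the families have been constructed on all moduli spaces of strictly smaller energy, and on those of the same energy with fewer boundary marked points, Proposition \ref{disckura}(3) describes $\partial\mathcal M_{k+1;\ell}(L;\beta)$ as a union of fiber products $\mathcal M_{k_1+1;\#\mathbb L_1}(L;\beta_1)\,{}_{\mathrm{ev}^\partial_0}\!\times_{\mathrm{ev}^\partial_i}\mathcal M_{k_2+1;\#\mathbb L_2}(L;\beta_2)$ with $\beta_1+\beta_2=\beta$. Since $\mathrm{ev}_0^\partial$ is already a submersion on the perturbed zero set of the first factor (by the inductive hypothesis, item (6)), the fiber product is transverse and inherits a continuous family of multisections from the two factors. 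One checks that the families so induced on the various boundary strata agree on the corners (this is where compatibility with the \emph{forgetful map} of boundary marked points, Proposition \ref{disckura}(8) and \cite{fukaya:cyc} Definition 3.1, the \emph{permutation} invariance for interior marked points (9), and the \emph{cyclic} invariance for boundary marked points (10) are used: each of these symmetries must be respected by the constructed family, and they are mutually compatible by the structure of the Kuranishi structures). Then one extends the family from $\partial\mathcal M_{k+1;\ell}(L;\beta)$ to the whole moduli space using the general extension theory for continuous families of multisections on spaces with Kuranishi structure with corners, enlarging the parameter space $W$ as necessary, and finally one perturbs further (still within the class of continuous families) to achieve that $\mathrm{ev}_0^\partial$ becomes a submersion on the new zero set over the interior — this last perturbation can be done relative to the boundary since submersivity is an open condition that already holds there.

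The main obstacle is the simultaneous compatibility of the several symmetry constraints (4), (5) together with the boundary compatibility (2) and the forgetful-map compatibility (3): one must organize the induction so that the choices made for the "smaller" moduli spaces are already symmetric in the appropriate sense and restrict correctly under all gluing identifications of boundary strata, including the overlaps where two different strata meet in codimension $\ge 2$. This is handled exactly as in \cite{fukaya:cyc} Corollary 3.1 and \cite{fooo:book} Section 7.2: one first constructs the Kuranishi structures themselves with all the symmetries (Proposition \ref{disckura}(8)--(10)), and then performs the multisection perturbation equivariantly, using averaged/continuous families to kill the obstruction to combining transversality with equivariance and with the submersivity of $\mathrm{ev}_0^\partial$. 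A secondary, more routine point is bookkeeping for the running-out phenomenon: as in Remark following the Piunikhin construction, one works over $\Lambda^\downarrow_0$ truncated at a finite energy level and then passes to the inductive limit, so that at each finite stage only finitely many $\mathcal M_{k+1;\ell}(L;\beta)$ are involved and the inductive construction terminates. I would then simply remark that all these steps are a direct transcription of \cite{fooo:book, fooo:toric1, fukaya:cyc} to the present de Rham setting and omit the routine details, which is the standard practice in this series of papers.
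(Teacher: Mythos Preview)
Your approach is essentially the same as the paper's: the paper simply cites \cite{fukaya:cyc} Corollary 5.2 and notes that the construction proceeds by induction over $\beta\cap\omega$ and $\ell$. Your expanded sketch of that inductive construction is accurate in spirit, but note one correction: the secondary induction variable should be $\ell$ (the number of interior marked points), not $k$. The point is that compatibility with the forgetful map of boundary marked points (item (3)) forces the multisection on $\mathcal M_{k+1;\ell}(L;\beta)$ to be the pullback of that on $\mathcal M_{1;\ell}(L;\beta)$, so there is no freedom left once the $k=0$ case is fixed, and hence nothing to induct on in $k$.
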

\par
\begin{proof}
Existence of such a system of CF-perturbations
is established in \cite[Corollary 5.2]{fukaya:cyc}
by an induction over $\beta\cap \omega$ and $\ell$.
(See also \cite[Theorem 7.49]{fooo:tech2}.)
\end{proof}
\begin{rem}
Strictly speaking, we need to fix $E_0$ and $\ell_0$ and restrict ourselves
to those moduli spaces ${\mathcal M}_{k+1;\ell}(L;\beta)$
satisfying $\beta\cap\omega \le E_0$ and $\ell \le \ell_0$,
in order to take care of the problem of `running out'
pointed out in \cite[Subsection 7.2.3]{fooo:book2}.
We can handle this in the same way as in \cite{fooo:book2}.
With the de Rham version it is simpler to resolve this problem than
with the singular homology version used in \cite{fooo:book2}. The detail of this
de Rham version is provided in \cite[Section 14]{fukaya:cyc}
for the case $\ell = 0$. The case $\ell \ne 0$ can be handled
by  the same way with the homological algebra
developed in \cite[Section 7.4]{fooo:book2}.
\end{rem}
Let $g_1,\dots,g_{\ell} \in \Omega(M)$ and
$h_1,\dots,h_k \in \Omega(L)$ and $\beta$ with
$(\beta,\ell) \ne (0,0)$. We define
\begin{equation}\label{defqformula}
\aligned
&\frak q_{\ell,k;\beta}(g_1,\dots,g_{\ell},h_1,\dots,h_k)\\
&=
(\text{\rm ev}^{\partial}_{0})!
\left(
\text{\rm ev}_1^*g_1\wedge \dots \wedge \text{\rm ev}_{\ell}^*g_{\ell}
\wedge
\text{\rm ev}_1^{\partial *}h_1\wedge \dots \wedge \text{\rm ev}_{k}^{\partial *}h_{k}
\right).
\endaligned
\end{equation}
Here we use the evaluation map
$$
(\text{\rm ev},\text{\rm ev}^{\partial})
= (\text{\rm ev}_1,\dots,\text{\rm ev}_{\ell};
\text{\rm ev}^{\partial}_0,\dots,\text{\rm ev}^{\partial}_k)
: {\mathcal M}_{k+1;\ell}(L;\beta)
\to M^{\ell} \times L^{k+1}
$$
as the correspondence via the CF-perturbation of the moduli space ${\mathcal M}_{k+1;\ell}(L;\beta)$
given in Lemma \ref{existmkulti1}.
(See Definition \ref{def320222} and \cite[Definition 9.13]{fooo:tech2}
for the definition of $(\text{\rm ev}^{\partial}_{0}!$.)
For $\beta = \beta_0 = 0$, $\ell = 0$ we put
\begin{equation}\label{defqformula2}
\aligned
&\frak q_{0,k;\beta_0}(h_1,\dots,h_k)
=
\begin{cases}
0 & k\ne 1,2 \\
(-1)^{n+1+\deg h_1} dh_1 & k=1\\
(-1)^{\deg h_1(\deg h_1+1)} h_1 \wedge h_2
& k=2.
\end{cases}
\endaligned
\end{equation}
\par
Theorem \ref{qproperties} (1) is a consequence of
Proposition \ref{disckura} (2)(3),
Stokes' formula (Theorem \ref{them48} and \cite[Theorem 9.26]{fooo:tech2}) and the composition formula
(Theorem \ref{compform}, \cite[Theorem 10.20]{fooo:tech2}).
\par
We may regard (\ref{qism}) as the definition of its right hand side.
So Theorem \ref{qproperties} (2) is obvious.
\par
Theorem \ref{qproperties} (3) is a consequence of
Proposition \ref{disckura} (8) and the compatibility of
CF-perturbations to this forgetful map.
See \cite[Section 7]{fukaya:cyc}  for a detailed explanation of this point.
The proof of Theorem \ref{qproperties} is now complete.
\qed
\par

\begin{rem}\label{rem:deferenceofq}
\begin{enumerate}
\item For $g_1\otimes \dots \otimes g_{\ell} \in B_{\ell}(\Omega(M))$ and $h_1\otimes \dots \otimes h_k \in B_k(\Omega(L))$ we defined
$\frak q_{\ell,k;\beta}(g_1,\dots, g_{\ell},h_1, \dots, h_k)$ by \eqref{defqformula}.
Thanks to Proposition \ref{disckura} (9) this is invariant under the permutation of $g_1, \dots, g_{\ell}$. Thus the operator $\frak q_{\ell,k;\beta}$ descends to the operator
$$
\frak q_{\ell,k;\beta}:
E_{\ell} (\Omega(M)[2]) \otimes B_k(\Omega(L)[1]) \to  \Omega(L)[1].
$$
\item
The right hand side of \eqref{defqformula} is
the same as that   of \cite[Definition 6.10]{fooo:toricmir}
even with the same constant in front, but different from the one
given in   \cite[(3.8.68)]{fooo:book1} and   \cite[(6.10)]{fooo:bulk} in the
coefficient.
In \cite{fooo:book1,fooo:toric1,fooo:bulk}, as we noted in Notations and Conventions (20),
we denoted by $E_{\ell}C$ the
$\text{\rm Perm}(\ell)$-invariant subset of $BC$
and used the deconcatenation coproduct\index{coproduct!deconcatenation} on it.
Indeed, we have the equality
$
\frak q_{\ell,k;\beta}^{\rm book}=
\frac{1}{\ell !} \frak q_{\ell,k;\beta}
$
where we denote by $\frak q_{\ell,k;\beta}^{\rm book}$ the operator given in \cite[(6.10)]{fooo:bulk} (or
in \cite[(3.8.68)]{fooo:book1}).
However, we can see that this difference does not cause any trouble in the proof of Theorem 17.1 by
just noticing the identity
$
{\bf y}_c^{2;1}\otimes {\bf y}_c^{2;2}=
\frac{\ell_1 ! \ell_2 !}{\ell !}
{\bf y}_c^{2;1 \prime}\otimes {\bf y}_c^{2;2\prime}
$
on $E_{\ell_1}C \otimes E_{\ell_2}C$, where
the left (resp. right) hand side is the
$(\ell_1,\ell_2)$-component in the
decomposition of $\Delta_{\rm decon}{\bf y}$ for
the invariant set (resp. $\Delta_{\rm shuff}{\bf y}$ for the quotient space). Here
we identify the quotient set with the invariant subset by the map
$
[y_1\otimes \dots \otimes y_{\ell}] \mapsto
\frac{1}{\ell !} \sum
_{\sigma \in \frak S_{\ell}} (-1)^{\ast}
y_{\sigma(1)}\otimes \dots \otimes y_{\sigma(\ell)}
$
with $\ast=\sum_{i<j;\sigma(i)>\sigma(j)}\deg y_i \deg y_j$.
\end{enumerate}
\end{rem}

We next explain how we use the map $\frak q$ to deform the filtered
$A_{\infty}$ structure $\frak m$ on $L$. In this section we use the
universal Novikov ring $\Lambda_{0}$.

\begin{defn}\label{deformedqdef} Put $\text{\bf b} = (\frak b_0,\text{\bf b}_{2;1},\frak b_+,b_+)$
where
\begin{equation}
\aligned
&\frak b_0  \in  H^0(M;\Lambda_0), \quad
&\text{\bf b}_{2;1}  \in H^2(M,L;\C),\\
&\frak b_+  \in  H^2(M;\Lambda_+)
\oplus \bigoplus_{k\ge 2} H^{2k}(M;\Lambda_0), \quad
&b_+  \in  \Omega^1(L)\widehat\otimes\Lambda_+
\oplus\bigoplus_{k\ge 2} \Omega^{2k-1}(L) \widehat\otimes\Lambda_0.
\endaligned
\nonumber
\end{equation}
Here $ \Omega^1(L)\widehat\otimes\Lambda_+$ is a completion of an algebraic tensor product,
$\Omega^1(L) \otimes\Lambda_+$.

We represent  $\frak b_0$, $\frak b_+$ by
closed differential forms which are denoted by the same letters.
\medskip
For each $k \ne 0$, we define $\frak m_{k}^{\text{\bf b}}$ by
\index{$\frak m_{k}^{\text{\bf b}}$}
\begin{equation}\label{mkdefeq}
\aligned
&\frak m_{k}^{\text{\bf b}}(x_1,\ldots,x_k) \\
&= \sum_{\beta\in H_2(M,L:\Z)}
\sum_{\ell=0}^{\infty}\sum_{m_0=0}^{\infty}\cdots
\sum_{m_k=0}^{\infty}T^{\omega\cap \beta}
\frac{\exp(\text{\bf b}_{2;1} \cap \beta)}{\ell!}\\
&\hskip2cm
\frak
q_{\ell,k+\sum_{i=0}^k m_i;\beta}(\frak b_{+}^{\otimes\ell};
b_{+}^{\otimes m_0},x_1,b_{+}^{\otimes m_1},\ldots,
b_{+}^{\otimes m_{k-1}},x_k,b_{+}^{\otimes m_k}),
\endaligned
\end{equation}\index{bulk deformation!filtered $A_{\infty}$ algebra}
where $x_i \in \Omega(L)$.
We extend it $\Lambda_0$-linearly to $\Omega(L)
\widehat\otimes \Lambda_0$.
\par
For $k=0$, we define $\frak m_0^{\text{\bf b}}$ by
\begin{equation}\label{mkdefeq2}
\aligned
\frak m_{0}^{\text{\bf b}}(1)
= \frak b_0 + \sum_{\beta\in H_2(M,L:\Z)}
\sum_{\ell=0}^{\infty}\sum_{m=0}^{\infty}T^{\omega\cap \beta}
\frac{\exp(\text{\bf b}_{2;1} \cap \beta)}{\ell!}
\frak
q_{\ell,k+m;\beta}(\frak b_{+}^{\otimes\ell};
b_{+}^{\otimes m}).
\endaligned
\end{equation}
Here we embed $H^0(M;\Lambda_0) = \Lambda_0 \subset \Omega^0(L) \widehat\otimes \Lambda_0$
as $\Lambda_0$-valued constant functions on $M$.
\end{defn}

We can prove that the right hand sides in \eqref{mkdefeq}, \eqref{mkdefeq2} converge in
$T$-adic topology in the same way as in Lemma \ref{adiccomv1}.
\begin{rem}
The weight appearing in (\ref{mkdefeq})
is mostly the same as the one appearing in the definition of Gromov-Witten invariants.
(See Remark \ref{rem53}, and \cite[Section 4.1]{fooo:toricmir}.)
\end{rem}
\begin{lem}\label{bulkdef} The family
$\{\frak m^{\text{\bf b}}_{k}\}_{k=0}^{\infty}$ defines a filtered $A_{\infty}$
structure on $\Omega(L) \widehat{\otimes} \Lambda_{0}$.
\end{lem}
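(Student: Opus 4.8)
The plan is to deduce Lemma \ref{bulkdef} from Theorem \ref{qproperties}(1) by a purely algebraic argument, exactly as in the singular-homology setting of \cite{fooo:book}. The key observation is that the deformation formula \eqref{mkdefeq}--\eqref{mkdefeq2} can be rephrased coalgebraically: if we let $\widehat{\frak q} : E(\Omega(M)[2])\,\widehat\otimes\,B(\Omega(L)[1]) \to B(\Omega(L)[1])$ be the coderivation whose Taylor coefficients are the $\frak q_{\ell,k;\beta}$ (weighted by $T^{\omega\cap\beta}$ and $\exp(\text{\bf b}_{2;1}\cap\beta)/\ell!$), then $\frak m^{\text{\bf b}}$ is obtained by substituting the group-like element $e^{\frak b_+}\otimes e^{b_+}$ into the $E(\Omega(M)[2])$-slot and the $b_+$-insertions into the $B(\Omega(L)[1])$-slot. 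Concretely, one first checks that for $\text{\bf b}_+ := \frak b_+$ of even degree, $e^{\frak b_+} = \sum_\ell \frak b_+^{\otimes \ell}/\ell!$ is group-like for $\Delta_{\rm shuff}$ on $E(\Omega(M)[2])$, and similarly $e^{b_+} = \sum_m b_+^{\otimes m}$ is group-like for $\Delta_{\rm decon}$ on $B(\Omega(L)[1])$ (using $\deg' b_+$ even); these are the identities \eqref{eqs:exponentials} in Notations and Conventions (18).

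First I would set up the bookkeeping: define $\widehat{\frak m^{\text{\bf b}}}$ as the coderivation on $B(\Omega(L)[1])$ with coefficients $\frak m^{\text{\bf b}}_k$, and show that the $A_\infty$ relation $\widehat{\frak m^{\text{\bf b}}}\circ\widehat{\frak m^{\text{\bf b}}} = 0$ is equivalent to the collection of quadratic identities among the $\frak m^{\text{\bf b}}_k$. Then I would expand each $\widehat{\frak m^{\text{\bf b}}}$ using the substitution above and apply $\Delta_{\rm decon}$ (on the $L$-factor) and $\Delta_{\rm shuff}$ (on the $M$-factor), using group-likeness to split $e^{\frak b_+}$ and $e^{b_+}$ compatibly across the two $\frak q$-factors. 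The upshot is that the sum over how the $M$-insertions $\frak b_+$ and the $L$-insertions $b_+$ are distributed between the two applications of $\frak q$ reconstructs precisely the left-hand side of \eqref{qmaineq} summed over $\beta_1+\beta_2=\beta$, weighted by $T^{\omega\cap\beta_1}T^{\omega\cap\beta_2} = T^{\omega\cap\beta}$ and by $\exp(\text{\bf b}_{2;1}\cap\beta_1)\exp(\text{\bf b}_{2;1}\cap\beta_2) = \exp(\text{\bf b}_{2;1}\cap\beta)$. Hence $\widehat{\frak m^{\text{\bf b}}}\circ\widehat{\frak m^{\text{\bf b}}} = 0$ term-by-term in $\beta$. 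One must also handle the $\beta=0$, $\ell=0$ pieces separately: here $\frak q_{0,k;\beta_0}$ is given by \eqref{defqformula2}, i.e.\ $d$ and the wedge product on $\Omega(L)$, so the corresponding contribution is the classical $A_\infty$ (in fact DGA) relation for $(\Omega(L),d,\wedge)$, which holds by the Leibniz rule and associativity; the cross terms mixing $\beta_0$ with $\beta\neq 0$ are again covered by \eqref{qmaineq}.

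Second, I would verify convergence and well-definedness over $\Lambda_0$. Each $\frak m^{\text{\bf b}}_k(x_1,\dots,x_k)$ is an infinite sum over $\beta$, $\ell$, and the $m_i$; by Gromov compactness the set of $\beta$ with ${\mathcal M}_{k+1;\ell}(L;\beta)\neq\emptyset$ and $\omega\cap\beta \le E_0$ is finite for each $E_0$, and the degree/dimension constraint \eqref{dimensiondisc} together with $\deg\frak b_+\ge 2$ bounds $\ell$ in terms of $\omega\cap\beta$ once $\frak b_+$ is in the stated positive-valuation range; this is the same estimate as Lemma \ref{adiccomv1}. The insertions $b_+$ have positive valuation (for the degree-$1$ part) or controlled even degree, so the $m_i$-sums also converge $T$-adically. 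One subtlety worth flagging is the "running out" problem mentioned in the Remark after Lemma \ref{existmkulti1}: strictly speaking one fixes energy and interior-marked-point cutoffs $E_0,\ell_0$, constructs the operators there, proves the relations modulo $T^{E_0}$, and passes to the inductive limit; I would simply cite \cite{fukaya:cyc} Section 14 (and \cite{fooo:book} Section 7.2.3, 7.4) for this, since it is identical to the established argument.

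\textbf{Main obstacle.} None of the steps is genuinely hard — the content is entirely in Theorem \ref{qproperties}(1), which is assumed. The only place demanding care is the sign bookkeeping: one must check that the sign $\ast = \deg'\text{\bf x}^{3;1}_{c_2} + \deg'\text{\bf x}^{3;1}_{c_2}\deg\text{\bf y}^{2;2}_{c_1} + \deg\text{\bf y}^{2;1}_{c_1}$ in \eqref{qmaineq}, after the substitutions $\text{\bf y}\leadsto e^{\frak b_+}$ (even degree) and $\text{\bf x}\leadsto$ the $b_+$-padded string, collapses to the sign appearing in the $A_\infty$ relation for the $\frak m^{\text{\bf b}}_k$. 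Because $\frak b_+$ and $b_+$ have even shifted degree, all the sign contributions from moving them past one another vanish, so the substitution is sign-transparent; this is exactly why group-likeness of the \emph{exponentials} (rather than arbitrary elements) is used. I would therefore organize the proof so that the sign verification is reduced to this single even-degree remark, and otherwise present the argument as a formal manipulation of coderivations, parallel to \cite{fooo:book} Proposition 3.8.36 and its de Rham counterpart in \cite{fooo:toricmir} Section 6.
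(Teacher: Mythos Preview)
Your proposal is correct and takes essentially the same approach as the paper: both reduce the $A_\infty$ relation for $\frak m^{\text{\bf b}}$ to the quadratic identity \eqref{qmaineq} of Theorem~\ref{qproperties}(1) via the substitution of the group-like exponentials $e^{\frak b_+}$ and $e^{b_+}$, and both defer the abstract algebraic verification to \cite{fooo:book} (the paper cites Lemma~3.8.39 there). Your write-up simply spells out in more detail what the paper's two-line proof leaves as a ``straightforward calculation.''
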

\begin{proof}
The proof is a straightforward calculation using
Theorem \ref{qproperties}.
See  \cite[Lemma 3.8.39]{fooo:book1} for the detail of the proof of
such a statement in the purely abstract context.
\end{proof}
We regard the constant function $1$ on $L$ as a differential
zero-form which we denote by $\text{\bf e}_L$.
\begin{defn}\label{bulkMCelement}
\index{$\widehat{\mathcal M}_{\text{\rm weak,def}}(L;\Lambda_{0})$}
Denote by $\widehat{\mathcal M}_{\text{\rm weak,def}}(L;\Lambda_{0})$ the
set of all the elements $\text{\bf b} = (\frak b_0,\text{\bf b}_{2;1},\frak b_+,b_+)$ as in
Definition \ref{deformedqdef} that satisfy the equation
$$
\frak m_0^{\text{\bf b}}(1) = c\text{\bf e}_L
$$
for $c = c(\text{\bf b}) \in \Lambda_+$.
We define $\frak{PO}(\text{\bf b}) \in \Lambda_+$ to be the coefficient $c(\text{\bf b})$, i.e.,
by the equation
$$
\frak m_0^{\text{\bf b}}(1) = \frak{PO}(\text{\bf b}) \text{\bf e}_L.
$$
\index{$\frak{PO}$}
We call the map $ \frak{PO}: \widehat{\mathcal M}_{\text{\rm
weak,def}}(L;\Lambda_{0}) \to \Lambda_+ $ {\it the potential
function}. \index{potential function} \index{$\frak{PO}$} We also define the projection
$$
\pi: \widehat{\mathcal M}_{\text{\rm weak,def}}(L;\Lambda_{0})
\to H^0(M;\Lambda_0) \oplus H^2(M,L;\C) \oplus H^2(M;\Lambda_+)
\oplus \bigoplus_{k\ge 2}  H^{2k}(M;\Lambda_0)
$$
by
$
\pi (\frak b_0,\text{\bf b}_{2;1},\frak b_+,b_+) =
(\frak b_0,\text{\bf b}_{2;1},\frak b_+).
$
\end{defn}
Let $\text{\bf b}^{(i)} =  (\frak b^{(i)}_0,\text{\bf b}^{(i)} _{2;1},\frak b^{(i)} _+,b^{(i)} _+)
\in \widehat{\mathcal M}_{\text{\rm weak,def}}(L;\Lambda_{0,nov}^+)$
($i=1,2$)
such that
$
\pi(\text{\bf b}^{(1)}) = \pi(\text{\bf b}^{(0)}).
$
We define an operator
$
\delta^{\text{\bf b}^{(1)},\text{\bf b}^{(0)}}: \Omega(L)\widehat\otimes\Lambda_0 \to \Omega(L)\widehat\otimes\Lambda_0
$
of degree $+1$ by
$$
\delta^{\text{\bf b}^{(1)},\text{\bf b}^{(0)}}(x)
= \sum_{k_1,k_0} \frak m^{\overline{\text{\bf b}}}_{k_1+k_0+1}((b_+^{(1)})^{\otimes k_1}
\otimes x \otimes (b_+^{(0)})^{\otimes k_0}),
$$
where
$\overline{\text{\bf b}} = (\frak b^{(0)}_0,\text{\bf b}^{(0)} _{2;1},\frak b^{(0)} _+,0)
= (\frak b^{(1)}_0,\text{\bf b}^{(1)} _{2;1},\frak b^{(1)} _+,0)$.
We remark that if $\text{\bf b}_1 = \text{\bf b}_0 = \text{\bf b}$
we have
\begin{equation}
\delta^{\text{\bf b}^{(1)},\text{\bf b}^{(0)}}
= \frak m_1^{\text{\bf b}}.
\end{equation}
\index{$\delta^{\text{\bf b}^{(1)},\text{\bf b}^{(0)}}$}
\begin{lem}\label{delta2}
$$
(\delta^{\text{\bf b}^{(1)},\text{\bf b}^{(0)}}\circ \delta^{\text{\bf b}^{(1)},\text{\bf b}^{(0)}})(x)
= (-\frak{PO}(\text{\bf b}^{(1)}) + \frak{PO}(\text{\bf b}^{(0)})) x.
$$
\end{lem}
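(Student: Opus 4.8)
The plan is to deduce Lemma \ref{delta2} from the $A_\infty$ relations satisfied by the deformed structure $\{\frak m_k^{\overline{\text{\bf b}}}\}$, in exactly the same algebraic spirit as the corresponding statement in \cite{fooo:book} (see e.g. the discussion around Lemma 3.8.39 and Proposition 3.6.14 there). First I would fix notation: write $\frak m^{\overline{\text{\bf b}}}$ for the filtered $A_\infty$ structure produced by Lemma \ref{bulkdef} applied to $\overline{\text{\bf b}} = (\frak b^{(0)}_0,\text{\bf b}^{(0)}_{2;1},\frak b^{(0)}_+,0)$, and recall that $\overline{\text{\bf b}}$ is a common value, since by hypothesis $\pi(\text{\bf b}^{(1)}) = \pi(\text{\bf b}^{(0)})$. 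The operator $\delta^{\text{\bf b}^{(1)},\text{\bf b}^{(0)}}$ is then the twist of $\frak m_1^{\overline{\text{\bf b}}}$ by inserting the bounding cochains $b_+^{(1)}$ on the left and $b_+^{(0)}$ on the right, i.e. $\delta^{\text{\bf b}^{(1)},\text{\bf b}^{(0)}}(x) = \sum_{k_1,k_0}\frak m^{\overline{\text{\bf b}}}_{k_1+k_0+1}\big((b_+^{(1)})^{\otimes k_1}\otimes x\otimes (b_+^{(0)})^{\otimes k_0}\big)$.

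Next I would compute $\delta\circ\delta$ by substituting the definition twice and reorganizing the resulting double sum of $\frak m^{\overline{\text{\bf b}}}$-compositions according to where the second $\frak m$ is inserted relative to the argument $x$. The $A_\infty$ relation for $\frak m^{\overline{\text{\bf b}}}$, evaluated on a string of the form $(b_+^{(1)})^{\otimes a}\otimes x\otimes (b_+^{(0)})^{\otimes c}$, expresses $0$ as a sum of terms in which the inner $\frak m$-output lands either entirely inside the block of $b_+^{(1)}$'s, entirely inside the block of $b_+^{(0)}$'s, or straddles $x$. The straddling terms assemble precisely to $(\delta\circ\delta)(x)$. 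The terms where the inner output lies in the $b_+^{(1)}$-block sum, after using the definition of $\frak{PO}$ via $\frak m_0^{\text{\bf b}^{(1)}}(1) = \frak{PO}(\text{\bf b}^{(1)})\,\text{\bf e}_L$ and the unitality property \eqref{unital2} of $\frak q$ (which survives the $\text{\bf b}$-deformation, so $\text{\bf e}_L$ remains a strict unit for $\frak m^{\overline{\text{\bf b}}}$), to $-\frak{PO}(\text{\bf b}^{(1)})\,x$; similarly the $b_+^{(0)}$-block terms sum to $+\frak{PO}(\text{\bf b}^{(0)})\,x$. The signs here are the standard Koszul signs of the $A_\infty$ formalism; since $b_+^{(i)}$ has shifted degree $\deg' b_+^{(i)} = 0$ (odd-degree part) together with even-degree pieces that behave trivially, the signs collapse to those stated. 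Collecting everything gives $0 = (\delta\circ\delta)(x) + \frak{PO}(\text{\bf b}^{(1)})\,x - \frak{PO}(\text{\bf b}^{(0)})\,x$, which is the claimed identity.

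The one point requiring care is the bookkeeping of which $A_\infty$ terms are ``absorbed'' into the potential function and with what sign — that is, verifying that the contributions of $\frak m_0^{\overline{\text{\bf b}}}$ re-summed against the $b_+^{(i)}$-insertions really reproduce $\frak m_0^{\text{\bf b}^{(i)}}(1) = \frak{PO}(\text{\bf b}^{(i)})\,\text{\bf e}_L$, and that the unit $\text{\bf e}_L$ then acts as the identity on $x$ via \eqref{unital2}. This is where the hypothesis $\text{\bf b}^{(i)}\in\widehat{\mathcal M}_{\text{\rm weak,def}}(L;\Lambda_0)$ is essential: it guarantees $\frak m_0^{\text{\bf b}^{(i)}}(1)$ is a multiple of $\text{\bf e}_L$ rather than a general degree-$+2$ element, so that the obstruction term is scalar. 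I expect this sign- and unitality-tracking to be the main (though entirely routine) obstacle; it is carried out in the abstract setting in \cite{fooo:book} Section 3.6, and the present deformed case differs only cosmetically, the deformation having already been packaged into $\frak m^{\overline{\text{\bf b}}}$ by Lemma \ref{bulkdef} and into $\frak{PO}$ by Definition \ref{bulkMCelement}.
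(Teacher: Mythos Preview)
Your proposal is correct and follows essentially the same approach as the paper: the paper's proof is a one-line reference (``This is an easy consequence of Theorem~\ref{qproperties}. See \cite{fooo:book} Proposition~3.7.17''), and you have written out exactly the standard argument that this reference points to, namely applying the $A_\infty$ relations for $\frak m^{\overline{\text{\bf b}}}$ to strings $(b_+^{(1)})^{\otimes a}\otimes x\otimes (b_+^{(0)})^{\otimes c}$ and using the weak Maurer--Cartan condition plus unitality to identify the non-straddling terms with the potential values. One cosmetic point: the precise reference in \cite{fooo:book} is Proposition~3.7.17 rather than Section~3.6, but the content is what you describe.
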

\begin{proof}
This is an easy consequence of Theorem \ref{qproperties}.
See \cite[Proposition 3.7.17]{fooo:book1}  for its proof.
\end{proof}

This enables us to give the following definition

\begin{defn}\label{FLoercohbulk}(\cite[Definition 3.8.61]{fooo:book1}.)
\index{$HF((L,\text{\bf b}^{(1)}),(L,\text{\bf b}^{(0)});\Lambda_{0})$}
For a given pair $\text{\bf b}^{(1)}, \text{\bf b}^{(0)} \in \widehat{\mathcal M}_{\text{\rm weak,def}}(L;\Lambda_{0})$
satisfying
$
\pi(\text{\bf b}^{(1)}) = \pi(\text{\bf b}^{(0)}),\quad
\frak{PO}(\text{\bf b}^{(1)}) = \frak{PO}(\text{\bf b}^{(0)}),
$
we define the {\em Lagrangian Floer (co}homology \index{Lagrangian Floer cohomology}
\index{Floer homology! Lagrangian submanifold} by:
$$
HF((L,\text{\bf b}^{(1)}),(L,\text{\bf b}^{(0)});\Lambda_{0})
= \frac{\text{\rm Ker}(\delta^{\text{\bf b}^{(1)},\text{\bf b}^{(0)}})}
{\text{\rm Im}(\delta^{\text{\bf b}^{(1)},\text{\bf b}^{(0)}})}.
$$
\index{$HF((L,\text{\bf b}^{(1)}),(L,\text{\bf b}^{(0)});\Lambda_{0})$}
When $\text{\bf b}^{(1)} = \text{\bf b}^{(0)} = \text{\bf b}$,
we just write
$HF((L,\text{\bf b});\Lambda_{0})$ for simplicity.
\end{defn}

Put $CF_{\text{\rm dR}}(L;\Lambda)=\Omega(L) \widehat\otimes \Lambda$.
Then
$(CF_{\text{\rm dR}}(L;\Lambda), \delta^{\text{\bf b}^{(1)},\text{\bf b}^{(0)}})$
forms a cochain complex. The cochain complex $CF_{\text{\rm dR}}(L;\Lambda)$ carries a natural
filtration given by
\begin{equation}
F^{\lambda}CF_{\text{\rm dR}}(L;\Lambda)
= T^{\lambda}\Omega(L) \widehat\otimes \Lambda_0.
\end{equation}
\begin{lem}
We have
$$
 \delta^{\text{\bf b}^{(1)},\text{\bf b}^{(0)}}
 (F^{\lambda}CF_{\text{\rm dR}}(L;\Lambda))
 \subset F^{\lambda}CF_{\text{\rm dR}}(L;\Lambda).
$$
\end{lem}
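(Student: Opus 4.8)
The plan is to show that the operator $\delta^{\text{\bf b}^{(1)},\text{\bf b}^{(0)}}$ does not decrease the $T$-adic filtration, which amounts to a bookkeeping of the powers of $T$ appearing in the definition. Recall that $\delta^{\text{\bf b}^{(1)},\text{\bf b}^{(0)}}(x)$ is a sum of terms of the form
$\frak m_{k_1+k_0+1}^{\overline{\text{\bf b}}}((b_+^{(1)})^{\otimes k_1} \otimes x \otimes (b_+^{(0)})^{\otimes k_0})$, and each $\frak m^{\overline{\text{\bf b}}}$ is in turn a sum, over $\beta \in H_2(M,L;\Z)$ and numbers of insertions $\ell, m_0,\dots,m_k$, of terms carrying the explicit factor $T^{\omega\cap\beta}$ times the operator $\frak q_{\ell,\bullet;\beta}$ applied to inputs drawn from $\frak b_+$ and $b_+$.

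First I would observe that the structure maps $\frak q_{\ell,k;\beta}$ are defined by integration (push-forward $\text{\rm ev}^{\partial}_{0!}$) over the moduli space ${\mathcal M}_{k+1;\ell}(L;\beta)$, which by Gromov compactness is nonempty only when $\omega\cap\beta \ge 0$, with equality forcing $\beta = \beta_0 = 0$. Hence the explicit $T^{\omega\cap\beta}$ factor is always a nonnegative power of $T$, i.e. lies in $\Lambda_0$. Second, by the definition of $\widehat{\mathcal M}_{\text{\rm weak,def}}(L;\Lambda_0)$ in Definition \ref{deformedqdef}, the components $\frak b_+ \in H^2(M;\Lambda_+)\oplus\bigoplus_{k\ge 2}H^{2k}(M;\Lambda_0)$ and $b_+ \in \Omega^1(L)\widehat\otimes\Lambda_+ \oplus \bigoplus_{k\ge 2}\Omega^{2k-1}(L)\widehat\otimes\Lambda_0$ both have $T$-adic valuation $\ge 0$, and $\frak b_0 \in H^0(M;\Lambda_0)$ likewise. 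Thus each insertion of a $\frak b_+$ or $b_+$ contributes a factor in $\Lambda_0$, and the only term in $\frak m_1^{\overline{\text{\bf b}}}$ with $\beta = 0$ and no bulk/boundary insertions is the one coming from \eqref{defqformula2} with $k=1$, namely (up to sign) the de Rham differential $d$, which visibly preserves the filtration $F^{\lambda}CF_{\text{\rm dR}}(L;\Lambda) = T^{\lambda}\Omega(L)\widehat\otimes\Lambda_0$ since $d$ is $\Lambda_0$-linear. Putting these together: if $x \in F^{\lambda}CF_{\text{\rm dR}}(L;\Lambda)$, i.e. $x = T^{\lambda} x_0$ with $x_0 \in \Omega(L)\widehat\otimes\Lambda_0$, then every term of $\delta^{\text{\bf b}^{(1)},\text{\bf b}^{(0)}}(x)$ is $T^{\lambda}$ times a $\Lambda_0$-coefficient expression, hence lies in $F^{\lambda}CF_{\text{\rm dR}}(L;\Lambda)$.

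The one point that needs a word of care — and the only place where anything could go wrong — is the convergence of the infinite sums defining $\frak m_k^{\overline{\text{\bf b}}}$ and hence $\delta^{\text{\bf b}^{(1)},\text{\bf b}^{(0)}}$: one must know that the series converges in the $T$-adic topology so that "every term lies in $F^\lambda$" actually implies "the sum lies in $F^\lambda$", using that $F^\lambda CF_{\text{\rm dR}}(L;\Lambda)$ is closed. This convergence was already asserted right after Definition \ref{deformedqdef} (proved as in Lemma \ref{adiccomv1}), so I would simply invoke it. In summary, the proof is: write $\delta^{\text{\bf b}^{(1)},\text{\bf b}^{(0)}}(x)$ as its defining ($T$-adically convergent) series; check term-by-term that each summand is $T^{\lambda}$ times an element of $\Omega(L)\widehat\otimes\Lambda_0$, using $\omega\cap\beta\ge 0$ for the explicit Novikov factors, the valuation bounds on $\frak b_0,\frak b_+,b_+$ built into $\widehat{\mathcal M}_{\text{\rm weak,def}}$, and the $\Lambda_0$-linearity of the $\beta=0$ contributions; and conclude by closedness of $F^{\lambda}CF_{\text{\rm dR}}(L;\Lambda)$. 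I do not anticipate a genuine obstacle here — this is a routine filtration estimate — but the bookkeeping of which $\beta = 0$, zero-insertion term survives (only the de Rham $d$) is the step most prone to an off-by-one slip.
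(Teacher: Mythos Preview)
Your proof is correct and follows essentially the same approach as the paper's: the key point is that $\omega\cap\beta \ge 0$ whenever ${\mathcal M}_{k+1;\ell}(L;\beta)$ is nonempty, so the Novikov weight $T^{\omega\cap\beta}$ lies in $\Lambda_0$, and the rest follows from the definitions. The paper states exactly this in two lines, while you have (correctly) spelled out the additional bookkeeping about the valuations of $\frak b_+$, $b_+$ and the $\beta=0$ term that the paper leaves implicit under ``follows from this fact and the definition.''
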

\begin{proof}
Since the symplectic area of a pseudo-holomorphic map is nonnegative,
$\beta\cap \omega \geq 0$  if
${\mathcal M}_{k+1;\ell}(L;\beta)$ is nonempty.
Therefore
if $\frak q_{\ell,k;\beta}$ is nonzero then $\beta\cap \omega$ is nonnegative.
The lemma follows from this fact and the definition.
\end{proof}

This enables us to define the following {\em Lagrangian version of
spectral numbers} \index{Lagrangian version of
spectral numbers} associated to $L$.

\begin{defn}\label{defn:ellLx}
For $x \in HF((L,\text{\bf b}^{(1)}),(L,\text{\bf b}^{(0)});\Lambda)$
we put
\index{$\rho_L^{\text{\bf b}^{(1)},\text{\bf b}^{(0)}}(x)$}
\begin{equation}\label{rhofilt}
\aligned
\rho_L^{\text{\bf b}^{(1)},\text{\bf b}^{(0)}}(x)  =
-\sup \{ \lambda \mid& \exists \widehat x \in F^{\lambda}CF_{\text{\rm dR}}(L;\Lambda)),\,
 \delta^{\text{\bf b}^{(1)},\text{\bf b}^{(0)}}(\widehat x) =0, \, \\
& \quad [\widehat x] = x \in HF((L,\text{\bf b}^{(1)}),(L,\text{\bf b}^{(0)});\Lambda)\}.
\endaligned
\end{equation}
\end{defn}
\begin{rem}\label{filtminusrem}
We put minus sign in (\ref{rhofilt}) for the sake of consistency with
Parts 2 and 3. In fact, $\frak v_q = -\frak v_T$ via the
isomorphism $\Lambda^\downarrow \cong \Lambda$.
\end{rem}
We can show
\be\label{eq:ellLbxne}
\rho_L^{\text{\bf b}^{(1)},\text{\bf b}^{(0)}}(x) >- \infty
\ee
if $x \ne 0$. (See \cite{usher:specnumber} or Lemma \ref{lem1816} of this paper
for the detail.)

We next define a closed-open map \index{closed-open map} from the cohomology of the ambient space
to the Floer cohomology of $L$.
Let $\text{\bf b} =
(\frak b_0,\text{\bf b} _{2;1},\frak b_+,b_+)
\in \widehat{\mathcal M}_{\text{\rm weak,def}}(L;\Lambda_{0})$,
take $g \in \Omega(M)$ and define a map $i_{\text{\rm qm},\text{\bf b}}(g):
\Omega(M) \otimes \Lambda_0 \to CF_{\text{dR}}(L;\Lambda_0)$ by
\begin{equation}\label{iqmdefformula}
\aligned
i_{\text{\rm qm},\text{\bf b}}(g)
= & (-1)^{\deg g}\sum_{\beta\in H_2(M,L:\Z)}
\sum_{\ell_1=0}^{\infty}\sum_{\ell_2=0}^{\infty} \sum_{k=0}^{\infty}
T^{\omega\cap \beta}
\frac{\exp(\text{\bf b}_{2;1} \cap \beta)}{(\ell_1+\ell_2+1)!} \times \\
& \qquad \frak q_{\ell_1+\ell_2+1,k;\beta} (\frak b_{+}^{\otimes\ell_1}\otimes g \otimes \frak b_{+}^{\otimes\ell_2};
b_{+}^{\otimes k}).
\endaligned
\end{equation}
Here `\text{\rm qm}' in the subindex of the map $i_{\text{\rm qm},{\bf b}}$
stands for the `quantum effect' or the effect of pseudoholomorphic discs. This effect
also has some interaction with deformation parameter ${\bf b}$.

It follows in the same way as in Lemma \ref{adiccomv1} that the right hand side converges in
$T$-adic topology.

\begin{lem}
The map $i_{\text{\rm qm},\text{\bf b}}$ is a chain map.
Namely,
$$
\delta^{{\text{\bf b}},\text{\bf b}}\circ i_{\text{\rm qm},\text{\bf b}}
= \pm i_{\text{\rm qm},\text{\bf b}} \circ d.
$$
\end{lem}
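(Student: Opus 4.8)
The plan is to deduce the chain map property of $i_{\text{\rm qm},\text{\bf b}}$ from the $\mathfrak q$-relation of Theorem \ref{qproperties} (1), applied with a bulk input $\text{\bf y}$ of length one (namely $g$) together with copies of $b_+$ inserted in all possible boundary positions, exactly as in the proof of the analogous statement Theorem 3.8.62 of \cite{fooo:book}. First I would fix $\text{\bf b}=(\frak b_0,\text{\bf b}_{2;1},\frak b_+,b_+)\in\widehat{\mathcal M}_{\text{\rm weak,def}}(L;\Lambda_0)$ and, given $g\in\Omega(M)$, expand both sides of the asserted identity
$$
\delta^{\text{\bf b},\text{\bf b}}\bigl(i_{\text{\rm qm},\text{\bf b}}(g)\bigr)
= \pm\, i_{\text{\rm qm},\text{\bf b}}(dg)
$$
using Definitions \ref{deformedqdef} and the formula \eqref{iqmdefformula}, collecting all terms as $T^{\omega\cap\beta}\exp(\text{\bf b}_{2;1}\cap\beta)$ times appropriate $\mathfrak q_{\ell,k;\beta}$-expressions, after summing $\beta=\beta_1+\beta_2$ and distributing the interior and boundary insertions.

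The key step is to choose in the $\mathfrak q$-relation \eqref{qmaineq} the bulk input $\text{\bf y}$ to be the symmetrized element built from $g$ and $\ell$ copies of $\frak b_+$, and the boundary input $\text{\bf x}$ to be $b_+^{\otimes m}$; then the shuffle coproduct $\Delta_{\rm shuff}$ on $E(\Omega(M)[2])$ and the deconcatenation coproduct $\Delta_{\rm decon}$ on $B(\Omega(L)[1])$ split these inputs in exactly the combinatorial patterns that appear in $\delta^{\text{\bf b},\text{\bf b}}\circ i_{\text{\rm qm},\text{\bf b}}$, in $i_{\text{\rm qm},\text{\bf b}}\circ\delta^{\text{\bf b},\text{\bf b}}$, and — because $\text{\bf b}\in\widehat{\mathcal M}_{\text{\rm weak,def}}$ so that $\mathfrak m_0^{\text{\bf b}}(1)=\frak{PO}(\text{\bf b})\,\text{\bf e}_L$ is a multiple of the unit — in a term that either cancels by unitality (Theorem \ref{qproperties} (3), equation \eqref{unital}) or contributes the $dg$-term via \eqref{defqformula2}. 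I would organize the bookkeeping by noting, as in Remark \ref{rem:deferenceofq} (2), that the factorial normalizations $\tfrac1{\ell!}$ (and $\tfrac1{(\ell_1+\ell_2+1)!}$ for the open-closed insertion) are precisely compensated by the multiplicities produced by $\Delta_{\rm shuff}$; this is the same mechanism that makes $\{\frak m_k^{\text{\bf b}}\}$ an $A_\infty$ structure in Lemma \ref{bulkdef}.

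I expect the main obstacle to be purely organizational rather than conceptual: keeping track of the signs and of the positions of the $b_+$-insertions relative to $g$ while matching the three groups of terms coming from $\Delta^2_{\rm decon}$ in \eqref{qmaineq}. Concretely, one group reassembles into $\delta^{\text{\bf b},\text{\bf b}}$ applied to $i_{\text{\rm qm},\text{\bf b}}(g)$, a second group is of the form $i_{\text{\rm qm},\text{\bf b}}$ of $\mathfrak m_1$-type terms which reconstitute $i_{\text{\rm qm},\text{\bf b}}(dg)$ (using \eqref{defqformula2} for the $k=1$, $\beta=0$ piece) together with the higher $\mathfrak m_k^{\text{\bf b}}$-deformation terms, and the third group — those in which the inner $\mathfrak q_{\beta_2}(\dots)$ has empty boundary and bulk inputs only — produces $\mathfrak m_0^{\text{\bf b}}(1)=\frak{PO}(\text{\bf b})\text{\bf e}_L$ in an interior slot, which is killed by the unitality relation \eqref{unital} of Theorem \ref{qproperties} (3). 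The sign $(-1)^{\deg g}$ already inserted in \eqref{iqmdefformula} is chosen exactly to make the surviving $dg$-term come out with the stated sign; verifying this is the one place where I would actually carry out the sign computation, following Section 3.8 of \cite{fooo:book}. Since Theorem \ref{qproperties}, Lemma \ref{bulkdef} and Lemma \ref{delta2} are available, no new geometric input is needed — the argument is a transcription of the $\mathfrak q$-calculus into the de Rham model, and I would state it as such.
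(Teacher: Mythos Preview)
Your proposal is correct and follows the same approach as the paper: both deduce the chain map property from the $\mathfrak q$-relations of Theorem \ref{qproperties}, referring to \cite{fooo:book} Theorem 3.8.62 for the detailed computation. The paper's proof is a one-line citation of Theorem \ref{qproperties} together with the sign convention $\frak m_{1;\beta_0}(h)=(-1)^{n+\deg h+1}dh$ from \cite{fooo:book} Remark 3.5.8, whereas you have spelled out the underlying combinatorics (shuffle/deconcatenation coproducts, factorial normalizations, and the role of unitality in killing the $\frak{PO}(\text{\bf b})\,\text{\bf e}_L$ terms) in more detail.
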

\begin{proof}
This is a consequence of Theorem \ref{qproperties}.
See \cite[Theorem 3.8.62]{fooo:book1}.
We recall from   \cite[Remark 3.5.8]{fooo:book1} that
$\frak m_{1;\beta_0}$ in \eqref{qism} satisfies
$\frak m_{1;\beta_0}(h)=(-1)^{n+\deg h +1}dh$ for $h \in \Omega^{\deg h}(M)$.
\end{proof}
We thus obtain a homomorphism
\begin{equation}\label{ambcohtoHFL}\index{$i_{\text{\rm qm},\text{\bf b}}^{\ast}$}
i_{\text{\rm qm},\text{\bf b}}^{\ast}
:
H^*(M;\Lambda_0) \to HF^*((L,\text{\bf b});\Lambda_{0}).
\end{equation}
\begin{rem}
The homomorphism (\ref{ambcohtoHFL}) is indeed a ring homomorphism. 
It is proved in \cite[Section 2.6]{fooo:toricmir}  for the toric case. See \cite[Section 4.7]{fooo:toricmir}
 and \cite{AFOOO} for the
general case.
\end{rem}

Composing the map $i_{\text{\rm qm},\text{\bf b}}^{\ast}$with $\rho_L^{{\bf b}^{(1)}, {\bf b}^{(0)}}$
in  Definition \ref{defn:ellLx}, we introduce:

\begin{defn}
For each $0 \neq a \in H^*(M;\Lambda)$, we define
\begin{equation}\label{rhoforL}
\rho^{\mathbf b}_L(a)
= \rho_L^{\text{\bf b},\text{\bf b}}(i_{\text{\rm qm},\text{\bf b}}^{\ast}(a))
\end{equation}
for
$\text{\bf b} \in  \widehat{\mathcal M}_{\text{\rm weak,def}}(L;\Lambda_{0})$.
\end{defn}

Therefore by the finiteness \eqref{eq:ellLbxne},
$\rho_L^{\mathbf b}(a) > -\infty$  for any $a \neq 0$, provided there exists some
element $\text{\bf b} \in \widehat{\mathcal M}_{\text{\rm weak,def}}(L;\Lambda_{0})$
such that $i_{\text{\rm qm},\text{\bf b}}^{\ast}(a) \ne 0$.

\section{Criterion for heaviness of Lagrangian submanifolds}
\label{sec:heavy}

In this section, we incorporate the Lagrangian Floer theory into
the theory of spectral invariants and Calabi quasi-morphisms
of Hamiltonian flows and symplectic quasi-states.

\subsection{Statement of the results}
\label{subsec:stateheavyness}

We review the notions of heavy and
superheavy subsets of a symplectic manifold $(M,\omega)$ introduced by
Entov and Polterovich \cite[Definition 1.3]{EP:rigid}.
(See also \cite{albers, biran-cor} for some related results.)

\begin{defn}\index{heavy subset}\index{superheavy subset} Let $\zeta$ be a partial symplectic quasi-state on $(M,\omega)$.
A closed subset $Y \subset M$ is called \emph{$\zeta$-heavy} if
\be\label{eq:zetaheavy}
\zeta(H) \leq \sup \{H(p) \mid p \in Y\}
\ee
for any $H \in C^0(M)$.
\par
A closed subset $Y \subset M$ is called {\it $\zeta$-superheavy} if
\be\label{eq:zetasheavy}
\zeta(H) \geq \inf \{H(p)\mid  p \in Y\}
\ee
for any  $H \in C^0(M)$.
\end{defn} \index{heavy subset!$\zeta$-heavy}\index{superheavy subset!$\zeta$-superheavy}

\begin{rem}\label{rem:heavysuperheavy}
\begin{enumerate}
\item
Due to the different sign conventions from \cite{EP:rigid} as mentioned in Subsection \ref{subsec:conv}, Remark \ref{rem:EPversusFOOO}
and also because we use quantum \emph{cohomology} class $a$ in the definition
of the spectral invariants $\rho(H;a)$, the above definition looks opposite to
that of \cite{EP:rigid}. However after taking these
different convention and usage, this definition of heaviness or of superheaviness
of a given subset $S \subset (M,\omega)$ indeed is equivalent to
that of \cite{EP:rigid}.
\item
Following the proof of \cite[Proposition 4.1]{EP:rigid},
we can obtain
a characterization of a $\zeta$-heavy set or a $\zeta$-superheavy set as follows:
A closed subset $Y \subset M$ is $\zeta$-heavy if and only if for every
$H \in C^{\infty}(M)$ with $H\vert_{Y}=0$, $H\ge 0$
one has $\zeta(H)=0$.
A closed subset $Y \subset M$ is $\zeta$-superheavy if and only if for every
$H \in C^{\infty}(M)$ with $H\vert_{Y}=0$, $H\le 0$
one has $\zeta(H)=0$.
Due to the different sign convention again,
this statement is in a slightly different form  \cite[Proposition 4.1]{EP:rigid}.
Using this characterization and our triangle inequality
Definition \ref{defn:zeta} (8) and the monotonicity (3),
we can show that every $\zeta$-superheavy subset is $\zeta$-heavy. This is nothing but  \cite[Proposition 4.2]{EP:rigid}.
\item
Furthermore, we can show
\cite[Proposition 4.3]{EP:rigid} as it is.
Namely for any $\zeta$-superheavy
set $Y$, and any $\alpha \in \R$ and
$H\in C^{\infty}(M)$ with $H\vert_{Y}=\alpha$ we have $\zeta(H)=\alpha$.
\item
 \cite[Entov-Polterovich Theorem 1.4 (iii)]{EP:rigid} proved that
for any partial symplectic quasi-state $\zeta$,
every $\zeta$-superheavy set intersects every $\zeta$-heavy subset. See Theorem \ref{sheavyintersectheavy}.
\end{enumerate}
\end{rem}

The definitions of heaviness and super-heaviness \cite{EP:rigid} involve only
{\it autonomous} Hamiltonian. We first enhance the definition by involving
time-dependent Hamiltonian. For this purpose, the following definition
is useful.
\begin{defn}
Let $H: [0,1] \times M \to \R$ be a Hamiltonian and $Y \subset M$ be a closed
subset.
We put $H_t(x) = H(t,x)$. For such a pair $(H,Y)$ we associate two constants $E^\pm(H;Y)$
by
\index{$E^-(H;Y)$}\index{$E^+(H;Y)$}\index{$E(H;Y)$}
\begin{equation}\label{eq:EHYpm}
\aligned
E^-(H;Y) & =  \int_0^1 - \min (H_t|_Y) \, dt =
 \int_0^1 \max (- H_t|_Y) \, dt \\
E^+(H,Y) & =  \int_0^1 \max (H_t|_Y) \, \\
E(H;Y) & =  E^-(H;Y) + E^+(H;Y).
\endaligned
\end{equation}
\end{defn}
We remark that when $Y = M$, $E^\pm(H;M)$ often appear in relation to the
energy estimate and Hofer geometry (see \cite[Theorem 3.1]{oh:dmj}  for example),
and $E(H;M)$ is the Hofer norm, $\|H\|$.

We note
$$
E^\pm(\underline H;Y) = E^\pm(H;Y)
\mp \frac{1}{\vol_\omega(M)} \Cal(H)
$$
and so
\begin{equation}\label{noneedtonormalize}
E^-(H;Y) + E^+(H;Y) = E^-(\underline H;Y) + E^+(\underline H;Y)
\end{equation}
depend only on the Hamiltonian path $\phi_H$, but not on the normalization constant.
\begin{defn}
For $\widetilde \psi \in \widetilde{\Ham}(M,\omega)$, we define
\begin{equation}\label{eq:eHYpm}
\aligned
e^-(\widetilde \psi;Y) & = \inf_{H}\{E^-(\underline H;Y) \mid \widetilde \psi = [\phi_H]\}
\\
e^+(\widetilde \psi;Y) & = \inf_{H}\{E^+(\underline H;Y) \mid \widetilde \psi = [\phi_H]\}
\\
e(\widetilde \psi;Y) & = \inf_H\{E(H;Y) \mid \widetilde \psi = [\phi_H]\}.
\endaligned
\end{equation}
Note thanks to (\ref{noneedtonormalize}) we do not need to normalize
$H$ in the definition of $e(\widetilde \psi;Y)$.
\end{defn}
\index{$e^-(\widetilde \psi;Y)$}\index{$e^+(\widetilde \psi;Y)$}\index{$e(\widetilde \psi;Y)$}
We note $e(\widetilde \psi;Y) \geq e^+(\widetilde \psi;Y) + e^-(\widetilde \psi;Y)$.

\begin{defn}\label{tdheavy}
Let $\mu: \widetilde{\Ham}(M,\omega) \to \R$ be an
partial quasi-morphism.
A closed subset $Y \subset M$ is called \emph{$\mu$-heavy} if
we have
\be\label{eq:muheavy}
- \mu(\widetilde \psi) \le \vol_\omega(M)
e^+(\widetilde \psi;Y)
\ee
for any $\widetilde \psi$.
A closed subset $Y \subset M$ is called \emph{$\mu$-superheavy} if
we have
\be\label{eq:musheavy}
- \mu(\widetilde \psi) \ge - \vol_\omega(M) e^-(\widetilde \psi;Y)
\ee
for any $\widetilde \psi$.
\end{defn}\index{heavy subset!$\mu$-heavy}\index{superheavy subset!$\mu$-superheavy}
}
\begin{rem}\label{rem:muimplieszeta}
We note that our definition of $\mu$-heaviness
is given in terms of the universal covering space of $\text{Ham}(M,\omega)$ while the $\zeta$-heaviness
is in terms of the autonomous functions.
Now consider $\widetilde \psi=[\phi_H]$ for an autonomous $H$. Then by (\ref{eq:muzetae}), we derive
$$
- \mu(\widetilde \psi) = - \mu(\phi_H) = \vol_\omega(M) \zeta(\underline H)
= \left( \vol_\omega(M) \zeta(H) - \Cal(H) \right)
$$
for autonomous $H$. On the other hand, we also have
$$
-e^+(\widetilde \psi;Y) \ge -E^+(\underline H;Y) =  -E^+(H;Y)
+ \frac{1}{\vol_\omega(M)} \Cal(H)
$$
for arbitrary time-dependent Hamiltonian $H$. Therefore $\mu$-heaviness of $L$ implies
$\zeta$-heaviness of $L$.
Similarly, we can also see that $\mu$-superheaviness implies $\zeta$-superheaviness.
However, since not every element $\widetilde \psi$ can be realized by an autonomous Hamiltonian,
a priori the $\mu$-heaviness (resp. $\mu$-super-heaviness)
is a stronger notion than the $\zeta$-heaviness (resp. $\zeta$-super-heaviness).
In fact, the definition of $\mu$-heaviness can be
given by replacing the right hand side of (\ref{eq:muheavy}) by some invariant defined in terms of the
loop space of the Lagrangian submanifolds. (See Section 25.4 for the related remark.)
It is an interesting problem to further investigate their relationship.
\end{rem}

The following result is due to Entov-Polterovich \cite{EP:rigid} which will be used later in Section \ref{sec:exotic}.
We give a proof for reader's convenience.
\begin{thm}{\rm (\cite[Theorem 1.4]{EP:rigid})}\label{sheavyintersectheavy}
Let $\zeta$ be a partial symplectic quasi-state.
If $Y \subset M$ is $\zeta$-superheavy and
$Z \subset M$ is $\zeta$-heavy, then for any
$\psi \in \text{\rm Symp}_0(M,\omega)$ we have
$$
\psi(Y) \cap Z \ne \emptyset.
$$
\end{thm}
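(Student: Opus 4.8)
\textbf{Proof proposal for Theorem \ref{sheavyintersectheavy}.}

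The plan is to argue by contradiction and use the displaceability-type hypothesis in a slightly clever way. Suppose $\psi(Y) \cap Z = \emptyset$ for some $\psi \in \text{\rm Symp}_0(M,\omega)$. Since $\psi \in \text{\rm Symp}_0(M,\omega)$ and, by the symplectic invariance properties Definition \ref{defn:zeta} (6) of partial symplectic quasistates, $\zeta(F) = \zeta(F\circ\psi)$, I may first replace $Y$ by $\psi(Y)$: it is still $\zeta$-superheavy (superheaviness is preserved under symplectic diffeomorphisms isotopic to the identity, by the same invariance), so without loss of generality $Y \cap Z = \emptyset$. Now $Y$ and $Z$ are disjoint compact subsets of $M$, so there exists a smooth function $H \in C^\infty(M)$ with $H \equiv a$ on a neighborhood of $Y$, $H \equiv b$ on a neighborhood of $Z$, where I get to pick the constants $a > b$; for definiteness take $H\vert_Y = 0$, $H\vert_Z = -c$ for some $c > 0$, and $\min_M H \ge -c$ (i.e. normalize so that $H \le 0$ and $H$ attains value $-c$ exactly near $Z$ while vanishing near $Y$).

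The key step is to feed this $H$ into the two defining inequalities. By $\zeta$-superheaviness of $Y$ applied to $H$,
\begin{equation}
\zeta(H) \ge \inf\{H(p) \mid p \in Y\} = 0.
\end{equation}
On the other hand, by $\zeta$-heaviness of $Z$ applied to $H$,
\begin{equation}
\zeta(H) \le \sup\{H(p) \mid p \in Z\} = -c < 0.
\end{equation}
These two inequalities contradict each other, which proves that $Y \cap Z \ne \emptyset$, hence $\psi(Y) \cap Z \ne \emptyset$ in the original formulation. The only ingredients used are the definitions of $\zeta$-heavy and $\zeta$-superheavy, the existence of a smooth separating function for disjoint compacta (Urysohn-type smoothing, standard), and the symplectic invariance Definition \ref{defn:zeta} (6).

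The step that needs the most care is the reduction using $\psi$: one must check that $\psi(Y)$ is indeed $\zeta$-superheavy when $Y$ is and $\psi \in \text{\rm Symp}_0(M,\omega)$. This follows because for any $H$, $\zeta(H \circ \psi^{-1}) \ge \inf\{(H\circ\psi^{-1})(p) \mid p \in Y\} = \inf\{H(q) \mid q \in \psi^{-1}(Y)\}$... actually the cleanest route is to not transplant $Y$ at all: keep $Y$, $Z$ as is with $\psi(Y)\cap Z = \emptyset$, pick a separating function $G$ with $G\vert_{\psi(Y)} = 0$, $G\vert_Z = -c$, $G \le 0$, then apply $\zeta$-superheaviness of $Y$ to the function $G \circ \psi$ (using Definition \ref{defn:zeta} (6) to get $\zeta(G\circ\psi) = \zeta(G)$ and noting $\inf_{p\in Y}(G\circ\psi)(p) = \inf_{q \in \psi(Y)} G(q) = 0$), and apply $\zeta$-heaviness of $Z$ directly to $G$ (giving $\zeta(G) \le -c$). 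The contradiction $0 \le \zeta(G) = \zeta(G\circ\psi) \le -c$ finishes the proof. I do not anticipate any genuine obstacle here; it is purely a matter of bookkeeping with the sign conventions of Definition \ref{defn:zeta}, which differ from those of \cite{EP:rigid}.
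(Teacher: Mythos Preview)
Your proof is correct and follows essentially the same approach as the paper's: argue by contradiction, reduce to $\psi = \text{id}$ via symplectic invariance, choose a smooth function separating the disjoint compacta $Y$ and $Z$ with different constant values on each, and derive contradictory bounds on $\zeta(H)$ from the definitions of $\zeta$-superheavy and $\zeta$-heavy. The paper uses $H = 1$ on $Y$ and $H = -1$ on $Z$ rather than your $0$ and $-c$, but this is purely cosmetic.
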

\begin{proof}
Since superheaviness is invariant under symplectic  diffeomorphisms contained in
$\text{\rm Symp}_0(M,\omega)$, we may assume that
$\psi$ is the identity map.
Suppose $Y \cap Z = \emptyset$.
We define $H : M\to \R$ such that $H = 1$ on $Y$ and $H = -1$ on $Z$.
Then since $Y$ is $\zeta$-superheavy,
$
\zeta(H) \ge \inf\{ H(y) \mid y \in Y\} = 1.
$
On the other hand, since  $Z$ is $\zeta$-heavy,
we have
$
\zeta(H) \le \sup \{ H(z) \mid z \in Z\} = -1.
$
This is a contradiction.
\end{proof}

Now the following is the main theorem of this paper whose proof is completed in
Subsection \ref{subsec:Heavynesscomplete}.

\begin{thm}\label{thm:heavy} Let $L\subset M$ be a relatively spin compact Lagrangian submanifold,
and $\text{\bf b} = (\frak b_0,\text{\bf b}_{2;1},\frak b_+,b_+)
\in \widehat{\mathcal M}_{\text{\rm weak,def}}(L;\Lambda_{0})$ as in
Definition $\ref{bulkMCelement}$.
We put
$$
\frak b = i^*(\text{\bf b}_{2;1}) + \frak b_+ \in H^{{\rm even}}(M;\Lambda_0),
$$
where $i^* : H^{2}(M,L;\Lambda_0) \to H^{2}(M;\Lambda_0)$ is the natural homomorphism.
Let $e \in H^*(M;\Lambda)$.
\begin{enumerate}
\item
If $e \cup^{\frak b} e = e$ and
\begin{equation}\label{localnonzero}
0 \ne i_{\text{\rm qm},\text{\bf b}}^{\ast}(e) \in HF^*((L,\text{\bf b});\Lambda),
\end{equation}
then $L$ is $\zeta_e^{\frak b}$-heavy and is $\mu_e^{\frak b}$-heavy.
\item
If there is a direct factor decomposition $QH_{\frak b}^*(M;\Lambda)
\cong \Lambda \times Q'$ as a ring and $e$ comes from the unit of the direct factor $\Lambda$
which satisfies \eqref{localnonzero},
then $L$ is $\zeta_e^{\frak b}$-superheavy and is $\mu_e^{\frak b}$-superheavy.
\end{enumerate}
\end{thm}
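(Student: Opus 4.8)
\textbf{Proof strategy for Theorem \ref{thm:heavy}.}

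The plan is to compare the Lagrangian spectral invariant $\rho^{\mathbf b}_L$ attached to $L$ with the Hamiltonian spectral invariant $\rho^{\frak b}$ via the open-closed map, and then convert the resulting inequality into the heaviness/superheaviness estimates of Definition \ref{tdheavy} by an asymptotic (homogenization) argument. The crucial intermediate object is a chain-level factorization
$$
i_{\text{\rm qm},\text{\bf b}}^{\ast} = (\text{open-closed})\circ \CP^{\frak b}_{(H_\chi,J_\chi),\ast},
$$
i.e.\ a map $\mathfrak I$ from the Hamiltonian Floer complex $CF(M,H;\Lambda^\downarrow)$ to the Lagrangian Floer complex $CF_{\text{\rm dR}}(L;\Lambda)$ (promised in the introduction as Definition \ref{def:I} and Proposition \ref{184mainprop}) through which the map $H^*(M) \to HF^*((L,\mathbf b))$ factors. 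One constructs $\mathfrak I$ by counting solutions of a perturbed Cauchy--Riemann equation on a disc with an interior cylindrical end asymptotic to a periodic orbit $\gamma$ and boundary on $L$, decorated with interior marked points carrying $\frak b_+$ and boundary marked points carrying $b_+$; the usual Kuranishi/multisection package (as in Propositions \ref{connBULKkura}, \ref{piuBULKkura}) gives a chain map, and a standard gluing/cobordism argument identifies its composition with $\CP^{\frak b}$ with $i_{\text{\rm qm},\text{\bf b}}^{\ast}$ up to chain homotopy. The key filtration estimate — analogous to Lemmas \ref{filtered}, \ref{connectinghomofilt}, \ref{monolemma} — is that $\mathfrak I$ shifts the valuation by a controlled amount determined by $E^\pm_\infty(H;L)$; concretely, if a trajectory asymptotic to $[\gamma,w]$ contributes, then the boundary of $w$ lies near $L$ so the energy identity yields
$$
\rho_L^{\mathbf b,\mathbf b}\big(\mathfrak I([x])\big) \le \frak v_q(x) + E^-_\infty(H;L)
$$
(in the downward convention, after the sign flip of Remark \ref{filtminusrem}), and hence $-\rho^{\mathbf b}_L(a) \le \rho^{\frak b}(H;a) + E^-_\infty(H;L)$ whenever $i_{\text{\rm qm},\text{\bf b}}^{\ast}(a)\neq 0$.

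Granting this, the two statements follow by homogenization exactly as in Entov--Polterovich. For (1): since $i_{\text{\rm qm},\text{\bf b}}^{\ast}(e)\neq 0$ we have $\rho^{\mathbf b}_L(e) > -\infty$ by the finiteness \eqref{eq:ellLbxne}; moreover $i_{\text{\rm qm},\text{\bf b}}^{\ast}$ is a ring map, so $i_{\text{\rm qm},\text{\bf b}}^{\ast}(e^{\cup^{\frak b} n}) = i_{\text{\rm qm},\text{\bf b}}^{\ast}(e)^{\cup n}$, and using $e\cup^{\frak b} e = e$ together with the triangle inequality for $\rho_L$ one gets that $\rho^{\mathbf b}_L(e)$ is, up to a bounded error, subadditive/superadditive under the product of $e$ with itself, so the estimate above applied to $nH$ and divided by $n$ survives in the limit:
$$
-\limsup_{n\to\infty}\frac{\rho^{\mathbf b}_L(e)}{n}\Big(=0\Big) \le \lim_{n\to\infty}\frac{\rho^{\frak b}(nH;e)}{n} + E^-_\infty(H;L),
$$
i.e.\ $\zeta_e^{\frak b}(H) \le E^-_\infty(H;L) = \sup\{-H(p)\mid p\in L\}$ for autonomous $H$; replacing $H$ by $-H$ and unwinding our sign conventions (Remark \ref{rem:EPvsFOOO}) gives precisely $\zeta_e^{\frak b}(H)\le\sup_L H$, which is $\zeta_e^{\frak b}$-heaviness. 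The $\mu_e^{\frak b}$-heaviness \eqref{eq:muheavy} is then immediate from \eqref{sympstateandquasimor} and the definition of $e^+_\infty$ by taking the infimum over Hamiltonians generating $\widetilde\psi$. For (2), the extra hypothesis that $e$ is the unit of a field factor lets us invoke Theorem \ref{dualitymain}: duality converts the upper bound on $\rho^{\frak b}(\widetilde\psi;e)$ into a matching lower bound on $\rho^{\frak b}(\widetilde\psi^{-1};e)$ (as in Lemma \ref{dualitygyakmain} and Corollary \ref{rhoqh}), and combining the two after homogenization upgrades heaviness to superheaviness, i.e.\ $\zeta_e^{\frak b}(H)\ge\inf_L H$ and $\mu_e^{\frak b}(\widetilde\psi)\le\vol_\omega(M)\,e^-_\infty(\widetilde\psi;L)$.

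The main obstacle is the construction and the filtration control of the interpolating map $\mathfrak I$: one must set up the moduli space of the mixed disc/cylinder configurations with both interior insertions ($\frak b_+$, via $E_\ell$) and boundary insertions ($b_+$, via $B_k$), equip it with a Kuranishi structure and a system of multisections compatible on the boundary with \emph{both} the Hamiltonian-Floer gluing (strip-breaking at $\tau\to+\infty$ producing factors $\CM_{\#\L}(H,J;\cdot,\cdot)$) and the Lagrangian $A_\infty$ gluing (disc bubbling along $\partial D^2$ producing the operators $\frak q$ of Section \ref{sec:q-map}), while ruling out the sphere-bubbling-at-the-interior-end components by the usual $S^1$-symmetry codimension-$2$ argument (cf.\ the proof of Proposition \ref{piuBULKkura}). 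Once the compatible perturbation data exist, the chain-homotopy identity $\partial^{\mathbf b}\circ\mathfrak I + \mathfrak I\circ\partial^{\frak b} = 0$ and the factorization through $\CP^{\frak b}$ follow by Stokes' theorem and a cobordism argument, and the energy/action estimate is a line-by-line repetition of Lemma \ref{filtered}. The remaining ingredients — finiteness $\rho^{\mathbf b}_L(a)>-\infty$, the ring property of $i_{\text{\rm qm},\text{\bf b}}^{\ast}$, and Theorem \ref{dualitymain} — are already available in the paper or its cited companions, so modulo careful bookkeeping of signs (the $\frak v_q$ vs.\ $\frak v_T$ flip, and the Entov--Polterovich vs.\ our convention) the argument is then routine.
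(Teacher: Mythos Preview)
Your overall strategy is exactly the paper's: construct a chain map $\frak I$ from $CF(M,H;\Lambda^\downarrow)$ to $CF_{\text{dR}}(L;\Lambda)$ via half-cylinders with boundary on $L$, prove a filtration shift (Proposition~\ref{energyHH}), prove the factorization $\frak I\circ\CP^{\frak b}=i_{\text{qm},\mathbf b}$ in homology (Proposition~\ref{184mainprop}), deduce a lower bound for $\rho^{\frak b}(H;a)$ (Proposition~\ref{182mainprop}), and homogenize. For part~(2) the paper likewise invokes Lemma~\ref{dualitygyakmain}.

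There are, however, two concrete slips in your write-up. First, the filtration shift is governed by $E^+_\infty(H;L)=\sup_{[0,1]\times L}H$, not $E^-_\infty$: the energy identity on the half-cylinder (Lemma~\ref{energyestimateLandF}) picks up the value of the modified Hamiltonian $F$ at the boundary $\{0\}\times S^1$, which is set to the constant $R+\epsilon$ with $R=\sup_L H$. Thus the paper's key inequality reads $\rho^{\frak b}(H;a)\ge -E^+_\infty(H;L)+\rho_L^{\mathbf b}(a)$, and homogenizing with $a=e$ yields $\mu_e^{\frak b}(\widetilde\psi_H)\ge -\vol_\omega(M)\sup_L H$ directly. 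Your attempted fix ``replace $H$ by $-H$'' does not work, since $\zeta_e^{\frak b}$ is only semi-homogeneous for positive scalars.

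Second, the detour through the ring property of $i_{\text{qm},\mathbf b}^\ast$ and ``subadditivity of $\rho_L$'' is unnecessary: since $e\cup^{\frak b}e=e$, one simply applies the inequality with $a=e$ to the iterates $H_{(n)}$; the finite constant $\rho_L^{\mathbf b}(e)$ disappears after dividing by $n$. Relatedly, the paper works with time-dependent $H$ throughout and proves $\mu_e^{\frak b}$-heaviness first (for arbitrary $\widetilde\psi$), then deduces $\zeta_e^{\frak b}$-heaviness via Remark~\ref{rem:muimplieszeta}; going in your direction would only give the estimate for autonomous $H$, which is not enough for the $\mu$-version of Definition~\ref{tdheavy}.
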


\subsection{Floer homologies of periodic Hamiltonians
and of Lagrangian submanifolds}
\label{subsec:constructhomo}

The main part of the proof of Theorem \ref{thm:heavy}
is the proof of the following proposition.

\begin{prop}\label{182mainprop}
Let $L$, $\text{\bf b}$, and  $\frak b$ be as in Theorem $\ref{thm:heavy}$
and $a \in QH^*_{\frak b}(M;\Lambda)$.
Then
\begin{equation}
\rho^{\frak b}(H;a) \ge -E^+(H;L) + \rho^{\mathbf b}_L(a)
\end{equation}
for any Hamiltonian $H$. Here $\rho_L^{\mathbf b}(a)$ is as in $(\ref{rhoforL})$.
Equivalently, we have
\begin{equation}
\rho^{\frak b}(\widetilde \psi;a) \ge -e^+(\widetilde \psi;L) + \rho^{\mathbf b}_L(a).
\end{equation}
\end{prop}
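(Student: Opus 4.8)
\textbf{Proof proposal for Proposition \ref{182mainprop}.}

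The plan is to construct a filtration-preserving chain map from the Hamiltonian Floer complex $CF(M,H;\Lambda^\downarrow)$ to the Lagrangian Floer complex $CF_{\text{dR}}(L;\Lambda)$ (with the appropriate bulk deformations on both sides) which, on homology, intertwines the Piunikhin map $\CP^{\frak b}_{(H_\chi,J_\chi),\ast}$ and the open-closed map $i^{\ast}_{\text{qm},\text{\bf b}}$. The relevant moduli spaces are those of maps $u$ from a disc-like domain $\Theta$ (a half-plane, or equivalently a disc with one interior puncture) to $M$, satisfying a $\tau$-dependent perturbed Cauchy-Riemann equation interpolating between the genuine $J_0$-holomorphic equation near the puncture (the $\tau\to -\infty$ end) and the Floer equation $\partial u/\partial\tau + J(\partial u/\partial t - X_{H_t}(u)) = 0$ near a boundary strip-like end, with Lagrangian boundary condition $u(\partial\Theta)\subset L$ where $X_H$ has been turned off, together with interior marked points $z^+_i$ carrying the bulk classes $\frak b_+$ and boundary marked points carrying $b_+$. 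Following the usual FOOO package (Propositions \ref{connBULKkura}, \ref{piuBULKkura}, \ref{pantsBULKkura} and their parametrized analogues), these moduli spaces carry Kuranishi structures with corners whose codimension-one boundary is described by splitting off either a Floer trajectory of the periodic Hamiltonian at the input end (which realizes composition with $\partial^{\frak b}_{(H,J)}$) or a disc of the $A_\infty$/$\frak q$-type along the boundary (which realizes $\delta^{\text{\bf b},\text{\bf b}}$ and the $b_+$-insertions), plus bubbling at the interior puncture end, which by the $S^1$-symmetry argument (as in Remark \ref{JS1inv} and the proof of Lemma \ref{parakuralem}) is of codimension two and does not contribute. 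Counting zeros of a compatible system of multisections, weighted by $q^{-w\cap\omega}$ and $\exp(w\cap\frak b_2)$, gives the desired chain map $\mathcal{I}^{\text{\bf b}}_H : CF(M,H;\Lambda^\downarrow)\to CF_{\text{dR}}(L;\Lambda)$.

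First I would set up these moduli spaces and establish the Kuranishi package, essentially quoting Proposition \ref{disckura} and Lemma \ref{existmkulti1} for the disc side and Proposition \ref{piuBULKkura} for the cylindrical side, glued along $L$. Second, I would prove the chain map property $\delta^{\text{\bf b},\text{\bf b}}\circ \mathcal{I}^{\text{\bf b}}_H = \pm \mathcal{I}^{\text{\bf b}}_H \circ \partial^{\frak b}_{(H,J)}$ via Stokes' theorem applied to the boundary description. Third — and this is the heart of the filtration estimate — I would prove the energy/action inequality: if the moduli space contributing to the $[\gamma,w]$-to-$\widehat x$ matrix element is nonempty, then $\frak v_T$ of the output is bounded by $-\CA_H([\gamma,w]) + E^+_\infty(H;L)$ (equivalently, the $\Lambda$-valuation shift is controlled by the maximum of $H$ along $L$). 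This is the analogue of Lemmas \ref{filtered}, \ref{connectinghomofilt}, \ref{monolemma}: one computes $\int u^*\omega$ using the equation, gets an energy term plus a term $\int\chi'(\tau)H_t(u)\,dt\,d\tau$ which on the Lagrangian boundary is bounded by $\int\chi'\,d\tau \cdot \sup_{[0,1]\times L} H = E^+_\infty(H;L)$ (up to the normalization subtlety in passing from $H$ to $\underline H$, handled exactly as in the proof of Proposition \ref{calabiprop}). Fourth, I would identify the induced map on homology with $i^{\ast}_{\text{qm},\text{\bf b}}\circ \CP^{\frak b}_{(H_\chi,J_\chi),\ast}$ by a parametrized cobordism argument degenerating the domain $\Theta$ into a Floer cylinder glued to a $\frak q$-disc, of the same type used in Theorem \ref{themprodcompati} (Lemma \ref{compprodmainlemma}).

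Granting all of this, the proof of the proposition is short: given $\epsilon>0$, pick $x\in F^{\rho^{\frak b}(H;a)+\epsilon}CF(M,H;\Lambda^\downarrow)$ representing $\CP^{\frak b}_{(H_\chi,J_\chi),\ast}(a^\flat)$ with $\partial^{\frak b}_{(H,J)}x=0$; then $\mathcal{I}^{\text{\bf b}}_H(x)$ is a $\delta^{\text{\bf b},\text{\bf b}}$-cycle representing $i^{\ast}_{\text{qm},\text{\bf b}}(a)$, and the step-three filtration estimate gives (after converting $\frak v_q$ to $\frak v_T$ per Remark \ref{filtminusrem}) that $\mathcal{I}^{\text{\bf b}}_H(x)\in F^{-(\rho^{\frak b}(H;a)+\epsilon) + E^+_\infty(H;L)}$ in the downward convention, hence $\rho_L^{\text{\bf b},\text{\bf b}}(i^{\ast}_{\text{qm},\text{\bf b}}(a)) \ge \rho^{\frak b}(H;a) + \epsilon - E^+_\infty(H;L)$; letting $\epsilon\to 0$ and recalling $\rho^{\text{\bf b}}_L(a) = \rho_L^{\text{\bf b},\text{\bf b}}(i^{\ast}_{\text{qm},\text{\bf b}}(a))$ yields the claim, and the $\widetilde\psi$-version follows by taking the infimum over $H$ with $[\phi_H]=\widetilde\psi$ as in Definition \ref{tdheavy}. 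The main obstacle I anticipate is step four — pinning down the correct signs and the precise homotopy-coherence of the degeneration identifying the new map with the composite $i^{\ast}_{\text{qm},\text{\bf b}}\circ\CP^{\frak b}$, since it requires compatibly choosing multisections on a tower of parametrized moduli spaces with both interior ($\frak b_+$) and boundary ($b_+$) insertions; the transversality bookkeeping (running-out, triple shuffles, fiber products over $M$ and over $L$ simultaneously) is delicate, though each individual ingredient is already available in \cite{fooo:book,fooo:bulk,fukaya:cyc}.
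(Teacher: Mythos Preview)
Your overall strategy---construct a chain map $\mathcal{I}: CF(M,H;\Lambda^\downarrow) \to CF_{\text{dR}}(L;\Lambda^\downarrow)$, prove a filtration estimate, show it realizes $i_{\text{qm},\text{\bf b}}^*$ after precomposing with the Piunikhin map, then combine---is exactly what the paper does (the map is $\frak I_{(F,J)}^{\text{\bf b},\frak b}$ of Subsection \ref{subsec:constructhomo}; the filtration step is Proposition \ref{energyHH}; the factorization is Proposition \ref{184mainprop}; the wrap-up is Subsection \ref{subsec:Heavynesscomplete}). Your step-four cobordism is also the same one.

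There is, however, a real gap in your step three. You propose to turn off $X_H$ near the Lagrangian boundary by an elongation $\chi$ and then bound $\int \chi'(\tau) H_t(u)\,dt\,d\tau$ by $\sup_{[0,1]\times L} H$. But $u(\tau,t)$ lies on $L$ only at $\tau = 0$, not on the support of $\chi'$; your integral is over the interior and only yields $\sup_M H$, which is the wrong quantity. The paper's fix (see the lemma immediately after \eqref{defSS}) is to replace $\chi(\tau)H$ by a function $F(\tau,t,x)$ satisfying $F = H$ for $\tau \ll 0$, $F \equiv R+\epsilon$ on a neighborhood $U\supset L$ for $\tau$ near $0$, and crucially $\partial F/\partial\tau \ge 0$ \emph{everywhere on $M$}. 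Then in the energy identity the $\partial F/\partial\tau$ term is nonnegative regardless of where $u$ goes, while the boundary term at $\tau=0$ is exactly $R+\epsilon$ because $u(0,t)\in L\subset U$; this gives Lemma \ref{energyestimateLandF}. (Alternatively, as the paper remarks, one can use $H$ with no elongation at all, \`a la Albers; then the boundary integral $\int_{S^1}H_t(u(0,t))\,dt \le E^+_\infty(H;L)$ directly. The paper avoids this only because the compactification/gluing analysis for the $X_H$-perturbed equation with Lagrangian boundary is not written up in the needed generality.) Two minor slips: your domain description has the ends swapped---the Floer equation lives at the interior puncture $\tau\to -\infty$ and the equation is unperturbed near the compact boundary circle, not at a ``boundary strip-like end''; and your final displayed inequality has the wrong direction---one obtains $\rho_L^{\text{\bf b}}(a) \le \rho^{\frak b}(H;a)+R+2\epsilon$, which then rearranges to the claim.
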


For the proof of Proposition \ref{182mainprop} we
will use a map $\frak I_{(H,J)}^{\text{\bf b},\frak b}: CF(M,H;\Lambda^{\downarrow})
\to
CF_{\text{dR}}(L;\Lambda^{\downarrow})
$
to be introduced in Definition \ref{def:I} the main properties of which we state
in Propositions \ref{energyHH} and \ref{184mainprop}.
The proof of Proposition \ref{182mainprop} will be completed in
Subsection \ref{subsec:Heavynesscomplete}.

To give the definition of the map $\frak I_{(H,J)}^{\text{\bf b},\frak b}$,
we start with introducing a moduli spaces relating the Hamiltonian periodic orbits of
$H$ and Lagrangian submanifold $L$ as Albers did in \cite{albers}. We refer readers
to Remark \ref{rem:albers} for detailed remark related to this moduli space and its usage.
See also \cite[Subsection 4.7]{fooo:toricmir}.
We recall that we fix a \emph{$t$-independent} $J$ throughout in Part 4.

\begin{defn}
Let $[\gamma,w] \in \text{\rm Crit}(\mathcal A_H)$ and
$\beta \in H_2(M,L;\Z)$.
We denote by $\overset{\circ}{\mathcal M}_{k+1;\ell}(H, J;[\gamma,w],L;\beta)$
\index{$\mathcal M_{k+1;\ell}(H, J;[\gamma,w],L;\beta)$}
the set of all triples
$(u;z_1^{+},\dots,z_{\ell}^+;z_0,\dots,z_k)$ satisfying the following:
\begin{enumerate}
\item
$u : (-\infty,0] \times S^1 \to M$ is a smooth map
such that $u(0,t) \in L$.
\item
The map $u$ satisfies the equation
\be\label{eq:HJCR18sec}
\dudtau + J \Big(\dudt -X_{H_t} (u)\Big) = 0.
\ee
Here $H_t(x) = H(t,x)$.
\item The energy
$$
E_{(H,J);L}= \frac{1}{2} \int \Big(\Big|\dudtau\Big|^2_{J} + \Big|
\dudt - X_{H_t}(u)\Big|_{J}^2 \Big)\, dt\, d\tau
$$
is finite.
\item The map $u$ satisfies the following asymptotic boundary condition
\begin{equation}
\lim_{\tau\to -\infty}u(\tau, t) = \gamma(t).
\end{equation}
\item $z_1^{+},\dots,z_{\ell}^+$ are points in $(-\infty,0)\times S^1$
which are mutually distinct.
\item
$z_0,\dots,z_k$ are mutually distinct points on the boundary
which are ordered counterclockwise on $S^1$
with respect to the boundary orientation coming from
$(-\infty, 0] \times S^1$.
We always set $z_0 = (0,0)$.
\item The homology class of the concatenation of $w$ and $u$ is $\beta$.
\end{enumerate}
\end{defn}
\begin{figure}[h]
\centering
\includegraphics[scale=0.3]
{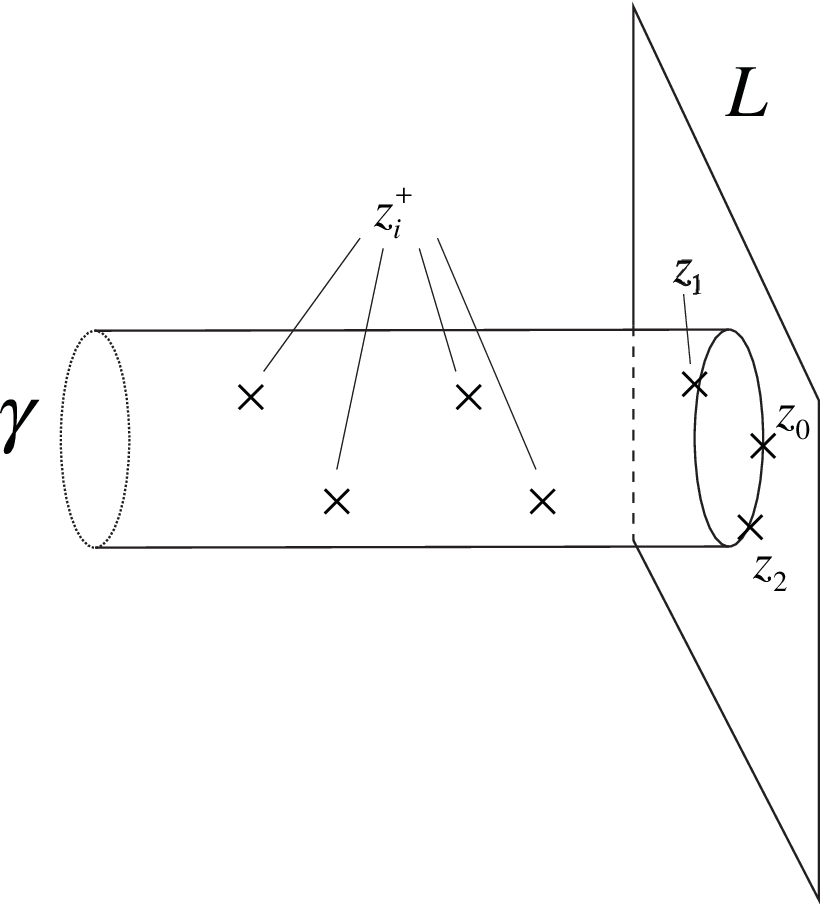}
\caption{An element of $\overset{\circ}{\mathcal M}_{k+1;\ell}(H, J;[\gamma,w],L;\beta)$}
\label{Figure13}
\end{figure}
For any $\alpha \in \pi_2(\gamma)$ we have a canonical homeomorphism
\begin{equation}\label{formula1813}
\overset{\circ}{\mathcal M}_{k+1;\ell}(H, J;[\gamma,\alpha\# w],L;\beta)
\cong \overset{\circ}{\mathcal M}_{k+1;\ell}(H, J;[\gamma,w],L;\beta+\alpha).
\end{equation}
In particular if $w$ is homologous to $w'$ we have
\begin{equation}\label{formula18132}
\overset{\circ}{\mathcal M}_{k+1;\ell}(H, J;[\gamma,w],L;\beta)
\cong \overset{\circ}{\mathcal M}_{k+1;\ell}(H, J;[\gamma,w'],L;\beta).
\end{equation}
We define an evaluation map
$$
(\text{\rm ev},\text{\rm ev}^{\partial})
= (\text{\rm ev}_1,\dots,\text{\rm ev}_{\ell};
\text{\rm ev}^{\partial}_0,\dots,\text{\rm ev}^{\partial}_k)
: \overset{\circ}{\mathcal M}_{k+1;\ell}(H,J;[\gamma,w],L;\beta)
\to M^{\ell} \times L^{k+1}
$$
where
$$
\text{\rm ev}_i([u;z_1^{+},\dots,z_{\ell}^+;z_0,\dots,z_k])
= u(z_i^+),
\quad
\text{\rm ev}^{\partial}_i([u;z_1^{+},\dots,z_{\ell}^+;z_0,\dots,z_k])
= u(z_i).
$$
\begin{lem}\label{FBULKkura18}
\begin{enumerate}
\item
The moduli space
$\overset{\circ}{\mathcal M}_{k+1;\ell}(H,J;[\gamma,w],L;\beta)$ has a compactification
${\mathcal M}_{k+1;\ell}(H,J;[\gamma,w],L;\beta)$ that is Hausdorff.
\item
The space ${\mathcal M}_{k+1;\ell}(H,J;[\gamma,w],L;\beta)$ has an orientable Kuranishi structure with corners.
\item
The normalized boundary of ${\mathcal M}_{k+1;\ell}(H,J;[\gamma,w],L;\beta)$ is described by
the union of the following two types of fiber or direct products.
\begin{equation}\label{bdryFFhamLkinmodu1}
\bigcup {{\CM}}_{\#\mathbb L_1}(H,J;[\gamma,w],[\gamma',w'])
\times {\CM}_{k+1;\#\mathbb L_2}(H,J;[\gamma',w'],L;\beta),
\end{equation}
where the union is taken over all $(\gamma',w') \in \text{\rm Crit}(\mathcal A_H)$,
 and $(\mathbb L_1,\mathbb L_2) \in \text{\rm Shuff}(\ell)$.
\begin{equation}\label{bdryFFhamLkinmodu2}
\bigcup {\mathcal M}_{k_1+1;\# \L_1}(L;\beta_1)
{}_{\text{\rm ev}^{\partial}_{0}}\times_{\text{\rm ev}^{\partial}_i} {\mathcal M}_{k_2+1;\# \L_2}
(H,J;[\gamma,w],L;\beta_2),
\end{equation}
where the union is taken over all
$(\mathbb L_1,\mathbb L_2) \in \text{\rm Shuff}(\ell)$, $k_1,k_2$ with
$k_1 + k_2 = k$, $i \le k_2$, and $\beta_1,\beta_2$ with $\beta_1 + \beta_2 = \beta$.
(See $(\ref{shuff})$ for the notation $\text{\rm Shuff}(\ell)$.)
\item
Let $\mu_H : \mbox{\rm Crit}(\CA_H) \to  \Z$
be the Conley-Zehnder index and
$\mu_L : H_2(M,L;\Z) \to  2\Z$ the Maslov index. Then the (virtual) dimension
is given by
\begin{equation}\label{dimensionFL}
\dim {\mathcal M}_{k+1;\ell}(H,J;[\gamma,w],L;\beta) =
\mu_L(\beta) - \mu_{H}([\gamma,w]) +2\ell + k -2 + n.
\end{equation}
\item
We can define a system of orientations on the moduli spaces
${\CM}_{k+1;\ell}(H,J;[\gamma,w],L;\beta)$ that is compatible with the
isomorphism $(3)$ above.
The compatibility for the boundary of type
\eqref{bdryFFhamLkinmodu2} is in the sense of
\cite[Proposition {\rm 8.3.3}]{fooo:book2}.
\item The evaluation map
$({\rm ev}, {\rm ev}^{\partial})$ given above extends to a strongly continuous smooth map
$$
{\CM}_{k+1;\ell}(H,J;[\gamma,w],L;\beta) \to M^{\ell} \times L^{k+1},
$$
which we denote also by the
same symbol. It is compatible with $(3)$.
\item $ev_0^{\partial}$ is weakly submersive.
\item
The Kuranishi structure is compatible with the forgetful map of the
boundary marked points.
\item
The Kuranishi structure is invariant under the permutation of the interior marked
points.
\item
The Kuranishi structure is invariant under the cyclic permutation of the
boundary  marked
points.
\item
The homeomorphisms $(\ref{formula1813})$, $(\ref{formula18132})$ extend to the compactifications
and their Kuranishi structures are identified by the homeomorphisms.
\end{enumerate}
\end{lem}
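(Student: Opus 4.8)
\textbf{Proof plan for Lemma \ref{FBULKkura18}.}

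The plan is to follow the same strategy used for the analogous moduli space results in this excerpt (Propositions \ref{connkura}, \ref{isomPiukura}, \ref{connBULKkura}, \ref{piuBULKkura}, \ref{pantsBULKkura} and Proposition \ref{disckura}), since the moduli space ${\mathcal M}_{k+1;\ell}(F,J;[\gamma,w],L;\beta)$ is a hybrid of a half-cylinder Floer-type problem (as in $\overset{\circ}{\CM}(H_\chi,J_\chi;*,[\gamma,w])$) and a pseudo-holomorphic disc problem bounded by $L$ (as in $\overset{\circ}{\mathcal M}_{k+1;\ell}(L;\beta)$). First I would set up the Gromov--Floer compactification in (1): finiteness of the energy $E_{(F,J);L}$ (which follows from the usual energy identity together with $\partial F/\partial\tau \ge 0$, exactly as in Lemma \ref{filtered} and Lemma \ref{connectinghomofilt}) forces exponential decay at $\tau \to -\infty$ to a periodic orbit, so the asymptotic limit is well-defined; bubbling can occur only as (a) sphere/disc bubbles in the interior, (b) disc bubbles along the boundary $\{0\}\times S^1$ carried by $L$, and (c) Floer-trajectory breaking along the cylindrical end at $\tau \to -\infty$. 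Stable map compactness from \cite{fukaya-ono} handles all of these and yields a Hausdorff compact space. For (2) and (5), the construction of the Kuranishi structure with corners and the coherent orientation is the standard linear gluing analysis: the relevant index is that of a Cauchy--Riemann operator on a half-cylinder with totally real boundary condition along $L$ at one end and an asymptotic periodic-orbit condition at the other; orientations are built from the relative spin structure on $L$ and the canonical orientations of the Floer moduli spaces, with compatibility at the type-\eqref{bdryFFhamLkinmodu2} boundary in the sense of \cite{fooo:book} Proposition 8.3.3.

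The heart of the lemma is (3), the boundary description. Type \eqref{bdryFFhamLkinmodu1} is the breaking of a Floer trajectory off the $\tau\to-\infty$ end, where the sphere/disc marked points split according to a shuffle $(\mathbb L_1,\mathbb L_2)$ of the interior markings between the escaping cylinder piece ${\CM}_{\#\mathbb L_1}(H,J;[\gamma,w],[\gamma',w'])$ and the remaining hybrid piece; this is exactly parallel to \eqref{bdryhomomap} and \eqref{bdryPIunikinmodu}. Type \eqref{bdryFFhamLkinmodu2} is the bubbling of a holomorphic disc bounded by $L$ at the boundary circle, attached at one of the boundary marked points $z_i$ with $i \le k_2$; the boundary marked points split as $k_1 + k_2 = k$ and the interior ones by a shuffle, with the fiber product over $L$ via ${\rm ev}^\partial_0$ and ${\rm ev}^\partial_i$ — this matches \eqref{eq177} in Proposition \ref{disckura}. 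The point is that the holomorphic-disc equation \eqref{eq:HJCR18sec} near $\tau = 0$ degenerates to the genuine $J$-holomorphic disc equation precisely because $F(\tau,t,x) \equiv R+\epsilon$ (a function of $x$ alone, in fact $t$-independent) near the boundary, so $X_F$ contributes only a cohomologically trivial reparametrization there; it is crucial here that $J$ is $t$-\emph{independent} in Chapter 4 so that a genuine holomorphic disc moduli space appears. The dimension formula \eqref{dimensionFL} in (4) is then a bookkeeping computation: it is the sum of the half-cylinder index $\mu_L(\beta) - \mu_H([\gamma,w]) - 2 + n$ (compare \eqref{dimensiondisc} with the half-cylinder replacing the disc, and \eqref{dimension} for the closed-string half) plus $k$ for the $k$ free boundary marked points (one, $z_0$, being fixed) plus $2\ell$ for the $\ell$ free interior marked points.

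Properties (6)--(10) — that the evaluation maps extend, $ev_0^\partial$ is weakly submersive, and the Kuranishi structure is compatible with the forgetful maps and invariant under permutation of interior markings and cyclic permutation of boundary markings — are proved exactly as in \cite{fukaya:cyc} Corollary 3.1 and the discussion around Proposition \ref{disckura} (8)--(10) and Lemma \ref{existmkulti1}; one carries the extra cylindrical-end data along without change since all those properties concern only the disc/boundary part of the domain. I expect the main obstacle to be purely expository rather than conceptual: organizing the boundary combinatorics of the two bubbling types together with the shuffle decomposition of marked points so that the coherent orientations are compatible at all the corners simultaneously (the type-\eqref{bdryFFhamLkinmodu1} corners use the Floer sign conventions and the type-\eqref{bdryFFhamLkinmodu2} corners use the $A_\infty$/disc sign conventions of \cite{fooo:book}), and checking that the running-out problem is controlled by fixing an energy cutoff $E_0$ and $\ell_0$ as in Lemma \ref{existmkulti1}'s remark before passing to the inductive limit. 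Since each ingredient already appears in the cited references in a form essentially identical to what is needed here, I would state that the proof is the same as that of Proposition \ref{connkura} together with Proposition \ref{disckura} and omit the routine details, as is done for Propositions \ref{connBULKkura}, \ref{piuBULKkura2}, and \ref{pantsBULKkura}.
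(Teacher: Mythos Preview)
Your proposal is correct and matches the paper's approach exactly: the paper simply states that the proof is the same as those of Propositions \ref{connkura} and \ref{disckura} and omits it, with a remark emphasizing (as you do) that the key technical point is that $F$ is constant near $\{0\}\times S^1$ so that \eqref{eq:HJCR18sec} is the genuine pseudo-holomorphic equation there and the disc analysis of \cite{fooo:book} Section 7.2 applies directly. One small imprecision: near the boundary $F \equiv R+\epsilon$ is an honest constant on $U$, so $X_F$ vanishes outright rather than contributing a ``cohomologically trivial reparametrization''.
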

Elements of (\ref{bdryFFhamLkinmodu1}) and (\ref{bdryFFhamLkinmodu2}) are
drawn in Figure \ref{Figure14} and \ref{Figure15} below, respectively.
\begin{center}
\begin{figure}[h]
 \begin{tabular}{cc}
 \begin{minipage}[t]{0.45\hsize}
\centering
\includegraphics[scale=0.3]
{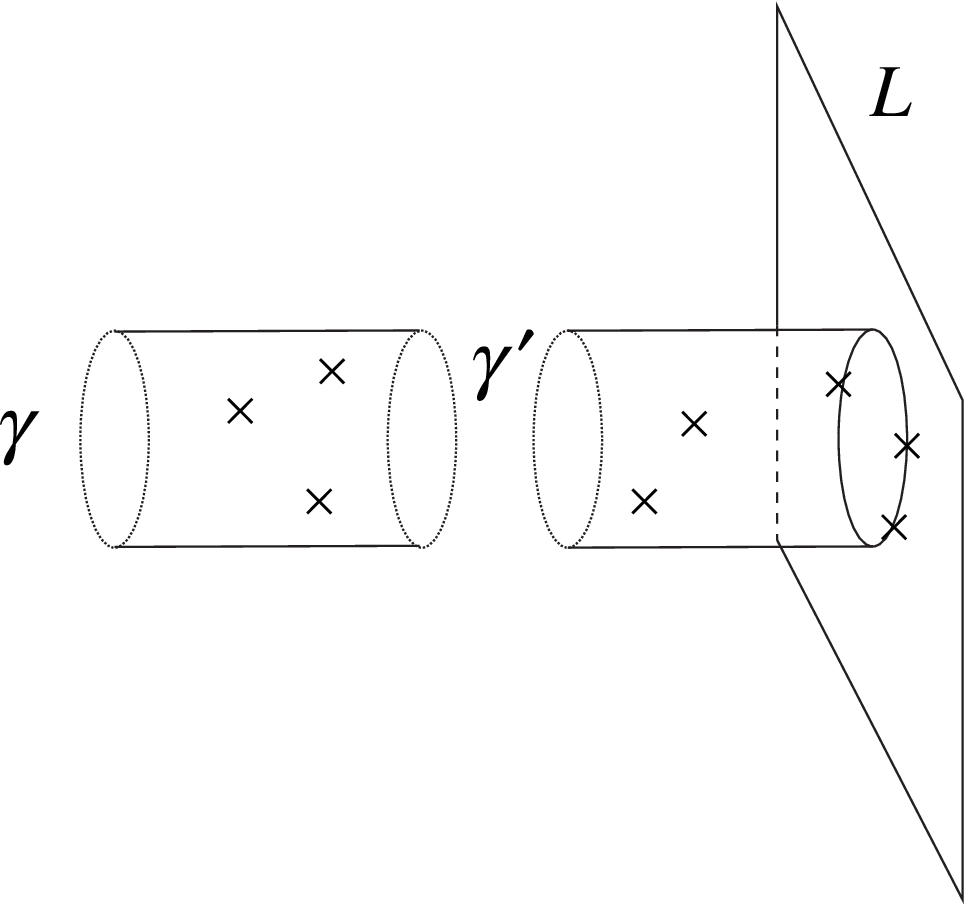}
\caption{An element of (\ref{bdryFFhamLkinmodu1})}
\label{Figure14}
\end{minipage} &
 \begin{minipage}[t]{0.45\hsize}
\centering
\includegraphics[scale=0.3]
{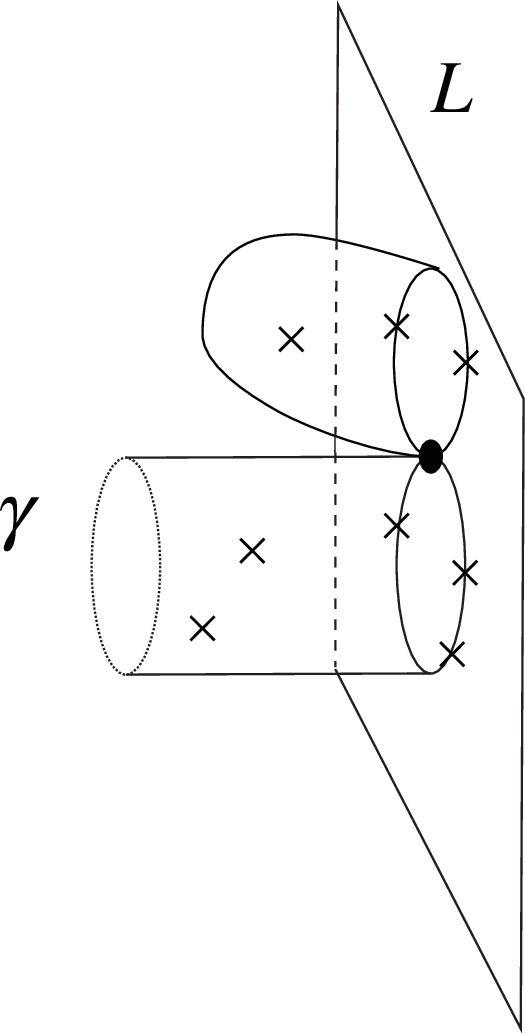}
\caption{An element of (\ref{bdryFFhamLkinmodu2})}
\label{Figure15}
\end{minipage}
\end{tabular}
\end{figure}
\end{center}
The proof of Lemma \ref{FBULKkura18}
is the same as those of Propositions \ref{connkura},
\ref{disckura}, which are detailed in \cite[Parts 4 and 5]{fooo:techI}, and so omitted.
\begin{rem}\label{rem:albers}
\begin{enumerate}
\item
The same moduli space was used by Albers \cite{albers} in the
monotone case. According to Entov-Polterovich \cite[p.779]{EP:rigid},
their motivation to define the heaviness comes from
\cite{albers}.
See also \cite{fooo:analysis} for the relevant analytic details needed for the construction of
the moduli space.
Note that \cite{fooo:analysis} treats the case when $H=0$. However, as is explained in \cite[Section 30]{fooo:techI}
(see especially \cite[Lemma 30.24]{fooo:techI}) the analytic detail given in \cite{fooo:analysis}
can be applied to Floer's equation (\ref{eq:HJCR18sec}) without change.
\item We refer to  \cite[Section 4.7]{fooo:toricmir} for a usage of the map $i_{\text{\rm qm},{\bf b}}$
in the study of mirror symmetry.
In the monotone case for ${\bf b} = 0$, the map $i_{\text{\rm qm},{\bf b}}$ coincides with
the map considered by Albers \cite{albers} and also by Biran-Cornea \cite{biran-cor}.
We refer to \cite[Remark 4.7.8]{fooo:toricmir} for further details on this relationship.
\end{enumerate}
\end{rem}
\begin{lem}\label{existmultiFF}
There exists a system of CF-perturbations on our
moduli space
${\mathcal M}_{k+1;\ell}(H,J;[\gamma,w],L;\beta)$
with the following properties.
\begin{enumerate}
\item
It is transversal to $0$.
\item
It is compatible with the description of the boundary in Proposition
$\ref{FBULKkura18}$ $(3)$.
\item
The evaluation map $\text{\rm ev}_0^{\partial}$ is strongly submersive
with respect to our CF-perturbation in the sense of
Definition $\ref{CFtransv}$ or \cite[Definition 7.48]{fooo:tech2}.
\item
It is compatible with forgetful map of the
boundary marked points.
\item
It is invariant under the permutation of the interior marked
points.
\item
It is invariant under the cyclic permutation of the
boundary  marked
points.
\end{enumerate}
\end{lem}
The compatibility in item (2) above is described as follows.
The description of the boundary in Proposition
$\ref{FBULKkura18}$ $(3)$ identifies the normalized boundary
of ${\mathcal M}_{k+1;\ell}(H,J;[\gamma,w],L;\beta)$ with the fiber product
of similar moduli spaces.
Using the fact that ${\rm ev}^{\partial}_0$ is strongly submersive etc. we can
define the fiber product CF-perburbation. (See Subsection \ref{subsec:compcorrespondence}.)
The compatibility here means that the restriction of the CF-perturbation
of ${\mathcal M}_{k+1;\ell}(H,J;[\gamma,w],L;\beta)$ to its boundary
coincides with the fiber product CF-perturbation.

The proof of Lemma \ref{existmultiFF} is similar to the proof of
Lemma \ref{existmkulti1}
(and also that of \cite[Corollary 5.2]{fukaya:cyc}  and \cite[Theorem 7.49]{fooo:tech2}) and so omitted.
\par
Let $CF(M,H;\C)$ be the $\C$ vector space over the basis $\text{\rm Crit}(\mathcal A_H)$.
We use this moduli space to define an operator \index{$\frak q^{H;[\gamma,w]}_{\ell,k;\beta}$}
for $[\gamma,w] \in \text{\rm Crit}(\mathcal A_H)$ and $\beta
\in H_2(M,L;\Z)$.
\begin{equation}\label{1814}
\frak q^{H;[\gamma,w]}_{\ell,k;\beta}
: E_{\ell} (\Omega(M)[2])
\otimes  B_k(\Omega(L)[1])
\to \Omega(L)[1]
\end{equation}
as follows.
Let $g_1,\dots,g_{\ell} \in \Omega(M)$ and $h_1,\dots,h_k \in \Omega(L)$. We define
\begin{equation}\label{defqformula2}
\aligned
&\frak q^{H;[\gamma,w]}_{\ell,k;\beta}(g_1,\dots,g_{\ell};h_1,\dots,h_k)\\
&=
(\text{\rm ev}^{\partial}_{0})!
\left(
\text{\rm ev}_1^*g_1\wedge \dots \wedge \text{\rm ev}_{\ell}^*g_{\ell}
\wedge
\text{\rm ev}_1^{\partial *}h_1\wedge \dots \wedge \text{\rm ev}_{k}^{\partial *}h_{k}
\right).
\endaligned
\end{equation}
Here we use the evaluation map
$$
(\text{\rm ev},\text{\rm ev}^{\partial})
= (\text{\rm ev}_1,\dots,\text{\rm ev}_{\ell};
\text{\rm ev}^{\partial}_0,\dots,\text{\rm ev}^{\partial}_k)
: {\mathcal M}_{k+1;\ell}(H,J;[\gamma,w],L;\beta)
\to M^{\ell} \times L^{k+1}
$$
and the correspondence given by this moduli space
via our CF-perturbation. (See Part \ref{part7} and \cite[Definition 7.78]{fooo:tech2}.)
The next proposition states the main property of this operator.
\begin{prop}\label{prop:qH-relation} The operators $\frak q^{H;[\gamma,w]}_{\ell,k;\beta}$ have the following properties:
\begin{enumerate}
\item
$\mathfrak q_{\ell,k;\beta}^{H;[\gamma,w]}$ satisfies
\begin{equation}\label{eq:qH-relation}
\aligned
0 & = \sum_{\beta_1 + \beta_2 = \beta}\sum_{c_1,c_2} (-1)^*
\frak q_{\beta_1}(\text{\bf y}^{2;1}_{c_1};
\text{\bf x}^{3;1}_{c_2} \otimes
\frak q_{\beta_2}^{H;[\gamma,w]}\left(\text{\bf y}^{2;2}_{c_1};\text{\bf x}^{3;2}_{c_2}) \otimes \text{\bf x}^{3;3}_{c_2}\right)\\
& \quad +\sum_{c_1,c_2}\sum_{[\gamma'w']
\in {\rm Crit}(\mathcal A_H)}(-1)^{**}\frak n_{(H,J);\vert \text{\bf y}^{2;2}_{c_2}\vert}
([\gamma,w],[\gamma',w'])(\text{\bf y}_{c_2}^{2;2})
\\
&\qquad\qquad\qquad\qquad\qquad\qquad\qquad\qquad
\frak q_{\beta}^{H;[\gamma',w']}\left(\text{\bf y}_{c_1}^{2;1};
\text{\bf x}\right)
\endaligned
\end{equation}
where
$
* = \deg'\text{\bf x}^{3;1}_{c_2} +
\deg'\text{\bf x}^{3;1}_{c_2} \deg \text{\bf y}^{2;2}_{c_1}
+\deg \text{\bf y}^{2;1}_{c_1},
** = \deg  \text{\bf y}^{2;1}_{c_1}.
$
\par
The number
$\frak n_{(H,J);\vert \text{\bf y}^{2;2}_{c_2}\vert}
([\gamma,w],[\gamma',w'])(\text{\bf y}^{2;2}_{c_2}) \in \C$ is defined in
$(\ref{eq:nww'C})$.
(Here $\vert \text{\bf y}^{2;2}_{c_2}\vert$  is defined by
$\text{\bf y}^{2;2}_{c_2} \in E_{\vert \text{\bf y}^{2;2}_{c_2}\vert}(\Omega(M)[2])$.)
\par
In $\eqref{eq:qH-relation}$ and hereafter, we simplify our notation by
writing $\frak q^{H;[\gamma,w]}_{\beta}(\text{\bf y};\text{\bf x})$,  $\frak q_{\beta}(\text{\bf y};\text{\bf x})$ in place of
$\frak q_{\ell,k;\beta}^{H;[\gamma,w]}(\text{\bf y};\text{\bf x})$,
$\frak q_{\ell,k;\beta}(\text{\bf y};\text{\bf x})$ if
$\text{\bf y} \in E_{\ell}(\Omega(M)[2])$, $\text{\bf x} \in B_{k}(\Omega(L)[1])$.
We use the  notation $(\ref{deltawritebyc})$ here.

\item Let $\text{\bf e}_L$ be the constant function $1$ which we regard degree $0$ differential
form on $L$. Let $\text{\bf x}_i \in B(H(L;R)[1])$ and we put
$\text{\bf x} = \text{\bf x}_1 \otimes \text{\bf e}_L \otimes \text{\bf x}_2
\in B(H(L;R)[1])$. Then
\begin{equation}\label{unitalL}
\frak q_{\beta}^{H;[\gamma,w]}(\text{\bf y};\text{\bf x}) = 0.
\end{equation}
\end{enumerate}
\end{prop}
\begin{proof}
Using Stokes' formula (Theorem \ref{them48}, \cite[Theorem 8.11]{fooo:tech2}) and  composition formula (Theorem \ref{compform},
\cite[Theorem 10.20]{fooo:tech2}),
Statement (1) follows from Lemma \ref{existmultiFF} (2) and
Proposition \ref{FBULKkura18} (3). Statement
(2) follows from \ref{existmultiFF} (4) and
Proposition \ref{FBULKkura18} (8).
\end{proof}
Let $\text{\bf b} = (\frak b_0,\text{\bf b}_{2;1},\frak b_+,b_+)$ as in
Definition \ref{deformedqdef}.
We put
$\frak b = i^*(\text{\bf b}_{2;1}) + \frak b_+$.
We extend $\frak q^{H;[\gamma,w]}_{\ell,k;\beta}$ by
$\Lambda$ linearity in the formula (\ref{HFtoHFLdef}) below.

\begin{defn}\label{def:I}
We define
$
\frak I_{(H,J)}^{\text{\bf b},\frak b}:
CF(M,H;\Lambda^{\downarrow})
\to
CF_{\text{dR}}(L;\Lambda^{\downarrow})
$
by
\begin{equation}\label{HFtoHFLdef}
\aligned
\frak I_{(H,J)}^{\text{\bf b},\frak b}(\llb \gamma,w \rrb)
=\sum_{\beta}\sum_{\ell=0}^{\infty}\sum_{k=0}^{\infty}
&q^{-(\beta\cap \omega - w\cap \omega)}
\frac{\exp(\text{\bf b}_{2;1}\cap \beta - i^*(\text{\bf b}_{2;1}) \cap w)}{\ell!}
\\
&
\frak q^{H,[\gamma,w]}_{\ell,k;\beta}(\frak b_+^{\otimes \ell};b_+^{\otimes k}).
\endaligned
\end{equation}
Here we suppose $\llb \gamma,w \rrb \in \widehat{\rm Per}(H)$
is represented by $[\gamma,w] \in \text{\rm Crit}(\mathcal A_H)$, i.e.,
$\pi([\gamma,w]) = \llb \gamma,w \rrb$, to define the right hand side.
We can show the independence of the representative $[\gamma,w]$ as follows.
Suppose we take another choice $[\gamma,w']$.
Then $w' = \alpha \# w'$ for $\alpha \in K_2(\gamma)$.
(Lemma \ref{determinedifference}.)
We find
$$
\frak q^{H,[\gamma,w']}_{\ell,k;\beta}(\frak b_+^{\otimes \ell};b_+^{\otimes k})
=
\frak q^{H,[\gamma,w]}_{\ell,k;\beta+\alpha}(\frak b_+^{\otimes \ell};b_+^{\otimes k})
$$
by Proposition \ref{FBULKkura18} (11).
Therefore the sum in the right hand side is independent of
the choice of $[\gamma,w]$.
\end{defn}
We can prove the convergence of the right hand side of (\ref{HFtoHFLdef})
in $q$-adic topology in the same way as in Lemma \ref{adiccomv1}.
\begin{lem}\label{lem1816}
We have
$$
\delta^{\bf b} \circ \frak I_{(H,J)}^{\text{\bf b},\frak b}
= \frak I_{(H,J)}^{\text{\bf b},\frak b} \circ \partial_{(H,J)}^{\frak b}, \quad \delta^{\bf b}(x)
= (-1)^{\deg x} \frak m_1^{\text{\bf b}}(x).
$$
\end{lem}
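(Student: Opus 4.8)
The plan is to deduce the chain-map identity $\delta^{\text{\bf b}} \circ \frak I_{(F,J)}^{\text{\bf b},\frak b} = \frak I_{(F,J)}^{\text{\bf b},\frak b} \circ \partial_{(H,J)}^{\frak b}$ from the master equation of Proposition \ref{prop:qH-relation}(1) together with the weak Maurer--Cartan property of $\text{\bf b}$, in exactly the same spirit as one deduces that $i_{\text{\rm qm},\text{\bf b}}$ is a chain map from Theorem \ref{qproperties}(1). First I would fix $[\gamma,w]\in\text{\rm Crit}(\mathcal A_H)$ and substitute into the relation \eqref{eq:qH-relation} the specific inputs $\text{\bf y} = e^{\frak b_+}\in E(\Omega(M)[2])$ (using the shuffle coproduct, for which $\Delta_{\rm shuff}(e^{\frak b_+}) = e^{\frak b_+}\otimes e^{\frak b_+}$ by \eqref{eqs:exponentials} since $\deg\frak b_+$ is even) and $\text{\bf x} = e^{b_+}\in B(\Omega(L)[1])$ (using the deconcatenation coproduct, for which $\Delta_{\rm decon}(e^{b_+}) = e^{b_+}\otimes e^{b_+}$). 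Summing \eqref{eq:qH-relation} over $\beta$ with weights $q^{-(\beta\cap\omega - w\cap\omega)}\exp(\text{\bf b}_{2;1}\cap\beta - i^*(\text{\bf b}_{2;1})\cap w)/\ell!$ and over the relevant $\ell,k$, the left-hand side of the resulting identity will reorganize into a sum of three families of terms corresponding to the three ways the $\frak q$/$\frak q^F$ factors can be nested.

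The first family, where $\frak q^F$ sits inside an outer $\frak q$ and the $\text{\bf x}$-inputs flanking $\frak q^F$ are all $b_+$'s, assembles precisely into $\frak m_1^{\text{\bf b}}$ applied to $\frak I_{(F,J)}^{\text{\bf b},\frak b}([\gamma,w])$ plus the curvature contributions $\frak m_{k}^{\text{\bf b}}$ for $k\ge 2$ with $\text{\bf e}_L$-type entries; here the vanishing statement Proposition \ref{qproperties}(3)/\eqref{unital} and Proposition \ref{prop:qH-relation}(2)/\eqref{unitalL} kill the unwanted pieces, and what survives, after the $\frak m_0^{\text{\bf b}}(1) = \frak{PO}(\text{\bf b})\text{\bf e}_L$ contribution cancels against the analogous term from $\delta^{\text{\bf b}}\circ\delta^{\text{\bf b}}$ being zero (Lemma \ref{delta2} with $\text{\bf b}^{(1)}=\text{\bf b}^{(0)}$), is exactly $\delta^{\text{\bf b}}(\frak I_{(F,J)}^{\text{\bf b},\frak b}([\gamma,w])) = \frak m_1^{\text{\bf b}}(\frak I_{(F,J)}^{\text{\bf b},\frak b}([\gamma,w]))$. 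The second family, the term involving $\partial_{(H,J),\beta_2}$, assembles into $\frak I_{(F,J)}^{\text{\bf b},\frak b}(\partial_{(H,J)}^{\frak b}([\gamma,w]))$ once one recognizes that the $\frak n^{\frak b}_{(H,J)}$-coefficients in \eqref{defboundary}/\eqref{bdrycoefb} are obtained from $\frak n_{(H,J);\ell}$ by the same exponential-of-$\frak b_2$ weighting and $1/\ell!$ normalization as in \eqref{HFtoHFLdef}, so the $\frak b_2$-area factors split correctly across the concatenation $\overline w\# w'$. The third family, where $\frak q^F$ is the outer operation but with a nontrivial $\text{\bf x}^{3;1}$ block of $b_+$'s before $\frak q_{\beta_2}$ — wait, more precisely the terms where an inner $\frak q_{\beta_2}(\text{\bf y}^{2;2};\text{\bf x}^{3;2})$ bubbles off on the Lagrangian boundary — reassemble into the deformation already absorbed into the definition of $\frak q^F$ via the $b_+$-insertions, i.e. they are bookkeeping terms that match the $A_\infty$ relations defining $\frak m^{\text{\bf b}}$ and contribute nothing new. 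The careful matching of signs $*$ and $**$ in \eqref{eq:qH-relation} against the Koszul signs in the definitions of $\delta^{\text{\bf b}}$ and of the $q$-expansion is where I expect to spend most of the effort; I would import the sign conventions of \cite{fooo:book} Section 3.8 and \cite{fukaya:cyc} verbatim rather than rederive them.

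The main obstacle, then, is not conceptual but combinatorial: organizing the triple sum over $(\beta,\ell,k)$ and over the shuffle/deconcatenation decompositions so that the three families of terms are cleanly separated and each is recognized as $\delta^{\text{\bf b}}\frak I$, $\frak I\partial^{\frak b}$, or zero, while keeping track of the weights $q^{-(\beta\cap\omega - w\cap\omega)}$ and $\exp(\text{\bf b}_{2;1}\cap\beta - i^*(\text{\bf b}_{2;1})\cap w)$ and showing they factor compatibly under the boundary decompositions \eqref{bdryFFhamLkinmodu1}--\eqref{bdryFFhamLkinmodu2}. This is exactly parallel to the proof that $i_{\text{\rm qm},\text{\bf b}}$ is a chain map (the lemma preceding \eqref{ambcohtoHFL}), which itself is attributed to \cite{fooo:book} Theorem 3.8.62, so I would state the key algebraic identity, indicate that its verification is a line-by-line translation of that argument with $CF(M,H;\C)[1]$ playing the role of one of the tensor factors and Proposition \ref{prop:qH-relation} playing the role of Theorem \ref{qproperties}, and defer the full sign check to the cited references. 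One should also remark, as is done after Definition \ref{def:I}, that all infinite sums converge in the $q$-adic topology by the same Gromov--Floer compactness argument as Lemma \ref{adiccomv1}, so that the formal manipulation is legitimate; and note in passing that $\frak I_{(F,J)}^{\text{\bf b},\frak b}$ descends to a map $HF^{\frak b}_*(M,H,J;\Lambda^\downarrow)\to HF^*((L,\text{\bf b});\Lambda^\downarrow)$ on homology, which is the form in which it will be used in Proposition \ref{184mainprop}.
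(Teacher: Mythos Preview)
Your approach is correct and is precisely the ``straightforward calculation using Proposition~\ref{prop:qH-relation}'' that the paper's one-line proof indicates: substitute $\mathbf y=e^{\frak b_+}$ and $\mathbf x=e^{b_+}$ into the master relation, and identify the resulting terms with $\delta^{\mathbf b}\circ\frak I$ and $\frak I\circ\partial^{\frak b}_{(H,J)}$. One clarification on your bookkeeping: the ``first family'' $\frak q(\ldots,\frak q^F(\ldots),\ldots)$ gives $\delta^{\mathbf b}(\frak I)$ directly with no curvature terms to cancel, and it is the \emph{third} family $\frak q^F(\ldots,\frak q(\ldots),\ldots)$ that vanishes --- the inner disc bubble with only $b_+$-inputs sums by the weak Maurer--Cartan condition to a multiple of $\mathbf e_L$, and then the unitality statement \eqref{unitalL} (not Lemma~\ref{delta2}) kills it.
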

The proof is a straightforward calculation using Proposition \ref{prop:qH-relation}
and (\ref{defboundary}) so omitted.
This gives rise to a map
\begin{equation}\label{1820}
\frak I_{(H,J)}^{\text{\bf b},\frak b, \ast}
: HF_*(M,H;\Lambda^{\downarrow})
\to
HF^*((L,\text{\bf b});\Lambda^{\downarrow}).
\end{equation}
\begin{rem}
We can show that the map (\ref{1820}) is a ring homomorphism
with respect to the pants product in the left hand side and $\pm \frak m_2$
in the right hand side up to sign. We do not prove it here since we do not use it.
\end{rem}

\subsection{Filtration and the map $\frak I_{(H,J)}^{\text{\bf b},\frak b}$}
\label{subsec:Iandfilt}

We define a filtration on $CF_{\text{dR}}(L;\Lambda^{\downarrow})$
$$
F^{-\lambda}CF_{\text{dR}}(L;\Lambda^{\downarrow})
: = F^{\lambda}CF_{\text{dR}}(L;\Lambda)
= T^{\lambda}(\Omega(L) \widehat\otimes \Lambda_0)
$$
by identifying $CF_{\text{dR}}(L;\Lambda^{\downarrow})$ with $CF_{\text{dR}}(L;\Lambda)$
via the change $T = q^{-1}$ of formal parameters.
Similarly we put
$$
F^{\lambda}(\Omega(M) \widehat\otimes \Lambda^{\downarrow})
= q^{-\lambda}(\Omega(M) \widehat\otimes \Lambda_0^\downarrow).
$$
This is consistent with Definitions
\ref{Lambdahatonashi}, \ref{valuationv}.
See Notations and Conventions (18) in Section \ref{sec:introduction}
and also Remark \ref{filtminusrem}.
In this subsection we prove the following:
\begin{prop}\label{energyHH} For all $\lambda \in \R$,
$$
\frak I_{(H,J)}^{\text{\bf b},\frak b}
(F^{\lambda}(\Omega(M) \widehat\otimes \Lambda^{\downarrow}))
\subseteq
F^{\lambda + E^+(H;Y)} CF_{\text{dR}}(L;\Lambda^{\downarrow}).
$$
\end{prop}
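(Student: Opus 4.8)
The plan is to trace through where $q$-powers come from in the definition \eqref{HFtoHFLdef} of $\frak I_{(F,J)}^{\text{\bf b},\frak b}$ and bound them using an energy identity for the moduli spaces ${\mathcal M}_{k+1;\ell}(F,J;[\gamma,w],L;\beta)$. First I would observe that it suffices to check the claim on each generator: if $[\gamma,w]\in \Crit\CA_H$ then by Definition \ref{valuationv} (extended to $\Omega(M)\widehat\otimes\Lambda^{\downarrow}$ as in the statement) it is enough to show that the $\beta$-summand of $\frak I_{(F,J)}^{\text{\bf b},\frak b}([\gamma,w])$ lies in $F^{\CA_H([\gamma,w])+R+\epsilon}CF_{\text{dR}}(L;\Lambda^{\downarrow})$, i.e. its $q$-exponent is $\le \CA_H([\gamma,w])+R+\epsilon$. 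The $q$-exponent of the $\beta$-summand is $-(\beta\cap\omega - w\cap\omega)$ (the $\exp(\cdots)$ factor carries no $q$-power, only $\C$-coefficients, since $\text{\bf b}_{2;1}\in H^2(M,L;\C)$), plus the $T$-adic order of $\frak q^F_{\ell,k;\beta}(\frak b_+^{\otimes\ell},[\gamma,w],b_+^{\otimes k})$, which is $\ge 0$ because $b_+,\frak b_+$ have positive valuation on their higher components and because the relevant moduli space is nonempty only if $\beta\cap\omega\ge 0$ — but this last remark is not quite what is needed, so the real content is the energy estimate below.

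The key step is the following energy identity: for $u\in \overset{\circ}{\mathcal M}_{k+1;\ell}(F,J;[\gamma,w],L;\beta)$ with boundary loop $\gamma_L = u(0,\cdot)$ on $L$, integrating $\omega$ against the equation \eqref{eq:HJCR18sec} exactly as in the proof of Lemma \ref{filtered} gives
\begin{equation}
\int u^*\omega + \int_{t\in S^1}F(0,t,\gamma_L(t))\,dt - \int_{t\in S^1}H(t,\gamma(t))\,dt = E_{(F,J);L}(u) \ge 0,
\end{equation}
where I use that $F(\tau,t,x)=H(t,x)$ for $\tau<-10$ and $\partial F/\partial\tau\ge 0$ along the way (the $\int\partial_\tau F\circ u$ term is $\ge 0$ and can be absorbed, or more precisely the monotonicity in $\tau$ is what lets one replace $H$ by $F(0,\cdot)$ with the correct sign). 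Now $F(0,t,x)=R+\epsilon$ for $x\in U$, and since $\gamma_L(t)\in L\subset U$ we get $\int_{t\in S^1}F(0,t,\gamma_L(t))\,dt = R+\epsilon$. Combining with the homotopy condition that the concatenation of $w$ and $u$ represents $\beta$, so that $\beta\cap\omega = \int u^*\omega + \int w^*\omega$, and recalling $\CA_H([\gamma,w]) = -\int w^*\omega - \int H(t,\gamma(t))\,dt$, one obtains
\begin{equation}
\beta\cap\omega - w\cap\omega = \int u^*\omega = E_{(F,J);L}(u) - (R+\epsilon) + \int H(t,\gamma(t))\,dt \ge -(R+\epsilon) - \CA_H([\gamma,w]) - w\cap\omega + w\cap\omega,
\end{equation}
so that $-(\beta\cap\omega - w\cap\omega)\le \CA_H([\gamma,w])+R+\epsilon$, which is exactly the desired bound on the $q$-exponent of the $\beta$-summand. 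Since the higher-order $b_+$ and $\frak b_+$ insertions only add nonnegative $T$-adic (hence nonnegative $q$-exponent shift, in the $q^{-\lambda}$ convention, nonnegative) order, the total $q$-exponent stays $\le \CA_H([\gamma,w])+R+\epsilon$, and then taking supremum over $\gamma$ in an element $x\in F^\lambda(\Omega(M)\widehat\otimes\Lambda^{\downarrow})$ gives the proposition.

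The main obstacle I expect is bookkeeping the signs and the precise role of the $\partial F/\partial\tau\ge 0$ term: one must be careful that the interpolation from $H$ at $\tau=-\infty$ to the constant $R+\epsilon$ near $\tau=0$ contributes with the sign that makes the inequality go the right way — this is the same mechanism as in Lemma \ref{connectinghomofilt} (where $\chi'\ge 0$ and $\int\chi'=1$ are used), and here it is $\partial F/\partial\tau\ge 0$ together with $F(0,\cdot)\equiv R+\epsilon$ on $U$ that does the job. A secondary point is confirming that the $\exp(\text{\bf b}_{2;1}\cap\beta - i^*(\text{\bf b}_{2;1})\cap w)$ factor genuinely lies in $\C$ (no $q$ or $T$), which follows because $\text{\bf b}_{2;1}\in H^2(M,L;\C)$ by Definition \ref{deformedqdef}, so it does not affect the valuation estimate. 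Everything else is a routine adaptation of the energy computations already carried out in the proofs of Lemmas \ref{filtered} and \ref{connectinghomofilt}.
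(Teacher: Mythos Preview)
Your approach is essentially the paper's: the paper isolates the energy estimate as Lemma~\ref{energyestimateLandF} (if ${\mathcal M}_{k+1;\ell}(F,J;[\gamma,w],L;\beta)\ne\emptyset$ then $\CA_H([\gamma,w]) \ge -(R+\epsilon) - \beta\cap\omega$), proves it via exactly the computation you outline using $\partial F/\partial\tau \ge 0$ and $F(0,\cdot)|_L \equiv R+\epsilon$, and then says the proposition follows immediately. Your second displayed inequality has a small bookkeeping slip --- the trailing ``$-\,w\cap\omega + w\cap\omega$'' is not actually justified by the preceding line; the honest conclusion of your chain is $\beta\cap\omega \ge -(R+\epsilon) - \CA_H([\gamma,w])$, which is precisely the paper's lemma --- but the substance and structure of the argument are the same.
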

\begin{proof}
The proposition immediately follows from Lemma \ref{energyestimateLandF} below.
\end{proof}

\begin{lem}\label{energyestimateLandF}
If ${\mathcal M}_{k+1;\ell}(H,J;[\gamma,w],L;\beta)$
is nonempty, we have
$$
\mathcal A_H([\gamma,w])
\ge - E^+(H;Y) -\beta\cap \omega.
$$
\end{lem}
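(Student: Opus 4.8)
\textbf{Proof plan for Lemma \ref{energyestimateLandF}.}

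The plan is to run the standard energy--action inequality for the $F$-perturbed Cauchy--Riemann equation on the half-cylinder $(-\infty,0]\times S^1$ with Lagrangian boundary condition on $\{0\}\times S^1$, mimicking the computation in the proof of Lemma \ref{filtered} (the closed-string case) and of Lemma \ref{connectinghomofilt} (the continuation-map case). Fix an element $(u;z_1^+,\dots,z_\ell^+;z_0,\dots,z_k)\in\overset{\circ}{\mathcal M}_{k+1;\ell}(F,J;[\gamma,w],L;\beta)$; it is enough to treat the top stratum since the estimate for the compactified moduli space follows by taking limits (or by the gluing description of the boundary strata together with nonnegativity of disc and sphere areas). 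First I would compute $\int u^*\omega$ exactly as in \eqref{energyincrease}: using equation \eqref{eq:HJCR18sec} one gets
$$
\int_{(-\infty,0]\times S^1} u^*\omega
= E_{(F,J);L}(u)
- \int_{(-\infty,0]\times S^1}\frac{\partial}{\partial\tau}\bigl(F(\tau,t,u(\tau,t))\bigr)\,dt\,d\tau
+ \int_{(-\infty,0]\times S^1}\frac{\partial F}{\partial\tau}(\tau,t,u(\tau,t))\,dt\,d\tau .
$$
The middle integral is a total $\tau$-derivative, and integrating it from $-\infty$ to $0$ produces the boundary terms: at $\tau=-\infty$ one recovers $\int_0^1 H(t,\gamma(t))\,dt$ (using $F(\tau,\cdot)=H$ for $\tau<-10$ and the exponential convergence $u(\tau,\cdot)\to\gamma$), and at $\tau=0$ one gets $-\int_0^1 F(0,t,u(0,t))\,dt$. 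Since $u(0,t)\in L$ for all $t$ and, by the construction of $F$, $F(0,t,x)=R+\epsilon$ whenever $x$ lies in the neighborhood $U$ of $L$ (and $\{0\}\times S^1$ maps into $L\subset U$), this last term equals exactly $-(R+\epsilon)$. The third integral, $\int \frac{\partial F}{\partial\tau}\,dt\,d\tau$, is $\ge 0$ because $\frac{\partial F}{\partial\tau}\ge 0$ everywhere by the defining property of $F$.

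Putting these together with $E_{(F,J);L}(u)\ge 0$ gives
$$
\int u^*\omega \;\ge\; \int_0^1 H(t,\gamma(t))\,dt \;-\;(R+\epsilon).
$$
Now I would relate $\int u^*\omega$ to $\beta$ and $w$: by condition (7) in the definition of the moduli space, the concatenation $w\# u$ represents the class $\beta\in H_2(M,L;\Z)$, so $\beta\cap\omega = \int w^*\omega + \int u^*\omega$. Substituting and using the definition \eqref{mathcal A_H} of $\mathcal A_H$,
$$
\mathcal A_H([\gamma,w]) = -\int w^*\omega - \int_0^1 H(t,\gamma(t))\,dt
= -\Bigl(\beta\cap\omega - \int u^*\omega\Bigr) - \int_0^1 H(t,\gamma(t))\,dt
\;\ge\; -\beta\cap\omega - (R+\epsilon),
$$
which is precisely the claimed inequality. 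Finally I would note that Proposition \ref{energyHH} follows: applying $\frak I_{(F,J)}^{\text{\bf b},\frak b}$ to a basis element $[\gamma,w]$, each term of \eqref{HFtoHFLdef} carries a factor $q^{-(\beta\cap\omega - w\cap\omega)}=T^{\beta\cap\omega-w\cap\omega}$ coming from $\mathcal M_{k+1;\ell}(F,J;[\gamma,w],L;\beta)$, and Lemma \ref{energyestimateLandF} bounds $\beta\cap\omega \le -\mathcal A_H([\gamma,w]) + R+\epsilon$, so the image lies in the asserted filtration level, using that $\frak v_T$ of the ambient class and of $b_+$, $\frak b_+$ are nonnegative.

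The only genuinely delicate point is the boundary-term evaluation at $\tau=0$: one must be sure that $F$ is literally the \emph{constant} $R+\epsilon$ on the whole image of $\{0\}\times S^1$, which is why the lemma constructing $F$ builds it to equal $R+\epsilon$ on $[0,1]\times U$ with $U$ a neighborhood of $L$, and why the boundary condition $u(0,t)\in L$ matters — this turns the $\tau=0$ contribution into an honest constant rather than something one can only bound. A secondary bookkeeping issue is justifying the limit $\lim_{\tau\to-\infty}\int_0^1 F(\tau,t,u(\tau,t))\,dt = \int_0^1 H(t,\gamma(t))\,dt$; this is routine given the exponential decay guaranteed by finiteness of energy and nondegeneracy of $H$ (the same fact used implicitly in Definition \ref{stripmk0int0} and Lemma \ref{filtered}). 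Everything else is the same sign-and-Stokes manipulation already carried out twice in Chapter 2, so I would state it compactly and refer back to \eqref{energyincrease}.
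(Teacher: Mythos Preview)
Your proof is correct and follows essentially the same route as the paper's: the same energy--action identity on the half-cylinder, the same evaluation of the $\tau=0$ boundary term as the constant $R+\epsilon$ (using $u(0,t)\in L\subset U$), the same use of $\partial F/\partial\tau\ge 0$, and the same final substitution via $\beta\cap\omega=\int w^*\omega+\int u^*\omega$. Your additional remarks on extending the estimate from the top stratum to the compactification and on deducing Proposition \ref{energyHH} are fine but go slightly beyond what the paper records.
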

\begin{proof}
Let $(u;z_1^+\dots,z_{\ell}^+,z_0,\dots,z_k) \in
\overset{\circ}{\mathcal M}_{k+1;\ell}(H,J;[\gamma,w],L;\beta)$.
We calculate
$$
\aligned \int u^*\omega
&=
E_{H,J}(u)
- \int_{(-\infty,0]\times S^1} \frac{\partial}{\partial \tau}
(H \circ u) d\tau dt \\
& \ge
\lim_{\tau\to-\infty}\int_{S^1} H(t, u(\tau,t))\, dt - \int_{S^1} H(0,u(0,t))\, dt \\
& = \int_{S^1} H(t, \gamma(t))\, dt - \int_0^1 H(0,u(0,t))\, dt \\
& \ge
\int_{S^1} H_t(\gamma(t))\, dt - E^+(H;Y).
\endaligned
$$
Recalling $\beta\cap \omega - \int w^*\omega = \int u^*\omega$ from
$\beta = [w\# u]$, we obtain
$$
- \int w^* \omega - \int_{S^1} H_t(\gamma(t))\, dt \ge -
E^+(H;Y) -  \beta\cap \omega.
$$
The lemma follows.
\end{proof}

\subsection{Identity $\frak I_{(H,J)}^{\text{\bf b},\frak b, \ast} \circ \CP_{(H_\chi,J),\ast}^{\frak b}
=
i_{\text{\rm qm},\text{\bf b}}^{\ast}$}
\label{subsec:PIirelation}\index{Piunikhin isomorphism}

In this subsection we prove:
\begin{prop}\label{184mainprop}
For any $a \in H^*(M)\otimes\Lambda$
the identity
$$
\frak I_{(H,J)}^{\text{\bf b},\frak b, \ast} \circ \CP_{(H_\chi,J),\ast}^{\frak b}(a^{\flat})
=
i_{\text{\rm qm},\text{\bf b}}^{\ast}(a)
$$
holds in cohomology. Here $a^{\flat} \in H_*(M;\Lambda^{\downarrow})$ is the homology class Poincar\`e dual to
$a\in H^*(M)\otimes \Lambda$ as in
Notations and Conventions $(22)$.
\end{prop}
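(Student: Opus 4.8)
The plan is to construct a single parametrized moduli space that simultaneously interpolates between the composition $\frak I_{(F,J)}^{\text{\bf b},\frak b}\circ \CP^{\frak b}_{(H_\chi,J)}$ and the open-closed map $i_{\text{\rm qm},\text{\bf b}}$, and to extract the equality in cohomology from a chain homotopy produced by that space together with Stokes' theorem. Concretely, I would glue the Piunikhin-type domain (the plane $\R\times S^1$ with the $J_0$-holomorphic asymptotic end at $\tau=-\infty$ carrying an interior evaluation map $\operatorname{ev}_{-\infty}$) to the domain defining $\frak q^F$ (the half-cylinder $(-\infty,0]\times S^1$ with Lagrangian boundary condition $u(0,t)\in L$). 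The composition $\frak I_{(F,J)}^{\text{\bf b},\frak b}\circ\CP^{\frak b}_{(H_\chi,J)}$ corresponds to the ``neck-stretched'' configuration where the two pieces are joined along a long cylinder over a periodic orbit $[\gamma,w]$; shrinking the neck to length zero degenerates to a single half-cylinder $(-\infty,0]\times S^1$ with the $X_F$-perturbed (in fact asymptotically $J_0$-holomorphic near $\tau=-\infty$) Cauchy-Riemann equation, an interior input $\operatorname{ev}_{-\infty}$, no periodic-orbit breaking, and the Lagrangian boundary condition at $\tau=0$ — this is precisely the moduli space computing $i_{\text{\rm qm},\text{\bf b}}$, because near $\tau=-\infty$ the equation is the genuine $J_0$-holomorphic disc equation after removable singularity, matching the $\frak q_{\ell,k;\beta}$ of Definition \ref{diskmoduli1}.

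The key steps, in order: (i) Define the one-parameter family of Hamiltonian data $\{(F_S,J)\}_{S\ge 0}$ on $(-\infty,0]\times S^1$ with $F_0$ the ``short neck'' profile equal to $F$ and $F_\infty$ reproducing the broken configuration $(H_\chi$ on the plane$)\,\#\,(F$ on the half-cylinder$)$; here I use the elongation function $\chi\in\CK$ and the $t$-independence of $J$ which was fixed throughout Chapter 4. (ii) Set up the parametrized moduli space ${\mathcal M}_{k+1;\ell}(\text{\it para};F,J;*,L;\beta) = \bigcup_{S\ge 0}\{S\}\times{\mathcal M}_{k+1;\ell}(F_S,J;*,L;\beta)$, establish (as in Propositions \ref{piuBULKkura}, \ref{disckura}, \ref{FBULKkura18}) that it carries an oriented Kuranishi structure with corners, with evaluation maps $\operatorname{ev}_{-\infty}$, $\operatorname{ev}_i$ at interior points, $\operatorname{ev}^\partial_j$ at boundary points, and describe its codimension-one boundary: the $S=0$ face gives the $i_{\text{\rm qm},\text{\bf b}}$ moduli space, the $S\to\infty$ face gives the fiber product of the Piunikhin moduli space with the $\frak q^F$ moduli space over a periodic orbit (yielding $\frak I^{\text{\bf b},\frak b}_{(F,J)}\circ\CP^{\frak b}_{(H_\chi,J)}$), and there are further faces corresponding to disc bubbling along $L$ (contributing the $\delta^{\text{\bf b}}$-image, i.e. the chain homotopy terms), while the would-be sphere bubble at $\tau\to-\infty$ is codimension two by the residual $S^1$-symmetry (cf. the argument in Proposition \ref{piuBULKkura}, Lemma \ref{parakuralem}). (iii) Choose compatible continuous families of multisections on this space (the boundary faces already carry multisections from the earlier constructions; extend by the usual induction on $\beta\cap\omega$, $\ell$ and energy), define the chain homotopy operator $\frak H$ by the correspondence $\operatorname{ev}^\partial_{0!}(\operatorname{ev}_{-\infty}^*(\cdot)\wedge\operatorname{ev}^*(\frak b_+^{\otimes\ell})\wedge(\operatorname{ev}^\partial)^*(b_+^{\otimes k}))$ weighted by $q^{-\beta\cap\omega}\exp(\text{\bf b}_{2;1}\cap\beta)/\ell!$ as in \eqref{HFtoHFLdef}. (iv) Apply Stokes' theorem (\cite{fooo:bulk} Lemma 12.13) and the composition formula (\cite{fooo:bulk} Lemma 12.15) to read off
$$
\delta^{\text{\bf b}}\circ\frak H + \frak H\circ\partial^{\frak b}_{(H,J)} = \frak I_{(F,J)}^{\text{\bf b},\frak b}\circ\CP^{\frak b}_{(H_\chi,J)} - i_{\text{\rm qm},\text{\bf b}}
$$
at the chain level (with the appropriate signs governed by Proposition \ref{disckura}(5) and the conventions of \cite{fooo:book} Proposition 8.3.3), which passes to the asserted identity in cohomology after applying to $a^\flat$.

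The main obstacle I expect is the analysis underlying step (ii): setting up the compactification and gluing theory for the $X_F$-perturbed Cauchy-Riemann equation on a half-cylinder with a Lagrangian boundary condition, in the parametrized ($S$-dependent) setting and in full generality (no semipositivity, no monotonicity). As the remark after Lemma \ref{FBULKkura18} already notes, this analysis is not written in detail in the literature at the level of generality needed, but the point is that $F$ is chosen constant near $\tau=0$ so that the equation is \emph{genuinely} $J$-holomorphic near the Lagrangian boundary, which lets one import the disc bubbling and gluing analysis of \cite{fooo:book} Section 7.2 verbatim there, while near $\tau=-\infty$ the $J_0$-holomorphicity lets one import the closed-string (Piunikhin) analysis; the $S$-dependence is handled as in Lemma \ref{parakuralem} and the identification of the $S\to\infty$ limit with a broken configuration is the standard neck-stretching argument. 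A secondary subtlety is bookkeeping the Novikov exponents and the $\text{\bf b}_{2;1}$ versus $i^*(\text{\bf b}_{2;1})$ pairing terms so that the weights on the $S=0$ and $S\to\infty$ faces match \eqref{iqmdefformula} and the composition $\frak I\circ\CP$ respectively; this is routine but must be done carefully, using that $\beta\cap\omega - w\cap\omega = [\text{area swept by }u]$ exactly as in Lemma \ref{energyestimateLandF}.
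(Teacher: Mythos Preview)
Your overall architecture matches the paper's: a parametrized half-cylinder moduli space over $S\in[0,\infty)$ whose $S\to\infty$ end produces the broken configuration computing $\frak I_{(F,J)}^{\text{\bf b},\frak b}\circ\CP^{\frak b}_{(H_\chi,J)}$, with disc-bubbling boundary strata giving the $\delta^{\text{\bf b}}$ terms and sphere bubbles at $\tau\to-\infty$ being codimension two. The paper's Lemma \ref{SFBULKkura18} is exactly your step (ii), and the chain homotopy you describe is the paper's $\frak H_{(F,J)}^{\text{\bf b},\frak b}$.

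There is, however, a genuine gap at the $S=0$ end. You assert that the $S=0$ face is ``precisely the moduli space computing $i_{\text{\rm qm},\text{\bf b}}$'', but this is not so. Recall that $F(\tau,t,x)=H(t,x)$ for $\tau<-10$, so your phrase ``$F_0$ equal to $F$'' cannot be reconciled with ``asymptotically $J_0$-holomorphic near $\tau=-\infty$''; with $F$ itself as data the $\tau\to-\infty$ asymptotic is a periodic orbit of $H$, not a point, and there is no $\operatorname{ev}_{-\infty}$. The paper instead takes $F_S(\tau,t,x)=\chi(\tau+S+20)F(\tau,t,x)$, so that at $S=0$ the perturbation is $\chi(\tau+20)F$, which \emph{is} compactly supported and hence vanishes near $-\infty$. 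But even then the $S=0$ moduli space is \emph{not} the unperturbed disc moduli space $\mathcal M_{k+1;\ell+1}(L;\beta)$ of Definition \ref{diskmoduli1} that defines $\frak q_{\ell,k;\beta}$: the equation still carries a nontrivial Hamiltonian term in the region $-20\le\tau\le-1$. The paper therefore introduces an intermediate map $\frak J_{(F_0,J)}^{\text{\bf b},\frak b}$ for this $S=0$ face and then runs a \emph{second} one-parameter homotopy, over $\sigma\in[0,1]$, using the equation $\partial u/\partial\tau + J(\partial u/\partial t - \sigma X_{F_0}(u))=0$; at $\sigma=0$ this is the genuine $J$-holomorphic half-cylinder equation, whose moduli space (after removable singularity) \emph{is} $\mathcal M_{k+1;\ell+1}(L;\beta)$. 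The two chain homotopies concatenate to give the claimed identity. Your single-homotopy formulation either needs $F_0\equiv 0$ from the start (which is fine, but then say so rather than ``equal to $F$''), or it needs this second step.

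A minor slip: your chain-homotopy identity has $\frak H\circ\partial^{\frak b}_{(H,J)}$, but $\frak H$ takes input from $\Omega(M)\widehat\otimes\Lambda^{\downarrow}$ via $\operatorname{ev}_{-\infty}^*$, so the correct differential on the source is the de Rham $d$, not the Floer boundary operator.
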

\begin{proof}
For $S \ge 0$ we put
$$
H^S(\tau,t,x) = \chi(\tau+S+20)H(t,x)
$$
where $\chi$ is as in Definition \ref{defn:chi}.
We also put
$H^S_{\tau,t}(x) = H^S(\tau,t,x)$.

\begin{defn}\label{Schijodulis}
Denote by
$$
\overset{\circ}{\mathcal M}_{k+1;\ell}(H^S,J;*,L;\beta)
$$
the set of all triples
$(u;z_1^{+},\dots,z_{\ell}^+;z_0,\dots,z_k)$ satisfying the following:
\begin{enumerate}
\item
$u : (-\infty,0] \times S^1 \to M$ is a smooth map
such that $u(0,t) \subset L$.
\item
$u$ satisfies the equation
\be\label{eq:HJCR18sec2}
\dudtau + J\Big(\dudt -X_{H^S_{\tau,t}}(u)\Big) = 0.
\ee
\item The energy
$$
\frac{1}{2} \int \Big(\Big|\dudtau\Big|^2_{J} + \Big|
\dudt - X_{H^S_{\tau,t}}(u)\Big|_{J}^2 \Big)\, dt\, d\tau
$$
is finite.
\item $z_1^{+},\dots,z_{\ell}^+$ are points in $(-\infty,0)\times S^1$
which are mutually distinct.
\item
$z_0,\dots,z_k$ are points on the boundary
$\{0\}\times S^1$.
They are mutually distinct.
$z_0,\dots,z_k$ respects the counterclockwise cyclic order on $S^1$.
We always set $z_0 = (0,0)$.
\item The homology class of  $u$ is $\beta$.
\end{enumerate}
\end{defn}
\begin{figure}[h]
\centering
\includegraphics[scale=0.3]
{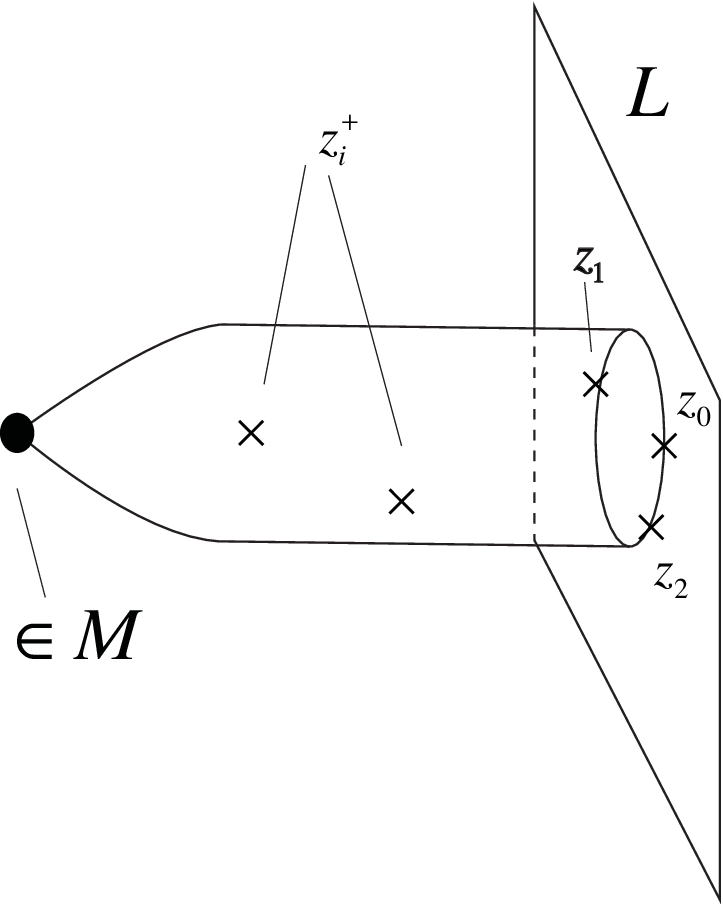}
\caption{An element of $
\overset{\circ}{\mathcal M}_{k+1;\ell}(H^S,J;*,L;\beta)
$}
\label{Figure16}
\end{figure}
We define an evaluation map
$$
(\text{\rm ev},\text{\rm ev}^{\partial})
= (\text{\rm ev}_1,\dots,\text{\rm ev}_{\ell};
\text{\rm ev}^{\partial}_0,\dots,\text{\rm ev}^{\partial}_k)
: \overset{\circ}{\mathcal M}_{k+1;\ell}(H^S,J;*,L;\beta)
\to M^{\ell} \times L^{k+1}
$$
by
$$
\text{\rm ev}_i(u;z_1^{+},\dots,z_{\ell}^+;z_0,\dots,z_k)
= u(z_i^+),
\quad
\text{\rm ev}^{\partial}_i([u;z_1^{+},\dots,z_{\ell}^+;z_0,\dots,z_k])
= u(z_i).
$$
We also define
$
\text{\rm ev}_{-\infty} : \overset{\circ}{\mathcal M}_{k+1;\ell}(H^S,J;*,L;\beta)
\to M
$
by
$$
\text{\rm ev}_{-\infty}(u;z_1^{+},\dots,z_{\ell}^+;z_0,\dots,z_k)
= \lim_{\tau\to -\infty}u(\tau,t).
$$
By (2), (3) and the removable singularity theorem, the limit of the right hand side exists and
is independent of $t$.
We put
\begin{equation}
\overset{\circ}{\mathcal M}_{k+1;\ell}(para;H,J;*,L;\beta)
=
\bigcup_{S\in [0,\infty)} \{S\} \times
\overset{\circ}{\mathcal M}_{k+1;\ell}(H^S,J;*,L;\beta),
\end{equation}
where $(\text{\rm ev},\text{\rm ev}^{\partial}) $ and $\text{\rm ev}_{-\infty}$ are defined on it
in an obvious way.
\index{$\mathcal M_{k+1;\ell}(H^S,J;*,L;\beta)$}
\begin{lem}\label{SFBULKkura18}
\begin{enumerate}
\item
The moduli space
$\overset{\circ}{\mathcal M}_{k+1;\ell}(para;H,J;*,L;\beta)$ has a compactification
\index{$\mathcal M_{k+1;\ell}(para;H,J;*,L;\beta)$}
${\mathcal M}_{k+1;\ell}(para;H,J;*,L;\beta)$ that is Hausdorff.
\item
The space ${\mathcal M}_{k+1;\ell}(para;H,J;*,L;\beta)$ has an orientable Kuranishi structure with corners.
\item
The normalized boundary of ${\mathcal M}_{k+1;\ell}(para;H,J;*,L;\beta)$ is described by
the union of the three types of direct or fiber products:
\begin{equation}\label{bdryFFhamLkinmodu22a}
\bigcup {\mathcal M}_{\# \L_1}(H_\chi,J;*,[\gamma,w])
\times {\mathcal M}_{k+1;\# \L_2}(H,J;[\gamma,w],L;\beta),
\end{equation}
where the union is taken over all $[\gamma,w] \in \text{\rm Crit}(\mathcal A_H)$,
$(\mathbb L_1,\mathbb L_2) \in \text{\rm Shuff}(\ell)$.
(Figure $\ref{Figure185}$)
(Here ${\mathcal M}_{\# \L_1}(H_\chi,J;*,[\gamma,w])$ is the moduli space
defined in Definition $\ref{buldPmoduli}$ and Proposition $\ref{piuBULKkura}$.
We write $J$ in place of $J_{\chi}$ since in Part $4$ we use a fixed $J$ which is
independent of $t$ and $\tau$.)

\begin{equation}\label{bdryFFhamLkinmodu22b}
\bigcup {\mathcal M}_{k_1+1;\# \L_1}(L;\beta_1)
{}_{\text{\rm ev}^{\partial}_{0}}\times_{\text{\rm ev}^{\partial}_i} {\mathcal M}_{k_2+1;\# \L_2}
(para;H,J;*,L;\beta_2),
\end{equation}
(Figure $\ref{Figure186}$)
where the union is taken over all
$(\mathbb L_1,\mathbb L_2) \in \text{\rm Shuff}(\ell)$, $k_1,k_2$ with
$k_1 + k_2 = k$, $i \le k_2$, and $\beta_1,\beta_2$ with $\beta_1 + \beta_2 = \beta$.
\begin{equation}\label{bdryFFhamLkinmodu22c}
{\mathcal M}_{k+1;\ell}(H^0,J;*,L;\beta),
\end{equation}
that is a compactification of the $S=0$ case of
$\overset{\circ}{\mathcal M}_{k+1;\ell}(H^S,J;*,L;\beta)$.
\item
Let
$\mu_L : H_2(M,L;\Z) \to  2\Z$ be the Maslov index. Then the (virtual) dimension satisfies
the following equality:
\begin{equation}\label{dimensionFLF}
\dim {\mathcal M}_{k+1;\ell}(para;H,J;*,L;\beta) =
\mu_L(\beta) +2\ell + k - 1 + n.
\end{equation}
\item
We can define a system of orientations on
the collection of moduli spaces ${\mathcal M}_{k+1;\ell}(para;H,J;*,L;\beta)$  that
is compatible with the isomorphism $(3)$ above.
For the boundary of type \eqref{bdryFFhamLkinmodu22b} the compatibility means
the same as in Lemma {\rm \ref{FBULKkura18}} $(5)$.
\item
$({\rm ev}, {\rm ev}^{\partial}, {\rm ev}_{-\infty})$ extends to a strongly continuous smooth map
$$
{\mathcal M}_{k+1;\ell}(para;H,J;*,L;\beta) \to M^{\ell+1} \times L^{k+1},
$$
which we denote  by the same symbol.
It is compatible with $(3)$.
\item $ev_0^{\partial}$ is weakly submersive.
\item
The Kuranishi structure is compatible with forgetful map of the
boundary marked points.
\item
The Kuranishi structure is invariant under the permutation of the interior marked
points.
\item
The Kuranishi structure is invariant under the cyclic permutation of the
boundary  marked
points.
\end{enumerate}
\end{lem}
\begin{center}
\begin{figure}[h]
 \begin{tabular}{cc}
 \begin{minipage}[t]{0.45\hsize}
        \centering
\includegraphics[scale=0.3]
{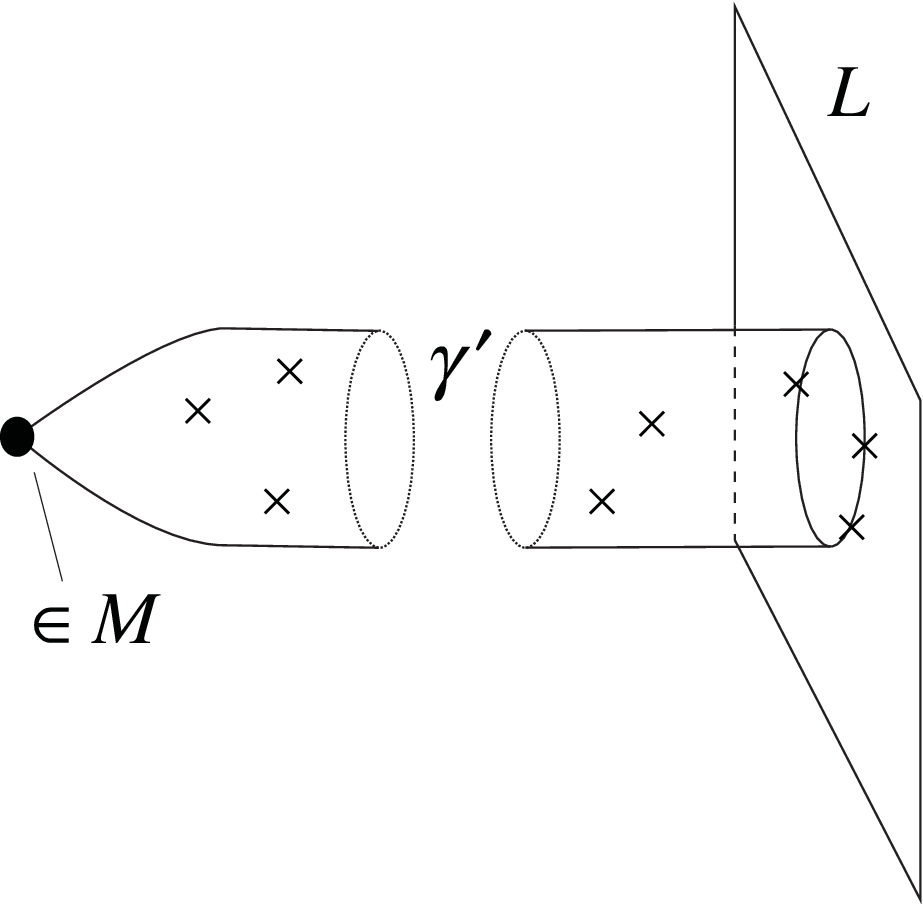}
\caption{An element of (\ref{bdryFFhamLkinmodu22a})}
\label{Figure185}
\end{minipage} &
\begin{minipage}[t]{0.45\hsize}
\centering
\includegraphics[scale=0.3]
{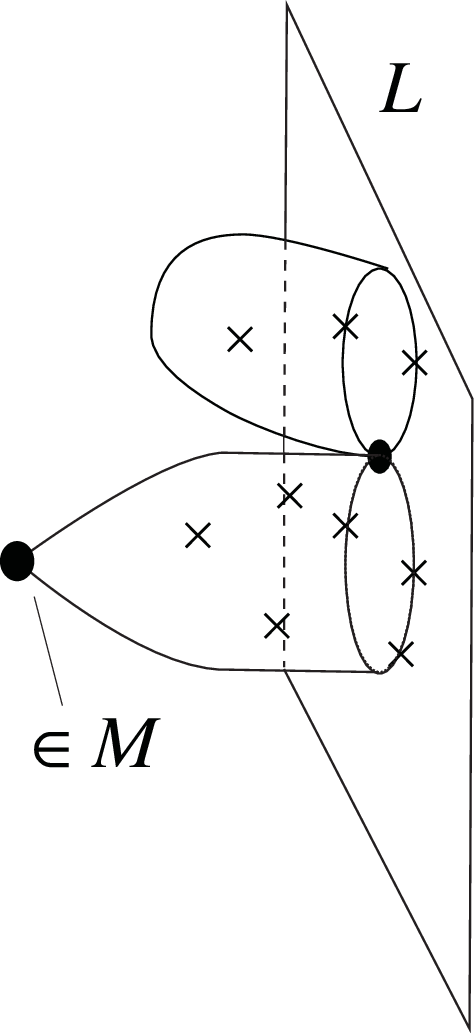}
\caption{An element of (\ref{bdryFFhamLkinmodu22b})}
\label{Figure186}
\end{minipage}
\end{tabular}
\end{figure}
\end{center}
The proof of Lemma \ref{SFBULKkura18} is the same as that of Propositions \ref{connkura},
which is detailed in \cite{fooo:techI} Parts 4 and 5.
It suffices to observe that (\ref{bdryFFhamLkinmodu22a}) appears at the limit $S \to \infty$.
\begin{lem}\label{existmultiFFS}
There exists a system of CF-perturbations on our
moduli spaces
${\mathcal M}_{k+1;\ell}(para;H,J;*,L;\beta)$
with
the following properties:
\begin{enumerate}
\item
It is transversal to $0$.
\item
It is compatible with the description of the boundary in Proposition
$\ref{SFBULKkura18}$ $(3)$.
\item
The map $\text{\rm ev}_0^{\partial}$ is strongly submersive with respect
to our CF-perturbation in the sense of \cite[Definition 7.48]{fooo:tech2}.
\item
It is compatible with forgetful map of the
boundary marked points.
\item
It is invariant under the permutation of the interior marked points.
\item
It is invariant under the cyclic permutation of the boundary marked points.
\end{enumerate}
\end{lem}
The proof is the same as that of Lemma \ref{existmultiFF}
(and  \cite[Corollary 5.2]{fukaya:cyc},
 \cite[Theorem 7.49]{fooo:tech2}) and is omitted.
We now define
\index{$\frak q^{H,S\ge 0}_{\ell,k;\beta}$}
\begin{equation}
\frak q^{H,S\ge 0}_{\ell,k;\beta}
: E_{\ell} (\Omega(M)[2])
\otimes \Omega(M)[1]
\otimes  B_k(\Omega(L)[1])
\to \Omega(L)[1]
\end{equation}
by sending $(g_1,\dots,g_{\ell};h;h_1,\dots,h_k)$ to
$$
(\text{\rm ev}^{\partial}_{0})!
\left(
\text{\rm ev}_1^*g_1\wedge \dots \wedge \text{\rm ev}_{\ell}^*g_{\ell}
\wedge \text{\rm ev}_{-\infty}^*h \wedge
\text{\rm ev}_1^{\partial *}h_1\wedge \dots \wedge \text{\rm ev}_{k}^{\partial *}h_{k}
\right).
$$
Here
\begin{equation}\label{defqformula2SS}
\aligned
(\text{\rm ev},\text{\rm ev}^{\partial},\text{\rm ev}_{-\infty})
&= (\text{\rm ev}_1,\dots,\text{\rm ev}_{\ell};\text{\rm ev}^{\partial};
\text{\rm ev}^{\partial}_0,\dots,\text{\rm ev}^{\partial}_k)  \\
&: {\mathcal M}_{k+1;\ell}(para;H,J;*,L;\beta)
\to M^{\ell+1} \times L^{k+1}
\endaligned
\end{equation}
is the natural evaluation map and $(\text{\rm ev}^{\partial}_{0})!$
is the integration along the fibers of $\text{\rm ev}^{\partial}_{0}$. (The precise definition of
this integration associated to the CF-perturbation
is given in \cite[Definition 7.78]{fooo:tech2}. See also Definition \ref{def320222}.)
We define
$$
\frak q^{H,S=0}_{\ell,k;\beta}
: E_{\ell} (\Omega(M)[2])
\otimes \Omega(M)[1]
\otimes  B_k(\Omega(L)[1])
\to \Omega(L)[1]
$$
by using ${\mathcal M}_{k+1;\ell}(H^0,J;*,L;\beta)$ in \eqref{bdryFFhamLkinmodu22c} in the same way.

\begin{defn} Define a map
$
\frak J_{(H^0,J)}^{\text{\bf b},\frak b}:
\Omega(M) \widehat\otimes \Lambda^{\downarrow}
\to
CF_{\text{dR}}(L;\Lambda^{\downarrow})
$
by
\begin{equation}\label{HFtoHFLdefS=0}
\aligned
\frak J_{(H^0,J)}^{\text{\bf b},\frak b}(h)
=\sum_{\beta}\sum_{\ell=0}^{\infty}\sum_{k=0}^{\infty}
q^{-\beta\cap \omega}
\frac{\exp(\text{\bf b}_{2;1}\cap \beta )}{\ell!}
\frak q^{H,S=0}_{\ell,k;\beta}(\frak b_+^{\otimes \ell},h,b_+^{\otimes k}).
\endaligned
\end{equation}
We also define
$
\frak H_{(H,J)}^{\text{\bf b},\frak b}:
\Omega(M) \widehat\otimes \Lambda^{\downarrow}
\to
CF_{\text{dR}}(L;\Lambda^{\downarrow})
$
by
\begin{equation}\label{HFtoHFLdefS>0}
\aligned
\frak H_{(H,J)}^{\text{\bf b},\frak b}(h)
=\sum_{\beta}\sum_{\ell=0}^{\infty}\sum_{k=0}^{\infty}
q^{-\beta\cap \omega}
\frac{\exp(\text{\bf b}_{2;1}\cap \beta)}{\ell!}
\frak q^{H,S\ge 0}_{\ell,k;\beta}(\frak b_+^{\otimes \ell},h,b_+^{\otimes k}).
\endaligned
\end{equation}
\end{defn}
\begin{lem}
We have
$$
\delta^{\text{\bf b}} \circ \frak H_{(H,J)}^{\text{\bf b},\frak b}
\pm \frak H_{(H,J)}^{\text{\bf b},\frak b} \circ d
=
\frak I_{(H,J)}^{\text{\bf b},\frak b} \circ \CP_{(H_\chi,J)}^{\frak b} \circ \flat
-\frak J_{(H^0,J)}^{\text{\bf b},\frak b}.
$$
Here $\flat : \Omega^*(M) \widehat\otimes \Lambda^{\downarrow}
\cong
\Omega_{\dim M-*}(M) \widehat\otimes \Lambda^{\downarrow}$ as in
\eqref{eq:deRhamchain}.
\end{lem}\label{chainhomotopy1}
\begin{proof}
This follows from Lemma \ref{SFBULKkura18} after considering the correspondence by using the moduli space in Lemma \ref{SFBULKkura18} (1)
and using Stokes' formula (Theorem \ref{them48}, \cite[Theorem 8.11]{fooo:tech2})
and  composition formula (Theorem \ref{compform}, \cite[Theorem 10.20]{fooo:tech2}).
Indeed, the first term of left hand side corresponds to
(\ref{bdryFFhamLkinmodu22b}).
The first and second terms of the right hand side
correspond to (\ref{bdryFFhamLkinmodu22a}) and
(\ref{bdryFFhamLkinmodu22c}), respectively.
\end{proof}
\par
We next construct a chain homotopy between
$\frak J_{(H^0,J)}^{\text{\bf b},\frak b}$
and $i_{\text{\rm qm},\text{\bf b}}$.
Let $\sigma \in [0,1]$.
We replace (\ref{eq:HJCR18sec2}) by
\be\label{eq:HJCR18sec23}
\dudtau + J\Big(\dudt - \sigma X_{H^0_t}(u)\Big) = 0
\ee
in Definition \ref{Schijodulis} to define
$\overset{\circ}{\mathcal M}_{k+1;\ell}(\sigma,H^0,J;*,L;\beta) $.
We put
$$
\overset{\circ}{\mathcal M}_{k+1;\ell}([0,1],H^0,J;*,L;\beta)
=
\bigcup_{\sigma \in [0,1]}
\{\sigma\} \times \overset{\circ}{\mathcal M}_{k+1;\ell}(\sigma,H^0,J;*,L;\beta) .
$$
We can prove a lemma similar to Lemmas \ref{SFBULKkura18}, \ref{existmultiFFS}
using the compactification ${\mathcal M}_{k+1;\ell}([0,1],H^0,J;*,L;\beta)$
in place of ${\mathcal M}_{k+1;\ell}(para,H,J;*,L;\beta)$
in (\ref{defqformula2SS}) and (\ref{HFtoHFLdefS>0}), and define
$$
\overline{\frak H}_{(H^0,J)}^{\text{\bf b},\frak b}:
\Omega(M) \widehat\otimes \Lambda^{\downarrow}
\to
CF_{\text{dR}}(L;\Lambda^{\downarrow}).
$$
Then in a similar way we can show
$$
\delta^{\text{\bf b}} \circ \overline{\frak H}_{(H^0,J)}^{\text{\bf b},\frak b}
\pm \overline{\frak H}_{(H^0,J)}^{\text{\bf b},\frak b} \circ d
= \frak J_{(H^0,J)}^{\text{\bf b},\frak b}
-i_{\text{\rm qm},\text{\bf b}}.
$$
Combining this with Lemma \ref{chainhomotopy1},
we finish the proof of Proposition \ref{184mainprop}.
\end{proof}

\subsection{Heaviness of $L$}
\label{subsec:Heavynesscomplete}
We are now ready to complete the proofs of Proposition
\ref{182mainprop} and Theorem \ref{thm:heavy}.
\begin{proof}[Proof of Proposition \ref{182mainprop}]
For $\epsilon >0$ we take $x \in F^{\lambda}CF(M,H;\Lambda^{\downarrow})
\cong F^{-\lambda}CF(M,H;\Lambda)$
such that $[x] = \CP_{(H_\chi,J),\ast}^{\frak b}(a^\flat)$
and $\lambda \le \rho^{\frak b}(H;a)+\epsilon$.
By Proposition \ref{energyHH} we have
\begin{equation}
\frak I_{(H,J)}^{\text{\bf b},\frak b}(x)
\in H^{\lambda+E^+(H;Y)+\epsilon}CF(L;\Lambda^{\downarrow}).
\end{equation}
On the other hand, Proposition \ref{184mainprop}
shows that
$$
[\frak I_{(H,J)}^{\text{\bf b},\frak b}(x)]
=
\frak I_{(H,J)}^{\text{\bf b},\frak b,\ast}\circ \CP_{(H_\chi,J),\ast}^{\frak b}(a^\flat)
= i_{\text{\rm qm},\text{\bf b}}^{\ast}(a).
$$
Therefore
$
\lambda + E^+(H;Y) \ge \rho^{\mathbf b}_L(a)
$
by definition.
It implies
$
\rho^{\frak b}(H;a)+\epsilon \ge \rho^{\mathbf b}_L(a) - E^+(H;Y).
$
The proof of Proposition
\ref{182mainprop} is complete.
\end{proof}
\par\medskip
\begin{proof}[Proof of Theorem \ref{thm:heavy}]
We first prove (1).
By Remark \ref{rem:muimplieszeta} it suffices to prove $\mu_e^{\frak b}$-heaviness.
Let $H : M \times S^1 \to \R$ be a normalized periodic Hamiltonian.
We put
\begin{equation}\label{Hndef}
H_{(n)}(t,x) =  nH(nt - [nt],x),
\end{equation}
where $[c]$ is the largest integer such that $c \ge [c]$.
It is easy to see that $\widetilde\psi_{H_{(n)}} = (\widetilde\psi_{H})^n$.
We apply Proposition
\ref{182mainprop} to $H_{(n)}$ and obtain
$$
\rho^{\frak b}( (\widetilde\psi_{H})^n;e)
\ge  - n \inf E^+(H;L) + \rho^{\mathbf b}_L(e).
$$
Therefore by definition we have
$$
- \mu_e^{\frak b}(\widetilde\psi_{H}) \le \vol_\omega(M)E^+(H;L).
$$
Thus Theorem \ref{thm:heavy} (1) is proved.
\par
We turn to the proof of (2). Again
it suffices to prove $\mu_e^{\frak b}$-superheaviness.
\par
We use our assumption to apply Lemma \ref{dualitygyakmain} and obtain
\begin{equation}\label{finalstep1}
\rho^{\frak b}((\widetilde\psi_{H})^n;e)
\le 3\frak v_q(e) - \rho^{\frak b}((\widetilde\psi_{H})^{-n};e).
\end{equation}
We put $\widetilde H(t,x) = -H(1-t,x)$ and then obtain
$\widetilde H_{(n)}$ as in (\ref{Hndef}).
We then apply Proposition \ref{182mainprop} to $\widetilde H_{(n)}$ and obtain
\begin{equation}\label{finalstep2}
\rho^{\frak b}((\widetilde\psi_{H})^{-n};e)
\ge - n E^-(H;L) + \rho^{\mathbf b}_L(e).
\end{equation}
By (\ref{finalstep1}) and (\ref{finalstep2}) we have
$$
\rho^{\frak b}((\widetilde\psi_{H})^n;e)
\le  n E^-(H;L)
+ 3\frak v_q(e) - \rho^{\mathbf b}_L(e).
$$
Therefore
$$
- \mu_e^{\frak b}(\widetilde\psi_{H})  \ge -\vol_\omega(M)E^-(H;L)
$$
as required. The proof of Theorem \ref{thm:heavy} is now complete.
\end{proof}

\section{Linear independence of quasi-morphisms.}
\label{sec:independence}

In this section we prove Corollary \ref{lieind}.
We use the same notations as those in this corollary.
Let $U_i \subset M$, $i=-N,\dots,N$ be open sets such that
$\overline U_i \cap \overline U_j = \emptyset$
for $i\ne j$ and $L_i \subset U_i$ for
$i=1,\dots,N$.
For $i=-N,\dots,N$, let $\rho_i$ be nonnegative smooth functions on $M$
such that
$
\supp \rho_i \subset U_i$ ($i=-N,\dots,N$),
$\rho_i \equiv 1 $ on $L_i$ ($i=1,\dots,N$), and
$ \int_M\rho_i \omega^n =c$ where $c>0$ is independent of $i$ ($i=1,\dots,N$).
We then put
$$
H_i
= \vol_\omega(M)^{-1}
\left(
\rho_i
-
\rho_{-i}
\right)
$$
and regard them as time independent normalized Hamiltonian functions.
We put $\widetilde\psi_i = \widetilde\psi_{H_i}$.
Since the support of $H_i$ is disjoint from that of $H_j$ it follows that
$\widetilde\psi_i$ for $i \ne j$ commutes with $\widetilde\psi_j$.
Namely they generate a subgroup isomorphic to $\Z^N$.
\par
For $(k_1,\dots,k_N) \in \Z^N$ we consider
$$
\widetilde\phi =
\prod_{i=1}^N \widetilde\psi_i^{k_i}.
$$
Note that $\widetilde\phi = \widetilde\psi_H$ where
$
H = \sum_{i=1}^N k_i H_i.
$
Since $L_i$ is $\mu_{e_i}^{\frak b_i}$-superheavy and
$\mu_{e_i}^{\frak b_i}$-heavy, we have
$$
\vol_\omega(M)
\inf \{- H(x) \mid x \in L_i \}
\le
\mu_{e_i}^{\frak b_i}(\widetilde\phi )
\le
\vol_\omega(M)
\sup\{-H(x) \mid x \in L_i \}.
$$
Therefore
$
\mu_{e_i}^{\frak b_i}(\widetilde\phi) = -k_i.
$
The proof of Corollary  \ref{lieind} is complete.
\qed

\part{Applications}

In this part, we provide applications of the results obtained in the previous parts.
Especially combining them with the calculations we carried out in
a series of papers \cite{fooo:toric1, fooo:bulk, fooo:toricmir} in the
case of toric manifolds, we prove Corollary \ref{existtoric},
and Theorem \ref{uncount} for the case of $k$ ($\ge 2$)  points blow up of $\C P^2$.
The latter example has been studied in \cite{fooo:bulk}.
We also examine a continuum of Lagrangian tori in $S^2 \times S^2$ discovered by the present
authors in \cite{fooo:S2S2} and prove Theorem \ref{uncount}.

\section{Lagrangian Floer theory of toric fibers: review}
\label{toriceLagreview}

\subsection{Toric manifolds: review}
\label{subsec:toricreview}

In this subsection we review a very small portion of the theory of toric variety.
See for example \cite{fulton}
for a detailed account of toric varieties.
\par
Let $(M,\omega,J)$ be a K\"ahler manifold, where $J$ is its complex structure
and $\omega$ its K\"ahler form.
Let $n$ be the complex dimension of $M$. We assume $n$ dimensional real torus $T^n
= (S^1)^n$ acts effectively on $M$ such that $J$ and $\omega$ are preserved by the
action. We call such $(M,\omega,J)$ a {\it K\"ahler toric manifold}\index{K\"ahler toric manifold}
if the $T^n$ action has a moment map in the sense we describe below.
Hereafter we simply say $(M,\omega,J)$ (or $M$) is a toric manifold.
\par
Let $(M,\omega,J)$ be as above. We say a map
$\pi = (\pi_1,\dots,\pi_n) : M \to \R^n$ is a {\it moment map} \index{moment map} if the following holds.
We consider the $i$-th factor $S^1_i$ of $T^n$. (Here $i=1,\dots,n$.)
Then $\pi_i : M\to \R$ is the moment map of the action of  $S^1_i$.
In other words, we have the following identity  of $\pi_i$
\begin{equation}\label{momentmapeq}
2\pi d\pi_i= \omega(\widetilde{\frak t}_i, \cdot)
\end{equation}
where $\widetilde{\frak t}_i$ is the Killing vector field
associated to the action of the circle
 $S^1_i$ on $X$.
\begin{rem}\label{rem2pi1}
We put $2\pi$ in Formula (\ref{momentmapeq})
in order to eliminate this factor from (\ref{elljandarea}).
See Remark \ref{rem2pi2}.
\end{rem}
\par
Let $\text{\bf u} \in \text{Int}P$.
Then the inverse image $\pi^{-1}(\text{\bf u})$ is a
Lagrangian submanifold which is an orbit of the $T^n$ action.
We put
\begin{equation}\label{L(u)def}
L(\text{\bf u}) = \pi^{-1}(\text{\bf u}).
\end{equation}
This is a Lagrangian torus.
\par
It is well-known that $P = \pi(M)$ is a convex polytope.
We can find a finitely many affine functions $\ell_j: \R^n \to \R$ ($j=1,\dots,m$)
such that \index{$\ell_j$}
\begin{equation}
P
= \{ \text{\bf u} \in \R^n \mid \ell_j(\text{\bf u}) \ge 0, \quad \forall j=1,\dots,m\}.
\end{equation}
We put $\partial_j P = \{\text{\bf u} \in P \mid \ell_j(\text{\bf u})= 0\} $
and $D_j = \pi^{-1}(\partial_j P)$.
($\dim_{\R} \partial_j P = n-1$.)
$D_1 \cup \dots \cup D_m$ is called the {\it toric divisor}\index{toric divisor}.
\par
Moreover we may choose $\ell_j$ so that the following holds.
\begin{conds}\label{condellj}
\begin{enumerate}
\item
We put
$$
d\ell_j = \vec v_j = (v_{j,1},\dots,v_{j,n}) \in \R^n.
$$
Then $v_{j,i} \in \Z$.
\item
Let $p$ be a vertex of $P$. Then the number of faces $\partial_j P$ which contain
$p$ is $n$. Let $\partial_{j_1} P,\dots, \partial_{j_n} P $ be those faces. Then
$\vec v_{j_1},\dots, \vec v_{j_n}$ (which is contained in $\Z^n$ by item (1)) is
a basis of $\Z^n$.
\end{enumerate}
\end{conds}
The existence of such $\ell_j$ and the property above is proved in \cite{Del}.
The affine functions $\ell_j$ have the following geometric interpretation.
Let $\text{\bf u} \in \text{\rm Int}P$.
There exists $m$ elements
$\beta_j \in H_2(M,L(\text{\bf u});\Z)$ such that
\begin{equation}\label{charbetaj}
\beta_j \cap D_{j'} =
\begin{cases}
1  & j = j', \\
0  & j \ne j'.
\end{cases}
\end{equation}
Then the following area formula
\begin{equation}\label{elljandarea}
\int_{\beta_j} \omega = \ell_j(\text{\bf u})
\end{equation}
is proved in \cite[Theorem 8.1]{cho-oh}.
(See \cite[Section 2]{fooo:toric1}  also.)
\begin{rem}\label{rem2pi2}
Note in \cite[Theorem 8.1]{cho-oh},
\cite[Section 2]{fooo:toric1}
there is a factor $2\pi$ in the right hand side of
(\ref{elljandarea}). We eliminate it
by slightly changing the notation of
moment map (See Remark \ref{rem2pi1}.)
Note in \cite{fooo:toric1} the contribution
of the pseudo-holomorphic disc of homology class
$\beta$ in $\frak m_k$ has
weight $T^{\beta \cap \omega/2\pi}$.
In this paper and in \cite{fooo:book1, fooo:book2}
the weight is $T^{\beta \cap \omega}$.
\end{rem}

\subsection{Review of Floer cohomology of toric fiber}
\label{subsec:toricHFreview}

Let $\widehat{\mathcal H}^{2k}$ be the $\C$ vector space whose basis is given by
the cohomology classes which are the Poincar\'e-dual to the fundamental homology classes of
complex codimension $k$ submanifolds of $M$ which arise as transversal intersections
of $k$ irreducible components $D_{j_1},\dots,D_{j_k}$ of the
toric divisor.
For $k=0$ we let $\widehat{\mathcal H}^{0} = \C$ and its basis is
regarded as a codimension $0$ submanifold $M$ itself.
For $k\ne 0$ the inclusion map induces an isomorphism
\begin{equation}\label{homologyisoML}
\widehat{\mathcal H}^{2k}  \cong
H_{2n-2k}(M \setminus L(\text{\bf u});\C)
\cong H^{2k}(M, L(\text{\bf u});\C).
\end{equation}
There exists a short exact sequence
\begin{equation}\label{shortexacthom}
0
\to
H_{2n-2k}(M;\Z)
\to
H_{2n-2k}(M,L(\text{\bf u});\Z)
\to
H_{2n-2k-1}(L(\text{\bf u});\Z)
\to
0.
\end{equation}
Note that $L(\text{\bf u})$ is a torus and so $H(L(\text{\bf u});\Z) $
is a free abelian group.
We fix a splitting of (\ref{shortexacthom}) and identify
\begin{equation}\label{homidentify}
H_{2}(M,L(\text{\bf u});\Z)
\cong
H_{2}(M;\Z)
\oplus
H_{1}(L(\text{\bf u});\Z).
\end{equation}
For $k \neq 0$, we also fix a $\C$-linear subspace
${\mathcal H}^{2k} \subset \widehat{\mathcal H}^{2k}$ such that \index{${\mathcal H}^{2k}$}
the homomorphism induced by the inclusion $H_{2n-2k}(M \setminus L(\text{\bf u});\Z) \to
H_{2n-2k}(M;\Z)$ restricts to an isomorphism from ${\mathcal H}^{2k}$ to $H_{2n-2k}(M;\C)$.
For $k=0$, we have
$$
{\mathcal H}^{0}\cong \C \cong H^{0}(M ;\C)
$$
where the first isomorphism is given by definition and the second is induced by
the canonical unit of $H^{0}(M ;\C)$.
We recall that the odd degree cohomology of toric manifolds are
all trivial.
\par
We take the direct sum and denote
${\mathcal H} = \bigoplus_{k=0}^{n} {\mathcal H}^{2k}.$
We take its basis $\{PD([D_a]) \mid a=0,\dots, B\}$ so that
$D_0 = [M]$ (whose Poincar\'e dual is the unit),
each of $D_1,\dots,D_{B_2}$ is an irreducible component of the toric divisor
($B_2 = \text{\rm rank} \, H_2(M;\Q)$)
and $D_{B_2+1}\dots, D_B$ are transversal intersection of
irreducible components of the toric divisors.
($B+1 =\text{\rm rank}\, H(M;\Q)$.)
We put $\text{\bf e}_a^M = PD([D_a])$.
\par
We put $\underline B = \{1,\ldots,B\}$ and denote the set of all maps $\text{\bf p}:
\{1,\ldots,\ell\} \to \underline B$ by
$Map(\ell,\underline B)$. We write $\vert\text{\bf
p}\vert = \ell$ if $\text{\bf p} \in Map(\ell,\underline B)$.
\par
For $k,\ell \in \Z_{\ge 0}$, $\text{\bf p} \in Map(\ell,\underline B)$ and $\beta \in H_2(M,L(\text{\bf u});\Z)$ we
define a fiber product
\begin{equation}\label{fiberproductwithbeta}
\mathcal M_{k+1;\ell}(L(\text{\bf u});\beta;\text{\bf p})
=
\mathcal M_{k+1;\ell}(L(\text{\bf u});\beta)
{}_{(ev_1,\ldots,ev_{\ell})}
\times_{M^{\ell}} \prod_{i=1}^{\ell} D_{\text{\bf p}(i)},
\end{equation}
\index{$\mathcal M_{k+1;\ell}(L(\text{\bf u});\beta)$}
where $\mathcal M_{k+1;\ell}(L(\text{\bf u});\beta)$
is the moduli space defined in Definition \ref{diskmoduli1} and
Proposition \ref{disckura}.
\par
Let $\text{\rm Perm}(\ell)$ be the symmetric group of order $\ell!$.
It acts on $\mathcal M_{k+1;\ell}(L(\text{\bf u});\beta)$ as the
permutation of the interior marked points.
We define $\sigma \cdot \text{\bf p} = \text{\bf
p} \circ \sigma^{-1}$.
They induce a map
$\sigma_* : \mathcal M_{k+1;\ell}(L(\text{\bf u});\beta;\text{\bf p})
\to \mathcal M_{k+1;\ell}(L(\text{\bf u});\beta;\sigma\cdot\text{\bf p})$.
\par
Since $L(\text{\bf u})$ is a $T^n$ orbit,
$\mathcal M_{k+1;\ell}(L(\text{\bf u});\beta;\text{\bf p})$ has a
$T^n$ action induced by the action on $M$.
To describe the boundary of
$\mathcal M_{k+1;\ell}(L(\text{\bf u});\beta;\text{\bf p})$
we need to prepare some notations.
We will define a map
\begin{equation}\label{split}
\text{Split}: \text{\rm Shuff}(\ell) \times Map(\ell,\underline B)
\longrightarrow \bigcup_{\ell_1+\ell_2 = \ell} Map(\ell_1,\underline
B) \times Map(\ell_2,\underline B),
\end{equation}
as follows: Let $\text{\bf p} \in Map(\ell,\underline B)$ and
$(\mathbb L_1,\mathbb L_2) \in \text{\rm Shuff}(\ell)$. We put
$\ell_j = \# (\mathbb L_j)$ and let $\frak i_j: \{1,\ldots,\ell_j\}
\to \mathbb L_j$ be the order preserving bijection. We consider
the map $\text{\bf p}_j: \{1,\ldots,\ell_j\} \to \underline B$
defined by $ \text{\bf p}_j(i) = \text{\bf p}(\frak i_j(i)) $, and
set
$$
\text{Split}((\mathbb L_1,\mathbb L_2),\text{\bf p}): = (\text{\bf
p}_1,\text{\bf p}_2).
$$
\begin{lem}\label{torickuranishi}
\begin{enumerate}
\item
$\mathcal M_{k+1;\ell}(L(\text{\bf u});\beta;\text{\bf p})$ has
a Kuranishi structure with corners.
\item
The Kuranishi structure is invariant under the $T^n$ action.
\item
Its normalized boundary is described by the union of fiber products:
\begin{equation}
\mathcal
M_{k_1+1;\#\L_1}(L(\text{\bf u});\beta_1;\text{\bf p}_1)
{}_{{\rm ev}^{\partial}_0}\times_{{\rm ev}^{\partial}_i} \mathcal M_{k_2+1;\#\L_2}(L(\text{\bf u});\beta_2;\text{\bf p}_2)
\end{equation}
where the union is taken over all $(\L_1,\L_2) \in \text{\rm Shuff}(\ell)$,
$k_1,k_2$ with $k_1 + k_2 = k$ and $\beta_1,\beta_2 \in H_2(M,L(\text{\bf u});\Z)$ with
$\beta = \beta_1+\beta_2$. We put $
\text{\rm Split}((\mathbb L_1,\mathbb L_2),\text{\bf p}) = (\text{\bf
p}_1,\text{\bf p}_2).
$
\item The virtual dimension is given by
\begin{equation}
\dim \mathcal M_{k+1;\ell}(L(\text{\bf u});\beta;\text{\bf p})
= n + \mu_{L(\text{\bf u})}(\beta) + k - 2 + 2\ell - \sum_{i=1}^{\ell} 2\deg D_{\text{\bf p}(i)}.
\end{equation}
\item The evaluation maps ${\rm ev}^{\partial}_i$ at the boundary marked points of $\mathcal M_{k+1;\ell}(L(\text{\bf u});\beta)$
define  strongly continuous smooth maps on $\mathcal M_{k+1;\ell}(L(\text{\bf u});\beta;\text{\bf p})$, which we also denote by
${\rm ev}^{\partial}_i$.
They are compatible with the description $(3)$ of the boundary.
\item
We can define an orientation of the Kuranishi structure so that it is compatible with $(3)$.
\item The evaluation map
${\rm ev}^{\partial}_0$ is weakly submersive.
\item
The Kuranishi structure is compatible with the permutation of the
interior marked points.
\item
The Kuranishi structure is compatible with the forgetful map of
the $i$-th boundary marked point for $i=1,\ldots,k$.
(We do not require the compatibility with the forgetful map of  the $0$-th marked point.)
\end{enumerate}
\end{lem}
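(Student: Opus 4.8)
\textbf{Proof plan for Lemma \ref{torickuranishi}.}
The plan is to reduce every assertion to the corresponding statement about the moduli spaces $\mathcal M_{k+1;\ell}(L(\text{\bf u});\beta)$ established in Proposition \ref{disckura}, together with the standard transversality of the toric divisors $D_a$, by carefully analyzing the fiber product \eqref{fiberproductwithbeta}. First I would recall from Proposition \ref{disckura} that $\mathcal M_{k+1;\ell}(L(\text{\bf u});\beta)$ carries an oriented Kuranishi structure with corners on which the evaluation maps $\text{\rm ev}_i$ at interior marked points are strongly continuous and smooth, and on which $\text{\rm ev}_0^{\partial}$ is weakly submersive. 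Since the $D_{\text{\bf p}(i)}$ are smooth complex submanifolds of $M$ (transversal intersections of toric divisors), the fiber product $\mathcal M_{k+1;\ell}(L(\text{\bf u});\beta)\, {}_{(ev_1,\ldots,ev_\ell)}\!\times_{M^\ell} \prod_{i=1}^\ell D_{\text{\bf p}(i)}$ inherits a Kuranishi structure with corners in the standard way (cf. \cite{fooo:book} Appendix A), which gives (1); the dimension count in (4) is then obtained by subtracting $\sum_{i=1}^\ell \text{codim}_\R D_{\text{\bf p}(i)} = \sum_{i=1}^\ell 2\deg D_{\text{\bf p}(i)}$ from \eqref{dimensiondisc}, and the orientation in (6) comes from the product orientation using the complex (hence canonically oriented) normal bundles of the $D_{\text{\bf p}(i)}$, with the boundary sign convention of Proposition 8.3.3 in \cite{fooo:book}.

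Next I would treat the equivariance statements (2) and (8). The $T^n$-action on $M$ preserves each toric divisor $D_a$ (the toric divisors are the $T^n$-invariant subvarieties), so it acts on $\prod_i D_{\text{\bf p}(i)}$; since the Kuranishi structure on $\mathcal M_{k+1;\ell}(L(\text{\bf u});\beta)$ can be chosen $T^n$-invariant (as $L(\text{\bf u})$ is a $T^n$-orbit, which is where the toric geometry genuinely enters — the almost complex structure $J$ and the torus action being compatible), the induced Kuranishi structure on the fiber product is $T^n$-invariant, giving (2). For (8), the $\frak S_\ell$-action permuting interior marked points on $\mathcal M_{k+1;\ell}(L(\text{\bf u});\beta)$ intertwines $(ev_1,\ldots,ev_\ell)$ with the permutation of factors in $M^\ell$, hence induces $\sigma_*:\mathcal M_{k+1;\ell}(L(\text{\bf u});\beta;\text{\bf p})\to\mathcal M_{k+1;\ell}(L(\text{\bf u});\beta;\sigma\cdot\text{\bf p})$, compatibly with the Kuranishi structures by Proposition \ref{disckura} (9). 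Statements (5) and (7) are immediate: ${\rm ev}^\partial_i$ on the fiber product is the composition with the projection to $\mathcal M_{k+1;\ell}(L(\text{\bf u});\beta)$ followed by ${\rm ev}^\partial_i$ there, and weak submersivity of ${\rm ev}^\partial_0$ is inherited because forming a fiber product over the interior evaluation maps does not interfere with the boundary evaluation map. Statement (9) follows from Proposition \ref{disckura} (8): the forgetful map of the $i$-th \emph{boundary} marked point ($i\ge 1$) commutes with all interior evaluation maps, hence descends to the fiber product.

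The main work, and the step I expect to be the genuine obstacle, is the boundary description (3). One starts from \eqref{eq177}, which expresses $\partial\mathcal M_{k+1;\ell}(L(\text{\bf u});\beta)$ as a union of fiber products over $L(\text{\bf u})$ indexed by $(\L_1,\L_2)\in\text{\rm Shuff}(\ell)$, $k_1+k_2=k$, $\beta_1+\beta_2=\beta$. The point is that intersecting with $\prod_i D_{\text{\bf p}(i)}$ distributes over this decomposition: an interior marked point labelled by the shuffle component $\L_1$ must satisfy the incidence condition with $D_{\text{\bf p}(i)}$ on the \emph{first} bubble component, and likewise for $\L_2$ and the second — this is exactly the combinatorial content of the map $\text{Split}((\mathbb L_1,\mathbb L_2),\text{\bf p})=(\text{\bf p}_1,\text{\bf p}_2)$ in \eqref{split}, using the order-preserving bijections $\frak i_j$. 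I would verify that forming the fiber product commutes with taking the boundary in the sense of Kuranishi structures (a general fact for fiber products of spaces with corners when the fiber product is taken over evaluation maps disjoint from the ones that glue at the boundary node, i.e., over interior rather than boundary marked points here), and then match the two fiber-product operations — gluing at the boundary node over $L(\text{\bf u})$ versus incidence over $M^\ell$ — to identify $\partial\bigl(\mathcal M_{k+1;\ell}(L(\text{\bf u});\beta;\text{\bf p})\bigr)$ with the stated union of $\mathcal M_{k_1+1;\#\L_1}(L(\text{\bf u});\beta_1;\text{\bf p}_1)\,{}_{{\rm ev}^\partial_0}\!\times_{{\rm ev}^\partial_i}\mathcal M_{k_2+1;\#\L_2}(L(\text{\bf u});\beta_2;\text{\bf p}_2)$, with orientations matching as in Proposition \ref{disckura} (5). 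The subtlety is purely bookkeeping — keeping track of which interior marked points land on which bubble and checking that no new boundary strata (e.g. from interior marked points colliding with the node, or from sphere bubbling which is real-codimension $2$) appear — and is a direct, if tedious, adaptation of the arguments in \cite{fooo:toric1} and \cite{fooo:bulk}; since the excerpt explicitly flags it as routine, I would present the combinatorics via $\text{Split}$ and refer to those sources for the detailed transversality and orientation verifications.
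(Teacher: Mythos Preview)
Your proposal is correct and is essentially the argument one would find in the cited reference: the paper itself gives no proof of this lemma but simply refers to \cite{fooo:bulk} Section 6. Your reduction of each item to Proposition \ref{disckura} via the fiber product \eqref{fiberproductwithbeta}, together with the $T^n$-invariance coming from the toric structure, is exactly the intended route.

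One small sharpening: your claim that (7) is ``immediate'' because the fiber product over interior evaluation maps ``does not interfere'' with ${\rm ev}^\partial_0$ is slightly too quick as stated. Restricting a Kuranishi chart to the preimage of $\prod_i D_{\text{\bf p}(i)}$ under $({\rm ev}_1,\ldots,{\rm ev}_\ell)$ does not a priori preserve submersivity of an unrelated map. What makes it work here is precisely item (2): the fiber product is $T^n$-invariant (the $D_a$ being $T^n$-invariant), $L(\text{\bf u})$ is a free $T^n$-orbit, and ${\rm ev}^\partial_0$ is $T^n$-equivariant, so ${\rm ev}^\partial_0$ is automatically a submersion on any $T^n$-invariant chart. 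The paper makes this same point explicitly just below, in the remark following Lemma \ref{toricmmultsect} (``We note that (4) is a consequence of (2)''), and again in the analogous Lemma \ref{torickuranishiU}. You have all the ingredients; just route (7) through (2) rather than treating it as formal.
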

Lemma \ref{torickuranishi} is proved in \cite[Section 6]{fooo:bulk}  and
\cite[Section 4.3]{fooo:toricmir}.
See Figure \ref{Figure12}.
\par

\begin{lem}\label{toricmmultsect}
There exists a system of multisections on $\mathcal M_{k+1;\ell}(L(\text{\bf u});\beta;\text{\bf p})$
with the following properties:
\begin{enumerate}
\item They are transversal to $0$.
\item They are invariant under the $T^n$ action.
\item They are compatible with the description of
the boundary in Lemma $\ref{torickuranishi}$ $(3)$.
\item The restriction of ${\rm ev}^{\partial}_0$ to the zero set of this
multisection is a submersion.
\item They are invariant under permutation of the
interior marked points.
\item The multisection is compatible with the forgetful
map of the $i$-th boundary marked point for $i=1,\ldots,k$.
\end{enumerate}
\end{lem}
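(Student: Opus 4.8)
The final statement is Lemma \ref{toricmmultsect}, asserting the existence of a system of $T^n$-invariant multisections on the moduli spaces $\mathcal M_{k+1;\ell}(L(\text{\bf u});\beta;\text{\bf p})$ with six compatibility properties. The plan is to construct such a system by induction on $\beta\cap\omega$ and then on $k+\ell$, exactly parallel to the constructions in \cite{fooo:book} Sections 7.1--7.2 and \cite{fukaya:cyc}, the only new feature being the simultaneous requirement of $T^n$-equivariance (property (2)) together with submersivity of $\text{\rm ev}^{\partial}_0$ on the zero set (property (4)). The key geometric input allowing these two requirements to coexist is Lemma \ref{torickuranishi} (2) and (7): the Kuranishi structure is itself $T^n$-invariant and $\text{\rm ev}^{\partial}_0$ is weakly submersive, so that one may work in the quotient orbifold-type charts where the $T^n$-action is free (since $L(\text{\bf u})$ is a single $T^n$-orbit, $\text{\rm ev}^{\partial}_0$ intertwines the $T^n$-action on the moduli space with the transitive one on $L(\text{\bf u})$, hence the $T^n$-action on $\mathcal M_{k+1;\ell}(L(\text{\bf u});\beta;\text{\bf p})$ is free).

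First I would set up the induction: fix $E_0$ and $\ell_0$ and restrict to those moduli spaces with $\beta\cap\omega\le E_0$ and $\ell\le\ell_0$, in order to deal with the running-out problem as in \cite{fooo:book} Subsection 7.2.3 and \cite{fukaya:cyc} Section 14. The base case is $\beta\cap\omega$ minimal with $k+\ell$ small, where the moduli space has no codimension-one boundary of the fiber-product type and the multisection can be chosen directly by a $T^n$-equivariant version of the transversality theorem, using the fact that the $T^n$-action is free to descend to the quotient. For the inductive step, property (3) forces the multisection on $\partial\mathcal M_{k+1;\ell}(L(\text{\bf u});\beta;\text{\bf p})$ to be the fiber product of the multisections already constructed on the pieces $\mathcal M_{k_1+1;\#\L_1}(L(\text{\bf u});\beta_1;\text{\bf p}_1)\,{}_{{\rm ev}^{\partial}_0}\times_{{\rm ev}^{\partial}_i}\mathcal M_{k_2+1;\#\L_2}(L(\text{\bf u});\beta_2;\text{\bf p}_2)$ with $\beta_i\cap\omega<\beta\cap\omega$ or $\#\L_i<\ell$; here one uses that $\text{\rm ev}^{\partial}_0$ on the first factor is already a submersion (property (4) at the lower level) so that the fiber product is cut out transversally and its own $\text{\rm ev}^{\partial}_0$ remains a submersion. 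Then I would extend this boundary datum to the whole moduli space keeping $T^n$-equivariance: this is possible because the obstruction to extending lives in a relative sheaf-cohomology group which vanishes by the usual partition-of-unity / Kuranishi-neighborhood argument, performed equivariantly on the free quotient.

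Next I would arrange the remaining compatibilities. Properties (5) and (8) (invariance under $\frak S_{\ell}$ permuting interior marked points) and (6) (invariance under $i$-th boundary forgetful maps, $i=1,\dots,k$) are handled exactly as in \cite{fukaya:cyc}: one first constructs the multisection on the moduli space with the forgetful image (i.e. with the relevant boundary marked points forgotten), using its $T^n$-invariant Kuranishi structure, and then pulls it back; the $\frak S_{\ell}$-invariance is imposed by averaging over $\frak S_{\ell}$, which is compatible with the other structures by Lemma \ref{torickuranishi} (8)--(9) and commutes with the $T^n$-averaging. The compatibility (3) with Split is preserved because averaging and pullback commute with the fiber-product decomposition. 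Property (4), the submersivity of $\text{\rm ev}^{\partial}_0$, is the one that must be threaded through every stage: it is needed both to make the fiber products in (3) transversal and to make the integration-along-the-fiber operations defining $\frak q_{\ell,k;\beta}$ (in \eqref{defqformula}) well-defined, and it is exactly here that one cannot use ordinary multisections but must use \emph{continuous families} of multisections, as in \cite{fukaya:cyc} Corollary 5.2.

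The main obstacle I expect is precisely the simultaneous realization of properties (2) and (4): in general requiring a multisection to be submersive on its zero set under a fixed evaluation map is in tension with requiring equivariance under a group action that the evaluation map does not respect, and one must genuinely use that $\text{\rm ev}^{\partial}_0$ is $T^n$-equivariant onto the \emph{homogeneous} space $L(\text{\bf u})$, so that the $T^n$-action on the moduli space is free and $\text{\rm ev}^{\partial}_0$ descends to the quotient as an honest map of Kuranishi spaces to a point; submersivity in the quotient then automatically lifts to $T^n$-equivariant submersivity upstairs. Once this reduction to the free quotient is in place, the rest is a routine but lengthy transcription of the arguments of \cite{fooo:book} Section 7 and \cite{fukaya:cyc} Sections 3--7, so I would state that explicitly and omit the details, as the excerpt does for the parallel Lemmas \ref{existmkulti1}, \ref{existmultiFF}, and \ref{existmultiFFS}.
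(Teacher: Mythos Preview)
Your approach is essentially correct and close to the paper's, but you overcomplicate the main point and make one small error. The paper's entire proof is the single observation that property (4) is a \emph{consequence} of property (2), together with a reference to \cite{fooo:bulk} Section 6 for the remaining standard inductive construction. You have in fact identified the mechanism behind this implication: since $L(\text{\bf u})$ is a free $T^n$-orbit and ${\rm ev}^{\partial}_0$ is $T^n$-equivariant, any $T^n$-invariant zero set is automatically mapped submersively by ${\rm ev}^{\partial}_0$ (the $T^n$-directions in the zero set already surject onto $T_{L(\text{\bf u})}$). But you then frame (2) and (4) as two competing constraints to be reconciled, and describe this reconciliation as ``the main obstacle''; in fact there is nothing to reconcile, since (4) comes for free once (2) is arranged.

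Relatedly, your claim that ``one cannot use ordinary multisections but must use continuous families of multisections'' is mistaken here. Continuous families are needed in the general (non-toric) Lemma \ref{existmkulti1} precisely in order to force submersivity of ${\rm ev}^{\partial}_0$; in the toric case that submersivity is automatic from $T^n$-invariance, and the statement of Lemma \ref{toricmmultsect} accordingly asks only for ordinary multisections. This distinction matters later in the paper (see Subsection \ref{subsec:DifferencemTm}, point (2), and the discussion around Proposition \ref{H1iswbdchain}).
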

This is also proved in \cite[Section 6]{fooo:bulk}  and
\cite[Section 4.4]{fooo:toricmir}.
We note that (4) is a consequence of (2).
\par
Let $h_1,\dots,h_k \in \Omega(L(\text{\bf u}))$.
We then define a differential form
on $L(\text{\bf u})$ by
\index{$\frak q_{\ell,k;\beta}^{T}(\text{\bf p};h_1,\ldots,h_k)$}
\begin{equation}
\mathfrak q_{\ell,k;\beta}^{T}(\text{\bf p};h_1,\ldots,h_k)
= ({\rm ev}^{\partial}_0)! ({\rm ev}^{\partial}_1,\ldots,{\rm ev}^{\partial}_k)^*(h_1 \wedge \cdots \wedge h_k).
\label{37.188}\end{equation}
\index{operator $\frak q$!operator $\frak q^T$}
Here the superscript $T$ stands for $T^n$ equivariance.
By Lemma \ref{toricmmultsect} (4) the integration along the fibers $({\rm ev}^{\partial}_0)!$ is
well-defined.
Then Lemma \ref{toricmmultsect} (5) implies that the operation (\ref{37.188}) is
invariant under the permutation of the factors of $\text{\bf p}$.
Therefore we can $\C$-linearly extend the definition \eqref{37.188} to define a map
\begin{equation}\label{eq:mapqT}
\frak q^T_{\ell,k;\beta}:
E_{\ell} (\mathcal H[2]) \otimes B_k(\Omega(L(\text{\bf u}))[1]) \to  \Omega(L(\text{\bf u}))[1].
\end{equation}
Identification of the de Rham cohomology group $H(L(\text{\bf u});\C)$ of $L(\text{\bf u})$
with the subspace of $T^n$-invariant differential forms on the torus $L(\text{\bf u})$ and
the property stated in Lemma \ref{toricmmultsect} (2) imply that  the operations $\frak q^T_{\ell,k;\beta}$ induce
\begin{equation}\label{eq:mapqTcan}
\frak q^T_{\ell,k;\beta}:
E_{\ell} (\mathcal H[2]) \otimes B_k(H(L(\text{\bf u});\C)[1]) \to  H(L(\text{\bf u});\C)[1].
\end{equation}
For $\beta=\beta_0 = 0$, we define $\frak q^T_{0,k;\beta_0}$ to be the map given in
(\ref{defqformula2}).
Then the operators  $\frak q^T_{\ell,k;\beta}$ satisfy all the conclusions of Theorem \ref{qproperties}.
We use Theorem \ref{qproperties} to define
$\frak m_{k}^{T,\text{\bf b}}$
for
$\text{\bf b} = (\frak b_0,\text{\bf b}_{2;1},\frak b_+,b_+)$
in the same way as we did in Definition \ref{deformedqdef}.
We have thus obtained a filtered $A_{\infty}$ algebra
$(CF_{\text{\rm dR}}(L(\text{\bf u});\Lambda_0),\{\frak m_k^{T,\text{\bf b}}
\}_{k=0}^{\infty})$ with
$$
CF_{\text{\rm dR}}(L(\text{\bf u});\Lambda_0)
= \Omega(L(\text{\bf u}))\widehat{\otimes} \Lambda_0.
$$
This is the filtered $A_{\infty}$ algebra used in \cite{fooo:bulk}.
(In \cite{fooo:bulk} $\frak q^{T}$ is denoted by $\frak q^{dR}$.)
In particular, if $\frak m_0^{T,\text{\bf b}}(1) \equiv 0 \mod \text{\bf e}_L\Lambda_+$,
$\frak m_1^{T,\text{\bf b}}\circ \frak m_1^{T,\text{\bf b}} = 0$.
We put $\delta^{T,\text{\bf b}} = \frak m_1^{T,\text{\bf b}}$ and
define its associated Floer cohomology by
\index{$HF_T((L,\text{\bf b});\Lambda_0)$}
\begin{equation}\label{THFdef}
HF_T((L,\text{\bf b});\Lambda_0)
=
\frac{\text{\rm Ker} \,\delta^{T,\text{\bf b}}}
{\text{\rm Im} \,\delta^{T,\text{\bf b}}}.
\end{equation}

We put subscript $T$ in the notation to indicate that we are using
a $T^n$-equivariant perturbation. In a series of papers
\cite{fooo:toric1,fooo:bulk,fooo:toricmir} we studied Floer cohomology (\ref{THFdef})
and described its nonvanishing property in terms of the critical point
theory of certain non-Archimedean analytic function, called the \emph{potential
function}. An explanation of this potential function is given in Subsection \ref{subsec:toricHFproperties}.

\begin{rem}\label{remark206666}
Here we note that we use a system of multisections and integrations on their zero sets in Lemma \ref{toricmmultsect}.
They are described in \cite[appendix C]{fooo:toric1}.
In other part of this paper, we use a system of CF-perturbations and define integration along the fiber
using CF-perturbation.
See Remarks \ref{rem28777} and \ref{multiseccont}.
\end{rem}

\subsection{Relationship with the Floer cohomology in Section \ref{sec:q-map}}
\label{subsec:relationHF}

To apply the Floer cohomology  (\ref{THFdef})
for the purpose of studying spectral invariants,
we need to show that (\ref{THFdef}) is isomorphic to the
Floer cohomology we used in Part 3.
We use the next proposition for this purpose.
We denote $\text{\bf b}^{(0)} = (\frak b_0,\text{\bf b}_{2;1},\frak b_+,0)$ as before.
\begin{prop}\label{Ainfinityequiv} Let $\text{\bf u} \in \text{\rm Int} P$.
The filtered $A_{\infty}$ algebra
$(CF_{\text{\rm dR}}(L(\text{\bf u});\Lambda_0),\{\frak m_k^{\text{\bf b}^{(0)}}
\}_{k=0}^{\infty})$
is homotopy equivalent to
$(CF_{\text{\rm dR}}(L(\text{\bf u});\Lambda_0),\{\frak m_k^{T,\text{\bf b}^{(0)}}
\}_{k=0}^{\infty})$
as a unital filtered $A_{\infty}$ algebra.
\end{prop}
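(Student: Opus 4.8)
The plan is to construct an explicit filtered $A_\infty$-homomorphism between the two structures by interpolating the perturbations used to define $\frak q_{\ell,k;\beta}$ and $\frak q^T_{\ell,k;\beta}$, exploiting that both arise from the \emph{same} underlying moduli spaces $\mathcal M_{k+1;\ell}(L(\text{\bf u});\beta)$ (Definition \ref{diskmoduli1}, Proposition \ref{disckura}) with two different choices of systems of continuous families of multisections: one merely satisfying the compatibility conditions of Lemma \ref{existmkulti1}, the other in addition $T^n$-equivariant as in Lemma \ref{toricmmultsect}. First I would set up a one-parameter family of moduli spaces over $[0,1]$, equipped with a system of multisections restricting to the non-equivariant system at $0$ and the equivariant system at $1$, compatible with the boundary description in Proposition \ref{disckura} (3). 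The existence of such a cobordism-type family is proven by the standard induction over $\beta\cap\omega$ and the number $\ell$ of interior marked points, using the same homological-algebra machinery of \cite{fooo:book} Section 7.4 and \cite{fukaya:cyc} that underlies Lemma \ref{existmkulti1}; compatibility with the forgetful maps of boundary marked points and with the $\frak S_\ell$-action on interior marked points must be carried along as before.

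Second, I would define $\frak q$-type operators from this parameterized moduli space (following the recipe of \eqref{defqformula}), which by Stokes' theorem \cite{fooo:toric1} Lemma C.9 and the composition formula, together with the boundary decomposition of the parameterized space, assemble into a filtered $A_\infty$-homomorphism $\mathfrak f = \{\mathfrak f_k\}$ from $(CF_{\text{\rm dR}}(L(\text{\bf u});\Lambda_0),\{\frak m_k^{\text{\bf b}^{(0)}}\})$ to $(CF_{\text{\rm dR}}(L(\text{\bf u});\Lambda_0),\{\frak m_k^{T,\text{\bf b}^{(0)}}\})$, after the bulk-deformation by $\text{\bf b}^{(0)}=(\frak b_0,\text{\bf b}_{2;1},\frak b_+,0)$ is switched on exactly as in Definition \ref{deformedqdef}. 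Unitality is ensured by the analog of Theorem \ref{qproperties} (3) for the parameterized operators, which in turn follows from compatibility of the family of multisections with the forgetful map of boundary marked points, just as in the proof of Theorem \ref{qproperties}. The linear term $\mathfrak f_1$ is, modulo $\Lambda_+$, the identity on $\Omega(L(\text{\bf u}))$ (the $\beta=\beta_0$, $\ell=0$ contribution), so $\mathfrak f$ is automatically a homotopy equivalence by the standard Whitehead-type theorem for filtered $A_\infty$-algebras (\cite{fooo:book} Theorem 4.2.45).

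The main obstacle I expect is the construction of the $T^n$-equivariant-to-non-equivariant cobordism of multisections with \emph{all} the required compatibilities simultaneously: transversality, the inductive boundary compatibility over the fiber-product strata of Proposition \ref{disckura} (3), compatibility with forgetful maps of boundary marked points, and invariance under the $\frak S_\ell$-action on interior marked points. The tension is that one endpoint is $T^n$-invariant while the interpolating family need not be, so one cannot simply average; instead one builds the family inductively, and at each stage must check that the already-constructed boundary data extends over the interior without destroying the forgetful-map compatibility. This is precisely the kind of argument carried out in \cite{fukaya:cyc} Corollary 5.2 and \cite{fooo:bulk} Section 6 for the two endpoints separately, and the parameterized version is a routine but lengthy elaboration; I would cite those sources and indicate the inductive scheme rather than reproduce it. A secondary point requiring care is the ``running out'' phenomenon of \cite{fooo:book} Subsection 7.2.3: one must fix energy and marked-point cutoffs $E_0,\ell_0$, build the homotopy equivalence up to that level, and pass to the inductive limit, exactly as in the de Rham treatment of \cite{fukaya:cyc} Section 14.
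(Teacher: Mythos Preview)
Your proposal contains a genuine gap: the two structures do \emph{not} arise from ``the same underlying moduli spaces $\mathcal M_{k+1;\ell}(L(\text{\bf u});\beta)$ with two different choices of systems of continuous families of multisections.''  The operator $\frak q^T_{\ell,k;\beta}$ is defined via \eqref{37.188} on the \emph{fiber product} space $\mathcal M_{k+1;\ell}(L(\text{\bf u});\beta;\text{\bf p}) = \mathcal M_{k+1;\ell}(L(\text{\bf u});\beta) \times_{M^\ell} \prod_i D_{\text{\bf p}(i)}$ of \eqref{fiberproductwithbeta}, equipped with its own Kuranishi structure (Lemma \ref{torickuranishi}) and a single $T^n$-equivariant multisection (Lemma \ref{toricmmultsect}); the bulk classes enter as singular cycles $D_a$, not as smooth forms.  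By contrast, $\frak q_{\ell,k;\beta}$ uses a continuous family of multisections on $\mathcal M_{k+1;\ell}(L(\text{\bf u});\beta)$ itself and pulls back smooth differential forms representing $\frak b$.  So the two endpoints differ both in the underlying Kuranishi space and in the nature of the bulk representatives (currents supported on submanifolds versus smooth forms); a naive interpolation of multisections on a single space does not make sense.

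The paper separates these two differences explicitly (Subsection \ref{subsec:DifferencemTm}) and resolves them by two distinct pseudo-isotopies.  The first and more substantial step (Subsection \ref{subsec:Smoothing}) introduces an enlarged moduli space $\mathcal M_{k+1;\ell}(L(\text{\bf u});\beta;\text{\bf p}(\mathcal U))$, obtained by thickening each cycle $D_a$ via a finite-dimensional family $\mathcal U_a$ of sections of its normal bundle; one then interpolates between the delta-current $\delta^a_{\text{\bf 0}}$ supported at $\text{\bf 0} \in \mathcal U_a$ and a smooth top form $\chi_a$ on $\mathcal U_a$, yielding a pseudo-isotopy from $\frak m^T$ to an intermediate structure $\frak m^S$.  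Only after this smoothing can one compare Kuranishi structures and multisections on a common space, which is the second step (Subsection \ref{subsec:CompletionProposition}) --- and that second step is indeed close to what you outline.  Your proposal essentially skips the smoothing step; without it, the cobordism-of-multisections you describe cannot be set up, because one of its putative boundary components lives on a different space and involves non-smooth data.
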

Here the first filtered $A_{\infty}$ algebra is defined in Lemma \ref{bulkdef}
and the second one is defined at the end of Subsection \ref{subsec:toricHFreview}.
They are de Rham versions of
the filtered $A_{\infty}$ algebra associated to a Lagrangian submanifold,
which was established in \cite[Theorem A]{fooo:book1}.
Since the two constructions are slightly
different from each other in technical points, we give a proof of
Proposition \ref{Ainfinityequiv} in Section \ref{sec:appendix3}
for completeness' sake.
\par
Let $\text{\bf e}_L = 1$ be the differential $0$-form on $L$
which is the unit of our filtered $A_{\infty}$ algebra.
We put:
\index{$\widehat{\mathcal M}_{\text{\rm weak},\text{\rm def}}
(L(\text{\bf u});\Lambda_+;\text{\bf b}^{(0)})$}
\index{$\widehat{\mathcal M}^T_{\text{\rm weak},\text{\rm def}}
(L(\text{\bf u});\Lambda_+;\text{\bf b}^{(0)})$}
\begin{equation}\label{MC111}
\aligned
&\widehat{\mathcal M}_{\text{\rm weak},\text{\rm def}}
(L(\text{\bf u});\Lambda_+;\text{\bf b}^{(0)}) \\
&=
\left\{
b_+ \in H^{odd}(L(\text{\bf u});\Lambda_+)\,
\Big\vert \, \sum_{k=0}^{\infty} \frak m_k^{\text{\bf b}^{(0)}}(b_+^k)
\equiv 0 \mod \text{\bf e}_L \Lambda_+
\right\}
\endaligned
\end{equation}
and
\begin{equation}
\aligned
&\widehat{\mathcal M}^T_{\text{\rm weak},\text{\rm def}}
(L(\text{\bf u});\Lambda_+;\text{\bf b}^{(0)}) \\
&=
\left\{
b_+ \in H^{odd}(L(\text{\bf u});\Lambda_+)\,
\Big\vert\, \sum_{k=0}^{\infty} \frak m_k^{T,\text{\bf b}^{(0)}}(b_+^k)
\equiv 0 \mod \text{\bf e}_L \Lambda_+
\right\}.
\endaligned
\end{equation}
We write $\text{\bf b}(b_+)
= (\frak b_0,\text{\bf b}_{2;1},\frak b_+,b_+)$.
Then
$$
\text{\bf b}(b_+)
\in
\widehat{\mathcal M}_{\text{\rm weak},\text{\rm def}}
(L(\text{\bf u});\Lambda_0)
$$
for
$b_+
\in \widehat{\mathcal M}_{\text{\rm weak},\text{\rm def}}
(L(\text{\bf u});\Lambda_+;\text{\bf b}^{(0)})$.
\par
Similar fact holds for
$b_+
\in \widehat{\mathcal M}^T_{\text{\rm weak},\text{\rm def}}
(L(\text{\bf u});\Lambda_+;\text{\bf b}^{(0)})$.
\par
Proposition \ref{Ainfinityequiv} and the
homotopy theory of filtered $A_{\infty}$ algebras as given in
\cite[Chapter 4]{fooo:book1}  immediately imply the following:

\begin{cor}
There exists a map
$$
\mathfrak J_* :
\widehat{\mathcal M}^T_{\text{\rm weak},\text{\rm def}}
(L(\text{\bf u});\Lambda_+;\text{\bf b}^{(0)})
\to
\widehat{\mathcal M}_{\text{\rm weak},\text{\rm def}}
(L(\text{\bf u});\Lambda_+;\text{\bf b}^{(0)})
$$
such that for each $b_+
\in \widehat{\mathcal M}^T_{\text{\rm weak},\text{\rm def}}
(L(\text{\bf u});\Lambda_+;\text{\bf b}^{(0)})$
there exists a chain homotopy equivalence
$$
\frak J_*^{b_+} :
(CF(L(\text{\bf u}));\Lambda),\delta^{T,\text{\bf b}(b_+)})
\to
(CF(L(\text{\bf u}));\Lambda),\delta^{\text{\bf b}(\mathfrak J_*(b_+))})
$$
that preserves the filtration.
\end{cor}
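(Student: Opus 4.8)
The plan is to deduce the Corollary directly from Proposition \ref{Ainfinityequiv} together with the abstract homotopy theory of filtered $A_\infty$ algebras developed in \cite{fooo:book} Chapter 4. First I would recall the precise functoriality statement for Maurer--Cartan moduli spaces: given a unital filtered $A_\infty$ homotopy equivalence $\mathfrak F : (C,\{\frak m_k\}) \to (C',\{\frak m'_k\})$, there is an induced map on bounding cochains which carries weak bounding cochains to weak bounding cochains, preserves the potential value $\frak{PO}$, and sits in a commutative diagram with the canonical $A_\infty$ maps. This is Theorem 4.3.1 and the surrounding discussion in \cite{fooo:book}; the key point is that the $A_\infty$ homotopy equivalence $\mathfrak F$ from Proposition \ref{Ainfinityequiv} is filtration preserving (it is built by the obstruction-theoretic method over increasing energy), so the induced map $\mathfrak J_* := \mathfrak F_*$ on $\widehat{\mathcal M}_{\rm weak,def}$ is well defined and respects the energy filtration as well.

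Next I would spell out why $\mathfrak J_*$ lands in the prescribed fiber over $\text{\bf b}^{(0)}$. Both $\frak m_k^{\text{\bf b}^{(0)}}$ and $\frak m_k^{T,\text{\bf b}^{(0)}}$ are the $A_\infty$ structures obtained by deforming the undeformed $\frak m_k$ (resp. $\frak m_k^T$) by the closed part $\text{\bf b}^{(0)} = (\frak b_0,\text{\bf b}_{2;1},\frak b_+,0)$; the homotopy equivalence in Proposition \ref{Ainfinityequiv} is asserted for exactly this pair, so for $b_+ \in \widehat{\mathcal M}^T_{\rm weak,def}(L(\text{\bf u});\Lambda_+;\text{\bf b}^{(0)})$ the element $\mathfrak J_*(b_+)$ is a weak bounding cochain of $\{\frak m_k^{\text{\bf b}^{(0)}}\}$ with the same component $\text{\bf b}^{(0)}$ fixed, i.e. $\text{\bf b}(\mathfrak J_*(b_+))$ is again of the form $(\frak b_0,\text{\bf b}_{2;1},\frak b_+,\mathfrak J_*(b_+))$ and lies in $\widehat{\mathcal M}_{\rm weak,def}(L(\text{\bf u});\Lambda_+;\text{\bf b}^{(0)})$. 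Moreover $\frak{PO}$ is preserved, so the two twisted differentials $\delta^{T,\text{\bf b}(b_+)} = \frak m_1^{T,\text{\bf b}(b_+)}$ and $\delta^{\text{\bf b}(\mathfrak J_*(b_+))} = \frak m_1^{\text{\bf b}(\mathfrak J_*(b_+))}$ both square to the same multiple of the identity (Lemma \ref{delta2}), and in the present case both are genuine differentials.

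Then I would produce the chain homotopy equivalence $\frak J_*^{b_+}$ at the level of twisted complexes. The linearization of $\mathfrak F$ at the bounding cochain $b_+$ (using the standard formula $\mathfrak F_1^{b_+}(x) = \sum_{k_0,k_1} \mathfrak F_{k_0+1+k_1}((b_+)^{\otimes k_0} \otimes x \otimes (\mathfrak J_*(b_+))^{\otimes k_1})$, with the appropriate one-sided variant since our Floer differential is $\delta^{\text{\bf b}^{(1)},\text{\bf b}^{(0)}}$) is a cochain map from $(CF(L(\text{\bf u});\Lambda),\delta^{T,\text{\bf b}(b_+)})$ to $(CF(L(\text{\bf u});\Lambda),\delta^{\text{\bf b}(\mathfrak J_*(b_+))})$; it preserves the filtration because $\mathfrak F$ does and $b_+,\mathfrak J_*(b_+) \in \Lambda_+$-valued so all correction terms raise energy. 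That it is a homotopy equivalence follows from the general fact that the linearization of an $A_\infty$ homotopy equivalence at a bounding cochain is a chain homotopy equivalence of the deformed complexes (\cite{fooo:book} Theorem 4.3.1, Corollary 4.3.14 style statements); here one must also check compatibility with units to stay within the unital category, which is automatic from the unitality clause in Proposition \ref{Ainfinityequiv}.

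The only genuinely substantive point — the main obstacle — is verifying that the homotopy equivalence supplied by Proposition \ref{Ainfinityequiv} and all the induced structure maps strictly preserve the energy filtration, not merely up to homotopy; this is what allows the conclusion "$\frak J_*^{b_+}$ preserves the filtration." This requires invoking the energy-filtered refinement of the homological perturbation / whitehead theorem for filtered $A_\infty$ algebras (the obstruction classes in \cite{fooo:book} Chapter 4 are constructed order by order in energy), together with the running-out bookkeeping already discussed in the Remark after Lemma \ref{existmkulti1}. Everything else is a routine transcription of \cite{fooo:book} Chapter 4 into the de Rham setting, so I would keep that part brief and refer to Section \ref{sec:appendix3} where Proposition \ref{Ainfinityequiv} itself is proved.
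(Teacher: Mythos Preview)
Your approach is correct and is exactly the paper's: the paper simply states that the Corollary follows immediately from Proposition~\ref{Ainfinityequiv} together with the homotopy theory of filtered $A_\infty$ algebras in \cite{fooo:book} Chapter~4, and you have spelled out precisely that deduction. One small slip: in your linearization formula the inserted bounding cochain should be $b_+$ on \emph{both} sides, i.e. $\frak J_*^{b_+}(x) = \sum_{k_0,k_1} \mathfrak F_{k_0+1+k_1}(b_+^{\otimes k_0}, x, b_+^{\otimes k_1})$; the pushed-forward element $\mathfrak J_*(b_+)$ enters only through the target differential $\delta^{\text{\bf b}(\mathfrak J_*(b_+))}$, not through the chain map itself.
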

(We note that $\mathfrak J_*$ induces an isomorphism after
taking a gauge equivalence. We do not use this fact in this paper.)
\par
We next use $\frak q^T_{\ell,k;\beta}$ in place of
$\frak q_{\ell,k;\beta}$ in (\ref{iqmdefformula}) to define a chain map
\index{$i_{\text{\rm qm},\text{\bf b}}^T$}
$$
i_{\text{\rm qm},\text{\bf b}}^T
: (\Omega(M) \widehat{\otimes} \Lambda,d)
\to (CF(L(\text{\bf u});\Lambda),\delta^{T,\text{\bf b}}).
$$\index{$i_{\text{\rm qm},\text{\bf b}}^T$}
\begin{lem}\label{ainfinityequivcomp}
$\frak J_*^{b_+}\circ i_{\text{\rm qm},\text{\bf b}}$ is chain homotopic
to $i_{\text{\rm qm},\text{\bf b}}^T$.
\end{lem}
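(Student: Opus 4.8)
\textbf{Proof proposal for Lemma \ref{ainfinityequivcomp}.}

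The plan is to realize both $i_{\text{\rm qm},\text{\bf b}}$ and $i_{\text{\rm qm},\text{\bf b}}^T$ as the leading components of a single \emph{unital filtered $A_\infty$ bimodule homomorphism} (or, in the simpler language adequate here, an $A_\infty$ homomorphism together with its interaction with the closed operator $\frak q$), and then invoke the general uniqueness statement for such homomorphisms once a homotopy equivalence of the target $A_\infty$ algebras has been fixed. More concretely, recall that $i_{\text{\rm qm},\text{\bf b}}$ (resp. $i_{\text{\rm qm},\text{\bf b}}^T$) is built out of the operator $\frak q_{\ell,k;\beta}$ (resp. $\frak q^T_{\ell,k;\beta}$) by the formula (\ref{iqmdefformula}), using the moduli spaces $\mathcal M_{k+1;\ell}(L;\beta)$ (resp. their $T^n$-equivariant perturbations). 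The homotopy equivalence $\frak J_*^{b_+}$ of Proposition \ref{Ainfinityequiv} / its corollary is itself constructed from a family of multisections on the parametrized moduli spaces $\mathcal M_{k+1;\ell}(L;\beta)$ interpolating between the two choices of perturbation. The key point is that these interpolating perturbations can be chosen compatibly with the insertion of interior marked points carrying classes from $M$: that is, one extends the interpolation to the moduli spaces $\mathcal M_{k+1;\ell}(L;\beta)$ with $\ell$ interior marked points constrained by differential forms $\frak b_+$ (and one extra interior input carrying the class $a$), in a way compatible with the boundary strata appearing in Theorem \ref{qproperties}(1) and with the forgetful maps of Proposition \ref{disckura}(8)--(10).

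First I would set up the parametrized moduli space. For $S\in[0,1]$ let $\mathcal M^{S}_{k+1;\ell}(L;\beta)$ denote the moduli space equipped with the $S$-family of multisections used to construct $\frak J_*^{b_+}$ in Section \ref{sec:appendix3}, and consider the union $\mathcal M^{[0,1]}_{k+1;\ell}(L;\beta)=\bigcup_{S}\{S\}\times\mathcal M^{S}_{k+1;\ell}(L;\beta)$. Its boundary decomposes into the $S=0$ and $S=1$ strata — giving the operators built from $\frak q$ and $\frak q^T$ respectively — together with the codimension-one strata coming from disc bubbling, which reproduce products of $\frak J_*^{b_+}$ with $\frak m^{\text{\bf b}}_k$ (or $\frak m^{T,\text{\bf b}}_k$) and, via the interior inputs, the quantum-product strata. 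Running the correspondence with $\text{\rm ev}_{-\infty}$ replaced by the evaluation at the extra interior marked point (the one carrying the class $a$), exactly as in Definition \ref{def:I} and Lemma \ref{chainhomotopy1}, and inserting $\frak b_+^{\otimes\ell}$ at the remaining interior points and $b_+^{\otimes k}$ at the boundary points, produces a degree $-1$ map
$$
\mathcal G^{\text{\bf b},\frak b}:\Omega(M)\widehat\otimes\Lambda\to CF_{\text{dR}}(L;\Lambda).
$$
Applying Stokes' theorem (in the de Rham / integration-along-the-fiber form, cf. \cite{fooo:bulk} Lemma 12.13) to the parametrized family yields the identity
$$
\delta^{\text{\bf b}(\frak J_*(b_+))}\circ\mathcal G^{\text{\bf b},\frak b}
\pm\mathcal G^{\text{\bf b},\frak b}\circ d
=\frak J_*^{b_+}\circ i^T_{\text{\rm qm},\text{\bf b}}-i_{\text{\rm qm},\text{\bf b}},
$$
where on the right the identification of the $S=1$ end uses that $\frak J_*^{b_+}$ is precisely the $A_\infty$ homotopy equivalence induced by this interpolation. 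This is the required chain homotopy.

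The convergence of the $q$-adic (equivalently $T$-adic) sums defining $\mathcal G^{\text{\bf b},\frak b}$ and the usual ``running out of energy'' issue are handled exactly as in Lemma \ref{adiccomv1} and the remarks following Lemma \ref{existmkulti1}, so I would not dwell on them. The main obstacle is the bookkeeping needed to check that the interpolating $S$-family of multisections on $\mathcal M^{[0,1]}_{k+1;\ell}(L;\beta)$ can be chosen to be simultaneously (i) compatible with the boundary decomposition that produces the $A_\infty$-type terms and the quantum-product terms, (ii) compatible with the forgetful maps of boundary marked points (so that the unitality statement and Theorem \ref{qproperties}(3) hold for the interpolated structure, hence $\mathcal G^{\text{\bf b},\frak b}$ is compatible with units), and (iii) $\frak S_\ell$-invariant in the interior inputs so that the descent to $E_\ell(\mathcal H[2])$ in (\ref{eq:mapqTcan}) is respected — all at once, by the usual induction over $\beta\cap\omega$, $\ell$ and $k$. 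This is precisely the kind of construction carried out in \cite{fukaya:cyc} Corollary 5.2 and in \cite{fooo:bulk} Section 6 for the individual operators $\frak q$ and $\frak q^T$; here one runs the same induction on the parametrized spaces, the only new ingredient being the extra interior marked point carrying $a$, which is inert for the purpose of transversality since $\text{\rm ev}_0^\partial$ remains weakly submersive. Once this compatible interpolating perturbation is in place, the displayed Stokes identity, and hence the lemma, follows formally.
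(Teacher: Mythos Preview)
Your approach is correct in spirit and captures the geometric heart of the argument, but it differs in packaging from the paper's proof. The paper does not construct a single chain homotopy $\mathcal G$ directly. Instead, it works inside the pseudo-isotopy formalism already set up for Proposition~\ref{Ainfinityequiv}: the two pseudo-isotopies (the smoothing step of Subsection~\ref{subsec:Smoothing} and the perturbation-interpolation step of Subsection~\ref{subsec:CompletionProposition}) give filtered $A_\infty$ structures $\{\frak m^{1,\text{\bf b}}_k\}$ and $\{\frak m^{2,\text{\bf b}}_k\}$ on $\Omega([0,1]\times L(\text{\bf u}))$, and the evaluation maps $\text{Eval}_{s=0}$, $\text{Eval}_{s=1}$ are chain homotopy equivalences to the endpoint complexes. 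The paper then constructs parametrized chain maps $i^1_{\text{qm},\text{\bf b}}$, $i^2_{\text{qm},\text{\bf b}}$ (and an intermediate $i^S_{\text{qm},\text{\bf b}}$) into these $[0,1]$-parametrized complexes, with the property that evaluating at the endpoints recovers $i^T_{\text{qm},\text{\bf b}}$, $i^S_{\text{qm},\text{\bf b}}$, and $i_{\text{qm},\text{\bf b}}$. The lemma then follows formally from the fact that $\text{Eval}_{s=0}$ and $\text{Eval}_{s=1}$ are chain homotopy equivalences.

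The difference matters for one point you gloss over: your displayed Stokes identity asserts that the $S=1$ boundary contributes exactly $\frak J_*^{b_+}\circ i^T_{\text{qm},\text{\bf b}}$. But $\frak J_*^{b_+}$ is only defined abstractly via Theorem~\ref{pisotohomotopyequiv} (as the induced $A_\infty$ homotopy equivalence from the pseudo-isotopy), and identifying the endpoint correspondence of your parametrized moduli space with this particular composition requires unwinding that abstract construction. The paper's approach sidesteps this: by mapping into $(\Omega([0,1]\times L),\widehat{\frak m})$ and then evaluating, the identification with $\frak J_*^{b_+}$ is automatic, since $\frak J_*^{b_+}$ is \emph{defined} to be the map making the $\text{Eval}$ diagram commute up to homotopy. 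Your direct approach works, but you would need to add a sentence explaining why the endpoint of your interpolation agrees with $\frak J_*^{b_+}\circ i^T_{\text{qm},\text{\bf b}}$ and not merely with some other chain map homotopic to it. Also note that the paper carries out the argument in two stages (through the intermediate ``smoothed'' structure $\{\frak m^{S,\text{\bf b}}_k\}$), whereas you collapse this into a single interpolation; both are fine, but the two-step version separates the two genuinely different issues (smoothing cycles to forms, and interpolating perturbation schemes).
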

The proof is parallel to Proposition \ref{Ainfinityequiv}  and is given in Section
\ref{sec:appendix3}.
\par
Now the following is an immediate consequence.
\begin{cor}
If we replace the condition \eqref{localnonzero} in Theorem $\ref{thm:heavy}$
by  $HF((L,\text{\bf b});\Lambda)$ and
$i_{\text{\rm qm}}^{\ast}$ by $HF_T((L,\text{\bf b});\Lambda)$ and
$i^{T, \ast}_{\text{\rm qm}}$ respectively, then the conclusion of Theorem $\ref{thm:heavy}$
still holds.
\end{cor}

\subsection{Properties of Floer cohomology $HF_T((L,\text{\bf b});\Lambda)$:
review}
\label{subsec:toricHFproperties}

We now go back to the study of Floer cohomology $HF_T((L,\text{\bf b});\Lambda)$
introduced in Subsection \ref{subsec:toricHFreview}. The main properties of
$HF_T((L,\text{\bf b});\Lambda)$ were established in \cite{fooo:toric1,fooo:bulk}.

\begin{prop}\label{H1iswbdchain}
If $b_+ \in H^1(L(\text{\bf u});\Lambda_+)$, then
$$
\frak m_0^{T,\text{\bf b}(b_+)}(1) \equiv 0 \mod \text{\bf e}_L\Lambda_+.
$$
In particular, we have an embedding $H^1(L(\text{\bf u});\Lambda_+) \hookrightarrow
\widehat{\mathcal M}^T_{\text{\rm weak},\text{\rm def}}(L(\text{\bf u});\Lambda_+;\text{\bf b}^{(0)})$.
\end{prop}

This is nothing but \cite[Proposition 4.3]{fooo:toric1}  and \cite[Proposition 3.1]{fooo:bulk}.
We omit its proof and refer readers to the above references for the details.
The proof is based on a dimension counting argument.
We remark that the proof of Proposition \ref{H1iswbdchain}
does not work if we replace $\frak m_0^{T,\text{\bf b}(b_+)}$
by $\frak m_0^{\text{\bf b}(b_+)}$.
This is because we used a {\it continuous family} of multisections (or CF-perturbations)
for the definition of $\frak m_k^{\text{\bf b}(b_+)}$ in Section \ref{sec:q-map}, which
obstructs the above mentioned dimension counting argument.
(See \cite[Remark 3.2.2]{fooo:toricmir}.)
Actually this is {\it the} reason why we use $T^n$-invariant cycles $D_a$ instead of differential forms
to represent cohomology classes of $M$ in \cite{fooo:toric1,fooo:bulk} and in this section.
\par
Now we consider $\text{\bf b}^{(0)} = (\frak b_0,\text{\bf b}_{2;1},\frak b_+,0)$
and $b_+ \in H^1(L(\text{\bf u});\Lambda_+)$.
By Proposition \ref{H1iswbdchain} we have
\begin{equation}
\text{\bf b}(b_+) \in
\widehat{\mathcal M}^T_{\text{\rm weak},\text{\rm def}}
(L(\text{\bf u});\Lambda_0).
\end{equation}
We use the splitting (\ref{homidentify}) to regard
$
\text{\bf b}(b_+)
= (\frak b_0,\text{\bf b}_{2;1},\frak b_+,b_+)
$ as an element of
$
H^{{\rm even}}(M;\Lambda_0) \oplus H^{1}(L(\text{\bf u});\Lambda_0).
$
So hereafter we define
$HF_T((L(\text{\bf u}),(\frak b,b));\Lambda_0)$
for $\frak b \in H^{{\rm even}}(M;\Lambda_0)$ and $b
\in  H^{1}(L(\text{\bf u});\Lambda_0)$.
This is the Floer cohomology we studied in \cite{fooo:bulk}.
\par
We define the potential function
$$
\frak{PO}^{\text{\bf u}}
:
H^{{\rm even}}(M;\Lambda_0) \times H^{1}(L(\text{\bf u});\Lambda_0)
\to
\Lambda_+
$$
by the equation
\begin{equation}\label{POtoric}
\frak m_0^{T,(\frak b,b)}(1)
=
\frak{PO}^{\text{\bf u}}(\frak b,b)\,\text{\bf e}_L.
\end{equation}
We now review the results of \cite{fooo:toric1,fooo:bulk}
on the potential function $\frak{PO}^{\text{\bf u}}$\index{potential function} and
its role in the study of Floer cohomology.
\par
We let $\text{\bf u} \in \text{\rm Int}P$ and fix a basis $\{\text{\bf e}_i\}_{i=1}^n$ of $H^1(L(\text{\bf u});\Z)$.
Identifying $L(\text{\bf u})$ with $T^n$ by the action,
we can find a basis  $\{\text{\bf e}_i\}_{i=1}^n$ for all $\text{\bf u} \in \text{\rm Int} P$
simultaneously in a canonical way.
Let $b \in H^1(L(\text{\bf u});\Lambda_0)$ express it as a linear combination
\begin{equation}\label{coordinatexu}
b = \sum x^{\text{\bf u}}_i \text{\bf e}_i
\end{equation}
where $x^{\text{\bf u}}_i \in \Lambda_0$. Thus $(x^{\text{\bf u}}_1,\dots,x^{\text{\bf u}}_n)$ are coordinates on
$H^1(L(\text{\bf u});\Lambda_0)$.
(To specify that the coordinate system is defined on the torus $L(\text{\bf u})$
associated to $\text{\bf u}$
we put ${\text{\bf u}}$ in the expression $x_i^{\text{\bf u}}$ above.)
Let $x_i^{\text{\bf u}} = x_{i,0}^{\text{\bf u}} + x_{i,+}^{\text{\bf u}}$
where $x_{i,0}^{\text{\bf u}} \in \C$ and
$x_{i,+}^{\text{\bf u}} \in \Lambda_+$. We put
\begin{equation}\label{yisexpx}
y_i^{\text{\bf u}} = \exp({x_{i,0}^{\text{\bf u}}}) \exp({x_{i,+}^{\text{\bf u}}})
\in \Lambda_{0} \setminus \Lambda_{+}.
\end{equation}
We note that $\exp({x_{i,0}^{\text{\bf u}}}) \in \C \setminus \{0\}$ makes sense in the
usual Archimedean sense, and
$$
\exp({x_{i,+}^{\text{\bf u}}}) := \sum_{k=0}^{\infty} (x_{i,+}^{\text{\bf u}})^k/k!
$$
converges in $T$-adic topology since $\frak v_T({x_{i,+}^{\text{\bf u}}})> 0$.
\par
Let $S^1_i$ be the $i$-th factor of $T^n$ which represents the basis element
$\text{\bf e}_i$. We choose our moment map $\pi : M \to \R^n$ so that
its $i$-th component is the moment map of the $S^1_i$ action.
In this way we fix the coordinates on the affine space $\R^n$ which contains
$P$.
Note that there is still a freedom to choose the origin $\text{\bf 0} \in \R^n$.
We do not specify this choice since it does not affect the story.
\par
Let
$
\text{\bf u} = (u_1,\dots,u_n) \in \text{\rm Int} P.
$
We put
\begin{equation}\label{yandyu}
y_i = T^{u_i}y_i^{\text{\bf u}}.
\end{equation}
We do not put $\text{\bf u}$ in the notation $y_i$ above.
This is justified by Theorem \ref{strongconvPO}.
\begin{rem}
For the notational convenience we assume $\text{\bf 0} \in P$.
Then we will have $y_i = y_i^{\text{\bf 0}}$.
\end{rem}
\par
With respect to the above coordinates, we may regard
$\frak{PO}^{\text{\bf u}}$ as a function of $(x_1^{\text{\bf u}}, \cdots, x_n^{\text{\bf u}})$
\index{$\frak{PO}^{\text{\bf u}}_{\frak b}$}
$$
\frak{PO}^{\text{\bf u}}(\frak b;b)
=
\frak{PO}^{\text{\bf u}}_{\frak b}(x^{\text{\bf u}}_1,\dots,x^{\text{\bf u}}_n)
$$
where $x^{\text{\bf u}}_k$ ($k=1,\dots,n$) are the variables defined in (\ref{coordinatexu}).
\par
As we will see in Theorem \ref{strongconvPO}, $\frak{PO}^{\text{\bf u}}_{\frak b}$ as a function of
$y_1,\dots,y_n$ will be independent of $\text{\bf u}$.
The resulting function is contained in an appropriate
completion of the Laurent polynomial ring
$\Lambda[y_1,\dots,y_n,y_1^{-1},\dots,y_n^{-1}]$.
Description of this completion is in order now.
By the change of variables (\ref{yandyu}), there exists an isomorphism
$$
\Lambda[y_1,\dots,y_n,y_1^{-1},\dots,y_n^{-1}]
\cong
\Lambda[y^{\text{\bf u}}_1,\dots,y^{\text{\bf u}}_n,(y^{\text{\bf u}}_1)^{-1},\dots,(y^{\text{\bf u}}_n)^{-1}]
$$
for any $\text{\bf u} \in \text{\rm Int} P$. In other words
any element of $\frak P \in \Lambda[y_1,\dots,y_n,y_1^{-1},\dots,y_n^{-1}]$ can be written
as a {\it finite} sum
\begin{equation}\label{laurenexpressu}
\frak P
=
\sum a_{k_1,\dots,k_n} (y_1^{\text{\bf u}})^{k_1}\dots
(y_n^{\text{\bf u}})^{k_n}.
\end{equation}
In other words $a_{k_1,\dots,k_n}\in \Lambda$ are zero except for a finite number of them.
We define a valuation
\index{$\frak v_{\text{\bf u}}$}
\begin{equation}
\frak v_{\text{\bf u}} (\frak P)
= \min \{ \frak v_T(a_{k_1,\dots,k_n}) \mid a_{k_1,\dots,k_n} \ne 0\}
\end{equation}
for each ${\bf u} \in \text{\rm Int}P$.
In particular $\frak v_{\text{\bf u}}(y_i^{\text{\bf u}}) = 0$.
This defines a family of non-Archimedean valuations on
the ring $\Lambda[y_1,\ldots,y_n,y_1^{-1},\ldots,y_n^{-1}]$.
This valuation is characterized by the equation
$$
{\frak v}_{\text{\bf u}}(y_i) = u_i
$$
for $\text{\bf u} \in \text{Int} P$. From this expression, the family
can be extended to the full closed moment polytope $P$ in the obvious way.

We now put
\index{$\frak v_{P}$}
$$
\frak v_{P}(\frak P)
= \inf \{ \frak v_{\text{\bf u}} (\frak P) \mid \text{\bf u} \in P \}.
$$
This defines a norm (but not a valuation) on $\Lambda[y_1,\dots,y_n,y_1^{-1},\dots,y_n^{-1}]$ and
the function $d_P$ defined by
\begin{equation}
d_P(\frak P,\frak Q)
= \exp^{-\frak v_{P}(\frak P - \frak Q)}
\end{equation}
gives rise to a metric thereon.
\par
For $\epsilon >0$, denote
$$
P_{\epsilon} = \{ \text{\bf u} \in P \mid \forall i\,\, \ell_i(\text{\bf u}) \ge \epsilon\}.
$$
We define another metric on
$\Lambda[y_1,\dots,y_n,y_1^{-1},\dots,y_n^{-1}]$ by
\begin{equation}
d_{\overset{\circ}P}(\frak P,\frak Q)
= \sum_{n=n_0}^{\infty}2^{-n}\exp^{-\frak v_{P_{1/n}}(\frak P - \frak Q)}.
\end{equation}
(Here we take $n_0$ sufficiently large so that $P_{1/n_0}$ is
nonempty.) This series obviously converges because $\frak
v_{P_{\epsilon'}} \leq \frak v_{P_{\epsilon}}$ if $\epsilon' <
\epsilon$.
\begin{defn}\label{def:completion}\index{completion of Laurent polynomial ring}
We denote
the completion of $\Lambda[y_1,\dots,y_n,y_1^{-1},\dots,y_n^{-1}]$
with respect to the metric $d_P$ by
$\Lambda\langle\!\langle y,y^{-1}\rangle\!\rangle^P$.
\index{$\Lambda\leftineqineq y,y^{-1}\rightineqineq^P$}
\par
We denote by
$
\Lambda\langle\!\langle
y,y^{-1}\rangle\!\rangle^{\overset{\circ}P}
$
\index{$\Lambda \leftineqineq y,y^{-1}\rightineqineq^{\overset{\circ}P}$}
the completion of $\Lambda[y_1,\dots,y_n,y_1^{-1},\dots,y_n^{-1}]$
with respect to the metric $d_{\overset{\circ}P}$.
\end{defn}
In other words, $\Lambda\langle\!\langle
y,y^{-1}\rangle\!\rangle^P$ (resp. $\Lambda\langle\!\langle
y,y^{-1}\rangle\!\rangle^{\overset{\circ}P}$) is the set of all $\frak P$'s such that
for any $\text{\bf u} \in P$ (resp. $\text{\bf u} \in \text{Int}\,P$)
we may write $\frak P$ as a
{\it possibly infinite} sum of the form (\ref{laurenexpressu})
such that
$\lim_{\vert k_1\vert + \cdots + \vert k_n\vert \to \infty}
\frak v_T(a_{k_1,\dots,k_n}) = +\infty$.
\begin{rem}\label{rem:notationtoric2}
In \cite{fooo:bulk}, we used a slightly different notation
$\Lambda^P\langle\!\langle
y,y^{-1}\rangle\!\rangle$ instead of $\Lambda\langle\!\langle
y,y^{-1}\rangle\!\rangle^P$.
\end{rem}

Now we have:
\begin{thm}\label{strongconvPO}
If $\frak b \in H^{{\rm even}}(M;\Lambda_0)$, then
\begin{equation}\label{POinstrictlyconv}
\frak{PO}_{\frak b}^{\text{\bf u}} \in
\Lambda\langle\!\langle
y,y^{-1}\rangle\!\rangle^{\overset{\circ}P}.
\end{equation}
\par
If
$\frak b \in H^{{\rm even}}(M;\Lambda_+)$, then
\begin{equation}\label{POinstrictlyconv2}
\frak{PO}_{\frak b}^{\text{\bf u}} \in
\Lambda\langle\!\langle
y,y^{-1}\rangle\!\rangle^P.
\end{equation}
\end{thm}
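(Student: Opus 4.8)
\textbf{Proof plan for Theorem \ref{strongconvPO}.}

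The plan is to reduce the statement to a convergence estimate for each individual term appearing in the defining sum \eqref{POtoric} for $\frak{PO}^{\text{\bf u}}_{\frak b}$, exactly as was done in \cite{fooo:toric1} Theorem 3.4 for the case $\frak b = 0$ and in \cite{fooo:bulk} Section 9 for general $\frak b$. Unwinding Definition \ref{deformedqdef} and \eqref{37.188}, the coefficient of $(y^{\text{\bf u}}_1)^{k_1}\cdots (y^{\text{\bf u}}_n)^{k_n}$ in $\frak{PO}^{\text{\bf u}}_{\frak b}$ is a sum, over homotopy classes $\beta \in H_2(M,L(\text{\bf u});\Z)$ with $\partial\beta = \sum_i k_i \text{\bf e}_i$ and over $\ell$ and maps $\text{\bf p}\in Map(\ell,\underline B)$, of numbers of the form
$$
T^{\beta\cap\omega}\,\frac{\exp(\text{\bf b}_{2;1}\cap\beta)}{\ell!}\cdot c_{\beta,\ell,\text{\bf p}}
$$
where $c_{\beta,\ell,\text{\bf p}}$ is the rational number obtained by integrating $\text{\bf ev}_0^\partial$-pushforward classes over the zero set of the chosen multisection on $\mathcal M_{1;\ell}(L(\text{\bf u});\beta;\text{\bf p})$, using the $\frak b_+$-insertions. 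First I would use Fukaya--Oh--Ohta--Ono's classification of holomorphic discs bounded by $L(\text{\bf u})$ (Cho--Oh) together with the area formula \eqref{elljandarea}: every such $\beta$ is a nonnegative combination $\beta = \sum_j n_j(\text{\bf p},k)\,\beta_j$ (modulo sphere classes), so that $\beta\cap\omega = \sum_j n_j \ell_j(\text{\bf u}) + (\text{sphere part})$, and the Maslov index is $\mu_{L(\text{\bf u})}(\beta) = 2\sum_j n_j$ plus twice the first Chern number of the sphere part. The dimension formula in Lemma \ref{torickuranishi} (4) then forces $\sum_j n_j$ to grow linearly with $|k_1|+\cdots+|k_n|$ once the degrees of the divisor insertions are fixed, and each $\ell_j(\text{\bf u})\ge \epsilon$ on $P_\epsilon$; this is the mechanism producing the valuation bound.

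The key steps, in order, are: (i) write $\frak{PO}^{\text{\bf u}}_{\frak b}$ as an honest (a priori formal) element of the Laurent polynomial ring in the $y^{\text{\bf u}}_i$ by collecting terms according to $\partial\beta$; (ii) for $\frak b\in H^{even}(M;\Lambda_+)$, observe that each insertion of $\frak b_+$ or of $\frak b_{2;1}$ contributes a strictly positive power of $T$, and combine this with the area contribution $\sum_j n_j \ell_j(\text{\bf u}) \ge \epsilon\sum_j n_j$ and the linear lower bound on $\sum_j n_j$ in terms of $|k|$ to get $\frak v_T$ of the coefficient of $(y^{\text{\bf u}})^k \to +\infty$ as $|k|\to\infty$, \emph{uniformly} for $\text{\bf u}\in P$ — this gives \eqref{POinstrictlyconv2}, i.e.\ convergence in $d_P$; (iii) for general $\frak b\in H^{even}(M;\Lambda_0)$, the $\frak b_2$-part need no longer raise the $T$-power (it only contributes $\exp(\frak b_2\cap\beta)$, a unit), so one only gets the estimate with the factor $\epsilon\sum_j n_j$, which blows up as $\text{\bf u}$ approaches $\partial P$; hence one obtains convergence only in $d_{P_{1/m}}$ for each $m$, i.e.\ in $d_{\overset{\circ}P}$, which is exactly \eqref{POinstrictlyconv}; (iv) finally check, via \eqref{yandyu}, that the resulting element of $\Lambda\langle\!\langle y,y^{-1}\rangle\!\rangle^{\overset\circ P}$ (resp.\ $\Lambda\langle\!\langle y,y^{-1}\rangle\!\rangle^{P}$) is independent of $\text{\bf u}$, since the change of variables $y_i = T^{u_i}y^{\text{\bf u}}_i$ precisely absorbs the $\ell_j(\text{\bf u})$-dependence of the $T$-powers into the monomials $y^k$.

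The main obstacle is step (ii)/(iii): making the linear lower bound $\sum_j n_j \ge c\,(|k_1|+\cdots+|k_n|)$ precise and checking that it holds with a constant $c>0$ and an error term that is controlled uniformly in $\text{\bf u}$. This requires the combinatorics of the polytope $P$ — that the vectors $\vec v_j = d\ell_j$ positively span $\R^n$ (a consequence of Condition \ref{condellj} and compactness of $P$), so that $\partial\beta$ large in $H_1(L(\text{\bf u});\Z)$ forces many disc factors — together with the non-negativity of the sphere-bubble areas (which can only increase $\frak v_T$, never decrease it) and the dimension constraint from Lemma \ref{torickuranishi} (4) bounding $\sum_i 2\deg D_{\text{\bf p}(i)}$ in terms of $\mu_{L(\text{\bf u})}(\beta)$. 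All of this is carried out in detail in \cite{fooo:bulk} Section 9 (see also \cite{fooo:toricmir} Lemma 2.29), and the proof here is a routine adaptation of that argument to the present de Rham setup, so I would state the estimate carefully and refer to those sources for the remaining bookkeeping.
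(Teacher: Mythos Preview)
Your plan is essentially correct and matches the paper's treatment: the paper does not prove Theorem~\ref{strongconvPO} at all but simply records that it is \cite{fooo:bulk} Theorem~3.14 and refers the reader there for the details. Your sketch accurately reconstructs the argument from \cite{fooo:bulk} Section~9 --- Cho--Oh disc classification, the area formula \eqref{elljandarea}, the dimension constraint, and the positive spanning property of the $\vec v_j$ --- so you are doing more than the paper does, not less. One small wording fix: in step (ii) you say the $\text{\bf b}_{2;1}$-insertions contribute strictly positive $T$-power when $\frak b\in H^{even}(M;\Lambda_+)$, but in the decomposition \eqref{decompb} used here the degree-$2$ complex part $\frak b_2$ simply vanishes under that hypothesis (since $\Lambda_+\cap\C=0$), so the factor $\exp(\frak b_2\cap\beta)$ is $1$ and all remaining insertions are genuinely $\frak b_+$-insertions carrying positive valuation; the conclusion is the same.
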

We explain the meaning of (\ref{POinstrictlyconv}).
Let $\frak P \in \Lambda\langle\!\langle
y,y^{-1}\rangle\!\rangle^{\overset{\circ}P}$
and $\text{\bf u} \in \text{Int}\,P$. As we mention above,
$\frak P$ is written as a series  of the form (\ref{laurenexpressu})
with
$$
\lim_{\vert k_1\vert + \cdots + \vert k_n\vert \to \infty}
\frak v_T(a_{k_1,\dots,k_n}) = +\infty.
$$
Let $b = \sum x^{\text{\bf u}}_i \text{\bf e}_i$.
Then by putting (\ref{yisexpx})
and plugging it in (\ref{laurenexpressu})
the series converges in $T$-adic topology and
we obtain an element of $\Lambda$.
Thus we obtain a function
$$
\frak P^{\text{\bf u}} : H^1(L(\text{\bf u});\Lambda) \to \Lambda.
$$
The statement (\ref{POinstrictlyconv}) means that there exists
$\frak P \in \Lambda\langle\!\langle
y,y^{-1}\rangle\!\rangle^{\overset{\circ}P}$ such that the above $\frak P^{\text{\bf
u}} $ coincides with $\frak{PO}^{\text{\bf u}}_{\frak b}$ for {\it
any} $\text{\bf u} \in \text{Int}\,P$. (We note that we require
$\frak P$ to be {\it independent} of $\text{\bf u}$.)
The meaning of (\ref{POinstrictlyconv2}) is similar.
\par
Actually we can show the following:
\begin{lem}
Let $\frak P \in  \Lambda\langle\!\langle
y,y^{-1}\rangle\!\rangle^{\overset{\circ}P}$. Then  $\frak P$
is written as a series
\begin{equation}\label{Pexpand2}
\frak P=
\sum a_{k_1,\dots,k_n} y_1^{k_1}\dots
y_n^{k_n}
\end{equation}
which converges in $d_{\overset{\circ}P}$ topology. For any $(\frak
y_1,\dots,\frak y_n) \in \Lambda^n$ with
$$
(\frak v_T(\frak y_1),\dots,\frak v_T(\frak y_n)) \in \text{\rm Int}\,P
$$
the series
\begin{equation}\label{Pexpand25}
\sum a_{k_1,\dots,k_n} \frak y_1^{k_1}\dots
\frak y_n^{k_n}
\end{equation}
converges in $T$-adic topology.
\par
Let  $\frak P \in  \Lambda\langle\!\langle
y,y^{-1}\rangle\!\rangle^P$. Then
$\frak P$ is written as a series
$(\ref{Pexpand2})$
which converges in $d_{P}$ topology.
For any $(\frak y_1,\dots,\frak y_n) \in \Lambda^n$ with
$$
(\frak v_T(\frak y_1),\dots,\frak v_T(\frak y_n)) \in P
$$
the series $(\ref{Pexpand25})$ converges in $T$-adic topology.
\end{lem}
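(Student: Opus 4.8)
The plan is to turn the proof into a routine unpacking of the definitions of the completions $\Lambda\langle\!\langle y,y^{-1}\rangle\!\rangle^{\overset{\circ}P}$ and $\Lambda\langle\!\langle y,y^{-1}\rangle\!\rangle^{P}$. The key point is the concrete description: an element $\frak P$ of $\Lambda\langle\!\langle y,y^{-1}\rangle\!\rangle^{\overset{\circ}P}$ is precisely a formal series $\frak P = \sum_{\text{\bf k}\in\Z^n} a_{\text{\bf k}} y_1^{k_1}\cdots y_n^{k_n}$ with $a_{\text{\bf k}}\in\Lambda$ obeying the growth condition that for every $\text{\bf u} = (u_1,\dots,u_n)\in\text{Int}\,P$ one has $\frak v_T(a_{\text{\bf k}}) + \sum_i k_i u_i \to +\infty$ as $|k_1|+\cdots+|k_n|\to\infty$; similarly $\Lambda\langle\!\langle y,y^{-1}\rangle\!\rangle^{P}$ consists of such series where this holds for all $\text{\bf u}\in P$. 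Once this identification is in place, the convergence of $\frak P$ as a series in the respective metric is built into the description, and the two evaluation assertions reduce to a single ultrametric estimate.

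To establish the identification I would argue as follows. Since $y_i = T^{u_i} y_i^{\text{\bf u}}$, a Laurent polynomial $\frak Q = \sum_{\text{\bf k}} b_{\text{\bf k}} y^{\text{\bf k}}$ has, in the $y^{\text{\bf u}}$-coordinates, coefficient $b_{\text{\bf k}} T^{\sum_i k_i u_i}$ on $(y^{\text{\bf u}})^{\text{\bf k}}$, so $\frak v_{\text{\bf u}}(\frak Q) = \min_{\text{\bf k}}(\frak v_T(b_{\text{\bf k}}) + \sum_i k_i u_i)$, and because $P_\epsilon$ is compact and the sum over $\text{\bf k}$ is finite one may interchange to get $\frak v_{P_\epsilon}(\frak Q) = \min_{\text{\bf k}}(\frak v_T(b_{\text{\bf k}}) + \min_{\text{\bf u}\in P_\epsilon}\sum_i k_i u_i)$. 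Now given $\frak P$ in the completion, take a $d_{\overset{\circ}P}$-Cauchy sequence of Laurent polynomials $\frak P_j = \sum_{\text{\bf k}} a^{(j)}_{\text{\bf k}} y^{\text{\bf k}}$ with $\frak P_j\to\frak P$. For a fixed $\text{\bf k}$ and any $\text{\bf u}\in\text{Int}\,P$, say $\text{\bf u}\in P_{1/m}$, the formula above gives $\frak v_T(a^{(j)}_{\text{\bf k}} - a^{(j')}_{\text{\bf k}}) \ge \frak v_{P_{1/m}}(\frak P_j - \frak P_{j'}) - \sum_i k_i u_i$, and since $d_{\overset{\circ}P}(\frak P_j,\frak P_{j'})\to 0$ forces $\frak v_{P_{1/m}}(\frak P_j - \frak P_{j'})\to +\infty$, the sequence $(a^{(j)}_{\text{\bf k}})_j$ is $T$-adically Cauchy and converges to some $a_{\text{\bf k}}\in\Lambda$. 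One then checks, either by a three-$\epsilon$ argument or by noting that the space of formal series satisfying the growth condition is itself complete for $d_{\overset{\circ}P}$ and contains the Laurent polynomial ring densely, that $\sum_{\text{\bf k}} a_{\text{\bf k}} y^{\text{\bf k}}$ satisfies the growth condition for all $\text{\bf u}\in\text{Int}\,P$, that its partial sums converge to $\frak P$ in $d_{\overset{\circ}P}$, and that the $a_{\text{\bf k}}$ are independent of the chosen Cauchy sequence. The $d_P$-case is identical with every $P_\epsilon$ replaced by $P$ and $\frak v_{P_\epsilon}$ by $\frak v_P = \inf_{\text{\bf u}\in P}\frak v_{\text{\bf u}}$; there the growth condition "for all $\text{\bf u}\in P$" is the same as "$\frak v_T(a_{\text{\bf k}}) + \min_{\text{\bf u}\in P}\sum_i k_i u_i\to+\infty$", because the linear functional $\text{\bf u}\mapsto\sum_i k_i u_i$ attains its minimum on $P$ at one of the finitely many vertices.

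Finally, for the evaluation statements, let $\frak P = \sum_{\text{\bf k}} a_{\text{\bf k}} y^{\text{\bf k}}$ lie in $\Lambda\langle\!\langle y,y^{-1}\rangle\!\rangle^{\overset{\circ}P}$ and let $(\frak y_1,\dots,\frak y_n)\in\Lambda^n$ with $\text{\bf u}:=(\frak v_T(\frak y_1),\dots,\frak v_T(\frak y_n))\in\text{Int}\,P$. Because $\Lambda$ is a field and $\frak v_T$ is a valuation, $\frak v_T(a_{\text{\bf k}}\frak y_1^{k_1}\cdots\frak y_n^{k_n}) = \frak v_T(a_{\text{\bf k}}) + \sum_i k_i\,\frak v_T(\frak y_i) = \frak v_T(a_{\text{\bf k}}) + \sum_i k_i u_i$, which by the growth condition for this particular $\text{\bf u}$ tends to $+\infty$ as $|k_1|+\cdots+|k_n|\to\infty$. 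Hence for each $C\in\R$ only finitely many $\text{\bf k}$ contribute a monomial of $\frak v_T$-value $\le C$, so the finite partial sums of $\sum_{\text{\bf k}} a_{\text{\bf k}}\frak y_1^{k_1}\cdots\frak y_n^{k_n}$ form a Cauchy net in the $T$-adic topology and the series converges in $\Lambda$; the $P$-case is verbatim, using that $\text{\bf u}\in P$ and the growth condition over all of $P$. The main obstacle is the middle step: rigorously matching the abstract metric completion with this concrete space of coefficient sequences, and in particular verifying that the partial sums of the extracted series converge to $\frak P$ in $d_{\overset{\circ}P}$ (resp. $d_P$). Once that concrete description is secured, both the representation of $\frak P$ as a convergent series and the two $T$-adic evaluation assertions are routine non-Archimedean estimates.
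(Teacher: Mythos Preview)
Your proposal is correct and is precisely the elementary argument the paper has in mind: the paper itself omits the proof entirely, stating only ``The proof is elementary and is omitted,'' after having already recorded (in the paragraph ``In other words\dots'' just before the lemma) the concrete description of $\Lambda\langle\!\langle y,y^{-1}\rangle\!\rangle^{\overset{\circ}P}$ and $\Lambda\langle\!\langle y,y^{-1}\rangle\!\rangle^{P}$ in terms of series with the growth condition $\frak v_T(a_{k_1,\dots,k_n})\to+\infty$ in the $y^{\text{\bf u}}$-coordinates. Your unpacking of that identification via Cauchy sequences and the subsequent ultrametric estimate for evaluation are exactly what that omitted elementary proof would be.
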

The proof is elementary and so omitted.
\par
Theorem \ref{strongconvPO} is \cite[Theorem 3.14]{fooo:bulk}.
We do not discuss its proof in this paper but refer to \cite{fooo:bulk}
for the details.
\par
We next discuss the relationship between the potential function
and the nonvanishing of Floer cohomology.
We first note that one can define the logarithmic derivative
\begin{equation}\label{Pderivay}
y_i\frac{\partial \frak P}{\partial y_i}
\end{equation}
for an element $\frak P$ of
$\Lambda\langle\!\langle
y,y^{-1}\rangle\!\rangle^{\overset{\circ}P}$.
In fact, regarding the expression (\ref{Pexpand2}) of $\frak P$ as a power series,
we define
$$
y_i\frac{\partial \frak P}{\partial y_i}
=
\sum a_{k_1,\dots,k_n} k_iy_1^{k_1}\dots
y_n^{k_n}.
$$
It is easy to see that this series converges with respect to
$d_{\overset{\circ}P}$-topology and so defines an element of
$\Lambda\langle\!\langle
y,y^{-1}\rangle\!\rangle^{\overset{\circ}P}$.

\begin{defn}
Let $\frak P \in \Lambda\langle\!\langle
y,y^{-1}\rangle\!\rangle^{\overset{\circ}P}$
and $\frak y = (\frak y_1,\dots,\frak y_n) \in \Lambda^n$ with
$$
(\frak v_T(\frak y_1),\dots,\frak v_T(\frak y_n))
\in \text{\rm Int}\, P.
$$
We say that $\frak y$ is a {\it critical point} of $\frak P$ if it satisfies
$$
\left(y_i\frac{\partial \frak P}{\partial y_i}\right)(\frak y) = 0
$$
for all $i =1,\dots,n$.
\par
For each critical point $\frak y$, we define a point $\text{\bf u}(\frak y) \in \text{\rm Int}\, P$ by
\begin{equation}\label{budeffromy1}
\text{\bf u}(\frak y) = (\frak v_T(\frak y_1),\dots,\frak v_T(\frak y_n)),
\end{equation}
and an element $b = b(\frak y) \in H^1(L(\text{\bf u}(\frak y)),\Lambda_0)$ by
\begin{equation}\label{budeffromy2t}
b(\frak y) = \sum x^\frak y_i e_i,
\quad
x^\frak y_i
= \log (T^{-\frak v_T(\frak y_i)}y_i).
\end{equation}
\end{defn}

Here the meaning of $\log$ in  (\ref{budeffromy2t}) is as follows.
Note that $\frak v_T(T^{-\frak v_T(\frak y_i)}y_i) = 0$. Therefore we can write
$$
T^{-\frak v_T(\frak y_i)}y_i
= c_1 (1 + c_2)
$$
for some $c_1 \in \C \setminus \{0\}$, $c_2 \in \Lambda_+$.
Then we define
$$
\log (T^{-\frak v_T(\frak y_i)}y_i )
= \log c_1 + \sum_{n=1}^{\infty} (-1)^{n}\frac{c_2^{n+1}}{n+1}.
$$
(Here we choose a branch of $\log c_1$ so that its imaginary part lies in $[0,2\pi)$, for example.)

\begin{thm}\label{Floercrit} Let $\frak b \in H^{{\rm even}}(M;\Lambda_0)$.
If $\frak y$ is a critical point of $\frak{PO}_{\frak b}$,
\index{potential function!critical point}
$$
HF((L(\text{\bf u}(\frak y)),(\frak b,b(\frak y));\Lambda)
\cong
H(T^n;\Lambda).
$$
\par
Conversely if
$$
HF((L(\text{\bf u}),(\frak b,b));\Lambda) \ne 0,
$$
there exists a critical point $\frak y$ of $\frak{PO}_{\frak b}$
such that
$$
\text{\bf u} = \text{\bf u}(\frak y),
\qquad
b = b(\frak y).
$$
\end{thm}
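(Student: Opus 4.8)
The plan is to reduce both halves of the statement to an explicit description of the Floer differential $\delta^{T,(\frak b,b)} = \frak m_1^{T,(\frak b,b)}$ on the $T^n$-equivariant canonical model, where $H^*(L(\text{\bf u});\Lambda)$ is the exterior algebra $\Lambda\langle e_1,\dots,e_n\rangle$ on the chosen basis of $H^1(L(\text{\bf u});\Z)\cong H^1(T^n;\Z)$. First I would invoke Proposition \ref{Ainfinityequiv}, the corollary following it, and Lemma \ref{ainfinityequivcomp} to replace $HF((L(\text{\bf u}),(\frak b,b));\Lambda)$ by $HF_T((L(\text{\bf u}),(\frak b,b));\Lambda)$ and $i_{\text{\rm qm},\text{\bf b}}$ by $i^{T}_{\text{\rm qm},\text{\bf b}}$; this is legitimate since the two filtered $A_\infty$ algebras are homotopy equivalent, so the computation and its conclusions transfer back. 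Next, using the $T^n$-equivariant Kuranishi structures and multisections of Lemmas \ref{torickuranishi} and \ref{toricmmultsect}, their compatibility with the forgetful maps of boundary marked points, the dimension formula of Lemma \ref{torickuranishi}(4), and the classification (\ref{charbetaj})--(\ref{elljandarea}) of disc classes bounding $L(\text{\bf u})$ together with their areas, one establishes the key formula
\begin{equation*}
\frak m_1^{T,(\frak b,b)}(h) = \pm\sum_{i=1}^{n}\left(y_i\frac{\partial \frak{PO}_{\frak b}^{\text{\bf u}}}{\partial y_i}(b)\right)\, e_i\wedge h ,
\end{equation*}
valid for $b = \sum_i x_i^{\text{\bf u}} e_i$, where the right side uses the chain rule $\partial/\partial x_i^{\text{\bf u}} = y_i\,\partial/\partial y_i$. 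The way I would obtain this is to differentiate the defining relation $\frak m_0^{T,(\frak b,b)}(1) = \frak{PO}_{\frak b}^{\text{\bf u}}(b)\,\text{\bf e}_L$ (Proposition \ref{H1iswbdchain}, (\ref{POtoric})) in each deformation parameter $x_i^{\text{\bf u}}$: on the left the $A_\infty$ relations and strict unitality of $\text{\bf e}_L$ convert $\partial/\partial x_i^{\text{\bf u}}$, after re-summation of the $b$-insertions, into $\frak m_1^{T,(\frak b,b)}(e_i\wedge-)$, while the right side becomes $(y_i\,\partial\frak{PO}_{\frak b}^{\text{\bf u}}/\partial y_i)(b)\,\text{\bf e}_L$.

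Granting this formula, both assertions follow by linear algebra over the field $\Lambda$. If $\frak y$ is a critical point of $\frak{PO}_{\frak b}$, set $\text{\bf u} = \text{\bf u}(\frak y)$ and $b = b(\frak y)$ as in (\ref{budeffromy1})--(\ref{budeffromy2t}); then $(y_i\,\partial\frak{PO}_{\frak b}/\partial y_i)(b(\frak y)) = 0$ for all $i$ by definition of a critical point together with $\frak y_i = T^{u_i(\frak y)}\exp(x(\frak y)_i)$, so $\frak m_1^{T,(\frak b,b(\frak y))} = 0$ and hence $HF_T((L(\text{\bf u}(\frak y)),(\frak b,b(\frak y)));\Lambda) = H^*(L(\text{\bf u}(\frak y));\Lambda)\cong H(T^n;\Lambda)$. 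Conversely, put $v = \sum_i (y_i\,\partial\frak{PO}_{\frak b}^{\text{\bf u}}/\partial y_i)(b)\,e_i \in H^1(L(\text{\bf u});\Lambda)$, so that $\frak m_1^{T,(\frak b,b)} = \pm\,v\wedge(-)$. The complex $(\Lambda\langle e_1,\dots,e_n\rangle, v\wedge(-))$ is the Koszul complex of the linear form $v$; when $v\ne0$ one extends $v$ to a basis and splits off the corresponding two-term acyclic factor, so the complex is acyclic. Therefore $HF_T((L(\text{\bf u}),(\frak b,b));\Lambda)\ne0$ forces $v=0$, i.e.\ $(y_i\,\partial\frak{PO}_{\frak b}/\partial y_i)(b)=0$ for all $i$; the point $\frak y$ with $\frak y_i = T^{u_i}\exp(x_i^{\text{\bf u}})$ is then a critical point with $\text{\bf u}(\frak y)=\text{\bf u}$ (since $\frak v_T(\frak y_i)=u_i$, the valuation of $\exp(x_{i,0}^{\text{\bf u}})\exp(x_{i,+}^{\text{\bf u}})$ being $0$) and $b(\frak y)=b$ by (\ref{budeffromy2t}).

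The main obstacle is the first paragraph --- proving that the Floer differential on a toric fiber is exactly ``wedge with $d\frak{PO}_{\frak b}$''. This is where essentially all of the geometric content sits: one must control the bulk-deformed moduli spaces $\mathcal M_{k+1;\ell}(L(\text{\bf u});\beta;\text{\bf p})$ through their $T^n$-equivariant perturbations and forgetful-map compatibilities, carry out the dimension bookkeeping, and handle the running-out problem exactly as in \cite{fooo:book} Section 7.2 and \cite{fooo:bulk} Section 6. This computation, which reproves and bulk-deforms the corresponding results of \cite{fooo:toric1}, occupies a substantial part of \cite{fooo:bulk}; accordingly I would isolate the displayed formula as a single lemma, present the Koszul argument above in full, and cite \cite{fooo:bulk} Theorem 3.14 and its supporting sections for the proof of the lemma. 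I should also note that the convergence needed to make sense of the logarithmic derivatives $y_i\,\partial\frak{PO}_{\frak b}^{\text{\bf u}}/\partial y_i$ and their evaluation at $b$ --- namely that $\frak{PO}_{\frak b}^{\text{\bf u}}\in\Lambda\langle\!\langle y,y^{-1}\rangle\!\rangle^{\overset{\circ}P}$ and that the series converges at points whose valuation lies in $\mathrm{Int}\,P$ --- is provided by Theorem \ref{strongconvPO}.
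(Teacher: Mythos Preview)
Your proposal is correct and follows exactly the approach of the cited reference. Note, however, that the paper itself does not prove this theorem: it states that Theorem \ref{Floercrit} is \cite{fooo:bulk} Theorem 5.5 and refers the reader there for the proof. Your sketch --- reducing to the $T^n$-equivariant canonical model via Proposition \ref{Ainfinityequiv}, identifying $\frak m_1^{T,(\frak b,b)}$ with wedging by $\sum_i(y_i\,\partial\frak{PO}_{\frak b}/\partial y_i)(b)\,e_i$, and then invoking the Koszul acyclicity argument --- is precisely the argument carried out in \cite{fooo:bulk}, so your plan to isolate the displayed formula as a lemma and cite \cite{fooo:bulk} for its proof is in fact what the paper does wholesale.
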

Theorem \ref{Floercrit} is nothing but \cite[Theorem 5.5]{fooo:bulk}.
We refer readers to \cite{fooo:bulk} for its proof.
\par
We next describe the relation of $\frak{PO}_{\frak b}$ to the
quantum cohomology.
Consider the closed ideal of the Fr\'echet ring
$\Lambda\langle\!\langle
y,y^{-1}\rangle\!\rangle^{\overset{\circ}P}$ generated by
$\left\{y_i\frac{\partial \frak{PO}_{\frak b}}{\partial y_i} \mid  i=1,\dots,n\right\}$.
We denote the quotient ring by
$$
\mathrm {Jac}(\frak{PO}_{\frak b};\Lambda)
=
\frac{\Lambda\langle\!\langle
y,y^{-1}\rangle\!\rangle^{\overset{\circ}P}}
{\text{Clos}_{\overset{\circ}d}\left(
y_i\frac{\partial \frak{PO}_{\frak b}}{\partial y_i}:
i=1,\dots,n
\right)}
$$
\index{$\text{\rm Jac}(\frak{PO}_{\frak b};\Lambda)$}\index{Jacobian ring}
which we call the {\it Jacobian ring} of $\frak{PO}_{\frak b}$.
We define a map
$$
\frak{ks}_{\frak b} :
H(M;\Lambda) \to  \mathrm {Jac}(\frak{PO}_{\frak b};\Lambda)
$$
\index{$\frak{ks}_{\frak b}$}
called {\it Kodaira-Spencer map}
\index{Kodaira-Spencer map $\frak{ks}_{\frak b}$}
as follows.
Let $\{\text{\bf e}^{M}_i\}$ be a basis of $H(M;\Q)$.
We write an element of $H(M;\Lambda)$ as
$\sum w_i \text{\bf e}^{M}_i$, $w_i \in \Lambda$.
We may express
$$
\frak{PO}_\frak b
= \sum a_{k_1,\dots,k_n}(\frak b) y_1^{k_1}\dots y_n^{k_n},
$$
where $a_{k_1,\dots,k_n}(\frak b)$ is a function of
$w_i$ (where $\frak b = \sum w_i \text{\bf e}^{M}_i$).
(See (\ref{fomulaforPO4}) for the precise expression of $\frak{PO}_\frak b$.)
Then $a_{k_1,\dots,k_n}(\frak b)$ is a formal power series
of $w_i$ with coefficients in $\Lambda$ which  converges in
$T$-adic topology.

Therefore the partial derivatives
$
\frac{\partial a_{k_1,\dots,k_n}}{\partial w_i}
$
are well-defined. Then we have
$$
\frac{\partial \frak{PO}}{\partial w_i}(\frak b)
=
\sum \frac{\partial a_{k_1,\dots,k_n}}{\partial w_i}(\frak b) y_1^{k_1}\dots
y_n^{k_n}.
$$
For each $\frak b \in H(M;\Lambda_0)$, the right hand side converges and
defines an element of $\Lambda\langle\!\langle
y,y^{-1}\rangle\!\rangle^{\overset{\circ}P}$.
\par
Now we define the map $\frak{ks}_{\frak b}$ by setting its value to be
\begin{equation}\label{defKS}
\frak{ks}_{\frak b}({\bf e}^{M}_i)
=
\left[
\frac{\partial \frak{PO}}{\partial w_i}(\frak b)
\right] \in \mathrm {Jac}(\frak{PO}_{\frak b};\Lambda).
\end{equation}

\begin{thm}\label{JacisHQ} The map
$\frak{ks}_{\frak b}$ defines a ring isomorphism
$$
(QH(M;\Lambda),\cup^{\frak b}) \cong  \mathrm {Jac}(\frak{PO}_{\frak b};\Lambda).
$$
\end{thm}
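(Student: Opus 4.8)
The plan is to prove three things about $\frak{ks}_{\frak b}$: that it is well defined as a map into $\mathrm{Jac}(\frak{PO}_{\frak b};\Lambda)$, that it is a unital $\Lambda$-algebra homomorphism, and that it is bijective. Well-definedness is essentially a matter of convergence together with independence of representatives. By Theorem \ref{strongconvPO}, $\frak{PO}_{\frak b}$ is a canonical element of $\Lambda\langle\!\langle y,y^{-1}\rangle\!\rangle^{\overset{\circ}P}$ whose coefficients, writing $\frak b=\sum_i w_i\,\text{\bf e}^M_i$, are power series in the $w_i$ convergent in the $T$-adic topology; hence each $\partial\frak{PO}_{\frak b}/\partial w_i$ appearing in \eqref{defKS} lies again in $\Lambda\langle\!\langle y,y^{-1}\rangle\!\rangle^{\overset{\circ}P}$, and its class in the Jacobian quotient is insensitive to the closed differential form chosen to represent $\frak b$.

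The geometric heart is the ring homomorphism property, and here I would use the $T^n$-equivariant operator $\frak q^T$ with its $A_\infty$-relations (the toric analogue of Theorem \ref{qproperties}) together with the closed-open map $i_{\text{\rm qm},\text{\bf b}}^{T}$. The idea is to differentiate the defining identity $\frak m_0^{T,(\frak b,b)}(\mathbf 1)=\frak{PO}^{\text{\bf u}}(\frak b,b)\,\text{\bf e}_L$ of \eqref{POtoric} in the bulk variable: the first $w_i$-derivative recovers (up to sign) $i_{\text{\rm qm},\text{\bf b}}^{T}(\text{\bf e}^M_i)$, and the mixed second derivative in $w_i,w_j$ reproduces, modulo the Jacobian ideal, the three-point bulk Gromov--Witten numbers $GW_{\ell+3}(\text{\bf e}^M_i,\text{\bf e}^M_j,\cdot,\frak b,\dots,\frak b)$ that define $\cup^{\frak b}$ in Definition \ref{defn:prod}; the discrepancy is exactly a combination of the $y_k\,\partial\frak{PO}_{\frak b}/\partial y_k$, arising from the codimension-one boundary strata of $\mathcal M_{k+1;\ell}(L(\text{\bf u});\beta;\text{\bf p})$ where a disc component bubbles off. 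Equivalently, and more cheaply, one invokes that $i_{\text{\rm qm},\text{\bf b}}^{T}$ is a ring homomorphism into the Hochschild cohomology of the Fukaya algebra and that the latter is isomorphic as a ring to $\mathrm{Jac}(\frak{PO}_{\frak b};\Lambda)$ (\cite{fooo:toricmir} Section 31, \cite{AFOOO}); then $\frak{ks}_{\frak b}$ factors through this identification and is multiplicative because each factor is.

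For bijectivity I would first record that $\mathrm{Jac}(\frak{PO}_{\frak b};\Lambda)$ is finite dimensional over $\Lambda$ with $\dim_\Lambda\mathrm{Jac}(\frak{PO}_{\frak b};\Lambda)=\dim_\Lambda H(M;\Lambda)$: the ``leading order'' part of $\frak{PO}_{\frak b}$ has the Newton polytope of $\sum_{j=1}^m T^{\ell_j(\text{\bf 0})}\prod_i y_i^{v_{j,i}}$ (the disc classes $\beta_j$ of \eqref{elljandarea}), the remaining terms being a perturbation which does not change the number of critical points counted with multiplicity, and the Bernstein--Kushnirenko bound for that leading polynomial is the normalized volume of $P$, which for a smooth toric $M$ equals $\mathrm{rank}\,H(M;\Q)$. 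Injectivity then follows from compatibility with pairings: on the (now finite dimensional, hence Frobenius) ring $\mathrm{Jac}(\frak{PO}_{\frak b};\Lambda)$ one has the residue pairing, and one shows $\langle a,a'\rangle_{\mathrm{PD}_M}=c\,\langle\frak{ks}_{\frak b}(a),\frak{ks}_{\frak b}(a')\rangle_{\mathrm{res}}$ for some $c\in\Lambda\setminus\{0\}$, so nondegeneracy of Poincar\'e duality forces $\ker\frak{ks}_{\frak b}=0$. Since $\Lambda$ is a field and the two sides have equal finite dimension, $\frak{ks}_{\frak b}$ is an isomorphism of rings. (As a variant one could match local factors instead: Theorem \ref{Floercrit} identifies the critical points of $\frak{PO}_{\frak b}$ with the support of $QH_{\frak b}^*(M;\Lambda)$, and one checks the map on each local Artinian piece.)

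The step I expect to be the main obstacle is the ring homomorphism property, precisely because it requires making the ``derivative of the potential'' computation rigorous: tracking orientations, signs and the $1/\ell!$ weights through $\mathcal M_{k+1;\ell}(L(\text{\bf u});\beta;\text{\bf p})$, isolating exactly which boundary contributions are Jacobian-ideal multiples, and performing every manipulation inside $\Lambda\langle\!\langle y,y^{-1}\rangle\!\rangle^{\overset{\circ}P}$ so that convergence in the $d_{\overset{\circ}P}$-topology is maintained. If one routes through Hochschild cohomology instead, the difficulty is displaced onto the ring isomorphism between Hochschild cohomology and $\mathrm{Jac}(\frak{PO}_{\frak b};\Lambda)$ and the multiplicativity of the closed-open map, which are themselves substantial results imported from \cite{fooo:toricmir} and \cite{AFOOO}. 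A secondary technical point is the Bernstein--Kushnirenko count when $M$ is not Fano, where some $\ell_j$ contribute positive $T$-powers and the bookkeeping must be organized by the norm $\frak v_P$ rather than by the $T$-adic valuation alone.
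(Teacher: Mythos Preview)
The paper does not prove this theorem. Immediately after the statement it writes: ``This is \cite{fooo:toricmir} Theorem 1.1 for whose proof we refer readers thereto.'' The result is being quoted from the authors' earlier work as part of a review section (Section \ref{toriceLagreview}, ``Lagrangian Floer theory of toric fibers: review''), and no argument is supplied here.

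Your outline is, in broad strokes, the strategy actually carried out in \cite{fooo:toricmir}: the ring homomorphism property is proved there (their Theorem 9.1, also invoked in the present paper in the proof of Theorem \ref{calcithm} and in Section \ref{sec:cubic}), and bijectivity is obtained by combining a dimension count with a compatibility of Poincar\'e duality and the residue pairing. So your sketch is a fair summary of how the cited proof goes, but there is nothing in this paper to compare it against. If anything, you should be aware that the honest work lies in the places you flagged yourself --- the multiplicativity statement and the pairing compatibility --- and those are precisely what \cite{fooo:toricmir} devotes substantial effort to; your sketch does not constitute an independent proof so much as a table of contents for one.
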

This is \cite[ Theorem 1.1.1]{fooo:toricmir} for whose proof we refer readers thereto.
\begin{rem}
\cite[Theorem 1.1.1]{fooo:toricmir}  is stated as a result over
$\Lambda_0$-coefficients which is stronger than Theorem \ref{JacisHQ}.
We do not use this stronger isomorphism over $\Lambda_0$-coefficients in the present paper.
\end{rem}
\par
We also need a result on the structure of the Jacobian ring
$\mathrm {Jac}(\frak{PO}_{\frak b};\Lambda)$.
\begin{defn}
We say a critical point $\frak y$ of
$\frak{PO}_{\frak b}$ is {\it nondegenerate} if
$$
\det \left[ y_iy_j\frac{\partial^2\frak{PO}_{\frak b}}{\partial y_i\partial y_j}
\right]_{i,j=1}^{i,j=n} (\frak y) \neq 0.
$$
 We say $\frak{PO}_{\frak b}$ is a {\it Morse function} if
all of its critical points are nondegenerate.
\end{defn}
Let $\text{\rm Crit}(\frak{PO}_{\frak b})$ be the set of all critical
points of $\frak{PO}_{\frak b}$.
\index{$\text{\rm Crit}(\frak{PO}_{\frak b})$}

\begin{defn}\label{def:Jaceigensp}
For $\frak y =(\frak y_1,\dots,\frak y_n) \in \text{\rm Crit}(\frak{PO}_{\frak b})$,
we define the subset $\text{\rm Jac}(\frak{PO}_{\frak b};\frak y) \subset
\mathrm {Jac}(\frak{PO}_{\frak b};\Lambda)$ as follows:
If we regard $y_i \in \Lambda\langle\!\langle
y,y^{-1}\rangle\!\rangle^{\overset{\circ}P}$, the multiplication by
$y_i$ induces an action on
$\text{\rm Jac}(\frak{PO}_{\frak b}; \Lambda)$. We denote the corresponding
endomorphism by $\widehat y_i$. Then we put
\begin{equation}\label{2035}
\aligned
\text{\rm Jac}(\frak{PO}_{\frak b};\frak y)
=
\{
x \in \text{\rm Jac}(\frak{PO}_{\frak b}; \Lambda)
\mid \
&(\widehat y_i - \frak y_i)^N x = 0,
\\
&\text{for all $i$ and sufficiently large $N$}
\}.
\endaligned
\end{equation}
\end{defn}
\begin{prop}\label{Morsesplit}
\begin{enumerate}
\item There is a factorization of the Jacobian ring
$$
\text{\rm Jac}(\frak{PO}_{\frak b}; \Lambda) \cong \prod_{\frak y \in \text{\rm Crit}(\frak{PO}_{\frak b})}
\text{\rm Jac}(\frak{PO}_{\frak b};\frak y)
$$
as a direct product of rings. \index{$\text{\rm Jac}(\frak{PO}_{\frak b};\frak y)$}
\item
For each critical point $\frak y \in \text{\rm Crit}(\frak P\frak O_{\frak b})$,
$\text{\rm Jac}(\frak{PO}_{\frak b};\frak y)$ is a local ring.
\item $\frak y$ is a nondegenerate critical point of $\frak P\frak O_{\frak b}$ if and only if
$\text{\rm Jac}(\frak{PO}_{\frak b};\frak y) \cong \Lambda$.
\end{enumerate}
\end{prop}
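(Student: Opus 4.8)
The plan is to combine finite-dimensionality with the standard structure theory of Artinian algebras over an algebraically closed field, and then to identify the resulting local factors with the critical points of $\frak{PO}_{\frak b}$ by means of evaluation homomorphisms.

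First I would invoke Theorem \ref{JacisHQ}: the Kodaira--Spencer map gives a $\Lambda$-algebra isomorphism $(QH(M;\Lambda),\cup^{\frak b})\cong \mathrm{Jac}(\frak{PO}_{\frak b};\Lambda)$, so $\mathrm{Jac}(\frak{PO}_{\frak b};\Lambda)$ is a commutative $\Lambda$-algebra of finite dimension $\mathrm{rank}\,H(M;\Q)$. Since $\Lambda\cong\Lambda^{\downarrow}$ is algebraically closed (see the remark after Theorem \ref{thm:morphism} and \cite{fooo:toric1} Appendix A), the structure theory of finite-dimensional commutative algebras yields a canonical splitting $\mathrm{Jac}(\frak{PO}_{\frak b};\Lambda)\cong\prod_{\frak m}A_{\frak m}$ into a finite product of Artinian local rings indexed by the maximal ideals $\frak m$, each with residue field $\Lambda$ and nilpotent maximal ideal. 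This gives (1) and (2) once the index set is identified with $\text{\rm Crit}(\frak{PO}_{\frak b})$ and $A_{\frak m}$ with the eigenspace $\text{\rm Jac}(\frak{PO}_{\frak b};\frak y)$ of \eqref{2035}.

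Second, for the identification: the images of $y_1,\dots,y_n$ generate $\mathrm{Jac}(\frak{PO}_{\frak b};\Lambda)$ as a $\Lambda$-algebra (they span a dense subalgebra, which is closed by finite-dimensionality) and pairwise commute, so on each local factor $A_{\frak m}$ the multiplication operator $\widehat y_i$ has a single eigenvalue $\frak y_i\in\Lambda$; set $\frak y=(\frak y_1,\dots,\frak y_n)$. The composite $\Lambda\langle\!\langle y,y^{-1}\rangle\!\rangle^{\overset{\circ}P}\to\mathrm{Jac}(\frak{PO}_{\frak b};\Lambda)\to A_{\frak m}\to A_{\frak m}/\frak m_{A_{\frak m}}=\Lambda$ is a continuous ring homomorphism sending $y_i\mapsto\frak y_i$, hence evaluates any convergent series in the $y_i$ by substitution; applied to the defining relations $y_i\,\partial\frak{PO}_{\frak b}/\partial y_i$, which map to $0$, it shows that $\frak y$ solves the critical point equations. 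Distinct $\frak m$ give distinct $\frak y$ (the $y_i$ generate the ring), so $A_{\frak m}$ consists exactly of the elements killed by large powers of all $\widehat y_i-\frak y_i$, since on $A_{\frak m'}$ with $\frak m'\neq\frak m$ some $\widehat y_j-\frak y_j$ is invertible; this is \eqref{2035}. The remaining point — that $(\frak v_T(\frak y_1),\dots,\frak v_T(\frak y_n))\in\mathrm{Int}\,P$, so that $\frak y$ is a genuine element of $\text{\rm Crit}(\frak{PO}_{\frak b})$, and that conversely every such $\frak y$ arises from some $\frak m$ — I would import from \cite{fooo:toricmir}, where the analytic structure of $\frak{PO}_{\frak b}$ (convergence only over the interior, the leading term analysis) is used precisely to control the valuations of critical points. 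For (3), I would pass to the formal local coordinates $z_i=\log(T^{-\frak v_T(\frak y_i)}y_i)-x(\frak y)_i$ of \eqref{budeffromy2t}, in which $y_i\,\partial/\partial y_i=\partial/\partial z_i$ and the surjection $\Lambda[[z_1,\dots,z_n]]\to A_{\frak m}$ (which factors through a nilpotent quotient since $A_{\frak m}$ is Artinian) exhibits $A_{\frak m}$ as the Milnor algebra $\Lambda[[z]]/(\partial_1 f,\dots,\partial_n f)$ of $f=\frak{PO}_{\frak b}$ at $\frak y$. At a critical point $y_iy_j\,\partial^2\frak{PO}_{\frak b}/\partial y_i\partial y_j=\partial^2 f/\partial z_i\partial z_j$, so the nondegeneracy condition of the Definition preceding Proposition \ref{Morsesplit} is exactly nondegeneracy of the Hessian $H=(\partial^2 f/\partial z_i\partial z_j)(0)$; if $\det H\neq0$ the linear parts $\sum_j H_{ij}z_j$ of the $\partial_i f$ already span $\frak m/\frak m^2$, so by Nakayama the $\partial_i f$ generate $\frak m$ and $A_{\frak m}\cong\Lambda$, while conversely $A_{\frak m}\cong\Lambda$ forces the $\partial_i f$ to generate $\frak m$, hence $\det H\neq0$.

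I expect the main obstacle to be the second step: the assertion that every maximal ideal of $\mathrm{Jac}(\frak{PO}_{\frak b};\Lambda)$ corresponds to a critical point whose coordinate valuations lie in the interior of the moment polytope. This is not formal — it depends on the fact that $\frak{PO}_{\frak b}$ is a series convergent only on $\mathrm{Int}\,P$ and on the Newton-polytope/leading-term analysis of \cite{fooo:toricmir}. Everything else is standard commutative algebra over an algebraically closed field together with a routine formal (non-Archimedean) Morse lemma.
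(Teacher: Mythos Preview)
The paper does not prove Proposition~\ref{Morsesplit} at all: immediately after the statement it writes ``Proposition~\ref{Morsesplit} is \cite{fooo:toricmir} Proposition~2.15, to which we refer readers for its proof.'' So there is no in-paper argument to compare against; both this result and Theorem~\ref{JacisHQ} are imported as black boxes from \cite{fooo:toricmir}.

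Your sketch is mathematically reasonable and identifies the right ingredients: finite-dimensionality, algebraic closure of $\Lambda$, Artinian structure theory, and the evaluation/eigenvalue picture to match maximal ideals with critical points. You also correctly isolate the genuinely non-formal step, namely that the eigenvalue tuple $\frak y$ has valuations in $\mathrm{Int}\,P$, which requires the analytic input from \cite{fooo:toricmir}. One caution worth flagging: you obtain finite-dimensionality of $\mathrm{Jac}(\frak{PO}_{\frak b};\Lambda)$ by invoking Theorem~\ref{JacisHQ}. In the present paper both statements are quoted from \cite{fooo:toricmir}, so this is harmless here, but in the source the logical order is likely the reverse --- the Jacobian ring is shown to be finite-dimensional (and its local structure understood) \emph{before} the Kodaira--Spencer isomorphism is established, and indeed the proof of the isomorphism may use this. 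So if you were reconstructing the argument from scratch rather than working inside this paper's citation graph, you would want an independent proof of finite-dimensionality (e.g., via the leading-term/Newton-polytope analysis you allude to) rather than bootstrapping from Theorem~\ref{JacisHQ}.
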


Proposition \ref{Morsesplit} is \cite[Proposition 1.2.16]{fooo:toricmir}, to which
we refer readers for its proof.

It follows from Proposition \ref{Morsesplit} that
the set of indecomposable idempotents of $\mathrm {Jac}(\frak{PO}_{\frak b};\Lambda)$
one-one corresponds to $\text{\rm Crit}(\frak{PO}_{\frak b})$.
We denote by $1_{\frak y} \in \text{\rm Jac}(\frak{PO}_{\frak b};\frak y)$ the unit of
the ring $\text{\rm Jac}(\frak{PO}_{\frak b};\frak y)$ which corresponds to an
idempotent of $\mathrm {Jac}(\frak{PO}_{\frak b};\Lambda)$.
Denote by $e_{\frak y}$ the idempotent of $(QH(M;\Lambda),\cup^{\frak b})$
corresponding to $1_{\frak y}$ under the isomorphism $\frak{ks}_{\frak b}$ in Theorem \ref{JacisHQ}.

We are finally ready to describe the map
\begin{equation}
i_{\text{qm},(\frak b,b)}^{T, \ast} :
QH_{\frak b}^*(M;\Lambda) \to HF^*((L(\text{\bf u}),(\frak b,b));\Lambda)
\end{equation}\index{$i_{\text{qm},(\frak b,b)}^{T, \ast}$}
in our situation.
\begin{thm}\label{calcithm}
Let $\frak b \in H^{{\rm even}}(M;\Lambda_0)$,
$\frak y$ a critical point of $\frak{PO}_{\frak b}$,
$a \in H(M;\Lambda)$ and let $\frak P \in \Lambda\langle\!\langle
y,y^{-1}\rangle\!\rangle^{\overset{\circ}P}$ satisfy
\begin{equation}\label{condforP}
\frak{ks}_{\frak b}(a) = [\frak P] \mod
\text{\rm Clos}_
{\overset{\circ}d}\left(
y_i\frac{\partial \frak{PO}_{\frak b}}{\partial y_i}:
i=1,\dots,n
\right).
\end{equation}
Then we have
\begin{equation}\label{calcisharp}
i_{\text{\rm qm},(\frak b(\frak y),b(\frak y))}^{T, \ast}(a)
= \frak P(\frak y)\text{\bf e}_{L(\text{\bf u}(\frak y))}.
\end{equation}
\end{thm}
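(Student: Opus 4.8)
The plan is to recognize both sides of \eqref{calcisharp} as ring homomorphisms $QH_{\frak b}^*(M;\Lambda)\to HF((L(\text{\bf u}(\frak y)),(\frak b(\frak y),b(\frak y)));\Lambda)$ and to check that they agree on a $\Lambda$-algebra generating set. The assignment $a\mapsto i_{\text{\rm qm},(\frak b(\frak y),b(\frak y))}^{T,\ast}(a)$ is a ring homomorphism by the ring property of the open--closed map (the remark after \eqref{ambcohtoHFL} together with \cite{fooo:toricmir} Section 9). For the other side, since $\frak y$ is a critical point of $\frak{PO}_{\frak b}$ the generators $y_i\,\partial\frak{PO}_{\frak b}/\partial y_i$ of the Jacobian ideal all vanish under the substitution $y_i\mapsto\frak y_i$; hence $[\frak P]\mapsto\frak P(\frak y)$ descends to a well-defined ring homomorphism $\mathrm{Jac}(\frak{PO}_{\frak b};\Lambda)\to\Lambda$, and composing it with the ring isomorphism $\frak{ks}_{\frak b}$ of Theorem \ref{JacisHQ} and with $\Lambda\to HF(\cdots;\Lambda)$, $1\mapsto\text{\bf e}_{L(\text{\bf u}(\frak y))}$, produces the ring homomorphism $a\mapsto\frak P(\frak y)\,\text{\bf e}_{L(\text{\bf u}(\frak y))}$. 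Both homomorphisms send $1\in QH_{\frak b}^*(M;\Lambda)$ to $\text{\bf e}_L$ (for the second because $\frak{ks}_{\frak b}(1)=[1]$ and $\frak P=1$ has $\frak P(\frak y)=1$), so it suffices to verify \eqref{calcisharp} for $a$ running over such a generating set; as a byproduct this also shows that $i_{\text{\rm qm}}^{T,\ast}(a)$ is always a scalar multiple of $\text{\bf e}_L$.

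As generators I would take the toric divisor classes $\text{\bf e}^M_1,\dots,\text{\bf e}^M_{B_2}$ with $\text{\bf e}^M_j=PD([D_j])$; together with $\Lambda$ these generate $QH_{\frak b}^*(M;\Lambda)$ as a $\Lambda$-algebra, because by \eqref{defKS} one has $\frak{ks}_{\frak b}(\text{\bf e}^M_j)=[\partial\frak{PO}_{\frak b}/\partial w_j]$, whose leading term is a nonzero multiple of the Cho--Oh monomial $z_j=T^{\ell_j(\text{\bf u})}\prod_i y_i^{v_{j,i}}$ (see \cite{cho-oh}, \cite{fooo:toric1} Section 2), and the $z_j$ generate the Jacobian ring by its Batyrev-type presentation; the passage from $\frak b=0$ to general $\frak b\in H^{even}(M;\Lambda_0)$ keeps this true by the standard filtered (Nakayama) argument. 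For $a=\text{\bf e}^M_j$ we may take $\frak P=\partial\frak{PO}_{\frak b}/\partial w_j$, so the remaining task is to prove
$$
i_{\text{\rm qm},(\frak b(\frak y),b(\frak y))}^{T,\ast}(\text{\bf e}^M_j)=\left(\frac{\partial\frak{PO}_{\frak b}}{\partial w_j}\right)(\frak y)\,\text{\bf e}_{L(\text{\bf u}(\frak y))},
$$
and to observe that, by \eqref{yisexpx}--\eqref{budeffromy2t}, the right-hand side is exactly the value of the function $\partial\frak{PO}^{\text{\bf u}(\frak y)}_{\frak b}/\partial w_j$ at $b(\frak y)$, i.e. the substitution $y_i\mapsto\frak y_i$.

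The identity for $a=\text{\bf e}^M_j$ would be obtained by differentiating the defining relation $\frak m_0^{T,(\frak b,b)}(1)=\frak{PO}^{\text{\bf u}}_{\frak b}(b)\,\text{\bf e}_L$ (see \eqref{mkdefeq2}, \eqref{POtoric}) in the $\text{\bf e}^M_j$-direction. As $\text{\bf e}^M_j$ has degree two it enters through the weights $\exp(\text{\bf b}_{2;1}\cap\beta)$ carried by the moduli spaces $\mathcal M_{k+1;\ell}(L(\text{\bf u});\beta;\cdots)$ of Lemma \ref{torickuranishi}, so $\partial/\partial w_j$ produces a factor $\beta\cap D_j$; combined with the divisor axiom for the $T^n$-equivariant operators $\frak q^T$ (the open analogue, proved as in Theorem \ref{qproperties} (3) from compatibility with the forgetful map and $T^n$-invariance of the perturbations), this identifies the $\text{\bf e}_L$-component of the chain $i_{\text{\rm qm},(\frak b,b)}^{T}(\text{\bf e}^M_j)$ with $\partial\frak{PO}^{\text{\bf u}}_{\frak b}(b)/\partial w_j$. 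The main obstacle, which I expect to be the technical heart of the argument, is to handle the remaining strictly positive-degree $T^n$-invariant components of that chain on $L\cong T^n$: a dimension count from Lemma \ref{torickuranishi} (4) shows these can be nonzero cochains, and one must show they become $\delta^{T,\text{\bf b}(b(\frak y))}$-exact once $b=b(\frak y)$ at a critical point $\frak y$. I would prove this by exhibiting an explicit primitive through a chain-homotopy argument inside the canonical (minimal) model of the Floer complex of $(L(\text{\bf u}(\frak y)),(\frak b(\frak y),b(\frak y)))$, reusing criticality of $\frak y$ in the same spirit as the proof of Theorem \ref{Floercrit} (and, for nondegenerate $\frak y$, Proposition \ref{Morsesplit}). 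Passing to cohomology then yields $[i_{\text{\rm qm},(\frak b(\frak y),b(\frak y))}^{T}(\text{\bf e}^M_j)]=(\partial\frak{PO}_{\frak b}/\partial w_j)(\frak y)\,\text{\bf e}_L$, which closes the generator check and hence completes the proof.
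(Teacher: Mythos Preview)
Your approach differs from the paper's in two linked respects, and the second of them is where your gap lies.

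First, the reduction. You invoke the ring homomorphism property of $i_{\text{\rm qm}}^{T,\ast}$ to reduce to the degree-two divisor generators $\text{\bf e}^M_1,\dots,\text{\bf e}^M_{B_2}$. The paper does not use the ring property at all: since both $a\mapsto i_{\text{\rm qm}}^{T,\ast}(a)$ and $a\mapsto\frak P(\frak y)\,\text{\bf e}_L$ are $\Lambda$-\emph{linear} in $a$, it simply verifies the identity on the full basis $\text{\bf e}^M_0,\dots,\text{\bf e}^M_B$ (unit, divisor classes, and higher-degree intersections), splitting into three short cases. This is cheaper, because the multiplicativity of $i_{\text{\rm qm}}^{T,\ast}$ is itself a nontrivial input (Theorem 9.1 of \cite{fooo:toricmir}) that the paper deliberately avoids here.

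Second, and this is the real gap: your ``main obstacle'' about positive-degree $T^n$-invariant components of the chain $i_{\text{\rm qm}}^{T}(\text{\bf e}^M_j)$ is a non-problem, and the chain-homotopy-at-criticality argument you sketch is both vague and unnecessary. The paper eliminates these components directly at the chain level by applying the divisor relations for $\frak q^T$ \emph{twice}: once for the interior degree-two insertion (Lemma 9.2 of \cite{fooo:bulk}, producing the weights $\frak w_i^{\beta\cap D_i}$), and once for the boundary degree-one insertions $b_+\in H^1(L)$ (Lemma 7.1 of \cite{fooo:bulk}, producing the monomial factors $y_i^{\partial\beta\cap\text{\bf e}_i}$). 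After both reductions the remaining factor is $\frak q^T_{0,\ell,\beta}(\frak b_{\text{\rm high}}^{\otimes\ell};1)$ with empty boundary input, which is the scalar $P_\beta(w')$ already appearing in the expansion of $\frak{PO}_{\frak b}$ itself; comparing term by term with $\partial\frak{PO}_{\frak b}/\partial w_j$ then finishes each case. You applied only the interior divisor axiom and missed the boundary one, which is why you were left with residual positive-degree terms and had to improvise a fix using criticality of $\frak y$. Criticality plays no role in the paper's argument beyond making the right-hand side well-defined.
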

\begin{proof}
This is \cite[Lemma 3.3.1]{fooo:toricmir}. Since
its proof was omitted in \cite{fooo:toricmir}, we provide its proof here.
\par
We note that the right hand side of (\ref{calcisharp}) is
independent of the choices of $\frak P$ satisfying (\ref{condforP})
 because $y_i\frac{\partial \frak{PO}_{\frak b}}{\partial y_i}$
is zero at $\frak y$.
\par
Let $\frak b = \frak b_0 + \frak b_2 + \frak b_+$ be as in (\ref{decompb}).
We express $\frak b = \sum w_i(\frak b) \text{\bf e}^M_i$ and
$b = \sum y_i(b)\text{\bf e}_i = b_0 + b_+$ where
$b_0 \in H^1(L(\text{\bf u});\C)$ and
$b_+ \in H^1(L(\text{\bf u});\Lambda_+)$.
By definition, we have
\begin{equation}\label{fomulaforPO1}
\frak{PO}_{\frak b}(b)
=
\frak b_0 +
\sum_{\beta,k,\ell}
T^{\omega\cap \beta}
\frac {\exp(\frak b_2 \cap\beta + b_0 \cap \del \beta)}{\ell!} \frak q^T_{k,\ell,\beta}(\frak b_+^{\otimes \ell},b_+^{\otimes k}),
\end{equation}
where we identify $H^0(L(\text{\bf u});\Lambda)= \Lambda$.
\par
We now consider the splitting
$\frak b = \frak b_0 + \widehat{\frak b}_2 + \frak b_{\text{\rm high}}$
such that
$$
\widehat{\frak b}_2 \in H^2(M;\Lambda_0), \qquad
\frak b_{\text{\rm high}} \in \bigoplus_{k>1} H^{2k}(M;\Lambda_0).
$$
Using a relative version of the divisor equation (\cite[Lemmas 7.1 and 9.2]{fooo:bulk}), we can rewrite (\ref{fomulaforPO1})
as
\begin{equation}\label{fomulaforPO2}
\frak{PO}_{\frak b}(b)
= \frak b_0 + \sum_{\beta,\ell} T^{\omega\cap \beta}
\frac{\exp(\widehat{\frak b}_2 \cap\beta + b \cap \partial\beta)}{\ell !}
\frak q^T_{0,\ell,\beta}(\frak b_{\text{\rm high}}^{\otimes \ell},1).
\end{equation}

We re-enumerate $D_1,D_2,\dots$ so that
$\{D_1,\dots,D_{B_1}\}$ becomes a $\Q$-basis of $H^2(M;\Q)$ and let
$w_1,\dots,w_{B_1}$ are the corresponding coordinates of $H^2(M;\Q)$.
Then we write
$\frak b_{\text{\rm high}} = \sum_{i=B_1+1}^B w_i\text{\bf e}_i^M$ for $\frak b_{\text{\rm high}}$
where we use the coordinate $w' = (w_{B_1+1},\dots,w_B)$ for $\frak b_{\text{\rm high}}$.
Then we define
a series $P_{\beta}$ of $w'$ by the equation
\begin{equation}\label{formulaforPO3}
P_{\beta}(w')
=
\sum_{\ell=0}^{\infty}
\frac{1}{\ell !}\frak q^T_{0,\ell,\beta}(\frak b_{\text{\rm high}}^{\otimes \ell},1).
\end{equation}
\begin{lem} The series $P_{\beta}$ is indeed a polynomial of $w_{B_1+1},\dots,w_B$, i.e.,
$$
P_{\beta}(w') \in \Lambda[w_{B_1+1},\dots,w_B].
$$
\end{lem}
\begin{proof}
Since each of the component of $\frak b_{\text{\rm high}}$ has degree $4$ or higher,
we can show  by a dimension counting argument that
$\frak q^T_{0,\ell,\beta}(\frak b_{\text{\rm high}}^{\otimes \ell},1)$
is nonzero only for a finite number of $\ell$.
Since each $\frak q^T_{0,\ell,\beta}(\frak b_{\text{\rm high}}^{\otimes \ell},1)$
is a polynomial of $w'$, $P_{\beta}(w')$ is also a polynomial as asserted.
\end{proof}

We put
$$
\frak w_i = e^{w_i} = \sum_{k=0}^{\infty} \frac{w_i^k}{k!}
$$
for $i = 1, \ldots, B_1$.
It follows from (\ref{fomulaforPO2}) and (\ref{formulaforPO3}) that we can write
\begin{equation}\label{fomulaforPO4}
\frak{PO}_{\frak b}
=
w_0 +
\sum_{\beta}
T^{\omega\cap \beta}
\frak w_1^{\beta\cap D_1}\cdots \frak w_{B_1}^{\beta\cap D_{B_1}}
y_1^{\partial \beta \cap e_1}\cdots y_n^{\partial \beta \cap e_n}
P_{\beta}(w').
\end{equation}
Here we regard $\beta$ as an element of $H_2(M;L(\text{\bf 0});\Z)$ to define the cap product $\omega \cap \beta$ in
(\ref{fomulaforPO4}).
\par
We will compare the value
$i_{\text{\rm qm},(\frak b(\frak y),b(\frak y))}^{T, \ast}(\text{\bf e}_i^M)$
with the $w_i$-derivative of (\ref{fomulaforPO4}).
By definition, we have
\begin{equation}\label{formulaforiei1}
\aligned
&i_{\text{\rm qm},(\frak b(\frak y),b(\frak y))}^{T, \ast}(\text{\bf e}_i^M)\\
&=
\sum_{\beta,k,\ell_1,\ell_2}
T^{\omega\cap \beta}
\frac{\exp(\frak b_2 \cap\beta + b_0 \cap \del \beta)}{(\ell_1 +\ell_2 +1) !} \frak q^T_{k,\ell_1+\ell_2+1,\beta}
(\frak b_+^{\otimes \ell_1}\otimes \text{\bf e}_i^M\otimes \frak b_+^{\otimes \ell_2},b_+^{\otimes k}).
\endaligned
\end{equation}

For a further analysis of the power series,
we consider the following three cases separately:
\par\smallskip
\noindent
(Case 1; $i=0$):
It is easy to see that
$$
\frak q_{k,\ell,\beta}^T(\frak b_+^{\otimes\ell_1}\otimes \text{\bf e}^M_0 \otimes
\frak b_+^{\otimes\ell_2};b_+^{\otimes k})
= 0
$$
unless $\beta =0$ and $k=\ell=0$.
Therefore we have
$
i_{\text{\rm qm},(\frak b(\frak y),b(\frak y))}^{T, \ast}(\text{\bf e}^M_0)
= \frak q_{0,0,0}^T(\text{\bf e}^M_0) = \text{\bf e}_L.
$
Since
$
\frac{\partial\, \frak{PO}_{\frak b(w)}}{\partial w_0}  = 1
$
by (\ref{fomulaforPO4}), we have (\ref{calcisharp}) for $a=\text{\bf e}_0^M$.
\par\smallskip
\noindent
(Case 2; $i>B_1$):
By \cite[Lemmas 7.1 and 9.2]{fooo:bulk}  we can rewrite (\ref{formulaforiei1})
to
\begin{equation}\label{formulaforiei2}
\aligned
& i_{\text{\rm qm},(\frak b(\frak y),b(\frak y))}^{T, \ast}(\text{\bf e}_i^M)\\
= &
\sum_{\ell_1,\ell_2,\beta}
\frac{\frak w_1^{\beta\cap D_1}\cdots \frak w_{B_1}^{\beta\cap D_{B_1}}
\frak y_1^{\partial \beta \cap e_1}\cdots \frak y_n^{\partial \beta \cap e_n}}
{(\ell_1 +\ell_2 +1) !} \frak q^T_{0,\ell_1+\ell_2+1,\beta}
(\frak b_{\text{\rm high}}^{\otimes \ell_1}\otimes \text{\bf e}_i^M\otimes \frak b_{\text{\rm high}}^{\otimes \ell_2},1).
\endaligned
\end{equation}
By differentiating \eqref{formulaforPO3} in $w_i$ and recalling
${\frak b}_{\text{\rm high}} = \sum_{i=B_1+1}^B w_i {\bf e}_i^M$, we compute
$$
\frac{\partial P_{\beta}}{\partial w_i}
=
\sum_{\ell_1,\ell_2}
\frac{1}{(\ell_1 +\ell_2 +1) !}
 \frak q^T_{0,\ell_1+\ell_2+1,\beta}
(\frak b_{\text{\rm high}}^{\otimes \ell_1}\otimes \text{\bf e}_i^M \otimes \frak b_{\text{\rm high}}^{\otimes \ell_2},1).
$$
Therefore comparing this with \eqref{formulaforiei2}, we conclude
\begin{equation}\label{iequalPconcl}
i_{\text{\rm qm},(\frak b(\frak y),b(\frak y))}^{T, \ast}(\text{\bf e}_i^M)
=\frac{\partial  \frak{PO}_{\frak b}}{\partial w_i}(\frak y),
\end{equation}
as required.
\par\smallskip
\noindent
(Case 3; $i=1,\dots,B_1$): The equality
(\ref{formulaforiei2}) also holds in this case.
Then, by \cite[Lemma 7.1 and 9.2]{fooo:bulk}   (``(relative) divisor equation''), we obtain
\begin{equation}\label{formulaforiei3}
\aligned
&i_{\text{\rm qm},(\frak b(\frak y),b(\frak y))}^{T, \ast}(\text{\bf e}_i^M)\\
&=
\sum_{\ell,\beta}
(\beta\cap D_i)
\frac{\frak w_1^{\beta\cap D_1}\cdots \frak w_{B_1}^{\beta\cap D_{B_1}}
\frak y_1^{\partial \beta \cap e_1}\cdots \frak y_n^{\partial \beta \cap e_n}}
{(\ell_1 +\ell_2 +1) !} \frak q^T_{0,\ell_1+\ell_2+1,\beta}
(\frak b_{\text{\rm high}}^{\otimes\ell},1).
\endaligned
\end{equation}
Substitution of
$$
\frac{\partial \frak w_i^{\beta\cap D_i}} {\partial w_i}
= (\beta\cap D_i) \frak w_i^{\beta\cap D_i}
$$
into the right hand side of \eqref{formulaforiei3} turns it into
the derivative $\frac{\partial  \frak{PO}_{\frak b}}{\partial w_i}(\frak y)$ and
hence gives rise to (\ref{iequalPconcl} also in this case.
The proof of Theorem \ref{calcithm} is now complete.
\end{proof}

\section{Spectral invariants and quasi-morphisms for toric manifolds}
\label{sec:etoricspectre}

\subsection{$\mu_e^{\frak b}$-heaviness of the Lagrangian fibers in
toric manifolds.}
\label{toricheavyfiber}

Let $(M,\omega)$ be a compact toric manifold, $P$ its moment polytope. Let
$\frak b \in H^{{\rm even}}(M;\Lambda_0)$. We consider the factorization
$$
QH_{\frak b}^*(M;\Lambda) \cong \prod_{\frak y \in \text{\rm Crit}(\frak{PO}_{\frak b})}
QH_{\frak b}(M;\frak y)
$$
corresponding to the one given
in Proposition $\ref{Morsesplit}$ via Theorem $\ref{JacisHQ}$ so that
$QH_{\frak b}(M;\frak {\frak y})$ is the factor corresponding to
$\text{\rm Jac}(\frak{PO}_{\frak b};\frak y)$.

\begin{thm}\label{toricheavymain}
Let $\frak y = (\frak y_1,\dots,\frak y_n) \in \text{\rm Crit}(\frak{PO}_{\frak b})$
and $e_{\frak y} \in QH_{\frak b}(M;\frak y)$ be the
corresponding idempotent.
We put
$$
\text{\bf u}(\frak y) = (\frak v_T(\frak y_1),\dots,\frak v_T(\frak y_n)) \in
\text{\rm Int}\,P.
$$
Then the following holds:
\begin{enumerate}
\item $L(\text{\bf u}(\frak y))$ is $\mu_{e_{\frak y}}^{\frak b}$-heavy.
\item If $\frak y$ is a nondegenerate critical point, then
$L(\text{\bf u}(\frak y))$ is $\mu_{e_{\frak y}}^{\frak b}$-superheavy.
\end{enumerate}
\end{thm}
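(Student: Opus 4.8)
The plan is to deduce Theorem \ref{toricheavymain} by combining the heaviness criterion Theorem \ref{thm:heavy} (in its $T^n$-equivariant incarnation, via the Corollary at the end of Subsection \ref{subsec:relationHF}) with the toric calculations of Section \ref{toriceLagreview}. First I would fix the critical point $\frak y$ of $\frak{PO}_{\frak b}$, set $\text{\bf u} = \text{\bf u}(\frak y) = (\frak v_T(\frak y_1),\dots,\frak v_T(\frak y_n))$, which lies in $\text{\rm Int}\,P$, and set $b = b(\frak y) \in H^1(L(\text{\bf u});\Lambda_0)$ as in \eqref{budeffromy2t}. Using the identification \eqref{homidentify}, the pair $(\frak b, b)$ assembles into an element $\text{\bf b} = (\frak b_0,\text{\bf b}_{2;1},\frak b_+, b) \in \widehat{\mathcal M}^T_{\text{\rm weak},\text{\rm def}}(L(\text{\bf u});\Lambda_0)$ — this is legitimate because $b \in H^1$ and Proposition \ref{H1iswbdchain} guarantees the weak Maurer-Cartan equation holds. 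Moreover $i^*(\text{\bf b}_{2;1}) + \frak b_+$ recovers $\frak b$, so the hypotheses relating $\text{\bf b}$ and $\frak b$ in Theorem \ref{thm:heavy} are met.

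The key input is that the open-closed map is nonzero on $e_{\frak y}$. For part (1), I would apply Theorem \ref{calcithm}: writing $\frak{ks}_{\frak b}(e_{\frak y}) = [\frak P]$ for a suitable $\frak P \in \Lambda\langle\!\langle y,y^{-1}\rangle\!\rangle^{\overset{\circ}P}$, Theorem \ref{calcithm} gives $i^{T,\ast}_{\text{\rm qm},(\frak b,b)}(e_{\frak y}) = \frak P(\frak y)\,\text{\bf e}_{L(\text{\bf u})}$. Since $e_{\frak y}$ is the idempotent corresponding under $\frak{ks}_{\frak b}$ (Theorem \ref{JacisHQ}) to the unit $1_{\frak y}$ of the local ring $\text{\rm Jac}(\frak{PO}_{\frak b};\frak y)$ in the product decomposition of Proposition \ref{Morsesplit}, and $1_{\frak y}$ is the identity on the $\frak y$-block, evaluating the representative $\frak P$ at the point $\frak y$ yields $\frak P(\frak y) = 1 \ne 0$ in $\Lambda$; hence $i^{T,\ast}_{\text{\rm qm},\text{\bf b}}(e_{\frak y}) \ne 0 \in HF_T((L(\text{\bf u}),\text{\bf b});\Lambda)$. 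Also $e_{\frak y} \cup^{\frak b} e_{\frak y} = e_{\frak y}$ by construction. Thus the hypothesis \eqref{localnonzero} of Theorem \ref{thm:heavy} (1) holds, and we conclude $L(\text{\bf u}(\frak y))$ is $\mu_{e_{\frak y}}^{\frak b}$-heavy (and $\zeta_{e_{\frak y}}^{\frak b}$-heavy).

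For part (2), when $\frak y$ is nondegenerate Proposition \ref{Morsesplit}(3) gives $\text{\rm Jac}(\frak{PO}_{\frak b};\frak y) \cong \Lambda$, so via $\frak{ks}_{\frak b}$ the corresponding direct factor $QH_{\frak b}(M;\frak y)$ of $QH^*_{\frak b}(M;\Lambda)$ is itself isomorphic to $\Lambda$, i.e. a field factor, and $e_{\frak y}$ is its unit. Combined with the nonvanishing $i^{T,\ast}_{\text{\rm qm},\text{\bf b}}(e_{\frak y}) \ne 0$ established above, this is exactly the hypothesis of Theorem \ref{thm:heavy} (2) (in its $T^n$-equivariant form), so $L(\text{\bf u}(\frak y))$ is $\mu_{e_{\frak y}}^{\frak b}$-superheavy (and $\zeta_{e_{\frak y}}^{\frak b}$-superheavy).

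The steps are all invocations of results recorded earlier, so the main obstacle is bookkeeping rather than new analysis: one must be careful that $i^{T,\ast}_{\text{\rm qm},(\frak b,b)}$ here is exactly the map to which the $T^n$-equivariant version of Theorem \ref{thm:heavy} applies — this is the content of Lemma \ref{ainfinityequivcomp} and the Corollary following it, which identify $HF_T((L,\text{\bf b});\Lambda)$ and the associated open-closed map with the ones appearing in Theorem \ref{thm:heavy} up to filtered chain homotopy equivalence. The other point requiring a line of care is that the value $\frak P(\frak y)$ is independent of the choice of representative $\frak P$ (because $y_i\partial\frak{PO}_{\frak b}/\partial y_i$ vanishes at $\frak y$, as noted in the proof of Theorem \ref{calcithm}) and equals $1$ precisely because $e_{\frak y}$ maps to the local unit $1_{\frak y}$; this uses that evaluation at $\frak y$ factors through the projection $\text{\rm Jac}(\frak{PO}_{\frak b};\Lambda) \to \text{\rm Jac}(\frak{PO}_{\frak b};\frak y)$ of Proposition \ref{Morsesplit}(1).
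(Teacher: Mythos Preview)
Your proposal is correct and follows essentially the same approach as the paper: the paper's proof simply cites Theorems \ref{thm:heavy} and \ref{calcithm} together with Proposition \ref{Morsesplit}, and then isolates the verification that $\frak P(\frak y)=1$ as a separate lemma (Lemma \ref{calksi}), whose proof---via the observation that evaluation at $\frak y$ is a unital ring homomorphism annihilating the factors $\text{\rm Jac}(\frak{PO}_{\frak b};\frak y')$ for $\frak y'\ne\frak y$---is exactly the argument you sketch in your final paragraph. Your treatment of the bookkeeping (the Maurer-Cartan setup via Proposition \ref{H1iswbdchain}, and the passage to the $T^n$-equivariant $i^{T,\ast}_{\text{\rm qm}}$ via Lemma \ref{ainfinityequivcomp} and its Corollary) is in fact more explicit than the paper's one-line citation.
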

\begin{proof}
Theorem \ref{toricheavymain} follows from
Theorems \ref{thm:heavy}, \ref{calcithm}, Proposition \ref{Morsesplit}
combined with the following lemma below.
\end{proof}

\begin{lem}\label{calksi}
Let $\frak{ks}_{\frak b}(e_{\frak y}) = 1_{\frak y} = [\frak P]$
for an element $\frak P \in \Lambda\langle\!\langle
y,y^{-1}\rangle\!\rangle^{\overset{\circ}P}$. Then $\frak P$ satisfies
$$
\frak P(\frak y) =1.
$$
\end{lem}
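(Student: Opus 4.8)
The plan is to evaluate the identity $1_{\frak y}=[\frak P]$ at the critical point $\frak y$ through a point–evaluation ring homomorphism, and then to exploit the idempotent decomposition of Proposition \ref{Morsesplit}. First I would introduce the evaluation map
\[
\mathrm{ev}_{\frak y}\colon \Lambda\langle\!\langle y,y^{-1}\rangle\!\rangle^{\overset{\circ}P}\to\Lambda,\qquad \frak Q\mapsto \frak Q(\frak y).
\]
This is well defined: since $\text{\bf u}(\frak y)=(\frak v_T(\frak y_1),\dots,\frak v_T(\frak y_n))\in\text{\rm Int}\,P$, the elementary convergence statement recorded right after Theorem \ref{strongconvPO} shows that the series defining $\frak Q(\frak y)$ converges $T$-adically. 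Moreover $\mathrm{ev}_{\frak y}$ is a unital $\Lambda$-algebra homomorphism, and it is continuous: picking $n_1$ large enough that $\text{\bf u}(\frak y)\in P_{1/n_1}$, the valuation estimates $\frak v_T(\frak Q(\frak y))\ge \frak v_{\text{\bf u}(\frak y)}(\frak Q)\ge \frak v_{P_{1/n_1}}(\frak Q)$ exhibit $\mathrm{ev}_{\frak y}$ as Lipschitz from the $d_{\overset{\circ}P}$-metric to the $\frak v_T$-metric. Since $\frak y$ is a critical point, $y_i\frac{\partial\frak{PO}_{\frak b}}{\partial y_i}(\frak y)=0$ for all $i$, so $\mathrm{ev}_{\frak y}$ kills the generators of the defining ideal of $\text{\rm Jac}(\frak{PO}_{\frak b};\Lambda)$, and by continuity it kills the closed ideal. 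Hence it descends to a unital ring homomorphism $\overline{\mathrm{ev}}_{\frak y}\colon \text{\rm Jac}(\frak{PO}_{\frak b};\Lambda)\to\Lambda$; in particular the scalar $\frak P(\frak y)$ depends only on the class $[\frak P]=1_{\frak y}$, not on the chosen lift $\frak P$.

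Next I would show that $\overline{\mathrm{ev}}_{\frak y}$ annihilates the local factor $\text{\rm Jac}(\frak{PO}_{\frak b};\frak z)$ for every $\frak z\in\text{\rm Crit}(\frak{PO}_{\frak b})$ with $\frak z\ne\frak y$. If $x$ lies in this factor, then by \eqref{2035} one has $(\widehat y_i-\frak z_i)^N x=0$ for all $i$ and all large $N$. Applying the ring homomorphism $\overline{\mathrm{ev}}_{\frak y}$, and using that $\overline{\mathrm{ev}}_{\frak y}$ sends the class of $y_i$ to $\frak y_i$, we get $(\frak y_i-\frak z_i)^N\,\overline{\mathrm{ev}}_{\frak y}(x)=0$. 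Choosing an index $i$ with $\frak y_i\ne\frak z_i$, which exists because $\frak z\ne\frak y$, the element $\frak y_i-\frak z_i$ is a nonzero element of the field $\Lambda$ and hence invertible, forcing $\overline{\mathrm{ev}}_{\frak y}(x)=0$. In particular $\overline{\mathrm{ev}}_{\frak y}(1_{\frak z})=0$ for $\frak z\ne\frak y$.

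Finally, recalling that $QH(M;\Lambda)\cong\text{\rm Jac}(\frak{PO}_{\frak b};\Lambda)$ is finite-dimensional, so $\text{\rm Crit}(\frak{PO}_{\frak b})$ is finite, Proposition \ref{Morsesplit}(1) gives the decomposition $1=\sum_{\frak z\in\text{\rm Crit}(\frak{PO}_{\frak b})}1_{\frak z}$ in $\text{\rm Jac}(\frak{PO}_{\frak b};\Lambda)$. Applying $\overline{\mathrm{ev}}_{\frak y}$ and using that it is unital,
\[
1=\overline{\mathrm{ev}}_{\frak y}(1)=\sum_{\frak z}\overline{\mathrm{ev}}_{\frak y}(1_{\frak z})=\overline{\mathrm{ev}}_{\frak y}(1_{\frak y})=\frak P(\frak y),
\]
which is the assertion of the lemma. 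The only mildly delicate point in this argument is the continuity of $\mathrm{ev}_{\frak y}$, needed to pass from the ideal to its closure and thus make $\overline{\mathrm{ev}}_{\frak y}$ well defined on the Jacobian ring; once that is in place, the rest is a formal consequence of the eigenspace structure of Proposition \ref{Morsesplit}. I expect that topological bookkeeping to be the main (albeit minor) obstacle.
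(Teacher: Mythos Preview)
Your proof is correct and follows essentially the same approach as the paper: both construct the evaluation-at-$\frak y$ ring homomorphism from the Jacobian ring to $\Lambda$, show it annihilates each factor $\text{\rm Jac}(\frak{PO}_{\frak b};\frak z)$ for $\frak z\ne\frak y$ via the nilpotence relation $(\widehat y_i-\frak z_i)^N x=0$, and conclude from unitality that $1_{\frak y}$ must evaluate to $1$. You are somewhat more explicit than the paper about the continuity of $\mathrm{ev}_{\frak y}$ needed to pass to the closed ideal, which the paper leaves implicit; otherwise the arguments coincide.
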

\begin{proof}
The ring homomorphism
$
{\frak P}\mapsto {\frak P}(\frak y) :
\Lambda\langle\!\langle
y,y^{-1}\rangle\!\rangle^{\overset{\circ}P}
\to \Lambda
$
induces a ring homomorphism
$
\text{\rm eval}_{\frak y} :
\text{\rm Jac}(\frak{PO}_{\frak b}; \Lambda)
\to \Lambda.
$
The ring homomorphism
$\text{\rm eval}_{\frak y}$ is unital and so is surjective.
\par
Let $\frak y' \in \text{\rm Crit}(\frak{PO}_{\frak b})$,
$\frak y' \ne \frak y$ and let
${\frak P} \in \Lambda\langle\!\langle y,y^{-1}\rangle\!\rangle^{\overset{\circ}P}$
be an element such that
$[{\frak P}] \in \text{\rm Jac}(\frak{PO}_{\frak b}; \frak y')$.
By definition
$
(\widehat y_i - \frak y'_i)^N [{\frak P}] = 0
$
in $\text{\rm Jac}(\frak{PO}_{\frak b}; \Lambda)$ for some large integer $N \in \N$.
Therefore applying $\text{\rm eval}_{\frak y}$ we obtain
$
(\frak y_i - \frak y'_i)^N \text{\rm eval}_{\frak y}([{\frak P}]) = 0.
$
Since $\frak y_i - \frak y'_i \ne 0$
for some $i$ by the hypothesis, we conclude
$\text{\rm eval}_{\frak y}([{\frak P}]) = 0$.
\par
Therefore by Proposition \ref{Morsesplit},
the homomorphism $\text{\rm eval}_{\frak y}$  is nonzero
on the factor $\text{\rm Jac}(\frak{PO}_{\frak b}; \frak y)$.
Since $1_{\frak y} = [\frak P]$ is the unit of this factor, we conclude
$\frak P(\frak y) =1$, as required.
\end{proof}

\subsection{Calculation of the leading order term of the potential function in the
toric case: review.}
\label{toricPOcalcu}\index{potential function!leading order term}

We put
\begin{equation}\label{defzj}
z_i = T^{\ell_i(\text{\bf 0})}y_1^{\partial \beta_i \cap \text{\bf e}_1}\cdots y_n^{\partial \beta_i \cap  \text{\bf e}_n}
\in \Lambda\langle\!\langle y,y^{-1} \rangle\!\rangle^{\overset{\circ}P}
\end{equation}
for $i =1, \ldots, m$.
We assume
\begin{equation}
\frak b - \sum_{i=1}^{B_1} \overline{\frak b}_i \text{\bf e}_i^M
\in
H^2(M;\Lambda_+) \oplus \bigoplus_{k\ne 1} H^{2k} (M;\Lambda_0),
\end{equation}
where $\overline{\frak b}_i\in \C$.

\begin{thm}\label{toricPOcalcthm}
We have
\begin{equation}
\frak{PO}_{\frak b}
=
\frak b_0 +
\sum_{i=1}^m e^{\overline{\frak b}_i}z_i
+
\sum_j T^{\lambda_j} P_j(z_1,\dots,z_m)
\end{equation}
where $P_j \in \Lambda[z_1,\dots,z_m]$, $\lambda_j \in \R_{>0}$,
$\lim_{j\to\infty}\lambda_j  = \infty$.
\par
In case $(M,\omega)$ is Fano and
$\frak b\in H^2(M;\Lambda_0)$, we have
\begin{equation}\label{FanoPO}
\frak{PO}_{\frak b}
=
\sum_{i=1}^m e^{w_i}z_i
\end{equation}
where $\frak b = \sum w_i\text{\bf e}^M_i$.
\end{thm}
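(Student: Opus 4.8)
The plan is to unwind the definition of $\frak{PO}_{\frak b}$ through the $T^n$-equivariant $\frak q$-operators and to single out the disc classes responsible for the leading term, controlling the symplectic area of everything else.

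First I would replace the general formula for $\frak{PO}_{\frak b}$ by the reduced expression \eqref{fomulaforPO2}, so that the bulk class $\frak b$ enters only via its degree-zero part $\frak b_0$, its degree-two part $\widehat{\frak b}_2$ through the scalar factor $\exp(\widehat{\frak b}_2\cap\beta)$, and its higher-degree part $\frak b_{\text{\rm high}}$ through insertions into $\frak q^T_{0,\ell;\beta}$; the dependence on $b$ is entirely through the holonomy factor $\exp(b\cap\partial\beta)$. Next, using the dimension formula of Lemma \ref{torickuranishi}~(4) together with $\deg D_{\text{\bf p}(i)}\ge 2$, one sees that the $\text{\bf e}_L$-component of $\frak q^T_{0,\ell;\beta}(\frak b_{\text{\rm high}}^{\otimes\ell},1)$ can be nonzero only when $\mu_L(\beta)=2+2\ell+2\sum_i(\deg D_{\text{\bf p}(i)}-2)\ge 2+2\ell$; in particular $\ell=0$ forces $\mu_L(\beta)=2$. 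Invoking the Cho--Oh classification of Maslov index two holomorphic discs with boundary on $L(\text{\bf u})$ (\cite{cho-oh} Theorem~8.1, \cite{fooo:toric1} Section~4), for $\ell=0$ the only contributing classes are the basic ones $\beta_1,\dots,\beta_m$, and for $\beta=\beta_i$ the perturbed moduli space ${\mathcal M}_1(L(\text{\bf u});\beta_i)$ is a single point on which $\text{\rm ev}^{\partial}_0$ is a diffeomorphism, so that $\frak q^T_{0,0;\beta_i}(1)=\text{\bf e}_L$.

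The main computation is to read off the contribution of $\beta_i$. By the area formula \eqref{elljandarea} one has $\omega\cap\beta_i=\ell_i(\text{\bf u})$, and $\partial\beta_i\cap\text{\bf e}_j$ equals the $j$-th component of $\vec v_i=d\ell_i$ as in Condition \ref{condellj}; combining $\ell_i(\text{\bf u})=\ell_i(\text{\bf 0})+\vec v_i\cdot\text{\bf u}$ with the substitution $y_j=T^{u_j}y_j^{\text{\bf u}}$ of \eqref{yandyu} turns $T^{\omega\cap\beta_i}\exp(b\cap\partial\beta_i)$, expressed in the $y_j$-coordinates, into the monomial $z_i$ of \eqref{defzj}, while $\exp(\widehat{\frak b}_2\cap\beta_i)$ has $\C$-part $e^{\overline{\frak b}_i}$ by the hypothesis on $\frak b$ and \eqref{charbetaj}. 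This produces the asserted main term $\frak b_0+\sum_{i=1}^m e^{\overline{\frak b}_i}z_i$. For every other pair $(\ell,\beta)$ I would use the standard decomposition $\beta=\sum_{i=1}^m k_i\beta_i+\alpha$ with $k_i\ge 0$ and $\alpha$ a class represented by $J$-holomorphic spheres, so that $\omega\cap\alpha\ge 0$ with equality iff $\alpha=0$; the same substitution converts $T^{\omega\cap\beta}\exp(b\cap\partial\beta)$ into $T^{\omega\cap\alpha}\prod_i z_i^{k_i}$, and the dimension inequality above forces $\omega\cap\alpha>0$ whenever $(\ell,\beta)$ is not one of the basic pairs. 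Discreteness of the relevant monoid together with Gromov compactness then shows that only finitely many $z$-monomials occur with any fixed $T$-power $T^{\lambda_j}$, so that the $P_j$ are polynomials, and that $\lambda_j\to\infty$. For the Fano case with $\frak b\in H^2(M;\Lambda_0)$ we have $\frak b_{\text{\rm high}}=0$, so $\ell=0$ and $\mu_L(\beta)=2$; Fano-ness excludes nonzero effective sphere classes of vanishing first Chern number, hence no class $\beta_i+\alpha$ with $\alpha\ne 0$ and $\mu_L=2$ contributes, the error term vanishes, and one obtains $\frak{PO}_{\frak b}=\sum_{i=1}^m e^{w_i}z_i$ after writing $\frak b=\sum_i w_i\text{\bf e}^M_i$.

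The step I expect to be the main obstacle is the last part of the general case: matching the dimension count for the moduli spaces $\mathcal M_{1;\ell}(L(\text{\bf u});\beta;\text{\bf p})$ carrying interior marked points against the decomposition of $\beta$ so as to guarantee strict positivity of the extra $T$-power. This bookkeeping is exactly what is carried out in \cite{fooo:bulk} Lemmas~7.1 and~9.2 (and in \cite{fooo:toric1} for $\frak b=0$), which I would quote rather than reprove; everything else is a direct unwinding of \eqref{fomulaforPO2}--\eqref{fomulaforPO4} and the computations already recorded in the proof of Theorem \ref{calcithm}.
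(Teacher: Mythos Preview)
Your approach is essentially the same as the paper's: both reduce to formula \eqref{fomulaforPO4} via \cite{fooo:bulk} Lemmas~7.1 and~9.2, invoke the Cho--Oh classification (\cite{fooo:toric1} Theorem~11.1) to isolate the Maslov-two classes $\beta_1,\dots,\beta_m$ as the leading contributions, and use the decomposition $\beta=\sum k_j\beta_j+\alpha$ with $\alpha\cap\omega>0$ for the remainder.

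Two small corrections. First, ${\mathcal M}_1(L(\text{\bf u});\beta_i)$ is not a single point but a copy of $T^n$, with $\text{\rm ev}^{\partial}_0$ a degree-one map to $L(\text{\bf u})$; the conclusion $\frak q^T_{0,0;\beta_i}(1)=\text{\bf e}_L$ is still correct. Second, the strict positivity $\omega\cap\alpha>0$ for non-basic $(\ell,\beta)$ does not follow from your dimension inequality alone (which only constrains $\mu_L(\beta)$, not the symplectic area of the sphere part); it is the content of part~(2) of the Cho--Oh result as stated in the paper, which you should invoke directly rather than derive from dimension.
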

\begin{proof}
Theorem \ref{toricPOcalcthm} is \cite[Theorem 3.5]{fooo:bulk}.
(See also \cite[Theorem 8.2]{fooo:survey}.)
We sketch  its proof below.
The proof uses the result of Cho-Oh \cite{cho-oh} which
was summarized in \cite[Theorem 11.1]{fooo:toric1}  as follows:
\begin{enumerate}
\item
If $\overset{\circ}{\mathcal M}_{1;0}(\beta) \ne \emptyset$,
$\mu_{L(\text{\bf u})}(\beta) = 2$ then
$\beta = \beta_j$ for $j=1,\dots,m$, where $\beta_j$ is as in (\ref{charbetaj}).
In this case $\overset{\circ}{\mathcal M}_{1;0}(\beta_j)
= {\mathcal M}_{1;0}(\beta_j) = T^n$ and the evaluation map
$\text{\rm ev}_0^{\partial} : {\mathcal M}_{1;0}(\beta_j) \to L(\text{\bf u})$
has degree $1$.
\item If
${\mathcal M}_{1;0}(\beta) \ne \emptyset$, $\beta \ne \beta_j$ ($j=1,\dots,m$)
then
$$
\beta = \sum_{j=1}^m k_j \beta_j + \alpha,
$$
where $\sum k_j > 0$, $k_j \ge 0$ and $\alpha \in \pi_2(M)$ with
$\alpha \cap \omega > 0$.
\end{enumerate}
Using this description of the moduli space ${\mathcal M}_{1,0}(\beta)$,
we calculate the summands of the right hand side of
(\ref{fomulaforPO4}) now.
\par
For $\beta = \beta_j$ we have
$$
\aligned
&T^{\omega\cap \beta_j}
\frak w_1^{\beta_j\cap D_1}\cdots \frak w_{B_1}^{\beta_j\cap D_{B_1}}
y_1^{\partial \beta_j \cap \text{\bf e}_1}\cdots y_n^{\partial \beta_j \cap  \text{\bf e}_n}
P_{\beta_j}(w')\\
& =
e^{w_j}z_j
= (e^{\overline{\frak b}_j} + (\text{higher order})) z_j.
\endaligned
$$
For $\beta \ne \beta_j$ ($j=1,\dots,m$) we have
$$\aligned
T^{\omega\cap \beta}
\frak w_1^{\beta\cap D_1}\cdots \frak w_{B_1}^{\beta\cap D_{B_1}}
y_1^{\partial \beta \cap \text{\bf e}_1}\cdots y_n^{\partial \beta \cap  \text{\bf e}_n}
P_{\beta}(w')
=
T^{\alpha \cap \omega} \prod_{j=1}^m (e^{k_jw_j} z_j^{k_j}).
\endaligned$$
Therefore combining the above discussion, we have proved Theorem \ref{toricPOcalcthm}.
\end{proof}

\subsection{Existence of Calabi quasi-morphism on
toric manifolds.}
\label{toricexistqh}

In this subsection we complete the proof of Corollary  \ref{existtoric}.
We begin with the following lemma

\begin{lem}\label{gennondeg}
The set of vectors $(c_1,\dots,c_m) \in (\C\setminus \{0\})^m$
with the following properties is dense in $(\C \setminus \{0\})^m$:
\par
The function $f$ defined by
\begin{equation}\label{defoff}
f(y_1,\dots,y_n) = \sum_{i=1}^m c_i y_1^{\partial \beta \cap \text{\bf e}_1}\cdots y_n^{\partial \beta \cap  \text{\bf e}_n}
\end{equation}
restricts to a Morse function on $(\C\setminus \{0\})^n$.
\end{lem}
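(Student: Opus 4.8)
The plan is to prove Lemma \ref{gennondeg} by exhibiting a single explicit choice of $(c_1,\dots,c_m)$ for which the function $f$ in \eqref{defoff} is Morse on $(\C\setminus\{0\})^n$, and then invoking a genericity/semicontinuity argument to conclude that Morse-ness holds on a dense (in fact Zariski-open, hence dense) subset of $(\C\setminus\{0\})^m$. First I would recall the structure afforded by Condition \ref{condellj}: among the primitive outward normals $\vec v_i = d\ell_i = (\partial\beta_i\cap\text{\bf e}_1,\dots,\partial\beta_i\cap\text{\bf e}_n)$, $i=1,\dots,m$, there is a subset of $n$ of them, say $\vec v_{i_1},\dots,\vec v_{i_n}$, forming a $\Z$-basis of $\Z^n$ (take those corresponding to a vertex of $P$). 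Writing the monomial attached to $\vec v_i$ as $z^{\vec v_i} := y_1^{v_{i,1}}\cdots y_n^{v_{i,n}}$, the sub-sum $\sum_{k=1}^n c_{i_k} z^{\vec v_{i_k}}$ is, after the invertible monomial change of coordinates $w_k = z^{\vec v_{i_k}}$ (a biregular automorphism of $(\C\setminus\{0\})^n$ because the $\vec v_{i_k}$ form a basis), simply $\sum_k c_{i_k} w_k$, whose logarithmic Hessian $w_j w_k \partial^2/\partial w_j\partial w_k$ is diagonal with nonzero entries $-c_{i_k} w_k$ — so this truncated function is already Morse (in fact has no critical points at all in $(\C\setminus\{0\})^n$).

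Next I would treat the full sum as a deformation of this truncation and argue that the property ``$f$ is Morse'' is generic. Concretely, I would set up the incidence variety $Z \subset (\C\setminus\{0\})^m \times (\C\setminus\{0\})^n$ cut out by the $n$ equations $y_i\partial f/\partial y_i = 0$, and consider the projection $\pi_1 : Z \to (\C\setminus\{0\})^m$; a point $c$ is a Morse parameter precisely when every fiber point is a nondegenerate critical point, i.e.\ when the differential of $\pi_1$ restricted to $Z$ is an isomorphism there, equivalently when the logarithmic Hessian $\det\big[y_i y_j \partial^2 f/\partial y_i\partial y_j\big]$ is nonvanishing along the fiber. The set of ``bad'' $c$ (those admitting a degenerate critical point) is the image under $\pi_1$ of the subvariety of $Z$ where this Hessian determinant vanishes; since all the defining data are Laurent-polynomial (algebraic), this bad set is a constructible subset of $(\C\setminus\{0\})^m$, so it is either dense or contained in a proper subvariety. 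The explicit truncation above shows the bad set is \emph{not} everything — indeed, I would run the argument slightly differently: replace $(c_{i_1},\dots,c_{i_n})$ by generic values and the remaining $c_i$ by $t\,c_i'$ with $t\to 0$, and use properness-type estimates (critical points escape to the toric boundary as $t\to 0$, controlled by Theorem \ref{toricPOcalcthm}'s leading-term analysis) to see that for small $t$ the critical points are close to those of the truncation and the Hessian stays nondegenerate. This exhibits a nonempty open set of good $c$, hence by constructibility the good set is dense.

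The main obstacle I anticipate is the genericity/transversality bookkeeping in the algebraic category: one must be careful that $f$ really has only \emph{finitely many} critical points for generic $c$ (so that ``Morse'' is an open condition at each of them and the union over critical points is still a reasonable set), and that the ``bad'' locus is genuinely a proper subvariety rather than all of parameter space. I would handle finiteness by noting that for the truncated function the critical scheme is empty and that adding the lower-order monomials perturbs this to a zero-dimensional scheme — or, more robustly, by quoting that $\text{\rm Jac}(\frak{PO}_{\frak b};\Lambda)$ is finite-dimensional over $\Lambda$ (Theorem \ref{JacisHQ}), which forces finiteness of $\text{\rm Crit}(\frak{PO}_{\frak b})$, and the same argument applies verbatim to $f$ since $f$ is, up to the units $e^{\overline{\frak b}_i}$ and rescaling, exactly the leading-order piece of such a potential. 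With finiteness in hand, the standard Bertini/Sard-over-$\C$ argument (the projection $\pi_1$ is dominant and its generic fiber consists of reduced points, hence nondegenerate critical points) finishes the proof, and density of the good locus follows since a nonempty Zariski-open subset of the irreducible variety $(\C\setminus\{0\})^m$ is dense in the analytic topology.
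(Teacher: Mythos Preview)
The paper does not actually prove this lemma; it cites Kushnirenko~\cite{kushnire} (with \cite{iritani2}~Corollary~5.12 and \cite{fooo:toric1}~Proposition~8.8 for the present context). Your attempt at a direct proof has a genuine gap in the deformation step. The truncation $\sum_{k=1}^n c_{i_k}z^{\vec v_{i_k}}$ has \emph{no} critical points on $(\C\setminus\{0\})^n$, so there is nothing for the critical points of the perturbed $f_t$ to be ``close to''. For every $t\neq 0$ the perturbed function typically \emph{does} have critical points (e.g.\ $n=1$, $f_t=y+ty^{-1}$ has critical points at $y^2=t$), and they escape to the toric boundary only in the limit $t\to 0$; whether they are nondegenerate for small $t>0$ is exactly what must be shown, and your argument does not touch this. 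Your semicontinuity-of-fibre-dimension heuristic fails for the same reason: the critical scheme is not proper over $c$-space, so emptiness at $t=0$ says nothing about small $t>0$. Invoking Theorem~\ref{JacisHQ} for finiteness also risks circularity, since that result belongs to the same circle of ideas.

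That said, a correct direct argument is latent in your final paragraph and renders the explicit example unnecessary. The defining equations of the incidence variety $Z\subset(\C\setminus\{0\})^m\times(\C\setminus\{0\})^n$ are linear in $c$ with $c$-Jacobian $(v_{i,j}\,z^{\vec v_i})_{j,i}$ of full rank $n$ everywhere (the $\vec v_i$ span $\R^n$), so $Z$ is smooth of pure dimension $m$; moreover $Z$ is irreducible since $\pi_2:Z\to(\C\setminus\{0\})^n$ is a bundle with connected linear fibres. If $\pi_1:Z\to(\C\setminus\{0\})^m$ is not dominant then $f_c$ has no critical points for $c$ outside a proper closed set and is vacuously Morse there. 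If $\pi_1$ is dominant, generic smoothness in characteristic zero yields a nonempty Zariski-open $U$ over which every fibre is smooth of dimension zero, i.e.\ every critical point is nondegenerate; finiteness is then automatic, since a zero-dimensional closed subscheme of an affine variety is finite. Either way the Morse locus contains a nonempty Zariski-open set and hence is dense.
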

This lemma is proved in \cite{kushnire} (see
\cite[Corollary 5.12]{iritani2}, \cite[Proposition 8.8]{fooo:toric1}  for the discussion in this context).
\begin{cor}\label{nondegcor1}
Write $\frak b = \sum_{i=1}^m \log c_i [D_i] \in H(M;\C)$ and consider the sum
$$
\frak{PO}_{\overline{\frak b},0}
= \sum_{i=1}^m c_i  z_i
\in \Lambda[y_1,\dots,y_n,y_1^{-1},\dots,y_n^{-1}].
$$
Then the set of $(c_1,\dots,c_m) \in (\C\setminus \{0\})^m$ for which
$\frak{PO}_{\overline{\frak b},0}$ becomes a Morse function is dense in
$(\C\setminus \{0\})^m$.
\end{cor}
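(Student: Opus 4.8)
The plan is to deduce Corollary \ref{nondegcor1} from Lemma \ref{gennondeg} by the leading-order (tropical) reduction that underlies \cite{fooo:toric1} Proposition 8.8 and \cite{fooo:bulk}, under the identification $c_i = e^{\overline{\frak b}_i}$. First I would unwind what is being asserted: $\frak{PO}_{\overline{\frak b},0}$ being a \emph{Morse function} means that every $\frak y = (\frak y_1,\dots,\frak y_n) \in \Lambda^n$ with $\text{\bf u}(\frak y) = (\frak v_T(\frak y_1),\dots,\frak v_T(\frak y_n)) \in \text{\rm Int}\,P$ and $y_i\,\partial\frak{PO}_{\overline{\frak b},0}/\partial y_i(\frak y)=0$ for all $i$ satisfies $\det[\,y_iy_j\,\partial^2\frak{PO}_{\overline{\frak b},0}/\partial y_i\partial y_j\,](\frak y)\neq 0$. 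So I must exhibit a dense set of $(c_1,\dots,c_m)\in(\C\setminus\{0\})^m$ for which this holds.

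The heart of the matter is a leading-order reduction of such a critical point to a critical point of a sub-sum of $f$. Writing $\frak y_i = T^{u_i}\eta_i$ with $\frak v_T(\eta_i)=0$, and using the toric identities $\ell_i(\text{\bf u}) = \ell_i(\text{\bf 0}) + \sum_k u_k(\partial\beta_i\cap\text{\bf e}_k)$ and $d\ell_i = (\partial\beta_i\cap\text{\bf e}_1,\dots,\partial\beta_i\cap\text{\bf e}_n)$ (cf. \cite{cho-oh} and the proof of \eqref{elljandarea}), one sees that $z_i(\frak y)$ has $T$-valuation $\ell_i(\text{\bf u})>0$. Set $\lambda_0 = \min_i \ell_i(\text{\bf u})$ and $I_0 = \{\,i : \ell_i(\text{\bf u}) = \lambda_0\,\}$. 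Dividing the critical-point equations by $T^{\lambda_0}$ and reducing modulo $\Lambda_+$, the leading ($T^0$) parts $\bar\eta \in (\C\setminus\{0\})^n$ of the $\eta_i$ satisfy precisely the critical-point equations of the sub-potential $f_{I_0} := \sum_{i\in I_0} c_i\, y_1^{\partial \beta_i \cap \text{\bf e}_1}\cdots y_n^{\partial \beta_i \cap \text{\bf e}_n}$ on $(\C\setminus\{0\})^n$ (the factors $T^{\ell_i(\text{\bf 0})}$ cancel against the shared power $T^{\lambda_0}$), and the Hessian of $\frak{PO}_{\overline{\frak b},0}$ at $\frak y$ agrees, up to a nonzero scalar, with the Hessian of $f_{I_0}$ at $\bar\eta$; hence nondegeneracy of $f_{I_0}$ at $\bar\eta$ forces nondegeneracy of $\frak y$.

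To finish, note that as $\text{\bf u}$ runs over $\text{\rm Int}\,P$ the subset $I_0(\text{\bf u})$ takes only finitely many values, forming a finite family $\mathcal S$ of subsets of $\{1,\dots,m\}$. For each $I\in\mathcal S$ I would invoke Lemma \ref{gennondeg}, more precisely its proof (the Kushnirenko genericity of \cite{kushnire}, which applies verbatim to the sub-collection of monomials indexed by $I$), to conclude that $f_I$ is a Morse function on $(\C\setminus\{0\})^n$ for $(c_i)_{i\in I}$ in a dense open subset --- open because the failure of $f_I$ to be Morse (two critical points colliding, or a positive-dimensional critical locus) is the vanishing of a discriminant, an algebraic condition on the coefficients. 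Intersecting the finitely many resulting dense open conditions on $(c_1,\dots,c_m)$ gives a dense set on which every $f_I$, $I\in\mathcal S$, is Morse; by the previous paragraph $\frak{PO}_{\overline{\frak b},0}$ is then Morse, which is the assertion.

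The step I expect to be the main obstacle is the leading-order reduction itself: making rigorous that a critical point of $\frak{PO}_{\overline{\frak b},0}$ over $\Lambda$ with valuation in $\text{\rm Int}\,P$ reduces, order by order, to a critical point of the initial form $f_{I_0}$ over $\C$ with the matching Hessian, and in particular dealing with the degenerate strata where $I_0$ does not already cut out the critical locus --- there one must iterate along the Newton subdivision of the exponent vectors, exactly as in the bookkeeping of \cite{fooo:toric1}. A further, minor point treated there is that no critical point of $\frak{PO}_{\overline{\frak b},0}$ escapes to $\partial P$; but since Corollary \ref{nondegcor1} only concerns Morseness of $\frak{PO}_{\overline{\frak b},0}$ as defined --- critical points with valuation in $\text{\rm Int}\,P$ --- this does not enter the present reduction, and is needed only afterwards, together with Theorem \ref{thm:morphism}, in the proof of Corollary \ref{existtoric}.
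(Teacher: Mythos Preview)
Your route differs from the paper's. The paper argues by contrapositive and \emph{specialization}: if $\frak{PO}_{\overline{\frak b},0}$ has a degenerate critical point $\frak y$ over $\Lambda$, then by \cite{fooo:toric1} Lemma~8.5 the coordinates of $\frak y$ converge upon substituting $T=\epsilon$ for small real $\epsilon>0$, producing a degenerate critical point over $\C$ of the function $f$ of Lemma~\ref{gennondeg} with coefficients $c'_i = c_i\epsilon^{\ell_i(\text{\bf 0})}$. Since this holds for all small $\epsilon$ and the bad locus of $f$ is a proper algebraic set, the resulting condition on $(c_i)$ is itself algebraic and proper. So the paper invokes Lemma~\ref{gennondeg} only once, for the full $f$, and never looks at any sub-sum $f_I$.

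Your one-step reduction has a real gap, not merely the bookkeeping you anticipate. Take $M=(\C P^1)^3$ with moment polytope $P=[0,1]^2\times[0,10]$. Every critical point of $\frak{PO}_{\overline{\frak b},0}$ has valuation $\text{\bf u}=(\tfrac12,\tfrac12,5)$, where $I_0(\text{\bf u})$ consists of the four facets with normals $\pm e_1,\pm e_2$. These span only a $2$-plane in $\R^3$, so $f_{I_0}=c_1y_1+c_2y_1^{-1}+c_3y_2+c_4y_2^{-1}$ has a one-dimensional critical locus in $(\C\setminus\{0\})^3$ and $\det\big[y_iy_j\,\partial^2 f_{I_0}/\partial y_i\partial y_j\big]=0$ at every critical point, for \emph{every} $(c_i)$. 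Thus $f_{I_0}$ is never Morse on $(\C\setminus\{0\})^3$, and the intersection of your ``dense open'' conditions is empty --- yet $\frak{PO}_{\overline{\frak b},0}$ is Morse for generic $c$, the nondegeneracy in the $y_3$-direction coming from the sub-leading terms $c_5z_5+c_6z_6$ that your single reduction step discards. To close this you must genuinely iterate the reduction, which amounts to reproving the inductive argument behind \cite{fooo:toric1} Proposition~8.8 and is substantially longer than the paper's specialization trick. (A smaller point: the two Hessians do not agree ``up to a nonzero scalar''; what holds is that the leading $T$-term of the determinant of the $\frak{PO}$-Hessian is $T^{n\lambda_0}$ times the determinant of the $f_{I_0}$-Hessian, which is all your implication needs.)
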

\begin{proof}
Suppose that $\frak{PO}_{\overline{\frak b},0}$
is not a Morse function. Consider a degenerate critical point
$\frak y = (\frak y_1,\dots,\frak y_n)$ each of whose coordinates
is a `formal Laurent power series' of $T$.
(We put `formal Laurent power series' in the quote
since the exponents of $T$ are real numbers which are not necessarily integers.)
By \cite[Lemma 8.5]{fooo:toric1}, those series are convergent when we put
$T=\epsilon$ for sufficiently small $\epsilon >0$.
Then for $c'_i = c_i {\epsilon}^{\ell_i(\text{\bf 0})}$
the function (\ref{defoff}) will not be a Morse function.
Corollary \ref{nondegcor1} follows from this observation and Lemma \ref{gennondeg}.
\end{proof}
\begin{cor}\label{nondegcor2}
For any compact toric manifold $M$ there exists
an element $\frak b
\in H^{{\rm even}}(M;\Lambda)$ such that
$\frak{PO}_{\frak b}$ is a Morse function.
\end{cor}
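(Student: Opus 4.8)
The plan is to choose $\frak b \in H^2(M;\C) \subset H^{even}(M;\Lambda)$ so that the \emph{leading order} part of $\frak{PO}_{\frak b}$ in the Novikov parameter is already a Morse function, and then to argue that the higher order corrections in $T$ cannot destroy nondegeneracy of the critical points. Concretely, by Theorem~\ref{toricPOcalcthm}, for an element $\frak b$ of the shape considered in Corollary~\ref{nondegcor1} we may write
\[
\frak{PO}_{\frak b} = \frak{PO}_{\overline{\frak b},0} + \sum_j T^{\lambda_j} P_j(z_1,\dots,z_m),
\qquad \frak{PO}_{\overline{\frak b},0} = \sum_{i=1}^m c_i z_i,
\]
with $z_i$ as in \eqref{defzj}, $P_j \in \Lambda[z_1,\dots,z_m]$ and $0 < \lambda_j \to \infty$; by Corollary~\ref{nondegcor1} the vector $(c_1,\dots,c_m)$ can be taken, in a dense subset of $(\C\setminus\{0\})^m$, so that $\frak{PO}_{\overline{\frak b},0}$ is a Morse function. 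Fix such a $\frak b$. If $(M,\omega)$ is Fano, then by \eqref{FanoPO} there are no higher order terms and we are already done, so assume we are in the general case.

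It remains to upgrade Morseness of $\frak{PO}_{\overline{\frak b},0}$ to that of $\frak{PO}_{\frak b}$. The critical point equations $y_i\,\partial\frak{PO}_{\frak b}/\partial y_i = 0$ $(i=1,\dots,n)$ form a $T$-adic perturbation of the corresponding equations for $\frak{PO}_{\overline{\frak b},0}$. Since every critical point of the leading term is nondegenerate, at each such point $\frak y^{(0)}$ the Jacobian of the system is invertible, so a Newton/Hensel iteration carried out $T$-adically produces a unique critical point $\frak y$ of $\frak{PO}_{\frak b}$ with $\frak v_T(\frak y_i) = \frak v_T(\frak y^{(0)}_i)$ for all $i$; in particular $\text{\bf u}(\frak y) = (\frak v_T(\frak y_1),\dots,\frak v_T(\frak y_n))$ still lies in $\text{\rm Int}\,P$ (cf. \eqref{budeffromy1}). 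Moreover the Hessian $\big[\,y_iy_j\,\partial^2\frak{PO}_{\frak b}/\partial y_i\partial y_j\,\big](\frak y)$ equals the invertible leading Hessian plus a matrix all of whose entries have strictly positive $T$-valuation, hence is itself invertible, so $\frak y$ is a nondegenerate critical point. Finally, to see that this accounts for \emph{all} critical points of $\frak{PO}_{\frak b}$, one compares dimensions: by Theorems~\ref{JacisHQ} and \ref{toricPOcalcthm} and Proposition~\ref{Morsesplit}, $\dim_\Lambda \text{\rm Jac}(\frak{PO}_{\frak b};\Lambda) = \dim_\Lambda QH(M;\Lambda) = \dim_\C H(M;\C)$, and this equals $\#\text{\rm Crit}(\frak{PO}_{\overline{\frak b},0})$ because the latter potential is Morse and its Jacobian ring has $\Lambda$-dimension $\dim_\C H(M;\C)$. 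Hence the lifting map $\frak y^{(0)} \mapsto \frak y$ is a bijection, and $\frak{PO}_{\frak b}$ is a Morse function.

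The main obstacle is making this $T$-adic implicit function argument precise inside the completed ring $\Lambda\langle\!\langle y,y^{-1}\rangle\!\rangle^{\overset{\circ}P}$: one must verify that the Newton iteration converges in the relevant completed topology, that the resulting solutions have valuation vector in the \emph{interior} of $P$ (not on its boundary or outside), and that the multiplicity bookkeeping through $\dim_\Lambda\text{\rm Jac}(\frak{PO}_{\frak b};\Lambda)$ is indeed clean. An alternative route that circumvents part of this is to specialize $T$ to a small positive real $\epsilon$, as in Lemma~8.5 of \cite{fooo:toric1}, and then apply the classical implicit function theorem together with the genericity of Lemma~\ref{gennondeg}; this is essentially the method already used to establish Corollary~\ref{nondegcor1}, so adapting it to the present setting should be routine.
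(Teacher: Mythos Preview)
Your approach coincides with the paper's, which proves the corollary by invoking \cite{fooo:toric1} Theorem 10.4 (Morseness of the leading term $\frak{PO}_{\overline{\frak b},0}$ implies Morseness of $\frak{PO}_{\frak b}$) and then applying Corollary~\ref{nondegcor1}; your write-up essentially unpacks the Hensel-lifting-plus-dimension-count argument behind that cited theorem and correctly flags the technical points that require care.

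One step needs adjustment. In the dimension count you assert $\#\text{\rm Crit}(\frak{PO}_{\overline{\frak b},0}) = \dim_\C H(M;\C)$ by appealing to Theorem~\ref{JacisHQ}, but that theorem identifies $\text{\rm Jac}(\frak{PO}_{\frak b};\Lambda)$ with $QH(M;\Lambda)$ only for the \emph{full} potential $\frak{PO}_{\frak b}$; the truncated leading term $\frak{PO}_{\overline{\frak b},0}$ is not the potential function of $M$ for any bulk class unless $M$ is Fano, so Theorem~\ref{JacisHQ} does not apply to it. The equality you need is nonetheless true, but it requires separate input --- for instance Kushnirenko's theorem (the source behind Lemma~\ref{gennondeg}) together with the combinatorial identity between the normalized volume of the fan polytope and $\dim_\C H(M;\C)$, or equivalently the Batyrev/Stanley--Reisner presentation of $H^*(M)$. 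With this supplied, your argument goes through and matches what \cite{fooo:toric1} Theorem 10.4 packages.
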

\begin{proof}
By \cite[Theorem 10.4]{fooo:toric1}  we can prove that if
$\frak{PO}_{\overline{\frak b},0}$ is a Morse function
then $\frak{PO}_{\frak b}$ is also a Morse function.
(Actually the case $\overline{\frak b}=0$ is stated there.
However the general case can be proved in the same way.)
Therefore Corollary \ref{nondegcor2} follows from
Corollary \ref{nondegcor1}.
\end{proof}

Corollary  \ref{existtoric} follows immediately from
Corollary \ref{nondegcor2},
Proposition \ref{Morsesplit},
Theorem \ref{JacisHQ}
and Theorem \ref{thm:morphism}.
\qed

\subsection{Defect estimate of a quasi-morphism $\mu_e^{\frak b}$.}
\label{subsec:estimateC}

Using the calculations we have performed, we can obtain some explicit estimates
of the norm of the defect $\operatorname{Def}\mu_e$\index{defect}
of spectral quasi-morphism $\mu_e$. We define
$$
\vert\operatorname{Def}\vert(\mu_e) = \sup_{\widetilde\psi, \widetilde\phi}|\mu_e(\widetilde \psi\widetilde \phi)
- \mu_e(\widetilde \psi) - \mu_e(\widetilde \phi)|.
$$
We illustrate this estimate by an example.
\par
We consider $(M,\omega) = \C P^n$ with moment polytope
$$
\{(u_1,\dots,u_n) \mid u_i \ge 0, \sum u_i \le 1\}.
$$
Set $\frak b = \text{\bf 0}$. It is well known that the small quantum cohomology
$QH(\C P^n;\Lambda)$ is isomorphic to
$\Lambda[x]/(x^{n+1} - T)$, where $x \in H^2(\C P^n;\C)$ is the
standard generator.
This is isomorphic to the direct product of $n+1$ copies of $\Lambda$.
Therefore we have $n+1$ quasi-morphisms $\mu_{e_k}$.
($k=0,\dots,n$.)
It is actually defined on the Hamiltonian diffeomorphism group $\text{\rm Ham}(\C P^n,\omega)$
itself. (\cite[Section 4.3]{EP:morphism}.)
(It is unknown whether they are different from one another.)
\begin{prop}\label{estimateC} Let $e_k$ and $\mu_{e_k}$ for $k = 0, \ldots, n$ be as above.
Then
$$
\vert\operatorname{Def}\vert(\mu_{e_k}) \le \frac{12n}{n+1}.
$$
\end{prop}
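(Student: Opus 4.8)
The plan is to combine Remark~\ref{rem168}~(1), which gives the explicit bound $C = 12\,\frak v_q(e)$ for the defect of the quasimorphism $\mu_e^{\frak b}$ in Theorem~\ref{thm:morphism}, with a computation of $\frak v_q(e_k)$ for the idempotents $e_k$ of $QH(\C P^n;\Lambda)$. First I would recall that, by definition, $\vert\operatorname{Def}\vert(\mu_e) = \sup_{\widetilde\psi,\widetilde\phi}|\mu_e(\widetilde\psi\widetilde\phi) - \mu_e(\widetilde\psi) - \mu_e(\widetilde\phi)|$, so Remark~\ref{rem168}~(1) immediately yields $\vert\operatorname{Def}\vert(\mu_{e_k}) \le 12\,\frak v_q(e_k)$ once we know $e_k$ is a unit of a $\Lambda$-factor of $QH(\C P^n;\Lambda)$; this is exactly the hypothesis verified in the paragraph preceding the proposition (the ring is $\Lambda[x]/(x^{n+1}-T) \cong \Lambda^{n+1}$). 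Thus it remains to show $\frak v_q(e_k) \le \dfrac{n}{n+1}$ for each $k$.

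The heart of the matter is therefore computing the idempotents of $\Lambda[x]/(x^{n+1}-T)$ explicitly and estimating their $\frak v_q$-valuation (equivalently $-\frak v_T$). Since $x^{n+1} = T$, the roots are $x = \zeta T^{1/(n+1)}$ for $\zeta$ ranging over the $(n+1)$-st roots of unity, so working with the variable $y = T^{-1/(n+1)}x$ (valuation zero) we have $QH(\C P^n;\Lambda) \cong \Lambda[y]/(y^{n+1}-1) \cong \prod_{\zeta^{n+1}=1}\Lambda$, and the idempotent $e_\zeta$ projecting to the $\zeta$-factor is the standard Lagrange interpolation idempotent
\[
e_\zeta = \frac{1}{n+1}\prod_{\eta^{n+1}=1,\ \eta\ne\zeta}\frac{y-\eta}{\zeta-\eta}.
\]
Expanding this as a polynomial in $y$ of degree $n$ with coefficients in $\C$ (valuation zero), and then substituting back $y = T^{-1/(n+1)}x$ and re-expressing via $x = T^{k/(n+1)}\cdot(\text{valuation-zero element})$, the top-degree term $y^n$ contributes a monomial $T^{-n/(n+1)}$ times a unit, so $\frak v_T(e_\zeta) \ge -n/(n+1)$, i.e. $\frak v_q(e_k) = -\frak v_T(e_k) \le n/(n+1)$. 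One should check that no other term has smaller $\frak v_T$, which is clear since the $y^j$-term carries $T^{-j/(n+1)}$ and $j \le n$. Feeding $\frak v_q(e_k) \le n/(n+1)$ into the bound $\vert\operatorname{Def}\vert(\mu_{e_k}) \le 12\,\frak v_q(e_k)$ gives $\vert\operatorname{Def}\vert(\mu_{e_k}) \le \dfrac{12n}{n+1}$, as claimed.

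The main obstacle, though a mild one, is bookkeeping the normalization between the homological grading/Novikov conventions used here (downward Novikov ring $\Lambda^\downarrow$, $\frak v_q = -\frak v_T$, the shift in $\frak v_q(a)$ from $(\ref{defvq})$) and the standard presentation $QH(\C P^n) = \Lambda[x]/(x^{n+1}-T)$: one must make sure that ``$e_k$ a unit of the $\Lambda$-factor'' is being applied with the correct identification so that $\frak v_q(e_k)$ really is computed from the interpolation formula above and not off by a grading-induced shift. Once the conventions are pinned down the valuation estimate is elementary. I would also remark, as the paper does, that this gives an honest upper bound but says nothing about whether the $\mu_{e_k}$ are distinct, so no further argument in that direction is needed here. \qed
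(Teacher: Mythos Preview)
Your proposal is correct and follows essentially the same line as the paper: compute $\frak v_q(e_k)=\dfrac{n}{n+1}$ via Lagrange interpolation for the idempotents of $\Lambda[x]/(x^{n+1}-T)$, then invoke Remark~\ref{rem168}~(1). The paper routes the same computation through the Kodaira--Spencer isomorphism $\frak{ks}_{\mathbf 0}$ to the Jacobian ring of $\frak{PO}_{\mathbf 0}=y_1+\cdots+y_n+T(y_1\cdots y_n)^{-1}$ (using $\frak{ks}_{\mathbf 0}(x)=[y_1]$), whereas you work directly in the quantum cohomology presentation; your version is slightly more elementary but otherwise identical. (Minor slip: your displayed interpolation formula has an extra factor $\frac{1}{n+1}$---the product $\prod_{\eta\ne\zeta}\frac{y-\eta}{\zeta-\eta}$ is already the idempotent---but this does not affect the valuation since $\frak v_q$ is invariant under nonzero scalar multiplication.)
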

\begin{proof}
We have
$$
\frak{PO}_{\text{\bf 0}} = y_1 + \dots + y_n + T(y_1\dots y_n)^{-1}.
$$
See for example \cite[Example 5.2]{fooo:toric1}.
\par
Let $\chi_k = \exp(2\pi k\sqrt{-1}/(n+1))$.
The critical points of $\frak{PO}_{\text{\bf 0}}$ are
$$
\frak y_k = T^{1/(n+1)}(\chi_k,\dots,\chi_k),
\quad k=0,1,\dots,n.
$$
We put
$$
P_k = \frac{\prod_{i\ne k} (y_1 - T^{1/(n+1)}\chi_i)}
{\prod_{i\ne k} (T^{1/(n+1)}\chi_k - T^{1/(n+1)}\chi_i)}.
$$
Since
$$
P_k(\frak y_{\ell})
=
\begin{cases}
0 &k\ne \ell, \\
1 &k = \ell,
\end{cases}
$$
it follows that $[P_k] = \frak{ks}_{\text{\bf 0}}(e_{\frak y_k})$ in the Jacobian ring.
Therefore, using  $\frak{ks}_{\text{\bf 0}}(x) = [y_1]$ also,
we have
$$
\frak v_q(e_{\frak y_k}) = -\frak v_T(e_{\frak y_k}) = \frac{n}{n+1}.
$$
Proposition \ref{estimateC} now follows from Remark \ref{rem168} (1).
\end{proof}
Note we chose our symplectic form $\omega$ so that
$\int_{\C P^1}\omega = 1$. (See  (\ref{elljandarea})
and Remark \ref{rem2pi2}.)

\section{Lagrangian tori in $k$-points blow up of
$\C P^2$ ($k\ge 2$).}
\label{sec:CP2blow}

In this section, we prove Theorem \ref{uncount} (3) in the case of
$k$-points blow up of $\C P^2$ $(k\ge 2)$.
We use the example of \cite[Section 5]{fooo:bulk}, which we review now.
\par
We first consider $2$-points blow up $M$ of $\C P^2$.
We put a toric
K\"ahler form on it $\omega_{\alpha,\beta}$ such that
the moment polytope is given by
\begin{equation}\label{momentpoly}
P_{\alpha,\beta} = \{ (u_1,u_2) \mid 0\le u_1\le 1, ~0 \le u_2 \le 1-\alpha,~
\beta \le u_1+u_2 \le 1\}.
\end{equation}
Here
\begin{equation}\label{Kcone}
(\alpha,\beta) \in  \{ (\alpha,\beta) \mid 0 \le \alpha,\beta,\,\,
\alpha + \beta \le 1\}.
\end{equation}
We are interested in the case $\beta =(1-\alpha)/2$ and
write
$M_{\alpha} = (M,\omega_{\alpha,(1-\alpha)/2})$ where $\alpha > 1/3$.
We denote
$$
D_1 = \pi^{-1}(\partial P_{\alpha,(1-\alpha)/2} \cap \{(u_1,u_2) \mid u_2 = 0\})
$$
and put
\begin{equation}\label{defbulk}
\frak b_{\kappa} = T^{\kappa} PD([D_1]) \in H^2(M_{\alpha};\Lambda_+), \quad \kappa >0.
\end{equation}
Then by (\ref{FanoPO}), we have obtained
\begin{equation}\label{POwithoutbulk}
\aligned
\mathfrak{PO}_{\frak b_{\kappa}} =
y_1 + e^{T^{\kappa}} y_2 + T^{1-\alpha}y_2^{-1}
+ Ty_1^{-1}y_2^{-1}
+ T^{-(1-\alpha)/2}y_1y_2.
\endaligned
\end{equation}
Now consider a family of Lagrangian torus fibers
\begin{equation}\label{LuinCP2}
L(u) = L(u,(1-\alpha)/2),
\end{equation}
for
$
(1-\alpha)/2 < u < (1+\alpha)/4.
$
Note that $\alpha >1/3$ implies $
(1-\alpha)/2 < 1/3 < (1+\alpha)/4.
$
Then for any such $u$ we can show the following.
\begin{thm}\label{CP2heavy}
If $1/3 \le u < (1+\alpha)/4$, we take
$\kappa(u) = (1+\alpha)/2 - 2u>0$.
If $(1-\alpha)/2 < u <1/3$, we take
$\kappa(u) = u- (1-\alpha)/2 >0$.
Then
$L(u) \subset M_{\alpha}$ is $\mu_{e}^{\frak b_{\kappa(u)}}$-superheavy
with respect to an appropriate idempotent $e$ of
$QH_{\frak b_{\kappa(u)}}^*(M_{\alpha};\Lambda)$.
\end{thm}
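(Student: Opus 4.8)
The plan is to realize $L(u)$ as a fiber $L(\text{\bf u}(\frak y))$ associated to a \emph{nondegenerate} critical point $\frak y$ of the bulk-deformed potential function $\frak{PO}_{\frak b_{\kappa(u)}}$ of $M_\alpha$, and then invoke Theorem \ref{toricheavymain} (2), which says that such a fiber is $\mu_{e_{\frak y}}^{\frak b}$-superheavy. Thus the real content is a critical point computation for the explicit function \eqref{POwithoutbulk} with $\kappa = \kappa(u)$, carried out over the Novikov field $\Lambda$ following the method of \cite{fooo:bulk} Section 5. Concretely, I would first substitute $\frak b = \frak b_{\kappa(u)}$ into Theorem \ref{toricPOcalcthm} (equivalently use \eqref{POwithoutbulk} directly, since $M_\alpha$ is toric Fano here after checking Fano-ness, or else account for the higher-order corrections if it is not Fano for the relevant range) and write down the two logarithmic derivative equations $y_i \partial \frak{PO}_{\frak b_{\kappa(u)}}/\partial y_i = 0$ for $i = 1,2$.

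Next I would solve these equations for a critical point $\frak y = (\frak y_1,\frak y_2)$ whose valuations $(\frak v_T(\frak y_1),\frak v_T(\frak y_2))$ equal $(u,(1-\alpha)/2)$, which is forced by \eqref{budeffromy1} to be the condition $\text{\bf u}(\frak y) = (u,(1-\alpha)/2)$, the base point of the fiber \eqref{LuinCP2}. The choice of $\kappa(u)$ is exactly engineered so that at $\text{\bf u} = (u,(1-\alpha)/2)$ the leading-order (in $T$-adic valuation) terms of $\frak{PO}_{\frak b_{\kappa(u)}}$ that involve $y_1$ or $y_2$ come in balanced pairs: for $1/3 \le u < (1+\alpha)/4$ the term $e^{T^\kappa}y_2$ has the same valuation as a competing term, and for $(1-\alpha)/2 < u < 1/3$ the balancing involves the $y_1$-terms. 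This is the familiar phenomenon from \cite{fooo:bulk} Section 5: one perturbs the Kähler parameters implicitly by a bulk class supported on a single divisor so that an otherwise "non-balanced" fiber becomes balanced. I would then check the Hessian nondegeneracy condition $\det[y_iy_j \partial^2\frak{PO}_{\frak b_{\kappa(u)}}/\partial y_i\partial y_j](\frak y) \ne 0$; since the leading-order reduced function is (after the rescaling $z_i = T^{\ell_i(\text{\bf 0})}y^{\partial\beta_i}$ of \eqref{defzj}) a Laurent polynomial whose critical points can be read off explicitly, this is a finite computation of a $2\times 2$ determinant of complex numbers.

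Having produced a nondegenerate critical point $\frak y$ with $\text{\bf u}(\frak y) = (u,(1-\alpha)/2)$, Proposition \ref{Morsesplit} (3) and Theorem \ref{JacisHQ} give the field factor $QH_{\frak b_{\kappa(u)}}(M_\alpha;\frak y) \cong \Lambda$ with idempotent $e = e_{\frak y}$, and Theorem \ref{calcithm} together with Lemma \ref{calksi} guarantees $i^{T,\ast}_{\text{qm},(\frak b(\frak y),b(\frak y))}(e_{\frak y}) = \text{\bf e}_{L(\text{\bf u}(\frak y))} \ne 0$, so the hypothesis \eqref{localnonzero} of Theorem \ref{thm:heavy} (2) is met (via the equivariant version, using the corollary after Lemma \ref{ainfinityequivcomp}). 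Theorem \ref{toricheavymain} (2) then yields $\mu_{e_{\frak y}}^{\frak b_{\kappa(u)}}$-superheaviness of $L(u)$, which is the claim. For the general case of $k$-points blow up with $k \ge 2$, one takes the same $M_\alpha$ sitting inside the $k$-fold blow up (blowing up $k-2$ further points in a region disjoint from the relevant fibers, with small enough blow-up sizes that \eqref{POwithoutbulk} is unaffected at the critical point of interest) — or more simply repeats the computation for the larger moment polytope. The main obstacle I anticipate is the bookkeeping of the $T$-adic leading terms: one must verify that the perturbed potential \eqref{POwithoutbulk}, after substituting the valuation constraints and the specific $\kappa(u)$, really has its $y$-dependent leading part given by a Laurent polynomial with a nondegenerate critical point at the right valuation, and that no higher-order term (from non-Fano corrections or from the blow-up divisors) destroys this nondegeneracy — this is precisely where the inequality $\alpha > 1/3$ and the two-case split in the definition of $\kappa(u)$ are used, and it requires care rather than cleverness.
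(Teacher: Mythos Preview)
Your proposal is correct and follows essentially the same approach as the paper: locate a nondegenerate critical point $\frak y$ of $\frak{PO}_{\frak b_{\kappa(u)}}$ with valuation $(u,(1-\alpha)/2)$ and then apply Theorem \ref{toricheavymain} (2). The paper simply outsources the explicit critical point and Hessian computations to the case analysis already carried out in \cite{fooo:bulk} Section 5 (Cases 1, 3, 4 there), whereas you sketch redoing them; note also that $M_\alpha$ is Fano (a degree-7 del Pezzo), so \eqref{POwithoutbulk} is exact and no higher-order corrections arise.
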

\begin{proof}
Let $\text{\bf u} = (u,(1-\alpha)/2)$. We put
$$
y^{\text{\bf u}}_1 = T^{-u_1}y_1 = T^{-u}y_1,\,~  y^{\text{\bf u}}_2 = T^{-u_2}y_2
= T^{-(1-\alpha)/2}y_2
$$
in (\ref{POwithoutbulk}) to obtain
\begin{equation}\label{POwithoutbulk1}
\aligned
\mathfrak{PO}_{\frak b_{\kappa(u)}}
&=&
T^uy^{\text{\bf u}}_1 + e^{T^{\kappa(u)}}T^{(1-\alpha)/2} y^{\text{\bf u}}_2
+ T^{(1-\alpha)/2}(y^{\text{\bf u}}_2)^{-1} \\
&{}& \quad+ T^{(1+\alpha)/2-u}(y^{\text{\bf u}}_1)^{-1}(y^{\text{\bf u}}_2)^{-1}
+ T^{u}y^{\text{\bf u}}_1y^{\text{\bf u}}_2.
\endaligned
\end{equation}
See \cite[(5.10)]{fooo:bulk}.
We first consider the case that $1/3 < u < (1+\alpha)/4$ and $\kappa(u) = (1+\alpha)/2 - 2u$.
Then the calculation in Case 1 of \cite[Section 5]{fooo:bulk}  shows that
the potential function
$\mathfrak{PO}_{\frak b_{\kappa(u)}}$ has
nondegenerate critical points $\frak y(u)=(\frak y_1(u), \frak y_2(u))$ such that
$$
(T^{-u}\frak y_1(u),T^{-(1-\alpha)/2}\frak y_2(u)) \equiv (\pm\sqrt{-2},-1) \mod \Lambda_+.
$$
Each of them corresponds to an idempotent $e_{\frak y(u)}$ of $QH_{\frak b_{\kappa(u)}}(M_{\alpha};\Lambda)$.
Theorem \ref{toricheavymain} implies that $L(u)$ is $\mu_{e_{\frak y(u)}}^{\frak b_{\kappa(u)}}$-superheavy.
When $u=1/3$ and $\kappa(u) = (1+\alpha)/2 - 2u$,
Case 4 of \cite[Section 5]{fooo:bulk}  shows that
there are nondegenerate critical points. (Note that we are using $\frak b_{\kappa}$ as
\eqref{defbulk} so $w=1$ in \cite[(5.14)]{fooo:bulk}.)
If $(1-\alpha)/2 < u <1/3$ and
$\kappa(u) = u- (1-\alpha)/2$, Case 3 of \cite[Section 5]{fooo:bulk}  shows that
there is a nondegenerate critical point as well.
Thus Theorem \ref{CP2heavy} follows from Theorem \ref{toricheavymain}.
\end{proof}

\begin{proof}[Proof of Theorem \ref{uncount} (3)]
Since $\frak y(u)$ is a nondegenerate critical point, Theorem \ref{JacisHQ} and Proposition
\ref{Morsesplit} imply that
$e = e_{\frak y(u)}$ is the unit of the direct factor of $QH_{\frak b_{\kappa(u)}}(M_{\alpha};\Lambda)$
that is isomorphic to $\Lambda$.
Therefore by
Theorem \ref{thm:morphism}
$\mu_{e_{\frak y(u)}}^{\frak b_{\kappa(u)}}$ is a Calabi quasi-morphism.
By Corollary \ref{lieind}, the set
$
\left\{\mu_{e_{\frak y(u)}}^{\frak b_{\kappa(u)}}\right\}_{u\in ((1-\alpha)/2,(1+\alpha)/4)}
$
is linearly independent. Thus we have  constructed a continuum of linearly independent
Calabi quasi-morphisms parametrized by $u \in ((1-\alpha)/2,(1+\alpha)/4)$.
The proof of Theorem \ref{uncount} for the case of two-points blow-up
of $\C P^2$ is complete.
\par
To prove the existence of a continuum of linearly independent Calabi
quasi-morphisms\index{quasi-morphism!Calabi
quasi-morphism} for the case of three-points blow-up of $\C P^2$, we consider the
K\"ahler toric surface $(M,\omega)$ whose moment polytope is given by
$$
P_{\alpha,(1-\alpha)/2} \setminus \{(u_1,u_2) \mid 1 - \epsilon < u_2\}
$$
for a sufficiently small $\epsilon$. Then
$(M,\omega)$ is a three-points blow-up of $\C P^2$.
Its potential function is given by
$$
(\text{\rm \ref{POwithoutbulk1}}) + T^{1-\epsilon}y_1^{-1}.
$$
It is easy to see that the extra term $T^{1-\epsilon}y_1^{-1}$ is of higher order,
when $({\mathfrak v}_T(y_1), {\mathfrak v}_T(y_2)) \linebreak = (u,(1-{\alpha})/2)$, $u\in (1/3,(1+{\alpha})/4)$.
So by the same argument as in the case of two-points blow-up,
we can prove Theorem \ref{uncount}.
For the general $k$-points blow-up with $k \geq 3$, we can repeat the same argument.
(See \cite[page 111]{fooo:toric1}  for a relevant study of $k$-points blow-up of $\C P^2$.)
\end{proof}

\section{Lagrangian tori in $S^2 \times S^2$}
\label{sec:exotic}

In this section we prove Theorem \ref{uncount} for the case of $S^2\times S^2$,
which is equipped with the symplectic structure $\omega \oplus \omega$.
We also prove Theorem \ref{dips2s2}.
We first recall the description of the family of Lagrangian tori
constructed in \cite{fooo:S2S2}.

\subsection{Review of the construction from \cite{fooo:S2S2}}
\label{subsec:review}

We consider the toric Hirzebruch surface $F_2(\alpha)$ ($\alpha >0$)\index{Hirzebruch surface}  whose
moment polytope is
\begin{equation}\label{POord}
P(\alpha) =
\{
(u_1,u_2) \in \R^2 \mid u_i \ge 0,
u_2 \le 1-\alpha, u_1+2u_2 \le 2
\}.
\end{equation}
Note that $F_2(\alpha)$ is not Fano but nef, i.e. every holomorphic sphere has non-negative
Chern number.
In fact, the divisor $D_1 \cong \C P^1$ associated to the facet $\partial_1P(\alpha) = \{ u \in \partial P(\alpha) \mid u_2 = 1-\alpha\}$
has
$c_1(D_1) = 0$.

\begin{thm}{\rm(\cite[Theorem {3.1}]{fooo:S2S2})}\label{F2PO}
We put $\frak b =\text{\bf 0}$.
The potential function $\frak{PO}_{\text{\bf 0}}$ of $F_2(\alpha)$ has the form
\begin{equation}\label{POord2}
\frak{PO}_{\text{\bf 0}} =
y_1+y_2 + T^2y_1^{-1}y_2^{-2} + T^{1-\alpha}(1+T^{2\alpha}) y_2^{-1}.
\end{equation}
\end{thm}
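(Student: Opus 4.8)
The plan is to derive \eqref{POord2} from the general structure theorem for toric potential functions, Theorem \ref{toricPOcalcthm}, together with one genuinely geometric input: the contribution of the non-Fano $(-2)$-curve $D_1$.

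First I would record the facet data of $P(\alpha)$. It is cut out by the affine functions $\ell_1(\text{\bf u}) = 1-\alpha-u_2$, $\ell_2(\text{\bf u}) = u_1$, $\ell_3(\text{\bf u}) = u_2$, $\ell_4(\text{\bf u}) = 2-u_1-2u_2$, with inward primitive normals $d\ell_1 = (0,-1)$, $d\ell_2 = (1,0)$, $d\ell_3 = (0,1)$, $d\ell_4 = (-1,-2)$; thus $D_1$ is the divisor attached to $\ell_1$, and the fan relation $d\ell_2 + d\ell_4 = 2\,d\ell_1$ shows $D_1^2 = -2$ and $c_1(D_1)=0$, i.e. $F_2(\alpha)$ is nef but not Fano. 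With the monomials $z_i = T^{\ell_i(\text{\bf 0})}y^{\partial\beta_i}$ of \eqref{defzj} this gives $z_1 = T^{1-\alpha}y_2^{-1}$, $z_2 = y_1$, $z_3 = y_2$, $z_4 = T^2 y_1^{-1}y_2^{-2}$. Applying Theorem \ref{toricPOcalcthm} with $\frak b = \text{\bf 0}$ yields $\frak{PO}_{\text{\bf 0}} = z_1 + z_2 + z_3 + z_4 + \sum_k T^{\lambda_k}P_k(z_1,\dots,z_4)$ with $P_k \in \Lambda[z_1,\dots,z_4]$ and $\lambda_k > 0$. The four leading monomials already reproduce every term of \eqref{POord2} except the extra summand $T\,y_2^{-1}$, so everything reduces to identifying the higher-order part.

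Next I would pin down which disk classes can enter that higher-order part. Since $F_2(\alpha)$ is nef, the classification of Maslov-index-two disk classes recalled in the proof of Theorem \ref{toricPOcalcthm} (from Cho--Oh \cite{cho-oh}, as stated in \cite{fooo:toric1} Theorem 11.1) tells us that if ${\mathcal M}_{1;0}(\beta) \neq \emptyset$ and $\beta$ is not one of $\beta_1,\dots,\beta_4$, then $\beta = \sum k_j\beta_j + \alpha'$ with $k_j \geq 0$, $\sum k_j > 0$, and $\alpha' \in \pi_2(F_2(\alpha))$ effective with $\alpha'\cap\omega > 0$. For $\beta$ to contribute to $\frak{PO}_{\text{\bf 0}}$ we need $\mu_{L(\text{\bf u})}(\beta) = 2$; since $\mu_{L(\text{\bf u})}(\beta_j) = 2$ and every sphere class in the nef surface $F_2(\alpha)$ has nonnegative Chern number, this forces $\sum k_j = 1$ and $c_1(\alpha') = 0$, hence $\alpha' = d[D_1]$ for some $d \geq 1$ — the $(-2)$-curve is, up to multiples, the unique effective $c_1 = 0$ class. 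Finally, $\beta_j\cap D_1 = \delta_{j1}$ by \eqref{charbetaj}, so $(\beta_j + d[D_1])\cap D_1 = \delta_{j1} - 2d$, and a positivity/dimension bookkeeping against \eqref{dimensiondisc} leaves only $j = 1$. Thus the only correction is in the $y_2^{-1}$-monomial: $\frak{PO}_{\text{\bf 0}}$ equals $z_1 + z_2 + z_3 + z_4$ with $z_1$ replaced by $z_1 + \sum_{d\geq 1} c_d\, z_1 T^{d\,([D_1]\cap\omega)}$, where $c_d$ is the open Gromov--Witten number attached to ${\mathcal M}_{1;0}(F_2(\alpha);\beta_1 + d[D_1])$ and the $T$-exponents are read off from \eqref{elljandarea}; the $d = 1$ term is then exactly the missing summand of \eqref{POord2} once $c_1 = 1$.

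The hard part — the only step that does not follow formally from the results already in hand — is to show $c_1 = 1$ and $c_d = 0$ for all $d \geq 2$. For $d = 1$, I would analyze ${\mathcal M}_{1;0}(F_2(\alpha);\beta_1 + [D_1])$ directly with a $T^n$-equivariant multisection, using that $D_1$, having self-intersection $-2$, is rigid (its class has a unique, reduced holomorphic representative), so a stable disk in that class is a $\beta_1$-disk carrying $D_1$ as its sphere bubble; one checks the space is transversally cut out of dimension $n$ and that ${\rm ev}_0^{\partial}$ has degree one onto $L(\text{\bf u})$, just as for the basic disks. For $d \geq 2$ the relevant configurations involve multiple covers of the rigid curve $D_1$, and a moduli-space analysis shows the virtual count vanishes. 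These disk counts are exactly those carried out in \cite{fooo:S2S2}, modelled on the $F_2$ computations in the spirit of \cite{fooo:toric1} Section 11 and \cite{fooo:bulk}. Assembling the three steps gives $\frak{PO}_{\text{\bf 0}} = y_1 + y_2 + T^2 y_1^{-1}y_2^{-2} + T^{1-\alpha}(1+T^{\alpha})y_2^{-1}$.
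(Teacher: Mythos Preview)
The paper does not prove this theorem: it is simply quoted as Theorem~2.2 of \cite{fooo:S2S2}, with no argument given here. Your proposal is a correct reconstruction of that argument --- you reduce to Theorem~\ref{toricPOcalcthm} for the four leading terms, isolate $\beta_1 + d[D_1]$ as the only possible extra Maslov-two classes via the nef condition and $c_1(D_1)=0$, and identify the remaining input as the disk count $c_1=1$, $c_{d\ge 2}=0$ for the rigid $(-2)$-curve, which is precisely what is computed in \cite{fooo:S2S2}. One small point: your ``positivity/dimension bookkeeping'' ruling out $j\neq 1$ is really the observation that a holomorphic $\beta_j$-disk with $j\neq 1$ satisfies $\beta_j\cap D_1 = 0$ and hence (by positivity of intersections) is disjoint from the unique holomorphic representative of $[D_1]$, so no nodal configuration exists; this deserves one sentence rather than a dimension-count allusion.
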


We consider the limit $\alpha \to 0$ of the Hirzebruch surface $F_2(\alpha)$.
At $\alpha =0$, the limit polytope is the triangle
\begin{equation}\label{P0}
P(0) =
\{ (u_1,u_2) \in \R^2 \mid u_i \ge 0,
u_2 \le 1, u_1+2u_2 \le 2
\}
\end{equation}
and the limit $F_2(0)$ is an orbifold with a singularity of the form
$\C^2/\{\pm 1\}$. We cut out a neighborhood of
the singularity of $F_2(0)$ and paste the Milnor fiber back into
the neighborhood to obtain the desired manifold. We denote it by $\widehat F_2(0)$.

Consider the preimage $Y(\varepsilon)$ of $P(\varepsilon) \subset P(0)$, $0<\varepsilon <1$,
under the moment map
$\pi : F_2(0) \setminus \{ O\} \to P(0) \setminus \{(0,1)\}$, where $O$ is the singularity of $F_2(0)$.
We can put a natural glued symplectic form on $\widehat F_2(0) =
Y(\varepsilon) \cup D_r(T^* S^2)$ in a way that the given toric
symplectic form on $Y(\varepsilon)$ is unchanged on
$
Y(\varepsilon) \setminus N(\varepsilon) \subset Y(\varepsilon) \setminus \partial Y(\varepsilon),
$
where $N(\varepsilon)$ is a collar neighborhood of $\partial Y(\varepsilon)$.
Since $H^2(S^3/\{\pm 1\};\Q) = 0$, the glued symplectic form does not depend on the choices
of $\varepsilon> 0$ or the gluing data up to the symplectic diffeomorphism.
This symplectic manifold is symplectomorphic to
$
(S^2, \omega_{\mathrm{std}}) \times (S^2, \omega_{\mathrm{std}})$ (\cite[Proposition 5.1]{fooo:S2S2}.)
In other words, we have symplectomorphisms
\be\label{symplecticisoS2S2}
\phi_\e: (\widehat F_2(0),\omega_\e) \to (S^2\times S^2, \omega_{\mathrm{std}}
\oplus \omega_{\mathrm{std}}).
\ee
We denote
\begin{equation}
T(\rho) = \phi_\e(L(1/2-\rho,1/2+\rho)), \quad 0 \le \rho < \frac{1}{2}
\end{equation}
where $L(1/2-\rho,1/2+\rho) = \pi^{-1}(1/2-\rho,1/2+\rho)$ regarded as a Lagrangian
submanifold of $(\widehat F_2(0), \omega_\e)$.
We refer to   \cite[Sections 3 and 4]{fooo:S2S2} for the detailed explanation
of this construction.

\subsection{Superheaviness of $T(\rho)$.}
\label{subsec:POb-T(u)}
Recall from  \cite[Section 5]{fooo:S2S2} that
we have a family $\bigcup_{a \in \C} X_a$ where
$X_a$ is biholomorphic to $\C P^1 \times \C P^1$
for $a\ne 0$ and $X_0$ is biholomorphic to $F_2$.
(See  \cite[Lemma 5.1]{fooo:S2S2}.)
The smooth trivialization of the simultaneous resolution $\bigcup_{a \in \C} X_a$
of $F_2(0)$ constructed in  \cite[Section 6]{fooo:S2S2}
identifies the homology class $[D_1]$ in $X_0$ and $[S^2_{{\mathrm{van}}}]$ in $X_a$. Beside this, the relative homology class $\beta_1$ in $X_0$ which satisfies $\beta_1 \cap [D_1] =1$ and
does not intersect with other toric divisors
can be also regarded as a homology class in $X_a$.
The homology classes $\beta_1$ and $\beta_1+[S^2_{\mathrm{van}}]$ satisfy the relations
\begin{equation}\label{intnumber}
\beta_1 \cap [S^2_{\mathrm{van}}] = 1, \quad (\beta_1+[S^2_{\mathrm{van}}])\cap [S^2_{\mathrm{van}}] = -1.
\end{equation}
We consider the cohomology class
$$
\frak b(\rho) = T^{\rho}PD[S^2_{\mathrm{van}}] \in H^2(\widehat F_2(0),\Lambda_+).
$$

Using the 4-dimensionality and special properties of $\widehat F_2(0)$,
we proved the following in \cite[Lemma 4.2]{fooo:S2S2}.

\begin{lem}
$$
H^1(T(u);\Lambda_0)
\subset
\{ b \in H^{odd}(T(u);\Lambda_0)
\mid (\frak b(\rho),b) \in
\widehat{\CM}_{\rm def,weak}(T(u);\Lambda_0)\}.
$$
\end{lem}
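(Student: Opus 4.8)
The statement says that for every $b \in H^1(T(u);\Lambda_0)$ the pair $(\frak b(\rho),b)$ is a weak bounding cochain of $T(u)$; by Definition \ref{bulkMCelement} this means $\frak m_0^{(\frak b(\rho),b)}(1) = c\,\text{\bf e}_{T(u)}$ for some $c \in \Lambda_+$, where $\frak m_0^{(\frak b(\rho),b)}$ is assembled from the operators $\frak q_{\ell,k;\beta}$ of Theorem \ref{qproperties} as in Definition \ref{deformedqdef}. The plan is to first reduce this to a Maslov-index bound on the $J$-holomorphic discs bounded by $T(u)$, and then to extract that bound from the description of such discs furnished by the Hirzebruch degeneration of \cite{fooo:S2S2}.

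First I would represent $b$ by a translation-invariant $1$-form on $T(u) \cong T^2$. Then the class $\beta = \beta_0 = 0$ makes no contribution to $\frak m_0^{(\frak b(\rho),b)}(1)$: in $\frak q_{0,m;\beta_0}$ the only nonzero terms are $m=1$, giving $\pm db = 0$, and $m=2$, giving $\pm b\wedge b = 0$ since $b$ has odd degree; and $\frak b_0 = 0$ because $\frak b(\rho) = T^{\rho}PD[S^2_{\mathrm{van}}]$ lies in $H^2(\widehat F_2(0);\Lambda_+)$. Hence $\frak m_0^{(\frak b(\rho),b)}(1)$ is a sum over nonconstant disc classes $\beta$; the $\beta$-summand carries the coefficient $T^{\omega\cap\beta}$ with $\omega\cap\beta > 0$, and, by the degree bookkeeping of Theorem \ref{qproperties} (the map $\frak q_{\beta}$ has degree $1-\mu_{T(u)}(\beta)$ on the shifted complexes, the interior inputs $\frak b(\rho)$ and the boundary inputs $b$ all having shifted degree $0$), it is a differential form on $T(u)$ of degree $2-\mu_{T(u)}(\beta)$. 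Since $\dim T(u) = 2$, a summand with $\mu_{T(u)}(\beta) > 2$ vanishes for degree reasons and a summand with $\mu_{T(u)}(\beta) = 2$ is a $0$-form, hence a constant multiple of $\text{\bf e}_{T(u)}$ with coefficient in $\Lambda_+$. It therefore remains to show, for a suitable compatible almost complex structure together with its system of multisections, that no disc class $\beta$ with $\mu_{T(u)}(\beta) \le 0$ contributes.

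For this I would use the simultaneous resolution $\bigcup_a X_a$ of $F_2(0)$ built in \cite{fooo:S2S2} Sections 5--6, in which $X_a \cong \C P^1\times\C P^1$ for $a \ne 0$ while $X_0 \cong F_2$, together with its smooth trivialization, under which $T(u) \subset \widehat F_2(0)$ corresponds to a Lagrangian torus in $\C P^1\times\C P^1$ and the relevant relative classes — in particular $\beta_1$ and $[S^2_{\mathrm{van}}]$, with the intersection numbers \eqref{intnumber} — are matched up. Combining the toric classification of holomorphic discs bounded by the torus fibers of $F_2(\alpha)$ for $\alpha > 0$ (Cho--Oh, cf. \cite{fooo:toric1} Theorem 11.1), in which the minimal Maslov index is $2$ by nefness, with the limit $\alpha \to 0$ and the integrable structure of $X_a$ for $a \ne 0$, one finds that every nonconstant holomorphic disc bounded by $T(u)$ has Maslov index $\ge 2$, with equality precisely for the basic classes visible in the Hirzebruch potential \eqref{POord2} of Theorem \ref{F2PO} and their translates by multiples of $[S^2_{\mathrm{van}}]$ (note $c_1([S^2_{\mathrm{van}}]) = 0$, so adding $[S^2_{\mathrm{van}}]$ leaves the Maslov index unchanged), while any stable configuration containing a genuine sphere bubble has Maslov index $\ge 4$. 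In particular there is no contributing class of Maslov index $\le 0$, which is exactly what was needed; the positive factors $T^{\omega\cap\beta}$ then force $c \in \Lambda_+$, and the lemma follows.

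The main obstacle — and the place where the ``4-dimensionality and special properties of $\widehat F_2(0)$'' mentioned in the excerpt are really used — is this last step: one has to control the moduli spaces of $J$-holomorphic discs (and their stable-map limits) bounded by $T(u)$ as the complex structure is moved from that of $F_2(0)$ to that of $\C P^1\times\C P^1$ through the resolution, verifying that no new low-Maslov discs are created and that the classes meeting the vanishing cycle $S^2_{\mathrm{van}}$ behave as claimed; in dimension four this is tractable because Chern-nonpositive sphere bubbles are excluded and the vanishing/exceptional locus is rigid. This analysis is carried out in detail in \cite{fooo:S2S2} Theorem 8.2, on which the present argument would ultimately rest.
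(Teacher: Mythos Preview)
Your proposal is correct and in fact supplies more detail than the paper, which gives no proof at all and simply cites \cite{fooo:S2S2} Theorem 8.2. Your reduction to the Maslov-index bound via the degree count on the two-torus, followed by invoking the disc classification from the Hirzebruch degeneration, is exactly the content of that reference; you have accurately reconstructed the argument the paper defers.
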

In \cite{fooo:S2S2}, we showed that the potential function for $T(0)$, i.e., ${\mathbf u}_0=(1/2,1/2)$ is
$$\mathfrak{PO}=T^{1/2}(y^{\bf u_0}_1 + y^{\bf u_0}_2 + (y^{\bf u_0}_1)^{-1}(y^{\bf u_0}_2)^{-2}
+ 2 (y^{\bf u_0}_2)^{-1}).$$
We find that there are two critical points $\pm(1/2,2)$, see \cite[Digression 4.1]{fooo:S2S2}.
Hence there exist two $b \in H^1(T(0);\Lambda_0)$ modulo $H^1(T(0);2\pi \sqrt{-1}\Z)$
such that $HF((T(0),b);\Lambda) \neq 0$.

When we consider the bulk deformation by $\frak b(\rho)$,
 (\ref{intnumber}) and
Theorem \ref{F2PO} imply that the potential function of $T({\bf u})$ with bulk,
$\frak{PO}_{\frak b(\rho)}$, becomes
\begin{equation}\label{PObulk}
\aligned
\frak{PO}_{\frak b(\rho)}
= T^{u_1}y^{\text{\bf u}}_1 + T^{u_2}y^{\text{\bf u}}_2
&+T^{2-u_1-2u_2}(y^{\text{\bf u}}_1)^{-1}(y^{\text{\bf u}}_2)^{-2}
\\ &+
(e^{T^{\rho}}
+ e^{-T^{\rho}})T^{1 -u_2}(y^{\text{\bf u}}_2)^{-1}.
\endaligned
\end{equation}
(See \cite[Theorem 3.5]{fooo:bulk}  and \cite[Formula (47)]{fooo:S2S2}.)
Now we put
\begin{equation}\label{rhoichi}
2\rho = u_2 - u_1 = u_2 - (1-u_2) = 2u_2 - 1
\nonumber
\end{equation}
and consider (\ref{PObulk}) at $\text{\bf u}= (u_1, u_2)$ for some $\rho$.
Namely, $\text{\bf u} = (1/2-\rho,1/2+\rho)$.
Then the potential function with bulk ${\mathfrak b}(\rho)$ of  $T(0)$ is written as
$$
T^{1/2} (y^{\bf u_0}_1 +y^{\bf u_0}_2 + (y^{\bf u_0}_1)^{-1} (y^{\bf u_0}_2)^{-2} + (e^{T^{\rho}} + e^{-T^{\rho}}) (y^{\bf u_0}_2)^{-1}).
$$
See   \cite[Formula (47)]{fooo:S2S2} with $u_1 = u_2 = 1/2$.
There are two critical points, which are
$ (\frak y^0_1(\rho),\frak y^0_2(\rho))=(\epsilon (e^{T^{\rho/2}} + e^{-T^{\rho/2}})^{-1}, \epsilon (e^{T^{\rho/2}} + e^{-T^{\rho/2}}))$
with $\epsilon = \pm 1$.
Hence $b^0(\rho)=b(\frak y^0(\rho)) = (\log \frak y^0_1(\rho),\log \frak y^0_2(\rho))
\in H^1(T(0);\Lambda_0)$,
$$
HF((T(0),(\frak b(\rho),b^0(\rho)));\Lambda) \ne 0.
$$

For $T(\rho)$, the potential function with bulk ${\mathfrak b}_{\rho}$ is written as
$$
T^{1/2 - \rho} (y^{\bf u}_1 + T^{2\rho} y^{\bf u}_2 + (y^{\bf u}_1)^{-1} (y^{\bf u}_2)^{-2} + (e^{T^{\rho}}
+e^{-T^{\rho}}) (y^{\bf u}_2)^{-1}).
$$
See \cite[Formula (47)]{fooo:S2S2} with $u_1= 1/2 -\rho, u_2 = 1/2 + \rho$.
There are two critical points, which are
$(\frak y_1(\rho), \frak y_2(\rho))=(\epsilon T^{\rho} (e^{T^{\rho/2}} - e^{-T^{\rho/2}})^{-1}, - \epsilon T^{-\rho} (e^{T^{\rho/2}} - e^{-T^{\rho/2}}))$.
It follows that for
$b(\rho)=b(\frak y(\rho)) = (\log \frak y_1(\rho), \log \frak y_2(\rho))
\in H^1(T(\rho);\Lambda_0)$,
$$
HF((T(\rho),(\frak b(\rho),b(\rho)));\Lambda) \ne 0.
$$

In summary, we have:

\begin{lem} \label{lem:critexistence}
(1)  There exist two $b \in H^1(T(0);\Lambda_0)/H^1(T(0);2\pi \sqrt{-1}\Z)$
such that
$$HF((T(0),b);\Lambda) \neq 0.$$

\noindent
(2)  There exist two $b^0(\rho) \in H^1(T(0);\Lambda_0)/H^1(T(0);2\pi \sqrt{-1}\Z)$ such that
$$
HF((T(0),(\frak b(\rho),b^0(\rho));\Lambda) \ne 0.
$$

\noindent
(3)  There exist two $b(\rho) \in H^1(T(\rho);\Lambda_0)/H^1(T(\rho);2\pi\sqrt{-1}\Z)$ such that
$$
HF((T(\rho),(\frak b(\rho),b(\rho));\Lambda) \ne 0.
$$
\end{lem}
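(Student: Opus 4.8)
The plan is to reduce all three statements to a single computation: find the critical points of the relevant potential function and then invoke Theorem \ref{Floercrit} to conclude nonvanishing of the corresponding Floer cohomology. Since the potential function in question is explicit (a four-term Laurent polynomial perturbed by higher-order terms in $T$), the strategy will be to locate the critical points of the \emph{leading-order} potential first, verify nondegeneracy, and then appeal to the standard fact (cf. \cite{fooo:toric1} Theorem 10.4) that nondegenerate leading-order critical points persist for the full potential function. I would carry out the three cases in the order (1), (2), (3), since each is governed by a mild variant of the same Laurent polynomial obtained by substituting the appropriate value of $\text{\bf u}$ and bulk parameter $\rho$ into Theorem \ref{F2PO}.

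First I would treat (1), which corresponds to $\frak b = \text{\bf 0}$ and $\text{\bf u}_0 = (1/2,1/2)$. Substituting $y_i = T^{1/2} y_i^{\text{\bf u}_0}$ into \eqref{POord2}, the leading-order potential (at $\alpha = 0$) becomes
$$
\frak{PO}_{\text{\bf 0}}^{\text{\bf u}_0}
= T^{1/2}\left(y_1^{\text{\bf u}_0} + y_2^{\text{\bf u}_0}
+ (y_1^{\text{\bf u}_0})^{-1}(y_2^{\text{\bf u}_0})^{-2}
+ 2(y_2^{\text{\bf u}_0})^{-1}\right).
$$
Setting the logarithmic derivatives to zero, $y_1\partial/\partial y_1$ gives $y_1^{\text{\bf u}_0} = (y_1^{\text{\bf u}_0})^{-1}(y_2^{\text{\bf u}_0})^{-2}$, i.e. $(y_1^{\text{\bf u}_0})^2 = (y_2^{\text{\bf u}_0})^{-2}$, and $y_2\partial/\partial y_2$ gives $y_2^{\text{\bf u}_0} = 2(y_1^{\text{\bf u}_0})^{-1}(y_2^{\text{\bf u}_0})^{-2} + 2(y_2^{\text{\bf u}_0})^{-1}$; solving yields the two solutions $(y_1^{\text{\bf u}_0}, y_2^{\text{\bf u}_0}) = (\pm 1/2, \pm 2)$ up to the obvious branch choices (this is the computation already recorded in \cite{fooo:S2S2} Digression 4.1). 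I would then compute the Hessian determinant $\det[y_iy_j \partial^2\frak{PO}/\partial y_i\partial y_j]$ at each critical point to confirm nondegeneracy, and conclude via Theorem \ref{Floercrit} that $HF((T(0),b);\Lambda) \cong H(T^2;\Lambda) \neq 0$ for each of the two corresponding $b = b(\frak y)$ modulo $H^1(T(0);2\pi\sqrt{-1}\,\Z)$.

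For (2) and (3) the only change is the bulk term: by \eqref{intnumber} and Theorem \ref{toricPOcalcthm}, inserting $\frak b(\rho) = T^\rho PD[S^2_{\mathrm{van}}]$ replaces the coefficient $2$ of the $(y_2^{\text{\bf u}})^{-1}$ term by $e^{T^\rho} + e^{-T^\rho}$, yielding \eqref{PObulk}. For (2) I keep $\text{\bf u} = \text{\bf u}_0 = (1/2,1/2)$ and solve the critical point equations of the resulting leading-order potential $T^{1/2}(y_1^{\text{\bf u}_0} + y_2^{\text{\bf u}_0} + (y_1^{\text{\bf u}_0})^{-1}(y_2^{\text{\bf u}_0})^{-2} + (e^{T^{\rho/2}}+e^{-T^{\rho/2}})^2 (y_2^{\text{\bf u}_0})^{-1})$ — wait, more precisely one arrives at $b^0(\rho) = b(\frak y^0(\rho))$ with $\frak y^0(\rho) = (\epsilon(e^{T^{\rho/2}}+e^{-T^{\rho/2}})^{-1},\, \epsilon(e^{T^{\rho/2}}+e^{-T^{\rho/2}}))$, $\epsilon = \pm 1$, and checks nondegeneracy (which holds since $e^{T^{\rho/2}}+e^{-T^{\rho/2}} \equiv 2 \not\equiv 0 \bmod \Lambda_+$, so the Hessian reduces modulo $\Lambda_+$ to that of case (1)). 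For (3) I set $\text{\bf u} = (1/2-\rho, 1/2+\rho)$; substituting and using $u_1 = 1/2-\rho$, $u_2 = 1/2+\rho$, $2 - u_1 - 2u_2 = 1/2 - \rho$, the potential takes the form $T^{1/2-\rho}(y_1^{\text{\bf u}} + T^{2\rho} y_2^{\text{\bf u}} + (y_1^{\text{\bf u}})^{-1}(y_2^{\text{\bf u}})^{-2} + (e^{T^\rho}+e^{-T^\rho})(y_2^{\text{\bf u}})^{-1})$, and solving the critical point equations gives $\frak y(\rho) = (\epsilon T^\rho(e^{T^{\rho/2}}-e^{-T^{\rho/2}})^{-1},\, -\epsilon T^{-\rho}(e^{T^{\rho/2}}-e^{-T^{\rho/2}}))$; again a Hessian computation gives nondegeneracy and Theorem \ref{Floercrit} yields $HF((T(\rho),(\frak b(\rho),b(\rho)));\Lambda) \neq 0$.

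The main obstacle is a bookkeeping one rather than a conceptual one: one must carefully track the valuations $\frak v_T$ of the candidate critical coordinates to make sure they lie in the \emph{interior} of the relevant moment polytope (so that the critical point is admissible in the sense of Theorem \ref{Floercrit}), and one must confirm that the higher-order correction terms in $T$ appearing in \eqref{POord2}–\eqref{PObulk} (which are genuinely present since $F_2(\alpha)$ is only nef, not Fano) do not destroy nondegeneracy when passing from the leading-order potential to the full one. The second point is handled by the persistence argument cited above (\cite{fooo:toric1} Theorem 10.4 and its proof), and the first point follows from the explicit formulas for $\frak y^0(\rho)$ and $\frak y(\rho)$, whose coordinates have $T$-valuation $0$ and $\pm\rho$ respectively, matching $\text{\bf u}_0 = (1/2,1/2)$ after the rescaling $y_i \mapsto T^{u_i} y_i^{\text{\bf u}}$ in case (2) and $\text{\bf u} = (1/2-\rho, 1/2+\rho)$ in case (3). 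I would present the three substitutions in sequence, do each two-variable critical-point computation explicitly, record the Hessian determinant in each case to exhibit nondegeneracy, and then quote Theorem \ref{Floercrit} for the conclusion.
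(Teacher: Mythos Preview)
Your proposal is correct and follows essentially the same route as the paper: the lemma is stated as a summary of the explicit critical-point computations carried out in the paragraphs immediately preceding it, and your critical points $(\pm 1/2,\pm 2)$, $(\epsilon(e^{T^{\rho/2}}+e^{-T^{\rho/2}})^{-1},\,\epsilon(e^{T^{\rho/2}}+e^{-T^{\rho/2}}))$, and $(\epsilon T^\rho(e^{T^{\rho/2}}-e^{-T^{\rho/2}})^{-1},\,-\epsilon T^{-\rho}(e^{T^{\rho/2}}-e^{-T^{\rho/2}}))$ match the paper's exactly.

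Two small points where you are being more cautious than necessary. First, the potential functions in Theorem~\ref{F2PO} and \eqref{PObulk} are \emph{exact}, not merely leading-order approximations (this is the content of \cite{fooo:S2S2} Theorem~2.2 and the computation leading to \eqref{PObulk}); so no persistence argument from \cite{fooo:toric1} Theorem~10.4 is needed---the critical points you write down are genuine critical points of the full potential. Second, Theorem~\ref{Floercrit} as stated is for Lagrangian torus fibers of a compact toric manifold, whereas $T(\rho)\subset \widehat F_2(0)\cong S^2\times S^2$ arises from the orbifold resolution and is not literally a toric fiber of $S^2\times S^2$; the paper therefore does not invoke Theorem~\ref{Floercrit} directly but instead cites the analogous critical-point/Floer-cohomology correspondence established in \cite{fooo:S2S2} (see the references to Digression~4.1 and Formula~(47) there). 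Your argument goes through once you substitute that citation for Theorem~\ref{Floercrit}.
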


Using this lemma, we obtain the following results.

\begin{thm}\label{thm:T(u)}
(1) \ \ There exists an idempotent $e$ of a field factor of $QH(S^2 \times S^2;\Lambda)$ such that
$T(0)$ is $\mu_e$-superheavy.

\noindent
(2) \ \ For any $0 \le \rho < \frac{1}{2}$, there exist idempotents $e_{\rho}$ and $e^0_{\rho}$, each of which is
an idempotent of a field factor of $QH_{\frak b(\rho)}(S^2 \times S^2;\Lambda)$ such that
$T(\rho)$ is $\mu_{e_{\rho}}^{\frak b(\rho)}$-superheavy and $T(0)$ is $\mu_{e^0_{\rho}}^{\frak b(\rho)}$-superheavy.
\end{thm}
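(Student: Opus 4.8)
The plan is to deduce Theorem \ref{thm:T(u)} from the superheaviness criterion Theorem \ref{thm:heavy} (2), feeding in the Floer-cohomological non-vanishing recorded in Lemma \ref{lem:critexistence}. So in each of the three cases I must produce (i) a direct-factor decomposition $QH^*_{\frak b}(S^2\times S^2;\Lambda)\cong\Lambda\times Q'$ together with the unit $e$ of the $\Lambda$-factor, and (ii) the non-vanishing $i_{\text{\rm qm},\text{\bf b}}^{\ast}(e)\ne 0$ in $HF^{\ast}((L,\text{\bf b});\Lambda)$ for the appropriate torus $L\in\{T(0),T(\rho)\}$ and weak bounding cochain $\text{\bf b}$. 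For (i) the key remark is that the quantum cohomology of $S^2\times S^2$ is semisimple: over the algebraically closed field $\Lambda$ one has $QH^{\ast}(S^2;\Lambda)\cong\Lambda[x]/(x^2-T^{c})\cong\Lambda\times\Lambda$, whence by the K\"unneth formula $QH^{\ast}(S^2\times S^2;\Lambda)\cong\Lambda^{\times4}$; and since $\frak b(\rho)=T^{\rho}PD[S^2_{\mathrm{van}}]$ lies in $\Lambda_{+}\cdot H^{2}$, the deformed product $\cup^{\frak b(\rho)}$ differs from $\cup_{Q}$ only by replacing each Gromov--Witten weight $q^{-\alpha\cap\omega}$ by $\exp(\frak b(\rho)\cap\alpha)\,q^{-\alpha\cap\omega}$ with $\exp(\frak b(\rho)\cap\alpha)\in1+\Lambda_{+}$, so it is a $T$-adically small perturbation of $\cup_{Q}$ and is again semisimple; thus $QH^{\ast}_{\frak b(\rho)}(S^2\times S^2;\Lambda)\cong\Lambda^{\times4}$ as well. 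In particular every primitive idempotent of these rings is automatically the unit of a (necessarily $\Lambda$-) direct factor, so (i) costs nothing, and the hypothesis of Theorem \ref{thm:morphism} making $\mu_e^{\frak b}$ a Calabi quasimorphism is met for free.

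For (ii) I would run the following argument uniformly. In case (1) take $\text{\bf b}=(0,0,0,b)$ with $b$ as in Lemma \ref{lem:critexistence} (1); in case (2) take $\text{\bf b}=(\frak b(\rho),b(\rho))$ on $T(\rho)$, resp. $\text{\bf b}=(\frak b(\rho),b^{0}(\rho))$ on $T(0)$, as in Lemma \ref{lem:critexistence} (3),(2). Each of these $\text{\bf b}$ lies in $\widehat{\mathcal M}_{\text{\rm weak,def}}(L;\Lambda_{0})$ --- this is Proposition \ref{H1iswbdchain} for the undeformed $T(0)$ and the lemma stated just before Lemma \ref{lem:critexistence} for the bulk-deformed cases --- and by Lemma \ref{lem:critexistence} the Floer cohomology $HF^{\ast}((L,\text{\bf b});\Lambda)$ is non-zero; being a unital ring under $\frak m_{2}$, it then has non-zero unit $\text{\bf e}_{L}$. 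Now invoke that the open--closed map $i_{\text{\rm qm},\text{\bf b}}^{\ast}$ of \eqref{ambcohtoHFL} is a unital ring homomorphism (\cite{fooo:toricmir} Section 31, \cite{AFOOO}): it carries the primitive idempotents $e_{1},\dots,e_{4}$ of $QH^{\ast}_{\frak b}(S^2\times S^2;\Lambda)$ to a complete orthogonal system of idempotents of $HF^{\ast}((L,\text{\bf b});\Lambda)$ summing to $\text{\bf e}_{L}\ne0$, so $i_{\text{\rm qm},\text{\bf b}}^{\ast}(e_{j})\ne0$ for at least one $j$. Calling that idempotent $e$ (case (1)), $e_{\rho}$ (the $T(\rho)$ part of case (2)), $e^{0}_{\rho}$ (the $T(0)$ part of case (2)), step (i) shows it is the unit of a field factor satisfying \eqref{localnonzero}, so Theorem \ref{thm:heavy} (2) applies and yields the asserted $\mu^{\frak b}$-superheaviness of $T(0)$, $T(\rho)$ and $T(0)$ respectively. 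Alternatively one could bypass the ring-homomorphism input in case (2) by the toric model: granting that the Floer theory of $T(\rho)$ is governed by the potential function \eqref{PObulk}, Theorems \ref{JacisHQ}, \ref{Floercrit}, \ref{calcithm} and Proposition \ref{Morsesplit} applied to the nondegenerate critical point $\frak y(\rho)$ produce directly the idempotent $e_{\rho}=e_{\frak y(\rho)}$ with $i_{\text{\rm qm}}^{\ast}(e_{\frak y(\rho)})=\text{\bf e}_{L}\ne0$, exactly as in the proof of Theorem \ref{toricheavymain}.

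The main obstacle is not in this assembly but in the input Lemma \ref{lem:critexistence}, i.e. the computation of $HF((T(\rho),(\frak b(\rho),b));\Lambda)$ inside the \emph{non}-toric manifold $\widehat F_{2}(0)\cong S^2\times S^2$: one must control the $J$-holomorphic discs bounded by $T(\rho)$, which a priori can leave the toric region on which formula \eqref{PObulk} lives, and this is precisely what the simultaneous-resolution and degeneration analysis from $F_{2}(\alpha)$ in \cite{fooo:S2S2} achieves; I take that as given here. A secondary point requiring care is the persistence of semisimplicity of the quantum cohomology under the degree-two bulk deformation $\frak b(\rho)$, handled above. Once Theorem \ref{thm:T(u)} is in hand, Theorem \ref{dips2s2} follows by feeding the $\mu$-superheaviness of $T(\rho)$ (hence, by Remark \ref{rem:muimplieszeta}, its $\zeta$-superheaviness) together with the $\zeta$-heaviness of $S^1_{\text{\rm eq}}\times S^1_{\text{\rm eq}}$ into Theorem \ref{sheavyintersectheavy}.
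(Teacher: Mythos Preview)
Your proof is correct and follows essentially the same architecture as the paper's: feed Lemma~\ref{lem:critexistence} into Theorem~\ref{thm:heavy}~(2), using semisimplicity of $QH^*_{\frak b}(S^2\times S^2;\Lambda)$ to guarantee that every primitive idempotent is the unit of a field factor, and using unitality of $i^*_{\text{qm},\text{\bf b}}$ to ensure that at least one such idempotent has non-zero image in $HF^*((L,\text{\bf b});\Lambda)$.

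The only substantive difference is in how semisimplicity of $QH^*_{\frak b(\rho)}(S^2\times S^2;\Lambda)$ is verified. You argue by K\"unneth for $\frak b=0$ and then a $T$-adic perturbation for $\frak b(\rho)\in T^\rho H^2(M;\Lambda_0)$. The paper instead exploits the \emph{monotone} toric structure on $S^2\times S^2$: it computes the bulk-deformed potential function of the central fiber $S^1_{\rm eq}\times S^1_{\rm eq}$ explicitly as $\frak{PO}_{\frak b}=T^{1/2}(e^a y_1 + y_1^{-1}+e^{-a}y_2+y_2^{-1})$, finds four nondegenerate critical points, and invokes the Jacobian-ring isomorphism (Theorem~\ref{JacisHQ}) to conclude $QH^*_{\frak b(\rho)}\cong\Lambda^{\times4}$. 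The paper's route has the advantage of giving the four idempotents $\text{\bf e}^{\frak b}_i$ explicitly as eigenvectors of $c_1(M)\cup^{\frak b}$ with computable eigenvalues, which it then matches (via Corollary~\ref{ev and crit}) to the critical values of the potential functions of $T(0)$ and $T(\rho)$---thereby identifying \emph{which} idempotent detects each torus, not merely that one exists. Your perturbation argument is cleaner but less informative; it does require checking that the valuation shift from the bulk term dominates the (negative) valuations of the undeformed idempotents, which here comes down to $\rho+(\text{minimal positive area}) > -2\frak v_T(e_i)$ and holds for all $\rho>0$. One small correction: for the weak-unobstructedness of $T(0)$ without bulk you should cite the lemma preceding Lemma~\ref{lem:critexistence} (i.e.\ \cite{fooo:S2S2} Theorem~8.2) rather than Proposition~\ref{H1iswbdchain}, since $T(0)$ is not a toric fiber for the monotone toric structure.
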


\begin{proof}
We first show that $QH(S^2 \times S^2;\Lambda)$ and $QH_{\frak b(\rho)}(S^2 \times S^2;\Lambda)$ are semi-simple.
For this purpose, we consider the toric structure as the monotone product of $S^2$.
Let ${\mathfrak b}=a [S^2_{\rm van}]$, $a \in \Lambda_+$.  ($\mathfrak b=\mathfrak b(\rho)$ if $a=T^{\rho}$,
while ${\mathfrak b}=0$ if $a=0$.)
We pick points $pt_1$ (resp. $pt_2$) on the first (resp. the second)
factor of $S^2 \times S^2$ in the hemisphere in the classes $\beta_1$, $\beta_2$ that contributes to
the coefficients of $y_1$, $y_2$ respectively in the potential function.
We represent the homology class $[S^2_{\rm van}]$ by the anti-diagonal whose
homology class is given by $[S^2 \times pt_2] - [pt_1 \times S^2]$ in $H_2(S^2 \times S^2)$.
The potential function  of $S^1_{\rm eq} \times S^1_{\rm eq}$ with bulk $\mathfrak b$ is written as
$$
\frak{PO}_{\mathfrak b}=T^{1/2}(e^a y_1 + y_1^{-1} + e^{-a} y_2 + y_2^{-1}).
$$
It has four nondegenerate critical points $(\epsilon_1 e^{-a/2}, \epsilon_2 e^{a/2})$
with $\epsilon_1, \epsilon_2 = \pm 1$.
The critical values are $2(\epsilon_1  e^{a/2}+ \epsilon_2 e^{a/2}) T^{1/2}$.
By   \cite[Theorem 6.1]{fooo:toric1} (Fano toric case) and \cite[ Theorem 1.1]{fooo:toricmir}, we find that the quantum cohomology with bulk deformation by $\mathfrak b$ is
factorized into four copies of $\Lambda$:
$$
QH_{\mathfrak b}(S^2 \times S^2;\Lambda) \cong \bigoplus_{i=1}^4 \Lambda {\mathbf e}^{\mathfrak b}_i.
$$
Here ${\mathbf e}^{\mathfrak b}_1, \dots, {\mathbf e}^{\mathfrak b}_4$ are the idempotents corresponding to
the critical points of $\mathfrak{PO}_{\mathfrak b}$ with $(\epsilon_1,\epsilon_2)=(1,1),(1,-1),(-1,1), (-1,-1)$,
respectively.  (When $\mathfrak b =0$, we simply write them as ${\mathbf e}_i$.)
In particular, $QH_{\mathfrak b}(S^2 \times S^2;\Lambda)$ is semi-simple.

By Lemma \ref{lem:critexistence} (1) and (3), there exists $b$ (resp. $b(\rho)$)
such that $HF((T(0),b);\Lambda) \neq 0$ (resp.
$HF(T(\rho), (\frak b(\rho),b(\rho));\Lambda) \neq 0$).
Hence \cite[Theorem 3.8.62]{fooo:book1}, with  \cite[(3.8.36.2)]{fooo:book1} taken into account,
implies that the maps
$$
i_{{\rm qm},T(0),b}^*:QH(S^2 \times S^2;\Lambda) \to HF((T(0),b);\Lambda),
$$
$$
i_{{\rm qm},T(\rho), (\frak b(\rho), b(\rho))}^*:QH_{\frak b(\rho)}(S^2 \times S^2;\Lambda) \to HF((T(\rho) (\frak b(\rho),b(\rho));\Lambda)
$$
are nontrivial.
In particular, there is at least one idempotent $e_0 \in QH(S^2 \times S^2;\Lambda)$
(resp. $e_{\rho} \in QH_{\frak b(\rho)}(S^2 \times S^2;\Lambda)$) such that
$i_{{\rm qm},T(0),b_i}^*(e_0) \neq 0$ (resp.
$i_{{\rm qm},T(\rho), (\frak b(\rho), b(\rho))}^*(e_{\rho}) \neq 0$).  Hence $T(0)$ is $\mu_{e}$-superheavy
and $T(\rho)$ is $\mu_{e_\rho}^{\frak b(\rho)}$-superheavy.
\end{proof}

The sphere $S^2_{\rm van}$ is a Lagrangian submanifold, which is disjoint from $T(\rho)$.
We have the following

\begin{lem}\label{antidiagonal:unobstruct}
The Lagrangian sphere $S^2_{\rm van}$, which is the anti-diagonal in $S^2 \times S^2$,
is unobstructed and
$$HF(S^2_{\rm van};\Lambda) \cong H(S^2_{\rm van};\Lambda) \neq 0.$$
\end{lem}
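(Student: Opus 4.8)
\textbf{Proof proposal for Lemma \ref{antidiagonal:unobstruct}.}

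The plan is to exploit the fact that the anti-diagonal $S^2_{\mathrm{van}} \subset S^2 \times S^2$ is a Lagrangian sphere, and in dimension $n=2$ such a sphere has especially simple $J$-holomorphic disc bubbling behaviour. First I would recall that $S^2_{\mathrm{van}}$ is diffeomorphic to $S^2$, which is simply connected, so $\pi_1(S^2_{\mathrm{van}}) = 1$ and hence $H^1(S^2_{\mathrm{van}};\Lambda_0) = 0$. Consequently the space of bounding cochains $\widehat{\mathcal M}_{\mathrm{weak,def}}(S^2_{\mathrm{van}};\Lambda_0)$ that one needs to probe for unobstructedness is much smaller than for a torus: the only degree-one cochains available come from $\Omega^1(S^2_{\mathrm{van}})$, all of which are exact, so after taking cohomology the canonical model has no degree-one part at all. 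Thus the relevant question is whether $\frak m_0 \equiv 0 \bmod \mathbf e_L \Lambda_+$, i.e. whether the obstruction class $\frak m_{0,\beta}$ vanishes for every $\beta$ with $\mu_L(\beta) = 2$.

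Next I would invoke the Maslov index and parity obstruction. Since $S^2_{\mathrm{van}}$ is a Lagrangian sphere in a $4$-dimensional symplectic manifold, it has minimal Maslov number $\geq 2$, and more to the point the moduli space $\mathcal M_1(S^2_{\mathrm{van}};\beta)$ of one-pointed holomorphic discs in a class $\beta$ with $\mu_L(\beta) = 2$ has virtual dimension $n + \mu_L(\beta) - 2 = 2$ (by \eqref{dimensiondisc} with $k=0$, $\ell=0$, $n=2$), so $\mathrm{ev}_{0!}$ of its fundamental class lands in $\Omega^0(S^2_{\mathrm{van}})$, i.e. it is a multiple of $\mathbf e_L$ (together with possible higher-degree contributions from larger area which are killed by the filtration). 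A cleaner route, which I would prefer, is to observe that $S^2_{\mathrm{van}}$ is the fixed-point set of an anti-symplectic involution of $S^2 \times S^2$ (the composition of the swap with complex conjugation on each factor), hence is \emph{monotone} and moreover bounds no holomorphic discs of Maslov index $2$ for a suitable $J$ by an involution/reflection argument: any such disc would pair with its conjugate to produce a sphere of Chern number $1$, and one checks that in the relevant homology class this is impossible, or alternatively that all Maslov-$2$ disc classes come in conjugate pairs whose contributions cancel. Either way I would conclude $\frak m_{0,\beta} = 0$ for the Maslov-$2$ classes, hence $\frak m_0(1) \in \mathbf e_L \Lambda_+$ trivially (in fact $\frak{PO}(S^2_{\mathrm{van}}) = 0$ with the zero bounding cochain), so $S^2_{\mathrm{van}}$ is unobstructed.

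Finally, for the nonvanishing of Floer cohomology, I would note that once $S^2_{\mathrm{van}}$ is unobstructed with $b = 0$, the Floer differential $\delta^{0,0} = \frak m_1$ on $CF(S^2_{\mathrm{van}};\Lambda) = \Omega(S^2_{\mathrm{van}})\widehat\otimes\Lambda$ is a $\Lambda_0$-linear deformation of the de Rham differential $d$ by terms of strictly positive $T$-adic valuation. A standard spectral sequence (the one in \cite{fooo:book} relating $HF$ to $H^*(L;\Lambda)$) then has $E_2$-page $H^*(S^2_{\mathrm{van}};\Lambda) = H^*(S^2;\Lambda)$; since this is concentrated in degrees $0$ and $2$ and the differentials of the spectral sequence shift degree by $1$ modulo $2$ hmm — more carefully, since $H^{\mathrm{odd}}(S^2) = 0$, every higher differential vanishes for parity reasons, so the spectral sequence degenerates and $HF(S^2_{\mathrm{van}};\Lambda) \cong H^*(S^2;\Lambda) \neq 0$. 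The main obstacle is the disc-counting input of the second paragraph: rigorously ruling out (or cancelling) Maslov-index-$2$ holomorphic discs bounded by $S^2_{\mathrm{van}}$. I expect this to follow either from the anti-symplectic involution symmetry (pairing each disc with its mirror image and checking the signed count is zero) or from a direct identification of $S^2_{\mathrm{van}}$ with the antidiagonal and a classification of low-area holomorphic discs in $S^2\times S^2$ with boundary on it; in fact since $S^2_{\mathrm{van}}$ is displaceable-free-free — it is known that the antidiagonal in $S^2\times S^2$ has nonvanishing Floer cohomology, which is exactly what is needed, and this is recorded in the literature on monotone Lagrangian spheres, so I would cite that and use the spectral sequence argument to package the conclusion.
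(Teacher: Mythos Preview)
Your overall strategy (exploit the anti-symplectic involution for unobstructedness, then a spectral-sequence/degree argument for $HF \cong H$) matches the paper's, but the crucial disc-counting step is both incomplete and contains an error, and you have overlooked the observation that makes it trivial.

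The gap is this: you try to rule out or cancel Maslov-index-$2$ discs, and you claim that doubling such a disc via the involution produces a sphere of Chern number $1$. That is wrong: for $L = \mathrm{Fix}(\tau)$, gluing a disc $u$ with its conjugate $\tau\circ u\circ\mathrm{conj}$ gives a sphere $v$ with $c_1(M)[v]=\mu_L(u)$, so a Maslov-$2$ disc yields a Chern-$2$ sphere, and those certainly exist in $S^2\times S^2$. Your alternative ``contributions cancel in conjugate pairs'' is exactly what the involution paper \cite{fooo:invol} addresses, but the sign of that cancellation depends on the Maslov index mod $4$, so it cannot be invoked blindly.

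What the paper does instead, and what you are missing, is the observation that there are \emph{no} Maslov-$2$ disc classes at all: since $L\cong S^2$ has $H_1(L)=0$, the boundary map $\pi_2(M,L)\to\pi_1(L)$ vanishes, so every disc class $\beta$ is the image of a sphere class $\alpha\in\pi_2(M)$, and then $\mu_L(\beta)=2\,c_1(M)\cap\alpha$. Because $S^2\times S^2$ has minimal Chern number $2$, every $\mu_L(\beta)$ is divisible by $4$. Combined with monotonicity (so $\omega(\beta)>0\Rightarrow\mu_L(\beta)>0$), the minimal Maslov number is $\ge 4$, and the dimension formula \eqref{dimensiondisc} with $n=2$, $k=\ell=0$ gives $\dim\mathcal M_1(L;\beta)=\mu_L(\beta)\ge 4>\dim L$, so $\mathrm{ev}_{0!}(1)=0$ and $\frak m_0(1)=0$. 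The paper records this via \cite{fooo:invol} Theorem 1.3, explicitly noting the divisibility by $4$. Your spectral-sequence argument for the second assertion is fine and is essentially what the cited \cite{fooo:book} Theorem D provides.
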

\begin{proof}
Note that the anti-diagonal in $S^2 \times S^2$ can be seen as a fixed point set of an anti-symplectic involution.
Then Theorem 1.3 with $k=0, \ell=0$ in
\cite{fooo:invol} implies that ${\mathfrak m}_0(1)=0$, since
the Maslov index of any holomorphic disc in $(S^2 \times S^2, S^2_{\rm van})$ is divisible by $4$.
See also \cite[Corollary 1.6]{fooo:invol}.
The second assertion follows from
\cite[Theorem D (D.3)]{fooo:book1}.
\end{proof}
Since $H^1(S^2_{\text{\rm van}};\Lambda) = 0$, there is at most one bounding cochain up to gauge equivalence. Lemma
\ref{antidiagonal:unobstruct} implies existence of bounding cochain that
 satisfies $\frak m^b(1)=0$.(Recall that in general a  weak bounding cochain satisfies $\frak m^b(1) = \frak{ PO}(b) {\bf e}_L$.)
and so the value of the potential function at $b$ is zero.
By the same argument as in the case of $T(0)$, we find an idempotent $e'$ of a field factor of
$QH(S^2 \times S^2;\Lambda)$
and such that $i_{\rm qm, S^2_{\rm van}}^{\ast}(e') \neq 0$.

Since each of  $e_0$, $e'$ is an idempotent of a field factor of $QH(S^2 \times S^2;\Lambda)$
and $e_{\rho}$ is an idempotent of a field factor of
$QH_{\frak b(\rho)}(S^2 \times S^2;\Lambda)$, there exist corresponding  Calabi quasi-morphisms
$\mu_{e_0}^{\bf 0}$, $\mu_{e'}$, $\mu_{e_{\rho}}^{\frak b(\rho)}$ from $\widetilde{\text{\rm Ham}}(S^2 \times S^2)$ to $\R$.
Since $T(\rho)$, $\rho \in [0,1/2)$ and $S^2_{\rm van}$ are mutually disjoint, Corollary \ref{lieind} implies
Theorem \ref{uncount} (1). This completes the proof of
Theorem \ref{uncount} (1). \qed
\par
\smallskip
Furthermore,
since homogenous quasi-morphisms are homomorphisms on abelian subgroups and $\pi_1(\text{\rm Ham}(S^2 \times S^2))
\cong \Z/2\Z \times \Z/2\Z$ (see \cite{gromov}), they descend to quasi-morphisms on $\text{\rm Ham}(S^2 \times S^2)$.
We denote them by
$\overline{\mu}_{e_0}^{\bf 0}$, $\overline{\mu}_{e'}$ and
$\overline{\mu}_{e_{\rho}}^{\frak b(\rho)}$.
Thus we also obtain the following.

\begin{cor}\label{S2S2qm}\index{quasi-morphism!Calabi quasi-morphism}
The Calabi quasi-morphisms $\overline{\mu}_{e_0}^{\bf 0}$,
$\overline{\mu}_{e'}$ and $\overline{\mu}_{e_{\rho}}^{\frak b(\rho)}$ on $\text{\rm Ham}(S^2 \times S^2)$
are linearly independent from one another.
\end{cor}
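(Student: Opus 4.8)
The plan is to deduce Corollary~\ref{S2S2qm} from Theorem~\ref{uncount}~(2) by pushing the homogeneous Calabi quasimorphisms $\mu_{e_0}^{\bf 0}$, $\mu_{e'}$ and $\mu_{e_\rho}^{\frak b(\rho)}$ constructed there down along the covering projection $p\colon \widetilde{\rm Ham}(S^2\times S^2)\to {\rm Ham}(S^2\times S^2)$. The key external input is Gromov's computation \cite{gromov} that $\pi_1({\rm Ham}(S^2\times S^2))\cong \Z/2\Z\times \Z/2\Z$; in particular the kernel $Z=\ker p$ is a \emph{finite central} subgroup of $\widetilde{\rm Ham}(S^2\times S^2)$.

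First I would invoke the elementary fact that a homogeneous quasimorphism $\mu$ is additive on any abelian subgroup, hence vanishes on torsion: if $g^n=e$ then $n\mu(g)=\mu(g^n)=\mu(e)=0$. Applying this to $Z$ gives $\mu|_Z\equiv 0$ for each of $\mu_{e_0}^{\bf 0}$, $\mu_{e'}$, $\mu_{e_\rho}^{\frak b(\rho)}$. Since $Z$ is central, homogeneity yields $\mu(\widetilde\phi z)=\mu(\widetilde\phi)+\mu(z)=\mu(\widetilde\phi)$ for all $z\in Z$, so $\mu$ is constant on $Z$-cosets and descends to a function $\overline\mu$ on ${\rm Ham}(S^2\times S^2)$. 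The descended $\overline\mu$ is again a homogeneous quasimorphism with the same defect bound (Remark~\ref{rem168}~(1)), and the Calabi property of Definition~\ref{def:Qhomo}~(2) is inherited because both $\mu$ and the Calabi homomorphism are already defined through the canonical map ${\rm Ham}_U\hookrightarrow {\rm Ham}$ on displaceable $U$ and agree there. Thus $\overline\mu_{e_0}^{\bf 0}$, $\overline\mu_{e'}$, $\overline\mu_{e_\rho}^{\frak b(\rho)}$ are homogeneous Calabi quasimorphisms on ${\rm Ham}(S^2\times S^2)$.

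Next I would check that linear independence survives the descent. In the proof of Theorem~\ref{uncount}~(2) (and of Corollary~\ref{lieind} via Section~\ref{sec:independence}), each finite subfamily is witnessed by a subgroup $\cong\Z^N$ of $\widetilde{\rm Ham}(S^2\times S^2)$ generated by lifts $\widetilde\psi_i$ of time-one maps of autonomous Hamiltonians with pairwise disjoint supports, with $\mu_{e_j}^{\frak b_j}(\prod_i\widetilde\psi_i^{k_i})=-k_j$ (using the heaviness/superheaviness of $T(\rho)$ and $S^2_{\rm van}$ from Theorem~\ref{thm:T(u)} and Lemma~\ref{antidiagonal:unobstruct}). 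I claim the images $\psi_i\in{\rm Ham}(S^2\times S^2)$ still generate a group $\cong\Z^N$: a nontrivial relation $\prod_i\psi_i^{k_i}={\rm id}$ would force $\prod_i\widetilde\psi_i^{k_i}\in Z$, hence a torsion element, so that $-k_j=\mu_{e_j}^{\frak b_j}(\prod_i\widetilde\psi_i^{k_i})=0$ for all $j$, a contradiction. Therefore $(\overline\mu_1,\dots,\overline\mu_N)$ restricts to an isomorphism onto its lattice on this $\Z^N\subset{\rm Ham}(S^2\times S^2)$, which is exactly linear independence for the uncountable family $\{\overline\mu_{e_\rho}^{\frak b(\rho)}\}_{\rho\in[0,1/2)}\cup\{\overline\mu_{e_0}^{\bf 0},\overline\mu_{e'}\}$.

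The main obstacle is not analytic but a matter of verifying the two structural inputs: that $\ker p$ is central and finite (which is precisely Gromov's $\pi_1$ computation), and that the Calabi normalization is compatible with passing to the quotient, i.e.\ that ${\rm Cal}_U$ for displaceable $U$ is already a function of the class in ${\rm Ham}(S^2\times S^2)$. Once these are granted, every property of $\overline\mu$ is literally inherited from $\mu$, and the collapse-free transfer of the witnessing $\Z^N$ gives the conclusion; the uncountability is nothing new, being that of Theorem~\ref{uncount}~(2).
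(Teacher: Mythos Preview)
Your argument is correct and follows essentially the same route as the paper: the paper's one-line justification is that homogeneous quasimorphisms are homomorphisms on abelian subgroups and that $\pi_1(\mathrm{Ham}(S^2\times S^2))\cong \Z/2\Z\times\Z/2\Z$ by Gromov, hence the quasimorphisms descend. You have simply filled in the details (vanishing on torsion, constancy on central cosets, inheritance of the Calabi property, and the preservation of the witnessing $\Z^N$ for linear independence) that the paper leaves implicit.
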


Generally,
let $(M, \omega)$ be a closed symplectic manifold.
Suppose that $\widetilde{\text{\rm Ham}}(M,\omega)$ has infinitely many linearly independent homogeneous
quasimoprhisms $\mu_i$. Now, under this hypothesis,
we state a sufficient condition for the existence of infinitely many linearly independent homogeneous
quasi-morphisms on $\text{\rm Ham}(M,\omega)$.

\begin{prop}
Suppose that $\pi_1(\text{\rm Ham}(M,\omega))$ is finitely generated.
If there are infinitely many linearly independent homogeneous quasi-morphisms $\mu_i$ on
$\widetilde{\text{\rm Ham}}(M,\omega)$,
then there are infinitely many linearly independent homogeneous quasi-morphisms on
$\text{\rm Ham}(M,\omega)$.
The same statement holds for Calabi quasi-morphisms.
\end{prop}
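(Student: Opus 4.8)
The plan is to exploit the central extension
$1 \to \pi_1(\text{\rm Ham}(M,\omega)) \to \widetilde{\text{\rm Ham}}(M,\omega) \to \text{\rm Ham}(M,\omega) \to 1$
together with the elementary fact that a homogeneous quasimorphism restricts to a genuine homomorphism on any central (more generally, amenable) subgroup. Write $\Gamma$ for the image of $\pi_1(\text{\rm Ham}(M,\omega))$ in $\widetilde{\text{\rm Ham}}(M,\omega)$; it is central and, by hypothesis, generated by the $A$ elements $\widetilde\phi_1,\dots,\widetilde\phi_A$. A homogeneous quasimorphism $\mu$ on $\widetilde{\text{\rm Ham}}(M,\omega)$ descends to one on $\text{\rm Ham}(M,\omega)$ precisely when $\mu|_\Gamma\equiv 0$, and the pull-back map from homogeneous quasimorphisms on $\text{\rm Ham}(M,\omega)$ to those on $\widetilde{\text{\rm Ham}}(M,\omega)$ is injective because $\widetilde{\text{\rm Ham}}(M,\omega)\to\text{\rm Ham}(M,\omega)$ is surjective. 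So the first statement reduces to producing an infinite-dimensional space of homogeneous quasimorphisms on $\widetilde{\text{\rm Ham}}(M,\omega)$ that vanish on $\Gamma$.

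This is a dimension count. Let $Q$ be the vector space of homogeneous quasimorphisms on $\widetilde{\text{\rm Ham}}(M,\omega)$ and let $\operatorname{res}\colon Q\to\operatorname{Hom}(\Gamma,\R)$ be restriction to $\Gamma$; since $\Gamma$ is generated by $A$ elements, $\operatorname{Hom}(\Gamma,\R)$ is a real vector space of dimension $r\le A$. The hypothesis gives an infinite-dimensional subspace $V\subset Q$, namely the span of the $\mu_i$, and $V\cap\ker\operatorname{res}$ then has codimension at most $r$ in $V$, hence is infinite-dimensional. Choosing a linearly independent sequence $\eta_1,\eta_2,\dots$ in $V\cap\ker\operatorname{res}$ and letting $\overline\eta_j$ be the induced homogeneous quasimorphisms on $\text{\rm Ham}(M,\omega)$, injectivity of pull-back keeps the $\overline\eta_j$ linearly independent. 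This settles the ordinary case.

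For Calabi quasimorphisms one runs the same count on the differences $\nu_i:=\mu_i-\mu_1$ ($i\ge 2$): each $\nu_i$ is a homogeneous quasimorphism (its Calabi defect cancels), and an affine combination of the $\mu_i$ with coefficients summing to $1$ is again a homogeneous Calabi quasimorphism, since the quasimorphism and homogeneity conditions are linear while the defining identity $\mu|_{\widetilde{\text{\rm Ham}}_U}=\operatorname{Cal}_U$ for displaceable $U$ is affine. Applying the previous paragraph to the $\nu_i$ yields an infinite-dimensional space $W\subset\operatorname{span}(\nu_i)\cap\ker\operatorname{res}$. If one knows a single homogeneous Calabi quasimorphism $\mu'$ on $\widetilde{\text{\rm Ham}}(M,\omega)$ with $\mu'|_\Gamma\equiv 0$ — for example $\mu'=\mu_1$ when $\mu_1|_\Gamma\equiv 0$ — then every element of $\{\mu'+\nu : \nu\in W\}$ is a homogeneous Calabi quasimorphism vanishing on $\Gamma$, hence descends, and the descended family is linearly independent because its pairwise differences are the $\overline\nu$'s, which are linearly independent honest quasimorphisms.

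The main obstacle is exactly the point just flagged: in general one must verify that at least one homogeneous Calabi quasimorphism restricts trivially to $\Gamma=\pi_1(\text{\rm Ham}(M,\omega))$, i.e. that $0$ lies in the affine subspace $\{\,\mu|_\Gamma : \mu \text{ a homogeneous Calabi quasimorphism}\,\}\subset\operatorname{Hom}(\Gamma,\R)$ (recall that via the Seidel representation $\mu|_\Gamma$ need not vanish a priori). This is automatic, and the whole argument becomes immediate, when $\pi_1(\text{\rm Ham}(M,\omega))$ is finite — the case relevant to Theorem \ref{uncount}, e.g. $\pi_1(\text{\rm Ham}(S^2\times S^2))\cong\Z/2\Z\times\Z/2\Z$ by Gromov — because then $\operatorname{Hom}(\Gamma,\R)=0$ and $\operatorname{res}=0$, so every homogeneous quasimorphism on $\widetilde{\text{\rm Ham}}(M,\omega)$ already vanishes on $\Gamma$. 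More generally it follows once one knows that $\operatorname{Cal}_U$ descends from $\widetilde{\text{\rm Ham}}_U(M,\omega)$ to $\text{\rm Ham}_U(M,\omega)$ for the members of a displaceable open cover of $M$.
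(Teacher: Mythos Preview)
Your argument for ordinary homogeneous quasimorphisms is essentially the paper's, recast in cleaner linear-algebraic language: both use that restriction to the central subgroup $\Gamma=\pi_1(\text{\rm Ham}(M,\omega))$ is a genuine homomorphism landing in a finite-dimensional target, and pass to the kernel, which has finite codimension in $V$.

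For the Calabi statement you correctly isolate the affine nature of the Calabi condition and reduce to producing a single Calabi quasimorphism vanishing on $\Gamma$, but you then leave this as an unresolved obstacle. The paper closes it directly from the given $\mu_i$, with no external input. Let $K\le A$ be the rank of your restriction map on $V$, arranged so that $\mu_1|_\Gamma,\dots,\mu_K|_\Gamma$ are independent. For each $k>K+1$ the system $\sum_{i=1}^{K+1}a_i\,\mu_i|_\Gamma=\mu_k|_\Gamma$ has (at least) a one-parameter family of solutions, so one chooses $a_i(k)$ with $\sum_i a_i(k)\neq 1$. Then $\mu'_k:=\mu_k-\sum_{i=1}^{K+1}a_i(k)\mu_i$ vanishes on $\Gamma$ and restricts to $(1-\sum a_i(k))\operatorname{Cal}_U$ on every displaceable $U$, so its rescaling $(1-\sum a_i(k))^{-1}\mu'_k$ is precisely a Calabi quasimorphism trivial on $\Gamma$ --- the base point $\mu'$ you were seeking. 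In short, $\mu'$ is itself an affine combination of the $\mu_i$, obtained by spending one extra $\mu_i$ for the freedom $\sum a_i\neq 1$; your detour through finite $\pi_1$ and through descent of $\operatorname{Cal}_U$ becomes unnecessary.
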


\begin{proof} Let $\mu_i$ for $i \in \N$ be the given infinite family of
linearly independent homogeneous quasi-morphisms.
Pick generators $g_1, \dots, g_A \in \pi_1(\text{\rm Ham}(M,\omega))$.
Denote by $K$ the maximal integer such that,
for some $i_1, \dots, i_K$, the vectors
$$(\mu_{i_j}(g_1), \dots, \mu_{i_j}(g_A)) \in \R^A, \quad  j=1, \dots, K$$
are linearly independent.
We re-arrange the ordering of $g_1, \dots, g_A$ so that $i_1=1, \dots, i_K=K$.
For $k > K$, we can find $a_\ell(k) \in \R$ such that the quasi-morphism
$$
\mu'_k : = \mu_k  - \sum_{\ell=1}^{K} a_\ell(k) \mu_\ell
$$
vanish at $g_i$ for all $i=1, \dots, A$.
Since the restriction of a homogeneous quasi-morphism on an abelian subgroup is a homomorphism,
$\mu'_k$ vanishes on $\pi_1({\text{\rm \text{\rm Ham}}}(M,\omega))$, which we regard as a subgroup of
$\widetilde{\text{\rm Ham}}(M,\omega)$. (Recall that $\pi_1({\text{\rm Ham}}(M,\omega))$
is abelian since $\text{\rm Ham}(M,\omega))$ is a topological group.)
Therefore all $\mu'_k$, for $k>K$, descend to homogeneous quasi-morphisms on $\text{\rm Ham}(M,\omega)$.
Linear independence of $\mu'_k$, $k>K$, follows from the standing hypothesis of linear indepedence of the set
$\{\mu_i \mid i \in \N \}$.
\par
For the statement concerning Calabi quasi-morphisms, we take one more
$\mu_{K+1}$.
Then, for $k>K+1$, choose $a_i(k)$, $i=1, \dots, K+1$ such that
$
\mu'_k (g_i)=
\mu_k (g_i) - \sum_{i=1}^{K+1} a_i(k)
\mu_i(g_i)
$
are zero for $i=1, \dots, A$ and $\sum_{i=1}^{K+1} a_i(k) \neq 1$.
Then after a suitable rescaling, $\mu'_k$ becomes a Calabi quasi-morphism.
$\{\mu'_k \mid k > K+1\}$ is the set of linearly independent Calabi quasi-morphisms.
\end{proof}

\begin{rem}
In case $M$ is either a $k$ ($\ge 3$) points blow up of $\C P^2$ or
cubic surface, we can descend our family of Calabi
quasi-morphisms on $\widetilde{\text{\rm Ham}}(M,\omega)$ to
one on ${\text{\rm Ham}}(M,\omega)$ in the same way as above if we can show that
$\pi_1({\text{\rm Ham}}(M,\omega))$ is a finitely generated group.
\end{rem}
\par
\smallskip
Next we give the proof of Theorem \ref{dips2s2} using Theorem \ref{thm:T(u)} together with
Theorem \ref{sheavyintersectheavy}.

\begin{proof}[Proof of Theorem \ref{dips2s2}]
Note that $T(u)$ in Theorem \ref{dips2s2} is
$T(\rho)$.
Since $S^1_{\rm eq} \times S^1_{\rm eq}$ is the unique Lagrangian torus
fiber with respect to the monotone toric structure on $S^2 \times S^2$, $S^1_{\rm eq} \times S^1_{\rm eq}$
is superheavy with respect to the quasi-morphism $\mu_{e}^{\mathfrak b}$ associated with any idempotent
$e$ of the field factor of $QH_{\mathfrak b_{\rho}}(S^2 \times S^2;\Lambda)$.
(Here we consider
$\mathfrak b=\mathfrak b_{\rho}=T^{\rho} [S^2_{\rm van}]$.)

From Theorem \ref{thm:T(u)} we know that $T(\rho)$ is superheavy with respect to the quasi-morphism
$\mu_{e_{\frak y(\rho)}}^{\frak b(\rho)}$
associated with a suitable idempotent $e_{\frak y(\rho)}$ of $QH_{\mathfrak b_{\rho}}(S^2 \times S^2;\Lambda)$.
Since the superheaviness is invariant under symplectomorphisms,
$\varphi(T(\rho))$ is also $\mu_{e_{\frak y(\rho)}}^{\frak b(\rho)}$-superheavy for any
$\varphi \in {\rm Ham}(S^2(1) \times S^2(1))$.
Since superheavy sets with respect to the same quasi-morphism must intersect by
Theorem \ref{sheavyintersectheavy}, we have
\begin{equation}\label{eq:Theorem 1.13}
\varphi(T(\rho)) \cap (S^1_{\rm eq} \times S^1_{\rm eq}) \neq \emptyset.
\end{equation}
By \cite[Theorem 0.3.C]{gromov}, ${\rm Symp} (S^2(1)\times S^2(1))$ deformation retracts
to the trivial $\Z/2\Z$ extension of $SO(3) \times SO(3)$ where $\Z/2\Z$ is generated by switching the factors.
Thus the identity component of ${\rm Symp} (S^2(1)\times S^2(1))$ is
the same as ${\rm Ham}(S^2(1) \times S^2(1))$ and
switching the factors fixes
$S^1_{\rm eq} \times S^1_{\rm eq}$.
Hence \eqref{eq:Theorem 1.13} holds
for any $\varphi \in {\rm Symp}(S^2(1) \times S^2(1))$.
\end{proof}

\begin{rem}
We can also give a different proof of Theorem \ref{thm:T(u)}
which follows the way similar to the toric case as follows.
(A similar argument is used in Section \ref{sec:cubic}.)
It follows from \cite[Theorem 9.1]{fooo:toricmir}  that for $\alpha >0$ the map
$$
\frak{ks}_{\frak b(\rho)} :
QH_{\frak b(\rho)}(F_2({\alpha});\Lambda) \to \text{\rm Jac}(\frak{PO}_{\frak b(\rho)};\Lambda)
$$
is a ring homomorphism.
(Here $\frak{PO}_{\frak b(\rho)}$ is the potential function (\ref{PObulk}) of $F_2({\alpha})$
computed with respect to its canonical toric structure.)
Since we can take limit $\alpha \to 0$ of $\frak{PO}_{\frak b(\rho)}$
and the potential function of $\widehat F_2(0)$ with respect to its toric structure
is nothing but this limit by the result from \cite{fooo:S2S2}, we find that
$$
\frak{ks}_{\frak b(\rho)} :
QH_{\frak b(\rho)}(\widehat F_{2}(0);\Lambda) \to \text{\rm Jac}(\frak{PO}_{\frak b(\rho)};\Lambda)
$$
is also a ring homomorphism. (See \cite{AFOOO} for the proof of this multiplicative property.)
\begin{lem}\label{lem234} The map
$\frak{ks}_{\frak b(\rho)}$ is surjective.
\end{lem}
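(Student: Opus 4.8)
The plan is to obtain the surjectivity of $\frak{ks}_{\frak b(\rho)}$ for $\widehat F_2(0)$ by a limiting argument as $\alpha\to 0$ from the toric Hirzebruch surfaces $F_2(\alpha)$, $\alpha>0$. For $\alpha>0$ the manifold $F_2(\alpha)$ is compact toric, so Theorem \ref{JacisHQ} (i.e. \cite{fooo:toricmir} Theorem 1.1) tells us that
$$
\frak{ks}^{F_2(\alpha)}_{\frak b(\rho)} : QH_{\frak b(\rho)}(F_2(\alpha);\Lambda)\to \text{\rm Jac}(\frak{PO}^{F_2(\alpha)}_{\frak b(\rho)};\Lambda)
$$
is a ring \emph{isomorphism}, where $\frak{PO}^{F_2(\alpha)}_{\frak b(\rho)}$ is the toric potential function computed as in Theorem \ref{F2PO} and (\ref{POord2}). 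As recalled in Sections 5--8 of \cite{fooo:S2S2}, the smooth trivialization of the simultaneous resolution $\bigcup_a X_a$ identifies the relevant (co)homology classes, and under the degeneration $F_2(\alpha)\leadsto\widehat F_2(0)$ the virtual counts entering the quantum product $\cup^{\frak b(\rho)}$, the potential function, and the Kodaira--Spencer map all vary $T$-adically continuously in $\alpha$ and extend to $\alpha=0$; in particular $\frak{PO}^{F_2(\alpha)}_{\frak b(\rho)}\to \frak{PO}^{\widehat F_2(0)}_{\frak b(\rho)}$, the latter being the Laurent polynomial displayed in (\ref{PObulk}), and $\frak{ks}^{F_2(\alpha)}_{\frak b(\rho)}$ specializes at $\alpha=0$ to $\frak{ks}^{\widehat F_2(0)}_{\frak b(\rho)}$ (which is already known to be a ring homomorphism).

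Next I would identify both sides for $\widehat F_2(0)$ as semisimple commutative $\Lambda$-algebras of dimension $4$. For the source this is immediate, since $QH_{\frak b(\rho)}(\widehat F_2(0);\Lambda)=QH_{\frak b(\rho)}(S^2\times S^2;\Lambda)$ is semisimple and splits as a product of four copies of $\Lambda$ (this was established in the proof of Theorem \ref{thm:T(u)} using the monotone toric structure on $S^2\times S^2$). For the target, one checks directly from (\ref{PObulk}) that $\frak{PO}^{\widehat F_2(0)}_{\frak b(\rho)}$ is a Morse function with exactly four nondegenerate critical points $\frak y$, two lying over $\text{\bf u}_0=(1/2,1/2)$ and two over $(1/2-\rho,1/2+\rho)$; these are precisely the critical points furnished by Lemma \ref{lem:critexistence}, and the count $4$ also follows from Kouchnirenko's bound applied to the Newton polytope of $\frak{PO}^{\widehat F_2(0)}_{\frak b(\rho)}$ (cf. \cite{kushnire}). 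Proposition \ref{Morsesplit} then gives $\text{\rm Jac}(\frak{PO}^{\widehat F_2(0)}_{\frak b(\rho)};\Lambda)\cong \prod_{\frak y}\Lambda\, 1_{\frak y}$, with evaluation maps $\text{\rm eval}_{\frak y}:\text{\rm Jac}\to\Lambda$ defined as in the proof of Lemma \ref{calksi}. Fixing the primitive idempotents of source and target as bases, the ring homomorphism $\frak{ks}^{\widehat F_2(0)}_{\frak b(\rho)}$ is represented by the matrix $M_{\frak y,k}=\text{\rm eval}_{\frak y}(\frak{ks}^{\widehat F_2(0)}_{\frak b(\rho)}(\text{\bf e}^{\frak b(\rho)}_k))$; every entry of $M$ lies in $\{0,1\}$ (it is $\text{\rm eval}_{\frak y}$ of an idempotent in the field $\Lambda$) and each row sums to $1$ (apply $\text{\rm eval}_{\frak y}\circ\frak{ks}$ to $\sum_k\text{\bf e}^{\frak b(\rho)}_k=1$). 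Hence surjectivity of $\frak{ks}^{\widehat F_2(0)}_{\frak b(\rho)}$ is equivalent to $M$ being a permutation matrix.

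Finally I would run the continuity step. The same recipe applied to $F_2(\alpha)$ for $\alpha>0$ gives a matrix $M(\alpha)$ whose entries $\text{\rm eval}_{\frak y(\alpha)}(\frak{ks}^{F_2(\alpha)}_{\frak b(\rho)}(\text{\bf e}^{\frak b(\rho)}_k(\alpha)))$ depend $T$-adically continuously on $\alpha$ — the critical points $\frak y(\alpha)$ and the idempotents $\text{\bf e}_k(\alpha)$ vary continuously and stay pairwise distinct through the limit, so no bookkeeping jumps — and converge to $M$ at $\alpha=0$. Since these entries take values in the discrete set $\{0,1\}\subset\Lambda$, $M(\alpha)$ is constant for small $\alpha>0$, where it is a permutation matrix because $\frak{ks}^{F_2(\alpha)}_{\frak b(\rho)}$ is an isomorphism; therefore $M=M(\alpha)$ is a permutation matrix and $\frak{ks}^{\widehat F_2(0)}_{\frak b(\rho)}$ is surjective (indeed an isomorphism). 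An equivalent route, avoiding the continuity of the idempotents of $QH$, is to show directly that the image of $\frak{ks}^{\widehat F_2(0)}_{\frak b(\rho)}$ separates the four critical points using the explicit leading-order formula for $\frak{ks}$ on degree-two classes (Theorem \ref{calcithm} Case $3$ together with Theorem \ref{toricPOcalcthm}), the four values of a suitable monomial such as $y_2$ at the four critical points being manifestly distinct. The main obstacle is making the degeneration-continuity input precise — that the operators $\frak q$ and the associated virtual counts on $F_2(\alpha)$ genuinely specialize, $T$-adically continuously, to those on $\widehat F_2(0)$ — but this is exactly the gluing/degeneration analysis carried out in \cite{fooo:S2S2}, so it can be cited rather than redone; the degenerate case $\rho=0$ (where $\frak b(0)$ fails to lie in $\Lambda_+$) is handled in the same manner, the Newton-polytope count being unaffected.
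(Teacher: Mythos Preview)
Your approach via continuity from the toric case $F_2(\alpha)$ is different from the paper's, which gives a direct computation.  The paper first checks that the monomials $z_1,\dots,z_4$ corresponding to the four facets of $P(0)$ generate a dense $\Lambda$-subalgebra of $\Lambda\langle\!\langle y,y^{-1}\rangle\!\rangle^{\overset{\circ}P}$ (as in \cite{fooo:toricmir} Lemma 2.3), hence generate the finite-dimensional Jacobian ring, and then computes $\frak{ks}_{\frak b(\rho)}(PD[D_i])$ directly from the definition (\ref{defKS}) as the $w_i$-derivative of $\frak{PO}_{\frak b(\rho)}$.  This yields $\frak{ks}_{\frak b(\rho)}(PD[D_i])=z_i$ for $i=2,3,4$ and $\frak{ks}_{\frak b(\rho)}(PD[D_1])=(e^{T^\rho}-e^{-T^\rho})z_1$; since the scalar is nonzero, the image contains all the generators and surjectivity follows immediately.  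No continuity or degeneration beyond what is already used to define $\frak{ks}$ on $\widehat F_2(0)$ is needed.

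Your permutation-matrix argument is plausible for $\rho>0$, but it leans on the continuity of the idempotents of $QH_{\frak b(\rho)}(F_2(\alpha))$ and of $\frak{ks}^{F_2(\alpha)}$ through $\alpha\to 0$; the paper's framework does supply the latter (this is exactly how it obtains the ring-homomorphism property of $\frak{ks}^{\widehat F_2(0)}_{\frak b(\rho)}$ just before the lemma), but tracking the idempotents is an additional step you would have to justify.  More seriously, your treatment of the case the paper labels ``$\rho=0$'' is off: there the relevant potential (with coefficient $2$ in front of $y_2^{-1}$) has only \emph{two} critical points in $\text{Int}\,P$, so $\text{\rm Jac}\cong\Lambda^2$ rather than $\Lambda^4$, and the $4\times4$ permutation-matrix picture collapses --- the Newton-polytope bound of $4$ is simply not attained in that degenerate case, and surjectivity cannot be read off from an equidimensional continuity argument.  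The paper's direct generator computation works uniformly in both regimes.  Your ``equivalent route'' (showing the image contains a monomial separating the critical points) is closer in spirit to the paper's method and would work, but note that Theorems~\ref{calcithm} and~\ref{toricPOcalcthm} are stated for toric $M$, so invoking them for $\widehat F_2(0)$ again implicitly appeals to the $\alpha\to 0$ limit rather than to the definition of $\frak{ks}$ itself.
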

\begin{proof}
We can check that $\frak{PO}_{\frak b(\rho)}$ has exactly $4$ critical points
if $\rho \in (0,1)$ and has exactly $2$ critical points in case $\rho =0$.
We can also check that those critical points are
nondegenerate.
Therefore $\text{\rm Jac}(\frak{PO}_{\frak b(\rho)};\Lambda) \cong \Lambda^4$
if $\rho \in (0,1)$ and $\text{\rm Jac}(\frak{PO}_{\frak b(\rho)};\Lambda) \cong \Lambda^2$
if $\rho =0$.
We put
$$
z_1 = T^{1 -u_2}(y^{\text{\bf u}}_2)^{-1},
\quad
z_2 = T^{u_1}y^{\text{\bf u}}_1,
\quad
z_3 = T^{u_2}y^{\text{\bf u}}_2,
\quad
z_4 = T^{2-u_1-2u_2}(y^{\text{\bf u}}_1)^{-1}(y^{\text{\bf u}}_2)^{-2}.
$$
By the same way as in \cite[Lemma 1.2.4]{fooo:toricmir}, we can show that
$z_1,\dots,z_4$ generate a $\Lambda$-subalgebra that is dense in
$\Lambda \langle\!\langle y,y^{-1}\rangle\!\rangle^{\overset{\circ}P}$. (See Definition \ref{def:completion} for the notation.)
Therefore, since $\text{\rm Jac}(\frak{PO}_{\frak b(\rho)};\Lambda)$ is  finite dimensional,
they generate $\text{\rm Jac}(\frak{PO}_{\frak b(\rho)};\Lambda)$ as a $\Lambda$-algebra.
\par
Let $D_i$ ($i=1,\dots,4$) be  the divisors of $X$ associated to the facets
of the moment polytope
$u_2 = 1-\alpha$, $u_1=0$, $u_2=0$, $u_1+2u_2 =2$ respectively.
It is easy to see that
$
\frak{ks}_{\frak b(\rho)}(PD[D_i]) = z_i
$
for $i=2,3,4$ and
$
\frak{ks}_{\frak b(\rho)}(PD[D_1]) = (e^{T^{\rho}}-e^{-T^{\rho}})z_1.
$
The lemma follows.
\end{proof}
\par
Let $\rho \ne 0$. Then $\text{\rm Jac}(\frak{PO}_{\frak b(\rho)};\Lambda)
\cong \Lambda^4$.
Since the Betti number of $\widehat F_0(0)$ is $4$,
Lemma \ref{lem234} implies that
$\frak{ks}_{\frak b(\rho)}$ is an isomorphism.
\par
Let  $\rho = 0$.
Using the fact that $QH_{\frak b(0)}(X;\Lambda)$ is
semisimple,
and $\text{\rm Jac}(\frak{PO}_{\frak b(0)};\Lambda) \linebreak
\cong \Lambda^2$, we derive from Lemma \ref{lem234} that
there exists an idempotent $e_{\frak y(0)}$
that is a unit of the direct factor
$\cong \Lambda$ of $QH_{\frak b(0)}(X;\Lambda)$,
such that  $\frak{ks}_{\frak b(0)}(e_{\frak y(0)}) \ne 0$.
(In the case $\rho \ne 0$, existence of such an idempotent
$e_{\frak y(\rho)}$ is immediate from the fact
that $\frak{ks}_{\frak b(\rho)}$ is an isomorphism.)
\par
Thus in the way similar to the proofs of Theorem \ref{calcithm} and Lemma \ref{calksi}
we find that
$
i_{\text{\rm qm},(\frak b(\rho),b(\frak y^u))}^{\ast}(e_{\frak y(\rho)}) \ne 0.
$
In fact, we can use a de Rham representative of the Poincar\'e dual to
$[S^2_{\mathrm{van}}]$ that is supported in a small neighborhood of
$S^2_{\mathrm{van}}$ and in particular can be taken to be disjoint from $T(\rho)$.
Therefore the above calculation of $\frak{PO}_{\frak b(\rho)}$
makes sense in the homology level.
\end{rem}

\subsection{Critical values and eigenvalues of $c_1(M)$.}
\label{subsec:eigenvalue}
In the course of the proof of Theorem \ref{uncount} (1) given in Subsection \ref{subsec:POb-T(u)},
we proved existence of an idempotent associated to a field factor of quantum cohomology
which is not in the kernel of the maps $i_{{\rm qm},T(0),b_i}^*$,
$i_{{\rm qm},T(\rho), (\frak b(\rho), b(\rho))}^*$. At the end of
this subsection, we specify those idempotents.

We give a digression on the critical values of the potential function and the eigenvalues of the quantum
multiplication by the first Chern class $c_1(M)$ on a general closed symplectic
manifold $(M,\omega)$.
We start with an easy observation.

\begin{lem} \label{lem:pd maslov}
For an oriented Lagrangian submanifold $L \subset M$,
there is a cycle $D$ of codimension $2$ in $M \setminus L$ such that the Maslov index
is equal to twice of the intersection number with $D$, i.e.,
$\mu(\beta)=2\beta \cdot D$ for any $\beta \in H_2(M,L;\Z)$.
\end{lem}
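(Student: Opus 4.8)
The plan is to realize the Maslov class $\mu_L \in H^2(M, L; \Z)$ by an honest geometric cycle supported away from $L$. First I would recall that for an oriented Lagrangian $L \subset M$, the Maslov index defines a homomorphism $\mu_L : H_2(M, L; \Z) \to \Z$, i.e.\ an element of $\operatorname{Hom}(H_2(M,L;\Z), \Z)$. Since $M$ is compact and $L$ is oriented, $\mu_L$ factors through the universal coefficient pairing and hence is represented by a class in $H^2(M, L; \Z)$, at least modulo torsion; for the purpose of this lemma, which only asserts an intersection-number formula against $H_2(M,L;\Z)$, it suffices to produce a relative cycle whose intersection pairing with every $\beta$ computes $\tfrac12\mu_L(\beta)$.

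The key step is the standard identification of the Maslov class with the (relative) first Chern class. Fix a compatible almost complex structure $J$ on $M$ and consider the complex line bundle $\Lambda^n_{\C}(TM, J)$, the top exterior power of the tangent bundle. Over $L$, the real subbundle $\Lambda^n_{\R}(TL) \subset \Lambda^n_{\C}(TM,J)|_L$ gives a real line (a Lagrangian line in this complex line), and the Maslov index $\mu_L(\beta)$ of a disk $w : (D^2, \partial D^2) \to (M, L)$ equals twice the winding number of $w^*\Lambda^n_{\R}(TL)$ in $w^*\Lambda^n_{\C}(TM,J)$ relative to its boundary trivialization; equivalently $\mu_L(\beta) = 2\, c_1(M) \cap \beta$ when this makes sense relatively. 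Concretely, I would take a section $s$ of $\Lambda^n_{\C}(TM, J)^{\otimes 2}$ that is nowhere zero along $L$ — such a section exists near $L$ because $(\Lambda^n_{\R} TL)^{\otimes 2}$ trivializes the restriction there — and extend it to a section over all of $M$ transverse to the zero section. Its zero locus $D := s^{-1}(0)$ is then a cycle of real codimension $2$ contained in $M \setminus L$ (after a small perturbation keeping it away from $L$), and Poincaré–Lefschetz duality in the pair $(M, L)$ gives $[D] = c_1\big(\Lambda^n_{\C}(TM,J)^{\otimes 2}\big) = 2c_1(M)$ in $H^2(M, L; \Z)$. Pairing with $\beta \in H_2(M, L; \Z)$ yields $\beta \cdot D = 2\, c_1(M) \cap \beta = \mu_L(\beta)$; wait — I should be careful about the factor, taking $s$ a section of $\Lambda^n_{\C}(TM,J)^{\otimes 2}$ gives $D \cdot \beta = \mu_L(\beta)$, so to get $\mu(\beta) = 2\beta \cdot D$ I instead take $s$ a section of $\Lambda^n_{\C}(TM,J)$ itself, which is nonvanishing along $L$ only after passing to a double cover or working modulo $2$-torsion in the normal direction; the clean statement is to use the squared bundle and absorb the factor, so I would present $D$ as $\tfrac12$ of the zero set of a section of the squared canonical bundle, i.e.\ choose the section of $\Lambda^n_\C(TM,J)$ over a neighborhood of $L$ using $(\Lambda^n_\R TL)$ directly and note that generically it extends with zero set $D'$ satisfying $D' \cdot \beta = \tfrac12\mu_L(\beta)$; then rescale notation so $D = D'$.

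The main obstacle is the transversality and the ``disjoint from $L$'' requirement handled simultaneously: one must extend the nonvanishing section from a neighborhood of $L$ to all of $M$ so that (a) it stays nonvanishing on $L$, and (b) its zero set is a genuine (pseudo)cycle transverse to the zero section. Both are routine — (a) holds automatically once the extension agrees with the given local section near $L$, and (b) follows from a generic perturbation supported away from $L$ — but writing this carefully, and reconciling the Maslov-index-versus-$2c_1$ normalization (the factor of $2$ in the statement $\mu(\beta) = 2\beta\cdot D$ is precisely the statement that $D$ represents $c_1(M)$, not $2c_1(M)$), is where the care is needed. Once $D$ is constructed, the intersection-number formula $\mu(\beta) = 2\beta\cdot D$ is immediate from the above identification and the definition of the intersection pairing on the pair $(M, L)$.
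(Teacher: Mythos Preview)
Your overall approach---construct a section of $\Lambda^n_{\C}TM$ nonvanishing along $L$, take its transverse zero locus $D \subset M \setminus L$, and identify $\beta \cdot D$ with $c_1(M) \cap \beta = \tfrac12\mu_L(\beta)$ relatively---is exactly the paper's approach. But you tangle yourself up over the factor of $2$ and end up with statements that are either wrong or unjustified.

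The gap is that you never invoke the hypothesis that $L$ is \emph{oriented}. You write that a section of $\Lambda^n_{\C}(TM,J)$ ``is nonvanishing along $L$ only after passing to a double cover or working modulo $2$-torsion,'' and then retreat to the squared bundle $\Lambda^n_{\C}(TM,J)^{\otimes 2}$. This is unnecessary: because $L$ is oriented, the real line bundle $\Lambda^n_{\R}TL$ is trivial, and a volume form on $L$ is precisely a nowhere-vanishing section $s_L$ of it. Via the inclusion $\Lambda^n_{\R}TL \subset \Lambda^n_{\C}TM|_L$, this $s_L$ is a nowhere-vanishing section of $\Lambda^n_{\C}TM|_L$ itself---no double cover, no squaring. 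Extend $s_L$ to a global section $s$ of $\Lambda^n_{\C}TM$ transverse to the zero section; then $D = s^{-1}(0)$ is a codimension-$2$ cycle in $M \setminus L$ representing $c_1(M)$, and $\mu_L(\beta) = 2\,\beta \cdot D$ follows from the standard identification of the Maslov index with twice the relative $c_1$. This is the paper's argument in two lines. Your detour through $\Lambda^n_{\C}(TM,J)^{\otimes 2}$ gives the wrong factor (it yields $\beta \cdot D = \mu_L(\beta)$, not $2\,\beta \cdot D = \mu_L(\beta)$), and ``$\tfrac12$ of a zero set'' is not a well-defined cycle.
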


\begin{proof}
Since $L$ is an oriented Lagrangian submanifold, the top exterior power $\bigwedge^n_{\C} TM$ is a trivial
complex line bundle on $L$, where $2n=\dim M$.
Moreover, the volume form of $L$ gives a non-vanishing
section ${\it s}_L$ of $\bigwedge^n_{\C} TM \vert_L$.
We extend ${\it s}_L$ to a section $\it s$ of $\bigwedge^n_{\C}TM$, which is transversal to the zero section.
Then the zero locus $D$ of $\mathit s$ represents the Poincar\'e dual of the first Chern class $c_1(M)$
and the Maslov index $\mu_L:H_2(M,L;\Z) \to \Z$ is given by the twice of the intersection number with $D$.
\end{proof}

For our present purpose, we restrict ourselves to the case of the triple $(M,\omega, J)$
and an oriented Lagrangian submanifold $L \subset X$ satisfy the property that $\mu(\beta) \geq 2$ whenever
the moduli space ${\mathcal M}(L;J;\beta)$ of ($J$-holomorphic) bordered stable maps
 in class $\beta$ is nonempty.
\footnote{See Section 2.1.2 \cite{fooo:book1}
 for the definition of this moduli space.} of bordered stable maps in the class $\beta \neq 0$.
See \cite[Appendix 1]{fooo:S2S2}  for some related results under this condition.
The following theorem was proved (in the Fano toric case) by Auroux \cite[Theorem 6.1]{auroux}.

\begin{thm} \label{thm:c_1&PO}\index{potential function!critical value}
Let ${\mathfrak b}$ be a cycle of codimension $2$ in $M$ with coefficients in $\Lambda_+$ and
$b \in {\mathcal M}_{{\rm weak,def}}(L;\frak b)$.
Then, for any cycle $A$ in $M$,
\begin{equation}\label{c_1 and PO}
i^*_{{\rm qm},({\mathfrak b},b)}(c_1(M)  \cup_{\mathfrak b} PD(A))
=\mathfrak{PO}_{\mathfrak b}(b) i^*_{{\rm qm},({\mathfrak b},b)}(PD(A))
\end{equation}
in $HF((L,{\mathfrak b},b);\Lambda)$.
\end{thm}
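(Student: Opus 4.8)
The plan is to realize $c_1(M)$ geometrically via the divisor $D$ produced in Lemma \ref{lem:pd maslov}, and then to run a degeneration/cobordism argument for the operator $\frak q$ (and for the closed-open map $i_{\text{\rm qm},(\frak b,b)}$) on a one-parameter family of moduli spaces in which the interior marked point carrying $PD(D)$ is pushed to the boundary of the disc. Concretely, I would fix a cycle $D \subset M$ with $\mu_L(\beta) = 2\,\beta\cdot D$ for all $\beta \in H_2(M,L;\Z)$, as in Lemma \ref{lem:pd maslov}, chosen transverse to all the relevant moduli spaces and (after perturbation) disjoint from $L$. The class $c_1(M)$ is then $PD(D)$, and $c_1(M)\cup_{\frak b}PD(A)$ is computed by the deformed quantum product of Definition \ref{defn:prod}. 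Under the open-closed map $i_{\text{\rm qm},(\frak b,b)}$, which by the remark after \eqref{ambcohtoHFL} is a ring homomorphism (see \cite{fooo:toricmir} Section 31, \cite{AFOOO}), we have $i^*_{\text{\rm qm},(\frak b,b)}(c_1(M)\cup_{\frak b}PD(A)) = i^*_{\text{\rm qm},(\frak b,b)}(c_1(M))\cdot_{\frak m_2^{\frak b}} i^*_{\text{\rm qm},(\frak b,b)}(PD(A))$. So it suffices to identify $i^*_{\text{\rm qm},(\frak b,b)}(c_1(M))$ with $\frak{PO}_{\frak b}(b)\cdot \mathbf e_L$ in $HF((L,\frak b,b);\Lambda)$, since multiplication by $\mathbf e_L$ is the identity and $\frak{PO}_{\frak b}(b)$ is a scalar.

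To prove $i^*_{\text{\rm qm},(\frak b,b)}(PD(D)) = \frak{PO}_{\frak b}(b)\,\mathbf e_L$, I would consider, for each $\beta$, the moduli space $\mathcal M_{1;\ell+1}(L;\beta)$ of discs with one interior marked point $z^+_0$ designated to carry $PD(D)$ (plus $\ell$ further interior marked points carrying $\frak b_+$ and boundary marked points carrying $b_+$), and deform the position of $z^+_0$ from the interior toward the boundary circle. This gives a cobordism whose two ends are: (i) $z^+_0$ in the interior, computing $i^*_{\text{\rm qm},(\frak b,b)}(PD(D))$ by definition \eqref{iqmdefformula}; and (ii) $z^+_0$ absorbed into a boundary node, where the component carrying $PD(D)$ becomes a constant or sphere bubble attached along the boundary — here the key point is that $\text{ev}_{z^+_0}^*PD(D)$ counts, with sign, the intersection number $\beta\cdot D = \tfrac12\mu_L(\beta)$, so this end contributes $\tfrac12\mu_L(\beta)$ times the disc count defining $\frak m_{0}^{\frak b}$-type contributions. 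Summing the weights $T^{\omega\cap\beta}\exp(\cdots)/\ell!$ and using $\frak m_0^{\frak b}(1)=\frak{PO}(\frak b,b)\mathbf e_L$ (Definition \ref{bulkMCelement}, \eqref{POtoric}), together with the divisor/Maslov bookkeeping, should reproduce $\frak{PO}_{\frak b}(b)\,\mathbf e_L$ up to $\delta^{\frak b}$-exact terms coming from the other boundary strata of the cobordism (strip-breaking at $\tau\to$ ends and disc bubbling, handled exactly as in Propositions \ref{disckura} and \ref{prop:qH-relation}). I would make this precise using the forgetful-map compatibility of the Kuranishi structures (Proposition \ref{disckura}(8), Lemma \ref{existmkulti1}) so that the $PD(D)$-insertion at $z^+_0$ behaves like a "tautological" divisor insertion.

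The main obstacle, I expect, is the careful sign and weight accounting in the degeneration (ii): one must verify that as $z^+_0 \to \partial D^2$ the contribution is \emph{exactly} $\tfrac12\mu_L(\beta)=\beta\cdot D$ copies of the corresponding $\frak q$-count (the factor $2$ in $\mu_L=2\,\beta\cdot D$ and the symmetrization conventions of Notations and Conventions (19)--(20) must be tracked), and that no stray contributions from sphere bubbles absorbing $D$ survive after perturbation — precisely the subtlety addressed in \cite{fooo:S2S2} Appendix 1 under the hypothesis $\mu_L(\beta)\ge 2$ on nonempty moduli spaces, which guarantees no negative-Maslov or disc-bubble anomalies. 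An alternative, less hands-on route, which I would fall back on if the direct cobordism becomes unwieldy, is the one sketched in the Remark preceding this subsection: use the surjectivity of the Kodaira--Spencer-type map $\frak{ks}_{\frak b}$ and the identity $\frak{ks}_{\frak b}(c_1(M)) = [\sum_i T^{\omega\cap\beta_i}z_i\cdots]$ relating $c_1$ to the leading terms of $\frak{PO}_{\frak b}$ (Theorem \ref{calcithm}, Theorem \ref{toricPOcalcthm}), combined with the fact that $i^*_{\text{\rm qm},(\frak b,b)}$ evaluated at a critical point $\frak y$ sends $[\frak P]$ to $\frak P(\frak y)\mathbf e_L$; in the toric or near-toric setting this immediately yields \eqref{c_1 and PO} pointwise at every $b=b(\frak y)$, and hence in general by the density argument of \cite{fooo:toricmir} Lemma 2.3.
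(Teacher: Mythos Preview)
Your reduction via the ring homomorphism property of $i^*_{\text{qm},(\frak b,b)}$ is a detour the paper deliberately avoids. That property is only cited in the paper (to \cite{fooo:toricmir} and the in-preparation \cite{AFOOO}) and is not proved here, so invoking it makes your argument depend on external or unpublished work. The paper instead proves \eqref{c_1 and PO} directly, keeping both interior insertions $D$ and $A$ in play throughout.

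More seriously, your cobordism picture for $i^*_{\text{qm},(\frak b,b)}(PD(D))$ is not correct. You propose to push the interior marked point $z_0^+$ carrying $PD(D)$ toward $\partial D^2$ and read off $\beta\cdot D$ from the resulting boundary bubble. But $D$ is chosen disjoint from $L$ (Lemma \ref{lem:pd maslov}), so no configuration with $z_0^+$ on a constant bubble at the boundary can satisfy $u(z_0^+)\in D$; that end of your cobordism is empty, not a $\beta\cdot D$-fold cover. The mechanism that actually produces the factor $\beta\cdot D$ is the divisor equation, i.e.\ the \emph{forgetful} map that drops $z_0^+$: over a generic disc $u$ the fiber is the finite set $u^{-1}(D)$, of signed cardinality $\beta\cdot D$. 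Under the standing hypothesis $\mu_L(\beta)\ge 2$ and $\operatorname{codim}\frak b=2$, the only contributing classes have $\mu_L(\beta)=2$, hence $\beta\cdot D=1$, and one recovers $\frak q([D\otimes e^{\frak b}];e^b)=\frak q(e^{\frak b};e^b)=\frak{PO}_{\frak b}(b)\cdot 1$.

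The paper's argument packages this without appealing to the ring structure: it considers the moduli space ${\mathcal M}_{k+1,\ell+2}(\beta;D\otimes A\otimes\frak b^{\otimes\ell};b^{\otimes k})$ and the forgetful map $\mathfrak{forget}$ to the one-dimensional moduli space $\mathcal M_{1;2}$ of domain curves (one boundary point, two interior points). Two special fibers are compared. Over $[\Sigma_0]$, where $z_1^+,z_2^+$ lie on a sphere bubble attached at an interior node, the sphere computes $c_1(M)\cup^{\frak b}PD(A)$ and the disc computes $i^*_{\text{qm},(\frak b,b)}$, giving the left side of \eqref{c_1 and PO}. Over $[\Sigma_1]$, where the domain is two discs meeting at a boundary node with $z_0,z_1^+$ on one and $z_2^+$ on the other, the first disc contributes $\frak q([D\otimes e^{\frak b}];e^b)=\frak{PO}_{\frak b}(b)$ (by the Maslov-2/divisor argument above) and the second contributes $i^*_{\text{qm},(\frak b,b)}(PD(A))$, giving the right side. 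Connectedness of $\mathcal M_{1;2}$ makes these cobordant. Your fallback via Kodaira--Spencer is toric-specific and does not address the general statement.
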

\begin{proof}
Let $D$ be the cycle in $M \setminus L$ obtained in
Lemma \ref{lem:pd maslov}.
Since $c_1(M)$ is the Poincar\'e dual of $D$ as a cycle in $M$, we use $D$ to prove the formula
\eqref{c_1 and PO}.

The strategy of the proof is the same as in the proof of \cite[Theorem 2.6.1]{fooo:toricmir}.
Let ${\mathcal M}_{k+1;\ell+2}$ be the moduli space of genus zero bordered stable curves
with $k+1$ boundary marked points $z_0, \dots, z_k$ and $\ell +2$ interior marked points $z_1^+, \dots,
z_{\ell+2}^+$ with connected boundary.
Let ${\mathcal M}_{k+1;\ell+2}(L;\beta)$ be the moduli space of bordered stable maps
in $M$ attached to $L$ in class $\beta \in H_2(M,L;\Z)$ whose domain is a
genus zero bordered semi-stable curves with $k+1$ boundary marked points and $\ell + 2$ interior
marked points with connected boundary.
Here the boundary marked points are
ordered counter-clockwise.
We denote by ${\rm ev}^+_j$ the evaluation map
at the $j$-th interior marked point and by ${\rm ev}_i$ the evaluation map at the $i$-th boundary
marked point $z_i$.  We set $\mathbf{\rm ev}^+=({\rm ev}^+_1, \dots, {\rm ev}^+_{\ell+2})$
and ${\rm ev}=({\rm ev}_1, \dots, {\rm ev}_k)$.

For cycles $Q_1, \dots, Q_{\ell +2}$ in $M$ and chains $P_1, \dots, P_k$ in $L$,
we define
\begin{eqnarray}
 & {\mathcal M}_{k+1, \ell+2}(\beta ; Q_1,  \ldots, Q_{\ell +2};
P_1, \dots, P_k)  \nonumber \\
:= &{\mathcal M}_{k+1, \ell +2}(\beta)_{{{\rm ev}}^+ \times {\rm ev}} \times_{(M^{\ell +2} \times L^k)} (Q_1 \times \cdots \times Q_{\ell +2} \times P_1 \times \cdots \times P_k).
\nonumber
\end{eqnarray}

By stabilizing the domain of the stable map and forgetting the boundary marked
points $z_1, \dots, z_k$ and
the interior marked points
$z^+_3, \dots, z^+_{\ell +2}$, we obtain the forgetful map
$$
\mathfrak{forget}: {\mathcal M}_{k+1, \ell +2}(\beta ;A \otimes D \otimes
{\mathfrak b}^{\otimes \ell};b^{\otimes k}) \to {\mathcal M}_{1;2}.$$

The moduli space ${\mathcal M}_{1;2}$ of bordered stable curves of genus $0$, connected boundary with two interior marked points
and one boundary marked point is a complex manifold with boundary of complex dimension $1$.
It can be easily shown to be homeomorphic to the unit disc (see e.g., \cite[Section 2.6]{fooo:toricmir}).
We pick two distinguished points $[\Sigma_0]$, $[\Sigma_1]$ in ${\mathcal M}_{1;2}$
given as follows.

The bordered stable curve $\Sigma_0$ is the union of the unit disc with $z_0=1$
on its boundary and the Riemann sphere with two marked points $z^+_1, z^+_2$,
away from the nodal point of $\Sigma_0$.
On the other hand, the bordered stable curve $\Sigma_1$ consists of the union of two copies $D_0$, $D_1$
of the unit disc with a boundary node so that we can put
$z_0=1$, $z^+_1=0$ in $D_0$, $z^+_2=0$ in $D_1$ and the boundary node
corresponds to $-1 \in \partial D_0$, $1 \in \partial D_1$. (See Figures 19, 20.)
\begin{center}
\begin{figure}[h]
\begin{tabular}{cc}
 \begin{minipage}[t]{0.45\hsize}
\centering
\includegraphics[scale=0.3]
{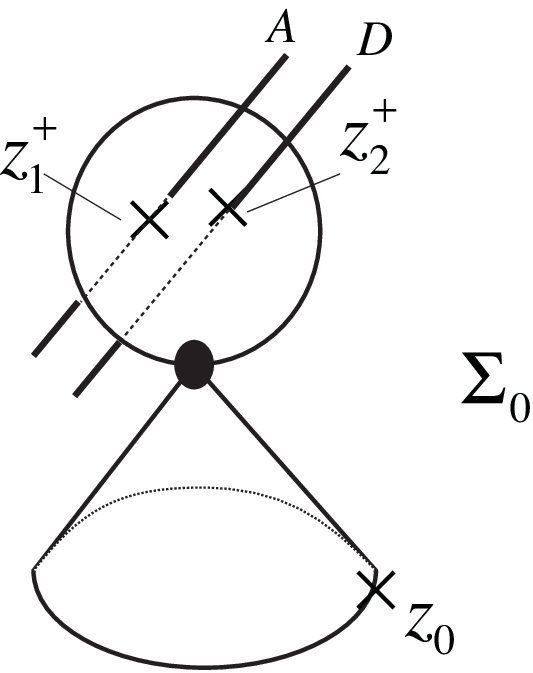}
\caption{$\Sigma_0$}
\label{Figure17}
\end{minipage} &
\begin{minipage}[t]{0.45\hsize}
\centering
\includegraphics[scale=0.3]
{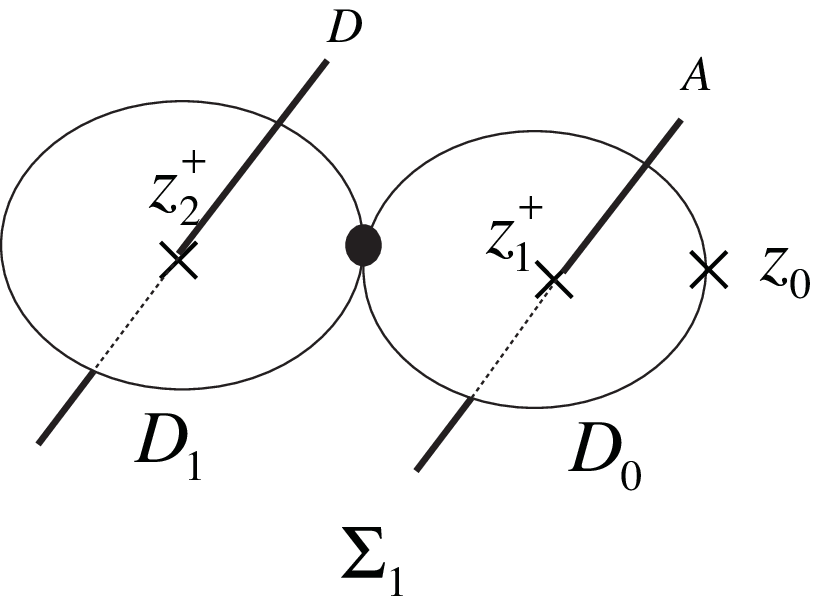}
\caption{$\Sigma_1$}
\label{Figure18}
\end{minipage}
 \end{tabular}
\end{figure}
\end{center}
In our case, since the Maslov index $\mu (\beta)$ is at least $2$ if
${\mathcal M}(L;J;\beta) \neq \emptyset$,
and $\mathfrak b$ is represented by a codimension $2$ cycle, it is enough to
study holomorphic discs of Maslov index $2$ for the computation of
$\mathfrak{m}_0^{\mathfrak b, b}(1)$.
We also recall that the Maslov index of a holomorphic discs attached to $L \subset M$ is equal to
$2 (\beta \cap D)$ where $\beta \cap D$ is the intersection number with $D$.
In particular, if $\mu(\beta) =2$, we have $\beta \cap D = 1$. Combining the above discussions, we
compute
$$
\aligned
\mathfrak{q}([D \otimes e^{\mathfrak b}];e^b)
& :=
\mathfrak{q}\left(\sum_{\ell_1,\ell_2}
\frac{1}{(\ell_1 +\ell_2 +1)!} \mathfrak b^{\otimes \ell_1}\otimes D \otimes {\mathfrak b}^{\otimes \ell_2};e^b \right) \\
& =
\mathfrak{q}(e^{\mathfrak b};e^b) =\mathfrak{PO}_{\mathfrak b}(b)\cdot {\bf e}
\endaligned
$$ for
$b \in {\mathcal M}_{\rm def, weak}(L,{\mathfrak b})$.
Here ${\bf e}$ is the unit.

Once we obtain this identity, we can derive that
the sum of contributions from ${\rm ev}_0:\mathfrak{forget}^{-1}([\Sigma_1]) \to L$
gives rise to the right hand side of \eqref{c_1 and PO}, which is
$\mathfrak{PO}_{\mathfrak b}(b) i^*_{{\rm qm},(\frak b,b)}(PD(A))$.
We use the evaluation map at the marked point corresponding to the boundary
node of $D_1$ to obtain  a differential form on $L$
using the moduli space of $D_1$ in Figure 23.2 as a correspondence. Then it gives
$
\frak m^{\mathfrak b}_0(1) = \mathfrak{PO}_{\mathfrak b}(b) \cdot 1,
$ which is proportional to the unit (the fundamental class).
We remark that we fix the conformal structure of $D_0$.
So there is no freedom of moving the node on $D_0$.
Therefore when we use the evaluation map at $z_0$ to obtain
a differential form of $L$ from the moduli space of
Figure 23.2, it gives rise to
$ \mathfrak{PO}_{\mathfrak b}(b)i^*_{{\rm qm},(\frak b,b)}(PD(A))$
as claimed.

On the other hand, it is straightforward to derive from construction that
the contribution of ${\rm ev}_0:\mathfrak{forget}^{-1}([\Sigma_0]) \to L$
is equal to $i^*_{{\rm qm},(\frak b,b)}(c_1(M) \cup_{\mathfrak b} PD(A))$
which is the left hand side of \eqref{c_1 and PO}.
Finally the proof of the equality of the two contributions from
the two correspondences
$$
{\rm ev}_0:\mathfrak{forget}^{-1}([\Sigma_0]) \to L, \quad {\rm ev}_0:\mathfrak{forget}^{-1}([\Sigma_1]) \to L
$$
follows the way similar to that of \cite{fooo:toricmir} Theorem 2.6.1, especially using
Lemma 2.6.3 therein. This then completes the proof of Theorem \ref{thm:c_1&PO}.
\end{proof}
\begin{cor} \label{ev and crit}
If $A$ is an eigenvector of $c_1(M) \cup^{\frak b}$ on $QH_{\frak b}(M;\Lambda)$ with eigenvalue $\lambda$
and  $i^*_{{\rm qm},({\mathfrak b},b)}(PD(A)) \neq 0$, then
$\lambda=\mathfrak{PO}_{\mathfrak b}(b)$.
\end{cor}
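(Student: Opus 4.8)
The plan is to obtain Corollary \ref{ev and crit} as a formal consequence of Theorem \ref{thm:c_1&PO}, using only the $\Lambda$-linearity of the open-closed map $i^*_{\mathrm{qm},(\mathfrak b,b)}$, the $\Lambda$-bilinearity of the deformed quantum product $\cup^{\mathfrak b}$, and the fact that $HF((L,\mathfrak b,b);\Lambda)$ is a vector space over the field $\Lambda$. In particular, no new geometric input is required: the entire content is already contained in the degeneration argument for the moduli space $\mathcal M_{1;2}$ used to prove Theorem \ref{thm:c_1&PO}.

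Concretely, I would proceed in three steps. First, upgrade the identity \eqref{c_1 and PO} from Poincar\'e duals of single cycles to arbitrary classes: since $QH_{\mathfrak b}(M;\Lambda)$ is a finite-dimensional $\Lambda$-vector space spanned by the Poincar\'e duals $PD(A_i)$ of cycles $A_i$, and since both $\mathbf A \mapsto i^*_{\mathrm{qm},(\mathfrak b,b)}(\mathbf A)$ and $\mathbf A \mapsto c_1(M) \cup^{\mathfrak b} \mathbf A$ are $\Lambda$-linear, applying \eqref{c_1 and PO} term by term to $\mathbf A = \sum_i c_i\, PD(A_i)$ (with $c_i \in \Lambda$) and using that $\mathfrak{PO}_{\mathfrak b}(b) \in \Lambda$ is a scalar yields
\[
i^*_{\mathrm{qm},(\mathfrak b,b)}\bigl(c_1(M) \cup^{\mathfrak b} \mathbf A\bigr) \;=\; \mathfrak{PO}_{\mathfrak b}(b)\; i^*_{\mathrm{qm},(\mathfrak b,b)}(\mathbf A)
\]
for every $\mathbf A \in QH_{\mathfrak b}(M;\Lambda)$. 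Second, insert the eigenvector hypothesis $c_1(M) \cup^{\mathfrak b} \mathbf A = \lambda \mathbf A$: the left-hand side becomes $i^*_{\mathrm{qm},(\mathfrak b,b)}(\lambda \mathbf A) = \lambda\, i^*_{\mathrm{qm},(\mathfrak b,b)}(\mathbf A)$ again by $\Lambda$-linearity, so that
\[
\bigl(\lambda - \mathfrak{PO}_{\mathfrak b}(b)\bigr)\, i^*_{\mathrm{qm},(\mathfrak b,b)}(\mathbf A) \;=\; 0
\]
in $HF((L,\mathfrak b,b);\Lambda)$. Third, cancel: since $i^*_{\mathrm{qm},(\mathfrak b,b)}(\mathbf A) \neq 0$ by assumption and $HF((L,\mathfrak b,b);\Lambda)$ is a module over the field $\Lambda$, hence torsion-free, we conclude $\lambda = \mathfrak{PO}_{\mathfrak b}(b)$.

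I do not expect a genuine obstacle in this argument; it is a routine corollary, and all the analysis lies upstream in Theorem \ref{thm:c_1&PO} (the splitting of $\mathcal M_{1;2}$ into the configurations $\Sigma_0$ and $\Sigma_1$, and the Stokes/composition bookkeeping identifying the two limiting contributions with $i^*_{\mathrm{qm},(\mathfrak b,b)}(c_1(M)\cup^{\mathfrak b} PD(A))$ and $\mathfrak{PO}_{\mathfrak b}(b)\, i^*_{\mathrm{qm},(\mathfrak b,b)}(PD(A))$ respectively). The only two points worth a sentence of care are the linear extension in Step 1 — which is immediate because $QH_{\mathfrak b}(M;\Lambda)$ is finite-dimensional over $\Lambda$ and is generated by Poincar\'e duals of cycles — and the cancellation in Step 3, which is immediate because $\Lambda$ is a field and $i^*_{\mathrm{qm},(\mathfrak b,b)}(\mathbf A)$ is a nonzero element of a $\Lambda$-vector space.
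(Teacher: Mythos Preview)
Your argument is correct and is exactly the intended one: the paper states this result as an immediate corollary of Theorem \ref{thm:c_1&PO} without writing out a proof, and the three-step deduction you give (linear extension of \eqref{c_1 and PO}, substitution of the eigenvector hypothesis, cancellation over the field $\Lambda$) is precisely how one reads off the corollary from the theorem.
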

We return to the discussion on $T(\rho)$.
For $T(0)=T(\rho=0)$, we can find that the potential function (without bulk deformations) of $T(0)$ has two critical points
with critical values $\pm 4 T^{1/2}$, by the result of the calculation in \cite[Digression 4.1]{fooo:S2S2}, where
$T(0)$ is denoted by $T({\mathbf u}_0)$.  We have two bounding cochains $b_1,b_2$
with critical values $4T^{1/2}, -4T^{1/2}$ up to gauge equivalence.
\par
\cite[Theorem 3.8.62]{fooo:book1} with   \cite[(3.8.36.2)]{fooo:book1} and Lemma \ref{antidiagonal:unobstruct}
and Lemma \ref{lem:critexistence} (1) taking into account
implies that
$$
i_{{\rm qm},S^2_{\rm van}}^*:H(S^2 \times S^2;\Lambda) \to HF(S^2_{\rm van};\Lambda),
$$
resp.
$$
i_{{\rm qm},T(0),b_i}^*:H(S^2 \times S^2;\Lambda) \to HF((T(0),b_i);\Lambda)
$$ sends
$\sum_{j=1}^4 {\mathbf e}_j$ to the unit $PD[S^2_{\rm van}] \neq 0$ of $HF(S^2_{\rm van};\Lambda)$,
resp. the unit $PD[T(0)] \neq 0$ of $HF((T(0),b_i);\Lambda)$, $i=1,2$.

Recall that $QH(S^2 \times S^2;\Lambda)$ is semi-simple and decomposes into  $\bigoplus_{i=1}^4 \Lambda {\bf e}_i$.
We may assume that ${\bf e}_1,{\bf e}_4$ are eigenvectors of the quantum multiplication by $c_1(S^2 \times S^2)$
with eigenvalues $\pm 4 T^{1/2}$ and ${\bf e}_2,{\bf e}_3$ are those with eigenvalue $0$.
Comparing the critical values of the potential function and eigenvalues of the quantum multiplication
by $c_1(S^2 \times S^2)$, Theorem \ref{thm:c_1&PO} implies that
\beastar
i_{{\rm qm},S^2_{\rm van}}^*({\mathbf e}_2 + {\mathbf e}_3)& = & PD[S^2_{\rm van}],\\
i_{{\rm qm},T(0),b_1}^*({\mathbf e}_1)& = & PD[T(0)],\\
i_{{\rm qm},T(0),b_2}^*({\mathbf e}_4)& = & PD[T(0)].
\eeastar
We may assume that $i_{{\rm qm}, S^2_{\rm van}}^*({\mathbf e}_2) \neq 0$.
By Theorem \ref{thm:heavy} (2), we find that $S^2_{\rm van}$ is $\mu_{{\mathbf e}_2}$-superheavy and
while $T(0)$ is $\mu_{{\mathbf e}_1}$-superheavy and $\mu_{{\mathbf e}_4}$-superheavy.
On the other hand,
since $S^2_{\rm van}$ and $T(0)$ are disjoint, two quasi-morphisms corresponding to
$\mu_{{\mathbf e}_1}$ and $\mu_{{\mathbf e}_2}$ are distinct by
Theorem \ref{sheavyintersectheavy} (and Remark \ref{rem:muimplieszeta}).
This statement is mentioned without proof in
\cite[Remark 7.1]{fooo:S2S2}.
\par
As we showed in Subsection \ref{subsec:POb-T(u)},
the potential function of $T(0)$ with bulk deformation by ${\mathfrak b}_{\rho}$ has
two critical points
$ (\epsilon (e^{T^{\rho/2}} + e^{-T^{\rho/2}})^{-1}, \epsilon (e^{T^{\rho/2}} + e^{-T^{\rho/2}}))$
with $\epsilon = \pm 1$.
The associated critical values are $\pm 2 (e^{T^{\rho}/2} + e^{-T^{\rho}/2})T^{1/2}$.

For $T(\rho)$, the potential function with bulk ${\mathfrak b}_{\rho}$ has critical points
$(\epsilon T^{\rho} (e^{T^{\rho/2}} - e^{-T^{\rho/2}})^{-1}, - \epsilon T^{-\rho} (e^{T^{\rho/2}} - e^{-T^{\rho/2}})$.
The critical values are $\pm 2(e^{T^{\rho}/2} - e^{-T^{\rho}/2})T^{1/2}$.

We now consider the maps
$$
i_{{\rm qm},T(0), (\frak b(\rho), b_i)}^*:QH_{\frak b(\rho)}(S^2 \times S^2;\Lambda) \to HF((T(0) (\frak b(\rho),b_i);\Lambda),
$$
$$
i_{{\rm qm},T(\rho), (\frak b(\rho), b(\rho)_i)}^*:QH_{\frak b(\rho)}(S^2 \times S^2;\Lambda) \to HF((T(\rho) (\frak b(\rho),b(\rho)_i);\Lambda).
$$
These maps have non-zero values at $b_i \in H^1(T(\rho),\Lambda_0)$ and $b(\rho)_i \in H^1(T(\rho),\Lambda_0), \, i = 1, 2$
respectively. (See the proof of Theorem \ref{uncount} (1) given in Subsection \ref{subsec:POb-T(u)}.)
The eigenvalues of the $\frak b(\rho)$-deformed quantum multiplication by $c_1(S^2 \times S^2)$ are
computed as follows.
It follows from  \cite[Remark 5.3 and  Theorem 1.9]{fooo:toric1}(Fano toric case),
\cite[Theorem 1.1.4]{fooo:toricmir} (general toric case) that ${\mathbf e}^{\mathfrak b(\rho)}_1, \dots,
{\mathbf e}^{\mathfrak b(\rho)}_4$ are
eigenvectors of the quantum multiplication with eigenvalues
given by
$$
\aligned
2( e^{T^{\rho}/2}+ e^{-T^{\rho}/2})T^{1/2}, \quad & 2(e^{T^{\rho}/2} - e^{-T^{\rho}/2})T^{1/2}, \\
2(-e^{T^{\rho}/2}+ e^{-T^{\rho}/2})T^{1/2}, \quad & -2(e^{T^{\rho}/2}+ e^{-T^{\rho}/2})T^{1/2},
\endaligned
$$
respectively.

Hence, by Corollary \ref{ev and crit}, $b_i$, $b(\rho)_i$ can be arranged so that
$$
i_{{\rm qm},T(0), (\frak b(\rho), b_1)}^*({\bf e}^{\mathfrak b(\rho)}_1)=i_{{\rm qm},T(0), (\frak b(\rho), b_2)}^*({\bf e}^{\mathfrak b(\rho)}_4)=PD[T(0)]
$$
and
$$
i_{{\rm qm},T(\rho), (\frak b(\rho), b(\rho)_1)}^*({\bf e}^{\mathfrak b(\rho)}_2)=
i_{{\rm qm},T(\rho), (\frak b(\rho), b(\rho)_2)}^*({\bf e}^{\mathfrak b(\rho)}_3)=PD[T(\rho)].
$$
\begin{rem}
The Lagrangian sphere $S^2_{\rm van}$ is unobstructed without  bulk deformation as we saw in
Lemma \ref{antidiagonal:unobstruct}.
Since the self-intersection number of $S^2_{\rm van}$ is $-2$,
${\mathfrak m}_0^{{\mathfrak b}_{\rho}}(1) = -2 T^{\rho}PD [pt]$,
it gets obstructed after the bulk deformation by ${\mathfrak b}_{\rho}$ for $\rho \neq 0$.
\end{rem}

\section{Lagrangian tori in the cubic surface}
\label{sec:cubic}
This section owes much to the paper \cite{nnu2} of Nishinou-Nohara-Ueda,
especially its Subsection 4.1 of the version 1
(arXiv:0812.0066v1). That section contained an error
which seems to be a reason why the subsection was removed
from the second version (arXiv:0812.0066v2).
However, using a result by Chan-Lau \cite{chanlau},
(actually, in \cite[Section 5]{nnu2}  of the second version
they independently obtained the relevant result for the cubic surface by a different argument), we can correct
this error. This provides an interesting example which we discuss in this section.
We would like to emphasize that the idea of exploiting a toric degeneration\index{toric degeneration}
in the calculation of the potential function of a non-toric manifold
used in this section and in \cite{fooo:S2S2} is due to Nishinou-Nohara-Ueda \cite{nnu1}
who successfully applied the idea to various examples.
\par
Following \cite[Subsection 4.1]{nnu2}  of its version one, we consider a family
of cubic surfaces given by
\begin{equation}
M_{t}
= \{
[x:y:z:w] \in \C P^3 \mid xyz - w^3 = t(x^3+y^3+z^3+w^3)
\}
\end{equation}
parametrized by $t \in \C$. For $t\ne 0$ this gives a smooth
surface. For $t=0$, $M_0$ becomes a toric variety with the $(\C^*)^2$-action
$$
(\alpha,\beta)[x:y:z:w]
= [\alpha x:\beta y: \alpha^{-1}\beta^{-1}z:w].
$$
The Fubini-Study form on $\C P^3$ induces a symplectic structure on $M_t$.
This symplectic structure on $M_0$ is invariant under the action
of real torus $T^2 \subset (\C^*)^2$. The moment polytope of this action is
given by
\begin{equation}
P =
\{
(u_1,u_2) \in \R^2 \mid \ell_i(u_1,u_2) \ge 0, \,\,
i=1,2,3
\}
\end{equation}
where
\begin{equation}
\aligned
\ell_1(u_1,u_2)
&=  -u_1 + 2u_2 + 1,\\
\ell_2(u_1,u_2)
&=  2u_1  - u_2 + 1,\\
\ell_3(u_1,u_2)
&=  -u_1  - u_2 + 1.
\endaligned
\end{equation}
\begin{figure}[h]
\centering
\includegraphics[scale=0.3]
{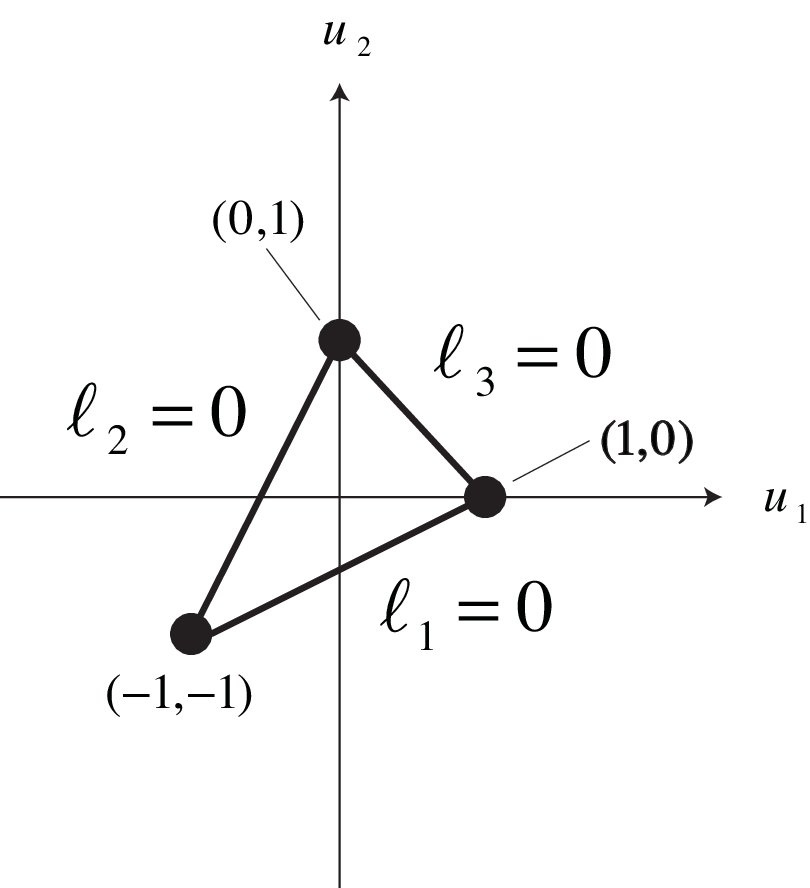}
\caption{Polytope $P$}
\label{Figure19}
\end{figure}
The moment polytope $P$ is an isosceles triangle, whose center of gravity is origin.
The three vertices of $P$ correspond to the three singular points of $M_0$.
The variety $M_0$ is a toric orbifold with three singular points of $A_2$-type.
\par
We can deform those three singular points by gluing the Milnor fiber of
the $A_2$ singularity by the same way as in Section \ref{sec:exotic}
to obtain a symplectic manifold $M$. It is easy to see that $M$ is symplectomorphic
to $M_t$ for $t\ne 0$.
(Note $M_t$ is symplectomorphic to $M_{t'}$ if $t,t'\ne 0$.)
\par
We consider
$$
\frak Z =
\big(\R_{\ge 0}(1,0)) \sqcup (\R_{\ge 0}(0,1)) \sqcup (\R_{\ge 0}(-1,-1)\big)
\cap \text{\rm Int} P.
$$
For $\text{\bf u} \in \frak Z$ we consider $\pi^{-1}(\text{\bf u})
\subset M_0$.
In the same way as in Section \ref{sec:exotic} we may regard it as a
Lagrangian torus in $M$. We denote it by $T(\text{\bf u})$.
\begin{figure}[h]
\centering
\includegraphics[scale=0.3]
{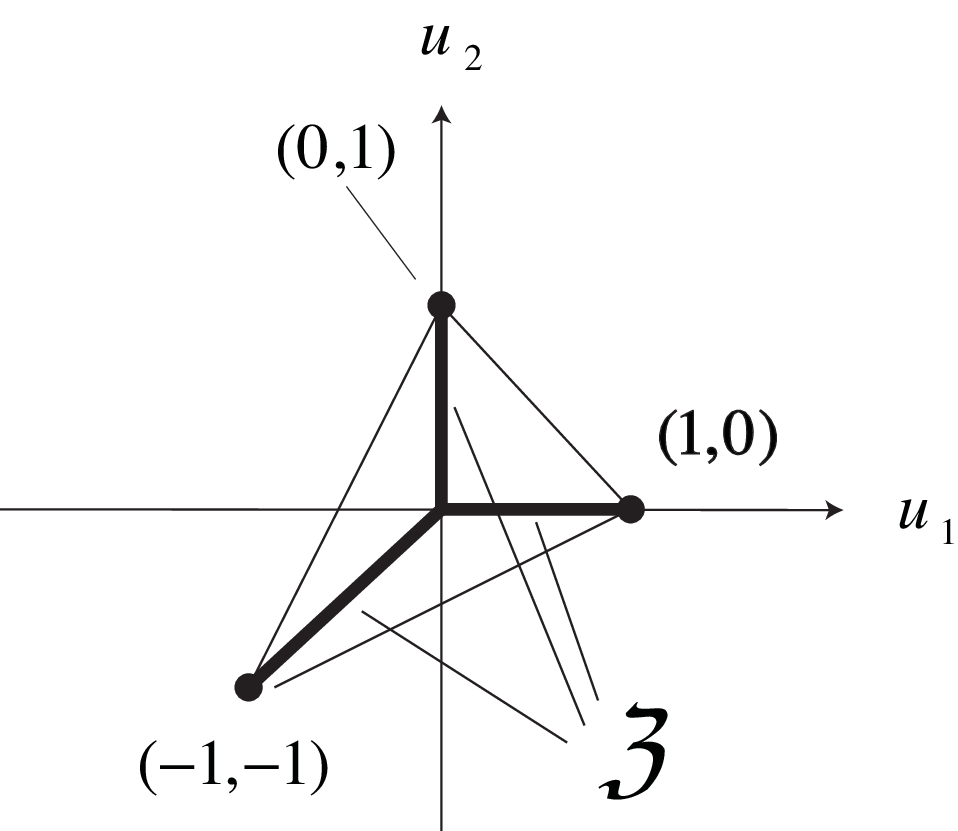}
\caption{Set $\frak Z$}
\label{Figure20}
\end{figure}
\begin{thm}\label{cubicmain}
For each  $\text{\bf u} \in \frak Z$, there exist
$\frak b(\text{\bf u}) \in H^2(M;\Lambda_+)$
and $b(\text{\bf u}) \in H^1(T(\text{\bf u});\Lambda_0)$
such that
$$
HF((T(\text{\bf u}),(\frak b(\text{\bf u}),b(\text{\bf u})));\Lambda) \ne 0.
$$
Moreover there exists $e_{\text{\bf u}}$ that is a unit of a direct
product factor $e_{\text{\bf u}}\Lambda = \Lambda$ of
$QH_{\frak b(\text{\bf u})}(M;\Lambda)$ such that
$$
i_{\text{\rm qm},(\frak b(\text{\bf u}),b(\text{\bf u}))}^{\ast}(e_{\text{\bf u}}) \ne 0
\in HF((T(\text{\bf u}),(\frak b(\text{\bf u}),b(\text{\bf u})));\Lambda).
$$
\end{thm}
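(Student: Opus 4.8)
The plan is to follow the same strategy as in the case of $S^2\times S^2$ treated in Section~\ref{sec:exotic}, using the toric degeneration $M_0$ of the cubic surface $M$ and the computation of the potential function with bulk via the gluing of Milnor fibers of the $A_2$-singularities. First I would recall from \cite{nnu1, nnu2, chanlau} the form of the potential function $\frak{PO}^{\text{\bf u}}_{\frak b}$ associated to the Lagrangian fiber $T(\text{\bf u})$ of the partial smoothing $M$: near each of the three rays of $\frak Z$ the potential function acquires, in addition to the three leading terms $T^{\ell_i(\text{\bf u})}z_i$ coming from the three toric divisors of $M_0$ (Theorem~\ref{toricPOcalcthm}), extra instanton corrections coming from the holomorphic discs passing near the Lagrangian spheres created by the smoothing, exactly as in Theorem~\ref{F2PO} and formula~(\ref{PObulk}). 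As in Section~\ref{sec:exotic}, the bulk class $\frak b(\text{\bf u}) = T^{\rho}PD[S^2_{\mathrm{van}}]$ for a suitable codimension-two cycle supported near a vanishing sphere adjusts the coefficients of these correction terms; one chooses $\rho = \rho(\text{\bf u})$ (depending on how far $\text{\bf u}$ is along its ray) so that the perturbed potential $\frak{PO}_{\frak b(\text{\bf u})}$ has a nondegenerate critical point $\frak y$ with $\text{\bf u}(\frak y) = \text{\bf u}$.

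The key steps, in order, would be: (i) write down $\frak{PO}_{\frak b(\text{\bf u})}$ explicitly using the toric degeneration together with the Milnor-fiber correction terms (borrowing the disc-counting of \cite{chanlau} to justify the form of the corrections — this is exactly the point where version~1 of \cite{nnu2} had an error which \cite{chanlau} and \cite{nnu2} version~2 correct); (ii) for each $\text{\bf u}\in\frak Z$ solve the critical point equations $y_i\partial\frak{PO}_{\frak b(\text{\bf u})}/\partial y_i = 0$ and verify that there is a nondegenerate solution $\frak y$ with $\frak v_T(\frak y_i)$ the coordinates of $\text{\bf u}$, defining $b(\text{\bf u}) = b(\frak y)$ via~(\ref{budeffromy2t}); (iii) invoke Theorem~\ref{Floercrit} (in its form valid for $M$ via the toric-degeneration argument, i.e. the analogue used in the $S^2\times S^2$ case — one needs that the Kodaira–Spencer map $\frak{ks}_{\frak b(\text{\bf u})} : QH_{\frak b(\text{\bf u})}(M;\Lambda)\to\mathrm{Jac}(\frak{PO}_{\frak b(\text{\bf u})};\Lambda)$ is a ring homomorphism, obtained by taking the limit $t\to 0$ of the toric surfaces as in Lemma~\ref{lem234} and Remark~\ref{rem:descend}) to conclude $HF((T(\text{\bf u}),(\frak b(\text{\bf u}),b(\text{\bf u})));\Lambda)\cong H(T^2;\Lambda)\ne 0$; (iv) since $\frak y$ is nondegenerate, Proposition~\ref{Morsesplit}(3) gives that the corresponding idempotent $e_{\text{\bf u}}$ is a unit of a direct factor $e_{\text{\bf u}}\Lambda\cong\Lambda$ of $QH_{\frak b(\text{\bf u})}(M;\Lambda)$; (v) finally apply the toric analogue of Theorem~\ref{calcithm} together with Lemma~\ref{calksi} to evaluate $i^{\ast}_{\text{\rm qm},(\frak b(\text{\bf u}),b(\text{\bf u}))}(e_{\text{\bf u}})$ and see that it equals $\text{\bf e}_{T(\text{\bf u})}\ne 0$, using — as in the remark at the end of Subsection~\ref{subsec:POb-T(u)} — that the de Rham representative of $PD[S^2_{\mathrm{van}}]$ can be chosen supported near the vanishing sphere, hence disjoint from $T(\text{\bf u})$, so that the potential-function computation is valid at the chain level.

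The main obstacle will be step~(i), namely rigorously establishing the form of the potential function $\frak{PO}_{\frak b(\text{\bf u})}$ for the non-toric manifold $M$. Unlike the toric case, where Theorem~\ref{toricPOcalcthm} gives a clean answer, here one must control the holomorphic discs bounding $T(\text{\bf u})$ that are created by smoothing the three $A_2$-singular points; this is precisely the computation that was done incorrectly in the first version of \cite{nnu2}. I would circumvent the difficulty by citing the corrected disc-count of Chan–Lau \cite{chanlau} (and \cite{nnu2} version~2), rather than redoing the gluing analysis, much as Section~\ref{sec:exotic} relied on \cite{fooo:S2S2} for the analogous statement about $\widehat F_2(0)$. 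A secondary technical point is that Theorems~\ref{Floercrit}, \ref{JacisHQ}, \ref{calcithm} are stated for toric manifolds, so one needs their counterparts for $M$; these follow by the toric-degeneration argument already used for $S^2\times S^2$ (limit $t\to 0$ of the family $M_t$), and in particular one checks that $\mathrm{Jac}(\frak{PO}_{\frak b(\text{\bf u})};\Lambda)$ is generated by the classes $z_i$ dual to the toric divisors of $M_0$ together with the vanishing-cycle classes, so that $\frak{ks}_{\frak b(\text{\bf u})}$ is surjective and its image contains the idempotent~$e_{\text{\bf u}}$, exactly as in Lemma~\ref{lem234}.
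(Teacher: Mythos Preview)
Your overall strategy --- compute the bulk-deformed potential, locate a nondegenerate critical point at the prescribed valuation, and then apply the Kodaira--Spencer/open-closed machinery --- matches the paper. However, you have misidentified where the real difficulty lies, and there is a genuine gap at your step~(ii).

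Step~(i) is not the obstacle: the explicit form of $\frak{PO}_{\frak b(\vec w)}$ follows from Chan--Lau's disc count and is recorded in the paper as Theorem~\ref{cubicbubblecal}. The difficulty is your step~(ii). The direct $S^2\times S^2$ approach --- choose a one-parameter bulk $T^{\rho}PD[S^2_{\rm van}]$ and solve explicitly for a nondegenerate critical point at valuation $\text{\bf u}$ --- does not carry over. The cubic surface has three $A_2$-singularities (six vanishing spheres), and the undeformed potential, after the substitution $Y_1^3=y_1^2y_2^{-1}$, $Y_1^2Y_2=y_1$, becomes $T((Y_1+Y_2+Y_1^{-1}Y_2^{-1})^3-6)$. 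Its critical locus along $\frak Z$ is the \emph{degenerate} hypersurface $Y_1+Y_2+Y_1^{-1}Y_2^{-1}=0$; so without bulk you do get nonvanishing Floer cohomology at every $\text{\bf u}\in\frak Z$ (Corollary~\ref{displacecubic}), but no field factor. The paper's resolution is indirect and not a single-parameter perturbation: it fixes an affine line $C\subset\C^9$ in the bulk parameter space joining two special points $\vec w_0$ (nine nondegenerate critical points, all at the origin) and $\vec w_{0;c}$ (only three). Since the generic fiber of the critical-point variety $\frak X_0\to C$ has nine points but only three survive in $\frak X_0$ over $\vec w_{0;c}$, six branches must escape to the compactification boundary; parametrizing these branches by Laurent series $(y_1(w),y_2(w),a(w))$ and substituting $w=T^{\rho}$ then produces, for $\rho\in(0,\rho_0)$, a nondegenerate critical point whose valuation sweeps out a ray $\frak Z_i$ (Lemma~\ref{existbulkcubic}). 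The $\Z_3$-symmetry covers all three rays. Thus the required bulk is $\frak b(a(T^{\rho}))$ for an \emph{implicitly} defined series $a$, not an explicit $T^{\rho(\text{\bf u})}PD[\text{cycle}]$; your proposal does not contain this idea.
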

We can use this theorem in the same way as in Section
\ref{sec:exotic} to show the following.
\begin{cor}
\begin{enumerate}
\item
Each of $T(\text{\bf u})$ is non-displaceable.
\item
$T(\text{\bf u})$ is not Hamiltonian isotopic to
$T(\text{\bf u}')$ if $\text{\bf u} \ne \text{\bf u}'$.
\item
There exist uncountably many homogeneous Calabi quasi-morphisms\index{quasi-morphism!Calabi quasi-morphism}
$$
\mu_{e_{\text{\bf u}}}^{\frak b(\text{\bf u})}
: \widetilde{\text{\rm Ham}}(M;\omega) \to \R
$$
which are linearly independent.
\end{enumerate}
\end{cor}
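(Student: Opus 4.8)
The plan is to derive the Corollary from Theorem \ref{cubicmain} together with the machinery already assembled in Chapters 3 and 4, exactly mirroring the treatment of $S^2 \times S^2$ in Section \ref{sec:exotic}. First I would prove statement (1): if $T(\text{\bf u})$ were displaceable, then since it is contained in some displaceable open set $U$, the vanishing property Definition \ref{defn:zeta} (7) of the partial quasi-state $\zeta_{e_{\text{\bf u}}}^{\frak b(\text{\bf u})}$ would force $\zeta_{e_{\text{\bf u}}}^{\frak b(\text{\bf u})}$ to behave trivially on functions supported in $U$; but Theorem \ref{thm:heavy} (1) applied to the data $(\text{\bf b}(\text{\bf u}),\frak b(\text{\bf u}))$ coming from Theorem \ref{cubicmain} (via $i_{\text{\rm qm},\text{\bf b}}^{\ast}(e_{\text{\bf u}}) \ne 0$) says $T(\text{\bf u})$ is $\zeta_{e_{\text{\bf u}}}^{\frak b(\text{\bf u})}$-heavy, and a $\zeta$-heavy set cannot be displaced (a heavy set must intersect its own image under any Hamiltonian diffeomorphism — this is immediate from the heavyness inequality applied to a function separating $T(\text{\bf u})$ from $\psi(T(\text{\bf u}))$, or one invokes Theorem \ref{sheavyintersectheavy} using that $T(\text{\bf u})$ is in fact superheavy). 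Since Theorem \ref{cubicmain} gives that $e_{\text{\bf u}}$ is a unit of a field factor of $QH_{\frak b(\text{\bf u})}(M;\Lambda)$, Theorem \ref{thm:heavy} (2) upgrades heavy to superheavy, which is what I would use.

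Next, statement (3). Since $e_{\text{\bf u}}$ is a unit of a direct factor $\cong \Lambda$ of $QH_{\frak b(\text{\bf u})}(M;\Lambda)$, Theorem \ref{thm:morphism} (equivalently Theorem \ref{existquasihomo}) produces a homogeneous Calabi quasimorphism $\mu_{e_{\text{\bf u}}}^{\frak b(\text{\bf u})} : \widetilde{\text{\rm Ham}}(M;\omega) \to \R$. Linear independence of the family $\{\mu_{e_{\text{\bf u}}}^{\frak b(\text{\bf u})}\}_{\text{\bf u} \in \frak Z}$ then follows from Corollary \ref{lieind}: the tori $T(\text{\bf u})$ for distinct $\text{\bf u} \in \frak Z$ can be taken to be mutually disjoint (they are fibers over distinct points of $\text{Int}\,P$ under the moment map of the toric degeneration, and the degeneration/gluing construction is local near the $A_2$ points, so disjointness of the fibers in $M_0$ away from the singular locus passes to $M$), each carries the bulk data $(\text{\bf b}_{\text{\bf u}},\frak b(\text{\bf u}))$ with $i_{\text{\rm qm},\text{\bf b}_{\text{\bf u}}}^{\ast}(e_{\text{\bf u}}) \ne 0$, and $e_{\text{\bf u}}$ is a unit of a field factor; Corollary \ref{lieind} then yields that any finite subcollection is linearly independent, which is precisely the definition of linear independence for the infinite (here uncountable) family. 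Since $\frak Z$ is a union of three segments, hence uncountable, this gives uncountably many.

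Finally, statement (2): if $T(\text{\bf u})$ and $T(\text{\bf u}')$ were Hamiltonian isotopic, say by $\psi \in \text{\rm Ham}(M;\omega)$, then $\psi(T(\text{\bf u})) = T(\text{\bf u}')$ would be simultaneously $\mu_{e_{\text{\bf u}}}^{\frak b(\text{\bf u})}$-superheavy (superheavyness is invariant under $\text{\rm Ham}$, indeed under $\text{\rm Symp}_0$) and $\mu_{e_{\text{\bf u}'}}^{\frak b(\text{\bf u}')}$-superheavy. But $T(\text{\bf u})$ and $T(\text{\bf u}')$ are disjoint, so by Theorem \ref{sheavyintersectheavy} (superheavy $\cap$ superheavy $\ne \emptyset$, since superheavy implies heavy by Remark \ref{rem:heavysuperheavy}) the two quasimorphisms $\mu_{e_{\text{\bf u}}}^{\frak b(\text{\bf u})}$ and $\mu_{e_{\text{\bf u}'}}^{\frak b(\text{\bf u}')}$ must be distinct — and then the disjoint pair $T(\text{\bf u})$, $\psi^{-1}(T(\text{\bf u}')) = T(\text{\bf u})$ gives a contradiction, since a set cannot be disjoint from itself while both being superheavy for the same quasimorphism; alternatively one runs the intersection argument directly on $\psi(T(\text{\bf u}))=T(\text{\bf u}')$ and $T(\text{\bf u})$. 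I expect the main obstacle to be purely bookkeeping rather than conceptual: verifying that the tori $T(\text{\bf u})$ can genuinely be realized as \emph{mutually disjoint} Lagrangians in the \emph{single} symplectic manifold $M$ (the degeneration construction identifies $M \cong M_t$ only after a choice of gluing parameters, and one must check these choices can be made uniformly so that all fibers over $\frak Z$ survive and stay disjoint), and that the de Rham representatives of $\frak b(\text{\bf u})$ — supported near the exceptional loci from the $A_2$ resolutions — are disjoint from the relevant $T(\text{\bf u})$ so that the potential-function computations underlying Theorem \ref{cubicmain} remain valid at chain level, exactly as in the remark at the end of Subsection \ref{subsec:POb-T(u)}.
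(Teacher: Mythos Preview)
Your proposal is correct and follows essentially the same approach as the paper, which itself gives no details for this Corollary beyond the sentence ``We can use this theorem in the same way as in Section \ref{sec:exotic}''; you have correctly unpacked that reference, invoking Theorem \ref{thm:heavy} (2) for superheavyness, Theorem \ref{thm:morphism} and Corollary \ref{lieind} for the linearly independent quasimorphisms, and Theorem \ref{sheavyintersectheavy} for (2). Your exposition of (2) is slightly roundabout, but the clean version you give at the end --- $T(\text{\bf u})$ is $\mu_{e_{\text{\bf u}}}^{\frak b(\text{\bf u})}$-superheavy, hence so is $\psi(T(\text{\bf u})) = T(\text{\bf u}')$ by symplectic invariance, but then $T(\text{\bf u}')$ must meet the $\mu_{e_{\text{\bf u}}}^{\frak b(\text{\bf u})}$-heavy set $T(\text{\bf u})$, contradicting disjointness --- is exactly right.
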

\begin{proof}[Proof of Theorem \ref{cubicmain}]
We consider a toric {\it resolution} of our orbifold $M_0$,
which we denote by $M(\epsilon)$.
We may take it so that its  moment polytope is
\begin{equation}
P_{\epsilon}
=
\{
(u_1,u_2) \in P \mid
\ell_i^{\epsilon}(u_1,u_2) \ge 0, \quad i=4,\dots,9
\},
\end{equation}
where
\begin{equation}
\aligned
\ell_4^{\epsilon}(u_1,u_2) &=
u_1 + 1 - \epsilon = \frac{1}{3}(2\ell_1+\ell_2) - \epsilon, \\
\ell_5^{\epsilon}(u_1,u_2) &=
u_2 + 1 - \epsilon = \frac{1}{3}(\ell_1+2\ell_2) - \epsilon, \\
\ell_6^{\epsilon}(u_1,u_2) &=
u_1-u_2 + 1 - \epsilon = \frac{1}{3}(2\ell_2+\ell_3) - \epsilon ,\\
\ell_7^{\epsilon}(u_1,u_2) &=
-u_2 + 1 - \epsilon = \frac{1}{3}(\ell_2+2\ell_3) - \epsilon, \\
\ell_8^{\epsilon}(u_1,u_2) &=
-u_1 + 1 - \epsilon = \frac{1}{3}(2\ell_3+\ell_1) - \epsilon, \\
\ell_9^{\epsilon}(u_1,u_2) &=
-u_1+u_2 + 1 - \epsilon = \frac{1}{3}(\ell_3+2\ell_1) - \epsilon.
\endaligned
\end{equation}
We put
$$
D_i = \pi^{-1}(\partial_i P_{\epsilon}),
\quad
\partial_i P_{\epsilon}
= \{(u_1,u_2) \in P_{\epsilon} \mid \ell_i^{\epsilon}(u_1,u_2) = 0\},
$$
for $i=4,\dots,9$.
($D_i$, $i=1,2,3$ are defined in the same way.)

\begin{figure}[h]
\centering
\includegraphics[scale=0.3]
{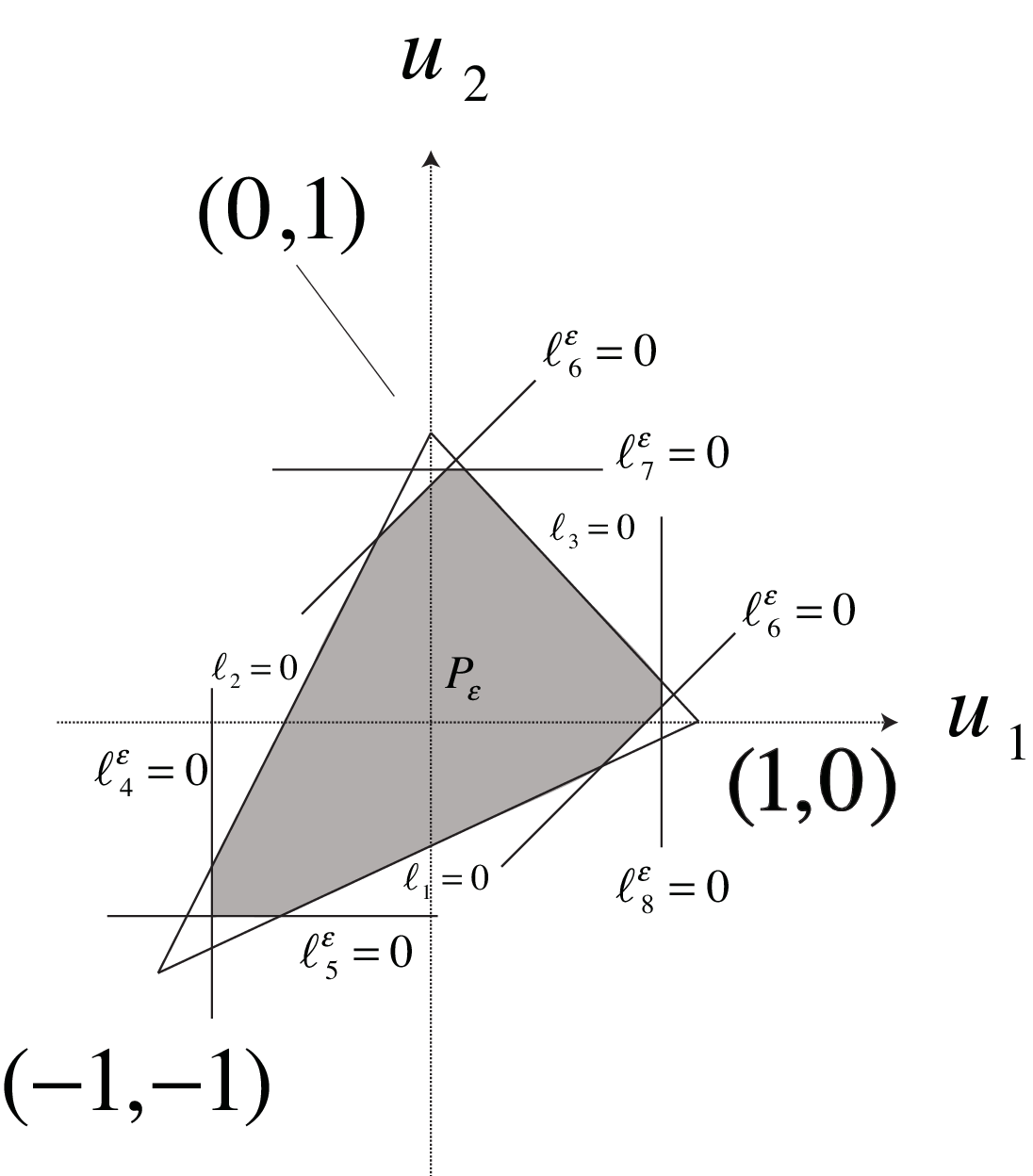}
\caption{Polytope $P_{\epsilon}$}
\label{Figure21}
\end{figure}
We note that $M(\epsilon)$ is nef but is not Fano.
In fact, $c_1(M(\epsilon)) \cap D_i = 0$ for $i=4,\dots,9$.
The potential function of $M(\epsilon)$ is calculated by Chan and Lau.
In fact, $M(\epsilon)$ is $X_{11}$ in the table given in  \cite[p.19]{chanlau}.
\par
Using the fact that $M$ is monotone, we can apply the argument of
\cite[Section 6]{fooo:S2S2}  to show that we can take the limit $\epsilon \to 0$
to calculate the potential function of $T(\text{\bf u})$ in $M$.
The result is the following.
\par
Let $e_1,e_2$ be a basis of $H^1(T(\text{\bf u});\Z)$ and put
$b = x_1e_1+x_2e_2 \in H^1(T(\text{\bf u});\Lambda_0)$.
We put $\overline y_i = e^{x_i}$ and
$y_i = T^{u_i}\overline y_i$, where ${\bf u}=(u_1,u_2)$.
\begin{thm}\label{cubicpocalcu}
The potential function of $T(\text{\bf u}) \subset M$ is given by
\begin{equation}\label{POcubic}
\aligned
\frak{PO}
= T\big(
&y_1^{-1}y_2^{-1} (y_1+y_2)^3
+
y_1^{-1}y_2^2(y_1y_2^{-1} + y_2^{-1})^3 \\
&+
y_1^{2}y_2^{-1}(y_1^{-1} + y_1^{-1}y_2 )^3
- y_1^{-1}y_2^2 - y_1^2y_2^{-1}  - y_1^{-1}y_2^{-1}
\big).
\endaligned
\end{equation}
\end{thm}
We postpone the proof of Theorem \ref{cubicpocalcu} till later
when we prove Theorem \ref{cubicbubblecal} using (\ref{CLcalcu}).

\begin{cor}\label{displacecubic}
For each $\text{\bf u}\in \frak Z$, there exists
$b \in H^1(T(\text{\bf u}),\Lambda_0)$ such that
$$
HF((T(\text{\bf u}),b),(T(\text{\bf u}),b);\Lambda_0)
\cong
H(T^2;\Lambda_0).
$$
\end{cor}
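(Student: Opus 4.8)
The plan is to deduce Corollary~\ref{displacecubic} from the explicit description of the potential function in Theorem~\ref{cubicpocalcu} together with Theorem~\ref{Floercrit}. Once $\frak{PO}$ is known to possess a critical point $\frak y$ with $\text{\bf u}(\frak y) = \text{\bf u}$, one puts $b = b(\frak y)$ as in~(\ref{budeffromy2t}) and applies Theorem~\ref{Floercrit} with $\frak b = 0$ to get $HF((T(\text{\bf u}),b),(T(\text{\bf u}),b);\Lambda) \cong H(T^2;\Lambda)$; the first assertion of Theorem~\ref{Floercrit} carries no nondegeneracy hypothesis, so degenerate critical points are fine for this purpose. Thus the whole problem reduces to exhibiting, for each $\text{\bf u} \in \frak Z$, a critical point of the Laurent polynomial $\frak{PO}$ of~(\ref{POcubic}) lying over $\text{\bf u}$.

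First I would simplify $\frak{PO}$. Rewriting the three cubic terms of~(\ref{POcubic}) as $y_1^{-1}y_2^{-1}(y_1+y_2)^3$, $y_1^{-1}y_2^{-1}(y_1+1)^3$, $y_1^{-1}y_2^{-1}(1+y_2)^3$ and using the elementary identity $(y_1+y_2)^3 + (y_1+1)^3 + (1+y_2)^3 - y_1^3 - y_2^3 - 1 = (y_1+y_2+1)^3 - 6y_1y_2$, one obtains $\frak{PO} = T\big(y_1^{-1}y_2^{-1}(y_1+y_2+1)^3 - 6\big)$, so that the logarithmic derivatives are $y_i\,\partial\frak{PO}/\partial y_i = T\,y_1^{-1}y_2^{-1}(y_1+y_2+1)^2(2y_i - y_j - 1)$ for $\{i,j\} = \{1,2\}$. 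Hence the critical locus of $\frak{PO}$ in $(\Lambda\setminus\{0\})^2$ consists of the curve $C = \{y_1+y_2+1 = 0\}$ --- along which $\frak{PO}$ vanishes to order $3$, so these critical points are genuinely degenerate --- together with the isolated nondegenerate point $(1,1)$, which we shall not need.

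Next I would match $C$ with $\frak Z$. By definition $\frak Z$ is the union of the segments $\R_{\ge 0}(1,0) \cap \text{\rm Int}\,P$, $\R_{\ge 0}(0,1) \cap \text{\rm Int}\,P$ and $\R_{\ge 0}(-1,-1) \cap \text{\rm Int}\,P$, so each $\text{\bf u}\in\frak Z$ is $(s,0)$, $(0,s)$ or $(-s,-s)$ for some $s$ with $0 \le s < 1$. For $\text{\bf u} = (s,0)$ with $s>0$ take $\frak y = (T^{s},\,-1-T^{s})$; then $\frak y\in C$ and $(\frak v_T(\frak y_1),\frak v_T(\frak y_2)) = (s,0) = \text{\bf u}$ since $s>0$. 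For $\text{\bf u} = (-s,-s)$ with $s>0$ take $\frak y = (T^{-s},\,-1-T^{-s})\in C$, whose valuations are $(-s,-s)$ because $-s<0$; for $\text{\bf u} = (0,s)$ take $\frak y = (-1-T^{s},\,T^{s})$; and for $\text{\bf u} = 0$ take $\frak y = (1,-2)\in C$. In each case $\text{\bf u}(\frak y)\in\text{\rm Int}\,P$, so $\frak y$ is a critical point of $\frak{PO}$ over $\text{\bf u}$ in the required sense, and Theorem~\ref{Floercrit} gives the isomorphism over $\Lambda$. To obtain it over $\Lambda_0$ as stated, I would invoke the $\Lambda_0$-coefficient form of this nonvanishing statement from~\cite{fooo:bulk} (at a critical point the differential $\delta^{(0,b(\frak y))}$ vanishes on the canonical model), or equivalently note that, the homology over $\Lambda$ being free of rank $4$ and $\delta^{(0,b)}$ preserving the $\Lambda_0$-filtration, a routine torsion-freeness argument passes from $\Lambda$ to $\Lambda_0$.

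The substantive input taken for granted above is Theorem~\ref{cubicpocalcu} itself: the formula~(\ref{POcubic}) is proved later, via the toric degeneration of the cubic surface, the Chan--Lau computation of the potential function of the small resolution $M(\epsilon)$, and the monotonicity argument of~\cite{fooo:S2S2} Section 6 controlling the limit $\epsilon\to 0$; getting this formula right is precisely the delicate point at which the first version of~\cite{nnu2} went astray, so I expect that to be where the real work lies. A secondary, routine matter is the justification of Theorem~\ref{Floercrit} for the non-toric manifold $M$, which is handled through the same degeneration exactly as in the treatment of $S^2\times S^2$ in Section~\ref{sec:exotic}.
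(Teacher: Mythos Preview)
Your argument is correct and in fact a bit more direct than the paper's. Both proofs rest on the same key simplification of~(\ref{POcubic}): expanding the three cubic terms and using the identity $(y_1+y_2)^3+(y_1+1)^3+(1+y_2)^3-y_1^3-y_2^3-1=(y_1+y_2+1)^3-6y_1y_2$ gives exactly your form $\frak{PO}=T\bigl(y_1^{-1}y_2^{-1}(y_1+y_2+1)^3-6\bigr)$. The paper reaches the equivalent expression $\frak{PO}=T\bigl((Y_1+Y_2+Y_1^{-1}Y_2^{-1})^3-6\bigr)$ only after the three-to-one change of variables $Y_1^3=y_1^2y_2^{-1}$, $Y_1^2Y_2=y_1$; since $(y_1y_2)^{-1/3}(y_1+y_2+1)=Y_1+Y_2+Y_1^{-1}Y_2^{-1}$, the two formulas are identical.

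The difference is in how the critical locus is exploited. In your coordinates the degenerate critical curve $y_1+y_2+1=0$ is immediately visible, and you can simply write down explicit points on it over every $\text{\bf u}\in\frak Z$, bypassing any symmetry argument. The paper instead analyzes the equation $Y_1+Y_2+Y_1^{-1}Y_2^{-1}=0$, works out one family of solutions landing on the ray $\R_{>0}(-1,-1)$, and then invokes the $\Z_3$ symmetry of the configuration (cyclic in $Y_1,Y_2,Y_1^{-1}Y_2^{-1}$) to cover the other two rays. For the corollary alone your route is cleaner; the paper's $(Y_1,Y_2)$ coordinates earn their keep only later in the section, where they are reused to analyze the bulk-deformed potential $\frak{PO}_{\frak b(\vec w_{u;c})}$ and its critical points.

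Your remarks on the two inputs taken for granted---the formula~(\ref{POcubic}) itself, and the applicability of the critical-point criterion (the analogue of Theorem~\ref{Floercrit}) to the non-toric $M$ via toric degeneration as in Section~\ref{sec:exotic}---are accurate and match what the paper relies on without further comment. The $\Lambda_0$ versus $\Lambda$ issue you flag is likewise not addressed explicitly in the paper's proof; your suggested justification (vanishing of $\frak m_1^b$ on the canonical model at a critical point) is the standard one.
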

\begin{proof}
We define $Y_1,Y_2$ by the formula
\begin{eqnarray}
Y_1^3 &=& y_1^2y_2^{-1}, \label{defz1}\\
Y_1^2Y_2 &=& y_1. \label{defz2}
\end{eqnarray}
Note for each $y_1,y_2$ there are 3 choices of $Y_1$ satisfying
(\ref{defz1}).
Then (\ref{defz2}) uniquely determines $Y_2$.
Thus $(Y_1,Y_2) \mapsto (y_1,y_2)$ is a three to one correspondence.
\par
Now we can rewrite (\ref{POcubic}) as follows:
$$
\aligned
\frak{PO}
&=
T
\left\{
(Y_1+Y_2)^3 + (Y_1+Y_1^{-1}Y_2^{-1})^3 + (Y_2+Y_1^{-1}Y_2^{-1})^3
- Y_1^3 -  Y_2^3 - Y_1^{-3}Y_2^{-3})
\right\} \\
&=
T((Y_1+Y_2+Y_1^{-1}Y_2^{-1})^3 - 6).
\endaligned
$$
Therefore
$$
\aligned
\frac{1}{T}\frac{\partial \frak{PO}}{\partial Y_1}
&=
3(1 - Y_1^{-2}Y_2^{-1})(Y_1+Y_2+Y_1^{-1}Y_2^{-1})^2 \\
\frac{1}{T}\frac{\partial \frak{PO}}{\partial Y_2}
&=
3(1 - Y_1^{-1}Y_2^{-2})(Y_1+Y_2+Y_1^{-1}Y_2^{-1})^2.
\endaligned
$$
Therefore the critical point is either
\begin{equation}\label{critz1}
Y_1=Y_2,
\quad
Y_1^3 = 1
\end{equation}
or
\begin{equation}\label{critz2}
Y_1+Y_2+Y_1^{-1}Y_2^{-1}=0.
\end{equation}
The equation (\ref{critz1}) gives a single solution $y_1=y_2=1$.
\par
By \eqref{defz1}, \eqref{defz2}, the equation \eqref{critz2} is equivalent to
$y_1 + y_2 +1 = 0$ in old coordinates $(y_1,y_2)$.
Therefore the point $(y_1,y_2)$ given by the coordinates
$$
\aligned
y_1 = & Y_1^2Y_2 = -c^3T^{3v}(1-c^{-3}T^{-3v}  + \cdots),\\
y_2 = & Y_1Y_2^2 =  c^3T^{3v}(1-2c^{-3}T^{-3v} + \cdots),
\endaligned
$$
is a critical point, when ${\frak v}_T(c) = 0$. Therefore the expression shows that
it is a critical point with its valuation vector given by
$\text{\bf u} = (3v,3v) \in \R_{>0}(-1,-1)$,
for any $v<0$.
The corollary now follows from the obvious $\Z_3$-symmetry.
\end{proof}

\begin{rem}
Corollary \ref{displacecubic} implies that the Jacobian ring
$$
\text{\rm Jac}(\frak{PO};\Lambda) =
\frac{\Lambda\langle\!\langle
y,y^{-1}\rangle\!\rangle^{\overset{\circ}P}}{\text{\rm Clos}_{d_{{\overset{\circ}P}}}
\left(y_i \frac{\partial \frak{PO} }{\partial y_i} ; i =1,2\right)}
$$
is infinite dimensional over $\Lambda$. Recall that in the toric case
the Jacobian ring is always finite dimensional
since the Kodaira-Spencer map
$
\frak{ks}_{\text{\bf 0}} : QH(X;\Lambda) \to  \text{\rm Jac}(\frak{PO};\Lambda)
$
is an isomorphism. (See Theorem \ref{JacisHQ}.)
\end{rem}
Corollary \ref{displacecubic} implies the existence of a continuum of
mutually disjoint non-displaceable Lagrangian tori in a cubic surface.
To show the existence of infinitely many Calabi quasi-morphisms  and
prove Theorem \ref{uncount} (2), we need to
study bulk deformations.
Let
\begin{equation}
\vec w = (w_1,\dots,w_9) \in \Lambda_0^9.
\end{equation}
We put
\begin{equation}
\frak b({\vec w}) = \sum_{i=1}^9 w_i PD(D_i).
\end{equation}
\begin{thm}\label{cubicbubblecal}
We have
$$
\aligned
&\frac{1}{T}(\frak{PO}_{\frak b({\vec w})} - \frak{PO}) \\
=
&(e^{w_1}-1) y_1^{-1}y_2^2 + (e^{w_2}-1) y_1^{2}y_2^{-1}
+ (e^{w_3} - 1)  y_1^{-1}y_2^{-1}  \\
&+ (e^{w_4} + e^{w_5-w_4} + e^{-w_5} -3) y_1
+ (e^{w_5} + e^{w_4-w_5} + e^{-w_4} -3) y_2 \\
&+ (e^{w_6} + e^{w_7-w_6} + e^{-w_7} -3) y_1y_2^{-1}
+ (e^{w_7} + e^{w_6-w_7} + e^{-w_6} -3) y_2^{-1}
\\
&+ (e^{w_8} + e^{w_9-w_8} + e^{-w_9} -3) y_1^{-1}
+ (e^{w_9} + e^{w_8-w_9} + e^{-w_8} -3) y_1^{-1} y_2.
\endaligned$$
\end{thm}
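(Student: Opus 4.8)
The plan is to compute the bulk-deformed potential function $\frak{PO}_{\frak b(\vec w)}$ of $T(\text{\bf u})$ by the toric degeneration method used already for $S^2\times S^2$ and for the undeformed case in Theorem \ref{cubicpocalcu}, tracking how each toric divisor $D_i$ contributes when we deform by $\frak b(\vec w)=\sum_{i=1}^9 w_i\,PD(D_i)$. First I would recall from Section \ref{subsec:toricHFreview} and from \cite{fooo:bulk} Lemmas 7.1 and 9.2 that a bulk class supported on toric divisors $D_i$ of degree $2$ simply multiplies the contribution of each holomorphic disc class $\beta$ by $\exp(\sum_i w_i(\beta\cap D_i))$; thus on the resolved toric surface $M(\epsilon)$, each basic disc class $\beta_i$ with $\beta_i\cap D_j=\delta_{ij}$ picks up the factor $e^{w_i}$, and by the nef (not Fano) geometry the relevant higher Maslov-index classes that enter $\frak{PO}$ are the ones recorded in Chan-Lau's computation of $X_{11}$ in \cite{chanlau}. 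Concretely, writing the undeformed $\frak{PO}$ as in \eqref{POcubic}, each cubic term $y_1^{-1}y_2^{-1}(y_1+y_2)^3$ etc.\ expands into monomials corresponding to disc classes whose intersection numbers with the six exceptional divisors $D_4,\dots,D_9$ determine which $e^{w_j}$ multiplies them, while the three "correction" monomials $-y_1^{-1}y_2^2$, $-y_1^2y_2^{-1}$, $-y_1^{-1}y_2^{-1}$ come from the classes meeting $D_1,D_2,D_3$ respectively and hence carry $e^{w_1},e^{w_2},e^{w_3}$.

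Next I would organize the bookkeeping by the $\Z_3$ cyclic symmetry of the triangle $P$, which permutes $\{D_1,D_2,D_3\}$, $\{D_4,D_6,D_8\}$ and $\{D_5,D_7,D_9\}$; it suffices to compute the coefficient of, say, $y_1$ and of $y_1^{-1}y_2^2$, and then apply the symmetry. For the $y_1$-coefficient: in the undeformed potential $y_1$ appears with total coefficient $3$ (from expanding $y_1^{-1}y_2^{-1}(y_1+y_2)^3$ one gets $3y_1^2y_2$ — no; rather the monomial $y_1$ arises from the three "fan" directions through the vertex region near $D_4,D_5$), and the three discs producing it meet $(D_4)$, $(D_5$ and $D_4$ via the resolution chain), and $(D_5)$ with the appropriate multiplicities one reads off from $\ell_4^\epsilon,\ell_5^\epsilon$ in terms of $\ell_1,\ell_2$; this yields the factor $e^{w_4}+e^{w_5-w_4}+e^{-w_5}$ replacing the $3$. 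I would verify that taking $\epsilon\to 0$ is legitimate exactly as in \cite{fooo:S2S2} Section 6 using monotonicity of $M$ (so that only finitely many disc classes contribute up to any energy and no disc bubbles off into the exceptional locus survive in the limit), and that the de Rham representatives of $PD(D_i)$ can be chosen supported near $D_i$, hence the whole computation descends from $M(\epsilon)$ to $M$.

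Finally I would assemble the nine monomials with their deformed coefficients, subtract the undeformed $\frak{PO}$ from \eqref{POcubic} term by term, and check that the result is precisely the claimed expression — in particular that the three unit "$-1$"-type shifts combine with the $e^{w_i}$ factors to give $(e^{w_i}-1)$ on the three quadratic-type monomials and $(e^{w_j}+e^{w_k-w_j}+e^{-w_k}-3)$ on the six linear-type monomials. The main obstacle I anticipate is the precise identification of which disc classes on $M(\epsilon)$ contribute each monomial of \eqref{POcubic} and their exact intersection numbers with $D_4,\dots,D_9$ along the $A_2$-resolution chains: the resolution of each $A_2$ point inserts two exceptional curves, and one must correctly read the combinatorics (via the expressions $\ell_i^\epsilon=\tfrac13(a\ell_j+b\ell_k)-\epsilon$) so that the "chain" discs acquire the mixed exponents $w_5-w_4$, $w_4-w_5$, etc.; getting these signs and the pattern $e^{w_j},e^{w_k-w_j},e^{-w_k}$ right is the delicate point, and here I would lean directly on Chan-Lau's explicit formula for $X_{11}$ in \cite{chanlau} together with the substitution $\overline y_i=e^{x_i}$, $y_i=T^{u_i}\overline y_i$ to pin down the answer, exactly as \eqref{CLcalcu} is invoked for Theorem \ref{cubicpocalcu}.
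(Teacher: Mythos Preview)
Your proposal is correct and follows essentially the same approach as the paper: reduce to the toric resolution $M(\epsilon)$, use that a degree-$2$ bulk class weights each disc class $\beta$ by $\exp(\sum_i w_i(\beta\cap D_i))$, invoke Chan--Lau's disc enumeration for $X_{11}$ to identify the contributing classes, compute one representative coefficient (the paper does $y_1$), and finish by the $\Z_3$ symmetry. The one place where the paper is sharper than your sketch is the actual intersection computation: rather than reading the exponents from the linear forms $\ell_4^\epsilon,\ell_5^\epsilon$, the paper identifies the three disc classes as $\beta_4$, $\beta_4+[D_4]$, $\beta_4+[D_4]+[D_5]$ (this is exactly \eqref{CLcalcu}) and then computes $\beta\cap D_4$, $\beta\cap D_5$ using $\beta_4\cap D_4=1$, $\beta_4\cap D_5=0$ together with $[D_4]\cdot[D_4]=[D_5]\cdot[D_5]=-2$, $[D_4]\cdot[D_5]=1$; this immediately yields the exponents $w_4$, $w_5-w_4$, $-w_5$ without ambiguity.
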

\begin{proof}
We consider the term
$3y_1^{-1}y_2^{-1} y_1^2y_2 = 3y_1$ in (\ref{POcubic}).
This term comes from the moduli space
$\mathcal M(\beta)$ where
$
\beta = \beta_4 + \alpha
$
with
\begin{equation}\label{beta4int}
\beta_4 \cap D_j =
\begin{cases}
1 &j =4 \\
0 & j\ne 4,
\end{cases}
\end{equation}
and $\alpha \in H_2(M;\Z)$
with
$$
\alpha = k_1 [D_4] + k_2 [D_5].
$$
We define
$$
d(k_1,k_2) = \text{\rm deg} (\text{\rm ev}_0 : \mathcal M_1(\beta_4 + k_1 [D_4] + k_2 [D_5]) \to L(\text{\bf u})).
$$
By the result of Chan-Lau, \cite[Theorem 1.1]{chanlau},
(and the fact that the potential functions
are continuous with respect to the limit $\epsilon \to 0$), we derive
\begin{equation}\label{CLcalcu}
d(k_1,k_2) =
\begin{cases}
1 &(k_1,k_2) = (0,0),(1,0),(1,1) \\
0 & \text{otherwise}.
\end{cases}
\end{equation}
This result is also obtained independently in Section 5 of
the second version of \cite{nnu2} based on a different argument.
Therefore, by the proof of \cite[Proposition 9.4]{fooo:bulk}, the coefficient of
$y_1$ in $\frak{PO}_{\frak b_{a,b}(u)}$ is given by
$$\aligned
\sum_{k_1,k_2} d(k_1,k_2)
&\exp(w_4 [D_4] \cap [\beta_4+ k_1 [D_4] + k_2 [D_5]]) \\
&\exp(w_5 [D_5]  \cap [\beta_4+ k_1 [D_4] + k_2 [D_5]]) \\
&\hskip-3cm=
e^{w_4} + e^{w_5-w_4} + e^{-w_5}.
\endaligned$$
(Here we use (\ref{beta4int}) and
$[D_4]\cdot [D_4] = [D_5]\cdot [D_5] = -2$, $[D_4]\cdot [D_5] = 1$.)
\par
In the same way the coefficient of $y_2$ in $\frak{PO}_{\frak b(\vec w)}$ is given by
$e^{w_5} + e^{w_4-w_5} + e^{-w_4}$.
This proves the second line of the right hand side formula.
The third and fourth lines of the formula can be proved in the same way.
The proof of the first line is easier and so omitted.
\end{proof}
We put
$$
\vec w_0 = (0,0,0,w_0,w_0,w_0,w_0,w_0,w_0,w_0),
\qquad
e^{2w_0} +  e^{w_0} +1 = 0.
$$
Theorem \ref{cubicbubblecal} implies
\begin{equation}\label{POw0}
\frak{PO}_{\frak b(\vec w_0)}
=
T(y_1^{-1}y_2^2 + y_1^2y_2^{-1}  + y_1^{-1}y_2^{-1}).
\end{equation}

\begin{rem}
According to \cite[Proposition 3.10]{iritani3} , the function
(\ref{POw0}) is the Landau-Ginzburg superpotential of the
mirror of the toric orbifold $M_0$.
\end{rem}
\begin{lem}
$\frak{PO}_{\frak b(\vec w_0)}$ has $9$ critical points.
All of them have valuation $0$ and are nondegenerate.
\end{lem}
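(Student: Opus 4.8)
The plan is to analyze the Laurent polynomial $\frak{PO}_{\frak b(\vec w_0)} = T(y_1^{-1}y_2^2 + y_1^2 y_2^{-1} + y_1^{-1}y_2^{-1})$ directly. First I would pass to the logarithmic derivatives. Writing $\frak P = y_1^{-1}y_2^2 + y_1^2 y_2^{-1} + y_1^{-1}y_2^{-1}$, one computes
\begin{equation*}
y_1\frac{\partial \frak P}{\partial y_1} = -y_1^{-1}y_2^2 + 2y_1^2 y_2^{-1} - y_1^{-1}y_2^{-1}, \qquad
y_2\frac{\partial \frak P}{\partial y_2} = 2y_1^{-1}y_2^2 - y_1^2 y_2^{-1} - y_1^{-1}y_2^{-1}.
\end{equation*}
Setting $A = y_1^{-1}y_2^2$, $B = y_1^2 y_2^{-1}$, $C = y_1^{-1}y_2^{-1}$, the critical point equations become $-A + 2B - C = 0$ and $2A - B - C = 0$; subtracting gives $3A - 3B = 0$, i.e. $A = B$, and then $C = A$. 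So at a critical point $A = B = C$, which forces $y_2^2 = y_1 y_2$ wait — more directly $A = B$ gives $y_2^3 = y_1^3$ and $A = C$ gives $y_2^3 = 1$. Hence $y_2^3 = 1$ and $y_1^3 = y_2^3 = 1$, giving $3 \times 3 = 9$ solutions $(y_1,y_2) = (\zeta_1,\zeta_2)$ with $\zeta_i$ a cube root of unity. Each such solution has $\frak v_T(y_i) = 0$, so $\text{\bf u}(\frak y) = (0,0)$ for all of them, establishing that all nine critical points lie over the center of gravity of $P$ and have valuation zero.

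Next I would verify nondegeneracy. At a critical point $A = B = C$, so each monomial equals the common value, call it $\lambda$ (a cube root of unity times itself — in any case $\lambda \ne 0$). A direct computation of the Hessian matrix $\left[ y_i y_j \frac{\partial^2 \frak P}{\partial y_i \partial y_j}\right]$ gives, using $y_i y_j \partial_i \partial_j$ acting on each monomial $y_1^{a}y_2^{b}$ returning $ab$ (off-diagonal) or $a(a-1)$ resp. $b(b-1)$ (diagonal) times the monomial: the $(1,1)$ entry is $A + 2B + C$, the $(2,2)$ entry is $2A + B + C$, and the $(1,2) = (2,1)$ entry is $-2A - 2B + C$ — these can be read off by differentiating the first-order expressions above once more. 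Substituting $A = B = C = \lambda$ gives Hessian $\lambda \begin{pmatrix} 4 & -3 \\ -3 & 4 \end{pmatrix}$, whose determinant is $\lambda^2(16 - 9) = 7\lambda^2 \ne 0$. Hence every critical point is nondegenerate, so $\frak{PO}_{\frak b(\vec w_0)}$ is a Morse function with exactly nine critical points, all of valuation zero.

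I expect the main subtlety is bookkeeping rather than conceptual: one must be careful that $\frak{PO}_{\frak b(\vec w_0)}$, though written as a Laurent polynomial over $\Lambda$ via (\ref{POw0}), genuinely has all its critical points in the region with valuation vector in $\text{\rm Int}\,P$ (here $(0,0)$, the interior point $\ell_i(0,0) = 1 > 0$ for all $i$), so that these are bona fide critical points in the sense of the $\Lambda\langle\!\langle y, y^{-1}\rangle\!\rangle^{\overset{\circ}P}$ setup, and that no critical point is lost "at infinity." Since the equations $y_1^3 = y_2^3 = 1$ have exactly nine solutions all with $|y_i| $-valuation zero, and the overall factor $T$ scales the potential without affecting critical point locations or the Hessian's nonvanishing, there is no such loss. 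This completes the verification of the three assertions: nine critical points, all of valuation zero, all nondegenerate.
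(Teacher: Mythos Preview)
Your approach is exactly what the paper does (the paper's own proof is one sentence: ``We can easily check that the critical points are given by $y_1^3 = y_2^3 = 1$''), and your derivation of the nine critical points $y_1^3 = y_2^3 = 1$ via $A=B=C$ is correct.

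There is, however, an arithmetic slip in your Hessian computation. Applying $y_1\partial_1$ again to $-A+2B-C$ gives $A + 4B + C$, not $A+2B+C$ (the exponent of $y_1$ in $B$ is $2$, so $y_1\partial_1(2B) = 4B$); similarly the $(2,2)$ entry is $4A+B+C$. Equivalently, with the paper's convention $y_i y_j\,\partial_i\partial_j$, each diagonal entry is $2A+2B+2C$. At a critical point the Hessian is therefore $\lambda\begin{pmatrix}6&-3\\-3&6\end{pmatrix}$ with determinant $27\lambda^2$, not $7\lambda^2$. Your conclusion is unaffected since both numbers are nonzero, but the specific values you record are off.
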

\begin{proof}
We can easily check that the critical points are given
by $y_1^3 = y_2^3 =1$.
\end{proof}
\begin{lem}
For a generic $\vec w$ the set of critical points of $\frak{PO}_{\frak b(\vec w)}$
consists of $9$ elements all of which are nondegenerate.
\end{lem}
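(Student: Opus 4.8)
The plan is to deduce the statement from the previous lemma, which computed
$\frak{PO}_{\frak b(\vec w_0)}$ explicitly and showed it has exactly $9$
nondegenerate critical points, together with an elementary semicontinuity
argument for the dependence of the critical point structure on $\vec w$.
First I would record the explicit formula for $\frak{PO}_{\frak b(\vec w)}$ from
Theorem \ref{cubicbubblecal}: it is $T$ times a Laurent polynomial in
$y_1, y_2$ whose coefficients are explicit entire (exponential-polynomial)
functions of the variables $w_1, \dots, w_9$. In particular all nine
coefficients of the monomials occurring in $\frak{PO}_{\frak b(\vec w_0)}$, namely
$y_1$, $y_2$, $y_1y_2^{-1}$, $y_2^{-1}$, $y_1^{-1}$, $y_1^{-1}y_2$,
$y_1^{-1}y_2^2$, $y_1^2y_2^{-1}$, $y_1^{-1}y_2^{-1}$, are present, while the
remaining coefficients vanish; the whole family varies analytically with
$\vec w$ over the base $\C^9$ (or over a Zariski-open subset thereof on which the
exponentials are units).

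The key step is the following semicontinuity principle. Fix the support
$S \subset \Z^2$ of the Laurent polynomial $\frak{PO}_{\frak b(\vec w_0)}/T$, and
consider the subfamily of Laurent polynomials with support in $S$; its critical
locus (in $(\C\setminus\{0\})^2$, or on the relevant non-Archimedean torus) is
cut out by the two equations $y_i \partial(\cdot)/\partial y_i = 0$, which again
have support contained in a fixed finite set. Since at $\vec w_0$ this system has
exactly $9$ solutions, all nondegenerate (equivalently, the Jacobian ring at
$\vec w_0$ is $9$-dimensional and reduced, i.e. $\cong \Lambda^9$), the same
holds for all $\vec w$ in a nonempty Zariski-open neighborhood of $\vec w_0$ in
the parameter space: nondegeneracy is the nonvanishing of the Hessian
determinant $\det[y_iy_j \partial^2 \frak{PO}_{\frak b(\vec w)}/\partial y_i\partial y_j]$
at each critical point, and the count "$9$ nondegenerate critical points'' is
preserved under small analytic deformation by the implicit function theorem
(or, algebraically, by flatness of the Jacobian ring over the open locus where
the leading coefficients stay units and the discriminant is nonzero). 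One then
invokes Proposition \ref{Morsesplit}: nondegeneracy of all critical points is
exactly the statement that $\frak{PO}_{\frak b(\vec w)}$ is a Morse function, and
their number equals $\dim_\Lambda \text{\rm Jac}(\frak{PO}_{\frak b(\vec w)};\Lambda)$,
which by Theorem \ref{JacisHQ} equals $\dim_\Lambda QH(M;\Lambda) = 9$ for the
cubic surface.

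I would therefore argue: at $\vec w_0$ the Jacobian ring is $\cong \Lambda^9$
by the preceding lemma, hence $\frak{ks}_{\frak b(\vec w_0)}$ is an isomorphism
onto $\Lambda^9$ and every critical point is nondegenerate; the set of
$\vec w$ for which this persists is open and dense (it is the complement of the
zero locus of a single non-identically-zero analytic function, namely the
resultant/discriminant of the critical-point system, which is nonzero at
$\vec w_0$); on this locus $\frak{PO}_{\frak b(\vec w)}$ has exactly $9$ critical
points, all nondegenerate. The main obstacle, and the only point requiring
genuine care, is to make the semicontinuity argument rigorous over the
non-Archimedean field $\Lambda$ rather than over $\C$: one must check that the
critical points of $\frak{PO}_{\frak b(\vec w_0)}$ do not escape to the boundary of
the moment polytope (they all have valuation $0$ by the lemma, so they stay in
$\text{\rm Int}\,P$) and that the counting of solutions with multiplicity is
stable — this is handled exactly as in \cite{fooo:toricmir} Proposition 2.15
and the discussion of $\text{\rm Jac}$ there, reducing everything to the
statement that a zero-dimensional complete intersection with reduced fiber at
one parameter value has reduced fibers of the same length nearby. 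Granting
that, the lemma follows.
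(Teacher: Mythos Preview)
Your approach differs from the paper's and contains a genuine gap. You deform from $\vec w_0$, where the previous lemma gives exactly $9$ nondegenerate critical points, and invoke the implicit function theorem to track them to nearby $\vec w$. This correctly shows that for $\vec w$ near $\vec w_0$ there are \emph{at least} $9$ nondegenerate critical points, but it does not by itself rule out further critical points elsewhere in the torus. To close this you appeal to Theorem~\ref{JacisHQ} to force $\dim_\Lambda \mathrm{Jac}(\frak{PO}_{\frak b(\vec w)};\Lambda) = \dim_\Lambda QH(M;\Lambda) = 9$; but Theorem~\ref{JacisHQ} is stated and proved only for compact \emph{toric} manifolds, and the cubic surface is not toric. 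In fact the paper remarks, just before this lemma, that for $\frak b = 0$ the Jacobian ring here is \emph{infinite-dimensional}, so the upper bound you need is genuinely not automatic. The analogue of Theorem~\ref{JacisHQ} in this setting (Lemma~\ref{semisimplecubic}) is established only \emph{after} one knows there are exactly $9$ nondegenerate critical points, so invoking it here would be circular.

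The paper's argument avoids this trap cleanly: the Newton polytope of $\frak{PO}_{\frak b(\vec w)}$ in $(y_1,y_2)$ is independent of $\vec w$ and has area $9/2$, so Kushnirenko's theorem bounds the number of isolated critical points in the torus by $2\cdot(9/2)=9$ for \emph{every} $\vec w$. Since this bound is attained at $\vec w_0$ with all critical points simple, it is attained for generic $\vec w$, and attainment of the maximal count forces nondegeneracy. Your deformation strategy can be salvaged, but only by supplying precisely such an a~priori upper bound; the reference to ``flatness of the Jacobian ring'' is not enough without knowing the family of critical schemes is proper over the base, which is exactly what Kushnirenko (or Bernstein--Kushnirenko) delivers.
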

\begin{proof}
The Newton polytope of the Laurent polynomial  $\frak{PO}_{\frak b(\vec w)}$
of $y$ has volume $9/2$. Therefore by
the result of Kushnirenko \cite{kushnire}
the number of critical points are at most $9$.
Since it is exactly $9$ for $\vec w = \vec w_0$, it is so
for a generic $\vec w$.
Since the number is maximal it must be nondegenerate.
\end{proof}
\begin{lem}\label{semisimplecubic}
Suppose that the set of critical points of $\frak{PO}_{\frak b(\vec w)}$
consists of $9$ elements all of which are nondegenerate.
We also assume that the valuation vectors of the critical points are in the
interior of the moment polytope $P$.
We also assume that none of the following happens.
\begin{enumerate}
\item
$e^{3w_4} = 1$, $e^{2w_4} = e^{w_5}$.
\item
$e^{3w_6} = 1$, $e^{2w_6} = e^{w_7}$.
\item
$e^{3w_8} = 1$, $e^{2w_8} = e^{w_9}$.
\end{enumerate}
\par
Then the homomorphism
$$
\frak{ks}_{\frak b(\vec w)} : QH_{\frak b(\vec w)}(X;\Lambda) \to
\text{\rm Jac}(\frak{PO}_{\frak b(\vec w)};\Lambda)
$$
is an isomorphism.
Moreover $QH_{\frak b(\vec w)}(X;\Lambda)$ is semi-simple.
\end{lem}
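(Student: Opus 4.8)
The plan is to show that, under the stated genericity hypotheses, the Kodaira--Spencer map $\frak{ks}_{\frak b(\vec w)}$ is injective and surjective, and then to deduce semisimplicity from the structure of the target Jacobian ring. The key geometric input is a toric degeneration argument: the cubic surface $M$ is symplectomorphic to a small deformation $M_t$ ($t\ne 0$) of the toric orbifold $M_0$, whose toric resolution $M(\epsilon)$ is nef. We first observe that $\frak{ks}_{\frak b(\vec w)}$ is a ring homomorphism. For the toric resolution $M(\epsilon)$ this is \cite{fooo:toricmir} Theorem 9.1 (or Theorem \ref{JacisHQ} in the Fano-adjacent nef form used here), and passing to the limit $\epsilon \to 0$ --- which is legitimate because $M$ is monotone and the argument of \cite{fooo:S2S2} Section 6 applies --- shows that
$$
\frak{ks}_{\frak b(\vec w)} : QH_{\frak b(\vec w)}(M;\Lambda) \to \text{\rm Jac}(\frak{PO}_{\frak b(\vec w)};\Lambda)
$$
is a ring homomorphism, where $\frak{PO}_{\frak b(\vec w)}$ is the potential function computed in Theorem \ref{cubicbubblecal}.

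The second step is a dimension count. Under the hypothesis that $\frak{PO}_{\frak b(\vec w)}$ has exactly $9$ nondegenerate critical points with valuations in $\text{\rm Int}\,P$, Proposition \ref{Morsesplit} gives a ring isomorphism $\text{\rm Jac}(\frak{PO}_{\frak b(\vec w)};\Lambda)\cong \Lambda^9$, so in particular the Jacobian ring is semisimple of $\Lambda$-dimension $9$. On the other hand $\dim_\Lambda QH_{\frak b(\vec w)}(M;\Lambda) = \dim_\Lambda H^*(M;\Lambda)$, and a cubic surface (being a $6$-point blow-up of $\C P^2$) has total Betti number $9$. Hence it suffices to prove that $\frak{ks}_{\frak b(\vec w)}$ is surjective; since source and target have equal finite dimension, surjectivity forces bijectivity. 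For surjectivity we use the image of the toric divisor classes. As in the computation preceding Theorem \ref{cubicpocalcu}, one identifies $\frak{ks}_{\frak b(\vec w)}(PD[D_i])$ with an explicit element of $\Lambda\langle\!\langle y,y^{-1}\rangle\!\rangle^{\overset{\circ}P}$ --- essentially the monomial $z_i$ attached to the facet $\partial_i P_\epsilon$, up to an invertible scalar coming from $\vec w$ --- and one checks, exactly as in \cite{fooo:toricmir} Lemma 2.3 and in the proof of Lemma \ref{lem234} above, that the elements $z_1,\dots,z_9$ generate a dense $\Lambda$-subalgebra of $\Lambda\langle\!\langle y,y^{-1}\rangle\!\rangle^{\overset{\circ}P}$; since $\text{\rm Jac}(\frak{PO}_{\frak b(\vec w)};\Lambda)$ is finite dimensional, density implies that these images already generate it as a $\Lambda$-algebra. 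The three excluded cases (1)--(3) are precisely the degenerations where one of the scalar factors $e^{3w_{2j}}-1$ or $e^{2w_{2j}}-e^{w_{2j+1}}$ vanishes, so that $\frak{ks}_{\frak b(\vec w)}(PD[D_i])$ fails to recover the corresponding $z_i$; ruling them out guarantees all nine generators are hit, up to units.

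The final step records the consequence: once $\frak{ks}_{\frak b(\vec w)}$ is a ring isomorphism onto $\text{\rm Jac}(\frak{PO}_{\frak b(\vec w)};\Lambda)\cong\prod_{\frak y}\text{\rm Jac}(\frak{PO}_{\frak b(\vec w)};\frak y)$, and each local factor is $\cong\Lambda$ by nondegeneracy (Proposition \ref{Morsesplit}(3)), the ring $QH_{\frak b(\vec w)}(M;\Lambda)$ is isomorphic to a product of nine copies of $\Lambda$ and is therefore semisimple. I expect the main obstacle to be the surjectivity/generation step: one must verify carefully that the divisor classes $D_4,\dots,D_9$ coming from the exceptional curves of the toric resolution survive the degeneration $\epsilon\to 0$ as honest classes in $H^2(M;\Lambda)$ (this uses that $M$ is monotone and that $H^2$ of the relevant $3$-manifold link vanishes, as in \cite{fooo:S2S2}), and that the explicit scalar prefactors in $\frak{ks}_{\frak b(\vec w)}(PD[D_i])$ are exactly those controlled by the conditions (1)--(3); the density argument à la \cite{fooo:toricmir} Lemma 2.3 is then routine but must be checked for the specific Newton polytope of $\frak{PO}_{\frak b(\vec w)}$.
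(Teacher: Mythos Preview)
Your overall architecture matches the paper's proof: you correctly establish that $\frak{ks}_{\frak b(\vec w)}$ is a ring homomorphism, that both sides have $\Lambda$-dimension $9$, and that surjectivity (via generation by the $z_i$) is the crux. However, your surjectivity argument has a genuine gap at the key computational step.

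You assert that $\frak{ks}_{\frak b(\vec w)}(PD[D_i])$ is ``essentially the monomial $z_i$ \dots up to an invertible scalar.'' This is true only for $i=1,2,3$, where differentiating Theorem~\ref{cubicbubblecal} gives $\frak{ks}_{\frak b(\vec w)}(PD[D_i]) = e^{w_i} z_i$. For $i=4,\dots,9$ it is false: because each of the monomials $y_1,\,y_2,\,y_1y_2^{-1},\dots$ in $\frak{PO}_{\frak b(\vec w)}$ carries a coefficient that is a \emph{symmetric} function of a pair $(w_{2j},w_{2j+1})$, differentiating in $w_4$ (say) produces a \emph{linear combination} of $z_4$ and $z_5$, not a scalar multiple of $z_4$. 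Concretely, the paper computes
\[
\begin{pmatrix}\frak{ks}_{\frak b(\vec w)}(PD[D_4])\\ \frak{ks}_{\frak b(\vec w)}(PD[D_5])\end{pmatrix}
=
\begin{pmatrix}
e^{w_4}-e^{w_5-w_4} & e^{w_4-w_5}-e^{-w_4}\\
e^{-w_4+w_5}-e^{-w_5} & e^{w_5}-e^{w_4-w_5}
\end{pmatrix}
\begin{pmatrix}z_4\\z_5\end{pmatrix}.
\]
The exclusion hypothesis (1) is precisely the condition that this $2\times 2$ matrix is the zero matrix (both equations in (1) must hold simultaneously, not one or the other as you wrote). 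When the matrix is nonzero one obtains at least one of $z_4,z_5$ in the image; the paper then uses the multiplicative relation $z_4 z_5 = z_1 z_2$ (which you do not mention) to conclude that both lie in the image, and similarly for the pairs $(z_6,z_7)$ and $(z_8,z_9)$. Without this matrix computation and the product relation, your argument does not show that all nine generators are hit.
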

\begin{proof}
We can prove that $\frak{ks}_{\frak b(\vec w)}$ is a ring homomorphism in a similar way as in
\cite[Theorem 2.6.1]{fooo:toricmir}.  (See \cite{AFOOO} for the detail.)
We put
$$
z_i = y_1^{\frac{\partial \ell_i}{\partial y_1}}y_2^{\frac{\partial \ell_i}{\partial y_2}}.
$$
In a way similar to \cite[Lemma 1.2.4]{fooo:toricmir},  we can prove that the set
$\{ z_i \mid i=1,\dots,9\}$ generates a dense $\Lambda$-subalgebra
of $\Lambda\langle\!\langle
y,y^{-1}\rangle\!\rangle^{\overset{\circ}P}$.
Since $\text{\rm Jac}(\frak{PO}_{\frak b(\vec w)};\Lambda)$
is finite dimensional, it is generated by the
images of $z_i, i=1,\dots,9$ as a $\Lambda$-algebra.
\par
By differentiating the formula given in
Theorem \ref{cubicbubblecal}, we find that the cohomology class
$PD[D_i]$ is mapped to $e^{w_i}z_i$ by $\frak{ks}_{\frak b(\vec w)}$
for all $i=1,2,3$.
We calculate
$$
\left(
\begin{matrix}
\frak{ks}_{\frak b(\vec w)}(PD([D_4])) \\
\frak{ks}_{\frak b(\vec w)}(PD([D_5]))
\end{matrix}
\right)
=
\left(
\begin{matrix}
e^{w_4}- e^{w_5-w_4} & e^{w_4-w_5} - e^{-w_4} \\
e^{-w_4 +w_5} - e^{-w_5}& e^{w_5}- e^{w_4-w_5}
\end{matrix}
\right)
\left(
\begin{matrix}
z_4
\\
z_5
\end{matrix}
\right)
$$
By assumption the matrix in the right hand side is nonzero.
Therefore the image of $\frak{ks}_{\frak b(\vec w)}$
contains either $z_4$ or $z_5$. Since
$z_4z_5 = z_1z_2$, it contains both of $z_4$ and $z_5$.
In  a similar way we find that the image of
$\frak{ks}_{\frak b(\vec w)}$ contains $z_6, \dots, z_9$.
Therefore $\frak{ks}_{\frak b(\vec w)}$ is surjective.
\par
The rank of
$\text{\rm Jac}(\frak{PO}_{\frak b(\vec w)};\Lambda)$ is $9$
that is equal to the Betti number of $X$.
Therefore $\frak{ks}_{\frak b(\vec w)}$ is an isomorphism and
$QH_{\frak b(\vec w)}(X;\Lambda) \cong \Lambda^9$. This in particular proves that
the ring $QH_{\frak b(\vec w)}(X;\Lambda)$ is semi-simple.
\end{proof}
We next put
$$
\vec w_{u;c}
= (0,0,0,w(u;c),w(u;c),0,0,0,0),
\qquad
e^{w(u;c)} + 1 + e^{-w(u;c)} = 3+cT^u
$$
with $c \in \C \setminus \{0\}$, $u\ge 0$.
(We observe that $\frak v_T(w(u;c)) = u/2$.)

\begin{lem} Let $\vec w_{u;c}$ be the vector given as above,
and assume $c$ is chosen generically for $u=0$. Then
\begin{enumerate}
\item
$\frak{PO}_{\frak b(\vec w_{u;c})}$ has exactly $3$ nonzero critical points.
\item
The valuation of the critical points are $(0,0)$.
\item
All the three critical points are nondegenerate.
\end{enumerate}
\end{lem}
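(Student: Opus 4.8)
The plan is to compute the potential function $\frak{PO}_{\frak b(\vec w_{u;c})}$ explicitly using Theorem \ref{cubicbubblecal} and then analyze its critical locus by hand, just as in the proof of Corollary \ref{displacecubic}. First I would substitute $\vec w_{u;c} = (0,0,0,w(u;c),w(u;c),0,0,0,0)$ into the formula of Theorem \ref{cubicbubblecal}; since $w_6 = \dots = w_9 = 0$ and $w_1=w_2=w_3=0$, all of those terms vanish, leaving only the $y_1$ and $y_2$ coefficients perturbed, each by the factor $e^{w(u;c)}+e^{w(u;c)-w(u;c)}+e^{-w(u;c)} - 3 = e^{w(u;c)} + 1 + e^{-w(u;c)} - 3 = cT^u$ (by the defining equation of $w(u;c)$). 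Hence
\begin{equation}
\frak{PO}_{\frak b(\vec w_{u;c})} = \frak{PO} + cT^{u+1}(y_1 + y_2),
\end{equation}
where $\frak{PO}$ is the potential of Theorem \ref{cubicpocalcu}. Using the change of variables $Y_1^3 = y_1^2y_2^{-1}$, $Y_1^2Y_2 = y_1$ from the proof of Corollary \ref{displacecubic}, one has $y_1 = Y_1^2Y_2$, $y_2 = Y_1Y_2^2$, and $\frak{PO} = T((Y_1+Y_2+Y_1^{-1}Y_2^{-1})^3 - 6)$, so
\begin{equation}
\frak{PO}_{\frak b(\vec w_{u;c})} = T\big((Y_1+Y_2+Y_1^{-1}Y_2^{-1})^3 - 6\big) + cT^{u+1}(Y_1^2Y_2 + Y_1Y_2^2).
\end{equation}

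Next I would carry out the critical point analysis. Writing $Q = Y_1+Y_2+Y_1^{-1}Y_2^{-1}$, the logarithmic derivatives $Y_i\,\partial \frak{PO}_{\frak b(\vec w_{u;c})}/\partial Y_i$ are $3Q^2$ times the logarithmic derivatives of $Q$ plus $cT^u$ times those of $Y_1^2Y_2 + Y_1Y_2^2 = Y_1Y_2(Y_1+Y_2)$. One expects that, after accounting for the three-to-one covering $(Y_1,Y_2)\mapsto(y_1,y_2)$, the critical equations force $Q$ to be of valuation $0$ (so that the $3Q^2$ term dominates the $cT^u$ perturbation, which has positive valuation $u$), hence to first order the critical points are perturbations of the $Y_1^3 = Y_2^3 = 1$, $Y_1=Y_2$ family that produced the critical point of Corollary \ref{displacecubic}; a more careful bookkeeping of the branches of $Y_i$ should show there are exactly three such critical points downstairs, each with $y$-valuation $(0,0)$, establishing (1) and (2). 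For nondegeneracy (3), I would compute the Hessian $\det[y_iy_j\partial^2\frak{PO}_{\frak b(\vec w_{u;c})}/\partial y_i\partial y_j]$ at $u=0$ with generic $c$: there the polynomial has Newton polytope of volume $9/2$, and the three nonzero critical points together with the six coming from $y_1^3 = y_2^3 = 1$-type contributions should account for the maximal number $9$ of critical points (by Kushnirenko \cite{kushnire}, as used in the preceding lemmas), and maximality forces nondegeneracy; by the continuity of the potential and of the discriminant, nondegeneracy persists for small $u$.

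The main obstacle I anticipate is the bookkeeping of the multivalued change of variables $(Y_1,Y_2)\mapsto(y_1,y_2)$ and making sure the count "exactly three nonzero critical points" is correct rather than an overcount by a factor of $3$ or an undercount from branches merging. Concretely, the equation $Q=0$ (which in Corollary \ref{displacecubic} produced a whole continuum of critical points of $\frak{PO}$) must be shown to be destroyed by the perturbation $cT^{u+1}Y_1Y_2(Y_1+Y_2)$ except for finitely many points, and one must verify those surviving points do not project to valuations on the boundary of $P$. A clean way to handle this is to work directly in the $y$-variables: expand $\frak{PO}_{\frak b(\vec w_{u;c})}$ as a Laurent polynomial, observe its Newton polytope and apply Kushnirenko's bound to get at most $9$ critical points, then exhibit (via the $u=0$ specialization $\frak{PO}_{\frak b(\vec w_0)}$-type computation, where $e^{w(0;c)}+1+e^{-w(0;c)} = 3+c$) exactly $9$ of them for generic $c$ and identify which six have valuation forced to a vertex and which three lie over $(0,0)$; then invoke persistence under the limit $u\to 0$ and genericity of $c$ exactly as in the lemmas immediately preceding this statement. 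The remaining steps are routine in view of Theorem \ref{cubicbubblecal}, Proposition \ref{Morsesplit}, and the techniques already displayed in the proof of Corollary \ref{displacecubic}.
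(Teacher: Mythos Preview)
Your setup is correct and matches the paper: the bulk $\vec w_{u;c}$ perturbs only the $y_1,y_2$ coefficients, giving
\[
\frak{PO}_{\frak b(\vec w_{u;c})}
= T\big((Y_1+Y_2+Y_1^{-1}Y_2^{-1})^3 - 6\big) + cT^{1+u}\,Y_1Y_2(Y_1+Y_2)
\]
in the cubic-cover variables. But from this point your argument stays heuristic precisely where the paper does a short, decisive computation. The paper writes the two critical equations
\[
3(1-Y_1^{-2}Y_2^{-1})Q^2 + cT^u Y_2(2Y_1+Y_2)=0,\qquad
3(1-Y_1^{-1}Y_2^{-2})Q^2 + cT^u Y_1(2Y_2+Y_1)=0,
\]
takes their ratio to obtain $(2Y_1+Y_2)/(Y_1^2Y_2-1)=(2Y_2+Y_1)/(Y_1Y_2^2-1)$, and deduces that either $Y_1=Y_2$ or $Y_1Y_2(Y_1+Y_2)=-1$. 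In the second case the equations force $2Y_1+Y_2=0=2Y_2+Y_1$, a contradiction; so the $Q=0$ continuum is eliminated in one line, and on the diagonal $Y_1=Y_2$ the system reduces to a single cubic $(x-1)(2x+1)^2 + cT^u x^3=0$ in $x=Y_1^3=y_1=y_2$, with three simple roots. This gives (1) and (2) immediately by reading off the valuations of the roots. For (3) the paper computes the $2\times 2$ Hessian explicitly and finds $\det = 3T^2(2x+1)^4/x^4\neq 0$.

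Your fallback via the Newton polytope and Kushnirenko is not just more laborious but actually points in the wrong direction: there are only \emph{three} critical points here, not nine, so no ``maximality forces nondegeneracy'' argument is available. The system $\nabla\frak{PO}_{\frak b(\vec w_{u;c})}=0$ has highly non-generic coefficients (the potential is a perfect cube in $Y_1+Y_2+Y_1^{-1}Y_2^{-1}$ plus a small perturbation), which is exactly why it has fewer solutions than the Kushnirenko bound and why a direct algebraic reduction succeeds. You also conflate $\vec w_{0;c}$ with the earlier $\vec w_0$; these live on different lines in $\C^9$ and produce potentials with very different critical loci (nine versus three). The missing idea is simply to solve the pair of critical equations algebraically rather than to estimate.
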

\begin{proof}
We change the variable from $(y_1,y_2)$ to $(Y_1, Y_2)$
determined by the relation (\ref{defz1}),  (\ref{defz2}).
Then by substituting the vector $w = \vec w_{u;c}$ above
into the formula given in Theorem \ref{cubicbubblecal} we obtain
$$
\frak{PO}_{\frak b(\vec w_{u;c})}
= T((Y_1+Y_2+Y_1^{-1}Y_2^{-1})^3 - 6)
+ c T^{1+u}(Y_1+Y_2)Y_1Y_2.
$$
Therefore the critical point equation $\nabla\frak{PO}_{\frak b(\vec w_{u;c})} = 0$ is equivalent to
\begin{eqnarray}
&3(1 - Y_1^{-2}Y_2^{-1})(Y_1+Y_2+Y_1^{-1}Y_2^{-1})^2 + cT^uY_2(2Y_1+Y_2) = 0,
\label{critpowuc1}
\\
&3(1 - Y_1^{-1}Y_2^{-2})(Y_1+Y_2+Y_1^{-1}Y_2^{-1})^2 + cT^uY_1(2Y_2+Y_1) = 0.
\label{critpowuc2}
\end{eqnarray}
In particular we obtain the equation
$$
\frac{2Y_1+Y_2}{Y_1^2Y_2 - 1}
=
\frac{2Y_2+Y_1}{Y_2^2Y_1 - 1}.
$$
Since $Y_1 \ne 0$, $Y_2 \ne 0$ this equation implies of the
following two alternatives:
$$
\text{\rm either }\,  Y_1 = Y_2 \quad \text{ \rm or }\,  Y_1Y_2(Y_1+Y_2) = -1.
$$
\par
In case $Y_1Y_2(Y_1+Y_2) = -1$, (\ref{critpowuc1}) and (\ref{critpowuc2}) imply
$2Y_1+Y_2 = 0$ and $2Y_2+Y_1 = 0$ respectively. But this equation also has trivial solution
which is ruled out since $Y_1 \neq 0 \neq Y_2$.
Therefore $Y_1=Y_2$ must hold. If we put $x=Y^3_1=Y^3_2$,
then the system of equations (\ref{critpowuc1}), (\ref{critpowuc2}) is equivalent to
\begin{equation}\label{3irdorderX}
(x-1)(2x+1)^2 + cT^u x^3 = 0.
\end{equation}
This equation has three simple roots. (We use genericity of $c$ in case $u =0$.)
This proves $(1)$.
(Note $y_1 = y_2 = x$.)
\par
Now we prove (2).
If $u=0$ $x \in \C \setminus \{0\}$. Therefore the valuations of $y_1,y_2$ are
$0$.
If $u>0$, then $x \equiv 1$ or $-1/2$ modulo $\Lambda_+$.
Therefore $\frak v_T(x) =0$ also.
This proves (2).
\par
We next prove (3).
We calculate
\begin{equation}
\left(
y_iy_j \frac{\partial^2 \frak{PO}_{\frak b(\rho)}}{\partial y_i\partial y_j}
\right)_{i,j=1}^2
=
\left(
\begin{matrix}
A  &   B \\
B            &A
\end{matrix}
\right)
\end{equation}
where
$$
A=\frac{T}{x^2}(4x^3+6x^2+6x+2), \quad
B=\frac{T}{x^2}(-4x^3-6x^2 + 1).
$$
Here we use (\ref{3irdorderX}) during the calculation.
Therefore we get
$$
\text{det}
\left(
y_iy_j \frac{\partial^2 \frak{PO}_{\frak b(\rho)}}{\partial y_i\partial y_j}
\right)_{i,j=1}^2
= 3T^2 \frac{(2x+1)^4}{x^4}
$$
and so (3) follows.
\end{proof}
We now consider an affine line $C \cong \C$ contained in
$\C^9$ so that it contains $\vec w_0$ and
$\vec w_{0;c}$ where $c$ is generic.
We denote by $\frak{b}(a)$ the bulk ambient cycle associated to $a$ and
consider the set
 $$
 \frak X_0
 =
 \{ (a;y_1,y_2) \mid a \in C,  \nabla(\frak{PO}_{\frak b(a)}) = 0
 \,\,\text{at $(y_1,y_2) \in \C^2$} \}.
$$
We take the Zariski closure of $ \frak X_0$ in $C \times \C P^1\times \C P^1$
and denotes it by $ \frak X$.
We have a natural projection
$
\pi :  \frak X \to C.
$
At a generic point $a\in C$ the fiber of $\pi$ consists of $9$ points.
At $a = \vec w_{0;c}$ the fiber of $\pi$ intersects with $ \frak X_0$ at $3$ points
and those three points are all simple.
Therefore
$
\pi^{-1}(\vec w_{0;c}) \setminus  \frak X_0 \ne \emptyset.
$
Then there exist Laurent power series
$$
y_1(w) = \sum_{k\ge k_{0,1}} y_{1,k} w^k,
\quad
y_2(w) = \sum_{k\ge k_{0,2}} y_{2,k} w^k,
$$
and
$$
a(w) = \sum_{k\ge 0} a_k w^k
$$
such that the following holds:
\begin{enumerate}
\item
$y_1(w)$, $y_2(w)$ converge for $\vert w\vert \in (0,\epsilon)$.
\item
$a(w)$ converges for $\vert w \vert \in [0,\epsilon)$.
\item
$(a(w);y_1(w),y_2(w)) \in \frak X_0$ for $\vert w \vert \in (0,\epsilon)$.
\item
$(y_1(0),y_2(0)) \in (\C P^1)^2 \setminus (\C\setminus\{0\})^2$.
\item
$a(0) = \vec w_{0;c}$.
\end{enumerate}
Now we consider
$(y_1(T^{\rho}),y_2(T^{\rho})) \in \Lambda^2$ and
$a(T^{\rho}) \in \Lambda_0^9$.
Then (3) implies that $(y_1(T^{\rho}),y_2(T^{\rho}))$ is a critical
point of $\frak{PO}_{\frak b(a(T^{\rho}))}$.
\begin{lem}
If $(\frak v_T(y_1(T^{\rho})),\frak v_T(y_2(T^{\rho}))) \in P$,
then
$$
(\frak v_T(y_1(T^{\rho})),\frak v_T(y_2(T^{\rho}))) \in \frak Z.
$$
\end{lem}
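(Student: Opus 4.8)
The plan is to read off the valuation of the escaping critical point from the leading exponents of the branch, and then to impose the critical‑point equation tropically. Write $\mu=(\mu_1,\mu_2):=(\frak v_T(y_1(T^\rho)),\frak v_T(y_2(T^\rho)))$ and let $k_{0,i}=\mathrm{ord}_{w=0}\,y_i(w)$ be the order of the Laurent series $y_i(w)$. Since $\rho>0$, the leading term of $y_i(T^\rho)=\sum_{k\ge k_{0,i}}y_{i,k}T^{\rho k}$ dominates $T$‑adically, so $\mu_i=\rho\,k_{0,i}$; in particular $\mu$ is a nonnegative real multiple of the integer vector $(k_{0,1},k_{0,2})$, which is nonzero because $(y_1(0),y_2(0))\in(\C P^1)^2\setminus(\C\setminus\{0\})^2$ (the case $\mu=0$ is in any event harmless). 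Moreover, substituting $w=T^\rho$ into the convergent power‑series identities defining the branch, the pair $(y_1(T^\rho),y_2(T^\rho))$, with both coordinates nonzero in $\Lambda$, is a critical point of $\frak{PO}_{\frak b(a(T^\rho))}$, i.e. $y_i\frac{\partial \frak{PO}_{\frak b(a(T^\rho))}}{\partial y_i}$ vanishes there for $i=1,2$.

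Next I would record the shape of the potential. By Theorems \ref{cubicpocalcu} and \ref{cubicbubblecal}, $\frak{PO}_{\frak b(\vec w)}$ is, for every $\vec w$, an honest Laurent polynomial in $y_1,y_2$ whose support lies in the triangle $\Delta:=\operatorname{conv}\{(2,-1),(-1,2),(-1,-1)\}$ and whose coefficients at the three vertices $(2,-1),(-1,2),(-1,-1)$ equal $e^{w_2}T,\ e^{w_1}T,\ e^{w_3}T$ respectively; these are nonzero with $\frak v_T$ equal to $1$, while every other coefficient has $\frak v_T\ge 1$. Since $a(T^\rho)\to\vec w_{0;c}\in\C^9$, the same holds for $\frak{PO}_{\frak b(a(T^\rho))}$, the three corner coefficients still having valuation exactly $1$. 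Now in $y_1\frac{\partial \frak{PO}_{\frak b(a(T^\rho))}}{\partial y_1}=\sum_{p\ne0}p\,c_{p,q}\,y_1^py_2^q$ each term has $\frak v_T$ equal to $\frak v_T(c_{p,q})+p\mu_1+q\mu_2\ge 1+(p\mu_1+q\mu_2)$, with equality at the three vertices of $\Delta$; so the minimal term‑valuation equals $1+\min_{\Delta}(p\mu_1+q\mu_2)$ and is attained at a minimizing vertex. By the ultrametric inequality, vanishing of this sum forces that minimum to be attained at two of the three vertices of $\Delta$, hence along a whole edge of $\Delta$ (or, if $\mu=0$, on all of $\Delta$).

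Finally I would compute the normal fan of $\Delta$: the edges $\{p+q=1\}$, $\{q=-1\}$, $\{p=-1\}$ are the minimal face for $\langle\mu,\cdot\rangle$ precisely when $\mu\in\R_{>0}(-1,-1)$, $\mu\in\R_{>0}(0,1)$, $\mu\in\R_{>0}(1,0)$ respectively; together with $\mu=0$ this gives $\mu\in\R_{\ge 0}(1,0)\cup\R_{\ge 0}(0,1)\cup\R_{\ge 0}(-1,-1)$. These three rays are exactly the segments joining the centroid $(0,0)$ of $P$ to its vertices $(1,0),(0,1),(-1,-1)$, so if in addition $\mu\in P$ then $\mu$ lies on one of these half‑medians, hence in $\text{\rm Int}\,P$ and therefore in $\frak Z$, unless $\mu$ is one of the vertices of $P$; that last case would force $\rho$ to take one of the discrete values $1/k_{0,i}$ and is excluded in the range of $\rho$ one uses (realizing points of $\frak Z\subset\text{\rm Int}\,P$).

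The step I expect to be the main obstacle is the second one: checking carefully that for $a=a(T^\rho)$ the Laurent polynomial $\frak{PO}_{\frak b(a(T^\rho))}$ genuinely has support inside $\Delta$ with all three corner coefficients of common valuation $1$ — so that a single vertex of $\Delta$ can never be the unique $\frak v_T$‑minimizer among the monomials of $y_i\frac{\partial\frak{PO}}{\partial y_i}$ — and that passing from $w$ a small complex parameter to $w=T^\rho$ introduces no new phenomenon, which is where $a(0)=\vec w_{0;c}\in\C^9$ is used. Once that is in place, the remainder is the elementary Newton‑polytope computation above together with the explicit description of $P$ and $\frak Z$.
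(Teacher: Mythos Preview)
Your argument is correct and is essentially the paper's: if the valuation point were off the three rays, one corner monomial of $\frak{PO}_{\frak b(a(T^\rho))}$ would strictly dominate all others and the critical-point equations could not hold there; the paper simply phrases this via the affine functions $\ell_1,\ldots,\ell_9$ (which encode the nine term-valuations $1+\langle\mu,(p,q)\rangle$) rather than via the Newton polytope $\Delta$ and its normal fan. Your opening paragraph ($\mu_i=\rho\,k_{0,i}$) is not needed for this lemma, and your caveat about the vertices of $P$ is a fair refinement of a point the paper leaves implicit.
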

\begin{proof}
If $(u_1,u_2) = (\frak v_T(y_1),\frak v_T(y_2)) \in P \setminus \frak Z$, then
there exist $i=1,2,3$ such that
$
\ell_i(u_1,u_2) < \ell_j(u_1,u_2)
$
for each $j \in \{1,\dots,9\}$, $j\ne i$.
It easily follows that $(y_1,y_2)$ is not a critical point of $\frak{PO}_{\frak b(a(T^{\rho}))}$.
\end{proof}
It is also easy to see that
$\rho \mapsto (\frak v_T(y_1(T^{\rho})),\frak v_T(y_2(T^{\rho})))$
is continuous and
$$
\lim_{\rho \to 0}  (\frak v_T(y_1(T^{\rho})),\frak v_T(y_2(T^{\rho})))
= (0,0).
$$
Moreover we find that $\frak v_T(y_1(T^{\rho}))$ and
$\frak v_T(y_2(T^{\rho}))$ are either increasing or decreasing, and
$(\frak v_T(y_1(T^{\rho})),\frak v_T(y_2(T^{\rho})))$ diverges as $\rho \to \infty$.
(This is a consequence of (4) above.)
Therefore there exists $\rho_1>0$ such that
$$
\rho \mapsto (\frak v_T(y_1(T^{\rho})),\frak v_T(y_2(T^{\rho})))
$$
defines a homeomorphism between $(0,\rho_1)$
and one of the sets
$$
\frak Z_1 = \{ (-u,-u) \mid u \in (0,1)\}, \,
\frak Z_2 =  \{ (u,0) \mid u \in (0,1)\}, \,
\frak Z_3 =  \{ (0,u) \mid u \in (0,1)\}.
$$
Note that there exist $6$ choices of such $(y_1(q),y_2(q))$ for given
$a(w)$, after replacing $C$ by an appropriate branched cover
branching at $w = \vec w_{0;c}$. This is because the order of the set $\pi^{-1}(a(w)) \subset \frak X$ is $9$
for generic $w$ and the set $\pi^{-1}(a(0)) \subset \frak X$
consists of $3$ points all of which are simple.
Each of such $6$ choices determines $\rho_1$ above.
We take its minimum and denote it by $\rho_0$.
Thus we have proved the following:
\begin{lem}\label{existbulkcubic}
\begin{enumerate}
\item
For each $\rho \in (0,\rho_0)$,
there exist exactly $9$ critical points of $\frak{PO}_{\frak b(a(T^{\rho}))}$.
They are simple and
their valuation vectors are always contained in the interior of $P$.
\item
We may take a choice of $(y_1(q),y_2(q))$ as above so that
$$
\rho \mapsto (\frak v_T(y_1(T^{\rho})),\frak v_T(y_2(T^{\rho})))
$$
defines a homeomorphism between $(0,\rho_0)$
and one of
$\frak Z_1$,
$\frak Z_2$,
$\frak Z_3$.
\end{enumerate}
\end{lem}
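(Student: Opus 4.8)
The plan is to assemble the statement from the geometry of the family $\pi\colon\frak X\to C$ together with the convergent Laurent expansions $y_1(w),y_2(w),a(w)$ constructed above. The inputs already in place are: the generic fibre of $\pi$ over the affine part of $\frak X$ has $9$ points, by the Kushnirenko bound applied to the Newton polytope of $\frak{PO}_{\frak b(\vec w)}$ (normalized volume $9$) together with the fact that $\frak{PO}_{\frak b(\vec w_0)}$ has $9$ nondegenerate critical points; the fibre over $\vec w_{0;c}$ (for generic $c$) consists only of the $3$ simple points with $y_1=y_2=x$ and $(x-1)(2x+1)^2+cx^3=0$, so $6$ of the $9$ solutions run off to the toric boundary; and, after a branched cover of $C$ ramified over $\vec w_{0;c}$, each of these $6$ runaway solutions is tracked by an expansion with $a(0)=\vec w_{0;c}$, $(y_1(0),y_2(0))\in(\C P^1)^2\setminus(\C\setminus\{0\})^2$, satisfying items $(1)$--$(5)$ preceding the statement.

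For part $(1)$ I would fix $\rho$ with $0<\rho<\epsilon$ and substitute $w=T^{\rho}$. Each of the $9$ branches (the $3$ finite ones and the $6$ runaway ones) then produces a $\Lambda$-valued critical point $(y_1(T^{\rho}),y_2(T^{\rho}))$ of $\frak{PO}_{\frak b(a(T^{\rho}))}$; since the branches are distinct and $\frak{PO}_{\frak b(a(T^{\rho}))}$ has at most $9$ critical points by Kushnirenko, these are all of them, and simplicity along the generic fibre of $\pi$ persists. For the valuations, since $y_{i,k}\in\C$ one has $\frak v_T(y_i(T^{\rho}))=k_{0,i}\,\rho$ with $k_{0,i}$ the lowest exponent occurring, so the pair $(\frak v_T(y_1(T^{\rho})),\frak v_T(y_2(T^{\rho})))=(k_{0,1}\rho,k_{0,2}\rho)$ lies near $(0,0)$ for small $\rho$ and hence in $\text{\rm Int}\,P$ (for the finite branches $(k_{0,1},k_{0,2})=(0,0)$). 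Taking $\rho_0$ below the threshold after which any of the nine valuation points would exit $P$ --- a finite quantity, being a minimum over the nine branches --- gives $(1)$.

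For part $(2)$ I would invoke the preceding lemma: whenever $(\frak v_T(y_1(T^{\rho})),\frak v_T(y_2(T^{\rho})))\in P$ it must lie in $\frak Z=\frak Z_1\cup\frak Z_2\cup\frak Z_3$. For a runaway branch $(k_{0,1},k_{0,2})\neq(0,0)$, so the linear map $\rho\mapsto(k_{0,1}\rho,k_{0,2}\rho)$ has image a ray issuing from $(0,0)$; as this ray must lie in $\frak Z$ and the three half-open segments of $\frak Z$ meet only at the origin, it lies in a single $\frak Z_j$, and the map restricts to a homeomorphism of $(0,\rho_1)$ onto $\frak Z_j$ for the $\rho_1>0$ at which the parametrization reaches the far endpoint of $\frak Z_j$. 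Each of $\frak Z_1,\frak Z_2,\frak Z_3$ is realized by some runaway branch, by the $\Z_3$ symmetry of $\frak{PO}_{\frak b(\vec w_0)}$; choosing such a branch and shrinking $\rho_0\le\rho_1$ as in $(1)$ yields $(2)$.

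The step I expect to be the main obstacle, and where the genuine work lies, is controlling part $(1)$ uniformly: one must exhibit a single $\rho_0>0$ for which all nine critical points of $\frak{PO}_{\frak b(a(T^{\rho}))}$ remain simple with valuation in $\text{\rm Int}\,P$ for every $\rho\in(0,\rho_0)$. This requires knowing that the Newton polytope of $\frak{PO}_{\frak b(a(T^{\rho}))}$ does not degenerate along the curve $a(w)$ and that no two of the nine branches collide before $\rho_0$; it is essentially an effective form of the continuity of the tropicalizations near the fibre over $\vec w_{0;c}$. Once it is in hand, Lemma \ref{semisimplecubic} applies and all the remaining assertions follow formally from the constructions carried out above.
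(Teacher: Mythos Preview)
Your approach is essentially that of the paper: the lemma is presented there as a summary of the discussion preceding it, which uses the same ingredients you list (the nine branches over the curve $C$, the valuation behavior of the Laurent expansions, and the constraint that any valuation landing in $P$ must lie in $\frak Z$). Your explicit linear formula $\frak v_T(y_i(T^{\rho}))=k_{0,i}\rho$ is in fact slightly cleaner than the paper's ``continuous, monotone, divergent'' description of the same map.

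Two small corrections. First, your appeal to the $\Z_3$ symmetry of $\frak{PO}_{\frak b(\vec w_0)}$ to conclude that each of $\frak Z_1,\frak Z_2,\frak Z_3$ is hit by some runaway branch does not work: the curve $C$ passes through $\vec w_{0;c}=(0,0,0,w(0;c),w(0;c),0,\dots,0)$, which breaks the $\Z_3$ symmetry. Fortunately the lemma only asks for \emph{one} of the $\frak Z_j$, so this claim is unnecessary; the $\Z_3$ symmetry is invoked only afterwards, at the level of Lemma~\ref{supreheavycubic}, by permuting the roles of $(w_4,w_5)$, $(w_6,w_7)$, $(w_8,w_9)$ in the choice of $\vec w_{0;c}$. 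Second, your phrase ``shrinking $\rho_0\le\rho_1$'' at the end of part~(2) is slightly off: the lemma wants the map \emph{onto} one of the $\frak Z_j$, so you must take $\rho_0$ equal to the minimal $\rho_1$ among the six runaway branches (exactly as the paper does), and then use the branch realizing that minimum. With $\rho_0$ so defined, part~(1) also follows, since every other branch has its valuation still strictly inside its $\frak Z_j\subset\operatorname{Int}P$ for $\rho<\rho_0$.
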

Lemmas \ref{semisimplecubic}, \ref{existbulkcubic} and
Theorem \ref{thm:heavy} imply that the following holds for one of $i=1,2,3$.
We also note that we can prove Theorem \ref{calcithm}
(\ref{calcisharp}) in our situation, where we replace $i_{\text{\rm qm},(\frak b(\frak y),b(\frak y))}^{T, \ast}$
by $i_{\text{\rm qm},(\frak b(a(T^{\rho})),b(a(T^{\rho})))}^{\ast}$.
\begin{lem}\label{supreheavycubic}
For each $\text{\bf u} \in  \frak Z_i$, there exist
$\frak b(\text{\bf u})$ and
$e(\text{\bf u}) \in QH_{\frak b(\text{\bf u})}(X;\Lambda)$ such that:
\begin{enumerate}
\item
$e(\text{\bf u})\Lambda \subset QH_{\frak b(\text{\bf u})}(X;\Lambda)$
is a direct factor.
\item
$T(\text{\bf u})$ is $\zeta_{e(\text{\bf u})}^{\frak b(\text{\bf u})}$-superheavy.
\end{enumerate}
\end{lem}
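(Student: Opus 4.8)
The plan is to obtain Lemma \ref{supreheavycubic} as a direct application of Theorem \ref{thm:heavy} (2), in the same way Theorem \ref{toricheavymain} was deduced for genuinely toric manifolds. The only thing that needs to be assembled is the verification that the hypotheses of Theorem \ref{thm:heavy} (2) hold for the data $(\frak b(\text{\bf u}), b(\text{\bf u}))$ coming from the critical point of $\frak{PO}_{\frak b(a(T^\rho))}$ produced in Lemma \ref{existbulkcubic}. First I would fix $i \in \{1,2,3\}$ to be the index for which the homeomorphism of Lemma \ref{existbulkcubic} (2) onto $\frak Z_i$ occurs, and for $\text{\bf u} \in \frak Z_i$ set $\rho = \rho(\text{\bf u})$ to be the preimage of $\text{\bf u}$ under that homeomorphism, $\frak b(\text{\bf u}) = \frak b(a(T^{\rho(\text{\bf u})}))$, and let $\frak y(\text{\bf u}) = (y_1(T^{\rho(\text{\bf u})}), y_2(T^{\rho(\text{\bf u})}))$ be the corresponding critical point of $\frak{PO}_{\frak b(\text{\bf u})}$, with associated bounding cochain $b(\text{\bf u}) = b(\frak y(\text{\bf u}))$ and base point on the torus $T(\text{\bf u}) = T(\text{\bf u}(\frak y(\text{\bf u})))$ via \eqref{budeffromy1}, \eqref{budeffromy2t}.

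Next I would check the algebraic input. By Lemma \ref{existbulkcubic} (1) all nine critical points of $\frak{PO}_{\frak b(\text{\bf u})}$ are nondegenerate and have valuation in $\text{Int}\,P$; one also checks (along the branched-cover path $w \mapsto a(w)$, starting from the explicit value $a(0) = \vec w_{0;c}$ which is generic and satisfies none of the three exceptional conditions in Lemma \ref{semisimplecubic}) that for $\rho$ small the parameter $a(T^\rho)$ still avoids those conditions, so Lemma \ref{semisimplecubic} applies and gives a ring isomorphism $\frak{ks}_{\frak b(\text{\bf u})}: QH_{\frak b(\text{\bf u})}(M;\Lambda) \xrightarrow{\cong} \text{Jac}(\frak{PO}_{\frak b(\text{\bf u})};\Lambda)$ together with semisimplicity of $QH_{\frak b(\text{\bf u})}(M;\Lambda)$. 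By Proposition \ref{Morsesplit} the Jacobian ring splits as a product of nine copies of $\Lambda$, one for each critical point, so transporting the idempotent $1_{\frak y(\text{\bf u})}$ through $\frak{ks}_{\frak b(\text{\bf u})}^{-1}$ gives $e(\text{\bf u}) := e_{\frak y(\text{\bf u})}$, a unit of a direct factor $e(\text{\bf u})\Lambda \cong \Lambda$ of $QH_{\frak b(\text{\bf u})}(M;\Lambda)$; this proves clause (1).

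For clause (2) I would use Theorem \ref{thm:heavy} (2): it suffices to show $i^*_{\text{qm},(\frak b(\text{\bf u}), b(\text{\bf u}))}(e(\text{\bf u})) \ne 0$ in $HF^*((L,(\frak b(\text{\bf u}),b(\text{\bf u})));\Lambda)$, which then yields $\mu_{e(\text{\bf u})}^{\frak b(\text{\bf u})}$-superheaviness and hence, via Remark \ref{rem:muimplieszeta}, $\zeta_{e(\text{\bf u})}^{\frak b(\text{\bf u})}$-superheaviness. Here I would invoke the analogue of Theorem \ref{calcithm}\eqref{calcisharp} in the present non-toric setting — it holds by the toric-degeneration argument of \cite{fooo:S2S2} Section 6 exactly as stated just before Lemma \ref{supreheavycubic}, with $i^{T,*}_{\text{qm}}$ replaced by $i^*_{\text{qm},(\frak b(a(T^\rho)),b(a(T^\rho)))}$ — to compute $i^*_{\text{qm},(\frak b(\text{\bf u}),b(\text{\bf u}))}(e(\text{\bf u})) = \frak P(\frak y(\text{\bf u}))\,\text{\bf e}_{L}$ where $\frak{ks}_{\frak b(\text{\bf u})}(e(\text{\bf u})) = [\frak P]$, and then quote Lemma \ref{calksi} verbatim: since $\frak{ks}_{\frak b(\text{\bf u})}(e(\text{\bf u})) = 1_{\frak y(\text{\bf u})}$ is the unit of the local factor $\text{Jac}(\frak{PO}_{\frak b(\text{\bf u})};\frak y(\text{\bf u})) \cong \Lambda$, the evaluation homomorphism $\text{eval}_{\frak y(\text{\bf u})}$ is surjective and vanishes on every other factor, forcing $\frak P(\frak y(\text{\bf u})) = 1 \ne 0$. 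This gives $i^*_{\text{qm},(\frak b(\text{\bf u}),b(\text{\bf u}))}(e(\text{\bf u})) = \text{\bf e}_L \ne 0$, completing the proof.

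The main obstacle is not any of the Floer-theoretic steps — those are transported wholesale from the toric machinery of Sections \ref{sec:q-map}--\ref{toriceLagreview} and from \cite{fooo:S2S2} — but the bookkeeping that the path $a(T^\rho)$ genuinely stays, for all small $\rho > 0$, inside the open region of $\C^9$ where Lemma \ref{semisimplecubic} applies (in particular that critical valuations remain in $\text{Int}\,P$, which is Lemma \ref{existbulkcubic} (1), and that the three exceptional degeneracies are avoided near $a(0) = \vec w_{0;c}$); and that exactly one of $\frak Z_1, \frak Z_2, \frak Z_3$ is swept out, which is Lemma \ref{existbulkcubic} (2). Once the parametrized critical point $\frak y(\text{\bf u})$ with the stated properties is in hand, the passage to superheaviness of $T(\text{\bf u})$ is formal.
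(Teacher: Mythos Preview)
Your proposal is correct and follows essentially the same approach as the paper: the paper states the lemma as an immediate consequence of Lemmas \ref{semisimplecubic} and \ref{existbulkcubic} together with Theorem \ref{thm:heavy}, noting that the analogue of Theorem \ref{calcithm} (\ref{calcisharp}) holds in this setting with $i_{\text{\rm qm},(\frak b(a(T^{\rho})),b(a(T^{\rho})))}^{\ast}$ in place of $i_{\text{\rm qm},(\frak b(\frak y),b(\frak y))}^{T,\ast}$. You have simply unpacked these ingredients --- including the use of Proposition \ref{Morsesplit} and Lemma \ref{calksi} --- in more explicit detail than the paper does.
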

Once we have Lemma \ref{supreheavycubic} for some $i$,
then by symmetry we obtain the same conclusion for the other two of $i=1,2,3$.
The proof of
Theorem \ref{cubicmain} is now completed.
\end{proof}

\section{Detecting spectral invariant via Hochschild cohomology}
\label{subsec:Hochschild}
In this section we prove the following theorem.
For a critical point $\frak y$ of the potential
function $\frak{PO}_{\frak b}$ we recall the subset
$\text{\rm Jac}(\frak{PO}_{\frak b};\frak y)
\subset \text{\rm Jac}(\frak{PO}_{\frak b};\Lambda)$
from Definition \ref{def:Jaceigensp}.
Corresponding to this subspace,
we put
$$
QH_{\frak b}(M;\Lambda;\frak y)
:=
\{
s \in QH_{\frak b}(M;\Lambda)
\mid \frak{ks}_{\frak b}(s) \in \text{\rm Jac}(\frak{PO}_{\frak b};\frak y)
\}.
$$
\begin{thm}\label{appliHOch}
Let $(M,\omega)$ be a compact toric manifold and
$\frak b \in H^{{\rm even}}(M;\Lambda_0)$. Suppose that
$\frak y$ is a critical point of the potential function $\frak{PO}_{\frak b}$.
Let $\text{\bf u} = \text{\bf u}(\frak y)$ and let $b = b(\frak y)$ be
those defined as in Theorem $\ref{Floercrit}$.
Denote by $e_{\frak y}$ the corresponding idempotent of $QH_{\frak b}(M,\omega;\frak y)$.
Then
$L(\text{\bf u})$ is $\mu_{e_{\frak y}}^{\frak b}$-superheavy.
\end{thm}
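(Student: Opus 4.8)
The plan is to run the proof of Theorem \ref{thm:heavy}(2) essentially verbatim, but to supply the one ingredient it is missing in the possibly degenerate situation from Hochschild cohomology. By Remark \ref{rem:muimplieszeta} it suffices to prove that $L(\text{\bf u})$ is $\mu_{e_{\frak y}}^{\frak b}$-superheavy; note $\mu_{e_{\frak y}}^{\frak b}$ is a well-defined Entov--Polterovich pre-quasimorphism by Theorem \ref{thm:state}, since $e_{\frak y}$ is an idempotent of $QH^*_{\frak b}(M;\Lambda)$ by Proposition \ref{Morsesplit} and Theorem \ref{JacisHQ}. The $\mu_{e_{\frak y}}^{\frak b}$-heavyness half is already Theorem \ref{toricheavymain}(1), and superheavyness is Theorem \ref{toricheavymain}(2) \emph{when $\frak y$ is nondegenerate}. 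Inspecting that proof, the nondegeneracy enters only through Theorem \ref{thm:heavy}(2), and there only through Lemma \ref{dualitygyakmain}, i.e. the uniform bound $\rho^{\frak b}(\widetilde\phi;e_{\frak y}) + \rho^{\frak b}(\widetilde\phi^{-1};e_{\frak y}) \le 3\frak v_q(e_{\frak y})$, whose proof requires $e_{\frak y}$ to be the unit of a \emph{field} factor of $QH^*_{\frak b}(M;\Lambda)$ (in order to invert, with controlled valuation, an element pairing nontrivially with $e_{\frak y}$ under the Poincar\'e pairing). For degenerate $\frak y$, $QH_{\frak b}(M;\Lambda;\frak y) \cong \text{\rm Jac}(\frak{PO}_{\frak b};\frak y)$ is a local Artinian $\Lambda$-algebra, not a field, $e_{\frak y}$ is not invertible in $QH^*_{\frak b}(M;\Lambda)$, and the argument fails. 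The plan is to recover the missing bound on the Hochschild level.

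The remedy is to replace $HF^*((L,\text{\bf b}),(L,\text{\bf b});\Lambda)$ --- a module --- by the Hochschild cohomology $HH^*((L,\text{\bf b}),(L,\text{\bf b}))$ of the bulk-deformed filtered $A_\infty$-algebra of $L = L(\text{\bf u}(\frak y))$ with the weak bounding cochain $\text{\bf b}$ attached to $b(\frak y)$ as in Theorem \ref{Floercrit} (so $HF((L,\text{\bf b});\Lambda) \cong H(T^n;\Lambda)\ne 0$), using the open--closed theory of \cite{fooo:book} Section 3.8 and its toric refinement in \cite{fooo:toricmir} Section 31 and \cite{AFOOO}. This furnishes a \emph{unital} ring homomorphism $\frak{ks}^{\mathrm{Hoch}}_{\frak b}: QH^*_{\frak b}(M;\Lambda) \to HH^*((L,\text{\bf b}),(L,\text{\bf b}))$, and --- this is the point --- the toric computations (Theorem \ref{JacisHQ}, Proposition \ref{Morsesplit}, Theorem \ref{calcithm} and Lemma \ref{calksi}) show that it annihilates every summand $QH_{\frak b}(M;\Lambda;\frak y')$ with $\frak y'\ne\frak y$ and restricts to a ring isomorphism $QH_{\frak b}(M;\Lambda;\frak y) \cong HH^*((L,\text{\bf b}),(L,\text{\bf b}))$; in particular $\frak{ks}^{\mathrm{Hoch}}_{\frak b}(e_{\frak y}) = 1_{HH}$. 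The advantage over $QH^*_{\frak b}(M;\Lambda)$ is that $HH^*((L,\text{\bf b}),(L,\text{\bf b}))$ carries a filtration $\frak v_q$ inherited from the energy filtration of $\Omega(L)\widehat\otimes\Lambda_0$, compatible with its product and with the non-degenerate pairing induced by the cyclic structure on $\Omega(L)$, and the image $1_{HH}$ of $e_{\frak y}$ is invertible with $\frak v_q(1_{HH}) = \frak v_q(1_{HH}^{-1}) = 0$, so the obstruction above disappears once the discussion is lifted to the Hochschild level.

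Concretely, I would enhance Chapter 4 and Section \ref{sec:duality} one categorical level up. First, promote the chain map $\frak I_{(F,J)}^{\text{\bf b},\frak b}$ of Definition \ref{def:I} to a filtered chain map from $CF(M,H;\Lambda^\downarrow)$ into the Hochschild cochain complex of $(L,\text{\bf b})$, built from the moduli spaces of Lemma \ref{FBULKkura18} decorated with an extra cyclically ordered string of boundary outputs together with the associated $\frak q$-type operators, using the compatibilities with forgetful maps and with cyclic permutations of boundary marked points from Lemma \ref{existmultiFF}. The filtration estimate of Proposition \ref{energyHH} and the factorization identity of Proposition \ref{184mainprop} then carry over, giving $\frak I_{(F,J),\ast}^{\mathrm{Hoch}}\circ \CP^{\frak b}_{(H_\chi,J),\ast} = \frak{ks}^{\mathrm{Hoch}}_{\frak b}$ and hence a Proposition \ref{182mainprop}-type lower bound $\rho^{\frak b}(H;a)\ge -E^+_\infty(H;L) + \rho^{\mathrm{Hoch}}_{\text{\bf b}}(\frak{ks}^{\mathrm{Hoch}}_{\frak b}(a))$, where $\rho^{\mathrm{Hoch}}_{\text{\bf b}}$ is the spectral number attached to the filtration of $HH^*((L,\text{\bf b}),(L,\text{\bf b}))$. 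Second, combine this with the Hochschild analogues of the duality of Subsection \ref{subsec:dualityHF} and of Theorem \ref{dualityandPthm} (built from the corresponding moduli spaces for the time-reversed Hamiltonian $\widetilde H$ and from the cyclic/Calabi--Yau pairing on the $A_\infty$-algebra) together with the algebraic Lemma \ref{dualityalgmainlemma} applied to the filtered Hochschild complex; since $1_{HH}$ is invertible of valuation $0$, this produces the required bound $\rho^{\frak b}(\widetilde\phi;e_{\frak y}) + \rho^{\frak b}(\widetilde\phi^{-1};e_{\frak y}) \le 3\frak v_q(e_{\frak y})$. Feeding this into the proof of Theorem \ref{thm:heavy}(2) --- applying the Hochschild form of Proposition \ref{182mainprop} to $\widetilde H_{(n)}$ and to $H_{(n)}$, dividing by $n$ and letting $n\to\infty$ --- yields $\mu_{e_{\frak y}}^{\frak b}(\widetilde\psi_H) \le \vol_\omega(M)\sup\{-H(t,x)\mid t\in S^1,\ x\in L\}$, i.e. the $\mu_{e_{\frak y}}^{\frak b}$-superheavyness of $L(\text{\bf u})$.

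The main obstacle is the first half of this program: setting up the filtered, bulk-deformed Hochschild-cochain-valued moduli problem with all of its compatibilities --- boundary stratification, orientations, invariance under cyclic permutation and under forgetful maps, in the framework of Kuranishi structures and continuous families of multisections --- and, above all, establishing the non-degeneracy of the induced pairing on $HH^*((L,\text{\bf b}),(L,\text{\bf b}))$, i.e. a filtered $\Lambda_0$-coefficient version of cyclic/Calabi--Yau $A_\infty$ Poincar\'e--Serre duality. Everything downstream of that is a transcription of the arguments of Chapters 2--4 and of Theorem \ref{thm:heavy}; it is also at this point, and in the identification $QH_{\frak b}(M;\Lambda;\frak y)\cong HH^*((L,\text{\bf b}),(L,\text{\bf b}))$, that the hypothesis that $M$ is toric is used, via Theorem \ref{JacisHQ}, Proposition \ref{Morsesplit} and the toric mirror computations of \cite{fooo:toricmir}.
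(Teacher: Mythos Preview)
Your diagnosis of where Theorem \ref{thm:heavy}(2) breaks down and your decision to lift the open--closed map to Hochschild cochains are both exactly right, and match the paper: the map $\frak q_*^{F,\text{\bf b}}: CF(M,H;\Lambda)\to CH^{\text{red}}$ and the resulting estimate $\rho^{\frak b}(H;a)\ge -E^+_\infty(H;L)+\frak v_q(\frak q_*^{\text{\bf b}}(a))$ are precisely Proposition \ref{HHmainprop} and Lemma \ref{HHtrianglecomm}. But the second half of your plan is a substantial detour. You propose to develop a filtered Hochschild-level Poincar\'e/cyclic duality, prove non-degeneracy of the induced pairing, and then rerun the Lemma \ref{dualitygyakmain} argument there; you correctly flag this non-degeneracy as the main obstacle, and it is a genuinely hard one that the paper never faces.

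The paper's route is much lighter. It keeps the duality entirely at the $QH$-level, using Theorem \ref{dualitymain} as is: $\rho^{\frak b}(\widetilde\psi_H^n;e_{\frak y}) = -\inf\{\rho^{\frak b}(\widetilde\psi_H^{-n};b)\mid \Pi(e_{\frak y},b)\ne 0\}$. For any such $b$, one passes to $e_{\frak y}\cup^{\frak b} b\in QH_{\frak b}(M;\Lambda;\frak y)$ via the triangle inequality; Sublemma \ref{sublem166} gives $\frak v_q(e_{\frak y}\cup^{\frak b} b)\ge 0$. The only new Hochschild-level input is that $\frak q_*^{T,\text{\bf b}}$ is \emph{injective} on the finite-dimensional summand $QH_{\frak b}(M;\Lambda;\frak y)$ (Lemma \ref{infofq}, deduced from the known Hochschild-to-Jacobian map of \cite{fooo:toricmir}), which by a standard-basis argument yields $\frak v_q(\frak q_*^{T,\text{\bf b}}(\sum x_ie_i))\ge \max_i \frak v_q(x_i)-C_1$. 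Feeding this into Proposition \ref{HHmainprop} for $\widetilde H_{(n)}$ gives a lower bound on $\rho^{\frak b}(\widetilde\psi_H^{-n};b)$ that is uniform in $b$ up to $O(1)$, and dividing by $n$ yields superheavyness. So the paper needs neither a ring isomorphism $QH_{\frak b}(M;\Lambda;\frak y)\cong HH^*$, nor any pairing on $HH^*$, nor any analogue of Lemma \ref{dualitygyakmain}; injectivity plus the existing $QH$-duality suffice. Your approach, while plausible in principle, imports substantial extra machinery (filtered Calabi--Yau/cyclic duality for $HH^*$) that is entirely avoidable here.
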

\index{superheavy subset!$\mu$-superheavy}
This theorem improves  Theorem $\ref{toricheavymain}$ in that superheaviness holds
without assuming nondegeneracy of $\frak y$.
\begin{prob}
Let $\frak y$ be a degenerate critical point of $\frak{PO}_{\frak b}$.
When does $\mu_{e_{\frak y}}^{\frak b}$ become a quasi-morphism?
\end{prob}

\subsection{Hochschild cohomology of filtered $A_{\infty}$ algebra: review}
\label{subsec:Hochschild}
\index{Hochschild cohomology!of filtered $A_{\infty}$ algebra}

We will use Hochschild cohomology in the proof of Theorem \ref{appliHOch}.
Let $(C,\{\frak m_k\}_{k=0}^{\infty})$ be a unital and gapped filtered $A_{\infty}$ algebra.
(See \cite[Section 3]{fooo:book1}  for the definition of filtered $A_{\infty}$
algebra etc.)
In the situation of Theorem \ref{appliHOch}, we have
$\frak m_0(1) = {\frak {PO}}(b)\cdot {\bf e}$
that is not zero in general.
(Here $\bf e$ is the unit of $(C,\{\frak m_k\}_{k=0}^{\infty})$.)
We {\it redefine} the $A_\infty$ structure by setting $\frak m_0(1) =0$
but not changing other operators.
By the unitality of the given $A_\infty$ algebra,
we find that the $A_{\infty}$ relations still hold.
After this modification
we will assume
$$\frak m_0 = 0
$$
in this section.

We consider the Hochschild cochain modules
\begin{equation}
\aligned
CH^k(C,C) &= Hom_{\Lambda}(B_kC[1],C[1]), \\
CH(C,C) &= \prod_{k=0}^{\infty} CH^k(C,C),\\
\mathcal N^kCH(C,C)
&=
CH(C,C) / \prod_{k'> k} CH^{k'}(C,C).
\endaligned
\end{equation}
\par
We define Hochschild differential $\delta_H :
CH(C,C) \to CH(C,C)$ by
\begin{equation}
\aligned
\delta_H(\varphi)(x_1,\dots,x_k)
= &\sum_{i,\ell} (-1)^{*_1}\varphi(x_1,\dots,\frak m_{\ell}(x_i,\dots),\dots) \\
&+ \sum_{i,\ell} (-1)^{*_2}\frak m_{\ell}(x_1,\dots,\varphi(x_i,\dots),\dots),
\endaligned\end{equation}
where
$*_1 = \deg\varphi(\deg x_1 + \dots + \deg x_{i-1} + i - 1)$,
$*_2 = \deg x_1 + \dots + \deg x_{i-1} + i$.
It is easy to check $\delta_H \circ \delta_H = 0$. So
$(CH(C,C),\delta_H)$ is a (co)chain complex,
which we call {\it Hochschild cochain complex}.
\par
By our assumption $\frak m_0 = 0$, we have
$$
\delta_H\left(\prod_{k'> k} CH^{k'}(C,C)\right)
\subset \prod_{k'> k} CH^{k'}(C,C).
$$
Therefore the map $\delta_H$ descends to a map
$\delta_H : \mathcal N^kCH(C,C) \to \mathcal N^kCH(C,C)$ in the quotient.
We call this filtration the {\it number filtration}.
Note we have
$$
(CH(C,C),\delta_H) = \projlim_{k\to \infty} (\mathcal N^kCH(C,C),\delta_H).
$$
\par
We assume that $(C,\{\frak m_k\}_{k=0}^{\infty})$ has a
strict unit $\text{\bf e}$.
We define its submodule
$$
CH^{\text{\rm red},k}(C,C)
= \{\varphi \in CH^k(C,C) \mid \varphi(\cdots,\text{\bf e},\cdots) = 0\}
$$
and define
$CH^{\text{\rm red}}(C,C)$,
$\mathcal N^k CH^{\text{\rm red}}(C,C)$ in a similar way.
We can easily show
$
\delta_H\left(CH^{\text{\rm red}}(C,C)\right)
\subset CH^{\text{\rm red}}(C,C)
$
which enables us to define {\it reduced Hochschild cochain complex}
$(CH^{\text{\rm red}}(C,C),\delta_H)$.
The latter complex carries a number filtration.
The cohomology of the reduced Hochschild cochain complex
is written as
$HH^{\text{\rm red}}(C,C)$ and is called {\it reduced Hochschild cohomology}.
\par
We note that
$$
CH^k(C,C) \cong Hom_{\C}(B_k\overline C[1],\overline C[1])\otimes \Lambda,
$$
where $\overline C \otimes_{\C}\Lambda = C$.
We can then use the filtration
$F^{\lambda}C = \overline C \otimes_{\C}T^{\lambda}\Lambda_0$
to define another filtration $F^{\lambda}CH^k(C,C)$ on $CH^k(C,C)$.
We call this filtration the {\it energy filtration}.
Using the condition
$
\frak m_k \left(
F^{\lambda_1}C \otimes \cdots \otimes F^{\lambda_k}C\right)
\subset
F^{\lambda_1 + \dots + \lambda_k}C
$
(\cite[(3.2.12.6)]{fooo:book1}), $\delta_H$ preserves the
energy filtration.

\subsection{From quantum cohomology to Hochschild cohomology}
\label{subsec:Hochschild2}

Let $L$ be a relative spin Lagrangian submanifold of a compact
symplectic manifold $(M,\omega)$ and $\frak b \in H^{{\rm even}}(M;\Lambda_0)$.
Let $b = b_0 + b_+\in \Omega^{odd}\otimes \Lambda_0$ be an element
satisfying the Maurer-Cartan equation
$$
\sum_{\beta}\sum_{k=0}^{\infty} T^{\omega \cap \beta} \exp(b_0 \cap \partial \beta)\frak m_{k,\beta}(b_+,\dots,b_+) = 0.
$$
For each such pair $(\frak b,b)$, we obtain a unital gapped filtered $A_{\infty}$ algebra
$(\Omega(L)\widehat{\otimes} \Lambda,\{\frak m_k^{\frak b,b}\})$.
We define
\begin{equation}
\frak q_*^{\bf b}
= H(M;\Lambda)
\to
CH^{\text{\rm red}}(\Omega(L)\widehat{\otimes} \Lambda,\Omega(L)\widehat{\otimes} \Lambda)
\end{equation}
as follows.
We put
$\text{\bf b} = (\frak b_0,\text{\bf b}_{2;1},\frak b_+,b_+)$
and define for $k\ne 0$:
\begin{equation}\label{mkdefeq2}
\aligned
&\frak q_{*}^{\text{\bf b}}(a)(x_1,\ldots,x_k) \\
&= \sum_{\beta\in H_2(M,L:\Z)}
\sum_{\ell_1=0}^{\infty}\sum_{\ell_2=0}^{\infty}\sum_{m_0=0}^{\infty}\cdots
\sum_{m_k=0}^{\infty}T^{\omega\cap \beta}
\frac{\exp(\text{\bf b}_{2;1} \cap \beta)}{(\ell_1+\ell_2+1)!}\\
&\frak
q_{\ell_1+\ell_2+1,k+\sum_{i=0}^k m_i;\beta}(\frak b_{+}^{\otimes\ell_1}
\otimes a \otimes \frak b_{+}^{\otimes\ell_2};
b_{+}^{\otimes m_0},x_1,b_{+}^{\otimes m_1},\ldots,
b_{+}^{\otimes m_{k-1}},x_k,b_{+}^{\otimes m_k}),
\endaligned
\end{equation}
where $x_i \in \Omega(L)$.
Here the notations are the same as in (\ref{mkdefeq}).
Recall that we assume $\frak m_0 =0$ in this section.
\par
Using Theorem \ref{qproperties} (1), we can show that
$\delta_H(\frak q_{*}^{\text{\bf b}}(a))= 0$.
Therefore we have a map \index{operator $\frak q$}
\begin{equation}
\frak q_{*}^{\bf b}
:H(M;\Lambda)
\to
HH_{\text{\bf b}}^{\text{\rm red}}(\Omega(L)\widehat{\otimes} \Lambda,\Omega(L)\widehat{\otimes} \Lambda)
\end{equation}
to the reduced Hochschild cohomology.
Here
$HH_{\text{\bf b}}^{\text{\rm red}}(\Omega(L)\widehat{\otimes} \Lambda,\Omega(L)\widehat{\otimes} \Lambda)$ is the
reduced Hochschild cohomology with respect to the filtered $A_{\infty}$
structure $\frak m^{\text{\bf b}}$.

\begin{rem}
\begin{enumerate}
\item
By composing $\frak q_{*}^{\text{\bf b}}$ with the projection
$$
HH^{\text{\rm red}}_{\text{\bf b}}(\Omega(L)\widehat{\otimes} \Lambda,\Omega(L)\widehat{\otimes} \Lambda)
\to
\mathcal N_0HH_{\text{\bf b}}^{\text{\rm red}}(\Omega(L)\widehat{\otimes} \Lambda,\Omega(L)\widehat{\otimes} \Lambda)
= HF^*((L,\text{\bf b});\Lambda),
$$
we obtain a map
$
H(M;\Lambda)
\to HF^*((L,\text{\bf b});\Lambda).
$
This coincides with the map
$i^{\ast}_{\text{\rm qm},HF((L,\text{\bf b});\Lambda)
}$ given in (\ref{ambcohtoHFL}).
\item
$HH^{\text{\rm red}}_{\text{\bf b}}(\Omega(L)\widehat{\otimes} \Lambda,\Omega(L)\widehat{\otimes} \Lambda)
$ has a filtered $A_{\infty}$ structure. (See \cite[4.7.4 and
Remark 4.7.1 (1)]{fooo:toricmir}.) Moreover
$\frak q_{*}^{\text{\bf b}}$ is a ring homomorphism. (See \cite{AFOOO} for the proof in general).
\end{enumerate}
\end{rem}

\subsection{Proof of Theorem \ref{appliHOch}}
\label{subsec:Hoshchildmainproof}
To prove Theorem \ref{appliHOch}, we need to explore some
estimates of the spectral invariant which are analogs of the ones developed in Parts 1-4.
In the previous parts we use $\Lambda^{\downarrow}$ coefficients
and the valuation $\frak v_q$, while we use $\Lambda$ coefficients
and the valuation $\frak v_T$ in Subsections \ref{subsec:Hochschild}, \ref{subsec:Hochschild2}.
To translate the valuation $\frak v_T$ for
any element $x$ defined over
$\Lambda$ into $\frak v_q$,
we just define
$$\frak v_q(x)=-\frak v_T(x),
$$ because $T=q^{-1}$. See Notations and Conventions (18).
We use this notation throughout this subsection.

The following is an analog of Proposition \ref{182mainprop}.
\begin{prop}\label{HHmainprop}
Let $L,\text{\bf b}$ be as above and $a \in H(M;\Lambda)$.
Then
$$
\rho^{\frak b}(H;a)
\ge - E^+(H;L) + \frak v_q(\frak q_{*}^{\text{\bf b}}(a)).
$$
\end{prop}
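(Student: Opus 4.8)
The plan is to imitate the strategy used for Proposition \ref{182mainprop}, replacing the open-closed map $i_{\text{\rm qm},\text{\bf b}}^{\ast}$ (which records only the number-$0$ part of the Hochschild cochain $\frak q_{*}^{\text{\bf b}}(a)$) by the full Hochschild cochain $\frak q_{*}^{\text{\bf b}}(a) \in CH^{\text{\rm red}}(\Omega(L)\widehat\otimes\Lambda,\Omega(L)\widehat\otimes\Lambda)$, and to upgrade the various maps $\frak I_{(F,J)}^{\text{\bf b},\frak b}$, $\frak H_{(F,J)}^{\text{\bf b},\frak b}$, $i_{\text{\rm qm},\text{\bf b}}$ accordingly to maps landing in the Hochschild cochain complex. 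Concretely, first I would define a "Hochschild-valued" version $\frak I_{(F,J)}^{\text{\bf b},\frak b,\text{Hoch}}$ of the chain map of Definition \ref{def:I}: instead of capping the moduli space $\mathcal M_{k+1;\ell}(F,J;[\gamma,w],L;\beta)$ against fixed boundary insertions $b_+^{\otimes k}$, one leaves $k$ boundary marked points free as \emph{inputs} $x_1,\dots,x_k \in \Omega(L)$ and pushes forward by $\text{\rm ev}_0^\partial$, producing a cochain in $CH(\text{CF}(M,H),\Omega(L))$; then one inserts $b_+$'s in all the gaps as in \eqref{mkdefeq2}. The analogue of Lemma \ref{lem1816} (that this is a chain map, where the target now carries the Hochschild differential $\delta_H$) follows from Proposition \ref{prop:qH-relation}(1) in exactly the same way, keeping track of the extra terms now visible because the inputs are free. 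This gives $\frak I_{(F,J)}^{\text{\bf b},\frak b,\text{Hoch},\ast}: HF_*(M,H;\Lambda^\downarrow) \to HH_{\text{\bf b}}^{\text{\rm red}}(\Omega(L)\widehat\otimes\Lambda,\Omega(L)\widehat\otimes\Lambda)$.

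Next I would establish the two analogues of Propositions \ref{energyHH} and \ref{184mainprop}. For the filtration estimate: the key input is Lemma \ref{energyestimateLandF}, which bounds $\mathcal A_H([\gamma,w])$ from below in terms of $-(R+\epsilon)$ and $-\beta\cap\omega$ for \emph{any} nonempty moduli space $\mathcal M_{k+1;\ell}(F,J;[\gamma,w],L;\beta)$; since the underlying moduli spaces are the same whether we fix the boundary insertions or leave them free, the same bound shows $\frak I_{(F,J)}^{\text{\bf b},\frak b,\text{Hoch}}$ shifts the energy filtration on Hochschild cochains by at most $R+\epsilon$. (Here one uses the energy filtration $F^\lambda CH^k$ defined at the end of Subsection \ref{subsec:Hochschild}, for which $\delta_H$ is filtered.) For the factorization identity: I would run the same parametrized-moduli-space cobordism as in Subsection \ref{subsec:PIirelation}, using $\mathcal M_{k+1;\ell}(para;F,J;*,L;\beta)$ but again with free boundary inputs, to produce a chain homotopy in the Hochschild complex between $\frak I_{(F,J)}^{\text{\bf b},\frak b,\text{Hoch}}\circ\CP_{(H_\chi,J)}^{\frak b}\circ\flat$ and $\frak q_{*}^{\text{\bf b}}(\cdot)$; the three boundary strata \eqref{bdryFFhamLkinmodu22a}--\eqref{bdryFFhamLkinmodu22c} and the subsequent $\sigma\in[0,1]$ homotopy work verbatim, the only change being bookkeeping of the open inputs. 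Combining, for $a\in H^*(M;\Lambda^\downarrow)$ and $\epsilon>0$ one takes $x\in F^\lambda CF(M,H;\Lambda^\downarrow)$ representing $\CP_{(H_\chi,J),\ast}^{\frak b}(a^\flat)$ with $\lambda\le\rho^{\frak b}(H;a)+\epsilon$; then $\frak I_{(F,J)}^{\text{\bf b},\frak b,\text{Hoch}}(x)$ lies in $F^{\lambda+R+\epsilon}$ of the Hochschild complex and represents the class $\frak q_{*}^{\text{\bf b}}(a)$, whence $\lambda + R + \epsilon \ge \frak v_q(\frak q_{*}^{\text{\bf b}}(a))$ by definition of the filtration level of a Hochschild cohomology class (to make this clean I would record, as a short lemma, that the level of a class in $HH_{\text{\bf b}}^{\text{\rm red}}$ with respect to the energy filtration is $> -\infty$, which follows by the argument of Lemma \ref{lem1816}, i.e. of \cite{usher:specnumber}). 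Letting $\epsilon\to 0$ and recalling $R = \sup\{H(t,p)\mid (t,p)\in S^1\times L\} = -\inf\{-H(t,p)\mid (t,p)\in S^1\times L\}$ gives the claimed inequality.

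I expect the main obstacle to be purely organizational rather than conceptual: one must set up the Hochschild cochain complex over $\Lambda$ together with the \emph{energy} filtration (as opposed to the number filtration), check that $\delta_H$ and $\frak q_*^{\text{\bf b}}$ are filtered for it, and verify that the filtration level of a class is finite and behaves well under chain homotopy; this is where I would be most careful, since the earlier development in Subsections \ref{subsec:Hochschild}, \ref{subsec:Hochschild2} emphasizes the number filtration. A second, more technical point is the compatibility of the system of multisections on the parametrized moduli spaces $\mathcal M_{k+1;\ell}(para;F,J;*,L;\beta)$ with \emph{all} the boundary strata when the boundary marked points are free inputs rather than fixed insertions; but this is the same compatibility already used in Lemmas \ref{existmultiFF}, \ref{existmultiFFS} together with the forgetful-map and cyclic-symmetry compatibility from \cite{fukaya:cyc}, so no genuinely new transversality argument is needed. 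Once Proposition \ref{HHmainprop} is in hand, Theorem \ref{appliHOch} follows by the argument of Subsection \ref{subsec:Heavynesscomplete}: apply it to $H_{(n)}$ and to $\widetilde H_{(n)}$, use Lemma \ref{dualitygyakmain} for the superheavy direction, and use that $\frak q_*^{\text{\bf b}}$ sends the idempotent $e_{\frak y}$ to a class of $\frak v_q$-value $0$ — this last fact being the Hochschild-level refinement of Theorem \ref{calcithm} and Lemma \ref{calksi}, valid now without any nondegeneracy hypothesis on $\frak y$ because $\frak q_*^{\text{\bf b}}$ is a unital ring homomorphism and $e_{\frak y}$ is a unit of the local factor $QH_{\frak b}(M;\Lambda;\frak y)$, so $\frak q_*^{\text{\bf b}}(e_{\frak y})$ is a nonzero idempotent and hence has valuation $0$.
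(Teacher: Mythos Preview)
Your proposal is correct and follows essentially the same approach as the paper: your Hochschild-valued map $\frak I_{(F,J)}^{\text{\bf b},\frak b,\text{Hoch}}$ is precisely the map $\frak q_*^{F,\text{\bf b}}$ the paper defines via formula \eqref{mkdefeq3}, and your factorization identity is the paper's Lemma \ref{HHtrianglecomm}, after which the paper simply says the result follows ``in the same way as we used Proposition \ref{184mainprop} to prove Proposition \ref{182mainprop}''. Your organizational worry about the energy filtration is already addressed at the end of Subsection \ref{subsec:Hochschild}, and your filtration estimate via Lemma \ref{energyestimateLandF} is exactly the implicit ingredient.
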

Here for $x \in HH^{\text{\rm red}}(\Omega(L)\widehat{\otimes} \Lambda,\Omega(L)\widehat{\otimes} \Lambda)$
we define
$$
\frak v_q(x) = -\frak v_T(x)
=-\sup\{\lambda
\mid
\exists \, \widetilde x \in F^{\lambda}CH^{\text{\rm red}}(\Omega(L)\widehat{\otimes} \Lambda,\Omega(L)\widehat{\otimes} \Lambda), x =[\widetilde x]
\}.
$$
\begin{proof}
Using the operator $\frak q_{\beta;\ell;k}^{H;[\gamma,w]}$ in
(\ref{1814}), we define
\begin{equation}
\frak q_*^{H,\text{\bf b}}
: CF(M;H;\Lambda)
\to
CH^{\text{\rm red}}_{\text{\bf b}}(\Omega(L)\widehat{\otimes} \Lambda,\Omega(L)\widehat{\otimes} \Lambda)
\end{equation}
by
\begin{equation}\label{mkdefeq3}
\aligned
&\frak q_{*}^{H,\text{\bf b}}(\llb \gamma,w \rrb)(x_1,\ldots,x_k) \\
&= \sum_{\beta\in H_2(M,L:\Z)}
\sum_{\ell=0}^{\infty}\sum_{m_0=0}^{\infty}\cdots
\sum_{m_k=0}^{\infty}T^{\omega\cap \beta}
\frac{\exp(\text{\bf b}_{2;1} \cap \beta)}{\ell!}\\
&\hskip1cm \frak
q^{H;[\gamma,w]}_{\ell,k+\sum_{i=0}^k m_i;\beta}(\frak b_{+}^{\otimes\ell};
b_{+}^{\otimes m_0},x_1,b_{+}^{\otimes m_1},\ldots,
b_{+}^{\otimes m_{k-1}},x_k,b_{+}^{\otimes m_k}).
\endaligned
\end{equation}
Here we take a lift $[\gamma,w] \in {\rm Crit}(\mathcal A_H)$ of
$\llb \gamma,w \rrb \in \widehat{\text{\rm Per}}(H)$ to define the right hand side
and can show that it is independent of the lift $[\gamma,w]$
by Proposition \ref{FBULKkura18} (11).
\par
Using Proposition \ref{prop:qH-relation}, we can  find that
it induces a map
$$
\frak q_{*}^{H,{\bf b}}
:
HF(M,H;\Lambda) \to HH_{\text{\bf b}}^{\text{\rm red}}(\Omega(L)\widehat{\otimes} \Lambda,\Omega(L)\widehat{\otimes} \Lambda).
$$
\begin{lem}\label{HHtrianglecomm}
$\frak q_*^{H,\text{\bf b}}\circ \CP_{(H_\chi,J)}^{\frak b}$
is chain homotopic $\frak q_*^{\text{\bf b}}$.
\end{lem}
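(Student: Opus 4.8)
\textbf{Proof plan for Lemma \ref{HHtrianglecomm}.}

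The plan is to mimic, almost verbatim, the chain-homotopy argument used in Subsection \ref{subsec:PIirelation} to prove Proposition \ref{184mainprop}, replacing the target Floer cochain complex of $L$ by the reduced Hochschild cochain complex of the filtered $A_\infty$ algebra $(\Omega(L)\widehat\otimes\Lambda,\{\frak m_k^{\text{\bf b}}\})$. Concretely, for $S\ge 0$ I would form the $\tau$-elongated Hamiltonian $F_S(\tau,t,x)=\chi(\tau+S+20)F(\tau,t,x)$ exactly as in Definition \ref{Schijodulis}, and introduce, for each tuple $(k,\ell,\beta)$, the moduli space ${\mathcal M}_{k+1;\ell}(para;F,J;*,L;\beta)$ together with the evaluation maps $(\text{\rm ev},\text{\rm ev}^\partial,\text{\rm ev}_{-\infty})$; its basic Kuranishi-theoretic properties (Hausdorff compactification, orientable Kuranishi structure with corners, the three-type boundary description \eqref{bdryFFhamLkinmodu22a}--\eqref{bdryFFhamLkinmodu22c}, dimension formula \eqref{dimensionFLF}, compatibility of evaluation maps, weak submersivity of $\text{\rm ev}_0^\partial$, invariance under permutation of interior points and cyclic permutation of boundary points) are furnished by Lemma \ref{SFBULKkura18} and the existence of an appropriate compatible system of continuous families of multisections by Lemma \ref{existmultiFFS}. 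The point to check is that, with the boundary marked points kept track of, the correspondence associated to this parametrized moduli space defines an operator-valued homotopy, i.e.\ a degree $-1$ map $\frak h^{F,\text{\bf b}}_*: CF(M,H;\Lambda)\to CH^{\text{\rm red}}_{\text{\bf b}}(\Omega(L)\widehat\otimes\Lambda,\Omega(L)\widehat\otimes\Lambda)$ built by the same formula \eqref{mkdefeq3} but using $\frak q^{F,S\ge 0}$ (the version of $\frak q^F$ defined via ${\mathcal M}_{k+1;\ell}(para;F,J;*,L;\beta)$) in place of $\frak q^F$.

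The key algebraic identity to extract is
$$
\delta_H\circ \frak h^{F,\text{\bf b}}_* \pm \frak h^{F,\text{\bf b}}_*\circ \partial_{(H,J)}^{\frak b}
= \frak q_*^{F,\text{\bf b}}\circ \CP^{\frak b}_{(H_\chi,J)} - \frak J_*^{F,\text{\bf b}},
$$
where the first term on the right (coming from the boundary component \eqref{bdryFFhamLkinmodu22a}, where bubbling at $\tau\to-\infty$ has codimension $\ge 2$ by the $S^1$-symmetry, exactly as in the proof of Lemma \ref{chainhomotopy1}) reproduces $\frak q_*^{F,\text{\bf b}}$ precomposed with the Piunikhin chain map, and the last term $\frak J_*^{F,\text{\bf b}}$ (the $S=0$ specialization, coming from \eqref{bdryFFhamLkinmodu22c}) is the analogous operator built from ${\mathcal M}_{k+1;\ell}(F_0,J;*,L;\beta)$. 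The left-hand side records \eqref{bdryFFhamLkinmodu22b}, which is precisely the Hochschild differential $\delta_H$ acting on the correspondence (the disk-bubbling-off-boundary strata correspond to pre- and post-composition by $\frak m^{\text{\bf b}}_k$) plus the boundary $\partial_{(H,J)}^{\frak b}$ of the Floer trajectory; here one uses that $\delta_H$ in Subsection \ref{subsec:Hochschild} was set up precisely to encode this boundary identification, in the same way that Proposition \ref{prop:qH-relation} (1) encodes the analogous relation for $\frak q^F$. Finally I would homotope $\frak J_*^{F,\text{\bf b}}$ to $\frak q_*^{\text{\bf b}}$ by turning off the Hamiltonian perturbation, i.e.\ replacing $X_{F_0}$ by $\sigma X_{F_0}$, $\sigma\in[0,1]$, and running the same correspondence argument over the $\sigma$-parametrized family as in the last paragraph of Subsection \ref{subsec:PIirelation}; at $\sigma=0$ the equation becomes the honest $J$-holomorphic disk equation with one interior input forced to the limit point, which by definition yields $\frak q_*^{\text{\bf b}}$ applied after feeding in the de Rham cycle. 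Combining the two homotopies gives $\frak q_*^{F,\text{\bf b}}\circ \CP^{\frak b}_{(H_\chi,J)}\simeq \frak q_*^{\text{\bf b}}$ on the Hochschild cochain level, hence $\frak q_*^{F,\text{\bf b},\ast}\circ \CP^{\frak b}_{(H_\chi,J),\ast}=\frak q_*^{\text{\bf b}}$ on $HH^{\text{\rm red}}_{\text{\bf b}}$, which is the claim.

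The main obstacle, as is typical in this circle of arguments, is \emph{not} any new geometric phenomenon but the bookkeeping: one must verify that the chosen families of multisections on the parametrized moduli spaces are simultaneously compatible with (a) the Hochschild-differential boundary strata (disk bubbling at boundary marked points, requiring $\text{\rm ev}_0^\partial$ to remain a submersion), (b) the Floer-trajectory breaking strata \eqref{bdryFFhamLkinmodu22a}--\eqref{bdryFFhamLkinmodu22b} where the previously fixed perturbations for $\CP^{\frak b}$, for the $\frak q^{\text{\bf b}}$-operators, and for $\partial_{(H,J)}^{\frak b}$ must be matched, and (c) the $\frak S_\ell$-symmetry and the forgetful maps of boundary marked points needed so that the operators descend from $B_\ell$ to $E_\ell$ and are reduced (vanish on $\text{\bf e}_L$). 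All of this is established in the cited lemmas of this chapter and in \cite{fukaya:cyc}, \cite{fooo:bulk} Section 12 for the fiber-product perturbations; the sign verification for the Hochschild differential (the signs $*_1,*_2$ in $\delta_H$ versus the orientation conventions of Proposition \ref{disckura} (5) and Lemma \ref{FBULKkura18} (5)) is the only place requiring genuine care, and it is handled exactly as the sign check behind Theorem \ref{qproperties} (1) and \eqref{eq:qH-relation}. I therefore expect the proof to reduce to the single sentence that the two correspondence-induced homotopies of the preceding paragraph, assembled from Lemmas \ref{SFBULKkura18}, \ref{existmultiFFS} and Stokes' / composition formulas \cite{fooo:bulk} Lemmas 12.13 and 12.15, give the desired chain homotopy, and so may be stated with the proof "the same as that of Proposition \ref{184mainprop}" modulo the cosmetic replacement of $CF_{\text{dR}}(L;\Lambda)$ by the Hochschild complex.
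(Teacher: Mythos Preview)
Your approach is exactly the one the paper has in mind: its proof is the single sentence ``The proof is the same as that of Proposition \ref{184mainprop} and is omitted,'' and you have correctly unpacked what that entails (the $S$-parametrized moduli spaces of Definition \ref{Schijodulis}, the boundary decomposition of Lemma \ref{SFBULKkura18}, and the second $\sigma$-homotopy turning off the Hamiltonian). One small slip: your homotopy operator $\frak h^{F,\text{\bf b}}_*$ should have domain $\Omega(M)\widehat\otimes\Lambda^\downarrow$ with source differential $d$, not $CF(M,H;\Lambda)$ with $\partial_{(H,J)}^{\frak b}$, since the parametrized moduli space ${\mathcal M}_{k+1;\ell}(para;F,J;*,L;\beta)$ has a \emph{point} asymptotic at $\tau\to-\infty$ (with $\text{\rm ev}_{-\infty}:\to M$), and the Floer breaking producing $\frak q_*^{F,\text{\bf b}}\circ\CP^{\frak b}_{(H_\chi,J)}$ happens inside the $S\to\infty$ boundary \eqref{bdryFFhamLkinmodu22a}; compare the domain of $\frak H_{(F,J)}^{\text{\bf b},\frak b}$ in \eqref{HFtoHFLdefS>0} and Lemma \ref{chainhomotopy1}.
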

The proof is the same as that of Proposition
\ref{184mainprop} and is omitted.
\par
We can use Lemma \ref{HHtrianglecomm}
to prove Proposition \ref{HHmainprop}
in the same way as we used
Proposition \ref{184mainprop}
to prove Proposition \ref{182mainprop}.
Thus Proposition \ref{HHmainprop} follows.
\end{proof}
\par
\smallskip
Now we restrict ourselves to the case of toric manifold $(M,\omega)$.
For the toric case we can use the $T^n$-equivariant operator $\frak q^T$ in place of $\frak q$
in Proposition \ref{HHmainprop} and the Hochschild complex
$
CH(H(L(\text{\bf u});\Lambda),H(L(\text{\bf u});\Lambda))
$
defined on de Rham cohomology group (instead on the
\emph{space of differential forms}).
In fact the $A_\infty$ structure $\frak m^T_*$ itself was defined thereon.
\par
Let $\frak b \in H^{{\rm even}}(M;\Lambda)$ and let
$\frak y$ be a critical point of
$\frak{PO}_{\frak b}$,
$\text{\bf u} = \text{\bf u}(\frak y)$ and
$b = b(\frak y)$ as in Theorem \ref{Floercrit}.
We put ${\text{\bf b}} = {\text{\bf b}}(\frak y) = (\frak b,b(\frak y))$.
\begin{lem}\label{infofq}
The restriction of the map
$$\frak q_*^{T,\text{\bf b}}
: QH_{\frak b}(M;\Lambda) \to
HH_{\text{\bf b}}^{\text{\rm red}}(H(L(\text{\bf u});\Lambda),
H(L(\text{\bf u});\Lambda))
$$ to
$QH_{\frak b}(M;\Lambda;\frak y)\subset QH_{\frak b}(M;\Lambda)$,
is injective.
\end{lem}
\begin{proof}
We recall from \cite[Lemma 4.7.5]{fooo:toricmir}  that we have the map
$$
HH(H(L(\text{\bf u});\Lambda),
H(L(\text{\bf u});\Lambda))
\to
\text{\rm Jac}(\frak{PO};\frak y).
$$
(More precisely, we constructed a
map from $HH(H(L(\text{\bf u});\Lambda),
H(L(\text{\bf u});\Lambda))$
to a formal power series version of
the Jacobian ring in \cite[Lemma 4.7.5]{fooo:toricmir}.
As we mentioned at the end of \cite[page 329]{fooo:toricmir}, this formal power series version
is isomorphic to the Jacobian ring
$\text{\rm Jac}(\frak{PO};\frak y)$, which is its adic convergent version.
The detail of the proof of this isomorphism
will be written in \cite{AFOOO2}.
It seems likely that this is known to the
expert of rigid analytic geometry.)
Its bulk deformation gives rise to a map
\begin{equation}\label{HHtoJac}
HH_{\text{\bf b}}(H(L(\text{\bf u});\Lambda),
H(L(\text{\bf u});\Lambda))
\to
\text{\rm Jac}(\frak{PO}_{\text{\bf b}};\frak y).
\end{equation}
Then the composition of the restriction of $\frak q_*^{T,\text{\bf b}}$
to $QH_{\frak b}(M;\Lambda;\frak y)$
with the canonical injective map
$
HH_{\text{\bf b}}^{\text{\rm red}}(H(L(\text{\bf u});\Lambda),
H(L(\text{\bf u});\Lambda))
\to
HH_{\text{\bf b}}(H(L(\text{\bf u});\Lambda),
H(L(\text{\bf u});\Lambda))
$
and
(\ref{HHtoJac}) induces an isomorphism
$QH_{\frak b}(M;\Lambda;\frak y) \cong
\text{\rm Jac}(\frak{PO}_{\frak b};\frak y)$.
Hence the lemma.
\end{proof}
Since
the image of the map
${\frak q}_*^{T,\text{\bf b}}$
is a finite dimensional vector space over $\Lambda$, we can apply the
argument of Subsection \ref{subsec:Usher}
to find a standard basis \index{standard basis}
${\frak q}_*^{T,\text{\bf b}}(e_1),\dots,{\frak q}_*^{T,\text{\bf b}}(e_k)$ of the image of
${\frak q}_*^{T,\text{\bf b}}$.
Then we have
\begin{equation}\label{259formu}
\aligned
\frak v_q\left(\frak q_*^{T,\text{\bf b}}\left(\sum_{i=1}^k x_i e_i\right)\right)
& = \max \{\frak v_q(x_i {\frak q}_*^{T,\text{\bf b}}(e_i)) \mid i=1,\dots,k \} \\
& \ge \max \{\frak v_q(x_i ) \mid i=1,\dots,k \} - C_1
\endaligned
\end{equation}
where $C_1$ is a constant independent of $x_i$.
\par
Now we are ready to complete the proof of
Theorem  \ref{appliHOch}.
Let
$\widetilde \psi_H \in \widetilde{\text{\rm Ham}}(M,\omega)$.
By Theorem \ref{dualitymain} we have
\begin{equation}\label{25100}
\rho^{\frak b}(\widetilde \psi_H^n;e_{\frak y})
=
- \inf \{
\rho^{\frak b}(\widetilde \psi_H^{-n};b)
\mid \Pi(e_{\frak y},b)\ne 0
\}.
\end{equation}
Let us estimate the right hand side of (\ref{25100}).
Suppose $\Pi(e_{\frak y},b) \ne 0$.
We put
$$
e_{\frak y}\cup^{\frak b}b = \sum_{i=1}^k x_i e_i,
\qquad x_i \in \Lambda.
$$
Since $\Pi(e_{\frak y},b) \ne 0$, Sublemma \ref{sublem166} implies
$
\frak v_q(e_{\frak y}\cup^{\frak b} b)\ge 0.
$
Therefore
\begin{equation}\label{2550}
\max\{\frak v_q(x_i) \mid i = 1,\dots, k\}
\ge C_2,
\end{equation}
where
$C_2 = -\max\{\frak v_q(e_i) \mid i = 1,\dots, k\}$.
\par
By the triangle inequality,
$$
\rho^{\frak b}(\widetilde \psi_H^{-n};b)
\ge
\rho^{\frak b}(\widetilde \psi_H^{-n};\sum_{i=1}^k x_ie_i)
- \rho^{\frak b}(\underline 0;e_{\frak y}).
$$
Here $\underline 0$ is the identity map $= \widetilde \psi_0$.
It follows from Proposition \ref{HHmainprop} that the right hand side is
not smaller than
$$
n \int_0^1 \min(- \widetilde H_t|_{L({\bf u})})\, dt +\frak v_q\left(\frak q_*^{T,\frak b}\left(\sum_{i=1}^k x_i e_i\right)\right)
-  \rho^{\frak b}(\underline 0;e_{\frak y}).
$$
Here $\tilde H$ is as in (\ref{tildeHHH}).
By (\ref{259formu}), this is not smaller than
$$
n \int_0^1 \min(H_t|_{L({\bf u})})\, dt +\max\{ \frak v_q(x_i) \mid i=1,\dots, k\}
- C_1 - \rho^{\frak b}(\underline 0;e_{\frak y}).
$$
Combining the above, we obtain
$$
\aligned
\rho^{\frak b}(\widetilde \psi_H^{-n};b)
\ge & \int_0^1 \min(H_t|_{L({\bf u})})\, dt + C_2 - C_1 - \rho^{\frak b}(\underline 0;e)\\
= & - n \int_0^1 \max(- H_t|_{L({\bf u})}) \, dt + C_2 - C_1 - \rho^{\frak b}(\underline 0;e) \\
= & - n E^-(H;L({\bf u})) + C_2 - C_1 - \rho^{\frak b}(\underline 0;e).
\endaligned
$$
Therefore using  \eqref{25100}, we obtain
$$
-\frac{\rho^{\frak b}(\widetilde \psi_H^n;e_{\frak y})}{n}
\ge - E^-(H;L({\bf u}))
- \frac{C_3}{n},
$$
where $C_3$ is independent of $n$.
By letting $n \to \infty$, we
have finished the proof of Theorem \ref{appliHOch}.
\qed

\subsection{A remark}
\label{subsec:Hochschildremark}

In Theorems \ref{toricheavymain}
and  \ref{appliHOch}, we use Lagrangian Floer
theory to estimate the spectral invariant in terms
of the values of the Hamiltonian on the Lagrangian submanifolds.
One can use a variant of this technique to obtain an estimate
of spectral invariant using various other invariant appearing in symplectic topology.
\index{$E^+(H;\mathcal L(Y))$}
\par
We introduce the invariant $E^+(H;\mathcal L(Y))$ defined by
\begin{equation}\label{defSS}
E^+(H;\mathcal L(Y)) : = \sup \left\{\int_0^1  H(t,\gamma(t))\, dt
~\vert~ \gamma \in \mathcal L(Y)\right\}.
\end{equation}
This is an invariant stronger than $E^+(H;Y)$ in that $E^+(H;\CL(Y)) \leq E^+(H;Y)$ and
more directly related to the loop space $\CL(Y)$ of $Y$.
Using this invariant, we can improve the statement of
Proposition \ref{182mainprop} to the following
\be\label{eq:loopE-HY}
\rho^{\frak b}(H;a) \geq -E^+(H;\CL(Y)) + \rho^{\mathbf b}_L(a).
\ee
This formula suggests that we may use symplectic homology $SH(V)$
(\cite{floerhofer}) of a subset $V\subset M$ and
the Viterbo functoriality (Viterbo \cite{viterbo2}, Abouzaid-Seidel \cite{abousaiseidel})
in place of Lagrangian Floer theory in certain cases, for example, in the case where $V$ is a
Darboux-Weinstein neighborhood of a Lagrangian submanifold $L$.
For the case where the Floer homology $HF(L)$ is isomorphic to $H(L)$
(such as the case $L$ is exact), the symplectic homology $SH(V)$ is related to the homology of
the loop space of $L$ (Salamon-Weber \cite{salamonweber}, Abbondandolo-Schwartz \cite{abbsch}, which is
in turn closely related to the Hochschild cohomology of $H(L)$.
(See also \cite{fukaya;loop}).)
Thus in that case the method using symplectic
homology becomes equivalent to those
using Hochschild cohomology that we have established
in this section.
\par
Eliashberg-Polterovich \cite{eliashpol} use
symplectic homology to estimate the spectral invariant
in the case of Lagrangian tori in $S^2\times S^2$.
Through the above mentioned equivalence, their argument is related to ours given in Section
\ref{sec:exotic}.

\part{Appendix}
\section{$\CP_{(H_\chi,J_\chi),\ast}^{\frak b}$ is an isomorphism}
\label{sec:appendix1} \index{Piunikhin isomorphism}

In Section \ref{sec:deform-bdy} we introduced the Piunikhin map
$\mathcal P_{(H_\chi,J_\chi),\ast}^{\frak b}$ with bulk deformation.
In this section we complete the proof of Theorem \ref{Pbulkiso}:

\begin{thm}\label{appendmain} The Piunikhin map with bulk deformation
$$
\mathcal P_{(H_\chi,J_\chi),\ast}^{\frak b}
: H_*(M;\Lambda^{\downarrow})
\to
HF^{\frak b}_*(M,H,J;\Lambda^{\downarrow})
$$
is a $\Lambda^{\downarrow}$-module isomorphism.
\end{thm}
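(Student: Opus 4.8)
The strategy is the standard one: construct a chain map in the reverse direction, namely a version of the map $\CQ^{\frak b}_{(H_{\widetilde\chi},J_{\widetilde\chi})}$ associated to the time-reversed elongation $\widetilde\chi(\tau) = \chi(1-\tau)$, and show that the two composites $\CQ^{\frak b}\circ\CP^{\frak b}$ and $\CP^{\frak b}\circ\CQ^{\frak b}$ are both chain homotopic to the identity. Since both maps preserve the $q$-adic filtration (by the energy estimates exactly as in Lemma \ref{filtered} and Lemma \ref{connectinghomofilt}), and since $HF^{\frak b}_*$ is already known to be abstractly isomorphic to $H_*(M;\Lambda^\downarrow)$ as a $\Lambda^\downarrow$-module (Theorem \ref{Piuiso} together with the observation that $\del^{\frak b}_{(H,J)}$ differs from $\del_{(H,J)}$ by higher-order terms in the $q$-adic topology, so a spectral sequence / successive approximation argument applies), it suffices to prove that $\CP^{\frak b}_{(H_\chi,J_\chi),\ast}$ is surjective, or equivalently that the composite is an isomorphism on homology.

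\textbf{Key steps.}
First, I would define the moduli spaces $\overset{\circ}{\CM}_\ell(H_{\widetilde\chi},J_{\widetilde\chi};[\gamma,w],*)$ of $X_{\widetilde\chi(\tau)H}$-perturbed trajectories on $\R\times S^1$ with $\ell$ interior marked points, asymptotic to $[\gamma,w]$ at $\tau\to -\infty$ and converging (by removable singularity) to a point of $M$ at $\tau\to+\infty$, and establish the analogue of Proposition \ref{piuBULKkura}: Hausdorff compactification, Kuranishi structure with corners, the boundary description, dimension formula, orientations, and that $\text{ev}_{+\infty}$ is weakly submersive. This is, as the authors repeatedly note, the same as the proof of Proposition \ref{connkura}. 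Using a compatible system of multisections and the insertion of $\frak b_+^{\otimes\ell}$ with the $\exp(\frak b_2\cap\cdot)$ weights, one defines $\CQ^{\frak b}_{(H_{\widetilde\chi},J_{\widetilde\chi})}:CF_*(M,H;\Lambda^\downarrow)\to \Omega_*(M)\widehat\otimes\Lambda^\downarrow$ and checks it is a chain map via Stokes and the boundary formula. Second, to compare the composite with the identity I would introduce a parametrized moduli space ${\CM}_\ell(para;H_\chi,J_\chi;*,*;C)=\bigcup_{S\ge 0}\{S\}\times {\CM}_\ell(H_\chi^S,J_\chi^S;*,*;C)$, obtained by gluing the $\CP$-side and $\CQ$-side configurations with a gluing parameter $S$ and letting $S\to\infty$ degenerate into the broken configuration computing $\CQ^{\frak b}\circ\CP^{\frak b}$, while $S=0$ gives the moduli space of $J_0$-holomorphic spheres/cylinders with two marked points computing a chain-homotopy to the identity (here one uses crucially that for $S=0$ the equation is the genuine $J_0$-holomorphic equation with $J_0$ time-independent, so that the bubble at $\tau\to-\infty$, having an extra $S^1$-symmetry, is codimension $2$ and does not contribute — cf.\ Remark \ref{JS1inv} and the proof of Proposition \ref{piuBULKkura}). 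Establishing the Kuranishi structure, the boundary description with the three relevant strata, and a compatible system of multisections for this parametrized space, then applying Stokes, yields the chain homotopy $\del^{\frak b}\circ\mathcal H + \mathcal H\circ\del^{\frak b} = \CQ^{\frak b}\circ\CP^{\frak b} - \mathrm{id}$. The same construction with the roles reversed handles $\CP^{\frak b}\circ\CQ^{\frak b}$.

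\textbf{Main obstacle.}
The technical heart is the analysis of the parametrized moduli space as $S\to\infty$: one must show that the only codimension-one boundary strata contributing are the broken trajectory (giving the composite) and the $S=0$ stratum (giving the chain homotopy term), and in particular that sphere bubbling at the $-\infty$ end is genuinely codimension two because of the residual $S^1$-action on the cylinder $\R\times S^1$ when the Hamiltonian perturbation vanishes there. This is where the requirement that $J_0$ be $t$-independent is used, and where running-out phenomena (infinitely many homotopy classes $C$) must be controlled by first working over $\Lambda^\downarrow_0$ up to a fixed energy level and taking an inductive limit, exactly as indicated in the Remark following the chain-homotopy-equivalence lemma in Section \ref{subsec:boundary}. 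Once these transversality-and-compactness inputs are in place, the homological conclusion — that $\CP^{\frak b}_{(H_\chi,J_\chi),\ast}$ is an isomorphism — is formal. The filtration-preserving property, needed only for the spectral-invariant applications and not for the isomorphism statement itself, follows from the energy identities already recorded.
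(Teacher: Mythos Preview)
Your overall strategy matches the paper's: construct $\CQ^{\frak b}_{(H_{\widetilde\chi},J_{\widetilde\chi})}$ via the reversed-elongation moduli spaces, then use a one-parameter family of moduli spaces $\CM_\ell(para;H_\chi,J_\chi;*,*;C)$ indexed by a gluing parameter $S\ge 0$ to produce a chain homotopy between $\CQ^{\frak b}\circ\CP^{\frak b}$ and the identity, and similarly for the other composite.

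However, your boundary analysis has a real gap. The parametrized moduli space has \emph{four} types of codimension-one boundary, not three: besides (i) the $S\to\infty$ broken configurations giving $\CQ^{\frak b}\circ\CP^{\frak b}$ and (iv) the $S=0$ stratum $\widehat\CM_\ell(H{=}0,J_0;*,*;C)$, there are (ii) and (iii): fiber products
\[
\CM_{\#\L_1}(H{=}0,J_0;*,*;C_1)\,{}_{\text{ev}_{+\infty}}\!\times_{\text{ev}_{-\infty}}\CM_{\#\L_2}(para;H_\chi,J_\chi;*,*;C_2)
\]
and the symmetric version at the $+\infty$ end, coming from cylinder-bubbling while $S$ remains bounded. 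These are \emph{genuinely codimension one}, not codimension two as you claim. The $S^1$-symmetry does not lower their codimension; rather, the paper uses it differently: one chooses the multisections on $\CM_\ell(H{=}0,J_0;*,*;C)$ and on $\widehat\CM_\ell(H{=}0,J_0;*,*;C)$ to be pulled back from their $S^1$-quotients $\overline\CM_\ell(H{=}0,J_0;*,*;C)$ via the natural forgetful maps. Since the evaluation maps also factor through these quotients, the correspondence (integration along the fiber) on these boundary pieces vanishes identically. The same mechanism applied to (iv) kills all contributions except the piece with $\ell=0$ and $C=0$, where the moduli space is $M$ itself and $\text{ev}_{\pm\infty}$ is the identity, producing the identity map on $\Omega(M)$. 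Without this explicit $S^1$-equivariant perturbation choice, the contributions from (ii), (iii), and the nonconstant parts of (iv) would not vanish, and your chain-homotopy formula would acquire extra terms you have no way to control.

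Your aside about establishing an abstract isomorphism $HF^{\frak b}_*\cong H_*(M;\Lambda^\downarrow)$ via a spectral sequence comparing $\del^{\frak b}$ to $\del$ is unnecessary and somewhat delicate (the $\frak b_2$-twist alters the leading term by nontrivial complex exponentials, not by something in $\Lambda^\downarrow_-$). The paper does not use this and neither should you; the direct construction of a two-sided homotopy inverse is both cleaner and what is actually required.
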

\begin{proof}
We first construct another map
\index{$\mathcal Q_{(H_{\tilde\chi},J_{\tilde\chi}),\ast}^{\frak b}$}
\begin{equation}\label{Qdomainandtarget}
\mathcal Q_{(H_{\tilde\chi},J_{\tilde\chi}),\ast}^{\frak b}
: HF^{\frak b}_*(M,H;\Lambda^{\downarrow})
\to H_*(M;\Lambda^{\downarrow})
\end{equation}
in the direction opposite to $\mathcal P_{(H_\chi,J_\chi),\ast}^{\frak b}$.
(Here and hereafter we sometimes omit $J$ in the notation of $HF^{\frak b}_*(M,H,J;\Lambda^{\downarrow})$.)
This will be
carried out by constructing the associated chain map
\begin{equation}
CF(M,H;\Lambda^{\downarrow})
\to \Omega(M) \widehat{\otimes} \Lambda^{\downarrow}.
\end{equation}
Let $\chi \in \CK$ be as in Definition \ref{defn:chi} and $[\gamma,w]
\in \text{\rm Crit}(\mathcal A_H)$. For the construction of this chain map, we need to consider
the dual version of $\chi$. To distinguish the two different types of
elongation functions, we recall that we denote
$$
\tilde \chi(\tau)= -\chi(1-\tau)
$$
for $\chi \in \CK$.
We also use $(H_{\chi},J_{\chi})$ defined in (\ref{eq:paraHJ}).
(In this section $J=\{J_t\}$ is a $t \in S^1$ parametrized family of compatible almost complex
structures.)

We consider the elongated family
$(H_{\tilde \chi},J_{\tilde \chi})$ defined by:
$$
H_{\tilde \chi}(\tau,t,x) = \tilde \chi(\tau) H_t(x), \quad J_{\tilde \chi}(\tau,t) = J_{-\tilde \chi(\tau),t},
$$
where $J_{s,t}$ is as in (\ref{HsJs}).

\begin{defn}\label{moduliforQ}
\index{$\mathcal M_{\ell}(H_{\tilde \chi},J_{\tilde \chi};[\gamma,w],*)$}
We denote by $\overset{\circ}{{\CM}}_{\ell}(H_{\tilde \chi},J_{\tilde \chi};[\gamma,w],*)$ the set of all
pairs
$(u;z_1^+,\dots,z_{\ell}^+)$
of maps
$u: \R \times S^1 \to M$ and
$z_i^+\in \R \times S^1$ which satisfy the following conditions:
 \begin{enumerate}
 \item The map $u$ satisfies the equation:
\be\label{eq:HJCRQ}
\dudtau + J_{\tilde \chi}\Big(\dudt - X_{H_{\tilde \chi}}(u)\Big) = 0.
\ee
\item The energy
$$
E_{(H_{\tilde \chi},J_{\tilde \chi})}(u) = \frac{1}{2} \int \Big(\Big|\dudtau\Big|^2_{J_{\tilde \chi}} + \Big|
\dudt - X_{H_{\tilde \chi}}(u)\Big|_{J_{\tilde \chi}}^2 \Big)\, dt\, d\tau
$$
is finite.
\item The map $u$ satisfies the following asymptotic boundary condition.
$$
\lim_{\tau\to -\infty}u(\tau, t) = \gamma(t).
$$
\item The homology class of the concatenation of $u$ and $w$
is homotopic to $0$.
\item $z_i^+$ are all different from one another.
\end{enumerate}
\end{defn}
$(u;z^+_1,\dots,z^+_{\ell}) \mapsto (u(z^+_1),\dots,u(z^+_{\ell}))$
defines an evaluation map
$$
{\rm ev} = ({\rm ev}_1\dots,{\rm ev}_{\ell}) = \overset{\circ}{\CM}_{\ell}(H_{\tilde \chi},J_{\tilde \chi};[\gamma,w],*) \to M^{\ell}.
$$

\begin{lem}\label{QBULKkura}
\begin{enumerate}
\item The moduli space
$\overset{\circ}{\CM}_{\ell}(H_{\tilde \chi},J_{\tilde \chi};[\gamma,w],*)$ has a compactification
${{\CM}}_{\ell}(H_{\tilde \chi},J_{\tilde \chi};[\gamma,w],*)
$
that is Hausdorff.
\item
The space ${{\CM}}_{\ell}(H_{\tilde \chi},J_{\tilde \chi};[\gamma,w],*)$ has an orientable Kuranishi structure with corners.
\item
The normalized boundary of ${{\CM}}_{\ell}(H_{\tilde \chi},J_{\tilde \chi};[\gamma,w],*)$ is described by
\begin{equation}\label{bdryQmodu}
\aligned
&\partial {\CM}_{\ell}(H_{\tilde \chi},J_{\tilde \chi};[\gamma,w],*) \\
&= \bigcup
{\CM}(H,J;[\gamma,w],[\gamma',w'])
\times
{\CM}(H_{\tilde \chi},J_{\tilde \chi};[\gamma',w'],*),
\endaligned\end{equation}
where the union is taken over all $[\gamma',w'] \in \text{\rm Crit}(\mathcal A_H)$
and $(\mathbb L_1,\mathbb L_2) \in \text{\rm Shuff}(\ell)$.
\item
Let $\mu_H : \mbox{\rm Crit}(\CA_H) \to  \Z$,
be the Conley-Zehnder index. Then the (virtual) dimension satisfies
the following equality:
\begin{equation}\label{dimensionboundarybQ2}
\dim {\CM}_{\ell}(H_{\tilde \chi},J_{\tilde \chi};[\gamma,w],*) = n-\mu_H([\gamma,w]) +2\ell.
\end{equation}
\item
We can define a system of orientations on ${\CM}_{\ell}(H_{\tilde \chi},J_{\tilde \chi};[\gamma,w],*)$ so that
the isomorphism $(3)$ above is compatible with this orientation.
\item
${\rm ev}$ extends to a strongly continuous smooth map
${\CM}_\ell(H_{\tilde \chi},J_{\tilde \chi};[\gamma,w],*) \to M^{\ell}
$, which we denote also by ${\rm ev}$.
It is compatible with $(3)$.
\item
The map $\text{\rm ev}_{+\infty}$ which sends $(u;z^+_1,\dots,z^+_{\ell})$ to
$
\lim_{\tau\to +\infty}u(\tau, t)
$
extends to a weakly submersive map ${\CM}_\ell(H_{\tilde \chi},J_{\tilde \chi};[\gamma,w],*) \to M$,
which we also denote by ${\rm ev}_{+\infty}$.
It is compatible with $(3)$.
\end{enumerate}
\end{lem}
The proof of Lemma \ref{QBULKkura} is the same as that of Proposition \ref{connkura},
which is detailed in \cite[Parts 4 and 5]{fooo:techI}, and
so omitted.
\par
We take a CF-perturbation $\widehat{\frak S}$
on the moduli space
$
{\CM}_\ell(H_{\tilde \chi},J_{\tilde \chi};[\gamma,w],*)
$
which is compatible with (3) and such that $\text{\rm ev}_{+\infty}$ is strongly submersive
with respect to $\widehat{\frak S}$.
\par
Let $h_1,\dots,h_{\ell} \in \Omega(M)$.
We define
$
\frak n_{(H,J),*}([\gamma,w ])(h_1,\dots,h_{\ell})
\in \Omega(M)
$
by
\begin{equation}\label{Qdefev}
\frak n_{(H,J),*}([\gamma,w])(h_1,\dots,h_{\ell})
=
(\text{\rm ev}_{+\infty})!
\left(
{\rm ev}_1^*h_1 \wedge\dots \wedge {\rm ev}_\ell^*h_\ell
; \widehat{\frak S^{\epsilon}}
\right).
\end{equation}
Here $(\text{\rm ev}_{+\infty})!$ is the
integration along the fibers of the map $\text{\rm ev}_{+\infty}$
which is defined by using a CF-perturbation $\widehat{\frak S}$
of the space  ${\CM}_\ell(H_{\tilde \chi},J_{\tilde \chi};[\gamma,w],*)$.
(See Definition \ref{def320222}, \cite[Definition 7.78]{fooo:tech2}.)
\par
Let $\frak b \in H^{{\rm even}}(M;\Lambda^\downarrow_0)$. We split
$\frak b = \frak b_0 + \frak b_2 + \frak b_{+}$ as in
(\ref{decompb}).
We take closed forms which represent $\frak b_0$, $\frak b_2$, $\frak b_{+}$
and write them by the same symbols.
\begin{defn}\label{bdrycoefbQQ0}
Let $\llb \gamma,w \rrb \in \widehat{\text{\rm Per}}(H)$. We define
\begin{equation}\label{bdrycoefbQQ}
\aligned
&\mathcal Q^\frak b_{(H_{\tilde \chi},J_{\tilde \chi})}(\llb \gamma,w \rrb)
=
\sum_{\alpha \in \pi_2(M)}\sum_{\ell=0}^{\infty}\frac{\exp({\int (\alpha\# w)^* \frak b_2})}{\ell!}
\\
&\qquad\qquad\qquad\qquad\qquad
q^{
-\int w^* \omega +\int (\alpha\# w)^* \omega}
\frak n_{(H,J),*}([\gamma,\alpha\# w])(\underbrace{\frak b_{+},
\dots,\frak b_{+}}_{\ell}).
\endaligned
\end{equation}
Note $\{[\alpha\# w] \mid \alpha \in \pi_2(M)\} = \pi_2(\gamma)$.
We can use it to show that the right hand side is independent of the choice
of $[\gamma,w] \in \text{\rm Crit}(\mathcal A_H)$.
\end{defn}
We can prove that the sum in (\ref{bdrycoefbQQ}) converges in
$q$-adic topology in the same way as in Lemma \ref{adiccomv1}.
We have thus defined (\ref{Qdomainandtarget}). Then
\begin{equation}
\partial \circ \mathcal Q^\frak b_{(H_{\tilde \chi},J_{\tilde \chi})}
= \mathcal Q^\frak b_{(H_{\tilde \chi},J_{\tilde \chi})} \circ \partial_{(H,J)}^{\frak b}
\end{equation}
is a consequence of Lemma \ref{QBULKkura} (3),
Stokes' theorem (Theorem \ref{them48}, \cite[Theorem 8.11]{fooo:tech2})
and composition formula (Theorem \ref{compform},  \cite[Theorem 10.20]{fooo:tech2}).
(Here $\partial$ is defined by (\ref{eq:deRhamchain}).)
\begin{prop}\label{QP-1}
$\mathcal Q^\frak b_{(H_{\tilde \chi},J_{\tilde \chi})}\circ \mathcal P^\frak b_{(H_\chi,J_\chi)}$
is chain homotopic to the identity.
\end{prop}
\begin{proof}
For $S \in [1,\infty)$ define $H^S_{\chi}$ as follows:
\begin{equation}
H^S_{\chi}(\tau,t,x)
=\begin{cases}
 \chi(\tau+S+1)H_t(x)  & S\ge 1, \tau \le 0 \\
 \widetilde\chi(\tau-S-1)H_t(x)  & S\ge 1, \tau \ge 0.
\end{cases}
\end{equation}
We extend it to $S\in [0,1]$ by
\begin{equation}
H^S_{\chi}(\tau,t,x) = SH^1_{\chi}(\tau,t,x).
\end{equation}
The function $H^S_{\chi}$ may not be smooth on $S$ at $S=1$, $\tau \in [-10,10]$.
We modify it on a neighborhood of $S=1$, $\tau \in [-10,10]$
so that it becomes smooth and denote it by the same symbol.
We define $(S,\tau,t) \in [0,\infty) \times \R \times [0,1]$ parametrized family
of compatible almost complex structures $J^S_{\chi}$ as follows.
For $S \in [1,\infty)$ we put
\begin{equation}
J^S_{\chi}(\tau,t) =
\begin{cases}
J_{ \chi(\tau+S+1),t} & S\ge 1, \tau \le 0, \\
J_{\tilde \chi(\tau-S-1),t} & S\ge 1, \tau \ge 0.
\end{cases}
\end{equation}
We extend it to $S \in [0,1]$ so that the following is satisfied.
\begin{equation}
J^S_{\chi}(\tau,t) =
\begin{cases}
J_0 & \tau \le -10, \\
J_0 & \tau \ge +10, \\
J_0 & S=0, \\
J_0 & \text{$t$ is in a neighborhood of $[1]$.}
\end{cases}
\end{equation}
\begin{defn}\label{moduliforH1}
Let $C \in \pi_2(M)$.
For each $0 \leq S < \infty$, we denote by $\overset{\circ}{{\CM}}_{\ell}(H^S_{\chi},J^S_{\chi};*,*;C)$ the set of all
pairs $(u;z_1^+,\dots,z_{\ell}^+)$ of maps
 $u: \R \times S^1 \to M$,
$z_i^+\in \R \times S^1$ which satisfy the following conditions:
 \begin{enumerate}
 \item The map $u$ satisfies the equation:
\be\label{eq:HJCRHH}
\dudtau + J^S_{\chi}\Big(\dudt - X_{H^S_{\chi}}(u)\Big) = 0.
\ee
\item The energy
$$
\frac{1}{2} \int \Big(\Big|\dudtau\Big|^2_{J^S_{\chi}} + \Big|
\dudt - X_{H^S_{\chi}}(u)\Big|_{J^S_{\chi}}^2 \Big)\, dt\, d\tau
$$
is finite.
\item The homotopy class of $u$ is $C$.
\item $z_i^+$ are all distinct each other.
\end{enumerate}
\end{defn}
We note that (\ref{eq:HJCRHH}) and the finiteness of energy imply that
there exist $p_1, p_2 \in M$ such that
\begin{equation}\label{p1p2Hdef}
\lim_{\tau\to - \infty}u(\tau, t) = p_1,
\qquad
\lim_{\tau\to +\infty}u(\tau, t) = p_2.
\end{equation}
Therefore the homology class of $u$ is well-defined.
We define the evaluation map
$$
({\rm ev}_{-\infty},{\rm ev}_{+\infty})
: \overset{\circ}{{\CM}}_{\ell}(H^S_{\chi},J^S_{\chi};*,*;C)
\to M^2
$$
by
$
({\rm ev}_{-\infty},{\rm ev}_{+\infty})(u) = (p_1,p_2),
$
where $p_1,p_2$ are as in (\ref{p1p2Hdef}).
\par
We put
\index{$\mathcal M_{\ell}(para;H_\chi,J_\chi;*,*;C)$}
\begin{equation}\label{266below}
\overset{\circ}{{\CM}}_{\ell}(para;H_\chi,J_\chi;*,*;C)
=
\bigcup_{S\ge 0}
\{S\} \times
\overset{\circ}{{\CM}}_{\ell}(H^S_{\chi},J^S_{\chi};*,*;C),
\end{equation}
where ${\rm ev}$, ${\rm ev}_{-\infty}$ and ${\rm ev}_{+\infty}$ are defined on it.
\par
To describe the boundary of the compactification of
$\overset{\circ}{{\CM}}_{\ell}(para;H_\chi,J_\chi;*,*;C)$ we define
another moduli space.
\begin{defn}
We denote by $\widehat{\overset{\circ}{{\CM}}}_{\ell}(H=0,J_0;*,*;C)$
the set of all $(u;z_1^+,\dots,z_{\ell}^+)$ that
satisfy $(1)$,\dots,$(4)$ of Definition \ref{moduliforH1}
with $S=0$.
\end{defn}
Note that $H$ actually does not appear in
$(1)$,\dots,$(4)$ of Definition \ref{moduliforH1}
in case $S=0$.
There exists an $\R \times S^1$ action on
$\widehat{\overset{\circ}{{\CM}}}_{\ell}(H=0,J_0;*,*;C)$
that is induced by the $\R \times S^1$
action on $\R \times S^1$, the source of the map $u$.
In fact, the equation (\ref{eq:HJCRHH}) is preserved by
$\R \times S^1$ action in case $S=0$.
See \cite[Definition 28.4]{fooo:techI}  for the definition of
$S^1$ equivariant Kuranishi structure and \cite[Section 30]{fooo:techI}
for its construction.
\par
We define the evaluation maps
\begin{equation}\label{eq:evell}
\text{\rm ev} =
(\text{\rm ev}_1,\dots,\text{\rm ev}_{\ell})
:
\widehat{\overset{\circ}{{\CM}}}_{\ell}(H=0,J_0;*,*;C)\to M^{\ell}
\end{equation}
and
\begin{equation}\label{eq:ev2}
(\text{\rm ev}_{+\infty},\text{\rm ev}_{-\infty})
: \widehat{\overset{\circ}{{\CM}}}_{\ell}(H=0,J_0;*,*;C)\to M^2
\end{equation}
in an obvious way. We put
$$
\aligned
\overset{\circ}{\CM}_{\ell}(H=0,J_0;*,*;C)
&=
\widehat{\overset{\circ}{\CM}}_{\ell}(H=0,J_0;*,*;C)/\R,
\\
\widehat{\overset{\circ}{\overline{\CM}}}_{\ell}(H=0,J_0;*,*;C)
&=
\widehat{\overset{\circ}{\CM}}_{\ell}(H=0,J_0;*,*;C)/S^1,
\\
\overset{\circ}{\overline{\CM}}_{\ell}(H=0,J_0;*,*;C)
&=
\widehat{\overset{\circ}{\CM}}_{\ell}(H=0,J_0;*,*;C)/(\R\times S^1).
\endaligned
$$
Then
$\widehat{\overset{\circ}{{\CM}}}_{\ell}(H=0,J_0;*,*;C)$,
$\overset{\circ}{\CM}_{\ell}(H=0,J_0;*,*;C)$,
$\widehat{\overset{\circ}{\overline{\CM}}}_{\ell}(H=0,J_0;*,*;C)$
and
$\overset{\circ}{\overline{\CM}}_{\ell}(H=0,J_0;*,*;C)$
can be compactified.
We denote the corresponding compactifications
by $\widehat{{{\CM}}}_{\ell}(H=0,J_0;*,*;C)$,
${\CM}_{\ell}(H=0,J_0;*,*;C)$,
$\widehat{\overline{\CM}}_{\ell}(H=0,J_0;*,*;C)$ and
${\overline{\CM}}_{\ell}(H=0,J_0;*,*;C)$, respectively.
The compactifications are obtained as follows.
Fix an identification of $\R \times S^1$ with $\C P ^1 \setminus \{N, S\}$, where
$N, S$ are the limits as $\tau \to \pm \infty$, respectively.
For each $(u;z^+_1,\dots,z^+_{\ell}) \in \widehat{\overset{\circ}{{\CM}}}_{\ell}(H=0,J_0;*,*;C)$,
we regard $u$ as a map from $\C P^1$ and consider its graph in $\C P^1 \times M$.
Then we identify the space $\widehat{\overset{\circ}{{\CM}}}_{\ell}(H=0,J_0;*,*;C)$ with
the space $\overset{\circ}{\mathcal N}_{\ell}(H=0,J_0;*,*;C)$ of their graphs.
Take its stable map compactification ${\mathcal N}_{\ell}(H=0,J_0;*,*;C)$, which is identified with
$\widehat{{{\CM}}}_{\ell}(H=0,J_0;*,*;C)$.  (The component, which has degree $1$ to
$\C P^1$-factor  is the component with a parametrized solution of \eqref{eq:HJCRQ}.)
The group $\R \times S^1$ acts on the first factor of $\C P^1 \times M$
and induces an action on ${\mathcal N}_{\ell}(H=0,J_0;*,*;C)$.
By taking the quotient of ${\mathcal N}_{\ell}(H=0,J_0;*,*;C)$ by $\R$, $S^1$, $\R \times S^1$,
we obtain the compactification ${\CM}_{\ell}(H=0,J_0;*,*;C)$,
$\widehat{\overline{\CM}}_{\ell}(H=0,J_0;*,*;C)$ and
${\overline{\CM}}_{\ell}(H=0,J_0;*,*;C)$, respectively.
Each of them carries a Kuranishi structure and
evaluation maps that extend to its compactification.
We note that
${\overline{\CM}}_{\ell}(H=0,J_0;*,*;C)$
is identified with ${{\CM}}^{\text{cl}}_{\ell+2}(C)$
which is introduced in Section \ref{sec:bigquantum}.
\begin{lem}\label{HBULKkurapara}
\begin{enumerate}
\item
The moduli space $\overset{\circ}{{\CM}}_{\ell}(para;H_\chi,J_\chi;*,*;C)$ has a compactification
${{\CM}}_{\ell}(para;H_\chi,J_\chi;*,*;C)$ that is Hausdorff.
\item
The space ${{\CM}}_{\ell}(para;H_\chi,J_\chi;*,*;C)$ has an orientable Kuranishi structure with corners.
\item
The normalized boundary of ${{\CM}}_{\ell}(para;H_\chi,J_\chi;*,*;C)$ is described by
the union of following four types of direct or fiber products:
\begin{enumerate}
\item[(i)]
\begin{equation}\label{bdryQBULKkurapara1}
{{\CM}}_{\#\mathbb L_1}(H_\chi,J_\chi;*,[\gamma,w])
\times
 {{\CM}}_{\#\mathbb L_2}(H_{\widetilde{\chi}},J_{\widetilde{\chi}};[\gamma,w'],*)
\end{equation}
where the union is taken over all $[\gamma,w] \in \text{\rm Crit}(H)$,
and $(\mathbb L_1,\mathbb L_2) \in \text{\rm Shuff}(\ell)$. Here the bounding disc
$w'$ is defined by $[C\# w] = [w']$.
(Figure $\ref{Figure261}$)
\item[(ii)]
\begin{equation}\label{bdryQBULKkurapara2}
{{\CM}}_{\#\mathbb L_1}(H=0,J_0;*,*;C_1){}_{\text{\rm ev}_{+\infty}}\times_{\text{\rm ev}_{-\infty}}
  {{\CM}}_{\#\mathbb L_2}(para;H_\chi,J_\chi;*,*;C_2)
\end{equation}
where the union is taken over all $C_1,C_2\in \pi_2(M)$
and $(\mathbb L_1,\mathbb L_2) \in \text{\rm Shuff}(\ell)$ such that
$C_1 + C_2 = C$.
The fiber product is taken over $M$.
(Figure $\ref{Figure262}$)
(See Definition $\ref{defnn325}$ for the definition of fiber product.)
\item[(iii)]
\begin{equation}\label{bdryQBULKkurapara3}
{{\CM}}_{\#\mathbb L_1}(para;H_\chi,J_\chi;*,*;C_1){}_{\text{\rm ev}_{+\infty}}\times_{\text{\rm ev}_{-\infty}}
{{\CM}}_{\#\mathbb L_2}(H=0,J_0;*,*;C_2)
\end{equation}
where the union is taken over all $C_1,C_2 \in \pi_2(M)$
and $(\mathbb L_1,\mathbb L_2) \in \text{\rm Shuff}(\ell)$ such that
$C_1 + C_2 = C$.
The fiber product is taken over $M$.
(Figure $\ref{Figure263}$)
\item[(iv)] And
\begin{equation}\label{bdryQBULKkurapara4}
\widehat{{\CM}}_{\#\mathbb L}(H=0,J_0;*,*;C).
\end{equation}
\end{enumerate}
\item
The (virtual) dimension is given by
\begin{equation}\label{dimensionHHboundaryb223}
\dim {{\CM}}_{\ell}(para;H^\chi,J^\chi;*,*;C) = 2 c_1(M)\cap C + 2n +2\ell -1.
\end{equation}
\item
We can define a system of orientations on ${{\CM}}_{\ell}(para;H_\chi,J_\chi;*,*;C)$
that is compatible with the isomorphism $(3)$ above.
\item
The map ${\rm ev}$ \eqref{eq:evell} extends to a weakly submersive map
$
{{\CM}}_{\ell}(para;H_\chi,J_\chi;*,*;C) \to M^{\ell}
$, which we denote also by ${\rm ev}$.
It is compatible with  $(3)$.
\item
The map $({\rm ev}_{-\infty},{\rm ev}_{+\infty})$ \eqref{eq:ev2} also extends to a weakly submersive map
$$
{{\CM}}_{\ell}(para;H_\chi,J_\chi;*,*;C) \to M^2,
$$ which we denote by $({\rm ev}_{-\infty},{\rm ev}_{+\infty})$.
It is compatible with $(3)$.
\end{enumerate}
\end{lem}
\begin{figure}[h]
\centering
\includegraphics[scale=0.3]
{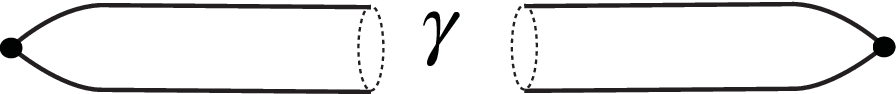}
\caption{An element of (\ref{bdryQBULKkurapara1})}
\label{Figure261}
\end{figure}
\begin{figure}[h]
\centering
\includegraphics[scale=0.3]
{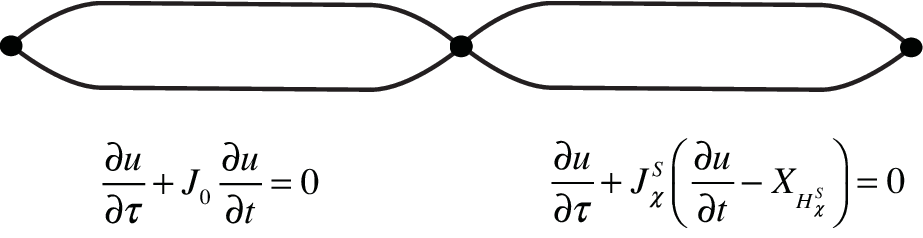}
\caption{An element of (\ref{bdryQBULKkurapara2})}
\label{Figure262}
\end{figure}
\begin{figure}[h]
\centering
\includegraphics[scale=0.3]
{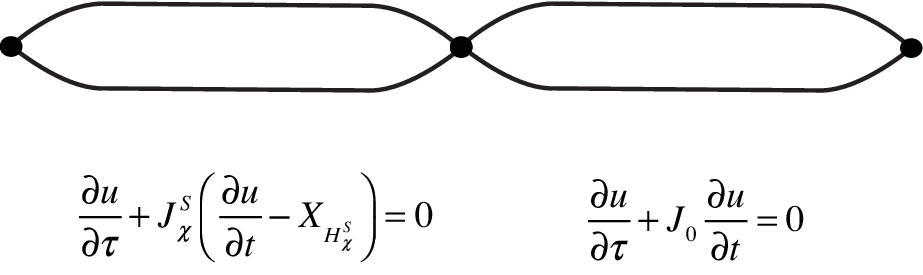}
\caption{An element of (\ref{bdryQBULKkurapara3})}
\label{Figure263}
\end{figure}
\begin{proof}
The proof is similar to the proofs of
Propositions \ref{piuBULKkura}, \ref{connkura} (See \cite[Parts 4 and 5]{fooo:techI}).
So we only mention the way how the four different types of boundary components
appear.
In fact, (\ref{bdryQBULKkurapara1}) appears when $S \to \infty$,
(\ref{bdryQBULKkurapara4}) appears when $S = 0$.
(\ref{bdryQBULKkurapara2}), (\ref{bdryQBULKkurapara3})
 appear when $S$ is bounded and is away from $0$.
(\ref{bdryQBULKkurapara2}) is the case there is some bubble which
slides to $\tau \to -\infty$ and
(\ref{bdryQBULKkurapara3}) is the case there is some bubble which
slides to $\tau \to +\infty$.
\end{proof}
We now use a system of CF-perturbations
$\widehat{\mathfrak S}=\{\widehat{\mathfrak S^{\epsilon}}\}$
on ${{\CM}}_{\ell}(para; H_\chi,J_\chi;*,*;C)$ such that it is compatible
with the description of its boundary given in
Lemma \ref{HBULKkurapara} (3) and that
$\text{\rm ev}_{+\infty}$ is
strongly submersive with respect to $\widehat{\mathfrak S}$,
in the sense of Definition \ref{CFtransv}.
We need some particular choice thereof at some of the factors
of the boundary component.
\par
We observe that there exist maps
\begin{equation}\label{HJCforget1}
\widehat{{{\CM}}}_{\ell}(H=0,J_0;*,*;C)
\to
\overline{{{\CM}}}_{\ell}(H=0,J_0;*,*;C)
\end{equation}
and
\begin{equation}\label{HJCforget2}
{{{\CM}}}_{\ell}(H=0,J_0;*,*;C)
\to
\overline{{{\CM}}}_{\ell}(H=0,J_0;*,*;C).
\end{equation}
Various evaluation maps factor through them.
We take our CF-perturbation obtained by the pull-back with respect to
the maps (\ref{HJCforget2}), (\ref{HJCforget1}) on
the first factor of (\ref{bdryQBULKkurapara2}),
the second factor of (\ref{bdryQBULKkurapara3}) and on (\ref{bdryQBULKkurapara4}).
\par
Using the system of CF-perturbations as above, we define the map
$$
\frak H_{H,J;C}^{\frak b} :
\Omega(M) \widehat\otimes \Lambda^{\downarrow}
\to \Omega(M) \widehat\otimes \Lambda^{\downarrow}
$$
by putting
$$
\frak H_{H,J}^{\frak b}(h)
=
\sum_{\ell=0}^{\infty}\sum_C\frac{\exp(C \cap\frak b_2)}{\ell!} q^{-
C\cap \omega} \text{\rm ev}_{+\infty}!
\left(
\text{\rm ev}_{-\infty}^*h \wedge
\text{\rm ev}^*(\underbrace{\frak b_{+},
\dots,\frak b_{+}}_{\ell}) ; \widehat{\mathfrak S^{\epsilon}}
\right).
$$
Here each summand of the right hand side is given by the correspondence by the
moduli space ${\CM}_{\ell}(para;H_\chi,J_\chi;*,*;C)$.
(See Definition \ref{defn354}, \cite[Definition 7.85]{fooo:tech2}.)
\begin{lem}\label{MBchainhomotopyprop}
$$
\partial \circ \frak H_{H,J}^{\frak b} + \frak H_{H,J}^{\frak b}\circ \partial
= \mathcal Q^\frak b_{(H_{\tilde \chi},J_{\tilde \chi})}\circ \mathcal P^\frak b_{(H_\chi,J_\chi)} - id.
$$
\end{lem}
\begin{proof}
The proof is based on Lemma \ref{HBULKkurapara} (3) and Stokes' formula
(Theorem \ref{them48}, \cite[Lemma 12.13]{fooo:bulk}, \cite[Corollary 8.13]{fooo:tech2}) and the composition
formula (Theorem \ref{compform}, \cite[Lemma 12.15]{fooo:bulk}, \cite[Theorem 10.20]{fooo:tech2}).
We note that (\ref{bdryQBULKkurapara1})
corresponds to the composition
$\mathcal Q^\frak b_{(H_{\tilde \chi},J_{\tilde \chi})}\circ \mathcal P^\frak b_{(H_\chi,J_\chi)}$.
Using the compatibility of the CF-perturbation
and evaluation map to   (\ref{HJCforget2})
it is easy to see that the contribution of (\ref{bdryQBULKkurapara2})
and (\ref{bdryQBULKkurapara3}) vanishes.
\par
By the same reason, the contribution of  (\ref{bdryQBULKkurapara4})
vanishes by the  the compatibility with (\ref{HJCforget1})), except the case $\ell =0$ and $C=0$.
In the latter cases the moduli space is identified with $M$ and $\text{\rm ev}_{\pm \infty}$
 with the identity map. Therefore the contribution induces the
identity map $: \Omega(M) \to \Omega(M)$.
This finishes the proof of Lemma \ref{MBchainhomotopyprop}.
\end{proof}
Therefore the proof of Proposition \ref{QP-1} is now complete.
\end{proof}
Similarly as in Proposition \ref{QP-1}
we can prove that
$\mathcal P^\frak b_{(H_\chi,J_\chi)}\circ \mathcal Q^\frak b_{(H_{\tilde \chi},J_{\tilde \chi})}$ is chain
homotopic to the identity.
Hence the proof of Theorem \ref{appendmain} is now complete.
\end{proof}

We now complete the proof of Theorem \ref{axiomshbulk} (3).
It remains to prove the following:
\begin{prop}\label{oppositeineq} For each $a \in QH^{\frak b}(M)$,
$$
\rho^{\frak b}(\underline 0;a) \ge \frak v_q(a) ( = - \frak v_T(a)).
$$
\end{prop}
\begin{proof}
\begin{lem}
If ${{\CM}}_{\ell}(H_{\tilde \chi},J_{\tilde \chi};[\gamma,w],*)$ is nonempty, then
$$
\mathcal A_H([\gamma,w]) \ge -E^+(H).
$$
\end{lem}
The proof is similar to the proof of Lemma
\ref{connectinghomofilt} and so omitted.
\begin{cor}
$$
\mathcal Q_{(H_{\tilde \chi},J_{\tilde \chi})}^{\frak b}
\left(
F^{\lambda}CF(M,H,J;\Lambda^{\downarrow})
\right)
\subseteq
q^{\lambda + E^+(H)}\Omega(M) \widehat{\otimes}
\Lambda^{\downarrow}.
$$
\end{cor}
\begin{proof}
Let
$
x \in F^{\lambda}CF(M,H,J;\Lambda^{\downarrow}).
$
We choose $[ \gamma, w_{\gamma}] \in \text{\rm Crit}(\mathcal A_H)$ for each of
$\gamma \in \text{\rm Per}(H)$ and put
$$
x = \sum_{\gamma \in \text{\rm Per}(H)}
x_{\gamma}\llb \gamma,w_\gamma \rrb,
$$
with
\begin{equation}\label{23formula1}
\frak v_q(x_\gamma) + \mathcal A_H(\llb \gamma,w_\gamma \rrb)
\le \lambda.
\end{equation}
By (\ref{bdrycoefbQQ}) we have
\begin{equation}\label{23formula2}
\lambda_q(\mathcal Q_{(H_{\tilde \chi},J_{\tilde \chi})}^{\frak b}(x))
\le
\max_{[\gamma, w']}
(-w_{\gamma}\cap \omega
+ w'\cap \omega
+ \frak v_q(x_\gamma)),
\end{equation}
where the maximum in the right hand side is taken over all
$[\gamma,w'] \in \text{\rm Crit}(\mathcal A_H)$
such that ${{\CM}}_{\ell}(H,J;[\gamma, w'],*)$ is nonempty.
\par
We note that
\begin{equation}\label{23formula3}
-w_{\gamma}\cap \omega
+ w'\cap \omega
=
-\mathcal A_H(\llb \gamma, w' \rrb)
+ \mathcal A_H(\llb \gamma, w_{\gamma} \rrb).
\end{equation}
By (\ref{23formula1}), (\ref{23formula2}), (\ref{23formula3}) we obtain
$$
\lambda_q(\mathcal Q_{(H_{\tilde \chi},J_{\tilde \chi})}^{\frak b}(x)) \le \lambda + E^+(H)
$$
as required.
\end{proof}
We take a sequence of normalized Hamiltonians
$H_i$ such that $\lim_{i\to\infty}\Vert H_i\Vert =0$ and $\widetilde\psi_{H_i}$
is nondegenerate.
Let $x \in CF(M,H_i,J;\Lambda^{\downarrow})$
such that $\partial_{(H_i,J)}^{\frak b}x = 0$,
$[x] = \mathcal P_{((H_i)_\chi,J_\chi)}^{\frak b}(a^\flat)$, and
$$
\vert
\lambda_{H_i}(x) - \rho^{\frak b}(H_i;a)
\vert
< \epsilon.
$$
Then $[\mathcal Q_{((H_i)_{\tilde\chi},J_{\tilde\chi})}^{\frak b}(x)] = a^\flat$ and
$$
\lambda_q (\mathcal Q_{((H_i)_{\tilde\chi},J_{\tilde\chi})}^{\frak b}(x))
\le
\rho^{\frak b}(H_i;a) + \epsilon
+ E^+(H_i).
$$
Since $\epsilon$ is arbitrary small and $\lim_{i\to\infty} E^+(H_i) = 0$,
we obtain the proposition.
\end{proof}

\begin{rem}
Actually there is a problem of `running-out of the given Kuranishi neighborhood'
in the above proof when the energy level increases as
mentioned in \cite[Section 7.2.3]{fooo:book2}.
In order to handle the problem, we first work
over the $\Lambda^{\downarrow}_0$ coefficients and perform the construction
up to a pre-given finite energy level. Then we take an inductive limit as we
let energy level go to infinity.
The technical difficulty performing this construction is much
simpler than that of \cite[Section 7.2]{fooo:book2}, since here
we need to take an inductive limit of chain complex (or DGA). This
is much simpler than the case of $A_{\infty}$ algebra in general discussed in
\cite[Section 7.2]{fooo:book2}. So we omit the detail.
\end{rem}

\section{Independence on the de Rham representative of $\frak b$.}
\label{sec:appendix2}

In this section we prove Theorem \ref{homotopyinvbulk} (2).
Let $H$ be a one periodic Hamiltonian on $M$ such that
$\psi_H$ is nondegenerate.
Let $\frak b(0),\frak b(1) \in \Omega(M) \widehat\otimes \Lambda^{\downarrow}$
such that $d\frak b(0) = d\frak b(1) = 0$. We assume that there exists
$\frak c \in \Omega(M) \widehat\otimes \Lambda^{\downarrow}$
such that
\begin{equation}
\frak b(1) - \frak b(0) = d\frak c.
\end{equation}
Then we prove that $\rho^{\mathfrak b(0)}(\widetilde{\phi}_H;a)=\rho^{\mathfrak b(1)}(\widetilde{\phi}_H;a)$.
Firstly we consider the case that $\mathfrak b_2(0)=\mathfrak b_2(1)$.
Here $\mathfrak b_2(0), \mathfrak b_2(1) \in H^2(M;\C)$ as in  (\ref{decompb}).
After establishing Theorem \ref{homotopyinvbulk} (2) under the condition that
$\mathfrak b_2(0)=\mathfrak b_2(1)$, we show that the invariant $\rho^{\mathfrak b}(\widetilde{\phi}_H;a)$
does not depend on the choice of representative of the cohomology class $[\mathfrak b_2]$.

We consider the ring of strictly convergent power series
\index{$\Lambda^{\downarrow}\leftineqineq
s\rightineqineq$}
\begin{equation}
\Lambda^{\downarrow}\langle\!\langle
s\rangle\!\rangle
=
\left.\left\{
\sum_{k=0}^{\infty} x_k s^k ~\right\vert~
x_k \in \Lambda^{\downarrow},
\lim_{k\to\infty}\frak v_q(x_k) = -\infty
\right\}.
\end{equation}
Here $s$ is a formal parameter.
We denote by $\text{\rm Poly}(\R;CF(M;H;\Lambda^{\downarrow}))$ the set of formal expressions of the form
$$
x(s)+ ds \wedge y(s)
$$
where
$$
x(s),y(s) \in CF(M;H;\Lambda^{\downarrow})\otimes_{\Lambda^{\downarrow}}\Lambda^{\downarrow}\langle\!\langle s\rangle\!\rangle.
$$
\par
For $s_0 \in \R$ we define
$$
\text{\rm Eval}_{s=s_0}
: \text{\rm Poly}(\R;CF(M;H;\Lambda^{\downarrow}))
\to
CF(M;H;\Lambda^{\downarrow})
$$
by
\begin{equation}\label{Eval}
\text{\rm Eval}_{s=s_0}(x(s)+ ds \wedge y(s)) = x(s_0).
\end{equation}
We note that, for $x(s) = \sum_{k=0}^{\infty} x_k s^k \in
CF(M;H;\Lambda^{\downarrow})\otimes_{\Lambda^{\downarrow}}\Lambda^{\downarrow}\langle\!\langle s\rangle\!\rangle$ with $x_k \in CF(M;H;\Lambda^{\downarrow})$,
the series
$x(s_0) = \sum_{k=0}^{\infty} x_k s_0^k $ converges in $q$-adic topology for $s_0 \in \R$.
\par\smallskip
We put
\begin{equation}\label{dcisb}
\frak b(s) = s\frak b(1) + (1-s)\frak b(0).
\end{equation}
For each $s_0 \in \R$ we define
$$
\partial_{(H,J)}^{\frak b(s_0)} :
CF(M;H;\Lambda^{\downarrow})
\to
CF(M;H;\Lambda^{\downarrow})
$$
by (\ref{defboundary}).

\begin{lem}
There exists a $\Lambda^{\downarrow}\langle\!\langle s\rangle\!\rangle$-module homomorphism
$$
\partial^{\frak b(\cdot)}_{(H,J)} :
CF(M;H;\Lambda^{\downarrow})\otimes_{\Lambda^{\downarrow}}\Lambda^{\downarrow}\langle\!\langle s\rangle\!\rangle
\to
CF(M;H;\Lambda^{\downarrow})\otimes_{\Lambda^{\downarrow}}\Lambda^{\downarrow}\langle\!\langle s\rangle\!\rangle
$$
such that
\begin{equation}\label{formulaeval}
\text{\rm Eval}_{s=s_0}
\circ \partial^{\frak b(\cdot)}_{(H,J)}
= \partial^{\frak b(s_0)}_{(H,J)} \circ \text{\rm Eval}_{s=s_0},
\quad
\partial^{\frak b(\cdot)}_{(H,J)}\circ \partial^{\frak b(\cdot)}_{(H,J)} = 0.
\end{equation}
\end{lem}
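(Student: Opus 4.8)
The plan is to define $\partial^{\frak b(\cdot)}_{(H,J)}$ by the very same moduli‑theoretic formula that defines $\partial^{\frak b}_{(H,J)}$ in $(\ref{bdrycoefb})$--$(\ref{defboundary})$, with the fixed closed form $\frak b$ replaced throughout by the affine family $\frak b(s)$ of $(\ref{dcisb})$. Since we are in the case $\frak b_2(0)=\frak b_2(1)$, the weight $\exp(w'\cap\frak b_2-w\cap\frak b_2)$ appearing in $(\ref{bdrycoefb})$ is independent of $s$, while $\frak b(s)_+ = s\,\frak b(1)_+ + (1-s)\frak b(0)_+$ depends affinely on $s$; expanding $\frak b(s)_+^{\otimes\ell}$ by the multilinearity of $\frak n_{(H,J);\ell}([\gamma,w],[\gamma',w'])$, each matrix coefficient $\frak n^{\frak b(s)}_{(H,J)}([\gamma,w],[\gamma',w'])$ becomes a polynomial in $s$ with coefficients in $\Lambda^{\downarrow}$. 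First I would verify that this produces a well‑defined $\Lambda^{\downarrow}\langle\!\langle s\rangle\!\rangle$‑module homomorphism. For a fixed pair $[\gamma,w],[\gamma',w']$ only finitely many $\ell$ contribute, by the dimension count in the proof of Lemma \ref{adiccomv1}, so $\frak n^{\frak b(s)}_{(H,J)}([\gamma,w],[\gamma',w'])$ is genuinely a polynomial; the sum over target generators $[\gamma',w']$ then converges in $CF(M,H;\Lambda^{\downarrow})$ in each $s$‑degree separately, by the Gromov--Floer compactness argument of Lemma \ref{adiccomv2}; and the resulting power series in $s$ lies in $\Lambda^{\downarrow}\langle\!\langle s\rangle\!\rangle$ because the coefficient of $s^k$ receives contributions only from configurations with at least $k$ interior marked points, which by the dimension relation $(\ref{dimensionboundaryb})$ forces a correspondingly large shift of the Conley--Zehnder index, hence---by the energy identity and the choice of lifts---a value of $\frak v_q$ that tends to $-\infty$ as $k\to\infty$. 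Extending $\Lambda^{\downarrow}\langle\!\langle s\rangle\!\rangle$‑linearly gives the map.

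The first identity in $(\ref{formulaeval})$, compatibility with $\text{\rm Eval}_{s=s_0}$, is immediate from the construction: substituting $s=s_0$ into the $s$‑dependent sums defining $\partial^{\frak b(\cdot)}_{(H,J)}$ returns exactly the sums defining $\partial^{\frak b(s_0)}_{(H,J)}$, because $\frak b(s_0)_+ = s_0\,\frak b(1)_+ + (1-s_0)\frak b(0)_+$ is a closed differential form representing the relevant class and $\text{\rm Eval}_{s=s_0}$ is $\Lambda^{\downarrow}$‑linear and annihilates the $ds\wedge(\cdot)$ summand of $(\ref{Eval})$.

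For the second identity, $\partial^{\frak b(\cdot)}_{(H,J)}\circ\partial^{\frak b(\cdot)}_{(H,J)}=0$, I would repeat the proof that $\partial^{\frak b}_{(H,J)}\circ\partial^{\frak b}_{(H,J)}=0$ given in Section \ref{sec:deform-bdy}: apply Proposition \ref{connBULKkura} (3), which describes the codimension‑one boundary of $\CM_{\ell}(H,J;[\gamma,w],[\gamma',w'])$ in the index‑two case as the union over all $[\gamma'',w'']\in\Crit(\CA_H)$ and all $(\mathbb L_1,\mathbb L_2)\in\text{\rm Shuff}(\ell)$ of the products $\CM_{\#\mathbb L_1}\times\CM_{\#\mathbb L_2}$, together with Stokes' theorem. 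The shuffle decomposition of $\frak b(s)_+^{\otimes\ell}$ distributes the interior insertions to the two factors exactly as in the fixed‑$\frak b$ case, so the argument yields the relation as an identity of $\Lambda^{\downarrow}\langle\!\langle s\rangle\!\rangle$‑valued matrix coefficients. (Alternatively, one may note that $\partial^{\frak b(\cdot)}_{(H,J)}\circ\partial^{\frak b(\cdot)}_{(H,J)}$ specializes under $\text{\rm Eval}_{s=s_0}$ to $\partial^{\frak b(s_0)}_{(H,J)}\circ\partial^{\frak b(s_0)}_{(H,J)}=0$ for every $s_0\in\R$, using that $\frak b(s_0)$ is closed, and conclude by the polynomiality of the matrix coefficients.)

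I expect the only genuine work to lie in the well‑definedness step---namely checking that $\partial^{\frak b(\cdot)}_{(H,J)}(x)$ really lands in $CF(M,H;\Lambda^{\downarrow})\otimes_{\Lambda^{\downarrow}}\Lambda^{\downarrow}\langle\!\langle s\rangle\!\rangle$, i.e. the strong convergence in $s$, together with the compatibility of the shuffle‑coproduct bookkeeping with the $s$‑expansion. Both are routine adaptations of Lemmas \ref{adiccomv1} and \ref{adiccomv2} and of the boundary argument of Section \ref{sec:deform-bdy}, and involve no new geometric input; the affine dependence on $s$ is harmless precisely because each matrix entry is already a finite sum.
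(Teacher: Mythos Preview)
Your construction and both verifications match the paper's: substitute $\frak b(s)$ into (\ref{bdrycoefb})--(\ref{defboundary}), check that each matrix coefficient $\frak n^{\frak b(s)}_{(H,J)}([\gamma,w],[\gamma',w'])$ lands in $\Lambda^{\downarrow}\langle\!\langle s\rangle\!\rangle$, and conclude; the paper deduces $\partial^2=0$ precisely via your ``alternatively'' (the second identity follows from the first by evaluating at every $s_0$).

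One detail in your write-up needs correcting. The assertion that $\ell\ge k$ interior marked points ``forces a correspondingly large shift of the Conley--Zehnder index'' via (\ref{dimensionboundaryb}) is false: with $\ell$ insertions all of degree $2$ the total form degree is $2\ell=\dim\CM_\ell$ exactly when $\mu_H([\gamma',w'])-\mu_H([\gamma,w])=1$, independent of $\ell$. The reason the $\frak v_q$ of the $s^k$-coefficient nevertheless tends to $-\infty$ is that the degree-$2$ component of $\frak b_+$ lies in $\Lambda_-^{\downarrow}$ (see (\ref{decompb})): if $\ell$ is large but the index shift bounded, dimension forces most insertions to be degree $2$, and each contributes a strictly negative $\frak v_q$. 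The paper does not spell this out either---it simply observes that $\frak n_{(H,J);\ell}(\frak b_+(s),\ldots,\frak b_+(s))$ is polynomial of degree $\le\ell$ in $s$ and cites (\ref{bdrycoefb}) and Lemma~\ref{adiccomv1}.
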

\begin{proof}
We split $\frak b(s) = \frak b_0(s) +\frak b_2(s) +\frak b_+(s)$
as in (\ref{decompb}).
Then we have $\frak b_2(s) = s\frak b_2(1) + (1-s) \frak b_2(0)$ etc.
We use it to see that
$$
\frak n_{(H,J);\ell}([\gamma,w ],[\gamma',w'])
(\underbrace{\frak b_+(s),\dots,\frak b_+(s)}_{\ell})
$$
is a polynomial of order $\le \ell$ in $s$ with coefficients in $\C$. (See \eqref{eq:nww'C}, (\ref{bdrycoefb}).)
\par
Therefore the series (\ref{bdrycoefb}) of the current context converges for each $s$ and so
the totality thereof define an element
$$
\frak n_{(H,J)}^{\frak b(s)}([\gamma,w],[\gamma',w' ]) \in \Lambda^{\downarrow}\langle\!\langle s\rangle\!\rangle.
$$
Hence we can define the map $\partial^{\frak b(\cdot)}_{(H,J)}$ by
replacing $\frak b$ by $\frak b(s)$ in (\ref{defboundary}).
The first formula in (\ref{formulaeval}) is easy to show.
The second formula follows from the first one.
\end{proof}
We next put
$$\aligned
&\frak n^{\frak c,1}_{(H,J)}([\gamma,w],[\gamma',w' ])\\
&=
\sum_{\ell_1=0}^{\infty}\sum_{\ell_2=0}^{\infty}
\frac{\exp(w'\cap \frak b_2(s) - w\cap \frak b_2(s))}{(\ell_1+\ell_2+1)!}\\
&\quad\quad\quad
\frak n_{(H,J);\ell_1+\ell_2+1}([\gamma,w],[\gamma',w'])
(\underbrace{\frak b_+(s),\dots,\frak b_+(s)}_{\ell_1},\frak c,
\underbrace{\frak b_+(s),\dots,\frak b_+(s)}_{\ell_2})\\
&\in \Lambda^{\downarrow}\langle\!\langle s\rangle\!\rangle
\endaligned
$$
and define a map $\partial_{(H,J)}^{\frak c}: CF(M;H;\Lambda^\downarrow) \to CF(M;H;\Lambda^\downarrow)$ by
\begin{equation}
\partial_{(H,J)}^{\frak c}(\llb \gamma,w \rrb)
= \sum_{[\gamma',w' ] \in \text{\rm Crit}(\mathcal A_H)}
\frak n^{\frak c,1}_{(H,J)}([\gamma,w],[\gamma',w'])\llb \gamma',w' \rrb.
\end{equation}
\begin{lem}\label{partsformula}
\begin{equation}
\frac{\partial}{\partial s} \circ \partial^{\frak b(\cdot)}_{(H,J)}
-
\partial^{\frak b(\cdot)}_{(H,J)} \circ \frac{\partial}{\partial s}
= \partial_{(H,J)}^{\frak c}
\circ \partial^{\frak b(\cdot)}_{(H,J)}
-
\partial^{\frak b(\cdot)}_{(H,J)}\circ
 \partial_{(H,J)}^{\frak c}.
\end{equation}
\end{lem}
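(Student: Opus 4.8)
\textbf{Proof proposal for Lemma \ref{partsformula}.}

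The plan is to prove the identity by differentiating the structure maps $\frak n^{\frak b(s)}_{(H,J)}$ with respect to the deformation parameter $s$ and organizing the resulting terms. Recall that $\frak b(s) = s\frak b(1) + (1-s)\frak b(0)$, so $\frac{\partial}{\partial s}\frak b(s) = \frak b(1) - \frak b(0) = d\frak c$; this is the key input. Write out $\partial^{\frak b(s)}_{(H,J)}[\gamma,w]$ as in \eqref{defboundary}, with its coefficients $\frak n^{\frak b(s)}_{(H,J)}([\gamma,w],[\gamma',w'])$ given by \eqref{bdrycoefb} with $\frak b$ replaced by $\frak b(s)$. The $s$-derivative of such a coefficient decomposes into two kinds of contributions: (i) differentiating the factor $\exp(w'\cap \frak b_2(s) - w\cap \frak b_2(s))$, which brings down $(w'\cap \dot{\frak b}_2 - w\cap \dot{\frak b}_2)$ where $\dot{\frak b}_2 = \frak b_2(1) - \frak b_2(0)$; and (ii) differentiating one of the $\ell$ slots $\frak b_+(s)$ inside $\frak n_{(H,J);\ell}$, which by multilinearity replaces that slot by $\dot{\frak b}_+ = \frak b_+(1) - \frak b_+(0)$ and sums over which slot is chosen (producing the combinatorial factor that converts $\ell$ insertions of $\frak b_+(s)$ together with one $\dot{\frak b}_+$ into the shuffle-type expression entering $\partial^{\frak c}_{(H,J)}$). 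The point is that $\dot{\frak b} = \dot{\frak b}_2 + \dot{\frak b}_+$ (up to the degree-zero piece, which does not contribute) equals $d\frak c$, so after representing $d\frak c$ by a closed form one can run Stokes' theorem on the moduli spaces $\CM_\ell(H,J;[\gamma,w],[\gamma',w'])$ exactly as in the proof that $\partial^{\frak b}_{(H,J)}\circ\partial^{\frak b}_{(H,J)} = 0$.

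Concretely, I would insert $\frak c$ (a primitive of $\dot{\frak b}$) into one interior marked point and integrate $d$ of the resulting form over $\CM_\ell(H,J;[\gamma,w],[\gamma',w'])$. Stokes' theorem (\cite{fooo:toric1} Lemma C.9) then produces two kinds of boundary terms corresponding to the two factors in the boundary description \eqref{62formu} of $\partial\CM_\ell(H,J;[\gamma,w],[\gamma',w'])$: the marked point carrying $\frak c$ may lie on the first factor or on the second. Summing over all of these, together with the sign bookkeeping coming from Proposition \ref{connBULKkura} (5), (6), identifies the left-hand side $\frac{\partial}{\partial s}\circ\partial^{\frak b(\cdot)}_{(H,J)} - \partial^{\frak b(\cdot)}_{(H,J)}\circ\frac{\partial}{\partial s}$ (the "interior" contribution, i.e. $d$ applied to the inserted form) with the right-hand side $\partial^{\frak c}_{(H,J)}\circ\partial^{\frak b(\cdot)}_{(H,J)} - \partial^{\frak b(\cdot)}_{(H,J)}\circ\partial^{\frak c}_{(H,J)}$ (the "boundary" contribution). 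One also has to check that the extra exponential factor $\exp(w'\cap\frak b_2(s)-w\cap\frak b_2(s))$ behaves correctly: differentiating it in $s$ gives precisely the $\frak b_2$-part of $\frak c$-insertion, which is consistent because one may take the de Rham representative of $\dot{\frak b}_2$ to be $d$ of the $\frak b_2$-component of $\frak c$; this matches the way the cap product $w\cap\frak b_2$ is built into the weights in \eqref{bdrycoefb} and \eqref{bdrycoefbQQ}.

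The main obstacle I anticipate is the sign and combinatorial bookkeeping: one must verify that the factorial weights $1/(\ell_1+\ell_2+1)!$ in the definition of $\partial^{\frak c}_{(H,J)}$ are exactly the ones produced by differentiating $\frac{1}{\ell!}(\frak b_+(s))^{\otimes\ell}$ and summing over the slot in which $\dot{\frak b}_+$ appears, and that the orientation signs coming from Stokes' theorem and from the boundary identification \eqref{62formu} assemble into the commutator form on the right-hand side rather than an anticommutator or a version with wrong signs. This is entirely parallel to the standard argument for $\partial^{\frak b}\circ\partial^{\frak b}=0$ (\cite{fukaya-ono} Lemma 20.2) and to the chain-homotopy arguments of \cite{fooo:book} Section 3.8, so no genuinely new analytic input is needed; the work is in matching conventions. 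Convergence of all series in $\Lambda^\downarrow\langle\!\langle s\rangle\!\rangle$ is guaranteed by the same Gromov--Floer compactness estimate used in Lemma \ref{adiccomv1}, since each coefficient is a polynomial in $s$ of degree bounded by the number of interior insertions.
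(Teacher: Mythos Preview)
Your proposal is correct and follows essentially the same route as the paper: differentiate the coefficients $\frak n^{\frak b(s)}_{(H,J)}$ in $s$ to produce a $d\frak c$-insertion, then invoke Stokes' theorem together with the boundary description of $\CM_\ell(H,J;[\gamma,w],[\gamma',w'])$ (Proposition~\ref{connBULKkura}~(3)) to convert that insertion into the commutator on the right-hand side. The paper organizes this by first isolating the Stokes identity \eqref{franformulawasure} for general inputs $h_1,\dots,h_\ell$ and then specializing, while you interleave the two steps; the content is the same. One simplification you overlooked: the lemma is stated under the standing hypothesis $\frak b_2(0)=\frak b_2(1)$ (declared just before the argument begins), so the exponential weight $\exp(w'\cap\frak b_2(s)-w\cap\frak b_2(s))$ is $s$-independent and your discussion of its derivative is unnecessary here---that case is handled separately afterward by the explicit isomorphism $I$.
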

\begin{proof}
Using Proposition \ref{connBULKkura} (3), Stokes' formula
(Theorem \ref{them48})
and the composition formula (Theorem \ref{compform}), we obtain
\begin{equation}\label{franformulawasure}
\aligned
&\sum_{i=1}^{\ell}
(-1)^*
\frak n_{(H,J);\ell}([\gamma,w],[\gamma',w'])
(h_1,\dots, dh_i,\dots, h_{\ell}) \\
&=
\sum_{(\L_1,\L_2) \in \text{\rm Shuff}(\ell)}\sum_{[\gamma'',w'']}
(-1)^{**}\frak n_{(H,J);\# \L_1}([\gamma,w],[\gamma'',w''])
(h_{i_1},\dots, h_{i_{\# \L_1}})
\\
&\qquad\qquad\qquad\qquad\qquad
\frak n_{(H,J);\# \L_2}([\gamma'',w''],[\gamma',w'])
(h_{j_1},\dots, h_{j_{\# \L_2}}),
\endaligned
\end{equation}
where
$\L_1 =\{ i_1,\dots, i_{\# \L_1}\},
\L_2 =\{ j_1,\dots, j_{\# \L_2}\}$,
$$
* =\deg h_1 + \dots + \deg h_{i-1}, \quad
** = \sum_{i\in \L_1, j \in \L_2; j < i} \deg h_i\deg h_j.
$$
Using (\ref{dcisb}) and (\ref{franformulawasure})
we can prove Lemma \ref{partsformula} easily.
\end{proof}
We define the map
$$
\partial_{(H,J)}^{(\frak b(\cdot),\frak c)} :
\text{\rm Poly}(\R;CF(M;H;\Lambda^{\downarrow}))
\to
\text{\rm Poly}(\R;CF(M;H;\Lambda^{\downarrow}))
$$
by
\begin{equation}
\aligned
&
\partial_{(H,J)}^{(\frak b(\cdot),\frak c)}(x(s) +  ds \wedge y(s))\\
&=
\partial_{(H,J)}^{\frak b(\cdot)}(x(s))
- ds \wedge \frac{\partial}{\partial s} (x(s))
+ ds \wedge \partial_{(H,J)}^{\frak c}(x(s))
- ds \wedge \partial_{(H,J)}^{\frak b(\cdot)}(y(s)).
\endaligned
\end{equation}
Then the second formula of (\ref{formulaeval}) and
Lemma \ref{partsformula} imply
$$
\partial_{(H,J)}^{(\frak b(\cdot),\frak c)}\circ
\partial_{(H,J)}^{(\frak b(\cdot),\frak c)} = 0.
$$
Thus
$
(\text{\rm Poly}(\R;CF(M;H;\Lambda^{\downarrow})) ,\partial_{(H,J)}^{(\frak b(\cdot),\frak c)})
$
is a chain complex.
The first formula of (\ref{formulaeval}) implies that
\begin{equation}\label{2510}
\text{\rm Eval}_{s=s_0}
: (\text{\rm Poly}(\R;CF(M;H;\Lambda^{\downarrow})),\partial_{(H,J)}^{(\frak b(\cdot),\frak c)})
\to
(CF(M;H;\Lambda^{\downarrow}),\partial_{(H,J)}^{\frak b(s_0)})
\end{equation}
is a chain map.
\par
We define a filtration
$F^{\lambda}\text{\rm Poly}(\R;CF(M;H;\Lambda^{\downarrow}))$ on $\text{\rm Poly}(\R;CF(M;H;\Lambda^{\downarrow}))$
by
$$
\aligned
&F^{\lambda}\text{\rm Poly}(\R;CF(M;H;\Lambda^{\downarrow})) \\
&=
\{
x(s) + ds \wedge y(s)
\mid
x(s) = \sum x_ks^k, y(s) = \sum y_ks^k, \lambda_H(x_k), \lambda_H(y_k) \le \lambda
\}.
\endaligned
$$
\begin{lem}\label{parafilt}
 $\partial_{(H,J)}^{(\frak b(\cdot),\frak c)}$ and
$\text{\rm Eval}_{s=s_0}$ preserves the filtration $F^{\lambda}$.
\end{lem}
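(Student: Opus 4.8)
\textbf{Proof proposal for Lemma \ref{parafilt}.}

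The plan is to prove the two assertions essentially by unwinding the definitions and observing that all the operators involved are built out of the maps $\partial_{(H,J)}^{\frak b(\cdot)}$ and $\partial_{(H,J)}^{\frak c}$, together with $\partial/\partial s$ and wedging with $ds$, none of which can lower the $q$-adic valuation of the coefficients. First I would recall from Lemma \ref{filtered} and its proof, more precisely from the energy identity \eqref{energyincrease}, that if ${\CM}_\ell(H,J;[\gamma,w],[\gamma',w'])\ne\emptyset$ then $\CA_H([\gamma',w'])\le\CA_H([\gamma,w])$; combined with the fact that the de Rham forms representing $\frak b_+(s),\frak b_+(1),\frak b_+(0),\frak c$ have nonnegative $\frak v_q$-valuation on their $\Lambda^\downarrow$-coefficients, this shows that each of the structure constants $\frak n_{(H,J)}^{\frak b(\cdot)}([\gamma,w],[\gamma',w'])$ and $\frak n^{\frak c,1}_{(H,J)}([\gamma,w],[\gamma',w'])$, when we factor in the shift $\CA_H([\gamma',w'])-\CA_H([\gamma,w])\le 0$ that occurs in the valuation $\frak v_q$ of Definition \ref{valuationv} (1), lies in $\Lambda^\downarrow\langle\!\langle s\rangle\!\rangle$ with coefficient valuations $\le 0$. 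This is exactly the statement that $\partial_{(H,J)}^{\frak b(s_0)}$ (for any fixed $s_0\in\R$) and $\partial_{(H,J)}^{\frak c}$ preserve $F^\lambda CF(M,H;\Lambda^\downarrow)$; indeed for $s_0$ real, $\frak b_2(s_0)=s_0\frak b_2(1)+(1-s_0)\frak b_2(0)$ is again a $\C$-valued class, so the exponential factors $\exp(w'\cap\frak b_2(s_0)-w\cap\frak b_2(s_0))$ are units in $\C\setminus\{0\}$ and do not affect $\frak v_q$. For general $s$, the coefficient of each power $s^k$ is obtained by the same estimate, so the operator $\partial_{(H,J)}^{\frak b(\cdot)}$ on $CF(M,H;\Lambda^\downarrow)\otimes_{\Lambda^\downarrow}\Lambda^\downarrow\langle\!\langle s\rangle\!\rangle$ preserves the filtration $F^\lambda$ coefficientwise in $s$.

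Next I would check the effect of the two remaining pieces of $\partial_{(H,J)}^{(\frak b(\cdot),\frak c)}$ on the filtration. Wedging with $ds$ only moves the component $x(s)$ into the $ds$-component and changes nothing about the $s$-power coefficients, hence preserves $F^\lambda$ trivially. For $\partial/\partial s$, if $x(s)=\sum_k x_k s^k$ with $\frak v_q(x_k)\le\lambda$ for all $k$, then $\frac{\partial}{\partial s}x(s)=\sum_k (k+1)x_{k+1}s^k$, and since $\frak v_q((k+1)x_{k+1})=\frak v_q(x_{k+1})\le\lambda$ by property (5) of $\frak v_q$ (i.e. $\frak v_q(ax)=\frak v_q(x)$ for $a\in\C\setminus\{0\}$), the derivative preserves $F^\lambda$ as well. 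Assembling these four observations, each summand in the defining formula of $\partial_{(H,J)}^{(\frak b(\cdot),\frak c)}(x(s)+ds\wedge y(s))$ lands in $F^\lambda\text{\rm Poly}(\R;CF(M;H;\Lambda^\downarrow))$ whenever $x(s)+ds\wedge y(s)$ does, which is the first claim.

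For the second claim, that $\text{\rm Eval}_{s=s_0}$ preserves the filtration, I would simply note that $\text{\rm Eval}_{s=s_0}(x(s)+ds\wedge y(s))=x(s_0)=\sum_k x_k s_0^k$, and that the series $\sum_k x_k s_0^k$ converges in the $d_q$-metric (this is the convergence statement recorded just after \eqref{Eval}); since $\frak v_q(x_k s_0^k)=\frak v_q(x_k)\le\lambda$ for every $k$ by property (5) again, the limit satisfies $\frak v_q(x(s_0))\le\lambda$ by the non-Archimedean property (2) of $\frak v_q$ together with completeness of $F^\lambda CF(M,H;\Lambda^\downarrow)$ under $d_q$. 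Hence $x(s_0)\in F^\lambda CF(M,H;\Lambda^\downarrow)$, as desired. I do not anticipate a genuine obstacle here; the only point requiring a little care is to make sure that the sign/scalar factors (the combinatorial coefficients $1/(\ell_1+\ell_2+1)!$, the binomial-type weights from differentiating, and the $\exp$-prefactors) are all in $\C\setminus\{0\}$ or at worst have valuation $0$, so that invoking property (5) of $\frak v_q$ is legitimate — this is where the hypothesis $\frak b_2(0)=\frak b_2(1)$, used to keep $\frak b_2(s)$ a $\C$-valued (hence valuation-zero-affecting) class along the homotopy, enters, exactly as flagged in the surrounding text.
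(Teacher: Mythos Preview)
Your proposal is correct and supplies exactly the routine verification the paper omits (the paper's ``proof'' reads in full: ``The proof is easy and is omitted''). One small wording slip: you say the forms $\frak b_+(s),\frak c$ have ``nonnegative $\frak v_q$-valuation,'' whereas what you need and actually use is that they lie in $\Lambda_0^\downarrow$, i.e.\ have $\frak v_q\le 0$; your subsequent conclusion (``coefficient valuations $\le 0$'') is stated with the correct sign, so the argument itself is unaffected.
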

The proof is easy and is omitted.
\begin{lem}\label{evische} The map
$(\ref{2510})$ is a chain homotopy equivalence.
\end{lem}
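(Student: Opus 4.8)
The statement to establish is Lemma~\ref{evische}, namely that for each fixed $s_0 \in \R$ the evaluation map
$$
\text{\rm Eval}_{s=s_0}
: (\text{\rm Poly}(\R;CF(M;H;\Lambda^{\downarrow})),\partial_{(H,J)}^{(\frak b(\cdot),\frak c)})
\to
(CF(M;H;\Lambda^{\downarrow}),\partial_{(H,J)}^{\frak b(s_0)})
$$
is a filtered chain homotopy equivalence. The basic idea is that the complex $\text{\rm Poly}(\R;CF(M;H;\Lambda^{\downarrow}))$ is an algebraic mapping-cylinder-type object: it models ``$CF$ tensored with the de Rham complex of $\R$,'' and the inclusion of the fiber over any point $s_0$ should be a homotopy equivalence by the usual Poincar\'e-lemma mechanism, but now carried out in the filtered category so as to respect the energy filtration $F^\lambda$.

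\textbf{Key steps.} First I would construct an explicit homotopy inverse. The natural candidate is the ``constant embedding'' $j_{s_0}: CF(M;H;\Lambda^{\downarrow}) \to \text{\rm Poly}(\R;CF(M;H;\Lambda^{\downarrow}))$ sending $x$ to the $s$-independent family $x(s)\equiv x$; one must check it is a chain map, which amounts to the vanishing of the $ds\wedge(\cdots)$ terms on constants — this requires a small adjustment because $\partial_{(H,J)}^{\frak c}$ does not vanish on constants, so the genuine homotopy inverse will be $j_{s_0}$ corrected by a ``parallel transport'' operator built from integrating $\partial_{(H,J)}^{\frak c}$ in the $s$-direction (the exponential of $\int_{s_0}^s \partial^{\frak c}$, which converges in $q$-adic topology because each application of $\partial^{\frak c}$ strictly decreases energy, or at least does not increase it, by the same area estimate as in Lemma~\ref{filtered} and Lemma~\ref{adiccomv1}). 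Second, $\text{\rm Eval}_{s=s_0}\circ j_{s_0}^{\rm corr} = \operatorname{id}$ is immediate. Third, the composite $j_{s_0}^{\rm corr}\circ \text{\rm Eval}_{s=s_0}$ must be shown chain homotopic to the identity on $\text{\rm Poly}(\R;\cdots)$; here the homotopy operator is the fiber-integration $\int_{s_0}^{s}(\cdot)\,ds$ composed with the appropriate parallel-transport correction — this is precisely the algebraic Poincar\'e lemma for $\Omega^*(\R)$ made compatible with the twisted differential, and the identity $\partial_{(H,J)}^{(\frak b(\cdot),\frak c)}\circ\partial_{(H,J)}^{(\frak b(\cdot),\frak c)}=0$ together with Lemma~\ref{partsformula} is exactly what makes the homotopy formula close up. Fourth, I would verify that all operators constructed (the correction term, the homotopy) preserve $F^{\lambda}$, using Lemma~\ref{parafilt} and the energy estimates; the point is that $\partial^{\frak c}$ and the integration operator do not increase $\frak v_q$, so the chain homotopy equivalence is filtered.

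\textbf{Expected main obstacle.} The routine part is the Poincar\'e-lemma bookkeeping; the delicate point is the \emph{convergence and filtration-compatibility} of the ``parallel transport'' correction operator $\exp\!\big(\int_{s_0}^s \partial^{\frak c}\big)$. One needs that the iterated compositions $(\partial^{\frak c})^{\circ k}$ applied to a fixed chain, weighted by $s^k/k!$, converge in the $q$-adic (equivalently $d_q$-metric) topology on $CF(M;H;\Lambda^{\downarrow})\otimes \Lambda^{\downarrow}\langle\!\langle s\rangle\!\rangle$, and that the limit still lies in the right filtration level. This follows from the Gromov--Floer compactness bound of Lemma~\ref{adiccomv1}/Lemma~\ref{adiccomv2} — each insertion of $\frak c$ comes with a factor recording symplectic area, and for any energy cutoff only finitely many contributions survive — but it must be stated carefully, paralleling the argument that $\del^{\frak b}_{(H,J)}$ is well-defined and $q$-adically continuous. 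Once this is in hand, Lemma~\ref{evische} follows, and combined with Lemma~\ref{parafilt} it yields that $\rho^{\frak b(s_0)}(H;a)$ is independent of $s_0\in\R$; specializing to $s_0=0,1$ gives $\rho^{\frak b(0)}(H;a)=\rho^{\frak b(1)}(H;a)$ whenever $\frak b_2(0)=\frak b_2(1)$. Finally, to remove the restriction $\frak b_2(0)=\frak b_2(1)$ one argues separately that changing the closed $2$-form representative of a fixed class $[\frak b_2]$ does not change $\rho^{\frak b}$: a change $\frak b_2 \mapsto \frak b_2 + d\frak c_1$ with $\deg\frak c_1=1$ alters the weights $\exp(w'\cap\frak b_2 - w\cap\frak b_2)$ in~\eqref{bdrycoefb} by a coboundary factor that can be absorbed into a chain-level gauge change $[\gamma,w]\mapsto \exp(\langle\frak c_1,\partial w\rangle)[\gamma,w]$, which is filtration-preserving since $\frak c_1$ is $q$-valued in $\Lambda^{\downarrow}_0$; this completes the proof of Theorem~\ref{homotopyinvbulk}~(2).
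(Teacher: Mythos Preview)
Your approach is correct and essentially the same as the paper's, though you make explicit what the paper delegates to a reference. The paper's proof consists of the single observation that $\partial_{(H,J)}^{(\frak b(\cdot),\frak c)}$ minus its de~Rham piece $-ds\wedge\tfrac{\partial}{\partial s}$ lands in $F^{\lambda-\epsilon}$ for some uniform $\epsilon>0$, and then invokes \cite{fooo:book} Proposition~4.3.18. Your parallel-transport construction is precisely how one implements that proposition: solve $\tfrac{\partial x'}{\partial s}=\partial^{\frak c}(x'(s))$ with $x'(s_0)=x$ by iterated integration, and Lemma~\ref{partsformula} guarantees this intertwines $\partial^{\frak b(s)}$ with $\partial^{\frak b(s_0)}$.

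One point deserves tightening. You hedge with ``strictly decreases energy, \emph{or at least does not increase it},'' but the weaker alternative is \emph{not} sufficient. The series $\sum_k\bigl(\int\partial^{\frak c}\bigr)^{\circ k}$ must land in $\Lambda^{\downarrow}\langle\!\langle s\rangle\!\rangle$, which requires the $s^k$-coefficient to have $\frak v_q\to-\infty$; the factorials $1/k!$ are invisible to the non-Archimedean valuation, so you genuinely need each application of $\partial^{\frak c}$ (and of $\partial^{\frak b(\cdot)}$) to drop the filtration by a \emph{uniform} $\epsilon>0$. This is exactly the paper's displayed inequality, and it holds because every contributing Floer trajectory has strictly positive energy, with a positive lower bound by Gromov compactness and finiteness of $\text{Per}(H)$. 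Your later appeal to Lemmas~\ref{adiccomv1}--\ref{adiccomv2} is the right justification; just drop the hedge. Also note that $\partial^{\frak c}$ itself depends on $s$ through $\frak b_+(s)$, so your ``exponential'' is really a time-ordered one --- harmless, but worth saying. Your concluding paragraph on the $\frak b_2$-case goes beyond Lemma~\ref{evische} proper but matches the paper's gauge-change isomorphism $I$.
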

\begin{proof}
If $x(s) + ds \wedge y(s) \in F^{\lambda}\text{\rm Poly}(\R;CF(M;H;\Lambda^{\downarrow}))$, then we have
$$
\partial_{(H,J)}^{(\frak b(\cdot),\frak c)}(x(s) + ds \wedge y(s))
- ds \wedge \frac{\partial x}{\partial s}(s)
\in
F^{\lambda-\epsilon}\text{\rm Poly}(\R;CF(M;H;\Lambda^{\downarrow}))
$$
for some positive $\epsilon$.
We use this fact to prove Lemma \ref{evische} by the same way as in
the proof given in \cite[Proposition 4.3.18]{fooo:book1}.
\end{proof}
We next define the map
\begin{equation}\label{eq:PbcHJ}
\mathcal P_{(H_\chi,J_\chi)}^{(\frak b(\cdot),\frak c)}
: \Omega(M) \otimes \Lambda^{\downarrow}
\to
\text{\rm Poly}(\R;CF(M;H;\Lambda^{\downarrow})).
\end{equation}
First, for each fixed $s_0$, we define a map
\begin{equation}
\mathcal P_{(H_\chi,J_\chi)}^{\frak b(s_0)}
:
\Omega(M) \otimes \Lambda^{\downarrow}
\to
CF(M;H;\Lambda^{\downarrow})
\end{equation}
by the formula (\ref{Piunikinb}). We then obtain a map
\begin{equation}
\mathcal P_{(H_\chi,J_\chi)}^{\frak b(\cdot)}
:
\Omega(M) \otimes \Lambda^{\downarrow}
\to
CF(M;H;\Lambda^{\downarrow}) \otimes_{\Lambda^{\downarrow}}
\Lambda^{\downarrow}\langle\!\langle s\rangle\!\rangle
\end{equation}
that satisfies
\begin{equation}
\text{\rm Eval}_{s=s_0} \circ \mathcal P_{(H_\chi,J_\chi)}^{\frak b(\cdot)}
=
\mathcal P_{(H_\chi,J_\chi)}^{\frak b(s_0)}.
\end{equation}
Similarly to (\ref{bdrycoefb2}), we define the term
$$\aligned
&\frak n^{\frak c,1}_{(H_\chi,J_\chi)}(h;[\gamma,w])\\
&=
\sum_{\ell_1=0}^{\infty}\sum_{\ell_2=0}^{\infty}
\frac{\exp( \int w^{*}\frak b_2(s))}{(\ell_1+\ell_2+1)!}\\
&\quad\quad\quad
\frak n_{(H,J);\ell_1+\ell_2+1}(h;[\gamma,w])
(\underbrace{\frak b_+(s),\dots,\frak b_+(s)}_{\ell_1},\frak c,
\underbrace{\frak b_+(s),\dots,\frak b_+(s)}_{\ell_2})\\
&\in \Lambda^{\downarrow}\langle\!\langle s\rangle\!\rangle.
\endaligned
$$
Using this, we then define
a map $\mathcal P_{(H_\chi,J_\chi)}^{\frak c}:\Omega(M) \otimes \Lambda^\downarrow
\to CF(M;H;\Lambda^\downarrow)$ by
$$
\mathcal P_{(H_\chi,J_\chi)}^{\frak c}(h)
=
\sum_{[\gamma,w] \in {\rm Crit}(\mathcal A_H)}
\frak n^{\frak c,1}_{(H_\chi,J_\chi)}(h;[\gamma,w]) \llb \gamma,w \rrb.
$$
Finally we define the map \eqref{eq:PbcHJ} by putting
\begin{equation}
\mathcal P_{(H_\chi,J_\chi)}^{(\frak b(\cdot),\frak c)}(h)
=
\mathcal P_{(H_\chi,J_\chi)}^{\frak b(\cdot)}(h)
+ ds \wedge \mathcal P_{(H_\chi,J_\chi)}^{\frak c}(h).
\end{equation}
\begin{lem}\label{Pandpara}
We have
$$
\partial_{(H,J)}^{(\frak b(\cdot),\frak c)}
\circ \mathcal P_{(H_\chi,J_\chi)}^{(\frak b(\cdot),\frak c)}
=
\mathcal P_{(H_\chi,J_\chi)}^{(\frak b(\cdot),\frak c)} \circ \partial
$$
and
$$
\text{\rm Eval}_{s=s_0} \circ \partial_{(H,J)}^{(\frak b(\cdot),\frak c)}
=
\mathcal P_{(H_\chi,J_\chi)}^{\frak b(s_0)}.
$$
\end{lem}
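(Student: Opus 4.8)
\textbf{Plan of proof of Lemma \ref{Pandpara}.}
The statement consists of two identities. The second one is essentially a matter of unwinding the definitions: by construction $\text{\rm Eval}_{s=s_0}$ picks out the $ds$-free part, and the $ds$-free part of $\mathcal P_{(H_\chi,J_\chi)}^{(\frak b(\cdot),\frak c)}(h)$ is $\mathcal P_{(H_\chi,J_\chi)}^{\frak b(\cdot)}(h)$, which upon setting $s=s_0$ becomes $\mathcal P_{(H_\chi,J_\chi)}^{\frak b(s_0)}(h)$. (I would note that the statement of the lemma as displayed should read $\text{\rm Eval}_{s=s_0}\circ \mathcal P_{(H_\chi,J_\chi)}^{(\frak b(\cdot),\frak c)} = \mathcal P_{(H_\chi,J_\chi)}^{\frak b(s_0)}$; this is immediate from the defining formulas and the compatibility $\text{\rm Eval}_{s=s_0}\circ \mathcal P_{(H_\chi,J_\chi)}^{\frak b(\cdot)} = \mathcal P_{(H_\chi,J_\chi)}^{\frak b(s_0)}$ established just before the lemma.)

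The substantive content is the first identity, the chain map property. Expanding $\partial_{(H,J)}^{(\frak b(\cdot),\frak c)}$ on $x(s) + ds\wedge y(s)$ with $x(s) = \mathcal P_{(H_\chi,J_\chi)}^{\frak b(\cdot)}(h)$ and $y(s) = \mathcal P_{(H_\chi,J_\chi)}^{\frak c}(h)$, the $ds$-free part of the left-hand side is $\partial_{(H,J)}^{\frak b(\cdot)}\circ \mathcal P_{(H_\chi,J_\chi)}^{\frak b(\cdot)}(h)$, which equals $\mathcal P_{(H_\chi,J_\chi)}^{\frak b(\cdot)}\circ\partial(h)$ by the $s$-parametrized version of \eqref{boundaryprop} (Theorem \ref{Pbulkiso}, applied for each value of $s$ and then assembled into a statement over $\Lambda^{\downarrow}\langle\!\langle s\rangle\!\rangle$). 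This matches the $ds$-free part of the right-hand side $\mathcal P_{(H_\chi,J_\chi)}^{(\frak b(\cdot),\frak c)}\circ\partial(h)$. The $ds$-coefficient of the left-hand side is
$$
-\frac{\partial}{\partial s}\big(\mathcal P_{(H_\chi,J_\chi)}^{\frak b(\cdot)}(h)\big)
+ \partial_{(H,J)}^{\frak c}\big(\mathcal P_{(H_\chi,J_\chi)}^{\frak b(\cdot)}(h)\big)
- \partial_{(H,J)}^{\frak b(\cdot)}\big(\mathcal P_{(H_\chi,J_\chi)}^{\frak c}(h)\big),
$$
while the $ds$-coefficient of the right-hand side is $\mathcal P_{(H_\chi,J_\chi)}^{\frak c}(\partial h)$. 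So the identity to prove is a "$ds$-Leibniz" relation among the Piunikhin operators for $\frak b(s)$ and the insertion operator for $\frak c$.

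The key step, and the main obstacle, is establishing this last relation. It is the exact analogue, on the Piunikhin moduli spaces $\CM_\ell(H_\chi,J_\chi;*,[\gamma,w])$, of Lemma \ref{partsformula}, which was the corresponding statement on the Floer trajectory spaces. My plan is to mimic the proof of Lemma \ref{partsformula}: differentiating $\frak b(s) = s\frak b(1) + (1-s)\frak b(0)$ in $s$ replaces one insertion $\frak b_+(s)$ by $\frak b_+(1) - \frak b_+(0) = d(\frak c_+)$ (together with the analogous effect on $\exp(\int w^*\frak b_2(s))$, which produces the $\int w^*(d\frak c_2)$ factor), and then I apply Stokes' theorem on $\CM_\ell(H_\chi,J_\chi;*,[\gamma,w])$ in the form \eqref{franformulawasure} adapted to the Piunikhin moduli spaces, using the boundary description \eqref{bdryPIunikinmodu} from Proposition \ref{piuBULKkura}. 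The boundary strata of type "bubbling at $\tau\to-\infty$" contribute nothing because of the extra $S^1$-symmetry (they have codimension $\geq 2$, as in the proof of Proposition \ref{piuBULKkura}), while the splitting strata ${{\CM}}_{\#\mathbb L_1}(H_\chi,J_\chi;*,[\gamma',w']) \times {\CM}_{\#\mathbb L_2}(H,J;[\gamma',w'],[\gamma,w])$ produce exactly the cross-terms $\partial_{(H,J)}^{\frak c}\circ\mathcal P^{\frak b(\cdot)}$, $\partial_{(H,J)}^{\frak b(\cdot)}\circ\mathcal P^{\frak c}$ and $\mathcal P^{\frak c}\circ\partial$ (the latter from the $d$ hitting $h$ via ${\rm ev}_{-\infty}^*$). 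Care is needed only in bookkeeping the signs and the combinatorial factors $1/(\ell_1+\ell_2+1)!$ coming from the shuffle decomposition, exactly as in the derivation of Lemma \ref{partsformula}; the convergence of all series in $\Lambda^{\downarrow}\langle\!\langle s\rangle\!\rangle$ follows by the argument of Lemma \ref{adiccomv1}. Once this relation is in hand, collecting the $ds$-free and $ds$-coefficient parts finishes the proof of both identities in Lemma \ref{Pandpara}.
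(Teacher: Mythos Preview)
Your proposal is correct and is precisely the ``straightforward calculation'' that the paper declares but omits. The paper's own proof reads, in its entirety, ``The proof is straightforward calculation and is omitted,'' so your plan---identifying the typo in the second displayed identity, splitting the first into $ds$-free and $ds$-coefficient parts, invoking \eqref{boundaryprop} for the former, and proving the Piunikhin analogue of Lemma~\ref{partsformula} via Stokes' theorem on $\CM_\ell(H_\chi,J_\chi;*,[\gamma,w])$ and the boundary description \eqref{bdryPIunikinmodu} for the latter---is exactly the intended argument.
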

The proof is straightforward calculation and is omitted.
\par
Combining Lemmas \ref{parafilt}, \ref{evische}, \ref{Pandpara},
we easily prove
$\rho^{\frak b(0)}(\widetilde\psi_H,a) = \rho^{\frak b(1)}(\widetilde\psi_H,a)$.
The proof of Theorem \ref{homotopyinvbulk} (2) is complete
under the condition that $\mathfrak b_2(0)=\mathfrak b_2(1)$.
\qed

Next, for $\mathfrak b(0), \mathfrak b(1)$ such that $\mathfrak b(1)-\mathfrak b(0)= d \mathfrak c$
for some $\mathfrak c$, we consider $\mathfrak b' = \mathfrak b(0) + d (\mathfrak c - \mathfrak c_1)$.
Here $\mathfrak c_1$ is the  $\Omega^1(M;\C)$-component of $\mathfrak c$ in the decomposition
$\Omega^1(M;\C) \oplus \Omega^1(M;\Lambda_-^{\downarrow}) \oplus \Omega^{\geq 3} (M;\Lambda^{\downarrow})$.
We showed that $\rho^{\mathfrak b(0)}(\widetilde{\phi}_H,a)=\rho^{\mathfrak b'}(\widetilde{\phi}_H,a)$.
The remaining task is to show that
$\rho^{\mathfrak b'}(\widetilde{\phi}_H,a)=\rho^{\mathfrak b(1)}(\widetilde{\phi}_H,a)$.
Namely we prove  Theorem \ref{homotopyinvbulk} (2) in the case that
$\mathfrak b(1) - \mathfrak b(0) = d \mathfrak c_1$ with $\mathfrak c_1 \in \Omega^1(M;\C)$.

We define $I:CF(M;H;\Lambda^{\downarrow}) \to CF(M;H;\Lambda^{\downarrow})$ by
$$I(\llb \gamma,w \rrb) = \exp (\int_{S^1} \gamma^* \mathfrak c_1) \llb \gamma,w \rrb.$$
Then we find that $I$ gives an isomorphism of Floer chain complexes
$$
I:(CF(M;H;\Lambda^{\downarrow}), \partial^{\mathfrak b'}_{(J,H)}) \to
(CF(M;H;\Lambda^{\downarrow}), \partial^{\mathfrak b(1)}_{(J,H)})
$$
and
$$I \circ {\mathcal P}^{\mathfrak b'}_{(H_{\chi},J_{\chi})} = {\mathcal P}^{\mathfrak b(1)}_{(H_{\chi},J_{\chi})}.$$
Hence the proof of Theorem \ref{homotopyinvbulk} (2).
\qed

\begin{rem}
A cocycle $\frak b \in \Omega^2(M;\C)$ induces a representation
$$\frak{rep}_{\frak b}: a \in \pi_1({\mathcal L}(M);\ell_0) \mapsto \exp
\left(\int_{C_a} {\frak b}\right) \in \C^{*},$$
where $C_a:S^1 \times S^1 \to M$ is the mapping corresponding to the loop
$a$ in ${\mathcal L}(M)$.
Then we can consider Floer complex of the Hamiltonian system with
coefficients in the local system
corresponding to $\frak{rep}_{\frak b}$.
If ${\frak b}(0)$ and ${\frak b}(1)$ are cohomologous, the corresponding
local systems are isomorphic,
hence Floer cohomology groups with coefficients in these local systems are isomorphic.
Here we gave the isomorphism $I$ directly without dealing with the
isomorphism of the local systems.
\end{rem}

\section{Proof of Proposition 20.7.}
\label{sec:appendix3}

The purpose of this section is to prove
Proposition \ref{Ainfinityequiv} and Lemma \ref{ainfinityequivcomp}.
In this section we fix a $t$-independent $J$.

\subsection{Pseudo-isotopy of filtered $A_{\infty}$ algebra}
\label{subsec:Pseudo-isotopy} \index{pseudo-isotopy! of filtered
$A_\infty$ algebra}

In this subsection, we review the notion of pseudo-isotopy of filtered $A_{\infty}$ algebra,
which was introduced in \cite{fukaya:cyc} Definition 8.5.
We consider $L \times [0,1]$ and use $s$ for the coordinate of the interval $[0,1]$.
We put $\overline C = \Omega(L)$ and
$$
C^{\infty}([0,1]\times \overline C)
=
\Omega([0,1]\times L).
$$
An element of $C^{\infty}([0,1]\times \overline C) $ is written uniquely as
$$
x(s) + ds \wedge y(s)
$$
where $x(s), y(s)$ are smooth differential forms on $[0,1]\times L$ that
do not contain $ds$, i.e., that satisfies
$\frac{\del}{\del s}\rfloor x(s) = 0 = \frac{\del}{\del s}\rfloor y(s)= 0$.
It follows that $x(s_0), y(s_0) \in \overline C$ for each fixed $s_0$.
\par
Suppose that, for each $s \in [0,1]$, $k,\ell$, $\beta \in \pi_2(M;L)$ we have operators
\begin{equation}\label{param}
\frak m^s_{k,\beta} : B_k(\overline C[1]) \to \overline C[1]
\end{equation}
of degree $-\mu(\beta)+1$ and
\begin{equation}\label{parac}
\frak c^s_{k,\beta} : B_k(\overline C[1]) \to \overline C[1]
\end{equation}
of degree $-\mu(\beta)$.
\begin{defn}\label{def:smoothop}
We say $\frak m^s_{k,\beta}$ is {\it smooth} if for
each $x_1,\ldots,x_k \in \overline C$ we may regard
$
\frak m^s_{k,\beta}(x_1,\ldots,x_k)
$
as an element of $C^{\infty}([0,1],\overline C)$ with vanishing $ds$ component.
The smoothness of $\frak c^s_{k,\beta}$ is defined in the same way.
\end{defn}
Suppose that there exists a subset $\widehat G$ of
$H_2(M,L;\Z)$ such that
$\{ \omega \cap \beta \mid \beta \in \widehat G\}$ is a discrete subset of
$\R_{\ge 0}$.
Let $G$ be the monoid generated by this set.
Here we assume that the operators $\frak m^s_{k,\beta}, \frak c^s_{k,\beta}$
are given for $\beta \in \widehat G$.

\begin{defn}\label{pisotopydef}
We say $(C,\{\frak m^s_{k,\beta}\},\{\frak c^s_{k,\beta}\})$
is a {\it pseudo-isotopy} \index{pseudo-isotopy} of $G$-gapped filtered $A_{\infty}$ algebras\footnote{
See \cite[Definition 3.2.26]{fooo:book1}
for the definition of $G$-gapped-ness of a filtered $A_{\infty}$ algebra.} if the following holds:
\smallskip
\begin{enumerate}
\item  $\frak m^s_{k,\beta}$ and $\frak c^s_{k,\beta}$ are smooth.
\item For each (but fixed) $s$, $(C,\{\frak m^s_{k,\beta}\})$ defines a filtered $A_{\infty}$
algebra.
\item  For each given $x_i \in \overline C[1]$, $i = 1, \ldots, k$, the operators satisfy
\begin{equation}\label{isotopymaineq}
\aligned
&\frac{d}{ds} \frak m_{k,\beta}^s(x_1,\ldots,x_k) \\
&+ \sum_{k_1+k_2=k}\sum_{\beta_1+\beta_2=\beta}\sum_{i=1}^{k-k_2+1}
(-1)^{*}\frak c^s_{k_1,\beta_1}(x_1,\ldots, \frak m_{k_2,\beta_2}^s(x_i,\ldots),\ldots,x_k) \\
&- \sum_{k_1+k_2=k}\sum_{\beta_1+\beta_2=\beta}\sum_{i=1}^{k-k_2+1}
\frak m^s_{k_1,\beta_1}(x_1,\ldots, \frak c_{k_2,\beta_2}^s(x_i,\ldots),\ldots,x_k)\\
&=0.
\endaligned
\end{equation}
Here $* = \deg' x_1 + \ldots + \deg'x_{i-1}$.
\item
$\frak m_{k,\beta_0}^s$ is independent of $s$, and $\frak c_{k,\beta_0}^s = 0$.
Here $\beta_0 = 0 \in H_2(M;L;\Z)$.
\end{enumerate}
\end{defn}
We consider $x_i(s) + ds \wedge y_i(s) = \text{\bf x}_i \in C^{\infty}([0,1],\overline C)$.
We define the operators $\widehat{\frak m}_{k,\beta}: C^\infty([0,1], \overline{C})^{\otimes k} \to
C^\infty([0,1],\overline{C})$ by
\begin{equation}
\widehat{\frak m}_{k,\beta}(\text{\bf x}_1,\ldots,\text{\bf x}_k)
= x(s) + ds \wedge y(s),
\end{equation}
where
\begin{subequations}\label{combineainf}
\begin{equation}
x(s) = {\frak m}^s_{k,\beta}(x_1(s),\ldots,x_k(s))
\end{equation}
\begin{equation}\label{combineainfmain}
\aligned
y(s)
=
& \frak c^s_{k,\beta}
(x_1(s),\ldots,x_k(s)) \\
&-\sum_{i=1}^k (-1)^{*_i} \frak m^t_{k,\beta}
(x_1(s),\ldots,x_{i-1}(s),y_i(s),x_{i+1}(s),\ldots,x_k(s))
\endaligned\end{equation}
if $(k,\beta) \ne (1,\beta_0)$ and
\begin{equation}
y(s) = \frac{d}{ds} x_1(s) + \frak m_{1,0}^s (y_1(s))
\end{equation}
if $(k,\beta) = (1,\beta_0)$. Here $*_i$ in (\ref{combineainfmain}) is
$*_i = \deg' x_1 +\ldots+\deg'x_{i-1}$.
\end{subequations}

\begin{lem}\label{dtcomiAinf}
The equation $(\ref{isotopymaineq})$ is equivalent to the filtered $A_{\infty}$
relation of $\{\widehat{\frak m}_{k,\beta}\}$ defined by $(\ref{combineainf})$.
\end{lem}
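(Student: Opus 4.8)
\textbf{Proof plan for Lemma \ref{dtcomiAinf}.}
The plan is to unwind both sides of the claimed equivalence directly. Fix elements $\text{\bf x}_i = x_i(s) + ds\wedge y_i(s) \in C^\infty([0,1]\times\overline C)$ and expand the filtered $A_\infty$ relation $\sum \widehat{\frak m}(\cdots \widehat{\frak m}(\cdots)\cdots) = 0$ for the family $\{\widehat{\frak m}_{k,\beta}\}$ defined by \eqref{combineainf}. Each term in this relation decomposes into a part with no $ds$ factor and a part with a single $ds$ factor; since $ds\wedge ds = 0$ there are no higher $ds$-terms, so the $A_\infty$ relation for $\widehat{\frak m}$ is equivalent to the conjunction of its ``$ds$-free'' component and its ``$ds$-coefficient'' component. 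The first thing I would do is observe that the $ds$-free component is exactly the $A_\infty$ relation for the family $\{\frak m^s_{k,\beta}\}$ at each fixed $s$, which is Definition \ref{pisotopydef}(2) and is assumed to hold; so that component carries no new content and can be set aside.

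Next I would compute the $ds$-coefficient component. Here I have to track where a $ds$ factor can originate: either from one of the inputs $\text{\bf x}_i$ (contributing a $y_i(s)$ in the corresponding slot, with the Koszul sign $(-1)^{*_i}$ coming from moving $ds$ past $x_1(s),\dots,x_{i-1}(s)$), or from the ``inner'' application of $\widehat{\frak m}$ producing a $ds$ via the $\frak c^s$-term or, in the special case $(k,\beta)=(1,\beta_0)$, via the $\frac{d}{ds}$-term in \eqref{combineainf}. Collecting these contributions and using that $\frak m^s_{1,\beta_0} = \frak m_{1,\beta_0}$ is $s$-independent while $\frak c^s_{1,\beta_0}=0$ (Definition \ref{pisotopydef}(4)), the $ds$-coefficient of the $A_\infty$ relation for $\widehat{\frak m}$ becomes precisely
$$
\frac{d}{ds}\frak m^s_{k,\beta}(x_1(s),\ldots,x_k(s))
+ \sum (-1)^{*}\frak c^s(x_1(s),\ldots,\frak m^s(\cdots),\ldots)
- \sum \frak m^s(x_1(s),\ldots,\frak c^s(\cdots),\ldots),
$$
evaluated at generic $s$ and generic inputs, which is exactly \eqref{isotopymaineq}. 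Conversely, if \eqref{isotopymaineq} holds for all $s$, then both components of the $\widehat{\frak m}$-relation vanish, so $\{\widehat{\frak m}_{k,\beta}\}$ is a filtered $A_\infty$ algebra. One also has to check the gapping and degree bookkeeping: $\widehat{\frak m}_{k,\beta}$ has the correct degree because $\frak m^s_{k,\beta}$ has degree $-\mu(\beta)+1$ and $\frak c^s_{k,\beta}$ has degree $-\mu(\beta)$, and the extra $ds$ raises degree by one so that \eqref{combineainfmain} is homogeneous of the right degree; the $G$-gappedness is inherited from that of the $\frak m^s, \frak c^s$.

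The only genuinely delicate step is the sign verification in the $ds$-coefficient computation: one must confirm that the Koszul signs $(-1)^{*_i}$ appearing when a $ds$ is extracted from an input slot of $\widehat{\frak m}$, together with the signs in the $A_\infty$ relation itself, combine to reproduce exactly the sign $(-1)^*$ with $*=\deg' x_1+\cdots+\deg' x_{i-1}$ in \eqref{isotopymaineq} (and that the $\frac{d}{ds}$ term, which is the $(k,\beta)=(1,\beta_0)$ case, enters with the correct sign and without an extraneous shift). I expect this to be the main obstacle, and the way to handle it cleanly is to adopt the convention that $ds$ is an odd variable of degree $+1$ and treat $C^\infty([0,1]\times\overline C)$ as $\Omega([0,1])\,\widehat\otimes\,\overline C$ with the Koszul sign rule, so that \eqref{combineainf} is forced by requiring $\widehat{\frak m}$ to be $\Omega([0,1])$-``linear up to the de Rham differential on $[0,1]$''; then all signs are dictated rather than chosen. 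This argument is essentially the one in \cite{fukaya:cyc} Lemma 8.6, and I would follow it, recording only the points where the de Rham (differential-form) model used here differs cosmetically from the model there.
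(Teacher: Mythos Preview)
Your proposal is correct and is precisely the ``straightforward calculation'' the paper invokes without further detail: decomposing the $A_\infty$ relation for $\widehat{\frak m}$ into its $ds$-free part (which recovers condition (2) of Definition~\ref{pisotopydef}) and its $ds$-coefficient part (which, after tracking whether the $ds$ originates from an input $y_i$, from a $\frak c^s$-term, or from the $\frac{d}{ds}$ in the $(1,\beta_0)$ case, yields exactly \eqref{isotopymaineq}). Your attention to the sign bookkeeping and the suggestion to view $C^\infty([0,1]\times\overline C)$ as $\Omega([0,1])\,\widehat\otimes\,\overline C$ with the Koszul rule is the right way to make the calculation clean, and goes beyond what the paper spells out.
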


The proof is a straightforward calculation.

\begin{defn}\label{smoothins}\index{pseudo-isotopy!unital}
A pseudo-isotopy
$(C,\{\frak m^s_{k,\beta}\},\{\frak c^s_{k,\beta}\})$
is said to be {\it unital}
if there exists $\text{\bf e} \in \overline C^0$ such that
$\text{\bf e}$ is a unit of $(C,\{\frak m^s_{k,\beta}\})$ for each $s$ and if
$
\frak c^s_{k,\beta}(\ldots,\text{\bf e},\ldots) = 0
$
for each $k,\beta$ and $s$.
\end{defn}
In our situation the unit $\text{\bf e}$ is always $\text{\bf e}^L$, the
constant function $1$ on $L$.

\begin{thm}\label{pisotohomotopyequiv}
If  $(C,\{\frak m^s_{k,\beta}\},\{\frak c^s_{k,\beta}\})$ is a unital pseudo-isotopy, then
there exists a unital filtered $A_{\infty}$ homomorphism from $(C,\{\frak m^{0}_{k,\beta}\})$
to $(C,\{\frak m^{1}_{k,\beta}\})$ that  has a homotopy
inverse.
\end{thm}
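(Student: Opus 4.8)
The strategy is to integrate the pseudo-isotopy relation in the variable $s$, which is the standard mechanism for producing filtered $A_\infty$ homomorphisms from pseudo-isotopies. The key observation is Lemma \ref{dtcomiAinf}: the data $(C,\{\frak m^s_{k,\beta}\},\{\frak c^s_{k,\beta}\})$ assemble into a single filtered $A_\infty$ algebra structure $\{\widehat{\frak m}_{k,\beta}\}$ on $C^\infty([0,1]\times\overline C)=\Omega([0,1]\times L)\widehat\otimes\Lambda_0$, which by Definition \ref{smoothins} is unital with unit $\text{\bf e}^L$. Thus we have a unital filtered $A_\infty$ algebra $\widehat C$ together with two evaluation maps $\text{\rm Eval}_{s=0},\text{\rm Eval}_{s=1}:\widehat C\to C$. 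First I would check that each $\text{\rm Eval}_{s=s_0}$ is a unital filtered $A_\infty$ homomorphism from $(\widehat C,\{\widehat{\frak m}_{k,\beta}\})$ onto $(C,\{\frak m^{s_0}_{k,\beta}\})$; this is immediate from the definitions since setting $s=s_0$ and dropping the $ds$-component turns \eqref{combineainf} into $\frak m^{s_0}_{k,\beta}$ (and more precisely one should note that $\text{\rm Eval}_{s=s_0}$ is a strict $A_\infty$ map, i.e. its higher components vanish, because evaluation is a ring homomorphism of the coefficient DGAs $\Omega([0,1])\to\Omega(\{s_0\})$).

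Second, I would show that $\text{\rm Eval}_{s=0}$ (and likewise $\text{\rm Eval}_{s=1}$) is a filtered $A_\infty$ \emph{homotopy equivalence}. The inclusion $\iota:C\to\widehat C$ given by pulling back a form on $L$ to $[0,1]\times L$ (i.e. $x\mapsto x$ with no $ds$-component and no $s$-dependence, using condition (4) of Definition \ref{pisotopydef} so that $\frak m^s_{1,\beta_0}$ is $s$-independent) is a filtered $A_\infty$ homomorphism, and $\text{\rm Eval}_{s=0}\circ\iota=\mathrm{id}_C$; moreover $\iota\circ\text{\rm Eval}_{s=0}$ is homotopic to $\mathrm{id}_{\widehat C}$ via the standard fiberwise-integration chain homotopy $\int_0^s$ on $\Omega([0,1]\times L)$, which on the level of the boundary operator $\widehat{\frak m}_{1,\beta_0}$ is just the usual Poincaré lemma homotopy. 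Then by Whitehead's theorem for filtered $A_\infty$ algebras (\cite{fooo:book} Theorem 4.2.45, cf.\ also Proposition 4.3.18 and the discussion around \cite{fooo:book} Theorem 4.2.37), a filtered $A_\infty$ homomorphism inducing a chain homotopy equivalence on the associated (un-filtered) complexes admits a filtered $A_\infty$ homotopy inverse. Since $\text{\rm Eval}_{s=0}$ is even a split surjection with the left inverse $\iota$ that is a homotopy equivalence, it is itself such a homotopy equivalence. The desired unital filtered $A_\infty$ homomorphism from $(C,\{\frak m^0_{k,\beta}\})$ to $(C,\{\frak m^1_{k,\beta}\})$ is then the composition $\text{\rm Eval}_{s=1}\circ(\text{\rm Eval}_{s=0})^{-1}$, where $(\text{\rm Eval}_{s=0})^{-1}$ is a chosen filtered $A_\infty$ homotopy inverse; it has a homotopy inverse, namely $\text{\rm Eval}_{s=0}\circ(\text{\rm Eval}_{s=1})^{-1}$. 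Unitality is preserved throughout because all of $\text{\rm Eval}_{s=s_0}$, $\iota$, the homotopies, and (after adjustment, using the unitalization procedure of \cite{fooo:book} Section 3.3) the homotopy inverse can be taken unital, thanks to Definition \ref{smoothins} which forces $\frak c^s_{k,\beta}$ to annihilate $\text{\bf e}^L$.

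\textbf{Main obstacle.} The routine part is the bookkeeping of signs in \eqref{combineainf} and verifying that $\iota$ and $\text{\rm Eval}$ are strict unital $A_\infty$ morphisms; the genuine content is the homotopy-theoretic input, i.e.\ that a filtered $A_\infty$ homomorphism which is a quasi-isomorphism (here even a deformation retract) on the underlying complexes has a filtered $A_\infty$ homotopy inverse, and that this can be arranged compatibly with units and with the energy filtration. This requires invoking the machinery of \cite{fooo:book} Chapter 4 (Theorems 4.2.37, 4.2.45 and the unital refinements in Section 4.3), together with the convergence/``running out'' care of \cite{fooo:book} Section 7.2 — though here, since $\widehat C$ is an honest $G$-gapped filtered $A_\infty$ algebra with $\{\omega\cap\beta\mid\beta\in\widehat G\}$ discrete, the gapped setting applies directly and no inductive-limit subtlety arises. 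So the expected difficulty is not a hard new argument but correctly citing and assembling these structural results; I would state the Whitehead-type theorem explicitly as a lemma and reduce the proof of Theorem \ref{pisotohomotopyequiv} to it in a few lines.

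\textbf{Proof of Lemma \ref{ainfinityequivcomp}.} For the companion statement about $i_{\text{\rm qm},\text{\bf b}}$, the plan is parallel: one builds a pseudo-isotopy version of the open-closed map, i.e.\ an $s$-parametrized family $i_{\text{\rm qm},\text{\bf b}}^s$ together with a correction term, assembling into a left $\widehat{\frak m}$-module-type morphism $\widehat{i}_{\text{\rm qm},\text{\bf b}}:\Omega(M)\widehat\otimes\Lambda\to\widehat C$ intertwining $d$ and $\widehat{\frak m}_1$. Evaluating at $s=0$ and $s=1$ recovers $i_{\text{\rm qm},\text{\bf b}}^T$ and $i_{\text{\rm qm},\text{\bf b}}$ respectively (up to the identification $\frak J_*^{b_+}$ coming from the homotopy inverse chosen above), and the chain homotopy between $\frak J_*^{b_+}\circ i_{\text{\rm qm},\text{\bf b}}$ and $i_{\text{\rm qm},\text{\bf b}}^T$ is produced by the same fiberwise integration $\int_0^1 ds$ applied to $\widehat{i}_{\text{\rm qm},\text{\bf b}}$. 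I would defer the sign computations and only indicate that they follow the same pattern as in \cite{fooo:book} Section 3.8 and \cite{fooo:toricmir} Section 31.
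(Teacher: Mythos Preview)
Your approach is essentially the same as the paper's: assemble the data into the ambient filtered $A_\infty$ algebra $\widehat C=(C^\infty([0,1]\times\overline C),\{\widehat{\frak m}_{k,\beta}\})$ via Lemma \ref{dtcomiAinf}, observe that each $\text{\rm Eval}_{s=s_0}$ is a strict filtered $A_\infty$ homomorphism, invoke \cite{fooo:book} Theorem 4.2.45 to see it is a homotopy equivalence, and then compose $\text{\rm Eval}_{s=1}$ with a homotopy inverse of $\text{\rm Eval}_{s=0}$. The paper's proof is terser (it simply cites Theorem 4.2.45 without spelling out the de Rham contraction), but the content is identical.

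One correction: your claim that the inclusion $\iota:C\to\widehat C$ is a \emph{filtered $A_\infty$ homomorphism} is false. For constant inputs $x_i$, formula \eqref{combineainf} gives
\[
\widehat{\frak m}_{k,\beta}(\iota x_1,\ldots,\iota x_k)=\frak m^s_{k,\beta}(x_1,\ldots,x_k)+ds\wedge\frak c^s_{k,\beta}(x_1,\ldots,x_k),
\]
which has $s$-dependence and a $ds$-component whenever $\beta\ne\beta_0$; condition (4) of Definition \ref{pisotopydef} only controls the $\beta_0$-part. This does not damage your argument, though: the hypothesis of Theorem 4.2.45 is precisely that the \emph{classical} ($\beta_0$-part) component of $\text{\rm Eval}_{s=s_0}$ be a chain homotopy equivalence $\Omega([0,1]\times L)\to\Omega(L)$, and for that your Poincar\'e-lemma homotopy with $\iota$ as an ordinary chain map for the de Rham differential is exactly what is needed. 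Just rephrase $\iota$ as a chain-level section rather than an $A_\infty$ morphism.
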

\begin{proof}
The cyclic version of this theorem is \cite[Theorem 8.2]{fukaya:cyc}.
Since we do not require cyclic symmetry here,
the proof of Theorem \ref{pisotohomotopyequiv} is easier.
In fact, it follows from
\cite[Theorem 4.2.45]{fooo:book1}  as follows.
We have a filtered $A_{\infty}$ homomorphism
$$
\text{\rm Eval}_{s=s_0} : (C^{\infty}([0,1]\times \overline C),\{\widehat{\frak m}^s_{k,\beta}\})
\to
(\overline C,\{\frak m^{s_0}_{k,\beta}\})
$$
defined by
$
(\text{\rm Eval}_{s=s_0})_1(a(s) + ds \wedge b(s))
= a(s_0)
$
and
$
(\text{\rm Eval}_{s=s_0})_k
= 0
$
for $k\ne 1$.  Then using the $A_\infty$ Whitehead theorem (\cite[Lemma 4.2.45]{fooo:book1})
we can show that it is a homotopy equivalence.
Theorem \ref{pisotohomotopyequiv} follows.
\end{proof}

\subsection{Difference between $\frak m^T$ and $\frak m$.}
\label{subsec:DifferencemTm}

We will construct a pseudo-isotopy between two
filtered $A_{\infty}$ structures $\{\frak m^{T,\frak b}_{k,\beta}\}$
and $\{\frak m^{\frak b}_{k,\beta}\}$ on $\overline C = \Omega(L)$.
Here the first one is defined in Section \ref{toriceLagreview}
and the second one is defined in Section \ref{sec:q-map}.
We note that the difference of these two constructions are
roughly as follows:
\begin{enumerate}
\item
We represent $\frak b$ by a $T^n$-invariant cycle $D_a$ that is a submanifold to
define $\{\frak m^{T,\frak b}_{k,\beta}\}$.
In other words, we use a
 current that may not be smooth when we define
$\{\frak m^{T,\frak b}_{k,\beta}\}$. On the other hand, we represent $\frak b$ by a smooth differential form
when we define $\{\frak m^{\frak b}_{k,\beta}\}$.
\item
In the definition of $\{\frak m^{T,\frak b}_{k,\beta}\}$ we first take the fiber product
(\ref{fiberproductwithbeta}) and then use a multisection to achieve transversality.
On the other hand, in the definition of $\{\frak m^{\frak b}_{k,\beta}\}$,
we first perturb (by CF-perturbations) the moduli space $\mathcal M_{k+1:\ell}(\beta)$ then
pull back the differential form representing $\frak b$ to the
zero set of the multisection.
In other words the perturbation to define  $\{\frak m^{\frak b}_{k,\beta}\}$
is independent of the ambient cohomology class $\frak b$.
\end{enumerate}
\begin{rem}\label{rem28777}
We note that there are various reasons why we need to
take cycles and multisections (rather than taking CF-perturbation), when we construct
$\{\frak m^{T,\frak b}_{k,\beta}\}$ in the toric case. The most important reason lies in Proposition
\ref{H1iswbdchain}. This is related to point (1) above. The reason
why we first need to take the fiber product
(\ref{fiberproductwithbeta}) is explained in \cite[Remark 11.4]{fooo:toric1}.
\par
On the other hand to develop the theory of spectral invariant with bulk
deformation in the general setting,
it seems simplest to  always use de Rham representative and CF-perturbations.
\end{rem}

In the next subsection, we will construct a pseudo-isotopy of filtered $A_{\infty}$
structures interpolating $\{\frak m^{T,\frak b}_{k,\beta}\}$  and
$\{\frak m^{\frak b}_{k,\beta}\}$. Below we handle the above (1)
and (2) separately. We construct the pseudo-isotopy resolving (1) in
Subsection \ref{subsec:Smoothing} and construct the pseudo-isotopy
resolving (2) in Subsection \ref{subsec:ainfinityequivcomp}.

\subsection{Smoothing $T^n$-invariant chains.}
\label{subsec:Smoothing} \index{smoothing}

Let $D_a = D_{i_1}\cap \dots \cap D_{i_k}$ be a transversal intersection of
$k$ irreducible components of the toric divisor,
$ND_a$ its normal bundle, and $\exp : ND_a \to M$ the exponential map
with respect to a $T^n$-invariant Riemannian metric.
Let $\mathcal U_a \subset \Gamma(ND_a)$ be a finite dimensional submanifold
of the space of smooth sections
of $ND_a$ such that if $\frak u \in \mathcal U_a$ and $\rho \in [0,1]$ then
$\rho\frak u \in \mathcal U_a$.
We assume that it has the following properties.

\begin{proper}\label{Uproper}
\begin{enumerate}
\item
The exponential map $\text{\rm Exp} : D_a \times \mathcal U_a \to M$ defined by
\begin{equation}\label{Exmap}
\text{\rm Exp}(\frak u,x) = \exp(\frak u(x))
\end{equation}
is a submersion.
\item
$\Vert \frak u(x) \Vert < \epsilon$, where $\epsilon$ is a
sufficiently small positive number determined later.
\end{enumerate}
\end{proper}
We put $d_a = \dim \mathcal U_a$.
\par
Let $\text{\bf p} : \{1,\dots,\ell\} \to \underline B$ be as in the beginning of Subsection
\ref{subsec:toricHFreview}.
We put
\begin{equation}
\mathcal U(\text{\bf p}) = \prod_{i=1}^{\ell} \mathcal U_{\text{\bf p}(i)},
\qquad
\text{\bf p}(\mathcal U) = \prod_{i=1}^{\ell}
(D_{\text{\bf p}(i)} \times \mathcal U_{\text{\bf p}(i)}).
\end{equation}
The map (\ref{Exmap}) induces
\begin{equation}
 \text{\rm Exp} : \text{\bf p}(\mathcal U) \to M^{\ell}.
\end{equation}
For $k,\ell \in \Z_{\ge 0}$ and $\beta \in H_2(M,L(\text{\bf u});\Z)$ we
define a fiber product
\begin{equation}\label{fiberproductwithbetawithU}
\mathcal M_{k+1;\ell}(L(\text{\bf u});\beta;\text{\bf p}(\mathcal U) )
=
\mathcal M_{k+1;\ell}(L(\text{\bf u});\beta)
{}_{(ev_1,\ldots,ev_{\ell})}
\times_{ \text{\rm Exp}}  \text{\bf p}(\mathcal U),
\end{equation}
where $\mathcal M_{k+1;\ell}(L(\text{\bf u});\beta)$
is a moduli space defined in Definition \ref{diskmoduli1} and
Proposition \ref{disckura}.
(Compare (\ref{fiberproductwithbeta}).)
We can define the evaluation map at the boundary marked points
\index{$\mathcal M_{k+1;\ell}(L(\text{\bf u});\beta;\text{\bf p}(\mathcal U) )$}
$$
\text{\rm ev}^{\partial}
=
(\text{\rm ev}^{\partial}_1,\dots,\text{\rm ev}^{\partial}_k)
:\mathcal M_{k+1;\ell}(L(\text{\bf u});\beta;\text{\bf p}(\mathcal U) )
\to L(\text{\bf u})^{k+1}
$$
in an obvious way.
We also have a projection
$$
\pi_{\mathcal U} :
\mathcal M_{k+1;\ell}(L(\text{\bf u});\beta;\text{\bf p}(\mathcal U) )
\to
\mathcal U(\text{\bf p})
$$
to the $\mathcal U_a$-factors.
By definition we have
\begin{equation}
\pi_{\mathcal U}^{-1}(\text{\bf 0}) =
\mathcal M_{k+1;\ell}(L(\text{\bf u});\beta;\text{\bf p}).
\end{equation}
\begin{lem}\label{torickuranishiU}
\begin{enumerate}
\item
$\mathcal M_{k+1;\ell}(L(\text{\bf u});\beta;\text{\bf p}(\mathcal U))$ has
a Kuranishi structure with corners.
\item
It induces the Kuranishi structure on $\pi_{\mathcal U}^{-1}(\text{\bf 0})$
as described in Lemma $\ref{torickuranishi}$.
\item
Its normalized boundary is described by the union of the following fiber products:
\begin{equation}
\mathcal
M_{k_1+1;\#\L_1}(L(\text{\bf u});\beta_1;\text{\bf p}_1(\mathcal U))
{}_{{\rm ev}^{\partial}_0}\times_{{\rm ev}^{\partial}_i} \mathcal M_{k_2+1;\#\L_2}(L(\text{\bf u});\beta_2;\text{\bf p}_2(\mathcal U))
\end{equation}
where the union is taken over all $(\L_1,\L_2) \in \text{\rm Shuff}(\ell)$,
$k_1,k_2$ with $k_1 + k_2 = k$ and $\beta_1,\beta_2 \in H_2(M,L(\text{\bf u});\Z)$ with
$\beta = \beta_1+\beta_2$. We put $
\text{\rm Split}((\mathbb L_1,\mathbb L_2),\text{\bf p}) = (\text{\bf
p}_1,\text{\bf p}_2).
$
\item
The dimension is
\begin{equation}
\aligned
&\dim \mathcal M_{k+1;\ell}(L(\text{\bf u});\beta;\text{\bf p}(\mathcal U)) \\
&= n + \mu_{L(\text{\bf u})}(\beta) + k - 2 + 2\ell - \sum_{i=1}^{\ell} 2\deg D_{\text{\bf p}(i)}
+ \sum_{i=1}^{\ell} d_{\text{\bf p}(i)}.
\endaligned
\end{equation}
\item The evaluation maps ${\rm ev}^{\partial}_i$ at the boundary marked points of $\mathcal M_{k+1;\ell}(L(\text{\bf u});\beta)$
define maps on $\mathcal M_{k+1;\ell}(L(\text{\bf u});\beta;\text{\bf p}(\mathcal U))$, which we denote by
${\rm ev}^{\partial}_i$ also.
They are compatible with $(3)$.
\item
We can define an orientation of the Kuranishi structure so that it is compatible with $(3)$.
\item
${\rm ev}^{\partial}_0 \times \pi_{\mathcal U}$ is weakly submersive.
\item
The Kuranishi structure is invariant under the permutation of interior marked points.
\item
The Kuranishi structure is compatible with the forgetful map  of
the $1$st, $2$nd, \dots, $k$-th boundary marked points.
(We do not require that it is compatible with the forgetful map of the $0$-th marked point.)
\end{enumerate}
\end{lem}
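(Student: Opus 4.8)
The plan is to realize $\mathcal M_{k+1;\ell}(L(\text{\bf u});\beta;\text{\bf p}(\mathcal U))$ as a fiber product of $\mathcal M_{k+1;\ell}(L(\text{\bf u});\beta)$ — carrying the Kuranishi structure of Proposition \ref{disckura} and Lemma \ref{torickuranishi} — with the smooth manifold $\text{\bf p}(\mathcal U)=\prod_{i=1}^{\ell}(D_{\text{\bf p}(i)}\times\mathcal U_{\text{\bf p}(i)})$, taken along the interior evaluation map $(\text{\rm ev}_1,\dots,\text{\rm ev}_{\ell})$ and the exponential map $\text{\rm Exp}$. By Property \ref{Uproper}~(1), $\text{\rm Exp}$ is a submersion on each factor, hence $\text{\rm Exp}:\text{\bf p}(\mathcal U)\to M^{\ell}$ is a submersion; therefore the fiber product of a space with Kuranishi structure (along a strongly continuous smooth map) with a manifold along a submersion inherits a Kuranishi structure whose charts are the fiber products of the charts of $\mathcal M_{k+1;\ell}(L(\text{\bf u});\beta)$ with $\text{\bf p}(\mathcal U)$, and no transversality on the moduli side is required. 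This gives (1). For (7), on each such chart $\text{\rm ev}^{\partial}_0\times\pi_{\mathcal U}$ is the product of the weakly submersive map $\text{\rm ev}^{\partial}_0$ (Lemma \ref{torickuranishi}~(7)) with the submersion $\text{\bf p}(\mathcal U)\to\mathcal U(\text{\bf p})$ onto the $\mathcal U$-factors; since these two maps involve complementary directions of the fiber product, their product is weakly submersive.

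For (2), note $\text{\bf 0}\in\mathcal U_a$ (take $\rho=0$ in the scaling property of $\mathcal U_a$), and $\pi_{\mathcal U}^{-1}(\text{\bf 0})$ is exactly the fiber product with $\prod_{i=1}^{\ell}D_{\text{\bf p}(i)}\subset M^{\ell}$, i.e.\ \eqref{fiberproductwithbeta}; the fiber-product Kuranishi charts restrict accordingly, so the induced structure coincides with that of Lemma \ref{torickuranishi}. For (3), I would apply Proposition \ref{disckura}~(3): the Kuranishi boundary of $\mathcal M_{k+1;\ell}(L(\text{\bf u});\beta)$ decomposes as the union of fiber products $\mathcal M_{k_1+1;\#\L_1}(L(\text{\bf u});\beta_1)\,{}_{\text{\rm ev}^{\partial}_0}\!\times_{\text{\rm ev}^{\partial}_i}\mathcal M_{k_2+1;\#\L_2}(L(\text{\bf u});\beta_2)$ over $(\L_1,\L_2)$, $k_1+k_2=k$, $\beta_1+\beta_2=\beta$, and fiber-producting with $\text{\bf p}(\mathcal U)$ distributes the interior marked points together with their labels in $\underline B$ and their $\mathcal U$-factors according to $\text{\rm Split}$, giving the stated description. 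The orientation statement (5) follows from the standard sign rule for fiber products of oriented Kuranishi structures, compatibly with Proposition \ref{disckura}~(5); (6) is immediate since the $\text{\rm ev}^{\partial}_i$ are pulled back from $\mathcal M_{k+1;\ell}(L(\text{\bf u});\beta)$; (8) and (9) follow from Proposition \ref{disckura}~(9),~(10) together with functoriality of the fiber-product construction under the $\frak S_{\ell}$-action on interior marked points (which permutes $\text{\bf p}$ and the factors of $\text{\bf p}(\mathcal U)$) and under the forgetful maps of boundary marked points (which leave the interior fiber-product data untouched). Finally (4) is a dimension count: the virtual dimension of the fiber product along the submersion $\text{\rm Exp}$ is $\dim\mathcal M_{k+1;\ell}(L(\text{\bf u});\beta)+\dim\text{\bf p}(\mathcal U)-2n\ell$, and since $\dim(D_{\text{\bf p}(i)}\times\mathcal U_{\text{\bf p}(i)})=\dim D_{\text{\bf p}(i)}+d_{\text{\bf p}(i)}$, the codimension contributions are exactly the $-2\deg D_{\text{\bf p}(i)}$ terms already present in Lemma \ref{torickuranishi}~(4), so the result is $\dim\mathcal M_{k+1;\ell}(L(\text{\bf u});\beta;\text{\bf p})+\sum_{i}d_{\text{\bf p}(i)}$, which is the claimed formula.

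The main obstacle I expect is not any individual formula but arranging the fiber-product construction so that \emph{all} of (3), (7), (8), (9) hold \emph{simultaneously and compatibly}: one must start from Kuranishi charts on $\mathcal M_{k+1;\ell}(L(\text{\bf u});\beta)$ that are at once compatible with the boundary decomposition, $\frak S_{\ell}$-invariant, compatible with the forgetful maps of boundary marked points, and have $\text{\rm ev}^{\partial}_0$ weakly submersive, and then verify that fiber-producting with the submersion $\text{\rm Exp}$ preserves each of these. In the case $\mathcal U_a=\{\text{\bf 0}\}$ this package is precisely Lemma \ref{torickuranishi}, which rests on \cite{fukaya:cyc} Corollary 3.1 and the compatibility results of \cite{fooo:book} Appendix A; the present lemma is the routine extension of that package to the thickened target $D_a\times\mathcal U_a$, so the work consists in checking that every transferred property survives the submersive fiber product, which is where the care is needed.
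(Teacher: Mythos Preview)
Your fiber-product approach along the submersion $\text{\rm Exp}$ is exactly what the paper intends; its own proof is a single line deferring to Lemma~\ref{torickuranishi}, together with one remark about item~(7). The difference between you and the paper is precisely in~(7), and there your ``complementary directions'' argument has a gap.

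On a chart $W = V \times_{M^\ell} \text{\bf p}(\mathcal U)$ the fibers of the submersion $W \to V$ are $\ker(d\text{\rm Exp}) \subset T\text{\bf p}(\mathcal U)$, of dimension $\sum_i (d_{\text{\bf p}(i)} - 2\deg D_{\text{\bf p}(i)})$. Since $\text{\rm Exp}$ restricted to each slice $\prod_i D_{\text{\bf p}(i)} \times \{\frak u\}$ is an immersion, $d\pi_{\mathcal U}$ is injective on these fibers, hence its image has dimension strictly less than $\dim \mathcal U(\text{\bf p}) = \sum_i d_{\text{\bf p}(i)}$ whenever some $\deg D_{\text{\bf p}(i)}>0$. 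So $\pi_{\mathcal U}$ is \emph{not} surjective from the vertical directions alone; to cover the rest you need the interior evaluation $(\text{\rm ev}_1,\dots,\text{\rm ev}_\ell)$ on $V$ to be transverse to the $\frak u$-perturbed divisors, and nothing in Proposition~\ref{disckura} provides that. (Your citation of Lemma~\ref{torickuranishi}~(7) here is also off: that item is about the already fiber-producted space $\mathcal M_{k+1;\ell}(L(\text{\bf u});\beta;\text{\bf p})$, not about $\mathcal M_{k+1;\ell}(L(\text{\bf u});\beta)$.)

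The paper's remedy is the one you anticipated in your last paragraph but did not use for~(7): invoke item~(2). On $\pi_{\mathcal U}^{-1}(\text{\bf 0})$ the Kuranishi structure coincides with the $T^n$-invariant one of Lemma~\ref{torickuranishi}; there the transversality of $\text{\rm ev}$ to $\prod_i D_{\text{\bf p}(i)}$ is built in, and $T^n$-transitivity on $L(\text{\bf u})$ forces ${\rm ev}^{\partial}_0$ to be submersive. Both conditions are open, so they persist for all $\frak u$ once $\epsilon$ in Properties~\ref{Uproper}~(2) is taken small enough. This is exactly why the paper writes ``(7) is a consequence of (2) if we take $\epsilon$ \dots\ small enough.''
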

The proof is the same as that of Lemma \ref{torickuranishi}.
We note that (7) is a consequence of (2) if we take $\epsilon$
in Properties \ref{Uproper} (2) to be small enough.

\begin{lem}\label{toricmmultsectU}
There exists a system of multisections on
$\mathcal M_{k+1;\ell}(L(\text{\bf u});\beta;\text{\bf p}(\mathcal U))$
with the following properties.
\begin{enumerate}
\item They are transversal to $\text{\bf 0}$.
\item They coincide with the multisection in Lemma $\ref{toricmmultsect}$
on the induced Kuranishi charts of $\pi_{\mathcal U}^{-1}(\text{\bf 0})$.
\item They are compatible with the description of
the boundary in Lemma $\ref{torickuranishi}$ $(3)$.
\item The restriction of ${\rm ev}^{\partial}_0 \times \pi_{\mathcal U}$ to the zero set of this
multisection is a submersion.
\item They are invariant under the permutation of interior marked points.
\item They are compatible with the forgetful map of
the $1$st, $2$nd, \dots, $k$-th boundary marked  points.
\end{enumerate}
\end{lem}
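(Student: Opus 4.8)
\textbf{Proof plan for Lemma \ref{toricmmultsectU}.}
The statement asserts the existence of a system of multisections on the moduli spaces $\mathcal M_{k+1;\ell}(L(\text{\bf u});\beta;\text{\bf p}(\mathcal U))$ which is transversal to zero, agrees on $\pi_{\mathcal U}^{-1}(\text{\bf 0})$ with the system already fixed in Lemma \ref{toricmmultsect}, is compatible with the boundary decomposition of Lemma \ref{torickuranishiU} (3), makes ${\rm ev}^{\partial}_0 \times \pi_{\mathcal U}$ a submersion on its zero set, is $\frak S_{\ell}$-invariant, and is compatible with the forgetful maps of the first $k$ boundary marked points. The plan is to construct the system by the standard induction over the pair $(\beta\cap\omega,\ell)$ ordered lexicographically, exactly as in the proof of Lemma \ref{toricmmultsect} in \cite{fooo:bulk} Section 6 and the proof of Lemma \ref{existmkulti1}, the only new feature being the presence of the extra parameter space $\mathcal U(\text{\bf p})$ and the requirement that the new system restricts to the old one over $\text{\bf 0}$.

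First I would set up the induction. For the base case $(\beta,\ell)$ minimal (i.e. $\beta=\beta_0$, or the smallest energy disc classes with $\ell=0$), the moduli space has no codimension-one boundary of fibered type, so one only needs a multisection transversal to zero, submersive for ${\rm ev}^{\partial}_0\times\pi_{\mathcal U}$, $\frak S_\ell$-invariant, compatible with the forgetful maps, and restricting over $\text{\bf 0}$ to the chosen one from Lemma \ref{toricmmultsect}. This is a relative transversality statement: one is given a multisection on the closed substratum $\pi_{\mathcal U}^{-1}(\text{\bf 0})$ and must extend it to a transversal multisection on the whole Kuranishi space. Because ${\rm ev}^{\partial}_0\times\pi_{\mathcal U}$ is weakly submersive (Lemma \ref{torickuranishiU} (7)) and $\pi_{\mathcal U}$ is a submersion onto a manifold, such an extension exists by the general theory of multisections as developed in \cite{fukaya-ono}, \cite{fooo:book} Appendix A.2 and refined in \cite{fukaya:cyc} — this is the same mechanism that produces the continuous family of multisections in Lemma \ref{existmkulti1}. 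The $\frak S_\ell$-invariance and forgetful-map compatibility are imposed by averaging / by working on the quotient moduli spaces and pulling back, just as in Lemma \ref{toricmmultsect} (5),(6); since the reference multisection over $\text{\bf 0}$ already has these properties, the extension can be chosen to preserve them.

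For the inductive step, assuming the system has been constructed for all $(\beta',\ell')$ strictly smaller, the boundary $\partial\mathcal M_{k+1;\ell}(L(\text{\bf u});\beta;\text{\bf p}(\mathcal U))$ is by Lemma \ref{torickuranishiU} (3) a union of fiber products of factors indexed by strictly smaller data, so it already carries a multisection induced from the previously chosen ones; this boundary multisection is automatically $\frak S_\ell$-invariant, forgetful-compatible, and restricts correctly over $\text{\bf 0}$ (by the corresponding properties of the factors and by the compatibility of the Lemma \ref{toricmmultsect} system with its own boundary). Because ${\rm ev}^{\partial}_0\times\pi_{\mathcal U}$ restricted to each boundary factor is a submersion on the perturbed zero set (this holds at the boundary by the induction hypothesis applied to the first factor $\mathcal M_{k_1+1;\#\L_1}(L(\text{\bf u});\beta_1;\text{\bf p}_1(\mathcal U))$ and the fact that the fiber product is taken over $L(\text{\bf u})$), one may extend this boundary multisection to the interior keeping transversality, the submersivity of ${\rm ev}^{\partial}_0\times\pi_{\mathcal U}$, and the symmetry and forgetful-map compatibilities, again using the relative extension theorem for multisections together with averaging over $\frak S_\ell$ and restriction to quotients. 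The running-out problem over infinitely many energy levels is handled exactly as in the remark after Lemma \ref{existmkulti1}: fix $E_0,\ell_0$, work with $\beta\cap\omega\le E_0$, $\ell\le\ell_0$, and take an inductive limit.

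The main obstacle is the simultaneous compatibility of all the required properties during the extension step, in particular the interaction between $\frak S_\ell$-invariance, compatibility with the forgetful maps of the first $k$ boundary marked points, submersivity of ${\rm ev}^{\partial}_0\times\pi_{\mathcal U}$, and — crucially — agreement with the pre-existing system of Lemma \ref{toricmmultsect} on $\pi_{\mathcal U}^{-1}(\text{\bf 0})$. One must verify that the multisection already present over $\text{\bf 0}$ (which satisfies submersivity only for ${\rm ev}^{\partial}_0$, not a priori for ${\rm ev}^{\partial}_0\times\pi_{\mathcal U}$) can be taken as the initial datum of a fiberwise extension in the $\mathcal U$-directions that is submersive for the enlarged map; this uses that over $\text{\bf 0}$ the $\mathcal U$-fiber is a point, so $\pi_{\mathcal U}$ imposes no condition there, and that a generic fiberwise perturbation in the $\mathcal U$-directions achieves submersivity of $\pi_{\mathcal U}$ while transversality for ${\rm ev}^{\partial}_0$ is an open condition preserved under small perturbation. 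Once this is arranged at each inductive stage, the construction goes through verbatim as in \cite{fooo:bulk} Section 6 and \cite{fukaya:cyc} Corollary 5.2, so I would present only the new points and refer to those sources for the routine parts.
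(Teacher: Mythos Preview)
Your inductive scheme is the same as the paper's, which simply says the proof is mostly the same as that of Lemma~\ref{toricmmultsect}. The one point the paper singles out, and where your treatment differs, is property~(4): you propose to achieve submersivity of $\pi_{\mathcal U}$ on the zero set by a generic fiberwise perturbation in the $\mathcal U$-directions, whereas the paper observes that (4) is an \emph{automatic} consequence of (1) and (2) once $\epsilon$ is taken sufficiently small. The mechanism is this: near $\pi_{\mathcal U}^{-1}(\text{\bf 0})$ a Kuranishi chart has the form $V\times W$ with $W\subset\mathcal U(\text{\bf p})$ and obstruction bundle pulled back from $V$ (Lemma~\ref{torickuranishiU}~(2)); since the restricted section $s_0$ on $V\times\{\text{\bf 0}\}$ is already transversal, $D_v s_0$ is surjective, so for any transversal extension $s$ and any $\eta\in T_{\text{\bf 0}}W$ there exists $\xi$ with $(\xi,\eta)\in T(s^{-1}(0))$, giving submersivity of $\pi_{\mathcal U}$ on the zero set at points over $\text{\bf 0}$; submersivity of ${\rm ev}^{\partial}_0$ there follows because $T_v(s_0^{-1}(0))\times\{0\}\subset T(s^{-1}(0))$ and Lemma~\ref{toricmmultsect}~(4) applies to $s_0$; openness and compactness then yield (4) everywhere for small $\epsilon$. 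Your genericity argument would also work, but the paper's observation is shorter and removes (4) from the list of conditions that must be engineered during the induction.
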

The proof is mostly the same as the proof of Lemma \ref{toricmmultsect}.
We only observe that (4) is a consequence of (2) if
$\epsilon$ is sufficiently small.
\par
For each $a = 1,\dots,B$, we choose a compactly supported smooth differential form
$\chi_a$ of top
degree on $\mathcal U_a$ such that $\int_{\mathcal U_a} \chi_a = 1$.
For  $\text{\bf p} : \{1,\dots,\ell\} \to \underline B$, we put
$$
\chi_{\text{\bf p}} = \prod_{i=1}^{\ell} \chi_{\text{\bf p}(i)} \in
\Omega(\mathcal U(\text{\bf p})).
$$
Let $h_1,\dots,h_k \in \Omega(L(\text{\bf u}))$.
We then define a differential form
on $L(\text{\bf u})$ by
\begin{equation}\label{q45Uformula}
\mathfrak q_{\ell,k;\beta}^{S}(\text{\bf p};h_1,\ldots,h_k)
= ({\rm ev}^{\partial}_0)! ({\rm ev}^{\partial}_1,\ldots,{\rm ev}^{\partial}_k
,\pi_{\mathcal U})^*(h_1 \wedge \cdots \wedge h_k \wedge \chi_{\text{\bf p}}),
\end{equation}
where we use the evaluation map
$$
({\rm ev}^{\partial}_0,\dots,{\rm ev}^{\partial}_k,\pi_{\mathcal U})
: \mathcal M_{k+1;\ell}(L(\text{\bf u});\beta;\text{\bf p}(\mathcal U))
\to L(\text{\bf u})^{k+1} \times \mathcal U(\text{\bf p})
$$
and $({\rm ev}_0)!$ is the integration along the fibers.
(See \cite[Definition C.1]{fooo:toric1}.)
Here the superscript $S$ stands for smoothing.
By Lemma \ref{toricmmultsectU} (4), the integration along the fibers is
well-defined. By Lemma \ref{toricmmultsectU} (5)
the operators $\mathfrak q_{\ell,k;\beta}^{S}$ is invariant
under the permutation of the components of $\text{\bf p}$.
Therefore by the $\C$-linearity we define
\begin{equation}\label{eq:mapqS}
\frak q^S_{\ell,k;\beta}:
E_{\ell} (\mathcal H[2]) \otimes B_k(\Omega(L(\text{\bf u}))[1]) \to  \Omega(L(\text{\bf u}))[1].
\end{equation}
Using these operators in the same way as in Definition \ref{deformedqdef}
to define
$\{\frak m_{k}^{S,\text{\bf b}}\}$
for
$\text{\bf b} = (\frak b_0,\text{\bf b}_{2;1},\frak b_+,b_+)$.
Thus we have obtained a filtered $A_{\infty}$ algebra
$(CF_{\text{\rm dR}}(L(\text{\bf u});\Lambda_0),\{\frak m_k^{S,\text{\bf b}}
\}_{k=0}^{\infty})$. Here we recall
$$
CF_{\text{\rm dR}}(L(\text{\bf u});\Lambda_0)
= \Omega(L(\text{\bf u}))\widehat{\otimes} \Lambda_0.
$$

\begin{lem}\label{piso1} The filtered $A_\infty$ algebra
$(CF_{\text{\rm dR}}(L(\text{\bf u});\Lambda_0),\{\frak m_k^{S,\text{\bf b}}
\}_{k=0}^{\infty})$
is pseudo-isotopic to
$(CF_{\text{\rm dR}}(L(\text{\bf u});\Lambda_0),\{\frak m_k^{T,\text{\bf b}}
\}_{k=0}^{\infty})$
as a unital filtered $A_{\infty}$ algebra.
\end{lem}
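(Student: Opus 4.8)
\textbf{Proof plan for Lemma \ref{piso1}.}
The plan is to construct an explicit pseudo-isotopy, in the sense of Definition \ref{pisotopydef}, between the two filtered $A_\infty$ structures $\{\frak m_k^{T,\text{\bf b}}\}$ and $\{\frak m_k^{S,\text{\bf b}}\}$, and then invoke Theorem \ref{pisotohomotopyequiv}. The idea is that $\frak m^T$ is built from the fiber product $\mathcal M_{k+1;\ell}(L(\text{\bf u});\beta;\text{\bf p})$ with the $T^n$-invariant divisors $D_{\text{\bf p}(i)}$ inserted directly, while $\frak m^S$ is built from the same moduli space but with the divisors pushed off along sections in $\mathcal U_a$ and then smoothed against the forms $\chi_a$. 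So I would interpolate between ``inserting $D_a$ itself'' and ``inserting a smoothing of $D_a$'' by scaling the sections in $\mathcal U_a$ by a parameter $s \in [0,1]$.

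Concretely, first I would set up, for each $s\in[0,1]$, the scaled family of sections $s\cdot\mathcal U_a$ (which is allowed since $\mathcal U_a$ is stable under multiplication by $[0,1]$), giving moduli spaces $\mathcal M_{k+1;\ell}(L(\text{\bf u});\beta;\text{\bf p}(s\mathcal U))$ together with Kuranishi structures and multisections depending smoothly on $s$; this is Lemmas \ref{torickuranishiU} and \ref{toricmmultsectU} done in an $s$-parametrized way, and at $s=0$ one recovers $\frak m^T$ (by Lemma \ref{torickuranishiU}(2) and Lemma \ref{toricmmultsectU}(2)) while at $s=1$ one recovers $\frak m^S$. Second, I would form the parametrized moduli space $\bigcup_{s\in[0,1]}\{s\}\times\mathcal M_{k+1;\ell}(L(\text{\bf u});\beta;\text{\bf p}(s\mathcal U))$, analyze its boundary (the usual splitting strata coming from disk bubbling, plus the two endpoint strata $s=0$ and $s=1$), and use integration along the fiber of $\text{\rm ev}_0^\partial$ over this parametrized space to define the operators $\frak c^s_{k,\beta}$. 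Third, I would check that $(\frak m^s,\frak c^s)$ satisfies the pseudo-isotopy relation \eqref{isotopymaineq}: this is a Stokes'-theorem argument over the parametrized moduli space exactly as in \cite{fukaya:cyc} Section 8, where the $d/ds$ term comes from the $s$-derivative of $\chi_{\text{\bf p}}$ pulled back, the $\frak m\circ\frak c$ and $\frak c\circ\frak m$ terms come from the codimension-one splitting boundary, and the unitality (Definition \ref{smoothins}) follows from compatibility with forgetful maps of boundary marked points (Lemma \ref{toricmmultsectU}(6)) together with the same argument proving Theorem \ref{qproperties}(3). Finally, I would feed the result into Definition \ref{deformedqdef} to get the $\text{\bf b}$-deformed operators $\frak m_k^{s,\text{\bf b}}$ and $\frak c_k^{s,\text{\bf b}}$, noting that the deformation is by a convergent sum in $T$-adic topology (as in Lemma \ref{adiccomv1}) so smoothness in $s$ and the pseudo-isotopy relations are preserved.

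The main obstacle I anticipate is not conceptual but bookkeeping: one must verify that the $s$-parametrized Kuranishi structures and multisections can be chosen compatibly at all the splitting strata (so that the induction over $\beta\cap\omega$ and $\ell$ closes), and that the submersivity of $\text{\rm ev}_0^\partial\times\pi_{s\mathcal U}$ on the zero set holds uniformly in $s$ — this uses that $\epsilon$ in Properties \ref{Uproper}(2) is taken small enough, exactly as in the non-parametrized Lemmas \ref{torickuranishiU}(2) and \ref{toricmmultsectU}(4). The degree/dimension shift by $\sum d_{\text{\bf p}(i)}$ is absorbed by integrating against the top-degree forms $\chi_a$, so the final operators land in the correct degrees. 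Once the parametrized transversality is in place, the pseudo-isotopy identity is the standard Stokes' argument and Theorem \ref{pisotohomotopyequiv} gives the desired homotopy equivalence of unital filtered $A_\infty$ algebras, completing the proof of Lemma \ref{piso1}.
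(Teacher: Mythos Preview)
Your plan is viable, but the paper takes a different and cleaner route. Rather than varying the moduli space with $s$ by scaling $\mathcal U_a$, the paper works throughout on the \emph{single} thickened moduli space $\mathcal M_{k+1;\ell}(L(\text{\bf u});\beta;\text{\bf p}(\mathcal U))$ with its fixed Kuranishi structure and fixed multisections (Lemmas \ref{torickuranishiU} and \ref{toricmmultsectU}), and instead interpolates the \emph{integrand}: it replaces $\chi_{\text{\bf p}}$ by the distributional form $\chi^s_{\text{\bf p}} = \prod_i\bigl(s\,\chi_{\text{\bf p}(i)} + (1-s)\,\delta^{\text{\bf p}(i)}_{\text{\bf 0}}\bigr)$, where $\delta^a_{\text{\bf 0}}$ is the delta form at the origin of $\mathcal U_a$. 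The submersivity of $\text{ev}_0^\partial \times \pi_{\mathcal U}$ on the zero set (Lemma \ref{toricmmultsectU}(4)) is precisely what makes the pull-back of such distributional forms well-defined and smooth on $L(\text{\bf u})$. At $s=0$ the delta form restricts the correspondence to $\pi_{\mathcal U}^{-1}(\text{\bf 0})$, recovering $\frak m^T$ via items (2) of those lemmas; at $s=1$ one has $\frak m^S$. The operators $\frak c^s$ come from a distributional primitive $\kappa_a$ with $d\kappa_a = \chi_a - \delta^a_{\text{\bf 0}}$: one replaces a single factor $\chi^s_{\text{\bf p}(i)}$ in the product by $\kappa_{\text{\bf p}(i)}$. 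The pseudo-isotopy relation \eqref{isotopymaineq} then drops out of Stokes' theorem on the \emph{fixed} moduli space together with $\frac{d}{ds}\chi^s_a = \chi_a - \delta^a_{\text{\bf 0}} = d\kappa_a$.

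The payoff of the paper's approach is that it eliminates exactly the bookkeeping obstacle you anticipate: there is no $s$-parametrized family of Kuranishi structures or multisections to construct, no compatibility across $s$ at the splitting strata to verify, and no uniform-in-$s$ submersivity to check. Your scaling approach should also go through, but the degeneration of $\text{Exp}_s$ as $s\to 0$ (it fails to submerse onto $M$, collapsing to the inclusion of $D_a$) means the parametrized Kuranishi data near $s=0$ must be set up with some care; the paper's single-space-with-varying-current trick sidesteps this entirely.
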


\begin{proof}
Let $\delta^a_{\text{\bf 0}}$ be the distributional $d_a$-form on $\mathcal U_a$
supported at the origin $\text{\bf 0}$ and satisfy $\int \delta^a_{\text{\bf 0}} = 1$.
(Namely it is the delta function times the volume form.)
We also take a distributional $(d_a -1)$-form $\kappa_a$ on $\mathcal U_a$
with the following properties.
\begin{proper}
\begin{enumerate}
\item $d\kappa_a = \chi_a - \delta^a_{\text{\bf 0}}$.
\item $\kappa_a$ is smooth outside the origin.
\end{enumerate}
\end{proper}
We put
\begin{equation}
\chi^s_a = s \chi_a + (1-s)  \delta^a_{\text{\bf 0}}
\end{equation}
and
$$
\chi_{\text{\bf p}}^s = \prod_{i=1}^{\ell} \chi^s_{\text{\bf p}(i)}
$$
which defines a distributional $\sum d_{\text{\bf p}(i)}$ form on $\mathcal U(\text{\bf p})$.
We then define
\begin{equation}
\mathfrak q_{\ell,k;\beta}^{S,s}(\text{\bf p};h_1,\ldots,h_k)
= ({\rm ev}^{\partial}_0)! ({\rm ev}^{\partial}_1,\ldots,{\rm ev}^{\partial}_k)^*(h_1 \wedge \cdots \wedge h_k \wedge \chi_{\text{\bf p}}^s).
\label{q45Usformula}\end{equation}
Note that because $\chi_{\text{\bf p}}^s$ is only a distributional form,
existence of the pull-back thereof is not automatic. However
using Lemma \ref{toricmmultsectU} (4) we can show that the pull-back exists and
the right hand side of (\ref{q45Usformula}) gives rise to a smooth differential form.
\par
By extending the definition \eqref{q45Usformula} multi-linearly, we obtain the homomorphism
\begin{equation}\label{eq:mapqSs}
\frak q^{S,s}_{\ell,k;\beta}:
E_{\ell} (\mathcal H[2]) \otimes B_k(\Omega(L(\text{\bf u}))[1]) \to  \Omega(L(\text{\bf u}))[1].
\end{equation}
We use them to define $\{\frak m_k^{S,s,\text{\bf b}}\}$ in the same way as in Definition \ref{deformedqdef}.
Then it is smooth (with respect to $s$ coordinate) in the sense of
Definition \ref{def:smoothop}.
\par
\begin{sublem}
$(CF_{\text{\rm dR}}(L(\text{\bf u});\Lambda_0),\{\frak m_k^{S,s,\text{\bf b}}
\}_{k=0}^{\infty})$
is a unital filtered $A_{\infty}$ algebra.
Moreover we have:
$$
\frak m_k^{S,0,\text{\bf b}} = \frak m_k^{T,\text{\bf b}},
\qquad
\frak m_k^{S,1,\text{\bf b}} = \frak m_k^{S,\text{\bf b}}.
$$
\end{sublem}
The proof is easy and  omitted.
\par
We next denote
$$
\kappa_{i,\text{\bf p}}^s
=
\chi^s_{\text{\bf p}(1)} \wedge
\cdots \wedge \chi^s_{\text{\bf p}(i-1)}
\wedge
\kappa_{\text{\bf p}(i)}
\wedge
\chi^s_{\text{\bf p}(i+1)} \wedge
\cdots \wedge \chi^s_{\text{\bf p}(\ell)}
$$
and define
\begin{equation}
\aligned
&\mathfrak{qc}_{\beta;\ell,k}^{S,s}(\text{\bf p};h_1,\ldots,h_k) \\
&=
\sum_{i=1}^{\ell} (-1)^{*(i)} ({\rm ev}^{\partial}_0)! 
({\rm ev}^{\partial}_1,\ldots,{\rm ev}^{\partial}_k)^*(h_1 \wedge \cdots \wedge h_k \wedge \kappa_{i,\text{\bf p}}^s).
\endaligned
\label{qc45Usformula}\end{equation}
\par
See Remark \ref{remarksign} for the sign.
In the same way as the operator $\frak q^{S,s}_{\ell,k;\beta}$ defines  $ \frak m_k^{S,s,\text{\bf b}}$,
the operator $\mathfrak{qc}_{\beta;\ell,k}^{S,s}$ induces an operator, which
we write $ \frak c_k^{S,s,\text{\bf b}}$.
Using Stokes' formula (\cite[Lemma C.9]{fooo:toric1}) and Composition formula (\cite[Lemma C.10]{fooo:toric1}),
together with Lemmas \ref{torickuranishiU}, \ref{toricmmultsectU}, we easily derive that
$$
(CF_{\text{\rm dR}}(L(\text{\bf u});\Lambda_0),\{\frak m_k^{S,s,\text{\bf b}}
\}_{k=0}^{\infty},\{\frak c_k^{S,s,\text{\bf b}}
\}_{k=0}^{\infty})
$$
is the required pseudo-isotopy.
The proof of Lemma \ref{piso1} is complete.
\end{proof}

\subsection{Wrap-up of the proof of Proposition \ref{Ainfinityequiv}.}
\label{subsec:CompletionProposition}

In this subsection, we construct a pseudo-isotopy between
$(CF_{\text{\rm dR}}(L(\text{\bf u});\Lambda_0),\{\frak m_k^{S,\text{\bf b}}
\}_{k=0}^{\infty})$
(which is defined in Subsection \ref{subsec:Smoothing})
and
$(CF_{\text{\rm dR}}(L(\text{\bf u});\Lambda_0),\{\frak m_k^{\text{\bf b}}
\}_{k=0}^{\infty})$
(which is defined in Definition \ref{deformedqdef}.)
\par
Together with Theorem  \ref{pisotohomotopyequiv} and
Lemma \ref{piso1},
this will complete the proof of Proposition \ref{Ainfinityequiv}.
\par
In Section \ref{sec:appendix2}, we already proved that
the homotopy equivalence class of
$$
(CF_{\text{\rm dR}}(L(\text{\bf u});\Lambda_0),\{\frak m_k^{\text{\bf b}}
\}_{k=0}^{\infty})
$$
is independent of the choice of de Rham representative $\text{\bf b}$.
We make this choice more specifically below.
\par
Let $D_a$ be as in the beginning of Subsection \ref{subsec:Smoothing}.
We put
\begin{equation}\label{repbetahow}
\frak b_a = \text{\rm Exp}!  (\pi_{\mathcal U}^*\chi_a),
\end{equation}
where we use the map
$(\text{\rm Exp},\pi_{\mathcal U}) : D_a \times \mathcal U_a
\to M \times \mathcal U_a$ as a correspondence.
Clearly $\frak b_a $ is a de Rham representative of the
Poincar\'e dual to $[D_a]$.
The de Rham cohomology classes $\{[\frak b_a]\}_{a=1}^{B}$ form a basis of $\bigoplus_{k\ne 0}H^k(M;\C)$.
We use them to specify the de Rham representatives of the elements of
$\bigoplus_{k\ne 0}H^k(M;\Lambda)$.
(We represent the $0$-th cohomology class by the constant function.)
\par
We next review two Kuranishi structures
on $\mathcal M_{k+1;\ell}(L(\text{\bf u});\beta;\text{\bf p}(\mathcal U))$
and their perturbations
that will enter in the construction of
a pseudo-isotopy used in the proof of Proposition \ref{Ainfinityequiv}.
\par\medskip
\noindent{\bf (Kuranishi structure and perturbations 1)}
Consider the natural projection
\begin{equation}\label{projectionwithoutp}
\pi :
\mathcal M_{k+1;\ell}(L(\text{\bf u});\beta;\text{\bf p}(\mathcal U))
\to
\mathcal M_{k+1;\ell}(L(\text{\bf u});\beta).
\end{equation}
We have chosen and fixed a Kuranishi structure on
$\mathcal M_{k+1;\ell}(L(\text{\bf u});\beta)$ in Proposition
\ref{disckura}.
We pull it back by the map (\ref{projectionwithoutp}).
It defines a Kuranishi structure $\mathcal K_1$.
\par
In Lemma \ref{existmkulti1} we took and fixed a CF-perturbation
on $\mathcal M_{k+1;\ell}(L(\text{\bf u});\beta)$.
We pull it back by the map (\ref{projectionwithoutp})
and obtain a CF-perturbation
of the Kuranishi structure $\mathcal K_1$.
We denote it by $\widehat{\frak S_1} = \{\widehat{\frak S_1^{\epsilon}}\}$.
We use them to define
$\mathfrak q_{\ell,k;\beta}^{\widehat{\frak S_1^{\epsilon}}}$ by
\begin{equation}
\mathfrak q_{\ell,k;\beta}^{\widehat{\frak S_1^{\epsilon}}}(\text{\bf p};h_1,\ldots,h_k)
= ({\rm ev}^{\partial}_0)! (({\rm ev}^{\partial}_1,\ldots,{\rm ev}^{\partial}_k
,\pi_{\mathcal U})^*(h_1 \wedge \cdots \wedge h_k \wedge \chi_{\text{\bf p}});\widehat{\frak S_1^{\epsilon}}),
\label{q45frasformula}\end{equation}
where
we use the evaluation map
$$
({\rm ev}^{\partial}_0,\dots,{\rm ev}^{\partial}_k,\pi_{\mathcal U})
: \mathcal M_{k+1;\ell}(L(\text{\bf u});\beta;\text{\bf p}(\mathcal U))
\to L(\text{\bf u})^{k+1} \times \mathcal U(\text{\bf p}),
$$
and the CF-perturbation $\widehat{\frak S_1}$ to define integration along
the fiber in the right hand side. See \cite[Definitions 7.78 and 9.13]{fooo:tech2}.
\begin{lem} Let $\mathfrak q_{\ell,k;\beta}$ be the map defined in $(\ref{defqformula})$.
Then we have
$\mathfrak q_{\ell,k;\beta}^{\widehat{\frak S_1^{\epsilon}}}
= \mathfrak q_{\ell,k;\beta}$.
\end{lem}
This lemma is obvious from the definition and (\ref{repbetahow}).
\par\medskip
\noindent{\bf (Kuranishi structure and perturbations 2)}
In Lemma \ref{torickuranishiU},
we equipped $\mathcal M_{k+1;\ell}(L(\text{\bf u});\beta;\text{\bf p}(\mathcal U))$
with a Kuranishi structure.
We name the Kuranishi structure $\mathcal K_2$.
In Lemma \ref{toricmmultsectU},
we took a multisection of $\mathcal K_2$.
We name the multisection $\frak s_2$.
They determine the operators
$\mathfrak q_{\ell,k;\beta}^{S}$ by (\ref{q45Uformula}).
\par\smallskip
Thus we have described two systems of Kuranishi structures and perturbations.
We next define a system of
Kuranishi structures and CF-perturbations on
$[0,1]\times \mathcal M_{k+1;\ell}(L(\text{\bf u});\beta;\text{\bf p}(\mathcal U))$
which interpolate the two systems.
\par
We define
$$
\widehat{\text{\rm ev}}_{i}^{\partial}
: [0,1]\times \mathcal M_{k+1;\ell}(L(\text{\bf u});\beta;\text{\bf p}(\mathcal U))
\to [0,1] \times L(\text{\bf u})
$$
by
$\widehat{\text{\rm ev}}_{i}^{\partial} = (\pi_s, {\text{\rm ev}}_{i}^{\partial})$
where $\pi_s$ is the projection to the $[0,1]$ factor.
(We use $s$ as the coordinate of this factor.)

\begin{lem}\label{torickuranishiU01}
\begin{enumerate}
\item
$[0,1]\times \mathcal M_{k+1;\ell}(L(\text{\bf u});\beta;\text{\bf p}(\mathcal U))$ has
a Kuranishi structure with corners.
\item
Its restriction to
$\{ 0\} \times \mathcal M_{k+1;\ell}(L(\text{\bf u});\beta;\text{\bf p}(\mathcal U))$ coincides with $\mathcal K_1$
and its restriction to
$\{ 1 \} \times \mathcal M_{k+1;\ell}(L(\text{\bf u});\beta;\text{\bf p}(\mathcal U))
$ coincides with $\mathcal K_2$.
\item
Its normalized boundary is described by the union of
$$\partial ([0,1]) \times \mathcal M_{k_1+1;\#\L_1}(L(\text{\bf u});\beta_1;\text{\bf p}_1(\mathcal U)).
$$
and the union of fiber products
\begin{equation}\label{28211}
\aligned
&\Big( [0,1]\times \mathcal M_{k_1+1;\#\L_1}(L(\text{\bf u});\beta_1;\text{\bf p}_1(\mathcal U))
\Big) \\
&{}_{\widehat{\rm ev}^{\partial}_0}\times_{\widehat{\rm ev}^{\partial}_i}
\Big([0,1]\times \mathcal M_{k_2+1;\#\L_2}(L(\text{\bf u});\beta_2;\text{\bf p}_2(\mathcal U))\Big)
\endaligned
\end{equation}
where the union is taken over all $(\L_1,\L_2) \in \text{\rm Shuff}(\ell)$,
$k_1,k_2$ with $k_1 + k_2 = k$ and $\beta_1,\beta_2 \in H_2(M,L(\text{\bf u});\Z)$ with
$\beta = \beta_1+\beta_2$. We put $
\text{\rm Split}((\mathbb L_1,\mathbb L_2),\text{\bf p}) = (\text{\bf
p}_1,\text{\bf p}_2).
$
\item
The (virtual) dimension is given by
\begin{equation}
\aligned
&\dim \mathcal M_{k+1;\ell}(L(\text{\bf u});\beta;\text{\bf p}(\mathcal U))
\\
&= n + \mu_{L(\text{\bf u})}(\beta) + k - 1 + 2\ell - \sum_{i=1}^{\ell} 2\deg D_{\text{\bf p}(i)}
+ \sum_{i=1}^{\ell} d_{\text{\bf p}(i)}.
\endaligned
\end{equation}
\item The evaluation map $\widehat{\rm ev}^{\partial}_i$ at any boundary marked point of
$\mathcal M_{k+1;\ell}(L(\text{\bf u});\beta)$
defines a map on $\mathcal M_{k+1;\ell}(L(\text{\bf u});\beta;\text{\bf p}(\mathcal U))$, which we also denote by
$\widehat{\rm ev}^{\partial}_i$.
It is compatible with $(3)$.
\item
We can define  a coherent system of orientations on the Kuranishi structures
 so that it is compatible with $(3)$.
\item
$\widehat{\rm ev}^{\partial}_0 \times \pi_{\mathcal U}$ is weakly submersive.
\item
The Kuranishi structure is invariant under the permutation of interior marked points.
\item
The Kuranishi structure is compatible with the forgetful map  of
the $1$st, $2$nd, \dots, $k$-th boundary marked points.
(We do not require that it is compatible with the forgetful map of the $0$-th marked point.)
\end{enumerate}
\end{lem}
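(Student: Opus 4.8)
\textbf{Proof proposal for Lemma \ref{torickuranishiU01}.}

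The plan is to construct the desired Kuranishi structure on $[0,1] \times \mathcal M_{k+1;\ell}(L(\text{\bf u});\beta;\text{\bf p}(\mathcal U))$ by an interpolation argument over the parameter $s \in [0,1]$, exactly parallel to the construction of the two endpoint structures $\mathcal K_1$ and $\mathcal K_2$, and then to verify each of the enumerated properties (1)--(9) one at a time. First, I would recall that both $\mathcal K_1$ and $\mathcal K_2$ are built from the same underlying moduli space $\mathcal M_{k+1;\ell}(L(\text{\bf u});\beta)$ together with the fiber product with $\text{\bf p}(\mathcal U)$; the difference between them is only in which obstruction bundle data one pulls back (namely, whether one first takes the fiber product and then chooses Kuranishi charts, or pulls back the pre-chosen charts along the projection $\pi$ of \eqref{projectionwithoutp}). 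Since the fiber product $\text{\bf p}(\mathcal U)$ enters through the weakly submersive evaluation maps $\text{\rm ev}_i$ and such fiber products of Kuranishi structures along weak submersions are again Kuranishi structures (as established in \cite{fooo:book} Appendix A, used repeatedly throughout this paper, e.g.\ in Proposition \ref{disckura} and Lemma \ref{torickuranishiU}), one can always interpolate linearly between the two choices of obstruction data over $[0,1]$, after a small homotopy near $s = 0, 1$ to make the family smooth in $s$, precisely as was done in the proof of Lemma \ref{piso1} and in the constructions of parametrized moduli spaces such as $\mathcal M_\ell(para;H^\varphi,J^\varphi;**,[\gamma,w])$ in Proposition \ref{pantsBULKkurapara}.

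Next I would address the boundary description (3). The boundary of $[0,1] \times X$ for a space $X$ with Kuranishi corners is $\partial([0,1]) \times X \cup [0,1] \times \partial X$, so (3) follows by combining the trivial $\partial([0,1]) = \{0\}\sqcup\{1\}$ contribution with the boundary formula for $\mathcal M_{k+1;\ell}(L(\text{\bf u});\beta;\text{\bf p}(\mathcal U))$ from Lemma \ref{torickuranishiU} (3), noting that the fiber products in \eqref{28211} are taken over the enlarged evaluation maps $\widehat{\rm ev}^{\partial}_i = (\pi_s, {\rm ev}^{\partial}_i)$, which forces the $[0,1]$-coordinates of the two factors to agree (this is the reason the fiber product is over $[0,1] \times L(\text{\bf u})$ rather than just $L(\text{\bf u})$). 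The compatibility at $s = 0$ and $s = 1$ with $\mathcal K_1$ and $\mathcal K_2$ respectively is (2), which one arranges by hand in the interpolation, using that the homotopy near the endpoints is stationary. The dimension formula (4) is immediate: it is the dimension of $\mathcal M_{k+1;\ell}(L(\text{\bf u});\beta;\text{\bf p}(\mathcal U))$ from Lemma \ref{torickuranishiU} (4) plus one, coming from the $[0,1]$-factor. The evaluation-map compatibility (5), orientation (6), weak submersivity of $\widehat{\rm ev}^{\partial}_0 \times \pi_{\mathcal U}$ (7), $\frak S_\ell$-invariance (8), and forgetful-map compatibility for the boundary marked points $1,\dots,k$ (9) are all inherited from the corresponding properties in Lemmas \ref{torickuranishi} and \ref{torickuranishiU} together with the fact that the $[0,1]$-factor carries these structures trivially; in particular (7) uses that $\pi$ of \eqref{projectionwithoutp} together with the weakly submersive ${\rm ev}^{\partial}_0$ of Proposition \ref{disckura} (7) gives a weakly submersive map, and that $\epsilon$ in Properties \ref{Uproper} (2) is chosen small enough, exactly as in Lemma \ref{torickuranishiU}.

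The main obstacle will be the same one that is always at the heart of such statements: arranging the interpolated Kuranishi structure over $[0,1]$ so that it is simultaneously compatible at the corners — that is, so that the restriction to each boundary stratum \eqref{28211} agrees with the fiber product of the already-constructed structures on the lower-dimensional pieces, and so that this compatibility holds coherently under iteration to deeper corners. This is handled by the standard inductive construction over the filtration by $\beta \cap \omega$ and by the number of marked points, choosing the interpolating obstruction data on the boundary first (where it is determined by the inductive hypothesis) and then extending inward, which is possible by the general extension theory of Kuranishi structures; this is the technique used, e.g., in the proof of Lemma \ref{parakuralem} and throughout \cite{fukaya-ono, fooo:book}. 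As in the previous such lemmas in this paper, the verification is a straightforward, if lengthy, adaptation and so one can legitimately say that the proof is the same as that of Proposition \ref{connkura} and Lemmas \ref{torickuranishi}, \ref{torickuranishiU} and omit the routine details, noting only the points above where the $[0,1]$-parameter and the enlarged evaluation maps enter.
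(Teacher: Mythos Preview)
Your proposal is correct and takes essentially the same approach as the paper, which simply states that the proof is the same as in Lemma \ref{torickuranishi} and omits it. In fact you have supplied considerably more detail than the paper itself, correctly identifying the $[0,1]$-interpolation, the boundary decomposition via $\partial([0,1]\times X)=\partial[0,1]\times X\cup[0,1]\times\partial X$, and the inductive extension over the energy filtration as the main ingredients.
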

The proof is the same as that of Lemma \ref{torickuranishi} and is omitted.
\begin{lem}\label{toricmmultsectU01}
There exists a system of CF-perturbations
$\widehat{\frak S} = \{\widehat{{\frak S}^{\epsilon}}\}$
of the Kuranishi structure on
$[0,1]\times \mathcal M_{k+1;\ell}(L(\text{\bf u});\beta;\text{\bf p}(\mathcal U))$
given in Lemma $\ref{torickuranishiU01}$
with the following properties:
\begin{enumerate}
\item They are transversal to $0$.
\item Its restriction to
$\{ 0\} \times \mathcal M_{k+1;\ell}(L(\text{\bf u});\beta;\text{\bf p}(\mathcal U))
$ coincides with $\widehat{\frak S_1}$
and its restriction to
$\{ 1\} \times \mathcal M_{k+1;\ell}(L(\text{\bf u});\beta;\text{\bf p}(\mathcal U))$ coincides with $\frak s_2$.
\item They are compatible with the description of
the boundary in Lemma $\ref{torickuranishiU01}$ $(3)$.
\item The map
$\widehat{\rm ev}^{\partial}_0 \times \pi_{\mathcal U}$ is
strongly submersive with respece to $\widehat{\frak S}$.
\item They are invariant under the permutation of interior marked points.
\item They are compatible with the forgetful map of
the $1$st, $2$nd, \dots, $k$-th boundary marked points.
\end{enumerate}
\end{lem}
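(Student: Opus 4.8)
The plan is to construct the family of multisections on $[0,1]\times \mathcal M_{k+1;\ell}(L(\text{\bf u});\beta;\text{\bf p}(\mathcal U))$ by the standard relative induction over the pair $(\beta\cap\omega,\ell)$ ordered lexicographically, exactly parallel to the proofs of Lemma \ref{existmkulti1} and Lemma \ref{toricmmultsectU} (which rest on \cite{fukaya:cyc} Corollary 5.2 and, for the running-out issue, on \cite{fooo:book} Subsection 7.2.3). Because of the required compatibility with the forgetful map of the $1$st,\dots,$k$-th boundary marked points, I would first carry out the whole construction on the moduli spaces $[0,1]\times\mathcal M_{1;\ell}(L(\text{\bf u});\beta;\text{\bf p}(\mathcal U))$ and then pull the multisections back under the forgetful maps; similarly, the $\mathfrak S_\ell$-invariance will be built in by performing every extension step $\mathfrak S_\ell$-equivariantly (averaging over the group when necessary, or equivalently working on the quotient by the symmetric group action). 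So the real content is the inductive construction of an $\mathfrak S_\ell$-invariant family of multisections on $[0,1]\times\mathcal M_{1;\ell}(L(\text{\bf u});\beta;\text{\bf p}(\mathcal U))$, compatible with the boundary description of Lemma \ref{torickuranishiU01} (3), restricting to the prescribed data at $s=0,1$, and such that $\widehat{\rm ev}^{\partial}_0\times\pi_{\mathcal U}$ is a submersion on the zero set.

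For the inductive step, the boundary $\partial\big([0,1]\times\mathcal M_{1;\ell}(L(\text{\bf u});\beta;\text{\bf p}(\mathcal U))\big)$ is a union of the two ``ends'' $\{0\}\times\mathcal M$ and $\{1\}\times\mathcal M$, on which the family of multisections is prescribed to be $\mathfrak s_1$ and $\mathfrak s_2$ respectively (here I use that the Kuranishi structure restricts to $\mathcal K_1$ and $\mathcal K_2$ at the two ends, Lemma \ref{torickuranishiU01} (2)), together with the fiber-product strata \eqref{28211}. On each stratum of type \eqref{28211} at least one factor has strictly smaller energy or strictly fewer interior marked points, so by the inductive hypothesis the family of multisections is already determined there as the fiber product of the previously constructed data. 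These prescriptions agree along the corners precisely because $\mathfrak s_1$ and $\mathfrak s_2$ are themselves boundary-compatible in the senses of Lemmas \ref{existmkulti1} and \ref{toricmmultsectU}. One then invokes the relative extension theorem for continuous families of multisections of Kuranishi structures to extend from the boundary to all of $[0,1]\times\mathcal M_{1;\ell}(L(\text{\bf u});\beta;\text{\bf p}(\mathcal U))$; since $\widehat{\rm ev}^{\partial}_0\times\pi_{\mathcal U}$ is weakly submersive (Lemma \ref{torickuranishiU01} (7)), the extension can be chosen so that the restriction of $\widehat{\rm ev}^{\partial}_0\times\pi_{\mathcal U}$ to its zero set is a submersion, and the equivariance with respect to $\mathfrak S_\ell$ is preserved throughout. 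Finally, pulling back along the forgetful maps yields the family on $\mathcal M_{k+1;\ell}(L(\text{\bf u});\beta;\text{\bf p}(\mathcal U))$ for all $k$, which gives properties (1)--(6).

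The step I expect to be the main obstacle is the simultaneous bookkeeping of all the compatibility requirements in the inductive scheme: one must verify that the two prescribed endpoint data $\mathfrak s_1$ (a pullback of the fixed family on $\mathcal M_{k+1;\ell}(L(\text{\bf u});\beta)$ under the projection \eqref{projectionwithoutp}) and $\mathfrak s_2$ (the intrinsic family of Lemma \ref{toricmmultsectU}) have boundary restrictions that are consistent with each other and with the fiber-product data from lower stages, so that the relative construction can get off the ground — and that this consistency is itself produced by the same inductive scheme, i.e. the three inductions (for $\mathfrak s_1$, for $\mathfrak s_2$, and for the relative family over $[0,1]$) must be run in lockstep. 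As usual, one also has to truncate at a fixed energy level $E_0$ and a fixed number $\ell_0$ of interior marked points to deal with the running-out problem, performing the above argument for each $(E_0,\ell_0)$ and then passing to the limit, exactly as in \cite{fooo:book} Subsection 7.2.3 and \cite{fukaya:cyc} Section 14; no new analytic input beyond those references is needed.
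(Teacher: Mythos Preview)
Your proposal is correct and follows exactly the approach the paper has in mind: the paper simply says ``The proof is the same as the proof of Lemma \ref{toricmmultsect} and is omitted,'' and that proof in turn is the relative induction over $(\beta\cap\omega,\ell)$ from \cite{fukaya:cyc} Corollary 5.2 that you spell out. Your additional remarks on running the three inductions in lockstep and on the running-out truncation are the natural details underlying that omitted argument.
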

As we show in Remark \ref{multiseccont} we can identify a multisection as a special
case of the CF-perturbation.
Taken this fact and the existence theorem of CF-perturbation
(\cite[Theorem 7.49]{fooo:tech2}) into account,
the proof is the same as that of Lemma \ref{toricmmultsect}
and so omitted.
\par
We now define operators \index{$\frak q^{para}_{\ell,k;\beta}$}
$$
\frak q^{para}_{\ell,k;\beta} :
E_{\ell} (\mathcal H[2]) \otimes B_k(\Omega(L(\text{\bf u}))[1]) \to  \Omega([0,1]\times L(\text{\bf u}))[1]
$$
by
\begin{equation}\label{q45Uformula01}
\mathfrak q_{\ell,k;\beta}^{para}(\text{\bf p};h_1,\ldots,h_k)
= (\widehat{\rm ev}^{\partial}_0)! ((\widehat{\rm ev}^{\partial}_1,\ldots,
\widehat{\rm ev}^{\partial}_k
,\pi_{\mathcal U})^*(h_1 \wedge \cdots \wedge h_k \wedge \chi_{\text{\bf p}}); \widehat{\frak S^{\epsilon}}),
\end{equation}
where we use the evaluation map
$$
(\widehat{\rm ev}^{\partial}_0,\dots,\widehat{\rm ev}^{\partial}_k,\pi_{\mathcal U})
: \mathcal M_{k+1;\ell}(L(\text{\bf u});\beta;\text{\bf p}(\mathcal U))
\to ([0,1]\times L(\text{\bf u}))^{k+1} \times \mathcal U(\text{\bf p})
$$
and $(\widehat{\rm ev}_0)!$ is the integration along the fibers
defined via the CF-perturbation $\widehat{\frak S^{\epsilon}}$.
(Definition \ref{def320222}, \cite[Definition 7.78]{fooo:tech2}.)
\par
We decompose $\mathfrak q_{\ell,k;\beta}^{para}$ into the sum of the form which does
not contain $ds$ and one which contains $ds$ and write:
$$
\mathfrak q_{\ell,k;\beta}^{para}
= \mathfrak q_{\ell,k;\beta}^{para,1}
+ ds \wedge \mathfrak q_{\ell,k;\beta}^{para,2}:
$$
More specifically, the two summands are given by
$$
\mathfrak q_{\ell,k;\beta}^{para,2} = \frac{\del}{\del s} \rfloor \mathfrak q_{\ell,k;\beta}^{para},
\quad \mathfrak q_{\ell,k;\beta}^{para,1} = \mathfrak q_{\ell,k;\beta}^{para} - ds \wedge \mathfrak q_{\ell,k;\beta}^{para,2}.
$$
Now we put
\begin{equation}\label{mkdefeqP}
\aligned
&\frak m_{k}^{\text{\bf b}}(x_1,\ldots,x_k) \\
&= \sum_{\beta\in H_2(M,L:\Z)}
\sum_{\ell=0}^{\infty}\sum_{m_0=0}^{\infty}\cdots
\sum_{m_k=0}^{\infty}T^{\omega\cap \beta}
\frac{\exp(\text{\bf b}_{2;1} \cap \beta)}{\ell!}\\
&\hskip2cm
\frak
q^{para,1}_{\ell,k+\sum_{i=0}^k m_i;\beta}(\frak b_{+}^{\otimes\ell};
b_{+}^{\otimes m_0},x_1,b_{+}^{\otimes m_1},\ldots,
b_{+}^{\otimes m_{k-1}},x_k,b_{+}^{\otimes m_k}),
\endaligned
\end{equation}
\begin{equation}\label{mkdefecP}
\aligned
&\frak c_{k}^{\text{\bf b}}(x_1,\ldots,x_k) \\
&= \sum_{\beta\in H_2(M,L:\Z)}
\sum_{\ell=0}^{\infty}\sum_{m_0=0}^{\infty}\cdots
\sum_{m_k=0}^{\infty}T^{\omega\cap \beta}
\frac{\exp(\text{\bf b}_{2;1} \cap \beta)}{\ell!}\\
&\hskip2cm
\frak
q^{para,2}_{\ell,k+\sum_{i=0}^k m_i;\beta}(\frak b_{+}^{\otimes\ell};
b_{+}^{\otimes m_0},x_1,b_{+}^{\otimes m_1},\ldots,
b_{+}^{\otimes m_{k-1}},x_k,b_{+}^{\otimes m_k}).
\endaligned
\end{equation}
They define maps from
$B_k(\Omega(L(\text{\bf u})) \widehat{\otimes} \Lambda)$ to
$(\Omega([0,1]\times L(\text{\bf u})))\widehat{\otimes} \Lambda
$.
By Lemmas \ref{torickuranishiU01},\ref{toricmmultsectU01},
$\frak m_{k}^{\text{\bf b}}$ and $\frak c_{k}^{\text{\bf b}}$ define a unital pseudo-isotopy
between
$(CF_{\text{\rm dR}}(L(\text{\bf u});\Lambda_0),\{\frak m_k^{S,\text{\bf b}}
\}_{k=0}^{\infty})$
and
$(CF_{\text{\rm dR}}(L(\text{\bf u});\Lambda_0),\{\frak m_k^{\text{\bf b}}
\}_{k=0}^{\infty})$.
The proof of Proposition \ref{Ainfinityequiv} is now complete.
\qed
\medskip

\begin{rem}\label{remarksign}
 We can handle the sign needed in the argument of this section by
the same way as in \cite{fooo:toric1}.
(See the end of  \cite[Appendix C]{fooo:toric1} for the relevant explanation.)
\end{rem}

\begin{rem}\label{multiseccont}
In this remark we explain how we can regard a multisection as a CF-perturbation.
Let $X$ be a space with Kuranishi structure and
${\widetriangle{\mathcal U}} = \{\mathcal U_{\frak p} \mid \frak p \in \frak P\}$ be
a good coordinate system of $X$.
Here $\mathcal U_{\frak p} = (U_{\frak p},\mathcal E_{\frak p},\psi_{\frak p},s_{\frak p})$
is a Kuranishi chart. (Namely $U_{\frak p}$ is an orbifold, $\mathcal E_{\frak p}$ a vector
bundle on it, $s_{\frak p}$ its section, and $\psi_{\frak p}$ is a homeomorphism
from $s_{\frak p}^{-1}(0)$ to an open set of $X$.)
We suppress the coordinate change from the notation of Kuranishi chart for simplicity.
Let $f : (X,\widetriangle{\mathcal U}) \to N$ be a weakly submersive map to a manifold $N$.

We recall that a CF-perturbation is a continuous family parametrized by $\epsilon>0$,
while a multisection does not involve such a parameter.
We just call a multisection trasversal to $\{0\}$ a transveral multisection.
Based on the consturction of a transveral multisection described in \cite[Section 13]{fooo:tech2}, we
can construct an $\epsilon$-parametrized family of multisections. We may regard it as
a multisection defined on $(X \times [0,1),{\widetriangle{\mathcal U}}\times [0,1))$
that restricts to the Kuranishi map at $X \times \{0\}$.
Moreover we may assume that it is transversal to $\{0\}$ on $X \times (0,1)$.
Then for each generic $\epsilon > 0$ the restriction $\frak s_{\epsilon}$ of
$\frak s$ to $X \times \{\epsilon\}$ defines a transversal multisection.
In the situation where we can take a multisection so that the restriction of $f$
to the zero set of the multisection on $(X,\widetriangle{\mathcal U})$
is a submersion, we may require in addition that the restriction of $f$
to the zero set of $\frak s_{\epsilon}$ becomes
a submersion for generic $\epsilon$. This is the case such as that of Lemma \ref{toricmmultsectU01},
Lemma \ref{toricmmultsect}, or the case of $\dim N =0$ (for example (\ref{boundaryop})).

In the latter situation, we will associate to the above constructed multisection a CF-perturbation
$\widetriangle{\mathfrak S}$ of $\widetriangle{\mathcal U}$
with the following properties:
\begin{enumerate}
\item $\widetriangle{\mathfrak S}$ is transversal to $0$.
\item $f$ is strongly submersive with respect to  $\widetriangle{\mathcal U}$.
\item
Let $h$ be a differential form on $(X,\widetriangle{\mathcal U})$
in the sense of Definition \ref{def323232}.
Then for a generic choice of sufficiently small $\epsilon$ we have
\begin{equation}\label{2827}
f!(h;\widetriangle{\mathfrak S^{\epsilon}})
=
f!(h;\mathfrak s_{\epsilon}).
\end{equation}
Here the left hand side is the integration along the fibers defined by
using the CF-perturbation $\widetriangle{\mathfrak S}$ (See Definition \ref{def320222},
\cite[Definition 7.78]{fooo:tech2})
and the right hand side the integration along the fibers
on the zero set of $\frak s_{\epsilon}$, which is defined in
\cite[Definition C.1]{fooo:toric1}.
\end{enumerate}

Now construction of the CF-perturbation $\widetriangle{\mathfrak S}$
is in order.

First, we note that the given multisection $\frak s$ by definition
gives a multisection $\frak s_{\frak p}$ of  $\mathcal U_{\frak p}
\times [0,1)$ for each $\frak p$.
We take an open coverging $U_{\frak p} = \bigcup_{a}U_{\frak p,a}$ so that each $a$
is associated to orbifold charts $(V_{\frak p,a},\Gamma_{\frak p,a},\phi_{\frak p,a})$
inducing a homeomorphism $V_{\frak p,a}/\Gamma_{\frak p,a} \cong
U_{\frak p,a}$.
We also take a trivialization $(V_{\frak p,a} \times E_{\frak p,a})/
\Gamma_{\frak p,a}  \cong
\mathcal E_{\frak p}\vert_{U_{\frak p,a}}
$.
In this coordinate, the multisection $\frak s_{\frak p}$ induces a map
$$
\frak s_{\frak p,a} = (\frak s_{\frak p,a,i})_{i=1}^{\ell_{\frak p,a}}
: V_{\frak p,a} \times [0,1) \to E_{\frak p,a}^{\ell_{\frak p,a}}
$$
together with a group homomorphism $\sigma_{\frak p,a} : \Gamma_{\frak p,a}
\to {\rm Perm}({\ell_{\frak p,a}})$ to the permutation group ${\rm Perm}({\ell_{\frak p,a}})$ of
order $\ell_{\frak p,a} !$ such that for each $i$ it satisfies
$$
\frak s_{\frak p,a,i}(\gamma x,\epsilon) = \frak s_{\frak p,a,\sigma_{\frak p,a}(\gamma)(i)}(x,\epsilon)
$$
for $x \in  V_{\frak p,a}$, $\epsilon \in [0,1)$ and $\gamma \in \Gamma_{\frak p,a}$.

We use this map to define a CF-perturbation $\mathcal S_{\frak p,a}
= (W'_{\frak p,a},\omega_{\frak p,a},\frak s_{\frak p,a})$ in terms of the given
chart so that it satisfies the conditions given in \cite[Definition 7.8]{fooo:tech2}
as follows.

We take an $\ell_{\frak p,a}$ dimensional vector space $W_{\frak p,a}=\R^{\ell_{\frak p,a}}$
and $\Gamma_{\frak p,a}$ acts on $W_{\frak p,a}$ by  $\sigma_{\frak p,a}$.
Let $w_i = (0,\dots,0,1,0,\dots,0) \in W_{\frak p,a}$ with the $i$-th component $1$.
We take a small neighborhood $W_i$ of $w_i$ in $W_{\frak p,a}$
and let $W'_{\frak p,a}$ be the union of $W_i$, $i=1,\dots,\ell_{\frak p,a}$,
which is $\Gamma_{\frak p,a}$ invariant.
We define a map
$
\frak s_{\frak p,a} : V_{\frak p,a} \times [0,1)\times  W'_{\frak p,a}
\to E_{\frak p,a}
$
by setting
$$
\frak s_{\frak p,a}(x,\epsilon,w) = \frak s_{\frak p,a,i}(x,\epsilon)
$$
if $w \in W_i$.
This map is $\Gamma_{\frak p,a}$-equivariant.
We take a compactly supported $\Gamma_{\frak p,a}$-invariant smooth $\ell_{\mathfrak p,a}$-form $\omega_{\frak p,a}$ on
$ W'_{\frak p,a} $ satisfying
$
\int_{W_i} \omega_{\frak p,a} = \frac{1}{\ell_{\frak p,a}}
$
and put
$$
\mathcal S_{\frak p,a} = (W'_{\frak p,a},\omega_{\frak p,a},\frak s_{\frak p,a}).
$$
Then $\mathcal S_{\frak p,a}$ is a CF-perturbation and,
by construction, the equality (\ref{2827}) holds  if $h$ is supported on $U_{\frak p,a}$.

Again by construction we may take them so that the restriction of $\mathcal S_{\frak p,a}$
to $U_{\frak p,a} \cap U_{\frak p,a'}$ is equivalent to the restriction of $\mathcal S_{\frak p',a}$
to $U_{\frak p,a} \cap U_{\frak p,a'}$
in the sense of \cite[Definition 7.16]{fooo:tech2}. (See also Definition \ref{defn32321}.)
 We thus obtain a CF-perturbation
$\mathfrak S_{\frak p}$ of $\mathcal U_{\frak p}$ for each $\frak p$.
By construction, $\{\mathfrak S_{\frak p}\}$ is compatible with coordinate change
in the sense of \cite[Definition 7.47]{fooo:tech2}
(that is, Definition \ref{defn355555} (1),(2) hold)
and  so define a CF-perturbation $\widetriangle{\mathfrak S}$ of $ (X,\widetriangle{\mathcal U})$.
The CF-perturbation $\widehat{\frak S}$ of our
Kuranishi structure is induced from $\widetriangle{\mathfrak S}$ by
\cite[Lemma 9.11]{fooo:tech2}.
Then (\ref{2827}) follows from the fact that $\mathcal S_{\frak p,a}$ satisfies
the equality in case $h$ is in one chart.
\end{rem}

\subsection{Proof of Lemma \ref{ainfinityequivcomp}}
\label{subsec:ainfinityequivcomp}

In this subsection we prove Lemma \ref{ainfinityequivcomp}.
Let
$$
\frak m^{1,\text{\bf b}}_{k} :
B_k((\Omega([0,1]\times L(\text{\bf u}))
\widehat{\otimes} \Lambda) [1]) \to  (\Omega([0,1]\times L(\text{\bf u}))
\widehat{\otimes} \Lambda) [1]
$$
be the filtered $A_{\infty}$ structure
induced from the pseudo-isotopy defined in the proof of Lemma \ref{piso1}.
Let
$$
\frak m^{2,\text{\bf b}}_{k} :
B_k((\Omega([0,1]\times L(\text{\bf u}))
\widehat{\otimes} \Lambda) [1]) \to  (\Omega([0,1]\times L(\text{\bf u}))
\widehat{\otimes} \Lambda) [1]
$$
be the filtered $A_{\infty}$ structure
induced from the pseudo-isotopy constructed in Subsection \ref{subsec:CompletionProposition}.
\par
They induce chain complexes
$$
((\Omega([0,1]\times L(\text{\bf u}))
\widehat{\otimes} \Lambda),\frak m^{1,\text{\bf b}}_{1}),
\quad
((\Omega([0,1]\times L(\text{\bf u}))
\widehat{\otimes} \Lambda),\frak m^{1,\text{\bf b}}_{2}).
$$
We have chain homotopy equivalences
$$
\aligned
&
\text{\rm Eval}_{s=0}:
 ((\Omega([0,1]\times L(\text{\bf u}))
\widehat{\otimes} \Lambda),\frak m^{1,\text{\bf b}}_{1})
\to
(\Omega(L(\text{\bf u})
\widehat{\otimes} \Lambda),\frak m^{T,\text{\bf b}}_{1}), \\
&
\text{\rm Eval}_{s=1}:
 ((\Omega([0,1]\times L(\text{\bf u}))
\widehat{\otimes} \Lambda),\frak m^{1,\text{\bf b}}_{1})
\to
(\Omega(L(\text{\bf u})
\widehat{\otimes} \Lambda),\frak m^{S,\text{\bf b}}_{1}),
\endaligned
$$
and
$$
\aligned
&
\text{\rm Eval}_{s=0}:
 ((\Omega([0,1]\times L(\text{\bf u}))
\widehat{\otimes} \Lambda),\frak m^{2,\text{\bf b}}_{1})
\to
(\Omega(L(\text{\bf u})
\widehat{\otimes} \Lambda),\frak m^{S,\text{\bf b}}_{1}), \\
&
\text{\rm Eval}_{s=1}:
 ((\Omega([0,1]\times L(\text{\bf u}))
\widehat{\otimes} \Lambda),\frak m^{2,\text{\bf b}}_{1})
\to
(\Omega(L(\text{\bf u})
\widehat{\otimes} \Lambda),\frak m^{\text{\bf b}}_{1}),
\endaligned
$$
that are defined by (\ref{Eval}).
\par
Therefore to prove Lemma \ref{ainfinityequivcomp}
it suffices to construct chain maps:
$$\aligned
&i_{\text{\rm qm},\text{\bf b}}^S
: \Omega(M) \widehat{\otimes} \Lambda
\to \Omega(L(\text{\bf u})
\widehat{\otimes} \Lambda;\frak m_1^{T,\text{\bf b}}), \\
&i_{\text{\rm qm},\text{\bf b}}^1
: \Omega(M) \widehat{\otimes} \Lambda
\to  ((\Omega([0,1]\times L(\text{\bf u}))
\widehat{\otimes} \Lambda);\frak m^{1,\text{\bf b}}_{1}), \\
&i_{\text{\rm qm},\text{\bf b}}^2
: \Omega(M) \widehat{\otimes} \Lambda
\to  ((\Omega([0,1]\times L(\text{\bf u}))
\widehat{\otimes} \Lambda);\frak m^{2,\text{\bf b}}_{1}),
\endaligned
$$
that satisfy
$$
\aligned
\text{\rm Eval}_{s=0}\circ i_{\text{\rm qm},\text{\bf b}}^1
&= i_{\text{\rm qm},\text{\bf b}}^T, \qquad
\text{\rm Eval}_{s=1}\circ i_{\text{\rm qm},\text{\bf b}}^1
= i_{\text{\rm qm},\text{\bf b}}^S, \\
\text{\rm Eval}_{s=0}\circ i_{\text{\rm qm},\text{\bf b}}^2
&= i_{\text{\rm qm},\text{\bf b}}^S, \qquad
\text{\rm Eval}_{s=1}\circ i_{\text{\rm qm},\text{\bf b}}^2
= i_{\text{\rm qm},\text{\bf b}}.
\endaligned
$$
We can construct such
$i_{\text{\rm qm},\text{\bf b}}^S$,
$i_{\text{\rm qm},\text{\bf b}}^1$,
$i_{\text{\rm qm},\text{\bf b}}^2$
by modifying the definition of
$i_{\text{\rm qm},\text{\bf b}}$ (\ref{iqmdefformula}) in an obvious way.
The proof of Lemma \ref{ainfinityequivcomp} is complete.
\qed
\medskip

\section{Seidel homomorphism with bulk}
\label{sec:appendix4} \index{Seidel homomorphism with bulk}

In this section we generalize the Seidel homomorphism \cite{seidel:auto}
to a version with bulk deformation. We then generalize, in the next section, the results by Entov-Polterovich
\cite[Section 4]{EP:morphism}  and by McDuff-Tolman \cite{mc-tol} on  the relationship
between
the Seidel homomorphism and the Calabi quasi-morphism.
These generalizations are rather straightforward and do not
require novel ideas.

\subsection{Seidel homomorphism with bulk}
\label{subsec:seibulk}

In this subsection, we present a version of Seidel's construction
\cite{seidel:auto} that incorporates bulk deformations.

Let $H$ be a one-periodic Hamiltonian such that $\phi_H:[0,1] \to \Ham(M,\omega)$
defines a loop, i.e. satisfies $\psi_H = \phi_H^1 =id$. Such a loop is
called a \emph{Hamiltonian loop}. For such $H$, there is a
diffeomorphism $M \to \text{\rm Per}(H)$: For each point $p \in M$, we
assign a periodic orbit defined by
\begin{equation}\label{perppsi}
z_p^H(t) = \phi_H^t(p).
\end{equation}
Then the map
$
p \mapsto z_p^H
$
is a one-to-one correspondence $M \to \text{\rm Per}(H)$.
\par
Let $v : \R \times S^1 \to M$ be any continuous map.
We define $u :  \R \times S^1 \to M$ by
\begin{equation}
u(\tau,t) = \phi_H^t(v(\tau,t)).
\end{equation}

\begin{lem}\label{extendcylinder}
Let $p_-,\, p_+ \in M$. Then
$$
\lim_{\tau \to -\infty} u(\tau,t) = z_{p_-}^H(t),
\quad
\lim_{\tau \to +\infty} u(\tau,t) = z_{p_+}^H(t),
$$
if and only if
$$
\lim_{\tau \to -\infty}v(\tau,t) = p_-,
\quad
\lim_{\tau \to +\infty}v(\tau,t) = p_+.
$$
\end{lem}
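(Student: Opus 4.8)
\textbf{Proof plan for Lemma \ref{extendcylinder}.}
The statement is a purely pointwise/uniform-convergence fact about the relation $u(\tau,t) = \phi_H^t(v(\tau,t))$, where $\phi_H^t$ is the time-$t$ flow of the Hamiltonian loop $H$ (so $\phi_H^0 = \phi_H^1 = \mathrm{id}$). The plan is to exploit that for each fixed $t$, the map $x \mapsto \phi_H^t(x)$ is a diffeomorphism of $M$ depending smoothly on $t \in S^1$, and that $S^1$ is compact. First I would set up notation: for $\tau$ large negative (resp. large positive) we want to compare $u(\tau,t)$ with $z_{p_\mp}^H(t) = \phi_H^t(p_\mp)$, and we have $u(\tau,t) = \phi_H^t(v(\tau,t))$ by definition. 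So the two curves $t \mapsto u(\tau,t)$ and $t \mapsto z_{p_-}^H(t)$ are obtained by applying the same ($t$-dependent) diffeomorphism $\phi_H^t$ to the two points $v(\tau,t)$ and $p_-$ respectively.

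The key step is a uniform continuity argument. Consider the evaluation map $E : S^1 \times M \to M$, $E(t,x) = \phi_H^t(x)$. Since $S^1 \times M$ is compact (as $M$ is compact) and $E$ is continuous, $E$ is uniformly continuous with respect to a fixed Riemannian distance $d$ on $M$ and the product metric on $S^1 \times M$. Hence for any $\varepsilon > 0$ there is $\delta > 0$ such that $d(v(\tau,t), p_-) < \delta$ for all $t$ implies $d(\phi_H^t(v(\tau,t)), \phi_H^t(p_-)) < \varepsilon$ for all $t$, i.e. $d(u(\tau,t), z_{p_-}^H(t)) < \varepsilon$ for all $t$. This gives the implication "$v \to p_-$ uniformly in $t$ as $\tau \to -\infty$ $\Rightarrow$ $u(\tau,\cdot) \to z_{p_-}^H(\cdot)$ uniformly". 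For the converse I would run the same argument with the inverse family: note $v(\tau,t) = (\phi_H^t)^{-1}(u(\tau,t))$, and $(t,x) \mapsto (\phi_H^t)^{-1}(x)$ is again continuous on the compact $S^1 \times M$, hence uniformly continuous; so $u(\tau,\cdot) \to z_{p_+}^H(\cdot) = \phi_H^\cdot(p_+)$ uniformly forces $v(\tau,\cdot) = (\phi_H^\cdot)^{-1}(u(\tau,\cdot)) \to (\phi_H^\cdot)^{-1}(z_{p_+}^H(\cdot)) = p_+$ uniformly. One small point to record: the asymptotic limits in Definition \ref{stripmk0int0} and in the statement are understood as limits of the loops $t \mapsto u(\tau,t)$ in (say) $C^0(S^1,M)$, which by the exponential-decay remark following Definition \ref{stripmk0int0} is the appropriate sense; I would simply phrase everything in terms of $C^0(S^1,M)$-convergence, or equivalently uniform-in-$t$ convergence, so that "$\lim_{\tau\to\pm\infty} v(\tau,t) = p_\pm$" means $v(\tau,\cdot) \to (\text{const }p_\pm)$ uniformly.

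I do not expect a genuine obstacle here; the only thing to be careful about is making the statement precise, namely that the convergences on both sides are uniform in $t \in S^1$ (which is automatic given finite energy and the removable-singularity/exponential-decay discussion already invoked in the paper), and that applying a continuously $t$-varying family of diffeomorphisms of the compact manifold $M$ preserves uniform convergence of loops. Thus the write-up will be short: state that the limits are taken in $C^0(S^1,M)$, introduce $E(t,x)=\phi_H^t(x)$ and its inverse family, invoke uniform continuity on $S^1\times M$, and conclude both implications. If one wants to be even more economical, one can observe that $v \mapsto u$ given by $u(\tau,t)=\phi_H^t(v(\tau,t))$ is a homeomorphism of the relevant mapping spaces whose effect on the constant asymptotic loops at $\mp\infty$ is exactly $p_\mp \mapsto z_{p_\mp}^H$, which is the content of the lemma.
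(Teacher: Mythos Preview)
Your proposal is correct and matches the paper's approach: the paper simply says ``The proof is a straightforward calculation,'' and your uniform-continuity argument using $u(\tau,t)=\phi_H^t(v(\tau,t))$ and $z_{p_\pm}^H(t)=\phi_H^t(p_\pm)$ is exactly the calculation intended. If anything, you have written out more detail than is needed for what amounts to the observation that a continuous $t$-dependent family of homeomorphisms preserves limits of loops.
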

The proof is a straightforward calculation.
For a map $u$ satisfying the above conditions, we define a homology class
$[u] \in H_2(M;\Z)$ by setting $[u] := [v]$. (Note by the asymptotic conditions given
$v$ extends to a map from $S^2$ so
$[v] \in H_2(M;\Z)$ is defined.)
\par
We define a symplectic fibration
$$
\pi : E_{\phi_H} \to \C P^1
$$
with its fiber isomorphic to $(M,\omega)$ as follows.
Let $D_{\pm}$ be two copies of the unit disc in $\C$.
Set  $U_1=D_- \times M$, $U_2=(\R \times S^1) \times M$ and $U_3=D_+ \times M$.
We glue them by the gluing maps
$$
I_-: (-\infty, 0) \times S^1 \times M \to  D_- \setminus \{0\}   \times M, \quad
I_-((\tau,t),x)=(e^{2\pi(\tau + \sqrt{-1}t)}, x)
$$
(where we regard $S^1 = \R/\Z$,) and
$$
I_+: (1,\infty) \times S^1 \times M \to D_+ \setminus \{\infty\} \times M,\quad
I_+((\tau,t),x) = (e^{-2\pi(\tau -1+ \sqrt{-1}t)},(\phi_{H}^{t})^{-1}(x)).
$$
We thus obtain
$$
E_{\phi_H} =  U_1 \cup U_2 \cup U_3.
$$
The projections to the second factor induce a map
$$
\pi : E_{\phi_H} \to D_- \cup (\R \times S^1) \cup D_+ \cong \C P^1.
$$
This defines a locally trivial fiber bundle and
the fiber of $\pi$ is diffeomorphic to $M$.
\par
In fact,  $E_{\phi_H} \to \C P^1$ becomes a Hamiltonian fiber bundle.
See \cite{GLS} for the precise definition of
Hamiltonian fiber bundle and its associated coupling form $\Omega$ that we
use below. We also refer to \cite{schwarz,entov,oh:alan} for their
applications to the Floer theory and spectral invariants.
\par
\begin{lem}\label{lem:coupling} The fibration $E_{\phi_H} \to \C P^1$ is a
Hamiltonian fiber bundle, i.e., it carries a coupling form
$\Omega$ on $E_{\phi_H}$ such that
\begin{enumerate}
\item $\Omega$ is closed and $\Omega|_{E_{\phi_H,\gamma}} = \omega$,
\item $\pi_!\Omega^{n+1} = 0$ where $\pi_!$ is the integration over fiber and
$2n= \dim M$.
\end{enumerate}
\end{lem}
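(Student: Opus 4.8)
\textbf{Proof proposal for Lemma \ref{lem:coupling}.}

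The plan is to invoke the standard existence theory for coupling forms on Hamiltonian fiber bundles over a surface, taking care that the clutching data we have used to build $E_{\phi_H}$ is exactly the Hamiltonian clutching data. First I would recall the general fact (Guillemin--Lerman--Sternberg \cite{GLS}, see also McDuff--Salamon) that a fiber bundle $\pi : E \to B$ with fiber $(M,\omega)$ and structure group $\Ham(M,\omega)$ admits a closed $2$-form $\Omega$ restricting to $\omega$ on each fiber, and that such an $\Omega$ is unique up to adding $\pi^*\alpha$ for a closed $2$-form $\alpha$ on $B$; the specific representative satisfying the normalization $\pi_!\Omega^{n+1}=0$ is what is called the coupling form, and it is uniquely determined. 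So the content to verify is (i) that $E_{\phi_H}$ is genuinely a Hamiltonian bundle, and (ii) that the normalization can be achieved.

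For (i), I would observe that $E_{\phi_H}$ is obtained by gluing the three trivial pieces $U_1 = D_- \times M$, $U_2 = (\R\times S^1)\times M$, $U_3 = D_+\times M$ along $I_-$ and $I_+$, and that the transition functions over the overlaps are, respectively, the identity and the family $t \mapsto (\phi_H^t)^{-1} \in \Ham(M,\omega)$. Since $\phi_H$ is a loop of Hamiltonian diffeomorphisms based at the identity, these transition functions take values in $\Ham(M,\omega)$, so $E_{\phi_H} \to \C P^1$ has structure group $\Ham(M,\omega)$, hence is a Hamiltonian fiber bundle. The base $\C P^1$ being simply connected, the bundle is classified by $\pi_1(\Ham(M,\omega))$, which is precisely the datum $[\phi_H]$; this is the point at which Seidel's construction attaches to each element of $\pi_1(\Ham(M,\omega))$ a bundle, and we are reproducing it here with the explicit clutching. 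I would then construct $\Omega$ concretely: on $U_2$ take $\omega$ (pulled back from the $M$-factor) plus a suitable $2$-form supported near the two ends that interpolates the two trivializations, exactly as in \cite{schwarz, seidel:auto, entov}; because $\phi_H$ is generated by the Hamiltonian $H$, the interpolation can be chosen using $H\,dt$ so that the resulting form is closed, and on $U_1$, $U_3$ it is simply $\omega$. This gives property (1).

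For (2): given any closed $\Omega$ with $\Omega|_{\text{fiber}} = \omega$, the fiber integral $\pi_!\Omega^{n+1}$ is a $2$-form on $\C P^1$, hence a multiple $c\cdot \omega_{\rm FS}$ of the Fubini--Study form; replacing $\Omega$ by $\Omega - \frac{c}{(n+1)\,[\omega_{\rm FS}]([\C P^1])} \,\pi^*\omega_{\rm FS}$ (which does not affect the fiberwise restriction) makes $\pi_!\Omega^{n+1}=0$, at least after checking the elementary identity $\pi_!\big((\Omega+\pi^*\alpha)^{n+1}\big) = \pi_!\Omega^{n+1} + (n+1)\,[\omega]^n([M])\,\alpha$, which follows by expanding the binomial and using that fiber integration kills terms with fewer than $n$ powers of the vertical part and that $\pi^*\alpha \wedge \pi^*\alpha = 0$ for a $2$-form on a surface. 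That normalization fixes $\Omega$ uniquely and yields property (2). I do not expect any serious obstacle here: everything is standard, and the only mild care needed is to make the explicit interpolation near the two ends both closed and compatible with the gluing maps $I_\pm$, which is exactly the computation already carried out in the references; the role of this lemma in what follows is just to provide the ambient closed form whose cohomology class, paired with bulk classes $\frak b$, will enter the definition of the bulk-deformed Seidel element.
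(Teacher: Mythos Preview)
Your proposal is correct, but it takes a somewhat more indirect route than the paper for property~(2). The paper's proof is essentially a two-line computation: on $U_1$ and $U_3$ take $\omega_i$ to be the pullback of $\omega$ from the $M$-factor, on $U_2$ take $\omega_2' = \omega_2 + d(\chi H\,dt)$, and check that these glue to a closed form $\Omega$ (this is exactly the explicit interpolation you allude to). The key point you miss is that property~(2) then follows \emph{for free} from the normalization condition $\int_M H_t\,\omega^n = 0$ that is assumed throughout the paper: expanding $(\omega_2')^{n+1}$ one finds that the only surviving term after fiber integration is $(n+1)\chi'\big(\int_M H_t\,\omega^n\big)\,d\tau\wedge dt$, which vanishes. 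So no a posteriori adjustment by $\pi^*\alpha$ is needed.

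Your argument for (2) is of course valid and has the virtue of working for any (not necessarily normalized) $H$; the paper's approach is shorter in context because it exploits the standing normalization hypothesis. Your discussion of the general existence theory and the Hamiltonian structure of the bundle is correct but more than is needed here, since the explicit clutching construction already hands you the closed form directly.
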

\begin{proof} On each of $U_i$, $i=1,2,3$, we pull back $\omega$ by
the projection to $M$ and denote it by $\omega_i$.
We put $\omega_2' = \omega_2 + d(\chi H dt)$.
Then we find that $\omega_1$ on $U_1$, $\omega'_2$ on $U_2$ and $\omega_3$ on $U_3$
are glued to a closed $2$-form $\Omega$ on $E_{\phi_H}$.
The normalization condition on $H$ then gives rise to
the condition $\pi_!\Omega^{n+1} = 0$.
\end{proof}

Let $u : \R \times S^1 \to M$ be a continuous map. We denote the associated
section $\widehat u: \R \times S^1 \to E_{\phi_H}$  by the formula
\begin{equation}
\widehat u(\tau,t) = ((\tau,t),u(\tau,t))
\end{equation}
on $U_2$.
\begin{lem}\label{triextendtau}
Let $u: \R \times S^1 \to M$ be a continuous map. The following is equivalent:
\begin{enumerate}
\item There exists some $p_-, \, p_+ \in M$ such that
$$
\lim_{\tau \to -\infty}u(\tau,t) = z_{p_-}^H(t),
\quad
\lim_{\tau \to +\infty}u(\tau,t) = z_{p_+}^H(t).
$$
\item The map $\widehat u$ extends to a section
$s_u:  \C P^1  \to E_{\phi_H}$.
\end{enumerate}
\end{lem}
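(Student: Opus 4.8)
\textbf{Proof plan for Lemma \ref{triextendtau}.}
The statement is essentially a matter of unwinding the gluing data that defines $E_{\phi_H}$, so the plan is to trace through the two charts $U_1 = D_-\times M$ and $U_3 = D_+\times M$ near $0$ and $\infty$ respectively, and see what conditions on $u$ are needed for the section $\widehat u$ defined on $U_2 = (\R\times S^1)\times M$ to extend continuously across these two fibers. First I would recall that $\widehat u(\tau,t) = ((\tau,t),u(\tau,t))$ is a section over the cylinder $\R\times S^1 \subset \C P^1$, and that $\C P^1 = D_- \cup (\R\times S^1)\cup D_+$ with $0\in D_-$ and $\infty\in D_+$. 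Extending $\widehat u$ to a global section $s_u$ amounts to extending it continuously over the two missing points $0$ and $\infty$, which sit at the ends $\tau\to-\infty$ and $\tau\to+\infty$ of the cylinder.

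Next I would treat the end $\tau\to-\infty$ using the gluing map $I_-$. Since $I_-((\tau,t),x) = (e^{2\pi(\tau+\sqrt{-1}t)},x)$ acts as the identity on the $M$-factor, the section $\widehat u$ expressed in the $D_-$ chart is simply $z\mapsto (z,u(\tau(z),t(z)))$ where $(\tau,t)$ is the preimage of $z\neq 0$ under $z = e^{2\pi(\tau+\sqrt{-1}t)}$. As $z\to 0$ we have $\tau\to-\infty$ while $t$ ranges over all of $S^1$; hence this section extends continuously over $0$ if and only if $\lim_{\tau\to-\infty}u(\tau,t)$ exists and is independent of $t$, i.e.\ equals some fixed $p_-\in M$. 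For the end $\tau\to+\infty$ I would use $I_+((\tau,t),x) = (e^{-2\pi(\tau-1+\sqrt{-1}t)},(\phi_H^t)^{-1}(x))$; in the $D_+$ chart the section becomes $z\mapsto (z,(\phi_H^{t(z)})^{-1}(u(\tau(z),t(z))))$. Writing $v(\tau,t) = (\phi_H^t)^{-1}(u(\tau,t))$ and using that $\psi_H = \phi_H^1 = \mathrm{id}$ makes $(\phi_H^t)^{-1}$ well-defined as a function of $t\in S^1$, this extends continuously over $\infty$ iff $\lim_{\tau\to+\infty}v(\tau,t)$ exists and is independent of $t$, say equals $p_+$. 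By Lemma \ref{extendcylinder} (applied with the roles of $u$ and $v$, $\phi_H^{-t}$ in place of $\phi_H^t$, or directly from the defining relation $u(\tau,t)=\phi_H^t(v(\tau,t))$), the condition $\lim_{\tau\to+\infty}v(\tau,t)=p_+$ is equivalent to $\lim_{\tau\to+\infty}u(\tau,t) = \phi_H^t(p_+) = z_{p_+}^H(t)$, which is exactly condition (1) at the $+\infty$ end. Combining the two ends gives the equivalence of (1) and (2).

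I do not expect a serious obstacle here; the only point that requires a little care is the asymmetry between the two ends, which comes from the gluing map $I_+$ carrying the twist $(\phi_H^t)^{-1}$ while $I_-$ does not. This is precisely why condition (1) demands an \emph{untwisted} limit $p_-$ at $-\infty$ but a \emph{twisted} limit $z_{p_+}^H(t)$ at $+\infty$. One should also check that, once the pointwise limits exist and are $t$-independent in the appropriate charts, the convergence is uniform enough to give a genuinely continuous extension of the section across the point; this follows from the fact that the chart transition maps are smooth and $M$ is compact, together with the exponential collar coordinates $z = e^{\pm 2\pi(\tau+\sqrt{-1}t)}$ which compress the whole circle's worth of $t$-values into a small punctured neighborhood of $0$ (resp.\ $\infty$), forcing any limit section value to be independent of $t$. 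A final remark: one may phrase the whole argument simply by saying that $\widehat u$ extends over $0$ iff $u$ extends over $0$ as a map to $M$ (since $I_-$ is the identity on fibers), and over $\infty$ iff $v$ does (since $I_+$ conjugates by $\phi_H^t$), reducing everything to the removable-singularity-type bookkeeping already encoded in Lemma \ref{extendcylinder}.
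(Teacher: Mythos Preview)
Your proposal is correct and is precisely the verification that the paper has in mind: the paper's own proof reads in full ``The proof is obvious by definition of $E_{\phi_H}$,'' and what you have written is exactly that obvious verification carried out in detail by chasing through the gluing maps $I_-$ and $I_+$.
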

The proof is obvious by definition of $E_{\phi_H}$.
Let $u_1,u_2$ satisfy the condition (1) above.
We say that $u_1$ is homologous to $u_2$ if
$$
[\widehat u_1] = [\widehat u_2] \in H_2(E_{\phi_H};\Z).
$$
Let $\Pi_2(M;H)$
\index{$\Pi_2(M;H)$} be the set of the homology classes of such $u$.
We note that
$$
[\widehat u_1] - [\widehat u_2] \in \text{\rm Ker} (H_2(E_{\phi_H};\Z) \to H_2(\C P^1;\Z)).
$$
Therefore $\Pi_2(M;H)$
\index{$\Pi_2(M;H)$} is a principal homogeneous space
of the group $\text{\rm Ker} (H_2(E_{\phi_H};\Z) \to H_2(\C P^1;\Z))$.

We also have a natural marking $M \cong E_{\{0\}}$ of the fibration $E_{\phi_H} \to \C P^1$
via the inclusion map
$$
M \times \{0\} \subset M \times \C \subset E_{\phi_H}
$$
which we will fix once and for all. Then the natural inclusion induces a map
$H_2(M;\Z) \to \text{\rm Ker} (H_2(E_{\phi_H};\Z) \to H_2(\C P^1;\Z))$.
Therefore there exists an action
\begin{equation}\label{hgaction}
H_2(M;\Z)\times \Pi_2(M;H) \to \Pi_2(M;H)
\end{equation}
of the group $H_2(M;\Z)$ to $\Pi_2(M;H)$.
\begin{rem}
Theorem \ref{LaMcth} which we will prove later
implies that
$$
H_2(M;\Q) \cong \text{\rm Ker} (H_2(E_{\phi_H};\Q) \to H_2(\C P^1;\Q)).
$$
We however do not use this fact.
\end{rem}

Let $J_0$ be a compatible almost complex structure on $M$.
For $t\in S^1$, we define
\begin{equation}\label{295eq}
J^{H}_t = (\phi_H^{t})_*J_0.
\end{equation}
Since $\phi_H^t$ is a symplectic diffeomorphism, $J^H_{t}$ is compatible
with $\omega$.
We denote by $J^H = \{ J^H_{t}  \}_{t\in S^1}$ the above
$S^1$-parametrized family of compatible almost complex structures.
\par
We take $\chi \in \CK$ and consider
$H_{\chi}$ as in (\ref{eq:paraHJ}).
We also take an
$(\R \times S^1)$-parametrized family of
almost complex structures $J^H_\chi$ such that
\begin{equation}\label{296form}
J^H_\chi(\tau,t) =
\begin{cases}
J_0    & \tau \le 0, \\
J^H_t  & \tau \ge 1, \\
J_0      & \text{$t$ is in a neighborhood of  $[0] \in S^1$}.
\end{cases}
\end{equation}

\begin{defn}\label{stripmk0int2}
For $\alpha \in \Pi_2(M;H)$ we denote by
\index{$\mathcal M_{\ell}(H_\chi,J^H_\chi;*,z^H_{*}; \alpha)$}
$\overset{\circ}{{\CM}}_{\ell}(H_\chi,J^H_\chi;*,z^H_{*}; \alpha)$ the set of all
pairs $(u;z^+_1,\dots,z^+_{\ell})$ of maps
$u: \R \times S^1 \to M$
and
$z^+_1,\dots,z^+_{\ell} \in \R \times S^1$,
which satisfy the following conditions:
\begin{enumerate}
\item The map $u$ satisfies the equation:
\be\label{eq:HJCR3}
\dudtau + J^H_\chi\Big(\dudt - \chi(\tau)X_{H_t}(u)\Big) = 0.
\ee
\item The energy
$$
\frac{1}{2} \int \Big(\Big|\dudtau\Big|^2_{J^H_\chi} + \Big|
\dudt - \chi(\tau)X_{H_t}(u)\Big|_{J^H_\chi}^2 \Big)\, dt\, d\tau
$$
is finite.
\item
The map $u$ satisfies the condition that there exists $p_+ \in M$ such that
$$
\lim_{\tau \to +\infty}u(\tau,t) = z_{p_+}^H(t).
$$
\item The homology class of $u$ in $\Pi_2(M;H)$ is $\alpha$.
\item $z^+_i$ are all distinct.
\end{enumerate}
\end{defn}
\par
By our construction, the map
$$
\overline u: \R \times S^1 \to M, \quad \overline u(\tau,t)=(\phi_H^t)^{-1} u(\tau,t)
$$
is $J_0$-holomorphic on $[1,\infty) \times S^1$ on $M$.
Therefore we can apply removable singularity theorem
to $\overline{u}$ which gives rise to a section $\widehat u$ mentioned in
Lemma $\ref{triextendtau}$.
\par
We denote by
$$
{\rm ev}_{\pm \infty} : \overset{\circ}{{\CM}}_{\ell}(H_\chi,J^H_\chi;*,z^H_{*};\alpha) \to M
$$
the map which associates to $u$ the limit $
\lim_{\tau \to \pm \infty} \overline u(\tau,0)$.
We define the evaluation maps at $z_i$ :
$$
{\rm ev} = ({\rm ev}_1,\dots,{\rm ev}_{\ell})
: \overset{\circ}{{\CM}}_{\ell}(H_\chi,J^H_\chi;*,z^H_{*};\alpha) \to (E_{\phi_H})^{\ell}
$$
by
$$
{\rm ev}_i(u;z_1,\dots,z_{\ell}) = (z_i,u(z_i)) \in U_2 \subset E_{\phi_H}.
$$
\begin{defn}
For $\alpha \in H_2(M;\Z)$
we define $\widehat{\overset{\circ}{{\CM}}}_{\ell}(H,J^H;z^H_*, z^H_*; \alpha)$ to be
the set of all pairs $(u;z^+_1,\dots,z^+_{\ell})$ of maps
$u: \R \times S^1 \to M$
and  marked points
$z^+_1,\dots,z^+_{\ell} \in \R \times S^1$, which satisfy the following conditions:
\begin{enumerate}
\item
The map $u$ satisfies the equation:
\be\label{eq:HJCR4}
\dudtau + J^{H_t}\Big(\dudt- X_{H_t}(u)\Big) = 0.
\ee
\item
The energy
$$
\frac{1}{2} \int \Big(\Big|\dudtau\Big|^2_{J^{H_t}} + \Big|
\dudt-X_{H_t}(u)\Big|_{J^{H_t}}^2 \Big)\, dt\, d\tau
$$
is finite.
\item
There exist points $p_{\pm} \in M$ such that
$$
\lim_{\tau \to -\infty}u(\tau,t) = z^H_{p_-}, \quad \lim_{\tau \to +\infty}u(\tau,t) = z^H_{p_+} .
$$
\item
The homology class of $u$ is $\alpha$.
\item
$z^+_i$ are all distinct.
\end{enumerate}
There is an $\R$-action
on $\widehat{\overset{\circ}{{\CM}}}_{\ell}(H,J^H;z^H_{*},z^H_{*}; \alpha)$
that is induced by the translations of the $\R$ direction
(namely $\tau \mapsto \tau + c$).
The action is free if $\alpha \ne 0$ or $\ell \ne 0$.
We denote its quotient space by
$\overset{\circ}{{\CM}}(H,J^H;z^H_{*},z^H_{*}; \alpha)$.
If $\alpha =0 = \ell$, we {\it define}
$\overset{\circ}{{\CM}}_{\ell}(H,J^H;z^H_{*},z^H_{*}; \alpha)$ to be the
empty set.
\par
We define evaluation maps
$
{\rm ev}_{\pm \infty} : \widehat{\overset{\circ}{{\CM}}}_{\ell}(H,J^H;z^H_{*},z^H_{*};\alpha) \to M
$
by
\be\label{eq:evHJH}
{\rm ev}_{\pm \infty}(u) = \lim_{\tau\to\pm\infty}(\phi_H^t)^{-1}(u(\tau,t)).
\ee
Here we would like to point out that for any $u \in \widehat{\overset{\circ}{{\CM}}}_{\ell}(H,J^H;z^H_{*},z^H_{*}; \alpha)$
the right hand side of (\ref{eq:evHJH}) converges to $p_\pm \in M$
that is independent of $t$. Therefore the evaluation
map is well-defined.
The maps ${\rm ev}_{\pm \infty}$ factor through $\overset{\circ}{{\CM}}_{\ell}(H,J^H;z^H_{*},z^H_{*}; \alpha)$.
\par
We define
${\rm ev} = ({\rm ev}_1,\dots,{\rm ev}_{\ell}): \widehat{\overset{\circ}{{\CM}}}_{\ell}(H,J^H;z^H_{*},z^H_{*};\alpha) \to M^{\ell}$
as follows.
\begin{equation}\label{eq:evphiH}
{\rm ev}_i(u;z^+_1,\dots,z^+_{\ell})
= \phi_H^{-t}(u(z_i^+))
\end{equation}
where $z_i^+ = (\tau,t)$. It factors through
 $\overset{\circ}{{\CM}}_{\ell}(H,J^H;z^H_{*},z^H_{*}; \alpha)$ also.
\end{defn}
We consider the case $H=0$ in
$\widehat{\overset{\circ}{{\CM}}}_{\ell}(H,J^H;z^H_{*},z^H_{*};\alpha)$ and
write it
$\widehat{\overset{\circ}{{\CM}}}_{\ell}(H=0,J_0;*,*;\alpha)$.
(Note that $J^H_t = J_0$ if $H=0$.)
\begin{lem}\label{HandH0iso}
$\widehat{\overset{\circ}{{\CM}}}_{\ell}(H,J^H;z^H_{*},z^H_{*};\alpha)$ is
isomorphic to
$\widehat{\overset{\circ}{{\CM}}}_{\ell}(H=0,J_0;*,*;\alpha)$.
The isomorphism is compatible with evaluation maps and
$\R$ actions.
\end{lem}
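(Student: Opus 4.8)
The plan is to construct the isomorphism explicitly by the same change of variables that relates $u$ and $v$ in Lemma \ref{extendcylinder}. Given an element $(u;z^+_1,\dots,z^+_{\ell})$ of $\widehat{\overset{\circ}{{\CM}}}_{\ell}(H,J^H;z^H_*, z^H_*;\alpha)$, I would set
$$
\Phi(u;z^+_1,\dots,z^+_{\ell}) = (\overline u; z^+_1,\dots,z^+_{\ell}), \qquad \overline u(\tau,t) = (\phi_H^t)^{-1}(u(\tau,t)).
$$
The first step is to check that $\overline u$ solves the unperturbed $J_0$-holomorphic curve equation \eqref{eq:HJCR4} with $H = 0$. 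This is a direct computation: differentiating $u(\tau,t) = \phi_H^t(\overline u(\tau,t))$ and using that $\partial_t \phi_H^t = X_{H_t}\circ \phi_H^t$ together with the definition $J^H_t = (\phi_H^t)_* J_0$ in \eqref{295eq} transforms $\partial_\tau u + J^H_t(\partial_t u - X_{H_t}(u)) = 0$ into $\partial_\tau \overline u + J_0 \partial_t \overline u = 0$. The energy of $\overline u$ with respect to $J_0$ equals the energy of $u$ with respect to $(H, J^H)$ since $\phi_H^t$ is $\omega$-preserving and the vector-field correction is exactly absorbed; hence finiteness of energy is preserved.

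The second step is to check the asymptotic and homology conditions. By Lemma \ref{extendcylinder} (applied with $p_- , p_+$), the condition $\lim_{\tau\to\pm\infty} u(\tau,t) = z^H_{p_\pm}(t)$ is equivalent to $\lim_{\tau\to\pm\infty}\overline u(\tau,t) = p_\pm$, so $\overline u$ extends to a map $S^2 \to M$ and defines a class in $H_2(M;\Z)$; by the definition $[u] = [\overline u]$ this class is $\alpha$, which is exactly condition (4) for $\widehat{\overset{\circ}{{\CM}}}_{\ell}(H=0,J_0;*,*;\alpha)$. The marked points $z^+_i$ are left untouched, so the mutual-distinctness condition (5) is immediate, and the construction is manifestly invertible with inverse $(\overline u;\dots) \mapsto ((\tau,t)\mapsto \phi_H^t(\overline u(\tau,t));\dots)$. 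Thus $\Phi$ is a bijection; one then observes that it is a homeomorphism (continuity in both directions is clear since $\phi_H$ is smooth and the correspondence $u \leftrightarrow \overline u$ is continuous in any reasonable topology on the moduli spaces), and in fact an isomorphism of spaces with Kuranishi structure, because the linearized operators are intertwined by the same conjugation by $\phi_H^t$ — this is the analogue of the identification already used implicitly in the paper (e.g. in the proof of Proposition \ref{isomPiukura}, where the $\tau < -1$ region is turned into the genuine $J_0$-holomorphic equation).

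The third step is compatibility with the two extra structures. The $\R$-action by $\tau$-translation commutes with $\Phi$ because the conjugating factor $(\phi_H^t)^{-1}$ depends only on $t$, not on $\tau$; hence $\Phi$ descends to the quotients $\overset{\circ}{{\CM}}_{\ell}$. For the evaluation maps, the definition ${\rm ev}_i(u;z^+_\bullet) = \phi_H^{-t}(u(z_i^+))$ with $z_i^+ = (\tau,t)$ is precisely ${\rm ev}_i$ of $\overline u$ in the $H=0$ model (where ${\rm ev}_i$ is just $\overline u(z_i^+)$), and similarly ${\rm ev}_{\pm\infty}(u) = \lim_{\tau\to\pm\infty}(\phi_H^t)^{-1}(u(\tau,t)) = \lim_{\tau\to\pm\infty}\overline u(\tau,t)$ matches ${\rm ev}_{\pm\infty}$ of $\overline u$. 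So $\Phi$ intertwines all evaluation maps. I expect the only genuinely nontrivial point to be the claim that $\Phi$ respects the Kuranishi structures and orientations rather than just the underlying topological spaces; this follows because the obstruction bundles and Fredholm data are pulled back through the smooth fiberwise diffeomorphism $(\tau,t,x)\mapsto (\tau,t,\phi_H^t(x))$ of $(\R\times S^1)\times M$, and this is exactly the type of identification repeatedly invoked (without further comment) in the earlier sections, so a brief remark suffices. The conclusion is Lemma \ref{HandH0iso}.
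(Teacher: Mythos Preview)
Your proof is correct and follows exactly the same approach as the paper: the paper's proof is the one-line assignment $v(\tau,t) = (\phi_H^t)^{-1}(u(\tau,t))$ with the marked points carried along, and your argument simply fills in the verifications (equation, energy, asymptotics, homology class, evaluation maps, $\R$-action) that the paper leaves implicit. Your extra remarks about Kuranishi structures go a bit beyond what the lemma itself asserts, but they are correct and harmless.
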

\begin{proof}
Let
$(u;z^+_1,\dots,z^+_{\ell}) \in \overset{\circ}{{\CM}}_{\ell}(H,J^H;z^H_{*},z^H_{*};\alpha)$
we put
$$
v(\tau,t) = (\phi_H^t)^{-1}(u(\tau,t)).
$$
Then
$(v;z^{+\prime}_1,\dots,z^{+\prime}_{\ell}) \in \overset{\circ}{{\CM}}_{\ell}(H=0,J_0;*,*;\alpha)$. The assignment
$(u;z^+_1,\dots,z^+_{\ell}) \mapsto (v;z^{+\prime}_1,\dots,z^{+\prime}_{\ell})$ gives
the required isomorphism.
\end{proof}
We can prove that $\overset{\circ}{{\CM}}_{\ell}(H,J^H;z^H_{*},z^H_{*};\alpha)$ and
$\overset{\circ}{{\CM}}_{\ell}(H=0,J_0;*,*;\alpha)$ have
compactifications
${{\CM}}_{\ell}(H,J^H;z^H_{*},z^H_{*};\alpha)$ and
${{\CM}}_{\ell}(H=0,J_0;*,*;\alpha)$, respectively.
They have Kuranishi structures which are isomorphic.
We can define an $S^1$ action on ${{\CM}}_{\ell}(H=0,J_0;*,*;\alpha)$ by
using the $S^1$ action on $\R \times S^1$.
We then use the isomorphism to define an $S^1$ action on the domain
${{\CM}}_{\ell}(H,J^H;z^H_{*},z^H_{*};\alpha)$.
The evaluation maps are compatible with this action.
The isotropy group of this $S^1$ action is always finite.
(We note that we have $\alpha \ne 0$ or $\ell \ne 0$ by definition.)

\begin{lem}\label{PiupureBM}
\begin{enumerate}
\item
The moduli space $\overset{\circ}{{\CM}}_{\ell}(H_\chi,J^H_\chi;*,z^H_{*};\alpha)$ has a compactification
${{\CM}}_{\ell}(H_\chi,J^H_\chi;*,z^H_{*};\alpha)$ that is Hausdorff.
\item
The space ${{\CM}}_{\ell}(H_\chi,J^H_\chi;*,z^H_{*};\alpha)$ has an orientable Kuranishi structure with corners.
\item
The normalized boundary of ${{\CM}}_{\ell}(H_\chi,J^H_\chi;*,z^H_{*};\alpha)$ is described as the union of the
following two types of fiber products.
\begin{equation}\label{bdryhomomapPiuBM}
 \bigcup
{{\CM}}_{\#\mathbb L_1}(H_\chi,J^H_\chi;*,z^H_{*};\alpha_1) {}_{\text{ev}_{+\infty}}\times_{\text{ev}_{-\infty}}
{{\CM}}_{\#\mathbb L_2}(H,J^H;z^H_*,z^H_*;\alpha_2)
\end{equation}
where the union is taken over all $\alpha_1,\alpha_2 \in \Pi_2(M;H)$ with
$\alpha_1 + \alpha_2 = \alpha$  and $(\mathbb L_1,\mathbb L_2) \in \text{\rm Shuff}(\ell)$.
Here $\alpha_1 + \alpha_2$ is as in $(\ref{hgaction})$. The fiber product is taken over $M$.
\begin{equation}\label{bdryhomomapPiuBM2}
 \bigcup
{{\CM}}_{\#\mathbb L_1}(H=0,J_0;*,*;\alpha_1) {}_{\text{ev}_{+\infty}}\times_{\text{ev}_{-\infty}}
{{\CM}}_{\#\mathbb L_2}(H_\chi,J^H_\chi;*,z^H_{*};\alpha_2)
\end{equation}
where the union is taken over all $\alpha_1,\alpha_2  \in \Pi_2(M;H)$ with
$\alpha_1 + \alpha_2 = \alpha$  and $(\mathbb L_1,\mathbb L_2) \in \text{\rm Shuff}(\ell)$. The fiber product is taken over $M$.
\item
We may choose $\alpha_0 \in \Pi_2(M;H)$ such that the (virtual) dimension satisfies
the following equality $(\ref{dimension22})$.
\begin{equation}\label{dimension22}
\dim  {{\CM}}_{\ell}(H_\chi,J^H_\chi;*,z^H_{*};\alpha_0 + \alpha) = 2 c_1(M)\cap \alpha + 2n + 2\ell.
\end{equation}
\item
We can define a system of orientations on ${{\CM}}_{\ell}(H_\chi,J^H_\chi;*,z^H_{*};\alpha)$ so that
the isomorphisms $(3)$ above are compatible with this orientations.
\item
The valuation maps \eqref{eq:evphiH} extend to ${{\CM}}_{\ell}(H_\chi,J^H_\chi;*,z^H_{*};\alpha)$ in a
way compatible with
$(3)$ above.
\item
The map ${\rm ev}_{+\infty}$ becomes a weakly submersive map in the sense of
\cite[Definition {\rm  A1.13}]{fooo:book2} (Definition $\ref{mapkura}$).  Here ${\rm ev}_{+\infty}$ is defined in the same way as in
\eqref{eq:evHJH}.
\item
The Kuranishi structure is invariant under the permutation of interior marked points.
\end{enumerate}
\end{lem}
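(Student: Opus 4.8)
\textbf{Proof plan for Lemma \ref{PiupureBM}.}
The plan is to follow the now-standard recipe for establishing that a moduli space of perturbed pseudo-holomorphic curves carries a well-behaved Kuranishi structure, as was done for Proposition \ref{connkura}, Proposition \ref{isomPiukura} and Proposition \ref{piuBULKkura}; the only genuinely new feature here is the twist by the Hamiltonian loop $\phi_H$, which I will remove by the fiberwise change of coordinates already used in Lemma \ref{HandH0iso}. First I would fix the elongation data $\chi \in \CK$ and the family $J^H_\chi$ satisfying \eqref{296form}, and work throughout with $\overline u(\tau,t) = (\phi_H^t)^{-1}u(\tau,t)$; on $[1,\infty)\times S^1$ this converts \eqref{eq:HJCR3} into the genuine $J_0$-holomorphic curve equation, so the removable singularity theorem of Sacks--Uhlenbeck--Gromov produces the extension $\widehat u$ of Lemma \ref{triextendtau} and makes sense of the homology class $\alpha \in \Pi_2(M;H)$ in condition (4). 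For (1), Gromov--Floer compactness (in the form used in \cite{fukaya-ono} Chapter 4, applied after the change of coordinates) gives the Hausdorff compactification ${{\CM}}_{\ell}(H_\chi,J^H_\chi;*,z^H_*;\alpha)$, whose boundary and codimension-one strata are bubbling of holomorphic spheres on $M$ at interior points and breaking along the two ends. For (2) and (5), the construction of orientable Kuranishi structures with corners and of a coherent system of orientations is verbatim that of \cite{fukaya-ono} Theorem 19.14 and Lemma 21.4 together with the orientation conventions of \cite{fooo:book}, since these are local constructions insensitive to the Hamiltonian twist.

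For (3), the description of $\partial{{\CM}}_{\ell}(H_\chi,J^H_\chi;*,z^H_*;\alpha)$, I would argue exactly as in Proposition \ref{isomPiukura}(3) and Proposition \ref{piuBULKkura}(3): breaking at $\tau \to +\infty$ produces the fiber product \eqref{bdryhomomapPiuBM} with a Floer trajectory for the pair $(H,J^H)$ in the sense that after the coordinate change it is a trajectory of the moduli space ${{\CM}}_{\#\mathbb L_2}(H,J^H;z^H_*,z^H_*;\alpha_2)$ (which by Lemma \ref{HandH0iso} is isomorphic to ${{\CM}}(H=0,J_0;*,*;\alpha_2)$, so in particular it really does appear with the expected Kuranishi structure), while breaking at $\tau \to -\infty$, where the equation is already $J_0$-holomorphic and constant Hamiltonian, produces \eqref{bdryhomomapPiuBM2}. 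The interior marked points are distributed according to a shuffle $(\mathbb L_1,\mathbb L_2)\in \text{\rm Shuff}(\ell)$, and the $H_2(M;\Z)$-action \eqref{hgaction} is what lets me write $\alpha_1 + \alpha_2 = \alpha$ with $\alpha_1 \in H_2(M;\Z)$ and $\alpha_2 \in \Pi_2(M;H)$ in one case and the reverse in the other. The compatibility of the Kuranishi structures with these fiber-product decompositions is again the relative construction of \cite{fukaya-ono} Theorem 6.12 (equivalently \cite{fooo:book} Lemma A1.20).

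For (4), the dimension formula, I would compute the index of the linearized operator for a section $s_u$ of $E_{\phi_H}\to\C P^1$: the relevant first Chern number splits as the vertical Chern number of the chosen base point class plus $2c_1(M)\cap\alpha$ for the fiberwise difference class $\alpha\in H_2(M;\Z)$, and each of the $\ell$ interior marked points contributes $+2$; absorbing the $\alpha$-independent part into a reference class $\alpha_0\in\Pi_2(M;H)$ yields \eqref{dimension22}. For (6) the evaluation maps ${\rm ev}_i$, ${\rm ev}_{\pm\infty}$ extend continuously and smoothly (strongly continuous smooth in the Kuranishi sense) to the compactification and are compatible with the boundary decomposition by construction of the gluing; for (7), weak submersivity of ${\rm ev}_{+\infty}$ is forced exactly as in Proposition \ref{isomPiukura}(6), namely because the asymptotic limit at $+\infty$ can be moved freely — here one uses that at the $+\infty$ end the equation is $J_0$-holomorphic after the coordinate change, so the argument is identical to the Piunikhin case.

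\textbf{Main obstacle.} The routine analytic parts (Fredholm theory, gluing, transversality via multisections) go through unchanged; the one point demanding care is \emph{bookkeeping with the principal homogeneous space} $\Pi_2(M;H)$ and the fibration $E_{\phi_H}$ — i.e. making sure that the two kinds of breaking in (3) are stated with the correct torsor/group actions, that the reference class $\alpha_0$ in (4) is chosen consistently with those actions, and that the isomorphism of Lemma \ref{HandH0iso} is applied so that the second factor of \eqref{bdryhomomapPiuBM} literally is a moduli space already appearing in the theory (so that $\del\circ\del=0$-type identities and, later, the Seidel homomorphism can be defined). In other words, the hard part is not analysis but getting the homotopy-theoretic accounting of sections of $E_{\phi_H}\to\C P^1$ exactly right.
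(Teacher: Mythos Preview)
Your proposal is correct and follows the same route as the paper, which simply says the proof is the same as that of Proposition \ref{connkura} and omits it. The one point the paper adds that you do not is a parenthetical remark that the boundary component \eqref{bdryhomomapPiuBM2} (bubbling at $\tau\to -\infty$) may be regarded as codimension $2$ because of the $S^1$-symmetry on the first factor ${{\CM}}_{\#\mathbb L_1}(H=0,J_0;*,*;\alpha_1)$; this is not needed for the lemma as stated but becomes relevant when one later shows that $\mathcal S^{\frak b}_{(H_\chi,J^H_\chi)}$ is a chain map.
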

Here the compatibility with the evaluation maps claimed in (6)  above is
described as follows.
Let us consider the boundary in (\ref{bdryhomomapPiuBM}).
Let $i \in\L_2$  be the $j$-th element of $\L_2$.
We have
$$
\text{ev}_i : {{\CM}}_{\ell}(H_\chi,J^H_\chi;*,z^H_{*};\alpha)
\to E_{\phi_H}
$$
and
$$
\text{ev}_j : {{\CM}}_{\# \L_2}(H,J^H;z^H_*,z^H_*;\alpha)
\to M.
$$
Denote by $t$ the second coordinate of the marked point in $\R \times S^1$.
Then $(\phi_H^t)^{-1} \circ \text{ev}_j$ is equal to
second factor of the $\text{ev}_i$ with respect to $U_3 \cong D_+ \times M$.

The proof of
Lemma \ref{PiupureBM} is the same as the proof of Proposition \ref{connkura} and is omitted.
Note the end of ${{\CM}}_{\ell}(H_\chi,J^H_\chi;*,z^H_*;\alpha)$ at which
an element of $\overline{{\CM}}(0,J_0;*,*;\alpha)$ bubbles off as $\tau \to -\infty$
may be regarded as codimension $2$ because of $S^1$ symmetry, where
the latter moduli space corresponds to the case of $H=0$ for the moduli space
$\overline{{\CM}}(H_\chi,J^H_\chi;*,z^H_*;\alpha)$.
\par
To define the operators involving bulk deformations, we need the
following result due to Lalonde-McDuff-Polterovich \cite{mcdufflalondepol}.
\begin{thm}[Lalonde-McDuff-Polterovich]
\label{LaMcth}
There exists a section
$$
H^*(M;\C) \to H^*(E_{\phi_H};\C)
$$
of the $\C$-linear map
$
H^*(E_{\phi_H};\C) \to H^*(M;\C)
$
which is induced by the inclusion $M \times \{0\} \to E_{\phi_H}$.
\end{thm}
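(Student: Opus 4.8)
\textbf{Proof proposal for Theorem \ref{LaMcth} (Lalonde--McDuff--Polterovich).}

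The plan is to produce a section by finding, for each cohomology class on $M$, an extension over $E_{\phi_H}$ whose restriction to a fiber is the given class, and to do so in a way that is linear. The heart of the matter is the well-known fact that for a Hamiltonian fibration $\pi : E \to B$ with closed base $B$, the Leray--Hirsch property holds: the cohomology of the total space splits as a module over $H^*(B)$, i.e. $H^*(E;\Q) \cong H^*(B;\Q) \otimes H^*(M;\Q)$. In our situation $B = \C P^1$, so this would give the desired splitting and, in particular, the section. So the first step is to reduce Theorem \ref{LaMcth} to this Leray--Hirsch statement, and the main work is to establish the latter.

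First I would use the coupling form $\Omega$ constructed in Lemma \ref{lem:coupling}, which restricts to $\omega$ on each fiber $E_{\phi_H,\gamma}$. The class $[\Omega] \in H^2(E_{\phi_H};\R)$ is therefore an extension of the fiber class $[\omega] \in H^2(M;\R)$. This already handles the degree-two generators in the case where $H^*(M)$ is generated by $[\omega]$ (e.g.\ for $\C P^n$), but for the general statement one needs extensions of an arbitrary basis of $H^*(M;\Q)$. The standard argument proceeds by considering the Wang sequence (or the Leray--Serre spectral sequence) of the fibration $M \hookrightarrow E_{\phi_H} \to \C P^1$. Because $\C P^1$ has cohomology only in degrees $0$ and $2$, the spectral sequence has only two nonzero columns, and the only possibly nonzero differential is $d_3 : E_3^{0,q} \to E_3^{2,q-2}$, which in turn is (up to sign) cup product with the differential of the fibration over $S^2 \subset \C P^1$. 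The key geometric input is that for a \emph{Hamiltonian} loop this differential vanishes: the monodromy acts trivially on $H^*(M)$ (it is a Hamiltonian, hence symplectic, isotopy), and moreover the relevant obstruction class dies because the fibration is Hamiltonian. This is exactly the content of the Lalonde--McDuff--Polterovich theorem on the cohomological triviality of Hamiltonian fibrations over $S^2$, and I would simply invoke it (or reprove the degeneration via the existence of the coupling form $\Omega$, following their argument: one uses $[\Omega]$ and its powers to produce enough extensions in the image of $H^*(E_{\phi_H}) \to H^*(M)$, then bootstraps using the ring structure).

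Once the spectral sequence degenerates at $E_2$, we get $H^*(E_{\phi_H};\Q) \cong H^0(\C P^1) \otimes H^*(M;\Q) \oplus H^2(\C P^1) \otimes H^*(M;\Q)$ as graded vector spaces, and the edge homomorphism $H^*(E_{\phi_H};\Q) \to E_\infty^{0,*} = H^*(M;\Q)$ is the restriction to a fiber; the splitting of the short exact sequence $0 \to H^2(\C P^1) \otimes H^*(M) \to H^*(E_{\phi_H}) \to H^*(M) \to 0$ provides the desired section $H^*(M;\C) \to H^*(E_{\phi_H};\C)$ (tensoring $\Q$-coefficients with $\C$). The main obstacle is thus purely the degeneration/triviality statement for Hamiltonian fibrations over the two-sphere; everything else is formal homological algebra. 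Since this is precisely the theorem attributed to Lalonde--McDuff--Polterovich \cite{mcdufflalondepol}, in the write-up I would quote it directly and only indicate how the coupling form $\Omega$ of Lemma \ref{lem:coupling} feeds into it, rather than reproducing their proof.
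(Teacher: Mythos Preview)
Your approach is genuinely different from the paper's, and it has a real gap. You correctly identify that the problem reduces to showing the Leray--Serre spectral sequence of $M \hookrightarrow E_{\phi_H} \to \C P^1$ degenerates at $E_2$ (equivalently, that the restriction $H^*(E_{\phi_H}) \to H^*(M)$ is surjective). But you do not actually prove this. The coupling form $\Omega$ only extends the subalgebra of $H^*(M)$ generated by $[\omega]$; for a general symplectic manifold there is no reason this should be everything, so the ``bootstrap using the ring structure'' you allude to does not work without further input. You then say the missing input is ``exactly the content of the Lalonde--McDuff--Polterovich theorem'' and propose to cite it---but that theorem \emph{is} Theorem~\ref{LaMcth}, so this is circular. (Incidentally, the only possibly nonzero differential over $\C P^1$ is $d_2 : E_2^{0,q} \to E_2^{2,q-1}$, not $d_3$.)

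The paper's proof is not spectral-sequence based at all; it is a direct Floer-theoretic construction that explains \emph{why} the Hamiltonian hypothesis matters. One uses the moduli space ${\CM}_1(H_\chi,J^H_\chi;*,z^H_*;\alpha)$ with a single interior marked point: the evaluation at that marked point lands in $E_{\phi_H}$, while $\mathrm{ev}_{-\infty}$ lands in $M$, so the correspondence defines a chain map
\[
\mathcal R_{(H_\chi,J^H_\chi)} : \Omega(M)\widehat\otimes\Lambda^{\downarrow} \longrightarrow \Omega(E_{\phi_H})\widehat\otimes\Lambda^{\downarrow}.
\]
Restricting to the fiber over $\infty$ recovers the Seidel map (without bulk), i.e.\ $i_\infty^*\circ \mathcal R_* = \mathcal S_*$. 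Since $\mathcal S_*$ is invertible on $H^*(M;\Lambda^{\downarrow})$ (Corollary~\ref{seicor} for $\frak b=0$, whose proof does \emph{not} use Theorem~\ref{LaMcth}), the explicit formula
\[
a \longmapsto \widehat a := \mathcal R_*\bigl(a \cup^Q \mathcal S(\widetilde\psi_H)^{-1}\bigr)
\]
is a section of $i_\infty^*$. Thus the paper extracts the section from the Seidel homomorphism itself, rather than from an abstract degeneration argument; this is also essentially how the original \cite{mcdufflalondepol} proof goes, as the paper remarks.
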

\begin{rem}
\begin{enumerate}
\item
Theorem \ref{LaMcth} is \cite[Theorem 3B]{mcdufflalondepol}.
We give a proof of
Theorem  \ref{LaMcth} in Subsection \ref{subsec:MLtheorem} for completeness' sake.
The proof we give in Subsection \ref{subsec:MLtheorem} is basically the same as
the one given in \cite{mcdufflalondepol}.
\item
The proof by \cite{mcdufflalondepol} as well as our proof in
Subsection \ref{subsec:MLtheorem} uses the construction which is closely related to the
definition of Seidel homomorphism. We use
Theorem \ref{LaMcth} to define the Seidel homomorphism with bulk.
However the argument is not circular by the following reason.
We do {\it not} use Theorem  \ref{LaMcth}
to define Seidel homomorphism in the case when
the bulk deformation $\frak b$ is zero.
The proof of Theorem  \ref{LaMcth} uses
the construction of Seidel homomorphism {\it without bulk} only,
that is the case $\frak b = 0$.
\end{enumerate}
\end{rem}
\par\medskip
Consider a system of CF-perturbations of
${{\CM}}_{\ell}(H,J_H;z^H_{*},z^H_{*};\alpha)$ and of
${{\CM}}_{\ell}(H=0,J_0;*,*;\alpha)$
which is transversal to $0$, $S^1$-equivariant and
compatible with the isomorphism in
Lemma \ref{HandH0iso}.
Moreover we may assume that the system is compatible
with the identification
\begin{equation}\label{bdryhomomapBM}
\aligned
&\partial {{\CM}}_{\ell}(H,J^H;z^H_{*},z^H_{*};\alpha) \\
&= \bigcup{{\CM}}_{\#\mathbb L_1}(H,J^H;z^H_{*},z^H_{*};\alpha_1) {}_{\text{ev}_{+\infty}}\times_{\text{ev}_{-\infty}}
{{\CM}}_{\#\mathbb L_2}(H,J^H;z^H_{*},z^H_{*};\alpha_2)
\endaligned
\end{equation}
of the boundary.
Furthermore we may assume that the
evaluation map $\text{\rm ev}_{+\infty}$ is strongly submersive
with respect to this CF-perturbations.
\par
Then, there exists a system of CF-perturbations of
the moduli space ${{\CM}}_{\ell}(H_\chi,J^H_\chi;*,z^H_{*};\alpha)$ such that
they are transversal to $0$,
compatible with the description of the
boundary in Lemma \ref{PiupureBM} (3)
and that $\text{\rm ev}_{+\infty}$ is strongly submersive with respect thereto.
\par
Let $\frak b = \frak b_0 + \frak b_2 + \frak b_+$ be as in
(\ref{decompb}).
Using Theorem \ref{LaMcth}, we regard them as de Rham cohomology classes
of $E_{\phi_H}$ and denote them as $\widehat{\frak b}_2$, $\widehat{\frak b}_+$.
\par
Now we define a map \index{$\mathcal S^{\frak b}_{(H_\chi,J_\chi)}$}
$$
\mathcal S^{\frak b}_{(H_\chi,J_\chi)} :  \Omega(M) \widehat{\otimes} \Lambda
\to \Omega(M) \widehat{\otimes} \Lambda
$$
as follows.
Let $h \in \Omega(M)$.
We put
$$
\mathcal S^{\frak b}_{(H_\chi,J_\chi);\alpha}(h)
=
\sum_{\ell=0}^{\infty}\frac{\exp({\int_\alpha \widehat{\frak b}_2})}{\ell!}
\text{\rm ev}_{+\infty !}\big(\text{\rm ev}^*(\underbrace{\widehat{\frak b}_+,
\dots,\widehat{\frak b}_+}_{\ell}) \wedge\text{\rm ev}_{-\infty}^*h\big)
$$
where we use the correspondence
$$
(\text{\rm ev};\text{\rm ev}_{-\infty},\text{\rm ev}_{+\infty})
: {{\CM}}_{\ell}(H_\chi,J^H_\chi;*,z^H_{*};\alpha)
\to E_{\phi_H}^{\ell} \times M^2,
$$
and the above constructed CF-perturbation to define integration along the fibers.
We define ${\int_\alpha \widehat{\frak b}_2}$ as follows.
Let $u \in \overset{\circ}{{\CM}}_{0}(H_\chi,J^H_\chi;*,z^H_{*};\alpha)$.
It induces a map $\widehat u : \C P^1 \to E_{\phi_H}$. We put
$$
\int_\alpha \widehat{\frak b}_2 = \int_{\C P^1} \widehat u^*  \widehat{\frak b}_2.
$$
It is easy to see that the integral depends only on $\alpha$ and is independent of the representative $u$.
\par
Let $u \in \overset{\circ}{{\CM}}_{0}(H_\chi,J^H_\chi;*,z^H_{*};\alpha)$
and $p = \text{\rm ev}_{+\infty}(u)$.
Then $[z_p^H,u] \in \text{\rm Crit}(\mathcal A_H)$.
We put
$$
\mathcal A_H(\alpha) := \mathcal A_H([z_p^H,u]).
$$
\par
We then define
$$
\mathcal S^{\frak b}_{(H_\chi,J_\chi)}
=  \sum_{\alpha} T^{-\mathcal A_H(\alpha)} \mathcal S^{\frak b}_{(H_\chi,J_\chi);\alpha}.
$$
\begin{lem}
$\mathcal A_H([z_p^H,u])$ depends only
on the homology class $\alpha$ but independent of its representative $u$.
\end{lem}
\begin{proof}
Recall the map $I_+: U_2 \to U_3$ which was defined as
$
I_+ (\tau,t,x) = (e^{-2\pi (\tau -1 +\sqrt{-1}t)},(\phi_H^{t})^{-1}(x)).
$
It is easy to see that
$
I_+ ^*\Omega=\omega
$
where $\omega$ is the pull back of the symplectic form of $M$
to $U_2, U_3$ and $\Omega$ is as in Lemma \ref{lem:coupling}.
\par
We have
$$
\int \widehat u^*\Omega
=
\int u^*\omega + \int H_t(z_p^H(t)) dt
= - \mathcal A_H([(z_p^H,w)]).
$$
The lemma follows from Stokes' theorem.
\end{proof}
\begin{lem} The map $\mathcal S^{\frak b}_{(H_\chi,J_\chi)}$ satisfies
$$
\mathcal S^{\frak b}_{(H_\chi,J^H_\chi)} \circ  d
= d  \circ \mathcal S^{\frak b}_{(H_\chi,J^H_\chi)}
$$
and so descends to a map
\begin{equation}
\mathcal S^{\frak b}_{(H_\chi,J^H_\chi),\ast}  : H^*(M;\Lambda)
\to H^*(M;\Lambda).
\end{equation}
\end{lem}
The lemma follows from Lemma \ref{PiupureBM} (3),
Stokes' formula (Theorem \ref{them48}, \cite[Corollary 8.13]{fooo:tech2}) and Composition formula
(Theorem \ref{compform}, \cite[Theorem 10.20]{fooo:tech2}).

\begin{thm}\label{seimain}
\begin{enumerate}
\item
$\mathcal S^{\frak b}_{(H_\chi,J^H_\chi),\ast}$ is independent of the family of compatible almost complex structures
$J^H_\chi$
and other choices involved such as multisection.
\item
$\mathcal S^{\frak b}_{(H_\chi,J^H_\chi),\ast}$ depends only on the homotopy class of the loop $t\mapsto \phi_H^t$
in the group of Hamiltonian diffeomorphisms.
\item
We have
$$
\mathcal S^{\frak b}_{(H_\chi,J^H_\chi),\ast} (x\cup^{\frak b}y) =
x\cup^{\frak b}\mathcal S^{\frak b}_{(H_{\chi},J^H_{\chi}),\ast}(y).
$$
\item
Let $H_1,H_2$ be periodic Hamiltonians satisfying
$\psi_{H_1} = \psi_{H_2} = identity$.
Then we have
$$
\mathcal S^{\frak b}_{((H_1\#H_2)_{\chi},J^{H_1\#H_2}_{\chi}),\ast} (x\cup^{\frak b}y)
=
\mathcal S^{\frak b}_{((H_1)_{\chi_1},J^{H_1}_{\chi}),\ast}(x)
\cup^{\frak b}\mathcal S^{\frak b}_{((H_2)_{\chi},J^{H_2}_{\chi}),\ast} (y).
$$
\end{enumerate}
\end{thm}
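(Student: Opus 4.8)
\textbf{Plan of proof for Theorem \ref{seimain}.}
The strategy is to imitate the proofs of the corresponding properties of the Piunikhin map with bulk (Theorem \ref{Pbulkiso}, Theorem \ref{themprodcompati}) and of Seidel's original construction \cite{seidel:auto}, now carried out in the framework of Kuranishi structures and the coupling form $\Omega$ on $E_{\phi_H}$ described in Lemma \ref{lem:coupling}. Throughout we use the fact, established in Lemma \ref{PiupureBM}, that the moduli spaces ${\CM}_{\ell}(H_\chi,J^H_\chi;*,z^H_{*};\alpha)$ carry oriented Kuranishi structures with corners whose codimension-one boundary is described by \eqref{bdryhomomapPiuBM} and \eqref{bdryhomomapPiuBM2}, together with the lift of the bulk classes $\widehat{\frak b}_2,\widehat{\frak b}_+$ to $E_{\phi_H}$ provided by Theorem \ref{LaMcth} (Lalonde--McDuff--Polterovich). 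Note the argument is not circular: Theorem \ref{LaMcth} only uses the $\frak b=0$ Seidel construction, which does not require it.

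First I would prove (1). As in the proof of Theorem \ref{Jindepedence} (Section \ref{sec:proofcontinuity}), given two choices $J^{H,0}_\chi, J^{H,1}_\chi$ of elongated families and of multisections, one interpolates between them by an $S\in[0,1]$-parametrized family and forms the parametrized moduli space $\bigcup_{S\in[0,1]}\{S\}\times {\CM}_{\ell}(H^S_\chi,J^{H,S}_\chi;*,z^H_{*};\alpha)$. A Lalonde--McDuff--Polterovich lift of the bulk classes and compatible multisections define a chain homotopy $\mathcal H^{\frak b}$ between $\mathcal S^{\frak b}_{(H_\chi,J^{H,0}_\chi)}$ and $\mathcal S^{\frak b}_{(H_\chi,J^{H,1}_\chi)}$; the key point is that the bubbling at $\tau\to-\infty$ involves $\overline{\CM}(H=0,J_0;*,*;\alpha)$, which has an $S^1$-symmetry and is therefore of codimension $2$ and does not contribute (exactly as in the proof of Proposition \ref{piuBULKkura} and Lemma \ref{MBchainhomotopyprop}). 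Independence of the section $H^*(M;\C)\to H^*(E_{\phi_H};\C)$ follows because any two such sections differ by a class pulled back from $\C P^1$, and such a class integrates trivially over $\C P^1$-free holomorphic sections; more directly, the relevant cohomology class of $E_{\phi_H}$ is already well-defined on the level of the moduli-space correspondence. Part (2) is then an immediate corollary: a homotopy of Hamiltonian loops $\phi_{H^s}$ induces an isomorphism of the fibrations $E_{\phi_{H^s}}$ and a cobordism of parametrized moduli spaces, and the argument of (1) produces the required chain homotopy.

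For (3) the point is that the evaluation map ${\rm ev}_{-\infty}: {\CM}_{\ell}(H_\chi,J^H_\chi;*,z^H_{*};\alpha)\to M$ realizes $\mathcal S^{\frak b}_{(H_\chi,J^H_\chi)}$ as a correspondence that factors, up to chain homotopy, through the Piunikhin construction: one compares $\mathcal S^{\frak b}_{(H_\chi,J^H_\chi)}$ with the composition of $\CP^{\frak b}$ and the pants product $\frak m_2^{\rm cl}$ applied to the fundamental section class, using a parametrized moduli space that degenerates at one end to a split configuration carrying a Gromov--Witten-type factor $\mathcal M^{\rm cl}_{3+\ell}(\alpha;J_0)$ (as in Section \ref{subsec:multiplicative}). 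Compatibility with $\cup^{\frak b}$ then follows from Theorem \ref{themprodcompati} together with the multiplicative property of $\frak m_2^{\rm cl}$. For (4) one uses the fact that $\psi_{H_1\#H_2}=\psi_{H_2}\circ\psi_{H_1}=id$ (both loops), so that $E_{\phi_{H_1\#H_2}}$ is isomorphic to the fiber sum of $E_{\phi_{H_1}}$ and $E_{\phi_{H_2}}$ glued along a common fiber; gluing holomorphic sections over the two copies of $\C P^1$ (with the bulk lifts matching under the Lalonde--McDuff--Polterovich sections) gives a cobordism of moduli spaces, and a standard gluing/cobordism argument à la Lemma \ref{compprodmainlemma} yields the stated factorization of $\mathcal S^{\frak b}$ through $\cup^{\frak b}$.

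The main obstacle I anticipate is the bookkeeping of the bulk insertions on $E_{\phi_H}$ together with the $S^1$-equivariance needed to discard the $\tau\to-\infty$ bubbling components: one must arrange the Lalonde--McDuff--Polterovich lifts of $\frak b_2,\frak b_+$ and the compatible systems of (continuous families of) multisections so that they simultaneously respect the two different boundary decompositions \eqref{bdryhomomapPiuBM}, \eqref{bdryhomomapPiuBM2}, the $S^1$-action on ${\CM}_\ell(H=0,J_0;*,*;\alpha)$, and—in parts (3) and (4)—the gluing of the sphere bubbles and of the two $\C P^1$ factors. This is the same class of transversality-and-compatibility issue that pervades Chapters 2--4, and it is handled by the usual inductive construction over energy $\alpha\cap\omega$ and number of marked points $\ell$; no new idea beyond what is already developed in this paper is required, but the verification that the factorization classes in $H^*(E_{\phi_H})$ are the correct ones (rather than being corrected by $\C P^1$-pullbacks) is the delicate step.
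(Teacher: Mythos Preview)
Your approach to (1) and (2) is essentially the same as the paper's (the paper just says they are ``similar to the proof of Theorem \ref{homotopyinvbulk}'' and omits details). The substantive difference is in (3) and (4). The paper does \emph{not} prove (3) and (4) separately in your order; instead it proves (4) first and then derives (3) from (4) by specializing to $H_1=0$, $H_2=H$ (using that $\mathcal S^{\frak b}_{((H_1)_\chi,J^{H_1}_\chi),\ast}=\mathrm{id}$ when $H_1=0$, and that $0\#H$ and $H$ are homotopic loops). For (4) the paper does not use a fiber sum of the two bundles $E_{\phi_{H_1}},E_{\phi_{H_2}}$; rather it introduces a pants moduli space $\CM_{\ell}(H^\varphi,J^{H_1,H_2};z^{H_1}_*,z^{H_2}_*,z^{H_1\#H_2}_*;\alpha)$ in which \emph{all three} ends have Bott--Morse (Hamiltonian-loop) asymptotics, defines an auxiliary product $\frak m_2^{\mathrm{cl},\frak b;H^\varphi}$ from it, and then proves (Lemma \ref{multiPiuBM}) via an $S\in\R$-parametrized version that $\frak m_2^{\mathrm{cl},\frak b;H^\varphi}\circ(\mathcal S^{\frak b}_{(1)}\otimes\mathcal S^{\frak b}_{(2)})$ is chain homotopic to $\mathcal S^{\frak b}_{(12)}\circ\cup^{\frak b}$. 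The five boundary types (\ref{bdrypantsBULKkura2BMpara})--(\ref{bdrypantsBULKkura6BMpara}) are handled exactly as in Section \ref{subsec:multiplicative}: the first three have $S^1$-equivariant factors (via Lemma \ref{HandH0iso}) and contribute zero, while the $S\to\pm\infty$ ends give the two sides of the identity. The transformation $u\mapsto(\phi_{H_i}^t)^{-1}u$ identifies the pants moduli space with the Gromov--Witten moduli space, so $\frak m_2^{\mathrm{cl},\frak b;H^\varphi}$ agrees with $\cup^{\frak b}$ on cohomology.

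Your sketch for (3) is the weakest point: you propose to ``factor through the Piunikhin construction'' and invoke $\mathcal P^{\frak b}$ and $\frak m_2^{\mathrm{cl}}$, but recall that $\mathcal P^{\frak b}_{(H_\chi,J_\chi)}$ is defined for a \emph{nondegenerate} Hamiltonian and lands in $CF(M,H;\Lambda^\downarrow)$, whereas here $\psi_H=\mathrm{id}$ is maximally degenerate and $\mathcal S^{\frak b}$ is a map $\Omega(M)\to\Omega(M)$. There is no intermediate Floer complex through which to factor in the way you describe, so that route does not go through as stated. Your fiber-sum approach to (4) is conceptually correct and is essentially Seidel's original picture, but the paper's pants-moduli argument is more in keeping with the rest of the paper and cleanly yields (3) as a corollary rather than requiring a separate argument.
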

We define the map
\index{$\mathcal S^{\frak b}$}
$
\mathcal S^{\frak b}: \pi_1(\Ham(M,\omega)) \to H(M;\Lambda)
$
by
$$
\mathcal S^{\frak b}([\phi_H]) = \mathcal S^{\frak b}_{(H_\chi,J^H_\chi),\ast} (1).
$$
Here $H$ is a time-dependent Hamiltonian such that $\psi_H = 1$.
$[\phi_H]$ is the homotopy class of the  loop in $\text{\rm Ham}(M;\omega)$
determined by $t\mapsto \phi_H^t$.
$1$ is the unit of $H(M;\Lambda)$.
(Note that $1$ is also the unit with respect to the quantum cup product on
$QH_{\frak b}(M;\Lambda)$ with the bulk.)

The proof of Theorem \ref{seimain} will be given in
Subsection \ref{subsec:seibulk2} for completeness' sake.
\begin{cor}\label{seicor}
$\mathcal S^{\frak b}$ is a homomorphism to the group
$QH_{\frak b}(M;\Lambda)^{\times}$of
invertible elements of $QH_{\frak b}(M;\Lambda)$.
\end{cor}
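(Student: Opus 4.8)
The plan is to deduce Corollary \ref{seicor} directly from Theorem \ref{seimain}. First I would observe that, by definition, $\mathcal S^{\frak b}([\phi_H]) = \mathcal S^{\frak b}_{(H_\chi,J^H_\chi),\ast}(1)$, so that the multiplicativity statement of Corollary \ref{seicor} must be read as an identity in the ring $QH_{\frak b}(M;\Lambda^{\downarrow})$. The main structural input is Theorem \ref{seimain} (4), which says that for two Hamiltonian loops generated by $H_1$ and $H_2$ we have
$$
\mathcal S^{\frak b}_{((H_1\#H_2)_{\chi},J^{H_1\#H_2}_{\chi}),\ast}(x\cup^{\frak b}y)
= \mathcal S^{\frak b}_{((H_1)_{\chi},J^{H_1}_{\chi}),\ast}(x)
\cup^{\frak b}\mathcal S^{\frak b}_{((H_2)_{\chi},J^{H_2}_{\chi}),\ast}(y).
$$
Setting $x = y = 1$ and using that $1\cup^{\frak b}1 = 1$ (since $1$ is the unit of $QH_{\frak b}(M;\Lambda^{\downarrow})$, as recorded just before the corollary) yields
$$
\mathcal S^{\frak b}([\phi_{H_1\#H_2}]) = \mathcal S^{\frak b}([\phi_{H_1}]) \cup^{\frak b} \mathcal S^{\frak b}([\phi_{H_2}]).
$$
Since $\psi_{H_1\#H_2} = \psi_{H_2}\circ\psi_{H_1} = \mathrm{id}$ and the homotopy class of the concatenated loop $\phi_{H_1\#H_2}$ represents the product $[\phi_{H_1}]\cdot[\phi_{H_2}]$ in $\pi_1(\mathrm{Ham}(M,\omega))$ (with the group operation being pointwise composition of loops, up to the standard reparametrization homotopy), this shows $\mathcal S^{\frak b}$ is multiplicative once we know it is well-defined on $\pi_1(\mathrm{Ham}(M,\omega))$; the latter is precisely Theorem \ref{seimain} (2).

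Next I would verify that $\mathcal S^{\frak b}$ takes values in the group of invertible elements. The constant loop at the identity is generated by $H = 0$; for this Hamiltonian the relevant moduli space $\mathcal M_\ell(H_\chi, J^H_\chi; *, z^H_*;\alpha)$ with $\alpha = \alpha_0$ (the class of dimension $2n$) reduces to the configuration whose only contribution is the trivial section, so $\mathcal S^{\frak b}_{(0,J_0),\ast}(1) = 1$; hence $\mathcal S^{\frak b}$ sends the identity of $\pi_1(\mathrm{Ham}(M,\omega))$ to $1 \in QH_{\frak b}(M;\Lambda^{\downarrow})$. Combined with multiplicativity, for any loop class $a$ we get $\mathcal S^{\frak b}(a)\cup^{\frak b}\mathcal S^{\frak b}(a^{-1}) = \mathcal S^{\frak b}(a\cdot a^{-1}) = \mathcal S^{\frak b}(1) = 1$, so $\mathcal S^{\frak b}(a)$ has a two-sided inverse $\mathcal S^{\frak b}(a^{-1})$ in $QH_{\frak b}(M;\Lambda^{\downarrow})$. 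Therefore $\mathcal S^{\frak b}$ is a group homomorphism $\pi_1(\mathrm{Ham}(M,\omega)) \to QH_{\frak b}(M;\Lambda^{\downarrow})^{\times}$.

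The genuinely substantive content is all contained in Theorem \ref{seimain}, whose proof is deferred to Subsection \ref{subsec:seibulk2}; for the present corollary that theorem may simply be invoked. So the only real work here is bookkeeping: matching the concatenation $H_1\# H_2$ of Hamiltonians with the product in $\pi_1(\mathrm{Ham}(M,\omega))$, checking that $\mathcal S^{\frak b}_{(0,J_0),\ast}(1) = 1$, and confirming that the identity $1\cup^{\frak b}1 = 1$ holds in the bulk-deformed quantum cohomology ring. The step I expect to require the most care is ensuring that the reparametrization homotopy relating $\phi_{H_1\#H_2}$ to the pointwise-composed loop $t\mapsto\phi_{H_1}^t$ followed by $t\mapsto\phi_{H_2}^t$ does not affect $\mathcal S^{\frak b}$ — but this is exactly the homotopy invariance guaranteed by Theorem \ref{seimain} (2), so even this is immediate. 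Hence the proof of Corollary \ref{seicor} is a short deduction from the already-established properties of $\mathcal S^{\frak b}_{(H_\chi,J^H_\chi),\ast}$.
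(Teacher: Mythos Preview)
Your proposal is correct and follows essentially the same route as the paper: apply Theorem \ref{seimain} (4) with $x=y=1$ to obtain multiplicativity, identify $[\phi_{H_1\#H_2}]$ with the product in $\pi_1(\mathrm{Ham}(M,\omega))$, and conclude invertibility from the group structure of the source. The only difference is one of explicitness: you verify directly that $\mathcal S^{\frak b}$ sends the trivial loop to $1$ via the $H=0$ moduli space, whereas the paper simply asserts that multiplicativity ``implies in particular that the elements of the image are invertible'' --- relying tacitly on the fact (used in the proof of Theorem \ref{seimain} (3)) that $\mathcal S^{\frak b}_{(0,J_0),\ast} = \mathrm{id}$, hence $\mathcal S^{\frak b}([\mathrm{id}]) = 1$.
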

\begin{defn}
We call the representation
$$
\mathcal S^{\frak b} : \pi_1(\text{\rm Ham}(M;\omega)) \to QH_{\frak b}(M;\Lambda)^{\times}.
$$
 {\it Seidel homomorphism with bulk}.
\end{defn}

\begin{rem}
As mentioned before the homomorphism $\mathcal S^{\frak b}$
is obtained by Seidel \cite{seidel:auto} in the case $\frak b =0$
under certain hypothesis on the symplectic manifold $(M,\omega)$.
Once the virtual fundamental chain technique had been established in the
year 1996, it is obvious that we can generalize  \cite{seidel:auto} to
arbitrary $(M,\omega)$.
The generalization to include bulk deformations is also  straightforward
and do not require  novel ideas.
\end{rem}
\begin{proof}
We prove Corollary \ref{seicor} assuming Theorem \ref{seimain}.
Let $[\phi_{H_i}] \in \pi_1(\text{\rm Ham}(M;\omega))$.
We have $[\phi_{H_1\#H_2}] = [\phi_{H_2}] [\phi_{H_1}] $.
Then using Theorem \ref{seimain} (3),(4) we have:
$$
\aligned
\mathcal S^{\frak b}([\phi_{H_1\#H_2}] )
&=
\mathcal S^{\frak b}_{((H_1\# H_2)_{\chi},J^{H_1\#H_2}_{\chi}),\ast}(1) \\
&=
\mathcal S^{\frak b}_{((H_1)_{\chi},J^{H_1}_{\chi}),\ast} (1)
\cup^{\frak b} \mathcal S^{\frak b}_{((H_2)_{\chi},J^{H_2}_{\chi}),\ast}(1)
=
\mathcal S^{\frak b}([\phi_{H_1}] )
\cup^{\frak b}
\mathcal S^{\frak b}([\phi_{H_2}] ).
\endaligned
$$
Thus $\mathcal S^{\frak b}$ is a homomorphism.
It implies in particular that the elements of the image are
invertible.
\end{proof}

\subsection{Proof of Theorem  \ref{seimain}}
\label{subsec:seibulk2}

The proof of Theorem \ref{seimain} (1),(2) is similar to the proof of
Theorem \ref{homotopyinvbulk} and so omitted.
\par
The proof of  Theorem \ref{seimain} (3),(4) is similar to the proof of
Theorem \ref{themprodcompati} and proceeds as follows.
\par
Let $\Sigma$ be as in Subsection \ref{subsec:pants}.
We also use the notations $h : \Sigma \to \R$, $\frak S \subset \Sigma$
and
$
\varphi: \R \times ((0,1/2) \sqcup (1/2,1)) \to \Sigma \setminus \frak S
$
etc. in Subsection \ref{subsec:pants}.
We define a $\Sigma$-parametrized family of almost complex structures $J^{H_1,H_2}$
by $J^{H_1,H_2}(\varphi(\tau,t)) = J^{H_1\# H_2}_t$.
We assume that $(H_1)_t = (H_2)_t = 0$ if $t$ is in a neighborhood of
$[0] \in S^1 = \R/\Z$.
Let
$H^{\varphi} : \Sigma \times M \to \R$ be a function as in
(\ref{defsharp}).

\begin{defn}\label{pantsmoduliBM}
\index{$\mathcal M_{\ell}(H^\varphi,J^{H_1,H_2};
z^{H_1}_*, z^{H_2}_*,z^{H_1 \# H_2}_*;\alpha)$}
We denote by $\overset{\circ}{{\CM}}_{\ell}(H^\varphi,J^{H_1,H_2};
z^{H_1}_*, z^{H_2}_*,z^{H_1 \# H_2}_*;\alpha)$ the set of all
pairs $(u;z_1^+,\dots,z_{\ell}^+)$ of maps
 $u: \Sigma \to M$
and $z_i^+ \in \Sigma$,
which satisfy the following conditions:
\begin{enumerate}
\item The map $\overline u = u\circ \varphi$ satisfies the equation:
\be\label{eq:HJCR11BM}
\frac{\partial\overline u}{\partial \tau} + J^{H_1,H_2}\Big(\frac{\partial\overline u}
{\partial t} -X_{H^\varphi}(\overline u)\Big) = 0.
\ee
\item The energy
$$
\frac{1}{2} \int \Big(\Big|\frac{\partial\overline u}{\partial \tau}\Big|^2_{J^{H_1,H_2}} + \Big|
\frac{\partial\overline u}
{\partial t} -X_{H^\varphi}(\overline u)\Big|_{J^{H_1,H_2}}^2 \Big)\, dt\, d\tau
$$
is finite.
\item There exist $p_{-,1}, p_{-,2}, p_{+} \in M$ such that $u$ satisfies the following three asymptotic boundary conditions.
$$
\lim_{\tau\to +\infty}u(\varphi(\tau, t)) = z^{H_1\# H_2}_{p_{+}}(t).
$$
$$
\lim_{\tau\to - \infty}u(\varphi(\tau, t)) =
\begin{cases}
z^{H_1}_{p_{-,1}}(2t)   &t \le 1/2, \\
z^{H_2}_{p_{-,2}}(2t-1)   &t \ge 1/2.
\end{cases}
$$
\item The homology class of $u$ is $\alpha$,
in the sense we explain below.
\item
$z_1^+,\dots,z_{\ell}^+$ are mutually distinct.
\end{enumerate}
Here the homology class of $u$ which we mention in (4) above is defined as follows.
We put
\begin{equation}\label{u and v}
v(\tau,t)
=
\begin{cases}
(\phi_{2H_1}^t)^{-1}(u(\tau, t))      \quad  \tau \leq 0, 0 \leq t \leq 1/2 \\
(\phi_{2H_2}^{t-1/2})^{-1}(u(\tau,t))  \quad \tau \leq 0, 1/2 \leq t \leq 1 \\
(\phi_{H_1\#H_2}^t)^{-1}(u(\tau,t)) \quad \tau \geq 0.
\end{cases}
\end{equation}
It defines a map $\Sigma \to M$ which extends to a continuous map
$v : S^2 \to M$. (Note that $\Sigma$ is
$S^2 \setminus \{ {\text {$3$ points}}\}$.)
The homology class of $u$ is by definition nothing but the class
$v_*([S^2]) \in H_2(M;\Z)$.
\par\smallskip
We denote by
$$
({\rm ev}_{-\infty,1},{\rm ev}_{-\infty,2},{\rm ev}_{+\infty})
: \overset{\circ}{{\CM}}_{\ell}(H^\varphi,J^{H_1,H_2};
z^{H_1}_*, z^{H_2}_*,z^{H_1 \# H_2}_*;\alpha) \to M^3
$$
the map which associates $(p_{-,1},p_{-,2},p_+)$ to $(u;z_1^+,\dots,z_{\ell}^+)$.
We also define an evaluation map
$$
{\rm ev} = ({\rm ev} _1,\dots,{\rm ev} _{\ell}) : \overset{\circ}{{\CM}}_{\ell}(H^\varphi,J^{H_1,H_2};
z^{H_1}_*, z^{H_2}_*,z^{H_1 \# H_2}_*;\alpha) \to M^{\ell}
$$
that associates to $(u;z_1^+,\dots,z_{\ell}^+)$ the point
$(u(z_1^+),\dots,u(z_{\ell}^+))$.
\end{defn}
\begin{lem}\label{pantsBULKkuraBM}
\begin{enumerate}
\item The moduli space
$\overset{\circ}{{\CM}}_{\ell}(H^\varphi,J^{H_1,H_2};
z^{H_1}_*, z^{H_2}_*,z^{H_1 \# H_2}_*;\alpha)$ has a compactification
${{\CM}}_{\ell}(H^\varphi,J^{H_1,H_2};
z^{H_1}_*, z^{H_2}_*,z^{H_1 \# H_2}_*;\alpha)$ that is Hausdorff.
\item
The space ${{\CM}}_{\ell}(H^\varphi,J^{H_1,H_2};
z^{H_1}_*, z^{H_2}_*,z^{H_1 \# H_2}_*;\alpha)$ has an orientable Kuranishi structure with corners.
\item
The normalized boundary of ${{\CM}}_{\ell}(H^\varphi,J^{H_1,H_2};
z^{H_1}_*, z^{H_2}_*,z^{H_1 \# H_2}_*;\alpha)$ is described
by the union of the following three types of fiber products.
\begin{equation}\label{bdrypantsBULKkura2BM}
{{\CM}}_{\#\mathbb L_1}(H_1,J^{H_1};z^{H_1}_,z^{H_1}_*;\alpha_1)\,\,
{}_{\text{\rm ev}_{+\infty}}\times_{\text{\rm ev}_{-\infty,1}}
{{\CM}}_{\#\mathbb L_2}(H^\varphi,J^{H_1,H_2};
z^{H_1}_*, z^{H_2}_*,z^{H_1 \# H_2}_*;\alpha_2)
\end{equation}
where the union is taken over all $\alpha_1,\alpha_2$ with $\alpha_1 + \alpha_2
= \alpha$,
 and $(\mathbb L_1,\mathbb L_2) \in \text{\rm Shuff}(\ell)$.
\begin{equation}\label{bdrypantsBULKkura3BM}
{{\CM}}_{\#\mathbb L_1}(H_2,J^{H_2};z^{H_2}_,z^{H_2}_*;\alpha_1)
\,\,
{}_{\text{\rm ev}_{+\infty}}\times_{\text{\rm ev}_{-\infty,2}}
{{\CM}}_{\#\mathbb L_2}(H^\varphi,J^{H_1,H_2};
z^{H_1}_*, z^{H_2}_*,z^{H_1 \# H_2}_*;\alpha_2)
\end{equation}
where the union is taken over all $\alpha_1,\alpha_2$ with $\alpha_1 + \alpha_2
= \alpha$,
 and $(\mathbb L_1,\mathbb L_2) \in \text{\rm Shuff}(\ell)$.
\begin{equation}\label{bdrypantsBULKkura4BM}
\aligned
&{{\CM}}_{\#\mathbb L_2}(H^\varphi,J^{H_1,H_2};
z^{H_1}_*, z^{H_2}_*,z^{H_1 \# H_2}_*;\alpha_1))\\
&
{}_{\text{\rm ev}_{+\infty}}\times_{\text{\rm ev}_{-\infty}}  {{\CM}}_{\#\mathbb L_1}(H_1\#
H_2,J^{H_1\#
H_2};z^{H_1 \# H_2}_*,z^{H_1 \# H_2}_*;\alpha_2)
\endaligned\end{equation}
where the union is taken over all $\alpha_1,\alpha_2$ with $\alpha_1 + \alpha_2
= \alpha$,
 and $(\mathbb L_1,\mathbb L_2) \in \text{\rm Shuff}(\ell)$.
 \item
The (virtual) dimension is given by
\begin{equation}\label{dimensionboundaryb11BM}
\dim{{\CM}}_{\ell}(H^\varphi,J^{H_1,H_2};
z^{H_1}_*, z^{H_2}_*,z^{H_1 \# H_2}_*;\alpha) = 2\ell + 2c_1(M)[\alpha ] + 2n.
\end{equation}
\item
We can define a system of orientations on ${{\CM}}_{\ell}(H^\varphi,J^{H_1,H_2};
z^{H_1}_*, z^{H_2}_*,z^{H_1 \# H_2}_*;\alpha)$ so that the isomorphism
$(3)$ above is compatible with this orientation.
\item
${\rm ev}_{-\infty,1}$, ${\rm ev}_{-\infty,2}$, ${\rm ev}_{+\infty}$, ${\rm ev}$ extend to
strongly continuous smooth maps on
${{\CM}}_{\ell}(H^\varphi,J^{H_1,H_2};
z^{H_1}_*, z^{H_2}_*,z^{H_1 \# H_2}_*;\alpha)$, which we denote also by the same symbol.
They are compatible with $(3)$.
\item
${\rm ev}_{+\infty}$ is weakly submersive.
\item
The Kuranishi structure is invariant under the permulation of interior marked points.
\end{enumerate}
\end{lem}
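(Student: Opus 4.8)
\textbf{Proof plan for Lemma \ref{pantsBULKkuraBM}.} The statement is another instance of the standard package of results (compactification, Kuranishi structure with corners, boundary description, dimension formula, orientation, properties of evaluation maps) for a moduli space of pseudo-holomorphic sections of a Hamiltonian fibration over a pair-of-pants domain $\Sigma$, now carrying $\ell$ interior marked points. As the excerpt itself indicates for the analogous Propositions \ref{connkura}, \ref{disckura}, \ref{pantsBULKkura}, \ref{PiupureBM}, the proof is ``the same'' as these and will be largely deferred. The plan is therefore to assemble the proof by quoting \cite{fukaya-ono} (for compactness, Kuranishi structures and orientations) together with the adaptations already carried out repeatedly in this paper, and to concentrate on the one genuinely new point: verifying that the boundary strata are exactly \eqref{bdrypantsBULKkura2BM}, \eqref{bdrypantsBULKkura3BM}, \eqref{bdrypantsBULKkura4BM}, i.e. that no extra codimension-one stratum appears.

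First I would set up the analytic framework: fix the complex structure $j_\Sigma$ and the cylindrical ends near the three punctures as in Subsection \ref{subsec:pants}, and recall that equation \eqref{eq:HJCR11BM} is, on each cylindrical end, the Floer equation for the respective Hamiltonian loop ($2H_1$ near the first incoming end, $2H_2$ near the second, $H_1\#H_2$ near the outgoing end), where I use the identity \eqref{eq:H1dtglueH2dt}. The map $v$ defined in \eqref{u and v} lets one transfer asymptotic and homology-class bookkeeping to genuine $J$-holomorphic maps $S^2\to M$, exactly as in Lemma \ref{triextendtau} and Lemma \ref{HandH0iso}; this is what makes the homology class $\alpha\in H_2(M;\Z)$ in item (4) well defined and gives the dimension formula \eqref{dimensionboundaryb11BM} by the usual index computation (virtual dimension $= 2c_1(M)[\alpha] + 2n + 2\ell$, with no $-1$ since the pair-of-pants domain is rigid). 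Gromov--Floer compactness then yields the Hausdorff compactification (1), and the relative construction of \cite{fukaya-ono} Theorem 6.12 (that is, \cite{fooo:book} Lemma A1.20) gives the Kuranishi structure with corners (2) together with compatible orientations (5). The evaluation maps ${\rm ev}_{\pm\infty,i}$, ${\rm ev}$ extend over the compactification and ${\rm ev}_{+\infty}$ is weakly submersive by the same argument as in Lemma \ref{PiupureBM} (using that $J^{H_1\#H_2}$ is a time-independent $J_0$ in a neighborhood of the outgoing puncture coming from the condition $(H_1)_t=(H_2)_t=0$ near $[0]\in S^1$), giving (6), (7), (8).

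The main point, and the one I expect to require the most care, is the boundary description (3). Codimension-one degenerations of a sequence in $\overset{\circ}{{\CM}}_{\ell}(H^\varphi,J^{H_1,H_2};\cdots;\alpha)$ are of two types: breaking off a Floer cylinder at one of the three ends, and bubbling of a sphere. Breaking at the incoming end near puncture $1$ produces \eqref{bdrypantsBULKkura2BM}, at puncture $2$ produces \eqref{bdrypantsBULKkura3BM}, and at the outgoing puncture produces \eqref{bdrypantsBULKkura4BM}; in each case the interior marked points distribute over the two pieces according to a shuffle $(\mathbb L_1,\mathbb L_2)\in\text{Shuff}(\ell)$, and the gluing of homology classes is $\alpha_1+\alpha_2=\alpha$, this last identity being checked via the extended map $v$ of \eqref{u and v}. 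The crucial observation — exactly as in the proof of Proposition \ref{piuBULKkura} (see \eqref{67doesnotoccur}) and in Lemma \ref{parakuralem} and Lemma \ref{HBULKkurapara} — is that a sphere bubble sliding off at $\tau\to\pm\infty$ (equivalently, an element of $\overline{\CM}^{\rm cl}$ appearing as a factor) carries an extra $S^1$-symmetry coming from rotation of the cylindrical end, so after quotienting it contributes in codimension two and does not appear as a boundary stratum; likewise a sphere bubble in the interior of $\Sigma$ or at a marked point is codimension two. Finally, the $S^1$-equivariance in the relevant factors requires that $J^{H_1,H_2}$ be $t$-independent near the ends where the bubbling occurs, which is why we keep the almost complex structures constant there; this last bit is precisely where one must be careful, and it is handled as in Remark \ref{JS1inv} and the proof of Lemma \ref{MBchainhomotopyprop}. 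Combining these, (3) follows, and compatibility of orientations and evaluation maps with (3) is inherited from \cite{fooo:book} Proposition 8.3.3 in the usual way. This completes the proof.
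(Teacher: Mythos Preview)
Your plan is correct and in fact more detailed than the paper, which states this lemma without proof and treats it as another routine instance of the package already established for Proposition \ref{connkura} and its variants (Propositions \ref{pantsBULKkura}, \ref{PiupureBM}, etc.).

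One point of emphasis is slightly off, though not fatal. You present the $S^1$-symmetry argument from Proposition \ref{piuBULKkura} as ``the main point'' needed to rule out extra codimension-one strata. But here all three ends of $\Sigma$ are asymptotic to periodic orbits (in the Bott--Morse sense, parametrized by $M$), not to a point as in the Piunikhin moduli space; hence the only codimension-one degenerations are precisely the three Floer-cylinder breakings \eqref{bdrypantsBULKkura2BM}--\eqref{bdrypantsBULKkura4BM}, and there is no separate ``sphere at an end'' phenomenon to exclude. The $S^1$ symmetry on the cylinder factors ${{\CM}}_{\#\mathbb L}(H_i,J^{H_i};z^{H_i}_*,z^{H_i}_*;\alpha)$ (inherited via Lemma \ref{HandH0iso}) is indeed present, but the paper exploits it \emph{after} this lemma --- in the sentence ``It is a chain map, since the contribution of the boundaries described in (3) above to the correspondence are all zero by the $S^1$ equivariance'' and again in the proof of Lemma \ref{multiPiuBM} --- to make the boundary contributions vanish once $S^1$-equivariant multisections are chosen, not to exclude these strata from the boundary description here. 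Also, weak submersivity of ${\rm ev}_{+\infty}$ in (7) is a feature of the Kuranishi-chart construction (enlarging obstruction bundles so the evaluation is submersive on each chart), not a consequence of $t$-independence of the almost complex structure near the outgoing puncture.
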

We take a system of CF-perturbations on
${{\CM}}_{\ell}(H^\varphi,J^{H_1,H_2};
z^{H_1}_*, z^{H_2}_*,z^{H_1 \# H_2}_*;\alpha)$
that are transversal to $0$, compatible with (3) above
and such that ${\rm ev}_{+\infty}$ is strongly submersive with respect
to our CF-perturbations.\par

We define
$
\frak m_{2;\alpha}^{\text{\rm cl}, H^\varphi}
: \Omega(M) \otimes \Omega(M)
\to  \Omega(M)
$
by
\index{$\frak m_{2;\alpha}^{\text{\rm cl};\frak b;H^\varphi}$}
\begin{equation}\label{2821}
\aligned
& \frak m_{2;\alpha}^{\text{\rm cl};\frak b;H^\varphi}(h_1,h_2)\\
&=
\sum_{\ell=0}^{\infty}
\frac{1}{\ell !}
(\text{\rm ev}_{+\infty})!
\left(
\text{\rm ev}_{-\infty,1}^* h_1 \wedge
\text{\rm ev}_{-\infty,2}^* h_2 \wedge
\text{\rm ev}^*(\underbrace{\frak b_+,\dots,\frak b_+}_{\ell})
\right),
\endaligned
\end{equation}
where we use ${{\CM}}_{\ell}(H^\varphi,J^{H_1,H_2};
z^{H_1}_*, z^{H_2}_*,z^{H_1 \# H_2}_*;\alpha)$
and the evaluation maps thereon as a correspondence to define the right hand side.
\par
We put
$$
\frak m_2^{\text{\rm cl},\frak b;H^\varphi}
=
\sum_{\alpha} e^{\alpha \cap \widehat{\frak b}_2}
T^{\alpha \cap \omega}
\frak m_{2;\alpha}^{\text{\rm cl};\frak b;H^{\varphi}}.
$$
It defines a chain map,
since the contribution of the boundaries described in (3) above
to the correspondence are
all zero by the $S^1$ equivariance.
Therefore $\frak m_2^{\text{\rm cl},\frak b;H^\varphi}$  defines a map
$$
\frak m_2^{\text{\rm cl},\frak b;H^\varphi} :
H^*(M;\Lambda)
\otimes_{\Lambda}
H^*(M;\Lambda)
\to
H^*(M;\Lambda).
$$

\begin{lem}\label{multiPiuBM}
$$
\frak m_{2}^{\text{\rm cl};\frak b;H^\varphi}
\circ \left(
{\mathcal S}_{((H_1)_{\chi},J^{H_1}_{\chi})}^{\frak b} \otimes
{\mathcal S}_{((H_2)_{\chi},J^{H_2}_{\chi})}^{\frak b}
\right)
$$
is chain homotopic to
$$
{\mathcal S}_{((H_1\#H_2)_{\chi},J^{H_1\#H_2}_{\chi})}^{\frak b} \circ \cup^{\frak b}.
$$
\end{lem}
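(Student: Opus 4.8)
\textbf{Proof proposal for Lemma \ref{multiPiuBM}.}

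The plan is to construct an explicit chain homotopy by introducing a parametrized moduli space that interpolates between the two compositions, exactly in the spirit of the proof of Theorem \ref{themprodcompati} (see Proposition \ref{compaticompositte} and Lemma \ref{compprodmainlemma}). First I would elongate the sphere-with-three-punctures data: for each $S \in [0,\infty)$ I rescale the Hamiltonian term $H^\varphi$ by an elongation function of $h(z)+S$, just as in the definition of $H_S^\varphi$ in Subsection \ref{subsec:multiplicative}, so that as $S\to\infty$ the domain degenerates into the two incoming cylinders carrying $(H_1)_\chi$, $(H_2)_\chi$ glued to the pair-of-pants with constant Hamiltonian, while at $S=0$ one recovers the map $\frak m_2^{\rm cl,\frak b;H^\varphi}$ built directly from the fixed section data. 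Because we are working with the Seidel-type moduli spaces ${\CM}_\ell(H^\varphi,J^{H_1,H_2};z^{H_1}_*,z^{H_2}_*,z^{H_1\#H_2}_*;\alpha)$, the relevant boundary condition is that $u$ converges to $z^H_{p_+}(t)$ at the outgoing end rather than to a critical point of $\CA_H$; this is the only real difference from Subsection \ref{subsec:multiplicative} and it is harmless because the $z^H_\bullet$-conventions are already built into these moduli spaces.

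Next I would set up the parametrized moduli space
$$
\overset{\circ}{\CM}_\ell(para;H^\varphi,J^{H_1,H_2};z^{H_1}_*,z^{H_2}_*,z^{H_1\#H_2}_*;\alpha)
= \bigcup_{S\in[0,\infty)}\{S\}\times\overset{\circ}{\CM}_\ell(H_S^\varphi,J_S^{H_1,H_2};\cdots;\alpha)
$$
and establish its Kuranishi structure, compactness, dimension formula ($\dim = 2\ell + 2c_1(M)[\alpha] + 2n + 1$), orientation, and evaluation maps, all in parallel with Lemma \ref{pantsBULKkurapara} and Lemma \ref{pantsBULKkuraBM}. The boundary will consist of the following strata: (i) the $S\to\infty$ end, which is the fiber product of ${\CM}_{\#\L_1}((H_1)_\chi,J^{H_1}_\chi;*,z^{H_1}_*;\alpha_1)\times{\CM}_{\#\L_2}((H_2)_\chi,J^{H_2}_\chi;*,z^{H_2}_*;\alpha_2)$ with ${\CM}_{\#\L_3}(H^\varphi,J^{H_1,H_2};z^{H_1}_*,z^{H_2}_*,z^{H_1\#H_2}_*;\alpha_3)$ over $M^2$, corresponding to $\frak m_2^{\rm cl,\frak b;H^\varphi}\circ(\mathcal S^{\frak b}\otimes\mathcal S^{\frak b})$; (ii) the $S=0$ end, which is ${\CM}_\ell(H^\varphi,J^{H_1,H_2};z^{H_1}_*,z^{H_2}_*,z^{H_1\#H_2}_*;\alpha)$ itself composed with $\cup^{\frak b}$ via Theorem \ref{themprodcompati}'s identification; (iii) splitting at one of the incoming or the outgoing ends, producing a fiber product with a Seidel moduli space ${\CM}_\bullet(H_i,J^{H_i};z^{H_i}_*,z^{H_i}_*;\cdot)$ or ${\CM}_\bullet(H_1\#H_2,J^{H_1\#H_2};\cdot)$, which corresponds to composing the chain homotopy with a Floer boundary operator; and (iv) sphere bubbles sliding to the incoming ends $\tau\to-\infty$, which — as in the proofs of Proposition \ref{piuBULKkura} and Proposition \ref{pantsBULKkurapara} — carry an extra $S^1$-symmetry and are therefore of codimension two, hence do not contribute. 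One then chooses a compatible system of families of multisections (using the de Rham representatives $\widehat{\frak b}_2,\widehat{\frak b}_+$ of $\frak b$ on $E_{\phi_{H_i}}$ pulled back appropriately, as in Subsection \ref{subsec:seibulk}) so that the relevant evaluation map is a submersion on the zero locus, defines the degree-one map
$$
\frak H^{\frak b}_{H^\varphi}:\ \Omega(M)\widehat\otimes\Lambda^\downarrow\otimes\Omega(M)\widehat\otimes\Lambda^\downarrow\ \to\ \Omega(M)\widehat\otimes\Lambda^\downarrow
$$
by the correspondence with $ev_{-\infty,1}^*,ev_{-\infty,2}^*,ev_{+\infty !}$ and the interior insertions of $\widehat{\frak b}_+$, and reads off the chain-homotopy identity
$$
d\circ\frak H^{\frak b}_{H^\varphi} \pm \frak H^{\frak b}_{H^\varphi}\circ d
= \frak m_2^{\rm cl,\frak b;H^\varphi}\circ(\mathcal S^{\frak b}_{((H_1)_\chi,J^{H_1}_\chi)}\otimes \mathcal S^{\frak b}_{((H_2)_\chi,J^{H_2}_\chi)})
- \mathcal S^{\frak b}_{((H_1\#H_2)_\chi,J^{H_1\#H_2}_\chi)}\circ\cup^{\frak b}
$$
from Stokes' theorem (\cite{fooo:bulk} Lemma 12.13) and the composition formula (\cite{fooo:bulk} Lemma 12.15), together with Lemma \ref{pantsBULKkuraBM} (3) applied to the parametrized space.

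The main obstacle, as usual in this circle of arguments, is organizing the virtual perturbation data consistently: one needs families of multisections on $\overset{\circ}{\CM}_\ell(para;\cdots)$ that simultaneously restrict at $S=\infty$ to the product of the chosen perturbations used to define $\mathcal S^{\frak b}_{((H_i)_\chi,J^{H_i}_\chi)}$ and the pair-of-pants map with constant Hamiltonian, restrict at $S=0$ to those used to define $\frak m_2^{\rm cl,\frak b;H^\varphi}$, are compatible at the splitting ends (iii) with the Floer boundary operators, and make $ev_{+\infty}$ a submersion so the pushforward integrals are defined — all this while keeping the $S^1$-equivariance near the $\tau\to-\infty$ bubbling strata so that (iv) genuinely drops out by a dimension count. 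This is routine but intricate; it is handled exactly as in the proof of Proposition \ref{pantsBULKkurapara} and Lemma \ref{compprodmainlemma}, using the inductive construction of Kuranishi structures and multisections over energy. Once this bookkeeping is in place, Lemma \ref{multiPiuBM} follows immediately, and combining it with the already-established associativity and unitality of $\cup^{\frak b}$ and with Theorem \ref{seimain} (1),(2) gives the remaining assertions (3) and (4) of Theorem \ref{seimain} as in the proof of Corollary \ref{seicor}.
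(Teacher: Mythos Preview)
Your overall strategy---a parametrized pair-of-pants moduli space interpolating between the two compositions, then Stokes' theorem---is exactly right, and your identification of the $S\to\infty$ boundary with $\frak m_2^{\rm cl,\frak b;H^\varphi}\circ(\mathcal S^{\frak b}_{H_1}\otimes\mathcal S^{\frak b}_{H_2})$ matches the paper's (\ref{bdrypantsBULKkura6BMpara}). But there are two genuine gaps.

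First, the parameter must range over all of $\R$, not $[0,\infty)$, and the incoming asymptotics of the parametrized space are points $*,*$, not $z^{H_i}_*$: with $H^\varphi_{S,\chi}(\varphi(\tau,t),x)=\chi(\tau+S)(H_1\#H_2)_t(x)$ the Hamiltonian vanishes identically at the incoming ends for every $S$, so the correct space is ${\CM}_\ell(para;H^\varphi_\chi,J^{H_1,H_2}_\chi;**,z^{H_1\#H_2}_*;\alpha)$. Your $S=0$ end does \emph{not} produce $\mathcal S^{\frak b}_{H_1\#H_2}\circ\cup^{\frak b}$; nothing like Theorem \ref{themprodcompati} applies there. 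In the paper one instead lets $S\to-\infty$, where the domain degenerates into a sphere carrying $\cup^{\frak b}$ glued along $\text{ev}_3$ to the Seidel cylinder ${\CM}((H_1\#H_2)_\chi,J^{H_1\#H_2}_\chi;*,z^{H_1\#H_2}_*;\alpha_2)$---this is boundary type (\ref{bdrypantsBULKkura5BMpara}), and it is the source of the second term. Truncating at $S=0$ leaves you with a map that still requires its own cobordism to be identified, which is exactly the missing half of the argument.

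Second, your treatment of the strata (iii) is wrong in this context. The bounded-$S$ splittings produce fiber products with ${\CM}(H=0,J_0;*,*;\alpha_1)$ at the incoming ends and with ${\CM}(H_1\#H_2,J^{H_1\#H_2};z^{H_1\#H_2}_*,z^{H_1\#H_2}_*;\alpha_2)$ at the outgoing end. These factors carry a free (up to finite isotropy) $S^1$-action via Lemma \ref{HandH0iso}, so one chooses the families of multisections to be $S^1$-equivariant there, and the contribution of (iii) is \emph{zero}---it does not encode the de Rham differential. The terms $d\circ\frak H+\frak H\circ d$ come entirely from Stokes' theorem applied to the (not necessarily closed) input forms $h_1,h_2$. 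This is precisely where the Seidel argument diverges from the template of Theorem \ref{themprodcompati}: in the Floer setting the analogous strata carry $\partial_{(H,J)}^{\frak b}$, but here the Bott--Morse degeneracy kills them and the only surviving boundary contributions are the two ends $S\to\pm\infty$.
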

\begin{proof}
For $S \in \R$ we define
$H_{S,\chi}^{\varphi} : \Sigma \times M \to \R$ by
\begin{equation}
H_{S,\chi}^{\varphi}(\varphi(\tau,t),x)
=
\chi(\tau+S) (H_1\#H_2)_t(x).
\end{equation}
Note that $J^H_{\chi}$ is the $(\R\times S^1)$-parameterized family of almost
complex structures as in (\ref{296form}).
For $S \in \R$ we define a $\Sigma$ parameterized family of
almost complex structures $J^{H_1,H_2}_{S,\chi}$ by
\begin{equation}
J^{H_1,H_2}_{S,\chi}(\varphi(\tau,t))
= J^{(H_1\#H_2)}_{\chi}(\tau+S,t).
\end{equation}
Let $\alpha \in \Pi_2(M;H_1\#H_2)$.

\begin{defn}\label{pantsmoduliBMpara}
For $S \in \R$,
we denote by $\overset{\circ}{{\CM}}_{\ell}(H^{\varphi}_{S,\chi},J^{H_1,H_2}_{S,\chi};
**,z^{H_1 \# H_2}_*;\alpha)$ the set of all
pairs $(u;z_1^+,\dots,z_{\ell}^+)$ of maps
 $u: \Sigma \to M$
and $z_i^+ \in \Sigma$,
which satisfy the following conditions:
 \begin{enumerate}
 \item The map $\overline u = u\circ \varphi$ satisfies the equation:
\be\label{eq:HJCR11BMpara}
\frac{\partial\overline u}{\partial \tau} + J^{H_1,H_2}_{S,\chi}\Big(\frac{\partial\overline u}
{\partial t} -X_{H^{\varphi}_{S,\chi}}(\overline u)\Big) = 0.
\ee
\item The energy
$$
\frac{1}{2} \int \Big(\Big|\frac{\partial\overline u}{\partial \tau}\Big|^2_{J^{H_1,H_2}_{S,\chi}} + \Big|
\frac{\partial\overline u}
{\partial t} -X_{H^{\varphi}_{S,\chi}}(\overline u)\Big|_{J^{H_1,H_2}_{S,\chi}}^2 \Big)\, dt\, d\tau
$$
is finite.
\item There exist $p_{-,1}, p_{-,2}, p_{+}
\in M$ such that $u$ satisfies the following three asymptotic boundary conditions.
$$
\lim_{\tau\to +\infty}u(\varphi(\tau, t)) = z^{H_1\# H_2}_{p_{+}}(t).
$$
$$
\lim_{\tau\to - \infty}u(\varphi(\tau, t)) =
\begin{cases}
p_{-,1}  &t \le 1/2, \\
p_{-,2}   &t \ge 1/2.
\end{cases}
$$
\item The homology class of $u$ is ${\alpha}$,
in the sense we explain below.
\item
$z_1^+,\dots,z_{\ell}^+$ are mutually distinct.
\end{enumerate}
\begin{figure}[h]
\centering
\includegraphics[scale=0.3]
{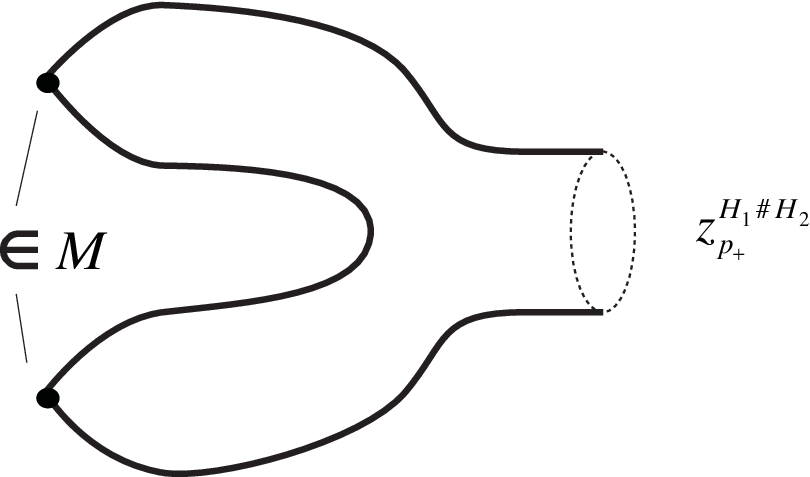}
\caption{An element of  $\overset{\circ}{{\CM}}_{\ell}(H^{\varphi}_{S,\chi},J^{H_1,H_2}_{S,\chi};
**,z^{H_1 \# H_2}_*;\alpha)$}
\label{Figure291}
\end{figure}
Here the homology class of $u$ which we mention in (4) above is defined as follows.
We consider $\Sigma \times M$ and
glue $M$ at the two ends corresponding to $\tau \to -\infty$ by
$(\phi_{H_1}^{2t})^{-1}$ $0 \leq t \leq 1/2$ and $(\phi_{H_2}^{2t-1})^{-1}$, $1/2 \leq t \leq 1$, respectively.
At the end corresponding to $\tau \to +\infty$ we glue $M$
but with twisting using the map $\phi_{H_1\# H_2}$ in the same
way as the definition of $E_{H^\varphi}$. We then obtain
$E_{H^\varphi}$. Actually this space
together with projection to $S^2 = \Sigma \cup \{ \text{3 points} \}$
can be identified with $E_{\phi_{H_1 \# H_2}}$.
We define
\begin{equation}\label{utouprimeprod}
\widehat u(\tau,t)
= ((\tau,t),u(\tau,t)) \in E_{H^\varphi}.
\end{equation}
It extends to a continuous map
$\widehat u : S^2 \to E_{H^\varphi}$.
The homology class of $\widehat u$ is well defined as an element of
$\Pi_2(M;H_1\# H_2)$.
\par
By Theorem \ref{LaMcth} we obtain $\widehat{\frak b}_2
\in H^2(E_{H^\varphi};\C)$ from
${\frak b}_2 \in H^2(M;\C)$.
\par\smallskip
We denote by
\begin{equation}\label{eq:ev3}
({\rm ev}_{-\infty,1},{\rm ev}_{-\infty,2},{\rm ev}_{+\infty})
:
\overset{\circ}{{\CM}}_{\ell}(H^{\varphi}_{S,\chi},J^{H_1,H_2}_{S,\chi};
**,z^{H_1 \# H_2}_*;\alpha)
\to M^3
\end{equation}
the map which associate $(p_{-,1},p_{-,2},p_+)$ to $(u;z_1^+,\dots,z_{\ell}^+)$.
We also define an evaluation map
\begin{equation}\label{eq:evEell}
{\rm ev} = ({\rm ev} _1,\dots,{\rm ev} _{\ell}): \overset{\circ}{{\CM}}_{\ell}(H^{\varphi}_{S,\chi},J^{H_1,H_2}_{S,\chi};
**,z^{H_1 \# H_2}_*;\alpha) \to (E_{H^\varphi})^{\ell}
\end{equation}
that associates to $(u;z_1^+,\dots,z_{\ell}^+)$ the point
$(\widehat u(z_i^+),\dots,\widehat u(z_{\ell}^+))$.
We put
\index{$\mathcal M_{\ell}(para;H^{\varphi}_{\chi},J^{H_1,H_2}_{\chi};
**,z^{H_1 \# H_2}_*;\alpha)
$}
$$
\overset{\circ}{{\CM}}_{\ell}(para;H^{\varphi}_{\chi},J^{H_1,H_2}_{\chi};
**,z^{H_1 \# H_2}_*;\alpha)
=
\bigcup_{S\in \R}
\{S\} \times \overset{\circ}{{\CM}}_{\ell}(H^{\varphi}_{S,\chi},J^{H_1,H_2}_{S,\chi};
**,z^{H_1 \# H_2}_*;\alpha).
$$
The above evaluation maps are defined on it in an obvious way.
\end{defn}
We can define a compactification
${{\CM}}_{\ell}(para;H^{\varphi}_{\chi},J^{H_1,H_2}_{\chi};
**,z^{H_1 \# H_2}_*;\alpha)$
of the moduli space
$
\overset{\circ}{{\CM}}_{\ell}(para;H^{\varphi}_{\chi},J^{H_1,H_2}_{\chi};
**,z^{H_1 \# H_2}_*;\alpha)$
and a system of Kuranishi structures on it, that are oriented with corners.
Its boundary is a union of the following five types of fiber products:
\begin{equation}\label{bdrypantsBULKkura2BMpara}
\aligned
&{{\CM}}_{\#\mathbb L_1}(H=0,J_0;*,*;\alpha_1) \\
&{}_{\text{\rm ev}_{+\infty}}\times_{\text{\rm ev}_{-\infty,1}}
{{\CM}}_{\#\mathbb L_2}(para;H^\varphi_{\chi},J^{H_1,H_2}_{\chi};
**,z^{H_1 \# H_2}_*;\alpha_2),
\endaligned
\end{equation}
where the union is taken over all $\alpha_1,\alpha_2$ with $\alpha_1 + \alpha_2
= \alpha$,
 and $(\mathbb L_1,\mathbb L_2) \in \text{\rm Shuff}(\ell)$.
\begin{figure}[h]
\centering
\includegraphics[scale=0.3]
{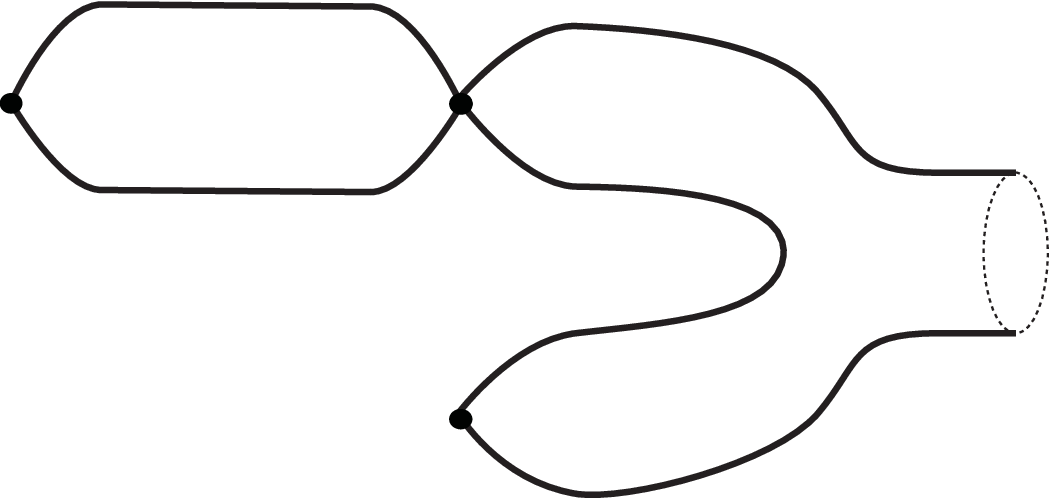}
\caption{An element of (\ref{bdrypantsBULKkura2BMpara})}
\label{Figure1}
\end{figure}
\begin{equation}\label{bdrypantsBULKkura3BMpara}
\aligned
&{{\CM}}_{\#\mathbb L_1}(H=0,J_0;*,*;\alpha_1)\\
&
{}_{\text{\rm ev}_{+\infty}}\times_{\text{\rm ev}_{-\infty,2}}
{{\CM}}_{\#\mathbb L_2}(para;H^\varphi_{\chi},J^{H_1,H_2}_{\chi};
**,z^{H_1 \# H_2}_*;\alpha_2),
\endaligned
\end{equation}
where the union is taken over all $\alpha_1,\alpha_2$ with $\alpha_1 + \alpha_2
= \alpha$,
 and $(\mathbb L_1,\mathbb L_2) \in \text{\rm Shuff}(\ell)$.
 \begin{center}
\begin{figure}[h]
 \begin{tabular}{cc}
 \begin{minipage}[t]{0.45\hsize}
\centering
\includegraphics[scale=0.3]
{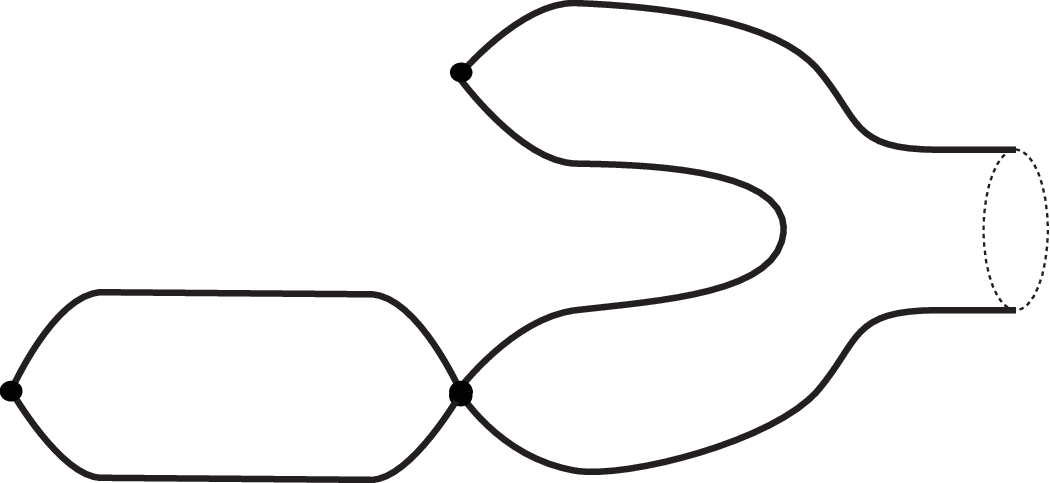}
\caption{An element of (\ref{bdrypantsBULKkura3BMpara})}
\label{Figure293}
\end{minipage} &
\begin{minipage}[t]{0.45\hsize}
\centering
\includegraphics[scale=0.3]
{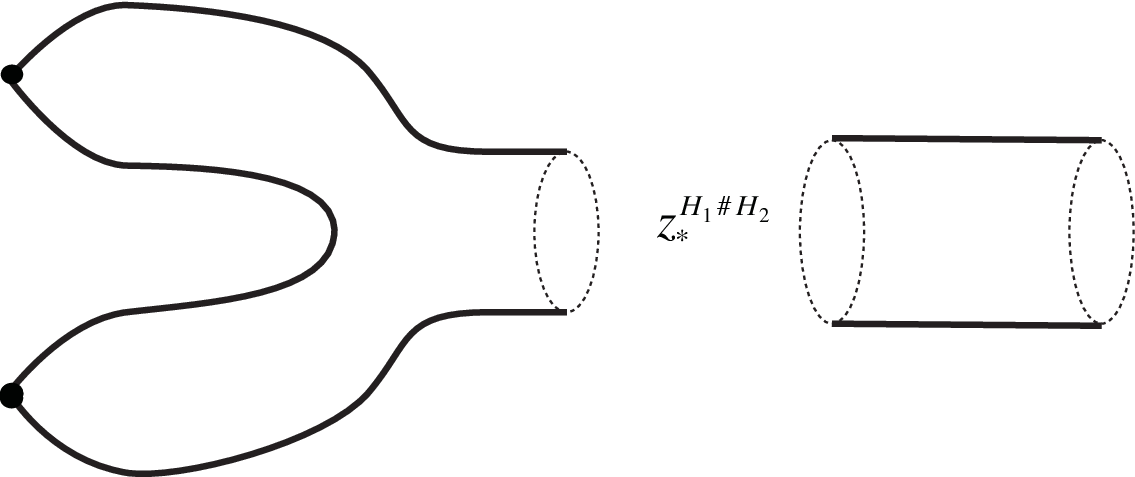}
\caption{An element of (\ref{bdrypantsBULKkura4BMpara})}
\label{Figure294}
\end{minipage}
\end{tabular}
\end{figure}
\end{center}

\begin{equation}\label{bdrypantsBULKkura4BMpara}
\aligned
&{{\CM}}_{\#\mathbb L_1}(para;H^\varphi_{\chi},J^{H_1,H_2}_{\chi};
**,z^{H_1 \# H_2}_*;\alpha_1)\\
&
{}_{\text{\rm ev}_{+\infty}}\times_{\text{\rm ev}_{-\infty}}
{{\CM}}_{\#\mathbb L_2}(H_1\# H_2,J^{H_1\#H_2};z^{H_1 \# H_2}_*,z^{H_1 \# H_2}_*;\alpha_2),
\endaligned\end{equation}
where the union is taken over all $\alpha_1,\alpha_2$ with $\alpha_1 + \alpha_2
= \alpha$,
 and $(\mathbb L_1,\mathbb L_2) \in \text{\rm Shuff}(\ell)$.
\begin{equation}\label{bdrypantsBULKkura5BMpara}
\aligned
{{\CM}}_{\#\mathbb L_1+3}^{\text{\rm cl}}(\alpha_1)
{}_{\text{\rm ev}_{3}}\times_{\text{\rm ev}_{-\infty}}
  {{\CM}}_{\#\mathbb L_2}((H_1\# H_2)_{\chi},J^{H_1\#H_2}_{\chi};*,z^{H_1 \# H_2}_*;\alpha_2),
\endaligned\end{equation}
where the union is taken over all $\alpha_1,\alpha_2$ with $\alpha_1 + \alpha_2
= \alpha$,
 and $(\mathbb L_1,\mathbb L_2) \in \text{\rm Shuff}(\ell)$.

\begin{center}
\begin{figure}[h]
 \begin{tabular}{cc}
 \begin{minipage}[t]{0.45\hsize}
\centering
\includegraphics[scale=0.3]
{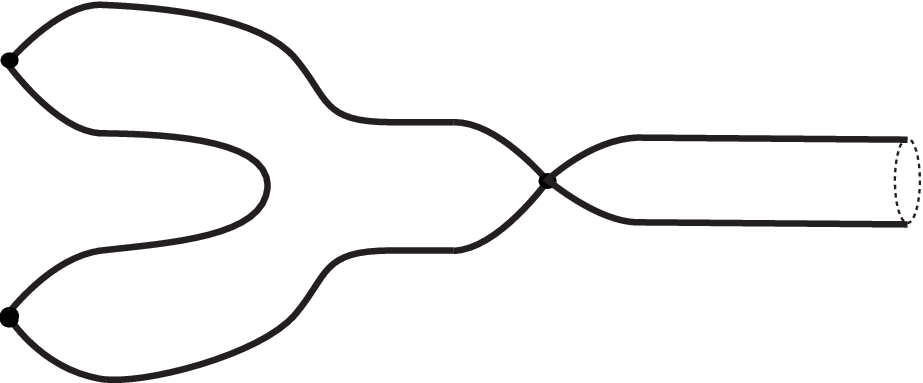}
\caption{An element of (\ref{bdrypantsBULKkura5BMpara})}
\label{Figure295}
 \end{minipage} &
 \begin{minipage}[t]{0.45\hsize}
 \centering
\includegraphics[scale=0.3]
{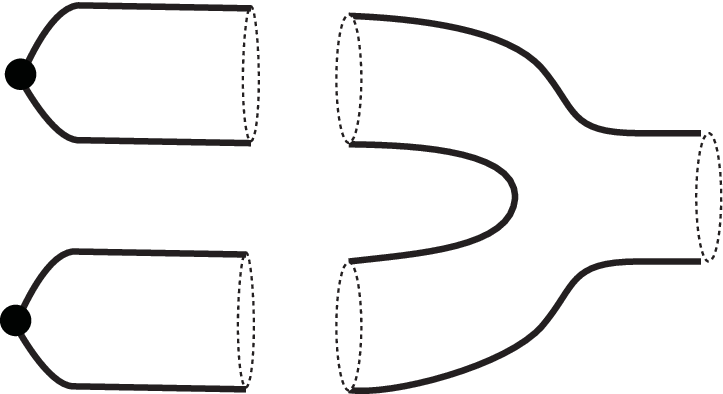}
\caption{An element of (\ref{bdrypantsBULKkura6BMpara})}
\label{Figure296}
\end{minipage}
\end{tabular}
 \end{figure}
\end{center}
\begin{equation}\label{bdrypantsBULKkura6BMpara}
\aligned
&{{\CM}}_{\#\mathbb L_1}((H_1)_{\chi};J^{H_1}_{\chi}
*,z^{H_1}_*;\alpha_1)) \times {{\CM}}_{\#\mathbb L_2}((H_2)_{\chi};J^{H_2}_{\chi};
*,z^{H_2}_*;\alpha_2))\\
&
{}_{(\text{\rm ev}_{+\infty},\text{\rm ev}_{+\infty})}
\times_{(\text{\rm ev}_{-\infty,1},\text{\rm ev}_{-\infty,2})}  {{\CM}}_{\#\mathbb L_3}(H^\varphi,J^{H_1,H_2};
z^{H_1}_*, z^{H_2}_*,z^{H_1 \# H_2}_*;\alpha_3)),
\endaligned\end{equation}
where the union is taken over all $\alpha_1, \alpha_2, \alpha_3$ with $\alpha_1 + \alpha_2
+ \alpha_3= \alpha$ and `triple shuffle' $(\mathbb L_1,\mathbb L_2,\mathbb L_3)$ of $\{1,\dots,\ell\}$.

Note that (\ref{bdrypantsBULKkura2BMpara}), (\ref{bdrypantsBULKkura3BMpara}),
The ends of types (\ref{bdrypantsBULKkura4BMpara}) are the ends which appear while $S$ is bounded.
(\ref{bdrypantsBULKkura5BMpara}) and (\ref{bdrypantsBULKkura6BMpara}) correspond to the case
$S \to -\infty$ and $S \to +\infty$, respectively.
Similarly as before the evaluation maps \eqref{eq:ev3}, \eqref{eq:evEell} extend to this
compactified moduli space, which we denote by the same notations.
\par
We next take a system of CF-perturbations on the moduli spaces
$$
{{\CM}}_{\ell}(para;H^\varphi_{\chi},J^{H_1\# H_2}_{\chi};
**,z^{H_1 \# H_2}_*;\alpha)
$$
so that it is transversal to $0$ and
the evaluation map $\text{\rm ev}_{+\infty}$ is strongly submersive with respect thereto.
Moreover we assume that it is compatible with the above description of the
boundary. We remark that the first factor of (\ref{bdrypantsBULKkura2BMpara}), (\ref{bdrypantsBULKkura3BMpara})
and the second factor of (\ref{bdrypantsBULKkura4BMpara}) carry $S^1$ actions such that the isotropy group is
finite.

To find an $S^1$ action on the second factor
${{\CM}}_{\#\mathbb L_3}(H^\varphi,J^{H_1,H_2};
z^{H_1}_*, z^{H_2}_*,z^{H_1 \# H_2}_*;\alpha_3)$
of (\ref{bdrypantsBULKkura4BMpara})
we map it to
${{\CM}}_{\#\mathbb L_3+2}(H^\varphi,J_0;\alpha_3)$,
by sending an element
$$
(\R\times S^1\cup\{-\infty,+\infty\},\{-\infty,+\infty\}\cup \vec z^+,u)
\in {{\CM}}_{\#\mathbb L_3+2}(H^\varphi,J_0;\alpha_3)
$$
to the element
$$
(\R\times S^1\cup\{-\infty,+\infty\},\vec z^+,\overline u)
\in {{\CM}}_{\#\mathbb L_3}(H^\varphi,J^{H_1,H_2};
z^{H_1}_*, z^{H_2}_*,z^{H_1 \# H_2}_*;\alpha_3)
$$
where
$
\overline u(\tau,t) = \phi^t_{H_1\#H_2}(u(\tau,t)).
$
The fibers of this map are the orbits of our
$S^1$ action.
Note the evaluation map ${{\CM}}_{\#\mathbb L_3}(H^\varphi,J^{H_1,H_2};
z^{H_1}_*, z^{H_2}_*,z^{H_1 \# H_2}_*;\alpha_3)
\to E_{H_1 \# H_2}^{\#\mathbb L_3}$ is
invariant under the $S^1$ action by definition.
\par
Therefore we may take our CF-perturbations so that they are $S^1$ equivariant on those factors.
Then the contributions of  (\ref{bdrypantsBULKkura2BMpara}), (\ref{bdrypantsBULKkura3BMpara}),
(\ref{bdrypantsBULKkura4BMpara}) become zero when we consider the correspondence of
the moduli space ${{\CM}}_{\ell}(para;H^\varphi_{\chi},J^{H_1\# H_2}_{\chi};
**,z^{H_1 \# H_2}_*;\alpha)$.
\par
Using the moduli space ${{\CM}}_{\ell}(para;H^\varphi_{\chi},J^{H_1 \# H_2}_{\chi};
**,z^{H_1 \# H_2}_*;\alpha)$ and the evaluation maps in the way similar to
the way used for (\ref{2821}), we obtain an operator
$$
\frak H : (\Omega(M) \widehat{\otimes} \Lambda)
\otimes (\Omega(M) \widehat{\otimes} \Lambda )
\to \Omega(M) \widehat{\otimes} \Lambda.
$$
Then $\frak H$ satisfies
\begin{equation}
\aligned
&d \circ\frak H+ \frak H\circ d  \\
&=
\frak m_{2;\alpha}^{\text{\rm cl};\frak b;H^\varphi}
\circ \left(
{\mathcal S}_{((H_1)_{\chi},J^{H_1}_{\chi})}^{\frak b} \otimes {\mathcal S}_{((H_2)_{\chi},J^{H_2}_{\chi})}^{\frak b}
\right) - {\mathcal S}_{((H_1\#H_2)_{\chi},J^{H_1\# H_2}_{\chi})}^{\frak b} \circ \cup^{\frak b}.
\endaligned
\end{equation}
In fact, the contributions of (\ref{bdrypantsBULKkura5BMpara}) and (\ref{bdrypantsBULKkura6BMpara}) correspond
to the first and second term of the right hand side of this identity respectively.
The proof of Lemma \ref{multiPiuBM} is complete.
\end{proof}
Theorem \ref{seimain} (4)
follows from Lemma \ref{multiPiuBM}.
\par
To prove (3) we apply (4) to the case
$
H_1  = 0, \, H_2 = H.
$
Then using the fact that $\mathcal S_{((H_1)_{\chi},J^{H_1}_{\chi}),\ast}^{\frak b} = id$
we have
\begin{equation}\label{almost3ban}
\mathcal S_{((0\# H)_{\chi},J^{0\# H}_{\chi}),\ast} (x \cup^{\frak b} y)
= x \cup^{\frak b} \mathcal S_{(H_\chi,J^H_\chi),\ast}(y)
\end{equation}
in homology.
Since $0\# H \sim H$,
we can prove $\mathcal S_{((0\# H)_\chi,J^{H\# 0}_{\chi}),\ast}
= \mathcal S_{(H_\chi,J^H_\chi),\ast} $ by using a homotopy between $0\# H$ and $H$. Then (\ref{almost3ban}) now implies
Theorem \ref{seimain} (3).
\par
Therefore the proof of Theorem \ref{seimain}  is now complete.
\qed

\subsection{Proof of Theorem  \ref{LaMcth}}
\label{subsec:MLtheorem}

As we mentioned before, we did not use Theorem \ref{LaMcth} in the definition of
$\mathcal S_{(H_{\chi},J^H_{\chi})}^0$ or in the proof of Theorem \ref{seimain}
for the  case $\frak b =0$.
We will use only that case in this subsection.
\par
We consider the moduli space
$\mathcal M_1(H_\chi,J^H_\chi;*,z^H_{*};\alpha)$ for the definition of
$$
\mathcal R_{(H_\chi,J^H_\chi)} : \Omega(M) \widehat{\otimes} \Lambda
\to
\Omega(E_{\phi_H}) \widehat{\otimes} \Lambda
$$
as follows.
Let $h \in \Omega(M)$. For each $\alpha$, we put
$$
\mathcal R_{(H_\chi,J^H_\chi),\alpha}(h)
=
(\text{\rm ev}_{1})! (\text{\rm ev}_{-\infty}^*(h))
$$
where we use the evaluation maps
$\text{\rm ev}_1 : \mathcal M_1(H_\chi,J^H_\chi;*,z^H_{*};\alpha) \to E_{\phi_H}$
and
$\text{\rm ev}_{-\infty} : \mathcal M_1(H_\chi,J^H_\chi;*,z^H_{*};\alpha) \to M$.
We then take the sum
$$
\mathcal R_{(H_\chi,J^H_\chi)} = \sum_{\alpha} T^{-\mathcal A_H(\alpha)}\mathcal R_{(H_\chi,J^H_\chi),\alpha}.
$$
We consider the inclusion map $i_{\infty} : M \to E_{\phi_H}$ to the fiber of $\infty \in \C P^1$.
By definition it is easy to see that
$
i_{\infty}^*\circ \mathcal R_{(H_\chi,J^H_\chi)}
$
is chain homotopic to $ \mathcal S_{(H_\chi,J^H_\chi)}$,
where $(\phi_H)^*(\gamma(t))=(\phi_H^t)^{-1}(\gamma (t))$.
Note that, if $\gamma(t)$ is a one-periodic orbit of $\phi_H^t$,
$(\phi_H)^*(\gamma(t))$ is a constant.   Therefore
\begin{equation}
i_{\infty}^{\ast} \circ \mathcal R_{(H_\chi,J^H_\chi),\ast} =  \mathcal S_{(H_\chi,J^H_\chi),\ast}
\end{equation}
in homology.
It follows that for $h \in H^*(M;\Lambda^{\downarrow})$  we have
\begin{equation}
\aligned
i_{\infty}^{\ast} (\mathcal R_{(H_\chi,J^H_\chi),\ast}(h \cup^{Q} \mathcal S(\widetilde\psi_H)^{-1}))
&=
\mathcal S_{(H_\chi,J^H_\chi),\ast}(h \cup^{Q} \mathcal S(\widetilde\psi_H)^{-1}\\
&=
h \cup^{Q} S(\widetilde\psi_H)\cup^{Q} \mathcal S(\widetilde\psi_H)^{-1} = h.
\endaligned\end{equation}
Thus
$$
h \mapsto \widehat h = \mathcal R_{(H_\chi,J^H_\chi),\ast}(h \cup^{Q} \mathcal S(\widetilde\psi_H)^{-1})
$$
is a required section.
The proof of Theorem \ref{LaMcth} is complete.
\qed

\section{Spectral invariants and Seidel homomorphism}
\label{subsec:seibulkspectral}

In this section we study the relationship between the Seidel homomorphism and
spectral invariants.

\subsection{Valuations and spectral invariants}
\label{subsec:valu}

The next theorem is a straightforward generalization of the
result  \cite[Theorem 4.3]{oh:jkms} and  \cite[Proposition 4.1]{EP:morphism}.

Let $H$ be a time-dependent normalized Hamiltonian such that $\psi_H = id$
and, let $\widetilde\psi_H$ be an associated element of $\pi_1(\text{\rm Ham}(M;\omega))$.

\begin{thm}\label{seivsspec}
For each $a \in QH_{\frak b}(M;\Lambda)$, we have
$$
\rho^{\frak b}(H;a)
= \lambda_q(a \cup^{\frak b} \mathcal S^{\frak b}(\widetilde\psi_H)).
$$
where $\lambda_q$ is as given in \eqref{defvq}.
\end{thm}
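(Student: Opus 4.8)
The goal is the identity $\rho^{\frak b}(H;a) = \frak v_q(a \cup^{\frak b} \mathcal S^{\frak b}(\widetilde\psi_H))$ for a normalized Hamiltonian $H$ generating a loop of Hamiltonian diffeomorphisms. The strategy is to compute both sides by describing the image of the Piunikhin chain $\CP^{\frak b}_{(H_\chi,J_\chi)}(a^\flat)$ explicitly, using the fact that for a Hamiltonian \emph{loop} the set $\mathrm{Per}(H)$ is diffeomorphic to $M$ via $p \mapsto z_p^H$ and the Floer complex is canonically identified with $\Omega(M) \widehat{\otimes} \Lambda^\downarrow$ (up to quasi-isomorphism). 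Under this identification the differential $\partial^{\frak b}_{(H,J)}$ on $CF(M,H;\Lambda^\downarrow)$ becomes (chain homotopic to) the de Rham differential, and the key point is that the Piunikhin map, the Seidel homomorphism with bulk $\mathcal S^{\frak b}$ constructed in Section~\ref{sec:appendix4}, and quantum multiplication all fit into one diagram. So the first step is to set up this identification carefully, recalling the relation $\mathcal A_H(\alpha) = -\int \widehat u^*\Omega$ from Subsection~\ref{subsec:seibulk} which governs how the valuation shift encodes the Seidel element.

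The second step is to establish the chain-level identity $\CP^{\frak b}_{(H_\chi,J_\chi)} \simeq \mathcal S^{\frak b}_{(H_\chi,J^H_\chi)} \circ (\text{identification})$, or more precisely to relate the moduli spaces $\mathcal M_\ell(H_\chi,J_\chi;*,[\gamma,w])$ used to define $\CP^{\frak b}$ with the moduli spaces $\mathcal M_\ell(H_\chi,J^H_\chi;*,z^H_*;\alpha)$ used to define $\mathcal S^{\frak b}$. Here one uses that for a Hamiltonian loop the almost complex structures $J^H_t = (\phi_H^t)_* J_0$ produce a fibration $E_{\phi_H} \to \C P^1$, and the two moduli spaces are biholomorphic after the gauge transformation $u(\tau,t) \mapsto (\phi_H^t)^{-1} u(\tau,t)$. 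This identifies $\CP^{\frak b}_{(H_\chi,J^H_\chi)}(a^\flat)$ with (the Poincaré dual of) $a \cup^{\frak b} \mathcal S^{\frak b}(\widetilde\psi_H)$ as an element of $HF^{\frak b}(M,H,J;\Lambda^\downarrow) \cong H(M;\Lambda^\downarrow)$, where the quantum product appears because the extra marked points carrying $\frak b$-insertions together with the asymptotic $a$ input produce the bulk-deformed product (this is precisely the multiplicativity in Theorem~\ref{seimain}~(3), applied at chain level via a parametrized moduli space as in Lemma~\ref{multiPiuBM}).

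The third step is to extract the valuation. Since $\rho^{\frak b}(H;a)$ is by definition the infimal $\frak v_q$-level of a cycle representing $\CP^{\frak b}_{(H_\chi,J_\chi),\ast}(a^\flat)$, and since under the canonical identification for a loop the filtered complex $(CF(M,H;\Lambda^\downarrow), F^\lambda)$ has cohomology with the valuation on $H(M;\Lambda^\downarrow)$ given literally by $\frak v_q$ (because $\mathcal A_H$ of the distinguished lifts $z_p^H$ is tied to the coupling form rather than varying over $M$), the spectral invariant equals the $\frak v_q$-valuation of the corresponding homology class. Combining with Step~2 gives $\rho^{\frak b}(H;a) = \frak v_q(a \cup^{\frak b} \mathcal S^{\frak b}(\widetilde\psi_H))$. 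One should also invoke homotopy invariance (Theorem~\ref{homotopyinvbulk}) to reduce to a convenient representative $H$ in its homotopy class, and the $C^0$-continuity to handle the passage from nondegenerate to general $H$ if needed; but for a loop one can work directly.

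\textbf{Main obstacle.} The hard part will be the chain-level bookkeeping in Step~2: one must show that the identification $CF(M,H;\Lambda^\downarrow) \cong \Omega(M)\widehat\otimes\Lambda^\downarrow$ coming from the loop structure is \emph{filtration-preserving up to the shift by the Seidel element}, and that the $\frak b$-deformed boundary operator $\partial^{\frak b}_{(H,J^H)}$ matches the de Rham differential under this identification — this requires care because the Seidel moduli spaces involve the coupling form $\Omega$ on $E_{\phi_H}$ and the cohomology classes $\widehat{\frak b}$ obtained via Theorem~\ref{LaMcth} (Lalonde--McDuff--Polterovich), whereas the Piunikhin construction uses $\frak b$ directly on $M$; reconciling these two conventions for $\frak b_2$-insertions (i.e., checking the exponential weighting $\exp(\int_\alpha \widehat{\frak b}_2)$ matches $\exp(\frak b_2 \cap w)$ up to a term absorbed into the valuation shift $q^{\mathcal A_H(\alpha)}$) is the delicate computation. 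A secondary subtlety is the $S^1$-symmetry argument used to discard codimension-one boundary strata coming from sphere bubbles at $\tau \to -\infty$, which is needed to make $\mathcal S^{\frak b}$ well-defined and to ensure the chain homotopy of Lemma~\ref{multiPiuBM} works; one must verify this is compatible with the filtration estimates.
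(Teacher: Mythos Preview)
Your overall architecture --- identify the Bott--Morse Floer complex for the loop Hamiltonian $H$ with $\Omega(M)\widehat\otimes\Lambda^\downarrow$, relate the Piunikhin class to $\mathcal S^{\frak b}$, then read off the valuation --- is close to the paper's route, and the multiplicativity of Theorem~\ref{seimain}~(3) is indeed what produces the factor $a\cup^{\frak b}\mathcal S^{\frak b}(\widetilde\psi_H)$ at the end. But there is a genuine gap in your Step~3, hidden in the phrase ``for a loop one can work directly.''

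The point is that $\psi_H = id$ is maximally degenerate, so $\rho^{\frak b}(H;a)$ is \emph{defined} as $\lim_k \rho^{\frak b}(H_k;a)$ for nondegenerate approximations $H_k$; there is no Floer chain complex $CF(M,H;\Lambda^\downarrow)$ in the sense of the paper from which to read it off. Your claim that under the Bott--Morse identification the spectral invariant coincides with the raw $\frak v_q$-valuation on $H(M;\Lambda^\downarrow)$ is exactly the content of the theorem, not a preliminary fact you may invoke. The paper closes this gap by constructing a continuation map
\[
\mathcal P^{\frak b}_{(F_k^\chi,J_k^\chi)}: CF(M,H_k;\Lambda^\downarrow)\longrightarrow \Omega(M)\widehat\otimes\Lambda^\downarrow
\]
(the right-hand side being the Bott--Morse complex for $H$), proving two lemmas: (i) its composite with the Piunikhin map for $H_k$ is chain homotopic to $\mathcal S^{\frak b}_{(H_\chi,J^H_\chi)}$, and (ii) it shifts filtration by at most $E^-(H-H_k)$. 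These give $\frak v_q(\mathcal S^{\frak b}(a))\le \rho^{\frak b}(H_k;a)+E^-(H-H_k)$; the reverse map $\mathcal Q^{\frak b}$ gives the opposite inequality; and letting $k\to\infty$ finishes. Your proposal contains neither the approximation nor the filtration estimate on the continuation, so as written it does not establish either inequality.

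A secondary remark: your ``main obstacle'' about reconciling $\widehat{\frak b}_2$ on $E_{\phi_H}$ with $\frak b_2$ on $M$ is not actually where the difficulty lies. Once Theorem~\ref{seimain}~(3) is in hand (which the paper proves separately), the identity $\mathcal S^{\frak b}_{(H_\chi,J^H_\chi),*}(a)=a\cup^{\frak b}\mathcal S^{\frak b}(\widetilde\psi_H)$ is already at the cohomology level and no further comparison of $\frak b$-conventions is needed for the present theorem. The work is entirely in the filtration comparison through the nondegenerate approximation.
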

\begin{proof}
The proof is similar to the proof of Theorem \ref{contspect}.
Let $H_k$ be a sequence of normalized time dependent Hamiltonians
such that $\psi_{H_k}$ are nondegenerate and
$\lim_{k\to\infty} H_k = H$ in $C^0$ topology.
We put
\begin{equation}
F_{k,\chi}(\tau,t,x) = H_k(t,x) + \chi(\tau)(H(t,x)-H_k(t,x)): \R\times S^1\times M \to \R.
\end{equation}
\par
We fix $J_0$ and define $J^{H_k}$, $J^H$, $J^{H_k}_{\chi}$, $J^H_{\chi}$
as in (\ref{295eq}) and (\ref{296form}).
Let $J_{k,\chi}$  be an $\R\times S^1$ parametrized family of
almost complex structures such that
$$
J_{k,\chi}(\tau,t)
=
\begin{cases}
J^H_t   &\tau \ge 2, \\
J^{H_k}_t &\tau \le -2.
\end{cases}
$$
\par
Let $[\gamma,w] \in \text{\rm Crit}(\mathcal A_{H_k})$ and
$\alpha \in \Pi_2(M;H)$.

 \begin{defn}\label{stripmk0int24BM}
We denote by $\overset{\circ}{\CM}_{\ell}(F_{k,\chi},J_{k,\chi};[\gamma,w],*;\alpha)$ the set of all
pairs $(u;z^+_1,\dots,z^+_{\ell})$ of maps
 $u: \R \times S^1 \to M$ and $z_i^+ \in \R\times S^1$ which satisfy the following conditions:
 \begin{enumerate}
 \item The map $u$ satisfies the equation:
\be\label{eq:HJCR24}
\dudtau + J_{k,\chi}\Big(\dudt -X_{F_{k,\chi}}(u)\Big) = 0.
\ee
\item The energy
$$
\frac{1}{2} \int \Big(\Big|\dudtau\Big|^2_{J_{k,\chi}} + \Big|
\dudt - X_{{F_{k,\chi}}}(u)\Big|_{J_{k,\chi}}^2 \Big)\, dt\, d\tau
$$
is finite.
\item There exists $p$ such that  the following asymptotic boundary condition is satisfied.
$$
\lim_{\tau\to -\infty}u(\tau, t) = \gamma(t), \quad \lim_{\tau\to
+\infty}u(\tau, t) = z_p^H(t).
$$
\item The homology class of $w\# u$ is $\alpha$,
where $\#$ is the obvious concatenation.
\item
$z_i^+$ are distinct to each other.
\end{enumerate}
\end{defn}
The space $\overset{\circ}{\CM}_{\ell}(F_{k,\chi},J_{k,\chi};[\gamma,w],*;\alpha)$ has a
compactification
${\CM}_{\ell}(F_{k,\chi},J_{k,\chi};[\gamma,w],*;\alpha)$,
on which there exists a system of oriented Kuranishi structures with corners
which is compatible at the boundaries.
There exists a system of CF-perturbations of these Kuranishi structures so that
the map $(u;z^+_1,\dots,z^+_{\ell})
\mapsto \lim_{\tau\to +\infty}u(\tau, 0)$ defines a
strongly submersive map
${\CM}_{\ell}(F_{k,\chi},J_{k,\chi};[\gamma,w],*;\alpha) \to M$ with respect to this
CF-perturbation.
\par
We use this in the way similar to the way how we did several times to define a map
\index{$\mathcal P_{(F_{k,\chi},J^{k,\chi})}^{\frak b}$}
$$
\mathcal P_{(F_{k,\chi},J_{k,\chi})}^{\frak b}
: \Omega(M)\widehat{\otimes} \Lambda^{\downarrow} \to CF(M,H_k,J;\Lambda^{\downarrow}).
$$
Here we identify $\Omega(M)\otimes \Lambda^{\downarrow}$ and the Floer chain module $CF(M,H,J;\Lambda^{\downarrow})$
of Bott-Morse type using the Hamiltonian loop $\{\phi_H^t\}$.

\begin{lem}\label{compaticompositteBM}
$\mathcal P_{(F_{k,\chi},J_{k,\chi})}^{\frak b}\circ \mathcal P_{((H_k)_{\chi},J^{H_k}_{\chi})}^{\frak b}$ is chain homotopic to
$\flat \circ \mathcal S_{(H_{\chi},J^H_{\chi})}^{\frak b} \circ \flat $.
\end{lem}
The proof is similar to the proof of Proposition \ref{compaticompositte} and is omitted.
\begin{lem}\label{connectinghomofiltBM}
$$
\mathcal P_{(F_{k,\chi},J_{k,\chi})}^{\frak b}
\left(
F^{\lambda}CF(M,H_k,J;\Lambda^{\downarrow})
\right)
\subset
\Omega(M) \widehat{\otimes} q^{\lambda + E^-(H - H_k)}\Lambda_0^{\downarrow}.
$$
\end{lem}
The proof is similar to the proof of Lemma \ref{connectinghomofilt}
and (\ref{connectinghomofiltformu}) and so omitted.
Lemmas \ref{connectinghomofiltBM} implies
$$
\lambda_q(\mathcal S_{(H_{\chi},J_{\chi})}^{\frak b}(a))
\leq  \rho^{\frak b}(H_k;a) +  E^-(H - H_k).
$$
Taking the limit $k\to \infty$ we have
\begin{equation}
\rho^{\frak b}(H;a)  \geq \lambda_q(\mathcal S_{(H_{\chi},J_{\chi})}^{\frak b}(a)).
\end{equation}
We can prove the opposite inequality by using
$$
\mathcal Q_{(F_k^{\tilde \chi},J_{F_k}^{\tilde \chi})}^{\frak b}
 : CF(M,H_k,J;\Lambda^{\downarrow}) \to  \Omega(M) \widehat{\otimes} \Lambda^{\downarrow}
$$
that can be defined in a similar way as Definition \ref{bdrycoefbQQ0}.
(See the proof of Proposition \ref{oppositeineq} also.)

By Theorem \ref{seimain} (3) with $x=a, y=1$, we find that
$$
 \mathcal S^{\frak b}_{(H_{\chi},J^H_{\chi}),\ast} (a)= a \cup^{\frak b}  \mathcal S^{\frak b}(\widetilde{\psi}_H).
$$
The proof of Theorem \ref{seivsspec} is now complete.
\end{proof}
Let $H$ be a time dependent periodic Hamiltonian such that $\psi_H = id$.
We do not assume that $H$ is normalized.
Let $e\in QH_{\frak b}(M;\omega)$ with $e \cup^{\frak b} e = e$.
We assume that $e\Lambda^{\downarrow}$ is a direct product factor
of $QH_{\frak b}(M;\omega)$.
\begin{cor}\label{xicalccor}
We put $e \cup^{\frak b}\mathcal S^{\frak b}(\widetilde\psi_H) = xe $
with $x \in \Lambda$.
Then
$$
\zeta^{\frak b}(H;e) = \frak v_T(x) + \frac{1}{\vol_\omega(M)}\int_{[0,1]}\int_M H_t \,dt\omega^n.
$$
\end{cor}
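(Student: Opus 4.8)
The statement to be proved is Corollary \ref{xicalccor}: with $e \cup^{\frak b} \mathcal S^{\frak b}(\widetilde\psi_H) = xe$, one has $\zeta^{\frak b}(H;e) = -\frak v_q(x) + \frac{1}{\vol_\omega(M)}\int_0^1\int_M H_t\,dt\,\omega^n$. The plan is to deduce this by combining Theorem \ref{seivsspec} (which computes $\rho^{\frak b}(H;a)$ for a Hamiltonian loop $H$ in terms of the Seidel element) with the definition \eqref{homog2} of $\zeta^{\frak b}_e$ and the normalization-shift behaviour of spectral invariants. A point that must be handled with care is that Theorem \ref{seivsspec} is stated for \emph{normalized} $H$, whereas in Corollary \ref{xicalccor} the Hamiltonian $H$ generating the loop $\phi_H$ need not be normalized; the discrepancy is exactly the Calabi term $\Cal(H) = \int_0^1\int_M H_t\,\omega^n$.

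\textbf{Key steps.} First I would reduce to the normalized case: write $\underline H_t = H_t - \frac{1}{\vol_\omega(M)}\int_M H_t\,\omega^n$, so $\underline H$ generates the same Hamiltonian path as $H$ and $\widetilde\psi_{\underline H} = \widetilde\psi_H$, hence $\mathcal S^{\frak b}(\widetilde\psi_{\underline H}) = \mathcal S^{\frak b}(\widetilde\psi_H)$. By the normalization-shift axiom (Theorem \ref{axiomsh-H}(3), in its bulk version) we have
\[
\rho^{\frak b}(H;a) = \rho^{\frak b}(\underline H;a) - \frac{1}{\vol_\omega(M)}\Cal(H).
\]
Second, apply Theorem \ref{seivsspec} to the normalized Hamiltonian $\underline H$ and the idempotent $e$: since $e \cup^{\frak b} e = e$ and $e\Lambda^\downarrow$ is a field direct factor, the product $e\cup^{\frak b}\mathcal S^{\frak b}(\widetilde\psi_H) = xe$ is an invertible element of this factor, so $\frak v_q(e\cup^{\frak b}\mathcal S^{\frak b}(\widetilde\psi_H)) = \frak v_q(x) + \frak v_q(e)$; but $\frak v_q(e) = 0$ because $e$ is an idempotent (it coincides with its own square and is the unit of a field factor, hence has zero valuation — this is Sublemma \ref{sublem166}-type reasoning, or simply $\frak v_q(e) = \frak v_q(e\cup^{\frak b} e) = 2\frak v_q(e)$ once one knows the valuation is additive on the field factor). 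Thus
\[
\rho^{\frak b}(\underline H;e) = \frak v_q(x).
\]
Third, replace $H$ by $nH$: since $\phi_H^1 = \mathrm{id}$, the loop $\{\phi_H^t\}^n$ is generated by $nH$ (after reparametrization), so $\widetilde\psi_{nH} = (\widetilde\psi_H)^n$ and, by Corollary \ref{seicor}, $\mathcal S^{\frak b}((\widetilde\psi_H)^n) = \mathcal S^{\frak b}(\widetilde\psi_H)^{\cup^{\frak b}n}$, whence $e\cup^{\frak b}\mathcal S^{\frak b}((\widetilde\psi_H)^n) = x^n e$. Applying the second step to $nH$ gives $\rho^{\frak b}(\underline{nH};e) = n\,\frak v_q(x)$, hence $\rho^{\frak b}(nH;e) = n\,\frak v_q(x) - \frac{n}{\vol_\omega(M)}\Cal(H)$. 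Dividing by $n$ and using the definition \eqref{homog2}, $\zeta^{\frak b}_e(H) = -\lim_{n\to\infty}\frac{\rho^{\frak b}(nH;e)}{n}$, we obtain
\[
\zeta^{\frak b}_e(H) = -\frak v_q(x) + \frac{1}{\vol_\omega(M)}\Cal(H) = -\frak v_q(x) + \frac{1}{\vol_\omega(M)}\int_0^1\int_M H_t\,dt\,\omega^n,
\]
which is exactly the claimed identity.

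\textbf{Main obstacle.} The genuinely substantive input is Theorem \ref{seivsspec}, whose proof occupies Subsection \ref{subsec:valu} and rests on the construction of the Seidel homomorphism with bulk and on the chain-level comparison maps $\mathcal P^{\frak b}_{(F^\chi_k,J^\chi_k)}$ and their filtration estimates (Lemmas \ref{compaticompositteBM} and \ref{connectinghomofiltBM}); all of that may be assumed here. Given that, the remaining difficulty in Corollary \ref{xicalccor} is essentially bookkeeping: tracking the Calabi correction between $H$ and $\underline H$, verifying $\frak v_q(e) = 0$ for the idempotent generating the field factor, and confirming that taking $n$-th powers of the loop multiplies the Seidel element $\cup^{\frak b}$-wise so that $x \mapsto x^n$. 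None of these presents a conceptual hurdle, so the corollary follows quickly once Theorem \ref{seivsspec} is in hand.
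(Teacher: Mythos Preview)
Your approach is essentially the same as the paper's, and the overall logic is sound, but there is one incorrect intermediate claim: you assert $\frak v_q(e) = 0$ via ``$\frak v_q(e) = \frak v_q(e\cup^{\frak b} e) = 2\frak v_q(e)$''. The second equality assumes $\frak v_q$ is multiplicative under $\cup^{\frak b}$, which is false in general (one only has an inequality $\frak v_q(a\cup^{\frak b} b)\le \frak v_q(a)+\frak v_q(b)$, coming from the fact that higher quantum corrections lower valuation). In fact $\frak v_q(e)$ is typically nonzero: the paper itself computes $\frak v_q(e_{\frak y_k}) = \tfrac{n}{n+1}$ for the idempotents in $\C P^n$ (Proposition~\ref{estimateC}), and Remark~\ref{rem168} uses $12\frak v_q(e)$ as a nontrivial defect bound. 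Consequently your intermediate equations $\rho^{\frak b}(\underline H;e) = \frak v_q(x)$ and $\rho^{\frak b}(\underline{nH};e) = n\,\frak v_q(x)$ are off by $\frak v_q(e)$; the correct versions read $\rho^{\frak b}(\underline{nH};e) = \frak v_q(x^n e) = n\,\frak v_q(x) + \frak v_q(e)$.

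This does not affect the conclusion, since after dividing by $n$ and taking $n\to\infty$ the $\frak v_q(e)$ term vanishes. The paper proceeds exactly this way: it writes $\zeta^{\frak b}(H;e) = -\lim_{k\to\infty}\frac{\frak v_q(ex^k)}{k} + \frac{1}{\vol_\omega(M)}\int\!\!\int H_t\,dt\,\omega^n$ and evaluates the limit directly as $\frak v_q(x)$ using $\frak v_q(ex^k) = k\frak v_q(x) + \frak v_q(e)$, without ever asserting $\frak v_q(e)=0$. So simply drop the claim about $\frak v_q(e)$ and carry the constant through to the limit; the rest of your argument matches the paper.
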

\begin{proof}
We put
$$
\underline H_t = H_t - \frac{1}{\vol_\omega(M)}\int_M H_t \omega^n.
$$
It is a normalized Hamiltonian and $\psi_{\underline H} = \psi_H$. By Theorem \ref{seivsspec}
we have
$$
\rho^{\frak b}(\underline H;e) = - {\frak v}_T(e \cup^{\frak b} \mathcal S^{\frak b}(\widetilde\psi_H)).
$$
On the other hand,
$$
\rho^{\frak b}(H;e) = \rho^{\frak b}(\underline H;e) - \frac{1}{\vol_\omega(M)}\int_{[0,1]}\int_M H_t \,dt\omega^n.
$$
Therefore we have
$$
\aligned
\zeta^{\frak b}(H;e) &=  - \left( \lim_{k\to \infty}
\frac{\rho^{\frak b}(kH;e)}{k} \right)\\
&=
- \lim_{k\to \infty}
\frac{\rho^{\frak b}(k\underline H;e)}{k} + \frac{1}{\vol_\omega(M)}\int_{[0,1]}\int_M H_t \,dt\omega^n \\
&= \lim_{k\to \infty}
\frac{{\frak v}_T(e x^k)}{k}
+  \frac{1}{\vol_\omega(M)}\int_{[0,1]}\int_M H_t \,dt\omega^n \\
&= {\frak v}_T(x)
+\frac{1}{\vol_\omega(M)}\int_{[0,1]}\int_M H_t \,dt\omega^n,
\endaligned
$$
as required.
\end{proof}

\subsection{The toric case}
\label{subsec:seibulkspectraltoric}
In this section we generalize a result by McDuff-Tolman \cite{mc-tol}
to a version with bulk and apply the result for some calculation.
Our discussion here is a straightforward generalization of \cite{mc-tol}.
(See also \cite{EP:rigid} for the general discussion for the Hamiltonian loops.)

Let $H$ be a time {\it independent} normalized Hamiltonian.
We assume that $\psi_H = id$. We put
$$
H_{\text{\rm min}} = \inf \{ H(y) \mid y \in M\}
$$
and
$$
D_{\text{\rm min}} = \{ x \in M \mid H(x) = H_{\text{\rm min}} \}.
$$
Since $D_{\text{\rm min}}$ is a connected component of the
fixed point set of the $S^1$ action generated by $X_H$,
it follows that  $D_{\text{\rm min}}$ is a smooth submanifold.
We assume that $D_{\text{\rm min}}$ is of (real) codimension $2$.
We also assume the following:
\begin{asmp}\label{linkassum}
Let $p\in D_{\text{\rm min}}$ and $q \in M \setminus D_{\text{\rm min}}$ be
sufficiently close to $p$.
We consider the orbit $z_{q}^H(t) = \phi_H^t(q)$ and
a disk $w : (D^2,\partial D^2) \to (M,z_q^H)$ which
bounds $z_{q}^H$.  If $w$ is sufficiently small,
then
$$
[D_{\text{\rm min}}] \cdot w_*([D^2]) = +1.
$$
\end{asmp}
\par
Let $\frak b = \frak b_0 + \frak b_2 + \frak b_+$ as before.

\begin{thm}\label{HamS1action}
We have
$$
\mathcal S^{\frak b}([\phi_H])
\equiv
T^{ H_{\text{\rm min}}} e^{\overline{\frak b}_2\cap D_{\text{\rm min}}}  PD([D_{\text{\rm min}}])
\mod T^{ H_{\text{\rm min}}}\Lambda^{\downarrow}_-.
$$
\end{thm}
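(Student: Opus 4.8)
\textbf{Proof proposal for Theorem \ref{HamS1action}.}
The plan is to compute the leading order term of $\mathcal S^{\frak b}([\phi_H]) = \mathcal S^{\frak b}_{(H_\chi, J^H_\chi),\ast}(1)$ directly from the definition, by identifying which moduli space $\mathcal M_{\ell}(H_\chi, J^H_\chi; *, z^H_*;\alpha)$ contributes with the smallest value of $q$-power, i.e.\ with smallest $\mathcal A_H(\alpha)$. Recall from Subsection \ref{subsec:seibulk} that
$$
\mathcal S^{\frak b}([\phi_H]) = \sum_{\alpha} q^{\mathcal A_H(\alpha)} \sum_{\ell=0}^{\infty} \frac{\exp(\int_\alpha \widehat{\frak b}_2)}{\ell!} \,\text{\rm ev}_{+\infty !}\big(\text{\rm ev}^*(\widehat{\frak b}_+^{\otimes \ell}) \wedge \text{\rm ev}_{-\infty}^*1\big).
$$
Since $H$ is autonomous and $S^1$-periodic, the $S^1$-action generated by $X_H$ furnishes a distinguished family of sections: the constant map $u \equiv p$ for $p \in \text{\rm Fix}(\phi^1_H)$, which in the fibration $E_{\phi_H}$ corresponds to a section class. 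The claim is that the class $\alpha$ realizing the minimal action value $-H_{\text{\rm min}}$ is the one whose holomorphic representatives converge at $\tau \to +\infty$ to the constant orbits lying in $D_{\text{\rm min}}$; the action of such a constant orbit $[\gamma, w]$ with $\gamma(t) \equiv p \in D_{\text{\rm min}}$ is $\mathcal A_H([\gamma,w]) = -H(p) = -H_{\text{\rm min}}$ (taking the trivial bounding disc). For all other homotopy classes the energy identity (as in Lemma \ref{connectinghomofilt}) forces a strictly larger $q$-power, which is why the right-hand side is determined modulo $q^{-H_{\text{\rm min}}}\Lambda^\downarrow_-$.

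The key steps, in order, are as follows. First I would set up the standard argument (cf.\ \cite{mc-tol}, \cite{EP:morphism}) identifying the ``minimal'' section class $\alpha_{\min} \in \Pi_2(M;H)$ associated to $D_{\text{\rm min}}$, and show $\mathcal A_H(\alpha_{\min}) = -H_{\text{\rm min}}$ while $\mathcal A_H(\alpha) > -H_{\text{\rm min}}$ for all other $\alpha$ with nonempty moduli space; this uses the energy identity together with the positivity of symplectic area of $J^H_\chi$-holomorphic sections, plus Assumption \ref{linkassum} to pin down the geometry of $\alpha_{\min}$. Second, for the class $\alpha_{\min}$ I would analyze the moduli space $\mathcal M_0(H_\chi, J^H_\chi; *, z^H_*;\alpha_{\min})$: by the removable singularity theorem and the $S^1$-equivariance near $D_{\text{\rm min}}$, its elements are (after the gauge transformation $u \mapsto (\phi_H^t)^{-1}u$) small $J_0$-holomorphic discs in the normal bundle of $D_{\text{\rm min}}$; the evaluation map $\text{\rm ev}_{+\infty}$ realizes this moduli space as (a virtual version of) $D_{\text{\rm min}}$ itself, with ${\rm ev}_{+\infty}$-pushforward of the fundamental class giving $PD([D_{\text{\rm min}}])$ — this is exactly the computation in \cite{mc-tol}, which goes through verbatim with Kuranishi structures and multisections. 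Third, I would incorporate the bulk term: since $\widehat{\frak b}_2$ is a degree-two class, the factor $\exp(\int_{\alpha_{\min}} \widehat{\frak b}_2)$ contributes $\exp(\overline{\frak b}_2 \cap D_{\text{\rm min}})$ once one checks that $\int_{\alpha_{\min}}\widehat{\frak b}_2 = \overline{\frak b}_2 \cap [D_{\text{\rm min}}]$ using the Lalonde--McDuff--Polterovich section (Theorem \ref{LaMcth}) and the identification of $\alpha_{\min}$ with the section through $D_{\text{\rm min}}$; meanwhile the $\widehat{\frak b}_+$-insertions (which raise degree or energy) only affect higher $q$-order terms by dimension and energy counting, hence vanish modulo $q^{-H_{\text{\rm min}}}\Lambda^\downarrow_-$. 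Assembling these three steps yields the stated formula.

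The main obstacle I expect is Step 2: making the identification ``$\mathcal M_0(H_\chi, J^H_\chi; *, z^H_*;\alpha_{\min}) \cong D_{\text{\rm min}}$ with ${\rm ev}_{+\infty}$ the inclusion'' precise at the level of virtual fundamental cycles, so that $\text{\rm ev}_{+\infty !}(1) = PD([D_{\text{\rm min}}])$ holds on the nose. One must verify that for the minimal class the relevant moduli space is already transverse (or can be made $S^1$-equivariantly transverse) with the expected dimension $\dim D_{\text{\rm min}} = 2n - 2$, that there is no contribution from lower strata (broken configurations), and that the orientation conventions produce $+PD([D_{\text{\rm min}}])$ rather than a sign; Assumption \ref{linkassum} is precisely what guarantees the intersection-number normalization here. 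This is essentially the content of \cite{mc-tol} Lemma/Proposition in the monotone toric setting, and the argument transports to the general setting using the virtual technique exactly as in Lemma \ref{PiupureBM} and the energy estimates of Section \ref{sec:proofcontinuity}; the remaining issue is bookkeeping of the $q$-powers, which is handled by the energy identity and the fact that $G(M,\omega,\frak b)$-grading separates $\alpha_{\min}$ from all other classes. A secondary, more routine point is checking that the terms with $\ell \geq 1$ insertions of $\widehat{\frak b}_+$ do not contribute at order $q^{-H_{\text{\rm min}}}$, which follows from the dimension formula \eqref{dimension22} together with $\deg \frak b_+ \geq 2$ (on the relevant component) and nonnegativity of symplectic area.
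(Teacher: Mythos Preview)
Your three-step plan is essentially the paper's own proof: the paper proves an energy estimate (Lemma~\ref{energyeqBMmin}) giving $\mathcal A_H(\alpha)\le -H_{\min}$ whenever the moduli space is nonempty, identifies the equality case as constant maps into $D_{\min}$ (Lemma~\ref{lem302}), and then checks transversality and $\text{ev}_{+\infty\,!}(1)=PD([D_{\min}])$ for this minimal class by modeling a neighborhood of $D_{\min}\times S^2$ in $E_{\phi_H}$ on the $\mathcal O(-1)$ line bundle (Lemma~\ref{lem303}, using Assumption~\ref{linkassum}). Your Step~3 on the bulk factor is handled tersely in the paper but your outline is correct.

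One point to correct: you have the direction of the filtration backwards in your first paragraph. The energy estimate gives $\mathcal A_H(\alpha)\le -H_{\min}$, so the \emph{largest} $q$-exponent appearing is $-H_{\min}$ (achieved exactly by $\alpha_{\min}$), and all other classes contribute \emph{strictly smaller} $q$-powers, i.e.\ lie in $q^{-H_{\min}}\Lambda^\downarrow_-$. You wrote ``smallest $\mathcal A_H(\alpha)$'' and ``strictly larger $q$-power,'' which is the wrong way round; this is just a slip, not a conceptual error, since the rest of your argument proceeds correctly.
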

\begin{rem}
In the case $\frak b =0$ this is  \cite[Theorem 1.9]{mc-tol}.
Our generalization to the case $\frak b \ne 0$  is actually
straightforward.
\end{rem}
\begin{proof} We start with the following lemma.
\begin{lem}\label{energyeqBMmin}
If $\overset{\circ}{{\CM}}_{0}(H_{\chi},J^H_{\chi};*,z^H_{*};\alpha)$ is nonempty, then
\begin{equation}
\mathcal A_H(\alpha)
\le -H_{\text{\rm min}}.
\end{equation}
\end{lem}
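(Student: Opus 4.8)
The plan is to prove Lemma \ref{energyeqBMmin} by a direct energy estimate, entirely parallel to the computation behind Lemma \ref{filtered} (and Lemma \ref{connectinghomofilt}); the only new feature is that one of the two asymptotic ends of $u$ is a constant point rather than a one-periodic orbit, and this point contributes precisely the lower bound $H_{\text{\rm min}}$.

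First I would unwind the definition of $\mathcal A_H(\alpha)$. Pick $u \in \overset{\circ}{{\CM}}_{0}(H_{\chi},J^H_{\chi};*,z^H_{*};\alpha)$ and set $p = \text{\rm ev}_{+\infty}(u)$, so that $\lim_{\tau\to+\infty}u(\tau,t) = z_p^H(t)$ while $\lim_{\tau\to-\infty}u(\tau,t) = p_-$ for some $p_- \in M$. Since $\chi(\tau)\equiv 0$ and $J^H_\chi(\tau,t)\equiv J_0$ for $\tau\le 0$, the equation \eqref{eq:HJCR3} restricted to $(-\infty,0]\times S^1$ is the genuine $J_0$-holomorphic curve equation; by the removable singularity theorem $u$ extends to a smooth disc $D^2\to M$ bounding $z_p^H$, and this disc is exactly the bounding disc used in the definition of $\mathcal A_H(\alpha)$. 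Thus $\mathcal A_H(\alpha) = \mathcal A_H([z_p^H,u]) = -\int u^*\omega - \int_0^1 H(z_p^H(t))\,dt$, and since $H$ is autonomous we have $H(z_p^H(t)) = H(\phi_H^t(p)) = H(p)$ for all $t$, whence $\mathcal A_H(\alpha) = -\int u^*\omega - H(p)$.

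Next I would run the energy identity for the elongated equation. Rewriting \eqref{eq:HJCR3} as $\dudt = J^H_\chi\,\dudtau + \chi(\tau)X_{H}(u)$ and using $dH = \omega(X_H,\cdot)$, one gets pointwise $\omega(\dudtau,\dudt) = \big|\dudtau\big|^2_{J^H_\chi} - \chi(\tau)\,\frac{\del}{\del\tau}\big(H(u(\tau,t))\big)$; here the $(\tau,t)$-dependence of $J^H_\chi$ causes no trouble because $J^H_\chi(\tau,t)$ is $\omega$-compatible for every $(\tau,t)$. Integrating over $\R\times S^1$ and integrating by parts in $\tau$ — the boundary term at $\tau=-\infty$ vanishes because $\chi\equiv 0$ there, and at $\tau=+\infty$ it equals $\int_0^1 H(z_p^H(t))\,dt = H(p)$ — one obtains
\[
\int u^*\omega \;=\; \int_{\R\times S^1}\Big|\dudtau\Big|^2_{J^H_\chi}\,dt\,d\tau \;-\; H(p) \;+\; \int_{\R}\chi'(\tau)\Big(\int_{S^1}H(u(\tau,t))\,dt\Big)\,d\tau .
\]

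Finally I would estimate the right-hand side: the first term is $\ge 0$; for each fixed $\tau$, $\int_{S^1}H(u(\tau,t))\,dt \ge H_{\text{\rm min}}$ because $H \ge H_{\text{\rm min}}$ on $M$; and $\chi'\ge 0$ with $\int_\R\chi'(\tau)\,d\tau = 1$. Hence $\int u^*\omega \ge -H(p) + H_{\text{\rm min}}$, and therefore $\mathcal A_H(\alpha) = -\int u^*\omega - H(p) \le -H_{\text{\rm min}}$, which is the assertion. I do not anticipate any genuine obstacle: the argument is a routine energy estimate, and the only points requiring a little care are the sign conventions for $\mathcal A_H$ and $X_H$, the identification of the bounding disc via the removable singularity at $\tau=-\infty$ (and the resulting independence of $\mathcal A_H(\alpha)$ on the choice of $u$, already established in the text), and the fact that $H$ must be autonomous so that $H$ is constant along $z_p^H$.
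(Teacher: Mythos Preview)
Your proof is correct and follows essentially the same energy-estimate approach as the paper: both substitute the Floer equation into $\int u^*\omega$, drop the nonnegative $\int|\partial_\tau u|^2$ term, integrate by parts in $\tau$, and use $\chi'\ge 0$, $\int\chi'=1$, $H\ge H_{\text{\rm min}}$ to bound the remaining integral. You are slightly more explicit about the removable singularity at $\tau\to -\infty$ and the simplification $H(z_p^H(t))=H(p)$ from autonomy, but the argument is the same.
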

\begin{proof}
Let $u \in \overset{\circ}{{\CM}}_{0}(H_{\chi},J^H_{\chi};*,z^H_{*};\alpha)$.
Then we have
$$
\aligned
\int u^*\omega
&=
\int \omega
\left(
\frac{\partial u}{\partial \tau}, \frac{\partial u}{\partial t}
\right)dtd\tau
=
\int
\omega
\left(
\frac{\partial u}{\partial \tau}, J^H_\chi\frac{\partial u}{\partial \tau}
+ \chi(\tau) X_{H}
\right)dtd\tau \\
&\ge - \int
\chi(\tau) \frac{\partial (H\circ u)}{\partial \tau}
dtd\tau \\
&\ge
-\int_{S^1} H(z_p^H(t))dt
+ \int \chi'(\tau) (H\circ u )dtd\tau
\\
&\ge
-\int_{S^1} H(z_p^H(t))dt
+ H_{\text{\rm min}}.
\endaligned
$$
Lemma \ref{energyeqBMmin} follows.
\end{proof}
We remark that the equality holds only when
$$
\int \omega
\left( \frac{\partial u}{\partial \tau}, J^H_\chi \frac{\partial u}{\partial \tau}
\right)\,dt\, d\tau = \int
\left|\frac{\partial u}{\partial \tau}\right|^2_{J^H_\chi}\,dt\,d\tau = 0
$$
and so
$
\frac{\partial u}{\partial \tau} = 0.
$
Therefore $u$ must be constant.
Moreover since $u(\tau,t) \to z_p^H(t)$ as $\tau\to \infty$,
the image of $u$ must lie in the zero locus of $X_H$. Thus
\begin{lem}\label{lem302}
If the equality holds in Lemma $\ref{energyeqBMmin}$,
$
{{\CM}}_{0}(H_{\chi},J^H_{\chi};*,z^H_{*};\alpha)
$
consists of constant maps to $D_{\text{\rm min}}$.
\end{lem}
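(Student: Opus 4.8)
\textbf{Proof proposal for Lemma \ref{lem302}.}

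The plan is to analyze the moduli space ${{\CM}}_{0}(H_{\chi},J^H_{\chi};*,z^H_{*};\alpha)$ for the homotopy class $\alpha$ for which the energy bound of Lemma \ref{energyeqBMmin} is attained with equality, and to show that every element of it must be a constant map landing in $D_{\text{\rm min}}$. First I would recall from the proof of Lemma \ref{energyeqBMmin} the chain of inequalities computing $\int u^*\omega$: the single place where slack can occur is the term $\int \omega(\partial u/\partial\tau, J^H_\chi \partial u/\partial\tau)\,dt\,d\tau = \int |\partial u/\partial\tau|^2_{J^H_\chi}\,dt\,d\tau$, and a second place is the inequality $\int \chi'(\tau)(H\circ u)\,dt\,d\tau \ge H_{\text{\rm min}}$ which uses $\int\chi' d\tau = 1$ together with $H\circ u \ge H_{\text{\rm min}}$ pointwise. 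Equality in $\mathcal A_H(\alpha) = -H_{\text{\rm min}}$ forces equality in \emph{both}. The first forces $\partial u/\partial\tau \equiv 0$, so $u(\tau,t)$ is independent of $\tau$; combined with equation (\ref{eq:HJCR3}) this gives $\dudt = \chi(\tau)X_{H_t}(u)$, and since the left side is $\tau$-independent while $\chi(\tau)$ varies, we must have $\dudt \equiv 0$ as well, i.e.\ $u$ is a constant map. The second equality forces $H\circ u = H_{\text{\rm min}}$ on the support of $\chi'$, hence (by constancy) everywhere, so the image of $u$ lies in $D_{\text{\rm min}} = \{H = H_{\text{\rm min}}\}$.

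The remaining point is to handle the compactification: a priori an element of the \emph{compactified} moduli space ${{\CM}}_{0}(H_{\chi},J^H_{\chi};*,z^H_{*};\alpha)$ could be a stable map with sphere bubbles or broken components rather than a genuine solution on $\R\times S^1$. Here I would argue by the energy identity again: any such stable map decomposes into a principal component carrying the cylindrical data plus bubble trees of $J_0$- or $J^H$-holomorphic spheres, each of which contributes strictly positive $\omega$-area unless constant. Since the total is constrained so that $\mathcal A_H(\alpha) = -H_{\text{\rm min}}$ is extremal, no area can be spent on nonconstant bubbles, and any bubble must be constant hence unstable — so no bubbling occurs and the element is an honest solution, to which the preceding paragraph applies. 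One also notes that Assumption \ref{linkassum} is what pins down which $\alpha$ realizes the equality, namely the one with $[D_{\text{\rm min}}]\cdot\alpha = +1$, so that the constant solutions in $D_{\text{\rm min}}$ indeed sweep out $D_{\text{\rm min}}$ as the evaluation image.

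The main obstacle I anticipate is not the pointwise differential-geometric argument — that is the routine part — but rather making the compactification/bubbling exclusion fully rigorous in the Kuranishi-theoretic setting: one must be sure that the virtual count is genuinely supported on the locus of constant maps, i.e.\ that the multisection perturbation does not create spurious contributions near the boundary strata. This is exactly the kind of transversality-at-the-boundary bookkeeping that Lemma \ref{PiupureBM} (3) and the $S^1$-equivariance of the chosen multisections are designed to control, so I would invoke those: the boundary strata involving bubbles at $\tau\to-\infty$ are of virtual codimension $\ge 2$ by $S^1$-symmetry and contribute nothing, and the extremal energy constraint kills the rest. With that in place, ${{\CM}}_{0}(H_{\chi},J^H_{\chi};*,z^H_{*};\alpha)$ is identified with $D_{\text{\rm min}}$ (as the space of constant maps to it), which is what feeds into the computation of $\mathcal S^{\frak b}([\phi_H])$ in Theorem \ref{HamS1action}.
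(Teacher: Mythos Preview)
Your proof is correct and follows essentially the same route as the paper: the paper's argument (appearing as a remark just before the lemma statement) deduces $\partial u/\partial\tau = 0$ from the vanishing of the nonnegative energy term, concludes $u$ is constant, and uses the asymptotic condition $u(\tau,t)\to z_p^H(t)$ to place the image in the zero locus of $X_H$. You are in fact more careful than the paper on two points --- you explicitly invoke the second equality condition $\int\chi'(H\circ u)\,dt\,d\tau = H_{\text{\rm min}}$ to pin the image to $D_{\text{\rm min}}$ (rather than merely the critical set of $H$), and you address the compactification, which the paper silently leaves to the transversality computation in Lemma~\ref{lem303} --- but these are refinements of the same argument, not a different one.
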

Let $\alpha_0$ be the homology class such that Lemma \ref{energyeqBMmin} holds.
\begin{lem}\label{lem303}
The moduli space ${{\CM}}_{0}(H_{\chi},J^H_{\chi};*,z^H_{*};\alpha_0)$
is transversal and
$$
\text{\rm ev}_{1 \#}({{\CM}}_{0}(H_{\chi},J^H_{\chi};*,z^H_{*};\alpha_0) )
= [D_{\text{\rm min}}].
$$
\end{lem}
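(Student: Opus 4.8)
\textbf{Plan of proof of Lemma \ref{lem303}.} The claim has two parts: transversality of the moduli space $\CM_0(H_\chi,J^H_\chi;*,z^H_*;\alpha_0)$ at its (constant) elements, and the identification of the evaluation pushforward with $[D_{\text{\rm min}}]$. By Lemma \ref{lem302} every element of this moduli space is a constant map into $D_{\text{\rm min}}$, so as a set the moduli space is identified with $D_{\text{\rm min}}$ itself via $\text{\rm ev}_1$ (the constant value). The first thing I would do is compute the linearization of the section defining $\CM_0(H_\chi,J^H_\chi;*,z^H_*;\alpha_0)$ at such a constant solution. Since the map is constant, the relevant operator is a $\overline\partial$-type operator on the cylinder $\R\times S^1$ with coefficients in the normal bundle $N D_{\text{\rm min}}$ (together with the tangent bundle $TD_{\text{\rm min}}$, along which everything is automatically unobstructed because $D_{\text{\rm min}}$ is a submanifold of fixed points), twisted by the weight coming from $X_H$ and by the elongation function $\chi$. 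Assumption \ref{linkassum}, which says the local intersection number of a small bounding disc with $D_{\text{\rm min}}$ is $+1$, is exactly the statement that the $S^1$-action weight on the normal bundle is $+1$; equivalently, the $S^1$-representation on $N_pD_{\text{\rm min}}$ is the standard weight-one representation. This is the hypothesis that makes the relevant Cauchy-Riemann operator surjective with the expected kernel, so the moduli space is cut out transversally and has no obstruction bundle — the constant solutions are already regular and no abstract perturbation is needed for this particular stratum.

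\textbf{Key steps, in order.} First, invoke Lemma \ref{lem302} to reduce to understanding a neighborhood of the locus of constant maps into $D_{\text{\rm min}}$ inside $\CM_0(H_\chi,J^H_\chi;*,z^H_*;\alpha_0)$, and to observe that this locus is $\text{\rm ev}_{-\infty}$-diffeomorphic to $D_{\text{\rm min}}$ (the limit at $-\infty$ being the constant value). Second, check the dimension: by Lemma \ref{PiupureBM} (4), $\dim \CM_0(H_\chi,J^H_\chi;*,z^H_*;\alpha_0+\alpha) = 2c_1(M)\cap\alpha + 2n$ for the appropriate base point $\alpha_0$; one must verify that the homology class where equality in Lemma \ref{energyeqBMmin} holds is indeed the base class, so the dimension is $2n$, matching $2\dim_{\R} D_{\text{\rm min}} = 2(n-1)$ plus the $2$-dimensional family of reparametrizations/degrees of freedom — actually $\dim D_{\text{\rm min}} = 2n-2$ and the expected count works out so that $\text{\rm ev}_1$ from the moduli space lands on a cycle of dimension $2n-2$, i.e. codimension $2$, which is Poincar\'e dual to the fundamental class of a codimension-$2$ cycle. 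Third, prove surjectivity of the linearized operator at each constant solution: split $u^*TM|_{D_{\text{\rm min}}}$ into $TD_{\text{\rm min}}\oplus ND_{\text{\rm min}}$, note that on the $TD_{\text{\rm min}}$-summand the operator is the standard $\overline\partial$ on a trivial bundle over the disc (after the removable-singularity compactification) hence surjective with kernel $TD_{\text{\rm min}}$, and on the $ND_{\text{\rm min}}$-summand the weight-one twist given by Assumption \ref{linkassum} forces the operator to be an isomorphism onto with zero kernel and zero cokernel. Fourth, conclude that $\CM_0(H_\chi,J^H_\chi;*,z^H_*;\alpha_0)$ is a smooth manifold diffeomorphic to $D_{\text{\rm min}}$ via $\text{\rm ev}_1$ (with its natural orientation), so $\text{\rm ev}_{1\#}[\CM_0(H_\chi,J^H_\chi;*,z^H_*;\alpha_0)] = [D_{\text{\rm min}}]$, which under Poincar\'e duality in $M$ gives $PD([D_{\text{\rm min}}])$. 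Finally, feed this, together with Lemma \ref{energyeqBMmin} (which guarantees no lower-energy contributions) and the weight $q^{\mathcal A_H(\alpha_0)} = q^{-H_{\text{\rm min}}}$ and the factor $e^{\overline{\frak b}_2\cap D_{\text{\rm min}}}$ coming from the $\exp(\int_{\alpha_0}\widehat{\frak b}_2)$ term in the definition of $\mathcal S^{\frak b}_{(H_\chi,J^H_\chi);\alpha_0}$, into the definition of $\mathcal S^{\frak b}([\phi_H]) = \mathcal S^{\frak b}_{(H_\chi,J^H_\chi),\ast}(1)$. All contributions from $\alpha$ with $\mathcal A_H(\alpha) < -H_{\text{\rm min}}$ lie in $q^{-H_{\text{\rm min}}}\Lambda^\downarrow_-$, which yields Theorem \ref{HamS1action}.

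\textbf{Main obstacle.} The genuinely delicate point is the transversality/regularity of the constant solutions, i.e.\ Lemma \ref{lem303}: one must establish that, under Assumption \ref{linkassum}, the linearized operator at a constant map into $D_{\text{\rm min}}$ is surjective and that its kernel is precisely $T_pD_{\text{\rm min}}$, so that these solutions are regular in the ordinary sense and do not require an abstract multisection perturbation — otherwise the evaluation pushforward would only compute a virtual class and the clean identification with $[D_{\text{\rm min}}]$ could fail. This is a Fredholm-index plus weighted-Sobolev Cauchy-Riemann computation on the cylinder with the $X_H$-twist and elongation $\chi$; the weight-one hypothesis is exactly what is needed, and the computation is essentially the one in McDuff-Tolman \cite{mc-tol} (and earlier in Seidel \cite{seidel:auto}), but it must be carried out in the present conventions with the elongated family $(H_\chi,J^H_\chi)$ and checked to be compatible with the system of multisections chosen for the neighboring non-constant strata (so that the perturbed moduli space agrees with the unperturbed one near $D_{\text{\rm min}}$). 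A secondary, more bookkeeping-type obstacle is pinning down the base class $\alpha_0$ and the precise sign/orientation so that the coefficient is exactly $q^{-H_{\text{\rm min}}}e^{\overline{\frak b}_2\cap D_{\text{\rm min}}}$ rather than that expression up to a sign or a unit.
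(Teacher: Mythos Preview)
Your proposal is correct and identifies the right mechanism: Assumption \ref{linkassum} gives weight one on the normal bundle $ND_{\text{\rm min}}$, which forces rigidity in the normal direction while the tangent direction contributes exactly $TD_{\text{\rm min}}$ to the kernel. The paper reaches the same conclusion by a slightly different route. Rather than computing the linearized Cauchy--Riemann operator directly on the cylinder, the paper passes to the section viewpoint via $\widehat u:\C P^1\to E_{\phi_H}$ (Lemma \ref{triextendtau}): the constant solutions correspond to the constant sections $\{p\}\times S^2\subset D_{\text{\rm min}}\times S^2\subset E_{\phi_H}$, and a tubular neighborhood of $D_{\text{\rm min}}\times S^2$ in $E_{\phi_H}$ is identified with $D_{\text{\rm min}}\times\mathcal O(-1)\to D_{\text{\rm min}}\times S^2$ (this is where Assumption \ref{linkassum} enters). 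Transversality and the pushforward statement then reduce to the elementary fact that the zero section is the unique holomorphic section of $\mathcal O(-1)\to\C P^1$ and is regular. Your direct Fredholm computation on the cylinder with the $\chi$-elongated $X_H$-twist is exactly what underlies this bundle statement; the paper's packaging is more geometric and avoids writing down weighted Sobolev spaces, while yours makes the analytic input explicit. One small caution: in the statement $\CM_0$ has $\ell=0$, so ``$\text{\rm ev}_1$'' is really shorthand for the asymptotic evaluation (the constant value), not evaluation at an interior marked point; you seem to have read it that way already.
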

\begin{proof}
We consider $D_{\text{\rm min}} \times S^2 \subset E_{\phi_H}$.
Its tubular neighborhood is identified with a
neighborhood of zero section in the line bundle
$D_{\text{\rm min}} \times \mathcal O(-1) \to D_{\text{\rm min}} \times S^2$.
Here we identify $S^2 \cong \C P^1$ and $\mathcal O(-1)$ is a line bundle with
Chern number $-1$.
(We use Assumption \ref{linkassum} here.)
The moduli space ${{\CM}}_{0}(H_{\chi},J^H_{\chi};*,z^H_{*};\alpha_0)$
then is identified to the moduli space of the sections to the bundle.
$
D_{\text{\rm min}} \times \mathcal O(-1)\to S^2.
$
The lemma follows easily.
\end{proof}
Theorem \ref{HamS1action} now follows from
Lemmas \ref{energyeqBMmin}, \ref{lem302}, \ref{lem303}.
\end{proof}
We now specialize Theorem \ref{HamS1action} to the case of toric manifold.
Let $(M,\omega)$ be a compact K\"ahler toric manifold.
Then $T^n$ acts on $(M,\omega)$ preserving the K\"ahler form.
Let $\pi : M \to P \subset \R^n$ be the moment map.
Let $D_j = \pi^{-1}(\partial_j P)$,  $j=1,\dots,m$ be the irreducible
components of the toric divisor.
As in Section \ref{toriceLagreview} we have affine functions $\ell_j : \R^n \to \R$ such that
$
\partial_j P = \{ \text{\bf u} \in P \mid \ell_j(\text{\bf u}) = 0\}.
$
We put
$
d\ell_j = (k_{j,1},\dots,k_{j,n})
$
where $k_{j,1},\dots,k_{j,n}$ are integers which are coprime.
Let $S^1_j$ be a subgroup of $T^n$ such that
$$
S^1_j
=
\{
[k_{j,1}t,\dots,k_{j,n}t]  \mid t \in \R
\} \subset T^n
$$
where we identify $T^n = \R^n/\Z^n$.
We note that if we put $H = \ell_j$ then $\psi_H = identity$.
The next result is a corollary to Theorem \ref{HamS1action}.
$S^1_j$ determines an element of $\pi_1(\text{\rm Ham}(M,\omega))$,
which we denote by $[S^1_j]$.

\begin{thm}\label{toricseidel}
$$
\mathcal S^{\frak b}([S^1_j])
\equiv
T^{\vol(P)^{-1}\int_P \ell_j } e^{\overline{\frak b}_2 \cap D_j}PD([D_j])
\mod
T^{\vol(P)^{-1}\int_P \ell_j } \Lambda_+.
$$
\end{thm}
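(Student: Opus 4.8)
The statement is precisely the specialization of Theorem \ref{HamS1action} to the toric Hamiltonian $H = \ell_j$, once one checks that the hypotheses of Theorem \ref{HamS1action} are satisfied for this choice. So the plan is: first, recall why $H = \ell_j$ generates a Hamiltonian \emph{loop}, i.e. $\psi_{\ell_j} = \mathrm{id}$; second, identify the normalization of $\ell_j$ and compute its minimum locus; third, verify Assumption \ref{linkassum} for the pair $(\ell_j, D_j)$; fourth, invoke Theorem \ref{HamS1action} and translate its output into the toric language, using $(\ref{elljandarea})$-type intersection data.

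\textbf{Step 1 (loop property).} With our convention $dH = \omega(X_H,\cdot)$ and the normalization $(\ref{momentmapeq})$ of the moment map, the function $\ell_j = \ell_j(\pi(\cdot))$ is, up to the affine constant, the $j$-th component direction of the moment map composed with the integral lattice vector $d\ell_j = (k_{j,1},\dots,k_{j,n})$. Its Hamiltonian flow is exactly the circle action $S^1_j \subset T^n$, hence $\phi_{\ell_j}^t$ is a loop in $\mathrm{Ham}(M,\omega)$ based at the identity; write $[S^1_j] = [\phi_{\ell_j}] \in \pi_1(\mathrm{Ham}(M,\omega))$. This is where the $2\pi$ normalization in $(\ref{momentmapeq})$ (see Remark \ref{rem2pi1}) is used so that the period is $1$.

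\textbf{Step 2 (normalization and minimum locus).} The Hamiltonian $\ell_j$ is not normalized in the sense of $(\ref{eq:normalized})$; its normalization is $\underline{\ell_j} = \ell_j - \mathrm{vol}_\omega(M)^{-1}\int_M \ell_j\,\omega^n$, and using that $\pi_*(\omega^n/n!)$ is (up to a constant) Lebesgue measure on $P$ one rewrites the shift as $\mathrm{vol}(P)^{-1}\int_P \ell_j\,dq$ — this is the origin of the exponent in the statement. Since $\ell_j \geq 0$ on $P$ with equality exactly on $\partial_j P$, the minimum locus of $\ell_j$ as a function on $M$ is $D_{\min} = \pi^{-1}(\partial_j P) = D_j$, which is the smooth toric divisor of real codimension $2$. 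Thus the codimension-$2$ hypothesis on $D_{\min}$ in the setup preceding Theorem \ref{HamS1action} holds.

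\textbf{Step 3 (Assumption \ref{linkassum}) and conclusion.} The intersection-number condition in Assumption \ref{linkassum} — that a small bounding disc $w$ of a nearby orbit $z_q^H$ meets $D_j$ positively with multiplicity exactly $+1$ — follows from $(\ref{charbetaj})$: the class $\beta_j$ of such a small disc is precisely the basic disc class with $\beta_j \cap D_{j'} = \delta_{jj'}$, so $[D_j]\cdot w_*(D^2) = +1$. Then Theorem \ref{HamS1action} applied to $H = \ell_j$ (more precisely to its normalization, which does not change $\psi_H$ nor $[\phi_H]$, and only shifts $\mathcal A_H(\alpha)$ by the Calabi-type constant $\mathrm{vol}(P)^{-1}\int_P\ell_j\,dq$ via the normalization-shift behaviour used in deriving $(\ref{eq:rhotildepsi})$) gives
$$
\mathcal S^{\frak b}([S^1_j]) \equiv q^{-\underline{\ell_j}_{\min}} e^{\overline{\frak b}_2 \cap D_j}\, PD([D_j]) \mod q^{-\underline{\ell_j}_{\min}}\Lambda_-^{\downarrow},
$$
and $-\underline{\ell_j}_{\min} = -(\ell_j)_{\min} + \mathrm{vol}(P)^{-1}\int_P\ell_j\,dq = \mathrm{vol}(P)^{-1}\int_P\ell_j\,dq$ since $(\ell_j)_{\min} = 0$. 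This is exactly the asserted formula.

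\textbf{Main obstacle.} The substantive point, rather than the bookkeeping, is Step 3 together with keeping careful track of normalization and sign conventions: one must be sure that the action value $\mathcal A_H(\alpha_0)$ of the ``constant at $D_{\min}$'' configuration that dominates the Seidel element is computed with the correct normalization of $H$, and that the bulk weight $e^{\overline{\frak b}_2\cap D_j}$ (coming from the $\exp(\int_{\alpha}\widehat{\frak b}_2)$ factor in the definition of $\mathcal S^{\frak b}_{(H_\chi,J^H_\chi);\alpha}$ together with Theorem \ref{LaMcth} and the identification of $\alpha_0$ with the fiber class $[D_{\min}\times S^2]$) restricts on the distinguished fiber to $\overline{\frak b}_2\cap D_j$ and not to something involving the other toric divisors. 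Both are handled precisely as in McDuff--Tolman \cite{mc-tol} for the $\frak b = 0$ case, with the bulk factor entering only through the exponential as in Theorem \ref{HamS1action}; there is no new analytic input.
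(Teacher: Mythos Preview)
Your approach is essentially the same as the paper's: both simply apply Theorem \ref{HamS1action} to the normalized Hamiltonian $\underline{\ell_j} = \ell_j - \mathrm{vol}(P)^{-1}\int_P \ell_j\,dq$, observing that its minimum locus is $D_j$ (the paper's proof is in fact only three sentences and does not explicitly verify Assumption \ref{linkassum} or the codimension-2 condition, which you supply). Your computation of the exponent $-\underline{\ell_j}_{\min} = +\mathrm{vol}(P)^{-1}\int_P\ell_j\,dq$ appears to differ in sign from the stated formula, but this is a sign issue in the statement rather than a flaw in your argument.
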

\begin{proof}
We note that $ \ell_j -  \text{\rm Vol}(P)^{-1}\int_P \ell_j$ is the normalized
Hamiltonian which generates $[S^1_j]$.
(This is because the push out measure $\pi_!(\omega^n)$ on $P$ is the
Lebesgue measure.
Its minimum is attained at $D_j$.
Therefore Theorem \ref{toricseidel} follows from
Theorem \ref{HamS1action}.
\end{proof}
Let $\text{\bf u}_{\text{\rm cnt}} \in P$ be the
center of gravity
and $e \in QH_{\frak b}(M;\Lambda)$
the idempotent, which corresponds to $\text{\bf u} \in P$
by Theorems \ref{Floercrit}, \ref{JacisHQ} and
Proposition \ref{Morsesplit}. The same identity was established
by Entov-Polterovich \cite[Theorem 1.15]{EP:rigid}  for general Hamiltonian loops, not just for
the circle action, which dealt the monotone case with ${\frak b} = 0$.
\begin{thm}\label{calcuxie}
$$
\mu_{e}^{\frak b}([S^1_j])
=\text{\rm Vol}(P)(\ell_j(\text{\bf u}) - \ell_j(\text{\bf u}_{\text{\rm cnt}})).
$$
In particular, $\mu_{e}^{\frak b} = 0$ on the image of $\pi_1(T^n) \to
\pi_1(\text{\rm Ham}(M,\omega))$ if and only if $\text{\bf u}
= \text{\bf u}_{\text{\rm cnt}}$.
\end{thm}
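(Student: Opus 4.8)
\textbf{Proof proposal for Theorem \ref{calcuxie}.}

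The plan is to combine the bulk Seidel computation of Theorem \ref{toricseidel} with the characterization of $\mu_e^{\frak b}$ in terms of $q$-valuations of Seidel elements provided by Corollary \ref{xicalccor}. First I would recall that $S^1_j$ is generated by the \emph{normalized} autonomous Hamiltonian $H_j := \ell_j \circ \pi - \vol(P)^{-1}\int_P \ell_j\, dq$, and that $\mu_e^{\frak b}$ is a homogeneous quasimorphism whose restriction to an abelian subgroup (in particular to the image of $\pi_1(T^n)$) is a homomorphism; so it suffices to evaluate $\mu_e^{\frak b}([S^1_j])$ for each $j=1,\dots,m$. By the relation \eqref{sympstateandquasimor} (or directly \eqref{eq:muzetae}), for an autonomous Hamiltonian $H$ generating $\widetilde\psi$ we have $\mu_e^{\frak b}(\widetilde\psi) = \Cal(H) - \vol_\omega(M)\zeta_e^{\frak b}(H)$; applying this to $H = \ell_j \circ \pi$ (not yet normalized, with $\Cal(\ell_j\circ\pi) = \int_{[0,1]}\int_M (\ell_j\circ\pi)\,dt\,\omega^n$), the Calabi terms will cancel against those appearing in Corollary \ref{xicalccor}.

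The key step is the following chain of identities. Write $e\cup^{\frak b}\mathcal S^{\frak b}([S^1_j]) = x_j e$ with $x_j\in\Lambda^\downarrow$; this makes sense because $e\Lambda^\downarrow$ is a field direct factor of $QH^*_{\frak b}(M;\Lambda^\downarrow)$ (Theorem \ref{JacisHQ}, Proposition \ref{Morsesplit}) and $\mathcal S^{\frak b}$ takes values in the invertible elements (Corollary \ref{seicor}). By Corollary \ref{xicalccor},
$$
\zeta_e^{\frak b}(\ell_j\circ\pi) = -\frak v_q(x_j) + \frac{1}{\vol_\omega(M)}\int_{[0,1]}\int_M (\ell_j\circ\pi)\,dt\,\omega^n,
$$
hence $\mu_e^{\frak b}([S^1_j]) = -\vol_\omega(M)\,\zeta_e^{\frak b}(\ell_j\circ\pi) + \Cal(\ell_j\circ\pi) = \vol_\omega(M)\,\frak v_q(x_j)$. (Here I use $\vol_\omega(M) = \vol(P)$ up to the normalization of the measure, as in the proof of Theorem \ref{toricseidel}.) Now I must compute $\frak v_q(x_j)$. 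By Theorem \ref{toricseidel}, $\mathcal S^{\frak b}([S^1_j]) \equiv q^{-c_j} e^{\overline{\frak b}_2\cap D_j} PD([D_j]) \bmod q^{-c_j}\Lambda_-^\downarrow$ with $c_j = \vol(P)^{-1}\int_P\ell_j\,dq = \ell_j(\text{\bf u}_{\text{\rm cnt}})$ (the last equality because $\text{\bf u}_{\text{\rm cnt}}$ is the center of gravity and $\ell_j$ is affine). Therefore $\frak v_q(\mathcal S^{\frak b}([S^1_j])) = -\ell_j(\text{\bf u}_{\text{\rm cnt}})$ provided the leading coefficient $e^{\overline{\frak b}_2\cap D_j}PD([D_j])$ is not annihilated when multiplied by $e$ — which is where the critical point $\text{\bf u}$ enters.

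To control $\frak v_q(x_j) = \frak v_q(e\cup^{\frak b}\mathcal S^{\frak b}([S^1_j])) - \frak v_q(e)$, I would pass through the Jacobian ring: under $\frak{ks}_{\frak b}$, multiplication by $PD([D_j])$ corresponds (by Theorem \ref{calcithm} and the formulas in Subsection \ref{toricPOcalcu}) to multiplication by $z_j = T^{\ell_j(\text{\bf 0})}y_1^{\partial\beta_j\cap\text{\bf e}_1}\cdots y_n^{\partial\beta_j\cap\text{\bf e}_n}$, and evaluation at the critical point $\frak y$ with $\text{\bf u}(\frak y)=\text{\bf u}$ sends $z_j$ to an element of $q$-valuation $\ell_j(\text{\bf u})$ (using $\frak v_q = -\frak v_T$ and $\frak v_T(y_i) = u_i$). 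Since $e = e_{\frak y}$ is the idempotent whose $\frak{ks}_{\frak b}$-image is the unit of the local factor $\text{\rm Jac}(\frak{PO}_{\frak b};\frak y)$, and $\text{\rm eval}_{\frak y}$ is a ring homomorphism, we get $\frak v_q(e\cup^{\frak b}\mathcal S^{\frak b}([S^1_j])) - \frak v_q(e) = \ell_j(\text{\bf u}) - \ell_j(\text{\bf u}_{\text{\rm cnt}})$ — exactly as in the monotone computation of \cite{mc-tol}. Thus $\mu_e^{\frak b}([S^1_j]) = \vol(P)(\ell_j(\text{\bf u}) - \ell_j(\text{\bf u}_{\text{\rm cnt}}))$, and the final assertion follows since the classes $[S^1_j]$ generate the image of $\pi_1(T^n)$ and the functions $\text{\bf u}\mapsto \ell_j(\text{\bf u}) - \ell_j(\text{\bf u}_{\text{\rm cnt}})$ vanish simultaneously for all $j$ exactly when $\text{\bf u} = \text{\bf u}_{\text{\rm cnt}}$ (the affine functions $\ell_j$ cut out $P$ and hence separate points).

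\textbf{Main obstacle.} The delicate point is not the Seidel computation itself (Theorem \ref{toricseidel} already does the hard geometry) but verifying that the \emph{leading-order} term survives the product with the idempotent $e$, i.e. that $\frak v_q(e\cup^{\frak b}\mathcal S^{\frak b}([S^1_j]))$ equals $\frak v_q(e) + \frak v_q(\mathcal S^{\frak b}([S^1_j]))$ with no cancellation. This requires knowing that the evaluation homomorphism $\text{\rm eval}_{\frak y}: \text{\rm Jac}(\frak{PO}_{\frak b};\Lambda) \to \Lambda$ is $q$-valuation-preserving on the relevant elements — equivalently that the leading term $e^{\overline{\frak b}_2\cap D_j}PD([D_j])$ of $\mathcal S^{\frak b}([S^1_j])$ is mapped by $\frak{ks}_{\frak b}$ to the monomial $z_j$ whose value at $\frak y$ has $q$-valuation precisely $\ell_j(\text{\bf u})$, together with the fact that the lower-order correction lies in $q^{-c_j}\Lambda_-^\downarrow$ and so contributes a strictly smaller $q$-valuation after multiplication. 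Handling the possibly degenerate critical point $\frak y$ (where $\text{\rm Jac}(\frak{PO}_{\frak b};\frak y)$ is only a local ring, not $\Lambda$) uses the same argument as in the proof of Theorem \ref{appliHOch}: the homomorphism $\text{\rm eval}_{\frak y}$ is still well-defined and unital on that factor, which is all that is needed.
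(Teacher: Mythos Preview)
Your proposal is correct and follows essentially the same route as the paper: combine Theorem \ref{toricseidel} with Corollary \ref{xicalccor}, and extract the valuation of $e\cup^{\frak b}\mathcal S^{\frak b}([S^1_j])$ by passing to the factor of the Jacobian ring at $\frak y$. The only cosmetic difference is that the paper computes the key valuation via the open--closed map $i^{\ast}_{\text{\rm qm},(\frak b,b(\frak y))}$ and its multiplicativity (\cite{fooo:toricmir} Theorem 9.1), whereas you phrase the same step through $\frak{ks}_{\frak b}$ and the evaluation $\text{\rm eval}_{\frak y}$; by Theorem \ref{calcithm} and Lemma \ref{calksi} these are the same computation. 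For the degenerate critical point the paper is slightly more explicit than your sketch: rather than invoking Theorem \ref{appliHOch}, it writes $e_{\frak y}\cup^{\frak b}\mathcal S^{\frak b}([S^1_j]) = a\,e_{\frak y} + b$ with $b$ nilpotent in the local factor, observes that $a$ has the expected leading $q$-power, and then computes the limit $\lim_k k^{-1}\frak v_q\big((e_{\frak y}\cup^{\frak b}\mathcal S^{\frak b}([S^1_j]))^{k}\big)$ directly --- this is exactly what is needed since Corollary \ref{xicalccor} as stated presupposes $e\Lambda^{\downarrow}$ is a field factor.
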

\begin{proof}
Let $\frak y$ be the critical point of $\frak{PO}_{\frak b}$ that
corresponds to $L(\text{\bf u})$.
(Namely $\text{\bf u}(\frak y) = \text{\bf u}$.)
By (\ref{defKS}), Theorem \ref{calcithm}, (\ref{calcisharp})
and Theorem \ref{toricPOcalcthm} we have
$$
i_{\text{\rm qm},(\frak b,b(\frak y))}^{\ast}(PD([D_j]))
=
\left[
\frac{\partial \frak{PO}_{\frak b}}{\partial w_j}
\right]
\equiv
e^{\overline{\frak b}_j} z_j
\mod \Lambda_+.
$$
Here $z_j$ is as in (\ref{defzj}).
By Lemma \ref{calksi}
$$
i_{\text{\rm qm},(\frak b,b(\frak y))}^{\ast}(e_{\frak y}) = 1.
$$
We put
$
b(\frak y) = \sum x_i e_i
$ and $
\partial \beta_j = \sum k_{ji}e_i,
$
where $e_i$ is a basis of $H(L(\text{\bf u}),\Z)$.
Then we get
$$
z_j(\frak y) = T^{\ell_j(\text{\bf u})}\prod_{i=1}^n \exp{k_{ji}x_i}.
$$
Therefore we have
$$
i_{\text{\rm qm},(\frak b,b(\frak y))}^{\ast}(\mathcal S^{\frak b}([S_j^1]))
\equiv
T^{\ell_j(\text{\bf u})-\text{\rm Vol}(P)^{-1}\int_P \ell_j}c
\mod T^{\ell_j(\text{\bf u})-\text{\rm Vol}(P)^{-1}\int_P \ell_j}\Lambda_+.
$$
where $c \in \C \setminus \{0\}$.
We note that $\text{\rm Vol}(P)^{-1}\int_P \ell_j
= \ell_j(\text{\bf u}_{\text{\rm cnt}})$.
\par
Let us assume that $\frak y$ is nondegenerate.
Then using also the multiplicativity of
$i_{\text{\rm qm},(\frak b,b(\frak y))}^{\ast}$
(\cite[Theorem 9.1]{fooo:toricmir})
we have
$$
e_{\frak y} \cup^{\frak b} \mathcal S^{\frak b}([S_j^1])
\equiv T^{\ell_j(\text{\bf u})-\ell_j(\text{\bf u}_{\text{\rm cnt}})}c e_{\frak y}
\mod T^{\ell_j(\text{\bf u})-\ell_j(\text{\bf u}_{\text{\rm cnt}})}\Lambda_+.
$$
Therefore by Corollary \ref{xicalccor} we obtain
$$\mu_{e}^{\frak b}([S^1_j])
=\text{\rm Vol}(P)(\ell_j(\text{\bf u}) - \ell_j(\text{\bf u}_{\text{\rm cnt}})).
$$
\par
In the general case we
recall that
$\text{\rm Jac}(\frak{PO}_{\frak b};\frak y)$ is a local ring
and the kernel of the homomorphism
$\text{\rm Jac}(\frak{PO}_{\frak b};\frak y)\to \Lambda$ defined by
$[\frak P] \mapsto \frak P(\frak y)$ is nilpotent.
Therefore
$$
e_{\frak y} \cup^{\frak b} \mathcal S^{\frak b}([S_j^1])
=
a e_{\frak y} + b
$$
with $a\in \Lambda$,
$$
a \equiv T^{\ell_j(\text{\bf u})-\ell_j(\text{\bf u}_{\text{\rm cnt}})}c
\mod T^{\ell_j(\text{\bf u})-\ell_j(\text{\bf u}_{\text{\rm cnt}})}\Lambda^{\downarrow}_+
$$
and $b$ is nilpotent.
We use it to show
$$
\lim_{k\to\infty}
\frac{{\frak v}_T(e_{\frak y} \cup^{\frak b} \mathcal S^{\frak b}([S_j^1])^k)}{k}
= \ell_j(\text{\bf u})-\ell_j(\text{\bf u}_{\text{\rm cnt}}).
$$
The proof of Theorem \ref{calcuxie} is now complete.
\end{proof}
\begin{rem}
Theorem \ref{calcuxie} also follows from
Theroems \ref{toricheavymain} and \ref{appliHOch}.
\end{rem}

\theoremstyle{theorem}
\newtheorem{adden}{Addendum}[section]

\part{Kuranishi structure and its CF-perturbation: summary}
\label{part7}

This part is a brief summary of the notion of Kuranishi structure and its perturbation.
There is nothing new in this part.
We include this part only for the convenience of the reader.
This is a summary which is {\it not} intended to include the
full detail of the construction.
We refer the reader to \cite[Section A1]{fooo:book2},
\cite[Section 12]{fooo:bulk}, \cite{fooo:tech2}
for the detail.
(The reader will find that the detail is, in principle, mostly parallel
to the standard theory of manifolds.)

\section{Kuranishi structure and good coordinate system}
\label{subsec:Kuranishi}

We first review a few basic terminologies of orbifolds.
See \cite[Section 15]{fooo:tech2}.  We remark that
we only consider effective orbifolds.

\subsection{Orbifold}
\label{subsec:orbifold}

\begin{defn}\label{def101}
Let $X$ be a topological space.
Its (effective) \index{oribfold chart} {\it orbifold chart} is  a triple $(V,\Gamma,\phi)$ such that
$V$ is a manifold, $\Gamma$  a finite group acting smoothly
and effectively on $V$, and $\phi : V \to X$
is a $\Gamma$ equivariant continuous map\footnote{The $\Gamma$ action
on $X$ is trivial.} which induces a
homeomorphism $\overline\phi : V/\Gamma \to X$ onto an open subset
$U$ of $X$.
\par
Let  $(V,\Gamma,\phi)$ be an orbifold chart and  $p \in V$.
\index{$(V,\Gamma,\phi)$}
We put $\Gamma_p = \{ \gamma \in \Gamma \mid \gamma p = p\}$.
Let $V_p$ be a $\Gamma_p$ invariant open neighborhood of $p$
in $V$. We assume the map
$\overline\phi : V_p/\Gamma_p \to X$ is injective.
(In other words, we assume that
$\gamma V_{p} \cap V_p \ne \emptyset$ implies $\gamma \in \Gamma_p$.)
We say such a triple $(V_{p},\Gamma_p,\phi\vert_{V_p})$
a {\it subchart} \index{orbifold ! subchart} of $(V,\Gamma,\phi)$.
\par
Let $(V_i,\Gamma_i,\phi_i)$  $(i=1,2)$ be orbifold charts of $X$.
We say that they are {\it compatible}
\index{orbifold ! compatible (orbifold chart)} if the following holds
for each $p_1 \in V_1$ and $p_2 \in V_2$ with
$\phi_1(p_1) = \phi_2(p_2)$.
\begin{enumerate}
\item
There exists a group isomorphism $h : (\Gamma_1)_{p_1}
\to (\Gamma_2)_{p_2}$.
\item
There exists an $h$ equivariant diffeomorphism $\tilde\varphi : V_{1,p_1}
\to V_{2,p_2}$. Here $V_{i,p_i}$ is a $(\Gamma_i)_{p_i}$
equivariant subset of $V_i$ such that
 $(V_{i,p_i},(\Gamma_i)_{p_i},\phi\vert_{V_{i,p_i}})$
 is a subchart.
 \item
 $\phi_2 \circ \tilde\varphi = \phi_1$ on $V_{1,p_1}$.
\end{enumerate}
\par
An {\it orbifold structure}
 on $X$
is represented by a set of orbifold charts $\{(V_i,\Gamma_i,\phi_i) \mid i \in I\}$
such that each two of the charts are compatible in the sense of (3)
above and
$
\bigcup_{i\in I} \phi_i(V_i) = X
$
is a locally finite open cover of $X$.
The set of all orbifold charts compatible with a given representative of orbifold
structure is said to be an orbifold structure\index{orbifold structure}.
\end{defn}
\begin{defn}\label{def262220}
Suppose that $X$, $Y$ have  orbifold structures represented by $\{(V^X_i,\Gamma^X_i,\phi^X_i) \mid i \in I\}$
and $\{(V^Y_j,\Gamma^Y_j,\phi^Y_j) \mid j \in J\}$,
respectively.
A continuous map $f : X \to Y$ is said to be an
{\it embedding}
\index{orbifold ! embedding of orbifolds}\index{embedding ! of orbifolds}
if the following holds.
\begin{enumerate}
\item
$f$ is an embedding of  topological spaces.
\item
Let $p \in V^X_i$, $q \in V^Y_j$ with
$f(\phi_i(p)) = \phi_j(q)$. Then we have the following.
\begin{enumerate}
\item
There exists an isomorphism of groups
$h_{p;ji} : (\Gamma_i^X)_p \to (\Gamma_j^Y)_q$.
\item
There exist  $V^X_{i,p}$ and $V^Y_{j,q}$ such that
$(V^X_{i,p},(\Gamma^X_i)_p,\phi_i\vert_{V^X_{i,p}})$
is a subchart for $i=1,2$.
There exists an $h_{p;ji}$ equivariant embedding of manifolds
$\tilde f_{p;ji}: V^X_{i,p} \to V^Y_{j,q}$.
\item
The diagram below commutes.
\begin{equation}\label{diag2633}
\xymatrix{
& V^X_{i,p} \ar[r]^{\tilde f_{p;ji}} \ar[d]_{\phi_{i}}
& V^Y_{j,q} \ar[d]_{\phi_{j}} &  \\
&X \ar[r]_{f} & Y \\
}
\end{equation}
\end{enumerate}
\end{enumerate}
Two orbifold embeddings are said to be {\it equal} \index{equal! to maps between orbifolds} if they coincide set-theoretically.
\par
A continuous map $f : X \to M$ from an orbifold to a manifold
$M$ is said to be a {\em smooth map} \index{smooth map! from orbifold to a manifold} if
for each orbifold chart $(V,\Gamma,\phi)$ of $X$ the composition
$f \circ \phi : V \to M$ is a smooth map between manifolds.
\end{defn}
\begin{defn}\label{obbundle}
Let $(X,\mathcal E,\pi )$ be a pair of  orbifolds $X, \mathcal E$ and $\pi : \mathcal E \to X$  a continuous map
between their underlying topological spaces.
An {\em orbibundle chart} \index{orbibundle chart} of $(X,\mathcal E)$ is $(V,E,\Gamma,\phi,\widehat\phi)$
with
$\frak V = (V,\Gamma,\phi)$ an orbifold chart,
$E$ is a finite dimensional vector space equipped with
\index{$(V,\Gamma,E,\phi,\widehat\phi)$}
a linear $\Gamma$
action and
$(V \times E,\Gamma,\widehat\phi)$ is an orbifold chart of the
orbifold $\mathcal E$,
such that
the diagram below commutes set-theoretically,
\begin{equation}\label{diag26399}
\xymatrix{
&V \times E  \ar[r]^{\widehat\phi} \ar[d]
&\mathcal E\ar[d]_{\pi}  \\
&V \ar[r]_{\phi}  &X \\
}
\end{equation}
where the left vertical arrow is the projection to the
first factor.
\par
By modifying Definition \ref{def101} in an obvious way we can define the notion of
subchart and then using it to define the structure of {\em orbibundle} \index{orbibundle} on $(X,\mathcal E,\pi )$.
(See \cite[Definition 15.17]{fooo:tech2}.)
\par
We also say {\it vector bundle} \index{vector bundle} on an orbifold instead of orbibundle.
\par
The notion of {\em embedding of orbibundle}  \index{embedding ! orbibundle} can be defined in the same way as
Definition \ref{def262220}. (See \cite[Definition 15.18]{fooo:tech2}.)
\end{defn}

\subsection{Kuranishi structure}
\label{subsec:kuranishi}

We next define the notion of Kuranishi structure.
Hereafter in this part, $X$ is always a separable metrizable space.
\begin{defn}\label{defnKchart}
A {\it Kuranishi chart}
\index{Kuranishi chart! Kuranishi chart} of $X$ is $\mathcal U =(U,\mathcal E,\psi,s)$ with
\index{$\mathcal U = (U,\mathcal E,\psi,s)$}
the following properties.
\begin{enumerate}
\item
$U$ is an orbifold.
\item
$\mathcal E$ is a vector bundle on $U$.
\item
$s$ is a smooth section of $\mathcal E$.
\item
$\psi : s^{-1}(0) \to X$ is a homeomorphism onto an open set.
\end{enumerate}
Here a {\em section} of an orbibundle $\pi : \mathcal E \to X$ is an embedding
$s : X \to \mathcal E$ (of orbifold) such that $\pi \circ s =$ the identity map.
\par
If $U'$ is an open subset of $U$, then by restricting $\mathcal E$, $\psi$ and $s$
to $U'$, we obtain a
Kuranishi chart which we write $\mathcal U\vert_{U'}$
and call it an {\it open subchart}.\index{Kuranishi chart! open subchart}
\par
The {\it dimension} of \index{dimension ! of Kuranishi chart} $\mathcal U =(U,\mathcal E,\psi,s)$
is by definition
$$
\dim \mathcal U = \dim U  - {\rm rank}\, \mathcal E.
$$
Here ${\rm rank}\, \mathcal E$ is the dimension  of the fiber $\mathcal E \to U$.
\par
We say that $\mathcal U =(U,\mathcal E,\psi,s)$ is {\it orientable}
\index{orientable !  Kuranishi chart} if $U$ and $E$ are orientable.
An {\it orientation} \index{orientation !  Kuranishi chart}
of $\mathcal U =(U,\mathcal E,s,\psi)$ is a pair of orientations of
$U$ and $\mathcal E$. (See \cite[Definition A1.17]{fooo:book2} for the precise meaning.)
An open subchart of an oriented Kuranishi chart is oriented.
\par
We call $U$ the \index{Kuranishi neighborhood} {\em Kuranishi neighborhood},
$\mathcal E$  the \index{obstruction bundle} {\em obstruction bundle},
$\psi$ the \index{parametrization map} {\em parametrization}, and
$s$ the {\em Kuranishi map}.\index{Kuranishi map}
\end{defn}
\begin{defn}\label{defKchart}
Let $\mathcal U = (U,\mathcal E,\psi,s)$, $\mathcal U'
= (U',\mathcal E',\psi',s')$ be Kuranishi charts of $X$.
An {\it embedding} of Kuranishi charts $: \mathcal U\to \mathcal U'$
\index{Kuranishi chart! embedding of Kuranishi charts}
\index{embedding ! of Kuranishi charts}
is  a pair $\Phi = (\varphi,\widehat\varphi)$ \index{$\Phi = (\varphi,\widehat\varphi)$} with the following properties.
\begin{enumerate}
\item
$\varphi : U \to U'$ is an embedding of orbifolds.
(See Definition \ref{obbundle})
\item
$\widehat\varphi : \mathcal E \to \mathcal E'$ is an embedding of vector bundles
over $\varphi$.
(See Definition \ref{defnKchart}.)
\item
$\widehat\varphi \circ s = s' \circ \varphi$.
\item
$\psi' \circ \varphi = \psi$ holds on $s^{-1}(0)$.
\item
For each $x \in U$ with $s(x) = 0$, the (covariant) derivative
$
D_{\varphi(x)}s'
$
induces an isomorphism
\begin{equation}\label{form3.1111}
\frac{T_{\varphi(x)}U'}{(D_x\varphi)(T_xU)}
\cong
\frac{\mathcal E'_{\varphi(x)}}{\widehat\varphi(\mathcal E_x)}.
\end{equation}
\end{enumerate}
\par
When $\mathcal U$ and $\mathcal U'$ are oriented
we say $\Phi$ is {\em compatible with orientation}
\index{coordinate change! compatible with orientation}
if the isomorphism (\ref{form3.1111}) induces an
orientation preserving isomorphism.
\end{defn}
\begin{defn}\label{kuranishineighborhooddef}
For $A \subseteq X$, a {\it Kuranishi neighborhood} of $A$ is a Kuranishi chart
such that ${\rm Im}(\psi)$ contains $A$.
In case $A=\{p\}$ we call it a Kuranishi neighborhood of $p$
or a Kuranishi chart at $p$.
\index{Kuranishi chart! Kuranishi neighborhood of $A$}
\end{defn}
\begin{defn}\label{coordinatechangedef}
Let $\mathcal U_1 = (U_1,\mathcal E_1,\psi_1,s_1)$, $\mathcal U_2
= (U_2,\mathcal E_2,\psi_2,s_2)$ be Kuranishi charts of $X$.
A {\it coordinate change in the weak sense} (resp. {\it in the strong sense}) from $\mathcal U_1$
\index{Kuranishi chart! coordinate change in the weak sense}
\index{Kuranishi chart! coordinate change in the strong sense}
\index{coordinate change! Kuranishi chart}
to
$\mathcal U_2$ is a triple $\Phi_{21} = (U_{21},\varphi_{21},\widehat\varphi_{21})$
with the following properties (1) and (2) (resp. (1), (2) and (3)):
\begin{enumerate}
\item
$U_{21}$ is an open subset of $U_1$.
\item
$(\varphi_{21},\widehat\varphi_{21})$ is an
embedding of Kuranishi charts $: \mathcal U_1\vert_{U_{21}}
\to \mathcal U_2$.
\item
$\psi_1(s_1^{-1}(0) \cap U_{21}) = {\rm Im}(\psi_1) \cap   {\rm Im}(\psi_2)$.
\end{enumerate}
In case $\mathcal U_1$ and $\mathcal U_2$ are oriented $\Phi_{21}$
is said to be {\it orientation preserving} \index{orientation preserving ! of
coordinate change} if it is so as an embedding.
\end{defn}
\begin{defn}\label{kstructuredefn}
A {\it Kuranishi structure} $\widehat{\mathcal U}$ of $X$
\index{Kuranishi structure! Kuranishi structure} \index{$\widehat{\mathcal U}
= (\{\mathcal U_p\},\{\Phi_{pq}\})$} assigns a Kuranishi neighborhood $\mathcal U_p
= (U_p,\mathcal E_p,\psi_p,s_p)$
of $p$ to each $p \in X$ and a coordinate change in weak sense
$\Phi_{pq} = (U_{pq},\varphi_{pq},\widehat\varphi_{pq}) : \mathcal U_q \to \mathcal U_p$
to each $p$, $q \in {\rm Im}(\psi_p)$ such that
$q \in \psi_q(s_q^{-1}(0) \cap U_{pq})$ and the following holds
for each $p$, $q \in {\rm Im}(\psi_p)$, $r \in \psi_q(s_q^{-1}(0) \cap U_{pq}) \cap Z$.
\par
We put
$U_{pqr} = \varphi_{qr}^{-1}(U_{pq}) \cap U_{pr}$. Then we have
\begin{equation}\label{form3333}
\Phi_{pr}\vert_{U_{pqr}} = \Phi_{pq}\circ
\Phi_{qr}\vert_{U_{pqr}}.
\end{equation}
\par
We also require that the dimension of $\mathcal U_p$ is independent of
$p$ which we call  the {\it dimension} of $\widehat{\mathcal U}$.
\index{dimension ! of Kuranishi structure}
\par
We replace $U_p$ by a smaller open subset and restrict $s_p$ etc. in an obvious way.
We thus obtain an {\em open substructure} \index{open substructure}.
\end{defn}
We remark that the equality (\ref{form3333}) is the set theoretical equality of the
maps involved.
\par
We next define the notion of a good coordinate system.
\begin{defn}\label{gcsystem}
A {\it good coordinate system} of  $X$
\index{good coordinate system}  is
$${\widetriangle{\mathcal U}}
= (({\frak P},\le), \{\mathcal U_{\frak p}\mid \frak p \in \frak P\},
\{\Phi_{\frak p\frak q}
\mid \frak q \le \frak p\})$$
\index{${\widetriangle{\mathcal U}}
= (({\frak P},\le), \{\mathcal U_{\frak p}\mid \frak p \in \frak P\},
\{\Phi_{\frak p\frak q}
\mid \frak q \le \frak p\})$}
such that:
\begin{enumerate}
\item
$({\frak P},\le)$ is a partially ordered set.
We assume $\# \frak P$ is finite.
\item
$\mathcal U_{\frak p}$ is a Kuranishi chart of $X$.
\item
$
\bigcup_{\frak p \in \frak P} U_{\frak p} \supseteq X.
$
\item
Let $\frak q \le \frak p$. Then
$\Phi_{\frak p \frak q} = (U_{\frak p \frak q},\varphi_{\frak p \frak q},\widehat\varphi_{\frak p \frak q})$ is a coordinate change
in strong sense:
$\mathcal U_{\frak q} \to \mathcal U_{\frak p}$
in the sense of Definition \ref{coordinatechangedef}.
\item
If $\frak r \le \frak q \le \frak p$, then by putting
$U_{\frak p\frak q\frak r} = \varphi_{\frak q\frak r}^{-1}(U_{\frak p\frak q}) \cap U_{\frak p\frak r}$ we have
\begin{equation}
\Phi_{\frak p \frak r}\vert_{U_{\frak p \frak q \frak r}} = \Phi_{\frak p \frak q}\circ
\Phi_{\frak q \frak r}\vert_{U_{\frak p \frak q \frak r}}.
\end{equation}
\item
If ${\rm Im}(\psi_{\frak p}) \cap {\rm Im}(\psi_{\frak q})
\ne \emptyset$, then either
$\frak p \le \frak q$ or $\frak q \le \frak p$ holds.
\item
We define a relation $\sim $ on the disjoint union
$\coprod_{\frak p \in \frak P}U_{\frak p}$ as follows.
Let $x \in U_{\frak p}, y \in U_{\frak q}$. We define $x\sim y$ if and
only if one of the following holds:
\begin{enumerate}
\item $\frak p = \frak q$ and $x=y$.
\item $\frak p \le \frak q$ and $y = \varphi_{\frak q\frak p}(x)$.
\item $\frak q\le \frak p$ and $x = \varphi_{\frak p\frak q}(y)$.
\end{enumerate}
Then $\sim$ is an equivalence relation.
\item
The quotient of $\coprod_{\frak p \in \frak P}U_{\frak p}/\sim$
by this equivalence relation is Hausdorff with respect to the
quotient topology.
\end{enumerate}
\end{defn}
\begin{defn}\label{defn32020202}
Let $\widehat{\mathcal U}$ be a Kuranishi structure and
${\widetriangle{\mathcal U}}$  a good coordinate system
of $X$.
A {\it strict KG-embedding}
\index{KG-embedding! strict} of $\widehat{\mathcal U}$ to ${\widetriangle{\mathcal U}}$
assigns, for each pair $(p,\frak p)$ with $p\in X$, $\frak p\in \frak P$
satisfying
$p \in {\rm Im}(\psi_{\frak p})$,
an embedding of Kuranishi charts
$
\Phi_{\frak p p} = (\varphi_{\frak p p},\widehat\varphi_{\frak p p}) :
\mathcal U_p \to \mathcal U_{\frak p}$ with the following
properties.
\par
If $\frak p,\frak q\in \frak P$,  $\frak q \le \frak p$,
$p \in {\rm Im}(\psi_{\frak p})$,
$q \in {\rm Im}(\psi_{p}) \cap \psi_{\frak q}(U_{\frak p\frak q})$,
then
the following diagram commutes.
\begin{equation}\label{diag33}
\xymatrix{
&\mathcal U_{q}\vert_{U_{pq}
\cap \varphi_{\frak q q}^{-1}(U_{\frak p \frak q})} \ar[r]^{\Phi_{\frak q q}} \ar[d]_{\Phi_{pq}}
&{\mathcal U}_{\frak q}\vert_{{U}_{\frak p\frak q}} \ar[d]_{\Phi_{\frak p \frak q}} \\
&\mathcal U_{p} \ar[r]^{\Phi_{\frak p p}} &{\mathcal U}_{\frak p} \\
}
\end{equation}
\par
A {\it KG-embedding} of $\widehat{\mathcal U}$ to ${\widetriangle{\mathcal U}}$
is by definition a strict KG-embedding \index{KG-embedding} of an open substructure of $\widehat{\mathcal U}$
to ${\widetriangle{\mathcal U}}$.
\end{defn}
The notions of KK-embeding \index{KK-embeding}(embedding of a Kuranishi structure to another Kuranishi structure)
and GG-embedding \index{GG-embedding}(embedding of a good coordinate system to another good coordinate system)
are defined in a similar way.  See \cite[Definitions 3.20 and 3.24]{fooo:tech2}.
\par
The basic existence theorem of good coordinate system is the following.
\begin{thm}\label{thmexist}
{\rm (\cite[Lemma 6.3]{fukaya-ono}, \cite[Lemma A1.11]{fooo:book1},
\cite[Theorem 3.10]{fooo:tech2})}
If  $\widehat{\mathcal U}$ is a Kuranishi structure of $X$
then there exists a good coordinate system ${\widetriangle{\mathcal U}}$
of  $X$  with a KG-embedding $\widehat{\mathcal U}$
to ${\widetriangle{\mathcal U}}$.
\end{thm}
See \cite[Section 11]{fooo:tech2} for the proof.
See \cite{fooohausdorf} for the detail of the argument to
establish Definition \ref{gcsystem} (7)(8).
We can also prove its relative version. (See  \cite[Proposition 5.11 etc.]{fooo:tech2}.)

\section{Strongly smooth map and fiber product}
\label{subsec:Kuranishifiber}

\subsection{Strongly smooth map}
\label{subsec:smoothmap}

\begin{defn}\label{mapkura}
Let $\widehat{\mathcal U}$ be a Kuranishi structure  of $X$.
\begin{enumerate}
\item
A {\it strongly continuous map $\widehat f$}
\index{map from Kuranishi structure ! strongly continuous map}
from $(X,\widehat{\mathcal U})$ to
a topological space $Y$ assigns a continuous map $f_{p}$ from
$U_{p}$ to $Y$ for each $p\in X$ such that
$f_p \circ \varphi_{pq} = f_q$ holds on $U_{pq}$.
\item
In the situation of (1), the map $f:X \to Y$ defined by
$f(p) = f_p(p)$ is a continuous map from $X$ to $Y$.
We call $f : X \to Y$ the {\it underlying continuous map} of $\widehat f$.
\item
When $Y$ is a smooth manifold, we say $\widehat f$ is {\it strongly smooth}
\index{map from Kuranishi structure ! strongly smooth}
if each  $f_p$ is smooth.
\item
A strongly smooth map is said to be {\it weakly submersive}
\index{map from Kuranishi structure ! weakly submersive} if each
$f_p$ is a submersion.
\end{enumerate}
We sometimes say $f$ is a strongly continuous map
(resp. a strongly smooth map, a weakly submersive map) in place of
$\widehat f$ is a strongly continuous map
(resp. a strongly smooth map, weakly submersive),
by an abuse of notation.
\par
We can define similar notions for the good coordinate system
by modifying the definition in an obvious way.
(See  \cite[Definition 3.38]{fooo:tech2}.)
\end{defn}
In  \cite[Proposition 6.49 (2)]{fooo:tech2} the next result is also proved.
\begin{adden}\label{add1}
In the situation of Theorem $\ref{thmexist}$,  suppose there exists a
strongly continuous map $\widehat f :  (X,\widehat{\mathcal U}) \to Y$.
Then we may  choose a good coordinate system ${\widetriangle{\mathcal U}}$ so that
$\widehat f$ is induced by a strongly smooth map
$\widetriangle f :  (X,\widetriangle{\mathcal U}) \to Y$.
If $Y$ is a manifold and $\widehat f$ is strongly smooth (resp. weakly submersive), we may choose
$\widetriangle f$ to be strongly smooth (resp. weakly submersive) also.
\end{adden}

\subsection{Fiber product}
\label{subsec:fproduct}

We next define a fiber product of Kuranishi structure.
Note if $f : X \to Y$ and $f' : M \to Y$ are continuous maps between topological spaces, their fiber product is a topological
space defined by
$$
X \,{}_f \times_{f'}  M  =
\{(x,y) \in X \times M \mid f(x) = f'(y)\}.
$$
\begin{defn}\label{directproduct}
For given $p_i \in X_i$, $i=1,2$, let
$\mathcal U_{p_i} = (U_{p_i},\mathcal E_{p_i},\psi_{p_i},s_{p_i})$ be their Kuranishi neighborhoods.
Then we define a Kuranishi neighborhood of $p = (p_1,p_2) \in X_1 \times X_2$ by
$\mathcal U_{p} = \mathcal U_{p_1} \times \mathcal U_{p_2} = (U_p,\mathcal E_p,\psi_p,s_p)$ where
$$
 (U_p,\mathcal E_p,\psi_p,s_p) =
(U_{p_1}\times U_{p_2},\mathcal E_{p_1}\times \mathcal E_{p_2},\psi_{p_1}\times \psi_{p_2},s_{p_1}\times
s_{p_2}) .
$$
This system satisfies the condition of
Kuranishi neighborhood (Definition \ref{kuranishineighborhooddef}.)
\par
Suppose $q_i \in X_i$ and $q = (q_1,q_2) \in X_1 \times X_2$.
If
$q \in \psi_p(s^{-1}_p(0))$, then it is easy to see that
$q_i \in \psi_{p_i}(s^{-1}_{p_i}(0))$ for $i=1,2$.
Therefore there exist coordinate changes
$\Phi_{p_iq_i} = ({\varphi_{p_iq_i}},\widehat{\varphi}_{p_iq_i},h_{p_iq_i})$ from
$\mathcal U_{q_i}$ to
$\mathcal U_{p_i}$.
We define
$$
\aligned
\Phi_{pq} &=
\Phi_{p_1q_1} \times  \Phi_{p_2q_2}=
(U_{pq},{\varphi}_{pq},\widehat{\varphi}_{pq})
 \\
&=
(U_{p_1q_1} \times U_{p_2q_2},{\varphi}_{p_1q_1}\times {\varphi}_{p_2q_2},\widehat{\varphi}_{p_1q_1}\times \widehat{\varphi}_{p_2q_2}).
\endaligned$$
This satisfies the condition of coordinate change
of
Kuranishi charts (Definition \ref{coordinatechangedef}).
\par
Then it is also easy to show that $(\{\mathcal U_{p_1} \times \mathcal U_{p_2}\},\{\Phi_{p_1q_1} \times  \Phi_{p_2q_2}\})$
defines a Kuranishi structure of $X_1 \times X_2$ in the sense of Definition
\ref{kstructuredefn}.
(We note that effectivity of an orbifold is preserved by the
direct product.)
We call this Kuranishi structure the {\it direct product Kuranishi structure}.
\index{Kuranishi structure! direct product}
\end{defn}
\begin{defn}\label{transverse1}
Let $(X,\widehat{\mathcal U})$ be a space with Kuranishi structure
and $\widehat f = \{f_p\}: (X,\widehat{\mathcal U}) \to N$  a strongly smooth map,
where $N$ is a smooth manifold of finite dimension.
Let $f' : M \to N$ be a smooth map between smooth manifolds.
Assume $M$ is compact.
We say $\widehat f$ is {\it weakly transversal} to $f'$
\index{weakly transversal to a smooth map}
on $X$ if the following holds.
Let  $(p,q) \in X \times_N M$ and
$\mathcal U_p = (U_p,E_p,s_p,\psi_p)$ be a Kuranishi neighborhood
of $p$.
We then require that
for each $(x,y) \in U_p\times M$ with
$f_p(x) = f'(y)$ we have
\begin{equation}\label{transformula}
(d_xf_p)(T_xU_p) + (d_yf')(T_yM) = T_{f(x)}N.
\end{equation}
\end{defn}
\begin{lem}\label{lem24}
In the situation of Definition $\ref{transverse1}$ we can define a Kuranishi structure on
$X \times_N M.
$
\end{lem}
The proof is easy. In fact the Kuranishi neighborhood is obtained by
the fiber product
$U_p \times_N M$.
See \cite[page 34]{fooo:tech2} for the detail. We call the Kuranishi structure obtained in
Lemma \ref{lem24} the \index{fiber product Kuranishi structure}
{\em fiber product Kuranishi structure} and write:
\begin{equation}\label{formB22}
(X,\widehat{\mathcal U}) \,{}_{f}\!\times_N M.
\end{equation}
\begin{defn}\label{defnn325}
Suppose $X_i$ $(i=1,2)$ have Kuranihi structures
$\widehat{\mathcal U_i}$ and
the maps
$\widehat f_i : (X_i,\widehat{\mathcal U_i}) \to N$
are strongly smooth.
We say that $\widehat f_1$ is \index{weakly transversal} {\em weakly transversal}  to
$\widehat f_2$ if
$$
\widehat f_1 \times \widehat f_2 :
(X_1,\widehat{\mathcal U_1}) \times (X_2,\widehat{\mathcal U_2}) \to N \times N
$$
is weakly transversal to the diagonal $N = \Delta_N \subset N\times N$ in the sense of
Definition \ref{transverse1}.
\par
Then we define the {\em fiber product} \index{Kuranishi structure! fiber product}
 \index{fiber product}
$(X_1,\widehat{\mathcal U_1}) {}_{f_1}\times_{f_2} (X_2,\widehat{\mathcal U_2})$
by
$$
(X_1,\widehat{\mathcal U_1}) {}_{f_1}\times_{f_2} (X_2,\widehat{\mathcal U_2})
=
((X_1,\widehat{\mathcal U_1})  \times  (X_2,\widehat{\mathcal U_2})) \times_{N\times N} \Delta_N
$$
where the right hand side is as in (\ref{formB22}).
\end{defn}

\section{CF perturbation and integration along the fiber}
\label{subsec:integration}

In this section we study integration of differential forms on the space with
Kuranishi structure or with good coordinate system.

\subsection{Differential form on the space with Kuranishi structure}
\label{subsec:diffform}
\begin{defn}\label{def323232}
A {\it differential $k$ form $\widehat h$ of a Kuranishi structure}
\index{differential form ! of Kuranishi structure}
$\widehat{\mathcal U}$ of $X$ assigns a differential $k$-form
$h_p$ on $U_p$ for each $p \in X$ such that
$\varphi_{pq}^*h_p = h_q$.
\par
Differential form on a space with good coordinate system can be defined
in the same way. (See  \cite[Definition 7.68]{fooo:tech2}.)
\end{defn}
\begin{defn}\label{def323233}
Let $\widehat f : (X,\widehat{\mathcal U}) \to M$ be a strongly
smooth map to a manifold $M$ and $h$  a differential form on $M$.
For $p \in X$, $\widehat f$ determines a smooth map $f_p : U_p \to M$
where $U_p$ is a Kuranishi neighborhood associated to $p \in X$ by
$\widehat{\mathcal U}$.
Then the family $\{f_p^* h \mid p \in X\}$ becomes a differential form
of $(X,\widehat{\mathcal U})$. We call it the {\em pull back}
\index{pull back of a differential form by strongly smooth map}
and write $\widehat f^* h$.
\index{$\widehat f^* h$}
\par
The case of good coordinate system is similar.
\end{defn}
\begin{defn}\label{def323234}
Let $\widehat h = \{h_p \mid p \in X\}$ be a differential form on $(X,\widehat{\mathcal U})$.
Then $\{dh_p \mid p \in X\}$  is also a differential form on $(X,\widehat{\mathcal U})$,
which we write $d\widehat h$.
The case of good coordinate system is similar.
\end{defn}

Let $\widehat h$ be a differential form on $(X,\widehat{\mathcal U})$.
We want to integrate $\widehat h$ on $X$. $X$ is locally written as
$s_p^{-1}(0)$,
which can be singular. So to make sense out of this integration we need to
perturb $s_p$ appropriately so that the zero set of the perturbed section
becomes smooth.
The notion of CF-perturbation (CF stands for `continuous family') is defined for this purpose.
Below we describe basic definitions of  CF-perturbation and integration along the fiber
of a differential form on a space equipped with good coordinate system or a Kuranishi structure together with
CF perturbation.
To obtain a global definition from the definition on each local chart, we need to
check certain consistency or well-defined-ness of the definitions.
We omit its explanation here since we describe its detail in
\cite[Section 7]{fooo:tech2}.

\subsection{CF-perturbation}
\label{subsec:CFperturbation}
\begin{defn}\label{defn32321}
Let $\mathcal U = (U,\mathcal E,\psi,s)$ be a Kuranishi chart of $X$.
A {\it CF-perturbation}
\index{CF-perturbation! on an orbifold chart} of $\mathcal U$ on an orbibundle chart
$\frak V = (V,\Gamma,E,\phi,\widehat\phi)$ of $(X,\mathcal E)$ consists of
$\mathcal S = (W,\omega,\{{\frak s}^{\epsilon}\})$,
\index{$\mathcal S = (W,\omega,\{{\frak s}^{\epsilon}\})$}
$0 < \epsilon  \le 1$,  with the following properties:
\begin{enumerate}
\item
$W$ is an open subset of a finite dimensional vector space
$\widehat W$ on which $\Gamma$ acts linearly.
$W$ is $\Gamma$ invariant.
\item
$
{\frak s}^{\epsilon} : V \times W \to E
$
is a $\Gamma$-equivariant smooth map for each $0 < \epsilon  \le 1$, which
depends smoothly on $\epsilon$.
We sometimes regard $\frak s^{\epsilon}$ as a family of sections
$V \times W \to E \times V \times W$ of the trivial bundle
$E \times V \times W \to V \times W$.
\item
For $(y,\xi) \in V\times W$ we have
\begin{equation}\label{C0convconti}
\lim_{\epsilon\to 0} {\frak s}^{\epsilon}(y,\xi)
= s(y)
\end{equation}
in compact $C^1$-topology
on $V\times W$.
\item
$\omega$ is a smooth differential form on $W$ of
degree $\dim W$ that is
$\Gamma$ invariant, of compact support and
$$
\int_{W} \omega = 1.
$$
We assume $\omega = \vert\omega\vert{\rm vol}$ where ${\rm vol}$ is a volume form of the oriented manifold $W$
and $\vert\omega\vert$ is a non-negative function.
\end{enumerate}
For each $0 < \epsilon \le 1$, we denote the
restriction of $\mathcal S$ at $\epsilon$, by $\mathcal S^{\epsilon} = (W,\omega,{\frak s}^{\epsilon})$.
\par
Let $\mathcal S_i = (W_i,\omega_i,\{{\frak s}_i^{\epsilon}\})$
($i=1,2$) be two CF-perturbations of $\mathcal U$ on $\frak V$.
We say $\mathcal S_2^{\epsilon}$ is a projection of
$\mathcal S_1^{\epsilon}$
if there exists a map $\Pi :
\widehat W_1 \to \widehat W_2$
with the following properties.
\begin{enumerate}
\item
$\Pi$ is a $\Gamma$ equivariant linear projection
which sends $W_1$ to $W_2$ and satisfies
$
\Pi_{!}(\omega_1) = \omega_2.
$
\item
For each $(y,\xi) \in V_x\times  W_1$
we have
$
{\frak s}_1^{\epsilon}(y,\Pi(\xi)) = {\frak s}_{2}^{\epsilon}(y,\xi).
$
\end{enumerate}
\par
Let $x \in X$. We write  $\mathcal S_1^{\epsilon}
\sim'_x \mathcal S_2^{\epsilon}$ if a restriction of $\mathcal S_1^{\epsilon}$ to a neighborhood
of $x$ is a projection of a restriction of the $\mathcal S_2^{\epsilon}$ to that neighborhood.
\par
We denote by $\sim_x$ the equivalence relation generated by $\sim'_x$.
\par
We say $\mathcal S_1^{\epsilon}$ is equivalent to $\mathcal S_2^{\epsilon}$ and write $\mathcal S_1^{\epsilon}
\sim \mathcal S_2^{\epsilon}$
if $\mathcal S_1^{\epsilon} \sim_x \frak \mathcal S_2^{\epsilon}$ at all $x$.
\end{defn}
\begin{defn}\label{defn33333}
Let $\mathcal U = (U,\mathcal E,\psi,s)$ be a Kuranishi chart of $X$.
A {\em CF-perturbation} of $\mathcal U$  \index{CF-perturbation! on a Kuranishi chart} is represented by
$(\{\mathcal S^{\epsilon}_{\frak r}
\mid \frak r \in \frak R\},\{\frak V_{\frak r}
\mid \frak r \in \frak R\})$ (where  $\frak R$ is a finite set,)
such that
\begin{enumerate}
\item $\frak V_{\frak r} = (V_{\frak r},\Gamma_{\frak r},E_{\frak r},\phi_{\frak r},\widehat\phi_{\frak r})$
is an orbibundle chart of $(X,\mathcal E)$ and the image $U_{\frak r}$ of $\phi_{\frak r}$ for various $\frak r$ is an open covering of $X$.
\item
$\mathcal S_{\frak r}  = (W_{\frak r} ,\omega_{\frak r}, \{{\frak s}_{\frak r} ^{\epsilon}\})$ is a
CF-perturbation of $\mathcal U$
on $\frak V_{\frak r}$.
\item
For each $x \in U_{\frak r_1} \cap U_{\frak r_2}$, we require $\mathcal S_{\frak r_1} \sim_x \mathcal S_{\frak r_2}$.
\end{enumerate}
\par
Two such $(\{\mathcal S^{\epsilon}_{\frak r}\},\{\frak V_{\frak r}\})$ and
$(\{\mathcal S^{\prime,\epsilon}_{\frak r'}\},\{\frak V_{\frak r'}\})$
are said to be {\it equivalent} \index{equivalent! equivalence of CF perturbation on a Kuranishi chart}
if for $x \in U_{\frak r} \cap U'_{\frak r'}$ the following holds.
$$
\mathcal S^{\epsilon}_{\frak r} \sim_x \mathcal S^{\prime,\epsilon}_{\frak r'}.
$$
\par
A CF-perturbation of $\mathcal U$ is by definition an equivalence class of this equivalence
relation.
We denote by $\frak S =[\{\mathcal S^{\epsilon}_{\frak r}
\mid \frak r \in \frak R\},\{\frak V_{\frak r}
\mid \frak r \in \frak R\}]$ the equivalence class of the representative
$(\{\mathcal S^{\epsilon}_{\frak r}\},\{\frak V_{\frak r}\})$.
\par
Let $\frak S$ be a CF-perturbation of $\mathcal U$.
For each $x \in U$ and a sufficiently small orbibundle chart
$\frak V_{x} = (V_{x},\Gamma_{x},E_{x},\phi_{x},\widehat\phi_{x})$,
$\frak S$ determines a CF-perturbation of $\mathcal U$
at $\frak V_{x}$ up to the equivalence.
We call an equivalence class a representative of the germ of CF-perturbation $\frak S$
at $x$.\footnote{We can define a sheaf of CF-perturbations
and germ makes sense using sheaf theory. See \cite[Subsection 7.2]{fooo:tech2}.}
\end{defn}
\begin{defn}\label{submersivepertconlocloc}
In the situation of Definitoin \ref{defn32321},
we consider
a CF-perturbation  $\mathcal S = (W,\omega,\{{\frak s}^{\epsilon}\})$
of $\mathcal U$ on $\frak V$.
\begin{enumerate}
\item
We say $\mathcal S$ is {\it transversal to $0$}
\index{CF-perturbation! transversal to $0$ !
on one orbifold chart}
if there exists $\epsilon_0 > 0$ such that the map ${\frak s}^{\epsilon}$ is transversal to $0$ on
a neighborhood of the support of $\omega$ for all $0 < \epsilon < \epsilon_0$.
In particular
$$
({\frak s}^{\epsilon})^{-1}(0)
=
\{(y,\xi) \in V \times W\mid {\frak s}^{\epsilon}(y,\xi) = 0\}
$$
is a smooth submanifold of $V \times W$ on
a neighborhood of the support of $\omega$.
\item
Let $f : V \to M$ be a $\Gamma$ equivariant smooth map.
We say $f$ is {\it strongly submersive} \index{strongly transversal} with respect to $(\frak V,\mathcal S)$ if $\mathcal S$ is transversal to $0$ and
there exists $\epsilon_0 > 0$ such that the map
\begin{equation}\label{form7575}
f \circ \pi_1\vert_{({\frak s}^{\epsilon})^{-1}(0)} :
({\frak s}^{\epsilon})^{-1}(0) \to M
\end{equation}
is a submersion on
a neighborhood of the support of $\omega$,
for all $0 < \epsilon < \epsilon_0$. Here $\pi_1 : V \times W \to V$ is the
projection.
\index{strongly submersive (w.r.t. CF-perturbation) !
on one chart}
\end{enumerate}
\par
In the situation of Definition \ref{defn33333},
we define transversality to $0$
(resp. strongly submersivity) of
CF perturbation  $\frak S =[\{\mathcal S^{\epsilon}_{\frak r}\},\{\frak V_{\frak r}\}]$ on a Kuranishi chart
as that of each of $\mathcal S^{\epsilon}_{\frak r}$.
We can easily prove that the definition does not depend
on the representative of the CF-perturbation $\frak S$.
\end{defn}
\begin{defn}\label{defn355555}
Let $\mathcal U = (U,\mathcal E,\psi,s)$, $\mathcal U'
= (U',\mathcal E',\psi',s')$ be Kuranishi charts of $X$
and $\Phi = (\varphi,\widehat\varphi)$
an embedding of Kuranishi charts $:\mathcal U
\to \mathcal U'$
in the sense of Definition \ref{defKchart}.
\par
Let $\frak S =[\{\mathcal S^{\epsilon}_{\frak r}
\mid \frak r \in \frak R\},\{\frak V_{\frak r}
\mid \frak r \in \frak R\}]$,
$\frak S' =[\{\mathcal S^{\prime,\epsilon}_{\frak r'}
\mid \frak r' \in \frak R'\},\{\frak V'_{\frak r'}
\mid \frak r' \in \frak R'\}]$
be CF-perturbations of $\mathcal U$ and
$\mathcal U'$, respectively.
\par
We say that $\frak S$, $\frak S'$ are {\it compatible
with the embedding}
\index{compatible
with the embedding! CF-perturbation} $\Phi$ if for each
$x \in U$ we can find a representatives
$(\frak V_x,\mathcal S^{\epsilon}_x)$,
$(\frak V'_x,\mathcal S^{\prime,\epsilon}_{y})$
of $\frak S$, $\frak S'$
at $x$, $y = \varphi(x)$, respectively, such that
the following holds.
We put $\frak V_x = (V_x,\Gamma,E_x,\phi_x,\widehat\phi_x)$,
$\frak V'_y = (V'_y,\Gamma'_y,E'_y,\phi'_y,\widehat\phi'_y)$
and
$\mathcal S_x  = (W_{x} ,\omega_{x}, \{{\frak s}_{x}^{\epsilon}\})$,
$\mathcal S'_y  = (W'_{y} ,\omega'_{y}, \{{\frak s}_{y} ^{\prime,\epsilon}\})$.
\begin{enumerate}
\item
$W_x = W'_y$. $\omega_{x} = \omega'_{y}$.
\item
The next diagram commutes.
\begin{equation}\label{diag93}
\xymatrix{
&E_x \times U_{x} \times W_x \ar[r]^{\widehat\varphi \times {\rm id}}
&E'_y \times U'_{y} \times W'_y  \\
&U_{x}\times W_x \ar[r]^{\varphi \times {\rm id}}
\ar[u]_{{\frak s}_{x}^{\epsilon} } & U'_{y} \times W'_y
\ar[u]_{{\frak s}_{y}^{\prime,\epsilon} }
\\
}
\end{equation}
\end{enumerate}
\end{defn}
\begin{defn}\label{defCFGCSFKUA}
\begin{enumerate}
\item
Let
${\widetriangle{\mathcal U}}
= (({\frak P},\le), \{\mathcal U_{\frak p}\mid \frak p \in \frak P\},
\{\Phi_{\frak p\frak q}
\mid \frak q \le \frak p\})$
be a good coordinate system of $X$.
A  {\em CF-perturbation} \index{CF-perturbation! good coordinate system} $\widetriangle{\frak S}$ of $(X,\widetriangle{\mathcal U})$
assigns a CF-perturbation $\frak S_{\frak p}$
of $\mathcal U_{\frak p}$ to each $\frak p \in
\frak P$, so that whenever $\frak q \le \frak p$,
the restrictions of the corresponding CF-perturbations $\frak S_{\frak p}$, $\frak S_{\frak q}$
to the domain of $\Phi_{\frak p\frak q}$
are compatible with the embedding $\Phi_{\frak p\frak q}$
in the sense of Definition \ref{defn355555}.
\item
Let
$\widehat{\mathcal U}
= (\{\mathcal U_{p}\mid p \in X\},
\{\Phi_{pq}
\})$ be a Kuranishi structure of $X$.
A {\em CF-perturbation $\widehat{\frak S}$}
\index{CF-perturbation! Kuranishi structure} of $(X,\widehat{\mathcal U})$
assigns a CF-perturbation $\frak S_{p}$
of $\mathcal U_{p}$ to each $p \in
X$, such that for any $q \in \psi_p(U_p)$,
 $\mathcal U_{p}$, $\mathcal U_{q}$
 are compatible with the embedding $\Phi_{pq}$
in the sense of Definition \ref{defn355555}.
\end{enumerate}
\end{defn}
\begin{defn}\label{CFtransv}
\begin{enumerate}
\item
A transversality to $0$ of the
CF-perturbation on a good coordinate system
or on a Kuranishi structure is
defined in an obvious way based on Definition
\ref{submersivepertconlocloc}.
\item
Let $\widehat f : (X,\widehat{\mathcal U}) \to M$
(resp. $\widetriangle f : (X,\widetriangle{\mathcal U}) \to M$)
be a strongly smooth map from a space with
Kuranishi structure (resp. good coordinate system).
We say $\widehat f$
(resp. $\widetriangle f$) is {\em strongly submersive with respect to}
$\widehat{\frak S}$
(resp. $\widetriangle{\frak S}$)
if it is so on each Kuranishi chart in the sense of Definition
\ref{submersivepertconlocloc}.
\index{map from Kuranishi structure ! strongly submersive}
\end{enumerate}
\end{defn}

A key existence result of CF-perturbation
is the following Theorem \ref{existperturbcont}.
To state Theorem \ref{existperturbcont} we need the next
definition.
\begin{defn}
We say a good coordinate system
${\widetriangle{\mathcal U'}}
= (({\frak P'},\le), \{\mathcal U'_{\frak p}\mid \frak p' \in \frak P'\},
\{\Phi'_{\frak p'\frak q'}
\mid \frak q' \le \frak p'\})$
is a {\em shrinking} \index{shrinking} of
${\widetriangle{\mathcal U}}
= (({\frak P},\le), \{\mathcal U_{\frak p}\mid \frak p \in \frak P\},
\{\Phi_{\frak p\frak q}
\mid \frak q \le \frak p\})$
if the following holds.
\begin{enumerate}
\item
${\frak P} = {\frak P}'$.
\item
$\mathcal U'_{\frak p}$ is a restriction of
$\mathcal U_{\frak p}$ to a relatively compact
open subset $U'_{\frak p}$ of $U_{\frak p}$.
\item
$\Phi'_{\frak p\frak q}$ is the restriction of
$\Phi_{\frak p\frak q}$ to
$
\varphi_{\frak p\frak q}^{-1}(U'_{\frak p})
\cap U'_{\frak q}.
$
\end{enumerate}
\end{defn}
\begin{thm}\label{existperturbcont}
{\rm (\cite[Theorem 7.49]{fooo:tech2})}
Let ${\widetriangle{\mathcal U}}$ be a good coordinate system
of $X$
and ${\widetriangle{\mathcal U'}}$ its shrinking.
\begin{enumerate}
\item
There exists a CF-perturbation $\widetriangle{\frak  S'}$
of $(X,{\widetriangle{\mathcal U'}})$ transversal to $0$.
\item
If $\widetriangle f : (X,{\widetriangle{\mathcal U}}) \to M$ is a
weakly submersive and strongly smooth map, then
we may take a CF-perturbation $\widetriangle{\frak  S}$
so that
$\widetriangle f$ is strongly submersive with respect to $\widetriangle{\frak  S}$.
\end{enumerate}
\end{thm}
Note in \cite{fooo:tech2} we use the notion of a support system
and state Theorem \ref{existperturbcont} in a slightly different way.
Here to shorten the description we use the notion of the shrinking
in place of the support system.
\par
We can prove a relative version of Theorem \ref{existperturbcont}.
See \cite[Proposition 7.52 and Lemma 7.53]{fooo:tech2}.
\par
For the Kuranishi structure (instead of the good coordinate system)
we have the following:

\begin{thm}\label{existperturbcontkura}
For any Kuranishi
structure ${\widehat{\mathcal U}}$ of $X$,
we can find a Kuranishi structure
${\widehat{\mathcal U'}}$ of $X$
and a KK-embedding
${\widehat{\mathcal U}} \to {\widehat{\mathcal U'}}$
such that the following holds.
\begin{enumerate}
\item
There exists a CF-perturbation $\widehat{\frak  S'}$
of $(X,{\widehat{\mathcal U'}})$ transversal to $0$.
\item
If $\widehat f : (X,{\widehat{\mathcal U}}) \to M$ is a
weakly submersive strongly smooth map, then
we may take the CF-perturbation $\widehat{\frak  S'}$
so that
$\widehat f$ is strongly submersive with respect to $\widehat{\frak  S'}$.
\end{enumerate}
\end{thm}
This theorem follows from Theorems \ref{existperturbcont}, \ref{thmexist},
Addendum \ref{add1} and \cite[Lemma 9.9]{fooo:tech2}.

\subsection{Integration along the fiber}
\label{subsec:Integration}

\begin{defn}\label{defn311111}
In the situation of Definition \ref{defn32321}
we assume that $f : U \to M$ is a smooth map to a smooth manifold $M$.
Let $\mathcal S = (W,\omega,\{{\frak s}^{\epsilon}\})$
be a CF-perturbation
of $\mathcal U$ on $\frak V = (V,\Gamma,E,\phi,\widehat{\phi})$.
We assume $f$ is strongly submersive
with respect to $\mathcal S$.
Let $h$ be a smooth differential form on $U$ that has compact support in $\phi(V)$.
Then we define a smooth differential form $f!(h;\mathcal S^{\epsilon})$
on $M$
for each $\epsilon > 0$ by the equation (\ref{form72}) below.
We call the differential form $f!(h;\mathcal S^{\epsilon})$
 the {\it integration along the fiber} of $h$ with respect to $f$, $\frak S$.
\index{integration along the fiber!  one chart}
\par
For any smooth differential form $\rho$  on $M$,
we require $f!(h;\mathcal S^{\epsilon})$ to satisfy
\begin{equation}\label{form72}
\# \Gamma
\int_{M} f!(h;\mathcal S^{\epsilon}) \wedge \rho
=
\int_{({\frak s}^{\epsilon})^{-1}(0)}
\pi_1^*(\tilde h) \wedge \pi_1^*(f^*\rho) \wedge \pi_2^*\omega.
\end{equation}
Here $\pi_1$ (resp. $\pi_2$) is the projection of $V \times W$ to the
first (resp. second) factor,
$\# \Gamma$ is the order of the finite set $\Gamma$, and
$\tilde h$ is the pull back of $h$ to $V$.
\par
We remark that
\begin{equation}\label{form35}
\deg f!(h;\mathcal S^{\epsilon})
= \deg h + \dim M - \dim \mathcal U.
\end{equation}
\end{defn}
The unique existence of such a differential form $f!(h;\mathcal S^{\epsilon})$
is an immediate consequence of the existence of integration along the fiber
of a smooth form by a proper submersion.
\begin{rem}
Note that the right hand side of (\ref{form72}) depends on $\epsilon$.
Therefore the differential form  $f!(h;\mathcal S^{\epsilon})$ depends on $\epsilon$.
\end{rem}

\begin{defn}
In the situation of Definition \ref{defn33333}
let
$\frak S =[\{\mathcal S^{\epsilon}_{\frak r}
\mid \frak r \in \frak R\},\{\frak V_{\frak r}
\mid \frak r \in \frak R\}]$
be a CF-perturbation of $\mathcal U$
and $f : U \to X$ a smooth map.
We assume $f$ is strongly submersive with respect to
$\frak S$.
We consider the open covering
\begin{equation}\label{cover35}
U = \bigcup_{\frak r \in \frak R} U_{\frak r}
\end{equation}
where $U_{\frak r} = \phi_{\frak r}(V_{\frak r})$.
Let $\chi_{\frak r}$  be a partition of unity
subordinate to the covering (\ref{cover35}).
Let $h$ be a differential form on $U$ of compact support.
We then define the {\em integration along the fiber}
\index{integration along the fiber!  on Kuranishi chart} of
$h$ with respect to $f$, $\frak S$ by
the following formula.
\begin{equation}\label{intdef}
f!(h;\frak S^{\epsilon})
=
\sum_{\frak r \in \frak R}
f!(\chi_{\frak r} h;\mathcal S_{
\frak r}^{\epsilon}).
\end{equation}
Here the right hand is defined by
Definition \ref{defn311111}.
The degree is given by (\ref{form35}).
\end{defn}
We can prove that the right hand side of
(\ref{intdef}) is independent of the choice of
partition of unity $\chi_{\frak r}$ as well as the
representative $(\{\mathcal S^{\epsilon}_{\frak r}
\mid \frak r \in \frak R\},\{\frak V_{\frak r}
\mid \frak r \in \frak R\})$ of $\frak S$.
(\cite[Proposition 7.80]{fooo:tech2}.)
\par
We next discuss the case of good coordinate
system $\widetriangle{\mathcal U}
= (({\frak P},\le), \{\mathcal U_{\frak p}\mid \frak p \in \frak P\},
\{\Phi_{\frak p\frak q}
\mid \frak q \le \frak p\})$ of $X$.
We use the equivalence relation
$\sim$ on the disjoint union $\coprod_{\frak p\in\frak P}U_{\frak p}$,
which is defined in Definition \ref{gcsystem} (7)
and denote by $\vert \widetriangle{\mathcal U}\vert$  the
set of this equivalence class.
This space is Hausdorff with respect to the quotient topology.
Suppose $\widetriangle{\mathcal U}$ is a shrinking of
$\widetriangle{\mathcal U}^+$.
Then $\vert \widetriangle{\mathcal U}\vert$
is a subset of $\vert \widetriangle{\mathcal U^+}\vert$.
We put the topology of
$\vert \widetriangle{\mathcal U}\vert$
that is induced from the quotient topology of
$\vert \widetriangle{\mathcal U^+}\vert$.
Then $\vert \widetriangle{\mathcal U}\vert$
is metrizable with respect to this topology.
(\cite[Proposition 2.11]{fooohausdorf}.)
Hereafter we use this topology on
$\vert \widetriangle{\mathcal U}\vert$.
We take its metric $d$ and fix it.
We remark that
by construction of $\vert \widetriangle{\mathcal U}\vert$ we have
natural embeddings $X \hookrightarrow \vert \widetriangle{\mathcal U}\vert$,
$U_{\frak p} \hookrightarrow \vert \widetriangle{\mathcal U}\vert$ so
we may regard them as subsets
\begin{equation}
X \subset \vert \widetriangle{\mathcal U}\vert,
\quad
U_{\frak p} \subset \vert \widetriangle{\mathcal U}\vert.
\end{equation}
We first discuss the notion of partition of unity with respect to the
covering $\{U_{\frak p} \mid \frak p \in \frak P\}$
of $\vert \widetriangle{\mathcal U}\vert$.
\begin{defn}
A continuous function $g : \vert \widetriangle{\mathcal U}\vert
\to \R$ is said to be {\it smooth}\index{smooth}
if the restriction of $g$ to any $U_{\frak p}$ is a
smooth function.
\end{defn}
\begin{defn}\label{partuni}
Let $K_{\frak p} \subset U_{\frak p}$ be a compact subset such that
\begin{equation}
\bigcup_{\frak p \in \frak P} \psi_{\frak p}(s_{\frak p}^{-1}(0) \cap K_{\frak p}) \supseteq X,
\end{equation}
and $\delta$ be a sufficiently small positive number.
\par
A $(\{K_{\frak p}\},\delta)$-{\it partition of unity} of $(X; \widetriangle{\mathcal U})$
\index{partition of unity ! good coordinate system}
assigns $\chi_{\frak p}$ to each $\frak p \in \frak P$, so that:
\begin{enumerate}
\item
$\chi_{\frak p} : \vert \widetriangle{\mathcal U}\vert \to [0,\infty)$ is a smooth function.
\item
The support of $\chi_{\frak p}$ is in the $\delta$ neighborhood of $K_{\frak p}$
in $\vert \widetriangle{\mathcal U}\vert$.\footnote{We remark that this $\delta$ neighborhood
is not contained in $U_{\frak p}$ in general.}
\item
$$
\sum_{\frak p \in \frak P} \chi_{\frak p} \equiv 1
$$
holds on the $\delta$-neighborhood $\frak U(X)$ of $X$ in $\vert \widetriangle{\mathcal U}\vert$.
\end{enumerate}
\end{defn}
Existence of a $(\{K_{\frak p}\},\delta)$-partition of unity of $(X; \widetriangle{\mathcal U})$ for
sufficiently small $\delta$ is proved in \cite[Proposition 7.67]{fooo:tech2}.

\begin{defn}\label{defintfiberGCS}
Let $(X,\widetriangle{\mathcal U})$ be a space equipped with an oriented good coordinate system
and $\widetriangle f : (X,\widetriangle{\mathcal U}) \to M$  a strongly smooth map.
Let $\widetriangle{\frak S} = \{ {\frak S}_{\frak p} \mid \frak p \in \frak P\}$ be a  CF-perturbation
of  $(X,\widetriangle{\mathcal U})$ such that $\widetriangle f$ is strongly submersive with respect to $\widetriangle{\frak S}$.
Let $\widetriangle h$ be a differential form on $(X,\widetriangle{\mathcal U})$.
We take $K_{\frak p}$ as in Definition \ref{partuni} and a sufficiently small $\delta$.
Let $\{\chi_{\frak p} \mid \frak p \in \frak P \}$ be a $(\{K_{\frak p}\},\delta)$-partition of unity of $(X; \widetriangle{\mathcal U})$.
\par
Then the integration along the fiber \index{integration along the fiber! good coordinate system} of $\widetriangle h$
with respect to  $\widetriangle{f}$, $\widetriangle{\frak S}$ is defined by the formula
\begin{equation}\label{formula714}
{\widetriangle f}!(\widetriangle h;\widetriangle{{\frak S}^{\epsilon}})
=
\sum_{\frak p\in \frak P}
f_{\frak p}!(\chi_{\frak p}h_{\frak p};{\frak S}^{\epsilon}_{\frak p}
\vert_{\frak U(X) \cap B_{\delta}(K_{\frak p}) \cap U_{\frak p}}).
\end{equation}
Here $B_{\delta}(K_{\frak p})$ is the $\delta$ neighborhood of $K_{\frak p}$ in
$\vert \widetriangle{\mathcal U}\vert$.
The degree is given by (\ref{form35}).
\end{defn}
\begin{prop}
The right hand side of $(\ref{formula714})$ depends on $\widetriangle h$, $\widetriangle{\frak S}$, $\epsilon$
but is independent of other choices such as partition of unity $\chi_{\frak p}$, $\delta$ and $K_{\frak p}$
provided $\delta$ and $\epsilon$ are sufficiently small.
\end{prop}
This is proved in \cite[Proposition 7.80]{fooo:tech2}.
\par
We next study the case of Kuranishi structure.

\begin{adden}\label{add2}
In the situation of Theorem $\ref{thmexist}$
suppose that we have a CF-perturbation $\widehat{\frak S}$
on $\widehat{\mathcal U}$.
Then we may choose $\widetriangle{\mathcal U}$   such that $\widehat{\frak S}$ is induced
from $\widetriangle{\frak S}$.\footnote{Namely ${\frak S}_p$
and ${\frak S}_{\frak p}$ are compatible with the embedding
$\Phi_{\frak p,p} : \mathcal U_p \to \mathcal U_{\frak p}$ in the sense of Definition \ref{defn355555}.
Here $\Phi_{\frak p,p}$ is a member of KG-embedding $\widehat{\mathcal U} \to \widetriangle{\mathcal U}$.}
\par
If $\widehat{\frak S}$ is transversal to $0$ then
we may choose $\widetriangle{\frak S}$ so that it is transversal to $0$.
\par
Suppose in addition that we are in the situation of Addendum $\ref{add1}$,
that is $\widehat f :  (X,\widehat{\mathcal U}) \to M$,
$\widetriangle f :  (X,\widetriangle{\mathcal U}) \to M$ are weakly submersive.
We assume that $\widehat f$ is strongly submersive with respect to $\widetriangle{\frak S}$.
Then we may choose $\widetriangle{\frak S}$ so that $\widetriangle f$
is strongly submersive with respect to  $\widetriangle{\frak S}$.
\end{adden}
See \cite[Proposition 9.10 (2)(3) and Proposition 6.49 (2)]{fooo:tech2} for the proof.
In case a differential form $\widehat h$ on $(X,\widehat{\mathcal U})$ is given
we may choose
a differential form $\widetriangle h$   on $(X,\widetriangle{\mathcal U})$
so that $\widehat h$ is induced by $\widetriangle h$.
\footnote{Namely $h_p$  is a pull back of
and $h_{\frak p}$ by the embedding
$\Phi_{\frak p,p} : \mathcal U_p \to \mathcal U_{\frak p}$.
Here $\Phi_{\frak p,p}$ is a member of KG-embedding $\widehat{\mathcal U} \to \widetriangle{\mathcal U}$.}
\begin{defn}\label{def320222}
Let $(X,\widehat{\mathcal U})$ be an oriented space with Kuranishi structure and
$\widehat f : (X,\widehat{\mathcal U}) \to M$  a weakly  submersive map.
Let $\widehat{\frak S}$ be a CF-perturbation of $(X,\widehat{\mathcal U})$
such that $\widehat f$ is strongly submersive with respect to $\widetriangle{\frak S}$.
Let $\widehat h$ be a differential form on $(X,\widehat{\mathcal U})$.
\par
Then we define the {\em integration along the fiber} of $\widehat h$ by the formula:
\index{integration along the fiber! Kuranishi structure}
\begin{equation}\label{formula715}
{\widehat f}!(\widehat h;\widehat{{\frak S}^{\epsilon}})
=
{\widetriangle f}!(\widetriangle h;\widetriangle{{\frak S}^{\epsilon}})
\end{equation}
where $\widetriangle f, \widetriangle h, \widetriangle{\frak S}$ are as in Addendum \ref{add2}
and right hand side is defined by (\ref{formula714}).
The degree is given by (\ref{form35}).
\end{defn}
\begin{prop}
The right hand side of $(\ref{formula715})$ depends on $\widehat h$, $\widehat{\frak S}$, $\epsilon$
but is independent of other choices such as $\widetriangle{\mathcal U}, \widetriangle f, \widetriangle h, \widetriangle{\frak S}$ if $\epsilon$ is sufficiently small.
\end{prop}
This is \cite[Theorem 9.14]{fooo:tech2}.

\section{Stokes' theorem}
\label{subsec:stokes}

\subsection{Normalized boundary}
\label{subsec:normalizedbdl}

To describe Stokes' theorem we first discuss a few points on the
boundary of manifold, orbifold and of Kuranishi structure.
Let $M$ be a manifold with corner.
We say a point $p \in M$ lies in the codimension $k$ corner of $M$
if $p$ has a neighborhood diffeomorphic to
$[0,1)^k \times \R^{n-k}$ where $p$ corresponds to the origin.
We denote by $\overset{\circ}{S^k}(M)$ the set of
all the points  which lie in the codimension $k$ corner of $M$.
$\overset{\circ}{S^k}(M)$ has a canonical structure of a manifold of
dimension $n-k$ (without boundary).
Its closure in general does not have a structure of a manifold with
corner. However we can easily show the next lemma.

\begin{lem}
There exists an $(n-k)$ dimensional manifold
with corner $\widehat{S}^k(M)$, such that the inclusion
$\overset{\circ}{S^k}(M) \to M$ extends to a map
$\pi : {\widehat{S}^k}(M) \to M$ such that
$$
\# (\pi^{-1}(p)) = \frac{\ell!}{k! (\ell-k)!}
$$
if $p \in \overset{\circ}{S^{\ell}}(M)$ for $\ell \ge $k.
\end{lem}
\begin{defn}\label{defnnormalizedboundary}
We denote  $\partial M = {\widehat{S}^1}(M)$, which we call the
{\em normalized boundary}
\index{normalized boundary! manifold} of $M$.
\index{normalized corner! manifold}
\index{$\widehat{S}^k(X)$}
We call it the boundary in case the meaning is clear.
\par
We call ${\widehat{S}^k}(M)$ the normalized codimension $k$ corner of $M$.
\end{defn}
\begin{exm}
Let $M = [0,\infty)^n$. Then $\overset{\circ}{S^{1}}(M)$ is a disjoint union of
$n$ copies each of which is identified with $(0,\infty)^{n-1}$.
$\partial M$ is then a {\it disjoint} union of the closures of those $n$ copies.
Namely it is a disjoint union of $n$ copies of $[0,\infty)^{n-1}$.
\end{exm}
\begin{defn}
\begin{enumerate}
\item An {\em orbifold chart with corner} \index{orbifold with corner} of $X$
is a triple $(V,\Gamma,\phi)$ satisfying the same condition as Definition \ref{def101}
except that this time $V$ may have a corner.
\item
We define the notion of the structure of {\em orbifold with corner} on $X$ using item (1)
in the same way as Definition \ref{def101}.
\item
By taking the normalized boundary (resp. corner)
of $V$ for each $V$ we can define the {\em normalized boundary} (resp. corner) of $X$,
which we denote by $\partial X$
(resp. ${\widehat{S}^k}(X)$). Then it itself carries the structure of an orbifold with corner.
\index{normalized boundary! orbifold}
\index{normalized corner! orbifold}
$\overset{\circ}{S^{\ell}}(X)$ is defined in the same way.
\item
If $X$ is an oriented orbifold with boundary then its
normalized boundary $\partial X$ is also oriented.
\item
The notion of {\em embedding of orbifold with corner}
is defined in the same way as Definition \ref{defn32020202}.
Note we require $f(\overset{\circ}{S^{\ell}}(X)) \subseteq
\overset{\circ}{S^{\ell}}(Y)$ for an embedding $f: X \to Y$ of orbifold with corner.
\end{enumerate}
\end{defn}

\begin{defn}\label{defn345}
By considering the case when $U$ is an orbifold with corners
in the definition of Kuranishi chart $\mathcal U = (U,\mathcal E,\psi,s)$,  we
define a {\em Kuranishi chart with corners}.
\index{Kuranishi chart! Kuranishi chart with corners}
\par
Using it in an obvious way we define {\em Kuranishi structure and
good coordinate system with corners}.
\index{Kuranishi structure! Kuranishi structure with corners}
\index{good coordinate system! good coordinate system with corners}
\par
\index{normalized boundary! Kuranishi structure}
\index{normalized corner! Kuranishi structure}
The {\em normalized boundary} $\partial(X,\widehat{\mathcal U})$
(resp. {\em normalized corner} $\widehat{S}^k(X,\widehat{\mathcal U})$) of a space with  Kuranishi structure with corners
can be defined by considering normalized boundary (resp. corner) on each
Kuranishi chart.
We can define normalized boundary and normalized corner
in the same way for the case of good coordinate system with corners in an obvious way.
\index{normalized boundary! good coordinate system}
\index{normalized corner! good coordinate system}
\end{defn}

\begin{defn}
Let $M_i$ be a manifold with corners, $f_i : M_i \to N$ a smooth map, for $i=1,2$.
We say $f_1$ is {\em transversal}
\index{transversality in the case of manifold with corners} to $f_2$ if
the restriction of $f_1$ to $\overset{\circ}{S^{k}}(M_1)$ is
transversal to the restriction of $f_2$ to $\overset{\circ}{S^{\ell}}(M_2)$
for any $k,\ell$.
Transversality to zero of the section on a manifold with corners is defined in the same way.
\par
The case of orbifold with corners is treated in the same way.
\par
Various definitions of the  transversality on various objects on Kuranishi structure
and/or good coordinate system can be generalized to the case
of Kuranishi structure
and/or good coordinate system with corners.
\end{defn}
Let $\widehat f : (X,\widehat{\mathcal U}) \to M$ be a map from a space with Kuranishi structure
with corners to a manifold (without boundary).
Then we can define its strong submersivity as above.
(Namely we require the strong submersivity of the restrictions to all the normalized
corners as well as  interior.)
It is then easy to see that if $\widehat f$ is weakly submersive then
it induces a weakly submersive map
 $\widehat{S}^k\widehat f : \widehat{S}^k(X,\widehat{\mathcal U}) \to M$.
 The same holds for good coordinate system.
\par
If $\widehat{\frak S}$ is a CF-perturbation of $(X,\widehat{\mathcal U})$ then
it induces CF-perturbation $\widehat{S}^k\widehat{\frak S}$ of $\widehat{S}^k(X,\widehat{\mathcal U})$.
The same holds for the case of good coordinate system.
This construction preserves the transversality to $0$ and the strong submersivity of maps.

\subsection{Statement of Stokes' theorem}
\label{subsec:stakes}
\index{Stokes' theorem}
\begin{thm}\label{them47}
Let $(X,\widetriangle{\mathcal U})$ be a space equipped with a good coordinate system with corners
and $\widetriangle h$ a differential form on it.
Let $\widetriangle{\frak S}$ be a CF-perturbation of $(X,\widetriangle{\mathcal U})$
and $\widetriangle f : (X,\widetriangle{\mathcal U}) \to M$  a strongly smooth map.
We assume $\widetriangle f$ is strongly submersive with respect to
$\widetriangle{\frak S}$.
Then we have the following equality:
\begin{equation}
d\left(\widetriangle f!(\widetriangle h;\widetriangle{{\frak S}^{\epsilon}})\right)
=
\widetriangle f!(d\widetriangle h;\widetriangle{{\frak S}^{\epsilon}})
+
\widetriangle f_{\partial}!(\widetriangle {h_{\partial}};\widetriangle{{\frak S}_{\partial}^{\epsilon}}).
\end{equation}
for each sufficiently small $\epsilon>0$.
Here $\widetriangle f_{\partial}$, $\widetriangle {h_{\partial}}$, $\widetriangle{\frak S}_{\partial}$ are
restrictions of $\widetriangle f$,  $\widetriangle {h}$, $\widetriangle{\frak S}$ to
$(\partial X,\partial \widetriangle{\mathcal U})$ respectively.
\end{thm}
This is  \cite[Theorem 8.11]{fooo:tech2}.
(See also \cite{fooo:bulk} Lemma 12.13.)
We remark that $d\widetriangle h$ is defined in Definition \ref{def323234}.
For the Kuranishi structure we have the same statement.
\begin{thm}\label{them48}
Let $(X,\widehat{\mathcal U})$ be a space equipped with a Kuranishi structure with corners
and $\widehat h$ a differential form on it.
Let $\widehat{\frak S}$ be a CF-perturbation of $(X,\widehat{\mathcal U})$ and
$\widehat f : (X,\widehat{\mathcal U}) \to M$  a strongly smooth map.
We assume $\widehat f$ is strongly submersive with respect to
$\widehat{\frak S}$.
Then we have the following equality:
\begin{equation}\label{fporm42}
d\left(\widehat f!(\widehat h;\widehat{{\frak S}^{\epsilon}})\right)
=
\widehat f!(d\widehat h;\widehat{{\frak S}^{\epsilon}})
+
\widehat f_{\partial}!(\widehat {h_{\partial}};\widehat{{\frak S}_{\partial}^{\epsilon}}).
\end{equation}
for each sufficiently small $\epsilon>0$.
Here $\widehat f_{\partial}$, $\widehat {h_{\partial}}$, $\widehat{\frak S}_{\partial}$ are
restrictions of $\widehat f$,  $\widehat {h}$, $\widehat{\frak S}$ to
$(\partial X,\partial \widehat{\mathcal U})$ respectively.
\end{thm}
This is  \cite[Proposition 9.26]{fooo:tech2}.
We remark that $d\widehat h$ is defined in Definition \ref{def323234}.
\par
The proof of Theorem \ref{them47} is similar to  the proof of
Stokes' theorem in manifold theory. Namely we use the partition of
unity and show that it suffices to prove the equality on each chart.
We then use Stokes' theorem in manifold theory to prove
the equality on one chart.
We omit the proof but refer the readers to \cite[Section 8]{fooo:tech2}.
Theorem \ref{them48} follows from Theorem \ref{them47}.
\par
We consider the case when $M$ is a point.
In this case the strong submersivity of $f : (X,\widehat{\mathcal U}) \to M$
with respect to $\widehat{\frak S}$
is nothing but the transversality to 0 of  $\widehat{\frak S}$.
\par
Let $\widehat h$ be a differential form of $(X,\widehat{\mathcal U})$
of degree $\dim (X,\widehat{\mathcal U})$.
Then we define
\begin{equation}\label{form34333}
\int_{((X,\widehat{\mathcal U}),\widehat{\frak S}^{\epsilon})} \widehat h
: =
{\widehat f}!(\widehat h;\widehat{{\frak S}^{\epsilon}}).
\end{equation}
Note in this case the right hand side is a differential 0 form on a
point, which is nothing but a real number. (It depends on $\epsilon$.)
We sometimes omit $\widehat{{\frak S}^{\epsilon}}$ from the notation
in case the choice of CF-perturbation is clear.
\par
In case $\widehat h = \dim (X,\widehat{\mathcal U}) + 1$,
(\ref{fporm42}) becomes the next formula:
\begin{equation}\label{stokessss}
\int_{(\partial(X,\widehat{\mathcal U}),\partial\widehat{\frak S}^{\epsilon})} \widehat{h_{\partial}}
=
\int_{((X,\widehat{\mathcal U}),\widehat{\frak S}^{\epsilon})} d\widehat h.
\end{equation}
If the obstruction bundles of $\widehat{\mathcal U}$ are all zero this Kuranishi
structure gives an orbifold structure of $X$. In this case
(\ref{stokessss}) is the standard Stokes' formula for orbifold.
Thus Theorems \ref{them48} and \ref{them47} are generalizations
of the standard Stokes' formula.

\section{Composition formula}
\label{sec:composition}

Another main result of the theory of integration along the fiber
is the composition formula which we describe in this section.
This is a version of Fubini's theorem on integration.

\subsection{Smooth correspondence and its perturbation}
\label{subsec:correspondence}

\begin{defn}
Let $M_s$ and $M_t$ be oriented compact smooth manifolds without boundary.
A {\em smooth correspondence} \index{smooth correspondence} from $M_s$ to $M_t$ is an object
$((X,\widehat{\mathcal U}),\widehat f_s,\widehat f_t)$ such that:
\begin{enumerate}
\item
$(X,\widehat{\mathcal U})$ is a space with an oriented Kuranishi structure.
(It may have boundary and corner.)
\item
$\widehat f_s : (X,\widehat{\mathcal U}) \to M_s$ is a strongly
smooth map.
\item
$\widehat f_t : (X,\widehat{\mathcal U}) \to M_t$ is a strongly
smooth and weakly submersive map.
\end{enumerate}
\end{defn}
Here $s$ and $t$ in the suffix stands for source and target, respectively.
\begin{defn}
A {\em perturbed smooth correspondence}
\index{perturbed smooth correspondence} from $M_s$ to $M_t$ is
an object $\Xi = ((X,\widehat{\mathcal U}),\widehat{\frak S},\widehat f_s,\widehat f_t)$
such that
\begin{enumerate}
\item
$((X,\widehat{\mathcal U}),\widehat f_s,\widehat f_t)$ is a
smooth correspondence from $M_s$ to $M_t$.
\item
$\widehat{\frak S}$ is a CF-perturbation of $(X,\widehat{\mathcal U})$
such that $\widehat f_t$ is strongly submersive with respect to
$\widehat{\frak S}$.
\end{enumerate}
\end{defn}
\begin{defn}
If $((X,\widehat{\mathcal U}),\widehat f_s,\widehat f_t)$ is a smooth correspondence
then $\partial(X,\widehat{\mathcal U})$ together with the restriction of
$\widehat f_s,\widehat f_t$ is a smooth correspondence, which we write
$\partial((X,\widehat{\mathcal U}),\widehat f_s,\widehat f_t)$.
\par
In the same way we can define the boundary of a perturbed
smooth correspondence $\Xi$
which we write $\partial\Xi$.
\end{defn}
\begin{defn}\label{defn354}
Let $\Xi = ((X,\widehat{\mathcal U}),\widehat{\frak S},\widehat f_s,\widehat f_t)$
be a perturbed smooth correspondence from $M_s$ to $M_t$.
We define an $\R$ linear map
$$
{\rm Corr}^{\epsilon}_{\Xi} : \Omega(M_s) \to \Omega(M_t)
$$
by \index{$\text{\rm Corr}^{\epsilon}_{\Xi}$}
\begin{equation}
{\rm Corr}^{\epsilon}_{\Xi}(h)
=
{\widehat f_t}!(\widehat f_s^* h;\widehat{{\frak S}^{\epsilon}}).
\end{equation}
Note the pull back $\widehat f_s^*$ is defined by
Definition \ref{def323233} and ${\widehat f_t}!$
is defined by Definition \ref{def320222}.
Here $\Omega(M_s)$, $\Omega(M_t)$ are the vector spaces
of all smooth differential forms on $M_s$, $M_t$, respectively.
\end{defn}
We remark that ${\rm Corr}^{\epsilon}_{\Xi}$ is a linear map
and
\begin{equation}
\deg {\rm Corr}^{\epsilon}_{\Xi}(h)
=
\deg h + \dim M_t - \dim (X,\widehat{\mathcal U}).
\end{equation}
Note $d\widehat f_t^* h = \widehat f_t^* (dh)$.
Therefore Stokes' theorem implies that:
\begin{equation}
d\circ {\rm Corr}^{\epsilon}_{\Xi}
= {\rm Corr}^{\epsilon}_{\Xi} \circ d + {\rm Corr}^{\epsilon}_{\partial\Xi}.
\end{equation}

\subsection{Composition of smooth correspondences}
\label{subsec:compcorrespondence}

We next define a composition of smooth correspondences.
\begin{defn}
Let $((X_{21},\widehat{\mathcal U_{21}}),\widehat f_{1;21},\widehat f_{2;21})$
and
$((X_{32},\widehat{\mathcal U_{32}}),\widehat f_{2;32},\widehat f_{3;32})$
be smooth correspondences from $M_1$ to $M_2$ and from $M_2$ to $M_3$
respectively.
Since $\widehat f_{2;21}$ is weakly submersive the fiber product
$$
(X_{21},\widehat{\mathcal U_{21}}) \,_{\widehat f_{2;21}}\times_{\widehat f_{2;32}}
(X_{32},\widehat{\mathcal U_{32}})
$$
is well-defined. We write it $(X_{31},\widehat{\mathcal U_{31}})$.
Then it is easy to see that $\widehat f_{1;21}$ (resp. $\widehat f_{3;32}$)
induces
$\widehat f_{1;31} : (X_{31},\widehat{\mathcal U_{31}}) \to M_1$
(resp. $\widehat f_{3;31} : (X_{31},\widehat{\mathcal U_{31}}) \to M_3$).
We can use weakly submersivity of $\widehat f_{2;21}$, $\widehat f_{3;32}$
to show that $\widehat f_{3;31}$ is weakly submersive.\footnote{See \cite[Lemma 10.18 (2)]{fooo:tech2} for its proof.}
We thus obtain a smooth correspondence
$$
((X_{31},\widehat{\mathcal U_{31}}),\widehat f_{1;31},\widehat f_{3;31}).
$$
We call it the {\em composition} \index{composition! smooth correspondences} of
$((X_{21},\widehat{\mathcal U_{21}}),\widehat f_{1;21},\widehat f_{2;21})$
and
$((X_{32},\widehat{\mathcal U_{32}}),\widehat f_{2;32},\widehat f_{3;32})$.
\end{defn}
\begin{defn}
Suppose we have two perturbed smooth correspondences
$\Xi_{21} = ((X_{21},\widehat{\mathcal U_{21}}),\widehat{\frak S_{21}},\widehat f_{1;21},\widehat f_{2;21})$
and
$\Xi_{32} = ((X_{32},\widehat{\mathcal U_{32}}),\widehat{\frak S_{32}},\widehat f_{2;32},\widehat f_{3;32})$
from $M_1$ to $M_2$ and from $M_2$ to $M_3$.
We consider the composition
$
((X_{31},\widehat{\mathcal U_{31}}),\widehat f_{1;31},\widehat f_{3;31})
$
 of
$((X_{21},\widehat{\mathcal U_{21}}),\widehat f_{1;21},\widehat f_{2;21})$
and
$((X_{32},\widehat{\mathcal U_{32}}),\widehat f_{2;32},\widehat f_{3;32})$.
Using strong submersivity of $\widehat f_{1;21}$
with respect to $\widehat{\frak S_{21}}$, we can easily construct the {\em fiber product}
\index{fiber product of CF-perturbations} of two
CF-perturbations
$$
\widehat{\frak S_{21}} \,_{\widehat f_{2;21}}\times_{\widehat f_{2;32}} \widehat{\frak S_{32}}
$$
which we denote by $\widehat{\frak S_{32}}$.\footnote{See \cite[Definition 10.13 (2)]{fooo:tech2} for its proof.}
Then using strong submersivity of $\widehat f_{1;21}$,  $\widehat f_{3;32}$
with respect to $\widehat{\frak S_{21}}$, $\widehat{\frak S_{32}}$,
we can show that $\widehat f_{3;32}$ is strongly submersive with respect to
$\widehat{\frak S_{32}}$.\footnote{See  \cite[Lemma 10.14 (2)]{fooo:tech2} for its proof.}
\par
We thus obtain a perturbed smooth correspondence
$$
((X_{31},\widehat{\mathcal U_{31}}),\widehat{\frak S_{31}},\widehat f_{1;31},\widehat f_{3;31}),
$$
for each sufficiently small $\epsilon > 0$.
We call it the {\em composition} \index{composition! perturbed
smooth correspondences} of $\Xi_{21}$ and $\Xi_{32}$ and write
$$
\Xi_{32} \circ \Xi_{21}.
$$
\end{defn}
\begin{equation}\label{diagram1010}
\xymatrix{
&& \frak X_{31} \ar[ld]\ar[rd] \\
& \frak X_{21} \ar[ld]\ar[rd] && \frak X_{32}\ar[ld]\ar[rd] \\
M_1 && M_2 && M_3}
\nonumber\end{equation}

\subsection{Statement of Composition formula}
\label{subsec:composition}

Now the composition formula is:
\index{composition formula}
\begin{thm}\label{compform}
Let  $\Xi_{21}$ (resp. $\Xi_{32}$) be a perturbed smooth correspondence from $M_1$ to $M_2$
(resp. from $M_2$ to $M_3$). Then we have
\begin{equation}\label{formcomposos}
{\rm Corr}^{\epsilon}_{\Xi_{32}} \circ {\rm Corr}^{\epsilon}_{\Xi_{21}}
=
{\rm Corr}^{\epsilon}_{\Xi_{32}\circ \Xi_{21}}
\end{equation}
for each sufficiently small $\epsilon >0$.
\end{thm}
Theorem \ref{compform} is
\cite[Theorem 10.20]{fooo:tech2}.
({See also \cite{fooo:bulk} Lemma 12.15.)
\par
We consider the case $M_2$ is a point.
Let $h_1$ and $h_3$ be differential forms on $M_1$ and $M_3$
of degree $\dim (X_{21},\widehat{\mathcal U_{21}})$
and $\dim (X_{32},\widehat{\mathcal U_{32}})$, respectively.
Then we have
$$
\int_{M_3}({\rm Corr}^{\epsilon}_{\Xi_{32}} \circ {\rm Corr}^{\epsilon}_{\Xi_{21}})(h_1) \wedge h_3
=
\int_{((X_{21},\widehat{\mathcal U_{21}}),\widehat{\frak S_{21}})}
\widehat f_{1;21}^* h_1
\times
\int_{((X_{32},\widehat{\mathcal U_{32}}),\widehat{\frak S_{32}})}
\widehat f_{3;32}^* h_3.
$$
On the other hand
$$
\int_{M_3}{\rm Corr}^{\epsilon}_{\Xi_{32}\circ \Xi_{21}}(h_1) \wedge h_3
=
\int_{((X_{21},\widehat{\mathcal U_{21}})
\times (X_{32},\widehat{\mathcal U_{32}}),
\widehat{\frak S_{21}} \times \widehat{\frak S_{32}})}
\widehat f_{1;21}^* h_1
 \times \widehat f_{3;32}^* h_3.
$$
Thus in this case (\ref{formcomposos}) is a generalization of Fubini's theorem.
\par
To prove Theorem \ref{compform} we use the partition of unity to reduce
it to one chart. Then we can use Fubini's theorem to prove it.
See \cite[Section 10]{fooo:tech2}.
\par

\bibliographystyle{amsalpha}
\bibliographystyle{amsalpha}

\include{index}
\printindex

\end{document}